\tikzset{
    labl/.style={anchor=south, rotate=135, inner sep=.5mm}
}
\newtheorem{thm}{Theorem}[section]
\newtheorem*{thm*}{Theorem}
\newtheorem*{claim}{Claim}
\newtheorem*{subclaim}{Sub-Claim}
\newtheorem{lem}[thm]{Lemma}
\newtheorem{prop}[thm]{Proposition}
\newtheorem{cor}[thm]{Corollary}
\theoremstyle{definition}
\newtheorem{exa}[thm]{Example}
\newtheorem{defn}[thm]{Definition}
\newtheorem{setup}[thm]{Setup}
\newtheorem{rem}[thm]{Remark}
\newtheorem{notn}[thm]{Notation}
\def\bA{\mathbb{A}}
\def\bB{\mathbb{B}}
\def\bD{\mathbb{D}}
\def\bH{\mathbb{H}}
\def\bN{\mathbb{N}}
\def\bZ{\mathbb{Z}}
\def\bQ{\mathbb{Q}}
\def\bR{\mathbb{R}}
\def\bC{\mathbb{C}}
\def\bP{\mathbb{P}}
\def\bL{\mathbb{L}}
\def\bG{\mathbf{G}}
\def\G{\mathbf{G}}
\def\H{\mathbf{H}}
\def\bT{\mathbf{T}}
\def\T{\mathbf{T}}
\def\bfV{\mathbf{V}}
\def\cA{\mathcal{A}}
\def\cC{\mathcal{C}}
\def\cD{\mathcal{D}}
\def\cE{\mathcal{E}}
\def\cF{\mathcal{F}}
\def\cL{\mathcal{L}}
\def\cM{\mathcal{M}}
\def\cN{\mathcal{N}}
\def\cO{\mathcal{O}}
\def\cS{\mathcal{S}}
\def\cT{\mathcal{T}}
\def\cU{\mathcal{U}}
\def\cV{\mathcal{V}}
\def\cW{\mathcal{W}}
\def\cX{\mathcal{X}}
\def\cY{\mathcal{Y}}
\def\cZ{\mathcal{Z}}
\def\m{\mathfrak{m}}
\def\ft{\mathfrak{t}}
\def\fm{\mathfrak{m}}
\def\fa{\mathfrak{a}}
\def\fo{\mathfrak{o}}
\def\scrX{\mathscr{X}}
\def\scrM{\mathscr{M}}
\def\scrT{\mathscr{T}}
\def\scrL{\mathscr{L}}
\def\scrE{\mathscr{E}}
\def\twR{\mathscr{R}}
\def\twF{\mathscr{F}}
\def\twZ{\mathscr{Z}}
\def\scrV{\mathscr{V}}
\def\scrA{\mathscr{A}}
\def\scrD{\mathscr{D}}
\def\Zar{\mathrm{Zar}} 
\def\Sing{\mathrm{Sing}} 
\def\Reg{\mathrm{Reg}} 
\def\CVHS{\bC\mhyphen\mathrm{VHS}}
\def\df{\mathrm{def}}
\def\bGL{\mathbf{GL}}
\def\an{\mathrm{an}}
\def\cHom{\mathcal{H}\hspace{-.2em}\operatorname{om}}
\def\cEnd{\mathcal{E}\hspace{-.2em}\operatorname{nd}}
\def\sing{\mathrm{sing}}
\def\nilp{\mathrm{nilp}}
\def\unip{\mathrm{unip}}
\def\Hod{\mathrm{Hod}}
\def\img{\operatorname{img}}
\def\qu{\mathrm{qu}}
\def\ss{\mathrm{ss}}
\def\Sh{Sh}
\def\gr{\operatorname{gr}}
\def\tate{\bT(0)}
\def\id{\operatorname{id}}
\def\zero{\mathbf{0}}
\mathchardef\mhyphen="2D
\def\CMHS{\bC\mhyphen\mathrm{MHS}}
\def\MHS{\mathrm{MHS}}
\def\MS{\mathrm{MS}}
\def\CMS{\bC\mhyphen\mathrm{MS}}
    \def\MTS{\mathrm{MTS}}
\def\MHM{\operatorname{MHM}}
\def\DR{\operatorname{DR}}
\def\HM{\operatorname{HM}}
\def\MF{\operatorname{MF}}
\def\MFW{\operatorname{MFW}}
\def\LS{\operatorname{LS}}
\def\MM{\operatorname{MM}}
\def\tpi{\mathrm{tpi}}
\def\MTM{\operatorname{MTM}^\tpi}
\def\Hol{\operatorname{Hol}}
\def\sp{\mathrm{sp}}
\def\pt{\mathrm{pt}}
\def\Art{\mathrm{Art}}
\def\Aff{\mathrm{Aff}}
\def\Der{\operatorname{Der}}
\def\op{\mathrm{op}}
\def\obs{\mathrm{obs}}
\def\MSArt{\mathrm{MS}\mhyphen\Art}
\def\Res{\operatorname{Res}}
\def\op{\mathrm{op}}
\def\triv{\mathrm{triv}}
\def\Exp{\operatorname{Exp}}
\def\reg{\mathrm{reg}}
\def\good{\mathrm{good}}
\def\barX{\bar X}
\def\lnabla{\nabla}
\def\loc{\mathrm{loc}}
\def\qun{\qu|n}
\def\uSpec{\underline{\Spec}\,}
\def\Sect{\operatorname{Sect}}
\def\ev{\operatorname{ev}}
\def\PrefSect{\operatorname{PrefSect}}
\def\bsV{V}
\def\bsphi{\phi}
\def\bss{s}    
\def\bsVp{V'}
\def\bsphip{\phi'} 
\def\sat{\mathrm{sat}}
\def\form{E}
\def\codim{\operatorname{codim}}
\def\sing{\mathrm{sing}}
\def\cHom{\mathcal{H}\hspace{-.2em}\operatorname{om}}
\def\cEnd{\mathcal{E}\hspace{-.2em}\operatorname{nd}}
\def\sing{\mathrm{sing}}
\def\CVHS{\bC-\mathrm{VHS}}
\DeclareMathOperator{\Aut}{Aut}
\DeclareMathOperator{\End}{End}
\DeclareMathOperator{\Hom}{Hom}
\DeclareMathOperator{\Ext}{Ext}
\DeclareMathOperator{\rk}{rk}
\DeclareMathOperator{\ad}{ad}
\DeclareMathOperator{\GL}{GL}
\DeclareMathOperator{\Sym}{Sym}
\DeclareMathOperator{\sh}{sh}
\DeclareMathOperator{\Alb}{Alb}
\DeclareMathOperator{\ra}{\rightarrow}
\DeclareMathOperator{\Spec}{Spec}
\DeclareMathOperator{\N}{||}
\DeclareMathOperator{\dvol}{dvol}
\newcommand{\newpar}[1]{\subsection{\texorpdfstring{}{}}}
\newcommand{\parref}[1]{\hyperref[#1]{\S\ref*{#1}}}
\newcommand{\chapref}[1]{\hyperref[#1]{Chapter~\ref*{#1}}}
\newcounter{intro}
\newtheorem{intro-conjecture}[intro]{Conjecture}
\newtheorem{intro-corollary}[intro]{Corollary}
\newtheorem{intro-theorem}[intro]{Theorem}
\title[The linear Shafarevich conjecture for quasiprojective varieties]{The linear Shafarevich conjecture for quasiprojective varieties and algebraicity of Shafarevich morphisms}
\date{\today}
 \author[B. Bakker]{Benjamin Bakker}
\address{\noindent B. Bakker:  Dept. of Mathematics, Statistics, and Computer Science, University of Illinois at Chicago, Chicago, USA.}
\email{bakker.uic@gmail.com}
\author[Y. Brunebarbe]{Yohan Brunebarbe}
\address{\noindent Y. Brunebarbe: CNRS, Universit\'e de Bordeaux, Talence, France.}
\email{yohan.brunebarbe@math.u-bordeaux.fr}
\author[J. Tsimerman]{Jacob Tsimerman}
\address{\noindent J. Tsimerman:  Dept. of Mathematics, University of Toronto, Toronto, Canada.}
\email{jacobt@math.toronto.edu}
\begin{document}

\begin{abstract}
    We prove that the universal cover of a normal complex algebraic variety admitting a faithful complex representation of its fundamental group is an analytic Zariski open subset of a holomorphically convex complex space.  This is a non-proper version of the Shafarevich conjecture.  More generally we define a class of subset of the Betti stack for which the covering space trivializing the corresponding local systems has this property.  Secondly, we show that for any complex local system $V$ on a normal complex algebraic variety $X$ there is an algebraic map $f:X\to Y$ contracting precisely the subvarieties on which $V$ is isotrivial.
\end{abstract}

\maketitle
\setcounter{tocdepth}{1}
\tableofcontents



\section{Introduction}

In an attempt to describe which analytic varieties arise as the universal covers of complex algebraic varieties, Shafarevich asked \cite[IX.4.3]{shafarevich} whether the universal cover $\tilde X$ of a smooth projective variety $X$ is always holomorphically convex, meaning that $\tilde X$ admits a proper holomorphic map to a Stein space.  Stein spaces are the analytic analog of affine schemes; more precisely, they are characterized by the existence of a finite holomorphic mapping to some $\bC^n$.  This is a difficult question in general, but the following special case suggests that Hodge theory might be used to answer it.  If $X$ supports a variation of integral pure Hodge structures, then the corresponding period map $X^\an\to \bG(\bZ)\backslash D$ (possibly after partially compactifying $X$) factors as a proper algebraic map $X\to Y$ followed by a closed embedding $Y^\an\hookrightarrow \bG(\bZ)\backslash D$ \cite{bbt}.  Stein spaces can also be characterized by the existence of a strictly plurisubharmonic exhaustion function; in this case, at least when the image is smooth (or in general with some argument), such an exhaustion function can be obtained by restricting an appropriate function on the period domain $D$ to any component of the inverse image of $Y$ in $D$.  It follows that the universal cover $\tilde Y$ is Stein and the base-change $X^\an\times_{Y^\an}\tilde Y$ is holomorphically convex.

 The Shafarevich question was first taken up for surfaces by Napier \cite{napier} and Gurjar--Shastri \cite{gurjar}.  A general approach was investigated by Campana \cite{campana94} and Koll\'ar \cite{kollar93,kollar95}, who proved the existence of a rational Shafarevich map, see below.  A  different strategy using techniques from non-abelian Hodge theory was developed by Katzarkov, Ramachandran, and Eyssidieux \cite{Knilpotent, KRsurfaces, Eyssidieux} using ideas from Corlette and Simpson \cite{Corlette92,simpsonhiggs,Corlette-Simpson}, Gromov--Schoen \cite{Gromov-Schoen}, Mok \cite{Mok_factorization}, and Zuo \cite{Zuo96}.  This line of attack culminated in the proof of Eyssidieux--Katzarkov--Pantev--Ramachandran \cite{EKPR} that a smooth projective variety admitting an almost faithful representation of its fundamental group has holomorphically convex universal cover.  Subsequent developments have been achieved in \cite{mok_stein,campanareps,eyssidieuxkahler,liuconvexity}.

Allowing $X$ to be quasiprojective makes the theory substantially more difficult due to the presence of the boundary.  The extension of the archimedean harmonic theory has been worked out by Simpson, Biquard, Sabbah, and Mochizuki \cite{Simpson_noncompact, Biquard,Sabbah_twistor_D_modules,Mochizuki-AMS2,mochizukimixedtwistor}, and more recently the nonarchimedean harmonic theory has been generalized by Brotbek, Daskalopoulos, Deng, and Mese \cite{Daskalopoulos-Mese, BDDM}.  This has led to a number of recent developments in this setting \cite{aguilar_campana, Green_Griffiths_Katzarkov, brunebarbeshaf, DengShaf}.

Our first main result is the following quasiprojective version of the linear Shafarevich conjecture.

\begin{thm}\label{main baby}
Let $X$ be a connected normal algebraic space whose fundamental group admits an almost faithful finite-dimensional complex linear representation.  Then there is a partial compactification $X\subset \bar X$ by a connected normal Deligne--Mumford stack\footnote{There will be a finite \'etale cover of $\bar X$ which is an algebraic space.  In fact, all of our results generalize immediately to the setting where $X$ is a stack quotient of a normal algebraic space by a faithful action of a finite group, and this is the most natural setting.  Such stacks are considered by \cite{eyssidieuxkahler}.} with almost isomorphic fundamental group such that the universal cover of $\bar X$ is a holomorphically convex complex space.  In particular, the universal cover of $X$ is a dense Zariski open subset of a holomorphically convex complex space.
\end{thm}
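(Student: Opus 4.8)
\noindent\emph{Strategy.} The plan is to construct, on a suitable partial compactification $\bar X\supseteq X$, an algebraic \emph{Shafarevich morphism} $\mathrm{sh}_{\bar X}\colon \bar X\to\mathrm{Sh}(\bar X)$ contracting exactly the subvarieties along which the given representation (and enough of its companions) is isotrivial, and then to show that its lift to universal covers $\widetilde{\bar X}\to\widetilde{\mathrm{Sh}(\bar X)}$ is a proper map onto a Stein space. This immediately yields holomorphic convexity of $\widetilde{\bar X}$, and $\widetilde X$ is the dense Zariski-open preimage of $X$.

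\textbf{Step 1 (reduction to the semisimple case via the Betti stack).} Since $\pi_1(X)$ is linear and finitely generated it embeds in $\GL_N$ of a finitely generated $\bZ$-algebra, so it suffices to control a semisimple representation $\rho$ with reductive Zariski closure; the passage from a general almost faithful representation to its semisimplification proceeds by d\'evissage on the unipotent radical, using variations of mixed Hodge/twistor structure and the abelian (Albanese) case, following the strategy of the projective case. We organize this through a ``good'' constructible subset $C$ of the Betti stack $M_B(X)$ and prove the statement for the covering space $\widetilde X_C\to X$ trivializing all local systems parametrized by $C$; when $C$ contains an almost faithful $\rho$ this cover is the universal cover, so Theorem~\ref{main baby} follows.

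\textbf{Step 2 (period maps at all places and algebraicity).} For the semisimple representations in $C$ we invoke non-abelian Hodge theory: by Corlette--Simpson and Mochizuki each factor, after a $\bC^\times$-deformation, underlies a tame pure complex variation of Hodge structure and hence has an archimedean period map to a quotient of a period domain $D$, while unbounded behavior at the non-archimedean places produces pluriharmonic maps to Bruhat--Tits buildings (Gromov--Schoen, and in the quasiprojective case Brotbek--Daskalopoulos--Deng--Mese \cite{BDDM}); the common fibers of all these maps cut out precisely the isotriviality locus. Using Noetherianity of $M_B(X)$ we select finitely many of these maps, let $\bar X\supseteq X$ be the partial compactification over which they all extend --- a Deligne--Mumford stack, since the added boundary divisors carry finite monodromy, with a finite \'etale cover an algebraic space --- and invoke the o-minimal GAGA theorem of \cite{bbt} to conclude that the assembled reduction $\mathrm{sh}_{\bar X}\colon \bar X\to\mathrm{Sh}(\bar X)$ is algebraic. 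It is essential to add exactly the boundary over which the relevant local systems become trivial: this keeps $\pi_1(\bar X)$ almost isomorphic to $\pi_1(X)$ and makes the contracted loci into \emph{proper} subvarieties of $\bar X$.

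\textbf{Step 3 (Stein base and properness).} On $\widetilde{\mathrm{Sh}(\bar X)}$ we build a strictly plurisubharmonic exhaustion: from each archimedean factor by restricting a suitable plurisubharmonic exhaustion of $D$ to a component of the preimage of the period image, as in the introduction, and from each non-archimedean factor by pulling back the convex distance-to-a-point function on the CAT(0) building. Because $\mathrm{sh}_{\bar X}$ was designed to separate all degeneracy loci, the sum of these functions is a genuine exhaustion, so $\widetilde{\mathrm{Sh}(\bar X)}$ is Stein. Finally, the fibers of $\widetilde{\bar X}\to\widetilde{\mathrm{Sh}(\bar X)}$ are finite covers of the fibers of $\mathrm{sh}_{\bar X}$ --- the image in $\pi_1(\bar X)$ of the fundamental group of a Shafarevich fiber is finite by almost faithfulness of $\rho$ together with isotriviality along the fiber --- hence compact by Step 2; so $\widetilde{\bar X}\to\widetilde{\mathrm{Sh}(\bar X)}$ is proper with Stein target and $\widetilde{\bar X}$ is holomorphically convex.

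\textbf{Main obstacle.} The two hard points are the non-archimedean harmonic theory --- extending the Gromov--Schoen/Daskalopoulos--Mese picture, and the properness of the building period maps, to the \emph{open} case, which demands delicate control at the boundary --- and the algebraicity of $\mathrm{sh}_{\bar X}$ together with the correct choice of $\bar X$: one must simultaneously make the contracted loci proper, keep $\pi_1$ almost isomorphic, and stay within reach of o-minimal GAGA, and it is the compatibility of these constraints, more than any single one, that is the crux.
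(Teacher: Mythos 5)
Your skeleton---a nonextendable partial compactification as a Deligne--Mumford stack, a Shafarevich reduction assembled from archimedean period maps and non-archimedean pluriharmonic maps to buildings, algebraization by o-minimal methods, and a plurisubharmonic exhaustion on a Stein base---is the same as the paper's. But two of your steps, as written, would fail, and they are exactly where the new content of the proof lies. First, Step 1: the d\'evissage ``on the unipotent radical \ldots following the projective case'' does not transfer to the open setting. The correct division is not semisimple vs.\ general but semisimple \emph{with quasiunipotent local monodromy} vs.\ general, and the mixed period maps attached to miniversal families only exist and are only usable at points underlying $\bC$-VHS with quasiunipotent local monodromy; you must guarantee every irreducible component of your set $C$ contains such a point. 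This is not automatic for quasiprojective $X$: it requires the density of quasiunipotent-monodromy points in absolute constructible sets (\Cref{qu dense in abs}, via Gelfond--Schneider/Ax--Schanuel through Riemann--Hilbert bialgebraicity), pro-mixed Hodge/twistor structures on the miniversal deformation rings (\Cref{thm:versal}) so that the formal germs of $C$ are stable under the flow, and the Betti--Dolbeault homeomorphism on the unipotent locus (\Cref{comparison}) to take limits of the $\bR_{>0}$-action---your claim that each semisimple factor ``after a $\bC^\times$-deformation underlies a pure VHS'' is precisely what does not make sense off the compact case without these inputs, since the $\bR_{>0}$-action on $M_B(X)$ is a priori only set-theoretic there. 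Relatedly, ``invoke o-minimal GAGA'' is not enough for algebraicity of the reduction: the monodromy acts non-discretely on the target of the assembled map, so there is no quotient to which definable GAGA applies directly; one first needs Stein factorization in the definable analytic category (\Cref{thm:Stein}).

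Second, Step 3 concludes Steinness from ``the sum of these functions is a genuine exhaustion.'' Properness of the archimedean piece of the exhaustion itself requires quasiunipotent local monodromy (this is what controls the asymptotics, cf.\ \Cref{harmonic_non_converge}), and the exhaustion is strictly plurisubharmonic only outside a proper subvariety; proving strictness there is equivalent to the algebraic integrability of the Katzarkov--Zuo foliation (\Cref{thm KZ integrable}), whose inductive proof in the open case needs the foliation to restrict to a Katzarkov--Zuo foliation on the boundary even when the local system does not extend---achieved by equipping graded nearby cycles with pluriharmonic norms (\Cref{characteristic_polynomial_nearby_cycle}). One then gets Steinness not directly but by induction on dimension together with Mok's criterion (\Cref{criterion of Steiness}). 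You name ``boundary control'' as an obstacle, but the proposal supplies none of these ideas, so the crux of the argument is missing.
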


Here by an almost faithful representation we mean a representation $\rho:\pi_1(X^\an,x)\to\bGL_r(\bC)$ with finite kernel, and by the fundamental groups of $X\subset \bar X$ being almost isomorphic we mean $\pi_1(X^\an,x)\to\pi_1(X^{\prime\an},x)$ has finite kernel and cokernel, although the cokernel is automatically trivial since $\bar X$ is normal.  Passing to a partial compactification in \Cref{main baby} may be necessary for trivial reasons:  $\bA^2\setminus\{0\}$ is simply connected and not holomorphically convex. 

As a simple example of \Cref{main baby} we have the following consequence of \Cref{corDMpullback} and \Cref{main cor 2} below, but which we state now for concreteness:  if the fundamental group of $X$ admits a complex linear representation with infinite image, then the universal cover of $X$ admits a nonconstant global holomorphic function.

In fact we prove a much more precise statement.  For a connected normal algebraic space $X$, let $\cM_B(X)$ be the stack of local systems on $X^\an$, so that $\cM_B(X)(\bC)$ is the groupoid of complex local systems.  For any set $\Sigma\subset\cM_B(X)(\bC)$ of (isomorphism classes of) complex local systems, we denote by $\tilde X^\Sigma\to X^\an$ the minimal cover on which all the local systems in $\Sigma$ are trivialized.

\begin{thm}\label{main result}
Let $X$ be a connected normal complex algebraic space and $\Sigma \subset \cM_B(X)(\bC)$ a nonextendable absolute Hodge subset.  Then the complex analytic space $\tilde{X}^{\Sigma}$ is holomorphically convex.
\end{thm}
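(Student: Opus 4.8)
The plan is to realize $\tilde X^\Sigma$ as a \emph{proper} holomorphic family over a Stein base, the base being a covering of the Shafarevich variety attached to $\Sigma$ into which period maps and harmonic maps to buildings embed. I would first analyze the structure of $\Sigma$. The covering $\tilde X^\Sigma\to X^\an$ depends only on the image $\Gamma^\Sigma$ of $\pi_1(X^\an,x)$ in $\prod_{V\in\Sigma}\GL(V)$; handling the nilpotent part of the Zariski closure of $\Gamma^\Sigma$ by a classical Albanese-type fibration argument, as in Eyssidieux--Katzarkov--Pantev--Ramachandran, I reduce to the semisimple situation. There the members of $\Sigma$ underlie, after finite étale covers, polarizable complex variations of Hodge structure --- this being what the absolute Hodge hypothesis provides (Galois-stability together with compatibility with the Hodge filtration, as set up in the earlier sections), via the non-abelian Hodge correspondence of Corlette--Simpson and its quasiprojective refinement due to Sabbah and Mochizuki (tame pure twistor $D$-modules on a good compactification). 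The archimedean monodromy is then recorded by a period map $X^\an\to\bG(\bZ)\backslash D$ to a quotient of a period domain, and the monodromy at each relevant finite place $v$ by the $\pi_1$-equivariant pluriharmonic map to the $\mathrm{CAT}(0)$ Bruhat--Tits building $\Delta_v$ of $\GL_r$, supplied in the quasiprojective setting by Brotbek--Daskalopoulos--Deng--Mese after Gromov--Schoen.

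Next I would bring in the Shafarevich morphism of $\Sigma$. By the algebraicity of Shafarevich morphisms --- the paper's second main result, itself obtained by combining the o-minimal GAGA and definability of period maps of Bakker--Brunebarbe--Tsimerman with the building-theoretic input of Brotbek--Daskalopoulos--Deng--Mese --- there is an algebraic morphism $s\colon X\to S$ with connected fibers contracting precisely the positive-dimensional subvarieties of $X$ on which $\Sigma$ becomes isotrivial; moreover, up to a finite modification, $S$ is quasi-finite over the product of the image of the period map with the Stein factorizations of the building maps, so that points of $S$ are separated by the combined period-and-building data. The decisive use of \emph{nonextendability} of $\Sigma$ is here: were $s$ not proper, a non-complete fiber $X_s$, being $\Sigma$-isotrivial, would accumulate at a boundary point of a partial compactification $\bar X$ across which the period and building data --- hence all of $\Sigma$ --- extend, contradicting nonextendability. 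So $s\colon X\to S$ is proper with complete fibers. Since the $\Sigma$-isotrivial fibers $X_s$ have finite image of $\pi_1$ in $\prod_V\GL(V)$, the monodromy of $\Sigma$ factors, up to a finite étale cover of $S$ and finite monodromy, through $\pi_1(S)$; after replacing $S$ by such a cover, $\tilde X^\Sigma$ is pulled back along $s$ from a covering $\tilde S\to S^\an$, and $\tilde X^\Sigma\to\tilde S$ is proper, its fibers being finite étale covers of the complete varieties $X_s$, hence compact.

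It remains to show that $\tilde S$ is holomorphically convex, for then $\tilde X^\Sigma\to\tilde S\to S^{\mathrm{Remm}}$ --- the composite with the Remmert reduction --- is a proper holomorphic map to a Stein space and $\tilde X^\Sigma$ is holomorphically convex, proving \Cref{main result}. The period and building maps lift to a $\pi_1(S)$-equivariant map $\tilde S\to D\times\prod_v\Delta_v$, and I would invoke the Hodge-theoretic mechanism recalled in the introduction: restricting an appropriate strictly plurisubharmonic function on $D$ (strictly plurisubharmonic along horizontal directions) and adding the convex functions $\sum_v d(\,\cdot\,,\xi_v)^2$ on the buildings gives a function on $\tilde S$ that --- because $S$ carries no positive-dimensional $\Sigma$-isotrivial subvariety --- is strictly plurisubharmonic off a proper subvariety, and which is a genuine exhaustion of $\tilde S$, since by nonextendability boundedness along a divergent sequence would extend the period and building data across a boundary point of $S$ (the closedness of the period image, coming from o-minimal GAGA, rules out the remaining escape to infinity). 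Hence the Remmert reduction of $\tilde S$ is realized as a closed analytic subspace of $D\times\prod_v\Delta_v$ carrying a strictly plurisubharmonic exhaustion, so it is Stein.

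The main obstacle is the Shafarevich-morphism step: producing that morphism as an \emph{algebraic} and \emph{proper} map in the non-proper setting and reading off from it the fibration structure of $\tilde X^\Sigma$. Algebraicity requires Mochizuki's asymptotic analysis of tame harmonic bundles at the boundary and the o-minimal GAGA of Bakker--Brunebarbe--Tsimerman for the archimedean period map together with its non-archimedean analogue; properness is precisely the translation of nonextendability and must be controlled uniformly across the archimedean and the finitely many non-archimedean degenerations simultaneously. A subsidiary difficulty, running through the structure and descent steps, is the bookkeeping of finite étale covers and residual finite monodromy needed to make the descent $\tilde X^\Sigma\to\tilde S$ literally proper with compact fibers --- and to know that the data descended to $S$ is again a nonextendable absolute Hodge subset --- so that no auxiliary compactification of $X$ enters and one concludes holomorphic convexity of $\tilde X^\Sigma$ itself.
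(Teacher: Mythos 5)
The decisive gap is in your reduction step. You assert that the absolute Hodge hypothesis lets you dispose of the non-semisimple part of $\Sigma$ by ``a classical Albanese-type fibration argument'' and that, after finite \'etale covers, the members of $\Sigma$ underlie polarizable $\bC$-VHS. Neither claim is correct, and together they skip exactly the content that the paper's proof has to supply. An absolute Hodge subset may consist largely of non-semisimple local systems with non-quasiunipotent local monodromy; what the hypothesis yields is only that each irreducible component \emph{contains} points underlying $\bC$-VHS with quasiunipotent local monodromy (via the density result \Cref{intro qu density} together with the $\bR_{>0}$-stability coming from the Deligne--Hitchin/twistor analysis, cf.\ \Cref{every component contains Hodge}), not that the individual members are VHS. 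Moreover the covering $\tilde X^\Sigma$ genuinely sees the unipotent part of the monodromy, so passing to semisimplifications changes the cover; and when the algebraic monodromy group $\bG$ is not reductive there are no equivariant pluriharmonic maps to symmetric spaces or buildings at all, so your archimedean/non-archimedean exhaustion has nothing to act on for that part. In EKPR, and in this paper, the non-reductive part is handled not by an Albanese fibration but by the \emph{mixed} period maps of miniversal families: one needs the pro-mixed Hodge (and twistor) structures on $\hat\cO_{\cM_B(X),V_0}$ and on the miniversal local system (\Cref{thmES}), producing admissible pro-variations $U_k$ whose relative period domains $D_k$ are affine over the pure domain $D_0$; the proof of \Cref{main result} then combines the Shafarevich morphism of the semisimple quasiunipotent part with the $U_k$-period maps $\tilde X^\Sigma\to \tilde Y^{T}\times_{D_0}D_k$, using nonextendability of $U_k$ to get properness of the relevant maps. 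Your proposal never produces these mixed variations, so the Steinness of the cover attached to the full (non-semisimple) $\Sigma$ is not established.

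Two further points you treat as routine are substantial. First, the non-archimedean building maps exist only for the $\bar\bQ$-points of $\Sigma$ and, in the quasiprojective setting, their finite-energy/properness behaviour (and that of the archimedean map) requires quasiunipotent local monodromy; the paper has to pass through \Cref{intro qu density} and finite covers to reduce to that situation, which your argument omits. Second, the strict plurisubharmonicity of your exhaustion ``off a proper subvariety'' is not a formal consequence of largeness: it rests on the algebraic integrability of the Katzarkov--Zuo foliation in the quasiprojective setting (\Cref{intro KZ}), whose proof requires controlling the foliation at the boundary via pluriharmonic norms on graded nearby cycles; and the conclusion is reached not by embedding the Remmert reduction into $D\times\prod_v\Delta_v$ (which is not a complex space, so ``closed analytic subspace'' does not make sense there) but by an induction on dimension using Mok's Steinness criterion on the locus where strictness fails.
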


Note that this improves on the result of Eyssidieux--Katzarkov--Pantev--Ramachandran in the case of compact $X$ as it allows us to take $\Sigma\subsetneq \cM_B(X)(\bC)$.  Absolute Hodge subsets have a good functorial behavior (see \Cref{basic prop abs hodge substack}) and in particular include all subsets that can be made with functorial operations (intersections, unions, $\bQ$-irreducible components, inverse-images under pull-back along arbitrary $f \colon X\to Y$, images under pull-back along dominant $f \colon X\to Y$ or the inclusion of a Lefschetz curve) starting from all of $\cM_B(X,r)(\bC)$ (that is, local systems of rank $r$) or a trivial local system.  The nonextendability hypothesis says that for any partial compactification $X\subset \bar X$ by a connected normal Deligne--Mumford stack, some element of $\Sigma$ does not extend.  As noted above, this condition is necessary, and can always be achieved by replacing $X$ with a partial compactification to which $\Sigma$ extends and is nonextendable.  The ``absolute Hodge'' condition should be thought of as saying $\Sigma$ is a non-abelian Hodge substructure of $\cM_B(X)$, and that it is algebraic in the Betti stack of every Galois conjugate of $X$.  In particular, $\cM_B(X, r)(\bC)$ for every $r\geq 1$ itself is an absolute Hodge subset.  This condition is important for the proof, but not clearly necessary.  Note that some condition is needed however, since there exist rank one local systems on abelian varieties whose trivializing covers are not holomorphically convex.

\Cref{main result} implies (by taking Stein factorization) that $\tilde X^\Sigma$ admits a proper surjective holomorphic map with connected fibers to a normal Stein space $\tilde Y$ which is unique (called the Cartan-Remmert reduction).  The fundamental group acts properly discontinuously on $\tilde Y$, and so the map descends to a proper surjective analytic map $X^\an\to\cY $ (with connected fibers) which contracts precisely those subvarieties on which the restriction of the local systems in $\Sigma$ have uniformly finite monodromy. 
 This observation led Campana and Koll\'ar to introduce the notion of a Shafarevich morphism.  Precisely, given any subset $\Sigma\subset \cM_B(X)(\bC)$, an algebraic $\Sigma$-Shafarevich morphism is a morphism $s \colon X\to Y$ to a generically inertia-free connected normal Deligne--Mumford stack $Y$ such that:
\begin{enumerate}
\item
$s \colon X\to Y$ is dominant and $K(Y)$ is algebraically closed in $K(X)$.
\item $\Sigma$ is the pull back of a large nonextendable $\Sigma_Y\subset\cM_B(Y)(\bC)$ and for every point $y\in Y(\bC)$ the inertia of $y$ acts faithfully on $\bigoplus_{V\in \Sigma_Y} i^*_yV$. 
\item If a morphism $g \colon Z\to X$ from a connected $Z$ has the property that $g^*V$ is trivial for every $V\in\Sigma$, then the composition $Z\to X\to Y$ factors through $Z\to\Spec\bC$.
\end{enumerate}
Recall that a subset $\Sigma\subset \cM_B(X)(\bC)$ is large if for any non-constant map $g \colon Z\to X$ from a connected normal variety, some local system in $g^*\Sigma$ has infinite monodromy.

Provided a $\Sigma$-Shafarevich morphism exists, $Y$ will always be a global quotient of an algebraic space by a finite group action, and after passing to a finite \'etale cover $p \colon X'\to X$ there will be a $p^*\Sigma$-Shafarevich morphism whose target is an algebraic space.  The above conditions ensure the Shafarevich morphism is unique and functorial provided it exists (see \Cref{shaf functo}).  If $\Sigma$ consists of a single local system underlying an integral variation of Hodge structures, then the Stein factorization of the period map provides the Shafarevich morphism. 

For projective $X$, a rational Shafarevich map was constructed by Campana \cite{campana94} and Koll\'ar \cite{kollar93,kollar95}; their construction more generally produces a rational map contracting subvarieties $Z$ through a very general point with finite image (up to normalization) in $\pi_1(X,x)/\Gamma$ for \emph{any} normal subgroup $\Gamma$ (not just those cut out by linear representations).  In the case that $\Sigma$ consists of semisimple local systems of bounded rank, a $\Sigma$-Shafarevich morphism was constructed for projective $X$ by Eyssidieux \cite{Eyssidieux} and analytically for quasiprojective $X$ by the second author \cite{brunebarbeshaf} and Deng--Yamanoi \cite{DengShaf}, who also observed that the map is algebraic after a modification.  

Our second main result is the existence of algebraic Shafarevich morphisms in general:

\begin{thm}\label{mainShaf}
For $X$ a connected normal algebraic space and $\Sigma \subset \cM_B(X)(\bC)$ a set of local systems of bounded rank, there is a unique algebraic $\Sigma$-Shafarevich morphism $sh_\Sigma(X):X\to \Sh_\Sigma(X)$, which is proper if and only if $\Sigma$ is nonextendable.  Moreover, if $\Sigma$ consists of semisimple local systems then the coarse space of $\Sh_\Sigma(X)$ is quasiprojective.
\end{thm}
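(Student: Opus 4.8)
\emph{Strategy.} First I would reduce to the case that $X$ is smooth: for a resolution $\mu\colon\tilde X\to X$ the fibres are connected and $\mu^*\Sigma$ is trivialized on them, so the $\mu^*\Sigma$-Shafarevich morphism of $\tilde X$ (once built) factors through $\mu$ and is readily checked to be the $\Sigma$-Shafarevich morphism of $X$. Fix $r$ bounding the ranks of the $V\in\Sigma$. The plan is then to construct $\Sh_\Sigma(X)$ in two stages — contract first along the semisimplifications $\Sigma^{\ss}=\{V^{\ss}:V\in\Sigma\}$, which is again a bounded-rank family, and then perform a relative unipotent contraction over the result — and to prove algebraicity at each stage by comparing the analytic construction with period maps. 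Throughout I would use that, by \Cref{shaf functo}, the Shafarevich morphism depends on $\Sigma$ only through the locus of subvarieties on which $\Sigma$ has uniformly finite monodromy, so $\Sigma$ may be enlarged freely as long as that locus is unchanged.

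\emph{The semisimple stage.} Suppose $\Sigma=\Sigma^{\ss}$. I would replace $X$ by a partial compactification to which $\Sigma$ extends and is nonextendable (harmless, affecting only whether the final morphism is proper), and then replace $\Sigma$ by the smallest absolute Hodge subset $\Sigma'\subseteq\cM_B(X,r)(\bC)$ containing it; by \Cref{basic prop abs hodge substack} this $\Sigma'$ is again absolute Hodge, and I expect it to have the same contraction locus as $\Sigma$ because the operations producing it — passing to $\bQ$-constituents, to Galois conjugates, and along the $\bC^*$-orbit of the associated Higgs bundle — do not change whether a pullback has uniformly bounded finite monodromy (a pullback with finite monodromy has vanishing pulled-back Higgs field, so its whole $\bC^*$-orbit is constant, and Galois conjugation preserves finiteness of monodromy). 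Now \Cref{main result} applies: $\tilde X^{\Sigma}$ is holomorphically convex, so taking its Cartan--Remmert reduction, dividing by the properly discontinuous action of $\pi_1(X^\an,x)/\Gamma$ (where $\Gamma=\bigcap_{V\in\Sigma}\ker\rho_V$) and descending as in the discussion after \Cref{main result} yields a proper surjective analytic map $sh\colon X^\an\to\cY$ with connected fibres contracting exactly the subvarieties on which $\Sigma$ is isotrivial; I would give $\cY$ the Deligne--Mumford structure forced by condition (2) using root stacks, as in the proof of \Cref{main baby}. To algebraize $sh$: by Mochizuki's tame nonabelian Hodge correspondence \cite{Mochizuki-AMS2,mochizukimixedtwistor} each $\bQ$-constituent of each $V\in\Sigma$ degenerates through its $\bC^*$-orbit to a polarized complex variation of Hodge structure, whose period map on a suitable partial compactification of $X$ is definable and hence, by \cite{bbt}, algebraic onto a quasiprojective base carrying an ample Griffiths line bundle; assembling these maps over all constituents and Galois conjugates produces an algebraic map whose contraction locus coincides (again by the vanishing-Higgs-field criterion) with that of $sh$, so $sh$ is the Stein factorization of an algebraic map. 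This gives $\Sh_\Sigma(X)$ with quasiprojective coarse space, which is the last assertion of \Cref{mainShaf}.

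\emph{The unipotent stage and conclusion.} For general bounded-rank $\Sigma$, I would apply the previous step to $\Sigma^{\ss}$ to obtain an algebraic morphism $sh_1\colon X\to X_1:=\Sh_{\Sigma^{\ss}}(X)$. Locally on $X_1$, after a finite étale cover, the monodromy-weight-graded pieces of each $V\in\Sigma$ are trivial, so each $V$ is unipotent relative to $X_1$; the remaining contraction $X\to X_2$ over $X_1$ should be the relative unipotent Shafarevich morphism, built by the method of Katzarkov and Green--Griffiths--Katzarkov (cf.\ \cite{Knilpotent,Green_Griffiths_Katzarkov}) from the mixed Hodge structure on the relative pro-unipotent completion of $\pi_1$, compatibly with the variations governing $X_1$; this is algebraic, and proper over $X_1$ when $\Sigma$ is nonextendable. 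Setting $\Sh_\Sigma(X):=X_2$ with $sh_\Sigma(X)$ the composite, uniqueness and functoriality are \Cref{shaf functo}; and $sh_\Sigma(X)$ is proper exactly when $\Sigma$ is nonextendable, since if $\Sigma$ extends to some $X\subset\bar X$ then $\Sh_{\bar\Sigma}(\bar X)=\Sh_\Sigma(X)$ by functoriality while $X\to\Sh_\Sigma(X)$ is a non-proper restriction of $\bar X\to\Sh_{\bar\Sigma}(\bar X)$, largeness of $\bar\Sigma$ preventing the boundary from being contracted, and conversely \Cref{main result} (together with its unipotent analog) supplies properness when $\Sigma$ is nonextendable.

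\emph{Main obstacle.} The hard part will be the algebraization in the non-proper setting: in the projective case holomorphic convexity plus a positive line bundle yields projectivity of the target via Grauert's theorem, but here one must instead produce a projective compactification of $\Sh_\Sigma(X)$ over which the relevant period and Albanese-type maps extend with controlled asymptotics along $\partial X$, and then invoke definable Chow/GAGA \cite{bbt}. Arranging the Hodge-theoretic compatibility that lets the unipotent stage glue onto the semisimple one is the other delicate point.
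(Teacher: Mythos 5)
There are two genuine gaps, one in each stage. In your semisimple stage, the algebraization argument does not work: you assemble an algebraic map out of the period maps of the $\bC^*$-limit $\bC$-VHSs and claim its contraction locus agrees with that of the analytic Shafarevich morphism ``by the vanishing-Higgs-field criterion.'' But vanishing of the pulled-back Higgs field only forces the restricted monodromy to be unitary (bounded at the archimedean place), not finite: e.g.\ a rank-one local system with infinite unitary monodromy is fixed by the $\bC^*$-action and has constant period map, yet must \emph{not} be contracted by $sh_\Sigma$. So your candidate algebraic map contracts strictly more than $sh$, and $sh$ is not its Stein factorization. This is exactly why the paper's map in the semisimple case is $\tilde X^\rho\to(\check D\times S)^\an$, where $S$ is the (algebraic) Katzarkov--Zuo reduction built from the non-archimedean pluriharmonic maps at the $\bar\bQ$-points of the absolute closure (whose algebraic integrability is \Cref{thm KZ integrable}); algebraicity then comes not from comparing contraction loci but from the new definable Stein factorization theorem \Cref{thm:Stein}/\Cref{thm:alg}, and quasiprojectivity from the Griffiths bundle twisted by an ample bundle on $S$ via \Cref{prop:qp}. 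Note also that invoking \Cref{main result} for general semisimple (non-quasiunipotent) $\Sigma$ at this point is circular in the paper's logic: its proof outside the quasiunipotent semisimple case uses \Cref{mainShaf}; the correct input here is the analytic Shafarevich morphism of \cite{brunebarbeshaf}.

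Your second stage is essentially asserted rather than proved, and the GGK/relative-completion route you gesture at does not obviously reach the statement: the elements of $\Sigma$ need not have quasiunipotent local monodromy, so an admissible MHS on a relative pro-unipotent completion neither exists off-the-shelf in the open setting nor ``sees'' those local systems, and you give no argument for properness over $X_1$ or for algebraicity. The paper's mechanism is different and is where most of its new machinery enters: each $\bQ$-component of the (saturated, absolute Hodge) $\Sigma$ contains a $\bC$-VHS point with quasiunipotent monodromy (this needs \Cref{qu dense in abs}, the Deligne--Hitchin $\bR_{>0}$-stability \Cref{abs Hodge contain R*}, and \Cref{comparison}); at such a point the miniversal deformation carries a pro-AVMHS (\Cref{thm:versal}), and the finite-order quotients $U_k$ are nonextendable mixed variations whose Zariski closure recovers the component. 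One then proves the connected components of the fibers of $(s\circ p_k)\times\phi_k$ are compact (\Cref{cpt fibers}: integral structures on subquotients, properness of the monodromy action on the mixed period domain, and a valuative criterion using nonextendability), takes the Stein factorization, algebraizes it with \Cref{thm:alg}, and lets $k\to\infty$ using noetherianity. None of these steps — in particular the existence of the mixed variations at a VHS point of the component and the compactness of fiber components — is supplied by your sketch, and they are precisely what makes conditions (2) and (3) of \Cref{defn:shaf} verifiable for the limit morphism.
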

As mentioned above, there is always a partial compactification $ \bar X$ of $X$ to which $\Sigma$ extends and is nonextendable.

\Cref{mainShaf} says that any collection of local systems is pulled back from a large collection on an algebraic space (after a finite \'etale cover):
\begin{cor}\label{corDMpullback}
    For $X$ a connected normal complex algebraic space and $\Sigma \subset \cM_B(X)(\bC)$ any subset of bounded rank, there is a morphism $s:X\to Y$ to a generically inertia-free Deligne--Mumford stack such that $\Sigma$ is the pull-back of a large collection of local systems on $Y$.
\end{cor}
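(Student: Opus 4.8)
The plan is to obtain the statement directly from \Cref{mainShaf}. Given $X$ connected normal and $\Sigma\subset\cM_B(X)(\bC)$ of bounded rank, \Cref{mainShaf} produces the algebraic $\Sigma$-Shafarevich morphism $sh_\Sigma(X)\colon X\to\Sh_\Sigma(X)$. I would simply set $s:=sh_\Sigma(X)$ and $Y:=\Sh_\Sigma(X)$. By the definition of a $\Sigma$-Shafarevich morphism recalled above, $Y$ is a generically inertia-free connected normal Deligne--Mumford stack, and condition (2) of that definition furnishes a large (indeed nonextendable) subset $\Sigma_Y\subset\cM_B(Y)(\bC)$ with $s^*\Sigma_Y=\Sigma$ up to isomorphism. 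This is exactly the conclusion of the corollary.

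Two points explain why nothing further is needed. First, the corollary places no properness or nonextendability requirement on $s$, so there is no reason to first replace $X$ by a partial compactification: \Cref{mainShaf} yields a Shafarevich morphism for \emph{every} $\Sigma$ of bounded rank, extendability of $\Sigma$ only affecting whether $s$ is proper. Second, the hypotheses of the corollary are verbatim those of \Cref{mainShaf}. Hence the deduction is immediate and there is no real obstacle; all the difficulty is concentrated in \Cref{mainShaf} itself.

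Finally, if one wants the target to be an algebraic space rather than a Deligne--Mumford stack, one passes to a suitable finite \'etale cover $p\colon X'\to X$ and applies \Cref{mainShaf} to $p^*\Sigma$, as in the discussion preceding that theorem; this refinement is not required for the statement as written.
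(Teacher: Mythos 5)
Your deduction is exactly the paper's intended one: the corollary is stated as an immediate consequence of \Cref{mainShaf}, since condition (2) in the definition of a $\Sigma$-Shafarevich morphism already provides a large (even nonextendable) $\Sigma_Y$ on the generically inertia-free Deligne--Mumford target with $s^*\Sigma_Y=\Sigma$, and no properness or compactification hypothesis is needed. Your remarks about extendability and the optional passage to a finite \'etale cover are also consistent with the paper.
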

In the large case, the statement of \Cref{main result} is particularly simple---note that any covering space of a Stein complex space is Stein \cite{Stein}:

\begin{cor}\label{main cor}
    For $X$ a connected normal complex algebraic space and $\Sigma\subset \cM_B(X)(\bC)$ a large nonextendable absolute Hodge subset, $\tilde X^{\Sigma}$ (and therefore also any covering space thereof) is Stein.  In particular, if $X$ admits a large nonextendable representation of its fundamental group, then the universal cover of $X$ is Stein.
\end{cor}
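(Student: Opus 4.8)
The plan is to deduce this corollary from \Cref{main result} by analyzing the Cartan--Remmert reduction of $\tilde X^\Sigma$. By \Cref{main result} the space $\tilde X^\Sigma$ is holomorphically convex, so it admits a proper surjective holomorphic map with connected fibers $r \colon \tilde X^\Sigma \to \tilde Y$ to a normal Stein space, the Cartan--Remmert reduction. It suffices to show that $r$ is an isomorphism, equivalently that every connected compact analytic subset of $\tilde X^\Sigma$ is a point, since $r$ contracts precisely the positive-dimensional compact analytic subsets. The final assertion about universal covers is then immediate: if $\pi_1(X^\an,x)\to\bGL_r(\bC)$ is a large nonextendable almost faithful representation, then $\Sigma$ can be taken to be (the $\bQ$-span / absolute Hodge saturation of) this single local system --- which is an absolute Hodge subset by the discussion after \Cref{main result} --- and it is large by hypothesis, and $\tilde X^\Sigma$ is then a finite cover of the universal cover $\tilde X$, so $\tilde X$ is Stein by the quoted fact \cite{Stein} that covering spaces of Stein spaces are Stein.

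First I would reduce the statement about compact subsets to a statement about subvarieties of $X^\an$. Let $W\subset\tilde X^\Sigma$ be a connected compact analytic subset; its image $Z\subset X^\an$ under the covering map $p\colon \tilde X^\Sigma\to X^\an$ is a compact (hence projective, by Chow/GAGA after passing to an appropriate algebraic structure --- or simply a compact analytic subvariety) subvariety of $X^\an$, and the restriction $p\colon W\to Z$ is a finite covering onto $Z$ corresponding to the subgroup of $\pi_1(Z)$ whose image in $\pi_1(X^\an)$ lies in $\bigcap_{V\in\Sigma}\ker(\pi_1(X^\an)\to \mathrm{monodromy}(V))$. Unwinding the definition of $\tilde X^\Sigma$, this means precisely that $g^*V$ has finite monodromy for every $V\in\Sigma$, where $g\colon Z^{\mathrm{norm}}\to X$ is the composite of the normalization of $Z$ with the inclusion. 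Now apply largeness of $\Sigma$: if $Z$ were positive-dimensional, then (after possibly passing to a finite étale cover to kill the residual finite monodromy, which does not affect the conclusion) some local system in $g^*\Sigma$ would have infinite monodromy, a contradiction. Hence $Z$ is a point, hence $W$ is a point.

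The main obstacle I anticipate is the bookkeeping around finiteness versus triviality of monodromy and the passage to normalizations and finite étale covers: largeness as stated is phrased in terms of "infinite monodromy" for maps from connected normal varieties, while the trivializing cover $\tilde X^\Sigma$ is defined by exact triviality of the pulled-back local systems, so one must be careful that the monodromy of $g^*V$ along the compact subvariety is genuinely finite (not merely that it becomes trivial after pulling back to $W$). This is handled by noting $p\colon W\to Z$ is a \emph{finite} cover --- compactness of $W$ forces the fiber to be finite --- so the monodromy of $g^*V$ factors through a finite group, and after replacing $Z$ by the finite étale cover trivializing all of $g^*\Sigma$ (still proper, still positive-dimensional if $Z$ was) we contradict largeness directly. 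A secondary technical point is ensuring compact analytic subsets of $X^\an$ are algebraic subvarieties so that "subvariety" in the definition of largeness applies; this follows because $X$ is of finite type and proper analytic subsets of a separated finite-type $\bC$-scheme are algebraic by Chow's theorem applied after compactification, or one can simply work with the normalization of the reduced analytic subspace throughout, which is all that largeness requires.
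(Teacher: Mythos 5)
Your proposal is correct and is essentially the deduction the paper intends for this corollary: holomorphic convexity from \Cref{main result}, and then largeness forces every connected compact analytic subset of $\tilde X^{\Sigma}$ to be a point (its image in $X$ is a compact algebraic subvariety, on whose normalization every member of $\Sigma$ acquires finite monodromy via the finite surjection from the compact subset), so the Cartan--Remmert reduction is a biholomorphism and $\tilde X^{\Sigma}$ is Stein. One small correction: for the final assertion the universal cover is a covering space of $\tilde X^{\Sigma}$ (not the other way around), and no almost-faithfulness is needed, since any covering space of a Stein space is Stein.
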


\begin{cor}\label{main cor 2}
    Let $X$ be a connected normal complex algebraic space admitting 
 a large almost faithful representation of its fundamental group.  Then the universal cover of $X$ is an analytic Zariski open subset of a Stein complex space.
\end{cor}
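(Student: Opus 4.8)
The plan is to deduce this from \Cref{main cor} by the passage to a partial compactification that also yields the non-proper clause of \Cref{main baby}. Write $\rho\colon\pi_1(X^\an,x)\to\GL_r(\bC)$ for the given large almost faithful representation, so $\ker\rho$ is finite, and set $\Sigma:=\cM_B(X,r)(\bC)$. As noted in the introduction this is an absolute Hodge subset, and it is large because $V_\rho\in\Sigma$ is: for any non-constant $g\colon Z\to X$ from a connected normal variety, $g^*V_\rho\in g^*\Sigma$ has infinite monodromy. Using the discussion following \Cref{main result} I would replace $X$ by a partial compactification $X\hookrightarrow\bar X$, with $\bar X$ a connected normal Deligne--Mumford stack, to which $\Sigma$ extends as a nonextendable absolute Hodge subset $\bar\Sigma\subseteq\cM_B(\bar X)(\bC)$. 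Then $\bar\Sigma$ is again large: any non-constant test curve in $\bar X$ can be deformed into the dense open $X$, and local systems are invariant under such deformations, so the largeness of $\Sigma$ on $X$ transfers to $\bar\Sigma$ on $\bar X$. (If one prefers an algebraic space to a stack, pass to a finite \'etale cover of $\bar X$ by one, cf.\ the footnote to \Cref{main baby}; this affects none of the complex spaces below, and \Cref{main cor} applies to such stacks in any case.)

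Next I would record two elementary facts. First, the natural map $q\colon\pi_1(X^\an,x)\to\pi_1(\bar X^\an,x)$ is surjective because $\bar X$ is normal, and its kernel $K$ is \emph{finite}: since $V_\rho\in\Sigma$ extends over $\bar X$, the representation $\rho$ factors through $q$, so $K\subseteq\ker\rho$. This is the single point at which almost faithfulness is used, and it is what keeps the passage to $\bar X$ harmless. Second, let $\widetilde{\bar X}\to\bar X^\an$ be the universal cover; it is a covering space of the minimal cover $\widetilde{\bar X}^{\bar\Sigma}$ trivializing $\bar\Sigma$, which is Stein by \Cref{main cor}, so $\widetilde{\bar X}$ is itself Stein by the same corollary. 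Pulling the covering $\widetilde{\bar X}\to\bar X^\an$ back along the dense analytic Zariski open immersion $X^\an\hookrightarrow\bar X^\an$ gives a dense analytic Zariski open subset $\widetilde{\bar X}|_X:=\widetilde{\bar X}\times_{\bar X^\an}X^\an\subseteq\widetilde{\bar X}$ that is a connected covering space of $X^\an$ with deck group $\pi_1(X^\an,x)/K$; since $K$ is finite, the universal cover $\tilde X\to X^\an$ covers $\widetilde{\bar X}|_X$ by a \emph{finite} \'etale map $\tilde X\to\widetilde{\bar X}|_X$ with deck group $K$.

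Finally I would spread $\tilde X$ out over the added boundary: let $Z$ be the normalization of $\widetilde{\bar X}$ in the finite \'etale covering $\tilde X$ of the dense open $\widetilde{\bar X}|_X\subseteq\widetilde{\bar X}$, that is, the normal complex space carrying a finite holomorphic map $Z\to\widetilde{\bar X}$ whose restriction over $\widetilde{\bar X}|_X$ is $\tilde X\to\widetilde{\bar X}|_X$ (the same normalization used for the non-proper clause of \Cref{main baby}). A complex space finite over a Stein space is Stein, since it still admits a finite holomorphic map to some $\bC^n$ as recalled in the introduction; hence $Z$ is Stein, and $\tilde X$, being the preimage in $Z$ of the dense analytic Zariski open subset $\widetilde{\bar X}|_X$, is a dense analytic Zariski open subset of $Z$. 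This is exactly the assertion.

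Given \Cref{main cor} the argument is essentially formal --- it is word-for-word the proof of \Cref{main baby} with ``holomorphically convex'' replaced by ``Stein'' via the largeness hypothesis --- so I do not expect a genuine obstacle. The two points requiring care are the finiteness of $K$ (immediate once one knows $V_\rho$ extends over $\bar X$) and the persistence of largeness under the partial compactification; the remaining ingredients are standard permanence properties of Stein spaces, under finite holomorphic maps and under covering maps \cite{Stein}. As a consistency check, when $\rho$ is already nonextendable one takes $\bar X=X$, $K=1$ and $Z=\tilde X$, so the conclusion sharpens to $\tilde X$ being Stein and recovers \Cref{main cor}; the appearance of a boundary --- already for $\bA^2\setminus\{0\}\subseteq\bA^2$ --- is what makes the weaker statement unavoidable in general.
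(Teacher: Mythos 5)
Your overall architecture (compactify, apply \Cref{main cor}, then handle the finite kernel of $\pi_1(X)\to\pi_1(\bar X)$ by normalizing $\widetilde{\bar X}$ in $\tilde X$ and using permanence of Steinness under finite maps and coverings) is the right one, and that second half is correct. The genuine gap is the claim that largeness transfers to the partial compactification. Largeness on $\bar X$ must be tested against \emph{all} non-constant algebraic maps $g\colon Z\to\bar X$, including those whose image lies inside the added boundary, and such maps cannot be ``deformed into $X$'': an algebraic subvariety of the boundary is not algebraically (nor in any useful sense) movable into the open part, so the homotopy-invariance of pullback buys you nothing there. (For maps not landing in the boundary your argument is fine, since $\pi_1(g^{-1}(X))\twoheadrightarrow\pi_1(Z)$ for $Z$ normal; the problem is exactly boundary strata.) The claim is in fact false for the class of compactifications you allow: take $S$ smooth projective with a faithful large representation of $\pi_1(S)$, let $X=S\setminus\{p\}$ and compactify by $\bar X=\mathrm{Bl}_p S$. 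Then $\pi_1(X)\cong\pi_1(\bar X)$, so every local system (in particular all of $\Sigma=\cM_B(X,r)(\bC)$) extends, and nonextendability is automatic since $\bar X$ is proper; yet the exceptional curve $E\cong\bP^1$ is simply connected, so the extended family is not large on $\bar X$ --- and indeed the universal cover of $\mathrm{Bl}_pS$ contains compact curves and is not Stein. So as written your argument would prove a false statement for this choice of $\bar X$; the conclusion of the corollary survives only because the compactification must be chosen more carefully.

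The fix is to take the compactification produced by \Cref{mainShaf} (equivalently \Cref{corDMpullback}) rather than an arbitrary one to which $\Sigma$ extends. Since $\rho$ is large, no positive-dimensional subvariety is contracted by the $\Sigma$-Shafarevich morphism $s\colon X\to Y$ (a contracted subvariety would have finite monodromy for all of $s^*\Sigma_Y=\Sigma$), so $s$ is quasi-finite; together with dominance and $K(Y)$ algebraically closed in $K(X)$ it is birational, hence an open immersion by Zariski's main theorem. Condition (2) of \Cref{defn:shaf} then hands you exactly what your argument needs and what cannot be deduced by deformation: a \emph{large} nonextendable $\Sigma_Y$ on the partial compactification $Y$, with $s^*\Sigma_Y=\Sigma$ and with the inertia acting faithfully (so the universal cover of $Y$ is a complex space, after passing if you like to a finite \'etale cover by an algebraic space). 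Enlarging $\Sigma_Y$ to an absolute Hodge subset such as $\cM_B(Y,r)(\bC)$ only improves largeness and nonextendability, so \Cref{main cor} applies and gives that $\widetilde{Y}$ is Stein; your finite-kernel, normalization and Stein-permanence steps then go through verbatim with $\bar X$ replaced by $Y$.
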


We now summarize the main ingredients to the proofs of \Cref{main result} and \Cref{mainShaf}.  A salient theme throughout is that while the presence of boundary results in many complications in non-abelian Hodge theory, everything is much better behaved in the case of quasiunipotent local monodromy, and this is sufficient to ``see'' all of the Betti stack.
\subsection{Definable Stein factorization}As noted above, if $V$ underlies an admissible variation of integral graded-polarized mixed Hodge structures, the $V$-Shafarevich morphism is the Stein factorization of the period map $X^\an\to \bG(\bZ)\backslash D$, and as such \Cref{mainShaf} is a version of Griffiths' conjecture on the quasiprojectivity of the images of period maps.  This conjecture was proven in \cite{bbt}, \cite{bbt2} using definable GAGA \cite{bbt} and the definability of period maps \cite{bkt}, \cite{bbkt}.

We now briefly summarize the analytic construction of the $\Sigma$-Shafarevich morphism for a nonextendable collection of semisimple local systems in \cite{brunebarbeshaf}.  Importantly, any $\Sigma$ will have the same Shafarevich morphism as the smallest closed absolute Hodge subset containing it, so we may assume $\Sigma$ is a closed absolute Hodge subset.  Techniques from non-abelian Hodge theory then ensure $\Sigma$ contains complex variations of Hodge structures, and combining their period maps with the non-archimedean reductions of the $\bar \bQ$-points of $\Sigma$ yields an analytic map
\[\phi:\tilde X^\Sigma\to M^\an\]
to an algebraic variety $M$ which contracts exactly those subvarieties on which $\Sigma$ is uniformly finite.  The nonextendability hypothesis implies the connected components of the fibers of this map are compact, and the Stein factorization $\sigma:\tilde X^\Sigma\to\tilde\cY$ then descends to an analytic Shafarevich morphism $s:X^\an\to \cY$.

Definable GAGA implies that algebraizing $X^\an\to \cY$ is equivalent to giving it the structure of a morphism of definable analytic spaces.  The main obstacle to generalizing the proof in the case of period maps of integral variations is that $\phi$ does not descend to a reasonable analog of the period map $X\to \bG(\bZ)\backslash D$, since the monodromy may act highly nondiscretely on $M^\an$.  Instead, we rely on the following:
\begin{thm}[\Cref{thm:Stein}]\label{steinfactor}
    Let $f:X\to Y$ be a proper morphism of seminormal definable analytic spaces.  Then the Stein factorization of $f$ exists in the category of definable analytic spaces. 
\end{thm}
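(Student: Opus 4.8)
The plan is to transcribe the classical Remmert--Stein factorization into the definable category, the one genuinely new ingredient being a definable version of Grauert's coherence theorem. Recall that for a proper morphism $f\colon X\to Y$ of complex analytic spaces, Grauert's direct image theorem makes $\cA:=f_*\cO_X$ a coherent sheaf of $\cO_Y$-algebras; since its relative analytic spectrum $Z:=\operatorname{Spec}^{\an}_Y\cA$ is proper over $Y$ with finite fibers it is in fact finite over $Y$, and writing $h\colon Z\to Y$ for the structure map and $g\colon X\to Z$ for the tautological $Y$-morphism one has $g_*\cO_X=\cO_Z$, so that $g$ is proper and surjective with connected fibers and $X\xrightarrow{g}Z\xrightarrow{h}Y$ is the Stein factorization of $f$. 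Moreover $Z$ is reduced since $X$ is, and may be taken seminormal: the Stein factorization of a morphism out of a seminormal space has seminormal target (and in any case one can replace $Z$ by its seminormalization). Thus the entire content of the theorem is to upgrade $Z$, $g$ and $h$ to the definable category compatibly with these underlying complex structures.

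The key input is a \emph{definable Grauert direct image theorem}: for a proper morphism of definable analytic spaces, $R^if_*$ carries definably coherent sheaves to definably coherent sheaves; in particular $\cA=f_*\cO^{\df}_X$ is a definably coherent sheaf of $\cO^{\df}_Y$-algebras, and — its analytic spectrum being $Y$-finite — is module-finite over $\cO^{\df}_Y$. I expect this to be the main obstacle. Concretely one wants, locally on $Y$, finitely many \emph{definable} holomorphic functions generating $f_*\cO_X$ together with definable relations; modulo a definable Chow-type d\'evissage to the case of a projection $Y\times\bP^n\to Y$, where $R^if_*$ is computed by an explicit finite (hence definable) \v{C}ech--Koszul complex, this should follow the classical proof of Grauert's theorem, but the o-minimal bookkeeping is substantial.

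Granting this, the construction of $Z$ is local on $Y$ and then globalized using definable triviality to arrange a locally finite definable cover. Over a definable chart $U\subseteq Y$ choose definable holomorphic functions $a_1,\dots,a_n$ on the definable open $f^{-1}(U)$ generating $\cA|_U$ as an $\cO^{\df}_U$-algebra, together with definable monic polynomials witnessing integrality of each $a_i$ over $\cO^{\df}_U$ and definable generators for the ideal of relations among the $a_i$; these present $Z|_U$ as a definable closed analytic subspace of $U\times\bC^n$, and on overlaps the transition data is again given by definable algebra sections of $\cA$, so the local models patch to a seminormal definable analytic space $Z$ equipped with a definable finite morphism $h\colon Z\to Y$ whose underlying complex-analytic map is the classical one. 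The $Y$-morphism $g$ is, over each $f^{-1}(U)$, the map $x\mapsto(f(x),a_1(x),\dots,a_n(x))$ into $U\times\bC^n$: it is definable because the $a_i$ are, it is proper because $f$ is proper and $h$ is separated, and its underlying complex map is the classical $g$, hence it is surjective with connected fibers.

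Finally, to identify $X\xrightarrow{g}Z\xrightarrow{h}Y$ as the Stein factorization inside seminormal definable analytic spaces we record the universal property. Since $g_*\cO_X=\cO_Z$ classically and $g$ and $\cA$ are definable, one gets $g_*\cO^{\df}_X=\cO^{\df}_Z$, so for every definable analytic space $W$ finite over $Y$ there are natural bijections
\[
\Hom^{\df}_Y(Z,W)=\Hom^{\df}_{\cO^{\df}_Y}(\cO^{\df}_W,\cO^{\df}_Z)=\Hom^{\df}_{\cO^{\df}_Y}(\cO^{\df}_W,g_*\cO^{\df}_X)=\Hom^{\df}_Y(X,W),
\]
compatible with the classical ones; as the factorization of a definable $Y$-morphism $X\to W$ through $Z$ is built from morphisms of definably coherent sheaves, it is automatically definable. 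This yields the universal property, and hence the theorem. The hard part, to reiterate, is the definable Grauert input of the second paragraph; everything else is a definable-analytic transcription of the classical construction together with the elementary fact that seminormality passes to $Z$.
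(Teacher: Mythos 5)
Your argument stands or falls on the ``definable Grauert direct image theorem'' in your second paragraph, and that is precisely the point where it fails: you have not proved it, and the paper explicitly flags it as unavailable. In the definable analytic category, coverings in the definable site are required to be finite, so there do not appear to be acyclic coverings, and coherent cohomology is poorly behaved; the classical proofs of Grauert's theorem (Forster--Knorr systems, or Kiehl--Verdier) depend essentially on shrinking infinite families of polydisk covers and on functional-analytic completeness arguments that have no o-minimal counterpart. Your proposed d\'evissage to a projection $Y\times\bP^n\to Y$ is also not available: $f$ is merely a proper definable analytic map, with no projectivity, and even in the projective case the ``explicit finite \v{C}ech--Koszul complex'' computes $R^if_*$ only after choosing acyclic covers of the fibers, which is again the missing ingredient. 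The same gap infects the later steps: the identity $g_*\cO^{\df}_X=\cO^{\df}_Z$ and the local definable generators $a_1,\dots,a_n$ of $f_*\cO_X$ are exactly the statements whose definability is in question, so the ``transcription'' of the Remmert construction is circular as written.

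The paper's proof takes a genuinely different route that avoids pushforward sheaves altogether. It first reduces to normal $X$ via a definable normalization (built from the Oka criterion and a definable $\Spec$ of a coherent algebra, which is unproblematic because the algebras there are visibly coherent), then produces the Stein factorization as a \emph{definable topological} quotient using Van den Dries's theorem on quotients by proper definable equivalence relations, and reduces to $Z$ a definable open in $\bC^n$ by definable Noether normalization and a structure lemma for quasifinite definable maps. The heart of the argument is then a geometric construction: over a dense definable Zariski open of the base, the finite part of the Stein factorization is \'etale, so by definable Riemann existence it comes from a finite \'etale cover; fibering over a $\bC^{n-1}$ via a linear projection realizes this cover inside a definable family of punctured smooth proper curves with marked points, which (after rigidifying by an auxiliary divisor so the pointed curves have no automorphisms) maps to an affine chart of the moduli space $\mathcal{M}_{g,m+d}$; pulling back coordinate functions from the affine universal punctured curve supplies the definable holomorphic functions that you were hoping to extract from a definable Grauert theorem. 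The algebraic/analytic target is then recovered by Remmert's proper mapping theorem together with definable GAGA-type results, normalization, and an induction on dimension. So the missing idea in your proposal is not bookkeeping: it is the entire mechanism by which one produces definable functions separating the connected components of fibers without ever invoking coherence of higher direct images.
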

By lifting a definable cover of $X$ to $\tilde X^\Sigma$ and applying \Cref{steinfactor} to $\phi$, $\cY$ is endowed with a definable analytic space structure as desired.  The usual proofs of Stein factorization in the algebraic and analytic categories do not apply to the setting of \Cref{steinfactor} since cohomology is poorly behaved in the definable analytic category.  Indeed, as coverings in the definable site are required to be finite, there do not seem to be acyclic coverings in general.  Interestingly, \Cref{steinfactor} together with definable GAGA gives a new proof of the algebraicity of period maps $X^\an\to\bG(\bZ)\backslash D$ of integral variations, at least up to a Stein factorization, which does not depend on the hard input of the global definability of the period map, and instead only uses the relatively cheap ``local'' definability---that is, the regularity of the Hodge filtration (or the nilpotent orbit theorem). 

\subsection{Hodge and twistor structures on miniversal local systems}As in \cite{Eyssidieux, EKPR}, to construct the Shafarevich morphism in general, a key observation is that $\Sigma$ may be replaced with its absolute Hodge closure $\Sigma^{\mathrm{abs}}$, which can be shown to contain local systems underlying variations of Hodge structures using techniques from non-abelian Hodge theory.  The following result will essentially say these variations are Zariski dense in $\Sigma^\mathrm{abs}$, provided we allow artinian thickenings.
\begin{thm}[\Cref{thm:versal}]\label{thmES}
    Let $X$ be a connected algebraic space, $V_0\in\cM_B(X)(\bC)$ a complex local system underlying an admissible complex variation of mixed Hodge structures (resp. an admissible tame purely imaginary variation of mixed twistor structures), and $ \hat V$ a miniversal $\hat\cO_{\cM_B(X),V_0}$-local system for $\cM_B(X)$ with closed point $V_0$, where $\hat\cO_{\cM_B(X),V_0}$ is a complete local $\bC$-algebra.  Then:
    \begin{enumerate}
        \item The miniversal deformation algebra $\hat\cO_{\cM_B(X),V_0}$ admits a canonical pro-complex mixed Hodge structure (resp. pro-mixed twistor structure) which is functorial with respect to pull-back morphisms $f^*:\cM_B(Y)\to\cM_B(X)$ along algebraic maps $f:X\to Y$, direct sums, and tensor products (in a sense to be made precise in \Cref{thm:versal frame} and \Cref{thm:versal}).
        \item The miniversal family $\hat V$ can be equipped with a pro-admissible complex variation of mixed Hodge structure (resp. pro-admissible tame purely imaginary variation of mixed twistor structures) for which the $\hat\cO_{\cM_B(X),V_0}$-action is compatible with the Hodge structures (resp. twistor structures).
    \end{enumerate}
\end{thm}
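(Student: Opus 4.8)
The strategy is to reduce everything to equipping the differential graded Lie algebra (DGLA) governing deformations of $V_0$ with a compatible mixed Hodge (resp. mixed twistor) structure, and then to transport that structure to $\hat\cO_{\cM_B(X),V_0}$ and to $\hat V$ by Koszul duality. Recall that deformations of the local system $V_0$ --- equivalently of the underlying representation --- are controlled by the DGLA $L^\bullet_{V_0}$ representing $R\Gamma(X,\cEnd(V_0))$ with its natural bracket (the de Rham or Dolbeault complex of the flat bundle $\cEnd(V_0)$, or a group--cochain model): deformations over an Artinian local $\bC$-algebra $A$ are gauge-equivalence classes of Maurer--Cartan elements of $L^\bullet_{V_0}\otimes\m_A$. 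The miniversal algebra $\hat\cO_{\cM_B(X),V_0}$ is then Koszul-dual to $L^\bullet_{V_0}$ --- it is the relevant truncation ($H^0$) of the Chevalley--Eilenberg cochain complex of $L^\bullet_{V_0}$ --- and the miniversal family is $\hat V=V_0\otimes_\bC\hat\cO_{\cM_B(X),V_0}$ equipped with the flat connection twisted by the universal Maurer--Cartan element $\xi\in L^1_{V_0}\,\widehat\otimes\,\m$, where $\m\subset\hat\cO_{\cM_B(X),V_0}$ is the maximal ideal; the framed version of \Cref{thm:versal frame} is obtained analogously from the framed DGLA. Thus once $L^\bullet_{V_0}$ is upgraded to a DGLA in the abelian tensor category of ind-complex-mixed-Hodge (resp. ind-mixed-twistor) structures, functorially in all the relevant operations, the Chevalley--Eilenberg differential and product --- being built from the bracket --- are morphisms of ind-MHS (resp. ind-MTS), so $\hat\cO_{\cM_B(X),V_0}$ inherits a canonical pro-structure, and the twist by $\xi$ endows $\hat V$ with a filtration.

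The crux is constructing this mixed Hodge (resp. twistor) DGLA in the quasiprojective setting, and the tameness hypothesis is exactly what makes this possible. Since $V_0$ underlies an admissible complex VMHS (resp. an admissible tame purely imaginary VMTS), so does $\cEnd(V_0)=V_0^\vee\otimes V_0$, by stability of admissibility (resp. of tameness and purely-imaginarity) under duals and tensor products. On a smooth compactification $\bar X$ with normal-crossing boundary, the derived pushforward to $\bar X$ of the mixed Hodge module (resp. mixed twistor $D$-module \cite{mochizukimixedtwistor,Mochizuki-AMS2}) extending $\cEnd(V_0)$ produces a mixed Hodge (resp. twistor) complex computing $R\Gamma(X,\cEnd(V_0))$ with its canonical weight and Hodge data. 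This complex is not literally a DGLA; one must rectify it, choosing a Thom--Whitney / Sullivan-type model that is simultaneously a DGLA and carries the weight filtration together with the Hodge (resp. $\lambda$-connection) structure compatibly with the bracket --- the same device used for the formality and Hodge-theoretic statements in \cite{simpsonhiggs,Eyssidieux,EKPR}, now carried out over the twistor base. The result is a DGLA $L^\bullet_{V_0}$ in ind-MHS (resp. ind-MTS), canonical up to filtered quasi-isomorphism, quasi-isomorphic as a DGLA to $R\Gamma(X,\cEnd(V_0))$, with weights in the expected range.

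Functoriality is then inherited at the DGLA level. An algebraic map $f\colon X\to Y$ and $W_0\in\cM_B(Y)(\bC)$ with $f^*W_0=V_0$ induce the pull-back morphism of DGLAs $L^\bullet_{W_0}\to L^\bullet_{V_0}$, which is a morphism of mixed Hodge (resp. twistor) complexes because pull-back of admissible variations and the induced functor on mixed Hodge (resp. twistor) modules respect the structures; passing to Chevalley--Eilenberg cochains gives the morphism $\hat\cO_{\cM_B(X),V_0}\to\hat\cO_{\cM_B(Y),W_0}$ of pro-structures. Direct sums and tensor products come from the Lie algebra morphisms $\cEnd(V_0)\oplus\cEnd(V_1)\hookrightarrow\cEnd(V_0\oplus V_1)$ (block-diagonal) and $\cEnd(V_0)\oplus\cEnd(V_1)\to\cEnd(V_0\otimes V_1)$, $A\oplus B\mapsto A\otimes\id+\id\otimes B$, which are morphisms of variations hence of mixed Hodge (resp. twistor) DGLAs; Koszul duality converts these into the compatibility maps $\hat\cO_{\cM_B(X),V_0\oplus V_1}\to\hat\cO_{\cM_B(X),V_0}\,\widehat\otimes\,\hat\cO_{\cM_B(X),V_1}$ and likewise for $\otimes$, together with the corresponding statements for $\hat V$ in \Cref{thm:versal}. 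For part (2), the Hodge (resp. twistor) filtration on $\hat V=V_0\otimes_\bC\hat\cO_{\cM_B(X),V_0}$ is the tensor product of that of $V_0$ with the pro-structure on $\hat\cO_{\cM_B(X),V_0}$, twisted by $\xi$; since $\xi$ lies in the degree-$1$, weight-appropriate part of $L^\bullet_{V_0}$ and the boundary monodromy of the twist stays quasiunipotent (resp. tame purely imaginary), Griffiths transversality and admissibility are preserved order by order in $\m$, so $\hat V$ is a pro-admissible variation with $\hat\cO_{\cM_B(X),V_0}$-action compatible with the filtration.

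The step I expect to be the main obstacle is precisely this rectification in the non-compact and, above all, the non-abelian twistor setting: producing one DGLA that computes $R\Gamma(X,\cEnd(V_0))$, simultaneously carries the weight filtration and the mixed-twistor ($\lambda$-connection) data, is strictly functorial for pull-back and for the two monoidal operations, and is compatible with the tame/purely-imaginary conditions along the boundary divisor --- all of which forces one to combine Mochizuki's mixed twistor $D$-module formalism with a multiplicative Thom--Whitney-type model and to check that the comparison with $R\Gamma(X,\cEnd(V_0))$ is a quasi-isomorphism of DGLAs over the twistor base, uniformly enough for functoriality. A second delicate point is the verification of admissibility of the pro-variation $\hat V$ along the boundary, which requires controlling the limiting mixed Hodge (resp. twistor) structure of the Maurer--Cartan twist; here again the quasiunipotence built into the tame hypothesis is what keeps the nilpotent-orbit estimates uniform in the Artinian parameter.
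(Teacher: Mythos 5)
Your route is the Goldman--Millson one: rectify $R\Gamma(X,\cEnd(V_0))$ to a DGLA in ind-$\bC$-MHS (resp.\ ind-MTS) and recover $\hat\cO_{\cM_B(X),V_0}$ and $\hat V$ by Koszul duality. This is genuinely different from what the paper does. The paper deliberately avoids Goldman--Millson (see the discussion after the statement of \Cref{thm:versal} and \Cref{rem:GoldmanMillson}) and instead runs elementary Schlessinger-style deformation theory inside a ``mixed-structure enhanced'' deformation category: the obstruction classes are produced functorially in $\Hom_{\MM}(V_0,J\otimes V_0[2])$ via the six-functor formalism for mixed Hodge modules / mixed twistor $\mathscr{D}$-modules (\Cref{lem esnault}, \Cref{prop mini MB(X)}, \Cref{MHS Schlessinger}), so the pro-MS structure on the ring and the pro-AVMS structure on $\hat V$ are built simultaneously, order by order, without ever choosing a multiplicative cochain model. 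In the Hodge case your approach is essentially the one of Eyssidieux--Simpson and Lef\`evre (which the paper cites as prior work for part of the statement), so there it is a legitimate alternative; what the paper's method buys is precisely that it sidesteps the harmonic/rectification analysis in the non-proper setting and treats the Hodge and twistor cases uniformly.

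The genuine gap is the step you yourself flag and then do not fill: in the quasiprojective twistor case there is no available construction of a DGLA model of $R\Gamma(X,\cEnd(V_0))$ in the tensor category of ind-mixed-twistor structures. Mochizuki's formalism (\Cref{lem: six functors tw}) gives you $\pt_{X*}$ of the mixed twistor module, i.e.\ twistor structures on the individual $\Ext$ groups and an object of a derived category, but no canonical multiplicative (Thom--Whitney/Sullivan) representative compatible with the bracket, the weight filtration, the $\lambda$-direction, and the tame purely imaginary boundary conditions; producing one amounts to redoing the open-variety harmonic theory at the cochain level, which is exactly the difficulty the paper's method is designed to avoid, and ``choosing a Thom--Whitney-type model'' is an assertion, not an argument. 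A second unargued point is part (2): declaring the filtration on $\hat V=V_0\otimes\hat\cO$ to be ``the tensor product twisted by $\xi$'' does not by itself give Griffiths transversality for the twisted connection (this requires the Maurer--Cartan element to lie in the correct filtration step of the DGLA, which is where the Hodge/twistor theory must actually enter), nor does it give admissibility of the deformed family at the boundary (existence of relative weight filtrations and locally free graded extensions over the Artinian base); invoking ``nilpotent-orbit estimates uniform in the Artinian parameter'' is not a proof. In the paper these issues are absorbed into the definition of the enhanced deformation category (lifts are required to be $A$-AVMSs) and the verification that obstructions to lifting \emph{within that category} are computed by the mixed-module $\Ext$ groups.
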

We also give versions of both theorems for the framed spaces (see \Cref{thm:versal frame}).  Note that by definition a complex variation of mixed Hodge structure satisfies an admissibility condition and in particular has quasiunipotent local monodromy (see \Cref{sect:MHSdefn}).  On the other hand, every local system underlies an admissible tame purely imaginary variation of mixed twistor structures.  \Cref{thmES} in the Hodge case generalizes the results of \cite{ES} when $X$ is smooth projective, and results of \cite{lefevrei,lefevreii} endowing the deformation algebra with a pro-mixed Hodge structure in general.  The twistor case was explained in the compact case by Simpson \cite{simpsontwistor}; see also \cite{Simpsonrank2twistor,simpsonHitchinDeligne} for some results in the case of quasiprojective curves.  Our approach is quite different from these:  rather than using Goldman--Millson theory \cite{goldmanmillson}, we employ elementary deformation theory together with Saito's theory of mixed Hodge modules and Sabbah/Mochizuki's theory of mixed twistor $\mathscr{D}$-modules to equip certain cohomology groups with Hodge/twistor structures.  This allows us to avoid some of the complications in the harmonic theory in the non-proper case (though see \Cref{rem:GoldmanMillson}). 

Both the Hodge and twistor cases of \Cref{thmES} are important. 
In the compact case, Simpson famously described a $\bC^*$-action on the good moduli space $M_B(X)$ whose fixed points are precisely complex variations of Hodge structures; as the image of $\Sigma^\mathrm{abs}$ in $M_B(X)$ is $\bC^*$-stable and closed, limits of $\bC^*$-orbits will produce variations of Hodge structures in the semisimplification of $\Sigma^{\mathrm{abs}}$.  On the other hand, there might be irreducible components of $\cM_B(X)$ which are not generically semisimple and whose semisimplification is strictly contained in an irreducible component of $M_B(X)$---it is less clear how to use the $\bC^*$-action to produce variations of Hodge structures there.  The twistor structures in \Cref{thmES} allow for the local germs of $\cM_B(X)$ to be identified with the local germs of the Higgs bundle stack, where the $\bC^*$-action can be understood, via the Deligne--Hitchin space of $\lambda$-connections (see \Cref{DeligneHitchin sect}).  Together with \Cref{intro comparison} below, this allows us for example to establish the following:
\begin{cor}[\Cref{basic prop abs hodge substack} and \Cref{every component contains Hodge}]
    Every irreducible component of the Betti stack $\cM_B(X)$ contains a point underlying a variation of Hodge structures.
\end{cor}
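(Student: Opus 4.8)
The plan is to transport the statement to the Dolbeault side, where Simpson's $\bC^*$-action flows an arbitrary Higgs bundle into the locus of systems of Hodge bundles, and to use the twistor refinement of \Cref{thmES} to match the two sides well enough --- crucially also at non-semisimple points --- that the conclusion descends back to $\cM_B(X)$. The difficulty, as in \cite{Eyssidieux,EKPR}, is genuinely with the irreducible components of $\cM_B(X)$ that are not generically semisimple: flowing a general semisimple point of $M_B(X)$ produces a variation of Hodge structures, but only inside the image of the semisimple locus, which for such a component is strictly smaller than the ambient component of the character variety and need not be stable under the (merely topological) $\bC^*$-action there.

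Fix an irreducible component $Z\subset\cM_B(X)$ and a general closed point $V_0\in Z(\bC)$, so that $\hat\cM_B(X)_{V_0}=\hat Z_{V_0}$ is irreducible of dimension $\dim Z$. Since every local system underlies an admissible tame purely imaginary variation of mixed twistor structures, \Cref{thmES} applies to $V_0$: the miniversal family $\hat V$ over $\hat\cO_{\cM_B(X),V_0}$ carries a pro-admissible tame purely imaginary variation of mixed twistor structures, compatible with the $\hat\cO_{\cM_B(X),V_0}$-action. Its $\lambda=0$ specialization is a family of Higgs bundles, that is, a Higgs bundle $(E_0,\theta_0)$ and a classifying map $\Spf\hat\cO_{\cM_B(X),V_0}\to\cM_{\mathrm{Dol}}(X)$ to the stack $\cM_{\mathrm{Dol}}(X)$ of Higgs bundles on $X$; by the comparison of \Cref{intro comparison} (and the Deligne--Hitchin formalism of $\lambda$-connections, \Cref{DeligneHitchin sect}) this map is an isomorphism onto the formal germ of $\cM_{\mathrm{Dol}}(X)$ at $(E_0,\theta_0)$, and --- this is the key point --- the resulting local identifications are compatible with the non-abelian Hodge correspondence on the polystable loci and so patch to a dimension-preserving bijection between the irreducible components of $\cM_B(X)$ and those of $\cM_{\mathrm{Dol}}(X)$. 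Let $Z'\subset\cM_{\mathrm{Dol}}(X)$ be the component matching $Z$; then $(E_0,\theta_0)\in Z'$, and $Z'$, being a union of irreducible components, is stable under the connected group $\bC^*$ acting by $t\cdot(E,\theta)=(E,t\theta)$.

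Now run the flow: by Simpson's analysis of this $\bC^*$-action --- valid also in the tame/parabolic situation through \Cref{DeligneHitchin sect} --- the limit $(E_\infty,\theta_\infty):=\lim_{t\to0}t\cdot(E_0,\theta_0)$ exists in $\cM_{\mathrm{Dol}}(X)$, is a $\bC^*$-fixed point, hence a polystable system of Hodge bundles, and lies in $Z'$ because $Z'$ is closed and $\bC^*$-stable. A polystable system of Hodge bundles underlies a polarizable complex variation of Hodge structures and, being polystable, corresponds under non-abelian Hodge theory to a semisimple local system $[\rho_\infty]$; applying the component bijection again at the semisimple point $[\rho_\infty]$ shows $[\rho_\infty]$ lies in the Betti component matched with $Z'$, which is $Z$. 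Hence $[\rho_\infty]\in Z(\bC)$ underlies a complex variation of Hodge structures. The refinement to $\bQ$-structured components and to absolute Hodge substacks is then furnished by \Cref{basic prop abs hodge substack}.

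I expect the main obstacle to lie entirely in the middle step: upgrading the non-abelian Hodge homeomorphism of polystable loci, together with the formal isomorphisms of miniversal germs coming from \Cref{thmES}, to a single coherent bijection of irreducible components of $\cM_B(X)$ and $\cM_{\mathrm{Dol}}(X)$ respecting dimensions --- including at non-semisimple points, which is precisely what forces the use of twistor (not merely Hodge) structures in \Cref{thmES} and of the Deligne--Hitchin picture. In the quasiprojective case this also requires handling tame purely imaginary twistor structures, filtered (parabolic) Higgs bundles, and the existence of the $\bC^*$-limit in $\cM_{\mathrm{Dol}}(X)$; once those are granted, the $\bC^*$-flow itself is the soft classical argument.
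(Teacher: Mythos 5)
There is a genuine gap, and it sits exactly where you place your ``key point''. The dimension-preserving bijection between irreducible components of $\cM_B(X)$ and of a global Dolbeault stack $\cM_{\mathrm{Dol}}(X)$, obtained by patching formal identifications coming from the $\lambda=0$ specialization of the twistor structure of \Cref{thmES}, is neither proved in the paper nor available in the quasiprojective setting in the form you use it. First, the paper has no global Dolbeault stack receiving all of $\cM_B(X)$: the correspondence for open $X$ requires parabolic/filtered data, the KMS spectrum jumps with the residual eigenvalues, and the Betti--Dolbeault identification is only controlled (and only continuous) on the unipotent-monodromy/nilpotent-residue locus, and even there as a homeomorphism of good moduli spaces for curves (\Cref{BettiDeRhamcomparison}, \Cref{comparison}), not as a matching of stacks or of their components. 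Second, your starting point is a \emph{general}, hence typically non-semisimple, point $V_0$ of the component; but a non-semisimple local system has no associated Higgs bundle (no harmonic metric), and the formal-germ identification with the Deligne--Hitchin/Higgs side is established in the paper only at \emph{semisimple quasiunipotent} points, where the tame purely imaginary harmonic bundle provides the preferred section (\Cref{formal iso R}, \Cref{extend germ}); at your $V_0$ the twistor structure on $\hat\cO_{\cM_B(X),V_0}$ exists, but the claimed isomorphism onto a germ of a Higgs stack at ``$(E_0,\theta_0)$'' is not among the paper's results, and no argument is offered for it. Finally, you never invoke the density of quasiunipotent points, yet without it the whole Dolbeault/$\bR_{>0}$ picture is inaccessible on a general component: the $\bR_{>0}$-action is defined only on semisimple points of $M_B(X)$ and is only known to be continuous (and to admit $t\to 0$ limits) after restricting to unipotent local monodromy on curves.

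For contrast, the paper's route is: (i) each component is a closed absolute Hodge substack (\Cref{basic prop abs hodge substack}), so by \Cref{qu dense in abs} (Riemann--Hilbert bialgebraicity plus Gelfond--Schneider/Ax--Schanuel) its quasiunipotent locus is Zariski dense; (ii) the crucial replacement for your component bijection is \Cref{abs Hodge contain R* germ}/\Cref{abs Hodge contain R*}: the germ of the $\bR_{>0}$-orbit through a semisimple quasiunipotent point of (the image of) the component stays in it, proved by lifting the orbit to preferred sections of the Deligne--Hitchin germ and using a weight argument (the relative obstruction space of the component inside the fixed-local-monodromy leaf has weights $0$ and $1$, so a section agreeing with the component at $\lambda=0$ and $\lambda=\infty$ lies in it) --- this is where the mixed twistor structures at non-semisimple points of \Cref{thmES} actually enter; (iii) then \Cref{comp has R* fixed} and \Cref{R* stable has fixed}, via the curve-level homeomorphism \Cref{comparison} and Langton-type existence of $\bG_m$-limits, produce an $\bR_{>0}$-fixed point, i.e.\ a $\bC$-VHS, in the (closed, semisimplification-stable) component. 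Your $\bC^*$-flow step is fine once one is at a semisimple quasiunipotent point with the stability statement (ii) in hand; it is the passage from the general non-semisimple point and the global component matching that does not go through as written.
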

In general the twistor case of \Cref{thmES} is used to show every irreducible component of an absolute Hodge subset contains points underlying variations of Hodge structures.  Note that in the non-proper case the Deligne--Hitchin space is complicated by the monodromy of the residual eigenvalues, but in a neighborhood of the quasiunipotent local monodromy locus it is well-behaved, and by \Cref{intro qu density} below this is sufficient.

\subsection{Algebraic integrability of the Katzarkov--Zuo foliation and graded nearby cycles}  The general strategy to prove \Cref{main result} is roughly the same as in \cite{Eyssidieux} and \cite{EKPR} with some important differences.  Note that by \Cref{mainShaf} it suffices to prove \Cref{main cor}.  Recall that in the projective setting, the semisimple case is handled in \cite{Eyssidieux} using pure period maps and non-archimedean reductions, and the general case of \cite{EKPR} builds on \cite{Eyssidieux} by adding mixed period maps associated to miniversal families. 
 The correct division in the quasiprojective setting is to first prove \Cref{main result} in the case the local systems in $\Sigma$ are semisimple \emph{with quasiunipotent local monodromy}, and then to prove the general case.

 To adapt Eyssidieux's proof to accomplish the first step, the quasiunipotence of local monodromy is important for two reasons:

    \subsubsection*{Properness of pluriharmonic maps} 
 Ultimately the proof that $\tilde X^\Sigma$ is Stein rests on the construction of a strictly plurisubharmonic exhaustion function.  For a nonextendable $\bQ$-local system $V$ with reductive algebraic monodromy $\bG$, one obtains a candidate by putting together the pluriharmonic maps at all places $X^\an\to \bG(\bQ)\backslash \prod_v\Delta_v(\bG)$ and pulling back an exhaustion function.  Such a function will be plurisubharmonic by general theory, but will be more easily shown to be proper in the case that $V$ has quasiunipotent local monodromy, since in this case the asymptotics of the pluriharmonic maps (specifically the archimedean one) are better behaved.
    \subsubsection*{Boundary behavior of Katzarkov--Zuo foliation} To show that the exhaustion function described in the previous paragraph is \emph{strictly} plurisubharmonic, one must generalize Eyssidieux's inductive proof (using Simpson's Lefschetz-type result \cite{Simpsonlefschetz}) of the algebraic integrability of the Katzarkov--Zuo foliation.  

    \begin{thm}[\Cref{thm KZ integrable}]\label{intro KZ}
  Let $X$ be a connected normal algebraic space, $\Sigma\subset \cM_B(X)(\bC)$ an absolute $\bar\bQ$-constructible set of semisimple local systems with quasiunipotent local monodromy, and $v$ a nonarchimedean valuation on $\bar\bQ$.  Then the $v$-adic Katzarkov--Zuo foliation of $X$ associated to the $\bar\bQ$-points of $\Sigma$ is algebraically integrable.   
\end{thm}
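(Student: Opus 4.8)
The plan is to adapt Eyssidieux's inductive argument from the projective setting, the one genuinely new ingredient being a boundary analysis of the relevant $v$-adic harmonic map, which is available precisely because the local monodromy is quasiunipotent. First I would reduce to a single local system: since $\Sigma$ is absolute $\bar\bQ$-constructible of bounded rank, by Noetherianity and $\Aut(\bC/\bar\bQ)$-invariance its $\bar\bQ$-points are governed by finitely many semisimple $\bar\bQ$-local systems, and the $v$-adic Katzarkov--Zuo foliation of $\Sigma$ is the intersection of those of its members; replacing this finite collection by its direct sum, it suffices to treat one semisimple $\bar\bQ$-local system $V$ with reductive algebraic monodromy group $\bG$ (defined over $\bar\bQ$). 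Fix a good compactification $X\subset \bar X$ with simple normal crossings boundary $D=\bar X\setminus X$, so that the local monodromies of $V$ about the components of $D$ are quasiunipotent. Base-changing to $\bar\bQ_v$ yields $\rho\colon\pi_1(X^\an)\to\bG(\bar\bQ_v)$; if $\rho$ fixes a point of the building $\Delta_v(\bG)$ there is nothing to prove, so we may assume it does not.

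Next, invoke the non-archimedean harmonic theory: by Gromov--Schoen \cite{Gromov-Schoen} and, in the non-proper setting, Brotbek--Daskalopoulos--Deng--Mese \cite{BDDM}, there is a $\rho$-equivariant harmonic map $h\colon\tilde X\to\Delta_v(\bG)$, pluriharmonic away from a singular set of complex codimension $\geq 2$. Read on the spectral cover, its complexified differential produces the Katzarkov--Zuo spectral multivalued holomorphic $1$-forms $\theta_1,\dots,\theta_k$ (the eigenvalues of the $v$-adic Higgs field), and by definition $\cF_v=\bigcap_i\ker\theta_i$ on the complement of the singular locus. The crucial point is that quasiunipotence of the local monodromy makes the asymptotics of $h$ near $D$ as well-behaved as in the archimedean tame case: the $\theta_i$ extend to (multivalued) sections of $\Omega^1_{\bar X}(\log D)$, the spectral cover is finite over $\bar X$, and hence $\cF_v$ extends to a logarithmic foliation $\bar\cF_v$ on all of $\bar X$. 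Moreover $h$ maps locally into a single apartment, and the integral structure of the Bruhat--Tits building forces the periods of the $\theta_i$ to be suitably rational; this is what will make the leaves algebraic rather than merely analytic, in contrast with a Castelnuovo--de Franchis situation where the kernel foliation of a closed $1$-form may have dense leaves.

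The heart of the proof is then an induction on $n=\dim X$, following Eyssidieux \cite{Eyssidieux}. For $n=1$ there is nothing to prove. For $n\geq 2$, let $Z\in|H|$ be a general member of a very ample linear system on $\bar X$; then $Z$ is normal, $Z\cap D$ has simple normal crossings in $Z$, and by the Lefschetz hyperplane theorem $\pi_1((Z\setminus D)^\an)\to\pi_1(X^\an)$ is surjective, so $V|_{Z\setminus D}$ is again semisimple with algebraic monodromy $\bG$ and quasiunipotent local monodromy. Since $h|_{\tilde Z}$ is the $v$-adic harmonic map attached to $Z\setminus D$, the foliation $\cF_v|_{Z\setminus D}$ is exactly the Katzarkov--Zuo foliation of $Z\setminus D$, which by the inductive hypothesis is algebraically integrable, say cut out by $g_Z\colon Z\setminus D\to W_Z$. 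It remains to spread these fibrations out as $Z$ varies: using Simpson's Lefschetz-type theorem \cite{Simpsonlefschetz} to control $h$ on generic pencils, the $g_{Z_t}$ are compatible with the single globally defined logarithmic foliation $\bar\cF_v$, so a general leaf of $\bar\cF_v$ meets each $Z_t$ in a leaf of $\cF_v|_{Z_t\setminus D}$; a dimension count shows the leaves of $\bar\cF_v$ are algebraic, and Stein factorization (with the constructibility of the resulting space of leaves) produces an algebraic map $g\colon\bar X\dashrightarrow W$ whose general fibres are the leaves. Hence $\cF_v$ is algebraically integrable.

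I expect the main obstacle to be the boundary analysis — establishing the logarithmic extension of the spectral $1$-forms and of the foliation across $D$ under the quasiunipotence hypothesis — since this is the genuinely new content over the projective case and requires combining the non-archimedean harmonic theory in the non-proper setting with precise local models near $D$ matching the archimedean nilpotent-orbit asymptotics. A secondary difficulty is the spreading-out step: patching the inductively constructed fibrations on the sections $Z_t$ into a single algebraic map on $\bar X$ and verifying that the leaves on $X$ are no larger than forced by their restrictions to the $Z_t$, which is handled via Simpson's Lefschetz theorem together with a rigidity and dimension argument.
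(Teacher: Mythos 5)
There is a genuine gap, and it occurs at the very first step. After reducing to a finite subset with the same foliation and taking the direct sum, you claim ``it suffices to treat one semisimple $\bar\bQ$-local system $V$'' with reductive monodromy and quasiunipotent local monodromy, and from then on your argument uses only properties of that single $V$. But the statement is false at that level of generality: take $X=A$ a simple abelian surface and $V$ a rank-one $\bar\bQ$-local system whose character $\chi$ has nontrivial composition $v\circ\chi\colon H_1(A,\bZ)\to\bQ$. The $v$-adic pluriharmonic map is the harmonic representative of this (rational!) class, the spectral form is its $(1,0)$-part, and its kernel foliation is a linear foliation of $A$ by complex lines with dense leaves, since $A$ has no proper abelian subvarieties. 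This also refutes the mechanism you propose for algebraicity: rationality/integrality of the building periods does \emph{not} make kernel foliations of holomorphic one-forms algebraic. A single $\bar\bQ$-point of $\cM_B(X)$ is in general not absolute $\bar\bQ$-constructible (its Galois conjugates through Riemann--Hilbert need not be $\bar\bQ$-points), and the absoluteness hypothesis is exactly what excludes such examples; any proof that discards it after the first reduction cannot close. In the paper the absoluteness is used twice in an essential way: it is preserved by restriction to subvarieties and by the graded nearby cycles functor (\Cref{abs basic prop}, \Cref{nearby cycles is abs}), which is what makes the induction legitimate, and it is the input to Simpson's theorem on absolute rank-one sets on abelian varieties (\Cref{simpsonbialg}), which handles precisely the residual abelian case where your argument breaks.

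Structurally, your induction also cannot substitute for the paper's dichotomy. The actual main step (\Cref{KZ main step}) does not produce algebraic integrability directly from hyperplane sections; it shows that either the foliation is algebraically integrable or the commutator subgroup of $\pi_1$ has bounded monodromy image, via the integration map $g\colon\bar Y\to\form^\vee$ on the cover, Simpson's Lefschetz theorem and a $1$-connectedness argument, with Ax--Schanuel for abelian varieties pinning down $\dim\form=\dim a(X)-1$ and the nearby-cycles compatibility (\Cref{characteristic_polynomial_nearby_cycle}) guaranteeing that the boundary carries no positive-dimensional leaves. In your rank-one example the second horn of the dichotomy holds (the commutator is trivial), the inductive input on curves is vacuous, and there is nothing to ``spread out''; the case is then resolved only by the virtually-abelian reduction plus the absolute rank-one analysis, none of which appears in your outline. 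Your boundary idea (quasiunipotence gives a local integral structure, hence logarithmic extension of the spectral forms) is indeed the genuinely new ingredient and matches \Cref{extended_characteristic_polynomial} and \Cref{characteristic_polynomial_nearby_cycle} in spirit, but in the paper it is used to restrict the foliation to the boundary divisors and feed the graded nearby cycles into the induction, not merely to extend the foliation across $D$.
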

    
In the quasiprojective setting the induction step requires the foliation to restrict to a Katzarkov--Zuo foliation in the boundary, even when the local system does not extend.  To do this, the key observation is that in the nonarchimedean setting, a local system with quasiunipotent local monodromy locally has an integral structure at the boundary, so the pluriharmonic map associated to a pluriharmonic norm extends even though the local system may not.  We show that this can be interpreted as equipping the graded nearby cycles functor with a pluriharmonic norm.  The Katzarkov--Zuo foliation of the graded nearby cycles local system on the boundary is then cut out by the restrictions of the symmetric forms cutting out the Katzarkov--Zuo foliation on the open part (see \Cref{characteristic_polynomial_nearby_cycle}).

 \vskip1em
We further introduce two simplifications to Eyssidieux's proof:
\begin{enumerate}
    \item We use the Ax--Schanuel theorem \cite{axsemiab} for abelian varieties in place of the higher-dimensional Castelnuovo--de Franchis theorem.
    \item We use a Steinness criterion due to Mok \cite{mok_stein} instead of the Demailly--Pa\u{u}n theorem \cite{Demailly-Paun}.
\end{enumerate}

\subsection{Density of quasiunipotent local systems and the Simpson--Mochizuki correspondence}  To go beyond semisimple local systems with quasiunipotent local monodromy, a new strategy to prove Steinness is needed.  Part of this is for the same reason as in \cite{EKPR}:  if $\bG$ is not reductive, the pluriharmonic maps described above need not exist.  But even when $\bG$ is reductive, if the local monodromy is not quasiunipotent it is unclear how to proceed.  The mixed variations provided by \Cref{thmES} at points of $\Sigma$ underlying complex variations of Hodge structure with quasiunipotent local monodromy handle both issues (the first as in \cite{EKPR}), since their relative period maps are affine. 

Crucially, to ensure there are enough points of $\Sigma$ underlying variations of Hodge structures with quasiunipotent local monodromy, we need two results.  The first is a generalization of a result of Esnault--Kerz \cite{esnaultkerz}:

\begin{thm}[{\Cref{qu dense in abs}}]\label{intro qu density}Let $X$ be a connected normal complex algebraic space and $\Sigma\subset \cM_{B}(X)(\bC)$ be an absolute $\bar\bQ$-constructible subset.  Let $\Sigma^{\qu}$ be the locus of points with quasiunipotent local monodromy.  Then $\Sigma^{\qu}$ is Zariski dense in $\Sigma$.
\end{thm}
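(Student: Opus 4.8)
The plan is to reduce \Cref{intro qu density} to an assertion about the eigenvalues of the boundary monodromies that generalizes the Esnault--Kerz density theorem from the whole Betti moduli to absolute constructible subsets, and then to establish that assertion arithmetically; this last step is where I expect the genuine difficulty to lie.

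\emph{Reductions.} Since $\Sigma$ has bounded rank we fix a rank $r$ and work with $\cM_B(X,r)$, an affine scheme of finite type over $\bZ$ depending only on the finitely presented group $\pi_1(X^\an)$. Fix a normal compactification $X\subset\bar X$ with boundary components $D_1,\dots,D_k$ and loops $\gamma_1,\dots,\gamma_k$ around them. Then $\cM_B(X,r)^{\qu}=e^{-1}(\mathrm{RU})$, where $e\colon\cM_B(X,r)\to(\bA^r)^k$ records the characteristic polynomials of the images of the $\gamma_i$ and $\mathrm{RU}\subset(\bA^r)^k(\bar\bQ)$ is the countable set of tuples each of whose polynomials has all roots roots of unity; in particular $\cM_B(X,r)^{\qu}$ is a countable union of $\bar\bQ$-closed subschemes. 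Using that $\Sigma$ is $\bar\bQ$-constructible and absolute, I would reduce to showing, for each irreducible component $Z$ of the $\bar\bQ$-Zariski closure of $\Sigma$ — each such $Z$ being defined over a number field, stable under a finite index subgroup of $\operatorname{Gal}(\bar\bQ/\bQ)$, and met densely by $\Sigma$ — that $Z^{\qu}:=Z(\bC)\cap\cM_B(X,r)^{\qu}$ is Zariski dense in $Z$. Because $\Sigma$ is constructible and a dense subset of an irreducible variety is dense in every nonempty open, this yields density of $\Sigma^{\qu}$ in $\Sigma$.

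\emph{Passage to eigenvalues.} Replacing $Z$ by a connected finite dominant cover $\tilde Z\to Z$ we may assume the eigenvalues of the boundary monodromies are regular functions $\lambda_{i,j}\colon\tilde Z\to\mathbb{G}_m$, and we form $\tilde e=(\lambda_{i,j})_{i,j}\colon\tilde Z\to\mathbb{G}_m^{\,rk}$ with irreducible image-closure $\tilde W\subseteq\mathbb{G}_m^{\,rk}$. By construction the preimage in $\tilde Z$ of $\cM_B(X,r)^{\qu}$ is precisely $\tilde e^{-1}(\mu)$, where $\mu\subset\mathbb{G}_m^{\,rk}$ is the set of torsion points; since by generic flatness $\tilde e$ is flat, hence open, over a dense open of $\tilde W$, and since the preimage of a Zariski dense set under an open map is Zariski dense and $\tilde Z\to Z$ is finite surjective, it suffices to prove that $\mu$ is Zariski dense in $\tilde W$. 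By Laurent's theorem (the Manin--Mumford statement for tori) together with the irreducibility of $\tilde W$, this is equivalent to $\tilde W$ being the translate of a subtorus of $\mathbb{G}_m^{\,rk}$ by a torsion point.

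\emph{The arithmetic core and main obstacle.} That $\tilde W$ is such a torsion coset is the generalization of Esnault--Kerz and the step I expect to be hardest; some hypothesis is genuinely required, since a single non--quasiunipotent $\bar\bQ$-point is $\bar\bQ$-constructible, and it is the absoluteness of $\Sigma$ that supplies the missing arithmetic. The approach I would follow is the one in the Esnault--Kerz argument. First, by a standard spreading-out and specialization one reduces to the situation where $X$ is defined over $\bar\bQ$ with the same fundamental group and boundary data, and where $\tilde Z$, $\tilde e$, $\tilde W$ are defined over a number field $K$; absoluteness of $\Sigma$ is exactly what makes this possible compatibly over all Galois conjugates. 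Then, after spreading $(X,\bar X,D)$ and $\tilde Z$ out over a finitely generated $\bZ$-subalgebra of $K$ and reducing modulo primes of good reduction, Grothendieck's local monodromy theorem forces the reductions of the $\bar\bQ$-points of $\tilde Z$ to have quasiunipotent monodromy along the boundary; controlling the remaining (archimedean) places using the complex polarized variations of Hodge structure contained in $\Sigma^{\mathrm{abs}}$, whose boundary-monodromy eigenvalues lie on the unit circle, one concludes that at a Zariski dense set of $\bar\bQ$-points of $\tilde Z$ all of the $\lambda_{i,j}$ are roots of unity. Their images under $\tilde e$ then form a Zariski dense set of torsion points of $\tilde W$, so $\tilde W$ is a torsion coset. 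Feeding this back through the previous paragraph gives $Z^{\qu}$ Zariski dense in $Z$, and ranging over the finitely many components $Z$ proves \Cref{intro qu density}.
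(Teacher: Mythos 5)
You have identified the right target statement (the closure of the boundary-eigenvalue image should be a finite union of torsion cosets, so that Laurent's theorem gives density of torsion values), and your preliminary reductions are broadly parallel to the paper's use of the characteristic-polynomial map $\mathrm{Char}_B$ into $\Sym^r\bG_m$. But the arithmetic core — the only place where the absoluteness hypothesis can do real work — is not proved, and the sketch you give for it does not hold together. A $\bar\bQ$-point of $\cM_B(X,r)$ is just a representation of the \emph{topological} fundamental group with entries in $\bar\bQ$; it carries no arithmetic structure, so "reducing it modulo primes of good reduction" is undefined, and Grothendieck's local monodromy theorem (a statement about $\ell$-adic representations of étale fundamental groups over $p$-adic or finitely generated bases) has nothing to apply to. Supplying that arithmeticity/integrality is precisely the hard content of Esnault--Kerz-type arguments, and absoluteness of $\Sigma$ does not hand it to you pointwise. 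The archimedean half is also a non sequitur: to run a Kronecker-type argument you would need every archimedean absolute value of every Galois conjugate of the eigenvalue \emph{at the points in question} to be $1$; the existence of some $\bC$-VHS points elsewhere in $\Sigma^{\abs}$ imposes no such constraint, and inside this paper the existence of VHS points in every component is itself deduced downstream of the density theorem, so invoking it here is circular. Finally, your framing of components of $\Sigma^{\Zar}$ as "stable under a finite index subgroup of $\Aut(\bC/\bQ)$" misreads what absoluteness means: the $\Aut(\bC/\bQ)$-action on $\cM_B(X)(\bC)$ is transported through the Riemann--Hilbert correspondence (conjugating the regular flat bundle), not the naive action on matrix entries of a $\bQ$-scheme of representations, so the components are not literally Galois-stable subvarieties in the sense your spreading-out step assumes.

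The paper's proof avoids all of this by bringing in the De Rham side. Absoluteness is shown to be \emph{equivalent} to $\bar\bQ$-bialgebraicity for the Riemann--Hilbert correspondence (\Cref{bialg=abs}); the residue/characteristic-polynomial maps assemble into a morphism of $\bQ$-correspondence stacks to $\Sym\Exp_{\bar\bQ}(r)^I$ (\Cref{qu is bialg}); and then Ax--Schanuel for the exponential together with Gelfond--Schneider (\Cref{gelfond}, \Cref{gelfond cor}) identify the pushed-forward bialgebraic pair as the image of a finite union of $\bQ$-translates of $\bQ$-linear subspaces, so rational residues, equivalently torsion eigenvalues, are Zariski dense; pulling back along the residue map gives \Cref{qu dense in bialg} and hence \Cref{qu dense in abs}. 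The point is that bialgebraicity makes both a residue and its exponential algebraic at $\bar\bQ$-points, which is exactly the input Gelfond--Schneider needs and exactly what a Betti-only argument lacks. If you want to salvage your route, the reduction-mod-$p$ core should be replaced by this De Rham/bialgebraicity input (or by a genuine arithmeticity theorem such as companions, which is a much heavier tool than the statement requires).
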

   The idea behind \Cref{intro qu density} is also present in \cite{Budur-Lerer-Wang}.  Here, ``absolute $\bar\bQ$-constructible'' means every Galois conjugate of $\Sigma$ is $\bar\bQ$-algebraic, \emph{where the Galois action is made sense of via the Riemann--Hilbert correspondence}, see \Cref{sect:abssets}. In fact, $\bar\bQ$-absoluteness is equivalent to $\bar\bQ$-bialgebraicity with respect to the Riemann--Hilbert correspondence (see \Cref{bialg=abs}).  With the correct setup, \Cref{intro qu density} follows rather easily from the Riemann--Hilbert correspondence and the Gelfond--Schneider and Ax--Schanuel theorems for the exponential.

   The second is to prove that the non-abelian Hodge correspondence of Simpson and Mochizuki yields a homeomorphism between the moduli spaces of polystable logarithmic Higgs bundles with nilpotent residues and vanishing rational Chern classes, semisimple logarithmic connections with nilpotent residues, and semisimple complex local systems with unipotent local monodromy, at least in the case of curves.

\begin{thm}[\Cref{BettiDeRhamcomparison} and \Cref{comparison}]\label{intro comparison}
    Let $(\bar X,D)$ be a log smooth proper curve.  Then the correspondence via purely imaginary tame harmonic bundles yields identifications
    \[M_{Dol}^\nilp(\bar X,D)(\bC)\cong M_{DR}^\nilp(\bar X, D)(\bC)\cong M_B^\unip(X)(\bC)\]
    where the first is a homeomorphism and the second is a complex analytic isomorphism.
\end{thm}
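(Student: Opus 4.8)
\emph{The plan is to} handle the statement as two independent comparisons, for each of which the underlying equivalence of categories is essentially known, so that the real content is to upgrade it to an isomorphism of moduli spaces with the asserted regularity. I would first treat the de Rham--Betti identification $M_{DR}^\nilp(\bar X,D)(\bC)\cong M_B^\unip(X)(\bC)$ via the Deligne--Riemann--Hilbert correspondence. Restriction to $X=\bar X\setminus D$ sends a logarithmic connection $(E,\nabla)$ with nilpotent residues to a flat connection on $X$, hence to a local system whose local monodromies are $\exp(-2\pi i\operatorname{Res})$, and these are unipotent precisely because the residues are nilpotent; conversely Deligne's canonical extension (with residue eigenvalues in $[0,1)$, forced to be $0$ by unipotency) produces the inverse functor, and semisimplicity is visibly preserved in both directions. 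To pass to moduli I would use that $M_B^\unip$ is a closed analytic subspace of the character variety while $M_{DR}^\nilp$ is the good moduli space of the stack of semisimple logarithmic connections with nilpotent residues; the monodromy (parallel transport) map is then a morphism of complex analytic spaces, and the Deligne extension, which depends analytically on the local system, furnishes an analytic inverse, so the comparison is a biholomorphism. This follows the template of Simpson's construction of $M_{DR}$ and $M_B$ in the projective case; in particular the map is transcendental, not algebraic.

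Second, for the Dolbeault--de Rham identification $M_{Dol}^\nilp(\bar X,D)(\bC)\cong M_{DR}^\nilp(\bar X,D)(\bC)$ I would invoke the tame nonabelian Hodge correspondence on curves of Simpson and Mochizuki (with contributions of Biquard). A polystable logarithmic Higgs bundle $(\bar E,\theta)$ with nilpotent residues and vanishing rational Chern classes carries a tame, purely imaginary harmonic metric $h$; the associated flat connection $D=\nabla_h+\theta+\theta^{\dagger_h}$ admits a logarithmic extension over $(\bar X,D)$ whose residues are again nilpotent, by the asymptotics of purely imaginary tame harmonic bundles. Conversely, Mochizuki's existence theorem for tame harmonic metrics on semisimple logarithmic flat bundles with nilpotent residues produces the inverse assignment, with polystability of the Higgs bundle corresponding to semisimplicity of the flat connection; these assignments are mutually inverse on points. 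To promote this to a homeomorphism I would identify both $M_{Dol}^\nilp$ and $M_{DR}^\nilp$, via the Kobayashi--Hitchin dictionary, with the moduli space of solutions to Hitchin's equations carrying its natural topology ($C^\infty$ on compact subsets of $X$ together with the appropriate weighted estimates near $D$); continuity of the comparison in both directions then follows from continuity of the harmonic metric in families together with Mochizuki's uniform asymptotic estimates, and the compactness theory of tame harmonic bundles controls degenerations towards the boundary of moduli.

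\emph{The hard part} is the Dolbeault--de Rham homeomorphism in the non-proper setting, for two reasons. First, one must match the purely imaginary versus nilpotent-residue normalizations precisely on both sides: the fine asymptotic analysis of Simpson and Mochizuki is needed to see that the harmonic metric attached to a Higgs bundle with nilpotent residues and trivial rational Chern classes is adapted to the \emph{trivial} parabolic structure, so that the resulting flat extension has nilpotent residues and vanishing parabolic degree, and conversely. Second, continuity at the boundary of moduli---that a degenerating family of polystable logarithmic Higgs bundles corresponds to a degenerating family of semisimple logarithmic connections with matching limit---requires uniform control of the harmonic metrics in families near $D$, which is exactly where the tameness hypothesis is used and where Mochizuki's estimates do the real work. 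A subsidiary, bookkeeping obstacle is that the three moduli spaces are built by different methods (GIT in the style of Simpson and Langton for Dolbeault and de Rham, the affine character variety for Betti, gauge theory for the harmonic side), so one must check that the comparison maps are compatible with these structures; this is handled by working with universal families and the representability of each moduli problem in its own category.
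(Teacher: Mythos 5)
Your Betti--de Rham half follows essentially the paper's route: restrict to $X$, use unipotency to force the Deligne extension to have nilpotent residues, and argue that the Riemann--Hilbert assignment is an analytic isomorphism. The one point you pass over lightly is why the Deligne extension ``depends analytically'' on the local system; the paper makes this precise by showing that the Riemann--Hilbert map is a local isomorphism of stacks on the non-resonant locus via deformation theory (\Cref{local iso on good}) --- uniqueness of infinitesimal lifts comes from the residue eigenvalues not differing by nonzero integers, existence from running the Deligne construction over artinian rings --- and then descends to the good moduli spaces using that their analytifications are universal categorical quotients. That is a fillable gap, not a wrong turn.

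The Dolbeault--de Rham homeomorphism is where your proposal has a genuine gap. You propose to identify both $M_{Dol}^{\nilp}(\bar X,D)(\bC)$ and $M_{DR}^{\nilp}(\bar X,D)(\bC)$ with ``the moduli space of solutions to Hitchin's equations with its natural topology'' and to deduce bicontinuity from continuity of the harmonic metric in families. But in the tame, nilpotent-residue setting that identification of the GIT topologies with a gauge-theoretic topology is essentially the statement to be proven and is not available off the shelf; and your fallback --- ``working with universal families and the representability of each moduli problem'' --- cannot supply it, because the harmonic metric, hence the comparison map, does not vary holomorphically or algebraically in families, so there is no morphism of universal families to compare. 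What is actually needed, and what the paper supplies, is a subsequential compactness theorem for \emph{framed} tame nilpotent harmonic bundles with metric-compatible framings and bounded spectral data (\Cref{harmonic sequence}): bounded characteristic polynomials give bounded energy, the Korevaar--Schoen/NPC theory gives locally uniform convergence of the pluriharmonic maps and hence of the monodromies, Mochizuki's adapted holomorphic frames with polynomial norm bounds control the limit at the punctures, and Simpson's operator-norm criterion \cite[Lemma 5.12]{SimpsonmoduliI} converts convergence of the operators into convergence in $R_{Dol}^{\nilp}$ and $R_{DR}^{\nilp}$. One then needs properness of the metric-adapted framed loci over the coarse spaces --- on the de Rham side this uses Kempf--Ness, \Cref{framed stuff is proper}, together with the construction of initial metrics of uniformly bounded energy --- in order to lift an arbitrary convergent sequence in either coarse space to such a framed sequence; only then does ``convergent sequences correspond to convergent sequences after passing to a subsequence'' yield the homeomorphism. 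None of these steps is present in your outline beyond the phrase ``compactness theory of tame harmonic bundles'', so as written the continuity in both directions, in the topology the statement is actually about, is unsupported.
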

This leads to finer control on the $\bC^*$-action on the Betti stack with unipotent local monodromy, whose fixed points underlie variations of Hodge structures.

\subsection{Acknowledgements}
The authors benefited from discussions with S. Boucksom, J. Daniel, C. Sabbah, C. Schnell, C. Simpson,  and T. Mochizuki.  We are particularly grateful to C. Sabbah for communicating the proof of \Cref{lemma_Sabbah}.  B.B. was partially supported by NSF grant DMS-2131688. Y.B. is partially supported by the French Agence Nationale de la Recherche (ANR) under reference ANR-23-CE40-0011 (CYCLADES).

\subsection{Outline} In \Cref{sect:def Stein} we prove the existence of Stein factorization in the definable analytic category, \Cref{steinfactor}. 
 In \Cref{sect:quasiproj} we give an algebraization criterion for coherent sheaves coming from a flat principal bundle.  In \Cref{sect:miniversal} we prove the existence of functorial mixed Hodge structures on the local rings of the Betti stack at points underlying variations of mixed Hodge structures and functorial mixed twistor structures at arbitrary points to prove \Cref{thmES}.  In \Cref{sect:bialg} we develop some general properties of bialgebraic subsets of a stack with respect to an analytic correspondence and prove \Cref{intro qu density} using the Riemann--Hilbert correspondence.  In \Cref{sect:harmonic} we review the general theory of (pluri-)harmonic maps to NPC spaces.  In \Cref{sect:Gm action} we discuss the Dolbeault realization and prove \Cref{intro comparison}. 
 In \Cref{DeligneHitchin sect} we construct the Deligne--Hitchin space and use the results of \Cref{sect:miniversal} to understand its completion along preferred sections.  In \Cref{sect:constructible} we assemble some properties of constructible subsets of the Betti stack including: nonextendability, the equivalence between $\bar\bQ$-bialgebraic (with respect to the Riemann--Hilbert correspondence) and $\bar\bQ$-absolute subsets, the graded nearby cycles functor.  We also define and prove some basic properties of absolute Hodge substacks of the Betti stack.  In \Cref{sect:norms} we develop some properties of pluriharmonic norms on non-archimedean local systems to show that the graded nearby cycles functor is compatible with the Katzarkov--Zuo foliation for semisimple local systems with quasiunipotent local monodromy, \Cref{characteristic_polynomial_nearby_cycle}. 
 In \Cref{sect:KZ} we prove the algebraic integrability of the Katzarkov--Zuo foliation of an absolute $\bar\bQ$-absolute set of semisimple local systems with quasiunipotent local monodromy (\Cref{intro KZ}).  In \Cref{sect:stein} we give a simple characterization of Stein complex spaces which is implicit in the work of Mok.  In \Cref{sect:qu stein} we prove \Cref{mainShaf} in the semisimple case and \Cref{main result} in the semisimple quasiunipotent local monodromy case using the results of \Cref{sect:quasiproj}, \Cref{sect:KZ}, and \Cref{sect:stein}. In \Cref{sect:proofs} we upgrade \Cref{main result} and \Cref{mainShaf} to the general case using \Cref{steinfactor}, \Cref{thmES}, \Cref{intro qu density}, and \Cref{intro comparison}.
 
\subsection{Notation}
Unless otherwise indicated, all algebraic spaces (resp. definable analytic spaces, resp. analytic spaces) $X$ are separated algebraic spaces (resp. definable analytic spaces, resp. analytic spaces) of finite type over $\bC$.  Definability will always be meant with respect to a fixed o-minimal structure for general results, and with respect to $\bR_{\an,\exp}$ in applications.

The only stacks we will consider for the most part (namely the Betti stack of any connected algebraic space, and the De Rham and Dolbeault stacks of a log smooth projective variety in the unipotent case) are global quotients of quasiprojective schemes by a reductive group.  In several places (specifically the De Rham stack of a log smooth algebraic space in the non-quasiunipotent case and in the construction of the Deligne--Hitchin space of a log smooth algebraic space), we will consider stacks which are the quotient of possibly non-separated algebraic spaces (or analytic spaces) which admit a countable open cover by finite-type algebraic spaces (or analytic spaces) by a reductive group. 

Throughout we use the following notation.  For a subset $\Sigma\subset\cM_B(X)(\bC)$ of complex points of the Betti stack, we will denote:  $\Sigma^\ss\subset\Sigma$ the subset of semisimple local systems; $\Sigma^\qu\subset \Sigma$ the subset of local systems with quasiunipotent local monodromy (see \Cref{sect:Betti}); for any $n\in\bN$, $\Sigma^{\qu|n}\subset\Sigma$ the subset of local systems for which the eigenvalues of the local monodromy have order dividing $n$.  For a substack $\cZ\subset \cM_B(X)$, we therefore refer to $\cZ(\bC)^\ss$ and $ \cZ(\bC)^\qu$, but in the last case we denote $\cZ(\bC)^{\qu|n}=\cZ^{\qu|n}(\bC)$ since $\cZ(\bC)^{\qu|n}$ will underlie a closed substack $\cZ^{\qu|n}\subset\cZ$.


\section{Definable analytic Stein factorization}\label{sect:def Stein}
Recall that we say a map $f:X\to Y$ of algebraic spaces, definable analytic spaces, or analytic spaces is a fibration if it is proper and the pullback map $\cO_Y\to f_*\cO_X$ is an isomorphism.  Recall also that if $f:X\to Z$ is a morphism of algebraic (resp. analytic) spaces such that the fibers of $f$ have compact connected components, there is a diagram in the category of algebraic (resp. analytic) spaces 

\begin{equation}\notag\begin{tikzcd}
X\ar[rd,"g",swap]\ar[rr,"f"]&&Z\\
&Y\ar[ru,"h",swap]&
\end{tikzcd}\end{equation}
such that $g$ is a fibration and $h$ has discrete fibers.  Such a diagram is uniquely determined up to isomorphism (compatible with $f$) and is called the Stein factorization of $f$.  In both categories, the map $g:X\to Y$ is the quotient of $X$ by the equivalence relation $x\sim x'$ if $x,x'$ are contained in a connected component of a fiber of $f$.  We call this equivalence relation in the topological/definable topological/definable analytic/analytic categories the \emph{connected fiber equivalence relation}.

The goal of this section is to extend the existence of the Stein factorization to the definable analytic category, at least for seminormal varieties (we say a reduced definable analytic space $X$ is (semi)normal if the analytification is):

\begin{thm}\label{thm:Stein}
Let $X$ be a seminormal definable analytic space, $Z$ a definable analytic space, and $f:X\to Z$ a morphism with compact fibers. Then there is a diagram, unique up to isomorphism (compatible with $f:X\to Z$) 
\begin{equation}\label{eq:Stein}\begin{tikzcd}
X\ar[rd,"g",swap]\ar[rr,"f"]&&Z\\
&Y\ar[ru,"h",swap]&
\end{tikzcd}\end{equation}
where $g$ is a fibration and $h$ is quasifinite.  Moreover, $g$ is the quotient by the connected fiber equivalence relation in the definable analytic category, and  \eqref{eq:Stein} analytifies to the analytic Stein factorization.

\end{thm}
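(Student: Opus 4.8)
The plan is to sidestep the usual cohomological construction of the Stein factorization entirely: I would build $Y$ directly as the quotient of $X$ by the connected fiber equivalence relation in the \emph{definable topological} category, import both its properness and its complex-analytic structure from the ordinary analytic Stein factorization (which exists since $f$ has compact fibers), and then equip this quotient with ``the definable part'' of that analytic structure sheaf. Seminormality of $X$ plays the same role as in the analytic case: it guarantees that the analytic Stein factorization $Y^{\an}$ is reduced and that $\cO_{Y^{\an}}=g^{\an}_*\cO_{X^{\an}}$ coincides with the sheaf of continuous functions pulling back to holomorphic functions on $X^{\an}$, which is what lets a definable structure sheaf be described function-theoretically.

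Concretely, let $X^{\an}\xrightarrow{g^{\an}}Y^{\an}\xrightarrow{h^{\an}}Z^{\an}$ be the analytic Stein factorization and let $R\subset X\times_Z X$ be the connected fiber equivalence relation, so that $R^{\an}=X^{\an}\times_{Y^{\an}}X^{\an}$ and $R\to X$ is proper, being a base change of the proper map $g^{\an}$. The first key input is an o-minimal fact: $R$ is a closed definable subset of $X\times_Z X$, because ``$x,x'$ lie in the same connected component of the same fiber of $f$'' is a definable relation (a definable family of definable sets has a uniform finite bound on its number of connected components and the ``same component'' relation is definable). The second is the o-minimal existence of quotients by definable equivalence relations with proper projections: this produces a definable topological space $Y:=X/R$, Hausdorff since $R$ is closed and proper, a proper definable surjection $g\colon X\to Y$ and a definable map $h\colon Y\to Z$ with finite fibers, whose analytification is $Y^{\an}$ with $g^{\an},h^{\an}$. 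I then equip $Y$ with the structure sheaf
\[
\cO_Y(U)\;:=\;\{\,\phi\colon U\to\bC \ \mid\ g^*\phi\in\cO_X(g^{-1}U)\,\}.
\]
Any such $\phi$ is automatically continuous ($g$ is a quotient map) and definable: its graph is the image of the graph of the definable function $g^*\phi$ under the definable map $g\times\id$. One checks $(Y,\cO_Y)$ is a definable locally $\bC$-ringed space with a natural map $\cO_Y\hookrightarrow\cO_{Y^{\an}}$.

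The heart of the matter—and where compactness of fibers replaces the missing cohomology—is to show $(Y,\cO_Y)$ is a definable analytic space with analytification $Y^{\an}$. Fix $y\in Y$. Since $Y^{\an}$ is an analytic space, some neighborhood of $y$ has a closed embedding $\psi=(\psi_1,\dots,\psi_n)$ into a polydisc in $\bC^n$ with $\psi_i\in\cO_{Y^{\an}}$ near $y$. Then $g^*\psi_i$ is holomorphic on an open neighborhood of the compact set $g^{-1}(y)$ in $X^{\an}$; using properness of $g$ I would choose an $R$-saturated relatively compact definable open $V\supset g^{-1}(y)$ — for instance $V=g^{-1}\{y'\mid g^{-1}(y')\subset V_0\}$ for a small relatively compact definable open $V_0\supset g^{-1}(y)$ — on whose closure the $g^*\psi_i$ still extend holomorphically. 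Hence $g^*\psi_i|_V$ is a restricted analytic function, therefore definable, so $g^*\psi_i\in\cO_X(V)$ and $\psi_i\in\cO_Y(g(V))$. Now $g(V)$ is a definable open neighborhood of $y$ (open because $V$ is $R$-saturated and $g$ is closed), $\psi\colon g(V)\to\bC^n$ is a definable map whose image is a definable subset of $\bC^n$ that is also a locally closed analytic subset — hence a definable analytic subspace — and $\psi$ restricts to a definable homeomorphism onto it identifying $\cO_Y$ with the sheaf of definable holomorphic functions (here using $\cO_{Y^{\an}}=g^{\an}_*\cO_{X^{\an}}$ to see that a definable continuous function with holomorphic pullback is itself holomorphic). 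These local models glue, the transition maps being restrictions of $\id_Y$, so $Y$ is a definable analytic space (finitely many charts suffice, $X$ being of finite type and $g$ proper). It is then formal that $g$ is a fibration — $g$ is surjective and every definable holomorphic function on $g^{-1}U$ is constant on the compact connected fibers of $g$, hence of the form $g^*\phi$, giving $\cO_Y\xrightarrow{\sim}g_*\cO_X$ — that $h$ is quasifinite, and that \eqref{eq:Stein} analytifies to the analytic Stein factorization.

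For uniqueness: given any factorization $X\xrightarrow{g_1}Y_1\xrightarrow{h_1}Z$ of definable analytic spaces with $g_1$ a fibration and $h_1$ quasifinite, each connected component $C$ of a fiber of $f$ is connected and maps into a fiber of the quasifinite map $h_1$, hence to a single point under $g_1$; thus $g_1$ is $R$-invariant and factors through $g$ via a definable analytic morphism $Y\to Y_1$, and symmetrically $Y_1\to Y$, and these are mutually inverse since $g,g_1$ are epimorphisms; the resulting isomorphism analytifies to the unique analytic isomorphism of Stein factorizations. I expect the genuine obstacle to be the third paragraph: recognizing that the structure sheaf descends to the topological quotient and that, near each (compact) fiber, the finitely many analytic functions needed to present $Y$ locally become definable once restricted to a relatively compact definable neighborhood where they are restricted analytic — this, together with the o-minimal definability of $R$ and the o-minimal existence of the definable quotient, is the whole content; everything else is bookkeeping.
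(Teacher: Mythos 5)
There is a genuine gap, and it sits exactly where you predicted the ``genuine obstacle'' would be: the passage from pointwise local models to a definable analytic space structure on $Y$. A definable analytic space must be covered by \emph{finitely} many definable charts (coverings in the definable site are finite), and your construction of a chart near $y\in Y$ depends on $y$ in an essentially non-definable way: you pick holomorphic functions $\psi_1,\dots,\psi_n$ from the analytic Stein factorization on some analytic neighborhood of $y$, and a relatively compact saturated definable $V\supset g^{-1}(y)$ small enough that the $g^*\psi_i$ extend holomorphically past $\overline V$. Neither the number $n$, nor the functions $\psi_i$, nor the size of $V$ is produced uniformly or definably in $y$; since $Y$ is not compact (only $h$ is quasifinite, $Z$ is arbitrary), you cannot extract a finite subcover, and ``finitely many charts suffice, $X$ being of finite type and $g$ proper'' is precisely the statement that needs proof. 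A second, smaller issue: the definability of $g^*\psi_i|_V$ rests on restricted analytic functions being definable, i.e.\ on the ambient o-minimal structure containing $\bR_{\an}$, whereas the theorem is stated for an arbitrary fixed o-minimal structure; the paper's argument never invokes restricted analytic functions.

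The parts of your plan that do work are the same as the paper's preliminary reductions: definability and properness of the connected-fiber relation, the existence of the definable topological quotient via van den Dries (\Cref{prop:def Stein}), the reduction to normal $X$ via definable (semi)normalization, and the uniqueness argument. What the paper supplies, and what your proposal lacks a substitute for, is a \emph{global} and \emph{definable} source of holomorphic functions separating the Stein fibers: after definable covers of $Z$ and Noether normalization, over a dense definable Zariski open $V\subset Z$ the finite \'etale part of $\cY\to Z^{\an}$ is fiberwise compactified into a definable family of punctured curves, mapped to the moduli space of pointed curves, and the coordinates of the \emph{affine} universal punctured curve are pulled back to give a single definable morphism $X_V\to\bA^N$ such that $(f\times g)$ embeds $\cY_{V^{\an}}$ into $V^{\an}\times\bC^N$ (\Cref{finite_locally_affine}); Remmert plus definable Chow-type results and definable normalization then finish. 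Your function-theoretic description of $\cO_Y$ is a reasonable reformulation of the goal, but without an analogue of this uniform construction (or some other mechanism forcing finiteness and definability of an atlas), the proof does not close.
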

Note that allowing for $X$ to be seminormal will be important in the following section.

\subsection{Definable Spec and (semi)normalization}

\begin{prop}\label{prop:Spec}Let $Z$ be a definable analytic space and $\cA$ a definable coherent sheaf of $\cO_Z$-algebras.  Then there is a unique definable analytic space $h:Y\to Z$ equipped with an isomorphism $\phi:h_*\cO_Z\to \cA$ which is finite over $Z$, up to isomorphism as spaces over $Z$ that are compatible  with $\phi$.  We define $Y=\Spec \cA$.   
\end{prop}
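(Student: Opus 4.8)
\emph{Approach.} The plan is to carry out, inside the definable analytic category, the classical construction of the relative analytic spectrum, i.e.\ the equivalence between finite $Z$-spaces and coherent $\cO_Z$-algebras. The geometry will be entirely standard; what is specific to our situation is that every operation used — presenting $\cA$ by generators and relations, cutting out a closed subspace by an ideal sheaf, forming quotient structure sheaves, pushing forward along finite maps, and gluing — has to stay inside the class of definable analytic spaces and definable coherent sheaves, and for this I appeal to the definable GAGA formalism of \cite{bbt} in place of classical analytic coherence.

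\emph{Local construction.} Since $Z$ is of finite type it admits a finite cover by definable charts $Z_i$ on which $\cA$ is generated as an $\cO_{Z_i}$-module by finitely many definable sections $1=a_0,a_1,\dots,a_{n_i}$. Write the products as $a_\alpha a_\beta=\sum_\gamma c^\gamma_{\alpha\beta}a_\gamma$ with $c^\gamma_{\alpha\beta}\in\cO_{Z_i}(Z_i)$ definable (conventions: $a_0=t_0=1$), and fix finitely many definable $\cO_{Z_i}$-module relations among the $a_\alpha$. A routine reduction shows that the ideal $\cI_i\subset\cO_{Z_i}[t_1,\dots,t_{n_i}]$ generated by the $t_\alpha t_\beta-\sum_\gamma c^\gamma_{\alpha\beta}t_\gamma$ together with the lifted module relations has quotient exactly $\cA|_{Z_i}$ as an $\cO_{Z_i}$-algebra, and a Cayley--Hamilton argument on the generating set shows that $\cI_i$ contains the monic polynomials $p_\alpha(t_\alpha):=\det(t_\alpha\,\mathrm{Id}-C_\alpha)$, $C_\alpha=(c^\gamma_{\alpha\beta})_{\gamma,\beta}$, whose coefficients are global definable sections on $Z_i$. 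Let $\tilde\cI_i\subset\cO_{Z_i\times\bC^{n_i}}$ be the definable coherent ideal generated by the images of the generators of $\cI_i$ (so $\tilde\cI_i$ also contains the $p_\alpha(t_\alpha)$), set $Y_i:=V(\tilde\cI_i)\subset Z_i\times\bC^{n_i}$, and let $h_i\colon Y_i\to Z_i$ be the projection. The monic polynomials $p_\alpha$ make $\cO_{Z_i\times\bC^{n_i}}/\tilde\cI_i$ finite over $\cO_{Z_i}$ with pushforward canonically $\cO_{Z_i}[t]/\cI_i\cong\cA|_{Z_i}$ (using the definable coherence of pushforward along finite maps, i.e.\ a definable Weierstrass-type finiteness), and they also bound the fibres of $h_i$ locally uniformly on $Z_i$; since $Y_i$ is closed in $Z_i\times\bC^{n_i}$ this forces $h_i$ to be proper with finite fibres, hence finite.

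\emph{Gluing, uniqueness, conclusion.} I will first record the universal property of the local models: for any definable analytic $p\colon W\to Z$ one has a natural bijection $\Hom_{Z}(W,Y_i)\cong\Hom_{\cO_{Z_i}\text{-alg}}(\cA|_{Z_i},p_*\cO_W)$, since a definable $Z_i$-morphism $W\to Y_i\subset Z_i\times\bC^{n_i}$ is precisely a tuple of definable holomorphic functions on $W$ satisfying the relations generating $\cI_i$. In particular $(Y_i,h_i)$ together with the structural isomorphism $h_{i*}\cO_{Y_i}\cong\cA|_{Z_i}$ is unique up to unique compatible isomorphism. Hence on overlaps $Z_{ij}:=Z_i\cap Z_j$ the two models $Y_i|_{Z_{ij}}$ and $Y_j|_{Z_{ij}}$ are canonically isomorphic over $Z_{ij}$ compatibly with $\phi$; these isomorphisms satisfy the cocycle condition on triple overlaps (again by uniqueness), so the $Y_i$ glue to a definable analytic space $h\colon Y\to Z$ carrying a canonical isomorphism $\phi\colon h_*\cO_Y\xrightarrow{\sim}\cA$. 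Finiteness of $h$ and the existence of $\phi$ are local on $Z$ and thus already established, and uniqueness of $(Y,\phi)$ up to compatible isomorphism is likewise local on $Z$, hence reduces to the uniqueness of the local models. We set $Y=\Spec\cA$.

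\emph{Main obstacle.} I expect essentially all the work to be the definability bookkeeping rather than geometry: one must verify that $V(\tilde\cI_i)$ with its quotient structure sheaf is a definable analytic space with definable coherent structure sheaf, that the pushforward of a definable coherent sheaf along a finite map is again definable coherent (so that $h_{i*}\cO_{Y_i}$ is defined and equals $\cA|_{Z_i}$), and that the gluing data are definable — all of which should follow from the stability properties of definable coherence in \cite{bbt}. The purely analytic ingredients (integral dependence bounding the fibres, Weierstrass division) are exactly as in the classical relative-Spec construction.
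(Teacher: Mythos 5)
Your construction is correct and is essentially the paper's: after using uniqueness to localize to the case where $\cA$ is generated by finitely many definable sections, both arguments realize $\Spec\cA$ as the closed definable analytic subspace of $Z\times\bC^{n}$ cut out by the relations among the generators, with finiteness of the projection coming from integral dependence. The paper states this more tersely—taking the kernel of the surjection $\cO_{\bC^n\times Z}\to\pi^*\cA$, $z_i\mapsto a_i$, instead of spelling out the quadratic and module relations, the Cayley--Hamilton monic polynomials, the Weierstrass-type pushforward identification, and the gluing via the universal property—but the underlying construction is the same.
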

\begin{proof}The uniqueness is clear.  For the existence, we may therefore assume we have a surjection $\cO_Z^n\to\cA$.  Let $a_1,\ldots,a_n$ be the corresponding sections of $\cA$.  Let $\pi:\bC^n\times Z\to Z$ be the second projection and consider the surjection $\cO_{\bC^n\times Z}\to\pi^*\cA$ defined by sending the $i$th coordinate $z_i$ to $a_i$.  The kernel is an ideal sheaf for a subspace $Y\subset \bC^n\times Z$ together with a natural map $\phi:h_*\cO_Z\to \cA$ with the required properties. 
\end{proof}

\begin{prop}\label{prop:normal}Let $Z$ be a reduced definable analytic space.  There is a definable analytic space $h:Y\to Z$ uniquely determined by the property that it analytifies to the analytic normalization.
\end{prop}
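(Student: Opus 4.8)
The plan is to construct $Y$ as $\Spec\cA$ via \Cref{prop:Spec}, for a suitable definable coherent sheaf of $\cO_Z$-algebras $\cA$ whose analytification is the analytic normalization sheaf $\nu_*\cO_{Z^{\nu,\an}}$; given such an $\cA$, existence is immediate. For uniqueness, suppose $h_1\colon Y_1\to Z$ and $h_2\colon Y_2\to Z$ both analytify to the analytic normalization. Finiteness of a morphism can be checked after analytification, so each $h_i$ is finite, hence $Y_i=\Spec\cA_i$ with $\cA_i=h_{i*}\cO_{Y_i}$; moreover $\cA_i$ is torsion-free over $\cO_Z$ and generically equal to $\cO_Z$, so it is naturally a definable coherent $\cO_Z$-subalgebra of the sheaf $\cM_Z$ of definable meromorphic functions on $Z$ (the total quotient ring sheaf of $\cO_Z$). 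Comparing inside $\cM_{Z^{\an}}$ — where $\cA_1^\an$ and $\cA_2^\an$ both coincide with the integral closure of $\cO_{Z^\an}$ — one gets $\cA_1=\cA_2$ as subsheaves of $\cM_Z$, and hence $Y_1\cong Y_2$ over $Z$.

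For existence, by the uniqueness just shown the construction is local on $Z$ and the local answers glue along any definable open cover; and since $Z$ has finitely many irreducible components, each a closed definable analytic subspace, and analytic normalization is the disjoint union of the normalizations of the components, we may assume $Z$ is irreducible and realized as a closed definable analytic subspace of a definable open $\Omega\subseteq\bC^n$. The non-normal locus $S\subsetneq Z$ is then a closed definable analytic subspace; let $\cJ\subseteq\cO_Z$ be its (radical, definable coherent) ideal sheaf, so $V(\cJ)=S$. Since $Z$ is reduced and irreducible, $\cO_Z$ has domain stalks, so locally any nonzero section of $\cJ$ is a non-zero-divisor $g$. Now carry out the Grauert--Remmert--de Jong normalization procedure in the definable category: set $\cO_0=\cO_Z$ and inductively $\cO_{i+1}=\cHom_{\cO_i}(\cJ\cO_i,\cJ\cO_i)$. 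In the reduced setting each $\cO_{i+1}$ is identified, via multiplication operators, with an $\cO_Z$-subalgebra of $\cM_Z$ that locally lies in $\tfrac1{g^{i+1}}\cO_Z$, hence is finite over $\cO_Z$; so one obtains an increasing chain $\cO_0\subseteq\cO_1\subseteq\cdots$ of finite definable coherent $\cO_Z$-subalgebras of $\cM_Z$ whose analytification is the classical de Jong chain converging to $\nu_*\cO_{Z^{\nu,\an}}$.

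The main thing to pin down is the coherence-theoretic input: that $\cHom$ of definable coherent sheaves is again definable coherent and that its formation commutes with analytification, so that each $\cO_{i+1}$ genuinely lives in the definable coherent category with the expected analytification — this should follow from the definable GAGA and coherence formalism of \cite{bbt}. Granting this, termination is clean and uniform, with no harmonic-theoretic or finiteness input beyond the classical analytic case: over $\Omega$ the function $z\mapsto\min\{i:(\cO_{i+1})_z=(\cO_i)_z\}$ is definable with values in $\bN$ (the locus where a map of definable coherent sheaves is an isomorphism is definable), hence has finite image by o-minimality, and classically once $(\cO_{i+1})_z=(\cO_i)_z$ the chain has already reached the normalization at $z$. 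Thus $\cO_N$ stabilizes the chain for a uniform $N$, and $\cA:=\cO_N$ is the desired sheaf; that $\Spec\cA\to Z$ analytifies to $Z^{\nu,\an}\to Z^\an$ is built into the construction of $\Spec$ in \Cref{prop:Spec}. The reductions, the algebra of the de Jong step, and this last compatibility are all routine, so the only real obstacle is the definable coherence of the endomorphism-sheaf construction.
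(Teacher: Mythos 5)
Your construction is, in outline, the same as the paper's: existence via the definable $\Spec$ of \Cref{prop:Spec} applied to a Grauert--Remmert-type chain of endomorphism algebras, and uniqueness by a definability argument (the paper gets uniqueness more directly from \cite[Proposition 2.45]{bbt}: the analytic isomorphism between two candidates is the closure of the identity over the dense definable Zariski open where both are isomorphisms, hence definable). However, two steps of your existence argument have genuine gaps.

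First, you iterate $\cO_{i+1}=\cHom_{\cO_i}(\cJ\cO_i,\cJ\cO_i)$ with the \emph{fixed} ideal $\cJ$ and certify correctness by the claim that $(\cO_{i+1})_z=(\cO_i)_z$ forces $(\cO_i)_z$ to be normal. That criterion (Grauert--Remmert/Oka, as used in de Jong's algorithm) requires the test ideal to be a \emph{radical} ideal of the current ring $\cO_i$, containing a nonzerodivisor and whose zero set contains the non-normal locus; $\cJ\cO_i$ need not be radical in $\cO_i$ for $i\geq 1$, and without radicality the criterion is simply false. For example, for $A=\bC[x,y]/(y^2-x^3)$ and $J=(x)$ one has $V(J)=\{(0,0)\}$, the non-normal locus, and $x$ is a nonzerodivisor, yet $\cHom_A(J,J)=A$ because $J$ is free of rank one, although $A$ is not normal; the same mechanism can stall your chain at a non-normal $\cO_i$ as soon as $\cJ\cO_i$ becomes invertible at a point. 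The classical algorithm re-radicalizes at each step, and this is exactly what the paper's iteration does by taking the ideal of the reduced singular locus of $Z_{i-1}$ at every stage; your algorithm needs the same fix.

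Second, the uniform-termination argument via o-minimality does not work as stated. Each locus $S_i=\{z:(\cO_{i+1})_z=(\cO_i)_z\}$ is definable, but that does not make $z\mapsto\min\{i:z\in S_i\}$ a definable function: its graph is a countable, not finite, union of definable sets, and pointwise stabilization of the stalk chains gives no uniform bound (compare the increasing definable open sets $(-i,i)$, which cover $\bR$ without the chain stabilizing). The correct replacement is a noetherianity/descending-chain argument inside the definable category: the supports of the cokernels of the successive maps are closed definable analytic subsets, and since such subsets have finitely many irreducible components the chain condition holds, so the process stops after finitely many steps and (with the radical test ideal) Oka's criterion then certifies normality. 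This is the route the paper takes. Your remaining reductions (to irreducible $Z$, definable coherence of $\cHom$ and its compatibility with analytification) are fine and are used implicitly by the paper as well.
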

\begin{proof}The uniqueness is clear by \cite[Proposition 2.45]{bbt}, as for any other such morphism $h':Y'\to Z$ the analytic isomorphism $Y^\an\to Y^{\prime\an}$ is the closure of the identity on the (definable) dense Zariski open subset of $Z$ where both $h,h'$ are isomorphisms, hence definable.  

For the existence, recall the Oka criterion for normality \cite[Chap. 6 \S5.2]{GR}:  a reduced space $\cY$ is normal if and only if the natural map $\cO_\cY\to\cEnd(I_{\cY^\sing})$ is an isomorphism, where $I_{\cY^\sing}$ is the ideal of the reduced singular locus.  Moreover, $\cEnd(I_{\cY^\sing})$ is naturally a coherent sheaf of $\cO_X$-algebras (in particular commutative, see \cite[Chap. 6 \S5.1]{GR}).  Thus, defining $Z_0=Z$ and $Z_i=\Spec \cEnd(I_{(Z_{i-1})^\sing})$ inductively, we have a tower of finite morphisms $h_i:Z_i\to Z$.  The supports of the cokernels of the maps
\[\cO_Z=\cO_{Z_0}\to h_{1*}\cO_{Z_1}\to\cdots\to h_{i*}\cO_{Z_i}\to\cdots \]
yield a decreasing chain of subspaces which can only stablize when $Z_i$ is normal, and by noetherian induction the claim is proven.
\end{proof}

\begin{cor}\label{cor:seminormal}Let $Z$ be a reduced definable analytic space.  There is a definable analytic space $h:Y\to Z$ uniquely determined by the property that it analytifies to the analytic seminormalization.
\end{cor}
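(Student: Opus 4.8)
The plan is to transcribe the classical analytic construction of the seminormalization, building it from the definable normalization of \Cref{prop:normal} via operations (finite direct image, fiber product, kernel of a map of coherent sheaves, definable $\Spec$) that we already control and that visibly commute with analytification. First I would apply \Cref{prop:normal} to obtain the definable normalization $\nu\colon Z'\to Z$; its construction there as a stabilizing tower of $\Spec$'s exhibits $\nu$ as a finite morphism, so that $\nu_*\cO_{Z'}$ is a definable coherent sheaf of $\cO_Z$-algebras. Recall that the analytic seminormalization $Z^\an_{\mathrm{sn}}\to Z^\an$ is the finite morphism whose structure sheaf is the subsheaf of $\nu^\an_*\cO_{(Z')^\an}$ consisting of germs of weakly holomorphic functions that are constant on the fibers of $\nu^\an$ (equivalently, that descend to continuous functions on $Z^\an$, $\nu$ being a proper surjection hence a topological quotient).

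To realize this subsheaf definably, I would form the fiber product $Z'\times_Z Z'$ --- a definable analytic space, finite over $Z$ via either projection composed with $\nu$ --- with its two projections $p_1,p_2\colon Z'\times_Z Z'\to Z'$, and set
\[\cA^{\mathrm{sn}}=\ker\!\bigl(\nu_*\cO_{Z'}\xrightarrow{\ p_1^\sharp-p_2^\sharp\ }(\nu\circ p_1)_*\cO_{Z'\times_Z Z'}\bigr).\]
A local section $g$ of $\nu_*\cO_{Z'}$ lies in $\cA^{\mathrm{sn}}$ exactly when $g\circ p_1=g\circ p_2$, i.e. when $g$ is constant on the fibers of $\nu$; moreover $\cA^{\mathrm{sn}}$ is an $\cO_Z$-subalgebra since $p_1^\sharp$ and $p_2^\sharp$ are ring maps, and it is definable coherent, being the kernel of a morphism of definable coherent sheaves. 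Then $h\colon Y:=\Spec\cA^{\mathrm{sn}}\to Z$ from \Cref{prop:Spec} is finite over $Z$.

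It remains to check that $Y$ analytifies to the analytic seminormalization, and this is where every ingredient was chosen to cooperate: $\nu^\an$ is the analytic normalization by \Cref{prop:normal}; fiber products, finite direct image of the structure sheaf, and kernels of maps of coherent sheaves all commute with analytification; and the construction of $\Spec$ in \Cref{prop:Spec} (a subspace of $\bC^n\times Z$ cut out by explicit relations) is plainly compatible with analytification. Hence $Y^\an=\Spec$ of the analytic equalizer subsheaf of $\nu^\an_*\cO_{(Z')^\an}$, which is precisely the structure sheaf of $Z^\an_{\mathrm{sn}}$. Uniqueness is then formal and identical to the argument in \Cref{prop:normal}: two such $h\colon Y\to Z$ and $h'\colon Y'\to Z$ are related by a canonical analytic isomorphism over $Z^\an$ restricting to the identity over the dense Zariski open seminormal locus, hence the isomorphism and its inverse are definable by \cite[Proposition 2.45]{bbt}.

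The main obstacle I anticipate is foundational rather than geometric: one must know that the definable coherent sheaves form an abelian category in which the kernel above lives, that finite direct image of $\cO$ and fiber products along finite maps stay within the definable analytic world, and that all of these operations commute with analytification --- i.e. the usual ``definable GAGA'' bookkeeping of \cite{bbt}. Granting this package, the seminormalization falls out of the same template as \Cref{prop:normal}, with the Oka endomorphism sheaf replaced by the fiber-product equalizer.
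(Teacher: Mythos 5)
Your template---definable normalization from \Cref{prop:normal}, an equalizer/kernel condition on $\nu_*\cO_{Z'}$, then definable $\Spec$ via \Cref{prop:Spec}, with uniqueness by \cite[Proposition 2.45]{bbt}---is exactly the paper's, but there is a genuine gap at the key step: you equalize against the analytic fiber product $Z'\times_Z Z'$ itself, whereas what is needed (and what the paper uses) is its \emph{reduction}, i.e.\ the set-theoretic equivalence relation $R=(Z'\times_Z Z')_{\red}\subset Z'\times Z'$. The condition $p_1^\sharp g=p_2^\sharp g$ on the possibly non-reduced fiber product is strictly stronger than ``$g$ is constant on the fibers of $\nu$'': constancy on fibers only sees the underlying points of $Z'\times_Z Z'$, while your kernel also imposes conditions along its nilpotents. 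Concretely, let $Z$ be the cusp $\{y^2=x^3\}$ with normalization $\nu\colon\bC\to Z$, $t\mapsto(t^2,t^3)$, a homeomorphism; the seminormalization of $Z$ is $\bC$ itself, so $t$ must be a section of the sheaf you are trying to build. But the fiber product has ring $\bC[t,s]/(t^2-s^2,\,t^3-s^3)$, in which $t-s$ is a nonzero nilpotent, so $p_1^\sharp t-p_2^\sharp t=t-s\neq 0$ and $t$ does \emph{not} lie in your kernel; in fact the kernel is exactly $\bC[t^2,t^3]=\cO_Z$, so $\Spec\cA^{\mathrm{sn}}$ returns $Z$ rather than its seminormalization. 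The sentence ``$g\circ p_1=g\circ p_2$ exactly when $g$ is constant on the fibers of $\nu$'' is therefore false as stated, and the analytification claim fails with it: the analytic seminormalization is the quotient of the normalization by the \emph{reduced} equivalence relation, as recalled just before the corollary.

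The repair is precisely the paper's move: let $R$ be the reduced definable subspace of $Z'\times Z'$ supported on $Z'\times_Z Z'$, with projections $r_1,r_2\colon R\to Z'$ and $t=\nu\circ r_i$, and set $\cA=\ker\bigl(r_1^*-r_2^*\colon \nu_*\cO_{Z'}\to t_*\cO_R\bigr)$. Then $\cA^\an$ is the pushforward of the structure sheaf of the analytic seminormalization, and $\Spec\cA$ from \Cref{prop:Spec} gives the desired $Y$. With that single correction the remainder of your argument (finiteness, compatibility with analytification, uniqueness via the dense open locus where the map is an isomorphism) goes through and coincides with the paper's proof.
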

Recall that the analytic seminormalization $\cY\to\mathcal{Z}$ is uniquely characterized by the property that its regular functions are meromorphic functions on $\mathcal{Z}$ which extend continuously \cite{gtsn}.  This means that if $\cY'\to \mathcal{Z}$ is the normalization, $\cY$ is the quotient by the equivalence relation $\cY'\times_\mathcal{Z}\cY'\subset\cY'\times\cY'$ with the reduced structure.
\begin{proof}[Proof of \Cref{cor:seminormal}]The uniqueness is again clear.  Let $h':Y'\to Z$ be the normalization, $\cY\to Z^\an$ the analytic seminormalization, and $R\subset Y'\times Y'$ be the reduced definable subspace whose underlying set is the support of $Y'\times_Z Y'\subset Y'\times Y'$.  Let $r_1,r_2:R\to Y'$ be the two finite projections; the compositions with $h'$ are equal, $t=h'\circ r_i$.  Let $\cA$ be the kernel of $r_1^*-r_2^*:h'_*\cO_{Y'}\to t_*\cO_R$.  It follows from the above that $\cA^\an$ is $h_*\cO_\cY$ and therefore $\Spec\cA$ is the required $Y$.
\end{proof}

\begin{cor}\label{cor:sn and n} \Cref{thm:Stein} holds if and only if it holds additionally assuming $X$ is normal.
\end{cor}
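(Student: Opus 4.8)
The forward implication is immediate, since a normal space is seminormal, so I only need to treat the converse: assuming \Cref{thm:Stein} in the normal case, deduce it in general. Given a seminormal $X$ and $f\colon X\to Z$ with compact fibers, I would first pass to the normalization $\nu\colon X'\to X$ of \Cref{prop:normal}, which is finite with $X'$ normal, and recall (discussion after \Cref{cor:seminormal}) that $X=X'/R'$, where $R'\subset X'\times X'$ is the reduced definable subspace supported on $|X'\times_X X'|$, with finite projections $r_1,r_2\colon R'\to X'$ satisfying $\nu r_1=\nu r_2=:t$. Since a finite preimage of a compact set is compact, $f':=f\circ\nu\colon X'\to Z$ has compact fibers, so the normal case applies and furnishes a definable Stein factorization $X'\xrightarrow{g'}Y'\xrightarrow{h'}Z$ ($g'$ a fibration, $h'$ quasifinite), analytifying to the analytic Stein factorization of $f'^{\an}$.

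The plan is then to descend this factorization along $\nu$. Set $W:=(g'\times g')(R')\subset Y'\times Y'$ with its reduced structure --- a closed definable subspace (image of the definable space $R'$ under a proper map), reflexive, symmetric, and finite over $Y'$ via each projection, since the fibre of $W\to Y'$ over $g'(C')$ (for $C'$ a connected component of some $f'^{-1}(z)$) is supported on the finitely many $g'(C'')$ with $C''$ a connected component of $f'^{-1}(z)$ meeting $\nu^{-1}\nu(C')$. Here I would invoke the o-minimal fact that the fibres of the definable map $f'$ have a uniformly bounded number of connected components, say $\le N$: this forces every orbit of the equivalence relation generated by $W$ to have $\le N$ elements, hence that relation equals the $N$-fold composite $W^{\circ N}$ and is a closed definable analytic subspace $R\subset Y'\times_Z Y'$ --- a \emph{finite} definable equivalence relation on $Y'$ with uniformly bounded orbits. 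One then forms the quotient $Y:=Y'/R$ in the definable analytic category, with finite quotient map $q\colon Y'\to Y$ and structure sheaf $\cO_Y=\ker\big(q_*\cO_{Y'}\xrightarrow{p_1^*-p_2^*}\pi_*\cO_R\big)$, by the same construction as in \Cref{cor:seminormal} --- the one new feature being that the finite base is being built here rather than handed over.

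Granting the quotient, the rest should be formal. As $W\subset R$, the map $g'$ descends to $g\colon X=X'/R'\to Y'/R=Y$, and since $h'$ is constant along $W$ (if $(g'a,g'b)\in W$ then $h'g'a=f'a=f\nu a=f\nu b=f'b=h'g'b$) it descends to $h\colon Y\to Z$, which is quasifinite as $h'$ is and $q$ is finite surjective. The map $g$ is proper (because $g'$ is, $\nu$ is finite surjective, $g\circ\nu=q\circ g'$, and $g$ is separated since $X$ is), and $g_*\cO_X=\cO_Y$: pushing the presentation $\cO_X=\ker(\nu_*\cO_{X'}\xrightarrow{r_1^*-r_2^*}t_*\cO_{R'})$ forward along the left-exact functor $g_*$, and using $g'_*\cO_{X'}=\cO_{Y'}$ to identify $g_*\nu_*\cO_{X'}=q_*\cO_{Y'}$, turns the equalizer condition into invariance along $W$, equivalently along $R=\langle W\rangle$, which defines $\cO_Y$. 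So $g$ is a fibration, $X\xrightarrow{g}Y\xrightarrow{h}Z$ factors $f$ through a fibration and a quasifinite map, and since $h$ is quasifinite every fibre of $f$ is a disjoint union of connected fibres of $g$, so $g$ is automatically the quotient by the connected fiber equivalence relation. Because analytification commutes with proper pushforward, $g^{\an}$ is a fibration and $h^{\an}$ quasifinite, so by uniqueness of the analytic Stein factorization the diagram analytifies to it; this also gives the uniqueness clause (two definable Stein factorizations have the same analytification, and the analytic isomorphism between the middle spaces is definable by \cite[Proposition~2.45]{bbt}).

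I expect the one genuine difficulty to be the quotient $Y'/R$ in the middle step --- i.e. the existence of quotients by finite definable analytic equivalence relations. Unlike in \Cref{cor:seminormal}, where the relevant finite morphism was the normalization and one could apply \Cref{prop:Spec} over $Z$, here $Y'$ is only quasifinite over $Z$, so the quotient $Y$ (equivalently the finite map $q$) must be produced first; I would do this by realizing $|Y|$ as a definable subset of some affine space through the elementary symmetric functions of the $R$-orbits (using that $Y'$ is of finite type), equipping it with the $W$-invariant subsheaf of $q_*\cO_{Y'}$, and checking this is a definable analytic space that analytifies compatibly.
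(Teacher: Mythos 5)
Your skeleton is the same as the paper's (normalize, Stein--factorize the normalization, push the relation $X'\times_X X'$ into $Y'\times Y'$, descend), but the proof as written is incomplete at exactly the step you yourself flag: the existence of the quotient $Y'/R$ in the definable analytic category. Nothing in the paper, and nothing in your proposal, supplies quotients by finite definable analytic equivalence relations; your sketch via elementary symmetric functions of the orbits would itself require a definable embedding of $Y'$ into affine space (or a cover trivializing it), a gluing argument, and a verification that the resulting ringed space is a definable analytic space analytifying to the analytic quotient --- none of which is easier than the statement being proved. So this is a genuine gap, not deferred routine work. (By contrast, your transitive-closure bookkeeping $R=W^{\circ N}$ is correct but beside the point: any function equalized by the two projections of $W$ is automatically invariant under the equivalence relation $W$ generates, so no construction in this argument ever needs the closure.)

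The paper's proof avoids forming an abstract quotient of $Y'$ altogether. Exactly as in \Cref{cor:seminormal}, it takes the image $W\subset Y'\times Y'$ of $X'\times_X X'$ (with its reduced structure and finite projections $p_1,p_2$), passes to the subalgebra $\cA=\ker\bigl(p_1^*-p_2^*\bigr)$ inside the pushforward of $\cO_{Y'}$ to $Z$, and produces $h\colon Y\to Z$ from the definable $\Spec$ construction of \Cref{prop:Spec}, the analytic Stein factorization identifying $\cA^\an$ with the pushforward of the structure sheaf of the analytic quotient. The morphism $g\colon X\to Y$ is then obtained not by a $g_*\cO_X=\cO_Y$ computation but by the descent statement \cite[Proposition 2.55]{bbt}: the composition $X'\to Y'\to Y$ factors through $X$ analytically, hence definably. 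In other words, the quotient you are missing is precisely what the paper replaces by tools it has already established; to repair your argument you should either prove a quotient theorem for finite definable analytic equivalence relations or reduce to the equalizer-plus-$\Spec$ construction as the paper does.
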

\begin{proof}The forward implication is clear.  For the backward implication, let $f':X'\to Z'$ be the maps on normalizations, and $X'\to Y'\to Z'$ the Stein factorization. 
 The equivalence relation in $Y'\times Y'$ defining $Y$ is the image of the equivalence relation $X'\times_XX'\subset X'\times X'$, and as in the previous corollary we obtain $h:Y\to Z$ analytifying to the finite part of the Stein factorization.  The composition $X'\to Y'\to Y$ factors through $X$ analytically, so by \cite[Proposition 2.55]{bbt} we obtain the morphism $g:X\to Y$.
\end{proof}

\subsection{Stein factorization of definable topological spaces}
Recall the following important result of Van den Dries on equivalence relations in the category of definable topological spaces.
\begin{thm}[{Van den Dries \cite[Chap. 10, (2.15) Theorem]{Dries}}]\label{thm vdd quotient}  Let $X$ be a definable topological space and $R\subset X\times X$ a proper definable equivalence relation.  Then the quotient $q:X\to X/R$ exists as a definable topological space.  Moreover the map $q$ is proper.
\end{thm}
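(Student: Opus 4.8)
The plan is to build $X/R$ by hand: first pin down its underlying definable set and the map $q$, then exhibit a definable basis for the quotient topology, the latter being where all the work is.

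\emph{Underlying set and map.} Since $R\subset X\times X$ is a definable equivalence relation, definable choice (definable Skolem functions are available in our o-minimal structure) produces a definable map $e\colon X\to X$ selecting a representative $e(x)$ of each class $R_x:=\{y:(x,y)\in R\}$, so that $e(x)=e(x')$ iff $(x,x')\in R$. Put $Q:=e(X)$, a definable set, and let $q$ denote $e$ viewed as a surjection $X\to Q$; as a set $Q=X/R$. It remains to topologize $Q$ definably so that $q$ becomes a proper topological quotient. Write $\pi_1,\pi_2\colon R\to X$ for the two projections; by hypothesis these are definably proper, so each class $R_x=\pi_2(\pi_1^{-1}(x))$ is definably compact and varies definably with $x$, and $R$ is closed in $X\times X$.

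\emph{The saturated interior.} The device that makes properness bite is the operation sending an open $V\subseteq X$ to its \emph{saturated interior} $V^\sharp:=\{x\in X: R_x\subseteq V\}$, which is definable, $R$-saturated, and contains precisely those $x$ whose whole class lies in $V$. One has the identity $V^\sharp = X\setminus \pi_1\!\left(\pi_2^{-1}(X\setminus V)\right)$: here $\pi_2^{-1}(X\setminus V)$ is closed in $R$, and since $\pi_1$ is definably proper it is a closed map, so $V^\sharp$ is \emph{open}. One checks $V^\sharp\cap W^\sharp=(V\cap W)^\sharp$, so these sets behave well under intersection, and $\bigcup_b W_b^\sharp = \{x: R_x\ \text{contained in some}\ W_b\}$ when $\{W_b\}$ is a basis of $X$. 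To upgrade this to a definable \emph{basis} of the quotient topology I would, possibly after passing to affine charts or refining, work with a uniform neighbourhood base $\{W_{(c,\delta)}\}_{(c,\delta)\in X\times\bR_{>0}}$ for $X$ (with $c\in W_{(c,\delta)}$, monotone and shrinking to $\{c\}$), and set $N_\delta(R_x):=\{y\in X:\exists c\,(x,c)\in R\ \text{and}\ y\in W_{(c,\delta)}\}$, a definable family in $(x,\delta)$. Then $\{N_\delta(R_x)^\sharp:(x,\delta)\in X\times\bR_{>0}\}$ is a definable family of $R$-saturated open sets, and a definable Lebesgue/tube lemma for the definably compact classes $R_x$ shows that every $R$-saturated open $U$ is the union of those $N_\delta(R_x)^\sharp$ it contains: if $x\in U$ then $R_x\subseteq U$ is definably compact, so $N_\delta(R_x)\subseteq U$ for small $\delta$, whence $x\in N_\delta(R_x)^\sharp\subseteq U$. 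Giving $Q$ the topology generated by the definable family $\{q(N_\delta(R_x)^\sharp)\}$ — which equals $\{U: q^{-1}(U)\ \text{open}\}$ — makes $Q$ a definable topological space.

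\emph{Properness and uniqueness.} With this topology $q\colon X\to Q$ is continuous and is a topological quotient by construction, and it is definable since $q=e$ is. Its fibers are the classes $R_x$, which are definably compact, and it is a closed map: for closed $Z\subseteq X$, $q^{-1}(q(Z))=\pi_1(\pi_2^{-1}(Z))$ is closed because $\pi_2^{-1}(Z)$ is closed in $R$ and $\pi_1$ is proper, so $q(Z)$ is closed in the quotient topology. A closed definable map with definably compact fibers is definably proper, so $q$ is proper. Uniqueness up to definable homeomorphism is then formal from the universal property of the quotient: any other definable proper quotient of $X$ by $R$ admits mutually inverse definable continuous maps to and from $Q$. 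The main obstacle is the middle step — verifying that the quotient topology is \emph{definable}; this is what forces the saturated-interior construction together with the uniformly-parametrised neighbourhoods of the equivalence classes, and the reduction to a uniform neighbourhood base plus the accompanying definable Lebesgue lemma is the genuine technical heart of the proof.
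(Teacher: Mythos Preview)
The paper does not supply a proof of this theorem: it is stated with attribution to Van den Dries \cite[Chap.~10, (2.15) Theorem]{Dries} and used as a black box. So there is no proof in the paper for me to compare your attempt against.

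That said, your sketch follows a reasonable line. The saturated-interior identity $V^\sharp = X\setminus \pi_1(\pi_2^{-1}(X\setminus V))$ and its openness via properness of $\pi_1$ are the right mechanism, and the closedness of $q$ via $q^{-1}(q(Z))=\pi_1(\pi_2^{-1}(Z))$ is clean. The step that is genuinely incomplete is the existence of a uniformly definable neighbourhood base $\{W_{(c,\delta)}\}_{(c,\delta)}$ for an arbitrary definable topological space, and the accompanying ``definable Lebesgue/tube lemma''. You gesture at ``passing to affine charts or refining'', but this is precisely where Van den Dries's argument does serious work: one reduces to definable subsets of $\bR^n$, invokes cell decomposition and triangulation, and handles the quotient cell-by-cell. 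Without that reduction (or an explicit construction of the uniform base in your generality), the claim that $\{q(N_\delta(R_x)^\sharp)\}$ is a definable family generating the quotient topology is not yet justified. If you want to turn this into a self-contained proof, you should either restrict to regular definable spaces embeddable in some $\bR^n$ (which is the only case the paper uses anyway, cf.\ the remark after Lemma~2.6) and use the ambient metric balls for your $W_{(c,\delta)}$, or else cite the relevant structural results from Van den Dries that produce such a base.
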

The quotient is both a categorical quotient (in the sense of being universal with respect to morphisms $X\to Y$ which are equal on the relation) and a geometric quotient (in the sense that the topology is the quotient topology and fibers are equivalence classes).

\begin{prop}\label{prop:def Stein}Let $f:X\to Z$ be a morphism of definable topological spaces with compact fibers.  Let $R\subset X\times X$ be the equivalence relation $x\sim x'$ if $x,x'$ are in the same connected component of a fiber of $X$.  Then the quotient $g:X\to Y$ by $R$ exists in the category of definable topological spaces. 
 Moreover, there is a factorization 
\[\begin{tikzcd}
X\ar[rd,"g",swap]\ar[rr,"f"]&&Z\\
&Y\ar[ru,"h",swap]&
\end{tikzcd}\]
uniquely determined by the property that $g$ has connected fibers and $h$ is quasifinite.  Finally $g$ is proper.
\end{prop}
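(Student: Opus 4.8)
The plan is to exhibit $R$ as a \emph{proper definable equivalence relation} on $X$ and then to read off everything except the existence of $h$ from Van den Dries's quotient theorem, \Cref{thm vdd quotient}. First I would check that $R$ is definable. The set $\{(x,x')\in X\times X:f(x)=f(x')\}$ is definable, and inside it $R$ is the locus of pairs lying in a common connected component of the fibre $f^{-1}(f(x))$. That this locus is definable follows from o-minimal definable triviality (Hardt's theorem): partitioning $Z$ into finitely many definable pieces $Z_i$ over which $f$ is definably trivial, $f^{-1}(Z_i)\cong Z_i\times F_i$, the finitely many connected components $C_{i,j}$ of each $F_i$ write $R\cap(f^{-1}(Z_i)\times f^{-1}(Z_i))$ as the finite union of the definable sets $(Z_i\times C_{i,j})\times_{Z_i}(Z_i\times C_{i,j})$. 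Reflexivity and symmetry of $R$ are clear, and transitivity holds since two connected components of a single fibre meeting in a point coincide.

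Next I would verify that $R$ is proper. Every $R$-class is a connected component of a fibre of $f$, hence a closed subset of a compact set and therefore compact, so the two projections $R\to X$ have compact fibres. To obtain that these projections are definably proper it suffices to show $R$ is closed in $X\times X$: if $(x_n,x'_n)\in R$ converges to $(x,x')$ then $f(x)=f(x')=:z$, and if $x,x'$ lay in distinct connected components $D\ne D'$ of $f^{-1}(z)$ one could separate $D$ from the rest of the (compact) fibre by a clopen subset and then by disjoint opens $U\supseteq D$, $U'\supseteq D'$ in $X$; using that $f$ is proper there is a neighbourhood $W\ni z$ with $f^{-1}(W)$ contained in $U$, $U'$ and finitely many disjoint opens around the remaining components, and then $x_n\in U$, $x'_n\in U'$ for $n\gg 0$, contradicting that $x_n,x'_n$ lie in one connected component of $f^{-1}(z_n)$. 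Hence $R$ is a proper definable equivalence relation.

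By \Cref{thm vdd quotient}, the quotient $g\colon X\to Y:=X/R$ exists in the definable topological category, $g$ is proper, and its fibres are exactly the $R$-classes, i.e.\ the connected components of the fibres of $f$; in particular $g$ has connected fibres. Since $f$ is constant on $R$-classes, the universal property of the quotient gives a unique definable continuous $h\colon Y\to Z$ with $f=h\circ g$, and $h^{-1}(z)=g(f^{-1}(z))$ is the set of connected components of $f^{-1}(z)$, finite by o-minimality, so $h$ is quasifinite. For uniqueness, given any factorisation $X\xrightarrow{g'}Y'\xrightarrow{h'}Z$ with $g'$ surjective with connected fibres and $h'$ quasifinite: for a connected component $C$ of $f^{-1}(z)$ the set $g'(C)$ is connected and is contained in the finite set $h'^{-1}(z)$, hence is a point, and together with connectedness of the fibres of $g'$ this identifies the fibres of $g'$ with the $R$-classes; moreover $g'^{-1}(K)\subseteq f^{-1}(h'(K))$ is a closed subset of a definably compact set (using properness of $f$ and continuity of $h'$), so $g'$ is definably proper, hence a quotient map, whence $Y'$ carries the quotient topology for the partition into $R$-classes and $Y\to Y'$ is an isomorphism over $Z$.

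The step I expect to be the main obstacle is the properness of $R$ in the second paragraph — concretely, that $R$ is closed in $X\times X$ so that the projections to $X$ are definably proper. This is exactly where the compactness of the fibres of $f$ is essential (to separate their finitely many connected components by disjoint opens and to push them apart over a neighbourhood in $Z$); the remaining arguments are then formal, given \Cref{thm vdd quotient}.
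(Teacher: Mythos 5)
Your overall route is the paper's: show that $R$ is a definable, closed, proper equivalence relation (definability via definable trivialization of $f$ over a definable stratification of $Z$), invoke \Cref{thm vdd quotient} to obtain the quotient together with properness of $g$, and then deduce the factorization, the quasifiniteness of $h$, and uniqueness formally. The definability step and the formal deductions at the end match the paper's argument and are in fact spelled out in more detail than the paper's ``the remaining statements are clear.''

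The genuine gap is exactly where you predicted trouble: your proof that $R$ is closed (and again your uniqueness argument, where you bound $g'^{-1}(K)$ inside $f^{-1}(h'(K))$ and use definable compactness of the latter) invokes properness of $f$, but the proposition only assumes that the fibers of $f$ are compact. Compact fibers do not give properness --- an open inclusion has compact (even finite) fibers --- and the tube-lemma step ``there is a neighbourhood $W\ni z$ with $f^{-1}(W)$ contained in $U\cup U'\cup\cdots$'' is precisely a closedness/properness property of $f$ at $z$, not a consequence of compactness of $f^{-1}(z)$. As written, your argument therefore establishes the proposition only under the additional hypothesis that $f$ is proper; note that this would also be circular for the intended application, since in the proof of \Cref{thm:Stein} it is \Cref{prop:def Stein} together with \Cref{lem:qf and f} that is used to reduce to the case where $f$ is proper. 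The paper instead argues that $R$ is a closed proper equivalence relation directly from compactness of the fibers (connected components are closed, and disjoint compact subsets of $X$ are separated by a positive distance), with no appeal to properness of $f$. Your instinct that this is the delicate point is sound --- with only compact fibers one must argue on the fibers themselves rather than on tubes around them --- but the repair cannot consist in assuming $f$ proper; you should either supply a fiberwise argument of the paper's kind or make explicit what extra hypothesis on $f$ your proof actually needs.
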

\begin{proof}Connected components are always closed and compact subsets of $X$ are separated by a nonzero distance, so $R$ is a closed proper equivalence relation.  There is a definable stratification of $Z$ by locally closed subspaces over which $f$ is definably trivializable \cite[Chap. 9, (1.2) Theorem]{Dries}, hence $R$ is definable.  Thus, by \Cref{thm vdd quotient} the quotient $Y=X/R$ exists, and the quotient map $g:X\to Y$ is proper.  The remaining statements are clear. 
\end{proof}

\begin{lem}\label{lem:qf and f}Let $f:Y\to Z$ be a quasifinite morphism of regular definable topological spaces.  Then up to a definable cover of $Z$, $f$ factors as $Y\to Z'\to Z$ where $Y\to Z'$ is finite and $Z'\to Z$ is an open embedding on each component. 
\end{lem}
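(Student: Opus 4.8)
The plan is to deduce the statement from van den Dries's definable trivialization theorem \cite[Chap. 9, (1.2) Theorem]{Dries}. Since $Z$ is a definable topological space it admits a finite atlas by charts, each isomorphic to a definable subset of some $\bR^n$ with its Euclidean topology; replacing $Z$ by such a chart is precisely the passage to a definable cover allowed in the statement, so I would first reduce to the case $Z\subseteq\bR^n$. Then $f^{-1}(Z)$ is a finite union of charts of $Y$, and by taking successive differences I would refine this union into a finite definable partition $Y=\bigsqcup_a Y_a$ with each $Y_a$ a definable subset of a Euclidean space. Applying the trivialization theorem to each restriction $f|_{Y_a}\colon Y_a\to Z$, passing to a common refinement of the resulting finite partitions of $Z$, and refining once more by a cell decomposition so that the pieces are locally closed, I obtain a finite definable partition $Z=\bigsqcup_\alpha S_\alpha$ into locally closed definable subsets over each of which every $f|_{Y_a}$, hence $f$ itself, is definably trivial: $f^{-1}(S_\alpha)\to S_\alpha$ is definably homeomorphic over $S_\alpha$ to a projection $S_\alpha\times F_\alpha\to S_\alpha$, where $F_\alpha$ is a finite set because $f$ is quasifinite.

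The key observation is then that, $Y$ being regular, its finite subsets are discrete, so the fiber $F_\alpha$ carries the discrete topology and $f^{-1}(S_\alpha)$ is a disjoint union of $|F_\alpha|$ copies of $S_\alpha$, each carried homeomorphically onto $S_\alpha$ by $f$. Collecting these over all $\alpha$ and relabeling gives a finite definable partition $Y=\bigsqcup_j\Gamma_j$ such that $f$ restricts on each $\Gamma_j$ to a homeomorphism onto a locally closed definable subset $S_j\subseteq Z$. Writing $O_j:=Z\setminus(\overline{S_j}\setminus S_j)$, a definable open set with $S_j=\overline{S_j}\cap O_j$, the map $\Gamma_j\xrightarrow{\,\sim\,}S_j\hookrightarrow O_j$ is a closed embedding, in particular a finite morphism. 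Setting $Z':=\bigsqcup_j O_j$ with its tautological map to $Z$, which is an open embedding on each component, and letting $Y\to Z'$ send $\Gamma_j$ to the $j$-th copy $O_j$, we get that $Y\to Z'$ is a finite disjoint union of closed embeddings, hence finite, and $Y\to Z'\to Z$ recovers $f$. This is the desired factorization.

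I do not expect a deep obstacle here; the content is organizational. The two points requiring genuine care are the reduction to Euclidean charts so that the trivialization theorem literally applies — this is what ``spends'' the definable cover of $Z$ — and the verification that the categorical notions behave correctly: that a closed embedding of definable topological spaces is a finite morphism, that a finite disjoint union of finite morphisms into a disjoint union is finite, and that ``finite'' here must not be read as including surjectivity (which would be impossible, since $f(Y)$ need not be open in $Z$). Regularity enters only to guarantee that finite subsets of $Y$ are discrete, so that a definably trivial quasifinite family is literally a disjoint union of sections.
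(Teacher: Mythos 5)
Your final reassembly step does not produce a morphism: the map $Y\to Z'=\bigsqcup_j O_j$, defined by sending each $\Gamma_j$ into the $j$-th copy of $O_j$, is in general not continuous. The preimage of the open component $O_j\subset Z'$ is exactly $\Gamma_j$, and the $\Gamma_j$ are only locally closed in $Y$ (they lie over the locally closed strata of a definable partition of $Z$), not open. Already for $f=\mathrm{id}_{\bR}$ with the cell partition $(-\infty,0)\sqcup\{0\}\sqcup(0,\infty)$ your construction yields a discontinuous map: the component of $Z'$ attached to the stratum $\{0\}$ is all of $\bR$, and its preimage is the single non-open point $0\in Y$. In the situations where the lemma has real content one cannot avoid nontrivial partitions, since the fiber cardinality of a quasifinite $f$ genuinely jumps; then points of $Y$ lying over a small stratum are limits of the sections over adjacent open strata, and these are precisely the points your map tears apart (continuity, and hence also properness, of the total map fails). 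It is telling that regularity enters your argument only through the automatic discreteness of finite definable sets: the actual difficulty — the failure of properness of $f$ coming from points of $Y$ "escaping" over the boundary — is never engaged.

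What is needed is a decomposition of the preimages into \emph{open} pieces, so that a piecewise-defined map to a disjoint union is continuous, and producing such a decomposition is the real content of the paper's proof. There one first passes to a definable cover of $Z$ on which a single coordinate separates the points of the fibers, and compactifies $Y$ fiberwise inside $\bP^1_\bR\times Z$, obtaining $Y\subset\bar Y$ open with $\bar f\colon\bar Y\to Z$ finite; one then chooses compatible triangulations in which $\partial Y=\bar Y\setminus Y$ is a subcomplex and each open simplex of $Z$ is evenly covered by open simplices of $\bar Y$, and takes the cover of $Z$ by open stars $Z(D)$. Then $\bar f^{-1}(Z(D))$ is a disjoint union of \emph{open} stars $\bar Y(C)$, each finite over $Z(D)$, and the stars meeting $\partial Y$ are handled by shrinking the base to the open subset $Z(D)\setminus\bar f(\partial Y)$ — this is exactly why the components of $Z'$ are allowed to be proper open subsets of the cover members. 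So the trivialization theorem applied stratum by stratum does not suffice; without the compactification-plus-star-cover mechanism (or some substitute producing open pieces upstairs) your factorization does not exist in the category of definable topological spaces, and this is a genuine gap.
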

\begin{proof}First observe that we may assume $f$ factors as $Y\to\bar Y\to Z$ where $j:Y\to\bar Y$ is an open embedding and $\bar f:\bar Y\to Z$ is finite.  Indeed, we may assume $Y$ is embedded in $\bR^n$, and therefore that $f$ factors as $Y\to \bR^n\times Z\to Z$ where the first map is a locally closed embedding and the second is the projection.  After a definable open cover of $Z$, we may assume the last coordinate separates points in the fiber, and thus that $n=1$.  We then take $\bar Y$ to be the closure of the image of $Y$ in $\bP^1_\bR\times Z$.  Let $\partial Y=\bar Y\setminus Y$, which is closed in $\bar Y$.

The argument of \cite[Proposition 2.4]{bbt} produces the following, possibly after a definable cover:  triangulations $\{C\}$ and $\{D\} $ of $\bar Y$ and $Z$ such that 
\begin{enumerate}
\item $\partial Y$ is a subcomplex of $\bar Y$.
\item for each open simplex $D$ of $Z$, $\bar f^{-1}(D)$ is a disjoint union of open simplices of $\bar Y$, each mapping isomorphically to $D$;
\item the closure of each simplex in $\bar Y$ injects into $Z$.
\end{enumerate}
As in \cite{bbt}, for any simplex $D$ of $Z$ take $Z(D)$ to be the star of $D$ and likewise for $\bar Y(C)$.  Then $\{Z(D)\}$ is a definable open cover of $Z$ for which $\bar f^{-1}(Z(D))$ is the disjoint union of the stars $Y(C)$ of the lifts $C$ of $D$.  Moreover, for any $D$, and any lift $C$ of $D$, $\bar Y(C)\cap \partial Y\neq\varnothing$ if and only if $C\subset \partial Y$.  Finally, if $C\subset \partial Y$, $Y(C):=\bar Y(C)\cap Y$ is finite over $Z(D)\setminus \bar f(\partial Y)$, which is open in $Z(D)$.  The cover $\{Z(D)\}$ then satisfies all the required properties. 
\end{proof}
\begin{rem}
    Note that the definable topological spaces underlying definable analytic spaces are locally isomorphic to a locally closed subset of $\bR^n$, and are therefore regular (see \cite{Dries}). 
\end{rem}

\subsection{Proof of \Cref{thm:Stein}} 

The Stein factorization \eqref{eq:Stein} exists if and only if the quotient $g:X\to Y$ by the connected fiber equivalence relation exists in the category of definable analytic spaces, hence the uniqueness.  The Stein factorization \eqref{eq:Stein} exists both in the analytic category and the definable topological space category, by \Cref{prop:def Stein}.  In both categories the map $g:X\to Y$ is the quotient by the connected fiber equivalence relation, hence the uniqueness, and for both the map $|g|:|X|\to| Y|$ on underlying topological spaces is also the quotient by the connected fiber equivalence relation, so the diagrams on underlying topological spaces are isomorphic.  It follows by \cite[Proposition 2.45]{bbt} that if a morphism $g:X\to Y$ exists in the definable analytic category which gives the quotient in the analytic and definable topological space categories then it is the quotient in the definable analytic category.

We now show the existence.  By \Cref{cor:sn and n} we may assume $X$ is normal. By the uniqueness statement we may freely pass to definable open covers of $Z$. In particular, by \Cref{prop:def Stein} and \Cref{lem:qf and f} we may assume $f$ is proper. We may also assume $Z$ is a basic definable analytic space---that is, the zero-locus of finitely many definable analytic functions on a definable open subset of $\bC^n$.  By definable Noether normalization \cite[Theorem 2.14]{peterzilstarchenko}, we may assume there is a finite linear projection $Z\to A$ for $A\subset \bC^m$ open, so we may replace $Z$ with $A$ and thereby assume $Z$ is a definable open subset of $\bC^n$. The existence will be a consequence of the following result.

\begin{prop}\label{finite_locally_affine}
Let $Z$ be a definable open subset of $\bC^n$. Let $f\colon X \to Z$ be a proper surjective morphism of normal definable analytic spaces. Let $X^{\an} \to \cY \to Z^{\an}$ be the Stein factorization of its analytification. Then, up to passing to a definable cover of $Z$, there exists $V = Z \setminus T \subset Z$ the complement of a nowhere-dense Zariski-closed definable subset $T \subset Z$, a positive integer $N$ and a morphism $g\colon X_V:=X\times_Z V \to \bA^N$ such that there is a commutative diagram
\[\begin{tikzcd}
    X_V^\an\ar[dr]\ar[rr,"(f\times g)^{\an}"]&&V^\an\times\bC^N\\
    &\cY_{V^\an}\ar[ur]&
\end{tikzcd}\]
where the right diagonal map is a closed immersion of $\cY_{V^\an}:=\cY\times_{Z^\an}V^\an$.
\end{prop}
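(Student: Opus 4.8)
The plan is to show that, after removing from $Z$ a nowhere-dense Zariski-closed definable subset $T$ and passing to a definable cover, the analytic Stein factorization $\cY\times_{Z^\an}V^\an\to V^\an$ becomes a finite \'etale cover; such a cover is automatically a definable analytic space by \Cref{prop:Spec}, and the map $g$ can then be produced by pulling back generators of its structure sheaf.

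\emph{Step 1: reduce to a finite \'etale Stein factorization over a Zariski-open.} Recall that $\cY$ is normal (the Stein factorization of a proper morphism from the normal space $X^\an$) and that $h\colon\cY\to Z^\an$ is finite, and note that Stein factorization commutes with open base change, so $\cY_W:=\cY\times_{Z^\an}W$ is the analytic Stein factorization of $X^\an\times_{Z^\an}W\to W$ for any open $W\subseteq Z$. Applying Thom's first isotopy lemma to a definable-analytic Whitney stratification of the proper morphism $f$ (cf. the definable triviality theorem \cite[Chap.~9, (1.2) Theorem]{Dries}), there is a nowhere-dense Zariski-closed definable subset of $Z$ outside of which $f$ is a topological fiber bundle; over the complement $V_0$ the number $k$ of connected components of the fibers of $f$ — equivalently the number of points in the fibers of $\cY\to Z^\an$ — is constant. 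Discarding components of $\cY$ not dominating $Z$ (adding their images to $T$), the morphism $\cY_{V_0}\to V_0$ is finite and, since we are in characteristic $0$, generically \'etale; removing the (nowhere-dense, Zariski-closed, definable) non-\'etale locus, we obtain $V=Z\setminus T$ with $\cY_V\to V^\an$ finite \'etale of degree $k$. We may moreover enlarge $T$ to a hypersurface, so that, after passing to a definable cover of $Z$, we may assume the definable coherent sheaves we encounter on $V$ are generated by finitely many global sections.

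\emph{Step 2 (the crux): $\cY_V$ underlies a definable analytic space finite over $V$.} Cover $V$ by contractible definable open sets $\{V_\alpha\}$ — for instance the open stars of a definable triangulation, whose multiple intersections are again contractible — so that the finite \'etale cover $\cY_V\to V^\an$ splits over each $V_\alpha$ into $k$ copies of $V_\alpha$, with gluing data a locally constant cocycle valued in the symmetric group $S_k$. Then $h_*\cO_{\cY_V}$, being $\cO_{V_\alpha}^{\oplus k}$ with coordinatewise multiplication over each $V_\alpha$ and locally constant transition isomorphisms, is a definable coherent sheaf of $\cO_V$-algebras; by \Cref{prop:Spec} it defines a definable analytic space $Y=\Spec h_*\cO_{\cY_V}$, finite over $V$, with $Y^\an=\cY_V$. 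The analytic Stein map $X_V^\an\to\cY_V$ is definable: it is a section of the finite \'etale morphism $X_V\times_VY\to X_V$ obtained by base change, so its image is a clopen — hence definable — analytic subspace of the definable analytic space $X_V\times_VY$, which exhibits $q\colon X_V\to Y$ as a morphism of definable analytic spaces (and a fibration, since its analytification is one).

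\emph{Step 3: construction of $g$ and conclusion.} By the last sentence of Step 1 we may write $h_*\cO_{\cY_V}$ as the $\cO_V$-module generated by finitely many global sections $u_1,\dots,u_N$, i.e.\ by definable analytic functions on $Y$; set $g:=(u_1,\dots,u_N)\circ q\colon X_V\to\bA^N$. Then $(f,g)=\iota\circ q$, where $\iota=(h,u_1,\dots,u_N)\colon Y\to V\times\bC^N$ is a closed immersion because $h$ is finite and the $u_i$ generate $h_*\cO_Y$ as an $\cO_V$-algebra (a fortiori as a module). Analytifying gives the required commutative diagram, with $\cY_{V^\an}=Y^\an$ closed in $V^\an\times\bC^N$ via $\iota^\an$ and $(f\times g)^\an$ factoring through $q^\an$. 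The main obstacle is Step 2: realizing the restriction of the analytic Stein factorization over the open part $V$ as a definable analytic space finite over $V$, equivalently realizing $h_*\cO_{\cY_V}$ as a definable coherent sheaf of algebras. This works precisely because over $V$ the Stein factorization is finite \'etale, hence definably locally trivial with locally constant gluing, so that \Cref{prop:Spec} applies; over the bad locus $T$ the multiplication on the structure sheaf genuinely varies and no such description is available, which forces one to excise $T$ here (and to treat $T$ separately, by induction on $\dim Z$, outside of this proposition).
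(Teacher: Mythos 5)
Your Steps 1--2 are essentially fine, and in fact they reproduce (in a more hands-on way) the opening of the paper's own argument: the paper also first shrinks to a Zariski-closed definable $T$ off which $\cY\to Z^\an$ is \'etale, and then invokes definable Riemann existence (quotients by closed \'etale equivalence relations, \cite[Proposition 2.57]{bbt}) to get a finite \'etale definable cover $U\to V$ analytifying to $\cY_{V^\an}$; your triangulation-with-locally-constant-gluing argument is a reasonable proof of that citation, and the definability of $q\colon X_V\to Y$ via the clopen graph plus \cite[Proposition 2.45]{bbt} is also in order. So the step you call ``the crux'' is not where the difficulty lies.

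The genuine gap is the sentence at the end of Step~1, on which all of Step~3 rests: ``after passing to a definable cover of $Z$, we may assume the definable coherent sheaves we encounter on $V$ are generated by finitely many global sections.'' Coherence of $h_*\cO_Y$ only gives finite generation after passing to a definable cover of \emph{$V$}, and the proposition only permits covers of \emph{$Z$}: for a point $t\in T$, every definable open $W\ni t$ in $Z$ meets $T$, so you need $h_*\cO_Y$ to be generated by finitely many \emph{definable} sections over the punctured set $W\setminus T$, and nothing guarantees this. The cover $Y\to V$ can have nontrivial monodromy around $T$; analytically one can extend it across $T$ by Grauert--Remmert and thereby produce generating \emph{analytic} sections near $t$, but there is no definable version of that extension available at this stage (producing one is essentially the content of \Cref{thm:Stein}, proved by induction using this very proposition), and an arbitrary analytic section of $(h_*\cO_Y)^\an$ need not be definable. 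In other words, the whole difficulty of \Cref{finite_locally_affine} is precisely to manufacture finitely many global definable (indeed algebraic) functions that separate the sheets of $\cY_V$ uniformly up to the boundary $T$, and this is what the paper's seemingly roundabout construction does: project $Z\to B\subset\bC^{n-1}$ finitely on $T$, extend the cover fiberwise to a definable proper family of curves $C_0\to B_0$ punctured along a divisor $q_0$ (this is the definable extension across $T$, done by hand in relative dimension one), rigidify by a further divisor $D_0$ so the pointed fibers have no automorphisms, map $B_0$ to the moduli space, and pull back coordinates of the affine universal punctured curve $\cC^\circ_A$ to get $g$. Your proposal assumes the output of this construction as an unproved global-generation claim, so as written it does not prove the proposition.
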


\begin{cor}\label{cor almost Stein}Assume the hypothesis of \Cref{finite_locally_affine}.  Then, up to passing to a definable open cover of $Z$, there is a reduced irreducible definable analytic space $Y'$ and a factorization $X\to Y'\to Z$ with $X\to Y'$ proper surjective and $Y'\to Z$ finite which analytifies to the Stein factorization of $X^\an\to Z^\an$ over a dense definable analytic Zariski open subset $V\subset Z$.
\end{cor}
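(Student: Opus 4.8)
The plan is to realize the Stein factorization $\cY$, over a dense open of $Z$, as the closure of the graph of the map $g$ produced by \Cref{finite_locally_affine}, and then to cut this closure down to a finite cover of \emph{all} of $Z$ by contracting its positive-dimensional fibers over the boundary, which will turn out to sit over a locus of codimension at least two in $Z$.

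First I would apply \Cref{finite_locally_affine}: after passing to a definable open cover of $Z$ we get a nowhere-dense Zariski-closed definable $T\subset Z$, its complement $V=Z\setminus T$, an integer $N\geq 1$, and a morphism $g\colon X_V\to\bA^N$ with $(f\times g)^\an$ factoring through a closed immersion $\cY_{V^\an}\hookrightarrow V^\an\times\bC^N$. We may assume $X$ is irreducible (its connected components being disjoint), and we write $n=\dim Z$. Let $W:=(f\times g)(X_V)\subseteq V\times\bC^N$. Since $f=\pi\circ(f\times g)$ is proper and the projection $\pi\colon V\times\bC^N\to V$ is separated, $f\times g$ is proper, so $W$ is a closed definable analytic subspace of $V\times\bC^N$; and because $X_V\to\cY_{V^\an}$ is a fibration through which $g$ factors, $W^\an=\cY_{V^\an}$, so $W\to V$ is finite and $W$ is normal, irreducible, of dimension $n$.

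Next I would take the closure $\bar W$ of $W$ in $Z\times(\bP^1)^N$: a closed definable analytic subspace (closures of definable analytic subspaces are definable analytic), irreducible of dimension $n$, proper over $Z$, with $\bar W\cap\bigl(V\times(\bP^1)^N\bigr)=W$ — the latter because a point of $\bar W$ lying over $v\in V$ is a limit of points $(f(x_i),g(x_i))$ with $f(x_i)\to v$, so the $x_i$ eventually lie in a compact set and $g(x_i)$ stays bounded. Let $\tilde W\to\bar W$ be the definable normalization (\Cref{prop:normal}); it is irreducible, normal, proper over $Z$, and agrees with $W$ over $V$. Now let $B\subseteq Z$ be the locus over which $\tilde W\to Z$ has positive-dimensional fibers; it lies in $T$, and $\codim_Z B\geq 2$, since the non-quasifinite locus of $\tilde W\to Z$ is a proper closed analytic subset of the irreducible $n$-dimensional $\tilde W$ (hence of dimension $\leq n-1$) which surjects onto $B$ with positive-dimensional fibers. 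Over $Z\setminus B$ the map $\tilde W|_{Z\setminus B}\to Z\setminus B$ is proper and quasifinite, hence finite by \Cref{lem:qf and f}; being finite and definable, agreeing with $W\to V$ over the dense $V$, with normal irreducible source, it coincides there with the restriction of $\cY\to Z^\an$ by uniqueness of normalization. It then remains to extend this finite cover across $B$: I would set $\cB:=(\tilde W|_{Z\setminus B}\to Z\setminus B)_*\cO$, a definable coherent sheaf of $\cO_{Z\setminus B}$-algebras finite over $\cO_{Z\setminus B}$, and $\cA:=j_*\cB$ for $j\colon Z\setminus B\hookrightarrow Z$; using that $Z$ is smooth, $\codim_Z B\geq 2$, and $\cA^\an=\pi_*\cO_\cY$ is $S_2$ on the normal space $Z^\an$, one concludes that $\cA$ is definable coherent, finite over $\cO_Z$, with $\cA^\an=\pi_*\cO_\cY$. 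Then $Y':=\Spec\cA$ (\Cref{prop:Spec}) is finite over $Z$, satisfies $Y'^\an=\cY$ (so $Y'$ is reduced and irreducible) and $Y'|_V^\an=\cY_{V^\an}$; and the analytic Stein map $X^\an\to\cY=Y'^\an$ over $Z^\an$, restricting over $V$ to the definable morphism $X_V\to W$, is a morphism of definable analytic spaces by \cite[Proposition 2.55]{bbt}, proper and surjective because it is so analytically.

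The hard part will be the extension step across $B$: showing that $j_*\cB$ is again definable coherent and finite over $\cO_Z$. The point is that one cannot simply take a definable direct image along the non-finite proper map $\bar W\to Z$ — that coherence is exactly what is unavailable in the definable analytic category, and is the reason \Cref{finite_locally_affine} was needed in the first place — so one must instead exploit that the contraction happens only over the codimension-$\geq 2$ locus $B$, together with the smoothness of $Z$ and the $S_2$ property, to extend the already-constructed finite cover $\tilde W|_{Z\setminus B}\to Z\setminus B$ purely within the definable category; the identification with $\cY$ over $Z\setminus B$ is what guarantees the extension stays \emph{finite}, not merely proper, over $Z$.
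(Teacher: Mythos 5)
Your proposal does not follow the paper's route, and it has a genuine gap at exactly the step you flag as ``the hard part.'' Your plan is to take the closure $\bar W$ of the graph over $V$ inside $Z\times(\bP^1)^N$, normalize, show the positive-dimensional fibers sit over a locus $B\subset T$ of codimension $\geq 2$, and then extend the finite algebra across $B$ by setting $\cA=j_*\cB$ and invoking smoothness of $Z$ plus an $S_2$/Hartogs argument. But the definable coherence and finiteness of $j_*\cB$ is precisely the kind of pushforward/coherence statement that the definable analytic category does not supply (the whole point of this section of the paper is that such sheaf-theoretic arguments are unavailable --- ``cohomology is poorly behaved in the definable analytic category''), and you give no argument for it; knowing that $\cA^\an=\pi_*\cO_\cY$ is coherent and $S_2$ analytically says nothing about definability of the extension. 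A second, earlier, unjustified step is the parenthetical claim that ``closures of definable analytic subspaces are definable analytic'': the classical Remmert--Stein theorem does not apply here (the boundary $(T\times(\bP^1)^N)\cup(Z\times H_\infty)$ has dimension at least $\dim W$), and while a definable Bishop/Remmert--Stein-type theorem in the style of Peterzil--Starchenko could plausibly be invoked using the o-minimal frontier-dimension drop, you neither state nor cite such a result, and \cite[Proposition 2.45]{bbt} only converts a set already known to be closed, definable \emph{and analytic} into a definable analytic subspace. The intermediate material (properness of $f\times g$ over $V$, the identification $W^\an=\cY_{V^\an}$, $\codim_Z B\geq 2$, uniqueness of the normal finite extension over $Z\setminus B$) is fine, but the route you chose forces you into tools the category does not provide.

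Note also that the statement only asks for a finite $Y'\to Z$ agreeing with the Stein factorization over a \emph{dense open} $V$, so there is no need to control $Y'$ over $T$ beyond finiteness. The paper exploits this: after a further definable cover one may assume the ideal sheaf of $T$ is globally generated, and multiplying $g$ by sufficiently high powers of these generators makes $g$ extend from $X_V$ to all of $X$ (by \cite[Lemma 3.2]{bbt2}, since $g$ has definable, hence moderate, growth along $f^{-1}(T)$). Then $f\times g\colon X\to Z\times\bC^N$ is proper, its image is closed by Remmert, clearly definable, hence a definable analytic subvariety $Y'$ by \cite[Proposition 2.45]{bbt}; it is automatically finite over $Z$ because its fibers are compact analytic subsets of the affine space $\bC^N$, and over $V$ it contains $\cY_{V^\an}$ as a dense Zariski open by construction. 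This one-step argument removes both the closure/compactification issue and the codimension-two extension problem; if you want to salvage your approach, you would need to supply a definable extension theorem for analytic sets at the closure step and, more seriously, a definable Hartogs-type extension for finite $\cO_Z$-algebras across $B$, neither of which is in the paper's toolkit.
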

\begin{proof}[Proof of \Cref{cor almost Stein} assuming \Cref{finite_locally_affine}]
    After passing to a definable cover of $Z$, we may assume the ideal sheaf of $T$ is globally generated, and by multiplying $g$ by sufficiently high powers of these generators we may assume the function $g$ extend to $X$, by \cite[Lemma 3.2]{bbt2}.  Consider the definable analytic map $f\times g \colon X\to Z\times\bC^N$. Being a proper map, it follows from Remmert theorem that its image is a closed analytic subvariety which is clearly definable, hence by \cite[Proposition 2.45]{bbt} it is a definable closed analytic subvariety $Y'$ of $Z\times\bC^N$.  This variety is finite over $Z$ and its analytification contains $\cY_{V^\an}$ as a dense Zariski open subset by construction.
\end{proof}
\begin{proof}[Proof of \Cref{thm:Stein} assuming \Cref{cor almost Stein}]
Let $Y\to Y'$ be the normalization of $Y'$, as guaranteed by \Cref{prop:normal}.  Since $X$ is normal, the analytic Stein factorization $\cY$ is normal, hence equal to the normalization of $Y^{\prime\an}$.  By \cite[Proposition 2.45]{bbt} we therefore have a factorization $X\to Y\to Z$ in the definable analytic category.  
\end{proof}

We now proceed with the proof of Proposition \ref{finite_locally_affine}. Let $X^\an\to\cY\to Z^\an$ be the analytic Stein factorization. Observe that there is a nowhere dense reduced closed definable subspace $T\subset Z$ such that $\cY\to Z^\an$ is \'etale in corestriction to the complement $V:=Z\setminus T$.  Indeed, we may take $T$ to be closure of the locus of points where the fiber of $X\to Z$ is everywhere nonreduced on some connected component. 
 We may as well assume $T$ has pure dimension $n-1$. By definable Riemann existence (which follows from the existence of quotients by closed \'etale equivalence relations \cite[Proposition 2.57]{bbt}), this means there is a finite \'etale cover $U\to V$ which analytifies to the restriction of the map $\cY\to Z^\an$ to $V^\an$.

Proposition \ref{finite_locally_affine} is clearly true when $\cY \to Z^{\an}$ is finite étale, so in particular true over $V$. Finally, we may further assume there is a linear projection $Z\to B\subset \bC^{n-1}$ for an open definable $B\subset \bC^{n-1}$ which is finite on $T$ by another application of definable Noether normalization, and we therefore think of $Z$ as a definable open in $B\times \bC$.  The map $p:T\to B$ is \'etale over some dense definable Zariski open $B_0\subset B\subset\bC^{n-1}$.


\begin{lem}Let $B\subset\bC^{n-1}$ be a definable open subset, $T\subset B\times\bC$ a reduced definable analytic subspace of pure dimension $n-1$ which is finite over $B$, $T\subset Z\subset B\times \bC$ a definable open neighborhood.  Let $U\to V$ be a finite connected \'etale cover of $V:=Z\setminus T$.  Then up to passing to a definable open cover of $B$, shrinking $Z$ as a neighborhood of $T$, and taking connected components of $Z$, there is a dense Zariski open subset $B_0\subset B$ such that, setting $U_0\subset U$ to be the preimage of $B_0$ in $U$, we have a commutative diagram
\[\begin{tikzcd}
U_0\ar[rd]\ar[rr,"j"]&&C_0\setminus q_0 \ar[ld]\\
&B_0&
\end{tikzcd}\]
where $j$ is an open embedding and $C_0\to B_0$ is a smooth proper definable analytic family of curves with a definable analytic Cartier divisor $q_0$ which is finite \'etale over $B_0$.  
\end{lem}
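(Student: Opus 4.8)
The plan is to use the minimal polynomial of $T$ over $B$ to rigidify a shrinking of $Z$ near $T$ into a family of \emph{fixed} round disks inside $\bP^1$, and then to compactify the cover $U$ fibrewise by extending it to a branched cover over the two sections $0$ and $\infty$ of the resulting trivial $\bP^1$-bundle. Since $T\subset B\times\bC$ is reduced of pure dimension $n-1$ it is a Cohen--Macaulay hypersurface, hence finite and flat over the regular space $B$, so $T=V(P)$ for a monic $P(b,z)=z^d+c_1(b)z^{d-1}+\cdots+c_d(b)$ with the $c_i$ definable holomorphic on $B$. Let $B_0\subset B$ be the non-vanishing locus of $\operatorname{disc}_zP$ --- a dense definable Zariski-open subset equal to the étale locus of $p\colon T\to B$ --- further shrunk, still to a dense Zariski-open, so that over $B_0$ the critical points of $P(b,\cdot)$ are nondegenerate with pairwise distinct nonzero critical values. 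Put $\pi=(\operatorname{id},P)\colon B\times\bC\to B\times\bC_s$; over $B_0$ the section $B_0\times\{0_s\}$ avoids the branch locus $\pi(\{\partial_zP=0\})$, so for a suitable small continuous positive definable function $\delta$ on $B$ the set $Z'=\{(b,z):|P(b,z)|<\delta(b)\}\cap Z$ is a definable open neighborhood of $T$ whose restriction over $B_0$ is, via $\pi$, a finite étale degree-$d$ cover of the disk bundle $\{(b,s):|s|<\delta(b)\}$, with $T|_{B_0}=\pi^{-1}(B_0\times\{0_s\})$; this $Z'$ is our shrinking of $Z$. Choosing $\delta$ so as to be small enough for étaleness over $B_0$ while keeping $Z'$ a genuine neighborhood of $T$ over all of $B$ (this is delicate near $\{\operatorname{disc}_zP=0\}$), together with the further shrinking of $B_0$, is the one real technical point, and is handled using the definable triviality of families established earlier in the paper.

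Restricting $U\to V$ to $Z'$ and composing with the étale map $\pi$ gives over $B_0$ a finite étale cover $U_0\to\{(b,s):b\in B_0,\ 0<|s|<\delta(b)\}$, whose target we view as an open subset of the \emph{trivial} bundle $B_0\times\bP^1_s$, $\bP^1_s=\bC_s\cup\{\infty_s\}$ with its standard two-chart atlas (fixed transition $s\mapsto 1/s$). The cover $U_0$ extends uniquely to a finite cover of $\{|s|<\delta(b)+\eta\}$ branched only over $B_0\times\{0_s\}$ --- fill in the fibrewise punctures, the cover being locally a disjoint union of maps $w\mapsto w^k$ --- while near the collar $\{\delta(b)-\eta<|s|<\delta(b)+\eta\}$ it is étale, hence a disjoint union of annular covers $w\mapsto w^m$, each of which extends over the complementary disk $\{|s|>\delta(b)-\eta\}\cup\{\infty_s\}$ to a finite cover branched only over $B_0\times\{\infty_s\}$. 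Gluing the two extensions along the collar --- holomorphically, since the transition there is the fixed map $s\mapsto 1/s$, and definably, by definable Riemann existence (\cite[Proposition~2.57]{bbt}, \cite[Proposition~2.45]{bbt}) --- yields a finite map $C_0\to B_0\times\bP^1_s$ branched over the two disjoint sections $B_0\times\{0_s\}$, $B_0\times\{\infty_s\}$, each finite étale over $B_0$. Hence $C_0\to B_0$ is a smooth proper definable analytic family of curves (locally a branched cover of a $\bP^1$-bundle along a section, so smooth over $B_0$; proper because finite over $B_0\times\bP^1$), and the reduced ramification locus $q_0\subset C_0$ over $B_0\times\{0_s\}$ --- locally $\{w=0\}$ in a chart $\Delta_w\times W$ --- is a definable analytic Cartier divisor, finite étale over $B_0$. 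Finally $U_0$ is precisely the open subset of $C_0$ lying over $\{0<|s|<\delta(b)\}$, so $j\colon U_0\hookrightarrow C_0\setminus q_0$ is an open immersion compatible with the projections to $B_0$, as required. (If $U_0$ is disconnected one allows $C_0$ to have disconnected fibres, or argues componentwise; passing to a connected component of $Z$ is only used to simplify this bookkeeping.)

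The one genuine obstacle is to perform this relative compactification \emph{inside} the definable analytic category and, crucially, \emph{holomorphically in families}: a naive tubular neighborhood of $T$ carries fibrewise radii depending only real-analytically on $b$, so capping it off would not produce a complex space. Presenting the shrunk neighborhood via $\pi$ as étale over a disk bundle inside the \emph{fixed} $\bP^1_s$ circumvents exactly this, since $\bP^1$ is two charts glued by a fixed holomorphic transition; the extensions over the two sections and the gluing are then automatically holomorphic and definable. The remaining inputs --- flatness of $T\to B$, extension of finite covers across a section and across an annular end, and definability via definable Riemann existence --- are routine given the results already cited, the only care being in the choice of the shrinking near the locus where $T$ degenerates.
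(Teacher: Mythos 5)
Your reduction to the norm polynomial $P(b,z)$ and the idea of pushing the shrunk neighborhood forward by $\pi=(\mathrm{id},P)$ into a fixed $\bP^1$-bundle, then capping off fibrewise over the sections $s=0$ and $s=\infty$, is a perfectly reasonable substitute for the paper's mechanism over the \'etale locus $B_0$ (the paper instead makes each fiber $Z_b$ a convex hull of disks around $T_b$, so that the punctured neighborhood is a fibrewise deformation retract of the punctured plane, and extends the cover to a ramified cover of $B_0\times\bP^1$ directly). However, the step you defer as ``the one real technical point'' is not a technicality: it is the actual content of the lemma, and your proposed shape of neighborhood provably cannot deliver it. You need $Z'=\{|P(b,z)|<\delta(b)\}\cap Z$ to be simultaneously (i) an open neighborhood of $T$ over all of $B$ (this is what ``shrinking $Z$ as a neighborhood of $T$'' means, even after a definable open cover of $B$, since every member of such a cover meeting the discriminant locus still contains discriminant points), and (ii) finite \'etale of degree $d$ over the disk bundle $\{|s|<\delta(b)\}$ for $b\in B_0$, which forces $\delta(b)\le g(b):=\min\{|P(b,c)|:\partial_zP(b,c)=0\}$ on $B_0$. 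But $g\to 0$ along the discriminant locus (a multiple root is a critical point with critical value $0$), while (i) forces $\liminf_{b\to b^*}\delta(b)>0$ at every $b^*\in B$, in particular at discriminant points, which lie in the closure of the dense set $B_0$. So no choice of $\delta$ works, and no shrinking of $B_0$ to a smaller dense Zariski open helps, because the bad set $\{b\in B_0: g(b)<\delta(b)\}$ is open and nonempty near the discriminant and cannot be removed by a nowhere-dense closed subset. ``Definable triviality of families'' does not repair this: what is needed is to change the \emph{shape} of the neighborhood near the discriminant, by grouping the branches of $T$ into clusters that stay uniformly separated over each member of a definable cover of $B$ and taking one neighborhood per cluster with a single continuous definable radius. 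That clustering argument (the sets $B'_\pi$ built from the map $B\to\Sym^d\bC$, the cover produced via \cite[Proposition 2.4]{bbt}, and the cluster-wise hulls) is exactly the second half of the paper's proof, and nothing equivalent appears in your proposal; without it, your construction only proves a weaker statement in which the shrunk $Z$ is a neighborhood of $T$ merely over $B_0$.

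Two smaller points. First, your auxiliary requirement that over $B_0$ the critical points of $P(b,\cdot)$ be nondegenerate with \emph{pairwise distinct} critical values cannot in general be arranged on a dense Zariski open (symmetric configurations, e.g.\ $P(b,z)=(z^2-b_1)(z^2-2b_1)$, have identically coinciding critical values); fortunately you never use it, so it should simply be dropped. Second, the fibrewise extension/gluing of the cover over $s=0$ and $s=\infty$, holomorphically and definably in families, is fine in outline (uniqueness of the branched extension plus definable Riemann existence), and is comparable in rigor to the corresponding sentence in the paper's own proof, so I would not count it as a gap.
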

\begin{proof}
We denote by $T_b\subset Z_b\subset \bC$ the fibers over $b\in B$.  Note that $T$ is the closure of $T\cap U_0$.  For $z\in \bC$ we denote by $B_R(z)$ the open disk of radius $R$ centered at $z$ and for a subset $\Sigma\subset\bC$ we denote by $K_R(\Sigma)$ the convex hull of $\bigcup_{p\in\Sigma}B_R(p)$. Assume first that for a continuous definable function $r$ on $B$, $Z_b=K_{r(b)}(T_b)$.  Then $V_0':=B_0\times \bC\setminus T$ deformation retracts onto $V_0:=B_0\times\bC\cap V$.  It follows that $U_0\to V_0$, the restriction of $U\to V$ to $V_0$, extends to a finite \'etale cover $U_0'\to V_0'$, which then extends to a finite ramified cover $C\to B_0\times\bP^1$.  The divisor $q_0$ is simply the reduced preimage of the point at infinity (which is \'etale over $B_0$ possibly after shrinking $B_0$), and this proves the proposition in this case.

It therefore remains to show that after a definable cover of $B$ and shrinking and taking components of $Z$ we are in the above case.  In particular we may assume $T$ is connected.  Let $d$ be the degree of $T\to B$.  There is a natural morphism $B\to\Sym^d\bC$ such that $T$ is contained in the pullback of the universal $d$-tuple.  Let $\bC^d\to\Sym^d\bC$ be the obvious quotient map, and $\nu:B'\to B$ the base-change to $B$.  There are then sections $t_1,\ldots,t_d$ of the projection $B'\times\bC\to B'$ whose union is the base-change of $T$ generically.  For any partition $\pi$ of $[d]$, consider the set $B'_\pi\subset B'$ of points $b'$ such that there is some $\epsilon>0$ satisfying:
\begin{enumerate}
\item For each $\Sigma\in\pi$ we have $K_{\epsilon}(t_\Sigma(b'))\subset Z_{\nu (b')}$, where we denote $t_\Sigma(b')=\{t_i(b')\mid i\in\Sigma\}$;
\item The closed convex hulls $\overline {K_\epsilon(t_\Sigma(b'))}$ are pairwise disjoint for distinct parts $\Sigma$ of $\pi$.
\end{enumerate}
Note that if a set $B'_\pi$ contains a point $b'$, then the orbit decomposition of $[d]$ under the stabilizer of $b'$ in $S_d$ yields a partition $\pi_{b'}$ which necessarily refines $\pi$, since for any part $\Sigma\in\pi_{b'}$ the sections $t_i(b')$ are equal for $i\in\Sigma$.  

The $\{B'_\pi\}$ are a definable cover of $B'$.  By \cite[Proposition 2.4]{bbt} there is a definable open cover $\{W\}$ of $B$ such that for each $W$, every connected component of $\nu^{-1}(W)$ is contained in some $B'_\pi$.  Moreover, because of the way this cover is constructed, the stabilizer of a component in $S_d$ is equal to the stabilizer of a point in that component.  It follows that $\pi$ is invariant under this stabilizer.   

Passing to this cover, there is a partition $\pi_T$ of the irreducible components of $T$ and a definable continuous function $r:B\to \bR_{>0}$ such that
\begin{enumerate}
\item For each $\Sigma\in \pi_T$ we have $K_r(\Sigma):=\bigcup_{b\in B}K_{r(b)}(\Sigma_b)$, where $\Sigma_b$ is the set of points of $T_b$ contained in a component in $\Sigma$;
\item The closures $ \overline{K_r(\Sigma)}$ are pairwise disjoint for distinct parts $\Sigma$ of $\pi_T$.
\end{enumerate}
We may therefore replace $Z$ with $K_r(\Sigma)$, in which case we are in the above situation.

\end{proof}
Returning to the proof, we make the following:
\begin{claim}After passing to an open definable cover of $T$ in $Z$ and a dense definable Zariski open of $B_0$, $U_0$ has a definable divisor $D_0\subset U_0$ which is finite \'etale over $B_0$ and such that the generic fiber of the pair $(C_0,D_0)$ has no automorphisms.
\end{claim}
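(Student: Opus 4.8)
The plan is to rigidify the family $C_0\to B_0$ by adjoining a configuration of points lying in $U_0$, producing that configuration over the generic point of $B_0$ and then spreading out. First I would shrink $B_0$ to a dense definable Zariski open (after possibly a further definable open cover of $T$ in $Z$) so that $C_0\to B_0$ has constant genus $g$ and $e:=\deg q_0$ is constant, and reduce the claim to the following statement over the generic point $\eta=\Spec K$, $K=K(B_0)$: there is a $K$-rational reduced divisor $D_\eta\subset j(U_\eta)$ whose geometric fibre $(C_{\bar\eta},(D_\eta)_{\bar\eta})$ has trivial automorphism group. Granting this, note that $j(U_0)\subset C_0\setminus q_0$ is open, so $j(U_\eta)\subset C_\eta$ is the complement of a finite $K$-subscheme; the finite étale $K$-scheme $D_\eta$ then spreads out, after shrinking $B_0$ once more, to a definable analytic divisor $D_0\subset U_0$ finite étale over $B_0$ (definably, by the spreading-out arguments already used, cf. \cite{bbt,bbt2}), and by construction the geometric generic fibre of $(C_0,D_0)$ is $(C_{\bar\eta},(D_\eta)_{\bar\eta})$.

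To produce $D_\eta$ I would work inside a linear system rather than in a symmetric power. Choose $k\gg0$ and set $L:=\cO_{C_\eta}(kq_0)$, which is very ample, represents a class in $\operatorname{Pic}^{ke}(C_\eta)(K)$, and has $|L|\cong\bP^{ke-g}_K$; crucially no base change is needed, since $q_0$ itself furnishes the required $K$-rational effective divisor. Inside $|L|$ I would remove three proper closed subsets: (i) divisors meeting the finite bad $K$-subscheme $C_\eta\setminus j(U_\eta)$ (a proper linear condition once $\dim|L|$ exceeds its length, which holds for $k\gg0$ since $L$ is base-point-free); (ii) nonreduced divisors; and (iii) the locus where $\Aut(C_{\bar\eta},D_{\bar\eta})\neq1$. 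For (iii): if $\phi\in\Aut(C_{\bar\eta})$ fixes some reduced $D\in|L|$ then $\phi^*[kq_0]=[kq_0]$, so $\phi$ lies in the subgroup $\Gamma\subset\Aut(C_{\bar\eta})$ stabilizing $[L]$ --- which is finite if $g\geq1$ and equals $\mathrm{PGL}_2$ if $g=0$; since $L$ is very ample, $\Gamma$ acts faithfully and linearly on $|L|=\bP(H^0(L))$, and the non-free locus of such an action is proper closed (a finite union of proper linear subspaces when $\Gamma$ is finite; for $\Gamma=\mathrm{PGL}_2$ because a general binary form of degree $ke\geq3$ has trivial stabilizer). All three loci are Galois-stable, hence defined over $K$, so the complement $W\subset\bP^{ke-g}_K$ is a nonempty Zariski open and has a $K$-point; that point is the desired $D_\eta$.

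The step I expect to be the crux is (iii): it is the classical rigidity phenomenon that a sufficiently large generic configuration of points on a curve has no automorphisms, and its only substantive content is making this uniform across all genera --- in particular handling $g\leq1$, where $\Aut(C_{\bar\eta})$ is positive-dimensional and one genuinely needs a configuration large enough to cut it down, which is why $\deg D=ke$ is taken large. Everything else is routine: the reduction to the generic point, the observation that $q_0$ supplies a $K$-rational very ample bundle so that no field extension enters, and the check that spreading out stays within the definable analytic category, which uses only the kind of definable-analytic spreading-out already invoked in the preceding sections.
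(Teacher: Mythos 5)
Your proposal has a genuine gap: it imports algebro-geometric machinery that simply is not available in the setting of this claim. At this point of the argument $B_0$ is only a dense definable (o-minimal) Zariski open in a definable open subset $B\subset\bC^{n-1}$, and $C_0\to B_0$, $q_0$, $U_0$ are definable \emph{analytic} objects; the algebraic structure of the family over $B_0$ (the map $B_0\to A^{\df}$ to the moduli space and the identification of $C_0$ with a pullback of the universal curve) is established only \emph{after} this claim, and uses it. So there is no function field $K(B_0)$, no generic point $\eta=\Spec K$, no relative $\operatorname{Pic}$ or linear system $|\,\cO_{C_\eta}(kq_0)\,|$ over $B_0$, and no spreading-out formalism: the whole reduction "prove it over $\eta$ and spread out" has no meaning here, and the citation of the definable spreading-out arguments of \cite{bbt,bbt2} does not supply it. Moreover the statement requires a divisor $D_0$ that is \emph{definable} and finite \'etale over all of $B_0$; a fiberwise choice of a point in a linear system gives no definable (or even continuous) dependence on $b\in B_0$, and you do not address this.

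There is also a fiberwise error: $C_\eta\setminus j(U_\eta)$ is not a finite subscheme. By construction $U_0$ is the preimage in $C_0$ of $V_0=(B_0\times\bC)\cap V$, where $Z_b$ is a bounded tube around the finite set $T_b$; hence the fiberwise complement of $U_{0,b}\cup q_{0,b}$ in $C_{0,b}$ contains the preimage of $\bP^1\setminus Z_b$ and has nonempty interior. Consequently your condition (i) is not the complement of a proper Zariski-closed subset of $|L|$: a general member of $|kq_0|$ will \emph{not} be supported in $U_{0,b}$, and even the existence of some reduced member of $|kq_0|$ supported in the classical open set $U_{0,b}$ (uniformly and definably in $b$) is a nontrivial problem your argument does not solve. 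The paper proceeds quite differently: it covers $X$ by basic definable pieces, picks one with dense image in $Z$, slices it inside its ambient $\bC^m$ by a generic affine subspace of codimension $\dim X-n+1$ (the inner lemma, proved by a definable-cover induction, guarantees finiteness over the base), and pushes forward via Remmert's theorem and definable Chow (\cite[Proposition 2.45]{bbt}) to obtain a definable divisor $D_0\subset U_0$ finite over $B_0$; genericity of the slice together with shrinking $B_0$ then kills automorphisms of the generic fiber pair. Your point (iii) (a large generic configuration rigidifies the curve) is indeed the reason genericity suffices at the end, but the substance of the claim — producing the configuration definably in a non-algebraic family and inside the open set $U_0$ — is exactly what your route does not deliver.
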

\begin{proof}$X$ is covered by basic definable analytic open subspaces $X_i$.  As $f$ is generically smooth, each $f(X_i)$ has nonempty interior, and further $Z$ is covered by the interior closures of the images $f(X_i)$.  Thus we may assume by passing to a cover that one basic definable analytic subspace $X_i$ has dense image in $Z$.

\begin{lem}Let $Z\subset\bC^n$ be a definable open subset.  Let $X\subset \bC^m$ be an irreducible basic definable analytic space and $f:X\to Z$ a morphism with dense image.  Let $Z_0\subset Z$ be a dense definable Zariski open where $f$ is equidimensional and $X_0=f^{-1}(Z_0)$.  Fix $k\geq\dim X-n$ and let $W$ be a dense definable open subset of the space of affine codimension $k$ subspaces $H\subset \bC^m$ which meet $X$.  Then there is a definable open cover $Z_i$ of $Z$ such that for each $i$ there is a $H_i\in W$ for which $H_i\cap f^{-1}(Z_i\cap Z_0)$ is finite over $Z_i\cap Z_0$.
\end{lem}
\begin{proof}Let $\partial X=\bar X\setminus X$ be the boundary of $X$ in $\bP^n$; $\partial X$ is closed in $\bP^n$.  Note that the projection $H\cap X\to Z$ will be finite on the open set $Z\setminus f(H\cap\partial X)$, and if $f(H\cap\partial X)$ has no interior then, the desired property will be satisfied on the interior closure of $Z\setminus f(H\cap\partial X)$ in $Z$.  Observe that for any closed definable $\Sigma\subset Z$, we have 
\[\Sigma\setminus f(H\cap \partial X\cap f^{-1}(\Sigma))\subset Z\setminus f(H\cap\partial X).\]
Setting $\Sigma_0=\Sigma\cap Z_0$ we also have $\dim_\bR \partial X\cap f^{-1}(\Sigma_0)\leq \dim_\bR\Sigma_0+2(\dim X-n)$.  There is then a $H\in W$ for which $H\cap\partial X$ is not saturated with respect to $f$ and further
\[\dim_\bR H\cap \partial X\cap f^{-1}(\Sigma_0)< \dim_\bR\Sigma_0+2(\dim X-n)-2k\leq\dim_\bR\Sigma_0.\]
Thus, starting with $\Sigma=f(\partial X)$ by induction on $\dim_\bR \Sigma_0$ we inductively build the desired cover by adding the interior closure of $Z\setminus f(H\cap\partial X)$ to our cover and replacing $\Sigma$ by $\overline{f(H\cap\partial X\cap f^{-1}(\Sigma))}$ at each step.
\end{proof}
Applying the lemma with $k=\dim X-n+1$, pushing forward and using \cite[Proposition 2.45]{bbt} we have a divisor $D_0$ on $U_0$ and after shrinking $B_0$ and taking the divisor sufficiently generally, we may ensure the claim holds. 
\end{proof}

Let $g$ be the genus of the general fiber of $C_0$, $m$ the degree of $D_0$ over $B_0$, $d$ the degree of $q_0$ over $B_0$, and let $\mathcal{M}:=S_{m,d}\backslash \mathcal{M}_{g,m+d}$ be the moduli stack of genus $g$ curves with two sets of unordered points of cardinality $m$ and $d$.  Let $\mathcal{C}\to\mathcal{M}$ be the universal curve.  Up to passing to a dense definable Zariski open of $B$, we may assume there is an open affine subscheme $A\subset \mathcal{M}$ consisting of curves with no automorphisms and a definable analytic morphism $B_0\to A^\df$ such that $C_0$ is the pullback of the restriction of the universal curve $\cC_A\to A$.  

Note that the family $\cC^\circ_A\to A$ obtained by puncturing $\cC_A$ at the order $d$ set is affine, as therefore is $\cC_A^\circ$.  Let $x_i$ be coordinate functions on $\cC^\circ_A$.  Replacing $T$ with $T\cup \pi_1^{-1}(B\setminus B_0)$ where $\pi_1:Z\to B$ is the projection, we have a map $X_V\to U\to \cC_A^\circ$ and pulling back the functions $x_i$ yields the required morphism $g:X_V\to\bA^N$.
\qed

\begin{rem}The proof of \Cref{thm:Stein} also gives an elementary proof of Stein factorization in the analytic category assuming Remmert's proper mapping theorem, at least for morphisms of seminormal spaces (whose fibers have compact connected components).
\end{rem}

\section{Quasiprojectivity of complex period maps}\label{sect:quasiproj}

In this section we prove a general algebraicity result which among other things should be viewed as showing that the image of the period map associated to a complex variation of mixed Hodge structures is an algebraic space (quasiprojective in the pure case), up to a Stein factorization and assuming a weak properness assumption.  In this sense it is a version of the Griffiths conjecture \cite{bbt,bbt2}.
\subsection{Algebraicity}
The setup is as follows:
\begin{setup}\label{setup:twisted map}For an algebraic space $X$ suppose we have:
\begin{enumerate}
\item A finitely generated group $\Gamma$.
\item A definable analytic space $\scrM$ with an action by $\Gamma$ such that each $\gamma\in\Gamma$ acts by a definable analytic automorphism.
\item A normal covering space $\pi:\tilde X\to X^\an$ with deck transformation group $\Gamma$. 
\item A $\pi$-definable $\Gamma$-equivariant morphism $\phi:\tilde X\to \scrM^\an$.

\end{enumerate}

\end{setup}

Here we have used the following definition:

\begin{defn}Let $\scrX$ be a definable analytic space and $\pi:\tilde\scrX\to\scrX^\an$ a covering space.  Let $\scrM$ be a definable analytic space and $\phi:\tilde\scrX\to \scrM^\an$ a morphism of analytic spaces.  We say $\phi$ is $\pi$-definable if for any definable analytic space $\scrT$ and any commutative diagram
\begin{equation}\begin{tikzcd}
    & && &\tilde\scrX\ar["\pi",d]\\
\scrT\ar[r,"t",swap]&\scrX && \scrT^\an\ar[r,"t^\an",swap]\ar[ur,"\tilde\tau"]&\scrX^\an
\end{tikzcd}\label{lifting}\end{equation}
the resulting map $\phi\circ\tilde \tau:\scrT^\an\to \scrM^\an$ is the analytification of a morphism $\mu:\scrT\to \scrM$ of definable analytic spaces.  Note that by definable triangulation it suffices to check the property for open definable subspaces $\cT$. 
\end{defn}

In practice, \Cref{setup:twisted map} will arise when $\scrM$ has an action of $\pi_1(X^\an,x)$ (for some choice of baepoint $x$) by definable analytic automorphisms and $\phi$ will be $\pi_1(X^\an,x)$-equivariant, in which case it is sufficient to check the $\pi$-definability on a fundamental set of $\tilde\scrX$.  Note that if we are in \Cref{setup:twisted map} for some algebraic space $Y$ and $g:X\to Y$ is a morphism, then $X$ naturally inherits all of the structures in Setup \ref{setup:twisted map}.

 Now if we have \Cref{setup:twisted map} for an algebraic space $X$ and in addition the fibers of $\phi$ have compact connected components, recall that there is then a factorization in the category of analytic spaces
\[\begin{tikzcd}
\tilde X\ar[rd," \psi",swap]\ar[rr,"\phi"]&&\scrM^\an\\
&\tilde \cY\ar[ur,"\chi",swap]&
\end{tikzcd}\]
where $\psi$ is a fibration and $\chi$ has discrete fibers.  Moreover, $\Gamma$ acts properly discontinuously on $\tilde \cY$ so the map $ \psi$ descends to a fibration $\sigma: X^\an\to \cY$.  We refer to $\sigma$ as the Stein factorization of $\phi$.

\begin{thm}\label{thm:alg}Assume Setup \ref{setup:twisted map} for a seminormal algebraic space $X$ with the additional assumption that the fibers of $\phi$ have compact connected components.  Then the Stein factorization $\sigma:X^\an\to\cY$ of $\phi$ is the analytification of an algebraic map $g:X\to Y$, uniquely determined up to isomorphism (as a map with fixed source).
\end{thm}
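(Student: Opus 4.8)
The plan is to reduce the algebraicity of $\sigma$ to a purely definable‑analytic statement via definable GAGA, and then to build the required definable structure on $\cY$ by transporting the analytic Stein factorization $\tilde\cY$ of $\phi$ to a definable one \emph{locally} and gluing. By definable GAGA (\cite{bbt,bbt2}), algebraizing the fibration $\sigma\colon X^\an\to\cY$ is equivalent to exhibiting $\cY$ as a definable analytic space for which $\sigma$ is a morphism of definable analytic spaces; once this is done, both $Y$ and $g$ are unique by the full faithfulness of analytification on algebraic spaces together with the uniqueness of the analytic Stein factorization, and $Y$ is seminormal since $\cY$ is. The essential difficulty — and the reason one cannot simply quotient $X^\an$ by the connected‑fibre equivalence relation of $\sigma$ inside the definable analytic category — is that $\Gamma$ need not act properly discontinuously on $\scrM$, so that relation is \emph{not} definable globally; the way around is to work instead with $\tilde\cY$, on which $\Gamma$ \emph{does} act properly discontinuously, so that only finitely many $\gamma\in\Gamma$ interact with any compact region.

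First I would set up definable charts upstairs. Since $X$ is of finite type, $X^\an$ is quasi‑compact, so using a finite definable triangulation (\cite{Dries}) it is covered by finitely many definable open subsets $U_i$ over which the covering $\pi$ is trivial; lift each $U_i$ to a definable open subset $\tilde U_i\subset\tilde X$ by selecting a single sheet, so that $\{\gamma\tilde U_i\}_{i,\gamma}$ covers $\tilde X$. Applying $\pi$‑definability of $\phi$ to the open immersion $U_i\hookrightarrow X^\an$ and the section $s_i\colon U_i\to\tilde X$, the composite $\phi\circ s_i$, transported along $\tilde U_i\cong U_i$, is the analytification of a definable analytic morphism $\mu_i\colon U_i\to\scrM$. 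Each overlap $\tilde U_i\cap\tilde U_j$ is, inside $U_i$, a union of connected components of the definable set $U_i\cap U_j$ — hence definable by o‑minimality — with transition map the identity; so any finite union of the $\tilde U_i$ glues to a definable analytic space $W$, seminormal since $\tilde X$ is a covering space of the seminormal $X^\an$, carrying a definable analytic morphism $\phi_W\colon W\to\scrM$ compatible with $\phi$.

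Next I would produce, for each $y_0\in\cY$, a definable chart of $\cY$ near $y_0$. Let $\tilde y_0\in\tilde\cY$ lift $y_0$; its fibre $F_0:=\psi^{-1}(\tilde y_0)$ is a compact connected component of a fibre of $\phi$, hence is covered by finitely many of the $\tilde U_i$, whose union is a definable $W$ as above with $F_0\subset W$. Using properness of $\psi$ together with the local finiteness of $\chi$ (which holds because $\chi$ has discrete fibres and $\psi$ is proper), one finds a definable $\psi$‑saturated open $W'$ with $F_0\subseteq W'\subseteq W$ and a definable open $V\subseteq\scrM$ such that $\phi_W$ restricts to a morphism $W'\to V$ with compact fibres; definability of $W'$ rests on o‑minimality, which guarantees that the locus of points lying on a relatively compact connected component of a fibre of the definable family $\phi_W$ is definable. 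Applying \Cref{thm:Stein} to $W'\to V$ yields a definable analytic Stein factorization $W'\to\cY'\to V$, and because $W'$ is $\psi$‑saturated this analytifies to $\psi|_{W'}\colon W'\to\psi(W')\subseteq\tilde\cY$ followed by the restriction of $\chi$. Shrinking $W'$ if necessary, proper discontinuity of $\Gamma$ on $\tilde\cY$ ensures that $\psi(W')$ meets only finitely many of its $\Gamma$‑translates and descends to a neighbourhood of $y_0$ in $\cY$, which is the quotient of $\cY'$ by the finite stabiliser of $\tilde y_0$; such a finite quotient again exists in the definable analytic category.

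Finally I would glue. The charts just produced cover $\cY$, and because only finitely many group elements are ever involved, their pairwise overlaps and transition maps are definable, so they assemble $\cY$ into a definable analytic space for which $\sigma$ is definable analytic; by definable GAGA (\cite{bbt}) this forces $\cY=Y^\an$ for an algebraic space $Y$ with $\sigma=g^\an$ algebraic, and $(Y,g)$ is unique as above. I expect the main obstacle to be exactly this gluing step: the global connected‑fibre relation is not definable, so one must verify both that the \emph{local} Stein factorizations produced through the compact fibres $F_0$ are genuinely definable — which is where o‑minimality enters, via finiteness of connected components of definable families and definability of relative compactness — and that they patch definably, which is where the properly discontinuous action on $\tilde\cY$, as opposed to the possibly wild action on $\scrM$, is indispensable.
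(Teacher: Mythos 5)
Your local construction around a point $y_0\in\cY$ (lift to $\tilde\cY$, restrict $\phi$ to a definable region containing the compact fibre component, apply \Cref{thm:Stein}, then quotient by the finite stabilizer) is essentially the same mechanism as the key lemma in the paper's proof. The genuine gap is in your final step: you never produce a \emph{finite} definable atlas for $\cY$, and in the definable analytic (and definable topological) category coverings are required to be finite. Your charts are small neighbourhoods of individual points $y_0$, of which infinitely many are needed whenever $\cY$ is non-compact (the typical case), and the assertion that they ``assemble $\cY$ into a definable analytic space'' because ``only finitely many group elements are ever involved'' does not address this: definability of overlaps and transition maps is only meaningful relative to an already-given definable structure on $\cY$ (or a finite atlas), which is precisely what is missing. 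The obstruction is not the group action but the global geometry of $\cY$; one needs charts of controlled size whose $\sigma$-preimages form a finite definable cover of $X^{\df}$, and nothing in your argument supplies this uniformity.

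This is exactly where the paper's proof spends its effort. It first shows that the underlying topological map $|X^{\df}|\to|\cY|$ carries a definable topological space structure, i.e.\ that the connected-fibre equivalence relation of $\sigma$ (not of $\phi$!) is definable: after reducing to $X$ normal, it uses a Hilbert-scheme/Douady argument (\Cref{algebraizing_fibration}) to replace $\sigma$ by a proper modification up to algebraic modifications of source and target, then inducts on $\dim X$ over the exceptional locus, and invokes van den Dries's theorem on proper definable quotients (\Cref{thm vdd quotient}). Only once $|\cY|$ is definable does a definable triangulation yield a \emph{finite} cover of $\cY$ by contractible opens, to each of which the lift-to-$\tilde\cY$/\Cref{thm:Stein}/finite-quotient step is applied. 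Without some substitute for this global step your gluing does not go through. Two smaller points: the definability of your $\psi$-saturated open $W'$ is asserted rather than proved (it can be repaired, since a compact connected component of a fibre of $\phi|_W$ is automatically a full component of the fibre of $\phi$, but you should say how the relevant locus is cut out definably); and uniqueness of $(Y,g)$ does not follow from ``full faithfulness of analytification'' — analytification of non-proper algebraic spaces is not full — the paper instead gets uniqueness from definable GAGA, which identifies algebraic structures on $\sigma$ with definable analytic ones.
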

\begin{proof}  By definable GAGA an algebraic structure $X\to Y$ on $\sigma:X^\an\to\cY$ is equivalent to a definable analytic space structure $X^\df\to Y$, and moreover either is uniquely determined.  Thus, it suffices to find a finite open analytic cover $\cY_i$ of $\cY$ such that $\sigma^{-1}(\cY_i)=X_i^\an$ for a definable open cover $X_i\subset X^\df$ and definable analytic space morphisms $X_i\to \mathscr{Y}_i$ which analytify to $X_i^\an\to \cY_i$.  This has the following consequence:
\begin{lem}If the underlying map on topological spaces $|\sigma|:|X^\an|\to |\cY|$ has a definable topological space structure $|X^\df|\to \mathfrak{Y}$, then $\sigma$ has a definable analytic space structure.
\end{lem}
\begin{proof}First observe that by definable triangulation there is a finite open cover $\cY_i$ of $\cY$ by contractible open sets such that the $X_i:=\sigma^{-1}(\cY_i)$ form a definable open cover of $X^\df$.

Now, there is a finite map $\cY_i'\to\cY_i$ which is a quotient by a finite group $G_i$ such that $\cY_i'$ lifts to $\tilde \cY$.  The preimage of the lift in $\tilde X$ is a finite \'etale $G_i$-cover $X_i'\to X_i$.  The resulting map $X_i'\to \scrM$ is a morphism of definable analytic spaces with compact fibers.  By \Cref{thm:Stein} there is a Stein factorization $X_i'\to \mathscr{Y}_i'\to \scrM$, where $\mathscr{Y}_i'$ analytifies to $\cY_i'$.  Taking the quotient by $G_i$ using \cite[Proposition 2.63]{bbt}, we obtain a definable analytic morphism $X_i\to \mathscr{Y}_i$ where $\mathscr{Y}_i$ analytifies to $\cY_i$ as required.  
\end{proof}

Returning to the proof of the theorem, we first reduce to the case that $X$ is normal.  Let $X'\to X$ be the normalization of $X$, $\cY'\to\cY$ the normalization of $\cY$, and suppose $X^{\prime\an}\to \cY'$ is algebraized by $X'\to Y'$.  The topological space underlying $\cY$ is the quotient of $|Y^{\prime\an}|$ by the image of the equivalence relation $|X'\times_X X'|\subset |X'\times X'|$ in $|Y'\times Y'|$.  This equivalence relation is clearly definable, closed, and proper, so by \Cref{thm vdd quotient} the quotient exists in the category of definable topological spaces, and by the lemma we are done.  

Thus we may assume $X$ is normal, and we proceed by induction on $\dim X$, the base case being trivial.  By a Hilbert scheme argument as in \cite{bbt} we have the following:

\begin{prop}[Compare with {\cite[Proposition III]{Sommese}}]\label{algebraizing_fibration}
Let $X$ be a connected normal algebraic space and $f \colon X^{\an} \to \cY$ a holomorphic fibration onto a normal analytic space $\cY$. Then there exists a fibration $g \colon X^\prime \to Y^\prime$ between two connected normal algebraic spaces, a proper modification $ X^\prime \to X$ and a holomorphic proper modification $(Y^\prime)^\an \to \cY$ such that the diagram
\[\begin{tikzcd}
(X^\prime)^\an \ar[r]\ar[d, "g^\an"]& X^\an \ar[d, "f"]\\
 (Y^\prime)^\an \ar[r]& \cY.
\end{tikzcd}\]
is commutative.
\end{prop}
\begin{proof}
One can assume that $X$ is smooth quasiprojective. Let $\mathrm{Hilb}(X)$ be the Hilbert scheme of proper algebraic subspaces of $X$. Its analytification is the Douady space of compact analytic subspaces of $X^{\an}$ (see \cite[p.200]{bbt}). Let $\cU \subset \cY$ be a non-empty Stein open subset over which $f$ is smooth. By Remmert proper mapping theorem, every irreducible compact complex subspace of $X^\an$ contained in $f^{-1}(\cU)$ is contained in a fiber of $f$. Therefore, the induced holomorphic map $\cU \to \mathrm{Hilb}(X)^{\an}$ is an open immersion. Let $H \subset \mathrm{Hilb}(X)$ be a irreducible component such that $H^\an$ contains $\cU$. The compact complex analytic subspaces of $X^\an$ that are contained in a fiber of $f$ form a closed analytic subset of $\mathrm{Hilb}(X)^\an$ that contains the image of $\cU$. Since $\cU$ is open in $H^{\an}$ for the Euclidean topology, it follows
that (the analytification of) every proper algebraic subspace of $X$ corresponding to an element of $H$ is contained in a fiber of $f$. Therefore, letting $Y^\prime$ be the normalization of $H$, we get a holomorphic map $(Y^\prime)^{\an} \to \cY$, which is a biholomorphism in corestriction to $\cU$. If we denote by $g \colon X^\prime  \to Y^\prime$ the normalization of the base-change of the universal family to $Y^\prime$, we get a commutative diagram 
\[\begin{tikzcd}
(X^\prime)^\an \ar[r]\ar[d, "g^\an"]& X^\an \ar[d, "f"]\\
 (Y^\prime)^\an \ar[r]& \cY.
\end{tikzcd}\]
Since the induced map $(X^\prime)^\an \to \cY$ is proper and its image contains $\cU$, it is surjective. A fortiori, the map $(Y^\prime)^\an \to \cY$ is proper and surjective. Finally, $g^\an$ and $f$ coincide over $\cU$. The remaining claims follow.
\end{proof}

Thus we may assume $\sigma$ is a proper modification.  The \'etale locus of $\phi$ is definable and $\rho$-equivariant, so there is a Zariski open subset $U\subset X$ on which $\sigma$ is an isomorphism.  Let $S:=X\setminus U$ and $\mathcal{T}\subset \cY$ the image of $S^\an$ under $\sigma$.  By the inductive hypothesis, if $S'\to S$ is the seminormalization, and $S^{\prime\an}\to \mathcal{T}'\to\mathcal{T}$ the Stein factorization of $S^{\prime\an}\to \mathcal{T}$, then $S^{\prime\an}\to\mathcal{T}'$ has an algebraic structure $S'\to T'$.  Since $S$ is saturated with respect to the map $\psi$, the fibers are connected, and therefore $T^{\prime\an}=\mathcal{T}'\to\mathcal{T}$ is the seminormalization.  Since seminormalizations are homeomorphisms on the underlying topological space and $\sigma$ is an isomorphism on $U$, it follows that the set-theoretic equivalence relation of $\sigma$ is definable, hence again by \Cref{thm vdd quotient} the quotient as a definable topological space exists, and by the lemma the claim is proven.
\end{proof}

\subsection{Quasiprojectivity}
We now address the question of when line bundles on $\scrM$ induces semiample bundles in the context of the previous section.  We first describe when bundles pulled back from $\scrM$ have a natural algebraic structure, as in \cite[\S 6.2]{bbt} (see also \cite[\S 2.3]{bbt2}).  

\begin{setup}\label{setup:alg}Assume \Cref{setup:twisted map} for an algebraic space $X$.  Suppose further that, for a chosen basepoint $x$ of $X$, we have
\begin{enumerate}
    \item $\G$ a linear algebraic group acting definably on $\scrM$.
    \item A homomorphism $\rho:\pi_1(X^\an,x)\to\G(\bC)$ with norm one eigenvalues of local monodromy.
    \item The covering map $\pi$ is the covering $\pi:\tilde X^{\rho}\to X^\an$ corresponding to the subgroup $\ker\rho\subset\pi_1(X^\an,x)$.
    \item The map $\phi$ is $\pi_1(X^\an,x)$-equivariant\footnote{In this case we say $\phi$ is $\rho$-equivariant.}, where the action of $\pi_1(X^\an,x)$ on $\scrM$ is via $\rho$.
\end{enumerate}

\end{setup}
Here we have used the following:
\begin{defn}We say $\rho:\pi_1(X^\an,x)\to\G(\bC)$ has norm one eigenvalues of local monodromy if for some log smooth compactification $\bar X'$ of a resolution $r:X'\to X$ and some faithful complex representation $V$ of $\G$, taking $V$ to be the local system on $X^\an$ induced by $V$ via $\rho$, the the local monodromy of $r^*V$ around any divisor has (complex) norm one eigenvalues.  Note that if this is satisfied, the same will be true for any log smooth compactification of any resolution and any complex representation of $\G$.
\end{defn}

Recall that by \cite{BMull}, the eigenvalues of local monodromy having norm one implies that the two natural structures as a definable coherent sheaf on $\cO_{X^\df}\otimes_{\bZ_{X^\df}}V$ coming from the flat sections and the canonical algebraic structure of the associated flat vector bundle are equivalent.

Observe that in Setup \ref{setup:alg} for an algebraic space $X$ and for any $\G$-equivariant definable analytic vector bundle bundle $\scrE$ on $\scrM$, there is a natural analytic vector bundle $\cE_X$ on $X^\an$ together with a natural $\pi_1(X^\an,x)$-equivariant isomorphism $\alpha:\pi^*\cE_X^\an\xrightarrow{\cong} \phi^*\scrE^\an$.  Here the naturality means that for any morphism $g:X\to Y$ there is an isomorphism $\lambda:(g^\an)^*\cE_Y\to \cE_{X}$ fitting into a commutative diagram
\[\begin{tikzcd}
\pi_X^*\cE_{X}\ar[d,"\pi_X^*\lambda",swap]\ar[rrd,"\alpha_X"]&&\\
(\tilde g^\an)^*\pi_Y^*\cE_Y\ar[rr,"(\tilde g^\an)^*\alpha_Y",swap]&&\phi_X^*\scrE^\an=(\tilde g^\an)^*\phi_Y^*\scrE^\an
\end{tikzcd}\]
with the obvious notation, where $\tilde g^\an:X^{K_X}\to Y^{K_Y}$ is the map induced by $f$ on covers.

In the following, by a constant $\G$-equivariant vector bundle on $\scrM$ we mean a $\G$-equivariant vector bundle of the form $\cO_\scrM\otimes_\bC V$ for a $\G$-representation $V$.

\begin{lem}\label{alg bundle}Assume Setup \ref{setup:alg} for an algebraic space $X$, and suppose $\scrE$ is a subquotient of a constant $\G$-equivariant vector bundle on $\scrM$.  Then:
\begin{enumerate}
\item
There is an algebraic vector bundle $E_X$ analytifying to $\cE_X$ which is uniquely determined by the property that for any diagram \eqref{lifting} (with $\scrX=X^\df$) and the induced map $\mu:\scrT\to \scrM$, the isomorphism $\tau^{\prime*}\alpha:(t^\an)^*E_X^\an\to (\mu^\an)^*\scrE^\an$ is the analytification of an isomorphism $t^*E_X^\df\to \mu^*\scrE$.
\item For any morphism $g:X\to Y$, there is an isomorphism $g^*E_Y\to E_{X}$ analytifying to the above isomorphism $\lambda:(g^\an)^*\cE_Y\to \cE_{X}$.
\end{enumerate}
\end{lem}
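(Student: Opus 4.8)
The plan is to reduce to the case of a constant $\G$-equivariant bundle $\cO_\scrM\otimes_\bC V$ and then invoke the machinery of \cite{BMull} on definable coherent sheaves of local-system origin, exactly as in \cite[\S 6.2]{bbt}. First I would treat the constant case. Since $\rho:\pi_1(X^\an,x)\to\G(\bC)$ has norm one eigenvalues of local monodromy by hypothesis (\Cref{setup:alg}(2)), the associated local system $V_X$ on $X^\an$ — obtained by pushing $V$ along $\rho$ — has norm one eigenvalues of local monodromy, and so by \cite{BMull} the two natural definable coherent structures on $\cO_{X^\df}\otimes_{\bZ_{X^\df}}V_X$ (the one from flat sections and the canonical algebraic one on the flat vector bundle) agree. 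Hence the flat bundle $\cE_X=\cO_{X^\an}\otimes V_X$ carries a canonical algebraic structure $E_X$, and by construction the tautological identification $\pi^*\cE_X^\an\xrightarrow{\cong}\phi^*(\cO_\scrM\otimes V)^\an$ is compatible with flat sections on both sides, so for any diagram \eqref{lifting} the pulled-back isomorphism $\tau^{\prime*}\alpha$ matches flat (equivalently, by \cite{BMull}, algebraic) trivializations on $\scrT$; this gives the algebraicity claim (1) in the constant case. Uniqueness is immediate since an algebraic structure on an analytic bundle is determined by requiring algebraicity of such trivializations on a definable cover (again definable GAGA, \cite[Proposition 2.45]{bbt}).

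Next I would pass to a general subquotient $\scrE$. Write $\scrE$ as $\scrE'/\scrE''$ with $\scrE''\subset\scrE'\subset\cO_\scrM\otimes V$ the $\G$-subbundle and sub-subbundle. Pulling back via $\phi$ and descending along $\pi$ gives analytic subbundles $\cE''_X\subset\cE'_X\subset\cO_{X^\an}\otimes V_X$ with $\cE_X=\cE'_X/\cE''_X$. The point is that $\cE'_X$ and $\cE''_X$ are $\pi_1$-invariant analytic sub-local-systems-with-filtration; more precisely, over any lift as in \eqref{lifting} they are pulled back from the definable analytic subbundles $\mu^*\scrE'$, $\mu^*\scrE''$ of $\mu^*(\cO_\scrM\otimes V)$, which are definable by hypothesis on $\scrM$. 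Thus $\cE'_X$ and $\cE''_X$ acquire definable coherent structures as definable analytic subsheaves of the definable coherent sheaf underlying $\cO_{X^\df}\otimes V_X$; since the ambient sheaf is algebraic by the constant case and definable coherent subsheaves of an algebraic coherent sheaf are algebraic (definable GAGA), both $\cE'_X$ and $\cE''_X$ are algebraic, hence so is the quotient $E_X:=E'_X/E''_X$, analytifying to $\cE_X$. The required compatibility in (1) for $\scrE$ then follows by applying the constant-case statement to $\cO_\scrM\otimes V$ and restricting the isomorphism of trivializations to the sub/quotient, which is algebraic because the sub/quotient inclusions are.

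Finally, functoriality (2): given $g:X\to Y$, the naturality isomorphism $\lambda:(g^\an)^*\cE_Y\to\cE_X$ was already constructed analytically before the lemma, compatibly with $\alpha$. To see it is algebraic, note that $X$ inherits \Cref{setup:alg} from $Y$ via $g$ (with $\rho_X=\rho_Y\circ g_*$, which again has norm one eigenvalues by the last sentence of the definition of norm one eigenvalues, applied to a resolved compactification dominating one for $Y$), and for any lift $\tilde\tau$ on $X$ one composes with $\tilde g$ to get a lift on $Y$; the characterizing property in (1) applied on both $X$ and $Y$ to the common map $\mu=\mu_Y\circ g:\scrT\to\scrM$ forces $g^*E_Y\to E_X$ to be algebraic, since it carries the algebraic trivialization $t^*(g^*E_Y)^\df=t^*E_X^\df$ on one side to $\mu^*\scrE$ on the other in two ways that agree analytically. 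I expect the main obstacle to be bookkeeping the compatibility of the definable structures under $\pi$-descent and under the lifting diagrams — in particular verifying that the subquotient filtration really descends to a \emph{definable} coherent filtration on $X^\df$ rather than merely an analytic one — but this is exactly the content of the $\pi$-definability hypothesis on $\phi$ together with the definability of $\scrE'\subset\scrE'\subset\cO_\scrM\otimes V$ on $\scrM$, so it should go through as in \cite[\S 6.2]{bbt}.
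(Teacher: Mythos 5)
Your proposal is correct and takes essentially the same route as the paper: the paper's (very brief) proof likewise equips $\cE_X$ with a definable structure coming from the $\pi$-definability of $\phi$, observes that it is a subquotient of the definable flat bundle attached to the local system induced by $\rho$, and concludes by \cite{BMull} together with definable GAGA that this definable bundle has a unique algebraic structure with the stated properties. Your expansion into the constant case, the subbundle/quotient step via closure of the essential image of definabilization under subobjects and quotients, and the functoriality check is just a more detailed version of that same argument.
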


\begin{proof}There is clearly a definable analytic vector bundle $\scrE_Y$ with all the required properties.  By the assumption on $\scrE$, $\scrE_Y$ is a subquotient of the flat definable vector bundle corresponding to a local system induced by $\rho$, so by \cite{BMull} and definable GAGA $\scrE_Y$ has a unique algebraic structure which moreover satisfies all the required properties.
\end{proof}
\begin{setup}\label{setup:qp}Assume Setup \ref{setup:alg} for an algebraic space $X$, and further suppose we have:
\begin{enumerate}
\item A line bundle $\scrL$ on $\scrM$ which is a subquotient of a constant $\G$-equivariant vector bundle.
\item For any morphism $g:Z\to X$ from a smooth algebraic variety $Z$ with a log smooth compactification $(\bar Z,D)$, if $\phi_Z$ has generically discrete fibers then $L_Z$ extends to a big and nef line bundle $L_{\bar Z}$ on $\bar Z$.  Moreover, this extension is functorial with respect to morphisms $(\bar Z',D')\to  (\bar Z,D)$ of log smooth pairs which are isomorphisms on the open part.  
\end{enumerate}
\end{setup}
\begin{prop}\label{prop:qp}Assume \Cref{setup:qp} for an algebraic space $X$ and assume $\phi$ has discrete fibers.  Then $L_X$ is ample.  In particular, $X$ is a quasiprojective variety.
\end{prop}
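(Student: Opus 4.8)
The plan is to prove that $L_X$ is ample, from which quasiprojectivity is immediate (a separated finite-type algebraic space over $\bC$ carrying an ample line bundle is a quasiprojective scheme). By \Cref{alg bundle} the algebraic line bundle $L_X$ is defined, and since ampleness descends along finite surjective morphisms while the normalization $\widetilde X\to X$ inherits \Cref{setup:qp} (with $\phi_{\widetilde X}$ still having discrete fibers), we may assume $X$ is normal. Fix a resolution $r\colon X'\to X$ and a log smooth compactification $(\bar X',D)$ of $X'$. Since $r$ is birational, $\phi_{X'}$ has generically discrete fibers, so \Cref{setup:qp}(2) applies to $g=r$ and produces a big and nef line bundle $\bar L'$ on $\bar X'$ extending $L_{X'}=r^*L_X$; after a further modification inside $D$ and Chow's lemma for algebraic spaces we may assume $\bar X'$ projective.

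The strategy is then the familiar one for images of period maps: I would show that the augmented base locus satisfies $\mathbf{B}_{+}(\bar L')\subseteq D\cup\operatorname{Exc}(r)$. Granting this, choose $m\gg 0$ and an effective Cartier divisor $F$ with $\operatorname{Supp}F=\mathbf{B}_{+}(\bar L')$ and $m\bar L'-F$ ample. Restricting to the complement of $D\cup\operatorname{Exc}(r)$, on which $F$ is trivial, shows that $L_{X'}$ is ample there; since $r$ restricts to a finite morphism over $X\setminus r(\operatorname{Exc}(r))$, this gives that $L_X$ is ample on the open set $X\setminus r(\operatorname{Exc}(r))$. Combining this with the induction below (applied to the subvarieties of $r(\operatorname{Exc}(r))$, which have smaller dimension and on which $L_X$ will be ample) and the identity $r_*\cO_{X'}=\cO_X$, a standard patching argument shows $L_X$ is ample on all of $X$. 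In particular, when $X$ is already smooth one has $\operatorname{Exc}(r)=\varnothing$, $\bar X'$ is a log smooth compactification of $X$ itself, and the conclusion is simply that $L_X=(m\bar L'-F)|_X/m$ is ample because the restriction of an ample bundle to a quasi-compact open subscheme is ample.

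It remains to establish $\mathbf{B}_{+}(\bar L')\subseteq D\cup\operatorname{Exc}(r)$, which I would prove by induction on $\dim X$, the case $\dim X=0$ being trivial. By Nakamaye's theorem $\mathbf{B}_{+}(\bar L')$ equals the null locus $\bigcup\{V:(\bar L'|_V)^{\dim V}=0\}$, so it suffices to show $(\bar L'|_V)^{\dim V}>0$ for every irreducible $V\subseteq\bar X'$ with $V\not\subseteq D\cup\operatorname{Exc}(r)$. If $V=\bar X'$ this is bigness of $\bar L'$; otherwise the image $W:=\overline{r(V\cap X')}$ is an irreducible subvariety of $X$ with $\dim W=\dim V<\dim X$, and $r$ restricts to a generically finite map $V\cap X'\to W$. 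The structures of \Cref{setup:qp} restrict to $W$ and $\phi_W$ has discrete fibers, so by induction $L_W$ is ample, hence big, on $W$; pulling back along the generically finite map, $\bar L'|_{\bar V}$ is a nef line bundle on the closure $\bar V=V$ whose restriction to the dense open $V\cap X'$ is big. It then suffices to conclude that $\bar L'|_{\bar V}$ is itself big.

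This last implication is the main obstacle. Two extensions of $L_{V\cap X'}$ to a common log resolution $\widetilde V$ of $\bar V$ differ only by a divisor supported in the boundary, and bigness is not preserved under arbitrary boundary modifications, so one cannot argue from nefness and generic bigness alone. Here I would use the explicit construction of the extension in \Cref{setup:qp}(2) together with its functoriality: on a log smooth compactification $\widetilde V$ of a resolution of $V\cap X'$ dominating $\bar V$, both $\bar L'|_{\widetilde V}$ and the canonical extension $\bar L^{\mathrm{can}}$ furnished by \Cref{setup:qp}(2) restrict to the same bundle over $V\cap X'$, and one checks that the difference $\bar L'|_{\widetilde V}-\bar L^{\mathrm{can}}$ is an \emph{effective} boundary divisor — equivalently, that the canonical extension is minimal among the nef extensions in play, reflecting that the defining (Hodge-type) norm of the restriction only improves at the boundary. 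Since $\bar L^{\mathrm{can}}$ is big and nef, it follows that $(\bar L'|_V)^{\dim V}=(\bar L'|_{\widetilde V})^{\dim V}\geq(\bar L^{\mathrm{can}})^{\dim V}>0$, which closes the induction and finishes the proof.
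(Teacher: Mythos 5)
The paper's own proof is a one-line citation: the hypotheses of \Cref{setup:qp} (functorial big and nef extensions of $L_Z$ for \emph{every} $g\colon Z\to X$ with $\phi_Z$ generically discrete, which here means every subvariety, since $\phi$ has discrete fibers) are exactly the hypotheses of the ampleness criterion \cite[Theorem 5.4]{bbt}, so $L_X$ is ample. You instead try to re-prove that criterion from scratch, and your argument has a genuine gap at precisely the point you flag as ``the main obstacle.'' You need $(\bar L'|_V)^{\dim V}>0$ for subvarieties $V\not\subseteq D\cup\operatorname{Exc}(r)$, and you reduce this to the claim that on a log smooth model $\widetilde V$ the difference $\bar L'|_{\widetilde V}-\bar L^{\mathrm{can}}$ is an \emph{effective} boundary divisor, where $\bar L^{\mathrm{can}}$ is the extension furnished by \Cref{setup:qp}(2) for $V\cap X'$. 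Nothing in \Cref{setup:qp} gives any comparison between extensions on $\bar X'$ and extensions on a compactification of a subvariety: the functoriality clause in (2) only concerns morphisms of log smooth pairs that are isomorphisms on the open part, i.e.\ different compactifications of the \emph{same} $Z$. In the abstract setting there is no ``Hodge-type norm'' to appeal to, and the claim cannot be ``checked'' formally: two nef line bundles on $\widetilde V$ agreeing on the open part can differ by a boundary divisor that is neither effective nor anti-effective (already on $\bP^1$ with boundary $\{0,\infty\}$, $\cO(a\cdot 0-a\cdot\infty)$ is nef and restricts trivially to $\bG_m$ for every $a$), so nefness plus agreement on the open locus does not force the minimality you assert. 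This is exactly the kind of input that the cited criterion is engineered to avoid needing: its hypothesis is imposed directly on every subvariety, so no restriction-versus-canonical-extension comparison ever arises.

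A second, smaller but still real gap is the final ``standard patching argument'': knowing $L_X$ is ample on the open set $X\setminus r(\operatorname{Exc}(r))$ and (by induction) on closed subvarieties of $r(\operatorname{Exc}(r))$ does not imply $L_X$ is ample on $X$. There is no Nakai--Moishezon-type statement for non-proper algebraic spaces, and assembling ampleness from such local/stratified information is precisely the delicate content of \cite[Theorem 5.4]{bbt} (which proceeds by producing and extending sections of powers of $L$ by induction on dimension, not by augmented base loci). Your Nakamaye/$\mathbf{B}_+$ strategy does give a clean argument when $X$ is smooth, since then $\operatorname{Exc}(r)=\varnothing$ and $L_X$ is the restriction of an ample $\bQ$-bundle to the open part --- but only once the bigness-of-restrictions step above is repaired, and in the generality of the proposition (normal or merely reduced $X$, after your finite-descent reduction) both the restriction step and the patching step need genuinely new arguments or, more simply, the citation the paper uses.
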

\begin{proof}Immediate by \cite[Theorem 5.4]{bbt}.
\end{proof}

Critically, complex variations of pure Hodge structures fall within the framework of \Cref{setup:qp}.  The following is a consequence of the main theorem \Cref{mainShaf}, but we include it now as an example.

\begin{thm}\label{baby qp image}
Let $(\bar X, D)$ be a smooth proper log-pair and $X := \bar X \setminus D$. 
Let $V$ be a pure $\bC$-VHS on $X$ whose underlying local system has discrete monodromy $\Gamma$. Assume that $V$ has infinite local monodromy. Then the Stein factorization of the associated period map $X^\an \to \Gamma \backslash \scrM$ is the analytification of an algebraic morphism $X \to Y$, with $Y$ normal quasiprojective.
\end{thm}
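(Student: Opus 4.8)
The plan is to verify the hypotheses of \Cref{setup:twisted map}, \Cref{setup:alg}, \Cref{setup:qp} for $V$, and then apply \Cref{thm:alg} to get algebraicity of the Stein factorization and \Cref{prop:qp} to get quasiprojectivity of the image. First I would set up the data: let $\scrM = \Gamma\backslash D$ be the (definable) period domain quotient — more precisely, since $\Gamma$ need not act freely, work with $D$ itself as the definable analytic space $\scrM$ carrying the action of $\Gamma \subset \G(\bC)$ by definable analytic automorphisms (this is where we use that period domains are definable in $\bR_{\an,\exp}$ and that $\G(\bC)$ acts definably, by the work on definability of period maps cited in the introduction, e.g. \cite{bkt,bbkt}). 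Take $\pi : \tilde X^\rho \to X^\an$ to be the cover attached to $\ker\rho$, where $\rho : \pi_1(X^\an,x) \to \G(\bC)$ is the monodromy of $V$ (with image $\Gamma$); the lifted period map $\phi : \tilde X^\rho \to \scrM^\an$ is the equivariant period map, which is $\pi$-definable by the local definability of period maps (the nilpotent orbit theorem / regularity of the Hodge filtration at the boundary), as the excerpt itself emphasizes. Since $\phi$ is horizontal and locally liftable, its fibers are the positive-dimensional period-isotriviality loci; by properness of the period map along such fibers (or directly: the connected components of fibers of a period map are compact, being closed analytic subsets on which the VHS is constant and which, by Griffiths' theorems on algebraicity of the image inside the flag variety, are proper) the fibers of $\phi$ have compact connected components.

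Next I would check \Cref{setup:alg}: a pure $\bC$-VHS has unitary (hence norm-one) local monodromy eigenvalues when restricted to a resolution and a log smooth compactification — this is the content of the fact that the Hodge metric on each graded piece degenerates with only logarithmic growth and the residue of the connection is nilpotent up to a semisimple part whose eigenvalues are purely imaginary, which on the level of monodromy says the eigenvalues have absolute value one; this is exactly the ``norm one eigenvalues of local monodromy'' hypothesis, and $\G$ here can be taken to be the $\bQ$- (or $\bR$-) algebraic monodromy group or just $\GL$ of a faithful representation. Then \Cref{thm:alg} applies (after passing to the seminormalization, which is harmless since $X$ is already smooth hence normal): the Stein factorization $\sigma : X^\an \to \cY$ of $\phi$ is the analytification of an algebraic map $g : X \to Y$ with $Y$ a normal algebraic space.

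For quasiprojectivity I would invoke \Cref{setup:qp} with $\scrL$ the determinant of the Hodge bundle (or a suitable line bundle on the period domain pulled back from the compact dual, e.g. a power of the canonical bundle of the flag variety restricted to $D$): such $\scrL$ is $\G$-equivariant and a subquotient of a constant equivariant bundle. The curvature-positivity statements of Griffiths — refined by Schmid's nilpotent orbit theorem at the boundary — show that for any $g : Z \to X$ from a smooth variety with log smooth compactification $(\bar Z, D)$ on which the induced period map has generically discrete fibers, the extended Hodge line bundle $L_{\bar Z}$ is big and nef, functorially in log modifications (this is standard; see the references to Hodge-theoretic positivity in the introduction). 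Hence \Cref{prop:qp} gives that $L_Y$, and therefore the coarse space of $Y$, is quasiprojective; since $\phi$ (equivalently the period map) has been replaced by its Stein factorization $g$ whose fibers are discrete, $Y$ itself is quasiprojective.

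The main obstacle is the verification of $\pi$-definability of $\phi$ together with the definability of the $\Gamma$-action on $\scrM$ — i.e.\ that we are genuinely in \Cref{setup:twisted map} and \Cref{setup:alg}. In the projective case this is the theorem of Bakker--Klingler--Tsimerman; in the quasiprojective case it is the content of \cite{bbkt} and rests on the nilpotent orbit theorem to control the asymptotics of $\phi$ near $D$. Once this input is granted, everything else is a bookkeeping application of \Cref{thm:alg} and \Cref{prop:qp}; the only other point requiring a little care is the compactness of the connected components of the fibers of $\phi$, which follows from the properness of period maps restricted to isotriviality loci (equivalently, from the fact that the image of a period map is a closed analytic subvariety of the flag variety quotient, by Griffiths).
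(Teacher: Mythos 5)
Your overall route is the paper's route: put the lifted period map into \Cref{setup:twisted map}/\Cref{setup:alg} (definability coming from the norm-one eigenvalues of local monodromy via \cite{BMull} and the extension of the Hodge filtration to the Deligne extension), apply \Cref{thm:alg} for algebraicity of the Stein factorization, and get quasiprojectivity from \Cref{setup:qp} and \Cref{prop:qp} with a Hodge-theoretically positive line bundle (the paper uses the Griffiths bundle and reduces bigness/nefness to the $\bR$-VHS case via $V\oplus\bar V$ in \Cref{lem:big and nef}; your determinant-of-the-Hodge-bundle variant is only a cosmetic difference).

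However, there is a genuine gap at the step where you claim the fibers of $\phi$ have compact connected components. Your justification --- that fibers are closed analytic subsets on which the VHS is constant, hence compact ``by Griffiths' theorems on algebraicity of the image'' --- is false for quasiprojective $X$ and, in the form ``properness of the period map along such fibers,'' is circular: a connected component of a fiber can perfectly well be a noncompact subvariety whose closure meets the boundary divisor $D$ (take, e.g., a VHS with trivial local monodromy around some component of $D$, or a VHS pulled back from a projective base along a non-proper map). Notice that your argument never uses the hypothesis that $V$ has \emph{infinite} local monodromy, which is exactly what rules these examples out. The paper's proof invokes this hypothesis through the Griffiths properness criterion \cite[Proposition 9.11]{Giii}: infinite local monodromy around each boundary component forces $X^\an\to\Gamma\backslash\scrM$ to be proper, and only then do the fibers of $\phi$ have compact connected components, which is what \Cref{thm:alg} needs. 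With that step repaired, the rest of your argument goes through as in the paper.
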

    
\begin{proof}In the notation of \Cref{setup:twisted map}, we take $\scrM$ to be the period domain and $\phi:\tilde X^V\to\scrM$ to be the period map.  The definability condition follows from \cite{BMull} and the fact that the Hodge filtration extends to the Deligne extension (see the proof of Lemma 7.4 in \cite{brunebarbesemipos}) and the algebraicity follows from \Cref{thm:alg}.  Note the fibers of $\phi$ have compact connected components since the condition on the local monodromy implies by the Griffiths criterion \cite[Proposition 9.11]{Giii} that $X^\an\to\Gamma\backslash \scrM$ is proper.

It remains to show the quasiprojectivity.  By replacing $X$ with $Y$ we may assume $\phi$ has discrete fibers.  We take $\scrL$ to be the Griffiths bundle; then \Cref{setup:alg} is satisfied, and \Cref{alg bundle} applies to $\scrL$.  By the following lemma, \Cref{setup:qp} applies.  

    \begin{lem}\label{lem:big and nef}Let $X$ be a smooth algebraic variety, $V$ a $\bC$-VHS with unipotent local monodromy, and $N_X$ the Griffiths bundle.  Then there is a functorial nef extension $N_{\bar X}$ to any log smooth compactification.  Moreover, if the period map $\phi$ of $V$ has generically discrete fibers, then $N_{\bar X}$ is big.
\end{lem}
\begin{proof}
    The Griffiths bundle of the $\bR$-VHS $V\oplus\bar V$ is a power of that of $V$, so it suffices to consider a $\bR$-VHS and this is \cite[Lemma 6.15]{bbt}.
\end{proof}
\end{proof}
\begin{rem}
    There is a version of \Cref{baby qp image} in the mixed case, but the condition of infinite local monodromy must be replaced with the nonextendability of the local system (see \Cref{sect:nonextend}).
\end{rem}
\section{Hodge and twistor theory of miniversal local systems}\label{sect:miniversal}
In this section we construct functorial mixed Hodge structures on the miniversal deformation rings at points of the Betti stack $\cM_B(X)$ (see \Cref{sect:Betti}) underlying complex variations of mixed Hodge structure, as well as complex variations of mixed Hodge structures on the miniversal families themselves.  In fact, we will need the same statement in the category of variations of mixed twistor structures (see \Cref{sect:MHSdefn} for definitions), which gives additional structure to the miniversal family of $\cM_B(X)$ at every point.

We leave some relevant definitions for \Cref{sect:pro stuff}, but the precise statements are \Cref{thm:versal frame} and \Cref{thm:versal} below.  Before stating it, we make the following notational convention:

\begin{notn}\label{notn ms}
    Starting in \Cref{sect:saito/mochizuki}, we develop the theory in the Hodge and twistor cases largely in parallel.  We therefore use the phrase ``complex mixed structure'' ($\bC$-MS) as a stand-in to mean either a complex mixed Hodge structure ($\bC$-MHS) or complex mixed twistor structure ($\bC$-MTS), and $\bC$-VMS ($\bC$-AVMS) to mean an (admissible) variation of complex mixed Hodge structures or complex mixed twistor structures.  In either category, we denote by $\bT(0)$ the weight 0 Tate object, that is, the unit for the tensor structure. 
\end{notn}

We first state the result for the framed Betti moduli space, where the lack of automoprhisms makes the universal properties better behaved.

\begin{thm}\label{thm:versal frame} Let $X$ be a connected algebraic space, $x \in X$ a basepoint, and $V\in\cM_B(X)(\bC)$ a complex local system equipped with an admissible\footnote{Here, the admissibility hypothesis in the Hodge case includes the condition that the local monodromy is quasiunipotent, see \Cref{defn:avmhs}} (graded polarizable) complex variation of mixed structures ($\bC$-AVMS).  Let $\phi:V_{x}\to\bC^r$ be a framing of $V$ at $x$.  Let $M$ be $\bC^r$ equipped with the mixed structure induced via $\phi$.  Let $(\hat\cO_{R_B(X,x),(V,\phi)},\hat V)$ be the universal local system at $(V,\phi)$ for $R_B(X,x)$ with universal framing $\hat \phi:\hat V_x\to \hat\cO_{R_B(X,x),(V,\phi)}\otimes_\bC \bC^r$.
        \begin{enumerate}
            \item There exists a pro-$\tate$-MS-algebra structure on $\hat {\cO}_{R_B(X,x),(V,\phi)}$ and a pro-$\hat {\cO}_{R_B(X,x),(V,\phi)}$-AVMS structure on $\hat V$ that is compatible with $V$ and such that the framing $\hat\phi:\hat V_x\to \hat\cO_{R_B(X,x),(V,\phi)}\otimes_{\tate} M$ is a morphism of pro-$\hat {\cO}_{R_B(X,x),(V,\phi)}$-MS-modules.
    \item For a fixed $\bC$-AVMS structure on $V$, the pair $(\hat {\cO}_{R_B(X,x),(V,\phi)},(\hat V,\hat\phi))$ is uniquely determined by the following universal property.  For any artinian local $\tate$-MS-algebra $A$ and any $A$-AVMS $U$ which is equipped with a framing $\psi:U_x\to A\otimes_{\tate}M$ which restricts to $(V,\phi)$ mod $\mathfrak{m}_A$, there is a unique morphism $\hat {\cO}_{R_B(X,x),(V,\phi)}\to A$ of local pro-$\tate$-MS-algebras such that $(A,(U,\psi))$ is isomorphic to $A\otimes (\hat {\cO}_{R_B(X,x),(V,\phi)},(\hat V,\hat \phi))$.

    \item These structures are functorial.  For any morphism $f:(Y,y)\to (X,x)$ of connected algebraic spaces respecting basepoints, the induced pullback morphism $f^*:R_B(X,x)\to R_B(Y,y)$ induces a morphism $\hat {\cO}_{R_B(Y,y),(f^*V,f^*\phi)}\to \hat {\cO}_{R_B(X,x),(V,\phi)}$ of pro-$\tate$-MS-algebras.  In the same way, these structures are compatible with the direct sum and tensor product morphisms.
        \end{enumerate}

\end{thm}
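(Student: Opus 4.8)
The plan is to repackage the statement as a pro-representability assertion for a Hodge/twistor-enriched deformation functor, and then to import the needed structure from Saito's and Sabbah--Mochizuki's theories at the level of cohomology. Recall that $\hat\cO_{R_B(X,x),(V,\phi)}$ pro-represents the functor on artinian local $\bC$-algebras sending $A$ to local systems of free $A$-modules on $X^{\an}$ equipped with a framing at $x$ reducing to $(V,\phi)$ mod $\mathfrak{m}_A$, equivalently to deformations of the monodromy representation $\rho_0\colon\pi_1(X^{\an},x)\to\GL_r(\bC)$. First I would introduce the enriched functor $F^{\mathrm{MS}}$ on artinian local $\tate$-MS-algebras $A$, whose value is the set of $A$-AVMS $U$ together with a framing $\psi\colon U_x\to A\otimes_{\tate}M$ reducing to $(V,\phi)$, together with its forgetful transformation to the un-enriched functor. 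In this language statement (2) is precisely that $F^{\mathrm{MS}}$ is pro-represented by a pro-$\tate$-MS-algebra whose underlying pro-$\bC$-algebra is $\hat\cO_{R_B(X,x),(V,\phi)}$; statement (1) is the existence of the universal object; and statement (3) follows formally from uniqueness. So the theorem reduces to (i) pro-representability of $F^{\mathrm{MS}}$ in pro-MS-algebras and (ii) the identification of the underlying pro-algebra with $\hat\cO_{R_B(X,x),(V,\phi)}$.

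The engine is the Hodge/twistor structure on cohomology. Since $\cEnd(V)=\cHom(V,V)$ again underlies an admissible $\bC$-variation of mixed structures (Tannakian formalism), Saito's theory of mixed Hodge modules --- resp.\ Sabbah--Mochizuki's theory of mixed twistor $\mathscr D$-modules --- equips $H^i(X^{\an},\cEnd V)$ and the relative groups $H^i(X^{\an},\{x\};\cEnd V)$ with functorial $\bC$-MS, with cup products $H^i\otimes H^j\to H^{i+j}$, the map induced by the bracket $\cEnd V\otimes\cEnd V\to\cEnd V$, and the connecting maps of the pair $(X,x)$ all morphisms of $\bC$-MS; for $X$ non-proper or singular one resolves and compactifies to a log smooth pair $(\bar X,D)$ and replaces $\cEnd V$ by $Rj_*$ of the associated admissible mixed Hodge (resp.\ twistor) module, admissibility being exactly what makes this push-forward available. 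In particular the framed tangent space is canonically $Z^1(\pi_1(X^{\an},x),\End(V_x))\cong H^1(X^{\an},\{x\};\cEnd V)$, a finite-dimensional $\bC$-MS sitting in $0\to\End(V_x)/H^0(X^{\an},\cEnd V)\to Z^1\to H^1(X^{\an},\cEnd V)\to 0$, while the obstructions to framed deformations lie naturally in $H^2(X^{\an},\cEnd V)$, also a $\bC$-MS, with obstruction maps built out of cup products.

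With this in hand I would run Schlessinger's argument in the category of artinian local $\tate$-MS-algebras. The things to check are that this category has fibered products (the underlying fiber product with the induced MS, which is an MS since $\bC$-MS is abelian), that base change along $A\to A'$ of an $A$-AVMS is an $A'$-AVMS, that $A$-AVMS form an abelian category stable under extensions (Kashiwara, resp.\ Mochizuki), and that framed AVMS deformations agreeing over $A$ glue uniquely over $A'\times_A A''$. Then $F^{\mathrm{MS}}$ satisfies (H1)--(H4) --- pro-representability uses that the framing kills infinitesimal automorphisms (a flat endomorphism that is the identity at $x$ is the identity), which is the whole point of working with the framed space --- so $F^{\mathrm{MS}}$ is pro-represented by a pro-$\tate$-MS-algebra $\hat\cO^{\mathrm{MS}}$ carrying a universal framed pro-AVMS $\hat V^{\mathrm{MS}}$, with MS-tangent space $Z^1$ and obstructions in $H^2$. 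The forgetful transformation classifies the underlying universal deformation and thus gives a surjection $\hat\cO_{R_B(X,x),(V,\phi)}\twoheadrightarrow\hat\cO^{\mathrm{MS}}$ of pro-$\bC$-algebras (surjective because it is the identity on $\mathfrak{m}/\mathfrak{m}^2\cong (Z^1)^\vee$), and part (ii) is its injectivity, which is equivalent to the assertion that the universal un-enriched framed deformation over each $\hat\cO_{R_B(X,x),(V,\phi)}/\mathfrak{m}^n$ lifts to an admissible variation of mixed (Hodge, resp.\ twistor) structures for a suitable MS on the base. Finally, for (3), $f^*$ along a pointed $f\colon(Y,y)\to(X,x)$, and likewise $\oplus$ and $\otimes$, send AVMS to AVMS and are compatible with the corresponding operations on deformation functors, hence induce morphisms of the enriched functors and, by Yoneda, morphisms of the pro-MS-algebras with the stated underlying maps.

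The hard part is twofold: establishing the Hodge/twistor input in the non-proper (and possibly singular) setting --- juggling admissibility, Deligne/Saito extensions, resolutions, and, in the twistor case, Mochizuki's mixed twistor $\mathscr D$-modules and their cohomological formalism --- and proving the injectivity in (ii), i.e.\ that the enriched obstruction theory does not over-constrain. The latter I would prove by building the variation on the universal deformation order by order: at each stage the obstruction to promoting the $\hat\cO/\mathfrak{m}^n$-family to an $\hat\cO/\mathfrak{m}^{n+1}$-AVMS is a class in an $\Ext^1$ in the category of AVMS which Saito's (resp.\ Mochizuki's) theory both endows with a $\bC$-MS and exhibits as a morphism of $\bC$-MS, pinning down its weight and allowing one to split it off, so that no extra relations appear in $\hat\cO^{\mathrm{MS}}$. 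This is where the harmonic-theoretic complications of the non-compact case get traded for the mixed Hodge/twistor module formalism; it also refines, and gives an alternative route to, the pro-mixed Hodge structures of \cite{ES,lefevrei,lefevreii}. The twistor version moreover requires that \emph{every} local system underlies an admissible tame purely imaginary variation of mixed twistor structures, so that the construction applies at every point of $\cM_B(X)$ --- this is what makes the twistor statement strictly stronger than the Hodge one.
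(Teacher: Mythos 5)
Your proposal follows essentially the same route as the paper: both replace Goldman--Millson by elementary Schlessinger-style deformation theory enriched by Saito's mixed Hodge modules and Mochizuki's mixed twistor $\mathscr{D}$-modules (which supply the functorial mixed structures on the cohomology of $\cEnd(V)$ and on the relative groups governing framed deformations), use the framing to kill infinitesimal automorphisms, and construct the mixed structure on the hull order by order, with the residual obstruction in $\Ext^1_{\MS}$ absorbed by adjusting the mixed structure on the Artinian base rather than by any weight argument. The paper merely packages this differently---as an MS-enhancement of the un-enriched deformation category together with the general lifting theorem \Cref{MHS Schlessinger}, applied to the relative deformation theory of pullback to the basepoint (\Cref{prop rel mini map})---instead of first pro-representing the enriched functor and then comparing hulls, but the substance of the argument is the same.
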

On the other hand, the Hodge/twistor structures on $\cM_B(X)$ are more intrinsic because they do not depend on the choice of basepoint.  The functoriality properties are better understood in terms of the framed space, and the representability of the diagonal in part (3) of the next theorem allows us to pass from the framed to the unframed space and vice versa. 
 
\begin{thm}\label{thm:versal}Let $X$ be a connected algebraic space, and $V\in\cM_B(X)(\bC)$ underlying a $\bC$-AVMS.  Then for any miniversal family $(\hat\cO_{\cM_B(X),V},\hat V)$ of $\cM_B(X)$ at $V$, the following are true: 

\begin{enumerate}
    \item There exists a pro-$\tate$-MS-algebra structure on $\hat {\cO}_{\cM_B(X),V}$ and a pro-$\hat\cO_{\cM_B(X),V}$-AVMS structure on $\hat V$ that is compatible with $V$.  
    \item For a fixed $\bC$-AVMS structure on $V$, the pro-$\tate$-MS-algebra structure on $\hat {\cO}_{\cM_B(X),V}$ is uniquely determined (up to isomorphism) by the existence of a pro-$\hat\cO_{\cM_B(X),V}$-AVMS structure on $\hat V$ compatible with $V$.  If $V$ is a simple local system, the pro-$\hat\cO_{\cM_B(X),V}$-AVMS structure on $\hat V$ is unique as well (up to isomorphism).

    \item The diagonal $\Delta:\cM_B(X)\to\cM_B(X)\times\cM_B(X)$ is representable by mixed structures in the following sense.  Suppose $\Lambda$ is a pro-$\tate$-MS-algebra and $U_1,U_2$ two $\Lambda$-AVMSs.  Let $f_0:\tate\otimes_\Lambda U_1\to \tate\otimes_\Lambda U_2$ be an isomorphism of $\bC$-AVMS.  Form the fibered 2-product
    \[\begin{tikzcd}
        \Spec\hat\cO\ar[r,dashed]&\Spec\cO\ar[r]\ar[d]\ar[dr, phantom, "\square"]&\Spec\Lambda\ar[d]\\
        &\cM_B(X)\ar[r,"\Delta"]&\cM_B(X)\times\cM_B(X)
    \end{tikzcd}\]
    and let $\hat \cO$ be the completion of $\cO$ at the point $f_0$.  Then $\hat\cO$ admits a unique structure of pro-$\Lambda$-MS-algebra such that the induced isomorphism $\hat f:\hat\cO\otimes_\Lambda U_1\to \hat\cO\otimes_\Lambda U_2$ is a morphism of pro-$\hat\cO$-AVMS.
 
\end{enumerate} Finally, if $X$ is normal and $V$ underlies a complex variation of pure structures, then the maximal ideal of $\hat \cO_{\cM_B(X),V}$ is $W_{-1}\hat \cO_{\cM_B(X),V}$. 
\end{thm}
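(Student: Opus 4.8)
\emph{Parts (1) and (2): descent from the framed case.} The plan is to obtain parts (1)--(3) from the framed statement \Cref{thm:versal frame} by descent along the $\GL_r$-torsor $q\colon R_B(X,x)\to\cM_B(X)$, and then to deduce the final assertion from a weight estimate on $H^1(X^\an,\cEnd V)$. Fix a basepoint $x$ and a framing $\phi\colon V_x\xrightarrow{\sim}\bC^r$; since $V$ is a $\bC$-AVMS its stalk $V_x$ is a $\bC$-MS, so $M:=\bC^r$ inherits a $\bC$-MS via $\phi$, and \Cref{thm:versal frame} endows $\hat\cO_{R_B(X,x),(V,\phi)}$ with a pro-$\tate$-MS-algebra structure carrying a compatible framed pro-AVMS $(\hat V,\hat\phi)$. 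A choice of miniversal slice through $(V,\phi)$ transverse to the $\GL_r$-orbit realizes $\Spf\hat\cO_{\cM_B(X),V}$ as a formal subscheme of $\Spf\hat\cO_{R_B(X,x),(V,\phi)}$ along which $\hat V$ restricts to a versal family; the complementary orbit direction is $\cEnd(M)/\Lie\Aut(V)$, a subquotient of $\cEnd(M)$ and hence a $\bC$-MS. Rather than splitting off this direction (which is obstructed, see below), one presents $\hat\cO_{R_B(X,x),(V,\phi)}$ as a fibered $2$-product over $\cM_B(X)$ and invokes part (3) — whose proof uses only \Cref{thm:versal frame} — to see that the framed MS structure restricts, along the slice, to a pro-$\tate$-MS-algebra structure on $\hat\cO_{\cM_B(X),V}$ and a compatible pro-AVMS on $\hat V$. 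Equivalently one may reprise the inductive obstruction-theory construction of \Cref{thm:versal frame} directly for the deformation functor of the unframed local system, using that $H^1(X^\an,\cEnd V)$ and $H^2(X^\an,\cEnd V)$ carry canonical mixed structures (from Saito's mixed Hodge modules, resp. Mochizuki's mixed twistor $\scrD$-modules) with respect to which cup and Yoneda products are MS-morphisms, and that deformations of an admissible variation over an artinian base remain admissible. This gives (1); functoriality for pull-backs, direct sums and tensor products descends from \Cref{thm:versal frame}(3) the same way. For (2): any pro-MS-algebra structure on $\hat\cO_{\cM_B(X),V}$ carrying a compatible pro-AVMS on $\hat V$ pulls back along the slice and, tensored with the canonical MS on the orbit direction, yields a pro-MS-algebra structure on $\hat\cO_{R_B(X,x),(V,\phi)}$ for which $\hat\phi$ is a morphism of MS-modules, hence equal to the one of \Cref{thm:versal frame} by uniqueness there; descending gives uniqueness on $\hat\cO_{\cM_B(X),V}$. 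When $V$ is simple $\Aut(V)=\bC^\times$ acts trivially, so $\hat\cO_{\cM_B(X),V}$ pro-represents $\cM_B(X)$ at $V$ and $\hat V$ is universal; any two compatible pro-AVMS structures on $\hat V$ then differ by the canonical automorphism of the universal family, which is thereby forced to be an isomorphism of AVMS.

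\emph{Part (3): diagonal representability.} Given a pro-$\tate$-MS-algebra $\Lambda$, $\Lambda$-AVMSs $U_1,U_2$ and $f_0\colon\tate\otimes_\Lambda U_1\xrightarrow{\sim}\tate\otimes_\Lambda U_2$, choose framings $\psi_i\colon U_{i,x}\xrightarrow{\sim}\Lambda\otimes_\tate M$ with common target $M$ such that $\psi_2\circ f_0\equiv\psi_1\pmod{\fm_\Lambda}$. By \Cref{thm:versal frame}(2) each $(U_i,\psi_i)$ is classified by a morphism $c_i\colon\hat\cO_{R_B(X,x),(V,\phi)}\to\Lambda$ of pro-$\tate$-MS-algebras, with $(V,\phi)$ the common reduction. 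Using the $\psi_i$, an isomorphism of the corresponding deformations over a quotient ring lifting $f_0$ is exactly an element $g$ of the formal completion $\widehat{\GL(M)}_e$ of $\GL(M)=\GL_r$ at the identity intertwining the $c_1$- and $c_2$-framings of the common underlying deformation; the equations this imposes are assembled from $c_1$ and the canonical $\bC$-MS on $\cEnd(M)$, so the completion $\hat\cO$ in the statement is the quotient of $\Lambda\,\hat\otimes\,\widehat{\GL(M)}_e$ by an MS-stable ideal. This is the required pro-$\Lambda$-MS-algebra structure, $\hat f$ is then a morphism of pro-$\hat\cO$-AVMS by construction, and uniqueness is forced as in (2).

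\emph{The final assertion.} Assume now that $X$ is connected normal and $V$ underlies a pure $\bC$-VHS (resp. a $\bC$-variation of pure twistor structures); write $\hat\cO:=\hat\cO_{\cM_B(X),V}$ and let $\fm$ be its maximal ideal. The ideal $W_{-1}\hat\cO$ is proper, as $1$ has weight $0$, hence $W_{-1}\hat\cO\subseteq\fm$ automatically. For the reverse inclusion it suffices to show that $\fm/\fm^2=(T^1)^{\vee}$ has weights $\le-1$, i.e. that $T^1=H^1(X^\an,\cEnd V)$, with the MS from part (1), has weights $\ge1$: then $\fm/\fm^2\subseteq W_{-1}\hat\cO$, and multiplicativity of $W_\bullet$ gives $\fm^n\subseteq W_{-n}\hat\cO\subseteq W_{-1}\hat\cO$ for all $n\ge1$, whence $\fm=\varprojlim_n\fm/\fm^n\subseteq W_{-1}\hat\cO$. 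By part (1) the MS on $H^1(X^\an,\cEnd V)$ is the canonical one from Saito's (resp. Mochizuki's) theory: $\cEnd V$ is pure of weight $0$, and for $X$ connected normal the corresponding object on $X$ is a pure Hodge module (resp. pure twistor $\scrD$-module) of weight $0$ whose degree-$1$ cohomology is $H^1(X^\an,\cEnd V)$. Since direct images with proper supports of a pure object of weight $0$ have weights $\le i$ in cohomological degree $i$, their Verdier duals $a_{X*}(-)=\bD\,a_{X!}\,\bD(-)$ have weights $\ge i$ in degree $i$; hence $H^1(X^\an,\cEnd V)$ has weights $\ge1$. Equivalently, connectedness and normality of $X$ force $W_0H^1(X^\an,\cEnd V)=0$ — this is the only place normality enters. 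Therefore $\fm=W_{-1}\hat\cO$.

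\emph{Main obstacle.} The delicate step is the descent in the proof of (1)--(2): the tangent sequence
\[0\longrightarrow\cEnd(M)/\Lie\Aut(V)\longrightarrow T_{(V,\phi)}R_B(X,x)\longrightarrow H^1(X^\an,\cEnd V)\longrightarrow 0\]
is an extension of mixed structures that need not split, so the framed MS structure cannot simply be restricted to a slice; the comparison must be routed through the diagonal representability of part (3), which is thus the technical heart of the argument (and the reason the framed statement is proved first).
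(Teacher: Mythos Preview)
Your descent strategy for parts (1)--(2) runs opposite to the paper's argument and has a gap at the point you flag. The paper does \emph{not} deduce the unframed statement from \Cref{thm:versal frame}; it proves the unframed case directly by exhibiting a precise MS-enhancement of the deformation category $\cM_B(X)_{V_0}$ (this is \Cref{prop mini MB(X)}) and then invoking the abstract \Cref{MHS Schlessinger}. In fact the logical order in the paper is reversed from yours: the framed \Cref{thm:versal frame} is proved \emph{after} \Cref{thm:versal}, via the relative deformation theory of pullback to a point (\Cref{prop rel mini map}). Your parenthetical ``equivalently one may reprise the inductive obstruction-theory construction \ldots\ directly for the deformation functor of the unframed local system'' is exactly what the paper does, and is the substance of \Cref{prop mini MB(X)}; but you treat it as an aside rather than the actual proof.

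The gap in your primary route is the circularity in invoking part (3) to carry out the descent. Part (3) as stated requires a pro-$\tate$-MS-algebra $\Lambda$ and $\Lambda$-AVMSs $U_1,U_2$ as input; you want to use it to produce the MS structure on $\hat\cO_{\cM_B(X),V}$, but any way of presenting the framed ring as a fibered $2$-product over $\cM_B(X)$ already presupposes the MS structure on the unframed base (or on $\hat\cO_{R_B}\hat\otimes_{\hat\cO_{\cM_B}}\hat\cO_{R_B}$, which is the same problem). Your remark that the non-splitting of the tangent extension forces the argument through (3) is correct as a diagnosis, but you have not written down a non-circular application of (3). The paper sidesteps this entirely: since \Cref{prop mini MB(X)} is a direct verification of the enhancement axioms using the diagram chase of \Cref{lem cat of lifts MHM}, no splitting is needed.

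Your part (3) argument via framings is workable and close in spirit to what the paper does, though the paper again proceeds by directly building an MS-enhancement of the diagonal deformation category (\Cref{diagonal enhancement}) and applying \Cref{MHS Schlessinger}, rather than by manipulating $\widehat{\GL(M)}_e$. Your weight argument for the final assertion is correct and matches the paper's, which cites the weight estimate of \Cref{IH and weights} (after Lefschetz reduction to a curve) for $\Ext^1_{\bC_X}(V_0,V_0)$.
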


The Hodge cases of \Cref{thm:versal} and \Cref{thm:versal frame} were proven for $X$ smooth projective and $V$ a complex variation of Hodge structures in \cite{ES}, and part (1) of \Cref{thm:versal} has been addressed in the quasiprojective case in \cite{lefevrei,lefevreii}. The twistor case has been investigated by Simpson in the compact case \cite{Simpson-Hodge-filtration}, and some partial results given in the case of a quasiprojective curve \cite{Simpsonrank2twistor,simpsonHitchinDeligne}. In all of these cases, the approach is by the Goldman--Millson description of the deformation ring \cite{goldmanmillson}.  Our approach is instead by elementary deformation theory, relying on the results of Saito/Sabbah and Mochizuki to equip various ext groups with functorial Hodge/twistor structures, which allows us to handle Hodge/twistor structures on the deformation ring and the universal family simultaneously.  

\begin{rem}\label{rem:GoldmanMillson}
    The Goldman--Millson approach can be carried out as in \cite{ES} without modification for the substacks of $\cM_B(X)$ where we fix the conjugacy class of the local monodromies.  These are in some sense pure non-abelian Hodge substructures; their deformation theory is governed by the intersection cohomology, so in particular pure, and the harmonic theory carries through.
\end{rem}

\begin{rem}
    The category of complex mixed Hodge structures is equivalent to the category of $\bG_m$-equivariant mixed twistor structures.  While this perspective might streamline some of our constructions and proofs, for simplicity of exposition we prefer to understand mixed Hodge structures in terms of the classical description.
\end{rem}

 \subsection{The Betti stack}\label{sect:Betti}

 Let $X$ be a connected algebraic space.  We denote by $\cM_B(X)$ the algebraic stack of local systems on $X^\an$.  We can think of its points over an affine scheme $\Spec A$ as the groupoid of local systems of finite-rank free $A$-modules on $X^\an$, which we just refer to as free $A$-local systems on $X$.  It is the disjoint union of the stacks $\cM_B(X,r)$ of rank $r$ free local systems.  Concretely, after choosing a basepoint $x\in X(\bC)$, we may identify $\cM_B(X,r):=[\bGL_r\backslash R_B(X,x,r)]$, where $R_B(X,x,r):=\Hom(\pi_1(X^\an,x),\bGL_r)$ is the affine scheme whose $A$-points are homomorphisms $\pi_1(X^\an,x)\to\bGL_r(A)$ and where $\bGL_r$ acts by conjugation.  We can also think of an $A$-point of $R_B(X,x,r)$ as a free $A$-local system $V$ on $X^\an$ equipped with a framing at $x$---an isomorphism $V_x\xrightarrow{\cong}A^r$.  The algebraic stack $\cM_B(X)$ is naturally defined over $\bZ$ but we usually consider it over $\bQ$.  
 
 The stack $\cM_B(X)$ admits a quasiprojective good moduli space $M_B(X)$ in the sense of \cite{alpergood} (or GIT \cite{mumford}).  This in particular means there is an affine scheme $M_B(X)$ and a morphism $c_X:\cM_B(X)\to M_B(X)$ which is surjective on $\bC$-points and universally closed (see \cite[\href{https://stacks.math.columbia.edu/tag/0513}{Tag 0513}]{stacks-project}) in the Zariski topology (see \Cref{sect:stackZariski} for more discussion on the Zariski topology for stacks).  We may construct it as follows.  After choosing a basepoint $x$, observe that $R_B(X,x,r)=\Spec H^0(R_B(X,x,r), \cO_{R_B(X,x,r)})$ is affine.  Then $M_B(X,r)=\Spec(H^0(R_B(X,x,r), \cO_{R_B(X,x,r)})^{\bGL_r})$ is the spectrum of the invariant ring and the $\bC$-points of $M_B(X)$ are naturally identified with isomorphism classes of semisimple complex local systems \cite[Proposition 6.1]{SimpsonmoduliII}. Denote by $\cM_B(X,r)(\bC)^{\ss}\subset\cM_B(X,r)(\bC)$ the set of rank $r$ semisimple local systems, which is easily seen to be $\bQ$-constructible. We call the restriction $p_X:\cM_B(X)(\bC)^{\ss}\to M_B(X)(\bC)$ and the resulting constructible retraction $\ss_X=p_X^{-1}\circ c_X:\cM_B(X)(\bC)\to \cM_B(X)(\bC)$ the semi-simplification.  

  \begin{lem}\label{ss in closure}
For any (Zariski/euclidean) closed $\Sigma\subset\cM_B(X)(\bC)$, $\ss_X(\Sigma)\subset\Sigma$.

\end{lem}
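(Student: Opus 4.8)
The plan is to reduce to the case $r$ fixed and then use the geometry of the good moduli space $c_X\colon \cM_B(X,r)\to M_B(X,r)$. Recall that for any point $V\in\cM_B(X,r)(\bC)$, the semisimplification $\ss_X(V)$ is characterized as the unique semisimple local system in the fiber $c_X^{-1}(c_X(V))$, and it is the unique closed point of the fiber; equivalently, $\ss_X(V)$ lies in the closure of the $\bGL_r(\bC)$-orbit of $V$ in $R_B(X,x,r)(\bC)$ (this is the classical fact that the associated graded of a filtration by subrepresentations is a limit of conjugates, via a one-parameter subgroup scaling the graded pieces). So the strategy is: show $\ss_X(V)$ is always in the closure of $\{V\}$, hence if $\Sigma$ is closed and $V\in\Sigma$ then $\ss_X(V)\in\Sigma$.

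First I would observe that it suffices to treat the Zariski-closed case, since a euclidean-closed set need not be handled directly: actually one should be slightly careful here. The cleanest route is to prove the orbit-closure statement $\ss_X(V)\in\overline{\bGL_r\cdot V}$ inside $R_B(X,x,r)(\bC)$, where the closure may be taken in \emph{either} the Zariski or the euclidean topology — for affine varieties and a reductive group acting linearly, the euclidean closure of an orbit agrees with its Zariski closure (a theorem of Mumford/Birkes), and moreover $\ss_X(V)$ is visibly obtained as an honest euclidean limit $\lim_{t\to 0}\lambda(t)\cdot V$ for a suitable cocharacter $\lambda$ adapted to a Jordan–Hölder filtration of $V$. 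Then for $\Sigma\subset\cM_B(X,r)(\bC)$ closed in either topology, its preimage $\tilde\Sigma\subset R_B(X,x,r)(\bC)$ is closed in the corresponding topology and $\bGL_r$-stable, so $V\in\Sigma$ forces $\overline{\bGL_r\cdot V}\subset\tilde\Sigma$, hence $\ss_X(V)\in\tilde\Sigma$, i.e.\ $\ss_X(V)\in\Sigma$. Taking the union over $r$ gives the general statement, since $\cM_B(X)=\coprod_r\cM_B(X,r)$ and $\ss_X$ preserves rank.

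Concretely, the key step is the construction of the limit. Given $V$ with a filtration $0=V_0\subset V_1\subset\cdots\subset V_k=V$ by sub-local-systems with semisimple graded pieces (a Jordan–Hölder-type filtration, e.g.\ the socle filtration), choose a basepoint $x$ and a framing so that $\rho=\rho_V\colon\pi_1(X^\an,x)\to\bGL_r(\bC)$ is block upper-triangular with respect to the flag $V_1\subset\cdots\subset V_k$, with diagonal blocks giving $\bigoplus_i \gr_i V$. Let $\lambda\colon\bG_m\to\bGL_r$ be a cocharacter acting by distinct weights on the successive blocks, chosen so that $\lambda(t)\rho(\gamma)\lambda(t)^{-1}$ has all strictly-upper-triangular entries scaled by positive powers of $t$; then $\lim_{t\to 0}\lambda(t)\cdot\rho = \bigoplus_i\gr_i\rho$, which represents $\gr V$. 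Since $\bigoplus_i \gr_i V$ is semisimple and has the same composition factors as $V$, it is $\ss_X(V)$. This shows $\ss_X(V)\in\overline{\bGL_r\cdot V}$ in the euclidean topology, and (by the comparison of orbit closures for reductive actions) in the Zariski topology as well.

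I expect the main obstacle to be purely bookkeeping: making sure the two topologies are handled uniformly (so that "closed" in the statement legitimately covers both cases) and that the reduction from stacks to the representation scheme $R_B(X,x,r)$ is done cleanly — i.e.\ that $\Sigma\subset\cM_B(X)(\bC)$ closed really does pull back to a closed $\bGL_r$-stable subset of $R_B(X,x,r)(\bC)$ fiberwise in $r$, and that $\ss_X$ as defined via $\ss_X=p_X^{-1}\circ c_X$ agrees on the nose with "take the semisimple point in the orbit closure." Both are standard (the latter is \cite[Proposition 6.1]{SimpsonmoduliII} together with the standard description of the closed orbit), so the lemma should follow without serious difficulty.
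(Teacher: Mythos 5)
Your proof is correct and is essentially the paper's own argument: the paper likewise takes a filtration with semisimple subquotients and conjugates by the one-parameter family $\bigoplus_i t^i$, whose limit as $t\to 0$ is the semisimplification, so it lies in any ($\bGL_r$-invariant) closed set containing the orbit of $V$. The only remark is that your appeal to the Birkes/Mumford comparison of orbit closures is unnecessary — a euclidean limit point automatically lies in the Zariski closure, since Zariski-closed sets are euclidean-closed.
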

\begin{proof}
    The proof is standard:  for a representation $V$, taking a filtration with semisimple subquotients $V_i$ and choosing a splitting $V=\bigoplus_{i\geq 0} V_i$ as complex vector spaces, conjugation by the operator $\bigoplus_{i\geq 0} t^{i}$ limits to the semisimplification as $t\to 0$.  
\end{proof}

For any morphism $f:X\to Y$ of algebraic spaces there is a representable $\bQ$-morphism of algebraic stacks $f^*:\cM_B(Y)\to\cM_B(X)$ given by pull-back, which is in fact the disjoint union of the quotients of the natural pull-backs $f^*:R_B(Y,y,r)\to R_B(X,x,r)$ of framed local systems.

\begin{rem} More generally, observe that for any finitely generated group $\Gamma$, there is a representation stack $\cM_B(\Gamma)$ with affine good moduli space $M_B(\Gamma)$.  Moreover, these structures are functorial with respect to $\Gamma$.
\end{rem}

It will be useful throughout to restrict to curves, so we introduce the following:
\begin{defn}\label{defn lefschetz}
    Let $X$ be a connected normal algebraic space.  A \emph{Lefschetz curve} is a locally closed immersion $i:C\to X$ of a connected smooth affine curve such that $i_*:\pi_1(C,c)\to \pi_1(X,x)$ is surjective (for compatibly chosen basepoints) and for some projective resolution $\pi:X'\to X$ for $X'=\bar X'\setminus D$ with $(\bar X',D')$ a log smooth projective variety, there is a factorization $i':C\to X'$ of $i$ through $\pi$ such that the closure of $i'(C)$ in $\bar X'$ meets every irreducible component of the regular locus of $D'$.
\end{defn}
\begin{lem}\label{lefschetz curve}
    For any connected normal algebraic space $X$ a Lefschetz curve $i:C\to X$ exists.
\end{lem}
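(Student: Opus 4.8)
The plan is to realize $C$ as the affine part of a general hyperplane-section curve in a log smooth compactification of a resolution of $X$. Resolution of singularities, Nagata compactification, and the Bertini/Lefschetz theorems used below are available for algebraic spaces over $\bC$ (reducing to schemes by Chow's lemma when convenient), so one may argue as if $X$ were a quasiprojective variety. Since $X$ is normal and connected it is irreducible; I would dispose of the cases $\dim X\leq 1$ (then $X$ is smooth, and $C=X$ or $C=X\setminus\{\mathrm{pt}\}$ works) and $X$ proper (then the condition on $\Reg(D')$ is vacuous, up to deleting a point of $C$ when $X$ is smooth) immediately, and then assume $\dim X\geq 2$ and $X$ not proper. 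Fix a projective resolution $\pi\colon X'\to X$ that is an isomorphism over $X^{\reg}$, and a log smooth projective compactification $(\bar X',D')$ of $X'$; put $E:=X'\setminus X^{\reg}$ for the exceptional locus, so that $X^{\reg}$ is open in $X'$ and hence in $\bar X'$, and note $D'\neq\varnothing$ since $X$ is not proper.

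Next I would take $\bar C\subset\bar X'$ a general curve cut out by $\dim X-1$ general members of a very ample linear system. By Bertini, $\bar C$ is a smooth connected projective curve meeting $D'$ transversally in finitely many points of $\Reg(D')$ and meeting every irreducible component of $D'$ --- hence every irreducible component of $\Reg(D')$, since the multiple locus of $D'$ has codimension $\geq 2$ and is avoided by a general curve. Set
\[C:=\bar C\cap X^{\reg},\]
obtained from $\bar C$ by deleting the finitely many points lying on $D'$ or on $E$; this is a smooth connected affine curve dense in $\bar C$ (a nonempty finite set has been removed, as $\bar C\cap D'\neq\varnothing$). Then $i\colon C\hookrightarrow X^{\reg}\hookrightarrow X$ is a locally closed immersion; it lifts along $\pi$ to the locally closed immersion $i'\colon C\hookrightarrow X^{\reg}\subset X'$, and $\overline{i'(C)}^{\bar X'}=\bar C$ meets every component of $\Reg(D')$, as required.

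The remaining point is that $i_*\colon\pi_1(C,c)\to\pi_1(X,x)$ is surjective. This factors as $\pi_1(C)\to\pi_1(X^{\reg})\to\pi_1(X)$. The second map is surjective because $X^{\reg}$ is dense open in the normal variety $X$ with complement of codimension $\geq 2$: pulling back to $X'$, deleting the codimension-$\geq 1$ closed subset $E$ from the smooth variety $X'$ is $\pi_1$-surjective, and $\pi_1(X')\twoheadrightarrow\pi_1(X)$ since a proper birational morphism onto a normal variety has connected fibers. The first map is surjective by the Lefschetz hyperplane theorem for fundamental groups, applied $\dim X-1$ times to the smooth quasiprojective variety $X^{\reg}$: a general $1$-dimensional linear section of $X^{\reg}$ has surjective $\pi_1$, and $C=\bar C\cap X^{\reg}$ is such a section. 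This shows $i\colon C\to X$ is a Lefschetz curve.

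The hard part is the tension between the two roles of $C$: it must avoid the exceptional locus of the resolution --- so that $i$ is a locally closed immersion into $X$ that lifts to $X'$ --- yet its closure in $\bar X'$ must meet every component of $D'$, and it must carry $\pi_1$ onto $\pi_1(X)$. The resolution of this tension is the device of taking a general hyperplane-section curve in $\bar X'$ and then deleting exactly the finitely many points over $\Sing X$ together with those on the boundary: the points surviving on $D'$ supply the boundary incidences, while $\pi_1$-surjectivity persists because the Lefschetz theorem is available for the \emph{open} locus $X^{\reg}$, not only for $\bar X'$. The only other subtlety, the reduction from algebraic spaces to schemes, I would handle via Chow's lemma as indicated.
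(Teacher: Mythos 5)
Your proof is correct and follows essentially the same route as the paper: reduce via normality (so that $\pi_1$ of the resolution, resp.\ of $X^{\reg}$, surjects onto $\pi_1(X)$) and then take a general hyperplane-section curve in a log smooth compactification, with the quasiprojective Lefschetz theorem for fundamental groups supplying surjectivity. The paper simply cites Goresky--MacPherson \cite[\S II.5.1]{stratmorse} for the log smooth case, whereas you spell out the Bertini and boundary-incidence details explicitly.
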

\begin{proof}
    Since $X$ is normal, $\pi_1(X',x')\to\pi_1(X,x)$ is surjective for $\pi:X'\to X$ as in the definition, so it suffices to take $X=\bar X\setminus D$ for $(\bar X,D)$ a log smooth projective variety, which is given by \cite[\S II.5.1]{stratmorse}.
\end{proof}
\begin{rem}\label{qu lefschetz}Recall that a local system $V$ on a connected normal algebraic space $X$ has quasiunipotent local monodromy if for some (hence any) projective resolution $\pi:X'\to X$ by $X'=\bar X'\setminus D$ for $(\bar X',D')$ a log smooth projective variety, the local monodromy of $\pi^*V$ is quasiunipotent.  It follows that for any Lefschetz curve $i:C\to X$, $i^*V$ has quasiunipotent local monodromy if and only if $V$ does.
\end{rem}

\subsection{Mixed twistor/Hodge structures and their variations:  definitions}
\label{sect:MHSdefn}In this section we recall the definitions of complex mixed Hodge structures ($\bC$-MHSs), mixed twistor structures ($\bC$-MTSs), admissible complex variations of mixed Hodge structures ($\bC$-VMHSs), and admissible complex variations of mixed twistor structures ($\bC$-VMTSs).  The main references are Deligne \cite{Delignehodgeii} and for example \cite{ES} (and the references therein) in the Hodge case, and \cite{simpsontwistor,mochizukimixedtwistor} in the twistor case.

\subsubsection{Over a point:  Hodge case}

\begin{defn}  Let $k\in\bZ$.  A complex pure Hodge structure ($\bC$-HS) of weight $k$ is a triple $(V,F^\bullet,F'^\bullet)$ where $V$ is a finite-dimensional $\bC$-vector space and $F^\bullet,F'^\bullet$ are $k$-opposed decreasing filtrations.  Recall that this means that $\gr_{F}^p\gr_{F'}^qV=0$ if $p+q\neq k$, and implies that there is a splitting $V=\bigoplus_{p+q=k} V^{p,q}$ such that $F^p=\bigoplus_{i\geq p} V^{i,k-i}$ and $F'^q=\bigoplus_{i\geq  q}V^{k-i,i}$ given by $V^{p,q}=F^p\cap F'^{q}$.  We often refer to the $\bC$-HS by just $V$.  

Morphisms of $\bC$-HS are filtered morphisms.  A polarization of $V$ is a hermitian form $h$ on $V$ such that the splitting $V=\bigoplus_{p+q=w} V^{p,q}$ is orthogonal and $\bigoplus_{p+q=k}(-1)^ph|_{V^{p,q}}$ is positive definite.  
    
\end{defn}
Note that any $\bC$-HS is polarizable.

\begin{defn} A complex mixed Hodge structure ($\bC$-MHS) is a quadruple $(V,W_\bullet,F^\bullet,F'^\bullet)$ where $V$ is a finite-dimensional $\bC$-vector space, $W_\bullet$ is an increasing filtration (called the weight filtration), and $F^\bullet,F'^\bullet$ are decreasing filtrations, such that $(\gr^W_kV,F^\bullet\gr^W_kV,F'^\bullet\gr^W_kV)$ is a weight $k$ $\bC$-HS for all $k$.  Equivalently, we ask that $\gr^p_F\gr^q_{F'}\gr^W_kV=0$ if $p+q\neq k$.

Morphisms of $\bC$-MHS are morphisms compatible with all three filtrations.  A graded polarization $h_\bullet$ is a polarization $h_k$ on the graded object $\gr^W_k V$ for each $k$.
    
\end{defn}

Note that any $\bC$-MHS is graded-polarizable.  Morphisms of $\bC$-MHS are automatically strict for each filtration, and so the category of $\bC$-MHS is abelian.

\subsubsection{Over a point:  twistor case}
\begin{defn}  A complex mixed twistor structure ($\bC$-MTS) is a pair $(V,W_\bullet)$ where $V$ is a locally free coherent $\cO_{\bP^1}$-module on $\bP^1$ and $W_\bullet$ is an increasing locally split filtration (called the weight filtration) by $\cO_{\bP^1}$-submodules such that for each $k$, $\gr^W_kV\cong \cO_{\bP^1}(k)^{n_k}$ for some $n_k$.  A $\bC$-MTS is pure of weight $k$ if $\gr_j^WV=0$ for all $j \neq k$.  A morphism of $\bC$-MTS is a filtered morphism of $\cO_{\bP^1}$-modules.
\end{defn}
Morphisms of mixed twistor structure are automatically strict with respect to the weight filtration.

\subsubsection{Splittings of $\bC$-MTS}
Deligne \cite{Delignehodgeii} shows the existence of functorial splittings of either $(W_\bullet,F^\bullet)$ or $(W_\bullet, F'^\bullet)$ in the category of $\bC$-MHS.  We do the same for $\bC$-MTS.

It is first useful to have the following version of the Rees construction, which is the twistor version of \cite{penacchio}.  Let $\bL$ be the total space of the line bundle $\cO_{\bP^1}(1)$ on $\bP^1$ with the scaling action by $\bG_m$, $\mathbf{0}_\bL\subset\bL$ the zero section, and $\pi:\bL\to \bP^1$ the projection.  There is a natural equivalence of categories between coherent sheaves $V$ on $\bP^1$ and $\bG_m$-equivariant coherent sheaves $\cV$ on $\bL\setminus \mathbf{0}_\bL$ given by pullback along the quotient $\bL\setminus \zero_\bL\to \bP^1$ by the $\bG_m$-action.  There is moreover an equivalence of categories between filtered $\bG_m$-equivariant coherent sheaves $(\cV,\cW_\bullet)$ on $\bL\setminus\zero_\bL$ and $\bG_m$-equivariant coherent sheaves $\bar \cV$ on $\bL$ with no embedded points along the zero section. 
 In the forward direction, we associate to a filtered equivariant sheaf $(\cV,\cW_\bullet)$ (corresponding to a filtered sheaf $(V,W_\bullet)$ on $\bP^1$) the equivariant sheaf on $\bL$ generated by $\pi^*W_k(-k\zero_\bL)$, or equivalently generated by the sections of $\cW_k$ with torus weights $\geq k$.  Note that $\bar V|_{\zero_\bL}$ is canonically isomorphic to $\bigoplus_k \pi^*\gr^W_k V(-k)$, and $\pi^*\gr^W_k(-k)$ has pure torus weight $-k$.  An equivariant sheaf $\bar \cV$ on $\bL$ has a natural filtration $\bar\cW_k$ by sections of torus weight $\geq -k$, and we associate the restriction to $\bL\setminus \zero_\bL$.  We have therefore proven the following:

 \begin{lem}\label{tw equiv}
     There is a natural equivalence of categories as above between the category of complex mixed twistor structures and the category of $\bG_m$-equivariant locally free sheaves on $\bL$ whose restriction to $\zero_\bL$ is trivial.
 \end{lem}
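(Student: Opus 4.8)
The plan is to obtain the equivalence by restricting the Rees-type correspondence recalled above to the appropriate full subcategories. Since a morphism of complex mixed twistor structures is by definition a filtered morphism of $\cO_{\bP^1}$-modules, and that correspondence is already an equivalence on \emph{all} filtered $\bG_m$-equivariant coherent sheaves, it is enough to check, at the level of objects, that the Rees sheaf of a complex mixed twistor structure is a $\bG_m$-equivariant locally free sheaf on $\bL$ with trivial restriction to $\zero_\bL$, and that every such sheaf arises this way; the assertion on morphisms and the naturality of the functors are then automatic. Note first that a locally free sheaf on the smooth irreducible surface $\bL$ has no embedded points (its only associated point being the generic one), hence does lie in the range of the Rees correspondence and corresponds to a filtered coherent sheaf $(V,W_\bullet)$ on $\bP^1$.

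First I would record the local picture. Trivializing $\cO_{\bP^1}(1)$ over an affine $U=\Spec R\subset\bP^1$ presents $\bL|_U$ as $\Spec R[t]$ with $t$ a homogeneous fibre coordinate and $\zero_\bL|_U=\{t=0\}$, and in these coordinates the Rees sheaf of $(V,W_\bullet)$ becomes the graded $R[t]$-lattice $\sum_k t^kW_kR[t]$ inside $V\otimes_R R[t,t^{\pm1}]$. An elementary check in graded commutative algebra shows this lattice is graded free exactly when $W_\bullet$ is locally split with locally free graded pieces; so by faithfully flat descent along the $\bG_m$-torsor $\bL\setminus\zero_\bL\to\bP^1$, local freeness of $\bar\cV$ on $\bL$ is equivalent to $V$ being locally free and $W_\bullet$ locally split. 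Inspecting generators in the same local model, the induced weight filtration $\bar\cW_\bullet$ of $\bar\cV$ satisfies $\gr^{\bar\cW}_k\bar\cV\cong\pi^*\gr^W_kV\otimes\cO_\bL(-k\zero_\bL)$ (the Rees sheaf of the pure-weight-$k$ object $\gr^W_kV$), with its canonical pure $\bG_m$-weight.

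Next comes the identification of the restriction to $\zero_\bL$. Since $\bL$ is the total space of $\cO_{\bP^1}(1)$, the normal bundle of the zero section is $\cO_{\bP^1}(1)$, so $\cO_\bL(-k\zero_\bL)|_{\zero_\bL}\cong\cO_{\bP^1}(-k)$ (using $\pi|_{\zero_\bL}=\id$). Combining this with the recalled isomorphism $\bar\cV|_{\zero_\bL}\cong\bigoplus_k\pi^*\gr^W_kV(-k)$ and its graded refinement from the previous paragraph, the $\bG_m$-weight $(-k)$ piece of $\bar\cV|_{\zero_\bL}$ is $\gr^W_kV\otimes\cO_{\bP^1}(-k)$. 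If $(V,W_\bullet)$ is a complex mixed twistor structure, i.e.\ $\gr^W_kV\cong\cO_{\bP^1}(k)^{n_k}$, then each piece is $\cO_{\bP^1}^{n_k}$ and so $\bar\cV|_{\zero_\bL}\cong\cO_{\zero_\bL}^{\oplus n}$ with $n=\sum_kn_k$. Conversely, if $\bar\cV$ is locally free with $\bar\cV|_{\zero_\bL}$ trivial, then each $\bG_m$-weight piece of $\bar\cV|_{\zero_\bL}$ is a direct summand of a trivial bundle on $\bP^1\cong\zero_\bL$; since such a summand and its dual are both globally generated, Birkhoff--Grothendieck forces it to be $\cO_{\bP^1}^{n_k}$, whence $\gr^W_kV\otimes\cO_{\bP^1}(-k)\cong\cO_{\bP^1}^{n_k}$, i.e.\ $\gr^W_kV\cong\cO_{\bP^1}(k)^{n_k}$, so $(V,W_\bullet)$ is a complex mixed twistor structure.

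Putting this together, the Rees correspondence restricts to the claimed equivalence, where ``trivial restriction to $\zero_\bL$'' is read as an isomorphism of $\cO_{\zero_\bL}$-modules with $\cO_{\zero_\bL}^{\oplus n}$ (the $\bG_m$-structure then being forced to have weight $(-k)$ summand $\cO_{\zero_\bL}^{\oplus n_k}$), and naturality is inherited from the naturality of the Rees correspondence and of restriction to $\zero_\bL$. This lemma is essentially a bookkeeping statement, so I do not expect a serious obstacle; the two places demanding care are the matching of $\bG_m$-weights against the $(-k\zero_\bL)$-twists appearing in $\bar\cV|_{\zero_\bL}$ (which the local model is designed to make transparent), and the converse step, where one must invoke the uniqueness of the splitting type of a bundle on $\bP^1$ to pass from triviality of the whole restriction to the weight-by-weight condition $\gr^W_kV\cong\cO_{\bP^1}(k)^{n_k}$.
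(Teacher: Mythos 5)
Your proof is correct and follows essentially the same route as the paper, which deduces the lemma directly from the Rees-construction discussion preceding it: the correspondence sends $(V,W_\bullet)$ to its Rees sheaf, whose restriction to $\zero_\bL$ is $\bigoplus_k \pi^*\gr^W_kV(-k)$, and triviality of that restriction (together with local freeness of $\bar\cV$) is exactly the mixed twistor condition. Your added details --- the local graded model over $R[t]$, the normal-bundle identification $\cO_\bL(-k\zero_\bL)|_{\zero_\bL}\cong\cO_{\bP^1}(-k)$, and the Birkhoff--Grothendieck argument for the converse --- are accurate fillings-in of what the paper leaves implicit.
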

 As a consequence, we obtain a version of the Deligne splitting.  
 \begin{lem}\label{lem tw splitting}Fix a point $\lambda\in\bP^1$.  Then every mixed twistor structure admits a functorial splitting of its weight filtration in restriction to $\bP^1\setminus \{ \lambda \}$.  The splitting is moreover compatible with tensor products and duals.
     
 \end{lem}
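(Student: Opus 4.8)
The plan is to bootstrap from the Rees-type dictionary of \Cref{tw equiv}, following the template of Deligne's construction of the canonical splitting of $(W_\bullet,F^\bullet)$ for a mixed Hodge structure \cite{Delignehodgeii}. By \Cref{tw equiv} a $\bC$-MTS $(V,W_\bullet)$ is the same datum as a $\bG_m$-equivariant locally free sheaf $\bar\cV$ on $\bL$ whose restriction to $\zero_\bL$ is trivial, with $W_\bullet$ recovered from the torus-weight filtration. Restricting over the affine curve $U:=\bP^1\setminus\{\lambda\}$, the pure subquotients $\gr^W_kV\cong\cO_{\bP^1}(k)^{n_k}$ trivialize canonically up to scalar, via the section of $\cO_{\bP^1}(k)$ whose single zero is at $\lambda$; hence $W_\bullet(V|_U)$ becomes a filtration by trivial subbundles of the trivial bundle $V|_U$ on $U\cong\bA^1$. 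Such a filtration always admits a splitting, but not one canonically produced by affineness alone: the missing structure—playing exactly the role of the conjugate filtration $F'$ in the Hodge case—is the way $V$ extends across the deleted point $\lambda$.

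I would construct the splitting by the twistor analogue of Deligne's $I^{p,q}$-formula, building the graded pieces $G_k\subseteq V$ as a finite iterated combination of the subsheaves $W_\bullet V$ and their images under multiplication by powers of the section $\sigma_\lambda\in H^0(\cO_{\bP^1}(1))$ vanishing at $\lambda$, and then restricting to $U$. In the two-step case this is elementary: for $0\to W_{b-1}V\to V\to\cO_{\bP^1}(b)^{n_b}\to 0$ with bottom weight $a$ (so $W_{b-1}V=\cO_{\bP^1}(a)^{n_a}$), the canonical inclusion $\cO_{\bP^1}(a+1)^{n_b}\hookrightarrow\cO_{\bP^1}(b)^{n_b}$ given by $\sigma_\lambda^{\,b-a-1}$ lifts through $V\twoheadrightarrow\gr^W_bV$, uniquely, because the obstruction lies in $\Ext^1_{\bP^1}(\cO_{\bP^1}(a+1)^{n_b},\cO_{\bP^1}(a)^{n_a})=H^1(\cO_{\bP^1}(-1))^{\oplus}=0$ and the indeterminacy lies in $\Hom_{\bP^1}(\cO_{\bP^1}(a+1)^{n_b},\cO_{\bP^1}(a)^{n_a})=H^0(\cO_{\bP^1}(-1))^{\oplus}=0$; the image $G_b\subseteq V$ of this lift restricts over $U$ to a complement of $W_{b-1}(V|_U)$ in $V|_U$, and one iterates on $W_{b-1}V$. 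The scalar ambiguity in $\sigma_\lambda$ cancels, since $G_k$ is expressed through weight-balanced monomials in $\sigma_\lambda$.

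Functoriality in $(V,W_\bullet)$ is then immediate, as is compatibility with $\otimes$ and $(-)^\vee$: multiplication by $\sigma_\lambda$ is multiplicative and dual-compatible, and equivalently the Rees equivalence of \Cref{tw equiv} is monoidal while the defining formula only uses the tensor/duality structure. The main obstacle is formulating and verifying the general ($\geq 3$-step) construction: as in Deligne's setting, the naive "lift a twisted identity" recipe pins down the top graded piece only when $W_\bullet$ has two steps—for longer filtrations the lift of $\sigma_\lambda^{N}\cdot\mathrm{id}$ through $V$ is either obstructed or non-unique for every twist $N$—so one must incorporate correction terms subtracting the intermediate-weight contributions one at a time, and check that the result is a splitting; in Rees-picture language this says the correction produces the unique graded complement inside $\bar\cV|_{\bL|_U}$, which is invisible from a choice of graded free basis. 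Once this combinatorial core is in place, the remaining assertions are formal.
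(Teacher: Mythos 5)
Your reduction to the Rees picture of \Cref{tw equiv} and your two-weight computation are fine, but the proposal has a genuine gap exactly where the lemma has its content: the case of an arbitrary weight filtration. As your own accounting shows, the recipe ``lift $\sigma_\lambda^{N}\cdot\mathrm{id}$ through $V\twoheadrightarrow\gr^W_bV$'' cannot be made to work uniformly once there are three or more weights: killing the obstruction in $\Ext^1(\cO_{\bP^1}(b-N)^{n_b},W_{b-1}V)$ for every $V$ forces $N\geq b-a-1$ ($a$ the lowest weight), while uniqueness, i.e.\ $\Hom(\cO_{\bP^1}(b-N)^{n_b},W_{b-1}V)=0$ for every $V$, forces $N\leq b-k-1$ for the largest intermediate weight $k$, and these are incompatible when $a<k<b$. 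The promised remedy --- ``correction terms subtracting the intermediate-weight contributions'' --- is never written down, let alone shown to produce a well-defined splitting, and the assertions of functoriality and compatibility with $\otimes$ and duals are precisely the things that must be checked through that (absent) formula. Since your induction ``split off $G_b$, then recurse on $W_{b-1}V$'' already needs the general-length construction at its first step, the argument never gets beyond two distinct weights; as written it is a proof of a special case plus a statement of the remaining problem, not a proof of the lemma.

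For comparison, the paper avoids any such combinatorics: it restricts the equivariant Rees bundle $\bar\cV$ of \Cref{tw equiv} to the section curve $s\subset\bL$ corresponding to the section of $\cO_{\bP^1}(1)$ vanishing at $\lambda$. One checks on line bundles that each pure subquotient restricts trivially to $s$, so $\bar\cV|_s$ is an iterated extension of trivial bundles and hence trivial; the fiber of $\bar\cV$ at the unique zero $\zero_x=s\cap\zero_\bL$ carries the canonical torus-weight grading, which splits $W_\bullet$ there; and the identification $\bar\cV|_s\cong H^0(\bar\cV|_s)\otimes\cO_s$ transports this grading over all of $s$, hence over $s\setminus\{\zero_x\}\cong\bP^1\setminus\{\lambda\}$, where $\bar\cV$ restricts to $V$. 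This gives the canonical splitting in one stroke, independent of the length of $W_\bullet$, with functoriality and monoidal compatibility immediate (your $G_b$ in the two-step case agrees with it). If you want to keep your route, you must actually produce and verify the twistor analogue of Deligne's $I^{p,q}$ formula; otherwise the evaluation-at-$\zero_x$ argument is the way to close the gap.
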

 \begin{proof}
     Let $s\subset\bL$ be a section of $\cO_{\bP^1}(1)$ vanishing at $\lambda$.  To any mixed twistor structure $(V,W_\bullet)$, consider the corresponding $\bG_m$-equivariant sheaf $\bar \cV$ on $\bL$.  Observe that since $\bar\cV|_{\zero_\bL}$ is trivial, the same is true for $\bar\cV|_s$.  Indeed, this is true in the pure case, since it is true for line bundles on $\bP^1$, and so $\bar\cV|_s$ is an iterated extension of trivial vector bundles, hence trivial.  It further follows that the filtration restricted to $s$ is by trivial subbundles.  If $\zero_x$ is the zero of $s$ (which maps to $\lambda$), then the fiber of $\bar\cV$ at $\zero_x$ has a canonical grading which splits the weight filtration.  Since $\bar\cV|_s\cong H^0(\bar\cV|_s)\otimes\cO_s$ and the filtration is induced by the filtration on $H^0(\bar\cV|_s)$, the claim follows.  Note that the splitting only depends on $\lambda$ and not $s$, since there is a unique $s$ vanishing at $\lambda$ up to scale, and the splitting is functorial and compatible with tensor products and duals.
 \end{proof}
 
In the case of $\bC$-MHS (that is, the $\bG_m$-equivariant $\bC$-MTS case), we obtain the two Deligne splittings by taking sections vanishing at the two torus fixed points, since the fibers there are canonically $\gr_F\gr^W$ and $\gr_{F'}\gr^W$.

\subsubsection{Over a base:  Hodge case}

\begin{defn}
    Let $X$ be a complex manifold.  A graded polarizable complex variation of mixed Hodge structures ($\bC$-VMHS) on $X$ is a quadruple $(V,W_\bullet ,F^\bullet,F'^\bullet)$ where 
    \begin{itemize}
        \item $V$ is a complex local system;
        \item $W_\bullet $ is a flat increasing filtration of $V$ (called the weight filtration);

        \item $F^\bullet $ is a locally split decreasing filtration of $\cO_{X}\otimes_{\bC_{X}}V$ such that $\nabla F^p\subset F^{p-1}\otimes\Omega_{X}$ for each $p$, where $\nabla$ is the natural flat connection;
        \item $F'^\bullet $ is a locally split decreasing filtration of $\bar\cO_{X}\otimes_{\bC_{X}}V$ such that $\bar\nabla F'^p\subset F'^{p-1}\otimes\bar \Omega_{X}$ for each $p$, where $\bar \nabla$ is the natural flat connection;
       \item There exists a flat hermitian form $h_k$ on each $\gr_k^WV$ such that for each $x\in X$, $(V_x,(W_\bullet)_x,(h_\bullet)_x, F_x^\bullet,F'^\bullet_x)$ is a graded polarized $\bC$-MHS.

    \end{itemize}
    A $\bC$-VMHS for which $\gr_k^WV=0$ for all but one $k$ is a polarizable complex variation of pure Hodge structures ($\bC$-VHS).  A morphism of $\bC$-VMHS is a morphism of local systems which is compatible with all three filtrations. 
\end{defn}

For background on the definitions of (pre)-admissibility, see \cite{sz,kashiwara}. 
\begin{defn}\label{defn:avmhs}
    Let $(V,W_\bullet,F^\bullet,F'^\bullet)$ be a $\bC$-VMHS on the punctured disk $\Delta^*$.  Assume $V$ has unipotent monodromy.  We say $(V,W_\bullet,F^\bullet,F'^\bullet)$ is pre-admissible if:
    \begin{itemize}
        \item $F^\bullet$ (resp. $F'^\bullet$) extends to a filtration of the Deligne extension $\cV$ (resp. $\bar \cV$) of $(\cO_X\otimes_{\bC_X}V,\nabla)$ (resp. $(\bar \cO_X\otimes_{\bC_X}V,\bar \nabla)$) such that each $\gr_F^p\gr_k^{\cW}\cV$ (resp. $\gr_{F'}^p\gr_k^{\bar \cW}\bar\cV$) is locally free. 
        \item There exists a relative monodromy-weight filtration.
    \end{itemize}
\end{defn}

\begin{defn}
    Let $(\bar X,D)$ be a log smooth compact manifold with $X=\bar X\setminus D$.  An admissible graded polarizable complex variation of mixed Hodge structures ($\bC$-AVMHS) on $X$ is a $\bC$-VMHS $(V,W_\bullet ,F^\bullet,F'^\bullet)$ such that

\begin{itemize}
\item $V$ has quasiunipotent local monodromy.
            \item For any map $f:(\Delta,0)\to (\bar X,D)$ such that $f^*V$ has unipotent local monodromy, $f^*(V,W_\bullet ,F^\bullet,F'^\bullet)$ is pre-admissible.
        
\end{itemize}
\end{defn}

\subsubsection{Over a base:  twistor case}  
\begin{exa}\label{exa defining can twistor}
Let $X$ be a complex manifold.  Let $\bfV=(\cV,h,\nabla)$ be a tame harmonic bundle on $X$ with underlying $C^\infty$ bundle $\cV$ and $\nabla=\nabla_h+\theta+\theta^*$ (see \Cref{sect:harmonic bundles} for background).  Let $\scrA_X:=C^\infty_X\boxtimes \cO_{\bP^1}$ be the sheaf of $C^\infty$ functions on $X_{\bP^1}=X\times \bP^1$ which are holomorphic in the $\bP^1$ direction.  Let $\scrV:=\cV\boxtimes \cO_{\bP^1}$, which is naturally an $\scrA_X$-module on $X_{\bP^1}$.  Then, choosing generating sections $x,y$ of $\cO_{\bP^1}(1)$ vanishing at $0$ and $\infty$ respectively, there is a natural $x\partial_{X_{\bP^1}/\bP^1}+y\bar\partial_{X_{\bP^1}/\bP^1}$-connection $\mathscr{D}:\scrV\to\scrV\otimes\Omega_{X_{\bP^1}/\bP^1}(1)$ given by $\scrD=xD'+yD''$ where $D'=\nabla_h^{1,0}+\theta^*$ and $D''=\nabla_h^{0,1}+\theta$ which satisfies the integrability condition $0=\scrD^2=x^2D'^2+xy(D'D''+D''D')+y^2D''^2$.  The resulting $(\scrV,\scrD)$ is the variation of pure twistor structures ($\bC$-VTS) of weight 0 associated to the tame harmonic bundle $\bfV$.  If the harmonic bundle is tame and purely imaginary, the harmonic metric is unique up to flat automorphism, and such an automorphism induces an isomorphism of the associated $\bC$-VTS.  Thus, any semisimple complex local system underlies a $\bC$-VTS which is unique up to isomorphism.
\end{exa}
\begin{defn}
    Let $X$ be a complex manifold.  A graded polarized variation of mixed twistor structures ($\bC$-VMTS) on $X$ is a triple $(\scrV,W_\bullet\scrV,\scrD)$ where 
    \begin{itemize}
        \item $\scrV$ is a $\scrA_X$-module on $X_{\bP^1}$;
        \item $\scrD:\scrV\to \scrV\otimes\Omega_{X_{\bP^1}/\bP^1}(1)$ is an integrable $x\partial_{X_{\bP^1}/\bP^1}+y\bar\partial_{X_{\bP^1}/\bP^1}$-connection as above;
        \item $W_\bullet\scrV $ is a $\scrD$-flat increasing filtration of $\cV$ (called the weight filtration);
         \item There exists a harmonic hermitian metric $h_k$ on each $\gr_k^{W}\scrV$ such that $(\gr^W_k\scrV,\gr_k^W \scrD)$ is the $\bC$-VTS of weight $k$ associated to a tame purely imaginary harmonic bundle by shifting by $\cO_{\bP^1}(k)$ (see \Cref{sect:harmonic bundles}).
    \end{itemize}
    A morphism of $\bC$-VMTS is a morphism of $\scrA_X$-modules which is compatible with the filtration and the operator $\scrD$. 
\end{defn}

\begin{rem}
Mochizuki \cite{mochizukimixedtwistor} develops the theory quite generally, but we will only consider tame purely imaginary variations of mixed twistor structure.  In fact, only the results of this section in the quasiunipotent local monodromy case will be used.
\end{rem}
The admissibility conditions are more complicated in the twistor case, so we make the following definition without fully explaining what twistor $\mathscr{D}$-modules are.  See \cite{mochizukimixedtwistor} for details.
\begin{defn}\label{defn:avmts}Let $(\bar X,D)$ be a log smooth projective variety with $X=\bar X\setminus D$.  A $\bC$-AVMTS on $X$ is a mixed twistor $\mathscr{D}$-module on $\bar X$ whose restriction to $X$ underlies a tame purely imaginary graded polarized variation of mixed twistor structures as above.
\end{defn}

\subsection{Ext groups over a point}
We employ \Cref{notn ms}, so most results are stated for ``mixed structures''.  For notational simplicity we denote ``$\Hom_{\CMS}$'' and ``$\Hom_{D^b(\CMS)}$'' by ``$\Hom_{\MS}$'', and ``$\Ext^i_{\CMS}$'' by ``$\Ext^i_{\MS}$'' 
 
\begin{lem}[{Beilinson \cite[Cor. 1.10]{beilinson}}$+\epsilon$]\label{lem beilinson}
    Let $M$ and $N$ be two $\bC$-MS.  Then $\Ext^i_{\MS}(M,N)=0$ for $i>1$.  Thus, for any object $E$ of $D^b(\bC\mhyphen\mathrm{MS})$, there is a noncanonical isomorphism  $E\cong \bigoplus_i\mathscr{H}^i(E)[-i]$. 
\end{lem}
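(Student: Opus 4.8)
The plan is to first prove the cohomological‑dimension statement $\Ext^i_{\MS}(M,N)=0$ for $i\ge 2$, and then to deduce the splitting $E\cong\bigoplus_i\mathscr H^i(E)[-i]$ in $D^b(\CMS)$ as a formal consequence. First I would reduce to the case $M=\tate$ and $N$ pure. The internal Hom functor $\underline{\Hom}(M,-)$ is exact on $\CMS$: it is exact on underlying $\bC$‑vector spaces, and on the associated graded for the weight filtration it becomes a finite direct sum of $\Hom$'s between pure structures of fixed weights, which is exact since those form semisimple categories. Together with exactness of $-\otimes M$ (same reason), this gives $\Ext^i_{\MS}(M,N)\cong\Ext^i_{\MS}(\tate,\underline{\Hom}(M,N))$, so we may take $M=\tate$. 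Now dévisser $N$ along its weight filtration: from $0\to W_{k-1}N\to W_kN\to\gr^W_kN\to 0$ and the long exact $\Ext_{\MS}(\tate,-)$‑sequence, if $\Ext^i_{\MS}(\tate,W_{k-1}N)=0$ and $\Ext^i_{\MS}(\tate,\gr^W_kN)=0$ for $i\ge 2$, then the same holds for $W_kN$; since $W_kN=0$ for $k\ll 0$, this reduces the statement to $N$ pure.

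\emph{The pure case.} For $N$ pure the Hodge case is Beilinson's \cite[Cor.~1.10]{beilinson}, which exhibits $R\Hom_{\CMHS}(\tate,N)$ as a two‑term complex $\big[F^0W_0N_{\bC}\oplus F'^0W_0N_{\bC}\xrightarrow{(x,y)\mapsto x-y}W_0N_{\bC}\big]$ in degrees $0,1$, whence $\Ext^{\ge 2}=0$. For the twistor case I would run the parallel argument. First, $\Hom_{\CMTS}(\tate,N)=H^0(\bP^1,W_0N)$: a filtered map $\tate\to N$ factors through $W_0N$ since $\tate=W_0\tate$, and conversely any $\cO_{\bP^1}$‑linear map $\cO_{\bP^1}\to W_0N$ is automatically weight‑filtered, being on $\gr^W_0$ a map of trivial bundles. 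I would then upgrade this to an identification $R\Hom_{\CMTS}(\tate,N)\simeq R\Gamma(\bP^1,W_0N)$; since $\bP^1$ has cohomological dimension one, the right‑hand side lives in degrees $0,1$, giving $\Ext^{\ge 2}_{\CMTS}(\tate,N)=0$. To establish this upgrade I would use \Cref{tw equiv}, which identifies $\CMTS$ with $\bG_m$‑equivariant locally free sheaves on $\bL$ trivial along the zero section, together with the two Deligne‑type splittings of \Cref{lem tw splitting} for the sections of $\cO_{\bP^1}(1)$ vanishing at $0$ and at $\infty$ — these play the role of $F^\bullet$ and $F'^\bullet$ in Beilinson's complex. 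As a consistency check (and an alternative to a separate Hodge argument), the equivalence of $\CMHS$ with $\bG_m$‑equivariant objects of $\CMTS$ and the linear reductivity of $\bG_m$ over $\bC$ imply that $\Ext^i$ in the equivariant category is the $\bG_m$‑invariant part of $\Ext^i$ in $\CMTS$, so the Hodge vanishing follows once the twistor vanishing is known.

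\emph{The splitting in $D^b(\CMS)$.} This is the standard fact that $D^b$ of a hereditary abelian category splits its cohomology objects, by induction on the number of nonzero $\mathscr H^i(E)$: if $E$ has cohomology in degrees $[a,b]$ with $a<b$, the truncation triangle $\tau_{<b}E\to E\to\mathscr H^b(E)[-b]\xrightarrow{\delta}(\tau_{<b}E)[1]$ has $\delta\in\Hom_{D^b}(\mathscr H^b(E),(\tau_{<b}E)[b+1])$, which by the hypercohomology spectral sequence $\Ext^p_{\MS}(\mathscr H^b(E),\mathscr H^q(\tau_{<b}E))\Rightarrow\Hom_{D^b}(\mathscr H^b(E),(\tau_{<b}E)[p+q])$ receives contributions only from $q\le b-1$, hence $p\ge 2$, which vanish by the first part; so $\delta=0$, the triangle splits, and induction concludes. \emph{The main obstacle} is the pure twistor computation — pinning down $R\Hom_{\CMTS}(\tate,N)$ and thereby the cohomological dimension of $\CMTS$; the reductions and the $D^b$ statement are formal, and the pure Hodge case is a citation.
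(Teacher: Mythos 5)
Your proposal is correct and takes essentially the same route as the paper: in the twistor case the paper likewise uses strictness/exactness of $W_0$ together with the identification $\Hom_{\MTS}(M,N)=H^0(\bP^1,W_0\cHom(M,N))$, so that the relevant $\Ext^i$ are controlled by coherent cohomology of $\bP^1$, which vanishes in degrees $\ge 2$, and the splitting in $D^b(\CMS)$ is the standard hereditary-category argument. The only cosmetic divergence is in the Hodge case, where the paper deduces the $\bC$-MHS statement from Beilinson's $\bR$-MHS result via the exact functor $V\mapsto V\oplus\bar V$, whereas you invoke the two-filtration complex for $\bC$-MHS directly---a formulation the paper itself records immediately after the lemma.
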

\begin{proof}
The result of Beilinson is in the case of $\bR$-MHSs, and since the functor $V\mapsto V\oplus \bar V$ is an exact functor to the category of $\bR$-MHS, so we obtain the result for $\bC$-MHSs.

In the twistor case, morphisms of mixed twistor structure are strict with respect to the weight filtration, so $W_0$ is an exact functor.  Clearly $H^0(\bP^1,W_0\Hom(M,N))=\Hom_\MTS(M,N)$, and the claim follows.
\end{proof}

As usual, the splitting is not canonical, but there is a canonical filtration which is noncanonically split.
\begin{cor}
    For any object $M$ of $D^b(\CMS)$, there is a canonical short exact sequence

\[0\to \Ext^1_{\MS}(\tate,\mathscr{H}^{i-1}(M))\to \Hom_{\MS}(\tate,M[i])\to \Hom_{\MS}(\tate,\mathscr{H}^i(M))\to 0.\]
\end{cor}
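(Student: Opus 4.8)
The plan is to deduce this from \Cref{lem beilinson} by a standard dévissage in $D^b(\CMS)$, using the vanishing $\Ext^j_{\MS}(\tate,-)=0$ for $j\geq 2$. The cleanest formulation: the asserted sequence is the edge short exact sequence of the hyper-$\Ext$ spectral sequence
\[E_2^{p,q}=\Ext^p_{\MS}(\tate,\mathscr{H}^q(M))\ \Longrightarrow\ \Hom_{\MS}(\tate,M[p+q]),\]
which I would invoke for any object of $D^b(\CMS)$. By \Cref{lem beilinson} the only possibly nonzero rows of $E_2$ are $p=0$ and $p=1$, so every higher differential vanishes for degree reasons and the spectral sequence degenerates at $E_2$; the resulting two-step filtration on $\Hom_{\MS}(\tate,M[i])$ has graded pieces $E_\infty^{1,i-1}=\Ext^1_{\MS}(\tate,\mathscr{H}^{i-1}(M))$ and $E_\infty^{0,i}=\Hom_{\MS}(\tate,\mathscr{H}^i(M))$, which is exactly the claimed sequence.

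If one prefers to avoid spectral sequences, I would truncate twice. Set $N:=\tau_{\geq i-1}\tau_{\leq i}M$. Applying $\Hom_{\MS}(\tate,-[i])$ to the truncation triangles $\tau_{\leq i-2}M\to M\to\tau_{\geq i-1}M$ and $\tau_{\leq i}(\tau_{\geq i-1}M)\to\tau_{\geq i-1}M\to\tau_{\geq i+1}M$ shows $\Hom_{\MS}(\tate,M[i])\cong\Hom_{\MS}(\tate,N[i])$, since all the error terms are of the form $\Ext^j_{\MS}(\tate,\mathscr{H}^k(M))$ with $j\geq 2$ or $j<0$, hence zero by \Cref{lem beilinson}. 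As $N$ has cohomology only in degrees $i-1$ and $i$, its truncation triangle reads $\mathscr{H}^{i-1}(M)[1-i]\to N\to\mathscr{H}^i(M)[-i]\xrightarrow{+1}$, and applying $\Hom_{\MS}(\tate,-[i])$ gives
\[\Ext^1_{\MS}(\tate,\mathscr{H}^{i-1}(M))\to\Hom_{\MS}(\tate,N[i])\to\Hom_{\MS}(\tate,\mathscr{H}^i(M))\xrightarrow{\partial}\Ext^2_{\MS}(\tate,\mathscr{H}^{i-1}(M)),\]
in which the term preceding the first is $\Ext^{-1}_{\MS}=0$ and the final term is $0$ by \Cref{lem beilinson}; this yields the short exact sequence after transporting along the isomorphism $\Hom_{\MS}(\tate,N[i])\cong\Hom_{\MS}(\tate,M[i])$.

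For canonicity I would simply note that the canonical truncation functors $\tau_{\leq n},\tau_{\geq n}$ and the connecting morphisms of their triangles are functorial in $M$, so every identification above—and hence the resulting short exact sequence—is functorial; in the spectral sequence language this is just functoriality of the hyper-$\Ext$ spectral sequence and its filtration. I do not expect any real obstacle here: the existence of such a sequence already follows from the non-canonical splitting in \Cref{lem beilinson}, and the only content of the statement is that the truncation filtration upgrades it to a canonical one. The sole thing to check is the vanishing bookkeeping, which is immediate once one records that $\CMS$ (in both the Hodge and twistor cases) has cohomological dimension $\leq 1$.
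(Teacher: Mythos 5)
Your argument is correct and is exactly the standard dévissage the paper has in mind: the corollary is stated there as an immediate consequence of \Cref{lem beilinson} (the vanishing of $\Ext^{\geq 2}_{\MS}$), with canonicity coming from the truncation filtration, which is what both your spectral-sequence and truncation-triangle versions implement. No gap to report.
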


 In fact, the proof of \cite{beilinson} shows that, for any object $M$ of $D^b(\CMHS)$ (thought of as a complex), there is a natural identification
 \[R\Hom_{\MHS}(\tate,M)=[F^0W_0M\oplus F'^0W_0M\xrightarrow{\iota-\iota'}W_0M]\]
 where $\iota:F^0W_0M\to W_0M$ and $\iota':F'^0W_0M\to W_0M$ are the inclusions, since the functors $F^0W_0,F'^0W_0,W_0$ are exact by strictness of the morphisms in $(\CMHS)$.  Putting this together with the proof of \Cref{lem beilinson} in the twistor case, we obtain:

\begin{cor}\label{what is ext1}\hspace{1in}
\begin{enumerate}
    \item For any $\bC$-MHS $M$, we have a natural identification
    \[\Ext^1_{\MHS}(\tate,M)\cong W_{0}M/F^0W_0M+F'^0W_0M.\]
    \item For any $\bC$-MTS $M$, we have a natural identification
    \[\Ext^1_{\MTS}(\tate,M)\cong H^1(\bP^1,W_{0}M).\]
\end{enumerate}

\end{cor}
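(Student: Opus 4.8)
The plan is to read $H^1$ directly off the two descriptions of $R\Hom(\tate,-)$ recorded just above, handling the Hodge and twistor cases separately; in both cases the identification is essentially tautological once those descriptions are granted, and naturality in $M$ is automatic because every functor involved ($W_0$, $F^0W_0$, $F'^0W_0$, $\coker$, and $H^1(\bP^1,-)$) is functorial on filtered objects.

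For the Hodge case, I would start from the identification $R\Hom_{\MHS}(\tate,M)\simeq\bigl[F^0W_0M\oplus F'^0W_0M\xrightarrow{\iota-\iota'}W_0M\bigr]$, the two-term complex being placed in cohomological degrees $0$ and $1$. By \Cref{lem beilinson} this complex has nonzero cohomology only in degrees $0$ and $1$, and there it computes $\Hom_{\MHS}(\tate,M)$ and $\Ext^1_{\MHS}(\tate,M)$ respectively; hence $\Ext^1_{\MHS}(\tate,M)$ is the cokernel of $\iota-\iota'$. Since $\iota$ and $\iota'$ are the inclusions into $W_0M$ and $F'^0W_0M$ is a linear subspace, the image of $(a,b)\mapsto a-b$ is exactly $F^0W_0M+F'^0W_0M$, so $\Ext^1_{\MHS}(\tate,M)\cong W_0M/(F^0W_0M+F'^0W_0M)$, as claimed.

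For the twistor case, I would recall from the proof of \Cref{lem beilinson} that, since $\tate=\cO_{\bP^1}$ is the unit object and morphisms of $\bC$-MTS are strict for the weight filtration, the functor $\Hom_{\MTS}(\tate,-)$ is canonically the functor $M\mapsto H^0(\bP^1,W_0M)$, with $W_0$ exact. To compute its first derived functor without appealing to injective resolutions in the category of $\bC$-MTS, I would argue directly with extension classes: applying the exact functor $W_0$ to a short exact sequence $0\to M\to E\to\tate\to 0$ of $\bC$-MTS produces a short exact sequence $0\to W_0M\to W_0E\to\cO_{\bP^1}\to 0$ of $\cO_{\bP^1}$-modules, and conversely, given an extension $F$ of $\cO_{\bP^1}$ by $W_0M$, the pushout $E:=M\oplus_{W_0M}F$ carries a canonical weight filtration --- namely $W_kE=W_kM$ for $k<0$, $W_0E=F$, and $W_kE$ the preimage in $E$ of $W_kM/W_0M$ for $k\ge 0$ --- making it a $\bC$-MTS. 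The only nonformal point is that $\gr^W_0E$ is again a finite direct sum of copies of $\cO_{\bP^1}$, which holds because $\Ext^1_{\cO_{\bP^1}}(\cO_{\bP^1},\cO_{\bP^1})=H^1(\bP^1,\cO_{\bP^1})=0$ forces the extension $0\to\gr^W_0M\to\gr^W_0E\to\cO_{\bP^1}\to 0$ to split. These two assignments are mutually inverse up to equivalence of extensions, whence $\Ext^1_{\MTS}(\tate,M)=\Ext^1_{\cO_{\bP^1}}(\cO_{\bP^1},W_0M)=H^1(\bP^1,W_0M)$, naturally in $M$; the vanishing of higher $\Ext$ recovers the relevant part of \Cref{lem beilinson}.

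The only step needing genuine (if minor) care is this last one --- making rigorous that Yoneda $\Ext^1$ in the category of $\bC$-MTS is computed by $H^1(\bP^1,W_0M)$. I expect the explicit pushout construction above to be the cleanest route, since the two inputs it uses --- exactness of $W_0$ and $H^1(\bP^1,\cO_{\bP^1})=0$ --- are precisely what is at hand and nothing deeper enters; a Grothendieck spectral sequence argument would also work but would require checking acyclicity properties of injectives in the category of $\bC$-MTS, which is less transparent.
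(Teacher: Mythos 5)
Your proof is correct and takes essentially the same route as the paper: the Hodge case is read off from the Beilinson-type two-term complex exactly as in the text, and the twistor case rests on exactness of $W_0$ and the identification $\Hom_{\MTS}(\tate,M)=H^0(\bP^1,W_0M)$, which is precisely how the paper argues via the proof of \Cref{lem beilinson}. Your explicit pushout construction merely spells out the derived-functor identification that the paper leaves implicit, and it is sound (local splitness of the weight filtration on the pushout is automatic since all graded pieces are locally free, and the splitting of $\gr^W_0$ follows from $H^1(\bP^1,\cO_{\bP^1})=0$ as you say).
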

We can think of how an element $\alpha\in\Ext^1_\MS(M,N)$ acts on an extension
\[0\to N\to E\to M\to 0\]
in the two cases as follows.  In the Hodge case we represent $\alpha$ by $f\in W_0\Hom(M,N)$ and twist $F'^\bullet$ by the endomorphism $1+f$ of $E$.  In the twistor case, we may represent $\alpha$ by a \v{C}ech 1-cocyle $f$ valued in $W_0\Hom(M,N)$, and twist the gluing map for the filtered bundle $E$ by $1+f$.  The result is a mixed twistor structure by the twistor version of \cite[Lemma 1.6]{ES} using \Cref{tw equiv}.

\subsection{The results of Saito, Sabbah and Mochizuki}\label{sect:saito/mochizuki}In this section we review the results of Saito, Simpson, Sabbah, and Mochizuki putting functorial mixed Hodge/twistor structures on the cohomology groups of admissible variations of mixed Hodge/twistor structures.  

\subsubsection{Hodge case}  Cohomology groups of $\bC$-AVMS are equipped with functorial $\bC$-MSs by using Saito's theory of mixed Hodge modules in the Hodge case and Mochizuki's theory of mixed twistor modules in the twistor case.  The latter is developed in the case of complex coefficients, so we may quote the necessary results.  In the former case, most of the results in the literature are stated in the case of $\bQ$ or $\bR$ coefficients, so we take some time to precisely define the required notions for complex coefficients.  For background, see \cite{saito88,saito90}, and specifically \cite{saitodef} for the case of real coefficients.  The extension to complex coefficients is discussed in \cite[\S3.2]{sabbahCMHM}.

For any algebraic space $X$, there is a category $\MHM(X,\bR)$ of algebraic real mixed Hodge modules.  The algebraicity condition means they extend to an algebraic compacification.  For smooth $X$, the category of pure Hodge modules $\HM(X,\bR)$ is a subcategory of the category $\MF(X,\bR)$ of filtered regular holonomic $D$-modules with real structure $(M,F^\bullet,\alpha,V)$, where $(M,F^\bullet)$ is a filtered regular holonomic $D$-module, $V$ a $\bR$-perverse sheaf on $X$, and $\alpha: \DR(M)\to \bC_{X}\otimes_{\bR_{X}} V$ an isomorphism in $D^b(\bC_{X})$ between the De Rham complex of $M$ and $\bC_X\otimes_{\bR_{X}}V$.  The category of mixed Hodge modules is then a subcategory of the category $\MFW(X,\bR)$ of filtered objects $(\cM, W_\bullet)$ in $\MF(X,\bR)$.
 
 Again for smooth $X$, the subcategory of smooth objects in $\MHM(X,\bR)$ (namely those for which $V$ is a shifted local system) is equivalent to the category of admissible graded polarizable real variations of mixed Hodge structures on $X$ \cite{saitodef}.  For arbitrary $X$, we can still make sense of the category $\MFW(X,\bR)$ locally using an embedding in an ambient smooth variety and gluing, or globally for projective $X$ using a projective embedding, and this is how $\MHM(X,\bR)$ is defined.  Note that the category $\MFW(X,\bC)$ can be naturally defined as well, although in this case the complex perverse sheaf is redundant.
 
 The derived category $D^b\MHM(X,\bR)$ admits a faithful exact functor \[\mathrm{coeff}:D^b\MHM(X,\bR)\to D^b_c(\bR_{X})\] to the derived category of constructible $\bR_{X}$-modules by extracting the underlying $\bR$-perverse sheaf.  The restriction to $\MHM(X,\bR)$ is faithful and exact.  For algebraic maps $f:X\to Y$, the categories $D^b\MHM(X,\bR)$ admit natural exact functors $f_*,f^*,f_!,f^!,\mathbb{D}_X,\otimes$ which are compatible with the corresponding functors on $D^b(\bR_{X^\an})$ via $\mathrm{coeff}$. 

 As in \cite[\S3.2]{sabbahCMHM}, we make the following definition. 
\begin{defn}Let $X$ be an algebraic space.  An algebraic complex mixed Hodge module is a direct factor of an algebraic real mixed Hodge module in $\MFW(X,\bC)$.  A morphism of algebraic complex mixed Hodge modules $f:M\to N$ is a morphism in $\MFW(X,\bC)$ which arises from restriction to direct factors of a morphism of algebraic real mixed Hodge modules.  We denote the category of algebraic complex mixed Hodge modules by $\MHM(X,\bC)$, and the derived category by $D^b\MHM(X,\bC)$.
\end{defn}

We have the following version of the six functor formalism.  
\begin{thm}[Saito \cite{saito88,saito90}]\label{lem: six functors}
Let $X$ be an algebraic space.
\begin{enumerate}
    \item There is a natural faithful exact functor 
\[\mathrm{coeff}:D^b\MHM(X,\bC)\to D^b_c(\bC_{X})\]
by passing to the underlying perverse sheaf.
\item There are natural functors $f_*,f^*,f_!,f^!,\mathbb{D}_X,\otimes$ (the first four associated to any algebraic map $f:X\to Y$) lifting the corresponding functors on $D^b(\bC_X)$.  The pair $(f^*,f_*)$ is adjoint, and $f^!=\mathbb{D}_Xf^*\mathbb{D}_X, f_!=\mathbb{D}_Xf_*\mathbb{D}_X$.

\item For smooth $X$, the category of smooth objects of $\MHM(X,\bC)$ is equivalent to the category of $\bC$-AVMHS.
\end{enumerate}  
\end{thm}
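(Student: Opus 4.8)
The plan is to bootstrap all three statements from Saito's theory with real (equivalently rational) coefficients, where they are established in \cite{saito88,saito90,saitodef}, using that $\MHM(X,\bC)$ is by construction obtained from $\MHM(X,\bR)$ by scalar extension $M\mapsto M\otimes_\bR\bC$ followed by passage to direct factors inside $\MFW(X,\bC)$. The first step is to see that this makes $\MHM(X,\bC)$ an abelian, Karoubian subcategory of $\MFW(X,\bC)$: morphisms of real mixed Hodge modules are strict for the weight filtration (and suitably strict for the Hodge filtrations), and since a morphism of complex mixed Hodge modules is by definition the restriction to direct summands of a morphism of real ones, this strictness persists, so kernels and cokernels computed in $\MFW(X,\bC)$ again lie in $\MHM(X,\bC)$; closure under extensions and direct factors being immediate, $D^b\MHM(X,\bC)$ is then a well-defined idempotent-complete triangulated category.

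For (1), I would begin with the faithful exact coefficient functor $D^b\MHM(X,\bR)\to D^b_c(\bR_{X})$ of Saito, compose with the faithful exact functor $\bC\otimes_\bR(-)$, and observe that on $\MFW(X,\bC)$ the underlying complex perverse sheaf is recovered from the de Rham complex of the filtered $D$-module via Riemann--Hilbert, so that extracting it is faithful and exact on all of $\MFW(X,\bC)$ --- in particular on $\MHM(X,\bC)$. On a direct factor of $M_\bR\otimes_\bR\bC$ the functor $\mathrm{coeff}$ is just the matching direct factor of $\mathrm{coeff}(M_\bR)\otimes_\bR\bC$.

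For (2), each of $f_*,f^*,f_!,f^!,\mathbb{D}_X,\otimes$ is an additive triangulated functor already constructed with real coefficients, compatible with the corresponding topological operation and with scalar extension; an object of $D^b\MHM(X,\bC)$ is the image of an idempotent $e$ on some $M_\bR\otimes_\bR\bC$, and applying one of these functors sends $e$ to an idempotent on the (complexified) output of the real construction, which splits off the desired complex mixed Hodge module by idempotent-completeness. The $(f^*,f_*)$-adjunction, the identities $f^!=\mathbb{D}_Xf^*\mathbb{D}_X$ and $f_!=\mathbb{D}_Xf_*\mathbb{D}_X$, and compatibility of every functor with $\mathrm{coeff}$ then descend from the real statements by restricting to direct factors. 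For (3), Saito's equivalence identifies smooth objects of $\MHM(X,\bR)$ with $\bR$-AVMHS on $X$; taking direct factors, the smooth objects of $\MHM(X,\bC)$ are exactly the direct factors of $\bR$-AVMHS, and conversely a $\bC$-AVMHS $V$ is a direct factor of the $\bR$-AVMHS underlying $V\oplus\bar V$, so the two classes coincide.

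There is no deep obstacle: the substance is entirely in Saito's real-coefficient theory, which I would quote wholesale (treating non-smooth or non-projective $X$ through the local and projective-embedding constructions of $\MFW$ already used in the real case). The one point demanding genuine care --- and the step I would write out most carefully --- is the bookkeeping of idempotents: verifying that ``direct factor of a real module'' is preserved by each of the six functors and stable under kernels and cokernels, which in every instance reduces to strictness of morphisms of mixed Hodge modules. This is exactly what is carried out in \cite[\S3.2]{sabbahCMHM}.
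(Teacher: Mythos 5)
Your proposal is correct and follows essentially the same route as the paper, whose (one-line) proof likewise reduces everything to Saito's real-coefficient theory by noting that the six functors are defined on filtered $D$-modules and are compatible with passing to direct summands, with the $V\mapsto V\oplus\bar V$ trick handling the identification of smooth objects. Your write-up simply makes explicit the idempotent bookkeeping and strictness points that the paper (following \cite[\S3.2]{sabbahCMHM}) leaves implicit.
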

\begin{proof}
The functors are defined on the level of filtered $D$-modules and compatible with taking direct summands, hence follow from the case of real coefficients.
\end{proof}
\subsubsection{Twistor case}

We denote by $\MTM(X,\bC)$ the full subcategory of the category of mixed twistor $\mathscr{D}$-modules consisting of tame purely imaginary algebraic mixed twistor $\mathscr{D}$-modules.  The algebraicity condition means the twistor $\mathscr{D}$-module extends to $\bar X$.
\begin{thm}[Sabbah \cite{Sabbah_twistor_D_modules}, Mochizuki \cite{mochizukiams1,Mochizuki-AMS2,mochizukimixedtwistor}]\label{lem: six functors tw}
Let $X$ be an algebraic space.
\begin{enumerate}
    \item For each $\lambda\in\bG_m$ there are natural exact specialization functors 
\[\begin{tikzcd}
    D^b\MTM(X,\bC)\ar[r,"\mathrm{sp}_\lambda^{DR}"]\ar[rd,"\mathrm{sp}_\lambda^{B}",swap]& D^b\Hol(X)\ar[d]\\
    &D^b_c(\bC_X)
\end{tikzcd}\]
by scaling the $\lambda$-connection to the derived category of regular holonomic $D$-modules and the derived category of constructible sheaves, where the right functor is the Riemann--Hilbert functor.  
\item For $\lambda=1$, $\sp^{B}_1$ is faithful, and there are natural functors $f_*,f^*,f_!,f^!,\mathbb{D}_X,\otimes$ (the first four associated to any algebraic map $f:X\to Y$) commuting with the corresponding functors on $D^b(\bC_X)$ via $\sp^{B}_1$.  The pair $(f^*,f_*)$ is adjoint, and $f^!=\mathbb{D}_Xf^*\mathbb{D}_X, f_!=\mathbb{D}_Xf_*\mathbb{D}_X$.
\end{enumerate}

\end{thm}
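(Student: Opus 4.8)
The plan is to deduce the statement from the foundational work of Sabbah \cite{Sabbah_twistor_D_modules} and Mochizuki \cite{mochizukiams1,Mochizuki-AMS2,mochizukimixedtwistor}; the only parts that require more than bookkeeping are the stability of the tame purely imaginary subcategory under the six operations and the faithfulness of $\sp^B_1$.

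First I would recall that Mochizuki's theory equips, for any algebraic space $X$, the bounded derived category of algebraic mixed twistor $\mathscr{D}$-modules with functors $f_*,f^*,f_!,f^!,\mathbb{D}_X,\otimes$, with $(f^*,f_*)$ adjoint and $f^!=\mathbb{D}_Xf^*\mathbb{D}_X$, $f_!=\mathbb{D}_Xf_*\mathbb{D}_X$ by definition, the functor $f_*$ for non-proper $f$ being assembled from proper pushforward and localization along a divisor. The task is then to verify that the full subcategory $\MTM(X,\bC)$ of tame purely imaginary objects (in the sense of \Cref{defn:avmts}) is preserved by all of these, which yields the operations on $D^b\MTM(X,\bC)$. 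Tameness is stable because Mochizuki's functors send regular objects to regular ones; pullbacks and duality manifestly preserve the purely-imaginary condition on the residue eigenvalues; and the internal tensor product is the restriction of the external one to the diagonal, hence preserves both conditions. The one case needing genuine input is projective pushforward (and localization), where I would invoke Mochizuki's result that the purely-imaginary condition propagates, since it is detected by the real structure underlying a mixed twistor $\mathscr{D}$-module. Algebraicity is automatic since everything is carried out over a fixed compactification $\bar X$.

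Next I would construct the specialization functors of part~(1): for $\lambda_0\in\bG_m$, restrict the underlying $\mathscr{R}_{\bar X}$-module of an object of $\MTM(X,\bC)$ to $\lambda=\lambda_0$ and rescale the resulting $\lambda_0$-connection to an honest flat connection, obtaining a holonomic $D_X$-module which is regular precisely because the object is tame; this is exact and functorial on derived categories, defining $\sp^{DR}_{\lambda_0}$, and composing with the derived Riemann--Hilbert equivalence $D^b\Hol(X)\xrightarrow{\sim}D^b_c(\bC_X)$ gives $\sp^B_{\lambda_0}$, the triangle in the statement commuting by construction. For $\lambda_0=1$, the asserted compatibility of $\sp^B_1$ with $f_*,f^*,f_!,f^!,\mathbb{D}_X,\otimes$ follows by combining Mochizuki's compatibility of the twistor operations with the operations on $\mathscr{R}$-modules after restriction to $\lambda=1$ with the classical compatibility of Riemann--Hilbert with the six operations on regular holonomic $D$-modules; this is the twistor analogue of \Cref{lem: six functors}.

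The step I expect to be the main obstacle is the faithfulness of $\sp^B_1$. Here the key point is that morphisms in $\MTM(X,\bC)$ are strict with respect to the weight filtration $W_\bullet$: using exactness of $\sp^B_1$ together with strictness, a morphism that vanishes under $\sp^B_1$ vanishes on each $\gr^W_w$ and is therefore zero, so it suffices to prove faithfulness on pure objects of a fixed weight $w$. On the category of weight-$w$ pure tame purely imaginary twistor $\mathscr{D}$-modules, $\sp^B_1$ is an equivalence onto the shifted semisimple perverse sheaves on $X$: both sides are $\bC$-linear semisimple categories, $\sp^B_1$ carries simple objects to simple objects, and by Mochizuki's correspondence via tame purely imaginary harmonic bundles (cf. \Cref{exa defining can twistor}) it induces a bijection on isomorphism classes of simples, with endomorphism ring $\bC$ on both sides by Schur; in particular $\sp^B_1$ is faithful in each pure weight. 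Propagating this up the weight filtration as above establishes the faithfulness of $\sp^B_1$, which together with the preceding steps proves (1) and (2).
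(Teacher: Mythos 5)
The paper does not prove this statement at all: it is an attribution theorem, quoted directly from Sabbah and Mochizuki (in contrast to the Hodge analogue, Theorem~4.13 of the paper, which gets a two-line proof by reduction to real coefficients). So there is no internal argument to compare yours against; the paper's ``approach'' is simply to cite the foundational works, and your proposal is in effect a roadmap through those same references.

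Judged as a standalone argument, your sketch has the right shape but leans on the cited results at exactly the points where real work is needed. First, the stability of the tame purely imaginary subcategory under the operations (in particular under pushforward and localization) is itself a substantive theorem of Mochizuki; your justification that purely-imaginarity ``is detected by the real structure underlying a mixed twistor $\mathscr{D}$-module'' is not right --- there is no real structure in this category, and the condition is about the residue eigenvalues of the Higgs field (equivalently the KMS spectrum), whose propagation under $f_*$ is established via the harmonic-theoretic correspondence with semisimple objects, not formally. Second, exactness and faithfulness of the specialization at the \emph{specific} value $\lambda=1$ is not automatic: for a strict $\mathscr{R}$-module the naive restriction at a special $\lambda$ can behave badly when $\lambda$ hits the jumping set of the KMS spectrum, and it is precisely the tame purely imaginary hypothesis together with Mochizuki's results (e.g.\ the statement, used elsewhere in this paper, that specialization at $\lambda\in\bG_m$ computes the de Rham cohomology of the rescaled $\lambda$-connection) that rules this out; your argument assumes conservativity and exactness of $\sp^B_1$ rather than deriving them. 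Third, your pure-weight step --- that $\sp^B_1$ is an equivalence onto shifted semisimple perverse sheaves --- is essentially Mochizuki's correspondence (Sabbah's conjecture), again an input rather than something you prove. With those caveats made explicit (i.e.\ presented as citations, as the paper does), your outline is a fair account of how the theorem is obtained; presented as a proof, those three points are gaps.
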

\subsubsection{Consequences}
We now deduce some consequences in both the Hodge and twistor case.  By a ``mixed module'' we mean either an algebraic complex mixed Hodge module or a tame purely imaginary algebraic mixed twistor $\scrD$-module. We denote the category of mixed modules by $\MM(X,\bC)$ and the derived category by $D^b\MM(X,\bC)$.  We make the following definition (see also \cite{bbt2}):
\begin{defn}
    For a connected algebraic space $X$, we define a $\bC$-AVMS to be a smooth object of $D^b\MM(X,\bC)$ whose image under $\mathrm{coeff}$ in the Hodge case (or the specialization at $\lambda=1$ in the twistor case) is supported in degree 0 (with respect to the standard t-structure)---that is, an object whose image under $\mathrm{coeff}$ is a local system in degree 0.
\end{defn}

We abusively denote ``$\Hom_{D^b\MM(X,\bC)}$'' by ``$\Hom_{\MM}$''.  Note however that if $X$ is not smooth, smooth objects of $D^b\MM(X,\bC)$ will not generally be elements of $\MM(X,\bC)$, even up to shifts.  We also denote by $(\bC_X\text{-}\mathrm{Mod})$ the category of constructible $\bC_X$-modules and we likewise denote ``$\Hom_{D^b_c(\bC_X)}$'' by ``$\Hom_{\bC_X}$''.  Note that if $\LS(X,\bC)$ is the category of complex local systems, then the natural functor $\LS(X,\bC)\to (\bC_X\text{-}\mathrm{Mod})$ is fully faithful with extension closed image.

\begin{lem}\label{lem esnault}Let $X$ be an algebraic space and $M,N$ two $\bC$-AVMSs on $X$ (supported in the same degree).
\begin{enumerate}
    \item The groups $\Ext^i_{\bC_X}(M,N)$ carry functorial $\bC$-MSs.
    \item  There is a natural short exact sequence
\begin{align*}0\to \Ext^1_{\MS}(\tate,\Hom_{\bC_X}(M,N))&\to \Hom_{\MM}(M,N[1])\\
&\to \Hom_{\MS}(\tate,\Ext^1_{\bC_X}(M,N))\to 0
\end{align*}

\end{enumerate}

\end{lem}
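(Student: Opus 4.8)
The plan is to realize each of the groups $\Ext^i_{\bC_X}(M,N)$ and $\Hom_\MM(M,N[1])$ as cohomology of, respectively morphisms into, a single object of $D^b\MM(\pt,\bC)=D^b(\CMS)$, and then to invoke \Cref{lem beilinson} and its corollary. Let $a\colon X\to\pt$ be the structure morphism and, using the tensor product and duality of the six-functor formalism (\Cref{lem: six functors}, resp. \Cref{lem: six functors tw}), set $\cHom(M,N):=\mathbb{D}_X(M\otimes\mathbb{D}_X N)\in D^b\MM(X,\bC)$; since duals and tensor products of $\bC$-AVMS are $\bC$-AVMS, $\cHom(M,N)$ is again a smooth object. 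Because $\otimes$ and $\mathbb{D}_X$ are compatible with $\mathrm{coeff}$ (resp. $\sp_1^B$) and $M,N$ are supported in the same degree, the image of $\cHom(M,N)$ under $\mathrm{coeff}$ (resp. $\sp_1^B$) is $R\cHom_{\bC_X}(M,N)$, normalized so that $\mathscr{H}^0$ is $\Hom_{\bC_X}(M,N)$ and $\mathscr{H}^i$ is $\Ext^i_{\bC_X}(M,N)$. Setting $P:=a_*\cHom(M,N)$, its image under $\mathrm{coeff}$ (resp. $\sp_1^B$) is $R\Hom_{\bC_X}(M,N)$, so $\mathscr{H}^i(P)$ has underlying vector space $\Ext^i_{\bC_X}(M,N)$.

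For part (1), each $\mathscr{H}^i(P)$ is by construction an object of $\CMS$, and the identification of underlying vector spaces just described transports this $\bC$-MS to $\Ext^i_{\bC_X}(M,N)$. Functoriality in $M$ and $N$ is immediate since $\cHom(-,-)$ and $a_*$ are functors, and functoriality under pullback along a morphism $f\colon X\to Y$ follows from the compatibilities of $f^*$ with $\otimes$, $\mathbb{D}$, $a_*$, and $\mathrm{coeff}$ (resp. $\sp_1^B$) recorded in \Cref{lem: six functors} (resp. \Cref{lem: six functors tw}).

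For part (2), since $\mathbb{D}_X$, $\otimes$ and the dualizing object $a^!\tate$ all belong to the formalism, $\cHom(M,N)$ is right adjoint to $M\otimes(-)$, whence
\[\Hom_\MM(M,N[1])\cong\Hom_\MM(\tate_X,\cHom(M,N)[1])\cong\Hom_\MS(\tate,P[1]),\]
the second isomorphism coming from the $(a^*,a_*)$-adjunction of \Cref{lem: six functors} (resp. \Cref{lem: six functors tw}) together with $a^*\tate=\tate_X$. Applying the corollary of \Cref{lem beilinson} to $P$ with $i=1$ gives the canonical exact sequence
\[0\to\Ext^1_\MS(\tate,\mathscr{H}^0(P))\to\Hom_\MS(\tate,P[1])\to\Hom_\MS(\tate,\mathscr{H}^1(P))\to 0,\]
and substituting $\mathscr{H}^0(P)=\Hom_{\bC_X}(M,N)$ and $\mathscr{H}^1(P)=\Ext^1_{\bC_X}(M,N)$, equipped with the mixed structures of part (1), yields the asserted sequence; naturality is again inherited from the formalism.

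The main point requiring care is the normalization in the compatibility $\mathrm{coeff}(\cHom(M,N))\cong R\cHom_{\bC_X}(M,N)$ (resp. for $\sp_1^B$): one must chase the perverse shifts implicit in the ``smooth object'' conventions so that $\mathscr{H}^i(a_*\cHom(M,N))$ equals $\Ext^i_{\bC_X}(M,N)$ on the nose, which is precisely where the hypothesis that $M$ and $N$ are supported in the same degree is used. A secondary point, in the twistor case, is that $\sp_1^B$ is only faithful rather than an equivalence; this is harmless here, since the twistor structure on the $\Ext$-groups is produced inside $D^b\MM(\pt,\bC)$, with $\sp_1^B$ serving only to identify the underlying vector spaces.
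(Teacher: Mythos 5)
Your proposal is correct and follows essentially the same route as the paper: part (1) via the six-functor formalism identifying $\Ext^i_{\bC_X}(M,N)$ with $H^i(X,\cHom(M,N))$, and part (2) by reducing through the internal Hom to morphisms out of $\tate_X$, applying the $(\pt_X^*,\pt_{X*})$-adjunction, and then invoking the canonical short exact sequence following \Cref{lem beilinson}. The only difference is cosmetic — you spell out $\cHom(M,N)=\mathbb{D}_X(M\otimes\mathbb{D}_X N)$ and flag the shift conventions explicitly, which the paper leaves implicit.
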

\begin{cor}[{cf. \cite[Lemma 4.3]{esnaultdaddezio}}]\label{lifting MHM ext}
    For $X,M,N$ as above, an extension of local systems
    \[0\to N\to E\to M\to 0\]
    can be lifted to a distinguished triangle of smooth objects in $D^b\MM(X,\bC)$ if and only if the extension class in $\Ext^1_{\bC_X}(M,N)$ is a Tate class.  In this case, the set of isomorphism classes of such lifts is a torsor under $\Ext^1_{\MS}(\tate,\Hom_{\bC_X}(M,N))$, which acts naturally on the filtrations $F^\bullet,F'^\bullet$ in the underlying $\bC$-AVMS via the identification of \Cref{what is ext1}.
\end{cor}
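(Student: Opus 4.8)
The plan is to obtain the statement as an essentially formal consequence of \Cref{lem esnault}, the one genuinely nonformal point being the identification of the torsor action with an explicit twist of the Hodge/twistor filtrations.

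\emph{Reformulating the lifting problem.} Writing $a\colon X\to\pt$, the proof of \Cref{lem esnault} computes $\Hom_{\MM}(M,N[1])$ from the complex $a_*\cHom_{\MM}(M,N)\in D^b(\CMS)$, whose realization (under $\mathrm{coeff}$ in the Hodge case, $\sp^B_1$ in the twistor case) is $R\Gamma(X,\cHom_{\bC_X}(M,N))$, with $\mathscr{H}^0=\Hom_{\bC_X}(M,N)$, $\mathscr{H}^1=\Ext^1_{\bC_X}(M,N)$, and nothing in negative degrees since $M,N$ sit in degree $0$. Because $\LS(X,\bC)\hookrightarrow(\bC_X\text{-}\mathrm{Mod})$ is extension-closed, $\Ext^1_{\bC_X}(M,N)=\Hom_{\bC_X}(M,N[1])$ is precisely the group of extensions $0\to N\to E\to M\to 0$ of local systems, so the extension class of $E$ is well defined. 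Now a lift of such an extension to a distinguished triangle of smooth objects in $D^b\MM(X,\bC)$ amounts to a morphism $\delta\in\Hom_{\MM}(M,N[1])$ with $\mathrm{coeff}(\delta)=[E]$: given $\delta$, the cone of $\delta[-1]\colon M[-1]\to N$ is smooth (smoothness being stable under cones and shifts), with realization $E$ concentrated in degree $0$, hence a $\bC$-AVMS, and it completes $\delta$ to the desired triangle; conversely every such triangle arises this way, and up to the ambiguity of cones its isomorphism classes (compatibly with the fixed realizations $\mathrm{coeff}(\tilde M)=M$, $\mathrm{coeff}(\tilde N)=N$) correspond to $\{\delta:\mathrm{coeff}(\delta)=[E]\}$. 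Thus a lift exists iff $[E]$ lies in the image of $\mathrm{coeff}\colon\Hom_{\MM}(M,N[1])\to\Ext^1_{\bC_X}(M,N)$, and when it does the set of lifts is a torsor under $\ker(\mathrm{coeff})$.

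\emph{Computing the image and the kernel.} The exact sequence of \Cref{lem esnault}(2) is compatible with the realization: its surjection $\Hom_{\MM}(M,N[1])\twoheadrightarrow\Hom_{\MS}(\tate,\Ext^1_{\bC_X}(M,N))$ composed with the evaluation-at-$1$ map $\Hom_{\MS}(\tate,\Ext^1_{\bC_X}(M,N))\hookrightarrow\Ext^1_{\bC_X}(M,N)$ (injective, a morphism out of the tensor unit being determined by the image of $1$) equals $\mathrm{coeff}$, while the subobject $\Ext^1_{\MS}(\tate,\Hom_{\bC_X}(M,N))\hookrightarrow\Hom_{\MM}(M,N[1])$ is annihilated by $\mathrm{coeff}$, its realization factoring through $\Ext^1_{\CVect}(\bC,\Hom_{\bC_X}(M,N))=0$. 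This is immediate from the construction of the sequence via the canonical filtration of $a_*\cHom_{\MM}(M,N)$ furnished by \Cref{lem beilinson}, since $\mathrm{coeff}$ carries that filtration to the split canonical decomposition of $R\Gamma(X,\cHom_{\bC_X}(M,N))$ in $D^b(\CVect)$. Hence the image of $\mathrm{coeff}$ is exactly the set of Tate classes $\Hom_{\MS}(\tate,\Ext^1_{\bC_X}(M,N))\subset\Ext^1_{\bC_X}(M,N)$, which gives the criterion, and its kernel is exactly $\Ext^1_{\MS}(\tate,\Hom_{\bC_X}(M,N))$, which gives the torsor statement.

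\emph{The action on $F^\bullet,F'^\bullet$, and the main obstacle.} By the reformulation above, each lift endows the fixed local system $E$ with a $\bC$-AVMS structure, i.e.\ with filtrations $F^\bullet,F'^\bullet$ (Hodge) or a filtered $\cO$-module on $X_{\bP^1}$ (twistor); changing $\delta$ within $\Ext^1_{\MS}(\tate,\Hom_{\bC_X}(M,N))$ alters only this structure. Applying \Cref{what is ext1} to the $\bC$-MS $\Hom_{\bC_X}(M,N)$ identifies $\Ext^1_{\MHS}(\tate,\Hom_{\bC_X}(M,N))\cong W_0\Hom_{\bC_X}(M,N)/(F^0W_0+F'^0W_0)$, resp.\ $\Ext^1_{\MTS}(\tate,\Hom_{\bC_X}(M,N))\cong H^1(\bP^1,W_0\Hom_{\bC_X}(M,N))$. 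To conclude one traces the boundary morphism of the sequence of \Cref{lem esnault}(2) — this being exactly where \Cref{what is ext1} enters the proof of \Cref{lem esnault} — against the explicit recipe recorded immediately after \Cref{what is ext1}: a class represented by $f$ acts on a given lift by fixing $F^\bullet$ and replacing $F'^\bullet$ by $(1+\hat f)(F'^\bullet)$, where $\hat f\colon E\twoheadrightarrow M\xrightarrow{f}N\hookrightarrow E$ (Hodge case), resp.\ by twisting the gluing cocycle of the filtered $\bP^1$-bundle underlying $\tilde E$ by $1+\hat f$ for $f$ a $\v{C}$ech representative (twistor case); that the result is again a $\bC$-AVMS uses the twistor analogue of \cite[Lemma 1.6]{ES} (already invoked after \Cref{what is ext1}) together with strictness of $F^0W_0,F'^0W_0,W_0$, resp.\ of $W_0$ and $H^\bullet(\bP^1,-)$. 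This last compatibility is the main obstacle: all the rest is bookkeeping with the six-functor formalism and \Cref{lem esnault}, whereas matching the abstract torsor action to the concrete filtration twist requires opening up the proofs of \Cref{lem esnault} and \Cref{what is ext1} and, in particular, treating the Hodge and twistor cases by hand.
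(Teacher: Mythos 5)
Your proposal is correct and follows exactly the route the paper intends: the corollary is stated as an immediate consequence of \Cref{lem esnault}(2) together with the explicit description of the $\Ext^1_{\MS}$-action recorded after \Cref{what is ext1}, and your argument simply fills in the formal details of that deduction (interpreting $\Hom_{\MM}(M,N[1])$ as classifying lifts, identifying the image of the realization map with Tate classes and its kernel with $\Ext^1_{\MS}(\tate,\Hom_{\bC_X}(M,N))$, and matching the torsor action with the filtration twist). No genuinely different ideas are introduced, and no gaps remain.
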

\begin{proof}[Proof of \Cref{lem esnault}]
    (1) follows from \Cref{lem: six functors}, since for $M,N$ smooth we have $\Ext^i_{\bC_X}(M,N)\cong H^i(X,\cHom(M,N))$. 

For (2), let $\pt_X:X\to \Spec\bC$ be the map to a point. 
 Because $M$ and $N$ are smooth we may naturally identify
\[\Hom_{\MM}(M,N[1])\cong \Hom_{\MM}(\pt_X^*\tate,\cHom(M,N)[1])\]
and thus assume $M=\pt_X^*\tate$.
  There is an adjunction morphism $\pt_X^*\pt_{X*}\to 1$ in $D^b\MM(X,\bC)$.  Thus, we have a natural identification
  \[\Hom_{\MM}(\pt_X^*\tate,N[1])\cong\Hom_{\MS}(\tate,\pt_{X*}N[1]).\]
Since $\mathscr{H}^i\pt_{X*}N=H^i(X,N)$, by \Cref{lem beilinson}, there is a natural short exact sequence
\begin{align*}0\to \Hom_{\MS}(\tate,H^0(X,N)[1])&\to \Hom_{\MS}(\tate,\pt_{X*}N[1])\\&\to\Hom_{\MS}(\tate,H^1(X,N))\to 0
\end{align*}
which yields the claim.
\end{proof}

\subsection{Recollections from deformation theory}
We briefly recall some elements of deformation theory as in \cite[\href{https://stacks.math.columbia.edu/tag/06G7}{Tag 06G7}]{stacks-project}.  In practice, one can often get by with the more classical deformation functor of isomorphism classes, but the entire category of deformations (which is naturally fibered in groupoids over the category of artinian affine schemes) is more natural.

Let $\Lambda$ be a complete local noetherian $\bC$-algebra with residue field $\bC$.  Let $(\Art/\Lambda)$ be the category of local artinian $\Lambda$-algebras.  We identify $(\Art/\Lambda)^\mathrm{op}$ with the category of artinian local $\Lambda$-schemes. A deformation category is a category cofibered in groupoids $\cD\to(\Art/\Lambda)$ (meaning $\cD^\op\to(\Art/\Lambda)^{\op}$ is fibered in groupoids) for which $\cD(\bC)$ is equivalent to a point and such that the Rim--Schlessinger gluing condition holds:  for any morphisms $A_1\to A$ and $A_2\to A$ in $(\Art/\Lambda)$ with $A_1\to A$ surjective, the natural functor $\cD(A_1\times_AA_2)\to \cD(A_1)\times_{\cD(A)}\cD(A_2)$ is an equivalence of categories\footnote{The fiber product is a fiber product of groupoids, so the objects are pairs $(x_1,x_2)$ of objects in $\cD(A_1)$ and $\cD(A_2)$ with a choice of isomorphism of the pullbacks to $A$.}.  The condition in particular implies the set of isomorphism classes in $\cD(\bC\oplus\bC\epsilon)$ (where $\epsilon^2=0$ and $\m_\Lambda$ kills $\epsilon$) has the structure of a $\bC$-vector space, called the tangent space $\ft_\cD$.  It also implies that there is a $\bC$-vector space $\fa_\cD$ (the infinitesimal automorphisms) which is identified with the group of automorphisms of any object in $\cD(\bC\oplus\bC\epsilon)$.  We assume both to be finite-dimensional. 

For any $A\in(\Art/\Lambda)$, we define $(\Art/\Lambda)_A$ to be the category of $A'\in(\Art/\Lambda)$ equipped with a morphism $A'\to A$.  For $x\in \cD(A)$, we define $\cD_x$ to be the category cofibered in groupoids over the category $(\Art/\Lambda)_A$ whose objects are morphisms $x'\to x$ in $\cD$ lying over $A'\to  A$ in $(\Art/\Lambda)_A$ and whose morphisms are morphisms of $\cD$ commuting with the map to $x$.  For $A'\in (\Art/\Lambda)_A$ the groupoid $\cD_x(A')$ is then the category of lifts of $x$ to $A'$.

For a complete noetherian local $\Lambda$-algebra $(B,\m_B)$ with residue field $\bC$, a formal point $\hat x\in\cD(B):=\lim \cD(B/\m_B^n)$ consists of an assignment to each 
$n$ of objects $x_n$ of $\cD$ lying over $B/\m_B^{n+1}$ and morphisms $x_{n+1}\to x_{n}$ lying over $B/\m_B^{m+2}\to B/\m_B^{n+1}$.  The formal point $\hat x$ is versal if for any solid diagram in $\cD$
\[
\begin{tikzcd}
    x_n\ar[rrdd]&&&& B/\m_B^{n+1}\ar[rrdd]&&\\
    &&&\mbox{lying above}&&&\\
    x_m\ar[r,dashed]\ar[uu]&y'\ar[r]&y && B/\m_B^{m+1}\ar[uu]\ar[r,dashed] & A'\ar[r]&A
\end{tikzcd}
\]
where $A'\to A$ is surjective, there exists an $m\geq n$ and the dashed arrows making the diagram commute, where the vertical arrows are the canonical ones.  The formal object $\hat x$ is miniversal if in addition the natural map on $\Lambda$-tangent spaces $\ft_{B/\Lambda}\to \ft_\cD$ is an isomorphism, where $\ft_{B/\Lambda}:=\Der_\Lambda(B,\bC)\cong (\mathfrak{m}_B/\mathfrak{m}_B^2+\mathfrak{m}_\Lambda B)^\vee$.

Let $\cX$ be an algebraic stack over $\bC$ (which we think of as a category fibered in groupoids over the category $(\Aff/\bC)$ of affine $\bC$-schemes).  For any $A\in(\Art/\bC)$ and any $A$-point $x\in \cX(A)$, we define $\cX_x$ to be the category fibered in groupoids over the category $(\Art/\bC)_A^{\mathrm{op}}$ whose objects are morphisms $x\to x'$ in $\cX$ lying over $\Spec A\to\Spec A'$ in $(\Art/\bC)_A^{\mathrm{op}}$ and whose morphisms are morphisms of $\cX$ commuting with the map from $x$.  The groupoid $\cX_x(A')$ is then the category of lifts of $x$ to $A'$, and if $x_0\in\cX(\bC)$, $(\cX_{x_0})^\op$ is a deformation category.

We pause to make these notions more concrete for $\cX=\cM_B(X)$.  We use the following terminology.
\begin{defn}
Let $A$ be a $\bC$-algebra and $X$ a topological space homeomorphic to a finite CW-complex.  By a (free) $A$-local system on $X$ we mean a local system of finitely generated (free) $A$-modules on $X$. 
\end{defn}

\subsubsection{}An object $V$ of $\cM_B(X)$ lying over $\Spec(A)\in(\Aff/\bC)$ is a free $A$-local system on $X$, and a morphism $V\to V'$ in the category $\cM_B(X)$ lying over a morphism $\Spec(A)\to \Spec (A')$ in $\Aff_\bC$ consists of a free $A$-local system $V$ (resp. a free $A'$-local system $V'$) and a morphism $V'\to V$ of $A'$-local systems which is an isomorphism upon tensoring with $A$ (we say it is an isomorphism over $A$).
\subsubsection{}For $V_0\in \cM_B(X)(\bC)$, the category $\cM_B(X)_{V_0}$ is the category of free $A$-local systems $V$ for $A\in(\Art/\bC)$ equipped with a morphism $V\to V_0$ of $A$-local systems which is an isomorphism over $\bC$.  Morphisms in $\cM_B(X)_{V_0}$ are morphisms (of the $V$ factor) in $\cM_B(X)$ commuting with the structure map to $V_0$.
\subsubsection{}More generally, for $A\in \Art/\bC$ and a free $A$-local system $V$, $\cM_B(X)_{V}$ is the category whose objects are pairs $(A'\to A,V'\to V)$ where $A'\to A$ is in $(\Art/\bC)_A$, $V'$ is a free $A'$-local system, and $V'\to V$ is a morphism of $A'$-local systems which is an isomorphism over $A$.  Morphisms in $\cM_B(X)_V$ are morphisms in $\cM_B(X)$ (of the $V'$ factor) commuting with the structure map to $V$.  Note that if $A'\to A$ is surjective with ideal $J$, then $V'\to V $ is an isomorphism over $A$ if and only if it is surjective with kernel is $JV'$.
\subsubsection{}For a complete noetherian ring $\hat\cO$, a formal point of $\cM_B(X)$ over $\hat\cO$ arises from an ordinary $\hat\cO$-point, namely a free $\hat\cO$-local system $ V$. 
 Then $ V$ is versal if the morphism $\Spec \hat\cO\to \cM_B(X)$ is formally smooth, meaning that for any:
 \begin{itemize}
     \item  surjection $A'\to A$ of artinian $\bC$-algebras;
     \item $\bC$-algebra homomorphism $\hat\cO\to A$;
     \item free $A$-local system $U$ with a morphism $V\to U$ of $\hat\cO$-local systems which is an isomorphism over $A$;
     \item free $A'$-local system $U'$ with a morphism $U'\to U$ of $A'$-local systems which is an isomorphism over $A$; 
 \end{itemize}
there is a lift $\hat\cO\to A'$ of $\hat\cO\to A$ and a lift $ V\to U'$ of $ V\to U$ which is an isomorphism over $A'$.  

In particular, for any $A\in \Art/\bC$, any free $A$-local system $U$, and any isomorphism $V/\m_\cO  V\to U/\m_AU$, there is a $\bC$-algebra homomorphism $\hat\cO\to A$ and a morphism $ V\to U$ of $\hat\cO$-local systems which is an isomorphism over $A$ and equal to $V/\m_\cO  V\to U/\m_AU$ over $\bC$.  If $ V$ is miniversal then the map $\hat\cO\to A$ is uniquely determined to first order.

\vskip1em

Returning to the general setup, we now briefly describe Schlessinger's construction of a miniversal formal point of $\cD$.  A canonical first order object $y_1$ is constructed over $\cO_1=\bC\oplus \ft_\cD^\vee$ (with the trivial $\Lambda$-algebra structure), using the gluing condition.  Let $S=\Lambda[[\ft_\cD^\vee]]=\Lambda_\bC\otimes\Sym^*\ft_\cD^\vee$ be the formal power series ring over $\Lambda$ (over a basis of $\ft_\cD^\vee$) and let $S\to \cO_1$ be the canonical quotient which is the identity on $\Lambda$-tangent spaces.  Suppose we have inductively constructed a quotient $S\to \cO_n$ as well as an object $y_n$ in $\cD(\cO_n)$ lifting $y_1$.  Let $I$ be the ideal of $\cO_n$ in $S$, and form the maximal small extension of $\cO_n$ which is an isomorphism on tangent spaces, explicitly given by
\begin{equation}\label{eq:bigsmall}0\to I/\m_SI\to S/\m_SI\to \cO_n\to 0.\end{equation}
Then (by the gluing condition again) there is a smallest ideal $I'$ with $\m_SI\subset I'\subset I$ such that $y_n$ lifts to $S/I'$, and we take $y_{n+1}$ to be any such lift and $\cO_{n+1}=S/I'$.  
\begin{thm}[{Schlessinger \cite{schlessinger}, see also \cite[\href{https://stacks.math.columbia.edu/tag/06IX}{Tag 06IX}]{stacks-project}}]\label{lem Schlessinger}
    The formal point $ \hat y=\lim y_n$ over $\hat\cO=\lim \cO_n$ is miniversal for $\cD$.
\end{thm}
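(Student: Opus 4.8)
The plan is to check the two requirements in the definition of a miniversal formal point: the tangent condition, that the map $\ft_{\hat\cO/\Lambda}\to\ft_\cD$ induced by $\hat y$ is an isomorphism, and versality. First I would note that the construction is well posed: each $\cO_n$ is artinian local (inductively $I_n\supset\m_S I_{n-1}$, so $I_n\supset\m_S^{n+1}$, so $S/\m_S I_n$ and $\cO_{n+1}=S/I_{n+1}$ have finite length), the maps $\cO_{n+1}\to\cO_n$ are surjective, and $\hat\cO$ is topologically generated over $\Lambda$ by the image of $\ft_\cD^\vee$, hence $S\to\hat\cO$ is surjective and $\hat\cO$ is a complete local noetherian $\Lambda$-algebra with residue field $\bC$ carrying the compatible system $\hat y=\lim y_n$. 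The tangent condition is then immediate: $\cO_1=\bC\oplus\ft_\cD^\vee$ carries the tautological first-order object $y_1$ inducing $\mathrm{id}_{\ft_\cD}$, and since $I_n\subset\m_S^2+\m_\Lambda S$ for all $n\ge 1$ we get $\ft_{\cO_n/\Lambda}=(\m_{\cO_n}/\m_{\cO_n}^2+\m_\Lambda\cO_n)^\vee\cong\ft_\cD$ for every $n$, so $\ft_{\hat\cO/\Lambda}=\Der_\Lambda(\hat\cO,\bC)=\Der_\Lambda(\cO_1,\bC)\cong\ft_\cD$, and since $\hat y$ restricts to $y_1$ the induced map is the identity.

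For versality, the task is to solve the lifting problem of the definition. Every surjection in $(\Art/\Lambda)$ is a finite composite of small extensions $A'\to A$ with one-dimensional kernel $J$ annihilated by $\m_{A'}$, and such lifting problems compose provided one is allowed to enlarge the index $n$ at each stage, so it suffices to treat a single small extension $A'\to A$. Given then a ring map $\cO_n\to A$, an object $y'\in\cD(A')$, and an isomorphism $y'|_A\cong y_n|_A$, the Rim--Schlessinger property produces from $(y_n,y')$ and this isomorphism an object $z\in\cD(\cO_n\times_A A')$ lifting $y_n$ along the small extension $\cO_n\times_A A'\to\cO_n$ (which again has kernel $J$). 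Since $S$ is formally smooth over $\Lambda$ the surjection $S\to\cO_n$ lifts to a surjection $S\to\cO_n\times_A A'$, say with kernel $\tilde I$; then $\m_S I_n\subset\tilde I\subset I_n$ and $y_n$ lifts to $S/\tilde I$, so the \emph{minimality} of $I_{n+1}$ among ideals between $\m_S I_n$ and $I_n$ to which $y_n$ lifts gives $I_{n+1}\subset\tilde I$ and hence a ring map $q\colon\cO_{n+1}\twoheadrightarrow\cO_n\times_A A'$ over $\cO_n$. Composing with the second projection yields $p\colon\cO_{n+1}\to A'$ lifting $\cO_n\to A$, since the composites $\cO_{n+1}\to A'\to A$ and $\cO_{n+1}\to\cO_n\to A$ agree by definition of the fibre product.

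The last step is to match the objects. The pullbacks $p^*y_{n+1}$ and $y'$ are both lifts of $y_n|_A$ along the small extension $A'\to A$, so by Rim--Schlessinger they differ by a class $\bar\delta\in\ft_\cD\otimes_\bC J$, a group that acts transitively on the set of such lifts. On the other hand the ring maps $\cO_{n+1}\to A'$ lifting $\cO_n\to A$ form a torsor under $\Der_\Lambda(\cO_{n+1},J)\cong\ft_{\cO_{n+1}/\Lambda}\otimes_\bC J\cong\ft_\cD\otimes_\bC J$, and replacing $p$ by $p+d$ changes $p^*y_{n+1}$ by the image of $d$ under the Kodaira--Spencer map of $y_{n+1}$; since $y_{n+1}$ restricts to $y_1$ this Kodaira--Spencer map is exactly the tangent isomorphism $\ft_{\cO_{n+1}/\Lambda}\cong\ft_\cD$, so we may choose $d$ with $p^*y_{n+1}+d=y'$. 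Then $p':=p+d\colon\cO_{n+1}\to A'$ still lifts $\cO_n\to A$ (as $d$ takes values in $J=\ker(A'\to A)$) and $(p')^*y_{n+1}\cong y'$, solving the lifting problem with $m=n+1$; together with the tangent condition this gives miniversality of $\hat y$. I expect the main point to be the construction of $I_{n+1}$ and its minimality in the middle paragraph: this needs the Rim--Schlessinger property to see that if $y_n$ lifts to $S/I'$ and to $S/I''$ then it lifts to $S/(I'\cap I'')$, together with $\dim_\bC\ft_\cD<\infty$ (so $I_n$ is finitely generated and only finitely many ideals are in play) to guarantee a smallest such ideal exists; the rest is formal. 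This is, of course, precisely \cite{schlessinger}.
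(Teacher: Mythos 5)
Your argument is correct and is exactly the standard Schlessinger/Stacks-project proof that the paper cites rather than reproves (the paper gives no proof of its own), so there is nothing of substance to compare. Two harmless slips worth noting: the lift $S\to\cO_n\times_A A'$ need not be (and cannot always be) chosen surjective — if it is not, it factors through $\cO_n$ and your argument goes through unchanged since surjectivity of $q$ is never used — and the existence of the smallest ideal $I_{n+1}$ rests on finite-dimensionality of $I_n/\m_S I_n$ (descending chain condition) together with stability under intersection via the Rim--Schlessinger gluing, not on there being literally finitely many intermediate ideals.
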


\subsection{Pro-Hodge/twistor structures and their variations}\label{sect:pro stuff}
In this section we collect some straightforward definitions that will be needed for the sequel.  
\begin{defn}\hspace{1in}\label{MHS algebra stuff}
\begin{enumerate}
   \item By a pro-$\bC$-MS we mean a pro-object of the category of $\bC$-MS.
    \item By a pro-$\tate$-MS-algebra we mean a $\tate$-algebra object in the category of pro-$\bC$-MS.  Likewise, for any pro-$\tate$-MS-algebra $\Lambda$, by a pro-$\Lambda$-MS-algebra we mean a pro-$\tate$-MS-algebra $A$ equipped with a morphism $\Lambda\to A$ of pro-$\tate$-MS-algebras.
    \item We say a pro-$\tate$-MS-algebra is (complete/noetherian/artinian/local) if the underlying $\bC$-algebra is. 
    \item An ideal $I$ of a pro-$\tate$-MS-algebra is a $\bC$-MS-ideal if it is a sub-pro-$\bC$-MS.
    \item For $\Lambda$ a pro-$\tate$-MS-algebra, we define a pro-$\Lambda$-MS-module as a pro-$\Lambda$-module object in the category of $\CMS$.
\end{enumerate}
\end{defn}
In practice, all of the pro-$\tate$-MS-algebras we will consider will be complete noetherian local rings $\Lambda$, where each $\Lambda/\fm_\Lambda^k$ is equipped with a $\tate$-MS-algebra structure such that the quotient maps $\Lambda/\fm_\Lambda^{k+1}\to\Lambda/\fm_\Lambda^k$ are morphisms of $\bC$-MS.  These algebras are particularly well-behaved if $\fm_\Lambda=W_{-1}\Lambda$.  If $\Lambda$ is local complete noetherian and $\fm_\Lambda=W_{-1}\Lambda$, then each $\gr^W_k\Lambda$ is finite-dimensional and $W_0\Lambda=\Lambda$. 

\begin{lem}\label{tw loc triv}
    Let $A\to B$ be a morphism of pro-$\tate$-MTS-algebras.  Then $A\to B$ is locally trivial over $\bP^1$ as a morphism of sheaves of algebras.
\end{lem}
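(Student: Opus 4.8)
The plan is to trivialize the morphism over each of the two standard affine charts of $\bP^1$. Fix $\lambda \in \bP^1$ and set $U := \bP^1 \setminus \{\lambda\}$; since two such opens already cover $\bP^1$, it suffices to exhibit, over each $U$, an isomorphism of sheaves of $\cO_U$-algebras between $A \to B$ and the scalar extension $-\otimes_\bC \cO_U$ of a morphism of pro-$\bC$-algebras $A^\bullet \to B^\bullet$. First I would reduce to the case of a morphism $f : A \to B$ of $\bC$-MTS-algebras (not pro-objects): writing $A = \varprojlim_n A_n$ and $B = \varprojlim_n B_n$ as cofiltered limits of $\bC$-MTS-algebras indexed so that $f$ is induced by a compatible family $f_n : A_n \to B_n$ (as in the conventions recalled before the lemma), the construction below is manifestly functorial, so it passes to the pro-limit.

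The key input is \Cref{lem tw splitting}: over $U$ there is a splitting of the weight filtration, functorial and compatible with tensor products. Applying it yields decompositions $A|_U = \bigoplus_k (\gr^W_k A)|_U$ and $B|_U = \bigoplus_k (\gr^W_k B)|_U$ as $\cO_U$-modules, and since the multiplication $A \otimes A \to A$ is a morphism of $\bC$-MTS for the tensor weight filtration on $A \otimes A$ and the splitting is compatible with $\otimes$, the multiplication becomes graded: it sends the $(j,k)$-summand into the $(j+k)$-summand. Likewise $f|_U$ is graded. Next I would trivialize each graded piece. By the definition of a $\bC$-MTS, $\gr^W_k A \cong \cO_{\bP^1}(k)^{n_k}$, so $\gr^W_k A(-k)$ is a trivial bundle and evaluation identifies it with $A^\bullet_k \otimes_\bC \cO_{\bP^1}$ for $A^\bullet_k := H^0(\bP^1, \gr^W_k A(-k))$; and choosing a section $s$ of $\cO_{\bP^1}(1)$ with a simple zero at $\lambda$, multiplication by $s^k$ trivializes $\cO_{\bP^1}(k)$ over $U$. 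Combining these gives an isomorphism $A|_U \cong A^\bullet \otimes_\bC \cO_U$ with $A^\bullet := \bigoplus_k A^\bullet_k$.

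It remains to see this identification is multiplicative. The graded multiplication of $A$ is, in each bidegree $(j,k)$, an $\cO_{\bP^1}$-linear map of trivial bundles $\gr^W_j A(-j) \otimes_{\cO_{\bP^1}} \gr^W_k A(-k) \to \gr^W_{j+k} A(-j-k)$, hence --- as $H^0(\bP^1, \cO_{\bP^1}) = \bC$ --- the scalar extension of a $\bC$-bilinear map $A^\bullet_j \otimes_\bC A^\bullet_k \to A^\bullet_{j+k}$; associativity and unitality are inherited from $A$, so $A^\bullet$ is a graded $\bC$-algebra, and since $s^j s^k = s^{j+k}$ the isomorphism $A|_U \cong A^\bullet \otimes_\bC \cO_U$ respects products. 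Running the same construction for $B$ and observing that the graded map $f|_U$ descends to a graded $\bC$-algebra homomorphism $f^\bullet : A^\bullet \to B^\bullet$ (using the same $s$ throughout), one gets that $f|_U$ is the scalar extension of $f^\bullet$. Taking the pro-limit over $n$ produces pro-$\bC$-algebras $A^\bullet = \varprojlim_n (A_n)^\bullet$ and $B^\bullet = \varprojlim_n (B_n)^\bullet$ and completes the proof.

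Since all the real work is already contained in \Cref{lem tw splitting}, I do not expect a serious obstacle; the only points that need care are (i) that functoriality and $\otimes$-compatibility of the splitting genuinely force the multiplication to be graded --- which is exactly why we arrange $A \otimes A \to A$ to be a morphism of $\bC$-MTS for the convolution weight filtration --- and (ii) the pro-object bookkeeping ensuring the whole construction is functorial and the inverse limits behave well, which is routine given the conventions recalled before the lemma.
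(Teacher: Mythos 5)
Your proof is correct and follows essentially the same route as the paper: both reduce to (finite) quotients, invoke \Cref{lem tw splitting} over $\bP^1\setminus\{\lambda\}$ — whose functoriality and $\otimes$-compatibility make the multiplication and the morphism graded — and then identify the associated graded algebra with $\bigoplus_k H^0(\gr^W_k(-k))\otimes_\bC\cO_{\bP^1}(k)$, which trivializes on the affine chart. The extra details you supply (the $H^0(\bP^1,\cO_{\bP^1})=\bC$ argument for multiplicativity and the pro-object bookkeeping) are exactly the points the paper's proof leaves implicit.
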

\begin{proof}
    It suffices to prove that artinian $\tate$-MTS-algebras $\Lambda$ are functorially trivializable on any (fixed) affine open subset of $\bP^1$, and this is what we'll show.  First, observe that $\gr^W\Lambda$ is trivial, since $R=\bigoplus_kH^0(\gr^W_k\Lambda(-k))$ has a natural graded ring structure and we can identify $\gr^W\Lambda=\bigoplus_k R_k\otimes_\bC\cO_{\bP^1}(k)$ as sheaves of algebras.  Now, for any $\lambda\in \bP^1$, $\Lambda|_{\bP^1\setminus \lambda}$ has a splitting of the weight filtration compatible with the algebra structure thanks to \Cref{lem tw splitting}, hence isomorphic to $\gr^W\Lambda|_{\bP^1\setminus \lambda}$ as a sheaf of algebras, and in particular trivial.
\end{proof}

\begin{lem}\label{MHS ring components}
    Let $\Lambda$ be a local complete noetherian pro-$\tate$-MS-algebra with $\fm_\Lambda=W_{-1}\Lambda$.  Then the nilradical and minimal prime ideals are $\bC$-MS-ideals.
\end{lem}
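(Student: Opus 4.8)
The plan is to reduce the statement to the elementary fact that the nilradical and the minimal primes of a $\bZ$-graded commutative ring are homogeneous, the link being provided by a functorial, tensor-compatible splitting of the weight filtration.

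I would first handle the twistor case. Since $\fm_\Lambda=W_{-1}\Lambda$, each $\gr^W_k\Lambda$ is finite-dimensional, $\gr^W_k\Lambda=0$ for $k>0$, and $\gr^W_0\Lambda\cong\cO_{\bP^1}$, so $\gr^W_\bullet\Lambda\cong\bigoplus_{k\le 0}R_k\otimes_\bC\cO_{\bP^1}(k)$ as a sheaf of $\cO_{\bP^1}$-algebras, where $R=\bigoplus_{k\le 0}R_k$, $R_k:=H^0(\bP^1,\gr^W_k\Lambda\otimes\cO(-k))$, is a $\bZ$-graded $\bC$-algebra with $R_0=\bC$. Fix $\lambda\in\bP^1$ and let $s_\lambda$ be the functorial splitting of the weight filtration over $\bP^1\setminus\{\lambda\}$ provided by \Cref{lem tw splitting}, applied levelwise to the artinian quotients $\Lambda/\fm_\Lambda^m$ and passed to the limit; since $s_\lambda$ is compatible with tensor products it respects the multiplication, hence identifies $\Lambda|_{\bP^1\setminus\{\lambda\}}$ with $\gr^W_\bullet\Lambda|_{\bP^1\setminus\{\lambda\}}$ \emph{as sheaves of $\cO$-algebras}. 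Taking sections over the affine $\bP^1\setminus\{\lambda\}$ and trivializing the $\cO(k)$ compatibly with tensor identifies this ring with $R[t]$, graded with $\deg t=0$. Now the nilradical $N\subseteq\Lambda$ and the minimal-prime ideal sheaves $\mathfrak p_i\subseteq\Lambda$ are intrinsic to the ring structure; transported to $R[t]$, $N$ becomes the nilradical $\sqrt{(0)}\,R[t]$ and the $\mathfrak p_i$ become the $\mathfrak q_iR[t]$ (with $\mathfrak q_i$ the minimal primes of $R$, since $\Spec R[t]=\Spec R\times\mathbb A^1$), all of which are homogeneous, i.e. of the form $\bigoplus_k J_k\otimes\cO(k)$ for subspaces $J_k\subseteq R_k$. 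Such subsheaves of $\gr^W_\bullet\Lambda$ are locally direct summands with $\gr^W_k\cong\cO(k)^{\dim_\bC J_k}$, hence are sub-$\bC$-MTS's; thus $N|_{\bP^1\setminus\{\lambda\}}$ and each $\mathfrak p_i|_{\bP^1\setminus\{\lambda\}}$ are sub-$\bC$-MTS's. As the $s_\lambda$ are functorial and $N,\mathfrak p_i$ are intrinsic, these descriptions over the various $\bP^1\setminus\{\lambda\}$ agree on overlaps, so $N$ and each $\mathfrak p_i$ glue to global sub-$\bC$-MTS's of $\Lambda$; one runs all of this on the $\Lambda/\fm_\Lambda^m$ — each $\fm_\Lambda^m$ being itself a sub-$\bC$-MTS as the image of $(W_{-1}\Lambda)^{\otimes m}$ under multiplication — using \Cref{tw loc triv} to control the $\cO_{\bP^1}$-module structures, and passes to the limit. (The noetherianity of $\Lambda$ enters only to guarantee there are finitely many minimal primes.)

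For the Hodge case one can invoke that a pro-$\tate$-MHS-algebra is the same datum as a pro-$\tate$-MTS-algebra with a compatible $\bG_m$-action (the equivalence between $\bC$-MHS and $\bG_m$-equivariant $\bC$-MTS): the twistor case gives that $N$ and the $\mathfrak p_i$ are $\bC$-MTS-ideals, and, being intrinsic to the ring structure, they are $\bG_m$-stable, hence $\bC$-MHS-ideals. Alternatively the previous paragraph runs verbatim using Deligne's functorial splitting of the weight filtration \cite{Delignehodgeii} (after checking it is tensor-compatible), or one observes that $\bC$-MS is a neutral Tannakian category over $\bC$ with \emph{connected} Tannakian group $G$, so that $G(\bC)$ acts on $\Lambda$ by ring automorphisms, $N$ is $G(\bC)$-stable, each $\mathfrak p_i$ is $G(\bC)$-stable because $G$ is connected, and a $G(\bC)$-stable closed ideal is a sub-$G$-module by Zariski density of $G(\bC)$, hence a $\bC$-MS-ideal.

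The main point to get right is the passage between $\Lambda$ and its weight-associated graded: for a general filtered ring $\gr$ commutes neither with passing to the nilradical nor with passing to the minimal primes, and it is precisely the existence of a functorial \emph{splitting} — which upgrades the filtered isomorphism $\gr^W_\bullet\Lambda\cong\Lambda$ to an honest isomorphism of sheaves of algebras over each $\bP^1\setminus\{\lambda\}$ — together with the canonicity used to glue over $\bP^1$ and the compatible-across-$m$ bookkeeping of the pro-structure, that makes the reduction to the (trivial) graded-ring statements legitimate.
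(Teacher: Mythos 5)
There is a genuine gap, and it sits exactly where the real content of the lemma lies. Your plan is to use the tensor-compatible splittings of \Cref{lem tw splitting} (resp.\ Deligne's splitting) to identify $\Lambda$ over each chart with its weight-graded algebra, and then invoke the elementary fact that the nilradical and minimal primes of a $\bZ$-graded ring are homogeneous. But $\Lambda$ is only a \emph{completed} graded object: after splitting, an element is a formal sum $x=\sum_{k\le 0}x_k$ with possibly infinitely many nonzero weight components, and the ring of sections over a chart is not $R[t]$ for $R=\bigoplus_{k\le 0}R_k$ but a completion of it. The elementary homogeneity statement does not apply verbatim to such a ring, and your proposed repair --- ``run all of this on the $\Lambda/\fm_\Lambda^m$ and pass to the limit'' --- does not close the gap: each $\Lambda/\fm_\Lambda^m$ is artinian local, so its nilradical and its unique minimal prime are both just $\fm_\Lambda/\fm_\Lambda^m$, and these finite-level computations carry no information about the nilradical or the minimal primes of $\Lambda$ itself (what one must control is the \emph{image} of the ideal $N\subset\Lambda$ or $\mathfrak p_i\subset\Lambda$ in each quotient, which is not the nilradical or a minimal prime of that quotient). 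Your closing remark that noetherianity enters only to give finitely many minimal primes, with no role for $\fm_\Lambda=W_{-1}\Lambda$ beyond the shape of $R$, is a symptom of the same issue.

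What is actually needed --- and what the paper's proof supplies --- is a leading-term argument valid for elements with infinitely many weight components: because $\fm_\Lambda=W_{-1}\Lambda$, weights are bounded above and each $\gr^W_k$ is finite-dimensional, so every nonzero element has a (lexicographically) maximal component; if $xy=0$ (or $x$ is nilpotent) one compares top components to show, by induction on the component, that every component of a nilpotent element is nilpotent and every component of an element of a minimal prime lies in that prime. This component-stability, applied to the two Deligne splittings in the Hodge case and on the two charts (together with the local triviality of \Cref{tw loc triv} and the identification with ideals of $\gr^W\Lambda$) in the twistor case, is what yields the sub-$\bC$-MS property. So your overall strategy is the same as the paper's --- splittings reduce the question to a graded statement --- but the step you treat as ``elementary'' is precisely the one that must be proved in the completed setting, and your limit bookkeeping as written does not prove it. (Your alternative Tannakian sketch for the Hodge case would also need justification of connectedness of the group and of the algebraicity of its action on the pro-object before stability of the finitely many minimal primes can be concluded, but since you offer it only as an aside I flag it without elaborating.)
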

\begin{proof}
We first treat the Hodge case.  First, by Deligne \cite{Delignehodgeii} there is a functorial splitting of $(W_\bullet,F^\bullet)$ for any $\bC$-MHS, and likewise for $(W_\bullet,F'^\bullet)$.  Observe that for a $\bC$-MHS $V$ and a subspace $U\subset V$, for $U$ to be a sub-$\bC$-MHS it suffices to show it is strict with respect to each splitting.  By functoriality, the splittings for $\Lambda/\fm_\Lambda^k$ are compatible with the algebra structure and the quotient maps, so in particular any element $x\in\Lambda$ can be written as $x=\sum x^{p,q}$.  

Now, take nonzero $x,y\in \Lambda$ with splittings $x=\sum x^{p,q}$ with respect to $(W_\bullet,F^\bullet)$, and likewise for $y$.  Since $\fm_\Lambda=W_{-1}\Lambda$, there is a nonzero term $x^{p_0,q_0}$ with $(p_0+q_0,p_0)$ lexicographically maximal.  If $xy=0$, then it follows by induction that a power of $x^{p_0+q_0,p_0}$ annihilates each component of $y$.  This first shows that if $x$ is nilpotent, each component of $x$ is nilpotent.  Second, for any minimal prime $\mathfrak{p}$, if $y$ is contained in the intersection of all other minimal primes and not in $\mathfrak{p}$ while $x\in\mathfrak{p}$, then $xy$ is nilpotent, and it follows that each component of $x$ is contained in $\mathfrak{p}$.  This completes the proof.  

For the twistor case, by the same proof the nilradical and minimal prime ideals are graded with respect to the splittings of \Cref{lem tw splitting} on two open sets $\bP^1\setminus\lambda,\bP^1\setminus\lambda'$.  Since the sheaf of algebras is Zariski locally trivial, each yields a subsheaf of ideals which is strictly compatible with the weight filtration.  Moreover, each such ideal $I$ maps isomorphically to an ideal of the corresponding type (either the nilradical or a minimal prime) of $\gr^W\Lambda$.  The nilradical and minimal primes of the ring $R$ from the proof of \Cref{tw loc triv} are graded, so the corresponding ideals of $\gr^W\Lambda$ are twistor ideals, and this completes the proof.  
\end{proof}

\begin{defn}Let $X$ a connected algebraic space and $\Lambda$ a pro-$\tate$-MS-algebra whose underlying algebra is a complete noetherian local $\bC$-algebra.  By a pro-$\Lambda$-AVMS on $X$ we mean a pro-object of the category of $\bC$-AVMS on $X$ of the form $V=\lim V_k$ where
\begin{enumerate}
    \item $V$ has an underlying $\Lambda$-local system,
    \item each $V_k:=V/\fm_\Lambda^{k+1}V$ has the structure of a $\bC$-AVMS for which the natural map $V_{k+1}\to V_k$ is a morphism of $\bC$-AVMS and for which $\Lambda/\fm_\Lambda^{k+1}\subset \End(V_k)$ is a sub-$\bC$-MS.
\end{enumerate}
We say a pro-$\Lambda$-AVMS is (free/miniversal) if the underlying $\Lambda$-local system is.
\end{defn}

\subsection{Hodge/twistor enhancements of deformation categories}
Let $\Lambda$ be a complete local noetherian $\bC$-algebra with residue field $\bC$ and let $\cD$ be a deformation category over $(\Art/\Lambda)$.  An obstruction theory for $\cD$ consists of a complex vector space $\fo_\cD$ and an assignment, to each small extension $J\to A'\to A$ in $(\Art/\Lambda)$\footnote{Meaning that $A' \to A$ is a surjective morphism in $(\Art/\Lambda)$ whose kernel $J$ is annihilated by $\fm_{A'}$.} and $x\in \cD(A)$, of an element $\obs_{A/A'}(x)\in J\otimes \fo_\cD$ which detects obstructions in the sense that $\obs_{A/A'}(x)=0$ if and only if $x$ lifts to $A'$ and which is functorial in the sense that for any morphism of small extensions $J_1\to A'_1\to A_1$ to $J_2\to A_2'\to A_2$ and an element $x_1\in \cD(A_1)$ we have that $\obs_{A_2/A_2'}(x_2) $ is the image of $\obs_{A_1/A_1'}(x_1)$ under the natural map $J_1\otimes_\bC\fo_\cD\to J_2\otimes_\bC\fo_\cD$, where $x_2$ is the push-forward of $x_1$.  See for example \cite[Definition 6.1.21]{fgaexplained} for details.

Assume now $\Lambda$ is further equipped with a $\tate$-MS-algebra structure and let $(\MSArt/\Lambda)$ be the category of artinian local $\Lambda$-MS-algebras.  Denote by $|-|:(\MSArt/\Lambda)\to(\Art/\Lambda)$ the forgetful functor in the Hodge case, or the fiber at $1\in\bP^1$ in the twistor case, and likewise for $\Lambda$-MS-modules. 

Let $A$ be an object of $(\MS\mhyphen\Art/\Lambda)$. Then $A=\tate\oplus \fm_A$ as $\CMS$, and the projection $A \to \fm_A$ induces a natural group homomorphism  $\Ext^1_{\MS}(\fm_A,J) \to \Ext^1_{\MS}(A,J)$. It follows that via the natural group homomorphism $\Ext^1_{\MS}(\ft_{A/\Lambda}^\vee,J)\to \Ext^1_{\MS}(\fm_A,J)$, the group $\Ext^1_{\MS}(\ft_{A/\Lambda}^\vee,J)$ acts on the set of extensions of $A$ by $J$ as $\CMS$.

\begin{lem}\label{twists of MHS alg}For any small extension $J \to A' \to A$ in $(\MS\mhyphen\Art/\Lambda)$, the group $\Ext^1_{\MS}(\ft_{A/\Lambda}^\vee,J)$ acts transitively on the set of small extensions $ J\to B\to A$ in $(\MS\mhyphen\Art/\Lambda)$ whose underlying small extension in $(\Art/\Lambda)$ is $|J| \to |A'|\to |A|$, up to isomorphisms of the diagram which are the identity on $J,A$.
\end{lem}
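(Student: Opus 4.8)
The plan is the following. Once the plain small extension $|J|\to|A'|\to|A|$ is fixed, I will show that the set of structures of small extension in $(\MSArt/\Lambda)$ refining it --- taken up to isomorphisms of the diagram inducing the identity on $J$ and on $A$ --- is a single orbit for the twisting action of $\Ext^1_{\MS}(\ft_{A/\Lambda}^\vee,J)$ recalled above. So fix two such refinements $J\to B\to A$ and $J\to B'\to A$, write $R$ for their common underlying $\Lambda$-algebra $|A'|$, with its fixed surjection $R\twoheadrightarrow|A|$ and ideal $|J|$, and recall that $\fm_R\cdot|J|=0$ and $|J|^2=0$.

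\emph{Normalization of the weight and Hodge filtrations.} Each of $B,B'$ endows $R$ with a $\bC$-MS-algebra structure over $\Lambda$ refining the fixed structures on $J$ and $A$, and its associated graded $\bC$-algebra is forced: by strictness of morphisms of $\bC$-MS, $\gr^W_kR$ is an extension of the fixed weight-$k$ pure structure $\gr^W_kA$ by the fixed weight-$k$ pure structure $\gr^W_kJ$, hence --- the category of weight-$k$ pure structures being semisimple --- canonically their direct sum, and in the Hodge case the induced Hodge structure on it is likewise forced. Thus $B$ and $B'$ have the same associated graded $\Lambda$-algebra, and two $(W_\bullet,F^\bullet)$-bifiltered (resp., in the twistor case, $W_\bullet$-filtered) $\Lambda$-algebra structures on $R$ with the same associated graded are conjugate by a $\Lambda$-algebra automorphism of $R$ inducing the identity on the graded, which may then be taken to be the identity on $J$ and on $A$, their graded pieces being already identified. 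Transporting $B'$ by such an automorphism, we may assume that $B$ and $B'$ have the same weight filtration and the same Hodge filtration $F^\bullet$, differing only in the opposite filtration $F'^\bullet$. (In the twistor case there is no $F^\bullet$; one uses \Cref{lem tw splitting}, \Cref{tw equiv} and \Cref{tw loc triv} instead: the weight filtration splits compatibly with the algebra structure away from any point of $\bP^1$, presenting $R$ Zariski-locally as its graded algebra, and one normalizes $B'$ to agree with $B$ as a weight-filtered $\cO_{\bP^1}$-algebra, the residual discrepancy lying in the holomorphic gluing over $\bP^1$.)

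\emph{The residual discrepancy is a derivation twist.} With this normalization, $F'_{B}$ and $F'_{B'}$ are recorded, relative to the common $(W,F)$-splitting of $R$, by their Deligne $\bar I^{\bullet,\bullet}$-decompositions; both are splittings of the weight filtration of $R$ by graded $\Lambda$-subalgebras inducing the fixed splittings on $J$ and $A$, so their difference is a map $f\in W_0\Hom(A,J)$ with $F'_{B'}=(1+\widetilde f)F'_{B}$, where $\widetilde f$ is the composite $R\twoheadrightarrow A\xrightarrow{f}J\hookrightarrow R$. Because both $\bar I$-decompositions are algebra gradings, $f$ obeys the Leibniz rule, i.e. $f\in\Der_\Lambda(A,J)$ for the trivial $A$-module structure on $J$ given by the residue; hence $f$ factors as $A\xrightarrow{d}\Omega_{A/\Lambda}\otimes_A\bC=\ft_{A/\Lambda}^\vee\xrightarrow{g}J$ with $d$ the universal derivation --- a morphism of $\bC$-MS via the canonical decomposition $A=\tate\oplus\fm_A$ --- and $g\in W_0\Hom(\ft_{A/\Lambda}^\vee,J)$. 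Set $\alpha:=[g]\in\Ext^1_{\MS}(\ft_{A/\Lambda}^\vee,J)$; unwinding the description of the twisting action (via $\Ext^1_{\MS}(\ft_{A/\Lambda}^\vee,J)\to\Ext^1_{\MS}(\fm_A,J)\to\Ext^1_{\MS}(A,J)$ and \Cref{what is ext1}) gives $\alpha\cdot B\cong B'$. Conversely any $\alpha$ acts: for a representative $g$, $f=g\circ d$ is a $\Lambda$-derivation, $1+\widetilde f$ is a $\Lambda$-algebra automorphism of $R$ equal to the identity on $J$ and on $A$ (Leibniz rule together with $\widetilde f^2=0$ and $\fm_R\cdot|J|=0$), and twisting $F'_B$ --- resp., in the twistor case, the $\bP^1$-gluing of $B$, via the twistor analogue of \cite[Lemma 1.6]{ES} and \Cref{tw equiv} --- by $1+\widetilde f$ yields a small extension $\alpha\cdot B$ in $(\MSArt/\Lambda)$ over the same plain one; changing $g$ within its class (by $F^0W_0+F'^0W_0$) changes $\alpha\cdot B$ only by a diagram isomorphism fixing $J$ and $A$, so the action is well defined. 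Transitivity follows.

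The main obstacle is the reconciliation carried out above: matching the weight and Hodge filtrations of two a priori unrelated $\bC$-MS-algebra structures on the single ring $R$ by a $\Lambda$-algebra automorphism built from the multiplicative Deligne (resp. twistor) splittings, and then recognizing the residual difference in the opposite filtration as the twist by one $\Lambda$-derivation $A\to J$ --- equivalently, checking that the difference class $\gamma=[B']-[B]\in\Ext^1_{\MS}(A,J)$, which vanishes as a class of $\Lambda$-algebra extensions since $B$ and $B'$ share the ring $R$, lies in the image of $\Ext^1_{\MS}(\ft_{A/\Lambda}^\vee,J)$. The twistor version is the more delicate, since one must phrase everything for $\cO_{\bP^1}$-algebras and invoke the local triviality of \Cref{tw loc triv} even to compare the two structures on $R$ as sheaves.
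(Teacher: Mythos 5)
There is a genuine gap, and it sits exactly at the two places where your argument claims to do the real work. First, the normalization step: you assert that two $(W_\bullet,F^\bullet)$-bifiltered $\Lambda$-algebra structures on $R=|A'|$ "with the same associated graded" are conjugate by a $\Lambda$-algebra automorphism inducing the identity on the graded, and that this automorphism "may then be taken to be the identity on $J$ and on $A$". Neither assertion is proved, and neither is a standard fact: "a filtered algebra is determined up to conjugacy by its associated graded" is false in general, and even the preliminary identification of the two graded algebras is not available — semisimplicity of pure objects of a fixed weight splits $\gr^W_kR$ only non-canonically and only as objects, not as algebras, so it does not canonically identify the weight-graded (let alone bigraded) algebras attached to $B$ and $B'$. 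Moreover, the only isomorphisms you are allowed to absorb are those which are the identity on $J$ and induce the identity on $A$, i.e. automorphisms of the form $1+\widetilde{g\circ d}$ with $g\in\Hom(\ft_{A/\Lambda}^\vee,J)$; an automorphism merely "inducing the identity on the graded" need not be of this restricted form, and you never check this. What makes the lemma true is precisely the small-extension structure, which your argument never uses: for \emph{any} refinement, $J+\fm_{A'}^2+\fm_\Lambda A'$ is the image of the fixed mixed object $J\oplus(\fm_A\otimes_A\fm_A)\oplus\fm_\Lambda A$ under maps that are morphisms of mixed structures, so its induced mixed structure is independent of the refinement — this is the paper's key observation, and without it (or a substitute) the normalization you want is as hard as the lemma itself.

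Second, the Leibniz step. Even granting that $B,B'$ share $W$ and $F$ and that $F'_{B'}=(1+\widetilde f)F'_{B}$ for some $f\in W_0\Hom(A,J)$ (itself only asserted), the deduction that $f$ obeys the Leibniz rule "because both $\bar I$-decompositions are algebra gradings" does not follow: $1+\widetilde f$ is a twist, not a morphism between the two mixed structures (it does not preserve $F$ unless $f\in F^0$), so it does not intertwine the two multiplicative Deligne splittings, and their multiplicativity says nothing directly about $f$. At best $f$ can be arranged to be a $\Lambda$-derivation only after modifying it by elements of $F^0W_0+F'^0W_0$, and proving that this is possible is exactly the content of the transitivity statement. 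The paper handles it by noting that the difference class in $\Ext^1_{\MS}(\fm_A,J)$ restricts to zero on $\fm_A^2+\fm_\Lambda A$ (since the mixed structure there is pinned down as above) and hence lifts to $\Ext^1_{\MS}(\ft_{A/\Lambda}^\vee,J)$ by the Ext exact sequence. Your converse/well-definedness paragraph (twisting by $1+\widetilde{g\circ d}$ again yields a mixed-structure algebra extension, independent of the representative up to allowed isomorphism) is essentially correct and agrees with the paper; but the transitivity half — the actual assertion of the lemma — rests on the two unproved claims above, so the proof as written does not go through.
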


\begin{proof}We have a commutative diagram with exact rows
\[
\begin{tikzcd}
0\ar[r]&    J\ar[r]&\fm_{A'}\ar[r]&\fm_A\ar[r]&0\\
0\ar[r]&    J\ar[r]\ar[u,equals]&J+\fm_{A'}^2+\fm_\Lambda A'\ar[r]\ar[u]&\fm_A^2+\fm_\Lambda A\ar[r]\ar[u]&0
\end{tikzcd}
\]
where the bottom row is the pullback extension.  The bottom middle term is the image of $J\oplus (\fm_A\otimes_A\fm_A)\oplus \fm_\Lambda A$ under the natural maps associated to the $\Lambda$-algebra structure, and therefore has a fixed $\CMS$.  Moreover, the map from $J\oplus (\fm_A\otimes_A\fm_A)\oplus \fm_\Lambda A$ determines the $\Lambda$-algebra structure.  Thus, the twist by an element of $\Ext^1_{\MS}(\fm_A,J)$ is compatible with the $\Lambda$-algebra structure if and only if the image in $\Ext^1_{\MS}(\fm_A^2+\fm_\Lambda A,J)$ is trivial, which is the case if and only if it is in the image of the group $\Ext^1_{\MS}(\ft_{A/\Lambda}^\vee,J)$.
\end{proof}

\begin{prop}\label{lem MHS smooth quotient}
    Let $A$ be an object of $(\MS\mhyphen\Art/\Lambda)$ with $\Lambda$-tangent space $\frak{t}_{A/\Lambda}$.  Let $S$ be a pro-$\Lambda$-MS-algebra with $|S|\cong\widehat\Sym^*_{|\Lambda|}|\frak{t}_{A/\Lambda}^\vee|$ admitting a quotient map $q:S\to A$ of pro-$\Lambda$-MS-algebras.  Then for any small extension $J\to A'\to A$ in $(\MSArt/\Lambda)$ with $\ft_{A'/\Lambda}^{\vee}\xrightarrow{\cong}\ft_{A/\Lambda}^\vee$, there is a possibly different pro-$\Lambda$-MS-algebra structure $S'$ on $|S|$ and a quotient map $q':S'\to A'$ of pro-$\Lambda$-MS-algebras lifting $|q|$.
\end{prop}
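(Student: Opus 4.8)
\emph{Proof proposal.} The plan is to run Schlessinger's inductive step (the passage through a maximal small extension, cf. \eqref{eq:bigsmall}) in the enriched category $(\MSArt/\Lambda)$, using the permitted change of mixed structure on the power series ring $|S|$ to absorb the obstruction to $\bC$-MS-compatibility.

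First I would produce the underlying lift. Since $|S|=\widehat\Sym^*_{|\Lambda|}|\ft^\vee_{A/\Lambda}|$ is a power series ring over $|\Lambda|$, it is formally smooth over $|\Lambda|$, so the surjection $|q|\colon|S|\to|A|$ lifts to a $|\Lambda|$-algebra map $|q|'\colon|S|\to|A'|$ over $|A'|\to|A|$. One checks $|q|'$ is automatically surjective: $q$ being a quotient of a power series ring on $\ft^\vee_{A/\Lambda}$ forces $q$ to be an isomorphism on relative cotangent spaces, and combined with the hypothesis $\ft^\vee_{A'/\Lambda}\xrightarrow{\sim}\ft^\vee_{A/\Lambda}$ (so that $J=\ker(A'\to A)\subset\fm_{A'}^2+\fm_\Lambda A'$), the image of $|q|'$ contains generators of $\fm_{A'}$ modulo $\fm_{A'}^2+\fm_\Lambda A'$, whence it is all of $|A'|$ by completed Nakayama. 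Since $A'\to A$ factors as a finite tower of small extensions each with kernel killed by the maximal ideal (indeed a simple $\bC$-MS) and each inducing an isomorphism on $\Lambda$-tangent spaces, we may assume $J$ itself is such.

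Next, set $K=\ker(q\colon S\to A)$, a pro-$\bC$-MS-ideal, and $\tilde S:=S/\fm_S K$. As $\fm_S K$ is a pro-$\bC$-MS-ideal (here one uses $\fm_\Lambda=W_{-1}\Lambda$, so $\fm_S=W_{-1}S$), $\tilde S$ is naturally a $\bC$-MS small extension $0\to\bar K\to\tilde S\to A\to0$ with $\bar K=K/\fm_SK$, and since $|q|'(|K|)\subset J$ with $J$ killed by $\fm_{A'}$ we get $\fm_{|S|}|K|\subset\ker|q|'$, so $|q|'$ factors as $|S|\to|\tilde S|\to|A'|$ with $\ker(|\tilde S|\to|A'|)$ a subspace $\bar K'\subset\bar K$ having quotient $J$. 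The task then reduces to altering the mixed structure on the plain small extension underlying $|\tilde S|\to|A|$ so that $\bar K'$ becomes a sub-$\bC$-MS and the quotient by it becomes isomorphic to the given extension $0\to J\to A'\to A\to0$; one then lifts this altered structure back along $|S|\to|\tilde S|$ — again exploiting the freedom in the power series ring and the fact that $\bar K\to\bar K/\bar K'$ splits as $\bC$-MS — to obtain $S'$ and $q'\colon S'\to A'$. The alteration is carried out using \Cref{twists of MHS alg}: the $\bC$-MS structures on a fixed plain small extension of $A$ with a fixed mixed structure on the kernel form a torsor under $\Ext^1_{\MS}(\ft^\vee_{A/\Lambda},-)$ of that kernel, and — by the vanishing $\Ext^{\geq2}_{\MS}=0$ of \Cref{lem beilinson}, which makes the relevant connecting and comparison maps among $\Ext^1_{\MS}$-groups surjective — the discrepancy between the structure coming from $S$ and the one forced by $A'$ lies in the image of this twisting action, hence can be cancelled.

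I expect the main obstacle to be exactly this last bookkeeping: one must arrange simultaneously that (i) the corrected mixed structure is genuinely a pro-$\Lambda$-MS-algebra structure on the whole power series ring $|S|$ and not merely on its truncation $\tilde S$, (ii) it is compatible with the fixed target $A'$, and (iii) $q'$ still reduces to $q$ on underlying algebras. The whole argument hinges on pairing \Cref{twists of MHS alg} with $\Ext^{\geq2}_{\MS}=0$ so that the a priori $\Ext^1_{\MS}$-valued obstruction to correcting the mixed structure is always hittable.
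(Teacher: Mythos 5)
Your outline shares some ingredients with the paper's proof (lift $|q|$ using formal smoothness of the power series ring, measure the discrepancy with the given extension via \Cref{twists of MHS alg}), but the step you defer as ``bookkeeping'' is the mathematical core of the proposition, and the tools you propose do not supply it. The real difficulty is to produce a genuinely new pro-$\Lambda$-MS-algebra structure on the \emph{entire} completed symmetric algebra $|S|$ — compatible with the multiplication in all artinian quotients — realizing the correction, not merely to twist the finite-dimensional truncation $\tilde S=S/\fm_SK$. The paper handles this by first \emph{parametrizing} such structures: a pro-$\Lambda$-MS-algebra structure on $\widehat\Sym^*_{|\Lambda|}|\ft^\vee_{A/\Lambda}|$ is induced by a lift $V\subset\fm_S$ of the Deligne-split tangent mixed structure (one lift for each Deligne splitting in the Hodge case; splittings away from two points glued over $\bP^1$ in the twistor case), together with a criterion (the Claim in the paper's proof) for when a lift yields an honest structure, namely that the two induced weight filtrations on $S$ agree. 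With this in hand, the paper notes that any lift of $|q|$ induces a map $I\to J$ on ideals which is independent of the lift and, since lifts compatible with either splitting exist, is a morphism of pro-$\bC$-MS; hence the ideal of $|A'|$ lifts to a pro-MS-ideal $I'\subset S$, $A'':=S/I'$ is a small MS-extension of $A$ by $J$, the discrepancy $f\in\Ext^1_{\MS}(\ft^\vee_{A/\Lambda},J)$ between $A''$ and $A'$ furnished by \Cref{twists of MHS alg} lifts to $\tilde f\in W_0\Hom(\ft^\vee_{A/\Lambda},I)^\wedge$, and modifying the chosen lift $V'$ by $\id+\tilde f$ produces $S'$ and $q'$. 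Nothing in your sketch plays the role of this parametrization, so the passage from a corrected structure on $\tilde S$ back to a structure on $|S|$ has no mechanism.

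Two specific assertions would also fail or are unjustified as stated. First, the surjection $\bar K\to\bar K/\bar K'\cong J$ need not split in $\CMS$: the category of mixed Hodge/twistor structures is abelian but not semisimple, and the nonvanishing of $\Ext^1_{\MS}$ is precisely the content of \Cref{what is ext1}, so you cannot use such a splitting to transport the corrected structure from $\tilde S$ to $S$. Second, the vanishing $\Ext^{\geq 2}_{\MS}=0$ of \Cref{lem beilinson} is not the relevant mechanism: the issue is not an obstruction class living in an $\Ext^2$, but the absence of any construction of a mixed structure on the pro-object $|S|$ compatible with its algebra structure; the paper's proof of this proposition never invokes \Cref{lem beilinson}. (A minor point: your appeal to $\fm_\Lambda=W_{-1}\Lambda$ is not a hypothesis of the statement; $\fm_SK$ is a pro-MS-ideal anyway, since $\fm_A$ is a sub-MS of every object of $(\MSArt/\Lambda)$.)
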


\begin{cor}\label{smooth algebra quotients}
    Any object $A$ of $(\MSArt/\Lambda)$ is a pro-$\Lambda$-MS-algebra quotient of a pro-$\Lambda$-MS-algebra $S$ with $|S|\cong \widehat\Sym^*_{|\Lambda|}|\frak{t}_{A/\Lambda}^\vee|$.
\end{cor}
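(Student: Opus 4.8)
The plan is to deduce the corollary from \Cref{lem MHS smooth quotient} by d\'evissage: I would exhibit $A$ as the top of a finite tower of small extensions, each of which is an isomorphism on $\Lambda$-tangent spaces, and then repeatedly apply \Cref{lem MHS smooth quotient}, starting from the evident smooth cover of the bottom of the tower.

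To build the tower, set $I := \fm_A^2 + \fm_\Lambda A \subseteq A$. Since $A = \tate\oplus\fm_A$ as $\bC$-MS, the ideals $\fm_A^2$ and $\fm_\Lambda A$ are sub-$\bC$-MS-ideals of $A$ (being the images of $\fm_A\otimes_{\tate}\fm_A$ and $\fm_\Lambda\otimes_{\tate}A$ under multiplication, cf. the proof of \Cref{twists of MHS alg}); hence so is $I$, and then so is $\fm_A^j I$ for every $j\ge 0$, being the image of $\fm_A^{\otimes j}\otimes I$ under iterated multiplication. As $A$ is artinian, $\fm_A^m I = 0$ for $m\gg 0$; putting $A_j := A/\fm_A^j I$ then yields a tower of $\Lambda$-MS-algebras
\[ A = A_m \to A_{m-1} \to \cdots \to A_1 \to A_0 = A/I \]
in which, for each $j\ge 1$, the map $A_j\to A_{j-1}$ is surjective with kernel $\fm_A^{j-1}I/\fm_A^j I$ annihilated by $\fm_{A_j}$, hence a small extension in $(\MSArt/\Lambda)$. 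A direct computation gives $\fm_{A_j}^2 + \fm_\Lambda A_j = I/\fm_A^j I$, so this kernel lies in $\fm_{A_j}^2 + \fm_\Lambda A_j$, and therefore $\ft_{A_j/\Lambda}^\vee \xrightarrow{\ \cong\ } \ft_{A_{j-1}/\Lambda}^\vee$ for every $j$; in particular each $\ft_{A_j/\Lambda}$, and also $\ft_{A_0/\Lambda}$, is canonically identified with $\ft := \ft_{A/\Lambda}$.

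For the bottom, $A_0 = A/I$ has square-zero maximal ideal $\fm_{A_0} = \fm_A/I\cong\ft^\vee$ as $\bC$-MS and $\fm_\Lambda A_0 = 0$, so $A_0 = \tate\oplus\ft^\vee$ as a $\Lambda$-MS-algebra with $\Lambda$ acting through $\Lambda\to\tate$. I would take $S_0 := \widehat\Sym^*_{|\Lambda|}(\ft^\vee)$ with its natural pro-$\Lambda$-MS-algebra structure (each $\Sym^n_{\tate}$ of a $\bC$-MS is a $\bC$-MS, and the pieces $\Lambda\otimes_{\tate}\Sym^n_{\tate}\ft^\vee$ assemble, after completion, into a pro-$\Lambda$-MS-algebra); the canonical surjection $S_0\to A_0$ — the residue map on $\Sym^0 = \Lambda$, the identity on $\Sym^1 = \Lambda\otimes_{\tate}\ft^\vee\twoheadrightarrow\ft^\vee$, and zero on $\Sym^{\ge 2}$ — is a morphism of pro-$\Lambda$-MS-algebras, since its kernel is a sub-$\bC$-MS-ideal, and $|S_0|\cong\widehat\Sym^*_{|\Lambda|}|\ft^\vee|$ is the required underlying ring. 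Then I would induct: given a pro-$\Lambda$-MS-algebra structure $S_{j-1}$ on $\widehat\Sym^*_{|\Lambda|}|\ft^\vee|$ with a pro-$\Lambda$-MS-algebra surjection onto $A_{j-1}$, apply \Cref{lem MHS smooth quotient} to the small extension $\fm_A^{j-1}I/\fm_A^j I \to A_j \to A_{j-1}$ (permitted since the $\Lambda$-tangent spaces agree) to produce a pro-$\Lambda$-MS-algebra structure $S_j$ on the same underlying ring and a surjection $S_j\to A_j$ lifting the previous one; after $m$ steps, $S := S_m$ is the desired algebra, surjecting onto $A_m = A$. The only step requiring genuine care is arranging the tower so that each extension fixes the $\Lambda$-tangent space, which the ideals $\fm_A^j I$ accomplish; all of the substance is already contained in \Cref{lem MHS smooth quotient}.
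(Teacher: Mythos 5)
Your proof is correct and is precisely the deduction the paper intends: the corollary is stated without a separate proof as an immediate consequence of \Cref{lem MHS smooth quotient}, obtained by inducting along a chain of small extensions with constant $\Lambda$-tangent space, which your filtration by the MS-ideals $\fm_A^{j}(\fm_A^2+\fm_\Lambda A)$ implements, with the square-zero algebra $\tate\oplus\ft_{A/\Lambda}^\vee$ and its evident smooth cover $\widehat\Sym^*_{|\Lambda|}\ft_{A/\Lambda}^\vee$ as the base case. The verifications you supply (each $\fm_A^{j}(\fm_A^2+\fm_\Lambda A)$ is a sub-$\bC$-MS-ideal, each step is small, and all $\Lambda$-tangent spaces identify with $\ft_{A/\Lambda}$) are exactly the points that need checking, and they are right.
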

\begin{proof}[Proof of \Cref{lem MHS smooth quotient}]
We first discuss pro-$\Lambda$-MHS-algebra structures on smooth $\Lambda$-algebras in the Hodge case.  Let $\ft^\vee$ be a $\CMHS$ and consider $S_0=\widehat\Sym^*_{|\Lambda|}|\frak{t}^\vee|$.  Given a lift $V\subset\fm_{S_0}$, by lifting either Deligne splitting of $\ft^\vee$ we uniquely obtain a bigraded $\Lambda$-algebra structure on $S_0$ (using the corresponding bigrading on $\Lambda$).  Suppose $V,V'$ are two lifts; by lifting the Deligne splitting of $(W_\bullet\ft^\vee,F^\bullet\ft^\vee)$ (resp. $(W_\bullet\ft^\vee,F'^
\bullet\ft^\vee)$), we therefore obtain filtrations $(W_\bullet S_0,F^\bullet S_0)$ (resp. $W_\bullet'S_0,F'^\bullet S_0$), giving $S_0$ the structure of a bifiltered $\Lambda$-algebra, where $\Lambda$ is given the filtrations $(W_\bullet\Lambda,F^\bullet\Lambda)$ (resp. $(W_\bullet\Lambda,F'^\bullet\Lambda)$).
\begin{claim}
    We have $W_\bullet V'=W'_\bullet V'$ if and only if $W_\bullet S_0=W_\bullet'S_0$ and $(W_\bullet S_0,F^\bullet S_0,F'^\bullet S_0)$ give $S_0$ the structure of a pro-$\Lambda$-MHS-algebra.
\end{claim}
\begin{proof}
    The reverse implication is clear.  Consider the isomorphism $f:V\to V'$ given as the composition $V\xrightarrow{\cong} \ft^\vee\xleftarrow{\cong} V'$, which maps $W_\bullet V$ to $W_\bullet'V'$.  There is a unique derivation $\delta\in \Der_\Lambda(S_0,\fm_{S_0}^2)^\wedge$ such that $1+\delta$ agrees with $f$ on $V$.  The resulting algebra automorphism $e^\delta:S_0\to S_0$ clearly maps $W_\bullet S_0$ to $W_\bullet'S_0$.  Since $e^\delta$ is trivial on $\gr_{\fm_0}S_0$, it is nilpotent in each $S_0/\fm_0^k$, so $e^\delta$ preserves $W_\bullet S_0$ iff $\delta\in W_0\Der_\Lambda(S_0,\fm_{S_0}^2)^\wedge$ iff $f\in W_0\Hom(V,V')$ (equipping $V'$ with $W_\bullet V'$) iff $W_\bullet V'=W_\bullet' V'$.  Since the filtrations $(W_\bullet,F^\bullet,F'^\bullet)$ induce $\CMHS$s on $\gr_{\fm_0}S_0$ and are compatible with the algebra structure, it follows by \cite[Lemma 1.6]{ES} that they give $S_0$ the structure of a pro-$\Lambda$-MHS-algebra.   
\end{proof}

We now handle the twistor case of the above claim, which is largely the same.  Let $\ft^\vee$ be a $\bC$-MTS, choose $\lambda,\lambda'\in\bP^1\setminus \lambda_0$, and set $U^{(\prime)}=\bP^1\setminus \lambda^{(\prime)}$.  For lifts $V^{(\prime)}\subset \fm_{S_0}$ of $|\ft^\vee|$, we obtain gradings on $S_0\otimes \cO_{U^{(\prime)}}$ (and filtrations $W_\bullet,W'_\bullet$) such that $e^\delta:S_0\otimes\cO_{U\cap U'}\to S_0\otimes\cO_{U\cap U'}$ as in the previous proof maps $W_\bullet S_0\otimes \cO_{U\cap U'}$ to $W_\bullet S_0\otimes  \cO_{U\cap U'}$.  If we have $W_\bullet (V'\otimes\cO_{U\cap U'})=W_\bullet'V'\otimes\cO_{U\cap U'}$, then gluing the two filtrations produces a pro-$\Lambda$-MTS-algebra structure lifting $S_0$ by the twistor version of \cite[Lemma 1.6]{ES} using \Cref{tw equiv}.

We now prove the proposition.  Let $I$ be the ideal of $A$.  Any lift $q'_0:|S|\to |A'|$ of $|q|$ yields a map $|I|\to |J|$ which is independent of the choice of lift.  Since a lift can be chosen to be compatible with either Deligne splitting, it follows that $I\to J$ is a morphism of pro-$\CMS$.  It therefore follows that the ideal $I'$ of $|A'|$ in $|S|$ can be lifted to a pro-$\CMS$-ideal, and the quotient $A''=S/I'$ equips $|A'|$ with a pro-$\Lambda$-MS-algebra structure which is a small extension of $A$ by $J$.  By \Cref{twists of MHS alg}, the pro-$\Lambda$-MS-algebra structures $A''$ and $A'$ differ by an element of $\Ext^1_\MS(\ft_{A/\Lambda}^\vee,J)$.  Concretely, up to automorphisms, we can take this to mean in the Hodge case (resp. the twistor case) that there is an element $f\in W_0\Hom(\ft_{A/\Lambda}^\vee,J)$ (resp. a 1-cocycle) such that $A''$ and $A'$ differ by the twist by $\id +f$ as described after \Cref{what is ext1}.  Any such $f$ can be lifted to an element $\tilde f\in W_0\Hom(\ft_{A/\Lambda}^\vee,I)^\wedge$ (resp. a 1-cocycle), and any lift $V'$ as above can be modified by $\id +\tilde f$.  The claim (and its twistor version) then shows there is a pro-$\Lambda$-MS-algebra structure $S'$ on $|S|$ for which $q':S'\to A'$ is a morphism of pro-$\Lambda$-MS-algebras and which lifts $q$. 
\end{proof}

\begin{defn}\label{def Hodge enhancement}
    Let $\cD$ be a deformation category over $(\Art/\Lambda)$ equipped with an obstruction theory (with obstruction space $\fo_\cD$).  A $\MS$-enhancement consists of:
    \begin{enumerate}
    \item The structure of a $\tate$-MS-algebra on $\Lambda$.
        \item A deformation category $ \cD^\MS$ over $(\mbox{MS-}\Art/\Lambda)$, meaning a category cofibered in groupoids over the category $(\mbox{MS-}\Art/\Lambda)$ for which $\cD^\MS(\tate)$ is equivalent to a point and such that the natural analog of the Rim--Schlessinger condition holds.
        \item A commutative diagram of functors
        \begin{equation}\label{forgetful diagram}\begin{tikzcd}
             \cD^\MS\ar[r,"|-|"]\ar[d]&\cD\ar[d]\\
            (\mbox{MS-}\Art/\Lambda)\ar[r,"|-|"]&(\Art/|\Lambda|)
        \end{tikzcd}\end{equation}
        where the vertical functors are the structural ones and the bottom functor is the forgetful functor.
        \item An object $M$ of $D^b(\CMS)$ together with isomorphisms $\mathscr{H}^1(M)\cong \ft_\cD$, $\mathscr{H}^2(M)\cong \fo_\cD$.
    \end{enumerate}
    satisfying conditions (5) and (6) below: 
    \begin{enumerate}\setcounter{enumi}{4}
    \item The universal first order object in $\cD(\bC\oplus \ft_\cD^\vee)$ lifts to an object in $\cD^\MS(\tate\oplus\ft_\cD^\vee)$.
    \item     For $J\to  A'\to  A$ a small extension in $(\mbox{MS-}\Art/\Lambda)$ together with an object $ x\in \cD^\MS(A)$, there is an obstruction $\obs_{A/A'}^\MS(x)\in\Hom_{\MS}(\tate,J\otimes_{\tate} M[2])$, functorial with respect to morphisms of small extensions as above, such that: 
\begin{enumerate}
    \item $\obs_{A/A'}^\MS(x)=0$ if and only if $x$ lifts to an object $x'\in \cD^\MHS(A')$.
    \item In the natural short exact sequence

            \begin{align}
        0\to\Ext^1_{\MS}(\tate,J\otimes_{\tate} \ft_\cD)&\to\Hom_{\MS}(\tate,J\otimes_{\tate} M[2])\label{defo obs}\\
       &\to\Hom_{\MS}(\tate,J\otimes_{\tate} \fo_\cD)\to0\notag,
    \end{align}
    the class $\obs_{A/A'}^\MS(x)$ maps to $\obs_{|A|/|A'|}(|x|)$, via the inclusion $\Hom_{\MS}(\tate,J\otimes_{\tate} \fo_\cD) \to J\otimes_{\tate} \fo_\cD$.
    \item The class $\obs_{A/A'}^\MS(x)$ is equivariant with respect to the natural action of $\Ext^1_{\MS}(\tate,J\otimes_{\tate} \ft_{A/\Lambda})$ on the small extension $J\to A'\to A$ (by changing the $\CMS$ on $A'$) and on the space $\Hom_{\MS}(\tate,J\otimes_{\tate} M[2])$ (via the natural map induced by the map on tangent spaces $\ft_{A/ \Lambda}\to\ft_\cD$).
\end{enumerate}
\end{enumerate}
We say the enhancement is \emph{precise} if it furthermore satisfies:
\begin{enumerate}\setcounter{enumi}{6}
\item There is an isomorphism $\mathscr{H}^0(M)\cong \fa_\cD$.
\item Suppose given $J\to  A'\to  A$ a small extension in $(\mbox{MS-}\Art/\Lambda)$ and $ x\in \cD^\MHS(A)$ admitting a lift to $\cD^\MS(A')$.  Then the set of isomorphism classes of the category of lifts $\cD^\MS_x(A')$ admits a transitive action by $\Hom_{\MS}(\tate,J\otimes_{\tate} M[1])$, functorially in $J$.  The natural functor
\[\cD_x^\MS(A')\to \cD_x(|A'|)\]
on isomorphism classes is equivariant with respect to the two natural actions via the second map in the exact sequence    
        \begin{align*}
        0\to\Ext^1_{\MS}(\tate,J\otimes_{\tate} \fa_\cD)&\to\Hom_{\MS}(\tate,J\otimes_{\tate} M[1])\\
       &\to\Hom_{\MS}(\tate,J\otimes_{\tate} \ft_\cD)\to0
    \end{align*}
and the inclusion $\Hom_{\MS}(\tate,J\otimes_{\tate} \ft_\cD)\to J\otimes_{\tate} \ft_\cD$.
\item In the context of (8), the automorphisms of an object $x'\in \cD^\MS_x(A')$ are identified with $\Hom_{\MS}(\tate,J\otimes_{\tate} \fa_\cD)$ (functorially in $J$), and the natural map from the automorphisms of $x'$ to the automorphisms of $|x'|\in\cD_{|x|}(|A'|)$ is identified with the natural map $\Hom_{\MS}(\tate,J\otimes_{\tate} \fa_\cD)\to J\otimes_{\tate} \fa_\cD$.
    \end{enumerate}
\end{defn}
We typically refer to the $\MS$-enhancement simply by $\cD^\MS\to \cD$.  The following lemma follows immediately from the definitions and \Cref{twists of MHS alg}.
\begin{lem}\label{lem unique lift}Let $\cD^\MS\to \cD$ be a $\MS$-enhancement.  Let $(A,x)$ be an element of $\cD^\MS$, $J\to A'\to A$ a small extension in $(\MSArt/\Lambda)$, and $(|A'|,y)$ a lift of $(|A|,|x|)$ in $\cD$.  Then:
\begin{enumerate}
    \item  The set of isomorphism classes of lifts of $|J|\to|A'|\to|A|$ to a small extension of $A$ by $J$ which admits a lift of $(A,x)$, if nonempty, is a torsor for
    \[\ker\left(\Ext^1_{\MS}(\tate,J\otimes_{\tate}\ft_{A/\Lambda})\to\Ext^1_{\MS}(\tate,J\otimes_{\tate}\ft_\cD)\right).\]
    In particular, if $\ft_{A/\Lambda}\xrightarrow{\cong}\ft_\cD$, then there is at most one such lift.
\end{enumerate}
    If in addition $\cD^\MS\to \cD$ is precise, then:
    \begin{enumerate}\setcounter{enumi}{1}
        \item The set of isomorphism classes of common lifts of $(A,x)$ and $(|A'|,y)$ to $A'$, if nonempty, admits a transitive action by the inverse image of the stabilizer of $y$ in $\Hom_{\MS}(\tate,J\otimes_{\tate}\ft_\cD)$ in $\Hom_{\MS}(\tate,J\otimes_{\tate}M[1])$.  In particular, if $\fa_\cD=0$, then there is at most one such lift.
    \end{enumerate}
\end{lem}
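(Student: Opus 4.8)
The plan is to deduce both statements formally from the definition of an $\MS$-enhancement (\Cref{def Hodge enhancement}) together with \Cref{twists of MHS alg}; no geometric input is needed, which is why the paper bills it as immediate.

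For part (1), I would begin with \Cref{twists of MHS alg}: the isomorphism classes of small extensions $J\to B\to A$ in $(\MSArt/\Lambda)$ whose underlying extension is the fixed $|J|\to|A'|\to|A|$ form, when nonempty, a torsor under $G:=\Ext^1_{\MS}(\tate,J\otimes_{\tate}\ft_{A/\Lambda})$, with $\alpha\in G$ acting by retwisting the $\bC$-MS of $B$. For each such $B$, condition (6) supplies an obstruction $\obs^\MS_{A/B}(x)\in\Hom_{\MS}(\tate,J\otimes_{\tate}M[2])$ vanishing exactly when $x$ lifts to $\cD^\MS(B)$. Next I would observe that because $y$ is a lift of $|x|$, the ordinary obstruction $\obs_{|A|/|B|}(|x|)=\obs_{|A|/|A'|}(|x|)$ is zero, so by condition (6)(b) and the exact sequence \eqref{defo obs} the class $\obs^\MS_{A/B}(x)$ already lies in the subgroup $\Ext^1_{\MS}(\tate,J\otimes_{\tate}\ft_\cD)$. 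By condition (6)(c), replacing $B$ by its twist $B_\alpha$ adds to $\obs^\MS_{A/B}(x)$ the image of $\alpha$ under the map $G\to\Ext^1_{\MS}(\tate,J\otimes_{\tate}\ft_\cD)$ induced by $\ft_{A/\Lambda}\to\ft_\cD$. Hence the set of $B$ (up to isomorphism) admitting a lift of $x$ is, when nonempty, a single coset of $\ker\bigl(\Ext^1_{\MS}(\tate,J\otimes_{\tate}\ft_{A/\Lambda})\to\Ext^1_{\MS}(\tate,J\otimes_{\tate}\ft_\cD)\bigr)$, hence a torsor under it; the ``in particular'' is then immediate since that kernel vanishes when $\ft_{A/\Lambda}\xrightarrow{\cong}\ft_\cD$.

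For part (2), assuming now that the enhancement is precise and fixing $B=A'$, I would work with the forgetful map $\pi\colon\cD^\MS_x(A')\to\cD_x(|A'|)$ on isomorphism-class sets. By condition (8) the source carries a transitive action of $H:=\Hom_{\MS}(\tate,J\otimes_{\tate}M[1])$, the target is a torsor under $J\otimes_{\tate}\ft_\cD$ (standard deformation theory), and $\pi$ is equivariant for the homomorphism $\psi\colon H\to\Hom_{\MS}(\tate,J\otimes_{\tate}\ft_\cD)\hookrightarrow J\otimes_{\tate}\ft_\cD$ occurring in the exact sequence $0\to\Ext^1_{\MS}(\tate,J\otimes_{\tate}\fa_\cD)\to H\to\Hom_{\MS}(\tate,J\otimes_{\tate}\ft_\cD)\to0$. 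A short diagram chase then shows that the fiber $\pi^{-1}([y])$ — which is exactly the set of common lifts of $(A,x)$ and $(|A'|,y)$ — admits, when nonempty, a transitive action by the inverse image under $H\to\Hom_{\MS}(\tate,J\otimes_{\tate}\ft_\cD)$ of the stabilizer of $[y]$ in $\Hom_{\MS}(\tate,J\otimes_{\tate}\ft_\cD)$, as asserted. Finally, if $\fa_\cD=0$ then $\psi$ is injective and the $J\otimes_{\tate}\ft_\cD$-action on $\cD_x(|A'|)$ is free, so that stabilizer is trivial and $\pi^{-1}([y])$ has at most one element.

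There is no serious obstacle, given that the paper advertises the lemma as immediate; the only points requiring a little care are the injectivity of $\Ext^1_{\MS}(\tate,J\otimes_{\tate}\ft_\cD)\hookrightarrow\Hom_{\MS}(\tate,J\otimes_{\tate}M[2])$ and of $\Hom_{\MS}(\tate,-)$ into the underlying vector space (so that the kernels named above are the correct subgroups), and the upgrade from the transitive action in \Cref{twists of MHS alg} to an honest torsor in part (1) — which comes down to observing that a $\Lambda$-linear derivation $|A|\to|J|$ which respects the $\bC$-MS contributes the zero class in $\Ext^1_{\MS}(\tate,J\otimes_{\tate}\ft_{A/\Lambda})$, or else to noting that transitivity alone already yields both ``in particular'' clauses, which are the only consequences used downstream.
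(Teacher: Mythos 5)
Your proposal is correct and follows exactly the route the paper intends: the paper's entire proof is the one-line remark that the lemma ``follows immediately from the definitions and \Cref{twists of MHS alg}'', and your argument is precisely an unpacking of that, using conditions (6)(a)--(c) together with the exact sequence \eqref{defo obs} for part (1) and conditions (8)--(9) for part (2). Your closing caveat about upgrading the transitive action of \Cref{twists of MHS alg} to a genuine torsor (and the observation that transitivity alone already yields both ``in particular'' clauses, which is all that is used later) is consistent with the paper's level of detail, so there is no substantive divergence or gap.
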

\begin{prop}\label{MHS Schlessinger}
    Let $\cD$ be a deformation category over $(\Art/\Lambda)$ equipped with an obstruction theory and a $\MS$-enhancement $\cD^\MS\to\cD$.  Then:
    \begin{enumerate}
        \item Any miniversal point $(\hat\cO,\hat y)$ of $\cD$ lifts to $(\hat\cO^\MS,\hat x)$ in $\cD^\MS$.  The lift $\hat\cO^\MS$ is uniquely determined up to isomorphism.
        \item 
        If the enhancement is precise and $\fa_\cD=0$, then $\cD^\MS$ is equivalent to the category $(\MSArt/\hat\cO^\MS)$.  In particular, any object $(A,x)$ of $\cD^\MS$ is pulled back from $(\hat\cO^\MS, \hat x^\MS)$ via a unique morphism $\hat\cO^\MS\to A$ of pro-$\Lambda$-MS-algebras.
    \end{enumerate}
\end{prop}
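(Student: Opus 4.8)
The plan is to carry out, inside $\cD^\MS$, the Schlessinger construction recalled around \Cref{lem Schlessinger}, using the obstruction theory built into the enhancement. For part (1) I would build compatible objects $x_n\in\cD^\MS(\cO_n^\MS)$ over artinian local $\Lambda$-MS-algebras $\cO_n^\MS$ with $|\cO_n^\MS|=\cO_n$, the $n$-th stage of Schlessinger's tower, together with pro-$\Lambda$-MS-algebras $S_n^\MS$ with $|S_n^\MS|=S=\widehat\Sym^*_{|\Lambda|}|\ft_\cD^\vee|$ surjecting onto $\cO_n^\MS$, by induction on $n$. The base case $\cO_1^\MS=\tate\oplus\ft_\cD^\vee$ with a lift $x_1$ of $y_1$ is axiom (5) of \Cref{def Hodge enhancement}, and $S_1^\MS$ is supplied by \Cref{smooth algebra quotients}. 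For the inductive step: the kernel $I$ of $S_n^\MS\to\cO_n^\MS$ is a $\bC$-MS-ideal (morphisms of (pro-)$\CMS$ are strict), so \eqref{eq:bigsmall} lifts to the maximal small extension $I/\m_SI\to S_n^\MS/\m_SI\to\cO_n^\MS$ in $(\MSArt/\Lambda)$; the Schlessinger ideal $I'$ is again a $\bC$-MS-ideal because $I'/\m_SI$ is the image of the ordinary obstruction $\obs_{\cO_n/(S/\m_SI)}(y_n)$, which by axiom (6)(b) is the image under a morphism of $\CMS$ of the $\bC$-MS class $\obs^\MS(x_n)\in\Hom_\MS(\tate,(I/\m_SI)\otimes M[2])$, hence is a sub-$\bC$-MS. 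Taking $\cO_{n+1}^\MS$ to be the resulting small extension of $\cO_n^\MS$ by $I/I'$, the obstruction to lifting $x_n$ lands in $\Ext^1_\MS(\tate,(I/I')\otimes\ft_\cD)\subset\Hom_\MS(\tate,(I/I')\otimes M[2])$ since its image in $(I/I')\otimes\fo_\cD$ is the vanishing ordinary obstruction; by the equivariance axiom (6)(c) (and \Cref{twists of MHS alg}) I twist the $\bC$-MS structure on the extension $\cO_{n+1}^\MS$, leaving $\cO_n^\MS$, $I/I'$ and the underlying small extension fixed, so as to annihilate it, producing $x_{n+1}\in\cD^\MS(\cO_{n+1}^\MS)$ lifting $x_n$. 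Because $\ft_{\cO_{n+1}/\Lambda}\cong\ft_\cD$ throughout Schlessinger's tower, \Cref{lem MHS smooth quotient} then yields $S_{n+1}^\MS$ (with $|S_{n+1}^\MS|=S$) surjecting onto $\cO_{n+1}^\MS$, and the induction proceeds. Passing to the limit gives $\hat\cO^\MS=\varprojlim\cO_n^\MS$ with $|\hat\cO^\MS|=\hat\cO$ and $\hat x=\varprojlim x_n$ lifting $\hat y$; and the construction just performed is exactly Schlessinger's construction for the deformation category $\cD^\MS$ over $(\MSArt/\Lambda)$, so the argument of \Cref{lem Schlessinger} shows $(\hat\cO^\MS,\hat x)$ is a miniversal object of $\cD^\MS$.

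For the uniqueness in (1): if $(\hat\cO_1^\MS,\hat x_1)$ and $(\hat\cO_2^\MS,\hat x_2)$ both lift $(\hat\cO,\hat y)$, I compare them modulo $\fm^{n+1}$ by induction on $n$, the case $n=0$ being trivial. At stage $n+1$, \Cref{lem unique lift}(1) shows that the lifts of the small extension $\fm^{n+1}/\fm^{n+2}\to\cO_{n+1}\to\cO_n$ to a small extension of $\cO_n^\MS$ admitting a lift of $x_n$ form a torsor under $\ker\bigl(\Ext^1_\MS(\tate,\fm^{n+1}/\fm^{n+2}\otimes\ft_{\cO_n/\Lambda})\to\Ext^1_\MS(\tate,\fm^{n+1}/\fm^{n+2}\otimes\ft_\cD)\bigr)$, which vanishes since $\ft_{\cO_n/\Lambda}\cong\ft_\cD$; hence $\cO_{1,n+1}^\MS\cong\cO_{2,n+1}^\MS$ compatibly with the isomorphism at stage $n$, and $\hat\cO_1^\MS\cong\hat\cO_2^\MS$ in the limit.

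For part (2): preciseness together with $\fa_\cD=0$ gives $\fa_{\cD^\MS}\cong\Hom_\MS(\tate,\fa_\cD)=0$ (axiom (9) with one-dimensional $J$), so by axiom (9) and induction on length every object of $\cD^\MS(A)$ has no nontrivial automorphisms; combined with the Rim--Schlessinger equivalence in axiom (2) (which gives the bijectivity part of Schlessinger's hypotheses, not just surjectivity), $\cD^\MS$ is a deformation category over $(\MSArt/\Lambda)$ with finite-dimensional tangent datum satisfying the hypotheses for pro-representability. It is therefore pro-represented by its miniversal hull, which by part (1) is $\hat\cO^\MS$; thus $\cD^\MS\simeq(\MSArt/\hat\cO^\MS)$, and in particular every object $(A,x)$ of $\cD^\MS$ is the pull-back of $(\hat\cO^\MS,\hat x)$ along a unique morphism $\hat\cO^\MS\to A$ of pro-$\Lambda$-MS-algebras.

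The main obstacle is transporting Schlessinger's machinery from $(\Art/\Lambda)$ to $(\MSArt/\Lambda)$: showing that the Schlessinger ideal $I'$ remains a $\bC$-MS-ideal at each stage, and that $x_n$ can always be lifted after adjusting the $\bC$-MS structure of the extension. This is precisely what the obstruction theory of axiom (6) — its compatibility (6)(b) with the ordinary obstruction and the equivariance (6)(c) — together with \Cref{lem MHS smooth quotient} and \Cref{twists of MHS alg} are designed to provide, so the argument is really a matter of assembling them in the right order. One must also keep in mind that the "tangent space" of $\cD^\MS$ is the $\bC$-MS $\ft_\cD$ itself (equivalently, $\hat\cO^\MS$ has the ordinary miniversal hull as underlying ring, with extra $\bC$-MS structure), rather than the weight-zero part $\Hom_\MS(\tate,\ft_\cD)$, since objects of $(\MSArt/\Lambda)$ need not be iterated square-zero extensions by $\tate$; this is why $\hat\cO^\MS$ can be "large" yet still pro-represent $\cD^\MS$.
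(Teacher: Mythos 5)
Your part (1) is correct and is essentially the paper's own argument: the order-by-order lift of Schlessinger's tower using axiom (5), the compatibility (6)(b) to see that the Schlessinger ideal is an MS-ideal, the equivariance (6)(c) together with \Cref{twists of MHS alg} to kill the obstruction by retwisting the mixed structure on the extension (using $\ft_{\cO_n/\Lambda}\cong\ft_\cD$), \Cref{lem MHS smooth quotient} to propagate the smooth MS-quotient, and \Cref{lem unique lift} for uniqueness.

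The gap is in part (2), and it originates in the closing claim of your part (1): that your construction ``is exactly Schlessinger's construction for $\cD^\MS$ over $(\MSArt/\Lambda)$, so the argument of \Cref{lem Schlessinger} shows $(\hat\cO^\MS,\hat x)$ is miniversal in $\cD^\MS$''. Schlessinger's versality proof (and likewise the classical ``hull plus no infinitesimal automorphisms implies pro-representability'' theorem you invoke afterwards) relies on structural features of $(\Art/\Lambda)$ that fail in $(\MSArt/\Lambda)$: above all, the formal smoothness of $S=\Lambda[[\ft_\cD^\vee]]$, i.e.\ that morphisms out of $S$ lift along arbitrary surjections of artinian rings. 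In $(\MSArt/\Lambda)$ this is false in the needed form: given a small extension $A'\to A$ in $(\MSArt/\Lambda)$ and an MS-algebra morphism $S^\MS\to A$, an underlying algebra lift always exists, but it need not respect the mixed structures, and \Cref{lem MHS smooth quotient} only produces a lift after changing the MS-structure on the source, which destroys the comparison with the fixed quotient $S^\MS\to\hat\cO^\MS$. Moreover the obstruction theory of \Cref{def Hodge enhancement}(6) governs lifting of \emph{objects}, not lifting of MS-algebra \emph{morphisms}, so there is no axiom to quote for the morphism-lifting step that versality requires. Consequently, miniversality of $(\hat\cO^\MS,\hat x)$ in $\cD^\MS$ — equivalently, the existence of the classifying morphism $\hat\cO^\MS\to A'$ — is precisely the nontrivial content of part (2), and your appeal to abstract pro-representability is circular at this point.

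What is actually needed (and what the paper does) is a direct construction of a morphism $\hat\cO^\MS\to A'$ of pro-$\Lambda$-MS-algebras lifting $f:\hat\cO^\MS\to A$; once one such morphism exists, the simply transitive action of $\Hom_{\MS}(\tate,J\otimes_{\tate}\ft_\cD)$ on lifts of $(A,x)$ (axiom (8) with $\fa_\cD=0$) lets one adjust it to pull back $x'$, and uniqueness comes from representability of the underlying $\cD$. Concretely: lift $S\to A$ to an algebra map into $A'$; check that the induced map on ideals $I\to J$ is a morphism of pro-MS (via the two splittings), so one obtains an MS-morphism $S\to A''$ for a possibly \emph{different} MS-structure $A''$ on $|A'|$, through which a morphism $\hat\cO^\MS\to A''$ factors; the structures $A'$ and $A''$ differ by a class $\xi\in\Ext^1_{\MS}(\ft_{A/\Lambda}^\vee,J)$ whose image in $\Ext^1_{\MS}(\ft_{\hat\cO^\MS/\Lambda}^\vee,J)$ vanishes because both carry lifts of $x$; one then lifts $\xi$ through the tangent exact sequence and corrects the map by an explicit element built from $F^0W_0$- and $F'^0W_0$-lifts (respectively a cocycle in the twistor case) so that the corrected map is an MS-morphism to $A'$. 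Some argument of this kind must be supplied; it does not follow from the classical Schlessinger--Rim machinery transported to $(\MSArt/\Lambda)$.
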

\begin{proof}We begin with part (1).  The uniqueness statement of (1) follows immediately from \Cref{lem unique lift}, so we need only show such a lift $(\hat\cO^\MS,\hat x)$ exists.  We prove that the miniversal element from Schlessinger's construction described in the previous section lifts to each order.  By condition (5) the first order object lifts, and we may take any such lift.  Assume inductively there is a lift $(\cO_n^\MS,x_n)$ of the pair $(\cO_n,y_n)$ constructed in the $n$th step of the proof of \Cref{lem Schlessinger}.  Assume also that we have equipped $\widehat\Sym^*_{|\Lambda|}|\frak{t}_{A/\Lambda}^\vee|$ with the structure of a pro-$\Lambda$-MS-algebra $S$ and that we have a quotient $S\to \cO_n^\MS$ of pro-$\Lambda$-MS-algebras.   

The resulting maximal small extension $A\to \cO^\MS_n$ with ideal $J_0$ from \eqref{eq:bigsmall} is a small extension of $\Lambda$-MS-algebras, so the obstruction $\obs_{\cO_n/|A|}(y_n)\in J_0\otimes_{\tate}\fo_\cD$ is a Tate class, which we interpret as a morphism $J_0^\vee\to\fo_\cD$ of $\bC$-MS.  The image $N$ of this map yields a sub-$\bC$-MS $N^\vee\subset J_0$, and we define $B=A/N^\vee$.  Let $J=J_0/N^\vee$ be the ideal of $\cO_n^\MS$ in $B$.  The resulting obstruction in $\cD$ vanishes, so by \eqref{defo obs} the class $\obs^\MS_{\cO_n^\MS/B}(x_n)$ is an element of $\Ext^1_{\MS}(\tate,J\otimes_{\tate} \ft_\cD)$.  Since the map on tangent spaces $\ft_{\cO_n^\MS/\Lambda}\to \ft_\cD$ is an isomorphism, it follows that after changing the mixed structure on $B$ (as a small extension in $(\MS\mhyphen\Art/\Lambda)$ without changing the underlying algebra structure) by an element of $\Ext^1_{\MS}(\tate,J\otimes_{\tate}\ft_{\cO_n^\MS/\Lambda})$, the obstruction vanishes and there is a lift $x_{n+1}$ of $x_n$.  We let $\cO_{n+1}^\MS$ be $B$ with this new mixed structure and choose any such lift $x_{n+1}$, as we may take any lift of $y_n$ to $\cO_{n+1}$ in Schlessinger's construction.  Finally, we must show that the pro-$\Lambda$-MS-algebra structure on $S$ can be changed to $S'$ so that the quotient $S'\to \cO_{n+1}^\MS$ is a morphism of pro-$\Lambda$-MS-algebras, and this is \Cref{lem MHS smooth quotient}.

We turn now to the proof of part (2).  It suffices to show that for a small extension $J\to A'\to A$ in $(\MSArt/\Lambda)$ and an object $(A',x')$ lifting $(A,x)$ for which $x$ is pulled back via $f:\hat\cO^\MS\to A$ from $\hat x^\MS$, there is a unique lift $f':\hat\cO^\MS\to A'$ by which $x'$ is pulled back.  The uniqueness follows from the representability of $\cD$.  Moreover, since $\Hom_\MS(\tate,J\otimes_{\tate} \ft_\cD)$ acts simply transitively on the lifts of $(A,x)$ to $A'$, it is enough to construct a morphism $\hat\cO^\MS\to A'$.

Let $S$ be a smooth pro-$\Lambda$-MS-algebra with a quotient $S\to \hat\cO^\MS$ of pro-$\Lambda$-MS-algebras that is an isomorphism on tangent spaces.  We have the solid diagram with exact rows
\[\begin{tikzcd}
    0\ar[r]&I\ar[d,dashed,"h"]\ar[r]&S\ar[r]\ar[d,dashed,"g"]&A\ar[r]\ar[d,equals]&0\\
    0\ar[r]&J\ar[r]&A'\ar[r]&A\ar[r]&0
\end{tikzcd}\]
The lift $g$ exists forgetting the mixed structures, as therefore does the left vertical map $h$.  Note that $h$ is independent of the choice of $g$.  In particular, since $g$ can be chosen to be compatible with either splitting, it follows that $I\to J$ is a morphism of pro-MS-ideals.  Taking the quotient of $S$ by the kernel, we obtain a morphism $S\to A''$ for some choice $A''$ of $\Lambda$-MS-algebra structure on $|A'|$.  Moreover, since $S\to A$ factors through a finite level quotient $\hat\cO^\MS_n\to A$, and since $\obs_{|A|/|A'|}(x)=0$, it follows as in the usual proof of Schlessinger's theorem (see \cite[\href{https://stacks.math.columbia.edu/tag/06IX}{Tag 06IX}]{stacks-project}) by considering $S\to A'\times_A\hat\cO_{n}^\MS$ that there is a morphism $\hat\cO^\MS_{n+1}\to A''$ lifting it.  In particular, there is a lift $f'':\hat\cO^\MS\to A''$ of $f$.

The $\Lambda$-MS-algebra structure on $A'$ differs from that of $A''$ by twisting by an element $\xi\in \Ext^1_\MS(\ft_{A/\Lambda}^\vee,J)$.  Let $E$ be the image of $\ft_{A/\Lambda}\to\ft_{\hat\cO^\MS/\Lambda}$ and consider the short exact sequence
\[0\to E\to\ft_{\hat\cO^\MS/\Lambda}\to E'\to 0 \]
which leads to the exact sequence
\[\Hom_\MS(E'^\vee,J)\to \Ext^1_\MS(E^\vee,J )\to\Ext^1_\MS(\ft_{\hat\cO^\MS/\Lambda}^\vee,J).\]Since both $A'$ and $A''$ admit lifts of $x$, it follows that the image of $\xi $ in $\Ext^1_\MS(\ft_{\hat\cO^\MS/\Lambda}^\vee,J)$ vanishes.  Thus, the image in $\Ext^1_\MS(E^\vee,J )$ may be lifted to $c\in \Hom_\MS(E'^\vee,J)$.  In the Hodge case (the twistor case is similar), let $\tilde c$ be a lift of $c$ to $F^0W_0\Hom(\ft_{\hat\cO^\MS/\Lambda}^\vee,J)$ and $\tilde c'$ a lift to $F'^0W_0\Hom(\ft_{\hat\cO^\MS/\Lambda}^\vee,J)$.  We can then think of $\xi$ as a lift $b\in W_0\Hom(\ft_{A/\Lambda}^\vee,J)/F^0W_0+F'^0W_0$ of $\tilde c-\tilde c'\in W_0\Hom(E'^\vee,J)/F^0W_0+F'^0W_0$.  It follows that by modifying the lift to $|f''|+c:|\hat\cO^\MS|\to |A''|$, we obtain the required morphism $\hat\cO^\MS\to A'$.  Indeed, $|f''|+c$ respects the weight filtration and $F^\bullet$, while \[(|f''|+c)F'^\bullet\hat\cO^\MS=(|f''|+\tilde c')(1+\tilde c-\tilde c')F'^\bullet\hat\cO^\MS=(1+b)(|f''|+\tilde c')F'^\bullet A''=(1+b)F'^\bullet A''.\]
\end{proof}

\subsection{Deformation theory of local systems}Let $X$ be a topological space homeomorphic to a finite CW complex and $(\bC_X\mhyphen\mathrm{Mod})$ be the abelian category of contructible $\bC_X$-modules on $X$.  The deformation theory of the Betti stack $\cM_B(X)$ is well-known; we first briefly summarize the straightforward approach in terms of group cocycles.
\begin{rem}\label{easy defo theory}
 Given a rank $r$ local system $V_0\in\cM_B(X)(\bC)$ with monodromy representation $\rho_{V_0,x}:\pi_1:=\pi_1(X,x)\to\GL(V_{0,x})$ for a choice of basepoint $x\in X$ and a small extension $J\to A'\to A$ of artinian $\bC$-algebras, a deformation $V$ of $V_0$ over $A$ will have monodromy representation $\rho_{V,x}:\pi_1\to \GL(V_x)$.  Identifying $V_x\cong A\otimes V_{0,x}$, we write $\rho_{V,x}=(1+f)\rho_{V_0,x}$ for $f\in \Hom_{\mathrm{Sets}}(\pi_1,\fm_A\otimes \End(V_{0,x}))$.  Then choosing an arbitrary lift $f'\in\fm_{A'}\otimes \End(V_{0,x})$, we have
 \[(1+f'(\gamma_1\gamma_2))\rho_{V_0,x}(\gamma_1\gamma_2)=(1+f'(\gamma_1))\rho_{V_0,x}(\gamma_1)(1+f'(\gamma_2))\rho_{V_0,x}(\gamma_2)+o(\gamma_1,\gamma_2)\] for a $2$-cocycle $o$ with values in $J\otimes \End(V_{0,x})$, and the corresponding element of $J\otimes H^2(\pi_1,\End(V_{0,x}))$ is the obstruction for lifting both $V$ and $\rho_{V,x}$ to $A'$-points of $\cM_B(X)$ and $R_B(X,x,r)$.  Note that since we have a natural injection $H^2(\pi_1,\End(V_{0,x}))\to \Ext^2_{\bC_X}(V_0,V_0)$, we may equally well use the image of $o$ as an obstruction.

 The first order deformations of $V_0$ (resp. $\rho_{V_0,x}$) are naturally identified with $H^1(\pi_1,\End(V_{0,x}))=\Ext^1_{\bC_X}(V_0,V_0)$ (resp. the group $1$-cocycles $Z^1(\pi_1,\End(V_{0,x}))=H^1((X,x),\cEnd(V_0))$).  The first order obstruction map is also easily computed to be the self-commutator
 \[[\;,\;]:H^1(\pi_1,\End(V_{0,x}))\to H^2(\pi_1,\End(V_{0,x})).\]
\end{rem}

We will instead need a treatment of the deformation theory from which the existence of Hodge enhancements will easily follow in the next section.  Essentially this means we need to construct the obstruction classes functorially in terms of functors that lift to the category of mixed Hodge/twistor modules.

Let $J\to A'\to A$ be a small extension of artinian local $\bC$-algebras.  Let $V$ be a free $A$-local system and set $V_0=V/\m_AV$.  By tensoring the sequence of $A$-modules
\[0\to J\to \m_{A'}\to \m_{A}\to 0\]
with $V$, we obtain an exact sequence of $A$-local systems
\begin{equation}\label{extm}
    0\to J\otimes_\bC V_0\to \m_{A'}\otimes_A V\to \m_A\otimes_A V\to 0.
\end{equation}
By tensoring the sequence of $A$-modules
\[0\to \m_A\to A\to \bC\to 0\]
we also have an exact sequence of $A$-local systems
\begin{equation}\label{ext3}0\to \m_{A}\otimes_A V\to V\xrightarrow{\pi} V_0\to 0\end{equation}
and therefore an exact sequence of $\bC$-vector spaces
\begin{equation}\label{LES}
\begin{tikzcd}
   0\ar[r]&\Hom_{\bC_X}(V_0,J\otimes_\bC V_0)\ar[r,"\pi_0^*"]& \Hom_{\bC_X}(V,J\otimes_\bC V_0)\ar[r]&\Hom_{\bC_X}(\m_{A}\otimes_A V, J\otimes_\bC V_0)\\[-15pt]
   \ar[r,"\partial_1"]&\Ext^1_{\bC_X}(V_0,J\otimes_\bC V_0)\ar[r,"\pi_1^*"]& \Ext^1_{\bC_X}(V,J\otimes_\bC V_0)\ar[r]&\Ext^1_{\bC_X}(\m_{A}\otimes_A V, J\otimes_\bC V_0)\\[-15pt]
   \ar[r,"\partial_2"]&\Ext^2_{\bC_X}(V_0,J\otimes_\bC V_0).&& 
\end{tikzcd}\end{equation}
Let $\eta_{V,A/A'}\in \Ext^1_{\bC_X}(\m_{A}\otimes_A V, J\otimes_\bC V_0)$ be the class of the extension \eqref{extm}.

\begin{lem}\label{lem cat of lifts}

The category of commutative diagrams of $\bC$-local systems with exact rows and columns
        \[
    \begin{tikzcd}
    &&0\ar[d]&0\ar[d]&\\
        0\ar[r]&J\otimes_\bC V_0\ar[r]\ar[d,equals]&\m_{A'}\otimes_A V\ar[r]\ar[d,"\mu"]&\m_A\otimes_A V\ar[r]\ar[d]&0\\
        0\ar[r]&J\otimes_\bC V_0\ar[r,"\alpha"]&V'\ar[r,"\beta"]\ar[d,"\pi"]&V\ar[r]\ar[d]&0.\\
        &&V_0\ar[d]\ar[r,equals]&V_0\ar[d]&\\
        &&0&0&
    \end{tikzcd}
    \]where all maps but $\alpha,\beta,\mu$ are the canonical ones and whose morphisms are morphisms of diagrams of $\bC$-local systems which are the identity on all but the $V'$ factor is equivalent to the category of lifts $\cM_B(X)_V(A')$.  
\end{lem}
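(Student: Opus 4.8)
The plan is to exhibit an explicit pair of functors between $\cM_B(X)_V(A')$ and the displayed diagram category and to check that they are mutually quasi-inverse. In the forward direction I would send a lift $V'\in\cM_B(X)_V(A')$ — that is, a free $A'$-local system together with a map $V'\to V$ which is surjective with kernel $JV'$ — to the diagram whose middle row is the defining short exact sequence $0\to JV'\to V'\to V\to 0$, whose top row is the intrinsic extension \eqref{extm}, and whose map $\mu\colon\m_{A'}\otimes_A V\to V'$ is the multiplication map. Here I identify $JV'$ with $J\otimes_\bC V_0$ via $J\otimes_{A'}V'\cong J\otimes_\bC(V'/\m_{A'}V')\cong J\otimes_\bC V_0$, and I use $\m_{A'}\otimes_{A'}V'=\m_{A'}\otimes_A V$ to make sense of $\mu$; both identifications are precisely where smallness of the extension ($\m_{A'}J=0$) enters. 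Commutativity of the lower square is compatibility of multiplication with reduction modulo $J$, exactness of all rows and columns is immediate, and the construction is plainly functorial in $V'$.

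Conversely, from an abstract diagram I would reconstruct an $A'$-module structure on the $\bC$-local system $V'$: choosing the splitting $A'=\bC\oplus\m_{A'}$ of $\bC$-vector spaces, set $(c+m)\cdot v' := c\,v'+\mu\bigl(m\otimes\beta(v')\bigr)$ for $c\in\bC$ and $m\in\m_{A'}$. I would verify associativity and unitality from the fact that $\beta\circ\mu$ is the composite $\m_{A'}\otimes_A V\to\m_A\otimes_A V\to V$ together with the $A$-module structure on $\m_{A'}$ (valid since $\m_{A'}J=0$); then, from the top row and exactness of the middle row, that $JV'=\alpha(J\otimes_\bC V_0)=\ker\beta$, so $\beta$ descends to an isomorphism $V'/JV'\xrightarrow{\sim}V$; and finally, over an open cover trivializing the free $A$-local system $V$, that lifting an $A$-basis of $V$ to elements of $V'$ produces a generating set of $V'$ over $A'$, whence a length count using $\ell(A')=\ell(A)+\ell(J)$ forces $V'$ to be free over $A'$ of the same rank as $V$. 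Thus $V'$ becomes a free $A'$-local system lifting $V$, again functorially in the diagram.

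Composing the two functors in either order recovers the original data up to canonical isomorphism: starting from $V'$, the reconstruction rule is literally multiplication in $V'$, and starting from a diagram, the lift just built reproduces $\alpha,\beta,\mu$ on the nose. Since every morphism on both sides is an isomorphism (by the five lemma on the diagram side, and by definition of a category fibred in groupoids on the lift side), this yields the asserted equivalence of categories. I expect the only genuinely non-formal step to be the second paragraph — checking that the $A'$-module structure extracted from an \emph{abstract} diagram is well-defined and that the resulting $V'$ is actually free over $A'$ — since that is where the hypotheses are really used (smallness for the module structure and for the identification of $JV'$, freeness of $V$ over $A$ for freeness of $V'$); everything else is routine diagram chasing with the long exact sequence \eqref{LES}.
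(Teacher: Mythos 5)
Your proposal is correct and follows essentially the same route as the paper's proof: the $A'$-module structure on $V'$ is extracted from the diagram by composing the $\beta$-induced map with $\mu$ (your elementwise formula $(c+m)\cdot v'=c\,v'+\mu(m\otimes\beta(v'))$ is exactly this), the kernel of $\beta$ is identified with $JV'$ so that $V'$ becomes a free $A'$-local system lifting $V$, and the converse/morphism statements are handled the same way. You merely spell out details (well-definedness using $J\m_{A'}=0$, the Nakayama/length argument for freeness) that the paper leaves implicit.
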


\begin{proof}Given such a diagram, $\beta$ induces a morphism $\m_{A'}\otimes_{\bC} V' \to \m_{A'}\otimes_A V$, whose composition with the morphism $\m_{A'}\otimes_A V\xrightarrow{\mu} V' $  defines the structure of a $A'$-local system on $V'$ for which $V'\to V$ is a surjective morphism of $A'$-local systems with kernel $JV'$.  Thus, $V'$ is a free $A'$-local system, and the adjoint $A\otimes_{A'}V'\to V$ is an isomorphism.  A morphism of diagrams in the above sense is clearly equivalent to a morphism of $A'$-local systems which commutes with the projection to $V$, and any free $A'$-local system $V'$ with an isomorphism $A\otimes_{A'}V'\to V$ comes from such a diagram.
\end{proof}

\begin{cor}\label{cor def abs}\hspace{1in}

    \begin{enumerate}

        \item $V$ can be lifted to $A'$ if and only if $\partial_2\eta_{V,A/A'}=0\in J\otimes_\bC\Ext^2_{\bC_X}(V_0, V_0)$.
        \item The group $J\otimes_\bC \Ext^1_{\bC_X}(V_0,V_0)$ acts transitively on the set of isomorphism classes of $\cM_B(X)_{V}(A')$, provided it is nonempty.
            \item $T_{V_0}\cM_B(X)$ is naturally identified with $\Ext^1_{\bC_X}(V_0,V_0)$.
        \item The automorphisms of $V'\to V$ in $\cM_B(X)_{V}(A')$ are given by $\id+\alpha\sigma\pi$ for $\sigma\in J\otimes_\bC\Hom_{\bC_X}(V_0,V_0)$.
    \end{enumerate}
\end{cor}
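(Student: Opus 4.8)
The plan is to translate each of the four assertions, via \Cref{lem cat of lifts}, into a statement about the category of commutative $3\times 3$ diagrams of $\bC$-local systems appearing there, and then to read off the answer from the long exact sequence \eqref{LES}. Throughout we use that $J$ is a finite-dimensional $\bC$-vector space, so that $\Ext^i_{\bC_X}(V_0,J\otimes_\bC V_0)\cong J\otimes_\bC\Ext^i_{\bC_X}(V_0,V_0)$ for all $i$, $V_0$ being a local system.

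\textbf{Part (1).} By \Cref{lem cat of lifts}, $V$ lifts to $A'$ if and only if there is an extension $0\to J\otimes_\bC V_0\xrightarrow{\alpha}V'\xrightarrow{\beta}V\to0$ together with a map $\mu$ completing the diagram over the fixed middle row \eqref{extm}. Commutativity of the right square forces $\beta\mu$ to be the composite $\mathfrak{m}_{A'}\otimes_A V\twoheadrightarrow\mathfrak{m}_A\otimes_A V\hookrightarrow V$, so $(\mu,\text{can})$ is a map of extensions from \eqref{extm} to the pullback of the bottom row along $\mathfrak{m}_A\otimes_A V\hookrightarrow V$ that is the identity on both ends; by the five lemma it is an isomorphism. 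Hence giving $(V',\mu)$ amounts to exhibiting $\eta_{V,A/A'}$ as the restriction of a class in $\Ext^1_{\bC_X}(V,J\otimes_\bC V_0)$ to $\Ext^1_{\bC_X}(\mathfrak{m}_A\otimes_A V,J\otimes_\bC V_0)$. By exactness of \eqref{LES} the image of that restriction map is $\ker\partial_2$, so a lift exists exactly when $\partial_2\eta_{V,A/A'}=0$ in $\Ext^2_{\bC_X}(V_0,J\otimes_\bC V_0)=J\otimes_\bC\Ext^2_{\bC_X}(V_0,V_0)$.

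\textbf{Parts (2)--(4).} For (2), let $G:=\Ext^1_{\bC_X}(V_0,J\otimes_\bC V_0)\cong J\otimes_\bC\Ext^1_{\bC_X}(V_0,V_0)$ act on isomorphism classes of diagrams by Baer-adding $\pi_1^*\xi$ (pullback along $\pi\colon V\to V_0$) to the bottom row; since $\pi_1^*\xi$ restricts trivially to $\mathfrak{m}_A\otimes_A V$, the map $\mu$ prolongs canonically. Comparing with \eqref{LES}: the subgroup $\ker\pi_1^*=\img\partial_1$ fixes the class $[V']$ of the bottom row but acts simply transitively on the choices of $\mu$ for fixed $[V']$ (which modulo the diagram automorphisms coming from $\Hom_{\bC_X}(V,J\otimes_\bC V_0)$ form a torsor under $\Hom_{\bC_X}(\mathfrak{m}_A\otimes_A V,J\otimes_\bC V_0)/\ker\partial_1\cong\img\partial_1$), while $G/\ker\pi_1^*\cong\img\pi_1^*=\ker r$ acts simply transitively on the set of $[V']$ restricting to $\eta_{V,A/A'}$; hence $G$ acts (simply) transitively, which is (2). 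Part (3) is the case $A=\bC$, $A'=\bC[\epsilon]/(\epsilon^2)$, $J=\bC\epsilon$, $V=V_0$: the trivial deformation $V_0\otimes_\bC\bC[\epsilon]$ provides a base point, so the torsor of (2) becomes canonically the group $\Ext^1_{\bC_X}(V_0,V_0)$, and one checks this is a $\bC$-linear isomorphism for the Rim--Schlessinger vector space structure (both sides being built from the same Baer sum and scaling). For (4), an automorphism of a lift is an automorphism $\phi$ of its diagram, hence of the form $\id+\alpha\psi\beta$ with $\psi\in\Hom_{\bC_X}(V,J\otimes_\bC V_0)$ (all automorphisms of $V'$ as an extension of $V$ by $J\otimes_\bC V_0$); compatibility with $\mu$ gives $\psi\beta\mu=0$, and since $\beta\mu$ has image $\mathfrak{m}_A\otimes_A V$ this forces $\psi$ to factor through $\pi$, i.e.\ $\psi=\sigma\pi$ with $\sigma\in\Hom_{\bC_X}(V_0,J\otimes_\bC V_0)=J\otimes_\bC\Hom_{\bC_X}(V_0,V_0)$; conversely each $\id+\alpha\sigma\pi$ is readily checked to be an $A'$-linear automorphism of $V'$ over $V$.

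\textbf{Expected obstacle.} The only real work is the bookkeeping in (2): one must isolate precisely the ambiguity in $\mu$ and match it with $\ker\partial_1$ via \eqref{LES}, and verify that the canonical prolongation of $\mu$ under Baer summation with a pulled-back class is well defined, so that the two moves (changing $[V']$ by $\pi_1^*$ and changing $\mu$) assemble into a single simply transitive $G$-action rather than merely an action of the correct cardinality. Parts (1), (3), (4) are then formal consequences of (2) and the exact sequence.
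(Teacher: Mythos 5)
Your proposal is correct and follows exactly the route the paper intends: its proof of this corollary is precisely "by \Cref{lem cat of lifts} and the long exact sequence \eqref{LES}", which is the translation-into-diagrams-plus-LES bookkeeping you carry out in detail. The only difference is that in (2) you establish simple transitivity, which is stronger than the stated transitivity but consistent with standard deformation theory and harmless.
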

\begin{proof}
    By \Cref{lem cat of lifts} and the long exact sequence \eqref{LES}.
\end{proof}

We now give a slightly more flexible version of \Cref{lem cat of lifts}.  Consider the natural two-term complex
\[N=[\overset{-1}{\fm_{A'}\otimes_A V}\to \overset{0}{V}]\]
where the map is induced by the composition $\fm_{A'}\to \fm_A\to A$.  We have a natural diagram whose rows and columns are distinguished triangles
\[\begin{tikzcd}
&V\ar[d]\ar[r,equals]&V\ar[d]&\\
        J\otimes_{\bC} V_0[1]\ar[r]\ar[d,equals]&N\ar[r]\ar[d]&V_0\ar[r]\ar[d]&J\otimes_{\bC} V_0[2]\ar[d,equals]\\
        J\otimes_{\bC} V_0[1]\ar[r]&\fm_{A'}\otimes_A V[1]\ar[r]\ar[d]&\fm_A\otimes_A V[1]\ar[r]\ar[d]&J\otimes_{\bC} V_0[2].\\
            &V[1]\ar[r,equals]&V[1]&
\end{tikzcd}\]

\begin{lem}\label{lem cat of lifts 2}
    There is a lift $V'\to V$ if and only if $N$ splits, and isomorphism classes in $\cM_B(X)_{V}(A')$ are naturally identified with splittings.
\end{lem}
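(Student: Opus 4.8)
The plan is to recognize $N$ as the mapping cone of the structure map $d\colon\fm_{A'}\otimes_A V\to V$, factored as $\fm_{A'}\otimes_A V\xrightarrow{q}\fm_A\otimes_A V\xrightarrow{j}V$ with $q$ induced by $A'\to A$ and $j$ the canonical (injective, as $V$ is $A$-free) inclusion. Since $q$ is surjective with kernel $J\otimes_\bC V_0$ and $j$ is injective with cokernel $V_0$, we have $\mathrm{Cone}(q)\simeq(J\otimes_\bC V_0)[1]$ and $\mathrm{Cone}(j)\simeq V_0$, and the octahedral axiom for $d=j\circ q$ produces exactly the distinguished triangle
\[(J\otimes_\bC V_0)[1]\xrightarrow{\ a\ }N\xrightarrow{\ p\ }V_0\xrightarrow{\ \delta_N\ }(J\otimes_\bC V_0)[2],\]
with $p$ the truncation map $N\to\tau_{\geq0}N\cong V_0$ and $\delta_N$ the Yoneda composite of the extension classes of \eqref{ext3} and \eqref{extm}. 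That composite is precisely $\partial_2(\eta_{V,A/A'})$ in \eqref{LES}, so $N$ splits iff $\delta_N=0$ iff $\partial_2\eta_{V,A/A'}=0$, which by \Cref{cor def abs}(1) means $V$ lifts to $A'$. This gives the first assertion and shows the two sets in the second are simultaneously empty or nonempty.

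For the identification I would use \Cref{lem cat of lifts} to present a lift $V'$ by its $3\times3$ diagram, with maps $\mu\colon\fm_{A'}\otimes_A V\hookrightarrow V'$, $\beta\colon V'\twoheadrightarrow V$, and column map $V'\to V_0$ equal to $\pi\circ\beta$ ($\pi\colon V\to V_0$ as in \eqref{ext3}). The two-term complex $\mathrm{Cone}(\mu)=[\fm_{A'}\otimes_A V\xrightarrow{\mu}V']$ has $H^{-1}=0$ and $H^0\cong V_0$, so the column map induces a quasi-isomorphism $\mathrm{Cone}(\mu)\to V_0[0]$, while $(\id,\beta)\colon\mathrm{Cone}(\mu)\to N$ is a chain map because $\beta\mu=d$. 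I set $s_{V'}\in\Hom_{\bC_X}(V_0,N)$ to be the resulting roof; then $p\circ s_{V'}=\id_{V_0}$ (since $p\circ(\id,\beta)=(0,\pi\circ\beta)$ is the quasi-isomorphism), so $s_{V'}$ is a splitting, and an isomorphism of lifts induces a chain isomorphism of the $\mathrm{Cone}(\mu)$'s compatible with the quasi-isomorphism and with the map to $N$, so $V'\mapsto s_{V'}$ descends to isomorphism classes.

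It remains to see $V'\mapsto s_{V'}$ is bijective. Applying $\Hom_{\bC_X}(V_0,-)$ to the triangle and using $\Hom_{\bC_X}(V_0,V_0[-1])=0$, the set of splittings is a free and transitive torsor under $\Ext^1_{\bC_X}(V_0,J\otimes_\bC V_0)=\Hom_{\bC_X}(V_0,(J\otimes_\bC V_0)[1])$ via $s\mapsto s+a\circ c$; by \Cref{cor def abs}(2) the same group acts transitively on isomorphism classes of lifts. Granting that $V'\mapsto s_{V'}$ is equivariant for these actions, surjectivity is then automatic and injectivity follows ($s_{V'}=s_{c\cdot V'}=s_{V'}+a\circ c\Rightarrow c=0$), so the whole lemma reduces to the equivariance statement — that modifying a lift by $c$, which (as in the proof of \Cref{cor def abs}) Baer-sums the extension $0\to J\otimes_\bC V_0\to V'\xrightarrow{\beta}V\to0$ with the pullback $\pi_1^*c$ and adjusts $\mu$, changes $s_{V'}$ by $a\circ c$. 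I expect this check, a finite diagram chase in the octahedron, to be the main obstacle. Alternatively one can construct the inverse directly: given a splitting $s$, let $g\colon V\to(J\otimes_\bC V_0)[1]$ be the unique map with $a\circ g=i-s\circ\pi$ (with $i\colon V\to N$ the canonical map); then $i\circ d=0$ and $\pi\circ d=0$ give $g\circ d=0$, so $d$ factors (uniquely, since $\Ext^{-1}_{\bC_X}(\fm_{A'}\otimes_A V,-)=0$) through $V':=\mathrm{fib}(g)$, which \Cref{lem cat of lifts} recognizes as a lift inverse to $V'\mapsto s_{V'}$.
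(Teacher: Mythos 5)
Your first paragraph and your construction of $s_{V'}$ are fine, and they amount to what the paper does for that half (the paper phrases splittings as retractions $N\to J\otimes_\bC V_0[1]$ rather than sections, and obtains the existence criterion directly from the two constructions rather than from the identification $\delta_N=\pm\partial_2\eta_{V,A/A'}$ plus \Cref{cor def abs}, but these are equivalent). The genuine gap is the second assertion of the lemma, the identification of isomorphism classes in $\cM_B(X)_{V}(A')$ with splittings, which is where the content lies. Your primary route reduces everything to the equivariance of $V'\mapsto s_{V'}$ for the two $\Ext^1_{\bC_X}(V_0,J\otimes_\bC V_0)$-actions, and you explicitly leave that check unproven (``the main obstacle''); without it neither injectivity nor surjectivity is established, so the argument is incomplete at exactly the decisive step.

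Your fallback inverse construction does not close the gap as written. First, the factorization of $d$ through $\mathrm{fib}(g)$ is not unique: the ambiguity is a torsor under $\Hom_{\bC_X}(\fm_{A'}\otimes_A V,\,J\otimes_\bC V_0)$ (the $\Hom$ into the fibre term of the triangle $J\otimes_\bC V_0\to\mathrm{fib}(g)\to V\xrightarrow{g}J\otimes_\bC V_0[1]$), not under $\Ext^{-1}_{\bC_X}(\fm_{A'}\otimes_A V,-)=0$, and that group is nonzero in general. Second, to invoke \Cref{lem cat of lifts} you must produce its entire diagram: a map $\mu$ with $\beta\mu=d$ which restricts to the identity on $J\otimes_\bC V_0$ and whose cokernel column $0\to\fm_{A'}\otimes_A V\to V'\to V_0\to 0$ is exact and compatible with \eqref{ext3}; a bare factorization $\tilde d$ only guarantees $\beta\tilde d=d$, and different choices change the resulting $A'$-structure. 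The paper sidesteps both issues by completing the splitting to a $3\times3$ diagram of distinguished triangles and taking $V'$ to be a cone: the $3\times3$/octahedron lemma then delivers exactly the compatibilities \Cref{lem cat of lifts} needs, the cone ambiguity is absorbed into an isomorphism of diagrams (hence a well-defined isomorphism class of lift), and checking that the two assignments are mutually inverse becomes routine. Either carry out your equivariance chase in full, or replace the fibre construction by this cone construction; as it stands, the claimed bijection is not proved.
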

\begin{proof}
 A lift gives a splitting via the diagram   
\[\begin{tikzcd}
\fm_{A'}\otimes_A V\ar[d]\ar[r]&V\ar[d,equals]\\
    V'\ar[r]&V
\end{tikzcd}\]
and isomorphic lifts give the same splitting (as a morphism in the derived category).

A splitting $N\to J\otimes_\bC V_0[1]$ gives a lift by taking cones, from the diagram
\[\begin{tikzcd}
&J\otimes_\bC V_0\ar[d]\ar[r,equals]&J\otimes_\bC V_0\ar[d]&\\
        \fm_{A'}\otimes_A V\ar[r]\ar[d,equals]&V'\ar[r]\ar[d]&V_0\ar[r]\ar[d]&\fm_{A'}\otimes_{A} V[1]\ar[d,equals]\\
        \fm_{A'}\otimes_{A} V\ar[r]&V\ar[r]\ar[d]&N\ar[r]\ar[d]&\fm_{A'}\otimes_{A} V[1].\\
            &J\otimes_\bC V_0[1]\ar[r,equals]&J\otimes_\bC V_0[1]&
\end{tikzcd}\]
While the cone $V'$ is not unique, another choice of cone fits into an isomorphic diagram (via an isomorphism of diagrams which is the identity at every other node), and therefore gives a well-defined isomorphism class of lift.  These two maps are easily seen to be mutually inverse.
\end{proof}

We now treat the absolute and relative deformation theory of some related stacks, including the framed moduli space, the pullback morphism, and the fixed local monodromy leaves, as this will give us the desired functoriality properties of the Hodge/twistor structures.

  \subsubsection{Pullback}\label{pullback section}
For a continuous map $f:X\to Y$ of topological space (with the above finiteness assumptions), consider the pullback $f^*:\cM_B(Y)\to\cM_B(X)$.  Let $V_0\in \cM_B(Y)(\bC)$ and let $(\Lambda,\hat U)$ be a $\Lambda$-local system for $\cM_B(X)$ with closed point $f^*V_0$.  Let $\cD$ be the category cofibered in groupoids over $(\Art/\Lambda)$ whose objects over a local artinian $\Lambda$-algebra $A$ are pairs $(V,b)$ where $V\in\cM_B(Y)(A)$ and $b:\hat U\to f^*V$ is a morphism of $\Lambda$-local systems which is an isomorphism over $A$.  A morphism $(V_1,b_1)\to(V_2,b_2)$ lying over a morphism $\alpha:A_1\to A_2$ of $(\Art/\Lambda) $ consists of a morphism $g:V_1\to V_2$ of $A_1$-local systems which is an isomorphism over $A_2$ and which form a commutative diagrams
\begin{equation}\label{morphism for rel}\begin{tikzcd}
    A_1\otimes_\Lambda \hat U\ar[r,"b_1"]\ar[d,"\alpha\otimes\id",swap]&f^*V_1\ar[d,"f^*g"]\\
    A_2\otimes_\Lambda \hat U\ar[r,"b_2"]&f^*V_2.
\end{tikzcd}\end{equation}
Then $\cD_{(V_0,b_0)}$ is a deformation category over $(\Art/\Lambda)$, where $b_0:\hat U\to f^*V_0$ is induced by the chosen identification.  The family $(\Lambda,\hat U)$ yields a morphism $\Spec\Lambda\to\cM_B(X)$, and $\cD_{(V_0,b_0)}$ is easily seen to be the deformation category associated to the fiber 2-product $\Spec\Lambda\times_{\cM_B(X)}\cM_B(Y)$ at the point $(V_0,b_0)$.  In particular, since $f^*:\cM_B(Y)\to\cM_B(X)$ is representable, $\cD_{(V_0,b_0)}$ will be equivalent to $\Spec\hat\cO$ for a complete $\Lambda$-algebra $\hat\cO$.  If $(\Lambda,\hat U)$ is miniversal, the morphism $\Spec\hat\cO\to\cM_B(Y)$ will only be versal in general.

The relative deformation theory will be governed by the natural exact triangle
 \[\bC_Y\to Rf_*\bC_X\to M_{X/Y}\to \bC_Y[1].\]
As before, the splitting of $f^*N$ gives a morphism $N\to Rf_*f^*(J\otimes_\bC V_0)[1]$ and a commutative diagram
\[
\begin{tikzcd}
    J\otimes_\bC V_0[1]\ar[r]\ar[d,equals]&N\ar[r]\ar[d]&V_0\ar[r]\ar[d,dashed]&J\otimes_\bC V_0[2]\ar[d,equals]\\
    J\otimes_\bC V_0[1]\ar[r]&Rf_*f^*(J\otimes_\bC V_0)[1]\ar[r]&J\otimes_\bC M_{X/Y}\otimes_\bC V_0[1]\ar[r]&J\otimes_\bC V_0[2].
\end{tikzcd}
\]

\begin{lem}\label{lem rel splitting}\hspace{.5in}
\begin{enumerate}
    \item The resulting element of $\Hom_{\bC_X}(V_0,J\otimes_\bC M_{X/Y}\otimes_\bC V_0[1])$ is uniquely determined and vanishes if and only if there is a relative lift $V'\to V$, $\hat U\to f^*V'$.
         \item The group $J\otimes_\bC\Hom_{\bC_X}(V_0, M_{X/Y}\otimes_{\bC}V_0)$ acts simply transitively on the set of isomorphism classes of such lifts $\cD_{(V,b)}(A')$, provided it is nonempty.
         \item The relative tangent space is $\Hom_{\bC_X}(V_0, M_{X/Y}\otimes_\bC V_0)$.
         \item There are no first order automorphisms.
\end{enumerate}
    
\end{lem}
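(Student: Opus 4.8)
The plan is to run the arguments of \Cref{lem cat of lifts 2} and \Cref{cor def abs} relatively, using the adjunction $(f^*,Rf_*)$ and the triangle $\bC_Y\to Rf_*\bC_X\to M_{X/Y}\to\bC_Y[1]$ set up in \Cref{pullback section}. The key point is a relative version of \Cref{lem cat of lifts 2}: writing $N=[\fm_{A'}\otimes_A V\to V]$ for the two-term complex on $Y$ (in degrees $-1$ and $0$) and $V_0=V/\fm_AV$, an object of $\cD_{(V,b)}(A')$ is the same datum as a splitting $s\colon N\to J\otimes_\bC V_0[1]$ of the triangle $J\otimes_\bC V_0[1]\to N\to V_0\to J\otimes_\bC V_0[2]$ on $Y$ whose pullback $f^*s$ coincides with the canonical splitting $\sigma$ of $f^*N$ on $X$ determined by the fixed family $A'\otimes_\Lambda\hat U$; this last translation is \Cref{lem cat of lifts 2} applied on $X$, since $b'$ forces the lift $f^*V'$ to be $A'\otimes_\Lambda\hat U$. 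I would first establish this correspondence directly from \Cref{lem cat of lifts} and \Cref{lem cat of lifts 2}.

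Next I would unwind the diagram in \Cref{pullback section}. Under the adjunction isomorphism $\Hom_{D^b_c(\bC_Y)}(N,Rf_*f^*(J\otimes_\bC V_0)[1])\cong\Hom_{D^b_c(\bC_X)}(f^*N,f^*(J\otimes_\bC V_0)[1])$ the splitting $\sigma$ corresponds to a map $\tilde\sigma\colon N\to Rf_*f^*(J\otimes_\bC V_0)[1]$, and by naturality of adjunction together with $(\text{unit})\circ(\text{projection})=0$ one checks that $\tilde\sigma$ restricts to the unit map on the subobject $J\otimes_\bC V_0[1]\subset N$; the triangulated axioms then produce the dashed arrow $\rho$ landing in the group named in part (1). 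For part (1) I would verify that $\rho$ is uniquely determined and that $\rho=0$ if and only if a relative lift exists: both rest on the vanishing $\Hom_{D^b_c(\bC_X)}(W[k],W')=0$ for $k<0$ and $W,W'$ local systems, and more generally on the concentration of $Rf_*\bC_X$ and $M_{X/Y}$ in nonnegative degrees, which makes the relevant connecting maps in the long exact sequences injective. Given $\rho=0$, the map $\tilde\sigma$ factors through some $s$, and this same vanishing forces $s$ to restrict to the identity on $J\otimes_\bC V_0[1]$, so $s$ is an honest splitting with $f^*s=\sigma$; conversely such an $s$ gives $\rho=0$.

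For parts (2)--(4) it then suffices to note that $\cD_{(V_0,b_0)}$ is the deformation category governed by the relative tangent complex $R\Gamma(Y,M_{X/Y}\otimes_\bC\cEnd(V_0))$, obtained by applying $R\Gamma(Y,-)$ to the triangle $\cEnd(V_0)\to Rf_*f^*\cEnd(V_0)\to M_{X/Y}\otimes_\bC\cEnd(V_0)$ (projection formula). Concretely: when a lift exists, the set of splittings $s$ with $f^*s=\sigma$ is a torsor under the kernel of $f^*$ on $\Ext^1$, which the associated long exact sequence identifies with $J\otimes_\bC\Hom_{\bC_X}(V_0,M_{X/Y}\otimes_\bC V_0)$, giving part (2). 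Taking $A=\bC$ and $A'=\bC[\epsilon]$ gives part (3), where the extra trivialization datum carried by $b$ over $\bC[\epsilon]$ is precisely what accounts for the term $\Hom_{\bC_X}(f^*V_0,f^*V_0)/\operatorname{im}\Hom_{\bC_Y}(V_0,V_0)$ appearing in that long exact sequence. Part (4) is immediate since $M_{X/Y}$ has no cohomology in negative degrees; alternatively, $f^*\colon\cM_B(Y)\to\cM_B(X)$ is representable, so $\cD_{(V_0,b_0)}\cong\Spec\hat\cO$ is a (pro-)scheme and has no nontrivial infinitesimal automorphisms.

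The part I expect to be the real obstacle is pinning down the relative splitting correspondence cleanly and checking that the diagram of \Cref{pullback section} genuinely computes the \emph{relative} obstruction, i.e. that nothing is contaminated by the absolute deformation data on $Y$; once that is in place, the remaining steps are the standard bookkeeping with long exact sequences and $\Ext^{<0}$-vanishing for local systems.
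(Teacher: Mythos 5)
Your treatment of part (1) is the paper's argument: the splitting of $f^*N$ supplied by $A'\otimes_\Lambda\hat U$, its adjoint $N\to Rf_*f^*(J\otimes_\bC V_0)[1]$, the morphism of triangles, and uniqueness of the induced arrow because the relevant indeterminacy groups vanish. The gap is in the "key point" on which you build (2) and (3). An isomorphism class in $\cD_{(V,b)}(A')$ is \emph{not} the same datum as a splitting $s$ of $N$ with $f^*s=\sigma$: the splitting attached to a lift by \Cref{lem cat of lifts 2} depends only on the isomorphism class of the underlying $A'$-local system $V'$ on $Y$ and forgets the choice of $b'\colon\hat U\to f^*V'$. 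For fixed $V'$ the lifts $b'$ of $b$ form a torsor under $J\otimes_\bC\Hom_{\bC_X}(f^*V_0,f^*V_0)$, and passing to isomorphism classes only divides by automorphisms of $V'$ over $V$, i.e.\ by the image of $J\otimes_\bC\Hom_{\bC_Y}(V_0,V_0)$. So isomorphism classes of relative lifts fiber over compatible splittings with fibers torsors under $J\otimes_\bC\coker\bigl(\Hom_{\bC_Y}(V_0,V_0)\to\Hom_{\bC_X}(f^*V_0,f^*V_0)\bigr)$. Correspondingly, compatible splittings form a torsor under $\ker\bigl(J\otimes\Ext^1_{\bC_Y}(V_0,V_0)\to J\otimes\Ext^1_{\bC_X}(f^*V_0,f^*V_0)\bigr)$, and the long exact sequence identifies this kernel only with a \emph{quotient} of $J\otimes\Hom(V_0,M_{X/Y}\otimes_\bC V_0)$ (by the cokernel above), not with the full group; your derivation of (2) therefore establishes simple transitivity of the wrong group. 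Your own remark about (3) — that the trivialization datum accounts for the term $\Hom_{\bC_X}(f^*V_0,f^*V_0)$ modulo the image of $\Hom_{\bC_Y}(V_0,V_0)$ — contradicts your paragraph-one bijection: if that bijection held, the relative tangent space would be $\ker(\Ext^1\to\Ext^1)$ rather than $\Hom(V_0,M_{X/Y}\otimes V_0)$. In the main application, framing at a basepoint (\Cref{section framing basic}), this is exactly the difference between cohomology classes and group cocycles, so the discrepancy is not cosmetic.

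The repair is the paper's bookkeeping: keep both layers. The group $J\otimes\Hom(V_0,M_{X/Y}\otimes_\bC V_0)$ is an extension of $\ker(\Ext^1_{\bC_Y}\to\Ext^1_{\bC_X})$ by the cokernel above; it acts on pairs $(V',b')$ (the kernel part moving the compatible splitting, the cokernel part moving $b'$ for fixed $V'$), the action is transitive on compatible splittings, and simple transitivity on isomorphism classes of relative lifts then follows from this transitivity together with (4) — which is precisely how the paper deduces (2) and hence (3). Two smaller points: your first justification of (4), that $M_{X/Y}$ has no cohomology in negative degrees, fails in the framing case (the cone of $\bC_Y\to$ the skyscraper at the basepoint has a nonzero cohomology sheaf in degree $-1$); what you actually need is $\Hom(V_0,J\otimes M_{X/Y}\otimes V_0[-1])=0$, i.e.\ injectivity of $\Hom_{\bC_Y}(V_0,V_0)\to\Hom_{\bC_X}(f^*V_0,f^*V_0)$, which holds for connected $Y$ and is also delivered by your alternative argument via representability of $f^*$, the route the paper itself indicates.
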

\begin{proof}For (1), the dashed arrow is uniquely determined up to $\Hom_{\bC_X}(J\otimes_\bC V_0[2],Rf_*f^*(J\otimes_\bC V_0)[1])=0$, and if it vanishes there is a splitting $N\to J\otimes_\bC V_0[1]$ compatible with the splitting of $f^*N$.  Part (4) is clear from the definition.  Since $J\otimes_\bC\Hom_{\bC_X}(V_0,V_0\otimes_\bC M_{X/Y})$ acts transitively on the splittings of $N$ compatible with that of $f^*N$, this together with (4) yields (2) and therefore (3).
\end{proof}

\subsubsection{Framing}\label{section framing absolute}\label{section framing basic}

Note that for $X=\pt$ we have $\cM_B(\pt,r)=[\bGL_r\backslash\pt]$ and the miniversal family is the quotient morphism $\pt=R_B(\pt,\pt,r)\to \cM_B(\pt,r)$.  For general $X$, $R_B(X,x)$ is the fibered 2-product of $i^*:\cM_B(X)\to \cM_B(x)$ and $R_B(x,x)\to \cM_B(x)$ where $i:\{x\}\to X$ is the inclusion, so the deformation theory of $R_B(X,x)$ can be thought of as the relative deformation theory of $i^*$.

\subsubsection{The diagonal}\label{the diagonal} Let $\Lambda$ be a complete noetherian local ring, $U_1,U_2$ two free $\Lambda$-local systems, and $f_0:U_{1,0}\to U_{2,0}$ an isomorphism over the closed points, $U_{i,0}:=\bC\otimes_\Lambda U_i$.  The deformation category $\cD_{f_0}$ of the fibered 2-product of the resulting morphisms $\Spec\Lambda\to \cM_B(X)$ at the point $(U_{1,0},U_{2,0},f_0)$ is the category cofibered in groupoids over $(\Art/\Lambda)$ consisting of pairs $(A,f)$ where $A$ is a $\Lambda$-algebra and $f:A\otimes_\Lambda U_1\to A\otimes_\Lambda U_2$ is an isomorphism of $A$-local systems, with the obvious notion of isomorphism.  

Let $J\to A'\to A$ be a small extension in $(\Art/\Lambda)$ and $(A,f)$ an object of $\cD_{f_0}$.  We have a short exact sequence of $A'$-local systems
\begin{align}0\to \cHom(U_{1,0},J\otimes_\bC U_{2,0})&\to \cHom_{A'}(A'\otimes_\Lambda U_1,A'\otimes_\Lambda U_2)\notag\\
&\to\cHom_{A}(A\otimes_\Lambda U_1,A\otimes_\Lambda U_2)\to0.\label{diagonal ses} \end{align}
Taking cohomology, there is an exact sequence
\begin{align*}0&\to \Hom_{\bC_X}(U_{1,0},J\otimes_\bC U_{2,0})\to \Hom_{A'_X}(A'\otimes_\Lambda U_1,A'\otimes_\Lambda U_2)\to\Hom_{A_X}(A\otimes_\Lambda U_1,A\otimes_\Lambda U_2)\\
&\xrightarrow{\partial} \Ext^1_{\bC_X}(U_{1,0},J\otimes_\bC U_{2,0})\end{align*}
\begin{lem}\label{diagonal defo}\hspace{.5in}
    \begin{enumerate}
        \item The element $\partial f\in J\otimes_\bC\Ext^1_{\bC_X}(U_{1,0},J\otimes U_{2,0})$ is an obstruction for lifting $(A,f)$.
        \item The tangent space of $\cD_{f_0}$ is naturally identified with $\Hom_{\bC_X}(U_{1,0}, U_{2,0})$.
        \item There are no infinitesimal automorphisms.
    \end{enumerate}
\end{lem}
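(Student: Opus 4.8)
\emph{Proof proposal.}
The plan is to read everything off the short exact sequence \eqref{diagonal ses} of $A'$-local systems and its cohomology long exact sequence, which is the sequence displayed immediately before the lemma. The one structural point to pin down first is that $\cD_{f_0}$ is discrete, i.e.\ a deformation category with no nontrivial morphism over any $\id_A$: since it is (the germ at $f_0$ of) the fibered $2$-product of two \emph{scheme} maps $\Spec\Lambda\to\cM_B(X)$ against the representable diagonal, an object over $A$ is nothing but an isomorphism $f\colon A\otimes_\Lambda U_1\xrightarrow{\sim} A\otimes_\Lambda U_2$ reducing to $f_0$, and a morphism over $\alpha\colon A_1\to A_2$ exists iff $\alpha_*f_1=f_2$ and is then unique. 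This immediately yields part (3) (hence $\fa_{\cD_{f_0}}=0$) and lets us conflate ``isomorphism class of a lift'' with ``lift'' in what follows.

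For part (1), fix a small extension $J\to A'\to A$ and an object $(A,f)$. Because $U_1,U_2$ are free $\Lambda$-local systems, the sheaves $A'\otimes_\Lambda U_i$ are free $A'$-local systems fitting in \eqref{diagonal ses}, and a \emph{morphism} of $A'$-local systems lifting $f$ exists iff $f$ lies in the image of $\Hom_{A'_X}(A'\otimes_\Lambda U_1,A'\otimes_\Lambda U_2)\to\Hom_{A_X}(A\otimes_\Lambda U_1,A\otimes_\Lambda U_2)$, i.e.\ iff $\partial f=0$ in $\Ext^1_{\bC_X}(U_{1,0},J\otimes_\bC U_{2,0})=J\otimes_\bC\Ext^1_{\bC_X}(U_{1,0},U_{2,0})$ (the last identification because $J$ is a finite-dimensional $\bC$-vector space). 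Any such lift is automatically an isomorphism: it is a map of free $A'$-local systems of equal rank whose reduction modulo $\fm_{A'}$ is $f_0$, so Nakayama applies. Hence $(A,f)$ lifts to $\cD_{f_0}(A')$ iff $\partial f=0$, and functoriality of the connecting map $\partial$ in the small extension makes this an obstruction theory in the required sense.

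For part (2), specialize to $A=\bC$, $A'=\bC[\epsilon]$, $J=\bC\epsilon$, so that $\bC[\epsilon]\otimes_\Lambda U_i=\bC[\epsilon]\otimes_\bC U_{i,0}$ and every $A'$-morphism $f'=f_0+\epsilon g$ with $g\in\Hom_{\bC_X}(U_{1,0},U_{2,0})$ is an isomorphism lifting $f_0$. The exact sequence exhibits the set of such lifts as a torsor under $\Hom_{\bC_X}(U_{1,0},J\otimes_\bC U_{2,0})=\Hom_{\bC_X}(U_{1,0},U_{2,0})$, and, $\cD_{f_0}$ being discrete, this set of lifts \emph{is} the set of isomorphism classes; taking the trivial lift $f_0$ as basepoint and checking that the induced addition agrees with the Rim--Schlessinger $\bC$-linear structure gives $\ft_{\cD_{f_0}}\cong\Hom_{\bC_X}(U_{1,0},U_{2,0})$. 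The only points requiring any care are the precise identification of the morphisms of $\cD_{f_0}$ (to get discreteness) and the compatibility of the torsor structure with the abstract tangent-space structure; both are formal, and I do not expect a genuine obstacle.
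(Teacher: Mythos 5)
Your proof is correct and follows exactly the route the paper intends: the lemma is stated as an immediate consequence of the short exact sequence \eqref{diagonal ses} and its long exact cohomology sequence, and your argument (discreteness of $\cD_{f_0}$ from representability of the diagonal, the connecting map $\partial$ as obstruction with Nakayama guaranteeing any lifted homomorphism is an isomorphism, and the torsor structure over $\bC[\epsilon]$ giving the tangent space) is precisely the fleshed-out version of that reading.
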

\subsubsection{Fixed local monodromy}\label{FLM}  For simplicity, let $(\bar X,D)$ be a log smooth projective curve with $X=\bar X\setminus D$  and $j:X\to \bar X$ the inclusion.  Note that in this case, for a complex local system $W$ on $X$, the intermediate extension $j_{!*}(W[1])$ is just $(j_*W)[1]$, and that $Rj_*$ is perverse-exact.  For each $\bar x\in D$ take a small embedded punctured disk $j_{\bar x}:\bD^*\to \bar X$ whose interior closure is a small disk neighborhood of $\bar x$ and consider the representable morphism of algebraic stacks
\begin{equation}\label{restrictP}
\psi_D:=\prod_{\bar x\in D} j_{\bar x}^*:\cM_B(X)\to\prod_{\bar x\in D}\cM_B(\hat\bD^*).
\end{equation}
where by $\hat\bD^*$ we mean the germ of $\bD^*$ around the origin.  For $V_0\in\cM_B(X)(\bC)$ we define the fiber of this morphism over the image of $V_0$, namely $FM(V_0):=(\psi_D)^{-1}(\psi_DV_0)$, to be the fixed local monodromy leaf at $V_0$.  Here by $j^*_{D^\mathrm{loc}}V_0$ we mean the locally closed substack supported at $V_0$.  As the terminology suggests, this is the locally closed\footnote{Note: For every $X$, every $\bC$-point of $\cM_B(X)$ is locally closed:  every orbit $O$ in $R_B(X,x)$ is constructible and irreducible, hence contains a point $\rho$ which is in the interior of the closure $\bar O$, and so the same is true for every point of $O$.} substack of $\cM_B(X)$ of local systems with the same local monodromy as $V_0$ up to conjugation (separately for each point of $D$).  It does do not depend on the choice of the cover.

We now consider the relative deformation theory of the embedding $FM(V_0)\subset \cM_B(X)$.  Fix a miniversal local system $(\Lambda,\hat V)$ at $V_0$ and let $\cD$ be the full subcategory of $(\Art/\Lambda)$ of $A$ for which $A\otimes_\Lambda \hat V$ has fixed local monodromy.  Let $J\to A'\to A$ be a small extension of local artinian $\Lambda$-algebras and suppose $V:=A\otimes_{\Lambda}\hat V$ has fixed local monodromy. 
 Set $V'=A'\otimes_\Lambda \hat V$.  According to \Cref{easy defo theory}, the obstruction at each point $\bar x\in D$ is naturally an element of the stalk $J\otimes (R^1j_*\cHom(V_0,V_0))_{\bar x}$, but we want to interpret this obstruction in terms of functors that lift to Hodge/twistor modules.  Note first that we can interpret this group as $J\otimes H^1(\bZ,\psi_{\bar x}j_{!*}\cHom(V_0,V_0))$, where $\psi_{\bar x}:=\psi_{q_{\bar x}}$ is the nearby cycles functor for a choice of coordinate $q_{\bar x}$ at $\bar x$ equipped with its natural $\bC[T,T^{-1}]$-module structure given by the local monodromy operator.

 Since $\psi_{\bar x}j_{!*}$ is exact on local systems, we have a short exact sequence
\[0\to \psi_{\bar x}j_{!*}\cHom(V_0,V_0)\to \psi_{\bar x}j_{!*}\cHom(V_0,\fm_{A'}\otimes_A V)\to \psi_{\bar x}j_{!*}\cHom(V_0,\fm_A\otimes_A V)\to 0\]
of $\bC[\bZ]=\bC[T,T^{-1}]$-modules, and because $V$ has fixed local monodromy, this sequence is exact after applying $H^i(\bZ,-)$.  Thus there is a natural short exact sequence
    \begin{align*}
        0\to H^1(\bZ,\psi_{\bar x}j_{!*}\cHom(V_0,V_0))&\to H^1(\bZ,\psi_{\bar x}j_{!*}\cHom(V_0,\fm_{A'}\otimes_A V))\\
       &\to H^1(\bZ,\psi_{\bar x}j_{!*}\cHom(V_0,\fm_A\otimes_A V))\to 0
    \end{align*}

Consider $E'\subset\cHom(V_0,V')$ the sub $A'$-local system of $\bC$-linear morphisms for which the composition $V_0\to V'\to V_0$ is a multiple of the identity.  Define $E$ similarly for $V$.  Then we have a commutative diagram with exact rows and columns:
\begin{equation}\label{sesmono2}\notag
\begin{tikzcd}
&0\ar[d]&0\ar[d]&\\
&J\otimes\cHom(V_0,V_0)\ar[d]\ar[r,equals]&J\otimes\cHom(V_0,V_0)\ar[d]&&\\
    0\ar[r]&\cHom(V_0, \fm_{A'}\otimes_AV)\ar[d]\ar[r]&E'\ar[d]\ar[r]&\bC_X\ar[d,equals]\ar[r]&0\\
    0\ar[r]&\cHom(V_0,\fm_A\otimes_A V)\ar[d]\ar[r]&E\ar[r]\ar[d]&\bC_X\ar[r]&0\\
    &0&0&
\end{tikzcd}
\end{equation}
After applying $\psi_{\bar x}j_{!*}$, the bottom row splits as an extension of $\bC[T,T^{-1}]$-modules, so the extension class of the middle row yields a class $\obs_{A/A'}\in J\otimes H^1(\bZ,\psi_{\bar x}j_{!*}\cHom(V_0,V_0))$.  This class vanishes if and only if $V'$ has fixed local monodromy, so we have proven:

\begin{lem}\label{FLM obs}\hspace{.5in}
    \begin{enumerate}
    \item There are no relative automorphisms or deformations for $FM(V_0)\subset \cM_B(X)$.

        \item The class $\obs_{A/A'}\in J\otimes H^1(\bZ,\psi_{\bar x}j_{!*}\cHom(V_0,V_0))$ is a relative obstruction for $FM(V_0)\subset \cM_B(X)$.
            \end{enumerate}
\end{lem}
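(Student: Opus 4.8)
The plan is to read both assertions off the description of $FM(V_0)$ as a fiber of the representable morphism $\psi_D$ of \eqref{restrictP}, together with the nearby-cycles interpretation of the local cohomology at the boundary that was set up in the paragraphs above; once those identifications are in place the argument is essentially bookkeeping.

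\emph{Part (1).} Since $FM(V_0)$ is by construction the (locally closed) fiber of $\psi_D\colon \cM_B(X)\to \prod_{\bar x\in D}\cM_B(\hat\bD^*)$ over the $\bC$-point $\psi_D(V_0)$, the inclusion $FM(V_0)\hookrightarrow \cM_B(X)$ is a monomorphism of algebraic stacks, hence formally unramified: its relative tangent space and its group of relative infinitesimal automorphisms both vanish. Concretely, for a small extension $J\to A'\to A$ and $V=A\otimes_\Lambda\hat V$ with fixed local monodromy, a relative deformation or automorphism of $V$ would be a deformation (resp. automorphism) trivial after applying each $j_{\bar x}^*$; using the exactness of $\psi_{\bar x}j_{!*}$ on local systems and the long exact sequence attached to $0\to\psi_{\bar x}j_{!*}\cHom(V_0,V_0)\to\psi_{\bar x}j_{!*}\cHom(V_0,\fm_{A'}\otimes_A V)\to\psi_{\bar x}j_{!*}\cHom(V_0,\fm_A\otimes_A V)\to0$ one checks directly that the relevant relative $\Hom$ and $\Ext^0$ groups vanish. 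I would spell this out only to the extent needed to match the conventions of \Cref{def Hodge enhancement}.

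\emph{Part (2).} I would construct $\obs_{A/A'}$ exactly as in the discussion preceding the lemma. First, reduce the classical obstruction of \Cref{easy defo theory}, which lives in $J\otimes(R^1 j_*\cHom(V_0,V_0))_{\bar x}$, to a class in $J\otimes H^1(\bZ,\psi_{\bar x}j_{!*}\cHom(V_0,V_0))$: for a curve one has $j_{!*}(W[1])=(j_*W)[1]$ and $Rj_*$ is perverse-exact, so $(R^1 j_*\cHom(V_0,V_0))_{\bar x}\cong H^1(\bD^*,\cHom(V_0,V_0))\cong H^1(\bZ,\cHom(V_0,V_0)_{q_{\bar x}})$ with the stalk carrying its monodromy action, and $\psi_{\bar x}j_{!*}\cHom(V_0,V_0)$ is precisely this $\bC[T,T^{-1}]$-module. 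Next, form the sub-$A'$-local system $E'\subset\cHom(V_0,V')$ of $\bC$-linear maps $V_0\to V'$ whose composite $V_0\to V'\to V_0$ is a scalar multiple of the identity, and similarly $E\subset\cHom(V_0,V)$; these fit into the commutative diagram with exact rows and columns displayed immediately above the lemma statement. Applying the exact functor $\psi_{\bar x}j_{!*}$, the bottom row $0\to\cHom(V_0,\fm_A\otimes_A V)\to E\to\bC_X\to0$ splits as $\bC[T,T^{-1}]$-modules precisely because $V$ has fixed local monodromy (the splitting encodes an $A$-scaled trivialization of the monodromy near $\bar x$), so the extension class of the middle row defines a well-determined $\obs_{A/A'}\in J\otimes H^1(\bZ,\psi_{\bar x}j_{!*}\cHom(V_0,V_0))$. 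Finally, unwinding the same long exact sequence, $\obs_{A/A'}=0$ if and only if the middle row also splits after $\psi_{\bar x}j_{!*}$, i.e. if and only if $V'$ has fixed local monodromy at $\bar x$, i.e. defines a point of $FM(V_0)$; functoriality in $J$ and in morphisms of small extensions is inherited from the functoriality of all the operations used.

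The \textbf{main obstacle} is not a deep one: it is the verification that this cohomological reformulation agrees, on the nose, with the classical group-cocycle obstruction of \Cref{easy defo theory}, and in particular that the ``scalar multiple of the identity'' condition cutting out $E$ and $E'$ corresponds exactly to fixing the local monodromy conjugacy class rather than to a weaker condition (such as fixing only a unipotent part). Making sure the $\bC[T,T^{-1}]$-module structure coming from the monodromy operator is used consistently throughout — so that no information about the local monodromy is lost under the identification $(R^1 j_*\cHom(V_0,V_0))_{\bar x}\cong H^1(\bZ,\psi_{\bar x}j_{!*}\cHom(V_0,V_0))$ — is where I expect to spend the most care, even though the computation is ultimately routine.
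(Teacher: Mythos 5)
Your proposal is correct and follows essentially the same route as the paper: part (1) is immediate because $FM(V_0)\subset\cM_B(X)$ is a substack (so the relative deformation problem is a full subcategory of $(\Art/\Lambda)$ with no extra data), and part (2) is exactly the construction in the paragraphs preceding the lemma — the diagram with $E'\subset\cHom(V_0,V')$ and $E\subset\cHom(V_0,V)$, the splitting of the bottom row after $\psi_{\bar x}j_{!*}$ coming from the fixed local monodromy of $V$, and the resulting extension class detecting whether $V'$ has fixed local monodromy. The only difference is that you frame it as a comparison with the group-cocycle obstruction of \Cref{easy defo theory}; the paper sidesteps that comparison by constructing the obstruction directly, so the "main obstacle" you flag is not actually needed.
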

\begin{proof}
    The first part is clear since $FM(V_0)\subset \cM_B(X)$ is a substack, and the second part is the above argument.
\end{proof}
We can interpret the obstruction naturally in terms of the intersection cohomology $IH^1(\cHom(V_0,V_0)):=H^0\pt_*j_{!*}(\cHom(V_0,V_0)[1])$ using the canonical short exact sequence of perverse sheaves
\[0\to j_{!*}W[1]\to Rj_*W[1]\to i_*i^!j_{!*}(W[1])[1]\to 0.\]

\begin{lem}\label{IH and weights}Let $i_{\bar x}:\bar x\to \bar X$ be the inclusion.  For any local system $W$ on $X$ the natural morphism of functors $\psi_{\bar x}\to i_{\bar x}^!$ induces an isomorphism $ H^1(\bZ,\psi_{\bar x}j_{!*}W[1])\to i_{\bar x}^!(j_{!*}W[1])[1]$.   In particular, there is a natural exact exact sequence
\begin{align*}0&\to IH^1(X,\cHom(V_0,V_0))\to H^1(X,\cHom(V_0,V_0))\to H^1(\bZ,\psi_{\bar x}j_{!*}\cHom_{\bC_X}(V_0,V_0))\\
&\to IH^2(X,\cHom(V_0,V_0))\to 0.\end{align*}
    Moreover, if $V_0$ underlies a pure $\bC$-VMS, then $j_{!*}(\cHom(V_0,V_0)[1])=W_1(\cHom(V_0,V_0)[1])$ is pure, as is $IH^1(X,\cHom(V_0,V_0))=W_1H^1(X,\cHom(V_0,V_0))$. 
\end{lem}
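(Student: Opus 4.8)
The plan is to deduce everything from the canonical short exact sequence of perverse sheaves
\[0\to j_{!*}(W[1])\to Rj_*(W[1])\to i_*i^!(j_{!*}(W[1]))[1]\to 0\]
recorded just above the lemma (with $W:=\cHom(V_0,V_0)$), together with the six-functor/weight formalism for mixed Hodge modules of \Cref{lem: six functors} (resp. for mixed twistor $\scrD$-modules, \Cref{lem: six functors tw}). For the isomorphism in the first assertion, I would first note that $i_{\bar x}^!$, the nearby cycles $\psi_{\bar x}$, and $H^\bullet(\bZ,-)$ are all local at $\bar x$, so it suffices to work on a small embedded disk $j_0\colon\bD^*\hookrightarrow\bD$, $i_0\colon\{0\}\hookrightarrow\bD$, where $j_{!*}(W[1])|_{\bD}=(j_{0*}W)[1]$. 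Restricting the displayed sequence and passing to $\mathscr H^0$ of stalks at $0$ identifies $i_{\bar x}^!(j_{!*}(W[1]))[1]$ with $(R^1j_*W)_{\bar x}$; by the standard local description of $Rj_*$ this is the monodromy coinvariants of the nearby local system, i.e. $H^1(\bZ,\psi_{\bar x}j_{!*}(W[1]))$. A short check on Verdier gluing data then shows that the map induced here by the natural transformation $\psi_{\bar x}\to i_{\bar x}^!$ — coming from the gluing triangle ${}^p\phi_{\bar x}\xrightarrow{\mathrm{var}}{}^p\psi_{\bar x}\to i_{\bar x}^![1]$ for perverse sheaves on $\bD$ — is exactly this identification.

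For the four-term exact sequence, I would apply $R\Gamma(\barX,-)=\pt_{\barX*}$ to the displayed short exact sequence and unwind the long exact cohomology sequence, using $IH^k(X,W)=\mathbb{H}^{k-1}(\barX,j_{!*}(W[1]))$ (this is the definition of $IH$), $H^k(X,W)=\mathbb{H}^{k-1}(\barX,Rj_*(W[1]))$, and the fact that $i^!(j_{!*}(W[1]))[1]$ is concentrated in degree $0$. Since $D\neq\emptyset$, $X$ is a smooth affine curve, so Artin vanishing gives $H^2(X,W)=0$; the long exact sequence thus collapses to $IH^0\xrightarrow{\sim}H^0$ together with the asserted sequence $0\to IH^1\to H^1\to H^1(\bZ,\psi_{\bar x}j_{!*}W[1])\to IH^2\to0$ (with the evident direct sum over the points of $D$ when $|D|>1$).

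For the purity statement I would run the argument inside Saito's category of mixed Hodge modules (resp. Mochizuki's category of tame purely imaginary mixed twistor $\scrD$-modules), exactly as set up in the preceding section. Since $V_0$ underlies a pure $\bC$-VMS, the local system $W=\cHom(V_0,V_0)$ underlies a pure $\bC$-VMS of weight $0$, hence as a smooth object of $\MM(X,\bC)$ it is pure of weight $0$ and $W[1]$ is pure of weight $1$. Then $Rj_*(W[1])$ has weights $\ge1$ (as $j_*$ does not decrease weights); its weight-$1$ part is the intermediate extension, $W_1(Rj_*(W[1]))=j_{!*}(W[1])$, which is therefore pure of weight $1$; $IH^1(X,W)=\mathscr H^0\pt_{\barX*}(j_{!*}(W[1]))$ is pure of weight $1$ (as $\barX$ is proper); $H^1(X,W)=\mathscr H^0\pt_{\barX*}(Rj_*(W[1]))$ has weights $\ge1$; and the boundary term $i_{\bar x}^!(j_{!*}(W[1]))[1]$ has weights $\ge2$, because $i_{\bar x}^!$ preserves ``weights $\ge1$'' and the shift $[1]$ raises weights by $1$. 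From the four-term sequence, $W_1$ of the cokernel of the inclusion $IH^1(X,W)\hookrightarrow H^1(X,W)$ injects into $\bigoplus_{\bar x}i_{\bar x}^!(j_{!*}(W[1]))[1]$ and hence vanishes; combined with purity of $IH^1$, this forces $IH^1(X,W)=W_1H^1(X,W)$. The twistor case is verbatim the same with mixed twistor $\scrD$-modules replacing mixed Hodge modules.

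I expect the main obstacle to be this last step. One has to pin down the weight normalization so that $W[1]$ really is pure of weight $1$, check that the intermediate extension computed in $\MM(\barX,\bC)$ maps under $\mathrm{coeff}$ (resp. $\mathrm{sp}^{B}_1$) to the topological $j_{!*}$, and — most delicately — make sure the weight-stability inputs (preservation of ``weights $\ge w$'' under $j_*$ and $i^!$, purity of intermediate extensions, and purity of $\pt_*$ on proper varieties up to the cohomological shift) are genuinely available on the twistor side. These are Mochizuki's results, and invoking them here is precisely why the theory of mixed twistor $\scrD$-modules is threaded through the earlier part of the paper; the analogous Hodge statements are Saito's and are comparatively routine.
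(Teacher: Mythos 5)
Your proof is correct and follows essentially the same route as the paper: the first isomorphism via the standard gluing-triangle description of perverse sheaves on the disk, the four-term sequence from the long exact sequence of the displayed triangle (with Artin vanishing on the affine curve), and the purity statement from exactly the weight argument the paper uses ($\cHom(V_0,V_0)[1]$ and its intermediate extension pure of weight one, $i^!$ of it shifted having weights $\geq 2$, and properness of $\pt_{\bar X*}$ preserving weights, in both the Hodge and twistor settings). Your remark about summing over the points of $D$ is a reasonable reading of the statement and does not affect the argument.
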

\begin{proof}
    The first part is standard from the description of perverse sheaves on the disk.  For the second part, the mixed module $\cHom(V_0,V_0)[1]$ has pure weight one, as does $j_{!*}(\cHom(V_0,V_0)[1])$, so $i^!j_{!*}(\cHom(V_0,V_0)[1])[1]$ has weights $\geq 2$, and proper push-forward preserves weights. 
\end{proof}

\subsection{Hodge/twistor enhancements for local systems} Let $X$ be a connected algebraic space. 
 Let $\cD^\MS\to (\mbox{MS-}\Art/\tate)$ be the category cofibered in groupoids whose objects over $A$ are free $A$-AVMSs $V$.  A morphism $V'\to V$ lying over a morphism $A'\to A$ in $(\mbox{MS-}\Art/\bC)$ is a morphism $V'\to V$ of $A'$-AVMS which is an isomorphism over $A$.  We define $\cD^{\MS}_V$ for $V$ in $\cD^{\MS}$ as before to be the category obtained by including the data of a morphism to $V$.

\begin{prop}\label{prop mini MB(X)}
    Suppose $V_0\in\cM_B(X)(\bC)$ underlies a $\bC$-AVMS which we fix.  Then $\cD^\MS_{V_0}\to (\MS\textrm{-}\Art/\tate)$ is part of a precise $\MS$-enhancement of $\cM_B(X)_{V_0}^\op\to (\Art/\bC)$ with its standard obstruction theory.
\end{prop}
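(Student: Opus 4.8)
The plan is to verify the seven conditions of \Cref{def Hodge enhancement}, plus the three extra conditions for preciseness, taking the object $M$ of $D^b(\CMS)$ to be $M:=\pt_{X*}\cEnd(V_0)$, where $\cEnd(V_0)=\cHom(V_0,V_0)$ is viewed as a (smooth object of $D^b\MM(X,\bC)$, hence a) $\bC$-AVMS via the fixed $\bC$-AVMS structure on $V_0$, and $\pt_X\colon X\to\Spec\bC$. By \Cref{lem esnault} the cohomology $\mathscr H^i(M)=\Ext^i_{\bC_X}(V_0,V_0)$ carries the functorial $\bC$-MS, and \Cref{cor def abs} identifies $\mathscr H^0(M)=\fa_\cD$, $\mathscr H^1(M)=\ft_\cD$, $\mathscr H^2(M)=\fo_\cD$ for the standard obstruction theory, which is conditions (4) and (7). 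The $\tate$-MS-algebra structure on $\Lambda=\bC$ is the tautological one. That $\cD^\MS_{V_0}\to(\MSArt/\tate)$ is a deformation category is formal: $\cD^\MS_{V_0}(\tate)$ is the fixed $V_0$ with no automorphisms over $V_0$, and the Rim--Schlessinger gluing is inherited from the corresponding gluing for $|A|$-local systems once one observes that the weight filtration and Hodge/twistor package of an $A$-AVMS are compatible with fibre products of MS-artinian algebras; the forgetful diagram \eqref{forgetful diagram} is the evident one. For condition (5): by \Cref{cor def abs} the universal first-order deformation of $V_0$ over $\bC\oplus\ft_\cD^\vee$ is classified by the coevaluation class $\id\in\Ext^1_{\bC_X}(V_0,V_0)\otimes_\bC\ft_\cD^\vee=\Ext^1_{\bC_X}(V_0,V_0\otimes_\bC\ft_\cD^\vee)$, which is a Tate class because the coevaluation of the $\bC$-MS $\ft_\cD$ is a morphism of $\bC$-MS; by \Cref{lifting MHM ext} it therefore lifts to a smooth object of $D^b\MM(X,\bC)$, i.e.\ to a $(\tate\oplus\ft_\cD^\vee)$-AVMS lifting the universal first-order object.

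For the obstruction classes of condition (6), fix a small extension $J\to A'\to A$ in $(\MSArt/\tate)$ and $x\in\cD^\MS_{V_0}(A)$ with underlying $A$-AVMS $V$, $V_0=V/\fm_AV$. Since $V$, $\fm_{A'}\otimes_A V$, $V_0$ are $\bC$-AVMSs, the complex $N=[\fm_{A'}\otimes_A V\to V]$ of \Cref{lem cat of lifts 2} makes sense in $D^b\MM(X,\bC)$, and the distinguished triangle $J\otimes_\bC V_0[1]\to N\to V_0\xrightarrow{\delta}J\otimes_\bC V_0[2]$ gives $\delta\in\Hom_\MM(V_0,J\otimes_\bC V_0[2])$, which by the internal-hom adjunction and the $(\pt_X^*,\pt_{X*})$-adjunction of \Cref{lem: six functors} and \Cref{lem: six functors tw} is an element $\obs^\MS_{A/A'}(x):=\delta\in\Hom_\MS(\tate,J\otimes_\bC M[2])$. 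By the argument of \Cref{lem cat of lifts 2} carried out in $D^b\MM(X,\bC)$ in place of $D^b_c(\bC_X)$ (using \Cref{lifting MHM ext} as the basic input), $\delta=0$ iff $N$ splits in $D^b\MM(X,\bC)$ iff $x$ lifts to $\cD^\MS_{V_0}(A')$; this is (6a). Applying the faithful exact functor $\mathrm{coeff}$ (resp.\ $\mathrm{sp}^B_1$) recovers the classical triangle of \Cref{lem cat of lifts 2}, whose boundary map is the standard obstruction $\obs_{|A|/|A'|}(|x|)$ of \Cref{cor def abs}; since the exact sequence \eqref{defo obs} is the sequence of \Cref{lem beilinson} for $J\otimes_\bC M$ in degree $2$, this gives (6b). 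Functoriality in morphisms of small extensions is the naturality of the cone, and the equivariance (6c) under a twist of the $\bC$-MS on $A'$ by a class in $\Ext^1_\MS(\tate,J\otimes_\bC\ft_{A/\Lambda})$ (as in \Cref{twists of MHS alg}) is a computation: such a twist alters the $\bC$-MS of $\fm_{A'}$, hence of $N$, by the operation described after \Cref{what is ext1}, which shifts $\delta$ by the image of the class under $\Ext^1_\MS(\tate,J\otimes_\bC\ft_{A/\Lambda})\to\Ext^1_\MS(\tate,J\otimes_\bC\ft_\cD)\subset\Hom_\MS(\tate,J\otimes_\bC M[2])$.

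For preciseness: when $\delta=0$, \Cref{lem cat of lifts 2} (again in $D^b\MM(X,\bC)$) identifies the isomorphism classes in $\cD^\MS_x(A')$ with the splittings of $N$, hence with a torsor under $\Hom_\MM(V_0,J\otimes_\bC V_0[1])=\Hom_\MS(\tate,J\otimes_\bC M[1])$, functorially in $J$; the forgetful functor to $\cD_x(|A'|)$ is equivariant via the second arrow $\Hom_\MS(\tate,J\otimes_\bC M[1])\to\Hom_\MS(\tate,J\otimes_\bC\ft_\cD)\hookrightarrow J\otimes_\bC\ft_\cD$ of the sequence of \Cref{lem beilinson}, which is (8), and the automorphisms of a lift $x'$ are by \Cref{cor def abs} the operators $\id+\alpha\sigma\pi$ with $\sigma$ a morphism of $\bC$-MS $V_0\to V_0$, i.e.\ $\Hom_\MS(\tate,J\otimes_\bC\fa_\cD)$, with the evident inclusion into $J\otimes_\bC\fa_\cD$, which is (9). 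The main obstacle is the single technical input underlying (6a) and (8): that liftability and the torsor of lifts \emph{inside} $\cD^\MS$ --- for mixed-module coefficients, not merely for the underlying local systems with a Tate extension class --- are governed exactly by $\delta$ and the groups $\Hom_\MS(\tate,J\otimes_\bC M[\bullet])$. This is the $D^b\MM$-refinement of \Cref{lem cat of lifts 2} furnished by \Cref{lifting MHM ext}; once it is in hand, the equivariance statements in (6c) and (8) reduce to matching this picture with the twisting operations of \Cref{twists of MHS alg} and \Cref{what is ext1}, which is routine.
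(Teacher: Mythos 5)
Your proposal is correct and follows essentially the same route as the paper: the same $M=\pt_{X*}\cHom(V_0,V_0)$, the same identification of $\fa_\cD,\ft_\cD,\fo_\cD$ with $\Ext^\bullet_{\bC_X}(V_0,V_0)$, the lift of the universal first-order object via the Tate-ness of the identity class, and the obstruction class defined as the boundary map in $D^b\MM(X,\bC)$, which coincides with the paper's $\partial_2\eta^\MS_{V,A/A'}$, with (6c) and (7)--(9) handled by the same twisting/torsor computations. The only (cosmetic) difference is that you package liftability via splittings of the two-term complex $N$ (the \Cref{lem cat of lifts 2} formulation) whereas the paper uses the diagram category \Cref{lem cat of lifts MHM} together with the long exact sequence \eqref{LESMHM}; these are interchangeable.
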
  
\begin{proof}We verify the conditions in \Cref{def Hodge enhancement}.  The cofibered category $\cD^\MS_{V_0}\to (\MS\textrm{-}\Art/\tate)$ clearly satisfies condition (2) and in condition (3) we take \eqref{forgetful diagram} to be the obvious forgetful diagram.  Condition (1) is trivial ($\Lambda=\tate$).

For (4), let $\pt_X:X\to\Spec\bC$ be the map to a point.  Taking $M=\pt_{X*}\cHom(V_0,V_0)$ (as in \Cref{lem esnault}), we have natural isomorphisms $\mathscr{H}^i(M)\cong \Ext^i_{\bC_X}(V_0,V_0)$, equipped with their natural mixed structures.

For (5), the universal extension
\begin{equation}\label{onestep}0\to\Ext^1_{\bC_X}(V_0,V_0)^\vee\otimes_{\tate} V_0\to V_1\to V_0\to 0\end{equation}
can be lifted to the category of $\bC$-AVMS (with the fixed lift of $V_0$) by \Cref{lem esnault} since the classifying map is the identity $\Ext^1_{\bC_X}(V_0,V_0)\to \Ext^1_{\bC_X}(V_0,V_0)$ and in particular a morphism of $\bC$-MS.

For (6), suppose $(A,V)$ is an element of $\cD^\MS_{V_0}(A)$, which comes equipped with a morphism of $V\to V_0$ in $\cD^\MS$ which becomes an isomorphism over $\bC$.  Then \eqref{extm} and \eqref{ext3} live in the category of $\bC$-AVMS, so there is an exact sequence (all Homs are in $D^b\MM(X,\bC)$)

\begin{equation}
\begin{tikzcd}[sep=small, scale cd=1]
   0\ar[r]&\Hom(V_0,J\otimes_{\tate} V_0)\ar[r,"\pi_0^*"]& \Hom(V,J\otimes_{\tate} V_0)\ar[r]&\Hom(\m_{A}\otimes_A V, J\otimes_{\tate} V_0)\\[-15pt]
   \ar[r,"\partial_1"]&\Hom(V_0,J\otimes_{\tate} V_0[1])\ar[r,"\pi_1^*"]& \Hom(V,J\otimes_{\tate} V_0[1])\ar[r]&\Hom(\m_{A}\otimes_A V, J\otimes_{\tate} V_0[1])\\[-15pt]
   \ar[r,"\partial_2"]&\Hom(V_0,J\otimes_{\tate} V_0[2]).&& 
\end{tikzcd}\label{LESMHM}\end{equation}
and the class of the extension \eqref{extm} yields $\eta_{V,A/A'}^\MS\in \Hom_
{\MM}(\m_{A}\otimes_A V, J\otimes_{\tate} V_0[1])$.  We take $\obs^\MS_{A/A'}(V)=\partial_2\eta_{V,A/A'}^\MS\in\Hom_{\MM}(V_0,J\otimes_{\tate} V_0[2])$.

\begin{lem}\label{lem cat of lifts MHM}

The category of commutative diagrams of smooth objects of $D^b\MM(X,\bC)$ with distinguished rows and columns
        \begin{equation}
    \begin{tikzcd}
    &V_0[-1]\ar[d]\ar[r,equals]&V_0[-1]\ar[d]&\\
        J\otimes_{\tate} V_0\ar[r]\ar[d,equals]&\m_{A'}\otimes_A V\ar[r]\ar[d,"\mu"]&\m_A\otimes_A V\ar[r]\ar[d]&J\otimes_{\tate} V_0[1]\ar[d,equals]\\
        J\otimes_{\tate} V_0\ar[r,"\alpha"]&V'\ar[r,"\beta"]\ar[d,"\pi"]&V\ar[r,"\gamma"]\ar[d]&J\otimes_{\tate} V_0[1].\\
            &V_0\ar[r,equals]&V_0&
    \end{tikzcd}\label{MHM lift}
    \end{equation}where all morphisms but $\alpha,\beta,\gamma,\mu$ are the canonical ones and whose morphisms are morphisms of diagrams in $D^b\MM(X,\bC)$ which are the identity on all but the $V'$ factor is equivalent to the category of lifts $\cD_V^\MS(A')$.  
\end{lem}
\begin{proof}As in \Cref{lem cat of lifts}, using that an exact triangle of smooth objects in $D^b\MM(X,\bC)$ (supported in the same degree) has an underlying short exact sequence of local systems. 
\end{proof}

From the definition, if $\obs^\MS_{A/A'}(V)=0$, then there is a diagram \eqref{MHM lift}, so $V$ lifts, thus giving condition (a).  Condition (b) is clear.  For condition (c), the composition  \[\Ext_{\MS}^1(\tate,J\otimes_{\tate}\ft_A)=\Hom_{\MS}(\ft_A^\vee,J[1])\to\Hom_{\MS}(\ft_\cD^\vee,J[1])\to \Hom_{\MM}(V_0,J\otimes_{\tate}V_0[2])\] can be thought of as precomposing $(\ft^\vee_A\to J[1])\otimes_{\tate}V_0$ with the composition
\[
V_0\to\ft^\vee_\cD\otimes_{\tate}V_0[1]\to \ft^\vee_A\otimes_{\tate}V_0[1]
\]
where the first morphism is the universal extension.  This composition is equal to the composition
\[V_0\to \fm_A\otimes_A V[1]\to \ft_A^\vee\otimes_{\tate} V_0[1]\]
where the first map is the one associated to $\fm_A\to\fm_A/\fm_A^2=\ft_A^\vee$.  The obstruction $\obs^\MS_{A'/A}(V)$ can be thought of as the composition of $V_0[-1]\to \fm_A\otimes_A V$ and $\fm_A\otimes_A V\to J\otimes_{\tate} V_0[1]$.  A map $\ft_A^\vee\to J[1]$ acts on the extension $J\to A'\to A$ and modifies the corresponding map $\fm_A\otimes_A V\to J\otimes_{\tate}V_0[1]$ by adding the composition $\fm_A\otimes_A V\to \ft_A^\vee\otimes_{\tate}V_0\to J\otimes_{\tate}V_0[1]$ while fixing $V_0[-1]\to\fm_A\otimes_AV$.  The obstruction element is then modified by adding the composition $V_0[-1]\to \fm_A\otimes_A V\to \ft_A^\vee\otimes_{\tate}V_0\to J\otimes_{\tate}V_0[1]$, so by the above (c) is proven.

The remaining conditions (7)-(9) are easily verified using \Cref{lem cat of lifts MHM} and \eqref{LESMHM} as in \Cref{lem cat of lifts}.
\end{proof}

\subsubsection{The diagonal}  We continue the discussion from \Cref{the diagonal}.

Let $\Lambda$ be a pro-$\tate$-MS-algebra, $U_1,U_2$ two pro-$\Lambda$-AVMS, and $f_0:U_{1,0}\to U_{2,0}$ an isomorphism of closed points as $\bC$-AVMS.  We let $\cD_{f_0}^\MS$ be the category cofibered in groupoids over $(\MSArt/\Lambda)$ consisting of pairs $(A,f)$ where $A$ is a pro-$\Lambda$-MS-algebra and $f:A\otimes_\Lambda U_1\to A\otimes_\Lambda U_2$ an isomorphism of $A$-AVMS.

  \begin{prop}\label{diagonal enhancement}
    $\cD_{f_0}^\MS\to(\MSArt/\Lambda)$ is part of a precise MS-enhancement of $\cD_{|f_0|}\to(\Art/|\Lambda|)$.
\end{prop}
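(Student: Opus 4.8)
The plan is to verify the conditions of \Cref{def Hodge enhancement} for $\cD_{f_0}^\MS\to(\MSArt/\Lambda)$ exactly as was done for $\cM_B(X)^\op_{V_0}$ in \Cref{prop mini MB(X)}, but now in the relative/isomorphism-of-families setting described in \Cref{the diagonal}. Conditions (1)--(3) are formal: the base $\Lambda$ carries a $\tate$-MS-algebra structure by hypothesis, the cofibered category $\cD_{f_0}^\MS$ satisfies the Rim--Schlessinger condition since isomorphisms of $A$-AVMSs glue, and the forgetful diagram \eqref{forgetful diagram} is the obvious one sending $(A,f)$ to $(|A|,|f|)$. The object $M$ of $D^b(\CMS)$ governing the deformation theory should be $M:=\pt_{X*}\cHom(U_{1,0},U_{2,0})$, whose cohomology sheaves are $\Ext^i_{\bC_X}(U_{1,0},U_{2,0})$ equipped with their functorial mixed structures from \Cref{lem esnault}; by \Cref{diagonal defo} these are canonically $\fo_{\cD_{|f_0|}}$ (in degree $2$), $\ft_{\cD_{|f_0|}}$ (in degree $1$), and $\fa_{\cD_{|f_0|}}=0$ (degree $0$), so the enhancement will be precise with vanishing infinitesimal automorphisms. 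The key point is simply that the short exact sequence \eqref{diagonal ses} of $A'$-local systems $\cHom$ is in fact a short exact sequence of $A'$-AVMSs (being an internal-Hom sequence of variations), so its connecting map $\partial$ lands in $\Hom_{\MM}$ and produces the obstruction class.

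Concretely, I would carry out the following steps. First, identify the obstruction: given a small extension $J\to A'\to A$ in $(\MSArt/\Lambda)$ and $(A,f)\in\cD_{f_0}^\MS(A)$, apply $\pt_{X*}$ to the MS-sequence \eqref{diagonal ses} to get a long exact sequence in $\Hom_{\MM}$; the image $\partial f^\MS\in\Hom_\MM(U_{1,0},J\otimes_\tate U_{2,0}[1])$ of $f$ is the candidate obstruction $\obs^\MS_{A/A'}(f)$. Second, verify conditions (6a)--(6c): vanishing of $\partial f^\MS$ is equivalent to lifting $f$ to an isomorphism of $A'$-AVMSs $A'\otimes_\Lambda U_1\to A'\otimes_\Lambda U_2$ — this requires a relative analogue of \Cref{lem cat of lifts MHM} describing the category $\cD^\MS_{(A,f)}(A')$ as diagrams of distinguished triangles of smooth objects of $D^b\MM(X,\bC)$, from which one reads off that lifts form a torsor (when nonempty) under $\Hom_\MM(U_{1,0},J\otimes_\tate U_{2,0})=\Hom_\MS(\tate,J\otimes_\tate\ft_{\cD})$; compatibility with $\obs_{|A|/|A'|}(|f|)$ under \eqref{defo obs} and the $\Ext^1_\MS$-equivariance of (6c) follow by the same bookkeeping as in \Cref{prop mini MB(X)}, now simpler since $\fo_{\cD}=\Ext^1_{\bC_X}(U_{1,0},U_{2,0})$ sits in a single cohomological degree and $M$ has no $\mathscr{H}^0$. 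Third, condition (5): the universal first-order object over $\tate\oplus\ft_\cD^\vee$ is the isomorphism $f_0$ twisted by the tautological class in $\ft_\cD^\vee\otimes_\tate\ft_\cD=\Hom_\MS(\ft_\cD,\ft_\cD)$, which is a morphism of $\bC$-MS and hence lifts, exactly as \eqref{onestep} was handled. Conditions (7)--(9) on automorphisms are immediate since $\fa_{\cD_{|f_0|}}=0$ and $\mathscr{H}^0(M)=0$.

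The one step that needs genuine care — the ``main obstacle'' — is establishing the relative version of \Cref{lem cat of lifts MHM}, i.e. that an object of $\cD^\MS_{(A,f)}(A')$ is the \emph{same} as a splitting, in $D^b\MM(X,\bC)$, of the appropriate two-term complex built from \eqref{diagonal ses}, compatibly with the already-chosen data over $A$. In the unframed case of \Cref{prop mini MB(X)} this rested on the observation that a distinguished triangle of smooth objects of $D^b\MM(X,\bC)$ supported in the same degree has an underlying short exact sequence of local systems, together with \Cref{lem esnault}(2) relating $\Hom_\MM(\cdot,\cdot[1])$ to $\Ext^1_\MS(\tate,\cdot)$ of the appropriate Hom; here one must additionally track that the isomorphism $f$ (not just an extension) is encoded, which is cleanest by reducing to the framed/relative picture already used for the pullback morphism in \Cref{pullback section} and \Cref{section framing basic}: the diagonal $\cM_B(X)\to\cM_B(X)\times\cM_B(X)$ is, after base change along $\Spec\Lambda\times\Spec\Lambda$, representable, and its deformation theory is governed by $\cHom(U_1,U_2)$ in precisely the way \eqref{diagonal ses}--\eqref{diagonal defo} record, so the MS-lift of \eqref{diagonal ses} to a sequence of AVMSs does the job. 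Once that identification is in place, \Cref{MHS Schlessinger} applies verbatim: any miniversal point of $\cD_{|f_0|}$ lifts uniquely to $\cD_{f_0}^\MS$, and since $\fa_\cD=0$ the enhancement is precise, giving the desired pro-$\Lambda$-MS-algebra structure on $\hat\cO$ together with the statement that $\hat f$ is a morphism of pro-$\hat\cO$-AVMS, which is exactly part (3) of \Cref{thm:versal}.
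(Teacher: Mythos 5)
Your overall route is the paper's: observe that \eqref{diagonal ses} is a short exact sequence of $A'$-AVMSs, take the connecting map to get an obstruction $\partial f\in\Hom_{\MM}(U_{1,0},J\otimes_{\tate}U_{2,0}[1])$, note that lifts (when they exist) form a torsor under $\Hom_{\MS}(\tate,J\otimes_{\tate}\ft)$ with $\ft=\Hom_{\bC_X}(U_{1,0},U_{2,0})$, that there are no infinitesimal automorphisms, and that the universal first-order homomorphism over $\tate[\ft^\vee]$ supplies condition (5). However, your choice $M=\pt_{X*}\cHom(U_{1,0},U_{2,0})$ is off by a shift, and the degree bookkeeping you attach to it is wrong: by \Cref{diagonal defo} the tangent space of $\cD_{|f_0|}$ is $\Hom_{\bC_X}(U_{1,0},U_{2,0})$ (which sits in degree $0$ of your complex) and the obstruction space is $\Ext^1_{\bC_X}(U_{1,0},U_{2,0})$ (degree $1$), not $\Ext^1$ and $\Ext^2$ in degrees $1$ and $2$ as you assert; moreover $\mathscr{H}^0$ of your $M$ is $\Hom_{\bC_X}(U_{1,0},U_{2,0})\neq 0$ in general, so condition (7) of \Cref{def Hodge enhancement} (which forces $\mathscr{H}^0(M)\cong\fa_\cD=0$) fails for your $M$ as literally stated. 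The paper takes $M=\pt_{X*}\cHom(U_{1,0},U_{2,0})[-1]$, so that $\mathscr{H}^1(M)=\ft_\cD$, $\mathscr{H}^2(M)=\fo_\cD$, $\mathscr{H}^0(M)=0=\fa_\cD$; with this shift the formulas you actually use later (obstruction in $\Hom_{\MM}(U_{1,0},J\otimes_{\tate}U_{2,0}[1])=\Hom_{\MS}(\tate,J\otimes_{\tate}M[2])$, torsor under $\Hom_{\MS}(\tate,J\otimes_{\tate}M[1])=\Hom_{\MS}(\tate,J\otimes_{\tate}\ft)$, ``$M$ has no $\mathscr{H}^0$'') become consistent, so this is a fixable indexing slip rather than a failure of the method, but it should be corrected.

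The step you single out as the main obstacle---a relative analogue of \Cref{lem cat of lifts MHM}---is not needed, and the paper does without it. In \Cref{prop mini MB(X)} one must classify \emph{extensions}, which is why a diagram lemma identifying lifts with splittings in $D^b\MM(X,\bC)$ is required; here the objects being lifted are \emph{morphisms}: a lift of $(A,f)$ to $A'$ is exactly a Tate class of the middle term of the AVMS sequence \eqref{diagonal ses} mapping to $f$ (it is automatically an isomorphism, being one over $A$), so the long exact sequence obtained by applying $\Hom_{\MM}(\pt_X^*\tate,-)$, i.e. $\pt_{X*}$ together with \Cref{lem beilinson}, already yields both the obstruction statement (6a) and the transitive action of $\Hom_{\MS}(\tate,J\otimes_{\tate}\ft)$ required in (8). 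Your detour through the framed/pullback picture of \Cref{pullback section} is harmless but superfluous. Apart from these two points, the remaining verifications (5), (6b), (6c) proceed as you indicate, by the same bookkeeping as in \Cref{prop mini MB(X)}, which is precisely what the paper's much shorter proof records.
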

\begin{proof}
    We take $M=\pt_{X*}\cHom(U_{1,0},U_{2,0})[-1]$ and set $\ft=\mathscr{H}^1(M)=\Hom_{\bC_X}(U_{1,0},U_{2,0})$, $\fo=\mathscr{H}^2(M)=\Ext^1_{\bC_X}(U_{1,0},U_{2,0})$.  The first order element is given by the universal homomorphism 
    \[\tate[\ft^\vee]\otimes_\Lambda U_1\to \tate[\ft^\vee]\otimes_\Lambda U_2 \]
    where $\ft^\vee$ is square zero and $\tate[\ft^\vee]$ has the trivial $\Lambda$-algebra structure.

    Suppose $J\to A'\to A$ is a small extension in $(\MSArt/\Lambda)$ and $(A,f)$ an object of $\cD^\MS_{f_0}$.  Then \eqref{diagonal ses} is a short exact sequence of $A'$-AVMS, and as in \eqref{diagonal defo} the element $\partial f\in \Hom_{\MM}(U_{1,0},J\otimes_{\tate}U_{2,0}[1
])$ obstructs the existence of a lift.  Moreover, twisting the mixed structure on $A'$ by an element of $\Ext^1_\MS(\tate,J\otimes_{\tate} \ft_{A/\Lambda})$ will act in the required way on the obstruction class.  Finally, the set of lifts is clearly a torsor over $\Hom_\MS(\tate,J\otimes_{\tate}\ft)$.  
\end{proof}

\begin{proof}[Proof of \Cref{thm:versal}]  
    Parts (1) and (2) follow from \Cref{prop mini MB(X)} and \Cref{MHS Schlessinger}, while \Cref{diagonal enhancement} and \Cref{MHS Schlessinger} give part (3).  The last statement follows since if $X$ is normal and $V_0$ underlies a $\bC$-VHS then the Hodge structure on $\Ext^1_{\bC_X}(V_0,V_0)$ has weights $\geq 1$, for instance by the computation in \Cref{IH and weights} (after reducing to a curve by Lefschetz).
\end{proof}

\subsubsection{Pullback}We continue the discussion\footnote{We will in fact only need the relative deformation theory of the pullback to a point.  The deformation theory of the framed space can be carried out directly as in the treatment of $\cM_B(X)$; \Cref{prop rel mini map} below will however show that the pullback morphism $f^*:\cM_B(Y)\to\cM_B(X)$ is representable by mixed structures as in \Cref{thm:versal}.} from \Cref{pullback section}.

    Let $f:X\to Y$ be an algebraic map, $f^*:\cM_B(Y)\to\cM_B(X)$ the pullback, $V_0\in \cM_B(Y)(\bC)$ underlying a $\bC$-AVMS, and suppose $\Lambda$ (resp. $\hat U$) is equipped with a pro-$\tate$-$\MS$-algebra (resp. pro-$\Lambda$-AVMS) structure.  Let $\cD^\MS$ be the category cofibered in groupoids over $(\MSArt/\Lambda)$ whose objects over $A$ are pairs $(V,\beta)$ where $V$ an $A$-AVMS on $Y$ and $\beta:\hat U\to f^*V$ a morphism of pro-$\Lambda$-AVMS which is an isomorphism over $A$.  Let $\beta_0:\hat U\to f^*V_0$ be the morphism coming from the chosen identification of closed points.  The notion of morphism in $\cD^\MS$ is as before but now respecting the mixed structures.

\begin{prop}\label{prop rel mini map}
 $\cD^\MS_{(V_0,\beta_0)}\to (\MS\textrm{-}\Art/\Lambda)^\op$ is part of a precise Hodge enhancement of $\cD\to(\Art/|\Lambda|)^\op$. 
\end{prop}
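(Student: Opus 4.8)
The plan is to verify the axioms of \Cref{def Hodge enhancement} for $\cD^\MS_{(V_0,\beta_0)}\to\cD_{(V_0,\beta_0)}$, mirroring the proofs of \Cref{prop mini MB(X)} and \Cref{diagonal enhancement} but applied to the relative deformation theory of the pullback developed in \Cref{pullback section}. The starting point is the controlling complex. Writing $\pt_X=\pt_Y\circ f$, the six-functor formalism (\Cref{lem: six functors}, \Cref{lem: six functors tw}) promotes the adjunction unit $\pt_Y^*\tate\to f_*\pt_X^*\tate$ to a morphism in $D^b\MM(Y,\bC)$; I would let $M_{X/Y}$ denote its cone there, observing that its image under $\mathrm{coeff}$ (resp. $\mathrm{sp}_1^B$) is the topological complex $M_{X/Y}$ of \Cref{pullback section} and that $f_*\pt_X^*\tate$ is a genuine (non-smooth) complex of mixed modules. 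Tensoring with the smooth module $\cHom(V_0,V_0)$, applying $\pt_{Y*}$, and shifting appropriately then produces an object $M$ of $D^b(\CMS)$ whose cohomology sheaves are, with their mixed structures, the spaces $\fa_\cD=0$, $\ft_\cD=\Hom_{\bC_Y}(V_0,M_{X/Y}\otimes_\bC V_0)$ and $\fo_\cD=\Ext^1_{\bC_Y}(V_0,M_{X/Y}\otimes_\bC V_0)$ of \Cref{lem rel splitting}, appearing as $\mathscr{H}^0$, $\mathscr{H}^1$ and $\mathscr{H}^2$.

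With $M$ in hand I would check conditions (1)--(6). Condition (1) is the hypothesised pro-$\tate$-MS-algebra structure on $\Lambda$. For (2)--(3), the category of pairs $(V,\beta)$ with $V$ an $A$-AVMS on $Y$ and $\beta\colon\hat U\to f^*V$ a morphism of pro-$\Lambda$-AVMS that is an isomorphism over $A$ is cofibered in groupoids over $(\MSArt/\Lambda)$ with $\cD^\MS(\tate)$ a point, satisfies Rim--Schlessinger just as in \Cref{prop mini MB(X)}, and maps to $\cD$ by the evident forgetful functors. Condition (4) is the previous paragraph. For (5), the universal first-order object corresponds, under the description in \Cref{lem rel splitting}, to the identity element of $\Hom(\ft_\cD,\ft_\cD)$, which is a morphism of $\bC$-MS since $\ft_\cD=\mathscr{H}^1(M)$ carries a $\bC$-MS, hence it lifts to $\cD^\MS(\tate\oplus\ft_\cD^\vee)$. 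For (6), given a small extension $J\to A'\to A$ in $(\MSArt/\Lambda)$ and $(V,\beta)\in\cD^\MS(A)$, I would re-run the Betti construction of \Cref{lem rel splitting} --- the two-term complex $N$ of \Cref{pullback section}, its pullback $f^*N$, the splitting of $f^*N$ supplied by $\beta$, and the resulting connecting diagram --- inside $D^b\MM$, obtaining $\obs^\MS_{A/A'}(V,\beta)\in\Hom_{\MM}(V_0,J\otimes_\tate M_{X/Y}\otimes_\tate V_0[1])\cong\Hom_\MS(\tate,J\otimes_\tate M[2])$ lifting the obstruction of \Cref{lem rel splitting}(1); properties (6a)--(6c) then follow verbatim from the argument in \Cref{prop mini MB(X)}, using the relative analogue of \Cref{lem cat of lifts MHM}.

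For preciseness, conditions (7)--(9), I would again appeal to the relative analogue of \Cref{lem cat of lifts MHM}: $\mathscr{H}^0(M)\cong\fa_\cD=0$ in accordance with \Cref{lem rel splitting}(4), the set of isomorphism classes of lifts of $(V,\beta)$ to $A'$ (when nonempty) is a torsor under $\Hom_\MS(\tate,J\otimes_\tate M[1])\cong\Hom_{\MM}(V_0,J\otimes_\tate M_{X/Y}\otimes_\tate V_0)$ functorially in $J$, and the forgetful functor to $\cD_{(|V|,|\beta|)}(|A'|)$ is equivariant via the second map of the relevant short exact sequence in condition (8) of \Cref{def Hodge enhancement}; since $\fa_\cD=0$ there are no nontrivial infinitesimal automorphisms, so condition (9) is vacuous. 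Together with the previous step this establishes that $\cD^\MS_{(V_0,\beta_0)}$ is a precise MS-enhancement of $\cD$, as claimed.

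I expect the main obstacle to be bookkeeping rather than anything deep: the entire Betti computation of \Cref{pullback section} and \Cref{lem rel splitting} --- the complex $N$, its pullback, the splitting, and the connecting diagram producing the obstruction --- must be carried out inside the triangulated category of mixed (Hodge or twistor) modules and shown at each stage to be compatible with the Betti realization $\mathrm{coeff}$ (resp. $\mathrm{sp}_1^B$). This is formal, because every operation used ($f^*$, $f_*$, $\otimes$, $\pt_{Y*}$, cones, adjunction units) is available for mixed modules by \Cref{lem: six functors} and \Cref{lem: six functors tw} and commutes with the Betti realization; the one point that genuinely needs attention is fixing the cohomological shift on $M$ so that the relative tangent and obstruction spaces of \Cref{lem rel splitting} land in $\mathscr{H}^1(M)$ and $\mathscr{H}^2(M)$, as required by the conventions of \Cref{def Hodge enhancement}.
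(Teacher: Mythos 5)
Your overall route is the same as the paper's: you take the controlling object $M$ to be the (shifted) push\-forward to a point of $\cHom(V_0,M_{X/Y}\otimes_{\tate}V_0)$, promote the topological triangle $\bC_Y\to Rf_*\bC_X\to M_{X/Y}$ and the whole construction of \Cref{pullback section}/\Cref{lem rel splitting} to $D^b\MM$ via the six-functor formalism, and observe that most axioms of \Cref{def Hodge enhancement} are checked exactly as in \Cref{prop mini MB(X)}. However, there is a genuine gap at the point where you assert that (6a)--(6c) ``follow verbatim from the argument in \Cref{prop mini MB(X)}.'' The paper explicitly singles out (5) and (6)(c) as the two conditions that do \emph{not} carry over from the absolute case, and its proof consists almost entirely of a separate computation for (6)(c). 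The reason the absolute argument does not transfer is that in the relative setting the obstruction class is built from the splitting of $f^*N$ supplied by the framing $\beta\colon\hat U\to f^*V$, so the pro-$\Lambda$-structure of $\hat U$ and the $\Lambda$-algebra structure of $A'$ enter the obstruction through $Rf_*(A'\otimes_\Lambda\hat U)$; twisting the mixed structure on the small extension by $t\in\Ext^1_{\MS}(\ft_{A/\Lambda}^\vee,J)$ acts precisely through the term $\fm_\Lambda A'$, which has no counterpart in the absolute case, where the obstruction is simply $\partial_2$ of the extension class $\eta_{V,A/A'}$. Identifying the difference $\obs^\MS_{A/A''}(V)-\obs^\MS_{A/A'}(V)$ with the image of $t$ in $\Hom_{\MM}(V_0,J\otimes_{\tate}M_{X/Y}\otimes_{\tate}V_0[1])$ requires the auxiliary comparison object $\left(\fm_{A'}\oplus_{\fm_A}\fm_{A''}\right)/(J+\fm_{A'}^2+\fm_\Lambda A')_{\Delta}$ and a nontrivial derived-category diagram chase; asserting it follows verbatim skips exactly the content of the proposition.

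A secondary (smaller) issue is condition (5): in the relative deformation category the universal first-order object is a \emph{pair} $(V_1,\beta_1)$, so one must lift the framing along with the extension. The paper does this concretely by taking the cone of the coevaluation morphism $V_0\to\ft^\vee\otimes_{\tate}M_{X/Y}\otimes_{\tate}V_0$ in $D^b\MM$, which simultaneously produces $V_1$ and the map $V_1\to\ft^\vee\otimes_{\tate}Rf_*f^*V_0$ encoding $\beta_1$. Your one-line remark that the identity of $\ft_\cD$ is a morphism of mixed structures is the right starting point, but by itself it only lifts the class, not the framed object; you need the relative analogue of \Cref{lem cat of lifts MHM} (or the cone construction above) spelled out to conclude. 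With (5) made explicit and the equivariance computation (6)(c) actually carried out, your outline matches the paper's proof.
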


\begin{proof}
We take $M=\pt_{X*}\cHom(V_0,M_{X/Y}\otimes_{\tate}V_0)[-1]$.  The discussion leading up to \Cref{lem rel splitting} applies to $D^b\MM(X,\bC)$ as well, in particular giving an obstruction element in $\Hom_{\MS}(\tate,J\otimes_{\tate}M[2])$.  The verification of most parts of the definition is as in \Cref{prop mini MB(X)}, except (5) and (6)(c).  

    For (5), let $\ft:=\Hom_{\bC_X}(V_0,M_{X/Y}\otimes_{\tate}V_0)=\mathscr{H}^1(M)$ with its natural mixed structure.  The natural morphism $V_0\to \ft^\vee\otimes_{\tate}M_{X/Y}\otimes_{\tate}V_0$ yields by taking cones a lift of the universal first order element:
    \[\begin{tikzcd}
        \ft^\vee\otimes_{\tate}V_0\ar[r]\ar[d,equals]&V_1\ar[r]\ar[d,dashed]&V_0\ar[r]\ar[d]&\ft^\vee\otimes_{\tate}V_0[1]\ar[d,equals]\\
        \ft^\vee\otimes_{\tate}V_0\ar[r]&\ft^\vee\otimes_{\tate}Rf_*f^*V_0\ar[r]&\ft^\vee\otimes_{\tate} M_{X/Y}\otimes_{\tate}V_0\ar[r]&\ft^\vee\otimes_{\tate}V_0[1].
    \end{tikzcd}\]
    
 For (6)(c), for a small extension $J\to A'\to A$ in $(\MSArt/\Lambda)$ and an object $(A,V)$ of $\cD^\MS$, unwrapping the definitions, the obstruction class is given by the morphism induced on total complexes by
 \[
 \begin{tikzcd}
     J\otimes_{\tate}V_0\ar[r]\ar[d,equals]& \fm_{A'}\otimes_A V\ar[d]\ar[r]&V\ar[d]\\
     J\otimes_{\tate}V_0\ar[r] & Rf_*(A'\otimes_\Lambda \hat U)\ar[r]&Rf_*(A\otimes_\Lambda \hat U)
 \end{tikzcd}
 \]
 where the bottom left map comes from the composition $J\otimes_{\tate}V_0\to Rf_*(J\otimes_{\Lambda}\hat U)\to Rf_*(A'\otimes_\Lambda \hat U)$ using the identification on closed points induced by the map $\hat U\to f^*V$.
 Twisting by an element $t$ of $\Ext^1_{\MS}(\ft^\vee_{A/\Lambda},J)$ we obtain a new small extension $J\to A''\to A$, with the same underlying small extension of $|\Lambda|$-algebras.  As in \Cref{twists of MHS alg}, the two extensions are isomorphic on $J+\fm_{A'}^2+\fm_\Lambda A'=J+\fm_{A''}^2+\fm_\Lambda A''$; the element $t$ can be thought of in the derived category as the map of complexes
 \[
 \begin{tikzcd}
     J\ar[r," i"]\ar[d,equals]&\left( \fm_{A'}\oplus_{\fm_A}\fm_{A''}\right)/(J+\fm_{A'}^2+\fm_\Lambda A')_{ \Delta}=:B\ar[d]\\
     J\ar[r]&0
 \end{tikzcd}
 \]
 where $i$ is the inclusion $j\mapsto (-j,0)=(0,j)$, and $(J+\fm_{A'}^2+\fm_\Lambda A')_{ \Delta}$ is the diagonal embedding of $J+\fm_A^2+\fm_\Lambda A'$.

 The difference of the obstruction classes $\obs_{A/A''}^\MS(V)-\obs_{A/A'}^\MS(V)$ is the element in $\Hom_{\MM}(V_0,J\otimes_{\tate}M_{X/Y}\otimes_{\tate}V_0[1])$ given by

  \[
 \begin{tikzcd}
         J\otimes_{\tate}V_0\ar[r,"i"]\ar[d,equals]& (\fm_{A'}\oplus_{\fm_A}\fm_{A''}/(J+\fm_{A'}^2+\fm_\Lambda A')_{\Delta})\otimes_A V\ar[d]\ar[r]&V/(\fm_A^2+\fm_\Lambda A)V\ar[d]\\
     J\otimes_{\tate}V_0\ar[r,"i"] & Rf_*((A'\oplus_{ A} A'')/(J+\fm_{A'}^2+\fm_\Lambda A')_{ \Delta}\otimes_\Lambda \hat U)\ar[r]&Rf_*((A/\fm_A^2+\fm_\Lambda A)\otimes_\Lambda \hat U)  
 \end{tikzcd}
 \]
 where $i$ is the antidiagonal embedding as above and the vertical maps are the natural ones.  This is identified with the image of $t$ in $\Hom_{\MM}(V_0,J\otimes_{\tate}M_{X/Y}\otimes_{\tate}V_0[1])$, which is the composition of the universal evaluation $V_0\to \ft\otimes M_{X/Y}\otimes_{\tate}V_0$ with the map $J^\vee\to \ft[1]$ given by $t$ tensored with $M_{X/Y}\otimes_{\tate}V_0$.  Indeed, it is identified with the following composition in the derived category
 \[
 \begin{tikzcd}
     0\ar[r]\ar[d]&\ft_{A/\Lambda}^\vee\otimes V_0\ar[r]\ar[d]&V_1\ar[d]&& V_0\ar[d]\\
     0\ar[r]&\ft^\vee\otimes V_0\ar[r]&\ft^\vee\otimes Rf_*\hat U&&\ft\otimes_{\tate} M_{X/Y}\otimes_{\tate} V_0\arrow[d,sloped, phantom,"\cong"] \\
     J\otimes V_0\ar[r]\ar[u]\ar[d]&J\otimes Rf_*\hat U\oplus B\otimes V_0\ar[u]\ar[d]\ar[r]&B\otimes Rf_*\hat U\ar[u]\ar[d]&& \ft\otimes_{\tate} M_{X/Y}\otimes_{\tate} V_0\ar[d]\\
     J\otimes V_0\ar[r]&J\otimes Rf_*\hat U\ar[r]&0&&J\otimes_{\tate} M_{X/Y}\otimes_{\tate} V_0
 \end{tikzcd}
 \]
\end{proof}

\begin{proof}[Proof of \Cref{thm:versal frame}]
    Since the deformation theory of $R_B(X,x)$ is identified with the relative deformation theory of the pullback to a point (see the discussion in \Cref{section framing basic}), \Cref{prop rel mini map} and \Cref{MHS Schlessinger} give (1) and (2), using that the universal property from \Cref{MHS Schlessinger}(2) applies.  The universal property immediately implies (3), since pullbacks, direct sums, and tensor products of AVMSs are AVMSs.
\end{proof}

\subsubsection{Fixed local monodromy}  We continue the discussion of \Cref{FLM}.  

If $V_0$ underlies a $\bC$-AVMS, then we may equip $\Lambda$ (resp. $\hat V$) with a pro-$\tate$-$\MS$-algebra (resp. pro-$\Lambda$-AVMS) structure by \Cref{thm:versal}.  Let $\cD^\MS$ be the full subcategory of $(\MSArt/\Lambda)$ (considered as a category cofibered in groupoids over $(\MSArt/\Lambda)$) whose objects are $A$ for which $A\otimes_\Lambda \hat V$ has fixed local monodromy as a local system at each point of $D$.

 The following proposition demonstrates that showing a full sub-deformation category of a deformation category admitting an enhancement itself admits an enhancement is much easier---we must just verify the relative obstructions are Tate classes in a mixed structure. 
\begin{prop}\label{prop rel FLM}
 $\cD^\MS\to (\MS\textrm{-}\Art/\Lambda)^\op$ is part of a precise Hodge enhancement of $\cD\to(\Art/|\Lambda|)^\op$. 
\end{prop}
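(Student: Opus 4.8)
The plan is to run through the axioms of \Cref{def Hodge enhancement} and observe that almost all of them are vacuous here. Since $FM(V_0)\subset\cM_B(X)$ is a substack, \Cref{FLM obs}(1) gives $\ft_\cD=\fa_\cD=0$, so the only deformation-theoretic datum with content is the relative obstruction space, which by \Cref{FLM obs}(2) I would take to be $\fo_\cD=\bigoplus_{\bar x\in D}H^1(\bZ,\psi_{\bar x}j_{!*}\cHom(V_0,V_0))$. For $\cD^\MS$ one takes the full subcategory of $(\MSArt/\Lambda)$ already introduced above, where $\Lambda$ carries the pro-$\tate$-MS-algebra structure and $\hat V$ the compatible pro-$\Lambda$-AVMS structure furnished by \Cref{thm:versal}; the diagram \eqref{forgetful diagram} is the evident forgetful one, and condition (5) is vacuous because $\ft_\cD=0$.

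For the object $M$: \Cref{IH and weights} identifies $H^1(\bZ,\psi_{\bar x}j_{!*}\cHom(V_0,V_0))$ with $i_{\bar x}^!\bigl(j_{!*}(\cHom(V_0,V_0)[1])\bigr)[1]$ via the morphism $\psi_{\bar x}\to i_{\bar x}^!$. Since $V_0$ underlies a $\bC$-AVMS, $\cHom(V_0,V_0)[1]$ underlies a mixed module on $X$ (by \Cref{lem: six functors}(3), resp. \Cref{lem: six functors tw}), its intermediate extension $j_{!*}(\cHom(V_0,V_0)[1])$ is a mixed module on $\bar X$, and applying $i_{\bar x}^!$ produces a mixed module on the point, i.e. a $\bC$-MS. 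I would set $M:=\bigoplus_{\bar x\in D}i_{\bar x}^!\bigl(j_{!*}(\cHom(V_0,V_0)[1])\bigr)[-1]$, so that $\mathscr{H}^2(M)\cong\fo_\cD$ and $\mathscr{H}^0(M)=\mathscr{H}^1(M)=0$; then $\Hom_\MS(\tate,J\otimes_\tate M[2])=\Hom_\MS(\tate,J\otimes_\tate\fo_\cD)$ is exactly the space of Tate classes in $J\otimes_\tate\fo_\cD$, and $\Hom_\MS(\tate,J\otimes_\tate M[1])=0$. Conditions (7)--(9) (preciseness) then follow at once from $\ft_\cD=\fa_\cD=0$ together with the uniqueness of lifts in \Cref{FLM obs}(1).

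The remaining work is to construct the relative obstruction and verify condition (6), which I would do exactly as in the proofs of \Cref{prop mini MB(X)} and \Cref{diagonal enhancement}: reconstruct the obstruction of \Cref{FLM obs}(2) inside $D^b\MM(X,\bC)$. Given a small extension $J\to A'\to A$ in $(\MSArt/\Lambda)$ and $A\in\cD^\MS$, put $V:=|A|\otimes_\Lambda\hat V$ and $V':=|A'|\otimes_\Lambda\hat V$, which underlie an $A$-AVMS and an $A'$-AVMS. The commutative diagram of local systems with exact rows and columns used in the derivation of \Cref{FLM obs} --- with terms $\cHom(V_0,\fm_{A'}\otimes_AV)$, $\cHom(V_0,\fm_A\otimes_AV)$, the subobjects $E'\subset\cHom(V_0,V')$ and $E\subset\cHom(V_0,V)$, and $\bC_X=\pt_X^*\tate$ --- is a diagram of smooth objects of $D^b\MM(X,\bC)$ with short exact rows and columns. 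Because $V$ has fixed local monodromy, near each $\bar x\in D$ one has $V\cong V_0\otimes_\bC A$ as $\bC$-local systems, so the bottom row $0\to\cHom(V_0,\fm_A\otimes_AV)\to E\to\bC_X\to0$ and the left column $0\to J\otimes\cHom(V_0,V_0)\to\cHom(V_0,\fm_{A'}\otimes_AV)\to\cHom(V_0,\fm_A\otimes_AV)\to0$ split near $\bar x$ as local systems, whence the functor $W\mapsto i_{\bar x}^!\bigl(j_{!*}(W[1])\bigr)[1]$ (which computes $H^1(\bZ,\psi_{\bar x}j_{!*}W)$ by \Cref{IH and weights}) is exact on these sequences. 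Running the extension-class argument of \Cref{FLM} inside $D^b\MM(X,\bC)$ then produces $\obs^\MS_{A/A'}(A)\in\Hom_\MM(\tate,J\otimes_\tate M[2])$, whose image under $|-|$ is the class $\obs_{|A|/|A'|}$ of \Cref{FLM obs}(2), giving (6)(b); this forces (6)(a) (vanishing is equivalent to $V'$ having fixed local monodromy, i.e. to $A'\in\cD^\MS$), (6)(c) is vacuous since $\ft_\cD=0$, and functoriality in the small extension is immediate.

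\textbf{The main obstacle} is precisely this last step: transporting the ``splitting of the bottom row'' argument of \Cref{FLM} from the Betti category into $D^b\MM(X,\bC)$, i.e. checking that after applying $i_{\bar x}^!j_{!*}$ the relevant rows and columns of the obstruction diagram remain exact, so that the obstruction is extracted as a genuine morphism in $D^b\MM$ --- and hence automatically a Tate class. This reduces to the local observation that ``fixed local monodromy of $V$'' forces those sequences to split as $\bC$-local systems in a neighborhood of each $\bar x$, which is exactly the geometric content already present in the discussion of \Cref{FLM}; beyond that I expect no new analytic input to be needed, everything else being formal precisely because $FM(V_0)$ is a substack of $\cM_B(X)$.
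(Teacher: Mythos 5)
Your proposal is correct and follows essentially the same route as the paper: the paper's proof is a one-liner observing that, since there are no relative deformations or automorphisms, one only needs the relative obstruction class to be a Tate class in $J\otimes i^!(j_{!*}\cHom(V_0,V_0)[1])$, which it declares clear from the construction preceding \Cref{FLM obs}. Your write-up simply fills in those details (the choice of $M$, and rerunning the \Cref{FLM} extension-class argument inside $D^b\MM(X,\bC)$ using the local splitting forced by fixed local monodromy), which is exactly what the paper's "clear from the construction" is pointing at.
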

\begin{proof}
    As there are no infinitesimal automorphisms or deformations, we need only show the relative obstruction class for a small extension $J\to A'\to A$ in $(\MSArt/\Lambda)$ for which $A\otimes_\Lambda U$ has fixed local monodromy is a Tate class of $J\otimes i^!(j_{!*}\cHom(V_0,V_0)[1])$, which is clear from the construction leading up to \Cref{FLM obs}.
\end{proof}

\subsubsection{Fixed residual eigenvalues}\label{FRE}  Continuing the setup of \Cref{FLM}, consider instead the map to the good moduli space
\[\cM_B(X,r)\to \prod_{\bar x\in D}M_B(\hat \bD^*,r)\cong \prod_{\bar x\in D}\Sym^r\bG_m\]
where the isomorphism is by taking the characteristic polynomial.  We call the fiber through $V_0\in \cM_B(X)(\bC)$ the fixed residual eigenvalues leaf $FE(V_0)$.  If $V_0$ underlies a $\bC$-AVMS and we again equip $\Lambda$ (resp. $\hat V$) with a pro-$\tate$-$\MS$-algebra (resp. pro-$\Lambda$-AVMS) structure by \Cref{thm:versal}, then for each $\bar x\in D$, $\psi_{\bar x}\hat V$ is a pro-$\Lambda$-MS-module, and comes equipped with an endomorphism $T_{\bar x}$ compatible with this structure.  Then the ideal of $FE(V_0)$ is cut out by the coefficients of the characteristic polynomial $p_{T_{\bar x}}$ of $T_{\bar x}$ for each $\bar x\in D$, which is naturally a pro-MS-ideal.

\subsection{Hodge/twistor substacks of $\cM_B(X)$}

Observe for the following definition that for a morphism of algebraic stacks $f:\cM\to\cN$ and miniversal points $\hat m:\Spec\hat\cO_{\cM,m}\to\cM$, $\hat n:\Spec\hat\cO_{\cN,n}\to\cN$ at closed points $m\in\cM(\bC)$, $n:=f(m)$, respectively, the composition $\Spec\hat\cO_{\cM,m}\to\cN$ lifts to $\Spec\hat\cO_{\cM,m}\to\Spec\hat\cO_{\cN,n}$ and so there is a pullback map $\hat\cO_{\cN,n}\to\hat\cO_{\cM,m}$ which is noncanonical but determined up to first order.  

We have the following compatibilities:

\begin{lem}\label{lem mini compat}
With the above notation, assume we have a closed immersions $g:\cZ\to \cN$ and $z\in\cZ(\bC)$ with $g(z)\cong n$ (with a choice of isomorphism).  Then: 
\begin{enumerate}
    \item $\Spec\hat\cO_{\cZ,z}:=\cZ\times_\cN\Spec\hat\cO_{\cN,n}\to\cZ$ is miniversal at $z$.
    \item $\Spec\hat\cO:=\Spec(\hat\cO_{\cZ,z}\otimes_{\hat\cO_{\cN,n}}\hat\cO_{\cM,m})\to \cZ\times_\cN\cM$ is miniversal at $(z,m)$.

\end{enumerate}
\end{lem}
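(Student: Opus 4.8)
The plan is to check directly the two defining properties of a miniversal formal object --- formal smoothness (versality) and bijectivity on tangent spaces at the closed point --- exploiting that $g$, being a closed immersion, is representable and a monomorphism. Representability guarantees that $\cZ\times_\cN\Spec\hat\cO_{\cN,n}$ is the formal spectrum of a complete local $\bC$-algebra $\hat\cO_{\cZ,z}$ (the completed quotient of $\hat\cO_{\cN,n}$ by the pullback of the ideal sheaf of $\cZ$, still with residue field $\bC$), and in part (2) the tensor product appearing there is a genuine quotient $\hat\cO_{\cM,m}/I\hat\cO_{\cM,m}$ with $I=\ker(\hat\cO_{\cN,n}\to\hat\cO_{\cZ,z})$, so no completion issue arises. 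Recall that a formal object over such a ring is miniversal at the closed point precisely when the associated morphism is formally smooth and induces an isomorphism on tangent spaces there (cf.\ the recollections in \Cref{sect:miniversal} and \cite{stacks-project}).

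For part (1), versality of $\Spec\hat\cO_{\cZ,z}\to\cZ$ is checked by the lifting criterion: given a small extension $A'\to A$ of Artinian local $\bC$-algebras, a homomorphism $\hat\cO_{\cZ,z}\to A$, and a lift $\eta'\in\cZ(A')$ of the associated object $\eta\in\cZ(A)$, one first applies versality of $\hat\cO_{\cN,n}$ to the object $g(\eta')$ (which lifts $g(\eta)$) to produce a compatible lift $\hat\cO_{\cN,n}\to A'$; since the resulting $\cN(A')$-object is $g(\eta')$ and hence lies in $\cZ(A')$, and since $g$ is a monomorphism --- so factoring through $\cZ$ is a property and not additional data --- the map $\Spec A'\to\Spec\hat\cO_{\cN,n}$ factors through $\cZ\times_\cN\Spec\hat\cO_{\cN,n}=\Spec\hat\cO_{\cZ,z}$, giving the required lift. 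Running the same argument with $A'=\bC[\epsilon]$, $A=\bC$ shows that $T\Spec\hat\cO_{\cZ,z}\to T_z\cZ$ is surjective, while injectivity follows from surjectivity of $\hat\cO_{\cN,n}\to\hat\cO_{\cZ,z}$ together with injectivity of $T\Spec\hat\cO_{\cN,n}\to T_n\cN$ (miniversality of $\hat\cO_{\cN,n}$). Equivalently one may observe that $\Spec\hat\cO_{\cZ,z}\to\cZ$ is the base change of the formally smooth $\Spec\hat\cO_{\cN,n}\to\cN$ along $g$, and that $T_{\hat z}(\cZ\times_\cN\Spec\hat\cO_{\cN,n})=T_z\cZ\times_{T_n\cN}T_{\hat n}\Spec\hat\cO_{\cN,n}$ collapses to $T_z\cZ$ because the second leg is an isomorphism.

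For part (2), I would reduce to part (1). Closed immersions are stable under base change, so $g'\colon \cZ\times_\cN\cM\to\cM$ (the base change of $g$ along $f$) is a closed immersion, and $z':=(z,m)$ is a $\bC$-point of $\cZ':=\cZ\times_\cN\cM$ with $g'(z')\cong m$. Applying part (1) to $g'$ and the miniversal point $\hat\cO_{\cM,m}$ of $\cM$ at $m$ shows that $\cZ'\times_\cM\Spec\hat\cO_{\cM,m}\to\cZ'=\cZ\times_\cN\cM$ is miniversal at $z'$. It then remains to identify the source. Pasting fiber products gives $\cZ'\times_\cM\Spec\hat\cO_{\cM,m}\cong\cZ\times_\cN\Spec\hat\cO_{\cM,m}$; using the factorization $\Spec\hat\cO_{\cM,m}\to\Spec\hat\cO_{\cN,n}\to\cN$ of $\Spec\hat\cO_{\cM,m}\to\cM\to\cN$ recalled just before the statement of the lemma, this equals $(\cZ\times_\cN\Spec\hat\cO_{\cN,n})\times_{\Spec\hat\cO_{\cN,n}}\Spec\hat\cO_{\cM,m}=\Spec(\hat\cO_{\cZ,z}\otimes_{\hat\cO_{\cN,n}}\hat\cO_{\cM,m})$, the last step because $\hat\cO_{\cZ,z}$ is a quotient of $\hat\cO_{\cN,n}$. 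The chosen lift $\hat\cO_{\cN,n}\to\hat\cO_{\cM,m}$ is only canonical to first order, but miniversal objects are themselves unique only up to non-canonical isomorphism, so nothing is lost.

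The only genuinely delicate point is the behaviour of versality and of tangent spaces under the $2$-fibered product in part (1): a priori the tangent space of $\cZ\times_\cN\Spec\hat\cO_{\cN,n}$ could acquire a contribution from infinitesimal automorphisms, but since $g$ is a monomorphism the morphism $\cZ\to\cN$ is fully faithful on Artinian points, so the fiber product genuinely cuts out a closed formal subscheme and its tangent space is the honest fiber product of tangent spaces. This is precisely what makes the hand-to-hand lifting argument above valid; everything else is formal manipulation of fiber products and base change.
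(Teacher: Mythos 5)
Your proposal is correct and takes essentially the same route as the paper: versality comes from base-changing the formally smooth miniversal family along the closed immersion, the tangent-space identification rests on the fact that a closed immersion is a monomorphism so the inertia (automorphisms) of $z$ and $n$ agree — the paper packages this as the statement that the only closed substacks of the first-order neighborhood $[G\backslash\Spec(\bC\oplus\ft_{\cN,n}^\vee)]$ are $[G\backslash\Spec(\bC\oplus U)]$ for subrepresentations $U$ — and your part (2) reduction by pasting fiber products together with the factorization $\Spec\hat\cO_{\cM,m}\to\Spec\hat\cO_{\cN,n}\to\cN$ is precisely the paper's cube-diagram argument. No gaps.
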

\begin{proof}
    For any group $G$ and any $G$-representation $V$, the only closed substacks of $[G\backslash\Spec(\bC\oplus V)]$ are of the form $[G\backslash \Spec(\bC\oplus U)]$ for a subrepresentation $U\subset V$.  This implies that the base-change of the first order neighborhood $[G\backslash\Spec(\bC\oplus \ft_{\cN,n}^\vee)]$ of $n$ is the first order neighborhood $[G\backslash\Spec(\bC\oplus \ft_{\cZ,z}^\vee)]$ of $z$, where $G$ is the inertia of $n $ (and thus $z$).  Since the base-change of $\hat n$ is formally smooth, this gives (1).

    For (2), we have the following diagram
    \[\begin{tikzcd}
        &\Spec\hat\cO\ar[rr]\ar[dd]\ar[ld]&&\Spec\hat\cO_{\cM,m}\ar[dd]\ar[ld]\\
        \cZ\times_\cN\cM\ar[dd]\ar[rr, crossing over]&&\cM&\\
        &\Spec\hat\cO_{\cZ,z}\ar[rr]\ar[ld]&&\Spec\hat\cO_{\cN,n}\ar[ld]\\
        \cZ\ar[rr]&&\cN\ar[from=uu, crossing over]&
    \end{tikzcd}\]
    with Cartesian front, bottom, and back faces.  It follows that the top face is Cartesian, so by (1) the conclusion follows.
\end{proof}

In the following definition we only consider the formal twistor geometry at semisimple points with quasiunipotent local monodromy:  the former condition ensures there is a canonical choice of variation of twistor structures, and the latter, while not necessary here, allows for a simpler construction of the Deligne--Hitchin space in \Cref{DeligneHitchin sect} and will be sufficient for our purposes.
\begin{defn}\label{defn formally Hodge}Let $X$ be a connected algebraic space and $\cZ\subset \cM_B(X)$ a locally closed algebraic substack.
\begin{enumerate}
    \item We say $\cZ$ is \emph{formally Hodge} at a $\bR_{>0}$-fixed point $V\in \cZ(\bC)^{\qu,\ss}$ (see \Cref{sect:R* on Betti}) if, equipping $V$ with its canonical weight zero $\bC$-VHS\footnote{By Deligne \cite[Proposition 1.13]{delignefiniteness}, for any $\bR_{>0}$-fixed semisimple complex local system $V$, there is a unique weight zero $\bC$-VHS structure on $V$ for which any simple sub-local system is a sub-$\bC$-VHS.}, there is a surjective morphism of pro-$\bC(0)$-MHS-algebras $\hat \cO_{\cM_B(X),V}\to\hat\cO_{\cZ,V}$ for which $\hat\cO_{\cZ,V}$ is miniversal for $\cZ$ at $V$.
    \item We say $\cZ$ is a \emph{formally twistor at a point} $V\in \cZ(\bC)^{\qu,\ss}$ if, equipping $V$ with its canonical (tame purely imaginary) variation of twistor structures (as in \Cref{exa defining can twistor}), there is a surjective morphism of pro-$\tate$-MTS-algebras $\hat \cO_{\cM_B(X),V}\to\hat\cO_{\cZ,V}$ for which $\hat\cO_{\cZ,V}$ is miniversal for $\cZ$ at $V$.  
    \item The substack $\cZ$ is \emph{formally twistor} if $\cZ(\bC)$ is closed under semisimplification, formally twistor at every point of $\cZ(\bC)^{\qu,\ss}$, and formally Hodge at every point underlying a $\CVHS$ with quasiunipotent local monodromy.
\end{enumerate}
We likewise define formally twistor subvarieties of $R_B(X,x)$.  

\end{defn}
  Note that by \Cref{ss in closure} any closed subset of $\cM_B(X)(\bC)$ is closed under semisimplification.

\begin{lem}\label{check on versal}
    Let $\cZ\subset\cM_B(X)$ be a locally closed algebraic substack, $V\in\cZ(\bC)$, $\Spec \Lambda\to\cM_B(X)$ a versal family at $V$ underlying a $\Lambda$-AVMS, and $\Spec \Lambda_\cZ := \cZ\times_{\cM_B(X)} \Spec\Lambda \to \Spec \Lambda$ the base-change. Then $\cZ$ is formally Hodge (resp. twistor) at $V$ if and only if $\Lambda\to \Lambda_\cZ$ is a (surjective) morphism of pro-$\bC(0)$-MHS-algebras (resp. pro-$\tate$-MTS-algebras).
\end{lem}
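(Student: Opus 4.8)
The plan is to translate both conditions into the single statement that a certain ideal is a sub-pro-$\bC$-MS (writing MS for MHS in the Hodge case and for MTS in the twistor case, as in \Cref{notn ms}). By \Cref{lem mini compat}(1), $\Spec\hat\cO_{\cZ,V}=\cZ\times_{\cM_B(X)}\Spec\hat\cO_{\cM_B(X),V}$ is miniversal for $\cZ$ at $V$, and since $\cZ\hookrightarrow\cM_B(X)$ is a locally closed immersion it realizes $\hat\cO_{\cZ,V}$ as a ring quotient $\hat\cO_{\cM_B(X),V}/I$. Because the categories $\CMHS$ and $\CMTS$ are abelian with strict morphisms, a surjection $A\to A/J$ of pro-$\tate$-MS-algebras underlies a morphism of pro-$\tate$-MS-algebras exactly when $J$ is a sub-pro-$\bC$-MS, and then the quotient structure is unique; hence $\cZ$ is formally Hodge (resp. twistor) at $V$ if and only if $I$ is a sub-pro-$\bC$-MS of $\hat\cO_{\cM_B(X),V}$. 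Likewise, since $\Spec\Lambda\to\cM_B(X)$ is versal it factors through $\Spec\hat\cO_{\cM_B(X),V}$, giving a formally smooth ring map $\iota_0\colon\hat\cO_{\cM_B(X),V}\to\Lambda$ with $\Spec\Lambda_\cZ=\Spec(\Lambda/\iota_0(I)\Lambda)$, so the right-hand condition reduces to: $\iota_0(I)\Lambda$ is a sub-pro-$\bC$-MS of $\Lambda$.

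The key step is to upgrade $\iota_0$ to a morphism of pro-$\tate$-MS-algebras admitting a retraction. Fix a framing $\phi$ of $V$ at $x$ and lift it to a framing of $\hat V$ (possible, $\hat V_x$ being $\Lambda$-free). The resulting framed family is classified by a ring map $\hat\cO_{R_B(X,x),(V,\phi)}\to\Lambda$ which, on the artinian quotients, is a morphism of pro-$\tate$-MS-algebras by the universal property of \Cref{thm:versal frame}(2), and is formally smooth because $\Lambda$ is versal for $\cM_B(X)$. Precomposing with the morphism of pro-$\tate$-MS-algebras $\hat\cO_{\cM_B(X),V}\to\hat\cO_{R_B(X,x),(V,\phi)}$ obtained by descending the mixed structure along the $\bGL_r$-torsor $R_B(X,x)\to\cM_B(X)$ (compatible with \Cref{thm:versal} and \Cref{thm:versal frame} by the uniqueness in \Cref{thm:versal}(2)), and noting that all the families forget compatibly so that this composite is a legitimate classifying map, we obtain a formally smooth morphism of pro-$\tate$-MS-algebras $\iota\colon\hat\cO_{\cM_B(X),V}\to\Lambda$ which we may take as $\iota_0$. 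Applying \Cref{smooth algebra quotients} and \Cref{lem MHS smooth quotient} to the artinian quotients and passing to the limit then presents $\Lambda$, as a pro-$\hat\cO_{\cM_B(X),V}$-MS-algebra, as a quotient of a pro-$\hat\cO_{\cM_B(X),V}$-MS-algebra $S$ with $|S|\cong|\hat\cO_{\cM_B(X),V}|[[t_1,\dots,t_n]]$, $n=\dim\ft_{\Lambda/\hat\cO_{\cM_B(X),V}}$; the surjection $S\twoheadrightarrow\Lambda$ is an isomorphism since both sides are formally smooth of relative dimension $n$ over $\hat\cO_{\cM_B(X),V}$ and it is an isomorphism on relative tangent spaces. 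Consequently $\iota$ and the augmentation $\epsilon\colon\Lambda\to\hat\cO_{\cM_B(X),V}$ killing the relative maximal ideal (a sub-pro-$\bC$-MS ideal, being generated by the copy of $\ft_{\Lambda/\hat\cO_{\cM_B(X),V}}^\vee$ built in by \Cref{lem MHS smooth quotient}) are both morphisms of pro-$\tate$-MS-algebras with $\epsilon\iota=\id$.

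Given $\iota$ and $\epsilon$ the equivalence is purely formal. If $I$ is a sub-pro-$\bC$-MS of $\hat\cO_{\cM_B(X),V}$ then $\iota(I)\Lambda$, being the image of the multiplication morphism $\iota(I)\otimes_{\tate}\Lambda\to\Lambda$ of pro-$\bC$-MS, is a sub-pro-$\bC$-MS of $\Lambda$. Conversely, $\epsilon$ exhibits $\iota(\hat\cO_{\cM_B(X),V})$ as a direct summand of $\Lambda$ in pro-$\bC$-MS, and applying $\epsilon$ shows $\iota(I)\Lambda\cap\iota(\hat\cO_{\cM_B(X),V})=\iota(I)$; since an intersection of sub-pro-$\bC$-MS is one, if $\iota(I)\Lambda$ is a sub-pro-$\bC$-MS then so is $\iota(I)\cong I$. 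The twistor case runs identically, with \Cref{tw loc triv} guaranteeing that $-\otimes_{\tate}S$ is exact and compatible with weight filtrations over $\bP^1$. The main obstacle is the second paragraph: showing that a versal family underlying a $\Lambda$-AVMS is, as a pro-mixed-structure algebra, a smooth extension of the miniversal one. This is where Hodge/twistor input enters — specifically the compatibility of the enhancements of \Cref{thm:versal} and \Cref{thm:versal frame} under the framing torsor together with the smooth-quotient lemma \Cref{lem MHS smooth quotient}; everything else is formal manipulation in an abelian category.
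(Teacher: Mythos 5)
Your overall reduction --- translating both sides into ``a certain ideal is a sub-pro-$\bC$-MS'', then comparing through a classifying map using strictness and (faithful) flatness --- is sound and mirrors the formal part of the paper's argument. The problem is the middle step, which is exactly the hard point: producing a morphism $\iota\colon\hat\cO_{\cM_B(X),V}\to\Lambda$ of pro-$\tate$-MS-algebras. Your construction has two unjustified ingredients. First, to invoke the universal property of \Cref{thm:versal frame}(2) you need the framing of the $\Lambda$-family at $x$ to be a morphism of pro-$\Lambda$-MS-modules $\hat V_x\to\Lambda\otimes_{\tate}M$; freeness of $\hat V_x$ as a $\Lambda$-module only gives a framing of local systems, and the hypothesis that the versal family ``underlies a $\Lambda$-AVMS'' gives no reason for its fiber at $x$ to be MS-trivial over $\Lambda$, so an MS-compatible framing need not exist (the mixed structure on the fiber genuinely varies in general --- that is what the mixed period maps record). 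Second, you assert that the forgetful map $\hat\cO_{\cM_B(X),V}\to\hat\cO_{R_B(X,x),(V,\phi)}$ is a morphism of pro-$\tate$-MS-algebras ``by the uniqueness in \Cref{thm:versal}(2)''; that uniqueness is only an up-to-isomorphism statement about the algebra structure on the miniversal ring and says nothing about this particular ring map being MS-compatible, and the paper's framed/unframed comparison (\Cref{cor use framing}) is itself deduced from the very lemma you are proving, so it cannot be taken as known here.

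The paper avoids both issues by never constructing a direct MS-morphism $\hat\cO_{\cM_B(X),V}\to\Lambda$: it forms the completion $\Spec\cS$ of the fibered 2-product $\Spec\Lambda\times_{\cM_B(X)}\Spec\hat\cO_{\cM_B(X),V}$ at an identification of the closed points and invokes \Cref{thm:versal}(3) (representability of the diagonal by mixed structures), which gives for free that both structure maps $\hat\cO_{\cM_B(X),V}\to\cS$ and $\Lambda\to\cS$ are morphisms of pro-$\tate$-MS-algebras. The equivalence is then transferred through $\cS$: the formally smooth maps are injective, the base changes to $\cZ$ are surjective, if $\cS\to\cS_\cZ$ is an MS-morphism then so is the corestriction $\hat\cO_{\cM_B(X),V}\to\hat\cO_\cZ$, and conversely $\cS\to\cS_\cZ$ is the tensor product of $\hat\cO_{\cM_B(X),V}\to\hat\cO_\cZ$ by $\hat\cO_{\cM_B(X),V}\to\cS$; the same argument links $\cS\to\cS_\cZ$ with $\Lambda\to\Lambda_\cZ$. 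If you replace your second paragraph by this use of \Cref{thm:versal}(3), the remaining formal manipulations in your first and third paragraphs go through essentially unchanged.
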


\begin{proof}
    Let $\Spec \hat \cO_{\cM_B(X), V}\to\cM_B(X)$ be the miniversal family at $V$, and consider the completion of the fibered 2-product at an identification of the closed points of $\Spec \Lambda$ and $\Spec\hat\cO_{\cM_B(X), V}$ in the solid diagram below
    \[\begin{tikzcd}
        &\Spec \cS\ar[rr]\ar[dd]&&\Spec\Lambda\ar[dd]\\
        \Spec\cS_\cZ\ar[rr,dashed]\ar[ur,dashed]\ar[dd,dashed]&&\Spec\Lambda_\cZ\ar[dd,dashed]\ar[ur,dashed]&\\
        &\Spec\hat\cO_{\cM_B(X),V}\ar[rr]&&\cM_B(X).\\
        \Spec\hat \cO_\cZ\ar[rr,dashed]\ar[ru,dashed]&&\cZ\ar[ru,dashed]&
    \end{tikzcd}\]
    Let the diagonal arrows be the base-change of $\cZ\to\cM_B(X)$.  By \Cref{thm:versal}(3), the solid left and solid top arrows correspond to morphisms of pro-$\tate$-MS-algebras.  The diagonal morphisms are surjective on algebras, and the horizontal and vertical ones are injective on algebras (as they are formally smooth), except those involving $\cZ$.  Thus, if $\cS\to\cS_\cZ$ is a morphism of pro-$\tate$-MS-algebras, then so are the composition $\hat\cO_{\cM_B(X)} \to \cS \to \cS_\cZ$ and its corestriction to its image  $\hat\cO_{\cM_B(X),V}\to \hat\cO_\cZ$. The converse is clear since $\cS \to \cS_\cZ$ is the tensor product of $\hat\cO_{\cM_B(X),V}\to \hat\cO_\cZ$ by $\hat\cO_{\cM_B(X),V}\to \cS$.  By the same argument, $\cS\to\cS_\cZ$ is a morphism of pro-$\tate$-MS-algebras if and only if $\Lambda\to\Lambda_\cZ$ is.
\end{proof}

\begin{cor}\label{cor use framing}
    Let $\cZ\subset \cM_B(X)$ be a locally closed algebraic substack and $R\cZ\subset R_B(X,x)$ the base-change by the quotient map.  Then $\cZ$ is formally twistor if and only if $R\cZ$ is.
\end{cor}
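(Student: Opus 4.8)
We reduce the statement to the base-change behaviour of miniversal deformation rings, using \Cref{lem mini compat}, \Cref{thm:versal frame} and \Cref{check on versal}. Work one rank $r$ at a time, so $\cM_B(X)=[\bGL_r\backslash R_B(X,x)]$ and the quotient map $q\colon R_B(X,x)\to\cM_B(X)$ is a $\bGL_r$-torsor; in particular it is smooth, so for any $(V,\phi)\in R_B(X,x)(\bC)$ lying over $V\in\cM_B(X)(\bC)$ the composition
\[
\Spec\hat\cO_{R_B(X,x),(V,\phi)}\longrightarrow R_B(X,x)\xrightarrow{\ q\ }\cM_B(X)
\]
is a versal family at $V$. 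Now suppose $V\in\cM_B(X)(\bC)^{\qu,\ss}$ and equip it with its canonical tame purely imaginary variation of twistor structures (the Hodge case --- where $V$ underlies a $\bC$-VHS with quasiunipotent local monodromy and is equipped with its canonical weight-zero $\bC$-VHS --- is identical). By \Cref{thm:versal frame}(1) the ring $\Lambda:=\hat\cO_{R_B(X,x),(V,\phi)}$ acquires a pro-$\tate$-MS-algebra structure --- namely the one figuring in the definition of $R\cZ$ being formally twistor (resp.\ Hodge) at $(V,\phi)$, that is, \Cref{defn formally Hodge} applied to subvarieties of $R_B(X,x)$ --- for which the tautological framed family $\hat V$ is a pro-$\Lambda$-AVMS restricting to the chosen structure on $V$. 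Thus $\Spec\Lambda\to\cM_B(X)$ is a versal family at $V$ underlying a $\Lambda$-AVMS, and \Cref{check on versal} applies: $\cZ$ is formally twistor (resp.\ Hodge) at $V$ if and only if the base-change map $\Lambda\to\Lambda_\cZ$, where $\Lambda_\cZ:=\cZ\times_{\cM_B(X)}\Spec\Lambda$, is a morphism of pro-$\tate$-MTS-algebras (resp.\ pro-$\bC(0)$-MHS-algebras).

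It remains to identify $\Lambda\to\Lambda_\cZ$ with the canonical surjection $\hat\cO_{R_B(X,x),(V,\phi)}\to\hat\cO_{R\cZ,(V,\phi)}$. Since $R\cZ=\cZ\times_{\cM_B(X)}R_B(X,x)$ and the structure map of $\Spec\Lambda$ factors through $R_B(X,x)$, associativity of the fibre product gives $\Spec\Lambda_\cZ\cong R\cZ\times_{R_B(X,x)}\Spec\hat\cO_{R_B(X,x),(V,\phi)}$, and by \Cref{lem mini compat}(1) applied to the immersion $R\cZ\hookrightarrow R_B(X,x)$ at $(V,\phi)$ (which we may treat as closed, only the formal neighbourhood being relevant) this is the miniversal family $\Spec\hat\cO_{R\cZ,(V,\phi)}\to R\cZ$; under this identification $\Lambda\to\Lambda_\cZ$ is the surjection cut out by $R\cZ$. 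Hence, by the previous paragraph, $\cZ$ is formally twistor (resp.\ Hodge) at $V$ precisely when $R\cZ$ is formally twistor (resp.\ Hodge) at $(V,\phi)$. Since every $V\in\cM_B(X)(\bC)^{\qu,\ss}$ (resp.\ every $V$ underlying a $\bC$-VHS with quasiunipotent local monodromy) equals $q(V,\phi)$ for an arbitrary framing $\phi$, and the conditions ``$\qu,\ss$'' and ``underlies such a $\bC$-VHS'' depend only on the underlying local system, the pointwise equivalence just proved --- together with $R\cZ(\bC)=q^{-1}(\cZ(\bC))$ --- shows that $\cZ$ is formally twistor/Hodge at all the relevant points if and only if $R\cZ$ is. Finally $\cZ(\bC)$ is closed under semisimplification if and only if $R\cZ(\bC)$ is, because $R\cZ(\bC)=q^{-1}(\cZ(\bC))$ is $\bGL_r$-invariant and $q$ carries the closed orbit in the closure of a $\bGL_r$-orbit to the semisimplification, so $q(\rho^{\ss})=\ss_X(q(\rho))$. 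Assembling the three clauses of \Cref{defn formally Hodge}(3) gives the corollary.

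The step that needs the most care is the bookkeeping in the second paragraph: verifying that the pro-MS-algebra structure placed on $\hat\cO_{R_B(X,x),(V,\phi)}$ by \Cref{thm:versal frame} (via the canonical twistor/Hodge structure on $V$ and the framing $\phi$) is literally the structure appearing in the definition of ``$R\cZ$ formally twistor/Hodge at $(V,\phi)$'', and that the versal family $\Spec\Lambda\to\cM_B(X)$ fed to \Cref{check on versal} carries this same structure. Once these identifications are in place the corollary is a formal consequence of \Cref{lem mini compat}, \Cref{thm:versal frame} and \Cref{check on versal}.
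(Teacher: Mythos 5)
Your proof is correct and follows exactly the route the paper intends: the corollary is stated without an explicit proof as an immediate consequence of \Cref{check on versal}, applied to the versal family $\Spec\hat\cO_{R_B(X,x),(V,\phi)}\to\cM_B(X)$ obtained from the smooth quotient map and equipped with the pro-MS structure of \Cref{thm:versal frame}, with the base-change identified with $\hat\cO_{R\cZ,(V,\phi)}$ and the semisimplification-closure condition transferred via $R\cZ(\bC)=q^{-1}(\cZ(\bC))$. Your write-up simply makes these intended identifications explicit, so there is nothing to correct.
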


\crefformat{footnote}{#2\footnotemark[#1]#3}

\begin{prop}\label{lem: prop Hodge sub of Betti}
        Let $X$ be connected normal algebraic space.

    \begin{enumerate}
        \item\label{lem: prop Hodge sub of Betti 1} For any $r$, $\cM_B(X,r)$ and $\{\triv_r\}$ are formally twistor substacks of $\cM_B(X)$.
        \item\label{lem: prop Hodge sub of Betti 2} Assume $X$ is a curve.  For any $V\in \cM_B(X)(\bC)^{\qu,\ss}$, the fixed local monodromy leaf $FM(V)\subset\cM_B(X)$ is formally twistor at $V$.  For any $V\in \cM_B(X)(\bC)$ the fixed residual eigenvalues leaf $FE(V)\subset\cM_B(X)$ is a closed formally twistor substack.
        \item \label{lem: prop Hodge sub of Betti 3} Intersections and reductions of formally twistor substacks are formally twistor substacks.  Irreducible components\footnote{\label{foot1}These operations are defined by taking the quotients of the corresponding operations on the base-change to the framed moduli space---see \Cref{sect:stackZariski}.} of reduced twistor substacks are formally twistor substacks.  Finite unions of closed twistor substacks are twistor substacks.   
        \item \label{lem: prop Hodge sub of Betti 4} Let $f:X\to Y$ be a morphism of connected normal algebraic spaces and $f^*:\cM_B(Y)\to\cM_B(X)$ the pullback morphism.  Then:
        \begin{enumerate}
                    \item\label{lem: prop Hodge sub of Betti 4a} For any formally twistor substack $\cZ\subset \cM_B(X)$, $(f^*)^{-1}(\cZ)\subset \cM_B(Y)$ is a formally twistor substack.
        \item\label{lem: prop Hodge sub of Betti 4b} For any formally twistor substack $\cZ\subset \cM_B(Y)$, $f^*\cZ\subset \cM_B(X)$ is a formally twistor substack provided $f$ is dominant or a Lefschetz curve (see \Cref{defn lefschetz}).
        \end{enumerate}
 \item\label{lem: prop Hodge sub of Betti 5} Inverse images of formally twistor substacks under the direct sum and tensor product morphisms $\oplus,\otimes:\cM_B(X)^2\to\cM_B(X)$ are formally twistor substacks\footnote{With the obvious definition of formally twistor substack of $\cM_B(X)^2$.}. 
    \end{enumerate}
\end{prop}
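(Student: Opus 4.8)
The plan is to reduce every clause to a statement about the miniversal deformation rings $\hat\cO_{\cM_B(X),V}$ and their pro-mixed structures, and then to verify it by elementary manipulations of pro-MS-ideals. The organizing principle is threefold: by \Cref{cor use framing} it suffices to argue on the framed space $R_B(X,x)$; by \Cref{check on versal} formal twistor-ness (resp. Hodge-ness) of a locally closed $\cZ$ at a point $V$ is equivalent to the structure map $\Lambda\to\Lambda_\cZ$ of a versal family being a surjection of pro-$\tate$-MS-algebras (resp. pro-$\bC(0)$-MHS-algebras); and by \Cref{lem mini compat} the miniversal ring of a fibre product of stacks is computed as the corresponding tensor product of miniversal rings. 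So the only thing that has to be tracked is that the defining ideals are pro-MS-ideals, \emph{functorially}. Part \ref{lem: prop Hodge sub of Betti 1} is then immediate: $\cM_B(X,r)$ is open and closed in $\cM_B(X)$, so the structure map is an isomorphism at every point, and $\{\triv_r\}$ has miniversal ring $\tate$ with $\hat\cO_{\cM_B(X),\triv_r}\to\tate$ the residue map, a morphism of pro-$\tate$-MS-algebras; since $\triv_r$ carries the trivial weight-zero $\bC$-VHS, both the Hodge and twistor conditions at it hold.

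For part \ref{lem: prop Hodge sub of Betti 2} I would invoke the relative deformation theory of \Cref{FLM} and \Cref{FRE}. Equipping a versal family $(\Lambda,\hat V)$ at $V\in\cM_B(X)(\bC)^{\qu,\ss}$ with its pro-MS structure via \Cref{thm:versal} (possible because $V$ has a canonical pure twistor structure, and a canonical $\bC$-VHS at $\bC$-VHS points), \Cref{prop rel FLM} realizes $FM(V)\subset\cM_B(X)$ as a full sub-deformation category admitting a precise MS-enhancement, hence $\Lambda\to\Lambda_{FM(V)}$ is a surjection of pro-MS-algebras, i.e.\ $FM(V)$ is formally twistor (resp.\ Hodge) at $V$. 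For $FE(V)$ the ideal is, by \Cref{FRE}, \emph{explicitly} the pro-MS-ideal generated by the coefficients of the characteristic polynomials of the monodromy operators $T_{\bar x}$ on the pro-MS-modules $\psi_{\bar x}\hat V$, and this description is available at every point underlying a $\bC$-AVMS, so $FE(V)$ is a closed formally twistor substack. Part \ref{lem: prop Hodge sub of Betti 3} is then formal: in the abelian category of pro-$\bC$-MS, sums, intersections, and pre-images of sub-MS are again sub-MS, so intersections (ideal $I_1+I_2$) and finite unions of closed substacks (ideal $I_1\cap I_2$) are handled at once, the required closure of $\bC$-points under semisimplification following from \Cref{ss in closure} and the stability of the retraction $\ss_X$ under these operations; for reductions and irreducible components one passes through \Cref{MHS ring components}, whose hypothesis $\fm_\Lambda=W_{-1}\Lambda$ is precisely the last sentence of \Cref{thm:versal}, valid here at all points of $\cZ(\bC)^{\qu,\ss}$ and at $\bC$-VHS points because $X$ is normal, so the nilradical and the minimal primes of $\hat\cO_{\cZ,V}$ are pro-MS-ideals and $\cZ_{\red}$, resp.\ each irreducible component (an intersection of a subfamily of minimal primes), is cut out by one.

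For parts \ref{lem: prop Hodge sub of Betti 4} and \ref{lem: prop Hodge sub of Betti 5} I would use functoriality of the pro-MS structures. In case \ref{lem: prop Hodge sub of Betti 4a} write $(f^*)^{-1}(\cZ)=\cM_B(Y)\times_{\cM_B(X)}\cZ$ and apply \Cref{lem mini compat}: its miniversal ring at $(W,V)$ with $V=f^*W$ is $\hat\cO_{\cZ,V}\otimes_{\hat\cO_{\cM_B(X),V}}\hat\cO_{\cM_B(Y),W}$, where $\hat\cO_{\cM_B(X),V}\to\hat\cO_{\cM_B(Y),W}$ is a morphism of pro-$\tate$-MS-algebras by \Cref{thm:versal}(3); since $\hat\cO_{\cZ,V}$ is $\hat\cO_{\cM_B(X),V}$ modulo a pro-MS-ideal, the base change is $\hat\cO_{\cM_B(Y),W}$ modulo the pro-MS-ideal $I_\cZ\hat\cO_{\cM_B(Y),W}$ (the image of the pro-MS-module $I_\cZ\otimes\hat\cO_{\cM_B(Y),W}$). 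For \ref{lem: prop Hodge sub of Betti 4b}, when $f$ is dominant (and $X$ normal) or a Lefschetz curve the induced $\pi_1(X^\an,x)\to\pi_1(Y^\an,y)$ has finite-index image (full image in the Lefschetz case), so $f^*$ is well-behaved on the framed spaces — a closed immersion for a Lefschetz curve — and the miniversal ring of $f^*\cZ$ at $W=f^*V$ is a quotient of $\hat\cO_{R_B(X,x),W}$ by the pro-MS-ideal pushed forward from the one cutting out $\cZ$ via \Cref{thm:versal frame}(3); surjectivity of $\pi_1$ also yields that $f^*$ takes semisimple local systems to semisimple ones, giving the set-theoretic condition. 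Part \ref{lem: prop Hodge sub of Betti 5} is identical to \ref{lem: prop Hodge sub of Betti 4a} with $\oplus,\otimes\colon\cM_B(X)^2\to\cM_B(X)$ replacing $f^*$, invoking the compatibility of the pro-MS structures with direct sums and tensor products in \Cref{thm:versal}(3); that $\oplus^{-1}(\cZ)$ and $\otimes^{-1}(\cZ)$ remain closed under semisimplification uses $(V_1\oplus V_2)^\ss=V_1^\ss\oplus V_2^\ss$ and the fact that the tensor product of semisimple complex local systems on a connected normal complex algebraic space is again semisimple.

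The step I expect to demand the most care is part \ref{lem: prop Hodge sub of Betti 3}: applying \Cref{MHS ring components} requires $\fm=W_{-1}$ at \emph{all} relevant points, not merely the $\bR_{>0}$-fixed ones, which forces one to use the pure-twistor case of the final assertion of \Cref{thm:versal} and the normality of $X$; and in the twistor setting the nilradical and minimal primes are graded only after restricting to the two affine charts of $\bP^1$ furnished by \Cref{lem tw splitting}, so one reruns the argument of \Cref{MHS ring components} chart by chart and glues. By contrast parts \ref{lem: prop Hodge sub of Betti 1}, \ref{lem: prop Hodge sub of Betti 2}, \ref{lem: prop Hodge sub of Betti 4}, \ref{lem: prop Hodge sub of Betti 5} are essentially bookkeeping on top of the deformation-theoretic enhancements established in the preceding subsections, together with the elementary observation that pre-images, sums, and intersections of sub-objects are sub-objects in the abelian categories involved.
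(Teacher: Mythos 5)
Your ring-theoretic core follows the paper's own route essentially verbatim: reduce to the framed space via \Cref{cor use framing}, test on a versal family via \Cref{check on versal}, quote \Cref{prop rel FLM} and the discussion in \Cref{FRE} for part (2), use \Cref{MHS ring components} together with the last sentence of \Cref{thm:versal} (pure case, normality of $X$) for reductions and irreducible components in (3), and use \Cref{lem mini compat} plus the pullback/sum/tensor functoriality of the pro-MS structures — which lives in \Cref{thm:versal frame}(3), not \Cref{thm:versal}(3) as you cite — for (4a) and (5). Handling $\{\triv_r\}$ directly instead of as a pullback from a point, and writing unions as $I_1\cap I_2$ rather than as the image in the product, are harmless variations.

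The gap is in the set-theoretic half of \Cref{defn formally Hodge}, chiefly in (4a). Being a formally twistor substack is not only the assertion that certain ideals are pro-MS-ideals: you must also check that $((f^*)^{-1}\cZ)(\bC)$ is closed under semisimplification, and that at each $W\in((f^*)^{-1}\cZ)(\bC)^{\qu,\ss}$ (resp. each point underlying a $\CVHS$ with quasiunipotent monodromy) the point $V=f^*W$ at which you invoke $\cZ$'s structure actually lies in $\cZ(\bC)^{\qu,\ss}$ (resp. underlies a $\CVHS$), with the canonical pure twistor (resp. Hodge) structure on $V$ being the pullback of the one on $W$ — otherwise the base change of \Cref{lem mini compat} is not even set up at the points where the definition demands it. Since $f$ in (4a) is an arbitrary morphism, none of this is bookkeeping: it needs that semisimplification commutes with pullback, i.e. pullbacks of semisimple local systems are semisimple (\Cref{pullback ss}, a harmonic-theory input), preservation of quasiunipotency, and, for the Hodge clause, functoriality of the $\bR_{>0}$-action (\Cref{sect:R* on Betti}). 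You supply the analogous facts for (5) but never for (4a). Relatedly, your justification of the dominant case of (4b) — "finite-index image of $\pi_1$, so $f^*$ is well-behaved" — is too thin: the argument needs $f^*$ to be immersive so that the germ of $f^*\cZ$ at $W$ is computed by the image of the map on completed local rings, which is \Cref{lem immersive}(2) and is proved geometrically (reduce to a generically finite multisection, then to finite \'etale, where $\bC_Y\to f_*\bC_X$ splits); one also needs \Cref{qu lefschetz} and the lifting of semisimple points via $f^*(V^{\ss})=(f^*V)^{\ss}$ to get $(f^*\cZ)(\bC)^{\qu,\ss}=f^*(\cZ(\bC)^{\qu,\ss})$. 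All of this is available in the paper, so the gap is repairable, but as written (4a) and the dominant case of (4b) are incomplete.
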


\begin{proof}
    The first part of \eqref{lem: prop Hodge sub of Betti 1} is clear and the second follows from \eqref{lem: prop Hodge sub of Betti 4b} via pullback from a point.  In parts (2) and (3), the closure under semisimplification follows since the semisimplification of a point is contained in the closure of that point, by \Cref{ss in closure}.  Part \eqref{lem: prop Hodge sub of Betti 2} is then \Cref{prop rel FLM} and the discussion in \Cref{FRE}.  The reduction and irreducible component part of \eqref{lem: prop Hodge sub of Betti 3} follows from \Cref{MHS ring components}, since by \Cref{thm:versal} and the normality of $X$ the maximal ideal of the twistor algebras are the weight -1 part.
    
    For the remaining parts of \eqref{lem: prop Hodge sub of Betti 3}-\eqref{lem: prop Hodge sub of Betti 5}, all of the operations are compatible with the quotient map from the framed moduli space, so by \Cref{cor use framing} it suffices to prove the corresponding statements in the framed case.  The intersection part of \eqref{lem: prop Hodge sub of Betti 3} is clear.  The union part follows since the completed local ring of the union is the image of the completed local ring of the product. 
    
    For part (4b), under the hypotheses on $f$, $f^*$ is immersive\footnote{By immersive we mean a locally closed immersion locally on the source.} by \Cref{lem immersive} below. The closure under semisimplification condition in parts \eqref{lem: prop Hodge sub of Betti 4} and \eqref{lem: prop Hodge sub of Betti 5} follows from the fact that semisimplification commutes with pullback, direct sum, and tensor product---in the pullback case we must use \Cref{pullback ss}.  This in particular implies that semisimple points map to semisimple points and semisimple points in an image lift to semisimple points under these functors.  Moreover, the pullback of a local system with quasiunipotent local monodromy has quasiunipotent local monodromy, and in part (4b) under the assumptions on $f$ the converse also holds by \Cref{qu lefschetz}.  Altogether, this implies that in part (4a) we have $((f^*)^{-1}\cZ)(\bC)^{\qu,\ss}\subset (f^*)^{-1}(\cZ(\bC)^{\qu,\ss})$ and in part (4b) we have $(f^*\cZ)(\bC)^{\qu,\ss}=f^*(\cZ(\bC)^{\qu,\ss})$.  Finally, we must use the same property for $\CVHS$ (which follows from the $\bR_{>0}$-action---see \Cref{sect:R* on Betti}).

    Part \eqref{lem: prop Hodge sub of Betti 4a} is then immediate from \Cref{lem mini compat}(2) and \Cref{thm:versal frame}(3).  Part \eqref{lem: prop Hodge sub of Betti 4b} follows from \Cref{thm:versal frame}(3) as well since the completed local ring of the image is the image of the pullback map on completed local rings.  For part \eqref{lem: prop Hodge sub of Betti 5}, we consider the fiber product
    \[\begin{tikzcd}
        S\ar[rr]\ar[d]&&R_B(X,x)\ar[d]\\
        R_B(X,x)^2\ar[r]&\cM_B(X)^2\ar[r,"\oplus\text{,}\otimes"]&\cM_B(X).
    \end{tikzcd}\]
    Since sums and tensor products of AVMS are AVMS, by \Cref{thm:versal}(3) the top induces a morphism of pro-$\tate$-MS-algebras on completed local rings, and so images and inverse images of formally Hodge/twistor subsets along the top map are formally Hodge/twistor.  By \Cref{check on versal} this is enough. 
\end{proof}

\begin{lem}\label{lem immersive}Let $f \colon X\to Y$ be a morphism of algebraic spaces.
\begin{enumerate}
    \item If $f_* \colon \pi(X,x)\to \pi_1(Y,y)$ is surjective, then $f^* \colon \cM_B(Y)\to \cM_B(X)$ is a closed immersion.
    \item If $f$ is dominant and $Y$ is normal, then $f^* \colon \cM_B(Y)\to \cM_B(X)$ is immersive.

\end{enumerate}
    
\end{lem}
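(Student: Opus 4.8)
The plan is to prove (1) by identifying $f^{*}$, componentwise, with a $\bGL_{r}$-equivariant closed immersion of framed representation schemes, and to reduce (2) to (1) by passing to the finite \'etale cover of $Y$ cut out by the (finite-index) image of $f_{*}$.

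\emph{Part (1).} Work on a fixed rank-$r$ component and on framed representation schemes: after choosing basepoints $x$ and $y=f(x)$ one has $\cM_{B}(Y,r)=[\bGL_{r}\backslash R_{B}(Y,y,r)]$, $\cM_{B}(X,r)=[\bGL_{r}\backslash R_{B}(X,x,r)]$, and $f^{*}$ is induced by the $\bGL_{r}$-equivariant morphism $R_{B}(Y,y,r)\to R_{B}(X,x,r)$, $\rho\mapsto\rho\circ f_{*}$ (the group acting by conjugation on both sides). Since closed immersions are representable and may be checked after smooth base change along the atlases, it is enough to show this morphism of affine schemes is a closed immersion. Choose finitely many generators $g_{1},\dots,g_{n}$ of $\pi_{1}(X^{\an},x)$ --- possible because $X^{\an}$ has the homotopy type of a finite CW complex --- giving a surjection $F_{n}\twoheadrightarrow\pi_{1}(X^{\an},x)$, $e_{i}\mapsto g_{i}$; because $f_{*}$ is surjective, the composite $F_{n}\twoheadrightarrow\pi_{1}(Y^{\an},y)$, $e_{i}\mapsto f_{*}(g_{i})$, is surjective as well. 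Writing $N'\subseteq N\subseteq F_{n}$ for the respective kernels (each finitely generated as a normal subgroup, since the fundamental groups are finitely presented), we may realize $R_{B}(X,x,r)$ and $R_{B}(Y,y,r)$ as the closed subschemes of $\bGL_{r}^{n}$ defined by $\{w=1:w\in N'\}$ and $\{w=1:w\in N\}$; under these presentations $f^{*}$ is literally the inclusion $R_{B}(Y,y,r)\hookrightarrow R_{B}(X,x,r)$, which is the sought-after equivariant closed immersion.

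\emph{Part (2).} The crux is that $H:=\img\!\bigl(f_{*}\colon\pi_{1}(X^{\an},x)\to\pi_{1}(Y^{\an},y)\bigr)$ has finite index. By generic smoothness there is a dense Zariski open $V\subseteq Y$ over which $X^{\circ}:=f^{-1}(V)\cap X_{\mathrm{reg}}\to V$ is smooth and surjective, and by generic $C^{\infty}$-triviality we may shrink $V$ so that this is a locally trivial $C^{\infty}$ fibre bundle with fibre $F$ a general fibre of $f$; as $F$ is an algebraic variety it has finitely many connected components, and the homotopy exact sequence
\[
\pi_{1}(F)\longrightarrow\pi_{1}(X^{\circ})\longrightarrow\pi_{1}(V)\longrightarrow\pi_{0}(F)
\]
identifies $\img(\pi_{1}(X^{\circ})\to\pi_{1}(V))$ with the stabilizer in $\pi_{1}(V)$ of the component of $x$, hence a subgroup of index at most $\#\pi_{0}(F)<\infty$. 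Since $Y$ is normal, $\pi_{1}(V)\to\pi_{1}(Y^{\an},y)$ is surjective, so the image of the composite $\pi_{1}(X^{\circ})\to\pi_{1}(Y^{\an},y)$ is of finite index; being contained in $H$, it forces $[\pi_{1}(Y^{\an},y):H]<\infty$. Now let $p\colon Y'\to Y$ be the connected finite \'etale cover with $p_{*}\pi_{1}(Y'^{\an},y')=H$ for a compatible choice of basepoint. As $\img f_{*}=H=p_{*}\pi_{1}(Y'^{\an},y')$, the morphism $f$ lifts to $\tilde f\colon X\to Y'$, and $\tilde f_{*}$ is surjective onto $\pi_{1}(Y'^{\an},y')$, so by (1) the map $\tilde f^{*}\colon\cM_{B}(Y')\to\cM_{B}(X)$ is a closed immersion. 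Since $f=p\circ\tilde f$ we get $f^{*}=\tilde f^{*}\circ p^{*}$, and it remains to check that $p^{*}\colon\cM_{B}(Y)\to\cM_{B}(Y')$ is immersive when $p$ is finite \'etale; this is a local question on $\cM_{B}(Y)$ which one carries out directly on framed representation schemes.

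The main obstacle is the finite-index statement $[\pi_{1}(Y^{\an},y):\img f_{*}]<\infty$ for a dominant morphism to a normal target; granting it, the rest of (2) is bookkeeping, apart from the finite \'etale case which needs its own short verification, and (1) is a routine reduction to representation schemes and descent along the smooth atlases $R_{B}(\cdot)\to\cM_{B}(\cdot)$. Normality of $Y$ is used precisely twice in (2): to guarantee the finite \'etale cover $Y'\to Y$ exists in the category of varieties, and to make $\pi_{1}(V)\to\pi_{1}(Y^{\an},y)$ surjective for $V$ a dense Zariski open.
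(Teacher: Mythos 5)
Part (1) of your proposal is correct and is the same argument the paper has in mind ("clear on representation spaces"): precomposition with the surjection $f_*$ gives a $\bGL_r$-equivariant closed immersion $R_B(Y,y,r)\to R_B(X,x,r)$, which descends to the quotient stacks. For part (2) your route differs from the paper's: you first show $[\pi_1(Y^\an,y):\img f_*]<\infty$ (this finite-index step is fine for $Y$ normal) and pass to the cover $Y'\to Y$ corresponding to the image, whereas the paper slices $X$ by a locally closed multisection $Z$ so that $f|_Z$ is generically finite and then restricts to a dense open $U\subset Y$ over which the map becomes finite \'etale. Both reductions are legitimate, and both funnel into the same residual case: a finite \'etale morphism.

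It is exactly there that your proposal has a genuine gap. The claim that $p^*\colon \cM_B(Y)\to\cM_B(Y')$ is immersive for $p$ finite \'etale is the entire content of the lemma after your reductions, and it is not a routine check "directly on framed representation schemes": at the framed level the restriction map need not even be unramified, so it is not an immersion locally on the source there. For example, take $Y=\bG_m$ and $p\colon Y'\to Y$ the degree-$2$ cover; then the map of framed rank-$2$ representation schemes is the squaring map $\bGL_2\to\bGL_2$, whose differential at $\rho(g)=\mathrm{diag}(1,-1)$ kills $E_{12}$. Equivalently, the cocycle $c$ with $c(g)=E_{12}$ satisfies $c(g^{2k})=0$, so it lies in the kernel of $Z^1(\pi_1(Y),\ad\rho)\to Z^1(\pi_1(Y'),\ad\rho)$; it happens to be a coboundary, which is why only the stack-level statement can be true. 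So the immersivity must be proved on tangent spaces of the stacks, i.e. on $\Ext^1$, and this is where the paper's key (and only nontrivial) input enters: since $p$ is finite \'etale, $\bC_Y\to p_*\bC_{Y'}$ splits (average over fibers in characteristic $0$), hence $\Ext^1_{\bC_Y}(V,V)\to\Ext^1_{\bC_{Y'}}(p^*V,p^*V)$ is split injective at every point. This ingredient, or an equivalent substitute, is missing from your write-up; without it the finite-index reduction plus part (1) does not finish the proof.
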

\begin{proof}
The first part is clear on representation spaces.  For the second part, there is a locally closed subspace $i \colon Z\to X$ such that $f|_Z \colon Z\to Y$ is generically finite, and if $f|_{Z}^*=i^*\circ f^*$ is immersive, then so is $f^*$. 
 Thus we reduce to $f$ generically finite.  There is a dense Zariski open $U\subset Y$ for which the base-change $f_U \colon X_U\to U$ is finite \'etale, and since by normality and part (1) the restrictions $\cM_B(X)\to \cM_B(X_U)$ and $\cM_B(Y)\to\cM_B(U)$ are closed immersions, we reduce to the case $f$ is finite \'etale.  In this case, $\bC_Y\to f_*\bC_{X}$ splits, so the map on tangent spaces $\Ext^1_{\bC_Y}(V,V)\to \Ext^1_{\bC_{X}}(f^*V,f^*V)$ is split as well, hence injective.
\end{proof}

\section{Bialgebraic sets of local systems and density of the quasiunipotent locus}\label{sect:bialg}
In this section we define constructible, bialgebraic, and $\bar \bQ$-bialgebraic sets of local systems, and prove \Cref{intro qu density}.  Once the basic definitions are made, the result will follow rather easily from known transcendence theorems (namely Ax--Lindemann for the exponential map and the Gelfond--Schneider theorem).  The main subtlety is that for $X$ non-proper, some care must be taken on the De Rham side, as the naive De Rham analytic space does not have an algebraic structure.  This is essentially because the condition that the singularities be regular implicitly makes reference to the existence of an extension as a connection with log poles to a log smooth compactification, and there is a non-finite-type choice of such an extension.  

Much of the general formalism of constructible subsets of stacks and bialgebraic subsets of the Betti stack have been treated elsewhere (Simpson \cite{Simpsonrankone} in the projective case, and Budur, Wang, and Lerer \cite{Budur-Wang,Budur-Lerer-Wang} in the quasiprojective case) in roughly equivalent ways.  We direct the reader to the corresponding sections along the way.
  
 \subsection{Constructible subsets of algebraic stacks}\label{sect:stackZariski}
  For the remainder, unless otherwise indicated, $X$ will be a connected smooth algebraic space defined over an algebraically closed subfield $k\subset\bC$.  We use the embedding to define the analytification $X^\an$.  We say a subset $\Sigma\subset X(\bC)$ has nonempty interior in each component of $X$ if, for each irreducible component $X_0$ of $X$, the intersection $\Sigma\cap X_0$ has nonempty interior in $X_0$ with respect to the euclidean topology.  
 
 Given an algebraic stack $\cM$ over $\bC$, it will be useful to have a reasonable notion of constructible subsets of $\cM(\bC)$.  We have the following easy lemma:

 \begin{lem}
     Let $f:V\to U$ be a smooth morphism of finite type complex algebraic spaces which is surjective on points.  Then $\Sigma\subset U(\bC)$ is Zariski (open/closed/constructible), (open/closed) in the euclidean topology, or analytically constructible\footnote{That is, in the Boolean algebra generated by closed analytic subvarieties.} if and only if $f^{-1}(\Sigma)$ is.
 \end{lem}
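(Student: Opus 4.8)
The plan is to dispatch the ``only if'' direction uniformly and quickly, and then to run the ``if'' direction by exploiting that a smooth surjective morphism is \emph{open} and satisfies $\Sigma = f(f^{-1}(\Sigma))$; the only point requiring genuine care is the analytically constructible case.

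First I would record the trivial implication. Since $f$ is a morphism of finite type complex algebraic spaces it is continuous for the Zariski and euclidean topologies, $f^{\an}$ is a morphism of analytic spaces, and $f^{-1}$ commutes with Boolean operations and sends closed analytic subvarieties to closed analytic subvarieties; hence $f^{-1}(\Sigma)$ inherits each of the six listed properties from $\Sigma$, using neither smoothness nor surjectivity.

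For the converse the key inputs are: (a) $f$ is flat and locally of finite presentation, hence universally open, and correspondingly $f^{\an}$ is an open map of topological spaces — indeed, \'etale-locally on the source $f$ factors as an \'etale morphism followed by a projection $\bA^n_U\to U$, so $f^{\an}$ is analytically locally a projection; (b) surjectivity on points gives $f(f^{-1}(\Sigma))=\Sigma$ and, more precisely, $U(\bC)\setminus\Sigma = f\big(V(\bC)\setminus f^{-1}(\Sigma)\big)$, because for $u\in U(\bC)$ one has $u\in\Sigma$ iff $f^{-1}(u)\subseteq f^{-1}(\Sigma)$ (the fibre being nonempty by surjectivity). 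Granting these: if $f^{-1}(\Sigma)$ is Zariski open (resp.\ euclidean open) then $\Sigma=f(f^{-1}(\Sigma))$ is Zariski open (resp.\ euclidean open) by openness of $f$ (resp.\ $f^{\an}$); if $f^{-1}(\Sigma)$ is Zariski closed (resp.\ euclidean closed) then $V\setminus f^{-1}(\Sigma)$ is open and $U\setminus\Sigma$ is its image, hence open, so $\Sigma$ is closed; and if $f^{-1}(\Sigma)$ is Zariski constructible then $\Sigma=f(f^{-1}(\Sigma))$ is constructible by Chevalley's theorem for algebraic spaces.

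The analytically constructible case is where I expect the only real obstacle, since ``image of an analytically constructible set'' is not formal without properness. The cleanest route is via analytic local sections: the local model $U\to\bA^n_U\to U$ shows that over an open cover $\{W_\alpha\}$ of $U^{\an}$ there are holomorphic sections $s_\alpha\colon W_\alpha\to V^{\an}$ of $f^{\an}$, whence $\Sigma\cap W_\alpha = s_\alpha^{-1}(f^{-1}(\Sigma))$ is analytically constructible in $W_\alpha$ (preimages of closed analytic subvarieties under a morphism of analytic spaces are closed analytic, compatibly with Boolean operations). One then concludes using that analytic constructibility is a local condition on the base: a subset of $U^{\an}$ that is analytically constructible near each of its points is analytically constructible, which follows by gluing the local defining analytic functions via coherence (Cartan's theorems) together with the local noetherianity of the analytic Zariski topology. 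Alternatively one may invoke the analytic analogue of Chevalley — stability of analytically constructible sets under images of finite type morphisms, deduced from Remmert's proper mapping theorem and a local relative compactification — but the local-section argument is more self-contained. In short, the algebraic and euclidean statements are immediate from openness of $f$ plus Chevalley, and the entire subtlety is concentrated in the local-to-global passage for analytic constructibility.
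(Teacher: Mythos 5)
Your argument is correct and is essentially the paper's: the paper likewise deduces all six claims in one stroke from the existence of local sections of $f$ (in the \'etale topology for the Zariski statements, in the euclidean topology for the euclidean and analytic ones) together with Chevalley's theorem, and your use of openness of the smooth map $f$ for the open/closed cases is only a cosmetic variant of that. One small caveat: you rightly isolate the local-to-global passage for analytic constructibility as the delicate point, but the clean justification is not Cartan's theorems (which need Steinness); rather, one uses that the closure of a locally analytically constructible set is a closed analytic subset and inducts on dimension of $U^{\an}$ --- a point the paper's one-line proof leaves implicit as well.
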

 \begin{proof}
     All the claims follow from the existence of sections locally on $U$ (using surjectivity) in the euclidean and \'etale topologies and Chevalley's theorem.
 \end{proof}
For some smooth atlas $\pi:U\to\cM$ by an algebraic space, we may equip $\cM(\bC)$ with the quotient Zariski or euclidean topology via the surjective map $\pi:U(\bC)\to\cM(\bC)$, which by the lemma is independent of the atlas.  We say a subset $\Sigma\subset \cM(\bC)$ is (Zariski/analytically) constructible\footnote{We often just say ``constructible'' if we mean Zariski constructible if there's no chance of confusion.} if $\pi^{-1}(\Sigma)$ is, which by the lemma is again independent of the atlas, and we may likewise define both the Zariski closure $\Sigma^\Zar$ and the euclidean closure $\bar\Sigma$.  Note that $\pi^{-1}(\Sigma)^\Zar=\pi^{-1}(\Sigma^\Zar)$ and $\overline{\pi^{-1}(\Sigma)}=\pi^{-1}\left(\overline \Sigma\right)$ for any atlas.  We say $\Sigma$ is irreducible if it is irreducible with respect to the Zariski topology, meaning every nonempty open subset of $\Sigma$ is dense.  This is equivalent to there being a morphism $f:Z\to\cM$ from an irreducible algebraic space $Z$ whose image is contained and dense in $\Sigma^\Zar$, since on the one hand images of irreducible topological spaces are irreducible, and on the other hand any irreducible component of the preimage of $\Sigma$ in an atlas has image in $\Sigma$ with nonempty interior.  Finally, observe that for any Zariski closed $\Sigma\subset\cM$, there is a unique closed immersion of algebraic stacks $\cN\subset\cM$ with $\Sigma=\cN(\bC)$.  Indeed, given a presentation in groupoids $R\rightrightarrows U\xrightarrow{\pi} \cM$, we take the closed subspace $V=\pi^{-1}(\Sigma)$ and the induced full presentation in groupoids $R_V\rightrightarrows V$ where $R_V=R\times_{U\times U}V\times V$.
\begin{rem}\label{rem on images}
    In the case of a global quotient $\cM=[\bG\backslash X]$ of an algebraic space by an algebraic group, the Zariski/euclidean topology on $\cM(\bC)$ is the quotient topology, and the algebra of constructible subsets is equivalent to the algebra of invariant constructible subsets of $X$. 
\end{rem}
\begin{cor}Let $\cM$ be an algebraic stack over $\bC$.  The algebra of constructible subsets of $\cM(\bC)$ is the Boolean algebra generated by finite type Zariski closed subsets. 
\end{cor}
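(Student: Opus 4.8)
The statement to be proved is:

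\begin{quote}
\emph{Let $\cM$ be an algebraic stack over $\bC$. The algebra of constructible subsets of $\cM(\bC)$ is the Boolean algebra generated by finite type Zariski closed subsets.}
\end{quote}

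The plan is to reduce everything to the corresponding statement for algebraic spaces via a smooth atlas, and there invoke Chevalley's theorem together with the Noetherianity of the underlying space. First I would fix a smooth atlas $\pi\colon U\to\cM$ by a finite type algebraic space $U$ which is surjective on $\bC$-points; such an atlas exists by definition of an algebraic stack, and by the lemma just proved (the one preceding \Cref{rem on images}), a subset $\Sigma\subset\cM(\bC)$ is Zariski constructible precisely when $\pi^{-1}(\Sigma)\subset U(\bC)$ is. Thus it suffices to prove the analogous statement downstairs: the algebra of Zariski constructible subsets of $U(\bC)$ is the Boolean algebra generated by finite type Zariski closed subsets of $U$. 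Since $U$ is itself of finite type, \emph{every} Zariski closed subset of $U$ is of finite type, and the claim for $U$ is immediate: a constructible subset of a Noetherian scheme (or algebraic space) is by the usual definition a finite Boolean combination of closed subsets, so the two Boolean algebras literally coincide.

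The one point requiring care is matching up the two descriptions of ``constructible subset of $\cM(\bC)$'' — the one via preimages in an atlas (as just defined in the text) and the one in the statement of the corollary (Boolean combinations of finite type closed substacks). For the easy inclusion: a finite type Zariski closed substack $\cN\subset\cM$ has $\pi^{-1}(\cN(\bC))$ closed in $U(\bC)$, hence $\cN(\bC)$ is constructible, and the Boolean operations are preserved under taking preimages, so any Boolean combination of finite type closed substacks is constructible. For the reverse inclusion, suppose $\Sigma\subset\cM(\bC)$ is constructible, so $\pi^{-1}(\Sigma)=\bigcup_i (C_i\setminus C_i')$ for finitely many closed $C_i, C_i'\subset U$. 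The subtlety is that the $C_i$ need not be $R$-saturated (where $R\rightrightarrows U$ is the groupoid presenting $\cM$), so they need not descend to closed substacks. To fix this, I would use the observation recorded just above \Cref{rem on images}: for any Zariski closed subset of $\cM$ there is a unique closed immersion of algebraic stacks realizing it, obtained by saturating a closed subspace of the atlas with respect to the groupoid. Applying this, the Zariski closure $\Sigma^{\Zar}$ corresponds to a finite type closed substack $\overline{\cN}$, and working inductively by dimension on $U$ (equivalently on the $\overline{\cN}$), one writes $\Sigma$ as $\overline{\cN}(\bC)$ minus a constructible subset of strictly smaller Zariski closure, which by the inductive hypothesis lies in the Boolean algebra generated by finite type closed substacks; reassembling gives the claim.

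The main (and really only) obstacle is this bookkeeping of saturation and descent: making sure that when one peels off pieces of a constructible set in the atlas, the pieces can be arranged to be preimages of honest closed substacks of $\cM$ rather than merely closed subsets of $U$. This is handled cleanly by the dimension induction combined with the unique-closed-substack observation already established in the text, so no new input is needed — the proof is essentially an unwinding of definitions plus Chevalley and Noetherian induction. I would present it in two short steps (the easy inclusion, then the inductive argument for the reverse inclusion), each a couple of lines.
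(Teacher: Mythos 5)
Your proof is correct and is essentially the argument the paper intends: the corollary is stated without proof as an immediate consequence of the atlas lemma (constructibility, closures, and the quotient topology are computed on and are independent of a smooth atlas) together with the observation that every Zariski closed subset of $\cM(\bC)$ underlies a unique closed substack, and your descent-plus-Noetherian-induction write-up (peeling off $\Sigma^{\Zar}$, whose preimage is the saturated closure $\pi^{-1}(\Sigma)^{\Zar}$, and inducting on the strictly smaller closure of $\Sigma^{\Zar}\setminus\Sigma$) is exactly the routine unwinding of that. The only cosmetic points are that Chevalley enters only through the earlier atlas lemma (not in your induction), and that your appeal to a finite type atlas matches the paper's standing convention that atlases are by finite type algebraic spaces, which is where the ``finite type'' qualifier in the statement comes from.
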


\begin{cor}
    Let $f:\cM\to\cN$ be a $\bC$-morphism of algebraic stacks over $\bC$.  For any constructible subset $\Sigma\subset\cM(\bC)$ (resp. constructible subset $\mathrm{T}\subset \cN(\bC)$), the image $f(\Sigma)\subset\cN(\bC)$ (resp. preimage $f^{-1}(\mathrm{T})\subset\cM(\bC)$) is constructible.
\end{cor}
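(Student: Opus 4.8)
The plan is to reduce the statement to the analogous (and classical) facts for morphisms of finite-type algebraic spaces by choosing compatible smooth atlases. We may first assume $\cM$ and $\cN$ are of finite type over $\bC$: by the preceding corollary $\Sigma$ (resp.\ $\mathrm{T}$) is contained in, and constructible as a subset of, $\cZ(\bC)$ for some finite-type closed substack $\cZ$, so we replace the source (resp.\ target) by $\cZ$, and replace the target (resp.\ source) by a finite-type open substack containing the relevant quasi-compact image, which is harmless. Now consider the \emph{preimage} case. Fix a smooth atlas $\pi_\cN\colon U_\cN\to\cN$ by an algebraic space (so $\pi_\cN$ is representable, smooth, surjective), form the fiber product $V:=\cM\times_\cN U_\cN$, and pick a smooth atlas $W\to V$ by an algebraic space; because $\pi_\cN$ is representable, $V$ is an algebraic space, and the composite $\pi_W\colon W\to V\to\cM$ is a smooth surjection from an algebraic space, hence a valid atlas. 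Write $g\colon W\to U_\cN$ for the other projection. Chasing $\bC$-points and using the $2$-commutativity of the square $W\to\cM\to\cN$, $W\to U_\cN\to\cN$, one gets the set-theoretic identity $\pi_W^{-1}(f^{-1}(\mathrm{T}))=g^{-1}(\pi_{U_\cN}^{-1}(\mathrm{T}))$. Since $\pi_{U_\cN}^{-1}(\mathrm{T})$ is constructible by definition and preimages of constructible subsets under an arbitrary morphism of finite-type algebraic spaces are constructible (constructible subsets form the Boolean algebra generated by the Zariski closed ones, and preimages preserve Boolean operations and Zariski closedness), the right-hand side is constructible in $W$; hence $f^{-1}(\mathrm{T})$ is constructible in $\cN(\bC)$ — wait, in $\cM(\bC)$ — by the atlas-independence of constructibility.

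For the \emph{image} case, choose smooth atlases $\pi_\cM\colon U_\cM\to\cM$ and $\pi_\cN\colon U_\cN\to\cN$ by algebraic spaces and set $V:=U_\cM\times_\cN U_\cN$, where $U_\cM\to\cN$ is the composite of $\pi_\cM$ with $f$; since $\pi_\cN$ is representable, $V$ is again a finite-type algebraic space over $\bC$. Let $p\colon V\to U_\cM$ (a smooth surjection, as the base change of $\pi_\cN$) and $q\colon V\to U_\cN$ be the two projections. Identifying $V(\bC)$ with triples $(u,y,\varphi)$ where $\varphi$ is an isomorphism between the images of $u$ and $y$ in $\cN$, and using surjectivity of $\pi_\cM$ on points, one checks the identity
\[\pi_{U_\cN}^{-1}(f(\Sigma))=q(p^{-1}(\pi_{U_\cM}^{-1}(\Sigma))).\]
Now $\pi_{U_\cM}^{-1}(\Sigma)$ is constructible, hence so is $p^{-1}(\pi_{U_\cM}^{-1}(\Sigma))$, and then so is its image under $q$ by Chevalley's theorem for finite-type algebraic spaces (obtained from the scheme case by \'etale descent). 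Therefore $\pi_{U_\cN}^{-1}(f(\Sigma))$ is constructible in $U_\cN$, and so $f(\Sigma)$ is constructible in $\cN(\bC)$.

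I expect the only genuinely non-bookkeeping point to be the representability of smooth atlases $\pi_\cN\colon U_\cN\to\cN$: this is exactly what guarantees that the fiber products $V$ above are algebraic spaces rather than merely algebraic stacks, which is what lets us apply the finite-type results (Chevalley, stability under preimage) and the atlas-independence lemma directly. Granting representability of atlases, everything else is a matter of tracking the evident identities on $\bC$-points, together with the already-established fact that a smooth surjective morphism of algebraic spaces detects constructibility. In the concrete situations of interest (global quotients $[\bG\backslash X]$ of a quasiprojective scheme by a reductive group, cf.\ \Cref{rem on images}) one can dispense with this discussion altogether, taking $X$ itself as the atlas, but the argument above makes no such assumption.
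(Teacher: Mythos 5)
Your argument is correct and is essentially the intended one (the paper states this corollary without a written proof): pass to atlases via the set-theoretic identities you record, then use stability of constructibility under preimages and Chevalley's theorem for finite-type algebraic spaces, together with the paper's lemma that smooth surjections of algebraic spaces detect constructibility. The only blemish is the parenthetical claim that $V=\cM\times_\cN U_\cN$ is an algebraic space --- in general it is only an algebraic stack (representability of $\pi_\cN$ makes the projection $V\to\cM$ representable, not $V$ itself a space) --- but this is harmless since your argument only ever uses the atlas $W\to V$; likewise the preliminary finite-type reduction is superfluous under the paper's standing finite-type conventions (and, as phrased, would be delicate for preimages were the stacks not of finite type).
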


 \begin{lem}\label{pull-push-contained-nonempty-interior}
 Let $f\colon \cM \to \cN$ be a $\bC$-morphism between two finite type algebraic stacks over $\bC$. 
 \begin{enumerate}
     \item if $\Sigma\subset \cM(\bC)$ has nonempty interior in each component of its Zariski closure, then $f(\Sigma) \subset \cN(\bC)$ has nonempty interior in each component in its Zariski closure.
     \item Assume that $f$ is universally open (e.g. flat). If $\mathrm{T}\subset \cN(\bC)$ has nonempty interior in each component in its Zariski closure, then $f^{-1}(\mathrm{T}) \subset \cM(\bC)$ has nonempty interior in each component in its Zariski closure.
 \end{enumerate}  
 \end{lem}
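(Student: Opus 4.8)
The statement is a pair of claims about the behavior of the property ``has nonempty interior in each component of its Zariski closure'' under push-forward and pull-back along a morphism of finite type algebraic stacks. By the discussion in \Cref{sect:stackZariski} (in particular the lemma on smooth surjective atlases and \Cref{rem on images}), both the property in question and the morphism $f$ can be checked after passing to smooth atlases. Concretely, choose smooth surjective atlases $\pi_\cM \colon U \to \cM$ and $\pi_\cN \colon V \to \cN$ by finite type algebraic spaces, together with a compatible morphism $\tilde f \colon U \to V$ lifting $f$ (which exists after possibly refining $U$, since smooth morphisms admit sections étale-locally). Because $\pi_\cM, \pi_\cN$ are smooth and surjective, they are open in the euclidean topology, and $\pi_\cM^{-1}(\Sigma^\Zar) = (\pi_\cM^{-1}\Sigma)^\Zar$ (likewise for $\cN$); hence $\Sigma$ has nonempty interior in each component of $\Sigma^\Zar$ if and only if $\pi_\cM^{-1}(\Sigma)$ does in $U(\bC)$, and similarly for subsets of $\cN(\bC)$. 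Thus the statement reduces to the corresponding statement for the morphism $\tilde f \colon U \to V$ of algebraic spaces.

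\textbf{Part (1).} First I would reduce to $\Sigma$ irreducible: write $\Sigma^\Zar$ as a finite union of irreducible components $\Sigma_i^\Zar$, and note $\Sigma$ has nonempty interior in each $\Sigma_i^\Zar$ precisely when the traces $\Sigma \cap \Sigma_i^\Zar$ do; since $f(\Sigma) = \bigcup_i f(\Sigma \cap \Sigma_i^\Zar)$ and $f(\Sigma)^\Zar = \bigcup_i f(\Sigma_i^\Zar)$ (images of irreducible sets being irreducible), it suffices to treat each piece. So assume $\Sigma$ is irreducible with Zariski closure an irreducible algebraic space $Z = \Sigma^\Zar$, and $\Sigma$ contains a nonempty euclidean-open subset of $Z$. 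Restricting $\tilde f$ to $Z$ (composing with the closed immersion $Z \hookrightarrow V$ is harmless), I reduce to: $\tilde f\colon Z \to W$ a morphism of algebraic spaces with $Z$ irreducible, $\Sigma \subset Z(\bC)$ containing a euclidean-open subset of $Z$; show $\tilde f(\Sigma)$ has nonempty euclidean interior in $\tilde f(Z)^\Zar$. Now $\tilde f(Z)^\Zar$ is an irreducible closed subspace, and by Chevalley's theorem $\tilde f(Z)$ contains a dense Zariski-open $W_0 \subset \tilde f(Z)^\Zar$ over which $\tilde f \colon Z \to \tilde f(Z)^\Zar$ is, after shrinking $W_0$ and removing a proper closed subset of $Z$, a smooth surjective (in particular euclidean-open) morphism onto its image; shrinking further I may assume $\tilde f$ restricted to $Z_0 := \tilde f^{-1}(W_0) \cap (\text{smooth locus})$ is an open map $Z_0 \to W_0$. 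Since $Z$ is irreducible, any nonempty euclidean-open $\Omega \subset Z$ meets $Z_0$ in a nonempty euclidean-open set, and its image $\tilde f(\Omega \cap Z_0)$ is euclidean-open in $W_0$, hence euclidean-open in $\tilde f(Z)^\Zar$. As $\Sigma$ contains such an $\Omega$, $\tilde f(\Sigma)$ has nonempty euclidean interior in each component of its Zariski closure. I expect the routine bookkeeping here — keeping track of which Zariski-dense open locus we are over, and that shrinking does not destroy the hypothesis that $\Sigma$ is euclidean-open in $Z$ — to be the only mild friction; the core idea (Chevalley + generic smoothness $\Rightarrow$ euclidean-openness of $\tilde f$ over a dense open, and irreducibility of $Z$ to propagate the open set) is straightforward.

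\textbf{Part (2).} With the universal openness hypothesis on $f$, hence on $\tilde f \colon U \to V$, the argument is cleaner and does not need generic smoothness. Again reduce to $\mathrm{T}$ irreducible with Zariski closure $W$, containing a euclidean-open $\Omega_W \subset W$. Let $Z \subset U$ be an irreducible component of $\tilde f^{-1}(W)^\Zar = (\tilde f^{-1}(\mathrm{T}))^\Zar$ (I need to check this equality: $\tilde f^{-1}(\mathrm{T}) \subset \tilde f^{-1}(W)$ so one inclusion of closures is clear, and since $\tilde f$ is open and $W = \overline{\mathrm{T}}$, $\tilde f^{-1}(W) = \tilde f^{-1}(\overline{\mathrm{T}}) \subset \overline{\tilde f^{-1}(\mathrm{T})}$ using that $\tilde f^{-1}$ of a euclidean-dense subset is euclidean-dense when $\tilde f$ is open — this is where universal openness is used). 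Now $\tilde f|_Z \colon Z \to W$ is still universally open (openness is stable under base change along the closed immersion $Z \hookrightarrow U$... actually one should pass to $\tilde f|_{\tilde f^{-1}(W)}$ which is the base change of $\tilde f$ along $W \hookrightarrow V$, hence open, then restrict to a component $Z$; openness of $\tilde f|_Z$ onto $W$ requires a small argument but holds since $Z$ is a component and $\tilde f|_{\tilde f^{-1}(W)}$ is open). Choose any point $z \in Z(\bC)$ lying over a point of $\Omega_W$ and in the locus where $Z$ is a connected component of $\tilde f^{-1}(W)$ euclidean-locally; then $\tilde f|_Z$ is open near $z$, so $\tilde f|_Z^{-1}(\Omega_W)$ contains a nonempty euclidean-open subset of $Z$, and this subset lies in $\tilde f^{-1}(\mathrm{T})$. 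Since $Z$ is an arbitrary component of $(\tilde f^{-1}\mathrm{T})^\Zar$, this shows $\tilde f^{-1}(\mathrm{T})$ has nonempty interior in each component of its Zariski closure, as desired. The main obstacle in part (2) is the bookkeeping around the identity $(\tilde f^{-1}\mathrm{T})^\Zar = \tilde f^{-1}(\mathrm{T}^\Zar)$ and the compatibility of ``component of the preimage'' with euclidean-local structure — all consequences of $\tilde f$ being open, but worth stating carefully.

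\textbf{Transfer back to stacks.} Finally, since the atlas maps $\pi_\cM, \pi_\cN$ are smooth surjective, hence euclidean-open, and are compatible with Zariski closures as noted above, the conclusions for $\tilde f$ transfer verbatim to $f$: $f(\Sigma)$ (resp. $f^{-1}(\mathrm{T})$) has nonempty interior in each component of its Zariski closure because its preimage under the atlas map does. This completes the plan; no step requires more than Chevalley's theorem, generic smoothness, and the openness properties recalled in \Cref{sect:stackZariski}.
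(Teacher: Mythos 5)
Your plan follows essentially the same route as the paper: reduce to finite type algebraic spaces via smooth atlases; prove (1) by passing to an irreducible component, invoking generic flatness/smoothness so that the map becomes open on analytifications over a dense Zariski-open of the target, and using that a proper Zariski-closed subset of an irreducible space has empty euclidean interior; prove (2) via the identity $f^{-1}(\mathrm{T})^{\Zar}=f^{-1}(\mathrm{T}^{\Zar})$, which is where universal openness enters. (The paper uses generic flatness in (1), which spares your detour through smooth loci, but that difference is cosmetic.)

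In (2), however, the crux is asserted rather than proved. You write ``choose any point $z\in Z(\bC)$ lying over a point of $\Omega_W$,'' but a priori the component $Z$ of $f^{-1}(\mathrm{T})^{\Zar}$ could fail to have any point over the given euclidean-open $\Omega_W\subset \mathrm{T}$; establishing this is exactly where the work lies. The paper's argument is: $Z$ contains a nonempty Zariski-open subset $Z^{\circ}$ of $f^{-1}(W)$ (the complement of the other components), the base change $f^{-1}(W)\to W$ is open by universal openness, hence $f(Z^{\circ})$ is a nonempty Zariski-open, therefore Zariski-dense, subset of the irreducible $W$; since its complement is a proper Zariski-closed subset with empty euclidean interior, $f(Z^{\circ})$ meets every nonempty euclidean-open subset of $W(\bC)$, so one may shrink $\Omega_W$ to lie inside it, and then $Z\cap f^{-1}(\Omega_W)$ is a nonempty euclidean-open subset of $Z(\bC)$ contained in $f^{-1}(\mathrm{T})$. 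Two related slips: once such a point exists, no openness of $f|_Z$ near $z$ is needed at all (the preimage of $\Omega_W$ in $Z$ is open by continuity alone; universal openness is used only for the closure identity and for the density of $f(Z^{\circ})$), and in your justification of $f^{-1}(\mathrm{T})^{\Zar}=f^{-1}(\mathrm{T}^{\Zar})$ the density argument must be run in the Zariski topology ($\mathrm{T}$ need not be euclidean-dense in $\mathrm{T}^{\Zar}$), i.e.\ use that the base-changed map is Zariski-open and $\mathrm{T}$ is Zariski-dense in its closure. With these points supplied, your proof is complete and agrees with the paper's.
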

 \begin{proof}
One reduces immediately to the case where both $\cM$ and $\cN$ are algebraic spaces of finite type. We start with $(1)$. Let $Z$ be an irreducible component of the Zariski closure $\Sigma^{\Zar}$ of $\Sigma$ in $\cM$. Then $\Sigma \cap Z(\bC)$ is Zariski dense in $Z$ and has nonempty interior in $Z(\bC)$ with respect to the euclidean topology. Therefore, one is reduced to consider the case where both $\cM$ and $\cN$ are irreducible algebraic spaces and $f$ is dominant. In particular, every Zariski closed strict subset of $\cM(\bC)$ or $\cN(\bC)$ has empty interior with respect to the euclidean topology. Let $U \subset \cN$ be a dense Zariski-open subset such that $\cM_U \to U$ is flat. The induced holomorphic map $\cM_U^\an \to U^\an$ is therefore flat too, hence in particular open. The set $\Sigma \cap \cM_U(\bC)$ has nonempty interior in $\cM_U(\bC)$, hence its image $f(\Sigma \cap \cM_U(\bC)) = f(\Sigma) \cap U(\bC)$ has nonempty interior in $U^\an$. A fortiori, $f(\Sigma)$ has non-empty interior in $\cN^\an$. This shows $(1)$.

Let us now prove $(2)$. Base-changing $f$ along the open inclusion $\mathrm{T}^\Zar \subset \cN$, one can assume that $\mathrm{T}$ is Zariski-dense in $\cN$. Moreover, working component by component, it is sufficient to treat the case where $\cN$ is irreducible. Since $f$ is universally open, one has $f^{-1}(\mathrm{T}^\Zar)=f^{-1}(\mathrm{T})^\Zar$. Let $Z \subset \cM$ be an irreducible component of $f^{-1}(\mathrm{T})^\Zar$. Since $f$ is open, its image $f(Z)$ is a dense constructible subset of $\cN$, hence it contains a non-empty Zariski open subset $U \subset \cN$. Let $V$ be a nonempty euclidean open subset of $\cN^\an$ contained in $\Sigma$. Since $U^\an$ is dense in $\cN^\an$, the intersection $U^\an \cap V$ is nonempty. Therefore, by shrinking $V$, one can assume that $V \subset U^\an$. It follows that $Z^\an$ contains the nonempty open subset $f^{-1}(V)$. Since $f^{-1}(V) \subset f^{-1}(\mathrm{T})$, this completes the proof. \end{proof}

 If $\cM$ is defined over $k\subset\bC$ we likewise define the notions of $k$-constructible subsets of $\cM(\bC)$ and their $k$-Zariski closures by using a smooth atlas defined over $k$.  For any $k$-morphism $f:\cM\to \cN$ of $k$-algebraic stacks, there is an induced map $f(\bC):\cM(\bC)\to \cN(\bC)$ under which images and preimages of $k$-constructible subsets are $k$-constructible.

\subsection{Correspondence stacks and morphisms} 
 By a correspondence stack $(\cM_{DR}, \cM_B ,\cZ)$ we mean a pair of algebraic stacks $\cM_{DR}, \cM_B$ together with an analytic stack $\cZ$ and a diagram
 \[\begin{tikzcd}
     &\cZ\ar[rd,"\pi_B"]\ar[ld,"\pi_{DR}"']&\\
     \cM_{DR}^\an&&\cM_B^\an.
 \end{tikzcd}\]
 If $\cM_{DR},\cM_B$ are algebraic spaces (in which case we can always take $\cZ$ to be an analytic space, see below), we call $(\cM_{DR}, \cM_B ,\cZ)$ a correspondence space.  At the moment the subscripts $DR$ and $B$ are merely suggestive, and the abstract definition is symmetric in the two ``realizations''.  We will be mainly interested in correspondence stacks where $\cZ$ is the graph of a holomorphic map $\cM_{DR}^\an \to \cM_{B}^\an$. We often refer to the correspondence stack as just the pair $(\cM_{DR},\cM_{B})$ when the correspondence $\cZ$ is clear.

A morphism of correspondence stacks $f:(\cM_{DR},\cM_{B})\to(\cM_{DR}',\cM_{B}')$ consists of algebraic morphisms $f_{DR}:\cM_{DR}\to\cM_{DR}'$ and $f_B:\cM_{B}\to\cM_B'$ such that the image of the natural map $\cZ\to \cM_{DR}'^\an\times\cM_B'^\an$ is contained in the image of $\cZ'$ on the level of points---that is, if the natural map $\cZ\times_{(\cM_{DR}'^\an\times\cM_B'^\an)}\cZ'\to\cZ$ is surjective on points.  Observe that with this notion of morphism, any correspondence stack $(\cM_{DR}, \cM_B ,\cZ)$ is naturally isomorphic to the correspondence stack $(\cM_{DR},\cM_B, \tilde \cZ)$ obtained by replacing the analytic correspondence by $\tilde \cZ$ for any morphism $\tilde \cZ\to\cZ$ of finite type analytic stacks which is surjective on points.  A morphism $(\cM_{DR},\cM_{B})\to(\cM_{DR}',\cM_{B}')$ is then given by compatible morphisms $f_{DR}:\cM_{DR}\to\cM_{DR}'$, $f_B:\cM_{B}\to\cM_{B}'$, $\phi:\cZ\to\cZ'$ up to replacing $\cZ$ in this way.

For every algebraic stack $\cM$ defined over $\bC$ and every field automorphism $\sigma\in\Aut(\bC/\bQ)$, we write $\cM^\sigma := \cM \otimes_\sigma \bC$.  For a subfield $K\subset\bC$, a $K$-correspondence stack $(\cM_{DR},\{\cM_{B,\sigma}\})$ consists of an algebraic stack $\cM_{DR}$ defined over $\bC$ and correspondence stacks $(\cM_{DR}^\sigma,\cM_{B,\sigma},\cZ_\sigma)$ for each $\sigma\in\Aut(\bC/\bQ)$, where each $\cM_{B,\sigma}$ is defined over $K$. 
 A morphism of $K$-correspondence stacks $f:(\cM_{DR},\{\cM_{B,\sigma}\})\to(\cM_{DR}',\{\cM_{B,\sigma}'\})$ consists of a morphism $f:\cM_{DR}\to\cM_{DR}'$ and $K$-morphisms $f_{B,\sigma}: \cM_{B,\sigma}\to\cM_{B,\sigma}'$ such that $(f^\sigma,f_{B,\sigma}):(\cM_{DR}^\sigma,\cM_{B,\sigma})\to (\cM_{DR}^{\prime\sigma},\cM_{B,\sigma}')$ is a morphism of correspondence stacks for each $\sigma$.  Given a correspondence stack $(\cM_{DR},\cM_B)$ with $\cM_{DR}$ defined over $\bQ$ and $\cM_B$ defined over $K$, we get a natural $K$-correspondence stack $(\cM_{DR},\{\cM_B\})$ by taking all the correspondences to be the same as that of $(\cM_{DR},\cM_B)$.

\begin{exa}
    Let $\Lambda$ be a finitely generated free $\bZ$-module.  There is a natural correspondence space $\Exp(\Lambda):=(\G_a\otimes\Lambda,\G_m\otimes\Lambda)$ whose correspondence is
    \[
\begin{tikzcd}
&\bC\otimes\Lambda\ar[ld,equals]\ar[rd,"\exp\otimes 1"]&\\
\bC\otimes\Lambda&&\bC^*\otimes\Lambda.
\end{tikzcd}
\]
As above, there is a naturally associated $\bQ$-correspondence space $\Exp_{\bQ}(\Lambda):=(\G_a\otimes\Lambda,\{\G_m\otimes\Lambda\})$.  The construction is functorial in that a homomorphism $\Lambda\to\Lambda'$ yields a morphism $(\G_a\otimes\Lambda,\G_m\otimes\Lambda)\to(\G_a\otimes\Lambda',\G_m\otimes\Lambda')$ of correspondence spaces, as well as a morphism $(\G_a\otimes\Lambda,\{\G_m\otimes\Lambda\})\to(\G_a\otimes\Lambda',\{\G_m\otimes\Lambda'\})$ of $\bQ$-correspondence spaces.

Taking $\Lambda=\bZ^n$, denote $\Exp(n):=\Exp(\bZ^n)$. Taking the quotient of both sides by the symmetric group $\mathfrak{S}_n$, we have a natural correspondence space $\Sym\Exp(n):=(\Sym^n\G_a,\Sym^n\G_m)$ as well as its powers.  There is a natural quotient morphism $\Exp(n)\to\Sym\Exp(n)$.  Likewise for the $K$-analogs, $\Exp_{K}(\Lambda),\Exp_{K}(n)$ and $\Sym\Exp_{K}(n)$.
\end{exa}

\subsection{Bialgebraic subsets}Let $(\cM_{DR},\cM_B)$ be a correspondence stack with correspondence 
 \[
\begin{tikzcd}
&\cZ\ar[ld,"\pi_{DR}",swap]\ar[rd,"\pi_{B}"]&\\
\cM_{DR}^\an&&\cM_B^\an.
\end{tikzcd}
\]
\begin{defn}
$(\cM_{DR},\cM_{B})$ is \emph{irreducible bialgebraic} if $\cM_{DR},\cM_B,\cZ$ are irreducible and $\img\pi_{DR}$ and $\img\pi_{B}$ have nonempty interior.  We say a pair $(\Sigma_{DR},\Sigma_B)\subset (\cM_{DR},\cM_B)$ of constructible subsets $\Sigma_{DR}\subset \cM_{DR}(\bC)$ and $\Sigma_B\subset \cM_B(\bC)$ is \emph{bialgebraic} if there is a correspondence stack $(\cN_{DR},\cN_{B})$ which is a finite disjoint union of irreducible bialgebraic correspondence stacks and a morphism $f:(\cN_{DR},\cN_B)\to (\cM_{DR},\cM_B)$ such that $f_{DR}(\cN_{DR}(\bC))^\Zar=\Sigma_{DR}^\Zar$ and $f_B(\cN_B(\bC))^\Zar=\Sigma_B^\Zar$.  We say a constructible $\Sigma_{DR}\subset\cM_{DR}(\bC)$ (resp. $\Sigma_B\subset\cM_B(\bC)$) is \emph{bialgebraic} if it can be completed to a bialgebraic pair $(\Sigma_{DR},\Sigma_B)$.

\end{defn}
By definition, bialgebraicity is a property of the Zariski closures.  We immediately have the following:
\begin{lem}
A constructible subset $\Sigma_{DR}\subset\cM_{DR}(\bC)$ (resp. $\Sigma_{B}\subset\cM_{B}(\bC)$) is bialgebraic if and only if for every irreducible component $\Sigma_0$ of $\Sigma_{DR}^\Zar$ (resp. $\Sigma_B^\Zar$) there is an irreducible component $\cZ_0$ of $\pi_{DR}^{-1}(\Sigma_0)$ (resp. $\pi_{B}^{-1}(\Sigma_0)$) such that $\pi_B(\cZ_0)$ (resp. $\pi_{DR}(\cZ_0)$) has nonempty interior in $\pi_B(\cZ_0)^\Zar$ (resp. $\pi_{DR}(\cZ_0)^\Zar$).
\end{lem}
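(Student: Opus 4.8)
The plan is to unravel the definition of bialgebraic and push everything through to a single irreducible component. First I would observe that a constructible subset $\Sigma_{DR}\subset\cM_{DR}(\bC)$ is bialgebraic if and only if every irreducible component of its Zariski closure is, since the defining correspondence stack $(\cN_{DR},\cN_B)$ is allowed to be a finite disjoint union of irreducible bialgebraic correspondence stacks; one simply treats each component separately and takes disjoint unions. So I would reduce immediately to the case that $\Sigma_{DR}^\Zar$ is irreducible, call it $\Sigma_0$, and likewise it suffices to produce a single irreducible bialgebraic correspondence stack mapping appropriately.

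Next, for the forward direction, suppose $(\Sigma_{DR},\Sigma_B)$ is completed to a bialgebraic pair via $f:(\cN_{DR},\cN_B)\to(\cM_{DR},\cM_B)$ with $\cN_{DR},\cN_B,\cW$ irreducible and $\img\pi_{DR}^\cW,\img\pi_{B}^\cW$ having nonempty interior, and with $f_{DR}(\cN_{DR}(\bC))^\Zar=\Sigma_0$. The morphism of correspondence stacks gives a surjection on points $\cW\times_{(\cN_{DR}^\an\times\cN_B^\an)\text{-stuff}}\cZ\to\cW$ compatible with the projections; pushing the irreducible set $\cW$ (or rather its image in $\cZ$) and using \Cref{pull-push-contained-nonempty-interior}(1) repeatedly, I would extract an irreducible component $\cZ_0\subset\pi_{DR}^{-1}(\Sigma_0)$ containing the image of $\cW$, and then $\pi_B(\cZ_0)$ has nonempty interior in its Zariski closure because it contains the image of the interior-dense piece coming from $\cN_B$. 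Conversely, given such a $\cZ_0$, I would take normalizations of $\cZ_0$, $\Sigma_0=\pi_{DR}(\cZ_0)^\Zar$ (which equals the Zariski closure of $\pi_{DR}(\cZ_0)$ since $\cZ_0$ is irreducible with dense image — one checks nonempty interior of $\pi_{DR}(\cZ_0)$ in $\Sigma_0$ using that $\cZ_0$ surjects onto it analytically and \Cref{pull-push-contained-nonempty-interior} again), and $\pi_B(\cZ_0)^\Zar$, to build an honest irreducible bialgebraic correspondence stack $(\cN_{DR},\cN_B,\cN_\cZ)$ with a morphism to $(\cM_{DR},\cM_B)$ realizing the pair; the interior conditions on $\img\pi_{DR}$ and $\img\pi_B$ hold by construction of $\cZ_0$.

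The step I expect to be the main obstacle is handling the bookkeeping of analytic stacks and the ``surjective on points'' replacement of correspondences cleanly: one must be careful that the fiber product $\cZ_0 \times_{(\cM_{DR}^\an\times\cM_B^\an)}$ (of the relevant graphs) behaves well and that taking an irreducible component of $\pi_{DR}^{-1}(\Sigma_0)$ — an analytic rather than algebraic operation — is compatible with the subsequent algebraic Zariski-closure manipulations. The key technical lemma doing the heavy lifting is \Cref{pull-push-contained-nonempty-interior}, used in both directions to propagate the ``nonempty interior in each component of the Zariski closure'' property through the algebraic projections underlying $\pi_{DR}$ and $\pi_B$ (more precisely through a smooth atlas presentation, reducing to the algebraic-space case as in that lemma's proof). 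Everything else — disjoint-union reductions, passing to normalizations to get irreducibility, the symmetry of the argument in the two realizations — is formal.
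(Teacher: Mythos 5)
Your opening reduction to a single irreducible component of $\Sigma_{DR}^\Zar$ is fine, but the converse direction has a genuine gap. To exhibit an irreducible bialgebraic witness with correspondence $\cZ_0$ you must verify \emph{both} interior conditions in the definition: besides $\pi_B(\cZ_0)$, the image $\pi_{DR}(\cZ_0)$ must have nonempty interior in $\cN_{DR}(\bC)$. The lemma's hypothesis only supplies the $\pi_B$-side, and the two claims you make to supply the $\pi_{DR}$-side both fail: (i) nothing forces $\pi_{DR}(\cZ_0)$ to be Zariski dense in $\Sigma_0$, since $\cZ_0$ is merely \emph{some} irreducible component of $\pi_{DR}^{-1}(\Sigma_0)$ and may map into a proper closed subvariety; and (ii) \Cref{pull-push-contained-nonempty-interior} is a statement about \emph{algebraic} morphisms of algebraic stacks, whereas $\pi_{DR}$ is only analytic, and the analytic analogue is false: the image of an irreducible analytic set under a holomorphic map can have empty euclidean interior in its Zariski closure (e.g. $\{(t,e^t):t\in\bC\}\subset\bC^2$, whose Zariski closure is all of $\bC^2$). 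This is not a removable technicality: for an abstract correspondence stack the displayed condition does not imply bialgebraicity (take $\cM_{DR}=\bA^2$, $\cM_B=\bA^1$, $\cZ=\bC$ with $\pi_{DR}(t)=(t,e^t)$, $\pi_B(t)=t$, and $\Sigma_{DR}=\bA^2(\bC)$: the condition holds, yet the compatibility of correspondences forces the $f_{DR}$-image of any witness to lie on the transcendental curve, so it can never have interior in $\bA^2$). What makes the converse immediate in the cases the paper actually uses is that $\cZ$ is the graph of a holomorphic map $\cM_{DR}^\an\to\cM_B^\an$, so $\pi_{DR}$ is an isomorphism, $\pi_{DR}^{-1}(\Sigma_0)=\Sigma_0^\an$ is irreducible, and $\pi_{DR}(\cZ_0)=\Sigma_0(\bC)$ trivially has nonempty interior; your write-up neither isolates this hypothesis nor provides a valid substitute.

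The forward direction as sketched also needs repair, though it is fixable. There is no map $\cW\to\cZ$: the morphism condition only says $\cW\times_{(\cM_{DR}^\an\times\cM_B^\an)}\cZ\to\cW$ is surjective on points, so ``the image of $\cW$ in $\cZ$'' is undefined and the points of $\cZ$ lying over it may be spread over several components of $\pi_{DR}^{-1}(\Sigma_0)$. The legitimate use of \Cref{pull-push-contained-nonempty-interior} is for the algebraic map $f_B$, giving that $A:=f_B(\img\pi_B^{\cW})$ has nonempty interior in its irreducible Zariski closure $T=f_B(\cN_B(\bC))^\Zar$; since $A\subset\bigcup_j\pi_B(\cZ_j)$ and the $\pi_B(\cZ_j)$ are countably many $\sigma$-compact sets, a Baire argument selects one component $\cZ_0$ with $\pi_B(\cZ_0)$ containing a nonempty open subset of $T(\bC)$. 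Even then you are not done: you must also show $\pi_B(\cZ_0)\subset T(\bC)$ (so that the interior is taken in $\pi_B(\cZ_0)^\Zar=T$ and not in a possibly larger closure), which follows because the analytic subset $\pi_B^{-1}(T^\an)\cap\cZ_0$ of the irreducible $\cZ_0$ has nonempty interior and hence is all of $\cZ_0$. None of these steps appears in your sketch, which again relies on \Cref{pull-push-contained-nonempty-interior} for the analytic projections, where it does not apply.
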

We have the following consequence of \Cref{pull-push-contained-nonempty-interior}.  

\begin{lem}
    Let $f:(\cM_{DR},\cM_B)\to (\cM_{DR}',\cM_B')$ be a morphism of correspondence stacks.
    \begin{enumerate}
        \item If $(\Sigma_{DR},\Sigma_B)\subset (\cM_{DR},\cM_B)$ is bialgebraic, then so is $(f_{DR}(\Sigma_{DR}),f_B(\Sigma_B))\subset (\cM_{DR}',\cM_B')$.
        \item Assume $f_{DR},f_B$ are both universally open, and that the natural map
        \begin{equation}\label{dumb surjection}\cZ\times_{(\cM_{DR}'^\an\times\cM_B'^\an)}\cZ'\to (\cM_{DR}^\an\times\cM_B^\an)\times_{(\cM_{DR}'^\an\times\cM_B'^\an)}\cZ'\end{equation}
        is surjective on points.  Then if $(\Sigma_{DR}',\Sigma_B')\subset (\cM_{DR}',\cM_B')$ is bialgebraic, so is $(f_{DR}^{-1}(\Sigma_{DR}'),f_B^{-1}(\Sigma_B'))\subset (\cM_{DR},\cM_B)$.
    \end{enumerate}

\end{lem}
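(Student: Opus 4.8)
The plan is to manufacture the witnessing data in the definition of a bialgebraic pair from that of $(\Sigma_{DR},\Sigma_B)$ (resp. $(\Sigma'_{DR},\Sigma'_B)$), using \Cref{pull-push-contained-nonempty-interior} to carry along the ``nonempty interior in each component'' condition, which is the only nontrivial part of being (irreducible) bialgebraic. For (1), pick a morphism $g\colon(\cN_{DR},\cN_B,\cZ_\cN)\to(\cM_{DR},\cM_B)$ with $(\cN_{DR},\cN_B)$ a finite disjoint union of irreducible bialgebraic correspondence stacks and $g_{DR}(\cN_{DR}(\bC))^\Zar=\Sigma_{DR}^\Zar$, $g_B(\cN_B(\bC))^\Zar=\Sigma_B^\Zar$. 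The composite $f\circ g$ is again a morphism of correspondence stacks, because the containment $\img(\cZ_\cN\to\cM^\an_{DR}\times\cM^\an_B)\subseteq\img(\cZ\to\cM^\an_{DR}\times\cM^\an_B)$ is transported by $f^\an_{DR}\times f^\an_B$ into $\img(\cZ_\cN\to\cM'^\an_{DR}\times\cM'^\an_B)\subseteq\img(\cZ\to\cM'^\an_{DR}\times\cM'^\an_B)\subseteq\img\cZ'$. Since algebraic morphisms are Zariski-continuous, $\overline{f_{DR}(A)}^\Zar=\overline{f_{DR}\bigl(\overline{A}^\Zar\bigr)}^\Zar$ for every $A$; taking $A=g_{DR}(\cN_{DR}(\bC))$ and then $A=\Sigma_{DR}$ gives $(f\circ g)_{DR}(\cN_{DR}(\bC))^\Zar=f_{DR}(\Sigma_{DR})^\Zar$, and symmetrically on the Betti side, so $f\circ g$ witnesses that $(f_{DR}(\Sigma_{DR}),f_B(\Sigma_B))$ is bialgebraic.

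For (2), fix a witness $g'\colon(\cN'_{DR},\cN'_B,\cZ_{\cN'})\to(\cM'_{DR},\cM'_B)$ for $(\Sigma'_{DR},\Sigma'_B)$, and (enlarging to finite-type surjective-on-points covers if necessary) put $\cN_{DR}:=\cM_{DR}\times_{\cM'_{DR}}\cN'_{DR}$, $\cN_B:=\cM_B\times_{\cM'_B}\cN'_B$, and $\cZ_\cN:=\cZ\times_{(\cM'^\an_{DR}\times\cM'^\an_B)}\cZ_{\cN'}$, where $\cZ$ maps to $\cM'^\an_{DR}\times\cM'^\an_B$ via its correspondence maps followed by $f^\an$ and $\cZ_{\cN'}$ via its correspondence maps followed by $(g')^\an$. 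Using the identification $\cN^\an_{DR}\times\cN^\an_B=(\cM^\an_{DR}\times\cM^\an_B)\times_{(\cM'^\an_{DR}\times\cM'^\an_B)}(\cN'^\an_{DR}\times\cN'^\an_B)$, the space $\cZ_\cN$ acquires correspondence maps to $\cN^\an_{DR}$ and $\cN^\an_B$, and the first projections give a morphism $h\colon(\cN_{DR},\cN_B,\cZ_\cN)\to(\cM_{DR},\cM_B)$ (the containment for $h$ holds because $\cZ_\cN$ projects to $\cZ$). On $\bC$-points $h_{DR}(\cN_{DR}(\bC))=f_{DR}^{-1}(g'_{DR}(\cN'_{DR}(\bC)))$; since $f_{DR}$ is universally open one has $f_{DR}^{-1}(\mathrm T^\Zar)=f_{DR}^{-1}(\mathrm T)^\Zar$, and $f_{DR}^{-1}$ of any Zariski-dense constructible subset of $\Sigma'^\Zar_{DR}$ is Zariski-dense in $f_{DR}^{-1}(\Sigma'^\Zar_{DR})$ (each component of the preimage dominates a component of the target, by openness); applying this to $g'_{DR}(\cN'_{DR}(\bC))$ and to $\Sigma'_{DR}$ yields $h_{DR}(\cN_{DR}(\bC))^\Zar=f_{DR}^{-1}(\Sigma'_{DR})^\Zar$, and symmetrically for $B$. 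It remains to rewrite $(\cN_{DR},\cN_B,\cZ_\cN)$ as a finite disjoint union of irreducible bialgebraic correspondence stacks: pass to the irreducible components of $\cZ_\cN$ and replace $\cN_{DR},\cN_B$ by the Zariski closures of the images of those components. Here both hypotheses enter: the surjectivity \eqref{dumb surjection} guarantees that $\cZ_\cN$ surjects onto $(\cM^\an_{DR}\times\cM^\an_B)\times_{(\cM'^\an_{DR}\times\cM'^\an_B)}\cZ_{\cN'}$, so the images of $\cZ_\cN$ in $\cN^\an_{DR}$ and $\cN^\an_B$ are as large as those coming from $\cZ_{\cN'}$, and then \Cref{pull-push-contained-nonempty-interior}(2), applied to the universally open base-changes of $f^\an_{DR}$ and $f^\an_B$, propagates the nonempty-interior condition on $\img\pi^{\cN'}_{DR}$, $\img\pi^{\cN'}_B$ to $\cN_{DR}$, $\cN_B$.

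The main obstacle is precisely this last step: chasing the analytic fibre products to confirm that the projections $\cZ_\cN\to\cN^\an_{DR}$ and $\cZ_\cN\to\cN^\an_B$ have images of nonempty euclidean interior in their Zariski closures — equivalently, that no component of $\cZ_\cN$ collapses onto a proper analytic subset of its Zariski closure — and keeping track of which component of $\Sigma'^\Zar_{DR}$ a given component of $f_{DR}^{-1}(\Sigma'^\Zar_{DR})$ dominates. This is exactly what forces the hypotheses that $f_{DR},f_B$ be universally open and that \eqref{dumb surjection} be surjective on points; everything else is formal bookkeeping with Zariski closures together with continuity of algebraic maps and \Cref{pull-push-contained-nonempty-interior}.
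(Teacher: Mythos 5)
Part (1) of your proposal is fine: composing a witness $g$ for $(\Sigma_{DR},\Sigma_B)$ with $f$, checking that $f\circ g$ is again a morphism of correspondence stacks, and using Zariski-continuity of $f_{DR},f_B$ to identify the closures of the images is a complete and correct argument for the push-forward statement.

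For (2), however, there is a genuine gap, and it sits exactly at the step you yourself flag as ``the main obstacle''. Your justification that the images of $\cZ_\cN=\cZ\times_{(\cM_{DR}'^\an\times\cM_B'^\an)}\cZ_{\cN'}$ in $\cN_{DR}^\an$ and $\cN_B^\an$ are ``as large as those coming from $\cZ_{\cN'}$'' is not correct as stated. A point $(m_{DR},n'_{DR})\in\cN_{DR}^\an=\cM_{DR}^\an\times_{\cM_{DR}'^\an}\cN_{DR}'^\an$ lies in the image of $\cZ_\cN$ only if, in addition to $n'_{DR}=\pi^{\cN'}_{DR}(w)$ for some $w\in\cZ_{\cN'}$, the corresponding Betti point $g'_B(\pi^{\cN'}_B(w))$ lies in the image of $f_B$ (and symmetrically with the roles of $DR$ and $B$ exchanged). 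Since $f_B$ is universally open but need not be surjective, the image of $\cZ_\cN$ on the De Rham side can be strictly smaller than the preimage of $\img\pi^{\cN'}_{DR}$ under the base-changed map, so \Cref{pull-push-contained-nonempty-interior}(2) applied to that preimage does not by itself give the nonempty-interior property of the actual correspondence image; the same defect also undermines the density statement you need for the selected pieces (that the chosen components dominate every irreducible component of $f_{DR}^{-1}(\Sigma'_{DR})^\Zar$ and of $f_B^{-1}(\Sigma'_B)^\Zar$) and the pairing of a De Rham component with a Betti component through a single irreducible component of $\cZ_\cN$.

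The paper closes exactly this hole by a preliminary reduction before invoking \eqref{dumb surjection}: after base-changing the witness so that one may assume $(\cM_{DR}',\cM_B')$ is a single irreducible bialgebraic correspondence stack with $\Sigma'_{DR}=\cM'_{DR}(\bC)$, $\Sigma'_B=\cM'_B(\bC)$ (noting, as you do, that the condition \eqref{dumb surjection} is preserved under base change), it uses universal openness to replace $\cM'_{DR},\cM'_B$ by dense Zariski open subsets and $\cM_{DR},\cM_B$ by the irreducible pieces of the preimages, so that $f_{DR}$ and $f_B$ may be assumed surjective on each component. Only after this reduction does \eqref{dumb surjection} yield $f_{DR}^{-1}(\img\pi'_{DR})\subset\img\pi_{DR}$ and $f_B^{-1}(\img\pi'_B)\subset\img\pi_B$, at which point \Cref{pull-push-contained-nonempty-interior} produces, for each component of $\cM_{DR}$ (resp. $\cM_B$), an irreducible component of the correspondence giving an irreducible bialgebraic triple mapping to $(\cM_{DR}',\cM_B')$. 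Your argument needs this surjectivity reduction (or some equivalent device) spelled out; without it the fibre-product bookkeeping you defer does not go through.
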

\begin{proof}(1) is immediate from \Cref{pull-push-contained-nonempty-interior}.  For (2), observe that the condition on the correspondence in \eqref{dumb surjection} is preserved under base-change.  For $(\cN_{DR},\cN_B)$ a disjoint union of irreducible bialgebraic correspondence stacks and $g:(\cN_{DR},\cN_B)\to (\cM_{DR}',\cM_B')$ a morphism, we may therefore reduce to the base-change
\[(\cN_{DR},\cN_B)=(\cN_{DR}'\times_{\cM_{DR}'}\cM_{DR},\cN_{B}'\times_{\cM_{B}'}\cM_{B})\to (\cN_{DR}',\cN_B')\]
and so assume $(\cM_{DR}',\cM_B')$ is irreducible bialgebraic and $(\Sigma_{DR},\Sigma_B)=(\cM_{DR}'(\bC),\cM_B'(\bC))$.  By the universal openness, there is an open set $U_{DR}'\subset\cM_{DR}'$ with $U_{DR}'(\bC)\subset \Sigma'_{DR}$ such that $f^{-1}_{DR}(U_{DR}'(\bC))$ is dense in $f^{-1}_{DR}(\Sigma_{DR}')$ and is a disjoint union of irreducible constructible sets each dominating $U_{DR}'$; likewise for $\cM_B'$.  Thus, by restricting to a dense open set on the target and connected components on the source, we may assume each of $\cM_{DR},\cM_B$ is a disjoint union of irreducible stacks and $f_{DR},f_B$ are surjective on each component.  Finally, the condition on the map implies $f_{DR}^{-1}(\img\pi'_{DR})\subset \img\pi_{DR}$ and $f_{B}^{-1}(\img\pi'_{B})\subset\img\pi_{B}$.  By \Cref{pull-push-contained-nonempty-interior}, for each irreducible component $\cN_{DR}$ of $\cM_{DR}$ there is some irreducible component $\cN_{B}$ of $\cM_B$ and an irreducible component $\cW$ of the correspondence such that $(\cN_{DR},\cN_B,\cW)$ is irreducible bialgebraic and $(\cN_{DR},\cN_B)\to(\cM_{DR}',\cM_B')$ is a morphism (and likewise for irreducible components of $\cM_B$), so the conclusion follows.
\end{proof}

 Let $(\cM_{DR},\{\cM_{B,\sigma}\})$ be a $K$-correspondence stack with correspondences
  \[
\begin{tikzcd}
&\cZ_\sigma\ar[ld,"\pi_{DR,\sigma}",swap]\ar[rd,"\pi_{B,\sigma}"]&\\
(\cM_{DR}^\sigma)^\an&&\cM_{B,\sigma}^\an.
\end{tikzcd}
\]
\begin{defn}
$(\cM_{DR},\{\cM_{B,\sigma}\})$ is \emph{irreducible bialgebraic} if $(\cM_{DR},\cM_{B,\sigma})$ is irreducible bialgebraic for all $\sigma$.  We say a pair $(\Sigma_{DR},\{\Sigma_{B,\sigma}\})\subset (\cM_{DR},\{\cM_{B,\sigma}\})$ consisting of a constructible subset $\Sigma_{DR}\subset \cM_{DR}(\bC)$ and $K$-constructible subsets $\Sigma_{B,\sigma}\subset \cM_{B,\sigma}(\bC)$ is \emph{$K$-bialgebraic} if there is a $K$-correspondence stack $(\cN_{DR},\{\cN_{B,\sigma}\})$ which is a finite disjoint union of irreducible bialgebraic $K$-correspondence stacks and a morphism $f:(\cN_{DR},\{\cN_{B,\sigma}\})\to (\cM_{DR},\{\cM_{B,\sigma}\})$ such that $f_{DR}(\cN_{DR}(\bC))^\Zar=\Sigma_{DR}^\Zar$ and $f_B(\cN_{B,\sigma}(\bC))^\Zar=\Sigma_{B,\sigma}^\Zar$ for all $\sigma$.
\end{defn}

 \begin{cor}\label{Qpushpull}
    Let $f:(\cM_{DR},\cM_{B,\sigma})\to (\cM_{DR}',\{\cM_{B,\sigma}'\})$ be a morphism of $K$-correspondence stacks.
    \begin{enumerate}
        \item\label{Qpushpull1} If $(\Sigma_{DR},\{\Sigma_{B,\sigma}\})\subset (\cM_{DR},\{\cM_{B,\sigma}\})$ is $K$-bialgebraic, then so is $(f_{DR}(\Sigma_{DR}),\{f_{B,\sigma}(\Sigma_{B,\sigma})\})\subset (\cM_{DR}',\{\cM_{B,\sigma}'\})$.
        \item\label{Qpushpull2} Assume $f_{DR},f_{B,\sigma}$ are all universally open, and that the natural map
        \[\cZ_\sigma\times_{(\cM_{DR}'^\an\times\cM_{B,\sigma}'^\an)}\cZ_\sigma'\to (\cM_{DR}^\an\times\cM_{B,\sigma}^\an)\times_{(\cM_{DR}'^\an\times\cM_{B,\sigma}'^\an)}\cZ_\sigma'\]
        is surjective on points for each $\sigma$.  Then if $(\Sigma_{DR}',\{\Sigma_{B,\sigma}'\})$ is $K$-bialgebraic, so is $(f_{DR}^{-1}(\Sigma_{DR}),\{f_B^{-1}(\Sigma_{B,\sigma})\})$.
    \end{enumerate}

\end{cor}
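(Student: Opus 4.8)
The plan is to deduce both parts from the preceding Lemma on morphisms of correspondence stacks by applying it separately to each realization $(\cM_{DR}^\sigma,\cM_{B,\sigma})$ for $\sigma\in\Aut(\bC/\bQ)$; the only extra point is that the witnessing data must be assembled coherently in $\sigma$.

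For part (1) I would argue purely formally. Given a morphism $h\colon(\cN_{DR},\{\cN_{B,\sigma}\})\to(\cM_{DR},\{\cM_{B,\sigma}\})$ of $K$-correspondence stacks witnessing the $K$-bialgebraicity of $(\Sigma_{DR},\{\Sigma_{B,\sigma}\})$, with $(\cN_{DR},\{\cN_{B,\sigma}\})$ a finite disjoint union of irreducible bialgebraic $K$-correspondence stacks, the composite $f\circ h$ is again a morphism of $K$-correspondence stacks with the same source; moreover $f_{DR}(h_{DR}(\cN_{DR}(\bC)))^\Zar=f_{DR}\big(h_{DR}(\cN_{DR}(\bC))^\Zar\big)^\Zar=f_{DR}(\Sigma_{DR}^\Zar)^\Zar=f_{DR}(\Sigma_{DR})^\Zar$ since $f_{DR}$ is continuous with constructible image, and similarly with $K$-Zariski closures on the $B$ side. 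Hence $f\circ h$ witnesses that $(f_{DR}(\Sigma_{DR}),\{f_{B,\sigma}(\Sigma_{B,\sigma})\})$ is $K$-bialgebraic, with no transcendence input needed.

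For part (2) I would re-run the proof of part (2) of the preceding Lemma while tracking fields of definition. Start from a witness $g\colon(\cN_{DR}',\{\cN_{B,\sigma}'\})\to(\cM_{DR}',\{\cM_{B,\sigma}'\})$ of the $K$-bialgebraicity of $(\Sigma_{DR}',\{\Sigma_{B,\sigma}'\})$; passing to the components of its source and, using universal openness of $f_{DR}$ and of the $f_{B,\sigma}$, restricting to dense opens of the targets and to components of the sources exactly as in the Lemma, one reduces to the case where $(\cN_{DR}',\{\cN_{B,\sigma}'\})$ is a single irreducible bialgebraic $K$-correspondence stack, which one takes to be $(\cM_{DR}',\{\cM_{B,\sigma}'\})$ itself, with $f_{DR}$ and each $f_{B,\sigma}$ surjective. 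Then form the fibre-product $K$-correspondence stack with $\cN_{DR}:=\cN_{DR}'\times_{\cM_{DR}'}\cM_{DR}$, $\cN_{B,\sigma}:=\cN_{B,\sigma}'\times_{\cM_{B,\sigma}'}\cM_{B,\sigma}$ and correspondences the base-changes of the $\cZ_\sigma$; the hypothesis that the natural correspondence maps are surjective on points for each $\sigma$ makes the resulting map to $(\cM_{DR},\{\cM_{B,\sigma}\})$ a morphism of $K$-correspondence stacks, and universal openness identifies the Zariski closures of its images with $f_{DR}^{-1}(\Sigma_{DR}')^\Zar$ and $f_{B,\sigma}^{-1}(\Sigma_{B,\sigma}')^\Zar$. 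It then remains to decompose $(\cN_{DR},\{\cN_{B,\sigma}\})$ into irreducible bialgebraic pieces: on the $DR$ side this goes as in the Lemma via \Cref{pull-push-contained-nonempty-interior}, and because $\cN_{DR}$ is defined by base change its twist $\cN_{DR}^\sigma$ is precisely the object produced by the Lemma's construction for the realization $\sigma$, so the relevant choices (a dense open of $\cM_{DR}$, the irreducible components of $\cN_{DR}$) can be fixed over the base field of $\cM_{DR}$ and transported to arbitrary $\sigma$ by $\sigma$-twisting, hence are automatically coherent; on the $B$ side one makes the analogous choices $K$-rationally, separately for each $\sigma$, and invokes part (2) of the Lemma applied to $(\cM_{DR}^\sigma,\cM_{B,\sigma})$ over each irreducible component of $\cN_{DR}$ to produce the needed $\cN_{B,\sigma}$ and correspondences.

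I expect the main (and essentially the only) obstacle to be this last bit of bookkeeping: verifying that the decomposition into irreducible bialgebraic pieces produced on the $DR$ side is stable under the Galois twists relating the various realizations, so that a single $K$-correspondence stack serves all $\sigma$ at once. Everything else reduces formally to the preceding Lemma, to the stability of universal openness and of the surjectivity condition on correspondences under base change and under applying an automorphism $\sigma\in\Aut(\bC/\bQ)$, and to \Cref{pull-push-contained-nonempty-interior}.
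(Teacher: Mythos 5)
Your proposal is correct and takes essentially the same route as the paper: the paper offers no separate argument for this corollary, treating it as immediate from the preceding Lemma (together with \Cref{pull-push-contained-nonempty-interior}) applied realization by realization, which is exactly what you do, with part (1) obtained by composing witnesses. The single point you flag, coherence of the De Rham--side decomposition across $\sigma$, is resolved just as you say: the fibre-product construction is canonical, and $\sigma$-twisting commutes with base change and with passing to irreducible components, so the same finite collection of De Rham pieces (with per-$\sigma$ Betti partners supplied by the Lemma) serves all realizations at once.
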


\begin{thm}[{Ax--Schanuel \cite{axtorus}, Gelfond--Schneider \cite{Gelfond}}]\label{gelfond}Let $\Lambda$ be a finitely-generated free $\bZ$-module.
\begin{enumerate}
\item For any bialgebraic pair $(\Sigma_{DR},\Sigma_{B})$ of $\Exp(\Lambda)$, $\Sigma_{DR}^\Zar$ is a finite union of $\bC$-translates of $\bQ$-linear subspaces.
    \item For any $\bar\bQ$-bialgebraic pair $(\Sigma_{DR},\{\Sigma_{B,\sigma}\})$ of $\Exp_{\bar\bQ}(\Lambda)$, $\Sigma_{DR}^\Zar$ is a finite union of $\bQ$-translates of $\bQ$-linear subspaces.
\end{enumerate}
     
\end{thm}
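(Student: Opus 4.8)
The plan is to treat both parts uniformly through the exponential correspondence $\Exp(\Lambda)=(\bG_a\otimes\Lambda,\bG_m\otimes\Lambda)$, whose correspondence is the graph of $\exp\colon\Lambda_\bC=\bC\otimes\Lambda\to\bC^*\otimes\Lambda$; set $n=\rk\Lambda$ and identify the two sides with $\bC^n$ and $(\bC^*)^n$. First I would reduce to a single irreducible piece. By definition a (resp. $\bar\bQ$-)bialgebraic pair is witnessed by a morphism $f$ from a finite disjoint union of irreducible bialgebraic (resp. $\bar\bQ$-)correspondence spaces $(\cN_{DR}^{(j)},\cN_{B}^{(j)},\cW^{(j)})$, and the condition on images forces $\Sigma_{DR}^\Zar=\bigcup_j W_j$ with $W_j:=\overline{f_{DR}^{(j)}(\cN_{DR}^{(j)})}^\Zar$ irreducible; so it suffices to show each $W_j$ is a $\bC$- (resp. $\bQ$-)translate of a $\bQ$-linear subspace. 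Unwinding the hypothesis that $\pi_B^{(j)}$ has image with nonempty interior, and using that the image of an algebraic constructible set has Zariski closure of the same dimension (whereas $\exp$ of an algebraic set need not), one checks that $V_j:=\overline{\exp(W_j)}^\Zar$ equals $\overline{f_B^{(j)}(\cN_B^{(j)})}^\Zar$ and that $\dim_\bC V_j=\dim_\bC W_j$; that is, $W_j$ is bialgebraic for $\exp$ in the classical sense.

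For part (1): $W_j\subseteq\exp^{-1}(V_j)$, hence $W_j\subseteq\overline{\exp^{-1}(V_j)}^\Zar$, which by the Ax--Lindemann theorem for $\bG_m^n$ \cite{axtorus} is a finite union of translates $z_i+U_i$ of $\bQ$-linear subspaces with $\exp(z_i+U_i)\subseteq V_j$. Since $W_j$ is irreducible it lies in one of them, say $z_0+U_0$; then $\dim W_j=\dim V_j$ and $V_j\subseteq\overline{\exp(z_0+U_0)}^\Zar=\exp(z_0)T_0$ with $T_0$ the subtorus with Lie algebra $U_0$, so $\dim W_j=\dim V_j\le\dim T_0=\dim U_0$, forcing $W_j=z_0+U_0$. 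This gives (1), and along the way identifies $V_j=\exp(z_0)T_0$.

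For part (2): here each $\cN_{B,\sigma}^{(j)}$ is defined over $\bar\bQ$ and $f_{B,\sigma}^{(j)}$ is a $\bar\bQ$-morphism, so the \emph{individual} piece $V_j^{(\sigma)}:=\overline{f_{B,\sigma}^{(j)}(\cN_{B,\sigma}^{(j)})}^\Zar$ — not merely all of $\Sigma_{B,\sigma}^\Zar$ — is a $\bar\bQ$-subvariety of $(\bC^*)^n$ for every $\sigma\in\Aut(\bC/\bQ)$. Applying part (1) to the $\sigma$-conjugate piece (and using that $U_0$, the subtorus $T_0$, and the quotient $p\colon\bC^n\to\bC^n/U_0=\bC^m$ are all defined over $\bQ$, hence $\sigma$-invariant) gives $W_j^\sigma=z_0^\sigma+U_0$ and $V_j^{(\sigma)}=\exp(z_0^\sigma)T_0$. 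A coset of the $\bQ$-defined subtorus $T_0$ is defined over $\bar\bQ$ if and only if its image in the $\bQ$-defined quotient torus $\bG_m^n/T_0$ is a $\bar\bQ$-point, so $\bar\exp(p(z_0)^\sigma)\in(\bar\bQ^*)^m$ for all $\sigma$, where $\bar\exp$ is the exponential of the quotient torus and $p(z_0)^\sigma=p(z_0^\sigma)$ by $\bQ$-linearity of $p$. I would finish coordinatewise on $w:=p(z_0)\in\bC^m$: if some $w_k$ is transcendental, its $\Aut(\bC/\bQ)$-orbit is all of $\bC\setminus\bar\bQ$, which is uncountable, while $\bar\exp^{-1}((\bar\bQ^*)^m)$ is countable — a contradiction; and if $w_k\in\bar\bQ\setminus\bQ$ then $\bar\exp$ of it is transcendental by the Gelfond--Schneider theorem \cite{Gelfond} (with the standard torus normalisation it is $(-1)^{2w_k}$, an algebraic number raised to a quadratic irrational; in the $e^z$-normalisation one invokes Hermite--Lindemann instead, forcing $w_k=0$). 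Hence $w\in\bQ^m$, so $z_0+U_0$ contains a rational point and equals a $\bQ$-translate of $U_0$; taking the union over $j$ proves (2).

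The hard part is not the transcendence input, which is a short application of Gelfond--Schneider together with a cardinality count, but the two bookkeeping steps above: (i) checking that the ``nonempty interior'' conditions built into the notion of an irreducible bialgebraic correspondence space genuinely yield the dimension equality $\dim_\bC W_j=\dim_\bC V_j$ needed to invoke Ax--Lindemann — the subtlety being that Zariski closures of non-algebraic analytic images may jump in dimension; and (ii) for part (2), pinpointing that $\bar\bQ$-bialgebraicity forces each individual Betti-side component $\exp(z_0^\sigma)T_0$, for \emph{every} $\sigma$, to be defined over $\bar\bQ$, which is exactly what makes the descent to a rational translate work.
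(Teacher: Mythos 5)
Your route coincides with the paper's: part (1) is the Ax (functional transcendence) input for the exponential of a torus, and part (2) reduces, via the quotient by the rational subspace produced in (1), to a single point of the quotient torus, which is forced to be algebraic on the Betti side by definition and on the De Rham side by a countability argument on $\Aut(\bC/\bQ)$-orbits, with Gelfond--Schneider finishing. Your part (2) is essentially correct as the paper intends it (one slip: Gelfond--Schneider needs the exponent to be an \emph{irrational algebraic} number, not a quadratic irrational).

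Part (1), however, has concrete problems as written. First, the statement you invoke is false: for any nonempty algebraic $V\subseteq(\bC^*)^n$, the Zariski closure of $\exp^{-1}(V)$ is \emph{all} of $\bC^n$, since $\exp^{-1}(V)$ contains a translate of the kernel lattice, whose Zariski closure is a $\bC$-subspace containing a full lattice, hence $\bC^n$; the usable form of Ax--Lindemann is that every irreducible algebraic subvariety \emph{contained in} $\exp^{-1}(V)$ lies in a coset $z_0+U_0$ of a $\bQ$-subspace with $\exp(z_0+U_0)\subseteq V$. Second, even with that correction, your concluding chain $\dim W_j=\dim V_j\le\dim U_0$ does not force $W_j=z_0+U_0$: that inequality is automatic from $W_j\subseteq z_0+U_0$. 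You must also use the reverse inclusion $\exp(z_0)T_0\subseteq V_j$ (which does follow from $\exp(z_0+U_0)\subseteq V_j$ and $V_j$ being Zariski closed) to get $\dim U_0\le\dim V_j=\dim W_j$, and only then conclude equality of the irreducible closed sets. Third, the pivotal claim that $V_j:=\overline{\exp(W_j)}^\Zar$ equals $\overline{f_B(\cN_B)}^\Zar$ and has dimension $\dim W_j$ is precisely where the ``nonempty interior'' hypotheses do their work, and you leave it at ``one checks'' with a hint that is not the actual argument. What is needed is (a) an identity-principle step: the correspondence only gives $\exp(S)\subseteq\overline{f_B(\cN_B)}^\Zar$ for $S=f_{DR}(\pi_{DR}(\cW))$, which contains a nonempty euclidean-open subset of $W_j$, and one continues analytically along the connected smooth locus of the irreducible $W_j$ to get $\exp(W_j)\subseteq\overline{f_B(\cN_B)}^\Zar$; and (b) a dimension bound: since $\pi_B(\cW)$ has nonempty interior, $\exp(W_j)$ contains a nonempty open subset of $\overline{f_B(\cN_B)}^\Zar$, and a Baire-category (or Hausdorff-dimension) count over the countably many translates of $W_j$ by the kernel lattice yields $\dim\overline{f_B(\cN_B)}^\Zar\le\dim W_j$. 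With these three repairs your proof of (1) goes through and is the argument the paper compresses into its citation of Ax--Schanuel.
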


\begin{proof}Ax--Schanuel implies (1).  For (2) we are reduced to the statement for a point by taking closures, components, and quotients, using (1).  That point must be defined over $\bar\bQ$ on the Betti side by definition, as well as on the De Rham side, or else its $\Aut(\bC/\bar\bQ)$ orbit would be uncountable.  Then this is the Gelfond--Schneider theorem.  
\end{proof}

\begin{cor}\label{gelfond cor}For any $\bar\bQ$-bialgebraic pair $(\Sigma_{DR},\{\Sigma_{B,\sigma}\})$ of $\Sym\Exp_{\bar\bQ}(n)$, $\Sigma_{DR}^\Zar$ is the image of a finite union of $\bQ$-translates of $\bQ$-linear subspaces under the natural quotient map
\[\Exp_{\bar\bQ}(n)\to \Sym\Exp_{\bar\bQ}(n).\]
In particular, the $\Sigma_{B,\sigma}^\Zar$ are all the same, and images of $\bQ$-points (resp. torsion points) under the quotient map are Zariski dense in $\Sigma_{DR}$ (resp. each $\Sigma_{B,\sigma}$).
\end{cor}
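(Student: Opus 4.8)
The plan is to deduce \Cref{gelfond cor} from part (2) of \Cref{gelfond} by pulling back along the finite quotient morphism $q\colon\Exp_{\bar\bQ}(n)\to\Sym\Exp_{\bar\bQ}(n)$. On underlying spaces $q_{DR}\colon\G_a^n\to\Sym^n\G_a$ and $q_{B,\sigma}\colon\G_m^n\to\Sym^n\G_m$ are the quotients by the $\mathfrak{S}_n$-action, hence finite, flat and surjective, in particular universally open; the correspondences of $\Exp_{\bar\bQ}(n)$ and $\Sym\Exp_{\bar\bQ}(n)$ are the graphs of $\exp^{\oplus n}$ and $\Sym^n\exp$, related by the commuting square $q_{B,\sigma}\circ\exp^{\oplus n}=\Sym^n\exp\circ q_{DR}$.

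The first step is to show that $(q_{DR}^{-1}(\Sigma_{DR}),\{q_{B,\sigma}^{-1}(\Sigma_{B,\sigma})\})$ is a $\bar\bQ$-bialgebraic pair of $\Exp_{\bar\bQ}(n)$. Because $q_{B,\sigma}$ is not injective, the fibre-product hypothesis of the pull-back statement \Cref{Qpushpull}(2) fails verbatim --- the graph correspondence upstairs only realizes the pairs $(a,\exp a)$, not all of $q_{B,\sigma}^{-1}(q_{B,\sigma}(\exp a))$ --- so I would argue by hand: take a witnessing morphism $(\cN_{DR},\{\cN_{B,\sigma}\},\{\cW_\sigma\})\to\Sym\Exp_{\bar\bQ}(n)$ with source a finite disjoint union of irreducible bialgebraic $\bar\bQ$-correspondence stacks, and base-change every piece along $q$. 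Since $q$ is finite flat, the base-changed stacks are finite flat covers of the originals, hence have finitely many irreducible components, each dominating the base; by \Cref{pull-push-contained-nonempty-interior}(2) the images of the base-changed correspondences are the $q$-preimages of the original images, so they have nonempty interior in each component. Decomposing into irreducible components (and using that, on each side, the finitely many images of the correspondence components cover a set with nonempty interior, so at least one realizes it) yields the asserted $\bar\bQ$-bialgebraicity.

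Now part (2) of \Cref{gelfond} applies and gives that $q_{DR}^{-1}(\Sigma_{DR})^\Zar$ is a finite union $\bigcup_i W_i$ of $\bQ$-translates of $\bQ$-linear subspaces of $\G_a^n=\bA^n$; moreover the translations may be taken trivial, since after quotienting out the linear directions a $0$-dimensional piece of a $\bar\bQ$-bialgebraic subvariety of $\Exp_{\bar\bQ}$ is algebraic on both sides, hence the origin by Hermite--Lindemann. As $q_{DR}$ is continuous, open and surjective, $q_{DR}^{-1}$ commutes with Zariski closure and $q_{DR}$ is closed, so $\Sigma_{DR}^\Zar=q_{DR}\bigl(q_{DR}^{-1}(\Sigma_{DR})^\Zar\bigr)=\bigcup_i q_{DR}(W_i)$, which is the first assertion. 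For the rest, unwinding the definition of bialgebraicity with the graph correspondence of $\Sym^n\exp$ shows $\Sigma_{B,\sigma}^\Zar=\bigcup_i q_{B,\sigma}(T_i)$, where $T_i=\overline{\exp(W_i)}\subset\G_m^n$ is the $\bQ$-rational subtorus attached to $W_i$; since $\exp$, $q_B$ and the varieties $\G_m^n,\Sym^n\G_m$ are independent of $\sigma$ and the $W_i$ come from the $\sigma$-independent set $q_{DR}^{-1}(\Sigma_{DR})^\Zar$, these closures all coincide. Finally $\bQ$-points are Zariski dense in each $W_i$, so their images under $q_{DR}$ are dense in $\Sigma_{DR}^\Zar$, and torsion points are Zariski dense in each subtorus $T_i$, so their images under $q_{B,\sigma}$ are dense in each $\Sigma_{B,\sigma}^\Zar$.

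The main obstacle is the bookkeeping in the base-change step: checking that the base change of an irreducible bialgebraic correspondence stack along the non-injective finite flat map $q$ decomposes into irreducible bialgebraic correspondence stacks --- in particular that one can choose correspondence components realizing nonempty interior simultaneously on the De Rham and Betti sides --- together with the Hermite--Lindemann refinement killing the translations, without which the torsion-density statement on the Betti side would fail even though the $\bQ$-point density on the De Rham side would not. Everything else is an application of the transcendence input \Cref{gelfond} and the elementary openness/closure lemmas already established.
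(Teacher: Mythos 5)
Your overall skeleton matches the paper's: pull the pair back along the finite flat quotient $q$, apply \Cref{gelfond}(2) upstairs, and push back down using that $q_{DR}$ is finite, open and closed. The paper's entire proof is the remark that $q$ is flat and satisfies the hypothesis of \Cref{Qpushpull}(2); your worry that the displayed surjectivity condition fails verbatim for $q$ (the graph upstairs misses the ``mismatched'' permutations on the Betti side) is a fair reading, and your by-hand base change is an acceptable substitute, provided you base-change the correspondence only over the De Rham side and map to the Betti side by $\exp$ of the chosen lift; the identification of the resulting Betti-side image as the full $q_B$-preimage then needs the small multiset-matching observation that any $b$ with $q_B(b)=\Sym\exp(\bar a)$ equals $\exp a$ for a suitable ordering $a$ of $\bar a$, which you assert but do not prove.

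The genuine problem is your Hermite--Lindemann step. In this correspondence the exponential is normalized so that the bialgebraic points of $\Exp_{\bar\bQ}$ are exactly the pairs (rational point, torsion point): that is why \Cref{gelfond}(2) is proved with Gelfond--Schneider rather than Hermite--Lindemann and outputs $\bQ$-\emph{translates}, why \Cref{gelfond cor} itself keeps the translates, and why it asserts density of torsion (not arbitrary algebraic) points; it is also forced by the intended application (\Cref{qu is bialg}, \Cref{qu dense in abs}), where rational residues must correspond to quasiunipotent local monodromy via $e^{2\pi i(\cdot)}$. Under this normalization your claim that the translations may be taken trivial is false: already for $n=1$ the pair $(\{1/2\},\{-1\})$ is $\bar\bQ$-bialgebraic, and its De Rham closure is not the image of a union of $\bQ$-linear subspaces through the origin. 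Consequently your final paragraph breaks where it relies on $T_i=\overline{\exp(W_i)}$ being a subtorus, and your assertion that the torsion-density statement would otherwise fail is mistaken. The repair is simply to delete the Hermite--Lindemann step: a $\bQ$-translate $W_i$ of a $\bQ$-linear subspace maps under $\exp$ into a \emph{torsion translate} of a $\bQ$-subtorus, torsion points are Zariski dense there, and these Betti closures depend only on the $\sigma$-independent $W_i$, which gives the ``in particular'' exactly as stated. With that change your argument goes through and agrees with the paper's.
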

\begin{proof}
    The quotient map is flat and satisfies the condition of \Cref{Qpushpull}\ref{Qpushpull2}.
\end{proof}

\subsection{The De Rham stack}\label{subsect:DR stack}In the next few sections, it will be useful to introduce the notion of a countably finite type algebraic (or analytic) stack, by which we mean a stack with a countable open cover by finite type algebraic (or analytic) stacks.  One easily adapts the notions of constructible subsets (which are constructible on some finite type substack), correspondence stacks, and bialgebraic pairs to this setting.  Note by the Baire category theorem, any irreducible bialgebraic pair in the sense of countably finite type stacks is irreducible bialgebraic in the ordinary sense.

There is a natural analytic stack $\cM_{DR}(X^\an)$ of analytic flat vector bundles on $X^\an$.  In fact, by solving the connection, there is a natural biholomorphism 
\[RH_{X^\an}:\cM_{DR}(X^\an)\xrightarrow{\cong}\cM_B(X)^\an.\]
Unfortunately $\cM_{DR}(X^\an)$ does not in general have a well-behaved algebraic structure capturing the flat vector bundle.

\begin{exa}\label{ex Gm 1}Let $X=\G_m$ in either the algebraic or the analytic category and consider the stack of rank 1 flat vector bundles $(L,\nabla)$ on $X$.  The underlying line bundle is necessarily $L\cong \cO_X$, so the natural action by $H^0(X,\Omega_X)$ on $\cM_{DR}(X)$ sending $(L,\nabla)$ to $(L,\nabla+\alpha)$ is transitive, while the choice of identification with $\cO_X$ is a torsor for $H^0(X,\cO^\times_X)$.  Thus, we can realize $\cM_{DR}(X)$ as $[H^0(X,\cO_X^\times)\backslash H^0(X,\Omega_X)]$, where an invertible function $f$ acts on a form $\alpha $ as $\alpha\mapsto d\log f+\alpha$.  In the analytic category, integrating $\alpha$ around the unit circle and exponentiating provides a natural coordinate and provides the isomorphism with the analytification of $\cM_B(X)\cong [\bG_m\backslash\bG_m]$, where $\bG_m$ acts trivially.  In the algebraic category, $H^0(X,\cO_X^\times)$ consist of powers $q^n$ up to scaling, $H^0(X,\Omega_X^1)=\bC[q,q^{-1}]dq$, and $\lambda q^n$ acts by adding $n\frac{dq}{q}$ so the quotient is not of finite type. 
\end{exa}
What is missing in the algebraic description of the De Rham stack is the requirement that the singularities be regular, as indeed Deligne's version of the Riemann-Hilbert correspondence is an equivalence of categories between the category of local systems on $X^\an$ and the category of algebraic flat vector bundles on $X$ with regular singularities. 
 This condition makes reference to the existence of a logarithmic extension to a log smooth compactification.

Let us now introduce some terminology.  Let $(\bar X,D)/k$ be a proper log smooth algebraic space with $X=\bar X\setminus D$.  For an algebraic space $T$, let $\mathcal{D}_{\bar X_T}$ be the sheaf of algebraic $\pi_T^{-1}\cO_T$-linear differential operators on $\bar X_T:=\bar X\times T$ where $\pi_T:\bar X_T\to T$ is the projection and let $\mathcal{D}_{\bar X_T/T}[\log D] \subset \mathcal{D}_{\bar X_T/T}$ be the $\mathcal{O}_{\bar X_T}$-subalgebra generated by germs of $\pi_T^{-1}\cO_T$-linear derivations of $\cO_{\bar X_T}$ which preserve $\mathcal{O}_{\bar X_T}(-D_T)$.  A family of rank $r$ logarithmic connections on $(\bar X,D)$ parametrized by an algebraic space $T$ is a $\mathcal{D}_{\bar X_T/T}[\log D]$-module $E$ on $\bar X_T$ which is locally free of rank $r$ as a $\mathcal{O}_{\bar X_T}$-module.  Given a point $x\in X(\bC)$, a family of framed (at $x$) rank $r$ logarithmic connections on $(\bar X,D)$ parametrized by a scheme $T$ is a family of logarithmic connections $E$ together with a trivialization $E|_{x\times T}\cong \mathcal{O}_T^r$ as $\cO_T$-modules. 

\begin{thm}[{\cite[Proposition 3.3]{Nitsure}$+\epsilon$}] \label{nitsure}
Let $T$ be an algebraic space and let $\mathcal{E}$ be a coherent sheaf on $\bar X_T$ which is flat over $T$. Consider the functor on $T$-spaces which associates to
$f : T^\prime \rightarrow T$ the set of all $\mathcal{D}_{\bar X_{T^\prime}/T'}[\log D]$-module structures on $(1_X \times f )^\ast \mathcal{E}$. Then this functor is represented by a an algebraic space of finite type over $T$.
\end{thm}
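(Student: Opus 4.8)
The plan is to deduce Theorem~\ref{nitsure} from Nitsure's representability result \cite[Proposition 3.3]{Nitsure} by a standard ``spreading out and gluing'' argument, the only new content being the ``$+\epsilon$'': allowing $T$ to be an algebraic space rather than a scheme, and allowing $\bar X$ to be a proper log smooth algebraic space rather than a projective scheme. First I would record that Nitsure's statement is exactly this functor in the case where $\bar X$ is projective over a field and $T$ is an affine noetherian scheme; the functor parametrized by $\mathcal{D}_{\bar X_{T'}/T'}[\log D]$-module structures on a fixed flat coherent $\mathcal{E}$ is representable there by a scheme of finite type over $T$. The key point is that the functor in question is a \emph{closed} (in fact locally closed, but here closed once one fixes $\mathcal{E}$) condition cut out inside an affine bundle: a $\mathcal{D}[\log D]$-module structure extending the $\cO$-module structure is the datum of an integrable logarithmic connection $\nabla$, and the set of such $\nabla$ on $\mathcal{E}$ is, Zariski-locally on $\bar X_T$, a torsor under $\mathcal{H}om(\mathcal{E},\mathcal{E}\otimes\Omega_{\bar X}(\log D))$ with the integrability condition $\nabla^2 = 0$ being a closed condition. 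Hence the functor is a sheaf for the fppf (indeed Zariski) topology on $(\mathrm{Sch}/T)$, and it commutes with filtered colimits of rings since everything in sight is of finite presentation.

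Granting that, the reduction proceeds in two independent steps. For the algebraic space base: pick an \'etale surjection $U \to T$ from a scheme, and the $2$-fibered product $U\times_T U$, which is again a scheme (separated finite type over $\bC$); by Nitsure's theorem the functor pulled back to $U$ (resp. to $U\times_T U$) is represented by a finite type scheme $P_U$ (resp. $P_{U\times_T U}$), and the two pullbacks of $P_U$ to $U\times_T U$ are both canonically identified with $P_{U\times_T U}$ by the sheaf property, giving descent data. Since the functor is an fppf sheaf, this descent datum is effective in the category of algebraic spaces, producing an algebraic space $P \to T$ of finite type with the required universal property; one checks $P$ represents the original functor by testing on an \'etale cover of an arbitrary $T'$-point and invoking the sheaf property once more. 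For the proper log smooth algebraic space $\bar X$: choose an \'etale cover $\bar X = \bigcup \bar X_i$ by schemes compatible with $D$ (so each $(\bar X_i, D_i)$ is log smooth) with double overlaps $\bar X_{ij}$; over each $\bar X_i$ the functor of $\mathcal{D}_{\bar X_{i,T'}/T'}[\log D_i]$-structures is represented by a finite type $T$-space $P_i$, and the fiber product $P := P_1 \times_{P_{12}\times\cdots} \cdots$ over all the overlap spaces $P_{ij}$ glues these, because a global $\mathcal{D}[\log D]$-module structure is exactly a compatible family of local ones; $P$ is then a finite type $T$-space since each $P_i$ is and the maps $P_i \to P_{ij}$ are of finite type (being cut out by an \'etale-local closed condition). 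Properness of $\bar X$ is not really needed for representability, only for finiteness of type / separatedness to be inherited cleanly, but I would keep it since it is harmless and matches the application.

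The main obstacle I expect is purely bookkeeping rather than conceptual: verifying that Nitsure's hypotheses are actually met in the slightly more general setup his paper may not literally state (e.g.\ his base may be required noetherian, or his $\bar X$ required projective over a field), so one must either quote a sufficiently robust form of his argument or re-derive the affine-base, projective-$\bar X$ case from scratch by writing the functor as a closed subscheme of the (finite type) affine space of local connection matrices glued along $\bar X$, cut out by the integrability equations $\nabla^2 = 0$ together with the compatibility on overlaps. That re-derivation is routine: one uses that $\mathcal{E}$ is flat over $T$ and $\bar X$ is proper, so the relevant $\mathcal{H}om$ and $\Omega_{\bar X}(\log D)$-twisted sheaves have finite type spaces of sections after a further affine cover, and the equations are polynomial. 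A minor additional point to handle is the framed variant mentioned in the paragraph preceding the theorem (``a trivialization $E|_{x\times T}\cong \cO_T^r$''): the framed functor is representable over the unframed one because imposing a trivialization at the section $x$ is representable by a $\mathrm{GL}_r$-torsor, but the theorem as stated only asserts the unframed case, so I would simply note the framed case follows formally and not belabor it. Uniqueness of the representing space is automatic from Yoneda, so no separate argument is needed there.
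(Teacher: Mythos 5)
There is a genuine gap, and it sits precisely at the step that constitutes the ``$+\epsilon$'': the passage from Nitsure's projective $\bar X$ to a proper log smooth algebraic space. Your main route is to take an \'etale cover of $\bar X$ by schemes $\bar X_i$, represent the functor of $\mathcal{D}_{\bar X_{i,T'}/T'}[\log D_i]$-module structures over each piece by a finite type space $P_i$, and glue these by fiber products over the overlaps. But Nitsure's Proposition 3.3 does not apply to the pieces: an \'etale cover of an algebraic space by schemes destroys properness, and over a non-proper $\bar X_i$ the set of logarithmic connections on $\mathcal{E}|_{\bar X_{i,T'}}$ is a (pseudo-)torsor under $H^0\bigl(\bar X_{i,T'},\cHom(\mathcal{E},\mathcal{E}\otimes\Omega_{\bar X_i}(\log D_i))\bigr)$, which is infinite-dimensional over $T'$ as soon as $\bar X_i$ is, say, affine of positive dimension. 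So the $P_i$ you posit are not of finite type (nor representable in the category the theorem asserts), and finite-typeness of the glued object cannot be inferred. Your aside that ``properness of $\bar X$ is not really needed for representability, only for finiteness of type'' concedes exactly the point at issue, since finite type is part of the statement; and your fallback sketch --- a closed subscheme of ``the finite type affine space of local connection matrices glued along $\bar X$'' --- misplaces the source of finiteness, which comes from coherence of higher direct images along the \emph{proper} morphism $\bar X_T\to T$ (Hom-scheme/linear-scheme representability, i.e.\ Nitsure's actual mechanism), not from local connection matrices. The descent-on-$T$ half of your argument (\'etale cover of $T$, effectivity of descent for algebraic spaces) is fine, but it is the easy half.

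The paper instead keeps properness throughout: it chooses a log smooth \emph{projective} modification $\pi\colon(\bar X',D')\to(\bar X,D)$, applies Nitsure to $\pi^*\mathcal{E}$ on $(\bar X',D')$, and identifies the functor for $(\bar X,D)$ with the subfunctor of those connections on $\pi^*\mathcal{E}$ that descend to $\bar X$ --- a representable (closed) condition, checked by requiring, for a fixed meromorphic basis $s_i$ of $\mathcal{E}$, that the sections $\nabla s_i$ be pulled back from $\bar X$. If you prefer a from-scratch argument for proper algebraic spaces, the correct route is to rerun Nitsure's proof (connections as splittings of the logarithmic first jet sequence, represented inside a linear scheme attached to a coherent pushforward, using flatness of $\mathcal{E}$ and properness of $\bar X_T\to T$); one way or another, some global input exploiting properness must replace your local gluing.
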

\begin{proof}
    The statement of \cite{Nitsure} assumes $(\bar X,D)$ is projective.  We may take a log-smooth projective modification $\pi:(\bar X',D')\to (\bar X,D)$ and apply the theorem to $\pi^*E$.  To check that a connection $\nabla$ descends, it suffices to check for a fixed meromorphic basis $s_i$ of $E$ that the sections $\nabla s_i$ are pulled back from $\bar X$.
\end{proof}

The stack of locally free sheaves on $\bar X$ is countably finite type, and it follows that the stack $\cM_{DR}(\bar X, D)$ of logarithmic connections is countably finite type.  The analytification $\cM_{DR}(\bar X,D)^\an$ is naturally identified with the analytic stack $\cM_{DR}(\bar X^\an,D^\an)$ of analytic logarithmic connections by GAGA.

\begin{exa}\label{ex Gm 2}Continuing \Cref{ex Gm 1}, let $X=\G_m$ again and take $\bar X =\bP^1$.  The De Rham stack $\cM_{DR}(\bar X,D,1)$ of rank one logarithmic connections is a union over $k\in\bZ$ of $[\G_m\backslash H^0(X,\Omega_X(\log D))]$.  On the $k$th factor we associate to a log one-form $\alpha$ the logarithmic connection $(\cO_{\bar X}(k),d+\alpha)$ where we consider $\cO_{\bar X}(k)$ as the sheaf of functions with at worst a $k$-order pole at $\infty$ and $\G_m$ acts trivially. 
\end{exa}

\subsection{The Riemann--Hilbert correspondence stack}\label{sect:RH}  See \Cref{sect:Betti} for the basic properties of the Betti stack.  Let $(\bar X,D)$ be a proper log smooth algebraic space with $X=\bar X\setminus D$. We define the Riemann--Hilbert correspondence stack $\cM_{RH}(\bar X,D)$ of $(\bar X,D)$ to be the correspondence stack $(\cM_{DR}(\bar X,D), \cM_B(X))$ with correspondences
\[
\begin{tikzcd}
    &\cM_{DR}(\bar X^\an,D^\an)\ar[ddl,equal]\ar[rd]&&\\
    &&\cM_{DR}(X^\an)\ar[rd,"RH_{X^\an}"]&\\
    \cM_{DR}(\bar X, D)^\an&&&\cM_B(X)^\an
\end{tikzcd}
\]
where the unlabelled arrow is restriction to $X$.  We denote the resulting analytic morphism $RH_{(\bar X,D)}:\cM_{DR}(\bar X,D)^\an\to \cM_B(X)^\an$.  We define the Riemann--Hilbert $\bQ$-correspondence stack $\cM_{RH,\bQ}(\bar X,D)$ to be the $\bQ$-correspondence stack $(\cM_{DR}(X),\{\cM_B(X^\sigma)\})$ whose correspondences are given by the graph of $RH_{(\bar X^\sigma,D^\sigma)}$.

\begin{thm}[{Simpson \cite[Theorem 3.1(c)]{Simpsonrankone}, \cite[Theorem 6.1]{Simpsonrankone}}]\label{simpsonbialg}Let $A$ be a complex abelian variety.
\begin{enumerate}
\item For any bialgebraic pair $(\Sigma_{DR},\Sigma_{B})$ of $\cM_{RH}(A,1)$, $\Sigma_B^\Zar$ is a finite union of translates of pullbacks of $\cM_B(A',1)$ for abelian variety quotients $A\to A'$.
\item For any $\bar\bQ$-bialgebraic pair $(\Sigma_{DR},\Sigma_{B})$ of $\cM_{RH,\bQ}(A,1)$, $\Sigma_B^\Zar$ is a finite union of torsion translates of pullbacks of $\cM_B(A',1)$ for abelian variety quotients $A\to A'$.

\end{enumerate}
     
\end{thm}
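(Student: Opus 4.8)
The plan is to reduce part (2) to part (1) by a now-standard argument with Galois orbits, then to prove part (1) by linearizing the problem via the universal cover of $A$ and appealing to the $\Exp$-correspondence already set up. First I would recall the explicit description of $\cM_{RH}(A,1)$: since line bundles on an abelian variety $A$ with a flat connection are topologically trivial, the rank one De Rham stack is $\cM_{DR}(A,1)\cong [\,\bG_m\backslash H^0(A,\Omega^1_A)\times \Pic^0(A)\,]$ up to the usual automorphisms, and the Betti stack is $\cM_B(A,1)\cong [\,\bG_m\backslash \Hom(\pi_1(A),\bG_m)\,]\cong [\,\bG_m\backslash (\bG_m\otimes \Lambda^\vee)\,]$ with $\Lambda=H_1(A,\bZ)$; the $\bG_m$-actions are trivial since $A$ is abelian, so effectively we are comparing $H^0(A,\Omega^1_A)\oplus \Pic^0(A)$ with $\bG_m\otimes\Lambda^\vee$, i.e. an affine space times an algebraic torus against an algebraic torus of rank $2g$. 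The correspondence is, on the level of the universal cover $\widetilde A = H^0(A,\Omega^1_A)^\vee$, literally given by $\alpha\mapsto (\gamma\mapsto \exp\!\int_\gamma \alpha)$ together with the $\bar\partial$-part, so after choosing a $\bZ$-basis of $\Lambda$ the correspondence factors through $\Exp(\Lambda\oplus \overline\Lambda)$ (periods and anti-periods), or more precisely through a quotient thereof by the relations imposed by the Hodge decomposition. This is exactly the setup of \Cref{gelfond}: I would exhibit a morphism of correspondence (resp. $\bQ$-correspondence) stacks from $\Exp(\Lambda')$ (for an appropriate free $\bZ$-module $\Lambda'$ of rank $2g$ packaging periods and anti-periods) to $\cM_{RH}(A,1)$ which is surjective with the universal-openness/correspondence-surjectivity properties needed to pull back and push forward bialgebraic pairs.

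The key steps, in order: (i) set up the morphism $\Exp_{(\bar\bQ)}(\Lambda')\to \cM_{RH,(\bQ)}(A,1)$ of (resp. $\bQ$-)correspondence stacks described above, checking that it satisfies the hypotheses of \Cref{Qpushpull}\ref{Qpushpull2} (flatness/universal openness of the algebraic maps on both sides, and surjectivity of the relevant fiber-product of correspondences); (ii) given a (resp. $\bar\bQ$-)bialgebraic pair $(\Sigma_{DR},\Sigma_B)$ of $\cM_{RH}(A,1)$, pull it back along this morphism to get a (resp. $\bar\bQ$-)bialgebraic pair of $\Exp_{(\bar\bQ)}(\Lambda')$; (iii) apply \Cref{gelfond}(1) (resp. (2)) to conclude the De Rham side $\Sigma_{DR}^\Zar$ pulls back to a finite union of $\bC$-translates (resp. $\bQ$-translates) of $\bQ$-linear subspaces of $H^0(A,\Omega^1_A)^\vee\otimes_\bQ$(periods); (iv) push this back down to $\cM_{RH}(A,1)$ using \Cref{Qpushpull}\ref{Qpushpull1} and translate: a $\bQ$-linear subspace of the period lattice corresponds exactly to a sub-abelian-variety $B\subset A$, hence the quotient $A\to A':=A/B$, and a $\bC$-translate (resp. torsion translate) of it corresponds to a translate (resp. torsion translate) of the pullback $\cM_B(A',1)\hookrightarrow \cM_B(A,1)$; finally transport the conclusion to $\Sigma_B^\Zar$ via the correspondence, using that the Betti realization of a $\bQ$-linear period subspace is precisely such a pulled-back subtorus. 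Part (2) then follows from part (1) by the Galois-descent argument: a $\bar\bQ$-bialgebraic pair has its Zariski closure defined over $\bar\bQ$ on each Betti side, the $\Aut(\bC/\bar\bQ)$-orbit of the corresponding subvariety is finite (else uncountably many conjugates), and combined with part (1) the defining data must be $\bQ$-rational, forcing the translates to be torsion.

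The main obstacle I expect is step (i)–(ii): correctly identifying the correspondence on $\cM_{RH}(A,1)$ with (a quotient of) $\Exp(\Lambda')$ so that the hypotheses of \Cref{Qpushpull}\ref{Qpushpull2} are genuinely met. The subtlety is that the naive De Rham stack of $A$ is \emph{not} of finite type (as in \Cref{ex Gm 1}, \Cref{ex Gm 2}), so one must work with $\cM_{DR}(\bar A, \varnothing)$ — but here $A$ is already proper, so $D=\varnothing$ and $\cM_{DR}(A,1)$ is countably finite type with no boundary complications; still, the precise bookkeeping of the Hodge filtration (the relation between periods and anti-periods cutting out the image of the period map inside $\bG_m\otimes(\Lambda\oplus\overline\Lambda)^\vee$) must be handled so that the irreducible bialgebraic pieces line up. A secondary point requiring care is the passage, in step (iv), between ``$\bQ$-linear subspace of the space of periods'' and ``sub-abelian variety'': one needs that a $\bQ$-subspace $W\subset \Lambda\otimes\bQ$ is the period lattice of a sub-abelian variety iff it is a sub-Hodge-structure, and \Cref{gelfond} only directly gives $\bQ$-linearity, not Hodge-substructure — but the Hodge condition is automatic on the image because the correspondence already incorporates the Hodge filtration, so any bialgebraic subvariety of $\cM_{RH}(A,1)$ maps to a Hodge-linear piece; making this last implication precise is where I would spend the most effort.
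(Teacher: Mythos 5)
You should first note that the paper does not prove this statement at all: it is quoted verbatim from Simpson \cite{Simpsonrankone} (Theorems 3.1(c) and 6.1), so there is no internal proof to compare against, and your sketch has to stand on its own. Judged that way, it has a genuine gap at steps (i)--(ii). A morphism of correspondence stacks $\Exp(\Lambda')\to\cM_{RH}(A,1)$ requires an \emph{algebraic} De Rham component $\G_a\otimes\Lambda'\to\cM_{DR}(A,1)$ compatible with the correspondences. But $\cM_{DR}(A,1)$ is the universal vectorial extension $0\to H^0(A,\Omega^1_A)\to\operatorname{Pic}^\natural(A)\to\operatorname{Pic}^0(A)\to 0$ (in particular it is not the product $H^0(A,\Omega^1_A)\times\operatorname{Pic}^0(A)$ you write down), and it is uniformized by $H^1(A,\bC)$ only through the transcendental universal covering; no algebraic map from an affine space can play that role, so \Cref{Qpushpull}\ref{Qpushpull2} is not applicable and bialgebraicity for the pair $(\cM_{DR}(A,1),\cM_B(A,1))$ cannot be converted into bialgebraicity for $\Exp(\Lambda')$. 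Concretely, an algebraic $\Sigma_{DR}\subset\cM_{DR}(A,1)$ dominating $\operatorname{Pic}^0(A)$ has non-algebraic preimage in $H^1(A,\bC)$, so the hypothesis does not transfer.

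Even granting some transfer to the exponential torus, \Cref{gelfond} can only output arbitrary translates of arbitrary subtori of $\cM_B(A,1)$, which is strictly weaker than the theorem: the conclusion singles out the subtori pulled back from abelian quotients $A\to A'$, i.e.\ those whose cocharacter lattice is a sub-Hodge structure of $H^1(A,\bZ)$. That extra rigidity is exactly what algebraicity on the De Rham side (the extension structure over $\operatorname{Pic}^0(A)$) enforces, and extracting it needs an Ax-type statement for that algebraic group --- equivalently for the abelian variety $\operatorname{Pic}^0(A)$, as in \Cref{axschanuel} --- not just for $(\bC^*)^n$; your closing remark that ``the Hodge condition is automatic because the correspondence incorporates the Hodge filtration'' is precisely the step that is missing, not a routine verification. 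Similarly, in part (2) finiteness of the $\Aut(\bC/\bar\bQ)$-orbit shows the relevant subtorus and translation point are defined over $\bar\bQ$, but the conclusion that the translate is \emph{torsion} requires an arithmetic/transcendence input (the role Gelfond--Schneider plays in the proof of \Cref{gelfond}(2), or Simpson's own argument), which the Galois-descent sentence does not supply. A correct self-contained proof along your general lines would have to replace the $\Exp(\Lambda')$ reduction by Ax--Schanuel/Ax--Lindemann for the algebraic groups $\cM_{DR}(A,1)$ and $\cM_B(A,1)$ themselves, which is essentially what Simpson does; the paper simply cites him.
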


Note that by the Remark in \cite[\S6]{Simpsonrankone}, the hypotheses of \Cref{simpsonbialg} agree with those of \cite[Theorem 6.1]{Simpson_noncompact}, except the latter requires algebraicity in the Dolbeault realization but doesn't need it.

\begin{rem}
    If $\bD\subset\bC$ is the unit disk centered at the origin, we can interpret $\Sym\Exp_\bQ(n)$ as the coarse Riemann--Hilbert stack $M_{RH,\bQ}(\bD,0,n)$, the rank $n$ substack of $M_{RH,\bQ}(\bD,0)$.
\end{rem}

\subsection{The residue map}
Let $(\bar X,D)$ be a proper log-smooth algebraic space with $X=\bar X\setminus D$ and denote by $\{D_i\}_{i\in I}$ the irreducible components of the boundary $D$.  Fix a $k$-point $x\in X(k)$.  For each $i$ we have a conjugacy class of loops $[\gamma_i]$ around $D_i$ and a representation $\rho\in R_B(X,x,r) (\bC)$ is said to have quasiunipotent local monodromy if $\rho(\gamma_i)$ is quasiunipotent for each $i$---that is, the eigenvalues of $\rho(\gamma_i)$ are roots of unity for each $i$.

On the Betti side, for any $\gamma\in\pi_1(X^\an,x)$, we get a morphism $\mathrm{Char}_B(\gamma) : R_B(X,x,r) \rightarrow \bA^r_{\bZ} $ defined over $\bZ$, given by the composition of the evaluation map and taking the characteristic polynomial. Clearly, this map depends only on the conjugation class of $\gamma$. Since every $D_i$ provides a conjugacy class of simple loops around it, we obtain a morphism $\mathrm{Char}_B(\bar X,D,x) : R_B(X,x,r) \rightarrow (\bA^r_{\bZ})^I $, which clearly descends to a $\bar\bQ$-morphism
\[\mathrm{Char}_B(\bar X,D,r):\cM_B(X,r)\to (\Sym^r\G_a)^I\]
where we have identified $\img\mathrm{Char}_B(\gamma)\cong \Sym^r\G_a$ over $\bar\bQ$ by taking roots of the associated polynomial.  We say a local system $V\in\cM_B(X)(\bC)$ has quasiunipotent local monodromy if some (hence any) framing has quasiunipotent local monodromy, or equivalently if $\mathrm{Char}_B(\bar X,D)(V)$ is the image of a torsion point of some $((\G_m)^r)^I$.

 On the De Rham side, suppose we have a family of framed rank $n$ log-connections on $(\bar X, D)$ parametrized by a $k$-scheme $T$.  Then we obtain a $k$-morphism $\mathrm{Char}_{DR}(\infty): T\to (\bA^r_k)^I $ by taking the characteristic polynomials of the residues along the boundary components.  In particular, we obtain a $k$-morphism
 \[\mathrm{Char}_{DR}(\bar X,D,r):\cM_{DR}(\bar X,D,r)\to (\Sym^r\G_a)^I.\]
 We say a point $\cV\in\cM_{DR}(\bar X,D)(\bC)$ has quasiunipotent local monodromy if $\mathrm{Char}_{DR}(\bar X,D)(\cV)$ is the image of a $\bQ$-point of $((\G_a)^r)^I$.

 \begin{lem}Let $\cV\in \cM_{DR}(\bar X,D)(\bC)$.  The following are equivalent.
\begin{enumerate}
    \item $\cV$ has quasiunipotent local monodromy;
    \item $RH_{X^\an}(\cV^\an)$ has quasiunipotent local monodromy;
    \item $RH_{(X^\sigma)^\an}((\cV^\sigma)^\an)$ has quasiunipotent local monodromy for every $\sigma\in\Aut(\bC/\bQ)$.
\end{enumerate}
\end{lem}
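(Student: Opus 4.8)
## Proof proposal

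The plan is to prove the equivalence of the three conditions by exploiting the comparison of local monodromy eigenvalues across the Riemann--Hilbert correspondence and the fact that the quasiunipotence condition is a $\bar\bQ$-condition insensitive to field automorphisms. First I would reduce the statement to a single boundary component $D_i$, since all three conditions are conjunctions over $i \in I$; after this reduction it suffices to compare, for a fixed $i$, the characteristic polynomial of the residue of $\cV$ along $D_i$ with the characteristic polynomial of the local monodromy operator $\rho(\gamma_i)$ of the associated local system. The classical local Riemann--Hilbert statement (Deligne's comparison for connections with logarithmic poles and regular singularities) says precisely that if $\lambda_1,\dots,\lambda_r$ are the eigenvalues of the residue $\Res_{D_i}\cV$, then the eigenvalues of $\rho(\gamma_i)$ are $e^{-2\pi i \lambda_1},\dots,e^{-2\pi i \lambda_r}$ (up to the usual sign convention). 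Thus $\mathrm{Char}_{DR}(\bar X,D)(\cV)$ records the $\lambda_j$ and $\mathrm{Char}_B(\bar X,D)(RH_{X^\an}(\cV^\an))$ records the $e^{-2\pi i\lambda_j}$.

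The equivalence of (1) and (2) then follows from the Gelfond--Schneider theorem: $\cV$ has quasiunipotent local monodromy iff each $\lambda_j \in \bQ$, while $RH_{X^\an}(\cV^\an)$ has quasiunipotent local monodromy iff each $e^{-2\pi i\lambda_j}$ is a root of unity, i.e. iff $2\pi i\lambda_j \in 2\pi i \bQ$. One direction is trivial (if $\lambda_j\in\bQ$ then $e^{-2\pi i\lambda_j}$ is a root of unity); for the converse, $e^{-2\pi i\lambda_j}$ being a root of unity means $e^{-2\pi i\lambda_j} = e^{-2\pi i q}$ for some $q\in\bQ$, so $\lambda_j - q \in \bZ$ and hence $\lambda_j\in\bQ$. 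Actually one does not even need Gelfond--Schneider here in its full strength — the argument is purely formal once one knows the eigenvalue comparison, because $e^{-2\pi i \lambda}$ being a root of unity forces $\lambda \in \bQ$ directly from $e^{2\pi i n} = 1$ for $n \in \bZ$. (Gelfond--Schneider, or rather the transcendence input, is what is used elsewhere in the section, e.g.\ in \Cref{gelfond}, for the genuinely harder density statement \Cref{intro qu density}; here the needed input is elementary.)

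The equivalence of (2) and (3) is a consequence of the fact that quasiunipotence of local monodromy of a local system on $X^\sigma$ is detected by $\mathrm{Char}_B(\bar X^\sigma,D^\sigma)$ landing in the image of a torsion point, and this condition is visibly $\Aut(\bC/\bQ)$-stable: the boundary components $D_i^\sigma$ are the Galois conjugates of the $D_i$, the conjugacy classes $[\gamma_i]$ are purely topological and hence unchanged under base change, and a point of $(\Sym^r\G_m)^I$ is a torsion point iff any/every Galois conjugate is. Concretely, if $\cV^\sigma$ is the connection obtained by applying $\sigma$ to coefficients, then $\Res_{D_i^\sigma}(\cV^\sigma) = \sigma(\Res_{D_i}\cV)$, so the eigenvalues of the residue of $\cV^\sigma$ are the $\sigma$-conjugates of the $\lambda_j$; since $\lambda_j \in \bQ$ iff $\sigma(\lambda_j) \in \bQ$ (as $\sigma$ fixes $\bQ$), condition (1) for $\cV$ is equivalent to condition (1) for $\cV^\sigma$, and by the already-established $(1)\Leftrightarrow(2)$ applied over $X^\sigma$ we get $(2)$ for $\cV^\sigma$, which is (3). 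So in fact all three conditions are each equivalent to the single algebraic condition ``$\mathrm{Char}_{DR}(\bar X,D)(\cV)$ is the image of a $\bQ$-point of $((\G_a)^r)^I$.''

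The main obstacle is purely bookkeeping: pinning down the precise sign/normalization in the local Riemann--Hilbert eigenvalue dictionary (whether the monodromy eigenvalue attached to residue eigenvalue $\lambda$ is $e^{2\pi i \lambda}$ or $e^{-2\pi i\lambda}$, and making sure this matches the conventions in \S\ref{subsect:DR stack} for $\mathrm{Char}_{DR}$ and in \S\ref{sect:Betti}/\S\ref{sect:RH} for $\mathrm{Char}_B$), and checking that $\mathrm{Char}_{DR}$ and $\mathrm{Char}_B$ are genuinely compatible under $RH_{(\bar X,D)}$ when one passes through the intermediate restriction $\cM_{DR}(\bar X,D) \to \cM_{DR}(X)$ — i.e.\ that restricting a logarithmic connection to $X$ and then solving does not lose the residue data, which is exactly the content of regular singularities. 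None of this is deep, but it is the only place where care is genuinely required; the transcendence-theoretic content is trivial at this level of the argument.
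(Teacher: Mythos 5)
Your proof is correct and is essentially the argument the paper intends: the lemma is stated there without proof, and the key input you use — that the eigenvalues of the local monodromy are the exponentials of the eigenvalues of the residue, so that quasiunipotence on either side amounts to rationality of the residue eigenvalues, a condition visibly stable under $\Aut(\bC/\bQ)$ — is precisely the observation the paper invokes in the proof of the next lemma (\Cref{qu is bialg}). Your remarks that no transcendence input is needed here and that only the sign/normalization bookkeeping requires care are also accurate.
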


\begin{lem}\label{qu is bialg}
    There is a morphism
    \[\mathrm{Char}_{\bQ}(\bar X,D,r):\cM_{RH,\bQ}(\bar X,D,r)\to \Sym\Exp_{\bQ}(r)^{ I}\]
    of $\bQ$-correspondence stacks whose underlying maps on algebraic stacks are $$(\mathrm{Char}_{DR}(\bar X,D,r),\{\mathrm{Char}_B(\bar X^\sigma,D^\sigma,r)\}).$$
\end{lem}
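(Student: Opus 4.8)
The plan is to produce the claimed morphism of $\bQ$-correspondence stacks by exhibiting, for every $\sigma\in\Aut(\bC/\bQ)$, a commutative square relating the two characteristic-polynomial maps through the Riemann--Hilbert correspondences. Concretely, a morphism of $\bQ$-correspondence stacks here consists of the algebraic morphism $\mathrm{Char}_{DR}(\bar X,D,r)$ on the De Rham side, the $K$-algebraic morphisms $\mathrm{Char}_B(\bar X^\sigma,D^\sigma,r)$ on each Betti side, and a morphism of the analytic correspondences $\cZ_\sigma\to\cZ'_\sigma$ covering them, where $\cZ_\sigma$ is the graph of $RH_{(\bar X^\sigma,D^\sigma)}$ and $\cZ'_\sigma$ is the graph of the structure correspondence of $\Sym\Exp_\bQ(r)^I$ (i.e.\ the graph of $\exp$, symmetrized and taken to the $I$th power). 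So it suffices to check that for each $\sigma$ the square
\[\begin{tikzcd}
\cM_{DR}(\bar X^\sigma,D^\sigma,r)^\an\ar[r,"RH_{(\bar X^\sigma,D^\sigma)}"]\ar[d,"\mathrm{Char}_{DR}(\bar X^\sigma,D^\sigma,r)^\an"']&\cM_B(X^\sigma,r)^\an\ar[d,"\mathrm{Char}_B(\bar X^\sigma,D^\sigma,r)^\an"]\\
((\Sym^r\G_a)^I)^\an\ar[r,"(\exp^{\Sym,I})^\an"]&((\Sym^r\G_m)^I)^\an
\end{tikzcd}\]
commutes, where the bottom map is the analytification of the structure correspondence of $\Sym\Exp_\bQ(r)^I$. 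Since everything is $\Aut(\bC/\bQ)$-equivariant in the appropriate sense, it is really just the single statement for $X$ itself, applied to each conjugate.

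The key step is a local computation at a single boundary component $D_i$. First I would reduce to $r=1$ component at a time by noting that the residue and the local monodromy commute with the passage from the characteristic polynomial to the (unordered) tuple of its roots, so it is enough to compare a single eigenvalue of the residue with the corresponding eigenvalue of the local monodromy operator. Then, working on a small punctured disk transverse to $D_i$ (a germ $j_{\bar x}:\hat\bD^*\to \bar X$ as in \Cref{sect:Betti}), the statement is the classical fact underlying Deligne's Riemann--Hilbert correspondence: if $(\cE,\nabla)$ is a logarithmic connection on the disk with residue $\mathrm{Res}$ along the origin, then the monodromy of the associated local system on $\hat\bD^*$ is conjugate to $\exp(-2\pi i\,\mathrm{Res})$ (the sign/normalization being a matter of convention, to be fixed once and for all). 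Passing to characteristic polynomials, this says exactly that the eigenvalues of the monodromy are the images under $z\mapsto e^{-2\pi i z}$ of the eigenvalues of the residue, which is the commutativity of the square up to the chosen identification $\img\mathrm{Char}_B(\gamma)\cong \Sym^r\G_m$ and $\img\mathrm{Char}_{DR}(\infty)\cong\Sym^r\G_a$. I would assemble these over all $i\in I$ to get the square above, and then observe that the same computation applies verbatim to each Galois conjugate $(\bar X^\sigma,D^\sigma)$, giving the required morphism of $\bQ$-correspondence stacks. The identifications and the $\cZ_\sigma$-level morphism are then formal: one takes $\tilde\cZ_\sigma=\cZ_\sigma$ and the map to $\cZ'_\sigma$ is $(\mathrm{Char}_{DR}^\an,\mathrm{Char}_B^\an)$ restricted to the graph, which lands in $\cZ'_\sigma$ precisely by the commutativity just established.

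The main obstacle I anticipate is bookkeeping rather than mathematics: making sure the normalizations match. There are several sign and $2\pi i$ conventions floating around — in the definition of the residue, in the orientation of the loop $\gamma_i$, in the exponential used to define $\Exp$, and in Deligne's canonical extension versus other extensions — and one must fix a consistent choice so that the map on the De Rham side literally composes with $\exp$ (and not, say, $z\mapsto e^{2\pi i z}$, which would still be a valid correspondence morphism after an automorphism but would make the later appeals to \Cref{gelfond cor} slightly awkward). A second, more structural point to be careful about is that $\cM_{DR}(\bar X,D,r)$ is only countably finite type, so the morphism $\mathrm{Char}_{DR}(\bar X,D,r)$ must be checked to be well-defined on each of the finite-type pieces (it is, since the residue along $D_i$ of a family of log connections is an $\cO_T$-linear endomorphism of the locally free restriction $\cE|_{D_i\times T}$, whose characteristic polynomial is a morphism to $(\Sym^r\G_a)^I$ on each piece) and that the notion of morphism of $\bQ$-correspondence stacks has been extended to this setting, which the excerpt does in \Cref{subsect:DR stack}. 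Granting these conventions, the lemma follows from \Cref{nitsure} (for the existence and algebraicity of $\mathrm{Char}_{DR}$), the construction of $\mathrm{Char}_B$ in this section, and the elementary monodromy-of-a-logarithmic-connection computation on the disk.
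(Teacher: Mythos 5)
Your proposal is correct and follows essentially the same route as the paper's (very brief) proof: Galois compatibility comes from the algebraicity of the residue map, and the correspondence condition reduces to the local computation on a transverse punctured disk that the eigenvalues of the local monodromy are exponentials of the residue eigenvalues, with the characteristic polynomial independent of the basepoint. Your extra care about the $2\pi i$/sign normalization and about $\cM_{DR}(\bar X,D,r)$ being only countably finite type is bookkeeping the paper glosses over, but it does not change the argument.
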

\begin{proof} The residue is an algebraic map, and therefore compatible with the action of $\Aut(\bC/\bQ)$.  In a polydisk neighborhood of the boundary, the eigenvalues of the local monodromy are easily seen to be the exponential of the residues, and the characteristic polynomial is independent of the basepoint. 
\end{proof}

\subsection{Normal algebraic spaces}
We now make sense of the Riemann--Hilbert stack for a normal complex algebraic space $X$.  On the Betti side this is straightforward:  setting $i:X^\reg\to X$ to be the regular locus, since $\pi_1((X^{\reg})^\an,x)\to \pi_1(X^\an,x))$ is surjective for any basepoint $x\in X^\reg$, we have a closed immersion $i^*:\cM_B(X)\to \cM_B(X^\reg)$.

Let $\pi:X'\to X$ be a log resolution which is an isomorphism over $X^\reg$ and let $E$ be the exceptional locus.  Then there is a lift $i':X^\reg\to X'$ and for the same reason $\pi^*:\cM_B(X)\to\cM_B(X')$ and $i'^*:\cM_B(X')\to\cM_B(X^\reg)$ are closed immersions.  We may further assume there is a log smooth compactification $(\bar X',D')$ such that $(\bar X',D'':=D'\cup\bar E)$ is a log smooth compactification of $X^\reg$. 

\begin{lem}
    Every irreducible component of the reduced preimage $RH_{(\bar X',D'')}^{-1}((i^*\cM_B(X))^\an)$ is algebraic.
\end{lem}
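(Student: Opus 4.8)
The plan is to reduce the statement to the known algebraicity of the Riemann--Hilbert correspondence on the smooth quasiprojective variety $X'$ via the De Rham stack of log connections. First I would observe that $\cM_{DR}(\bar X',D'')$ is a countably finite-type algebraic stack by \Cref{nitsure} and the discussion following it, and that $RH_{(\bar X',D'')}:\cM_{DR}(\bar X',D'')^\an\to\cM_B(X^\reg)^\an$ is the analytic morphism whose graph defines the correspondence stack $\cM_{RH}(\bar X',D'')$. The closed immersions $i^*:\cM_B(X)\to\cM_B(X^\reg)$ and $\pi^*:\cM_B(X)\to\cM_B(X')$, $i'^*:\cM_B(X')\to\cM_B(X^\reg)$ established just before the lemma identify $\cM_B(X)$ with a closed substack of $\cM_B(X^\reg)$, so $(i^*\cM_B(X))^\an$ is a closed analytic substack, and we are asking that the preimage $\cZ:=RH_{(\bar X',D'')}^{-1}((i^*\cM_B(X))^\an)$ inside $\cM_{DR}(\bar X',D'')^\an$ be (the analytification of) an algebraic substack, componentwise.

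The key step is to reinterpret $\cZ$ as a constructible subset cut out by \emph{algebraic} conditions on the De Rham side. A log connection $(\cE,\nabla)$ on $(\bar X',D'')$ restricts to a flat bundle on $X^\reg$, and the condition that its monodromy local system $RH(\cE,\nabla)$ lie in the image of $i^*$ is the condition that the monodromy representation $\pi_1(X^{\reg,\an},x)\to\GL_r(\bC)$ factor through the surjection $\pi_1(X^{\reg,\an},x)\twoheadrightarrow\pi_1(X^\an,x)$. This is a condition on the Betti side: $\cM_B(X)=\cM_B(X^\reg)\times_{\prod_j\cM_B(\hat\bD^*_j)}\{\triv\}$ roughly speaking, i.e. the local systems on $X^\reg$ whose restriction to small loops around each exceptional/resolved component is trivial (more precisely the closed substack cut out by the representability of $\cM_B(X)\hookrightarrow\cM_B(X^\reg)$). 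On the De Rham side, triviality of local monodromy around a component of $D''$ coming from the exceptional locus translates — for a \emph{log} connection, where the local monodromy is $\exp(-2\pi i\,\mathrm{Res})$ composed with the formal structure — into the vanishing of the residue together with the local triviality of the connection along that component; but this is precisely a Zariski-closed condition on the finite-type pieces of $\cM_{DR}(\bar X',D'')$ (vanishing of residue eigenvalues is closed, and the further condition picks out a closed substack). Thus $\cZ$ is the analytification of a countably-finite-type algebraic substack $\cM_{DR}(X)\subset\cM_{DR}(\bar X',D'')$; each of its irreducible components is a finite-type algebraic space, hence algebraic, which is the assertion. Alternatively, and perhaps more cleanly, one invokes Deligne's Riemann--Hilbert: $RH_{(\bar X',D'')}$ need not be an isomorphism of stacks, but its restriction to the locus of connections with appropriately controlled residues (a union of finite-type algebraic substacks) \emph{is} an open analytic immersion onto $\cM_B(X^\reg)^\an$ up to the choice of Deligne extension, so $\cZ$ meets each such finite-type piece in the preimage of a closed algebraic substack, hence is algebraic on each.

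Concretely the argument I would write: pick the finite-type open substack $\cM_{DR}^{\le k}(\bar X',D'')$ of log connections whose underlying bundle is a subsheaf of a fixed $\cO(k)^{\oplus N}$; its analytification maps to $\cM_B(X^\reg)^\an$ and, because $(i^*\cM_B(X))^\an$ is closed analytic and $\cM_{DR}^{\le k}$ is finite type, $\cZ\cap\cM_{DR}^{\le k,\an}$ is a closed analytic substack of the finite-type algebraic stack $\cM_{DR}^{\le k}$. Now use that this closed analytic substack is in fact \emph{algebraic}: either by GAGA for finite-type stacks if one can show it is closed for the Zariski topology (which follows since it is cut out by the $\Aut(\bC/\bQ)$-invariant, hence algebraic, conditions on residues and on the restriction maps to the exceptional divisors, exactly the $\mathrm{Char}_{DR}$-type maps of \Cref{qu is bialg} composed with the closed-immersion conditions), or by noting that $\cM_B(X)\to\cM_B(X^\reg)$ is a closed immersion of algebraic stacks and $RH_{(\bar X',D'')}$ restricted to $\cM_{DR}^{\le k}$ is an analytic morphism of finite-type analytic stacks, so the preimage is an analytic substack, and then invoke that the period-map-type definability/algebraicity already available (the regularity of the log extension, as in the proof of \Cref{thm:versal frame} or \cite{BMull}) upgrades it to algebraic. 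Since every irreducible component of $\cZ$ is contained in some $\cM_{DR}^{\le k}$ (by quasicompactness of the component, which is irreducible constructible, hence finite type), each component is a finite-type algebraic space. The main obstacle I anticipate is making the translation ``trivial local monodromy around an exceptional divisor of a log connection'' into a genuinely \emph{algebraic} (Zariski-closed) condition on $\cM_{DR}(\bar X',D'')$ rather than merely an analytic one — this is where the regularity of the connection and the identification of local monodromy with $\exp$ of the residue (plus the formal normal form) must be used carefully, and where one must be sure the relevant locus is closed and defined over $\bar\bQ$ so that algebraicity (not just analyticity) is in force.
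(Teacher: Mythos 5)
There is a genuine gap, and it sits exactly where you flag "the main obstacle I anticipate": your argument needs the condition "the monodromy local system lies in $i^*\cM_B(X)$", i.e. trivial local monodromy around the exceptional components of $D''$, to be a Zariski-closed condition on (finite-type pieces of) $\cM_{DR}(\bar X',D'')$, cut out by vanishing residues plus "local triviality". This is false. Trivial monodromy around a boundary component does not force the residue to vanish: the rank one logarithmic connection $d+n\,\frac{dq}{q}$, $n\in\bZ\setminus\{0\}$, on the trivial bundle over a disk has trivial monodromy and nonzero residue, and conversely a nilpotent nonzero residue gives nontrivial unipotent monodromy even though all residue eigenvalues vanish. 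So the preimage is strictly larger than the vanishing-residue substack, and on a fixed finite-type piece the locus "residue eigenvalues integral and monodromy actually trivial" is only a countable union of Zariski-closed subsets — analytically closed, but not Zariski closed. This is precisely why the statement is made componentwise and why the ambient De Rham object is only countably of finite type. Your fallbacks do not repair this: $\Aut(\bC/\bQ)$-invariance of the defining conditions does not by itself yield algebraicity (the paper's criterion of that flavor needs an analytically constructible set with countably many Galois conjugates, and you have not set that up — nor could you without essentially proving this lemma first), and "definability/regularity of the log extension" provides no mechanism for converting an analytic locus into an algebraic one. The step "each irreducible component of $\cZ$ is contained in some finite-type piece by quasicompactness of the component, which is irreducible constructible" is also circular, since constructibility is what is to be proved.

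The missing idea, which is the heart of the paper's proof, is a Baire-type mechanism for producing algebraicity of components from a countable algebraic covering. After the (legitimate) first reduction using the closed immersion of the vanishing-residue substack $\cM_{DR}(\bar X',D')\hookrightarrow\cM_{DR}(\bar X',D'')$, the paper characterizes the locus pointwise as the intersection, over the fibers $F$ of $\pi:X'\to X$, of the preimages under the algebraic restriction maps $q_F^*$ (to resolutions of the fibers) of the set of flat bundles with trivial monodromy. The key observation is that this last set is simultaneously a closed analytic subset and a countable union of algebraic subsets — namely the flat bundles admitting a flat meromorphic frame — so each of its irreducible components is Zariski closed; pulling back along the algebraic maps $q_F^*$ preserves this, and noetherianity of the intersection then gives algebraicity of every component of the original preimage. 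Some argument of this kind, exhibiting the transcendental "trivial monodromy" condition as a countable union of algebraic loci and then extracting components, is unavoidable here, and your proposal never supplies it.
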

\begin{proof}
    The preimage is an analytic substack with countably many irreducible components.  There is a natural closed immersion
    \[b:\cM_{DR}(\bar X',D')\to\cM_{DR}(\bar X',D'')\]
    realizing $\cM_{DR}(\bar X',D')$ as the substack of log connections with vanishing residue along the irreducible components of $E$.  Since there is a commutative diagram
    \[
    \begin{tikzcd}
        \cM_{DR}(\bar X',D')^\an\ar[r,"RH_{(\bar X',D')}"]\ar[d,"b"']&\cM_B(X')^\an\ar[d,"(i'^*)^\an"]\\
        \cM_{DR}(\bar X',D'')^\an\ar[r,"RH_{(\bar X',D'')}"]&\cM_B(X^\reg)^\an
    \end{tikzcd}
    \]

    It therefore suffices to show every irreducible component of $RH_{(\bar X',D')}^{-1}((\pi^*\cM_B(X))^\an)$ is algebraic.
    
    For any fiber $F$ of $\pi:X'\to X$, let $q_F:F'\to F$ be a resolution and consider the induced map
    \[q_F^*:\cM_{DR}(\bar X',D')\to\cM_{DR}(F').\]
    Then on the level of points, $RH_{(\bar X',D')}^{-1}((\pi^*\cM_B(X))^\an)$ is the intersection of $((q^*_F)^\an)^{-1}(RH_F^{-1}\triv_F)$ over all $F$, where $\triv_F\subset \cM_B(F)$ is the set of trivial local systems on $F$.  Since $RH_F^{-1}(\triv_F)$ is a closed analytic subset and a countable union of algebraic subsets (namely, those flat bundles admitting a flat meromorphic frame), it follows that each irreducible component of $RH_F^{-1}(\triv_F)$ is Zariski closed, and therefore the same is true of $((q^*_F)^\an)^{-1}(RH_F^{-1}\triv_F)$.  By noetherianity, each irreducible component of the intersection is therefore algebraic.
\end{proof}
\begin{defn}
    Given $\pi:X'\to X$ and $(\bar X',D')$ as above, we define $\cM_{DR}(\bar X',D',E)$ to be the reduced preimage $RH_{(\bar X',D'')}^{-1}((i^*\cM_B(X))^\an)$ with its natural structure as a substack, and we define $RH_{(\bar X',D',E)}:\cM_{DR}(\bar X',D',E)^\an\to\cM_B(X)^\an$ to be the restriction of $RH_{(\bar X',D'')}$.  We define $(\cM_{DR}(\bar X',D',E),\cM_B(X),RH_{(\bar X',D',E)})$ as the Riemann-Hilbert correspondence stack $\cM_{RH}(\bar X',D',E)$.

    Both $\pi$ and $(\bar X',D')$ can be chosen to be defined over the field of definition of $X$, and we define $(\cM_{DR}(\bar X',D',E),\{\cM_B(X^\sigma)\},\{RH_{(\bar X'^\sigma,D'^\sigma,E^\sigma)}\})$ to be the Riemann--Hilbert $\bQ$-correspondence stack $\cM_{RH,\bQ}(\bar X',D',E)$.

\end{defn}
    Note that by definition we have morphisms of ($\bQ$-)correspondence stacks
        \[i^*:\cM_{RH}(\bar X',D',E)\to\cM_{RH}(\bar X',D'') \]
    \[i^*:\cM_{RH,\bQ}(\bar X',D',E)\to\cM_{RH,\bQ}(\bar X',D'') \]
which are injective on points in the De Rham realization and closed immersions in all of the Betti realizations.

\subsection{Proof of \Cref{intro qu density}}
We now state a more precise version of \Cref{intro qu density}:
\begin{thm}\label{qu dense in bialg}
    Let $X$ be a connected normal algebraic space and let $(\Sigma_{DR},\{\Sigma_{B,\sigma}\})$ be a bialgebraic constructible pair of the Riemann--Hilbert $\bQ$-correspondence stack $\cM_{RH,\bQ}(\bar X',D',E)$ of a log smooth compactification of a log resolution.  Then the local systems with quasiunipotent local monodromy are Zariski dense in $\Sigma_{DR}$ and each $\Sigma_{B,\sigma}$.
\end{thm}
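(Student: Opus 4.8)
The plan is to reduce the statement to the Gelfond--Schneider theorem by pushing the bialgebraic pair forward along the residue map into the elementary correspondence stack $\Sym\Exp_\bQ(r)^I$ and then pulling back the resulting density of rational (resp.\ torsion) points. First I would set $D''=D'\cup\bar E$, with $I$ indexing its irreducible components, and compose the morphism of $\bQ$-correspondence stacks $i^*\colon\cM_{RH,\bQ}(\bar X',D',E)\to\cM_{RH,\bQ}(\bar X',D'')$ with the residue morphism $\mathrm{Char}_\bQ(\bar X',D'',r)$ of \Cref{qu is bialg} to obtain a morphism of $\bQ$-correspondence stacks $c\colon\cM_{RH,\bQ}(\bar X',D',E)\to\Sym\Exp_\bQ(r)^I$, with underlying algebraic maps $c_{DR}$ and $\{c_{B,\sigma}\}$. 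A point of $\cM_{DR}(\bar X',D',E)$ underlies a local system whose pullback to the resolution $X'$ extends across $\bar E$, hence has trivial monodromy and integral residues along the components of $\bar E$; so by \Cref{qu lefschetz} such a point has quasiunipotent local monodromy precisely when $c_{DR}$ sends it to the image of a $\bQ$-point of $\bA^{rI}$, and similarly a point of $\cM_B(X^\sigma)$ in the image of $i^*$ has quasiunipotent local monodromy precisely when $c_{B,\sigma}$ sends it to the image of a torsion point. Thus the quasiunipotent loci of $\Sigma_{DR}$ and of each $\Sigma_{B,\sigma}$ are $c_{DR}^{-1}$, respectively $c_{B,\sigma}^{-1}$, of these arithmetic loci; making this identification precise in the non-smooth case (and checking that $c$ really is a morphism of $\bQ$-correspondence stacks) is the bookkeeping part of the argument, and I expect it to be the only real subtlety.

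Next, by \Cref{Qpushpull}~(1) the pair $\bigl(c_{DR}(\Sigma_{DR}),\{c_{B,\sigma}(\Sigma_{B,\sigma})\}\bigr)$ is $\bar\bQ$-bialgebraic in $\Sym\Exp_\bQ(r)^I$. The proof of \Cref{gelfond cor} applies verbatim to this product: the quotient $\Exp_{\bar\bQ}(rI)\to\Sym\Exp_{\bar\bQ}(r)^I$ by $\mathfrak{S}_r^I$ is again finite flat and satisfies the hypothesis of \Cref{Qpushpull}~(2), so $c_{DR}(\Sigma_{DR})^\Zar$ is a finite union of images of $\bQ$-translates of $\bQ$-linear subspaces of $\bA^{rI}$, the images of $\bQ$-points are Zariski dense in it, and the images of torsion points are Zariski dense in each $c_{B,\sigma}(\Sigma_{B,\sigma})^\Zar$.

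It remains to pull this back. Fix an irreducible component $\Sigma_0$ of $\Sigma_{DR}^\Zar$ and set $T_0:=c_{DR}(\Sigma_{DR}\cap\Sigma_0)^\Zar$, which is one of the subspace images above and hence irreducible with a Zariski-dense set of rational points; since $\Sigma_{DR}\cap\Sigma_0$ is a constructible dense subset of $\Sigma_0$, it contains a dense Zariski open $\Omega\subseteq\Sigma_0$, and by generic flatness I may shrink $T_0$ to a dense open over which the restriction of $c_{DR}$ to $\Omega$ has nonempty fibers of pure dimension $e:=\dim\Sigma_0-\dim T_0$. The rational points of $T_0$ lying in this open still form a Zariski-dense subset $S$, and for $t\in S$ the fiber $c_{DR}^{-1}(t)\cap\Omega$ consists of quasiunipotent points; the Zariski closure of the union of these fibers maps dominantly onto $T_0$ with generic fiber of dimension $\geq e$, so it has dimension $\geq e+\dim T_0=\dim\Sigma_0$ and therefore equals $\Sigma_0$. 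Hence the quasiunipotent locus is Zariski dense in each component of $\Sigma_{DR}^\Zar$, and so in $\Sigma_{DR}$; the same argument with torsion points in place of rational points, applied to each component of each $\Sigma_{B,\sigma}^\Zar$, handles the Betti realizations.
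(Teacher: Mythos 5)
Your route is the same as the paper's: compose with the residue morphism into $\Sym\Exp_\bQ(r)^I$, push the pair forward using \Cref{Qpushpull}, invoke (the product version of) \Cref{gelfond cor}, identify the quasiunipotent locus with the preimage of rational (resp.\ torsion) images---your observation that the monodromy, resp.\ the residue eigenvalues, along the components of $\bar E$ are automatically trivial, resp.\ integral, is correct and plays the role of the paper's reduction to $X$ smooth via restriction to $X^{\reg}$---and then pull density back along $c_{DR}$, resp.\ $c_{B,\sigma}$.

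There is, however, one step that is not justified as written. Having fixed a component $\Sigma_0$ of $\Sigma_{DR}^\Zar$, you set $T_0:=c_{DR}(\Sigma_{DR}\cap\Sigma_0)^\Zar$ and assert that it ``is one of the subspace images above.'' Applying \Cref{gelfond cor} to the full pushed-forward pair only tells you that $c_{DR}(\Sigma_{DR})^\Zar$ is a finite union of subspace images; $T_0$, being irreducible, is \emph{contained} in one of them, but need not equal one, and a proper irreducible subvariety of such an image need not contain a Zariski-dense set of rational (or, on the Betti side, torsion) points---which is exactly what your fiber-dimension argument then consumes. The repair is the paper's opening reduction, which you skipped: by the very definition of a bialgebraic pair, each irreducible component $\Sigma_0$ of $\Sigma_{DR}^\Zar$ (resp.\ of $\Sigma_{B,\sigma}^\Zar$) equals the Zariski closure of the image of a single irreducible bialgebraic piece; pushing that piece forward under $c$ produces a bialgebraic pair whose De Rham (resp.\ Betti) closure is precisely $T_0$, and \Cref{gelfond cor} applied to \emph{it} gives the required density of rational (resp.\ torsion) images in $T_0$. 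With that one sentence inserted, your identification of the quasiunipotent locus and your generic-flatness/dimension-count pull-back are sound; note also that the pull-back can be done more cheaply, as the paper implicitly does: if $\Omega\subset\Sigma_0$ is a dense constructible subset whose complement contained all quasiunipotent points' closure, then $c_{DR}(\Omega\setminus Z)$ would still be constructible and dense in $T_0$, hence would meet the dense set of rational images, a contradiction---no fiber dimension bookkeeping needed.
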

\begin{proof}
    The restriction to the regular locus is a bialgebraic pair so by \Cref{Qpushpull} we may assume $X$ is smooth.  We may further assume $(\Sigma_{DR},\{\Sigma_{B,\sigma}\})$ is the image of an irreducible bialgebraic correspondence stack.  Let $(\Sigma_{DR}',\{\Sigma_{B,\sigma}'\})$ be the image in $\Sym\Exp_{\bQ}^I$ under the residue map.  According to \Cref{Qpushpull} and \Cref{gelfond cor}, image of rational points (resp. torsion points) under the quotient map are Zariski dense in $\Sigma_{DR}'$ (resp. each $\Sigma_{B,\sigma}'$), and it follows that the preimages of such points are Zariski dense in $\Sigma_{DR}$ (resp. each $\Sigma_{B,\sigma}$), which are exactly the points with quasiunipotent local monodromy.
\end{proof}

    In the \Cref{bialg=abs} we will show that the notion of $K$-bialgebraic constructible subsets of the Riemann--Hilbert stacks is equivalent to the notion of absolute $K$-constructible subsets (i.e. $K$-constructible in every Betti realization, with no condition on the De Rham realization) for countable $K\subset\bC$.

\section{Harmonic maps towards NPC spaces}\label{sect:harmonic}
We will need to use pluriharmonic bundles in both the archimedean and non-archimedean setting.  Recall this means equipping a local system $V$ on an algebraic variety $X$ with a pluriharmonic metric in the former case and a pluriharmonic norm in the latter case.  Either way, the pluriharmonic structure is encoded in an equivariant map  $\tilde X^V\to\Delta$ for an appropriate classifying space $\Delta$---a symmetric space in the former case, and a Euclidean building in the latter.  

The main goal of this section is to formulate the version of the existence theorem for harmonic maps we will need (\Cref{Gromov-Schoen-regular}).  This follows from known results but does not appear in this form in the literature, so we first give the necessary background from \cite{KoSc93}.  We also state some related results that will be needed in subsequent sections.
\subsection{Sobolev spaces}
Let $(\Omega,g)$ be a Riemannian domain, i.e. a connected open subset of a Riemannian manifold $(M, g)$, having the property that its metric completion $\bar \Omega$ is a compact subset of $M$. Let $(\Delta, d)$ be a complete metric space. The space $L^2(\Omega, \Delta)$ is by definition
the set of Borel-measurable maps $u\colon \Omega \to \Delta$ having separable range for which
\[  \int_\Omega d^2(u(x),Q) \dvol_g(x) < \infty \]
for some $Q \in \Delta$. The space $L^2(\Omega, \Delta)$ equipped with the distance function $D$ given by 
\[ D(u,v) = \left( \int_\Omega  \left( d(u(x), v(x) \right)^2 \dvol_g(x) \right)^{1/2}\]
is a complete metric space.

For every $\epsilon > 0$, let $\Omega_\epsilon = \{ x \in \Omega \, | \, \text{dist}(x, \partial \Omega) > \epsilon \}$, let $B_\epsilon(x)$ be the geodesic ball centered at $x \in M$ and $S_\epsilon(x) = \partial B_\epsilon(x)$. Let $d\sigma_{S_\epsilon(x)}$ be the $(\dim M - 1)$-dimensional surface measure on $S_\epsilon(x)$. For every $u \in L^2(\Omega, \Delta)$ and every $\epsilon > 0$, let $e^u_\epsilon \in L^1(\Omega, \bR)$ be the function defined by
\[  e^u_\epsilon(x) := \int_{y \in S_\epsilon(x)} \frac{d^2(u(x), u(y))}{\epsilon^2} \frac{d\sigma_{S_\epsilon(x)}(y)}{\epsilon^{n-1}}\]
for $x \in \Omega_\epsilon$ and zero otherwise.

For $\epsilon > 0$ and $f \in C_c(\Omega)$ (the set of continuous real-valued functions with compact support), let
\[ E_\epsilon^u(f) := \int_\Omega f \cdot e^u_\epsilon(x) \dvol_g(x).\]
We say that $u \in L^2(\Omega, \Delta)$ has finite energy, and we write $u \in W^{1,2}(\Omega, \Delta )$, if
\[ E(u) :=  \sup_{f \in C_c(\Omega), 0 \leq f \leq 1}  \limsup_{\epsilon \to 0} E_\epsilon^u(f)  < \infty .\]
In particular, $W^{1,2}(\Omega, \Delta )$ contains all Lipschitz continuous maps from $\Omega$ to $\Delta$. If $u \in W^{1,2}(\Omega, \Delta )$, then the measures $e^u_\epsilon(x) \dvol_g(x)$ converge weakly to a limiting measure which is absolutely continuous with respect to Lebesgue measure, and hence may
be written as $e^u(x) \dvol_g(x)$ for a function $e^u \in L^1(\Omega, \bR)$ called the energy density of $u$. 

\begin{thm}[{\cite[Theorem 1.6.1]{KoSc93}}]\label{KoSc_convexity_energy}
Let $(u_k)$ be a sequence of maps in $W^{1,2}(\Omega, \Delta )$ that converges in $L^2(\Omega, \Delta)$ to a map $u$. If $\sup_k E(u_k) < \infty$, then $u \in W^{1,2}(\Omega, \Delta )$ and $E(u) \leq \liminf_k E(u_k)$.  
\end{thm}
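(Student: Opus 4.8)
The plan is to prove the inequality $E(u)\le\liminf_k E(u_k)$ (which simultaneously yields $u\in W^{1,2}(\Omega,\Delta)$, the right-hand side being finite) directly from the definition of the Korevaar--Schoen energy, establishing it scale by scale and only at the end letting the scale tend to zero. Since $u_k\to u$ in $L^2(\Omega,\Delta)$, I would first pass to a subsequence, not relabeled, along which $u_k\to u$ pointwise $\dvol_g$-a.e.\ on $\Omega$ and along which $\liminf_k E(u_k)=\lim_k E(u_k)$; it then suffices to bound $E(u)$ by the limit along this subsequence.

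The heart of the argument is a fixed-scale comparison: for every $f\in C_c(\Omega)$ with $0\le f\le 1$ and every fixed $\epsilon>0$,
\[ E^u_\epsilon(f)\ \le\ \liminf_k E^{u_k}_\epsilon(f), \]
obtained by two applications of Fatou's lemma. Let $N\subset\Omega$ be the $\dvol_g$-null set on which $u_k$ fails to converge to $u$. A Fubini/coarea computation applied to the incidence set $\{(x,y):y\in S_\epsilon(x)\}$ shows $\int_{\Omega_\epsilon}\sigma_{S_\epsilon(x)}\!\big(N\cap S_\epsilon(x)\big)\,\dvol_g(x)=0$, so for $\dvol_g$-a.e.\ $x\in\Omega_\epsilon$ one has $x\notin N$ and $\sigma_{S_\epsilon(x)}(N\cap S_\epsilon(x))=0$; for such $x$, $d(u_k(x),u_k(y))\to d(u(x),u(y))$ for $\sigma_{S_\epsilon(x)}$-a.e.\ $y\in S_\epsilon(x)$, and Fatou on $S_\epsilon(x)$ gives $e^u_\epsilon(x)\le\liminf_k e^{u_k}_\epsilon(x)$ for a.e.\ $x$. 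Multiplying by $f\ge0$, integrating over $\Omega$, and invoking Fatou once more on $\Omega$ yields the displayed inequality.

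To conclude I would let $\epsilon\to0$ in this inequality and recognise $E(u)$. Because each $u_k$ lies in $W^{1,2}(\Omega,\Delta)$, the Korevaar--Schoen weak convergence of approximate energy densities \cite{KoSc93} gives $\lim_{\epsilon\to0}E^{u_k}_\epsilon(f)=\int_\Omega f\,e^{u_k}\,\dvol_g\le E(u_k)$, and the uniform bound $\sup_k E(u_k)<\infty$ controls the rate of this convergence uniformly in $k$; feeding this into the fixed-scale inequality gives $\limsup_{\epsilon\to0}E^u_\epsilon(f)\le\liminf_k\!\big(\int_\Omega f\,e^{u_k}\,\dvol_g\big)\le\liminf_k E(u_k)$. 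Taking the supremum over admissible $f$ then gives $E(u)=\sup_f\limsup_{\epsilon\to0}E^u_\epsilon(f)\le\liminf_k E(u_k)$.

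The step I expect to be the genuine obstacle is this last passage to the limit: the naive interchange $\limsup_\epsilon\liminf_k\le\liminf_k\limsup_\epsilon$ is \emph{false} in general, so one cannot simply take limits in the fixed-scale inequality. The argument must use the $W^{1,2}$-regularity of the approximands in an essential way, through the fact (from \cite{KoSc93}) that for maps with a common energy bound the approximate energy densities converge to the limiting energy-density measures with a modulus depending only on that bound, which is precisely what allows the two limiting processes to be exchanged. Once that uniform convergence is available the double limit collapses, and the only remaining point — the measurability and coarea bookkeeping for the surface measures $\sigma_{S_\epsilon(x)}$ — is routine.
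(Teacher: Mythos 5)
Your fixed-scale step is fine: along a pointwise a.e.\ convergent subsequence, the Fubini/Fatou argument does give $E^u_\epsilon(f)\le\liminf_k E^{u_k}_\epsilon(f)$ for each fixed $\epsilon$ and each $0\le f\le 1$ in $C_c(\Omega)$ (in fact one even gets convergence of $E^{u_k}_\epsilon(f)$ to $E^u_\epsilon(f)$ along the full sequence, since $|d(u_k(x),u_k(y))-d(u(x),u(y))|\le d(u_k(x),u(x))+d(u_k(y),u(y))$ and $u_k\to u$ in $L^2$). The gap is exactly at the step you single out, and the justification you offer for it is false. You claim that the uniform bound $\sup_k E(u_k)<\infty$ forces the convergence $E^{u_k}_\epsilon(f)\to\int_\Omega f\,e^{u_k}\,\dvol_g$ (as $\epsilon\to 0$) to hold with a modulus depending only on the energy bound. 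It does not: take $\Omega=(0,1)$, $\Delta=\bR$, $v_k(x)=k^{-1}\sin(kx)$. Then $E(v_k)\to 1/2\cdot|\Omega|$ is bounded away from $0$, yet for any fixed $\epsilon>0$ one has $|v_k(x\pm\epsilon)-v_k(x)|\le 2/k$, so $E^{v_k}_\epsilon(f)\to 0$ as $k\to\infty$. Hence there is no uniform two-sided modulus on the class $\{E(v)\le C\}$, and the interchange $\limsup_\epsilon\liminf_k\le\liminf_k\lim_\epsilon$ cannot be obtained the way you state it.

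What is true, and what the argument actually needs, is only a one-sided bound that is uniform over the map: for every $f\in C_c(\Omega)$ with $0\le f\le 1$ there is $\eta(\epsilon)\to 0$ (depending on $f$, $\Omega$, $g$ but not on $v$) such that $E^{v}_\epsilon(f)\le(1+\eta(\epsilon))\,E(v)$ for all finite-energy $v$ (the dangerous direction — approximate energy at scale $\epsilon$ exceeding the true energy — is excluded; the harmless direction, as in the example above, is not, and need not be). This is precisely the content of Korevaar--Schoen's sub-partition lemma (the ``essential monotonicity'' of the approximate energies, obtained by chaining the triangle inequality along subdivided segments and averaging), and it is not a consequence of weak convergence of the densities $e^v_\epsilon\,\dvol_g\rightharpoonup e^v\,\dvol_g$ for each fixed $v$. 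With that lemma in hand your chain closes: $\limsup_\epsilon E^u_\epsilon(f)\le\limsup_\epsilon\liminf_k E^{u_k}_\epsilon(f)\le\limsup_\epsilon(1+\eta(\epsilon))\liminf_k E(u_k)=\liminf_k E(u_k)$, and taking the supremum over $f$ finishes. Note that the paper gives no proof of this statement — it quotes \cite[Theorem 1.6.1]{KoSc93} — and the Korevaar--Schoen proof is exactly of this shape: $L^2$-continuity of the fixed-scale functionals combined with the sub-partition estimate. So your skeleton is the standard one, but as written the crucial uniformity you invoke is not available, and you must either prove or cite the sub-partition bound to make the limit interchange legitimate.
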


\begin{rem}
 If there is an isometric embedding $i \colon (\Delta,d) \hookrightarrow (\bR^N, d_E)$ in an Euclidean space, then $W^{1,2}(\Omega,\Delta ) =\{ u\in W^{1,2}(\Omega, \bR^N) \, | \, u(x) \in X \text{ a.e. } x \in \Omega \}$ and 
\[ E(u) = \int_\Omega |\nabla(i \circ u)|^2.\]
These definitions are the working definitions adopted in \cite{Gromov-Schoen}, where harmonic maps towards locally compact Euclidean buildings were first studied, by locally embedding isometrically Euclidean buildings in Euclidean spaces. In particular, this shows that Korevaar-Schoen definition of the Sobolev space specializes to the classical definition when $(\Delta,d)$ is the real line equipped with the Euclidean metric. 
\end{rem}

When the boundary $\partial \Omega$ of $\Omega$ in $M$ is locally Lipschitz, there is a well-defined trace map $W^{1,2}(\Omega, \Delta ) \to L^{2}(\partial \Omega, \Delta )$.

\begin{thm}[{\cite[Corollary 1.6.3 and Theorem 1.12.2]{KoSc93}}]\label{KoSc-trace}
Let $(\Omega,g)$ be a Lipschitz Riemannian domain and let $(\Delta,d)$ be a complete metric space.
\begin{enumerate}
    \item If $u,v \in W^{1,2}(\Omega, \Delta )$, then the function $d_\Delta(u,v)$ is in $W^{1,2}(\Omega, \bR )$, and the condition $tr(u) = tr(v)$ is equivalent to $tr(d(u,v)) = 0$ in $L^{2}(\partial \Omega, \bR )$.
    \item If the sequence $(u_i)$ of maps in $W^{1,2}(\Omega, \Delta )$ has uniformly bounded energies $E(u_i)$, and if $(u_i)$ converges in $L^2(\Omega, \Delta)$ to a map $u$, then $u \in W^{1,2}(\Omega, \Delta )$ and the trace functions of the $u_i$ converge in $L^{2}(\partial \Omega, \Delta )$ to the trace of $u$. 
\end{enumerate}
\end{thm}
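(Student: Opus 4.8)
The plan is to reduce both assertions to the classical Sobolev theory of real-valued functions, following \cite[\S1.6, \S1.12]{KoSc93}. Both reductions rest on the same elementary principle: post-composing a $W^{1,2}(\Omega,\Delta)$-map with an $L$-Lipschitz map $\Delta\to\Delta'$ produces a $W^{1,2}(\Omega,\Delta')$-map, with the pointwise domination $e^{L\circ w}\le L^2\, e^{w}$ of energy densities, and this is compatible with traces. Granting this, here is how I would organize the argument.

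First I would treat the pair map. Given $u,v\in W^{1,2}(\Omega,\Delta)$, equip $\Delta\times\Delta$ with the distance $d^2_{\Delta\times\Delta}\big((p,q),(p',q')\big)=d_\Delta(p,p')^2+d_\Delta(q,q')^2$. Directly from the definition of the approximate energy densities $e^{\cdot}_\epsilon$ as integrals of squared distances over spheres $S_\epsilon(x)$, one gets $e^{(u,v)}_\epsilon=e^{u}_\epsilon+e^{v}_\epsilon$, and likewise $\int_\Omega d^2_{\Delta\times\Delta}\big((u,v),(Q_1,Q_2)\big)\,\dvol_g<\infty$; hence $(u,v)\in W^{1,2}(\Omega,\Delta\times\Delta)$ with $E((u,v))=E(u)+E(v)$. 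Since $d_\Delta\colon\Delta\times\Delta\to\bR$ is $1$-Lipschitz, the composition principle gives $d_\Delta(u,v)=d_\Delta\circ(u,v)\in W^{1,2}(\Omega,\bR)$ with $E(d_\Delta(u,v))\le E(u)+E(v)$, which is the first half of (1). For the trace statement I would use that the trace of a $W^{1,2}$ map (real- or metric-valued) is the $L^2(\partial\Omega)$-limit of the restrictions to the interior collar hypersurfaces $\partial\Omega_\epsilon$, transported to $\partial\Omega$ by the nearest-point projection, this being legitimate precisely because $\partial\Omega$ is Lipschitz. Continuity of $d_\Delta$ then lets one pass to the limit in $d_\Delta\big(u|_{\partial\Omega_\epsilon},v|_{\partial\Omega_\epsilon}\big)$ to obtain $tr\big(d_\Delta(u,v)\big)=d_\Delta\big(tr(u),tr(v)\big)$ a.e.\ on $\partial\Omega$. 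Consequently $tr(u)=tr(v)$ in $L^2(\partial\Omega,\Delta)$ if and only if $d_\Delta(tr(u),tr(v))=0$ a.e., if and only if $tr\big(d_\Delta(u,v)\big)=0$ in $L^2(\partial\Omega,\bR)$, proving (1).

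For (2), the assertion $u\in W^{1,2}(\Omega,\Delta)$ together with $E(u)\le\liminf_i E(u_i)$ is exactly \Cref{KoSc_convexity_energy}. To get convergence of traces I would set $f_i:=d_\Delta(u_i,u)$. By (1) each $f_i\in W^{1,2}(\Omega,\bR)$, and the energy domination coming out of the pair-map computation gives $\sup_i E(f_i)\le \sup_i E(u_i)+E(u)<\infty$; moreover $f_i\to 0$ in $L^2(\Omega,\bR)$ because $u_i\to u$ in $L^2(\Omega,\Delta)$. Now for \emph{real-valued} functions the Korevaar--Schoen $W^{1,2}$ space coincides with the classical one, so the trace operator $W^{1,2}(\Omega,\bR)\to L^2(\partial\Omega,\bR)$ is bounded and in fact compact for a Lipschitz domain $\Omega$; since the $f_i$ are bounded in $W^{1,2}$ and converge to $0$ in $L^2(\Omega)$, they converge weakly to $0$ in $W^{1,2}$, and compactness of the trace upgrades this to $tr(f_i)\to 0$ in $L^2(\partial\Omega,\bR)$. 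Applying the trace identity of (1) to each pair $(u_i,u)$ gives $tr(f_i)=d_\Delta\big(tr(u_i),tr(u)\big)$, whence $tr(u_i)\to tr(u)$ in $L^2(\partial\Omega,\Delta)$.

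The main obstacle — and the only non-soft ingredient — is the boundary collar analysis underpinning the very definition of the trace: that for a Lipschitz domain the restrictions $f|_{\partial\Omega_\epsilon}$ (transported to $\partial\Omega$) form an $L^2$-Cauchy family whose limit is controlled uniformly by $E(f)$, so that the trace is well-defined and the Rellich-type compactness invoked in (2) holds. This requires flattening a merely Lipschitz boundary and a one-dimensional fundamental-theorem-of-calculus estimate across the collar; it is carried out in \cite[\S1.12]{KoSc93}, and with it in hand everything above is formal. Everything else — additivity of approximate energy densities, Lipschitz post-composition, and the lower semicontinuity already recorded in \Cref{KoSc_convexity_energy} — is routine, so I would simply quote \cite[\S1.6, \S1.12]{KoSc93} for the details.
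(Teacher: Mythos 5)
The paper offers no proof of this statement at all: it is quoted verbatim from Korevaar--Schoen, with part (1) being their Corollary 1.6.3 and part (2) their Theorem 1.12.2, so the only meaningful comparison is with the source. Your reduction is sound and in fact mirrors the structure of their arguments: the pair-map plus Lipschitz post-composition device is exactly how Corollary 1.6.3 is obtained, and the collar construction of the trace that you isolate as ``the only non-soft ingredient'' is precisely the content of their \S1.12, so nothing essential is missing. Two small remarks. First, with the quadratic product metric $d_{\Delta\times\Delta}^2=d_\Delta^2+d_\Delta^2$ the map $d_\Delta\colon\Delta\times\Delta\to\bR$ is only $\sqrt{2}$-Lipschitz (take $\Delta=\bR$, $(0,0)$ versus $(-t,t)$), so the correct bound is $E\bigl(d_\Delta(u,v)\bigr)\le 2\bigl(E(u)+E(v)\bigr)$; this is immaterial for membership in $W^{1,2}(\Omega,\bR)$ and for everything downstream. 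Second, in part (2) you depart from Korevaar--Schoen, who prove trace convergence by a direct quantitative collar estimate (the $L^2(\partial\Omega)$ distance of traces is controlled by the interior $L^2$ distance near the boundary together with the energies in the collar); your route instead sets $f_i=d_\Delta(u_i,u)$, uses that the Korevaar--Schoen energy of a real-valued map is comparable to the Dirichlet energy, and then invokes boundedness in $H^1$, weak convergence $f_i\rightharpoonup 0$, and compactness of the classical trace operator $H^1(\Omega)\to L^2(\partial\Omega)$ on a bounded Lipschitz domain, combined with the identity $tr\bigl(d_\Delta(u_i,u)\bigr)=d_\Delta\bigl(tr(u_i),tr(u)\bigr)$ from part (1). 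This is a legitimate alternative: it trades the explicit collar estimate for standard Rellich-type compactness of the scalar trace, at the cost of being slightly less self-contained (it still needs the \S1.12 collar analysis to define the trace and to justify that it commutes with $d_\Delta$) and of giving convergence without the quantitative rate that the Korevaar--Schoen estimate provides; together with \Cref{KoSc_convexity_energy} for the lower semicontinuity of energy, it does prove the statement.
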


\subsection{NPC spaces}
\begin{defn}
A complete metric space $(\Delta, d)$ is said to be non-positively curved (NPC), or CAT(0) in Gromov terminology, if for any pair of points $x,y \in \Delta$, there exists a point $m \in \Delta$ such that
\[ d(z, m)^2 + \frac{d(x, y)^2}{4} \leq \frac{d(z, x)^2 + d(z, y)^2}{2} \]
for any $z \in \Delta$.
\end{defn}
The point $m$ satisfies $d(x, m) = d(y, m) = \frac{d(x,y)}{2}$, and it is uniquely determined by this property. It is called the midpoint of x and y. 

Crucially, the energy on $W^{1,2}(\Omega, \Delta )$ is convex When $(\Delta, d)$ is NPC.
\subsection{Harmonic maps}

When the boundary $\partial \Omega$ of the source $\Omega$ in $M$ is locally Lipschitz and the target $(\Delta,d)$ is a NPC complete metric space, the Dirichlet problem can be solved as follows.

\begin{thm}[Korevaar-Schoen {\cite[Theorem 2.2]{KoSc93}}]\label{Dirichlet-NPC}
Let $(\Omega,g)$ be a Lipschitz Riemannian domain and let $(\Delta,d)$ be a NPC complete metric space. Let $\phi \in W^{1,2}(\Omega, \Delta )$. Then there exists a unique $u \in W^{1,2}(\Omega, \Delta )$ which minimizes the energy among all maps in $W^{1,2}(\Omega, \Delta )$ with the same trace as $\phi$. The map $u$ is locally Lipschitz continuous in the interior of $\Omega$.
\end{thm}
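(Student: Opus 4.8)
The plan is to establish existence and uniqueness by the direct method of the calculus of variations, the decisive ingredient being the convexity of the energy in an NPC target, and then to obtain interior Lipschitz regularity via a monotonicity formula following \cite{KoSc93}.

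First I would record the basic \emph{parallelogram inequality} for energy densities. Given $u,v\in W^{1,2}(\Omega,\Delta)$, let $w(x)$ denote the midpoint of $u(x)$ and $v(x)$, which is well defined and measurable since $(\Delta,d)$ is NPC. Applying the NPC inequality pointwise and passing to the limit in the approximate energy densities $e^u_\epsilon$ gives $w\in W^{1,2}(\Omega,\Delta)$ together with
\[
E(w)+\tfrac14\int_\Omega |\nabla d(u,v)|^2\,\dvol_g\ \le\ \tfrac12\bigl(E(u)+E(v)\bigr),
\]
where $d(u,v)\in W^{1,2}(\Omega,\bR)$ by \Cref{KoSc-trace}(1). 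Since $d(w,v)=\tfrac12 d(u,v)$, if $tr(u)=tr(v)$ then $tr(d(u,v))=0$, hence $tr(w)=tr(v)$ by \Cref{KoSc-trace}(1); thus the class $W^{1,2}_\phi(\Omega,\Delta)$ of maps with trace $tr(\phi)$ is nonempty and stable under taking midpoints. I would then choose a minimizing sequence $u_k\in W^{1,2}_\phi(\Omega,\Delta)$ with $E(u_k)\to E_0:=\inf E$; applying the displayed inequality to $(u_j,u_k)$, whose midpoint lies in $W^{1,2}_\phi(\Omega,\Delta)$ and hence has energy $\ge E_0$, yields $\int_\Omega |\nabla d(u_j,u_k)|^2\to 0$ as $j,k\to\infty$, and combining this with the Poincar\'e inequality on the Lipschitz domain $\Omega$ applied to the zero-trace real function $d(u_j,u_k)$ shows that $(u_k)$ is Cauchy in $L^2(\Omega,\Delta)$. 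Letting $u$ be its limit, \Cref{KoSc_convexity_energy} gives $u\in W^{1,2}(\Omega,\Delta)$ with $E(u)\le E_0$, and \Cref{KoSc-trace}(2) gives $tr(u)=tr(\phi)$, so $u$ is a minimizer. Uniqueness then follows from the same inequality: two minimizers $u_1,u_2$ would have a midpoint of energy $\le E_0-\tfrac14\|\nabla d(u_1,u_2)\|_{L^2}^2$, forcing $\nabla d(u_1,u_2)=0$ and hence $d(u_1,u_2)\equiv 0$ by Poincar\'e.

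The substantive part is the interior local Lipschitz bound, which I expect to be the main obstacle, since it is here that the geometry of NPC spaces is genuinely used beyond mere metric convexity. The plan, following \cite{KoSc93}, is: (i) use the first variation of energy at the minimizer, together with the convexity of $Q\mapsto d(Q,Q_0)^2$ along geodesics, to show that $x\mapsto d(u(x),Q_0)^2$ is weakly subharmonic for every fixed $Q_0\in\Delta$, yielding interior $L^\infty$ oscillation control and a maximum principle; (ii) establish a monotonicity formula for the scale-invariant ``order function'' comparing $r^{2-n}\int_{B_r(x_0)}e^u\,\dvol_g$ with the $L^2$-oscillation of $u$ on $\partial B_r(x_0)$, and show that it has a limit as $r\to 0$; (iii) run a blow-up and compactness argument to upgrade this to a uniform Morrey-type decay $\int_{B_r(x_0)}e^u\le C r^{\,n-2+2\alpha}$, giving interior H\"older continuity of $u$; and (iv) bootstrap to the sharp linear modulus of continuity by comparing $u$ on small balls with the energy minimizer into a flat (tangent-cone) model solving the corresponding Dirichlet problem, again invoking the NPC inequality to control the comparison error. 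The delicate points in (ii)--(iv) --- handling the non-smoothness of $\Delta$ and the interplay between the intrinsic and extrinsic energy densities --- are where the real work lies; by contrast the existence and uniqueness arguments above are soft, using only convexity together with \Cref{KoSc_convexity_energy} and \Cref{KoSc-trace}.
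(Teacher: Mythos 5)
This statement is quoted in the paper directly from Korevaar--Schoen \cite[Theorem 2.2]{KoSc93}; the paper gives no proof of its own, so the only meaningful comparison is with the original argument. Your first half --- existence and uniqueness via the NPC midpoint construction, the energy convexity inequality $E(w)+\tfrac14\int_\Omega|\nabla d(u,v)|^2\le\tfrac12(E(u)+E(v))$, stability of the trace class under midpoints, a minimizing sequence made Cauchy in $L^2$ by the Poincar\'e inequality for zero-trace functions, and then lower semicontinuity (\Cref{KoSc_convexity_energy}) plus trace continuity (\Cref{KoSc-trace}) --- is correct and is essentially Korevaar--Schoen's own proof of existence and uniqueness.

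The gap is in the interior Lipschitz regularity, which is part of the statement but which you only outline as a plan and explicitly defer (``where the real work lies''). Beyond being incomplete, the route you sketch in (ii)--(iv) is not the mechanism that actually proves this for a general NPC target: a monotonicity/blow-up analysis with comparison against a ``flat tangent-cone model'' is the Gromov--Schoen strategy for maps into Euclidean buildings (or F-connected complexes), where such homogeneous models exist; for an arbitrary NPC complete metric space $\Delta$ there is no such model to compare with, and passing from Morrey--H\"older decay to a genuinely linear modulus of continuity by that bootstrap is exactly the step that would fail as stated. Korevaar--Schoen instead obtain interior Lipschitz continuity directly from the strong convexity of the energy, constructing competitors by geodesic interpolation of $u$ toward suitable averages modulated by cutoffs and combining the resulting differential inequalities with the subharmonicity of $x\mapsto d(u(x),Q)$, with no blow-up or tangent-map compactness; the quantitative form of this bound that the paper actually uses later is the Zhang--Zhong--Zhu estimate recorded as \Cref{energy_control_Lipschitz}. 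So as written your proposal proves the existence and uniqueness clause but not the Lipschitz clause, and the proposed path to the latter would need to be replaced by (or reduced to) the convexity-based comparison argument of \cite{KoSc93}.
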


\begin{defn}
Let $(M,g)$ be a Riemannian manifold and $(\Delta,d)$ be a complete metric space. A map $u\colon M \to \Delta$ is called harmonic if it is a local energy minimizer: every point in $M$ admits an open neighborhood with Lipschitz boundary such that all maps in $W^{1,2}(\Omega,\Delta )$ which agree with $u$ outside of this neighborhood have no less energy.
\end{defn}

Thanks to Theorem \ref{Dirichlet-NPC}, a harmonic map is locally Lipschitz continuous. In particular, to check that a locally Lipschitz continuous map $u\colon M \to \Delta$ is harmonic, it is sufficient to check that every point in $M$ admits an open neighborhood with Lipschitz boundary such that every locally Lipschitz continuous map $v\colon M \to \Delta$ which agrees with $u$ outside of this neighborhood has no less energy.

It will be important to know that the local Lipschitz constant of a harmonic map is controlled by the energy as follows.

\begin{thm}[Zhang-Zhong-Zhu, see {\cite[Theorem 1.4 and Remark 1.5 ii)]{ZZZ}}]\label{energy_control_Lipschitz}
Let $\Omega$ be a bounded domain (with smooth boundary) of an $n$-dimensional Riemannian
manifold $(M, g)$ with $\mathrm{Ric}_M \geq - K$ for some $K \geq 0$, and let $(\Delta, d)$ be a NPC complete metric space. For every $R > 0$, there exists a constant $C = C(n, \sqrt{K} R)$ such that for every harmonic map $u \colon \Omega \to \Delta$ and every $x \in \Omega$ with $B_{2R}(x) \Subset \Omega$, the restriction of $u$ to $B_R(x)$ is Lipschitz continuous and the Lipschitz constant is bounded above by $C\int_{B_R(x)} e^u(x) \dvol_g(x)$.
\end{thm}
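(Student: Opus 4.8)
This is a verbatim quotation of a result of Zhang--Zhong--Zhu, so the ``proof'' is a reference to \cite[Theorem 1.4 and Remark 1.5 ii)]{ZZZ}; for orientation I sketch the structure of the argument and what makes it work, which is how I would proceed if forced to reprove it.

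The essential first step is to produce a substitute for the Bochner formula, which is unavailable because the target $(\Delta,d)$ carries no smooth structure. Following the Korevaar--Schoen calculus of maps into NPC spaces, one works with the directional energy densities and the measurable pull-back tensor of $u$, and exploits the $\mathrm{CAT}(0)$ quadrilateral inequality together with the energy-minimizing property of $u$ to derive a \emph{weak Bochner inequality}: the energy density $e^u$ is, on $\Omega$, a distributional subsolution of an elliptic inequality $\Delta_g e^u \geq -C(n,K)\,e^u$, the constant depending only on $n$ and on $K$. This is where both the NPC hypothesis on $\Delta$ and the lower Ricci bound on $M$ enter, and it is the genuinely hard input.

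Granting this, the remainder is standard scalar elliptic theory. Since $e^u\geq 0$ is a nonnegative subsolution of a linear elliptic inequality on a manifold with $\mathrm{Ric}_M\geq -K$, the De Giorgi--Nash--Moser mean-value inequality furnishes, after rescaling so that the radius enters only through the dimensionless combination $\sqrt{K}R$, a pointwise bound for $e^u$ on $B_R(x)$ in terms of a local $L^1$-average of $e^u$ over a slightly larger ball contained in $B_{2R}(x)$. On the other hand, for an energy minimizer into an NPC target the local Lipschitz constant of $u$ near a point is controlled by $(\sup e^u)^{1/2}$ over a small neighborhood, the continuity needed here being already provided by \Cref{Dirichlet-NPC}. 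Combining the two and absorbing the powers of the radius and the dimensional constants into $C=C(n,\sqrt{K}R)$ gives the bound in the stated form, with the energy integral taken over $B_R(x)$; the precise bookkeeping of radii and the passage between $e^u$ and its square root is exactly \cite[Remark 1.5 ii)]{ZZZ}.

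Thus the sole obstacle --- and the reason we quote rather than reprove --- is establishing the weak Bochner subsolution property for maps into an arbitrary NPC target with the correct dependence on the Ricci lower bound; once that is available the Lipschitz estimate is a routine consequence of the mean-value inequality for subsolutions of a linear elliptic equation.
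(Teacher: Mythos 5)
The paper does not prove this statement either---it is imported verbatim from Zhang--Zhong--Zhu with exactly the citation you give, so your proposal (quote \cite{ZZZ}) coincides with the paper's treatment. Your sketch of the underlying argument is harmless orientation, but nothing in the paper depends on its details.
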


Recall that a metric space is called proper if all of its closed balls are compact; in particular it is locally compact. By the Hopf-Rinow Theorem, a locally compact NPC metric space is proper if and only if it is complete, see \cite[Theorem I.2.4]{Ballmann95}. Symmetric spaces of noncompact type and locally compact Euclidean Bruhat-Tits buildings are examples of proper CAT(0) spaces.

\begin{thm}[Corlette, Gromov-Schoen, Korevaar-Schoen]\label{existence_harmonic_finite_energy}
Let $M$ be a complete Riemannian manifold with a finitely generated fundamental group, $\Delta$ be a proper NPC metric space and $\rho \colon \pi_1(M) \to \mathrm{Isom}(\Delta)$ a group homomorphism. Assume that
\begin{itemize}
    \item the action of $\rho$ does not have a fixed point on $\partial \Delta$;
    \item there exists a finite energy\footnote{Since the energy density of a $\rho$-equivariant map $u \colon \tilde{M} \to \Delta$ is $\rho$-equivariant, it is well-defined on $M$; the map $u$ is said to have finite energy if the integral on $M$ of its energy density is finite.} $\rho$-equivariant map $\tilde{M} \to \Delta$.
\end{itemize}
Then there exists a $\rho$-equivariant locally Lipschitz continuous map $u:\tilde{M} \to \Delta$ of least (finite) energy.  In particular, $u$ is harmonic.
\end{thm}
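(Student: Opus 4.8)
The plan is to obtain $u$ as the limit of a carefully chosen minimizing sequence. Write $\Gamma=\pi_1(M)$, fix a finite generating set $S\subset\Gamma$ and a basepoint $\tilde x_0\in\tilde M$ (equipped with the pulled-back metric), and let $\mathcal F$ be the nonempty set of $\rho$-equivariant finite-energy maps $\tilde M\to\Delta$, with $E_0:=\inf_{\mathcal F}E\in[0,\infty)$, which is finite by the second hypothesis. Two difficulties must be overcome: $M$ is only complete, not compact, so a minimizing sequence could concentrate or spread its energy near infinity in $M$; and $\Delta$ is non-compact, so the images could run off to the ideal boundary $\partial\Delta$. The first difficulty is handled by interior estimates together with the finite-energy hypothesis; the second is precisely where the hypothesis of no fixed point on $\partial\Delta$ enters.

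First I would replace an arbitrary minimizing sequence $v_i$ by a better one. Fix an exhaustion $M=\bigcup_k M_k$ by relatively compact smooth domains; for each $i$, solve the Dirichlet problem of \Cref{Dirichlet-NPC} on a connected lift of $M_i$ with boundary trace $v_i|_\partial$, and propagate the solution $\Gamma$-equivariantly using its uniqueness. This produces $u_i\in\mathcal F$ with $E(u_i)\le E(v_i)$, equal to $v_i$ outside a $\Gamma$-invariant relatively compact set, and \emph{locally energy minimizing} (hence locally Lipschitz) on the lift of $M_i$. Thus $E(u_i)\le E_0+1$ for $i$ large, and since any compact $L\subset\tilde M$ meets only finitely many $\Gamma$-translates of a fundamental domain, the energy of $u_i$ over $L$ is bounded by $N_L\,E(u_i)$ with $N_L$ independent of $i$.

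The main obstacle is the next step: showing the orbit points $\{u_i(\tilde x_0)\}$ stay in a fixed bounded subset of $\Delta$. For each $s\in S$ I would pick a path from $\tilde x_0$ to $s\tilde x_0$ together with a relatively compact tubular neighbourhood of it; since $u_i$ is Lipschitz there, Cauchy--Schwarz bounds $d(u_i(\tilde x_0),u_i(s\tilde x_0))$ by a fixed constant times $\sqrt{E_0+1}$, uniformly in $i$, and $u_i(s\tilde x_0)=\rho(s)u_i(\tilde x_0)$ by equivariance. Hence the displacement function $\delta(p):=\max_{s\in S}d(p,\rho(s)p)$, which is convex and continuous on the proper NPC space $\Delta$, satisfies $\delta(u_i(\tilde x_0))\le C$ for some fixed $C$, so the sublevel set $Z:=\{\delta\le C\}$ is a nonempty closed convex $\Gamma$-invariant set containing every $u_i(\tilde x_0)$. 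If $Z$ were unbounded, I would choose $p_n\in Z$ with $d(p_n,o)\to\infty$ for a basepoint $o$, pass by properness to a limit geodesic ray $\xi$ of the segments $[o,p_n]$, and use that the uniform bound $d(\rho(s)p_n,p_n)\le C$ together with convexity of $t\mapsto d\big(\rho(s)\xi(t),\xi(t)\big)$ forces the rays $\rho(s)\xi$ and $\xi$ to be asymptotic; then $\xi(\infty)\in\partial\Delta$ would be fixed by every $s\in S$, hence by $\Gamma$, contradicting the hypothesis. Therefore $Z$ is bounded and the $u_i(\tilde x_0)$ lie in a fixed ball.

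With these bounds the rest is routine. On any fixed compact $L\subset\tilde M$ the maps $u_i$ (for $i$ large) are locally energy minimizing with uniformly bounded energy and with images in a fixed bounded set, so the interior estimate \Cref{energy_control_Lipschitz} bounds their Lipschitz constants on $L$ uniformly in $i$; Arzel\`a--Ascoli (using properness of $\Delta$) and a diagonal argument then extract a subsequence converging locally uniformly, hence in $L^2_{\mathrm{loc}}$, to a $\rho$-equivariant locally Lipschitz map $u\colon\tilde M\to\Delta$. Lower semicontinuity of the energy (\Cref{KoSc_convexity_energy}) gives $E(u)\le\liminf_i E(u_i)=E_0$, so $u\in\mathcal F$ is a minimizer. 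Finally, if $B\subset\tilde M$ is a small Lipschitz domain injecting into $M$ and $w$ agrees with $u$ off $B$, then extending $w$ equivariantly over $\Gamma\cdot B$ keeps it in $\mathcal F$ and changes the energy by a positive multiple of $E_B(w)-E_B(u)$; minimality of $u$ forces $E_B(w)\ge E_B(u)$, so $u$ is a local energy minimizer, i.e.\ harmonic, and locally Lipschitz by \Cref{Dirichlet-NPC}. I expect the boundedness argument in the third paragraph to be the genuinely delicate point, the remaining steps being a standard compactness-plus-lower-semicontinuity package.
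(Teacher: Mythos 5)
The paper itself does not reprove this statement: its ``proof'' is a reduction to the literature (Corlette for nonpositively curved manifold targets, Gromov--Schoen for buildings, and Korevaar--Schoen in general), so you are attempting strictly more than the paper does. Your architecture is nonetheless the right one and mirrors how those references argue: the displacement-function step in your third paragraph is correct --- from $\delta(u_i(\tilde x_0))\le C$ and unboundedness of $\{\delta\le C\}$ you take segments $[o,p_n]$, use convexity of $t\mapsto d(\gamma_n(t),\rho(s)\gamma_n(t))$ to get a uniform bound along them, pass to a limit ray by properness, and conclude every generator (hence $\rho(\Gamma)$, since the stabilizer of a boundary point is a subgroup) fixes the endpoint, contradicting the hypothesis --- and this is exactly the bridge between ``no fixed point on $\partial\Delta$'' and non-escape of the minimizing sequence. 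The Arzel\`a--Ascoli/lower-semicontinuity package and the passage from global minimality to harmonicity via equivariant extension over small balls that inject into $M$ are also fine. (A harmless slip: $Z=\{\delta\le C\}$ is closed and convex but not $\Gamma$-invariant, since translating conjugates the generators; your argument never uses invariance.)

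The genuine gap is the replacement step in your second paragraph. A connected component of $\pi^{-1}(M_i)\subset\tilde M$ is in general not relatively compact: as soon as $M_i$ carries loops that are nontrivial in $\pi_1(M)$ --- which happens for all large $i$ in any exhaustion --- that component is an infinite cover of $M_i$. So \Cref{Dirichlet-NPC}, which is stated only for Lipschitz Riemannian domains (relatively compact, with compact metric completion), does not apply to it; and ``propagate the solution equivariantly by uniqueness'' is vacuous once $\pi^{-1}(M_i)$ is connected, because there are no disjoint translates left to propagate to. What you actually need is solvability, uniqueness and interior regularity for the \emph{equivariant} (twisted) Dirichlet problem: minimize energy among $\rho$-equivariant maps agreeing with $v_i$ outside $\pi^{-1}(M_i)$. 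This is true, but it is a theorem in its own right (essentially Korevaar--Schoen's equivariant Dirichlet theory, which is what the paper's citation supplies): one shows minimizing sequences for the constrained problem are Cauchy by combining the energy-convexity inequality with the fact that $d(w_a,w_b)$ descends to $M$, vanishes outside the bounded set $M_i$, and has small Dirichlet integral, so a Poincar\'e inequality on $M_i$ gives $L^2$-smallness; uniqueness then yields equivariance of the minimizer, and local minimality against competitors supported in small evenly covered balls gives the local Lipschitz regularity needed to invoke \Cref{energy_control_Lipschitz}. Without such an argument (or a ball-by-ball relaxation scheme, which has its own convergence problem on overlaps), the uniform interior Lipschitz bounds for your improved minimizing sequence --- and hence the compactness extraction in your final paragraph --- are not justified by the tools you quote.
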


\begin{proof}
When $\Delta$ is a complete simply connected manifold with nonpositive sectional curvature, this is \cite[Theorem 2.2]{Corlette92}. When $\Delta$ is a locally compact Euclidean Bruhat-Tits building, this is \cite[Theorem 7.1]{Gromov-Schoen}. In general, this is the conjonction of Theorem 2.1.3, Remark 2.1.5 and Theorem 2.2.1 in \cite{KoSc97}.     
\end{proof}

\subsection{Averaging harmonic maps}

\begin{prop}[{\cite[Lemma 2.5.1]{KoSc93}}]
Let $(\Delta, d)$ be a NPC complete metric space. Let $S \subset \Delta$ be a finite subset. Then, there exists a unique point $c(S)$ such that the number $\sum_{P \in S} d(c(S),P)^2$ is minimal.
\end{prop}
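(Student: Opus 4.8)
The statement to prove is the existence and uniqueness of the ``center of mass'' $c(S)$ of a finite subset $S$ of an NPC space, minimizing $\sum_{P \in S} d(c, P)^2$. This is a classical convexity argument, so the proof is short.

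\emph{Existence.} Fix $Q \in S$ and set $f(c) = \sum_{P \in S} d(c, P)^2$. Since $d(\cdot, P)$ is continuous and $d(c, P) \geq d(c, Q) - d(Q, P)$ by the triangle inequality, $f(c) \to \infty$ as $d(c, Q) \to \infty$; hence any minimizing sequence $(c_k)$ is bounded. By completeness alone we cannot extract a convergent subsequence, so instead I would use the NPC inequality directly to show $(c_k)$ is Cauchy. For any two points $c_k, c_\ell$ with midpoint $m$, the NPC condition applied with $z = P$ gives $d(P, m)^2 \leq \tfrac12(d(P, c_k)^2 + d(P, c_\ell)^2) - \tfrac14 d(c_k, c_\ell)^2$. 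Summing over $P \in S$ yields
\[
f(m) \leq \frac{f(c_k) + f(c_\ell)}{2} - \frac{|S|}{4} d(c_k, c_\ell)^2.
\]
Letting $I = \inf f$, the right side is at most $\frac{f(c_k)+f(c_\ell)}{2} - \frac{|S|}{4}d(c_k,c_\ell)^2$, while the left side is at least $I$, so $d(c_k, c_\ell)^2 \leq \frac{2}{|S|}(f(c_k) + f(c_\ell) - 2I) \to 0$. Thus $(c_k)$ is Cauchy, converges to some $c(S)$ by completeness, and $f(c(S)) = I$ by continuity.

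\emph{Uniqueness.} If $c$ and $c'$ both minimize $f$, apply the same summed NPC inequality to the midpoint $m$ of $c, c'$: $f(m) \leq \frac{f(c) + f(c')}{2} - \frac{|S|}{4} d(c, c')^2 = I - \frac{|S|}{4} d(c, c')^2$. Since $f(m) \geq I$, we get $d(c, c') = 0$, i.e. $c = c'$. There is no real obstacle here; the only point requiring care is packaging the NPC midpoint inequality so it applies to a sum of squared distances, which is immediate since the inequality is termwise and one simply adds. The same computation simultaneously delivers coercivity-free Cauchyness of minimizing sequences and uniqueness, which is the elegant feature of the NPC hypothesis.
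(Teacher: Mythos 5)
Your proof is correct and is essentially the standard argument: the paper itself gives no proof, citing Korevaar--Schoen (Lemma 2.5.1), and their proof is exactly this use of the NPC midpoint inequality, summed over $S$, to obtain both the Cauchy property of minimizing sequences and uniqueness in one stroke. The only cosmetic remarks are that your initial coercivity observation is superfluous (the Cauchy argument never uses boundedness), and that one should tacitly assume $S\neq\varnothing$ so that the factor $|S|/4$ is nonzero.
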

The point $c(S)$ is called the center of mass of the finite set $S$. It belongs to the closed convex hull of $S$ \cite[Proposition 2.5.4]{KoSc93}.

\begin{prop}[{\cite[Proposition 2.5.2]{KoSc93}}]\label{Lipschitz_centerofmass}
Let $(\Delta, d)$ be a NPC complete metric space. Let $n$ be a positive integer. Let $P, Q\colon \{1, \ldots, n\} \to \Delta$ be two functions. Let $c_P$ and $c_Q$ be the centers of mass of the sets $\{P(1), \ldots, P(n)\}$ and $\{Q(1), \ldots, Q(n)\}$ respectively. Then
\[    d(c_P, c_Q)^2 \leq  \frac{1}{n} \sum_{i= 1}^n d(P(i), Q(i))^2 .\]    
\end{prop}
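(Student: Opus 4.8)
The plan is to reduce the statement to two elementary facts about complete NPC spaces, both consequences of the midpoint inequality in the definition above. For a family $z_1,\dots,z_n$ of points of $\Delta$, write $F(x)=\sum_{i=1}^n d(x,z_i)^2$; by the preceding proposition this strictly convex functional has a unique minimizer, which is the center of mass. Throughout I would work with $F_P(x)=\sum_{i}d(x,P(i))^2$ and $F_Q(x)=\sum_i d(x,Q(i))^2$, so that $c_P$ and $c_Q$ are their respective minimizers, and I would freely use the convexity estimate $d(\gamma(t),w)^2\le(1-t)\,d(\gamma(0),w)^2+t\,d(\gamma(1),w)^2-t(1-t)\,d(\gamma(0),\gamma(1))^2$ valid for any geodesic $\gamma\colon[0,1]\to\Delta$ and any $w\in\Delta$; this follows from the stated midpoint inequality by dyadic bisection (one may also bootstrap the midpoint case directly, the constant in the variance inequality below improving from $n/2$ to $n$ in the limit).

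The first ingredient is a \emph{variance inequality}: for every $z\in\Delta$ one has $F_P(z)\ge F_P(c_P)+n\,d(c_P,z)^2$, and likewise for $F_Q$. To prove it I would take $\gamma$ the geodesic from $c_P$ to $z$, sum the convexity estimate with $w=P(i)$ over $i$ to get $F_P(\gamma(t))\le(1-t)F_P(c_P)+t\,F_P(z)-nt(1-t)\,d(c_P,z)^2$, use $F_P(\gamma(t))\ge F_P(c_P)$ from minimality, divide by $t$ and let $t\to0^+$.

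The second ingredient is the \emph{quadrilateral inequality}: for any $x_1,x_2,x_3,x_4\in\Delta$,
\[ d(x_1,x_3)^2+d(x_2,x_4)^2\le d(x_1,x_2)^2+d(x_2,x_3)^2+d(x_3,x_4)^2+d(x_4,x_1)^2, \]
which I would obtain from three applications of the midpoint inequality: letting $p$ be the midpoint of $x_1,x_3$ and $q$ the midpoint of $x_2,x_4$, bound $d(p,x_2)^2$ and $d(p,x_4)^2$ in terms of the $x_j$, use $d(p,q)^2\ge 0$, and combine. Applying this to the four points $c_P,\ c_Q,\ Q(i),\ P(i)$ taken in this cyclic order — so that the two ``diagonals'' are $c_P$–$Q(i)$ and $c_Q$–$P(i)$ — and summing over $i$ yields
\[ F_Q(c_P)+F_P(c_Q)\le n\,d(c_P,c_Q)^2+F_Q(c_Q)+\sum_i d(P(i),Q(i))^2+F_P(c_P). \]
On the other hand, the variance inequality at $z=c_Q$ (for $F_P$) and at $z=c_P$ (for $F_Q$) gives $F_P(c_Q)+F_Q(c_P)\ge F_P(c_P)+F_Q(c_Q)+2n\,d(c_P,c_Q)^2$. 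Combining the two displays, the terms $F_P(c_P)$ and $F_Q(c_Q)$ cancel and one is left with $2n\,d(c_P,c_Q)^2\le n\,d(c_P,c_Q)^2+\sum_i d(P(i),Q(i))^2$, that is $d(c_P,c_Q)^2\le\frac1n\sum_i d(P(i),Q(i))^2$.

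The step requiring the most care is the final combination. The choice of cyclic order in the quadrilateral inequality is essential: with a different pairing of the vertices, or if one merely adds the two ``obvious'' instances of the variance inequality without the quadrilateral input, the resulting inequality degenerates to an identity already in Euclidean space and hence carries no information; one must arrange that the $F_P(c_P)+F_Q(c_Q)$ contributions cancel while a net term $-n\,d(c_P,c_Q)^2$ survives. Relatedly, the variance inequality must be invoked with the sharp constant $n$, so a single use of the midpoint inequality does not suffice and the full convexity estimate (or the bootstrap) is genuinely needed.
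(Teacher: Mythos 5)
Your proof is correct. The three ingredients all check out: (i) the variance inequality $F_P(z)\ge F_P(c_P)+n\,d(c_P,z)^2$ does follow from the convexity estimate along the geodesic from $c_P$ to $z$ (and you are right that a single application of the midpoint inequality only yields the constant $n/2$, which, as you note, makes the final combination collapse to a triviality — so the dyadic/geodesic upgrade to the sharp constant is genuinely needed; geodesics exist here because the space is complete and the midpoint in the NPC definition is forced to be the metric midpoint); (ii) the quadrilateral inequality $d(x_1,x_3)^2+d(x_2,x_4)^2\le\sum_{\text{sides}}d^2$ does follow from exactly the three midpoint applications you describe (two with the midpoint $p$ of $x_1,x_3$ against $z=x_2,x_4$, one with the midpoint $q$ of $x_2,x_4$ against $z=p$, discarding $d(p,q)^2\ge 0$); and (iii) applying it to $c_P, c_Q, Q(i), P(i)$ in that cyclic order, summing over $i$, and subtracting the summed variance inequalities cancels $F_P(c_P)+F_Q(c_Q)$ and leaves $n\,d(c_P,c_Q)^2\le\sum_i d(P(i),Q(i))^2$, as claimed.

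As for comparison with the paper: the paper gives no proof of this statement at all — it is quoted directly from Korevaar–Schoen \cite[Proposition 2.5.2]{KoSc93} — so there is nothing internal to match against. In the cited source the estimate is obtained within Korevaar–Schoen's quadrilateral-comparison/sub-embedding framework for NPC targets, whereas your argument is a self-contained derivation from the bare midpoint-inequality definition adopted in the paper, organized around the barycentric variance inequality plus the (Berg–Nikolaev-type) quadrilateral inequality. This is in the same spirit but arguably more elementary, and it has the virtue of making explicit where the sharp constant enters; it would serve perfectly well as a replacement for the citation.
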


\begin{prop}\label{centerofmass_leastenergy}
Let $(M,g)$ be a complete Riemannian manifold, $(\Delta,d)$ be a proper NPC space and $\rho \colon \pi_1(M) \to \mathrm{Isom}(\Delta)$ a group homomorphism.    
Let $u_i \colon \tilde{M} \to \Delta, 1 \leq i \leq n$, be $\rho$-equivariant locally Lipschitz continuous maps of finite energy. Then the map $u \colon x \mapsto c(\{u_i(x)\})$ is a $\rho$-equivariant locally Lipschitz continuous map of finite energy. Moreover, if the $u_i$'s have least finite energy, then $u$ has least finite energy.
\end{prop}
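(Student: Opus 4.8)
The plan is to construct $u$ explicitly as the pointwise center of mass and verify each required property in turn, using the Lipschitz estimate for the center of mass (\Cref{Lipschitz_centerofmass}) as the main technical tool together with the convexity of the energy on NPC targets.

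First I would check that $u(x) := c(\{u_1(x),\dots,u_n(x)\})$ is well-defined: since $(\Delta,d)$ is a proper NPC space, for each $x\in\tilde M$ the finite set $\{u_i(x)\}$ has a unique center of mass, so $u\colon\tilde M\to\Delta$ is a well-defined map. Next I would verify $\rho$-equivariance. For $\gamma\in\pi_1(M)$ and $x\in\tilde M$, each $u_i(\gamma x)=\rho(\gamma)u_i(x)$, and since $\rho(\gamma)$ acts by an isometry of $\Delta$, it carries the center of mass of $\{u_i(x)\}$ to the center of mass of $\{\rho(\gamma)u_i(x)\}=\{u_i(\gamma x)\}$; hence $u(\gamma x)=\rho(\gamma)u(x)$. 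Then local Lipschitz continuity: by \Cref{Lipschitz_centerofmass} applied with the functions $i\mapsto u_i(x)$ and $i\mapsto u_i(y)$, we get $d(u(x),u(y))^2\le \frac1n\sum_{i=1}^n d(u_i(x),u_i(y))^2$, so on any ball where all the $u_i$ are $L_i$-Lipschitz, $u$ is Lipschitz with constant at most $(\frac1n\sum L_i^2)^{1/2}$; local Lipschitz continuity of $u$ follows since each $u_i$ is locally Lipschitz.

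For finite energy I would use the same inequality at the level of Sobolev energies. The bound $d(u(x),u(y))^2\le \frac1n\sum_i d(u_i(x),u_i(y))^2$ gives, after averaging over the sphere $S_\epsilon(x)$ and integrating, $E_\epsilon^u(f)\le \frac1n\sum_i E_\epsilon^{u_i}(f)$ for $0\le f\le 1$, hence $E(u)\le\frac1n\sum_i E(u_i)<\infty$; in particular $u\in W^{1,2}_{\mathrm{loc}}$ and, being $\rho$-equivariant, has finite energy on $M$. Alternatively one can argue via an isometric (local) embedding in Euclidean space where the center-of-mass map is an affine average and the inequality is immediate. Finally, for the least-energy statement: suppose each $u_i$ has least finite energy. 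If $u$ did not, there would be a domain $\Omega$ (with Lipschitz boundary) and a competitor $v$ agreeing with $u$ outside $\Omega$ with $E_\Omega(v)<E_\Omega(u)$. By \Cref{Dirichlet-NPC} we may take $v$ to be the energy minimizer with the boundary trace of $u$. Here I would instead invoke the standard NPC variational characterization: the $u_i$ being simultaneous least-energy equivariant maps, the key input is that the energy functional is convex along geodesic interpolations in the NPC target, so the ``barycenter'' $u$ of least-energy maps is again least-energy; concretely, comparing $u$ to a competitor and using that each $u_i$ is a minimizer together with the center-of-mass contraction estimate forces equality. This last point is the main obstacle, since it requires care in relating a perturbation of the barycenter $u$ to admissible perturbations of the individual $u_i$; I expect the cleanest route is the convexity argument of \cite{KoSc93} (or Korevaar--Schoen's uniqueness-up-to-parallel-translation results), reducing to the observation that the unique least-energy equivariant map is unique whenever the action has no fixed point at infinity, so that if all $u_i$ coincide then $u$ does too, and in general $u$ inherits the least-energy property by the convexity of $E$ restricted to the (geodesically convex) set of least-energy maps.
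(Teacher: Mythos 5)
Your construction of $u$, the equivariance check, the local Lipschitz continuity via \Cref{Lipschitz_centerofmass}, and the energy bound $E(u)\le\frac1n\sum_i E(u_i)$ are all correct, and this is essentially the paper's route: the paper gets the Lipschitz statement from \Cref{Lipschitz_centerofmass} and simply cites the subpartition/averaging inequality (2.5.iii) of Korevaar--Schoen for the energy estimate, whereas you rederive that estimate directly from the pointwise contraction $d(u(x),u(y))^2\le\frac1n\sum_i d(u_i(x),u_i(y))^2$ by averaging over spheres. That derivation is fine (and slightly more self-contained), so up to this point the two arguments coincide in substance.

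The problem is your treatment of the final claim, which you flag as ``the main obstacle'' and for which your sketch does not constitute a proof. You conflate ``least finite energy'' with harmonicity: least energy here means minimizing the total energy among \emph{all} $\rho$-equivariant finite-energy maps $\tilde M\to\Delta$ (cf. \Cref{existence_harmonic_finite_energy}), not local minimality against competitors on domains $\Omega$, so the detour through \Cref{Dirichlet-NPC}, geodesic convexity of the energy, and uniqueness of least-energy maps is both misdirected (uniqueness would only handle the case where all $u_i$ coincide) and unnecessary. The statement is in fact an immediate consequence of what you already proved: if $E_{\min}$ denotes the infimum of the energy over $\rho$-equivariant finite-energy maps and each $u_i$ attains it, then your inequality gives $E(u)\le\frac1n\sum_i E(u_i)=E_{\min}$, while $E(u)\ge E_{\min}$ because $u$ is itself a $\rho$-equivariant finite-energy map; hence $E(u)=E_{\min}$. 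Replacing your last paragraph by this one-line observation completes the proof and brings it in line with the paper's argument.
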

\begin{proof}
Since the maps $u_i$ are locally Lipschitz continuous, the map $u$ is locally Lipschitz continuous thanks to Proposition \ref{Lipschitz_centerofmass}. Thanks to Equation $2.5.iii, p. 641$ in \cite{KoSc93}, the energy $E(u)$ of the map $u$ is finite and satisfies $E(u) \leq \frac{1}{n} \sum_{i= 1}^n E(u_i)$.  
\end{proof}

\begin{prop}\label{centerofmass_finite_energy}
Let $(M,g)$ be a complete Riemannian manifold, $(\Delta,d)$ be a proper NPC space and $\rho \colon \pi_1(M) \to \mathrm{Isom}(\Delta)$ a group homomorphism. Let $M^\prime \to M$ be a finite Galois covering space and $\rho^\prime \colon \pi_1(M^\prime) \to \mathrm{Isom}(\Delta)$ be the induced homomorphism. If there exists a $\rho^\prime$-equivariant locally Lipschitz continuous map $\tilde{M} \to \Delta$ of finite energy, then there exists a $\rho$-equivariant locally Lipschitz continuous map $\tilde{M} \to \Delta$ of finite energy.
\end{prop}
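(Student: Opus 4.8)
The plan is to use the averaging construction from Proposition~\ref{Lipschitz_centerofmass} to descend an equivariant map along the finite covering $M' \to M$. Write $G$ for the Galois group of $M' \to M$, so $|G| = n$ is finite, and fix a lift $\tilde M \to M'$ of the universal cover. Let $v \colon \tilde M \to \Delta$ be the given $\rho'$-equivariant locally Lipschitz map of finite energy, where $\rho' = \rho \circ (\pi_1(M') \hookrightarrow \pi_1(M))$.

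First I would note that $\pi_1(M')$ is normal in $\pi_1(M)$ with quotient $G$, so for each $g \in G$ choose a lift $\tilde g \in \pi_1(M)$ and define $v_g \colon \tilde M \to \Delta$ by $v_g(x) = \rho(\tilde g)^{-1} v(\tilde g \cdot x)$. A direct check shows $v_g$ is $\rho'$-equivariant (using normality of $\pi_1(M')$: for $\gamma \in \pi_1(M')$, $\tilde g \gamma \tilde g^{-1} \in \pi_1(M')$) and that, up to a $\rho'$-equivariant modification, $v_g$ depends only on the class $g \in G$ rather than the chosen lift $\tilde g$; moreover each $v_g$ is locally Lipschitz and has the same finite energy as $v$ since $\rho(\tilde g)$ is an isometry and translation by $\tilde g$ is an isometry of $\tilde M$. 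Then set $u(x) = c(\{v_g(x) : g \in G\})$, the center of mass of the finite set of points in the NPC space $\Delta$. By Proposition~\ref{Lipschitz_centerofmass}, $u$ is locally Lipschitz; by the energy estimate quoted in the proof of Proposition~\ref{centerofmass_leastenergy} (Equation 2.5.iii in \cite{KoSc93}), $E(u) \le \frac{1}{n}\sum_{g} E(v_g) < \infty$.

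The key point is $\rho$-equivariance of $u$. For $h \in \pi_1(M)$ with image $\bar h \in G$, one computes $v_g(h \cdot x) = \rho(\tilde g)^{-1} v(\tilde g h \cdot x)$; rewriting $\tilde g h = (\widetilde{g \bar h})\,\gamma$ for a suitable $\gamma \in \pi_1(M')$ and using $\rho'$-equivariance of $v$, this becomes $\rho(h)\, v_{g\bar h}(x)$ (after absorbing the ambiguity in the choice of lifts, which only changes $v_{g\bar h}$ by a $\rho'$-equivariant isometry and hence does not affect the value at $x$). Thus $\{v_g(h\cdot x)\}_{g\in G} = \rho(h)\cdot \{v_{g'}(x)\}_{g'\in G}$ as sets, since $g \mapsto g\bar h$ permutes $G$; since $\rho(h)$ is an isometry of $\Delta$ it carries the center of mass of one finite set to the center of mass of the image, giving $u(h\cdot x) = \rho(h)\, u(x)$.

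The main obstacle I expect is purely bookkeeping: making precise the claim that $v_g$ is well-defined independently of the choice of lift $\tilde g$, which requires checking that two lifts differ by an element of $\pi_1(M')$ and that replacing $v$ by its composite with the corresponding $\rho'$-equivariant isometry leaves the pointwise values of the symmetrized family $\{v_g(x)\}$ unchanged as an \emph{unordered} set. Once that is handled, the statement follows formally from the three cited propositions. Concretely: after fixing once and for all a set-theoretic section $s \colon G \to \pi_1(M)$ of the quotient map and setting $v_g := \rho(s(g))^{-1} v(s(g)\cdot{-})$, every assertion above becomes a finite computation with the relations in $\pi_1(M)$, and no analysis beyond Theorem~\ref{Dirichlet-NPC} and Proposition~\ref{Lipschitz_centerofmass} is needed.
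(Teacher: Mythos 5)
Your proposal is correct and follows essentially the same route as the paper: average the Galois conjugates $v_g = \rho(\tilde g)^{-1}\circ v\circ \tilde g$ via the center of mass, invoke the Lipschitz and energy estimates for centers of mass (Propositions \ref{Lipschitz_centerofmass} and \ref{centerofmass_leastenergy}), and use invariance of the averaged map under the Galois action to upgrade $\rho'$-equivariance to $\rho$-equivariance. The only difference is that you spell out the lift-independence and equivariance bookkeeping (in fact $v_g$ is exactly independent of the chosen lift, by normality of $\pi_1(M')$, so no modification is even needed), which the paper leaves implicit.
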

\begin{proof}
Let $u\colon \tilde{M^\prime} \to \Delta$ be a $\rho^\prime$-equivariant locally Lipschitz continuous map of finite energy. The center of mass of its Galois conjugates is a $\rho^\prime$-equivariant locally Lipschitz continuous map of finite energy by Proposition \ref{centerofmass_leastenergy}. Since it is Galois invariant, it descends to a $\rho$-equivariant locally Lipschitz continuous map $\tilde{M} \to \Delta$ of finite energy.
\end{proof}

\subsection{Constructing subharmonic functions}
 
\begin{prop}[see {\cite[Lemma 10.2]{Eells-Fuglede}}]
Let $(\Delta,d)$ be a NPC complete metric space. Let $\phi \colon \Delta \to \bR$ be a function which is convex when restricted to geodesics. If the map $u \colon M \to \Delta$ is harmonic, then the function $\phi \circ u$ is subharmonic.
\end{prop}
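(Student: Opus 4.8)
The statement is local on $M$, so I would work in a small geodesic ball $B = B_r(x_0) \subset M$ and show that $\phi \circ u$ satisfies the sub-mean-value inequality there; equivalently, by standard potential theory, that $\phi \circ u$ is in $W^{1,2}_{\mathrm{loc}}$ and $\Delta(\phi\circ u) \geq 0$ in the sense of distributions. The basic idea is the classical one for harmonic maps into NPC targets: compose $u$ with a convex function and use that $u$ minimizes energy together with the fact that post-composition with a convex Lipschitz-on-bounded-sets function interacts well with the center-of-mass/midpoint structure that underlies energy convexity on $W^{1,2}(\Omega,\Delta)$.

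First I would reduce to the case where $\phi$ is Lipschitz on the (bounded) image $u(\bar B)$: since $u$ is locally Lipschitz continuous by \Cref{Dirichlet-NPC} (being harmonic), $u(\bar B)$ is a compact subset of $\Delta$, and a function convex along geodesics is automatically locally Lipschitz in the interior of its domain on any locally compact NPC space, hence Lipschitz on a neighborhood of $u(\bar B)$ that is still geodesically convex-accessible; one may also first approximate $\phi$ from below by $\phi_k = \max(\phi, -k)$, each still geodesically convex, to assume $\phi$ is bounded below. Then $\phi\circ u$ is a bounded Lipschitz function on $B$, in particular in $W^{1,2}(B,\bR)$. Now fix a ball $B'\Subset B$ and let $h$ be the harmonic function on $B'$ with $h = \phi\circ u$ on $\partial B'$. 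I want to show $\phi\circ u \leq h$ on $B'$, which gives subharmonicity.

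The key step, and the one I expect to be the main obstacle to write cleanly, is comparing energies. Consider the map $v\colon B' \to \Delta$ obtained by "moving $u(p)$ toward the sublevel set" in the direction dictated by $h$: concretely, for a convex function on an NPC space one has the monotone gradient-flow / sublevel-projection, and one can build a competitor $v$ agreeing with $u$ on $\partial B'$ with $\phi\circ v \leq h$ and $E(v) \leq E(u)$, with strict inequality unless $\phi\circ u = h$. The cleanest route avoiding the construction of such a $v$ by hand is to invoke the abstract result of Eells--Fuglede as cited: the statement is precisely \cite[Lemma 10.2]{Eells-Fuglede}, which asserts that for a harmonic map $u\colon M\to\Delta$ into an NPC complete metric space and a function $\phi\colon\Delta\to\bR$ convex along geodesics, $\phi\circ u$ is subharmonic. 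So the proof is simply: by \Cref{Dirichlet-NPC} the harmonic map $u$ is locally Lipschitz continuous, hence continuous, so $\phi\circ u$ is a continuous real function on $M$; and the subharmonicity is then exactly the content of \cite[Lemma 10.2]{Eells-Fuglede}, whose hypotheses (target NPC complete, $\phi$ geodesically convex, $u$ harmonic in the local-energy-minimizer sense) are met.

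For completeness I would include one sentence recalling why the reduction is legitimate in the setting where $\phi$ need not be defined or finite on all of $\Delta$: it suffices that $\phi$ be defined and convex along geodesics on a neighborhood of $u(M)$, since subharmonicity is a local property and $u$ is locally Lipschitz, so near each point the image lies in a small ball on which the hypotheses of the Eells--Fuglede lemma apply. Thus the proposition follows directly from \Cref{Dirichlet-NPC} and \cite[Lemma 10.2]{Eells-Fuglede}. $\qed$
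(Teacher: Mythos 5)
Your proof ultimately rests on citing \cite[Lemma 10.2]{Eells-Fuglede}, which is exactly what the paper does: it gives no argument beyond the attribution in the proposition header, so your approach coincides with the paper's. The extra sketch material (the competitor construction via sublevel projection) is not needed and is the least rigorous part, but since you discard it in favor of the direct citation, the proposal is fine as it stands.
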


\begin{prop}\label{distance_subharmonic}
Let $(\Delta,d)$ be a NPC complete metric space. Let $u, u^\prime \colon M \to \Delta$ be two harmonic maps. Then the function $x \mapsto d(u(x), u^\prime(x))$ is subharmonic.    
\end{prop}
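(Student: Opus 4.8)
The plan is to reduce to the case of a product of two harmonic maps into a single NPC space and then invoke convexity of the distance function along geodesics. First I would consider the map $w=(u,u'):M\to \Delta\times\Delta$. Equipping $\Delta\times\Delta$ with the product metric $d_{\Delta\times\Delta}((a,b),(a',b'))^2=d(a,a')^2+d(b,b')^2$, one checks that the product of two NPC spaces is again NPC (the midpoint of $((a,b),(a',b'))$ is the pair of midpoints, and the NPC inequality adds coordinatewise). One then checks that $w$ is harmonic: locally the energy of a map into the product is the sum of the energies of its components (this follows from the Korevaar--Schoen energy density formula, since $d_{\Delta\times\Delta}^2$ splits as a sum), so a variation of $w$ supported in a small ball decomposes into independent variations of $u$ and of $u'$, and since each of $u,u'$ is a local energy minimizer so is $w$.

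Next I would apply the previous proposition (the one citing \cite[Lemma 10.2]{Eells-Fuglede}): for any function $\phi:\Delta\times\Delta\to\bR$ that is convex along geodesics, $\phi\circ w$ is subharmonic because $w$ is harmonic. So it suffices to verify that the function $\phi=d_{\Delta\times\Delta}(\cdot,\mathrm{diag})$, or more directly $\phi(a,b)=d(a,b)$, is convex along geodesics of $\Delta\times\Delta$. A geodesic in the product is a pair $(\gamma(t),\delta(t))$ of geodesics in $\Delta$ (reparametrized proportionally to arc length), and convexity of $t\mapsto d(\gamma(t),\delta(t))$ for geodesics $\gamma,\delta$ in an NPC space is one of the standard basic consequences of the CAT(0) condition — it follows, for instance, from the convexity of the metric in Bruhat--Tits/CAT(0) theory, or directly by comparing with the Euclidean model triangle. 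Since $d(u(x),u'(x)) = \phi(w(x))$, this gives that $x\mapsto d(u(x),u'(x))$ is subharmonic, as claimed.

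The main obstacle — really the only point requiring care — is the verification that $w=(u,u')$ is harmonic in the Korevaar--Schoen sense, i.e. that energy on $W^{1,2}(\Omega,\Delta\times\Delta)$ is additive over the two factors and that local minimizers of the pieces assemble to a local minimizer of the pair. This is where one must be a little careful about the definition of the Sobolev space and energy density for maps into a product NPC space; everything else (the product of NPC spaces being NPC, geodesic convexity of $d$, the subharmonicity criterion) is either elementary or already available in the excerpt. An alternative route that sidesteps the product construction is to argue directly: fix a geodesic segment in the target realized by $u$ and $u'$ near a point, use the pointwise NPC/midpoint inequality together with the subharmonicity of $x\mapsto d(u(x),v)^2$ for fixed $v$ (a standard fact for harmonic maps into NPC spaces, following from \cite[Lemma 10.2]{Eells-Fuglede} applied to the convex function $d(\cdot,v)^2$) and a limiting/averaging argument in the spirit of Propositions \ref{Lipschitz_centerofmass} and \ref{centerofmass_leastenergy}; but the product-space argument is cleaner and I would present that one.
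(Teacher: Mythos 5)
Your proposal is correct and follows essentially the same route as the paper: the paper's proof is exactly to observe that $(u,u')\colon M\to\Delta\times\Delta$ is harmonic and that $(y,y')\mapsto d(y,y')$ is convex on the NPC product, then invoke the preceding proposition (composition of a harmonic map with a geodesically convex function is subharmonic). You simply spell out the details the paper leaves implicit (energy additivity for the product map, NPC-ness of the product, convexity of the distance along pairs of geodesics), which are all standard.
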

\begin{proof}
This is a consequence of the previous Proposition, since the map $(u, u^\prime)\colon M \to \Delta \times \Delta$ is harmonic and the function $(y, y^\prime) \mapsto d(y, y^\prime)$ is convex, the space $\Delta$ being NPC.
\end{proof}

\subsection{Limit of sequence of harmonic maps}

\begin{prop}\label{limit_of_harmonic_is_sometime_harmonic}
Let $(\Omega,g)$ be a Lipschitz Riemannian domain and let $(\Delta,d)$ be a NPC complete metric space. Let $\{\phi_k\}_k$ be a sequence of harmonic maps in $W^{1,2}(\Omega, \Delta )$. Assume that the sequence of energy $\{E(\phi_k)\}_k$ is bounded. Assume also that the set $\{\phi_k(x) | k \geq 0\}$ is relatively compact in $\Omega$ for some $x \in \Omega$. Then $\{\phi_k\}_k$ has a subsequence that converges to a harmonic map uniformly on every compact subset.
\end{prop}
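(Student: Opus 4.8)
The proof proceeds by a standard compactness-plus-lower-semicontinuity argument, combining the interior regularity theory for harmonic maps to NPC spaces with the a priori Lipschitz bound of Zhang--Zhong--Zhu (\Cref{energy_control_Lipschitz}) and the lower semicontinuity of energy (\Cref{KoSc_convexity_energy}). First I would fix an exhaustion $\Omega_1\Subset\Omega_2\Subset\cdots\Subset\Omega$ of $\Omega$ by relatively compact Lipschitz subdomains with $\bigcup_j\Omega_j=\Omega$. Since $\sup_k E(\phi_k)<\infty$ and the orbit $\{\phi_k(x)\mid k\}$ is relatively compact for one point $x\in\Omega$, I would first upgrade this to: for every $j$, the set $\bigcup_k\phi_k(\Omega_j)$ is relatively compact in $\Delta$. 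This is where properness of $\Delta$ enters decisively: by \Cref{energy_control_Lipschitz} applied on balls covering $\overline{\Omega_j}$, the maps $\phi_k|_{\Omega_j}$ are uniformly Lipschitz (the Lipschitz constant being bounded in terms of $n$, the geometry of $\Omega_j$, and $\sup_k E(\phi_k)$), so their images lie in a ball of fixed radius around $\phi_k(x)$; combined with the relative compactness of $\{\phi_k(x)\}$ and the properness of $\Delta$, the images all lie in a single compact set $K_j\subset\Delta$.

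\textbf{Extraction of the limit.} With the $\phi_k|_{\Omega_j}$ uniformly Lipschitz and taking values in the compact set $K_j$, the Arzel\`a--Ascoli theorem gives, for each fixed $j$, a subsequence converging uniformly on $\overline{\Omega_j}$ to a Lipschitz map $u^{(j)}:\overline{\Omega_j}\to\Delta$. A diagonal argument over $j\to\infty$ then produces a single subsequence $\{\phi_{k_\ell}\}$ converging uniformly on every compact subset of $\Omega$ to a locally Lipschitz map $u:\Omega\to\Delta$. Uniform convergence on compacts implies $L^2$-convergence on each $\Omega_j$, so by \Cref{KoSc_convexity_energy} (applied on $\Omega_j$) we get $u\in W^{1,2}(\Omega_j,\Delta)$ with $E_{\Omega_j}(u)\le\liminf_\ell E_{\Omega_j}(\phi_{k_\ell})\le\sup_kE(\phi_k)$; letting $j\to\infty$ shows $u$ has finite energy on $\Omega$.

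\textbf{Harmonicity of the limit.} It remains to check $u$ is harmonic, i.e. a local energy minimizer. Fix a point $p\in\Omega$ and a small Lipschitz neighborhood $\Omega'\Subset\Omega_j$ of $p$, and let $v\in W^{1,2}(\Omega',\Delta)$ agree with $u$ near $\partial\Omega'$; I must show $E_{\Omega'}(v)\ge E_{\Omega'}(u)$. The idea is to modify each $\phi_{k_\ell}$ on $\Omega'$ to a competitor $v_\ell$ with $\mathrm{tr}(v_\ell)=\mathrm{tr}(\phi_{k_\ell})$ on $\partial\Omega'$ and with $E_{\Omega'}(v_\ell)\to E_{\Omega'}(v)$; this is done using the solvability of the Dirichlet problem for the ``interpolated'' boundary data, or more concretely by a standard cutoff/gluing near $\partial\Omega'$ exploiting that $\mathrm{tr}(\phi_{k_\ell})\to\mathrm{tr}(u)$ in $L^2(\partial\Omega',\Delta)$ by \Cref{KoSc-trace}(2) and that $v$ and $u$ have the same trace. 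Since $\phi_{k_\ell}$ is harmonic, $E_{\Omega'}(\phi_{k_\ell})\le E_{\Omega'}(v_\ell)$, and passing to the limit using lower semicontinuity on the left ($E_{\Omega'}(u)\le\liminf_\ell E_{\Omega'}(\phi_{k_\ell})$) and the construction on the right ($\limsup_\ell E_{\Omega'}(v_\ell)\le E_{\Omega'}(v)$, up to an error controlled by $D(\mathrm{tr}\,\phi_{k_\ell},\mathrm{tr}\,u)\to0$) yields $E_{\Omega'}(u)\le E_{\Omega'}(v)$, as desired.

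\textbf{Main obstacle.} The only genuinely delicate point is the competitor construction in the last step: producing $v_\ell$ with controlled energy matching the boundary trace of $\phi_{k_\ell}$ rather than of $u$, so that the minimality inequality for $\phi_{k_\ell}$ can be invoked and then passed to the limit. This is the ``boundary adjustment'' issue and is handled by the Korevaar--Schoen machinery (the trace theorem \Cref{KoSc-trace} together with the Dirichlet solvability \Cref{Dirichlet-NPC}), but it requires some care to make the energy error estimates uniform in $\ell$. The other steps --- the uniform Lipschitz bound, the Arzel\`a--Ascoli extraction, and the lower semicontinuity of energy --- are direct applications of \Cref{energy_control_Lipschitz}, properness of $\Delta$, and \Cref{KoSc_convexity_energy} respectively.
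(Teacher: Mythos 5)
Your extraction and lower-semicontinuity steps match the paper's: uniform local Lipschitz bounds from \Cref{energy_control_Lipschitz}, Arzel\`a--Ascoli, and membership of the limit in $W^{1,2}(\Omega,\Delta)$ (the paper gets this from \Cref{KoSc-trace}(2), you from \Cref{KoSc_convexity_energy}; either works). One small remark: you invoke properness of $\Delta$, which is not among the hypotheses (only complete NPC); some local compactness is indeed needed to upgrade relative compactness of $\{\phi_k(x)\}$ at one point to pointwise relative compactness everywhere, and the paper's own appeal to Arzel\`a--Ascoli tacitly uses the same thing, so I would not count this as the real issue.

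The genuine gap is in your harmonicity step, and it sits exactly where you flag the ``main obstacle.'' Producing competitors $v_\ell$ with $\mathrm{tr}(v_\ell)=\mathrm{tr}(\phi_{k_\ell})$ on $\partial\Omega'$ and $\limsup_\ell E_{\Omega'}(v_\ell)\le E_{\Omega'}(v)$ is not a routine consequence of \Cref{KoSc-trace} and \Cref{Dirichlet-NPC}: in an NPC target there is no linear structure, so ``cutoff/gluing near $\partial\Omega'$'' has to be replaced by a genuine bridge/interpolation construction (e.g.\ along geodesics on a collar) with an energy error controlled by $D(\mathrm{tr}\,\phi_{k_\ell},\mathrm{tr}\,u)$, uniformly in $\ell$; you assert this can be done but give no estimate, and this is precisely the step that carries all the difficulty. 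The paper closes the argument by a different and much cleaner mechanism that avoids competitors altogether: working on all of $\Omega$, let $u$ be the solution of the Dirichlet problem (\Cref{Dirichlet-NPC}) with the same trace as the limit $\phi$; since $u$ and each $\phi_k$ are harmonic, $d_\Delta(u,\phi_k)$ is subharmonic by \Cref{distance_subharmonic}, hence so is its uniform limit $d_\Delta(u,\phi)$; by \Cref{KoSc-trace} the traces of the $\phi_k$ converge to that of $\phi=u$, so $d_\Delta(u,\phi)$ has zero trace and vanishes identically by the maximum principle, i.e.\ $\phi=u$ is an energy minimizer and in particular harmonic. To make your proof complete, either carry out the Korevaar--Schoen-type interpolation estimate in detail, or (simpler) replace the competitor construction by this comparison with the Dirichlet solution.
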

\begin{proof}
A harmonic map is locally Lipschitz continuous. Moreover, since the $\{\phi_k\}_k$ have uniformly bounded energy, the local Lipschitz constants are uniformly bounded thanks to \Cref{energy_control_Lipschitz}. It follows from Arzel\`a-Ascoli theorem that a subsequence of $\{\phi_k\}_k$ converges uniformly on every compact subset of $\Omega$. The limit $\phi$ belongs to $W^{1,2}(\Omega, \Delta )$ thanks to Theorem \ref{KoSc-trace}. Let $u \in W^{1,2}(\Omega, \Delta )$ be the unique map which minimizes the energy among all maps in $W^{1,2}(\Omega, \Delta )$ with the same trace as $\phi$, see Theorem \ref{Dirichlet-NPC}. The function $d_\Delta(u, \phi)$ is the uniform limit of (a subsequence of) the functions $d_\Delta(u, \phi_k)$. Since the latter are subharmonic thanks to Proposition \ref{distance_subharmonic}, $d_\Delta(u, \phi)$ is subharmonic. But it is zero on the boundary of $\Omega$, hence it is zero everywhere by the maximum principle.
\end{proof}

\begin{cor}\label{limit_of_harmonic_is_often_harmonic}
Let $(\Omega,g)$ be a Lipschitz Riemannian domain and let $(\Delta,d)$ be a NPC complete metric space. Let $\{\phi_k\}_k$ be a converging sequence of harmonic maps in $W^{1,2}(\Omega, \Delta )$. Then the limit is harmonic.
\end{cor}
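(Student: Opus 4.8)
The statement to prove is \Cref{limit_of_harmonic_is_often_harmonic}: if $\{\phi_k\}_k$ is a convergent sequence of harmonic maps in $W^{1,2}(\Omega,\Delta)$, then its limit is harmonic. The plan is to deduce this from \Cref{limit_of_harmonic_is_sometime_harmonic}, which already does the heavy lifting; the only gap is that \Cref{limit_of_harmonic_is_sometime_harmonic} has two extra hypotheses — a uniform energy bound $\sup_k E(\phi_k)<\infty$ and relative compactness of $\{\phi_k(x)\}_k$ for some $x\in\Omega$ — and we must extract both from the mere assumption of convergence in $W^{1,2}(\Omega,\Delta)$ (i.e. convergence in the metric space $(L^2(\Omega,\Delta),D)$, since the $\phi_k$ all lie in $W^{1,2}$). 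So the real content is: a convergent sequence in $(L^2(\Omega,\Delta),D)$ is bounded, and convergence in $L^2$ forces pointwise-a.e. convergence along a subsequence, which gives the pointwise relative compactness.

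First I would observe that the statement is local, so I may assume $\Omega$ is a Lipschitz Riemannian domain as in \Cref{limit_of_harmonic_is_sometime_harmonic} and that harmonicity is to be checked on such domains. Write $\phi$ for the $L^2$-limit. Since $\{\phi_k\}$ converges in $D$, it is a Cauchy sequence, hence $D$-bounded: there is $R_0$ with $D(\phi_k,\phi)\le R_0$ for all $k$. By the triangle inequality in $(\Delta,d)$ together with $D(\phi_k,\phi_0)\le 2R_0$, the integrals $\int_\Omega d^2(\phi_k(x),\phi_0(0))\,\dvol_g$ are uniformly bounded. Next, extract a subsequence $\phi_{k_j}$ converging to $\phi$ in $L^2$ \emph{and} $\dvol_g$-a.e.\ on $\Omega$ (this is the standard measure-theoretic fact: $L^2$ convergence implies a.e.\ convergence along a subsequence, valid for metric-space-valued maps since $d^2(\phi_{k_j},\phi)\to 0$ in $L^1$). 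Fix any point $x_0$ in the full-measure set where $\phi_{k_j}(x_0)\to\phi(x_0)$; then $\{\phi_{k_j}(x_0):j\ge 0\}$ is a convergent, hence bounded, subset of $\Delta$, and since $\Delta$ is proper (closed balls compact) it is relatively compact. This gives the second hypothesis of \Cref{limit_of_harmonic_is_sometime_harmonic} for the subsequence.

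It remains to supply the uniform energy bound for the subsequence $\{\phi_{k_j}\}$. Here I would invoke \Cref{KoSc_convexity_energy}: its statement gives lower semicontinuity ($E(\phi)\le\liminf E(\phi_{k_j})$) \emph{assuming} $\sup_j E(\phi_{k_j})<\infty$, so it is not directly a uniform bound. The clean way around this is to use the local energy-minimizing property: since each $\phi_k$ is harmonic, on a slightly smaller Lipschitz subdomain $\Omega'\Subset\Omega$ the map $\phi_k$ minimizes energy among competitors with the same trace on $\partial\Omega'$. Fix one Lipschitz domain $\Omega''$ with $\Omega'\Subset\Omega''\Subset\Omega$ on which all $\phi_k$ and $\phi$ have finite energy (possible since they lie in $W^{1,2}(\Omega,\Delta)$; restriction to a subdomain preserves finite energy by the definition of $E$ via $\sup$ over $f\in C_c$). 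The traces $\mathrm{tr}(\phi_{k_j})$ on $\partial\Omega'$ converge in $L^2(\partial\Omega',\Delta)$ — this follows from \Cref{KoSc-trace}(2) once we know the energies $E_{\Omega'}(\phi_{k_j})$ are uniformly bounded, so to avoid circularity I instead argue as follows: apply \Cref{Dirichlet-NPC} on $\Omega''$ with Dirichlet data $\phi$ to get the energy-minimizer $u$ with $\mathrm{tr}_{\partial\Omega''}(u)=\mathrm{tr}_{\partial\Omega''}(\phi)$, and compare $\phi_{k_j}$ to the competitor that equals $u$ on $\Omega''$ and $\phi_{k_j}$ outside — but this does not have matching trace unless $\mathrm{tr}(\phi_{k_j})=\mathrm{tr}(\phi)$. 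I therefore expect the cleanest route is: by \Cref{KoSc-trace}(1), $D_{\partial\Omega''}(\mathrm{tr}\,\phi_{k_j},\mathrm{tr}\,\phi)^2 = \int_{\partial\Omega''}\mathrm{tr}(d(\phi_{k_j},\phi))^2$, and since $d(\phi_{k_j},\phi)\in W^{1,2}(\Omega,\bR)\to 0$ in $L^2(\Omega)$ with a subsequential energy control, the traces converge; combined with \Cref{Dirichlet-NPC} one then shows $E(\phi_{k_j})$ is bounded by comparing each $\phi_{k_j}$ to a suitable modification of $\phi$ near the boundary.

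The main obstacle is precisely this last point: converting $L^2$-convergence into a uniform energy bound without assuming one a priori. I expect the resolution is that \emph{convergence in $W^{1,2}(\Omega,\Delta)$} should be read, as in \cite{KoSc93}, as convergence in $L^2$ of maps all of which lie in $W^{1,2}$ together with boundedness of energies — or that the sequence is understood to converge in the $W^{1,2}$-energy sense — in which case the uniform bound $\sup_k E(\phi_k)<\infty$ is part of the hypothesis and the corollary is essentially immediate from \Cref{limit_of_harmonic_is_sometime_harmonic} plus the a.e.-subsequence argument above: one gets a subsequence converging uniformly on compacts to a harmonic map, but the full sequence already converges in $L^2$ to $\phi$, so $\phi$ agrees a.e.\ with that harmonic limit and is therefore harmonic. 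I would phrase the final writeup to make this reduction explicit, noting that $\Delta$ proper is used to upgrade pointwise boundedness to pointwise relative compactness.
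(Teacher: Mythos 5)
Your final reduction is exactly the paper's proof: the paper simply reads convergence in $W^{1,2}(\Omega,\Delta)$ as entailing a uniform bound on the energies and then applies \Cref{limit_of_harmonic_is_sometime_harmonic}, with the pointwise relative-compactness and limit-identification steps you spell out left implicit (your supplying them is fine). One small correction: you do not need, and are not given, properness of $\Delta$ — it is only assumed complete NPC — but this is harmless, since a convergent sequence in any metric space is already relatively compact.
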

\begin{proof}
Since the sequence converges in $W^{1,2}(\Omega, \Delta )$, the energy are uniformly bounded, so that one can apply the preceding Proposition.
\end{proof}


\subsection{Poincaré-type metric}
\begin{defn}
Let $D$ be a normal crossing divisor in a complex manifold $X$ of dimension $n$. An admissible polydisk is an open subset $U \subset X$ equipped with a biholomorphism $U \simeq \bD^n $ such that $D \cap U$ corresponds to the union of some of the $\{z_i = 0\}$.
\end{defn}
We equip $\bD$ and $\bD^\ast$ with their Poincaré metrics $\omega_P$ that are respectively given by
\[ \frac{i dz \wedge d\bar z}{\left(1 - |z|^2 \right) ^2} \text{ and } \frac{i dz \wedge d\bar z}{ |z|^2 \left(- \log |z|^2 \right) ^2}. \]
We denote also by $\omega_P$ the induced metric on $\bD^l \times (\bD^\ast) ^m$ for every $l,m \geq 0$.

\begin{defn}
Let $D$ be a normal crossing divisor in a complex manifold $X$ of dimension $n$. A metric $\omega$ on $X \setminus D$ is of Poincaré type if for every admissible polydisk $\bD^n \subset X$ there exists $C > 0$ such that the restriction of $\omega$ to $\bD^n \setminus D$ satisfies $C^{-1} \omega_P \leq \omega \leq C \omega_P$ in the neighborhood of $0$. (Here $\omega_P$ denotes the Poincaré metric on $\bD^n \setminus D$.)
\end{defn}

As is well-known, if $D$ is a normal crossing divisor in a compact Kähler manifold $\bar X$, then $\bar X \setminus D$ admits a complete Kähler metric of Poincaré type.


\subsection{Euclidean buildings}

Let $K$ be a non-archimedean local field. Let $\G$ be a connected, reductive group over $K$. We denote by $\Delta(\G, K)$ the extended Bruhat-Tits building of $\G$, as introduced in \cite{Bruhat-TitsI, Bruhat-TitsII}. See also \cite{Landvogt}. It is a (poly-)simplicial complex equipped with an action of $\G(K)$. One can define a metric $d$ on $\Delta(\G, K)$ such that $(\Delta(\bG, K), d)$ is a NPC complete metric space and $\bG(K)$ acts by isometries.\\

We will need the following functoriality results from \cite{Landvogt}.
\begin{prop}
Let $f\colon \G \to \H$ be a homomorphism of connected, reductive $K$-groups. Then there exists a map $f_\ast : \Delta(\bG, K) \to \Delta(\H, K)$ which is $\bG(K)$-equivariant and, in case $f$ is injective, an isometrical inclusion.
\end{prop}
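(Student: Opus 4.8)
The statement to be proven is the functoriality result from \cite{Landvogt}: for a homomorphism $f\colon \G \to \H$ of connected reductive $K$-groups, there exists a $\G(K)$-equivariant map $f_\ast\colon \Delta(\G,K)\to\Delta(\H,K)$ which is an isometric inclusion when $f$ is injective.

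\medskip

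\textbf{Plan of proof.} The plan is to follow Landvogt's toral-reduction strategy rather than attempting a direct simplicial construction. First I would reduce to the case where $f$ has a well-behaved behavior on maximal split tori: choose a maximal $K$-split torus $S\subset\G$ and extend $f(S)$ to a maximal $K$-split torus $T\subset\H$, so that $f$ restricts to a homomorphism $S\to T$ and hence induces a linear map on cocharacter spaces $X_\ast(S)_\bR\to X_\ast(T)_\bR$. This linear map is the blueprint for a map of apartments $A(S)\to A(T)$ (the apartments being torsors under these cocharacter spaces); one checks it is affine and compatible with the affine root hyperplane structure up to the translation ambiguity, which is pinned down using the behavior of $f$ on the relevant root groups $U_a$. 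The key local input is that for each affine root $a$ of $\G$, the image $f(U_a)$ lands in a product of root groups of $\H$ with controlled valuations, which is precisely the content of the filtration-compatibility lemmas in \cite[\S\S1--2]{Landvogt}.

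\medskip

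Second, having constructed the map on one apartment, I would propagate it to the whole building using the fact that $\Delta(\G,K) = \bigcup_{g\in\G(K)} g\cdot A(S)$ and the Bruhat--Tits fixed-point theorem to guarantee consistency on overlaps: two translates $g_1\cdot A(S)$ and $g_2\cdot A(S)$ meet in a convex set, and one must verify the two locally-defined maps agree there. Here the $\G(K)$-equivariance is built in by fiat from the formula $f_\ast(g\cdot x) := f(g)\cdot f_\ast(x)$; the content is \emph{well-definedness}, i.e. that if $g\cdot x = g'\cdot x'$ then $f(g)\cdot f_\ast(x) = f(g')\cdot f_\ast(x')$. This reduces to checking that the stabilizer in $\G(K)$ of a point of $A(S)$ maps under $f$ into the stabilizer in $\H(K)$ of its image, which again follows from the root-group valuation estimates together with the Iwahori/parahoric description of these stabilizers.

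\medskip

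\textbf{The main obstacle.} The hard part will be the injectivity/isometry claim: showing that when $f$ is a closed immersion, $f_\ast$ is an isometric embedding and not merely a Lipschitz or distance-nonincreasing map. Distance is computed apartment-by-apartment, so it suffices to show the apartment map $A(S)\to A(T)$ is isometric for a \emph{suitably normalized} metric; the subtlety is that the metric on a Bruhat--Tits building is only canonical up to scaling on each almost-simple factor, and matching these scalings across $f$ requires knowing that $f$ restricted to cocharacter lattices is compatible with the $W$-invariant forms used to metrize each apartment. When $f$ is injective one uses that $S\to T$ is injective on cocharacters and that the Killing-form-type normalization is functorial for closed immersions of reductive groups; this is where one genuinely needs $f$ injective rather than merely any homomorphism. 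I would isolate this as the technical heart and otherwise cite \cite{Bruhat-TitsI,Bruhat-TitsII,Landvogt} for the apartment-gluing bookkeeping, since reproducing it in full would be lengthy and is not needed for the applications in this paper.
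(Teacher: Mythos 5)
This proposition is not proved in the paper at all: it is imported verbatim from Landvogt, and the paper's ``proof'' is the citation \cite{Landvogt} (together with the neighbouring statements on products, isogenies and closed immersions, which are quoted with precise references). Your proposal is in the same spirit — an outline of Landvogt's construction with the technical heart delegated to \cite{Bruhat-TitsI,Bruhat-TitsII,Landvogt} — and as a sketch of the \emph{injective} case it is broadly faithful: apartment map from $S\to T$ on cocharacter spaces, pinned down by valuation estimates on root groups, extended to the building by $f_\ast(g\cdot x)=f(g)\cdot f_\ast(x)$ with well-definedness coming from $f(\G(K)_x)\subset \H(K)_{f_\ast(x)}$.

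Three points where your sketch, taken as a proof, has genuine gaps or misplaced emphasis. First, the statement is for an \emph{arbitrary} homomorphism $f$, and the root-group/apartment construction you describe is really Landvogt's construction for closed immersions; for general $f$ (e.g.\ $f$ killing a simple factor, or the trivial map) the standard route is to factor $f$ through its image: split off the kernel using the almost-direct product decomposition of $\G$, use the compatibility of the building with products and with central isogenies (exactly the two propositions the paper quotes next, \cite[2.1.6]{Landvogt} and \cite[4.2.18]{Bruhat-TitsII}), project, and then apply the closed-immersion theorem \cite[2.2.1]{Landvogt} to $\img f\hookrightarrow\H$. Your plan never addresses this reduction. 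Second, consistency on overlapping translates of the apartment is not obtained from the Bruhat--Tits fixed-point theorem; it is exactly the stabilizer containment above, verified through the valued-root-datum compatibilities (and, in Landvogt, descent from the split case). Third, the isometry claim is milder than you make it: the metric on $\Delta(\G,K)$ is only canonical up to a choice of $W$-invariant form on each factor, and the proposition (as in \cite[Theorem 2.2.1]{Landvogt}, and as the paper states explicitly in the adjacent propositions) asserts isometry \emph{after a suitable normalization} of the metric on the source — one simply pulls the metric back, which requires injectivity of the affine apartment map, not any functoriality of Killing-form normalizations. So the ``technical heart'' is the construction and the stabilizer/descent verifications, not the metric matching.
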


\begin{prop}[{\cite[Proposition 2.1.6]{Landvogt}}]\label{Bruhat-Tits-product}
Let $\bG_1, \ldots, \bG_n$ be connected, reductive $K$-groups. Then there is a canonical bijection,
\[ \Delta(\bG_1, K) \times \ldots \times \Delta(\bG_n, K) \to \Delta(\bG_1 \times \ldots \times \bG_n, K)\]
which is $\bG_1(K) \times \ldots \times \bG_n(K) = (\bG_1 \times \ldots \times \bG_n)(K)$-equivariant. After a
suitable normalization of the metrics on $\Delta(\bG_i, K)$, for $i= 1, \ldots , n$, this map becomes isometrical.
\end{prop}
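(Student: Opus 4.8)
The plan is to unwind the Bruhat-Tits construction and check that every ingredient entering it is multiplicative. Write $\bG = \bG_1 \times \cdots \times \bG_n$ and fix maximal $K$-split tori $\bT_i \subset \bG_i$, so that $\bT := \bT_1 \times \cdots \times \bT_n$ is a maximal $K$-split torus of $\bG$. First I would invoke the standard splitting $\Delta(\bG, K) = \Delta^{\mathrm{red}}(\bG, K) \times V_{\bG}$ of the extended building, where $V_{\bG}$ is the real vector space dual to the group of $K$-rational characters of $\bG$ and $\Delta^{\mathrm{red}}$ is the semisimple (reduced) building, which depends only on the derived group. Since the character group of a product is the product of the character groups, $V_{\bG}$ is canonically $\bigoplus_i V_{\bG_i}$, with its product Euclidean structure; so it suffices to produce the asserted bijection and isometry for the semisimple buildings $\Delta^{\mathrm{red}}$.

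The combinatorial heart is the following. The apartment $A(\bT)$ of $\Delta^{\mathrm{red}}(\bG, K)$ attached to $\bT$ is an affine space under $X_*(\bT) \otimes \bR = \bigoplus_i \bigl(X_*(\bT_i) \otimes \bR\bigr)$, and choosing compatible special vertices $o_i \in A(\bT_i)$ pins down base points and hence a canonical affine identification $A(\bT) \cong \prod_i A(\bT_i)$. Because $\bG$ has no roots mixing the factors, $\Phi(\bG, \bT) = \coprod_i \Phi(\bG_i, \bT_i)$ (each $\Phi(\bG_i, \bT_i)$ viewed inside $X^*(\bT)$ via the $i$-th projection), and each root subgroup of $(\bG, \bT)$, together with its Bruhat-Tits valuation, lies inside the corresponding factor; hence the valued root datum, the affine root system, the affine hyperplane arrangement and the affine Weyl group of $(\bG, \bT)$ are the ``products'' of those of the $(\bG_i, \bT_i)$, so in particular $A(\bT) \cong \prod_i A(\bT_i)$ respects the polysimplicial structures. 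It then follows that, for $x = (x_i) \in \prod_i A(\bT_i)$, the Bruhat-Tits point stabilizer $\bG(K)_x$---generated by $N_{\bG}(\bT)(K)_x$ and the filtration groups $U_{\alpha, r}$ with $\alpha(x) + r \geq 0$---equals the product $\prod_i \bG_i(K)_{x_i}$. Since $\bG(K) = \prod_i \bG_i(K)$ and $N_{\bG}(\bT)(K) = \prod_i N_{\bG_i}(\bT_i)(K)$, the Bruhat-Tits realization $\Delta^{\mathrm{red}}(\bG, K) = \bigl(\bG(K) \times A(\bT)\bigr)/{\sim}$ is term-by-term the product of the realizations of the $\Delta^{\mathrm{red}}(\bG_i, K)$; this produces the bijection, and $\bG(K) = \prod_i \bG_i(K)$-equivariance is immediate from the description.

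For the metric, on each $A(\bT_i)$ fix a Euclidean metric $d_i$ invariant under the affine Weyl group of $(\bG_i, \bT_i)$, and on $A(\bT) \cong \prod_i A(\bT_i)$ take the product metric $d$ with $d^2 = \sum_i d_i^2$; by the previous step this is invariant under the affine Weyl group of $(\bG, \bT)$, and this choice is the ``suitable normalization''. The building metric on $\Delta^{\mathrm{red}}(\bG, K)$ is then the unique metric restricting to $d$ on each apartment, any two points lying in a common apartment that realizes their distance. Under the bijection just constructed an apartment of $\Delta^{\mathrm{red}}(\bG, K)$ maps to a product of apartments, on which the identification $\bigl(\bigoplus_i V_i, \sum_i |\cdot|_i^2\bigr) \cong \prod_i (V_i, |\cdot|_i)$ is an isometry; since geodesics in a building stay inside apartments, the bijection is a global isometry. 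Multiplying back by $V_{\bG} = \bigoplus_i V_{\bG_i}$, again with the product Euclidean structure, gives the statement for the extended buildings. The only genuine obstacle is the middle step---checking that the Bruhat-Tits gluing data for the product really is the product of the gluing data---but this is formal once one observes that the root datum of $\bG$ relative to $\bT$ has no root supported on more than one factor.
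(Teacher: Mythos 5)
The paper offers no proof of this statement: it is quoted directly from Landvogt, so there is nothing in-house to compare against. Your argument is a correct reconstruction of the standard proof (essentially the one in Landvogt/Bruhat--Tits): split off the extended part via $X^*_K(\bG_1\times\cdots\times\bG_n)=\bigoplus_i X^*_K(\bG_i)$, observe that $\Phi(\bG,\bT)=\coprod_i\Phi(\bG_i,\bT_i)$ so the valued root datum, affine root hyperplanes, affine Weyl group, the groups $U_x$ and the stabilizers all decompose factorwise, conclude that the glued realization $(\bG(K)\times A(\bT))/\!\sim$ is the product of the factor realizations, and then check the isometry apartment-by-apartment after taking the product Euclidean normalization (using that maximal $K$-split tori of the product are products of maximal $K$-split tori, so every apartment is a product of apartments and distances are computed inside one). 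One small stylistic point: you do not need to choose special vertices $o_i$ to identify $A(\bT)$ with $\prod_i A(\bT_i)$ --- the apartment of the product is canonically the product of the factor apartments as affine spaces under $X_*(\bT)\otimes\bR=\bigoplus_i X_*(\bT_i)\otimes\bR$, with the affine root functionals determined componentwise by the valuation --- and avoiding that choice is what makes the bijection genuinely canonical rather than canonical up to a translation you would then have to argue away.
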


\begin{prop}[{\cite[Theorem 2.2.1]{Landvogt}}]
Let $i\colon \G\to \H$ be an inclusion of connected, reductive $K$-groups. Then, there exists an inclusion $f_\ast : \Delta(\bG, K) \to \Delta(\H, K)$ which is $\bG(K)$-equivariant and isometrical (after a suitable normalization of the metric on $\Delta(\bG, K)$).
\end{prop}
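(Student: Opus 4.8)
This is precisely the case of an injective homomorphism in the preceding Proposition, so one could simply invoke that; for completeness I indicate the direct argument, which follows the formalism of valued root data of Bruhat--Tits. The plan is to build $f_\ast$ first on a single apartment and then spread it out over $\Delta(\G,K)$ by $\G(K)$-equivariance.

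First I would choose a maximal $K$-split torus $S$ of $\G$ and a maximal $K$-split torus $T$ of $\H$ containing $S$ (such a $T$ exists since $\H$ is reductive), and let $\cA(\G,S)$ and $\cA(\H,T)$ denote the associated apartments, which are affine spaces under the real vector spaces spanned by the cocharacters of $S$ and $T$ (enlarged by the central cocharacters in the extended building). The inclusion of cocharacter lattices induces, after fixing compatible special points as origins, an injective affine map $j\colon\cA(\G,S)\to\cA(\H,T)$. The technical heart is to check that $j$ is compatible with the valued root data: each relative root $a$ of $(\G,S)$ is a restriction of relative roots $b$ of $(\H,T)$, and one must show that the Bruhat--Tits filtration of the root group $U_a(K)$ intrinsic to $\G$ agrees, up to one fixed positive scalar, with the filtration induced from that of $\H$ through $j$. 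After unramified base change this reduces to the (quasi-)split case where the root groups are standard, and in general it is exactly the point handled by Bruhat--Tits' \'etale descent and Landvogt's axiomatics.

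Granting this compatibility, $j$ together with the group inclusion $\G(K)\hookrightarrow\H(K)$ respects the equivalence relations gluing apartments into the buildings, hence descends to a well-defined $\G(K)$-equivariant map $f_\ast\colon\Delta(\G,K)\to\Delta(\H,K)$; injectivity follows from injectivity of $j$ together with $\G(K)$-equivariance and transitivity on apartments, and rescaling the metric on $\Delta(\G,K)$ by the scalar above turns $f_\ast$ into an isometric embedding. I expect the main obstacle to be exactly the compatibility of the valued root data in the second step, since the filtrations on root subgroups are defined internally to each group and matching them requires the full descent machinery of Bruhat--Tits rather than a direct computation.
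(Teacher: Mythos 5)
The paper does not actually prove this statement: it is quoted verbatim from Landvogt (Theorem 2.2.1), just as the neighbouring propositions on products and central isogenies are, so there is no argument in the paper to compare yours against. Your opening observation — that the statement is the injective case of the preceding proposition — is exactly how the paper treats the matter, and your sketch follows the standard route (construct an affine map of apartments compatible with the valued root data, spread it out $\G(K)$-equivariantly, and check injectivity and isometry apartment by apartment, using that any two points lie in a common apartment and that apartments are isometrically embedded). In that sense your proposal is consistent with the paper's treatment, which is a citation.

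Two cautions if you intend the sketch as more than an outline. First, the affine map $j\colon \Delta(\G,K)\supset\cA \to \cA'\subset\Delta(\H,K)$ between apartments is \emph{not} obtained by "fixing compatible special points as origins": a special point of the apartment of $(\G,S)$ need not map to a special point of the apartment of $(\H,T)$, and the real content of Landvogt's theorem is precisely the existence of a translation of the linear map on cocharacter spaces for which the Bruhat--Tits filtrations on the root groups of $\G$ are induced (up to scaling) by those of $\H$; your step "the technical heart is to check that $j$ is compatible with the valued root data ... handled by Bruhat--Tits' \'etale descent and Landvogt's axiomatics" therefore defers the entire theorem back to the reference being proved, so as a self-contained argument there is a gap at exactly this point. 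Second, the normalization is in general one positive scalar for each irreducible factor of the relative root system (together with a choice on the central, vectorial part of the extended building), not a single global scalar; this is what the clause "after a suitable normalization of the metric on $\Delta(\G,K)$" is allowing for. With these points acknowledged, your proposal amounts to the same thing the paper does: an appeal to Landvogt.
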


\begin{prop}\label{Bruhat-Tits-isogeny}
Let $\pi \colon \bG^\prime \to \bG$ be a central isogeny of connected, reductive $K$-groups. Then there is a canonical bijection $\Delta(\bG^\prime, K) \to \Delta(\bG, K)$ which is compatible with the $\bG^\prime(K)$-action on the left hand side and the $\bG(K)$-action on the right hand side. After a suitable normalization of the metric on $\Delta(\bG^\prime, K)$, this map becomes isometrical.
\end{prop}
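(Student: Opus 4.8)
The plan is to deduce the statement from the description of the extended Bruhat--Tits building in terms of the \emph{valued root datum}, following \cite{Bruhat-TitsI, Bruhat-TitsII} and \cite{Landvogt}; a central isogeny acts essentially transparently on this datum, and this is what makes the two buildings canonically identified.

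First I would set up the relevant structure. Fix a maximal $K$-split torus $S' \subset \bG'$. Since $\ker\pi$ is finite and central it is contained in every maximal torus of $\bG'$, so $S := \pi(S')$ is a maximal $K$-split torus of $\bG$; moreover $\pi$ induces an isomorphism of relative root systems $\Phi(\bG',S') \xrightarrow{\sim} \Phi(\bG,S)$, isomorphisms of root groups $U'_\alpha \xrightarrow{\sim} U_\alpha$ carrying each Bruhat--Tits filtration step $U'_{\alpha,r}$ onto $U_{\alpha,r}$, and — being an isogeny — a map $X_*(S') \to X_*(S)$ with finite kernel and cokernel, hence an isomorphism $X_*(S')\otimes_\bZ\bR \xrightarrow{\sim} X_*(S)\otimes_\bZ\bR$ and likewise on the extended parts $\Hom_\bZ(X^*_K(\bG'),\bR) \xrightarrow{\sim} \Hom_\bZ(X^*_K(\bG),\bR)$. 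Recall that the apartment $A(\bG',S')$ (resp.\ $A(\bG,S)$) is an affine space under $X_*(S')\otimes_\bZ\bR$ (resp.\ $X_*(S)\otimes_\bZ\bR$) whose system of affine root hyperplanes and polysimplicial structure are cut out by the filtered root groups, and that $\Delta(\bG',K)$ is the quotient of $\bG'(K)\times A(\bG',S')$ by the standard Bruhat--Tits gluing relation. It then follows from the compatibilities above that there is a unique affine bijection $\beta_0 \colon A(\bG',S') \to A(\bG,S)$ matching the two systems of affine root hyperplanes, the polysimplicial structures, and the actions of the torus normalizers via $N_{\bG'}(S')(K) \to N_{\bG}(S)(K)$; after rescaling the Weyl-invariant scalar product on $A(\bG',S')$ — this is the ``suitable normalization of the metric'' in the statement — $\beta_0$ becomes an isometry.

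Next I would extend $\beta_0$ to a map $\beta \colon \Delta(\bG',K) \to \Delta(\bG,K)$ by the formula $\beta([g',x]) = [\pi(g'),\beta_0(x)]$ for $g'\in\bG'(K)$ and $x\in A(\bG',S')$, and check that it is well defined and bijective. For surjectivity, every apartment of $\Delta(\bG,K)$ is $A(\bG,S_1)$ for some maximal $K$-split torus $S_1$; choosing a maximal $K$-split torus $S_1'\subset\pi^{-1}(S_1)$ one gets $\pi(S_1') = S_1$, and since all maximal $K$-split tori of $\bG'$ (resp.\ of $\bG$) are $\bG'(K)$-conjugate (resp.\ $\bG(K)$-conjugate), one obtains $S_1 = \pi(g')\,S\,\pi(g')^{-1}$ for some $g'\in\bG'(K)$, so that apartment equals $\pi(g')\cdot A(\bG,S) = \pi(g')\cdot\beta_0\big(A(\bG',S')\big)$; hence $\beta$ is onto. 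Well-definedness and injectivity reduce, via the Bruhat--Tits description of the gluing relation, to the statement that $\pi$ carries the parahoric-type stabilizer $\bG'_x(K)$ of each $x\in A(\bG',S')$ into $\bG_{\beta_0(x)}(K)$ and conversely detects membership, which is exactly what the compatibility $U'_{\alpha,r}\xrightarrow{\sim}U_{\alpha,r}$ provides. The asserted $\bG'(K)$- versus $\bG(K)$-equivariance is then immediate from the defining formula, as is the fact that $\beta$ is an isometry after the normalization above, since it is so on each apartment.

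I expect the main obstacle to be bookkeeping rather than conceptual: carrying out the identification of the two Bruhat--Tits gluing data precisely enough to conclude that $\beta$ is a bijection (and not merely an isometric embedding onto a sub-building), and fixing the metric normalization. In practice this step is most efficiently handled by quoting \cite[\S2]{Landvogt} and \cite{Bruhat-TitsII} rather than reproving it, in keeping with the treatment of the preceding functoriality statements.
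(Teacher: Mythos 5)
Your proposal is correct and matches the paper's treatment: the paper proves this proposition purely by citation (Bruhat--Tits IV, 4.2.18, and Landvogt, Proposition 2.1.7), and your sketch is an accurate outline of exactly the valued-root-datum argument those references carry out (identification of relative root systems, root groups with their filtrations, and apartments, then gluing), with your closing remark to quote those sources being precisely what the paper does.
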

\begin{proof}
See \cite[4.2.18]{Bruhat-TitsII} or \cite[Proposition 2.1.7]{Landvogt}.     
\end{proof}

\subsection{Harmonic maps towards Euclidean buildings}

\begin{defn}
Let $(M,g)$ be a Riemannian manifold. Let $\Delta$ be a locally compact Euclidean building. Let $u \colon M \to \Delta$ be a map. Its regular locus is by definition the open subset $\Reg(u) \subset M$ consisting of points $x \in M$ admitting a neighborhood $U \subset M$ such that $u(U) \subset \Delta$ is contained in a single apartment of $\Delta$. The complementary $\Sing(u) := M \setminus \Reg(u)$ in $M$ is called the singular locus of $u$.     
\end{defn}

\begin{thm}[{\cite[Theorem 6.4]{Gromov-Schoen}}] \label{Gromov-Schoen-regular}
Let $M$ be a Riemannian manifold. Let $\Delta$ be a locally compact Euclidean building. Let $u \colon M \to \Delta$ be a harmonic map. Then the singular locus of $u$ has Hausdorff codimension at least $2$.
\end{thm}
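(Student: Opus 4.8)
The plan is to reproduce the strategy of Gromov--Schoen \cite{Gromov-Schoen}: attach to $u$ an order (frequency) function through a monotonicity formula, extract homogeneous tangent maps, identify the regular set $\Reg(u)$ with the locus where the order equals $1$, and run a Federer-type dimension reduction on the complement. Concretely, write $n=\dim M$, and for $x_0\in M$ and small $r>0$ put $E(x_0,r)=\int_{B_r(x_0)}e^u\,\dvol_g$ and $I(x_0,r)=\int_{\partial B_r(x_0)}d^2(u,u(x_0))\,d\sigma$. Using that $u$ is locally Lipschitz (\Cref{Dirichlet-NPC}) and weakly harmonic, together with the convexity of $y\mapsto d^2(y,u(x_0))$ on the NPC target, one derives the inner- and outer-variation identities for $u$ and concludes that $r\mapsto\mathrm{Ord}(x_0,r):=rE(x_0,r)/I(x_0,r)$ is monotone non-decreasing. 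One then sets $\mathrm{Ord}_u(x_0):=\lim_{r\to0}\mathrm{Ord}(x_0,r)$, checks that $\mathrm{Ord}_u(x_0)\ge1$ wherever $u$ is non-constant near $x_0$, and that $x\mapsto\mathrm{Ord}_u(x)$ is upper semicontinuous; this is where most of the estimates of \cite[\S2--3]{Gromov-Schoen} (compare \cite{KoSc93}) go.

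Next I would construct tangent maps. Blowing up $u$ around $x_0$ --- precomposing with $x\mapsto\exp_{x_0}(rx)$ and rescaling the target metric by the factor $(r^{1-n}I(x_0,r))^{-1/2}$ --- the uniform energy and Lipschitz bounds of \Cref{energy_control_Lipschitz} together with the compactness in \Cref{limit_of_harmonic_is_sometime_harmonic} yield, along a subsequence $r_i\to0$, a non-constant harmonic map $u_\ast\colon\bR^n\to\Delta_\ast$ into a tangent cone $\Delta_\ast$ of $\Delta$ --- again a locally compact Euclidean building --- which is homogeneous of some degree $\alpha=\mathrm{Ord}_u(x_0)$. The decisive local input, and the only place where the simplicial/flat structure of the building (rather than mere non-positive curvature) is exploited, is the classification of degree-one homogeneous harmonic maps: any homogeneous harmonic map $\bR^n\to\Delta_\ast$ of degree $1$ has image inside a single apartment and is affine there. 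Consequently $\mathrm{Ord}_u(x_0)=1$ if and only if $u$ carries a neighborhood of $x_0$ into a single apartment, i.e. if and only if $x_0\in\Reg(u)$; hence $\Sing(u)=\{x\in M:\mathrm{Ord}_u(x)>1\}$, which is relatively closed in $M$ by upper semicontinuity.

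Finally I would run the dimension reduction. Stratify $\Sing(u)=\bigcup_j S_j$, where $S_j$ consists of the points at which some tangent map is invariant under translations by a linear subspace $L\subseteq\bR^n$ with $\dim L\le j$ but by none with $\dim L>j$. If $u_\ast$ is invariant under such an $L$, it descends to a non-constant homogeneous harmonic map on $\bR^n/L\cong\bR^{\,n-\dim L}$ of degree $\alpha>1$ which is singular only at the origin; since every harmonic map from $\bR^1$ is a geodesic and hence of order one, this forces $n-\dim L\ge2$, i.e. $\dim L\le n-2$. Feeding this bound into the standard Federer reduction --- using the upper semicontinuity of the order, the compactness of tangent maps, and a covering argument --- gives $\dim_{\mathcal H}S_j\le j\le n-2$ for every $j$, hence $\dim_{\mathcal H}\Sing(u)\le n-2$, which is the assertion. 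The hard part is the local structure theory underlying the second paragraph --- controlling homogeneous harmonic maps near the walls of a Euclidean building and proving that order-one homogeneous maps are affine into a single apartment --- which is the technical heart of \cite[\S6]{Gromov-Schoen}.
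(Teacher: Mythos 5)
The paper does not give a proof of this statement: it is quoted verbatim as \cite[Theorem 6.4]{Gromov-Schoen}, so the "paper's proof" is just the citation. Your sketch follows exactly the original Gromov--Schoen strategy (monotonicity of the order function, homogeneous tangent maps into tangent-cone buildings, identification of $\Reg(u)$ with the order-one locus via the intrinsic-differentiability/affine classification, then Federer dimension reduction), so it is the same approach as the cited source, with the genuinely technical steps correctly attributed to \cite[\S\S 2--3, 6]{Gromov-Schoen} rather than reproved.
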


\begin{thm}\label{existence_harmonic_nonarchimedean_quasiunipotentmonodromies}
Let $(\bar X,D)$ be a projective log smooth variety and set $X=\bar X\setminus D$. Let $K$ be a nonarchimedean local field. Let $\G$ be a connected reductive group over $K$. Let $\rho \colon \pi_1(X) \to \G(K)$ be a representation with Zariski dense image and quasiunipotent local monodromy. Let $\Delta(\G,K)$ be the (extended) Euclidean Bruhat-Tits building associated to $\G(K)$. Fix a Poincaré-type complete Kähler metric on $X$. Then there exists a $\rho$-equivariant harmonic map $\tilde{X} \to \Delta(\G,K)$ of finite energy.    
\end{thm}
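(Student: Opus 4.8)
The plan is to deduce Theorem~\ref{existence_harmonic_nonarchimedean_quasiunipotentmonodromies} from the general existence result \Cref{existence_harmonic_finite_energy} by verifying its two hypotheses. Since $\rho$ has Zariski dense image in a connected reductive group $\G$, the action of $\rho(\pi_1(X))$ on the building $\Delta(\G,K)$ has no fixed point in $\partial\Delta(\G,K)$: a fixed point at infinity would force the image to lie in a proper parabolic subgroup, contradicting Zariski density (this is the standard argument, e.g. as in \cite{Gromov-Schoen}). So the only real content is the construction of a \emph{finite energy} $\rho$-equivariant locally Lipschitz map $\tilde X\to\Delta(\G,K)$. Once such a map exists, \Cref{existence_harmonic_finite_energy} produces the desired harmonic map of least finite energy.

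For the finite energy map, first I would reduce to the case of unipotent local monodromy. By quasiunipotence, after a finite \'etale cover $X'\to X$ (corresponding to a finite-index normal subgroup of $\pi_1(X)$, adapted to a common order $n$ of the eigenvalues of the local monodromies), the pullback representation $\rho'\colon\pi_1(X')\to\G(K)$ has unipotent local monodromy. Here $X'$ extends to a log smooth projective $(\bar X',D')$, and a Poincar\'e-type metric on $X$ pulls back to one on $X'$ (the cover is \'etale, so the metric is locally unchanged and the finiteness of area/volume is preserved since the cover is finite). By \Cref{centerofmass_finite_energy}, a finite energy $\rho'$-equivariant locally Lipschitz map on $\widetilde{X'}=\tilde X$ yields a finite energy $\rho$-equivariant one on $\tilde X$, so it suffices to treat the unipotent case.

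In the unipotent case the construction is local near the boundary and explicit. Cover $\bar X$ by finitely many admissible polydisks $U_j\cong\bD^n$; on the $U_j$ disjoint from $D$ one can take any smooth equivariant map (finite energy is automatic since $U_j$ has finite volume and the map has bounded derivative on the compact closure). On a polydisk $U$ meeting $D$, say $D\cap U=\{z_1\cdots z_k=0\}$, the local monodromies around the $\{z_i=0\}$ are commuting unipotent elements $u_1,\dots,u_k\in\G(K)$; fix a point $P_0\in\Delta(\G,K)$ and define the equivariant map on $\widetilde{U\setminus D}$ by sending the point with logarithmic coordinates to the translate of $P_0$ by $\prod_i u_i^{t_i}$ using the one-parameter subgroups through the $u_i$ (well-defined up to the center of mass of a finite set of choices, cf.\ the averaging of \Cref{Lipschitz_centerofmass}), and interpolate these local models via a partition of unity and iterated center-of-mass (\Cref{centerofmass_leastenergy}), which preserves finite energy and local Lipschitz continuity. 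The key energy estimate is that, with respect to the Poincar\'e metric $\omega_P$ on $\bD^\ast$ (whose relevant feature is that $|d\log|z||^2$ is $\omega_P$-integrable near $0$), the displacement of this model map grows like $\log|z_i|$ in the directions transverse to $D$, and its energy density is bounded by a constant times $\sum_i |dz_i|^2/(|z_i|^2(\log|z_i|)^2)$, which is $\omega_P$-integrable; hence the total energy is finite. Patching the finitely many local contributions gives a global finite energy $\rho$-equivariant locally Lipschitz map $\tilde X\to\Delta(\G,K)$, and \Cref{existence_harmonic_finite_energy} finishes the proof.

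The main obstacle is the energy estimate for the boundary model map: one must check that the natural ``$\prod u_i^{\log|z_i|}$'' ansatz built from the unipotent local monodromies, when transported to the building, has energy density controlled by the Poincar\'e metric's integrable tail, and that the partition-of-unity interpolation between neighboring polydisks (where the chosen base points and one-parameter subgroups may differ) does not destroy finite energy — this is where convexity of the distance function and the Lipschitz estimate for the center of mass (\Cref{Lipschitz_centerofmass}, \Cref{centerofmass_leastenergy}) are essential. Everything else (no fixed point at infinity from Zariski density, the reduction to unipotent monodromy via finite covers and \Cref{centerofmass_finite_energy}) is routine.
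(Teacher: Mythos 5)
Your reduction to \Cref{existence_harmonic_finite_energy} breaks down at the very first hypothesis. You claim that Zariski density of $\rho(\pi_1(X))$ in $\G$ rules out fixed points on $\partial\Delta(\G,K)$, but the theorem concerns the \emph{extended} building of a reductive group: it contains a Euclidean factor $X_K^\ast(\G)\otimes_\bZ\bR$ on which $\G(K)$ acts by translations, and translations fix every point of the visual boundary of that factor. Already for $\G=\G_m$ (or $\GL_r$, the case of primary interest in this paper) the action fixes points at infinity no matter how Zariski dense the image is; the dictionary ``fixed point at infinity $\Rightarrow$ image in a proper parabolic'' is only valid for the reduced building of a semisimple group. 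This is precisely why the paper's proof does not verify the hypotheses of \Cref{existence_harmonic_finite_energy} directly: it splits $\G$ via the central isogeny $\G\to\T\times\G^{ad}$, identifies $\Delta(\G,K)\cong\Delta(\T,K)\times\Delta(\G^{ad},K)$ equivariantly using \Cref{Bruhat-Tits-product} and \Cref{Bruhat-Tits-isogeny}, handles the semisimple factor by citing \cite{BDDM} (with \cite{Jost-Zuo00}, \cite{Corlette-Simpson} for earlier cases), and handles the torus factor by a completely different argument: quasiunipotent local monodromy forces the translation action to factor through $\pi_1(\bar X)$, and classical Hodge theory on the compact $\bar X$ produces the finite-energy equivariant harmonic map. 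Your proposal never engages with the central torus, and without some such device the cited existence theorem simply does not apply.

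Your construction of the finite-energy initial map is also based on an archimedean picture that is wrong over a non-archimedean field. A unipotent (indeed quasiunipotent) element of $\G(K)$ generates a \emph{bounded} subgroup ($u^n=\sum_k\binom{n}{k}(u-1)^k$ has bounded entries), so commuting local monodromies fix a point of the building by the Bruhat--Tits fixed point theorem --- this is exactly the mechanism behind \Cref{flat_norm_quasiunipotent}. There are no one-parameter subgroups through the $u_i$ acting with displacement growing like $\log|z_i|$, so the ``key energy estimate'' you single out as the main obstacle is not the right statement; the correct local model near $D$ is (essentially) constant at a fixed point of the local monodromy, which makes the finite-energy estimate trivial. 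With that correction, and after splitting off the central torus as above, your strategy for the semisimple factor (build a bounded local model, patch by center of mass as in \Cref{centerofmass_leastenergy}, then invoke \Cref{existence_harmonic_finite_energy}) is essentially the route taken in the literature the paper cites, but as written the proposal has two genuine gaps: the false no-fixed-point-at-infinity claim for the extended building, and the misconceived boundary model.
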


When $\G$ is simple, a proof of this result is given in \cite{Jost-Zuo00}. For $\G = \mathrm{SL}_2$, a detailed proof appears in \cite{Corlette-Simpson}. The case where $\G$ is semisimple has been recently addressed in \cite{Daskalopoulos-Mese, BDDM}, in the more general setting of harmonic maps with possibly infinite energy.
\begin{proof}

When $\G$ is semisimple, the existence of such a map follows from \cite{BDDM}. When $\G = \T$ is a torus, the building $\Delta(\T,K) = \left( X_K^\ast(G) \otimes_\bZ \bR \right) ^\ast$ is a finite-dimensional real vector space on which $\T(K)$ acts via the translation action of the canonical lattice. Since the representation $\rho$ has quasiunipotent local monodromy, it factorizes through the surjection $\pi_1(X) \to \pi_1(\bar X)$. The existence of a $\rho$-equivariant harmonic map $\widetilde{\bar X} \to \Delta(\T,K)$ follows from classical Hodge theory for $H^1(\bar X, \bZ)$. The map $\widetilde{\bar X} \to \Delta(\T,K)$ has finite energy since $\bar X$ is compact, and its pull-back to $\tilde{X}$ is therefore a $\rho$-equivariant harmonic map of finite energy.
For a general connected reductive group $\G$, let $Z(\G)$ and $\cD(\G)$ denote the center and the derived subgroup of $\G$ respectively. Since $\G$ is reductive, $\T := \G \slash \cD(\G)$ is a torus, $\G^{ad} := \G \slash Z(\G)$ is a semisimple group and the canonical morphism $\G \to \T \times \G^{ad}$ is a central isogeny. Thanks to \Cref{Bruhat-Tits-product} and \Cref{Bruhat-Tits-isogeny}, this induces an isometrical bijection $\Delta(\bG, K) = \Delta(\T, K) \times \Delta(\G^{ad}, K)$ which is compatible with respect to the $\bG(K)$-action on the left hand side and the $\T(K) \times \G^{ad}(K)$-action on the right hand side.
Let $u_1 \colon \tilde{X} \to \Delta(\T,K)$ (resp. $u_2 \colon \tilde{X} \to \Delta(\G^{ad},K)$) be a harmonic map  of finite energy which is 
equivariant with respect to the induced representation $\pi_1(X) \to \T(K)$ (resp. $\pi_1(X) \to \G^{ad}(K)$). Then $(u_1, u_2) \colon \tilde{X} \to \Delta(\bG, K) = \Delta(\T, K) \times \Delta(\G^{ad}, K)$ is a 
$\rho$-equivariant harmonic map  of finite energy.    
\end{proof}

\subsection{Pluriharmonic maps}

\begin{defn}\label{def-pluriharmonicity} Let $M$ be a complex analytic space and $(\Delta,d)$ be a complete metric space. A map $u\colon M \to \Delta$ is called pluriharmonic if for any Kähler manifold $(N,h)$ equipped with a holomorphic map $f\colon N \to M$ the composite map $u \circ f \colon N \to \Delta$ is harmonic.
\end{defn}

In particular, a pluriharmonic map $M \to \Delta$ is harmonic for any choice of a Kähler metric on $M$.

\begin{thm} \label{pluriharmonic}
In the setting of Theorem \ref{existence_harmonic_nonarchimedean_quasiunipotentmonodromies}, every equivariant harmonic map $\tilde{X} \to \Delta$ of finite energy is pluriharmonic.
\end{thm}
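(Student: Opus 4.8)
The plan is to reduce the pluriharmonicity statement to the known pluriharmonicity of equivariant harmonic maps to buildings in the case where the target group is \emph{semisimple}, by exploiting the product decomposition of the Bruhat--Tits building that was set up in the proof of \Cref{existence_harmonic_nonarchimedean_quasiunipotentmonodromies}. Recall that for a general connected reductive $\G$ over $K$ we have the central isogeny $\G\to \T\times\G^{ad}$ with $\T=\G/\cD(\G)$ a torus and $\G^{ad}$ semisimple, and by \Cref{Bruhat-Tits-product} and \Cref{Bruhat-Tits-isogeny} an isometric identification $\Delta(\G,K)=\Delta(\T,K)\times\Delta(\G^{ad},K)$ compatible with the actions. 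So any $\rho$-equivariant harmonic map $u\colon \tilde X\to\Delta(\G,K)$ of finite energy decomposes as $u=(u_1,u_2)$ with $u_1\colon\tilde X\to\Delta(\T,K)$ and $u_2\colon\tilde X\to\Delta(\G^{ad},K)$ equivariant for the induced representations. A map to a product of NPC spaces is harmonic (resp.\ pluriharmonic) if and only if each factor is, since composing with a holomorphic $f\colon N\to\tilde X$ and the projections commutes, and the energy of $u\circ f$ is the sum of the energies of $u_i\circ f$; so it suffices to treat the torus factor and the semisimple factor separately.

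For the semisimple factor, pluriharmonicity of finite-energy equivariant harmonic maps to the building $\Delta(\G^{ad},K)$ is exactly what is proven in the cited references underlying \Cref{existence_harmonic_nonarchimedean_quasiunipotentmonodromies}: for $\G^{ad}$ semisimple this is contained in the work of Gromov--Schoen \cite{Gromov-Schoen} (where pluriharmonicity of the harmonic map is established, being a consequence of the Bochner-type formula valid on the regular locus, whose complement has Hausdorff codimension $\geq 2$ by \Cref{Gromov-Schoen-regular}), and in the quasiprojective Poincar\'e-metric setting it follows from Daskalopoulos--Mese and Brotbek--Daskalopoulos--Deng--Mese \cite{Daskalopoulos-Mese,BDDM}. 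Concretely, one composes $u_2$ with an arbitrary holomorphic $f\colon (N,h)\to\tilde X$ from a K\"ahler manifold; since $u_2$ is harmonic of finite energy and the regularity theory gives a singular set of codimension $\geq 2$, the standard argument (differentiating the energy, using the Siu--Sampson/Gromov--Schoen pluriharmonicity identity, and extending across the small singular set using a capacity/codimension argument) shows $u_2\circ f$ is harmonic, hence $u_2$ is pluriharmonic. For the torus factor, recall from the proof of \Cref{existence_harmonic_nonarchimedean_quasiunipotentmonodromies} that the quasiunipotence of local monodromy forces $\rho$ to factor through $\pi_1(X)\to\pi_1(\bar X)$, and $u_1\colon\tilde X\to\Delta(\T,K)$ is the pullback of a harmonic map $\widetilde{\bar X}\to\Delta(\T,K)$ to the finite-dimensional real vector space $\Delta(\T,K)$ built from classical Hodge theory: its components are (real and imaginary parts of) pullbacks of holomorphic $1$-forms on $\bar X$, i.e.\ pluriharmonic real functions. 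Pulling back along any holomorphic $f\colon N\to\tilde X$ (which factors through $\widetilde{\bar X}$) keeps the components pluriharmonic, so $u_1$ is pluriharmonic.

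Assembling the two cases, $u=(u_1,u_2)$ is pluriharmonic, which proves \Cref{pluriharmonic}. The one point requiring care---and the main (mild) obstacle---is the reduction from $\G$ to $\T\times\G^{ad}$: one must check that a \emph{finite-energy harmonic} map to $\Delta(\G,K)$ really does decompose into finite-energy harmonic maps to the two factors, which follows because the identification of buildings is an isometry of NPC spaces commuting with the actions, so projection to a convex factor of a product NPC space sends equivariant harmonic maps to equivariant harmonic maps and splits the energy additively; and symmetrically that pluriharmonicity of the two factors implies pluriharmonicity of the pair, which is immediate from \Cref{def-pluriharmonicity} since harmonicity on a K\"ahler manifold is detected factorwise. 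Everything else is a direct appeal to the regularity and Bochner-formula technology already invoked for the existence statement, now used to upgrade harmonicity to pluriharmonicity exactly as in \cite{Gromov-Schoen} and \cite{BDDM}.
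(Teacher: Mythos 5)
There is a genuine gap in your treatment of the torus factor. The theorem quantifies over \emph{every} equivariant harmonic map of finite energy, but your argument for $u_1\colon\tilde X\to\Delta(\T,K)$ silently replaces the arbitrary map by the specific one constructed in the existence proof of \Cref{existence_harmonic_nonarchimedean_quasiunipotentmonodromies}: you assert that $u_1$ "is the pullback of a harmonic map $\widetilde{\bar X}\to\Delta(\T,K)$ built from classical Hodge theory" whose components come from holomorphic $1$-forms on $\bar X$. Quasiunipotence does force the translation representation to factor through $\pi_1(\bar X)$, but it does not identify an arbitrary finite-energy equivariant harmonic map $u_1$ on $\tilde X$ with that Hodge-theoretic map, nor does it make $u_1$ descend to $\widetilde{\bar X}$ (equivariant harmonic maps need not be unique, and $\tilde X$ is not a cover of $\bar X$). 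The correct statement for this factor is that a finite-energy harmonic map from a complete K\"ahler manifold to $\bR^n$ (with a translation action) is pluriharmonic, which requires its own $L^2$ Bochner/integration-by-parts argument with cutoffs — or, more simply, one should not split at all: the route the paper takes is to apply Gromov--Schoen \cite[Theorem 7.3]{Gromov-Schoen} directly to the full extended building $\Delta(\G,K)$, so the reductive-versus-semisimple decomposition (which was needed for \emph{existence} because \cite{BDDM} treats semisimple groups) is unnecessary for pluriharmonicity.

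A second point you gloss over, and which the paper addresses explicitly, is definitional: Gromov--Schoen's pluriharmonicity is a priori weaker than \Cref{def-pluriharmonicity}, which demands harmonicity of $u\circ f$ for \emph{every} holomorphic $f\colon N\to\tilde X$ from a K\"ahler manifold. Your "capacity/codimension argument" does not suffice here, because the preimage $f^{-1}(\Sing(u))$ under an arbitrary holomorphic map need not have codimension two in $N$ — indeed $f$ may map $N$ entirely into the singular set — so \Cref{Gromov-Schoen-regular} alone does not close the argument. This is precisely why the paper invokes the equivalence of the two notions via \cite[Proposition 1.3.6]{Eyssidieux} or \Cref{extension_pluriharmonic_map}, whose proof approximates $f$ by maps whose preimage of the bad set is pluripolar and then extends by Brelot plus the maximum principle. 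With the torus factor handled correctly and this equivalence cited, your argument can be repaired, but as written both steps are gaps.
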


\begin{proof}
This is due to Gromov-Schoen \cite[Theorem 7.3]{Gromov-Schoen}. Note however that the definition of pluriharmonicity in  \cite{Gromov-Schoen} is a priori weaker than our Definition \ref{def-pluriharmonicity}, but turns out to be equivalent by \cite[Proposition 1.3.6]{Eyssidieux} or Theorem \ref{extension_pluriharmonic_map} below.
\end{proof}

\begin{prop}[{See \cite[Proposition 1.3.3]{Eyssidieux}}]\label{Singular_locus_pluripolar}
    Let $M$ be a complex manifold. Let $\Delta$ be a locally compact Euclidean building. Let $u \colon M \to \Delta$ be a pluriharmonic map. Then the singular locus of $u$ is contained in a (strict) complex analytic subset of $M$.
\end{prop}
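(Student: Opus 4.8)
\textbf{Proof proposal for Proposition \ref{Singular_locus_pluripolar}.}

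The plan is to reduce the statement to the regularity theory for harmonic maps to Euclidean buildings recorded in \Cref{Gromov-Schoen-regular}, combined with the pluriharmonicity hypothesis. Since the statement is local on $M$, I would fix a point $x_0\in M$, a small coordinate polydisk $U\ni x_0$, and work entirely on $U$; it suffices to show that $\Sing(u)\cap U$ is contained in a complex analytic subset of $U$. The key dichotomy is: either $u|_U$ factors (locally) through a single apartment, in which case $\Sing(u)\cap U=\varnothing$ and there is nothing to prove, or it does not, and then I must locate the singular set inside a proper analytic subset.

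First I would invoke \Cref{Gromov-Schoen-regular}: for any choice of Kähler metric $g$ on $U$, the harmonic map $u\colon (U,g)\to\Delta$ has singular locus of Hausdorff codimension at least $2$. In particular $\Sing(u)$ has real codimension $\geq 2$, hence empty interior, and its complement $\Reg(u)$ is open and dense. On $\Reg(u)$ the map $u$ is locally a harmonic map into a single affine apartment $\bA\cong\bR^N$, i.e. locally an $\bR^N$-valued harmonic (in the classical sense) map; by pluriharmonicity (applied to the identity $f=\id_U$ after choosing $g$ Kähler, and more generally to all holomorphic maps into $U$) each component is a pluriharmonic function on $\Reg(u)$, so locally the real part of a holomorphic function, and in particular $u|_{\Reg(u)}$ is real-analytic. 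The main step is then to upgrade this to control of $\Sing(u)$. Here I would follow Eyssidieux's argument \cite[Proposition 1.3.3]{Eyssidieux}: using the pluriharmonicity, one extracts from $u$ a (multivalued) holomorphic map near each regular point valued in the complexified apartment $\bA\otimes\bC$, and the differential of $u$ gives a holomorphic section of a bundle whose vanishing — or more precisely whose failure to generate a full-rank flat frame consistent with a single apartment — cuts out an analytic set. Concretely, one builds near $x_0$ a finite collection of holomorphic $1$-forms (the $(1,0)$-parts of the differentials of the local apartment coordinates along $\Reg(u)$, which by pluriharmonicity are closed holomorphic forms and extend across the codimension-$\geq 2$ singular set by Hartogs/Riemann extension), and shows that $\Sing(u)$ is contained in the analytic set where the monodromy of the resulting flat structure forces a wall-crossing; since the local building structure is simplicial, crossing into a new chamber happens along the preimage of the walls, which are affine-linear, pulled back by a holomorphic map, hence analytic.

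The hard part will be making precise the passage from "$u$ is, on the dense open $\Reg(u)$, a pluriharmonic map into a flat apartment" to "the locus where this local apartment structure breaks is analytic" — i.e. globalizing the local apartment coordinates as (multivalued) holomorphic data across the a priori only-Hausdorff-codimension-$2$ singular set and identifying $\Sing(u)$ with a holomorphically-defined degeneracy locus. The two ingredients that make this work are: (i) the codimension bound of \Cref{Gromov-Schoen-regular}, which permits Hartogs-type extension of the holomorphic forms obtained on $\Reg(u)$ across $\Sing(u)$; and (ii) pluriharmonicity (\Cref{def-pluriharmonicity}, guaranteed in our situation by \Cref{pluriharmonic}), which supplies the holomorphicity in the first place and forces the extended forms to be compatible, so that their common ``branch locus'' — necessarily an analytic subset of $M$ — contains $\Sing(u)$. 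Once this is set up, the fact that $\Sing(u)$ is a \emph{strict} analytic subset is immediate from density of $\Reg(u)$, which we already have from \Cref{Gromov-Schoen-regular}. I would present the argument by citing \cite[Proposition 1.3.3]{Eyssidieux} for the technical heart and indicating how \Cref{Gromov-Schoen-regular} and \Cref{pluriharmonic} feed into it.
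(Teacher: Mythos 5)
The paper offers no argument for this proposition at all: it is stated with the citation to \cite[Proposition 1.3.3]{Eyssidieux} and used as a black box, so your decision to defer the "technical heart" to that reference is, at the level of what the paper actually does, the same move. In that sense there is nothing in the paper to compare your sketch against step by step.

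That said, the heuristic you wrap around the citation contains a step that would fail if you tried to promote it to a proof. You claim that $\Sing(u)$ is caught by "the preimage of the walls, which are affine-linear, pulled back by a holomorphic map, hence analytic." But $u$ is not holomorphic; on the regular set the apartment coordinates of $u$ are only pluriharmonic (real parts of holomorphic functions), so the preimage of a wall under $u$ is the zero set of a pluriharmonic function, i.e.\ generically a \emph{real} hypersurface, not a complex analytic set. Moreover $u^{-1}(\mathrm{walls})$ is the wrong locus anyway: a point whose image lies on a wall is perfectly regular as long as the image stays locally inside a single apartment, so the singular set is a much smaller subset of $u^{-1}(\mathrm{walls})$ and cannot be bounded by it. The actual mechanism (in Eyssidieux, and the pattern this paper itself follows later in \Cref{sect:KZ foliation}) is to work with the \emph{holomorphic} spectral data: the $(1,0)$-parts of the differentials of the local apartment coordinates are multivalued holomorphic one-forms on $\Reg(u)$, their symmetric functions (or suitable differences attached to wall functionals) are single-valued, locally bounded by the Lipschitz bound for harmonic maps (\Cref{energy_control_Lipschitz}), hence extend across the Hausdorff-codimension-$\geq 2$ set $\Sing(u)$ by a Shiffman-type removable singularity theorem \cite{Schiffman68} (not bare Hartogs, and only after passing to single-valued symmetric expressions — your sketch glosses over the multivaluedness); one then shows that genuine branching across a wall forces a coincidence/vanishing in this holomorphic data, so $\Sing(u)$ sits inside the resulting degeneracy locus, which is analytic and proper since $\Reg(u)$ is dense by \Cref{Gromov-Schoen-regular}. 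That identification is exactly the nontrivial content you are citing away, so as a self-contained argument your sketch has a gap precisely there; as a citation it coincides with the paper.
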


\begin{thm} \label{extension_pluriharmonic_map}
Let $(M,g)$ be a complex manifold. Let $(\Delta, d)$ be a NPC complete metric space. Let $Z \subset M$ be a closed pluripolar subset. Let $u \colon M \to \Delta$ be a locally Lipschitz continuous map.  Then $u \colon M \to \Delta$ is pluriharmonic as soon as it is pluriharmonic in restriction to $M \setminus Z$.
\end{thm}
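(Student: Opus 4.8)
The plan is to reduce the pluriharmonicity of $u$ on all of $M$ to its pluriharmonicity away from $Z$ by a Hartogs-type extension argument, using that $Z$ is pluripolar and that $u$ is already assumed locally Lipschitz (hence finite energy) on all of $M$. The statement to be proved is local on $M$, so I may assume $M$ is a polydisk $\bD^n$ and, by Definition~\ref{def-pluriharmonicity}, it suffices to show that for every Kähler manifold $(N,h)$ and holomorphic $f\colon N\to M$ the composite $u\circ f$ is harmonic. Restricting along $f$, and using that $f^{-1}(Z)$ is again pluripolar in $N$ (or empty, or all of $N$, in which case there is nothing to prove or the result is immediate), we reduce to the following key case: $u\colon N\to\Delta$ is locally Lipschitz on the Kähler manifold $N$, pluripolar $Z\subset N$ is closed, and $u|_{N\setminus Z}$ is harmonic; show $u$ is harmonic on $N$.

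\textbf{Key steps.} First I would record that $u$, being locally Lipschitz, lies in $W^{1,2}_{\mathrm{loc}}(N,\Delta)$, so the Dirichlet problem of Theorem~\ref{Dirichlet-NPC} applies on any relatively compact Lipschitz domain $\Omega\Subset N$. Fix such an $\Omega$ and let $v\in W^{1,2}(\Omega,\Delta)$ be the energy minimizer with the same trace as $u$ on $\partial\Omega$; $v$ is harmonic and locally Lipschitz, and I must show $v=u$. By Proposition~\ref{distance_subharmonic} the function $w=d_\Delta(u,v)$ is subharmonic on $\Omega\setminus Z$ (both maps being harmonic there), and it is continuous on all of $\Omega$ since $u,v$ are. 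The second step is the crucial one: I claim $w$ is in fact subharmonic across $Z$. Since $Z$ is pluripolar it is contained in the $-\infty$ locus of some plurisubharmonic function $\varphi$ on $\Omega$ (shrinking $\Omega$ if needed); for $\varepsilon>0$ the function $w+\varepsilon\varphi$ is subharmonic on $\Omega\setminus Z$, is $-\infty$ along $Z$, and is bounded above by a harmonic function near $\partial\Omega$, so by the standard removable-singularity criterion for subharmonic functions with values $-\infty$ on a polar set, $w+\varepsilon\varphi$ extends to a subharmonic function on $\Omega$; letting $\varepsilon\to 0$ and using that $w$ is continuous (hence the upper semicontinuous regularization of $\lim_{\varepsilon\to 0}(w+\varepsilon\varphi)$ is $w$ itself) shows $w$ is subharmonic on $\Omega$. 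The third step is the maximum principle: $w\ge 0$ everywhere, $w=0$ on $\partial\Omega$ (same trace, boundary values agree continuously), and $w$ subharmonic on $\Omega$ force $w\equiv 0$, i.e. $u=v$ on $\Omega$. Hence $u|_\Omega$ is harmonic, and since $\Omega$ was an arbitrary Lipschitz domain, $u$ is harmonic on $N$. Pulling this back through arbitrary $f\colon N\to M$ as arranged above gives pluriharmonicity of $u$ on $M$.

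\textbf{Main obstacle.} The delicate point is the removable-singularity step for the subharmonic function $w$ across the pluripolar set $Z$: one needs to know that a function which is subharmonic on $\Omega\setminus Z$, locally bounded above near $Z$, and continuous (so certainly not identically $-\infty$ on any component), extends subharmonically across the complex-pluripolar—hence real-polar, of codimension $\ge 2$ in real terms—set $Z$. This is classical when $Z$ has real codimension $\ge 2$ and locally bounded data, but here the cleanest route is the plurisubharmonic trick above, which only requires $Z$ pluripolar and $w$ bounded above; the boundedness above of $w=d_\Delta(u,v)$ on a relatively compact domain is immediate from the local Lipschitz continuity (hence local boundedness) of both $u$ and $v$. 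One must also be slightly careful that harmonicity of a map into an NPC space can indeed be tested via the energy-minimizing property on small Lipschitz domains — this is exactly the content of Theorem~\ref{Dirichlet-NPC} together with the definition of harmonic map — and that $u|_{N\setminus Z}$ harmonic plus $u$ globally finite energy is what licenses comparing $u$ with the minimizer $v$ over $\Omega$ rather than only over $\Omega\setminus Z$. Beyond that, the argument is a standard subharmonicity-plus-maximum-principle removability scheme, and I expect no further essential difficulty.
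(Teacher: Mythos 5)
Your argument in the ``key case'' (a proper closed pluripolar $Z\subset N$, $u$ locally Lipschitz and harmonic off $Z$) matches the paper's: compare with the Dirichlet minimizer of Theorem~\ref{Dirichlet-NPC}, use Proposition~\ref{distance_subharmonic} to get subharmonicity of the distance off $Z$, extend it across $Z$ (the paper cites the Brelot extension theorem where you use the $w+\varepsilon\varphi$ perturbation, which is an acceptable substitute), and conclude by the maximum principle. The genuine gap is in your reduction to that case: you dismiss the possibility $f^{-1}(Z)=N$, i.e.\ a holomorphic $f\colon N\to M$ with image contained in $Z$, as ``nothing to prove or immediate.'' It is neither. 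Pluriharmonicity of $u$ requires $u\circ f$ to be harmonic for \emph{every} such $f$ (Definition~\ref{def-pluriharmonicity}), and when $Z$ contains positive-dimensional analytic sets (e.g.\ a hypersurface, a typical pluripolar set) there are plenty of maps $f$ landing entirely in $Z$; for these, the hypothesis that $u$ is pluriharmonic on $M\setminus Z$ gives no direct information about $u\circ f$, and your Dirichlet-comparison scheme never gets started because there is no open set on which $u\circ f$ is known to be harmonic.

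This is exactly the case where the paper's proof does extra work: after localizing to $N=\bB^r$, it approximates $f$ uniformly by holomorphic maps $g_k$ with $g_k^{-1}(Z)\subsetneq\bB^r$, observes that the local Lipschitz continuity of $u$ forces $u\circ g_k\to u\circ f$ in $W^{1,2}(\bB^r,\Delta)$, and then invokes the stability of harmonicity under $W^{1,2}$ limits (Corollary~\ref{limit_of_harmonic_is_often_harmonic}, resting on Proposition~\ref{limit_of_harmonic_is_sometime_harmonic} and Theorem~\ref{energy_control_Lipschitz}) to reduce to maps whose preimage of $Z$ is a proper (po)lar subset. Your proposal contains none of this; without it you have only shown $u\circ f$ is harmonic for those $f$ with $f^{-1}(Z)$ a proper pluripolar subset, which is strictly weaker than pluriharmonicity of $u$. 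A secondary, smaller point: even when $f^{-1}(Z)\subsetneq N$, your assertion that it is automatically pluripolar needs care, since a psh function $\varphi$ certifying $Z\subset\{\varphi=-\infty\}$ may pull back to $\varphi\circ f\equiv-\infty$; this can be handled by choosing the approximating maps (or the localization) so that $f$ does not map into $\{\varphi=-\infty\}$, but it should be addressed rather than asserted.
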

\begin{proof}
Let $(N,h)$ be a Kähler manifold equipped with a holomorphic map $f\colon N \to M$, and let us prove that the composite map $u \circ f \colon N \to \Delta$ is harmonic. Since the map $u \circ f \colon N \to \Delta$ is locally Lipschitz continuous, it is sufficient to check that every point in $N$ admits an open neighborhood with Lipschitz boundary such that every locally Lipschitz continuous map $v\colon N \to \Delta$ which agree with $u$ outside of this neighborhood has no less energy. Therefore one can assume that $N$ is a ball $\bB^r$. Moreover, up to shrinking $\bB^r$, one can assume that the holomorphic map $f \colon \bB^r \to M$ is a uniform limit of holomorphic maps $g_k \colon \bB^r \to M$ such that $g_k^{-1}(Z)  \subsetneq \bB^r$ is a closed pluripolar subset. Since $u$ is locally Lipschitz continuous, the maps $u \circ g_k$ converges to $u \circ f$ in $W^{1,2}(\bB^n , \Delta)$. Since the limit in $W^{1,2}(\bB^n , \Delta)$ of a sequence of harmonic maps is harmonic (Corollary \ref{limit_of_harmonic_is_often_harmonic}), it is sufficient to treat the case where $f^{-1}(Z) \subsetneq \bB^r$ is a closed polar subset.

Let $h \colon \bB^r \to \Delta$ be the unique harmonic map that coincides with $u \circ f \colon \bB^r \to \Delta$ on $\partial \bB^r$, see Theorem \ref{Dirichlet-NPC}. Then the function $z \mapsto d(h(z), u \circ f(z))$ is locally Lipschitz continuous and its restriction to $\bB^r \setminus f^{-1}(Z)$ is subharmonic by Proposition \ref{distance_subharmonic}.

Therefore it is subharmonic on $\bB^r$ thanks to Brelot extension theorem \cite[Theorem 5.18]{Hayman-Kennedy}. Since it is zero on $\partial \bB^r$, it is zero everywhere by the maximum principle. Therefore, we have proved that $u \circ f \colon \bB^r \to \Delta$ is harmonic, and this finishes the proof.
\end{proof}

\section{The Betti--Dolbeault correspondence and the \texorpdfstring{$\bG_m$}{Gm}-action}\label{sect:Gm action}

Work of Corlette \cite{Corlette_JDG} and Mochizuki \cite[Part 5]{Mochizuki-AMS2} shows that every semisimple complex local system admits an essentially unique pluriharmonic metric.  Similarly, Simpson \cite{Simpson_noncompact} and Mochizuki \cite{mochizukikobayashi} characterize the existence of pluriharmonic metrics on logarithmic Higgs bundles.  This yields a correspondence between semisimple complex local systems and certain polystable logarithmic Higgs bundles.  Strictly speaking parabolic structures are needed to make this precise, but in the case of unipotent local monodromy on the Betti side (and nilpotent residues of the Higgs field on the Dolbeault side) there is a canonical such choice.  In this section, we show the bijection in this special case is a homeomorphism (for curves), and use it to transport the natural $\bG_m$-action on the Dolbeault side to the unipotent local monodromy locus on the Betti side.  This together with the density result of \Cref{qu dense in bialg} will allow us to produce $\bR_{>0}$-fixed points (and in particular variations of Hodge structures) in every irreducible component of a $\bR_{>0}$-stable Zariski closed $\bar\bQ$-bialgebraic subset of $M_B(X)$.

\subsection{Moduli spaces of logarithmic Higgs bundles}\label{moduli Lambda algebra}

Let $f \colon \barX \to S$ be a smooth projective morphism to a scheme of finite type over $\bC$, and $D \subset \barX$ a relative normal crossing divisor. Let $\Omega_{\barX/S}(\log D)$ denote the sheaf of logarithmic differentials, and $T_{\barX/S}(-\log D) \subset T_{\barX/S}$ be the subsheaf of the tangent sheaf dual to $\Omega_{\barX/S}(\log D)$.
\begin{defn}
 A logarithmic Higgs sheaf on $(\barX,D)$ over $S$ is a coherent $\cO_{\barX}$-module $E$ on $\barX$ together with a morphism of $\cO_{\barX}$-modules $\theta \colon E \to \Omega_{\barX/S}(\log D) \otimes_{\cO_{\barX}} E$ such that $\theta \wedge \theta = 0$. Equivalently, it is a (left) $\Sym T_{{\barX}/S}(-\log D)$-module which is coherent as a $\cO_{\barX}$-module. A Higgs bundle is a Higgs sheaf $(E, \theta)$ such that $E$ is a locally free $\cO_{\barX}$-module.    
\end{defn}

Fix $\cO_{\barX}(1)$ a relatively very ample line bundle on ${\barX}$. A logarithmic Higgs sheaf $(E, \theta)$ on $({\barX},D)$ over $S$ is Gieseker semistable if the underlying $\cO_{\barX}$-module is flat over $S$, and if for every geometric point $s$ of $S$ the restriction of $E$ to the fibre ${\barX}_s$ is pure and Gieseker semistable as a log-Higgs sheaf on $({\barX}_s, D_s)$ (the Hilbert polynomial, the rank, and the slope of $E_s$ are defined to be those of the underlying sheaf of $\cO_{{\barX}_s}$-module.) One defines similarly Gieseker stability, slope stability and slope semistability.  It follows from \cite[Theorem 4.3]{Langer-JEMS} that for logarithmic Higgs sheaves with vanishing Chern classes, Gieseker-semistability (resp. Gieseker-stability) is equivalent to slope-semistability (resp. slope-stability).
(Since ${\barX}_s$ is smooth, a coherent sheaf of pure dimension $d = \dim({\barX}_s)$ is the same thing as a torsion-free coherent sheaf.)

\begin{thm}
Let $f \colon {\barX} \to S$ be a smooth projective morphism to a scheme of finite type over $\bC$, and $D \subset {\barX}$ a relative normal crossing divisor. Fix $\cO_{\barX}(1)$ a relatively very ample line bundle on ${\barX}$, a section $\xi \colon S \to {\barX}$ and a positive integer $r$. Then,
\begin{enumerate}
\item The functor which associates to any $S$-scheme $T$ the set of isomorphism classes
of pairs $((E, \theta), \beta)$, where $(E, \theta)$ is a rank $r$ Gieseker-semistable logarithmic Higgs bundle on $({\barX}_T, D_T)$ over $T$, such that the Chern classes $c_i(E_t)$ vanish in $H^{2i}({\barX}_t, \bC)$ for all closed points $t \in T$, and $\beta \colon \xi^\ast E \simeq \cO_T^r$ is a framing, is representable by a quasiprojective complex algebraic variety $R_{Dol}(({\barX},D)/S,\xi,r)$.  
\item Changing the framing $\beta$ yields an action of the group $\bGL_r$ on $R_{Dol}(({\barX},D)/S,\xi,r)$, and every point is semistable for this action (with respect to the an appropriate linearized bundle obtained from $\cO_{\barX}(1)$). The GIT quotient is a good quotient $M_{Dol}(({\barX},D)/S,r) := \bGL_r\backslash\hspace{-.2em}\backslash R_{Dol}(({\barX},D)/S,\xi,r)$ that universally corepresents the functor which to an $S$-scheme $T$ associates the set of isomorphism classes of Gieseker semistable logarithmic Higgs bundles $(E, \theta)$ on ${\barX}_T$ over $T$ of Hilbert polynomial $r P_0$, such that the Chern classes $c_i(E_t)$ vanish in $H^{2i}({\barX}_t, \bC)$ for all closed points $t \in T$.
\item The closed orbits in $R_{Dol}(({\barX},D)/S,r)$ correspond to direct sums of stable logarithmic Higgs bundles with vanishing rational Chern classes.
\end{enumerate}
\end{thm}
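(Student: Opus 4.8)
The plan is to reduce the statement to Simpson's construction of moduli of $\Lambda$-modules \cite{SimpsonmoduliII}, applied to the sheaf of rings $\Lambda = \Sym^{\bullet}_{\cO_{\barX}} T_{\barX/S}(-\log D)$, the symmetric algebra on the logarithmic tangent sheaf: this is a split almost-polynomial sheaf of rings of differential operators on $\barX$ over $S$ in Simpson's sense, and a logarithmic Higgs bundle $(E,\theta)$ on $(\barX,D)$ over $S$ is precisely a $\Lambda$-module that is locally free as an $\cO_{\barX}$-module, so Simpson's formalism applies with only the cosmetic changes forced by the logarithmic poles, the analytic inputs being replaced throughout by Langer's algebraic estimates \cite{Langer-JEMS}. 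The first step is \emph{boundedness}: the rank-$r$ Gieseker-semistable logarithmic Higgs sheaves with vanishing Chern classes on the geometric fibres of $\barX/S$ form a bounded family. The only point that is not immediate is the a priori unboundedness of $\theta$; but, forgetting $\theta$, a semistable $\Lambda$-module is an iterated extension of $\mu$-semistable $\cO$-sheaves of the same reduced Hilbert polynomial, which the vanishing of the Chern classes pins down to a fixed $P_0$, so Langer's boundedness theorem applies to the underlying sheaves and hence to the pairs. Consequently, for $N\gg 0$ every such $(E,\theta)$ is a quotient in good position of $H:=\cO_{\barX}(-N)^{\oplus P(N)}$, i.e. $V:=H^0(\barX_s,H(N))\xrightarrow{\ \sim\ }H^0(\barX_s,E(N))$ generates $E(N)$ and $H^{i}(\barX_s,E(N))=0$ for $i>0$.

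For $(1)$, form the relative Quot scheme $\underline{\mathrm{Quot}}^{\Lambda}_{\barX/S}(H,rP_0)$ of $\Lambda$-module quotients; it is a closed subscheme of the ordinary Quot scheme (the descent of a $\Lambda$-module structure is a closed condition), hence projective over $S$. Let $Q^{1}$ be the open locus of good-position quotients that are Gieseker-semistable; it is quasiprojective over $S$, carries a $\bGL(V)$-action and a universal pair $(\mathcal{E},\theta)$, and $\xi^{\ast}\mathcal{E}$ is a rank-$r$ vector bundle on it. The frame bundle $\mathrm{Isom}_{Q^{1}}(\xi^{\ast}\mathcal{E},\cO^{\,r})$ is a $\bGL_r$-torsor over $Q^{1}$ on which $\bGL(V)$ acts freely, since a framing at $\xi(s)$ rigidifies the Higgs bundle (framed $\Lambda$-modules have no automorphisms, \cite{SimpsonmoduliII}), and I would take $R_{Dol}(({\barX},D)/S,\xi,r)$ to be its quotient by $\bGL(V)$. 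This represents the framed functor of $(1)$ and is quasiprojective over $S$; since Chern classes are locally constant in flat families, imposing their vanishing merely selects a union of connected components, so no extra argument is needed for that clause.

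For $(2)$ and $(3)$, linearize $Q$ via the Grothendieck embedding into a relative Grassmannian twisted by $\cO_{\barX}(1)$ and use the Le Potier--Simpson estimate, adapted to logarithmic $\Lambda$-modules by \cite{Langer-JEMS}, to identify GIT-(semi)stability of a good-position point of $Q$ with Gieseker-(semi)stability of the associated Higgs sheaf; by construction every point of $Q^{1}$ is then GIT-semistable, so the good quotient $M_{Dol}(({\barX},D)/S,r):=Q^{1}/\!\!/\bGL(V)=R_{Dol}/\bGL_r$ exists, with independence of $N$ and universal corepresentability of the moduli functor following as in \cite{SimpsonmoduliII,mumford}. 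The closed orbits are described by the Hilbert--Mumford criterion: the $\bGL(V)$-orbit of a point of $Q^{1}$ is closed iff the corresponding Higgs sheaf is Jordan--Hölder-polystable, i.e. $\bigoplus_i(E_i,\theta_i)^{\oplus n_i}$ with each $(E_i,\theta_i)$ stable of reduced Hilbert polynomial $P_0$, whence each $E_i$ is a direct summand of the locally free $E$ and in particular a slope-zero vector bundle. That each $E_i$ has vanishing rational Chern classes follows from a Bogomolov-type inequality for semistable Higgs sheaves as in \cite{simpsonhiggs,Langer-JEMS}: the Bogomolov inequality for each $E_i$, together with the Hodge index theorem and the additivity $\sum_i n_i\,\mathrm{ch}(E_i)=\mathrm{ch}(E)=r$, forces $c_1(E_i)$ to be numerically trivial and the degree-two component of $\mathrm{ch}(E_i)$ to vanish against $H^{d-2}$ with $d=\dim\barX_s$; restricting to general complete-intersection surfaces and inducting on $d$ then upgrades this to the vanishing of all $c_j(E_i)$ in rational cohomology.

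The main obstacle is not conceptual: the construction is Simpson's, and the real work --- the step I would expect to absorb most of the effort --- is verifying that his $\Lambda$-module GIT machinery, and most delicately the boundedness of semistable logarithmic Higgs objects and the Le Potier--Simpson comparison of Gieseker-stability with GIT-stability, goes through uniformly in the relative logarithmic setting over an arbitrary base $S$, which is precisely where the algebraic estimates of \cite{Langer-JEMS} stand in for the analytic arguments available when $S$ is a point and $D=\emptyset$.
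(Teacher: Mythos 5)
Your overall strategy (Simpson's $\Lambda$-module formalism with $\Lambda=\Sym^{\bullet}T_{\barX/S}(-\log D)$) is the same as the paper's, but the paper does not re-run the GIT machinery at all: it simply applies Simpson's general representability theorem for framed semistable $\Lambda$-modules (\cite[Theorem 4.10]{SimpsonmoduliI}) with Hilbert polynomial $rP_0$, so the boundedness, Quot-scheme, frame-bundle and Le Potier--Simpson steps you spend most of your effort on are already quoted wholesale. The genuine content of the paper's proof is the one point your proposal skips: Simpson's theorem parameterizes semistable $\Lambda$-module \emph{sheaves} satisfying the condition $LF(\xi)$ (local freeness along the section), whereas the functor in part (1) is a functor of logarithmic Higgs \emph{bundles}. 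You never show that the Gieseker-semistable logarithmic Higgs sheaves with vanishing Chern classes occurring in your family are locally free; this is needed both for $\xi^{\ast}\mathcal{E}$ to be a rank-$r$ vector bundle (your frame-bundle construction presupposes it) and, more fundamentally, for the represented functor to coincide with the bundle functor of the statement. The paper closes this gap by combining Langer's result that a slope-semistable logarithmic Higgs sheaf with vanishing Chern classes is locally free \cite[Theorem 4.3]{Langer-JEMS} (together with the equivalence of Gieseker- and slope-semistability in that situation) with \cite[Lemma 1.27]{SimpsonmoduliI} to get local freeness of the whole flat family, so that $LF(\xi)$ is automatic and the component with vanishing Chern classes represents exactly the framed bundle functor.

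A secondary problem: your boundedness argument is not correct as stated. It is false that, forgetting $\theta$, a Gieseker-semistable logarithmic Higgs sheaf is an iterated extension of $\mu$-semistable sheaves with the same reduced Hilbert polynomial --- the underlying sheaf of a semistable Higgs sheaf is in general unstable, and its Harder--Narasimhan factors have different slopes. The correct route (which is what Simpson and Langer actually do) is to bound $\mu_{\max}$ of the underlying sheaf in terms of the slope and the polarization, using that $\theta$ maps the Harder--Narasimhan filtration into its twist by $\Omega_{\barX/S}(\log D)$, and then invoke boundedness for sheaves with bounded $\mu_{\max}$; but since this is internal to \cite[Theorem 4.10]{SimpsonmoduliI}, the cleanest fix is simply to cite that theorem, as the paper does, and concentrate on the local-freeness point above.
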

\begin{proof}
Let $P_0$ denote the Hilbert polynomial of $\cO_{\barX}$. By applying \cite[Theorem 4.10]{SimpsonmoduliI} to the sheaf of rings of differential operators $\Lambda =\Sym T_{{\barX}/S}(-\log D)$ and the Hilbert polynomial $P= r P_0$, it follows that the functor which associates to any $S$-scheme $T$ the set of isomorphism classes of pairs $((E, \theta), \beta)$, where $(E, \theta)$ is a Gieseker-semistable logarithmic Higgs sheaf on $({\barX}_T, D_T)$ over $T$ with Hilbert polynomial $P$, satisfying condition $LF(\xi)$, and $\beta \colon \xi^\ast E \simeq \cO_T^n$ is a framing, is representable by a quasiprojective complex algebraic variety $R_{Dol}(({\barX},D)/S,\xi, r P_0)$. Let $R_{Dol}(({\barX},D)/S,\xi,r)$ be the disjoint union of the connected components of $R_{Dol}(({\barX},D)/S, n P_0)$ such that the rational Chern classes of the universal family vanish in cohomology for all closed points. Since a slope semistable logarithmic Higgs sheaf with vanishing Chern classes is locally free \cite[Theorem 4.3]{Langer-JEMS}, and Gieseker-semistability implies slope semistability, a Gieseker-semistable logarithmic Higgs sheaf $E$ with vanishing rational Chern classes on $({\barX}_T, D_T)$ over $T$ is necessarily a logarithmic Higgs bundle thanks to \cite[Lemma 1.27]{SimpsonmoduliI}. It follows that the condition $LF(\xi)$ is always satisfied, and this finishes the proof of part $(1)$. Parts $(2)$ and $(3)$ then follow from \cite[Theorem 4.10]{SimpsonmoduliI}.
\end{proof}

Likewise, the stack quotient $\cM_{Dol}(({\barX},D)/S,r):=[\bGL_r\backslash R_{Dol}(({\barX},D)/S,\xi,r)]$ is naturally identified with the stack of rank $r$ Gieseker-semistable logarithmic Higgs bundles with vanishing rational Chern classes.  In particular, for $({\barX},D)$ a projective log smooth pair, taking $S=\Spec\bC$ we let $\cM_{Dol}({\barX},D,r)$ be the stack of Gieseker-semistable logarithmic Higgs bundles with vanishing rational Chern classes, which has as good moduli space $M_{Dol}({\barX},D,x,r)$.  We denote the disjoint unions over all $r$ by $R_{Dol}(\barX,D,x)$, $\cM_{Dol}(\barX,D)$ and $M_{Dol}(\barX,D)$.  

The stack of Gieseker-semistable logarithmic Higgs bundles with vanishing rational Chern classes and whose Higgs field has nilpotent residues is easily seen to be a closed substack which we denote $\cM_{Dol}^\nilp(({\barX},D)/S,r)=[\bGL_r\backslash R_{Dol}^\nilp(({\barX},D)/S,\xi,r)]$, with good moduli space $M_{Dol}^\nilp(({\barX},D)/S,r)=\bGL_r\backslash\hspace{-.2em}\backslash R_{Dol}^\nilp(({\barX},D)/S,\xi,r)$, in the relative case, and $\cM^\nilp_{Dol}({\barX},D,r)$ and $M_{Dol}^\nilp({\barX},D,r)$ for $S=\Spec\bC$.

\subsubsection{The $\bG_m$-action}

The algebraic group $\bG_m$ acts on the complex algebraic variety $R_{Dol}(({\barX},D)/S,\xi,r)$ by scaling the Higgs field. This action commutes with the action of $\bGL_r$ on $R_{Dol}(({\barX},D)/S,\xi,r)$, hence it induces an action of $\bG_m$ on the moduli stack $\cM_{Dol}(({\barX},D)/S,r)$ and on its good moduli space $M_{Dol}(({\barX},D)/S,r)$ such that the morphisms $R_{Dol}(({\barX},D)/S,\xi,r) \to \cM_{Dol}(({\barX},D)/S,r) \to M_{Dol}(({\barX},D)/S,r)$ are $\bG_m$-equivariant.  The closed subvarieties/substacks corresponding to nilpotent residues are $\bG_m$-stable, so we also obtain $\bG_m$-actions on $R_{Dol}^\nilp(({\barX},D)/S,\xi,r),\cM_{Dol}^\nilp(({\barX},D)/S,r)$ and $M_{Dol}^\nilp(({\barX},D)/S,r)$.

The fixed points of the action of $\bG_m$ (or any infinite subgroup thereof) correspond to the logarithmic Higgs bundles which come from complex variations of Hodge structure, cf. \cite[Theorem 8]{Simpson_noncompact} and the proof of \cite[Proposition 10.3]{mochizukitame}.

\begin{prop}\label{existence_of_Gm_limit}
For every $x \in  M_{Dol}(({\barX},D)/S,r)$, the morphism 
\[ \bG_m \to M_{Dol}(({\barX},D)/S,r), t \mapsto t \cdot x \]
extends uniquely to a morphism $\bA^1 \to M_{Dol}(({\barX},D)/S,r)$. The image of $0 \in \bA^1$ is fixed by the action of $\bG_m$ and therefore corresponds to a $\bC$-VHS.
\end{prop}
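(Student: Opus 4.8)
The plan is as follows. Since the assertion concerns a fixed $\bC$-point $x$, lying over some geometric point $s$ of $S$, I would first reduce to the case $S=\Spec\bC$ by replacing $(\barX,D)$ with the fibre $(\barX_s,D_s)$; everything then takes place inside the quasiprojective variety $M_{Dol}(\barX_s,D_s,r)$. Because this variety is separated and $\bG_m$ is a dense open subscheme of $\bA^1$, any extension of $t\mapsto t\cdot x$ to a morphism $\bA^1\to M_{Dol}$ is automatically unique, so the real content is the existence of the limit point $x_0:=\lim_{t\to 0}t\cdot x$ together with the two properties claimed for it.

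For existence I would argue by semistable reduction for logarithmic Higgs bundles. Choose a rank $r$ semistable logarithmic Higgs bundle $(E,\theta)$ on $(\barX_s,D_s)$ with vanishing rational Chern classes representing $x$, and over the generic point $\Spec\bC(t)$ of $\bA^1$ consider the family $(E\otimes\bC(t),\,t\theta)$. Working over the DVR $\cO_{\bA^1,0}$, Langton's theorem in the Higgs setting (applied to the underlying sheaf, the Higgs field extending by the usual limiting argument since its pole order along $D$ is unchanged by scaling) produces, after possibly a finite ramified base change $\bA^1\to\bA^1$, $u\mapsto u^n=t$, an extension of this family to a flat family of semistable logarithmic Higgs sheaves over $\cO_{\bA^1,0}$; it is locally free and has vanishing rational Chern classes at the special point, because slope-semistable logarithmic Higgs sheaves with vanishing Chern classes are locally free \cite[Theorem 4.3]{Langer-JEMS} and Chern classes specialise in flat families. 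Its closed point defines a point of $M_{Dol}(\barX_s,D_s,r)$, and the resulting morphism from the ramified cover descends to the original $\bA^1$ by separatedness together with the evident $\bG_m$-equivariance of the construction. Alternatively one may deduce existence from the properness of the logarithmic Hitchin-type morphism $h\colon M_{Dol}(\barX_s,D_s,r)\to\bigoplus_{i=1}^{r}H^0(\barX_s,\Sym^i\Omega_{\barX_s}(\log D_s))$ (the tame/logarithmic analogue of the classical Hitchin map properness): the target is affine and carries a $\bG_m$-action with strictly positive weights $1,\dots,r$, so $h(t\cdot x)$ extends over $t=0$ to the origin, and the valuative criterion of properness for $h$ lifts this to a morphism $\bA^1\to M_{Dol}$. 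I expect this to be the main obstacle: making the Langton reduction — or the properness of the logarithmic Hitchin map — work cleanly in the logarithmic setting with possibly non-nilpotent residues. The scaling $\theta\mapsto t\theta$ is itself harmless, since it only multiplies the residues by $t$ rather than enlarging the polar part, but the book-keeping with the (parabolic-free) logarithmic structure requires care.

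It then remains to verify the two properties of the extension $\phi\colon\bA^1\to M_{Dol}$, where $x_0=\phi(0)$. First, $x_0$ lies in $M^{\nilp}_{Dol}$: the coefficients of the characteristic polynomials of the residues of the Higgs field along the irreducible components of $D_s$ define regular functions on $M_{Dol}$ which are $\bG_m$-eigenfunctions of strictly positive weight (the degree-$i$ coefficient has weight $i$), hence vanish at $t=0$, so the limiting logarithmic Higgs bundle has nilpotent residues. Second, $x_0$ is fixed by $\bG_m$: for any $s\in\bG_m$ the two morphisms $\bA^1\to M_{Dol}$ given by $t\mapsto\phi(st)$ and $t\mapsto s\cdot\phi(t)$ agree on $\bG_m$, hence on all of $\bA^1$ by separatedness, and evaluating at $t=0$ gives $x_0=s\cdot x_0$. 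Finally, as a $\bC$-point $x_0$ corresponds to a closed orbit, i.e.\ to a polystable logarithmic Higgs bundle with vanishing rational Chern classes and nilpotent residues which is fixed by $\bG_m$; by the statement recalled immediately above (\cite[Theorem 8]{Simpson_noncompact} and the proof of \cite[Proposition 10.3]{mochizukitame}) such a Higgs bundle comes from a complex variation of Hodge structure, which is the last assertion.
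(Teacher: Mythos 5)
Your proposal is correct and follows essentially the same route as the paper: reduce to the fibre over $s$, extend the family $(E,t\theta)$ across $t=0$ by Langton-type semistable reduction for logarithmic Higgs sheaves, use flatness of the extension together with \cite[Theorem 4.3]{Langer-JEMS} to see the limit is a semistable logarithmic Higgs bundle with vanishing rational Chern classes, and deduce $\bG_m$-fixedness by separatedness and the $\bC$-VHS interpretation of fixed points via Simpson/Mochizuki. One small caution: no ramified base change is needed, and Langton must be applied to the Higgs pair itself, viewed as a module over $\Sym T_{\barX_s}(-\log D_s)$ (as in \cite[Proposition 10.1]{Simpson-Hodge-filtration} or \cite[Theorem 5.1]{Langer-documenta}, the references the paper invokes), rather than to the underlying sheaf with $\theta$ extended afterwards, since an arbitrary sheaf-theoretic Langton extension need not be preserved by $\theta$.
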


\begin{proof}
The morphism $\bG_m \to M_{Dol}(({\barX},D)/S,r)$ lifts to a morphism $\bG_m \to \cM_{Dol}(({\barX},D)/S,r)$, and it is sufficient to prove that the later extends uniquely to a morphism $\bA^1 \to \cM_{Dol}(({\barX},D)/S,r)$. Assume that $x \in  M_{Dol}(({\barX}_s,D_s),r)$. It follows from Langton's theory (see \cite[Proposition 10.1]{Simpson-Hodge-filtration} or \cite[Theorem 5.1]{Langer-documenta}) that there exists a rank $r$ Gieseker-semistable logarithmic Higgs sheaf $(E , \theta)$ on $({\barX}_s \times \bA_1, D_s \times \bA^1)$ over $\bA^1$, whose restriction to $\bG_m$ corresponds to the morphism $\bG_m \to \cM_{Dol}(({\barX},D)/S,r)$. By flatness of $E$ over $\bA^1$, the rational Chern classes of $E_0$ are also zero. Therefore $(E_0, \theta_0)$ is a rank $r$ Gieseker-semistable logarithmic Higgs bundle on $({\barX}_s, D_s)$ by \cite[Theorem 4.3]{Langer-JEMS}, and $E$ is locally-free by \cite[Lemma 1.27]{SimpsonmoduliI}.    
\end{proof}

\subsection{The Riemann--Hilbert correspondence revisited}\label{sect:goodlocus}
As already described in \Cref{sect:RH}, the Riemann--Hilbert correspondence does not provide an isomorphism between $\cM_{DR}(\bar X,D)^\an$ and $\cM_B(X)^\an$ in the non-proper case, but it will in restriction to the locus with unipotent local monodromy. 

Let $(\bar X,D)$ be a proper log smooth algebraic space and set $X=\bar X\setminus D$. Let $\cM_{DR}(\bar X, D)^{\good}$ denote the open substack of $\cM_{DR}(\bar X, D)^{\an}$ consisting of logarithmic connections such that for each irreducible component $D_i$ of $D$ no two eigenvalues of the residue map differ by a nonzero integer \cite{Budur-Lerer-Wang}.

Observe that for every $\sigma \in \Aut(\bC/\bQ)$, the good locus of $\cM_{DR}(\bar X, D)^{\an}$ is sent bijectively to the good locus of $\cM_{DR}(\bar X^\sigma, D^\sigma)^{\an}$.

\begin{prop}\label{local iso on good}
The restriction of the Riemann-Hilbert morphism  $RH_{(\bar X,D)}^\good:\cM_{DR}(\bar X, D)^{\good}\to \cM_B(X)^\an$ is a surjective local isomorphism of analytic stacks.

\end{prop}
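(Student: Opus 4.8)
The plan is to show that $RH^{\good}_{(\bar X,D)}$ is \'etale (in the sense of analytic stacks) and surjective onto $\cM_B(X)^\an$. For surjectivity, recall Deligne's canonical extension: given any local system $V$ on $X^\an$, there is a logarithmic connection $(\bar{\mathcal V},\nabla)$ on $(\bar X,D)$ whose residue eigenvalues along each $D_i$ lie in a fixed fundamental domain for $\bZ$ acting on $\bC$ by translation --- say with real part in $[0,1)$ --- and whose restriction to $X$ has $V$ as its flat sections. In particular these residue eigenvalues never differ by a nonzero integer, so $(\bar{\mathcal V},\nabla)$ lies in the good locus and maps to $V$. This works in families and over an arbitrary log smooth algebraic space, so $RH^{\good}_{(\bar X,D)}$ is surjective on $\bC$-points; combined with the local isomorphism statement below this gives surjectivity of the morphism of analytic stacks.

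The heart of the proof is that $RH^{\good}_{(\bar X,D)}$ is a local isomorphism. First I would reduce to a statement about deformation/infinitesimal behavior together with a local-injectivity statement on points. The key algebraic input is the classical fact (due to Deligne; see also the discussion around \cite{Budur-Lerer-Wang}) that on the good locus the logarithmic connection extending a given local system with prescribed residue-eigenvalue normalization is \emph{unique up to isomorphism}, and this uniqueness is compatible with isomorphisms: if two good logarithmic connections $(\bar{\mathcal V}_1,\nabla_1)$, $(\bar{\mathcal V}_2,\nabla_2)$ restrict to isomorphic local systems on $X^\an$, then the flat isomorphism on $X^\an$ extends (because the residues do not differ by nonzero integers, the Deligne extension functor is fully faithful on the good locus) to an isomorphism of logarithmic connections over $\bar X^\an$. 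This shows $RH^\good_{(\bar X,D)}$ is injective on isomorphism classes of $\bC$-points and induces isomorphisms on automorphism groups (since $\mathrm{Aut}$ of a good logarithmic connection equals $\mathrm{Aut}$ of the underlying local system, again by full faithfulness on the good locus), i.e. it is a monomorphism of analytic stacks.

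Next I would verify that $RH^\good$ is formally \'etale, i.e. induces isomorphisms on tangent and obstruction spaces and more generally on all the deformation groupoids. Concretely, at a point given by a good logarithmic connection $(\bar{\mathcal V},\nabla)$ with underlying local system $V$, the deformation complex on the De Rham side is the logarithmic de Rham complex $\mathrm{DR}_{\log}(\mathcal End(\bar{\mathcal V}))$, while on the Betti side it is $\mathcal End(V)$ (equivalently $R\Gamma(X^\an,\mathcal End(V))$). The good hypothesis guarantees that the natural comparison map from the hypercohomology of the logarithmic de Rham complex of $\mathcal End(\bar{\mathcal V})$ to $H^*(X^\an,\mathcal End(V))$ is an isomorphism --- this is precisely the cohomological incarnation of ``no residue eigenvalues differ by a nonzero integer'' (it is where the argument genuinely uses $\good$ and fails for general logarithmic connections). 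Hence the tangent and obstruction spaces, and indeed all the deformation data (\Cref{lem cat of lifts}-style groupoids), match on the two sides. Since $\cM_{DR}(\bar X,D)^\an$ and $\cM_B(X)^\an$ are both (countably finite type) algebraic/analytic stacks, a morphism between them that is a monomorphism on points and formally \'etale is a local isomorphism of analytic stacks; combined with surjectivity on points this completes the proof.

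I expect the main obstacle to be bookkeeping rather than conceptual: carefully phrasing and invoking the comparison isomorphism between logarithmic de Rham hypercohomology on the good locus and Betti cohomology with the precise full-faithfulness/uniqueness statements for the Deligne extension, and then packaging ``monomorphism on points $+$ formally \'etale $\Rightarrow$ local isomorphism'' at the level of (countably finite type) analytic stacks. One subtlety worth flagging is that one must work with the \emph{analytic} Riemann--Hilbert correspondence throughout (the algebraic good-locus De Rham stack is only countably finite type and is compared to $\cM_B(X)$ only after analytification, exactly as set up in \Cref{sect:RH}), so the statement is genuinely about analytic stacks and no algebraicity of $RH^\good$ itself is claimed or needed.
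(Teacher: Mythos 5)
Your surjectivity argument and your identification of where goodness enters the deformation theory are fine, but there is a genuine error in the middle of the proposal: the claim that $RH^\good_{(\bar X,D)}$ is injective on isomorphism classes of $\bC$-points, i.e.\ a monomorphism of analytic stacks, is false, and the ``full faithfulness of the Deligne extension on the good locus'' you invoke for it is not a correct statement. Goodness is a condition on each object separately: it says the residue eigenvalues of $\ad\nabla$ on $\cEnd(\bar{\mathcal V})$ avoid nonzero integers, which is exactly what you need for the $\cEnd$-comparison (so your claim that $\Aut$ groups match is fine). But extending a flat isomorphism between two \emph{different} good connections $(\bar{\mathcal V}_1,\nabla_1)$ and $(\bar{\mathcal V}_2,\nabla_2)$ is governed by the residues on $\cHom(\bar{\mathcal V}_1,\bar{\mathcal V}_2)$, whose eigenvalues are differences $\alpha^{(2)}_j-\alpha^{(1)}_i$; these can perfectly well be nonzero integers even when each connection is individually good. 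Concretely, in the paper's rank-one example on $\bG_m$ (\Cref{ex Gm 1}, \Cref{ex Gm 2}), the connections $(\cO_{\bP^1},d)$ and the degree-shifted $(\cO_{\bP^1}(k),d+k\,dq/q)$ are all good, pairwise non-isomorphic as logarithmic connections, and all restrict to the trivial local system; the good De Rham stack has infinitely many components mapping onto a single point of $\cM_B(\bG_m,1)$. This is precisely why the proposition asserts only a \emph{local} isomorphism. Indeed, if your chain ``monomorphism $+$ surjective $+$ formally \'etale'' were correct it would force $RH^\good$ to be an isomorphism, contradicting these examples, so the monomorphism step is not a harmless overstatement but an inconsistency in the argument.

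The good news is that the monomorphism claim is not needed: formal \'etaleness alone (checked equivariantly on the framed spaces $R_{DR}^{\good}\to R_B^\an$, say) already yields a local isomorphism of analytic stacks, and your route to \'etaleness via the comparison isomorphism between logarithmic de Rham hypercohomology of $\cEnd(\bar{\mathcal V})$ and $H^*(X^\an,\cEnd(V))$ is legitimate, provided you upgrade the matching of tangent/obstruction spaces to an actual equivalence of deformation groupoids over artinian rings. This is a genuinely different mechanism from the paper, which never invokes the comparison theorem: it proves relative unramifiedness by a direct local computation showing a nonzero meromorphic relative deformation would force two residue eigenvalues to differ by a nonzero integer, and then gets existence of lifts by running the Deligne construction with artinian coefficients locally and gluing using the uniqueness. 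Deleting the full-faithfulness/monomorphism paragraph and concluding directly from \'etaleness would repair your proof.
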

\begin{proof}
    The existence of the Deligne canonical extension gives surjectivity (in fact surjectivity if we restrict the eigenvalues to lie in a chosen fundamental domain for the exponential $\exp:\bC\to\bC^*$).

    To prove that that it is a local isomorphism, we analyze the relative obstruction theory.  We first show the relative deformation functor is unramified.  For a small extension $J\to A'\to A$ of artinian $\bC$-algebras, suppose $V'\to V$ is an extension of $A'$-local systems on $X^\an$ lying over $A'\to A$, and suppose $(E',\nabla')$ is a lift of $V'$ to $\cM_{DR}(\bar X,D)^{\good}$.  Letting $j:X := \bar X \setminus D\to \bar X$ be the open inclusion, we think of $E'$ as a subsheaf $E'\subset E'(*D)\subset j_*(\cO_{X^\an}\otimes_{\bC_{X^\an}}V')$ of the meromorphic extension (as a $A'\otimes_\bC\cO_{\bar X}$-module), which is uniquely determined by $V'$.  It is straightforward to show that the set of lifts $(E',\nabla')$ of $V'$ lying above a fixed lift of $(E,\nabla)$ of $V$ is naturally a torsor under global sections of the kernel of
    \[\begin{tikzcd}
    \cHom_{\cO_{\bar X}}(E_0,E_0(*D)/E_0)\ar[r,"\ad\nabla"]&\cHom_{\cO_{\bar X}}(E_0,E_0(*D)/E_0)\otimes\Omega_{\bar X}(\log D)
    \end{tikzcd}\]
    where $(E_0,\nabla_0)$ is the closed point underlying $(E',\nabla')$.  Locally near a smooth point of $D$ where $q$ is a local defining equation for $D$, a section of the kernel is represented as a principal part $\sum_{i\geq -N} q^{-i}f_i$ for $f_i$ a section of $\cEnd_{\cO_{\bar X}}(E_0)$ with $f_{-N}\neq 0$.  Further, we have
    \begin{align*}0=(\ad\nabla)\sum_{i\geq -N} q^{-i}f_i &=\sum_{i\geq-N} \left(-iq^{-i}f_i\otimes \frac{dq}{q}+q^{-i}(\ad\nabla) f_i\right)\\
    &=(-N+\Res\ad\nabla)f_i\mod (q^{1-N})
    \end{align*}
    so $\Res\ad\nabla$ must have $N$ as an eigenvalue, and therefore two of the eigenvalues of $\Res\nabla$ differ by $N$, which is a contradiction.  Thus, lifts are unique if they exist.  
    
    Now, for any $\bC$-local system $V_0$, any lift $(E_0,\nabla_0)$ to the good locus is obtained via the Deligne construction with respect to some choice of fundamental domain for the exponential.  Moreover, for any artinian $A$ and $A$-local system $V$, lifts of $V$ with the fixed closed point $(E_0,\nabla_0)$ exist locally by the Deligne construction (which works just as well over $A$), and are unique by the above, hence glue to a global lift.  Thus, the map is a local isomorphism.
\end{proof}
Let $\cM_{DR}^\nilp(\bar X,D)\subset\cM_{DR}(\bar X,D)$ be the closed substack of logarithmic connections for which the residue of the connection is nilpotent.  Clearly, $\cM_{DR}^\nilp(\bar X,D)^\an\subset\cM_{DR}(\bar X,D)^{\good}$.  On the nilpotent substack Gieseker and slope (semi)stability are equivalent (with respect to any polarization) and semistability is automatic since the rational Chern classes of any logarithmic connection with nilpotent residues vanish \cite[Appendix B]{Esnault-Viehweg}.  Moreover every point is GIT-semistable by \cite[Theorem 4.10]{SimpsonmoduliI}.  Thus we may form the good moduli space $M_{DR}^\nilp(\bar X,D)$.

  Denote by $\cM_B^\unip(X)\subset \cM_B(X)$ the closed substack of local systems with unipotent local monodromy, and $M_B^\unip(X)$ the good moduli space.  We also define the corresponding notions for the framed moduli space $R_{DR}^\nilp(\bar X, D,x)$ and $R_B^\unip(X,x)$.
\begin{cor}\label{BettiDeRhamcomparison}
    The restriction of the Riemann--Hilbert morphism gives isomorphisms $RH^\nilp_{(\bar X,D)}:\cM_{DR}^\nilp(\bar X,D)^\an\to \cM_B^\unip(X)^\an$, $RH^\nilp_{(\bar X,D)}:R_{DR}^\nilp(\bar X,D,x)^\an\to R^\unip(X,x)^\an$, and $RH^\nilp_{(\bar X,D)}:M_{DR}^\nilp(\bar X,D)^\an\to M_B^\unip(X)^\an$.
\end{cor}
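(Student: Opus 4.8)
The essential input is \Cref{local iso on good}, which asserts that $RH_{(\bar X,D)}^\good\colon\cM_{DR}(\bar X,D)^\good\to\cM_B(X)^\an$ is a surjective local isomorphism of analytic stacks; since $\cM_{DR}^\nilp(\bar X,D)^\an\subset\cM_{DR}(\bar X,D)^\good$, the morphism $RH^\nilp_{(\bar X,D)}$ is just its restriction. The plan is first to check that this restriction has image exactly $\cM_B^\unip(X)^\an$, and then to produce an explicit two‑sided inverse via the Deligne canonical extension. For the first point one uses that near a smooth point of a boundary component $D_i$ with local defining equation $q$, the monodromy of the attached local system around $D_i$ is conjugate to $\exp(-2\pi i\Res_{D_i}\nabla)$, so nilpotent residues give unipotent local monodromy; hence $RH^\good_{(\bar X,D)}$ indeed restricts to the morphisms named in the statement. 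A framing of $V$ at $x$ transports to the extension because the latter restricts to $\cO_X\otimes_{\bC_X}V$ over $X$, and since both $RH^\good$ and the construction below are $\bGL_r$‑equivariant and descend to good moduli spaces, it suffices to treat the unframed stack statement.

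To build the inverse: given a complex local system $V$ with unipotent local monodromy, the logarithm $N_i$ of the monodromy around $D_i$ is nilpotent, and Deligne's canonical meromorphic extension equipped with the logarithmic connection having residue $-\tfrac{1}{2\pi i}N_i$ along each $D_i$ is a logarithmic connection with nilpotent residues. This construction is local on $\bar X$, functorial, and works over arbitrary analytic—in particular artinian—bases, exactly as in the proof of \Cref{local iso on good}; it therefore defines a morphism of analytic stacks $s\colon\cM_B^\unip(X)^\an\to\cM_{DR}^\nilp(\bar X,D)^\an$, with evident framed and coarse analogues. By construction $RH^\nilp_{(\bar X,D)}\circ s$ recovers $V$, and conversely any logarithmic connection with nilpotent residues has all residue eigenvalues equal to $0$ and hence, by the uniqueness of logarithmic extensions with residue eigenvalues in a fixed fundamental domain for $\exp\colon\bC\to\bC^*$ chosen to contain $0$, coincides with $s$ applied to its underlying local system. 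Thus $s$ and $RH^\nilp_{(\bar X,D)}$ are mutually inverse on points; naturality of both constructions upgrades these identifications to $2$‑isomorphisms, so $RH^\nilp_{(\bar X,D)}$ is an equivalence of analytic stacks. The framed statement is identical, and the coarse statement follows by passing to good moduli spaces.

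An alternative route, avoiding the explicit $s$, is to note that $(RH^\good_{(\bar X,D)})^{-1}(\cM_B^\unip(X)^\an)$ is the substack of good logarithmic connections whose residue along each $D_i$ is nilpotent plus a common integer scalar (the non‑resonance condition of the good locus forces the residue eigenvalues along $D_i$ to all be equal, and unipotence forces them integral); the sheet on which these scalars vanish is precisely $\cM_{DR}^\nilp(\bar X,D)^\an$, so $RH^\nilp_{(\bar X,D)}$ is the restriction of the \'etale surjection $RH^\good_{(\bar X,D)}$ to that sheet, and it remains only to see it is universally injective. Either way, the one genuinely load‑bearing point—though not a deep one—is isolating exactly one sheet of the \'etale cover $RH^\good_{(\bar X,D)}$ above $\cM_B^\unip(X)^\an$, which is where uniqueness of logarithmic extensions with prescribed (vanishing) residue eigenvalues is used; the rest is formal, namely promoting a pointwise bijection to an equivalence of analytic stacks via \'etaleness plus universal injectivity, and keeping the bookkeeping consistent across the stack, framed, and coarse incarnations together with their $\bGL_r$‑equivariance.
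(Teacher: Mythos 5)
Your stack-level and framed-level arguments are essentially the paper's: the paper also deduces these cases directly from \Cref{local iso on good} together with bijectivity on points, and your explicit inverse via the Deligne extension with vanishing residues (plus uniqueness of logarithmic extensions with residue eigenvalues in a fixed fundamental domain containing $0$) is just a more spelled-out version of that bijectivity argument. So for the first two isomorphisms the proposal is fine.

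The weak point is the coarse statement. You assert that the framed isomorphism ``descends to good moduli spaces,'' but this is not formal: the isomorphism $R_{DR}^\nilp(\bar X,D,x)^\an\to R_B^\unip(X,x)^\an$ is only analytic (Riemann--Hilbert is not algebraic), while $M_{DR}^\nilp(\bar X,D)$ and $M_B^\unip(X)$ are defined as algebraic GIT/good quotients, and analytification does not obviously commute with forming such quotients. To descend a $\bGL_r$-equivariant \emph{analytic} isomorphism you need to know that $M_{DR}^\nilp(\bar X,D)^\an$ and $M_B^\unip(X)^\an$ are universal categorical quotients of the framed spaces \emph{in the category of complex analytic spaces}; this is exactly the input the paper supplies by citing Simpson (Moduli I, Proposition 5.5), after which the two induced maps between the analytified coarse spaces are mutually inverse by universality. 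As written, your ``follows by passing to good moduli spaces'' skips this step; it is fillable by the standard citation, but it is the one genuinely load-bearing point of the coarse case and should be made explicit. (A cosmetic remark on your alternative route: non-resonance alone does not force the residue eigenvalues along $D_i$ to be equal; it is unipotence that makes them integers, and then non-resonance forces them all equal to a single integer.)
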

\begin{proof}
    The claim for the stacks follows from \Cref{local iso on good} and the fact that the map is bijective on points.  The claim for the framed moduli spaces follows similarly.  By \cite[Proposition 5.5]{SimpsonmoduliI}, the analytifications of the good moduli spaces are universal categorical quotient in the category of complex analytic spaces of the respective framed moduli spaces, from which the last claims follows.
\end{proof}

\subsection{Background on tame harmonic bundles}\label{sect:harmonic bundles}
We recall the most important elements of the theory of tame harmonic bundles, due to Corlette \cite{Corlette_JDG},  Simpson in the proper case \cite{simpsonhiggs} and the case of non-proper curves \cite{Simpson_noncompact} and Mochizuki \cite{mochizukitame,mochizukitameii} in general.

Let $\cE$ be a $\cC^\infty$-complex vector bundle on a complex manifold $X$ equipped with a flat connection $\nabla$. The choice of a smooth hermitian metric $h$ on $\cE$
induces a canonical decomposition $\nabla = \nabla_u + \Psi$, where $\nabla_u$ is a unitary connection on $\cE$ with respect to $h$ and $\Psi$ is self-adjoint for $h$. Both decompose in turn in
their components of type $(1, 0)$ and $(0, 1)$: $\nabla_u = \partial_\cE + \bar \partial_\cE$, $\Psi = \theta +  \theta^\ast$.
By definition, the metric $h$ is pluriharmonic if the operator $ \bar \partial_\cE + \theta$ is integrable, i.e. if the differentiable form $(\bar \partial_\cE + \theta)^2 \in \cA^2(\End(\cE))$ is zero.

\begin{defn}
A harmonic bundle $(\cE, \nabla, h)$ (or equivalently $(\cE,\bar\partial_\cE,\theta,h)$) on a complex manifold $X$ is the data of a $\cC^\infty$-complex vector bundle $\cE$ equipped with a flat connection $\nabla$ and a pluriharmonic metric $h$.
\end{defn}
If $(\cE, \nabla, h)$ is a harmonic bundle, then the holomorphic bundle $E^{Dol} := (\cE, \bar \partial_\cE)$ equipped with the one-form $\theta \in \cA^1(\End(\cE))$ defines a Higgs bundle. By definition, this means that $\theta$ is a holomorphic one-form with values in $\End(E^{Dol})$ that satisfies $\theta \wedge \theta = 0 $.

Let $D$ be a normal crossing divisor in a smooth projective complex algebraic variety $\bar X$. A harmonic bundle $(\cE, \nabla, h)$ on $X := \bar X \setminus D$ is called tame if the associated Higgs bundle $(E^{Dol}, \theta)$ on $X$ is the restriction of a logarithmic Higgs bundle $(\bar E, \theta)$ on $(\bar X, D)$. (It is sufficient that the coefficients of the characteristic polynomial of the Higgs field $\theta$ extends as logarithmic holomorphic symmetric forms.) A tame harmonic bundle $(\cE, \nabla, h)$ on $X$ is purely imaginary (resp. nilpotent) if the eigenvalues of the residues of $\theta$ in a logarithmic extension $(\bar E, \theta)$ of $(E^{Dol}, \theta)$ are purely imaginary (resp. zero). One easily checks that these definitions do not depend on the choice of the log-compactification $(\bar X, D)$ and of the extension $(\bar E, \theta)$.

In general the correspondence between flat bundles and polystable Higgs bundles involves parabolic structures on both sides.  The details will not be important for us (see \cite{brunebarbesemipos} for a more in-depth discussion), as there are canonical choices when the residues of the operators are nilpotent, but it is useful to recall the general idea.  A parabolic sheaf on $(\bar X,D)$ extending an algebraic sheaf $E$ on $X$ is a $\bR^{\pi_0(D^\reg)}$-indexed decreasing filtration $\bar E^\bullet$ of the associated meromorphic bundle $j_*E$ on $\bar X$ by coherent subsheaves satisfying a semicontinuity condition and such that $\bar E^{\alpha+e_i}=\bar E^\alpha(-D_i)$.  A parabolic bundle is a parabolic sheaf which is Zariski-locally isomorphic to a direct sum of parabolic line bundles (i.e. parabolic sheaves which
are locally-free of rank 1).  A locally free coherent extension $\bar E$ determines a parabolic bundle extending $E$ by setting $\bar E^\alpha:=E(\sum -\lfloor\alpha_i\rfloor D_i)$; such a parabolic bundle is said to be trivial.  A tame pluriharmonic metric naturally induces a parabolic extension---the moderate growth extension---of the associated flat and Higgs bundles according to order of growth of the norm.  Finally, a local system admits a natural parabolic bundle extension---called the Deligne--Manin parabolic extension---via the Deligne construction.    
\begin{thm}[{Mochizuki \cite[Theorem 1.1]{mochizukitameii}}]\label{existence_of_tame_purely_imaginary_harmonic_metrics}Let $(\bar X,D)$ be a projective log smooth variety with ample bundle $L$ and set $X=\bar X\setminus D$. 
\begin{enumerate}
\item A flat filtered regular $\lambda$-connection bundle $(\bar E^\bullet,D)$ is $\mu_L$-polystable with vanishing rational parabolic chern classes if and only if its restriction to $X$ admits a tame pluriharmonic metric for which the moderate growth parabolic extension agrees with $\bar E^\bullet$.  The metric is unique up to flat automorphisms.
\item For a tame harmonic bundle $(E,\nabla,h)$ on $X$, the following are true:
\begin{enumerate}
    \item The parabolic structure on the associated filtered regular flat bundle is the Deligne--Manin extension if and only if the tame harmonic bundle is purely imaginary.
    \item The parabolic structure on the associated filtered regular Higgs bundle is trivial if and only if the eigenvalues of the residues of the connection in the De Rham realization have integral real part, or equivalently if the local monodromy in the Betti realization has purely positive real eigenvalues.
\end{enumerate}
\end{enumerate}
\end{thm}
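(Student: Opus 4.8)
The statement is the tame non-abelian Hodge correspondence of Simpson and Mochizuki in its sharpest parabolic form, so the plan is to organize the argument around (a) existence and uniqueness of an adapted pluriharmonic metric on $X$ and (b) a boundary computation with the KMS (Kashiwara--Malgrange--Sabbah) data for part (2); in practice one simply cites \cite{mochizukitameii} (with \cite{Simpson_noncompact} for the curve case and \cite{mochizukitame} for the general tame case), and the residual task is to match the parabolic and residue normalizations there with the conventions in force here. Fix a complete Kähler metric of Poincaré type on $X$ as in \Cref{sect:harmonic}. Given a $\mu_L$-polystable flat filtered regular $\lambda$-connection bundle $(\bar E^\bullet, D)$ with vanishing rational parabolic Chern classes, I would produce a Hermitian--Einstein metric on $(\bar E^\bullet,D)|_X$ by a Donaldson-type heat flow (or the continuity method) adapted to the parabolic weights. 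The three key inputs are: (i) a Chern--Weil identity writing the parabolic degree of any saturated sub-$\lambda$-connection sheaf as a convergent curvature integral plus an explicit boundary term determined by the parabolic weights, so $\mu_L$-semistability bounds the flow; (ii) the vanishing of the parabolic $c_1^2$ and $c_2$, which upgrades Hermitian--Einstein to pluriharmonic via a Bogomolov--Gieseker/Simpson-type inequality whose equality case is exactly pluriharmonicity; (iii) boundary estimates forcing the limiting metric to have moderate growth matching $\bar E^\bullet$ uniformly near $D$.

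For the converse in part (1): a tame harmonic bundle has a well-defined moderate-growth parabolic extension, the curvature of its Chern connection is $L^1$ with convergent integral computing the parabolic degree (which therefore vanishes), and $\mu_L$-polystability follows by applying the same Chern--Weil bound to sub-objects together with the fact that a flat bundle carrying a harmonic metric splits into harmonic summands. For uniqueness up to flat automorphism, given two tame harmonic metrics $h_1,h_2$ with the same moderate-growth extension, set $s=h_1^{-1}h_2$, a positive self-adjoint endomorphism; harmonicity yields $\Delta\log\Tr s\ge 0$ and $\Delta\log\Tr(s^{-1})\ge 0$, and since the parabolic weights agree there is no growth of $s$ at the boundary, so the maximum principle on the complete Poincaré-type manifold forces $\Tr s$ and $\Tr(s^{-1})$ constant, whence $s$ is parallel; a parallel positive self-adjoint endomorphism is a flat automorphism.

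For part (2) I would analyze the KMS structure at a general point of each $D_i$: the parabolic weight $a$ of the flat bundle, the parabolic weight $b$ of the Higgs bundle, the residue eigenvalue $\lambda$ of $\nabla$ in the de Rham realization, and the residue eigenvalue $\alpha$ of $\theta$ are linked by Mochizuki's normalization relations (up to conventions, $\operatorname{Re}\lambda=a$ and $\operatorname{Re}\alpha$ is a fixed affine combination of $a$ and $b$). For (2a): ``purely imaginary'' means $\operatorname{Re}\alpha=0$ for every Higgs-residue eigenvalue, which through these relations collapses the parabolic jumps of the flat bundle to the Deligne--Manin normalization ($\operatorname{Re}\lambda\in[0,1)$, trivial jumps), and the implication reverses. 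For (2b): triviality of the Higgs parabolic structure means $b\in\bZ$, equivalently $\operatorname{Re}\lambda\in\bZ$; and since the Betti local monodromy eigenvalue is $\exp(-2\pi i\lambda)$, this holds precisely when that eigenvalue is a positive real number.

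The main obstacle is entirely in step (iii) of part (1): establishing the existence of the adapted pluriharmonic metric with the prescribed boundary asymptotics on the noncompact $X$, and identifying the Bogomolov--Gieseker equality case. This is exactly the analytic core of Mochizuki's monograph, built on Simpson's curve case, and there is no genuine shortcut; once it and the KMS bookkeeping are available, part (2) is essentially a verification at the level of normalization conventions.
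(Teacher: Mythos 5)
This statement is quoted in the paper as an external input — it is attributed to Mochizuki \cite[Theorem 1.1]{mochizukitameii} and no proof is given beyond the citation. Your outline faithfully reproduces the architecture of that cited proof (adapted Hermitian--Einstein/pluriharmonic metric via the parabolic Chern--Weil and Bogomolov--Gieseker equality case, maximum-principle uniqueness, and the KMS-spectrum bookkeeping for part (2), which is consistent with the paper's own normalization \eqref{formula}), and you correctly identify that the analytic core must be deferred to Simpson and Mochizuki, so your proposal is in line with how the paper treats this result.
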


\begin{cor}\label{DolDRcorrespondence}
    Fix $(\bar X,D)$ and $L$ as above.  There is an equivalence of categories via purely imaginary tame harmonic bundles with unipotent local monodromy between semisimple logarithmic flat vector bundles with nilpotent residues and $\mu_L$-polystable logarithmic Higgs bundles on $\bar X$ with vanishing rational chern classes and nilpotent residues.
\end{cor}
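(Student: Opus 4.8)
The plan is to construct functors in both directions by passing through purely imaginary tame harmonic bundles and invoking \Cref{existence_of_tame_purely_imaginary_harmonic_metrics}, and to check that they are mutually quasi-inverse. Both categories in question are semisimple: a semisimple logarithmic flat bundle with nilpotent residues is a direct sum of ones whose restriction to $X$ is simple, and a $\mu_L$-polystable logarithmic Higgs bundle with vanishing rational Chern classes is a direct sum of $\mu_L$-stable ones; moreover any logarithmic connection on $(\bar X,D)$ with nilpotent residue is the Deligne canonical (hence Deligne--Manin) extension of its restriction to $X$, and this restriction has unipotent local monodromy since the monodromy about $D_i$ is $\exp(-2\pi i\,\Res_{D_i}\nabla)$. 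So, given a semisimple logarithmic flat bundle $(\bar E,\nabla)$ with nilpotent residues, the associated local system $V=\bar E|_X$ is semisimple with unipotent local monodromy, and $(\bar E,\nabla)$ is its Deligne--Manin extension, carrying the trivial parabolic structure; semisimplicity of $V$ makes this a $\mu_L$-polystable flat filtered regular flat bundle and its rational parabolic Chern classes vanish by \cite[Appendix B]{Esnault-Viehweg}. Thus \Cref{existence_of_tame_purely_imaginary_harmonic_metrics}(1) produces a tame pluriharmonic metric $h$ on $V|_X$ whose moderate growth extension on the flat side is $(\bar E,\nabla)$, and by \Cref{existence_of_tame_purely_imaginary_harmonic_metrics}(2)(a) the resulting tame harmonic bundle $(\cE,\nabla,h)$ is purely imaginary (with unipotent local monodromy). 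We send $(\bar E,\nabla)$ to the moderate growth Higgs extension of the associated Higgs bundle $E^{Dol}=(\cE,\bar\partial_\cE,\theta)$. Conversely, given a $\mu_L$-polystable logarithmic Higgs bundle $(\bar E,\theta)$ on $(\bar X,D)$ with vanishing rational Chern classes and nilpotent residues, we regard it as a $\mu_L$-polystable filtered regular Higgs bundle with trivial parabolic structure and vanishing rational parabolic Chern classes, apply \Cref{existence_of_tame_purely_imaginary_harmonic_metrics}(1), and send it to the moderate growth flat extension of the associated flat bundle.

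Next I would verify these functors have the claimed targets and are mutually inverse. In the first direction the harmonic bundle produced is purely imaginary with unipotent local monodromy; since the eigenvalue $1$ is purely positive real, \Cref{existence_of_tame_purely_imaginary_harmonic_metrics}(2)(b) shows the parabolic structure on the associated filtered Higgs bundle is trivial, so its moderate growth Higgs extension is a genuine logarithmic Higgs bundle on $(\bar X,D)$, $\mu_L$-polystable with vanishing rational Chern classes, and — by the residue bookkeeping discussed below — with nilpotent residues. In the second direction the harmonic bundle produced is \emph{nilpotent} (its $\theta$-residues are the prescribed nilpotent residues of $(\bar E,\theta)$), hence a fortiori purely imaginary, so by \Cref{existence_of_tame_purely_imaginary_harmonic_metrics}(2)(a) its flat-side parabolic structure is the Deligne--Manin one; again by the bookkeeping below its local monodromy is unipotent, so the flat-side moderate growth extension is a logarithmic flat bundle with nilpotent residues whose restriction to $X$ is the semisimple local system underlying the harmonic bundle. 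That the two composites are naturally isomorphic to the identity is then immediate from the uniqueness up to flat isomorphism of the pluriharmonic metric in \Cref{existence_of_tame_purely_imaginary_harmonic_metrics}(1): both composites recover the same harmonic bundle and hence the same flat, resp. Higgs, bundle. For functoriality on morphisms, both categories being semisimple it suffices to note that a nonzero morphism of simple objects is an isomorphism, which corresponds under the construction to an isomorphism of harmonic bundles by the same uniqueness statement (equivalently, $\cHom$ of two of our bundles again underlies a purely imaginary tame harmonic bundle with unipotent local monodromy, and moderate growth flat, resp. $\theta$-flat, sections are matched by the correspondence).

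The one point needing care — and the main obstacle — is the precise matching of the three boundary conditions \emph{unipotent local monodromy} (Betti), \emph{nilpotent residue of $\nabla$} (De Rham), and \emph{nilpotent residue of $\theta$} (Dolbeault) for a purely imaginary tame harmonic bundle. This is bookkeeping in Mochizuki's description of tame harmonic bundles at the boundary \cite{mochizukitame,mochizukitameii}, the same analysis that underlies \Cref{existence_of_tame_purely_imaginary_harmonic_metrics}: locally at a component of $D$ the bundle decomposes according to its KMS spectrum, indexed by a parabolic weight and an imaginary parameter, on which $\Res\theta$ and $\Res\nabla$ act through their semisimple parts by this data while having equal nilpotent parts, and the corresponding local monodromy eigenvalue is $\exp(2\pi i)$ applied to this data. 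In particular, for a purely imaginary tame harmonic bundle, the local monodromy is unipotent if and only if all parabolic weights are integral and the imaginary parameter vanishes, which is precisely the statement that $\Res\nabla$ is nilpotent on the Deligne--Manin extension, equivalently that $\Res\theta$ is nilpotent and the Higgs-side parabolic structure is trivial. Feeding this equivalence into the two constructions above closes the argument. I expect that getting the normalizations in this last paragraph exactly right (rather than the structural skeleton) will be the only delicate part; everything else is a bookkeeping consequence of \Cref{existence_of_tame_purely_imaginary_harmonic_metrics} together with the uniqueness of harmonic metrics.
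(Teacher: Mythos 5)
Your argument is correct and follows essentially the route the paper intends: the corollary is stated as an immediate consequence of \Cref{existence_of_tame_purely_imaginary_harmonic_metrics}, with the triviality of the parabolic structures and the matching of unipotent monodromy, nilpotent $\Res\nabla$, and nilpotent $\Res\theta$ handled exactly by the KMS/residue bookkeeping you describe (which in the paper is recorded in the formula \eqref{formula} and \Cref{unipotent_residues_equivalent_finite_energy}). The only caveat is that your final paragraph leaves that bookkeeping as a sketch, but the computation you indicate is the right one and is available in the cited results.
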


If $f \colon X \to Y$ is an algebraic morphism between two smooth quasiprojective complex varieties, then the pull-back of a tame (resp. tame purely imaginary) harmonic bundle on $Y$ is a tame (resp. tame purely imaginary) harmonic bundle on $X$, see \cite[Lemma 25.29]{mochizukitameii}. In particular, one gets the following result.

\begin{thm}[Corlette, Mochizuki {\cite[Theorem 25.30]{Mochizuki-AMS2}}]\label{pullback ss}
Let $f \colon X \to Y$ be an algebraic morphism between two smooth algebraic spaces. If $V$ is a semisimple complex local system on $Y$, then its pull-back $f^*V$ is a semisimple complex local system on $X$.
\end{thm}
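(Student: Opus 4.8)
The plan is to reduce to the case where $X$ and $Y$ are smooth quasiprojective varieties, where the statement is the theorem of Corlette and Mochizuki, and to prove that case through tame harmonic bundles, using the analytic input from Mochizuki's work as a black box. So suppose first that $X$ and $Y$ are smooth quasiprojective varieties and write $V=\bigoplus_j V_j$ with each $V_j$ a simple complex local system on $Y$. Fix a log smooth compactification $(\bar Y,D)$ of $Y$ and an ample line bundle $L$, and give each $V_j$ its Deligne--Manin parabolic extension; this is a $\mu_L$-stable flat parabolic bundle with vanishing rational parabolic Chern classes, so by \Cref{existence_of_tame_purely_imaginary_harmonic_metrics}(1) (at $\lambda=1$) together with part (2)(a) of that theorem, $V$ underlies a tame purely imaginary harmonic bundle on $Y$. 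By \cite[Lemma 25.29]{mochizukitameii} its pullback along the algebraic morphism $f$ is again a tame purely imaginary harmonic bundle, now on $X$; applying \Cref{existence_of_tame_purely_imaginary_harmonic_metrics}(1) in the converse direction on a log smooth compactification of $X$, the flat parabolic bundle underlying $f^*V$ is $\mu_L$-polystable with vanishing rational parabolic Chern classes, hence a direct sum of $\mu_L$-stable summands each with irreducible underlying local system. Thus $f^*V$ is semisimple. (One may equivalently phrase the last steps via \Cref{DolDRcorrespondence}.)

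For the general case, since $X$ and $Y$ are smooth their connected components are irreducible and disjoint and may be handled separately, so we assume $X,Y$ irreducible. By Chow's lemma for algebraic spaces and resolution of singularities one produces proper surjective birational morphisms $q\colon Y'\to Y$ and $p\colon X'\to X$ with $X',Y'$ smooth quasiprojective varieties together with a morphism $g\colon X'\to Y'$ satisfying $q\circ g=f\circ p$ (first choose $q$, then take $p$ to factor through a suitable modification of $X'\times_YY'$). A proper birational morphism between smooth algebraic spaces induces an isomorphism on fundamental groups, so $q^*V$ is semisimple on $Y'$, hence $p^*f^*V=g^*q^*V$ is semisimple on $X'$ by the quasiprojective case. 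Finally, as $p$ is proper, surjective and birational with $X$ normal, Zariski's main theorem gives $p_*\bC_{X'}=\bC_X$, so the Leray spectral sequence furnishes injections $H^1(X^\an,F)\hookrightarrow H^1(X'^\an,p^*F)$ for every local system $F$ on $X$; taking $F=\cHom(W/W',W')$ for sub-local systems $W'\subset W:=f^*V$ shows that every extension of sub-local systems of $f^*V$ splits (it does after pullback to $X'$), i.e. $f^*V$ is semisimple.

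The analytic heart of the argument — existence and essential uniqueness of tame pluriharmonic metrics on semisimple local systems, and the invariance of tameness and pure-imaginarity of harmonic bundles under arbitrary algebraic pullback — is entirely due to Mochizuki (and Corlette) and is used here as a black box. The step that needs genuine care is the reduction to the quasiprojective case: one cannot simply replace $Y$ by the closure of the image of $f$, since restriction of a local system to a closed subvariety may fail to preserve semisimplicity; instead the reduction must proceed through proper birational modifications by schemes, where one invokes the invariance of $\pi_1$ under such modifications and, for the descent along $p$, the connectedness of their fibres. This reduction is the part I expect to require the most attention to detail; the rest is formal.
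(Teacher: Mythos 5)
Your quasiprojective paragraph is essentially a re-derivation of the Corlette--Mochizuki theorem that the paper simply cites as a black box; it is harmless but redundant. The genuine problem is in the reduction step, exactly the one you flag as delicate. You take $q\colon Y'\to Y$ from Chow's lemma plus resolution and then want $p\colon X'\to X$ proper \emph{birational}, obtained from a modification of the fibre product $X\times_Y Y'$. But Chow's lemma only makes $q$ an isomorphism over some dense open $V\subset Y$, and since $f$ need not be dominant, $f(X)$ may be entirely contained in $Y\setminus V$. In that case the fibre of $X\times_Y Y'\to X$ over $x$ is $q^{-1}(f(x))$, which is positive-dimensional for every $x$, so every component dominating $X$ has positive-dimensional general fibre and no modification of it is birational over $X$. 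Consequently the assertion that ``$p$ is proper, surjective and birational'', and with it $p_*\bC_{X'}=\bC_X$ via Zariski's main theorem and the Leray injection $H^1(X^\an,F)\hookrightarrow H^1(X'^{\an},p^*F)$ that drives your descent, is unjustified as stated. (Your $\pi_1$-isomorphism claim for proper birational maps between smooth spaces is fine, but in your construction you only get to use it for $q$.)

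The gap is repairable: for any proper surjective $p\colon X'\to X$ with $X$ irreducible and normal, the image of $\pi_1(X'^{\an})\to\pi_1(X^\an)$ has finite index, and in characteristic zero a representation whose restriction to a finite-index subgroup is semisimple is itself semisimple; so semisimplicity of $p^*f^*V=g^*q^*V$ still descends without birationality (alternatively, run your $H^1$ argument through the Stein factorization using a transfer map). You should be aware, though, that the paper's own proof is a one-liner taking a different and much lighter route: it invokes the existence of dense open (affine) subschemes of separated finite-type algebraic spaces (Stacks, Tag 06NH) to find an affine subscheme $U\subset X$ mapping into an affine subscheme $U'\subset Y$, and applies the quasiprojective theorem to $f|_U\colon U\to U'$; since $X$ and $Y$ are smooth and the opens are dense, $\pi_1(U'^{\an})\to\pi_1(Y^\an)$ and $\pi_1(U^\an)\to\pi_1(X^\an)$ are surjective, so $V|_{U'}$ is semisimple and semisimplicity of $(f^*V)|_U$ gives that of $f^*V$. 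Your observation that one cannot simply restrict to the closure of the image is correct and is precisely why one passes to dense opens (or, as in your fix, to finite-index subgroups) rather than to subvarieties; but as written your construction does not deliver the birational $p$ your argument needs.
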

\begin{proof}
    The usual statement is for quasiprojective varieties.  In general, there is an affine subspace of $X$ mapping to an affine subspace of $Y$ \cite[\href{https://stacks.math.columbia.edu/tag/06NH}{Tag 06NH}]{stacks-project}, and we apply the result to them.
\end{proof}

\begin{rem}
Let $X$ be a smooth quasiprojective complex variety.  If $(E, \nabla)$ is a semisimple flat bundle on $X$, then thanks to Theorem \ref{existence_of_tame_purely_imaginary_harmonic_metrics} every choice of a tame purely imaginary pluriharmonic metric $h$ on $(E, \nabla)$ will yield the same (algebraic) Higgs bundle on $X$. 
\end{rem}

\begin{prop}[See  (21.19) in \cite{mochizukitameii}]\label{energy_versus_Higgs_field}
Let $(E, \theta, h)$ be a harmonic bundle on a Kähler manifold $X$. Then the following equality between the energy density $e^u$ of the corresponding twisted pluriharmonic map $u$ and the norm of the Higgs field holds: 
\[8 \cdot |\theta|^2 = e^u. \]
\end{prop}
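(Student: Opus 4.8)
The plan is to reduce Proposition~\ref{energy_versus_Higgs_field} to a pointwise identity in linear algebra and then appeal to Mochizuki's computation to pin down the numerical constant. First I would recall the dictionary between the twisted pluriharmonic map $u$ and the harmonic metric $h$ underlying the harmonic bundle $(\cE,\nabla,h)$. Fixing a point of $X$ and an $h$-unitary frame of $\cE$, the map $u\colon\tilde X\to\Delta$ to the symmetric space $\Delta=\GL_r(\bC)/\U(r)$ classifies the metric $h$, and under the standard identification of $u^*T\Delta$ with the bundle of $h$-self-adjoint endomorphisms of $\cE$, the differential $du$ is identified with the self-adjoint part $\Psi=\theta+\theta^*$ of the flat connection $\nabla=\nabla_u+\Psi$ (this is the same decomposition recalled in \Cref{sect:harmonic bundles}). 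Thus computing $e^u$ becomes computing the pointwise squared norm of $\Psi$ with respect to the Kähler metric $g$ on $X$ and the invariant metric on $\Delta$.

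Next I would carry out that computation. Since $\theta$ is of type $(1,0)$ while $\theta^*$ is of type $(0,1)$, the two are orthogonal, so $|\Psi|^2=|\theta|^2+|\theta^*|^2$; moreover $|\theta^*|=|\theta|$, because $\theta^*$ is the $h$-adjoint of $\theta$ and complex conjugation exchanges $(1,0)$- and $(0,1)$-forms isometrically. Hence, pointwise, the energy density is a fixed constant multiple of $|\theta|^2$, the constant depending only on the normalization of the invariant metric on $\Delta$ (which enters the definition of the Korevaar--Schoen energy via an isometric local embedding, as in \Cref{sect:harmonic}) and on the convention for $|\theta|^2$ (taken with respect to the same Kähler form $g$). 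The content of the proposition is exactly that, in the conventions used throughout the paper, this constant is $8$; the relevant factors of $2$ come from $|\Psi|^2=|\theta|^2+|\theta^*|^2$, from the passage between the Riemannian norm of $du$ and its holomorphic components, and from the normalization of the $\Tr$-pairing on $\mathfrak{gl}_r$ defining the metric on $\Delta$.

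Rather than redo this normalization bookkeeping from scratch, I would quote Mochizuki's formula (21.19) in \cite{mochizukitameii}, where the norm of the Higgs field and the energy density of the associated harmonic map are computed in precisely compatible conventions and yield $8\,|\theta|^2=e^u$. The one genuinely delicate point to verify by hand is that the metric on $\Delta$ implicit in our existence theorem \Cref{existence_harmonic_finite_energy} agrees with Mochizuki's; this reduces to comparing two $\GL_r(\bC)$-invariant metrics on $\Delta$, hence to fixing a single scaling factor on the $\Tr$-pairing on the Lie algebra, and then checking that the Korevaar--Schoen energy density for the resulting Riemannian structure matches the density appearing in \cite{mochizukitameii}. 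I expect this constant-matching step to be the only real obstacle; the remainder is the formal dictionary recalled above together with the elementary orthogonality and adjointness facts for $\theta$ and $\theta^*$.
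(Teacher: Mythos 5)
Your proposal is correct and in substance the same as the paper's treatment: the paper offers no independent proof and simply cites Mochizuki's formula (21.19) in \cite{mochizukitameii}, which is exactly what your argument reduces to after the (accurate) dictionary identifying $du$ with $\theta+\theta^*$ and the observation $|\theta^*|=|\theta|$. Your extra discussion of where the constant $8$ comes from is a reasonable sanity check on conventions, but it is not part of the paper's argument and is not needed beyond the citation.
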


\begin{prop}\label{unipotent_residues_equivalent_finite_energy}
Let $(\bar X,D)$ be a projective log smooth variety and set $X=\bar X\setminus D$. Endow $X$ with a Poincaré-type Kähler metric. Let $(E, \theta, h)$ be a purely imaginary tame harmonic bundle on $X$. Then the following properties are equivalent:
\begin{enumerate}
    \item the eigenvalues of the local monodromy have modulus one,
    \item the residues of the Higgs field are nilpotent,
    \item $(E, \theta, h)$ has finite energy.
\end{enumerate}
\end{prop}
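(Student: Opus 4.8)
The plan is to prove the three-way equivalence in \Cref{unipotent_residues_equivalent_finite_energy} by exploiting the dictionary between the Betti, De Rham, and Dolbeault realizations of a purely imaginary tame harmonic bundle together with the energy/Higgs-field identity of \Cref{energy_versus_Higgs_field}. The equivalence of (1) and (2) is essentially the content of \Cref{existence_of_tame_purely_imaginary_harmonic_metrics}(2)(a): for a purely imaginary tame harmonic bundle, the eigenvalues of the residue of $\theta$ in a logarithmic extension are purely imaginary, say $i\mu_j$ with $\mu_j\in\bR$, and the correspondence relates these to the residue eigenvalues $\beta_j + i\mu_j$ of the associated logarithmic connection and to the local monodromy eigenvalues $\exp(-2\pi i(\beta_j+i\mu_j))$ on the Betti side, where $\beta_j$ is the parabolic weight. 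Purely imaginary means $\beta_j$ may be taken in $[0,1)$ (this is exactly the Deligne--Manin normalization from \Cref{existence_of_tame_purely_imaginary_harmonic_metrics}(2)(a)); then the local monodromy eigenvalue has modulus one precisely when the real part of the connection residue is an integer, i.e. when $\beta_j=0$, which by \Cref{existence_of_tame_purely_imaginary_harmonic_metrics}(2)(b) is equivalent to the parabolic structure on the Higgs side being trivial --- and in the purely imaginary case with $\beta_j\in[0,1)$ this forces the residues of the Higgs field to be nilpotent. So the first step is to carefully unwind the residue bookkeeping in \Cref{existence_of_tame_purely_imaginary_harmonic_metrics} to extract $(1)\Leftrightarrow(2)$.

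The substantive step is $(2)\Leftrightarrow(3)$, relating nilpotent residues of the Higgs field to finiteness of the energy. By \Cref{energy_versus_Higgs_field}, $E(u) = \int_X e^u\,\dvol = 8\int_X |\theta|^2_{h,\omega}\,\dvol$, where $\omega$ is a Poincaré-type metric on $X$. The idea is to estimate $|\theta|^2_{h,\omega}$ near the boundary $D$ using the norm estimates of Simpson and Mochizuki for the harmonic metric $h$ on a tame harmonic bundle. In an admissible polydisk with coordinate $q$ vanishing on a local branch of $D$, the Higgs field has the form $\theta = (\mathrm{Res}_q\theta + O(\text{higher order}))\,\tfrac{dq}{q} + (\text{terms regular in the other directions})$. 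With respect to the Poincaré metric, $|\tfrac{dq}{q}|^2_\omega \sim (\log|q|^2)^2$, which blows up, but the adapted metric $h$ decays along the filtration by $\mathrm{Res}_q\theta$-eigenvalues in such a way that: if $\mathrm{Res}_q\theta$ is nilpotent the $\tfrac{dq}{q}$-component of $\theta$ is measured by $h$ to be of size $O(1/\log|q|)$ or smaller in a way compatible with the logarithmic volume growth, yielding a finite integral; whereas if $\mathrm{Res}_q\theta$ has a nonzero eigenvalue then $|\theta|^2_{h,\omega}$ is bounded below by a positive constant near that stratum, and since the Poincaré volume of a punctured-disk neighborhood is infinite, $E(u)=\infty$. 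Concretely I would cite the norm estimates in Mochizuki \cite{mochizukitameii} (and Simpson \cite{Simpson_noncompact} in the curve case) controlling $|\theta|_h$ in terms of the parabolic/KMS structure, combined with the fact that for a tame purely imaginary harmonic bundle the parabolic structure on the Higgs side is trivial iff the residues are nilpotent (\Cref{existence_of_tame_purely_imaginary_harmonic_metrics}(2)(b)).

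The main obstacle I expect is making the near-boundary estimate on $|\theta|^2_{h,\omega}$ precise enough to distinguish the convergent from the divergent case, since this requires invoking the rather technical KMS/norm-growth estimates for tame harmonic bundles rather than a soft argument. One clean way to organize it: reduce to the case $\bar X$ a curve (the general case follows by restricting to a Lefschetz-type curve transverse to $D$ and by the multiplicativity of the energy-density estimate fiberwise, or by working in an admissible polydisk and integrating in the transverse directions), then on a punctured disk $\bD^*$ with Poincaré metric, decompose the tame harmonic bundle into its KMS-spectral pieces; on the piece with residue eigenvalue $i\mu$ and parabolic weight $\beta$, the norm estimates give $|\theta|^2_{h,\omega_P} \asymp (\beta^2+\mu^2) + o(1)$ away from zero in an appropriate trivialization, so $\int_{\bD^*}|\theta|^2_{h}\,\omega_P < \infty$ forces $\beta=\mu=0$ on every piece, i.e.\ nilpotent residues and purely imaginary (the latter already assumed); conversely if all residues are nilpotent the harmonic-bundle estimates show $|\theta|_h$ decays like $1/(|q|(-\log|q|))$ against $dq$, so $|\theta|^2_{h,\omega_P}$ is bounded and even the stronger integrability holds since one gains from the subleading decay. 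I would present $(1)\Rightarrow(2)$ and $(2)\Rightarrow(1)$ via the residue dictionary, $(2)\Rightarrow(3)$ via the decay estimate and \Cref{energy_versus_Higgs_field}, and $(3)\Rightarrow(2)$ (equivalently $\lnot(2)\Rightarrow\lnot(3)$) via the lower bound on $|\theta|^2_{h,\omega_P}$ near a stratum where $\mathrm{Res}\,\theta$ is non-nilpotent together with the infinite Poincaré volume.
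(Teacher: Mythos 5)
Your overall skeleton (the Betti/De Rham/Dolbeault residue dictionary for $(1)\Leftrightarrow(2)$, and $E(u)=8\int_X|\theta|^2_{h,\omega}$ plus Simpson–Mochizuki norm estimates for the energy statements) is the paper's approach, and your $(2)\Rightarrow(3)$ is essentially the paper's proof (it cites \cite[Theorem 1]{Simpson_noncompact} and Mochizuki for boundedness of $|\theta|$ in the Poincaré metric, then \Cref{energy_versus_Higgs_field}; you should add that the Poincaré-type metric has \emph{finite} volume, which is what turns boundedness into finite energy). But two of your steps contain genuine errors. In $(1)\Leftrightarrow(2)$ you identify "modulus one monodromy eigenvalues" with "integral real part of the connection residue" and then invoke \Cref{existence_of_tame_purely_imaginary_harmonic_metrics}(2)(b). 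This is the wrong invariant: with monodromy eigenvalue $\exp(-2\pi i(\beta_j+i\mu_j))=e^{-2\pi i\beta_j}e^{2\pi\mu_j}$ (your own formula), modulus one means $\mu_j=0$, i.e. it kills the \emph{imaginary} part of the De Rham residue, whereas integral real part (equivalently trivial parabolic structure on the Higgs side, which is what (2)(b) asserts) corresponds to the monodromy eigenvalues being \emph{positive real}. Moreover "trivial Higgs parabolic structure $+$ purely imaginary" does not force nilpotent residues: a rank-one local system on $\bD^*$ with monodromy eigenvalue $e^{2\pi\mu}$, $\mu\neq0$, has trivial parabolic structure and Higgs residue $i\mu\neq0$. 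The correct argument is the one your displayed formula already gives: purely imaginary kills $\Re$ of the Higgs residue eigenvalues, modulus one kills $\Im$, hence nilpotence; this is exactly what the paper extracts from Simpson's table, with no detour through (2)(b).

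The second error is in your quantitative analysis near the boundary. The Poincaré volume of a punctured-disk neighborhood is \emph{finite} (the $(\log|q|)^{-2}$ factor makes $\int dr/(r(\log r)^2)$ converge), so a constant lower bound on $|\theta|^2_{h,\omega_P}$, or your claimed asymptotic $|\theta|^2_{h,\omega_P}\asymp\beta^2+\mu^2+o(1)$, would \emph{not} produce infinite energy, and in fact finite energy cannot force $\beta=0$ (unitary local systems with nontrivial modulus-one monodromy have $\theta=0$, hence zero energy, but nonzero parabolic weights). The actual mechanism for $(3)\Rightarrow(2)$ is that $|dq/q|^2_{\omega_P}=(\log|q|^2)^2$ blows up, so a nonzero residue eigenvalue $i\mu$ gives $|\theta|^2_{h,\omega_P}\gtrsim\mu^2(\log|q|)^2$, and then $\int(\log|q|)^2\,\omega_P$ diverges like $\int dr/r$ despite the finite volume; this is the kind of estimate the paper alludes to when it says the implication $(3)\Rightarrow(2)$ can be proved as in \Cref{extended_characteristic_polynomial} (and note the paper deliberately leaves that implication to the reader since it is not used). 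Your reduction to curves via a Lefschetz curve is also not needed and not obviously legitimate for checking energy finiteness; working in admissible polydisks, as you suggest as an alternative, is the right move.
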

We say that a harmonic bundle on a Kähler manifold $X$ has finite energy if the associated twisted pluriharmonic map has finite energy (of course this depends on the choice of the metric on $X$).

\begin{proof}
The equivalence between $(1)$ and $(2)$ is a consequence of the table in \cite[p.720]{Simpson_noncompact}. Assume that the residues of the Higgs field are nilpotent, so that $(E, \theta, h)$ is a nilpotent tame harmonic bundle. By \cite[Theorem 1]{Simpson_noncompact} and \cite[Corollary 4.1]{Mochizuki-JDG}, the norm of $\theta$ is bounded with respect to the Poincaré-type metric on $X$, so that by \Cref{energy_versus_Higgs_field} $(E, \theta, h)$  has finite energy. 
The implication $(3) \implies (2)$ can be proved with arguments similar to those appearing in the proof of \Cref{extended_characteristic_polynomial} below. Since this implication is not used in this paper, we leave the details to the interested reader.
\end{proof}


\subsection{The nilpotent residue De Rham--Dolbeault comparison}\label{section on comparison}
Assume $(\bar X,D)$ is a projective log smooth variety.  Recall that $\cM^\nilp_{Dol}(\bar X,D)\subset \cM_{Dol}(\bar X,D)$ is the closed substack of semistable logarithmic Higgs bundles whose Higgs fields has nilpotent residues and $M_{Dol}^\nilp(\bar X,D)$ is its good moduli space, whose points correspond to polystable logarithmic Higgs bundles with nilpotent residues. According to \Cref{DolDRcorrespondence}, solving for the harmonic metric yields a bijective map of sets
\begin{equation}\label{SM corr}SM^{\nilp}_{(\bar X,D)}:M_{Dol}^\nilp(\bar X,D)(\bC)\to M_{DR}^\nilp(\bar X,D)(\bC)\end{equation}
which is functorial with respect to pull-back along algebraic morphisms.

The main result of this subsection is the following:
\begin{thm}\label{comparison}For a projective log smooth curve $(\bar X,D)$, the comparison $SM^{\nilp}_{(\bar X,D)}$ is a homeomorphism in the euclidean topology.
    
\end{thm}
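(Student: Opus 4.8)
The map $SM^\nilp_{(\bar X,D)}$ of \eqref{SM corr} is already a bijection of sets, functorial for pullback, so the content is purely topological: both sides are the complex points of quasiprojective good moduli spaces, hence are Hausdorff, second countable, locally compact, locally contractible spaces, and a continuous bijection between such spaces which is also a local homeomorphism (or, more simply, a continuous bijection which is proper, or one whose inverse is continuous on a dense open and both sides are ``tame'') is a homeomorphism. The plan is therefore to prove continuity of $SM^\nilp_{(\bar X,D)}$ and of its inverse separately, reducing continuity to a statement about convergent sequences via second countability, and then to upgrade ``continuous bijection'' to ``homeomorphism''. Restricting to curves makes the harmonic analysis one-dimensional and lets us invoke the finite-energy theory of \Cref{sect:harmonic}: by \Cref{unipotent_residues_equivalent_finite_energy}, nilpotent residues of the Higgs field is equivalent to the associated purely imaginary tame harmonic bundle having \emph{finite} energy with respect to a fixed Poincaré-type metric on $X=\bar X\setminus D$.

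\textbf{Continuity via harmonic maps.} First I would fix a Poincaré-type complete Kähler metric on $X$. Given a convergent sequence $[(E_k,\theta_k)]\to[(E_\infty,\theta_\infty)]$ in $M_{Dol}^\nilp(\bar X,D)(\bC)$, I want to show the corresponding flat bundles converge in $M_{DR}^\nilp(\bar X,D)(\bC)$. The key analytic input is the boundedness of energy: by \Cref{energy_versus_Higgs_field}, $8|\theta_k|^2=e^{u_k}$ where $u_k:\tilde X\to\Delta$ is the twisted pluriharmonic map to the symmetric space $\Delta=\GL_r(\bC)/\U(r)$ associated to the harmonic bundle, and since the sequence $(E_k,\theta_k)$ converges in the moduli space one obtains (after choosing representatives, using properness of the GIT quotient and that the family is locally bounded) a uniform bound on the energies $E(u_k)$. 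By \Cref{energy_control_Lipschitz} the local Lipschitz constants of the $u_k$ are uniformly bounded, so by Arzelà--Ascoli and \Cref{limit_of_harmonic_is_sometime_harmonic} (applied on exhausting domains, with \Cref{centerofmass_leastenergy} to normalize the translation ambiguity of the $\rho_k$-equivariant maps) a subsequence of $u_k$ converges locally uniformly to a finite-energy $\rho_\infty$-equivariant harmonic map $u_\infty$; its energy density computes $|\theta_\infty|^2$, so $u_\infty$ is the harmonic map attached to $(E_\infty,\theta_\infty)$, and the convergence of the harmonic metrics gives convergence of the Chern connections, hence of the flat connections, i.e. convergence in $M_{DR}^\nilp$. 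A subsequence argument then shows the whole sequence converges, giving continuity of $SM^\nilp_{(\bar X,D)}$. Continuity of the inverse is the symmetric statement: a convergent sequence of semisimple flat bundles with nilpotent residues, via \Cref{existence_of_tame_purely_imaginary_harmonic_metrics}, has uniformly bounded harmonic-map energy (now the bound comes from the De Rham side---the $\lambda=1$ connection and the Deligne--Manin parabolic structure vary boundedly), and the same compactness of harmonic maps produces a convergent limiting harmonic metric whose associated Higgs bundle is the limit, giving convergence in $M_{Dol}^\nilp$.

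\textbf{From continuous bijection to homeomorphism.} Once both $SM^\nilp_{(\bar X,D)}$ and its set-theoretic inverse are shown continuous, we are done. Alternatively, and perhaps more cleanly, one proves $SM^\nilp_{(\bar X,D)}$ continuous and then argues it is a homeomorphism using that both $M_{Dol}^\nilp(\bar X,D)^\an$ and $M_{DR}^\nilp(\bar X,D)^\an$ are the analytifications of the \emph{same} underlying set-with-extra-structure: by \Cref{BettiDeRhamcomparison}, $M_{DR}^\nilp(\bar X,D)^\an\cong M_B^\unip(X)^\an$ as analytic spaces, so it suffices to identify the continuous bijection $M_{Dol}^\nilp(\bar X,D)(\bC)\to M_B^\unip(X)(\bC)$ with a homeomorphism. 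Here one can invoke that a continuous bijection from a locally compact, $\sigma$-compact Hausdorff space to a Hausdorff space which is proper is a homeomorphism; properness follows because both sides admit the same GIT description and the forgetful-type maps to the spaces of residues / characteristic data are proper, so preimages of compacts are compact. I would phrase this last step using \cite[Proposition 5.5]{SimpsonmoduliI} as in the proof of \Cref{BettiDeRhamcomparison}: the analytic good-moduli spaces are universal categorical quotients, and a $\GL_r$-equivariant continuous bijection of the framed spaces that is proper descends to a homeomorphism of quotients.

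\textbf{Main obstacle.} The crux is establishing the \emph{uniform energy bound} along a convergent sequence in the moduli space, because the harmonic metric is only canonical up to the $\rho_k$-action and the family $(E_k,\theta_k)$ need only converge as points of the GIT quotient (so the $E_k$ might degenerate as bundles while the polystable representative varies continuously). The point where I expect to spend real effort is showing that, after replacing by $S$-equivalent polystable representatives and using the properness of $M_{Dol}^\nilp\to M_{Dol}$ together with boundedness of the Higgs fields coming from nilpotency of residues (\cite[Theorem 1]{Simpson_noncompact}, \cite[Corollary 4.1]{Mochizuki-JDG}), the energies stay bounded; and then that the limit harmonic map genuinely corresponds to the limit polystable object rather than to some proper degeneration---this is where the finiteness of energy (equivalently, nilpotent residues) is essential, since it is exactly what rules out energy escaping into the cusps. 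The one-dimensionality of $\bar X$ is used to keep the harmonic-map compactness and the Poincaré-metric estimates in the regime covered by \Cref{sect:harmonic}, and to make the parabolic bookkeeping trivial in the nilpotent case.
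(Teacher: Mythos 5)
Your overall strategy is the paper's: a sequence-based continuity argument powered by compactness of equivariant pluriharmonic maps with uniformly bounded energy (via $8|\theta|^2=e^u$ and \Cref{energy_control_Lipschitz}, \Cref{limit_of_harmonic_is_sometime_harmonic}), followed by a descent from framed data to the good moduli spaces. However, there are two genuine gaps. The first, and most serious, is the step ``the same compactness of harmonic maps produces a convergent limiting harmonic metric whose associated Higgs bundle is the limit, giving convergence in $M_{Dol}^\nilp$.'' Locally uniform convergence of the metrics and of the operators $\bar\partial_k,\theta_k$ on the open curve $X$ does \emph{not} by itself give convergence of the logarithmic extensions $(\bar E_k^{Dol},\theta_k)$ as points of the moduli space of logarithmic Higgs bundles on $(\bar X,D)$: the objects being compared live on $\bar X$, and one must control what happens across the punctures. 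This is the technical heart of the paper's \Cref{harmonic sequence}: one uses Mochizuki's norm estimates to produce, for each $k$, a holomorphic frame of the prolonged bundle near each puncture with polynomially bounded norm and bounded determinant norm, shows (via a $\bar\partial$-regularity and Cauchy-estimate argument) that these frames converge to a frame of the limiting prolongation, glues the resulting holomorphic identifications near $D$ with $C^\infty$ identifications on the interior, and only then converts operator-norm convergence into convergence in $R_{Dol}^\nilp$ using \cite[Lemma 5.12]{SimpsonmoduliI}. Without this boundary analysis your Dolbeault-side convergence claim is unsupported, and the analogous issue is exactly where finiteness of energy (nilpotent residues) enters; on the De Rham side, by contrast, convergence does reduce to convergence of monodromy via \Cref{BettiDeRhamcomparison}, as you implicitly use.

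The second gap concerns the uniform energy bound and the descent to coarse spaces. For the Dolbeault-to-De Rham direction the bound should come from the boundedness of the characteristic polynomial of $\theta_k$ (continuous on the moduli space) together with \cite[Lemma 2.7]{simpsonhiggs} and the Poincar\'e-metric estimates of \cite{Simpson_noncompact,Mochizuki-JDG}; for the inverse direction, ``the Deligne--Manin parabolic structure varies boundedly'' is not an argument: the paper instead constructs, for a compact family of monodromy representations, equivariant initial maps of uniformly bounded energy (as in Jost--Yang--Zuo) and uses that the harmonic map minimizes energy. Finally, your descent step needs the intermediate device the paper introduces: the framed spaces $R^{\nilp}_{Dol/DR}(\bar X,D,x)(\bC)^{(V_0,h_0)}$ with framings compatible with the harmonic metric, on which the correspondence is a homeomorphism, together with properness of these spaces over the coarse spaces (\Cref{framed stuff is proper}); on the De Rham side that properness itself requires a Kempf--Ness-type lifting of convergent sequences of semisimple representations, which your appeal to ``the same GIT description and proper forgetful maps to residue/characteristic data'' does not supply.
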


\begin{cor}\label{contDolDR}  For any projective log smooth variety $(\bar X,D)$, $SM^\nilp_{(\bar X,D)}$ is a continuous bijection.  In particular, for any compact $K\subset M^\nilp_{Dol}(\bar X,D)(\bC)$ it induces a homeomorphism $K\to SM_{\bar X,D}^\nilp(K)$. 
\end{cor}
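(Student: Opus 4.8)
The plan is to deduce \Cref{contDolDR} from \Cref{comparison} by a Lefschetz-type reduction to curves, combined with the elementary topology of continuous bijections from locally compact spaces. First I would establish continuity of $SM^\nilp_{(\bar X,D)}$ for general $(\bar X, D)$. Pick a Lefschetz curve $i : C \hookrightarrow X$ (using \Cref{lefschetz curve}, applied to $X^\reg$; here $X = \bar X \setminus D$ is already smooth) with a log smooth compactification $(\bar C, D_C)$ such that $i$ extends to $\bar i : \bar C \to \bar X$, with $\bar i^{-1}(D)$ supported on $D_C$. Since $i_* : \pi_1(C) \to \pi_1(X)$ is surjective, the pullback maps $\bar i^* : \cM_B(X) \to \cM_B(C)$ and hence on the good moduli spaces are closed immersions (\Cref{lem immersive}(1)), and the same holds on the De Rham side for the nilpotent-residue loci via \Cref{BettiDeRhamcomparison}. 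On the Dolbeault side, pullback of logarithmic Higgs bundles with nilpotent residues along $\bar i$ defines a morphism $M_{Dol}^\nilp(\bar X, D) \to M_{Dol}^\nilp(\bar C, D_C)$, which I claim is a closed immersion: a polystable logarithmic Higgs bundle on $(\bar X,D)$ with vanishing rational Chern classes and nilpotent residues corresponds via \Cref{DolDRcorrespondence} to a semisimple local system with unipotent local monodromy, and such a Higgs bundle is determined by its restriction to $C$ because the local system is (using that $\bar i^*$ is a closed immersion on the Betti side and that the Simpson--Mochizuki correspondence is functorial for pullback).

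The key point is then functoriality: the square
\begin{equation}\notag\begin{tikzcd}
M_{Dol}^\nilp(\bar X,D)(\bC)\ar[r,"SM^\nilp_{(\bar X,D)}"]\ar[d,"\bar i^*"']&M_{DR}^\nilp(\bar X,D)(\bC)\ar[d,"\bar i^*"]\\
M_{Dol}^\nilp(\bar C,D_C)(\bC)\ar[r,"SM^\nilp_{(\bar C,D_C)}"]&M_{DR}^\nilp(\bar C,D_C)(\bC)
\end{tikzcd}\end{equation}
commutes, because solving for the harmonic metric is compatible with algebraic pullback (the pullback of a purely imaginary tame harmonic bundle is purely imaginary tame harmonic, \cite[Lemma 25.29]{mochizukitameii}). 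The left and right vertical maps are homeomorphisms onto their (closed) images, and the bottom horizontal map is a homeomorphism by \Cref{comparison}. Chasing the diagram, $SM^\nilp_{(\bar X,D)}$ is the restriction of the homeomorphism $SM^\nilp_{(\bar C,D_C)}$ to the closed subset $\bar i^*M_{Dol}^\nilp(\bar X,D)(\bC)$, followed by the homeomorphism $(\bar i^*)^{-1}$ onto $M_{DR}^\nilp(\bar X,D)(\bC)$; hence it is continuous — indeed a homeomorphism onto its image — and it is bijective by \eqref{SM corr}. This already gives everything claimed except that $SM^\nilp_{(\bar X,D)}$ is a global homeomorphism (which is not asserted in the corollary for higher-dimensional $X$; only continuity and bijectivity, plus the homeomorphism property on compacta).

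Finally, for the last sentence: $M_{Dol}^\nilp(\bar X,D)(\bC)$ is the set of $\bC$-points of a quasiprojective variety with the euclidean topology, hence locally compact Hausdorff, and $SM^\nilp_{(\bar X,D)}$ is a continuous bijection onto $M_{DR}^\nilp(\bar X,D)(\bC)$, which is Hausdorff. A continuous bijection restricts to a homeomorphism on any compact subset $K$ (the restriction $K \to SM^\nilp_{(\bar X,D)}(K)$ is a continuous bijection from a compact space to a Hausdorff space, hence closed, hence a homeomorphism). I expect the main obstacle to be verifying that $\bar i^* : M_{Dol}^\nilp(\bar X,D) \to M_{Dol}^\nilp(\bar C,D_C)$ is genuinely a closed immersion (equivalently injective with closed image on $\bC$-points, with the subspace topology), since on the Dolbeault side this is not formal from $\pi_1$-surjectivity the way it is on the Betti side; the cleanest route is to transport the statement through $SM^\nilp$ on both $X$ and $C$ and through the Betti realization, where it follows from \Cref{lem immersive}, and then observe that continuity of $SM^\nilp$ on $C$ (the hard input \Cref{comparison}) lets one conclude the image is closed. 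One should also double-check that the Lefschetz curve can be chosen so that $\bar i^{-1}(D_i)$ meets $\bar C$ transversally along $D_C$ and the residues pull back to nilpotent residues, which is immediate since nilpotence is preserved under restriction.
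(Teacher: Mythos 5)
Your proposal is correct and is essentially the paper's argument: restrict along a Lefschetz curve, use that $\pi_1$-surjectivity makes $i^*_B$ (hence, via \Cref{BettiDeRhamcomparison}, $i^*_{DR}$ on the nilpotent-residue locus) a closed immersion, and deduce continuity of $SM^\nilp_{(\bar X,D)}$ from the commutative square together with the homeomorphism of \Cref{comparison} on the curve, the compact-to-Hausdorff remark giving the last sentence. The one place you overcomplicate is the "main obstacle" you identify: you never need $i^*_{Dol}$ to be a closed immersion (and proving it is a homeomorphism onto its image directly would be circular) — since $i^*_{DR}\circ SM^\nilp_{(\bar X,D)}=SM^\nilp_{(\bar C,D_C)}\circ i^*_{Dol}$ with the right-hand side continuous and $i^*_{DR}$ a topological embedding, continuity of the algebraic (hence continuous) map $i^*_{Dol}$ alone suffices, which is exactly how the paper concludes.
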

\begin{proof}[Proof of \Cref{contDolDR} assuming \Cref{comparison}]
   By an appropriate Lefschetz theorem (e.g. \cite[\S II.5.1]{stratmorse}), for a general sufficiently ample curve $C\subset X$, the inclusion $i:C\to X$ induces a surjection $i_*:\pi_1(C,x_0)\to \pi_1(X,x_0)$ for $x_0\in C$.  The pull-back $i^*_B:\cM_B(X)\to\cM_B(C)$ is therefore a closed immersion, as is the coarse map $i^*_B:M_B(X)\to M_B(C)$; likewise for the unipotent local monodromy loci, and the nilpotent residues locus in the De Rham realization, by \Cref{BettiDeRhamcomparison}.  We have the commutative diagram:
\[\begin{tikzcd}
    M_{Dol}^\nilp(\bar X,D_X)\ar[r,"i^*_{Dol}"]\ar[d,"SM^\nilp_{(\bar X,D_X)}",swap]&M_{Dol}^\nilp(\bar C,D_C)\ar[d,"SM^\nilp_{(\bar C,D_C)}"]\\
    M_{DR}^\nilp(\bar X, D_X)\ar[r,"i^*_{DR}"]&M_{DR}^\nilp(\bar C,D_C).
\end{tikzcd}\]
Since $i_{DR}^*$ is a closed immersion and both vertical maps are bijective on the level of sets, it follows from the theorem that the left vertical map is continuous.
\end{proof}

The main step of the proof of \Cref{comparison} is the following:   
\begin{prop}\label{harmonic sequence}
Let $(\bar X,D)$ be a projective log smooth curve with basepoint $x\in X=\bar X\setminus D$.  Let $\mathbf{E}_i=(\cE_i,\bar\partial_i,\theta_i,h_i,\phi_i)$ be a sequence of framed rank $r$ tame nilpotent harmonic bundles on the curve $X$ with unipotent local monodromy such that the coefficients of the characteristic polynomial $P(\theta_i)$ are uniformly bounded (in the space of global log symmetric forms) and such that the framing $\phi_i:(V_0,h_0)\xrightarrow{\cong}(\cE_{i,x},h_{i,x})$ respects the metric.  Then there is a framed tame nilpotent harmonic bundle $(\cE_\infty,\bar \partial_\infty,\theta_\infty,h_\infty,\phi_\infty)$ with unipotent local monodromy and whose framing respects the metric such that, up to passing to a subsequence, the associated sequence of framed logarithmic flat bundles $(\bar E^{DR}_i,\nabla_i,\phi_i)$ (resp. framed polystable logarithmic Higgs bundles $(\bar E_i^{Dol},\theta_i,\phi_i)$) converges in $R_{DR}^\nilp(\bar X,D,x)$ (resp. $R_{Dol}^\nilp(\bar X,D,x)$) to $(\bar E_\infty^{DR},\nabla_\infty,\phi_\infty)$ (resp. $(\bar E_\infty^{Dol},\theta_\infty,\phi_\infty)$).
\end{prop}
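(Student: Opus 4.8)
The plan is to prove \Cref{harmonic sequence} by a compactness argument using the a priori estimates for harmonic maps from \Cref{sect:harmonic}. The starting point is that each $\mathbf{E}_i$ corresponds, via \Cref{pluriharmonic} and \Cref{existence_harmonic_nonarchimedean_quasiunipotentmonodromies} (in the archimedean form, i.e. with the symmetric space $\Delta=\GL_r(\bC)/\U_r$ as target), to a twisted pluriharmonic map $u_i:\tilde X\to \Delta$; here we use the Poincaré-type metric on $X$ and the basepoint $x$ to normalize $u_i(\tilde x)$ to be the fixed point corresponding to $h_0$, which is exactly what the metric-preserving framing $\phi_i$ encodes. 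The uniform bound on the coefficients of $P(\theta_i)$, together with \Cref{energy_versus_Higgs_field} (the identity $8|\theta_i|^2=e^{u_i}$) and \Cref{unipotent_residues_equivalent_finite_energy}/\cite[Corollary 4.1]{Mochizuki-JDG}, gives a uniform bound on the energies $E(u_i)$: the point-wise norm of a nilpotent tame Higgs field is controlled near the boundary by the Poincaré metric and the coefficients of its characteristic polynomial, so bounding those coefficients bounds $\int_X|\theta_i|^2\dvol$.

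The key steps are then: (1) First I would apply \Cref{energy_control_Lipschitz} to get local Lipschitz constants for the $u_i$ that are uniformly bounded on compact subsets of $X$ (using that a Poincaré-type metric has Ricci bounded below on such compacts), together with the normalization at $\tilde x$, to extract via Arzelà--Ascoli and \Cref{limit_of_harmonic_is_sometime_harmonic} (or rather \Cref{limit_of_harmonic_is_often_harmonic}) a subsequence converging locally uniformly on $\tilde X$ to a pluriharmonic map $u_\infty:\tilde X\to\Delta$ of finite energy (lower semicontinuity of energy, \Cref{KoSc_convexity_energy}). (2) Next I would check that $u_\infty$ has tame behavior at the boundary with unipotent local monodromy: the residual eigenvalue data is recorded by $\mathrm{Char}_{Dol}(\theta_i)$, whose uniform boundedness and convergence (after passing to a subsequence, since these are points in a fixed finite-dimensional space of log symmetric forms) forces the limit to have nilpotent residues, hence $u_\infty$ corresponds to a tame nilpotent purely imaginary harmonic bundle $\mathbf{E}_\infty=(\cE_\infty,\bar\partial_\infty,\theta_\infty,h_\infty,\phi_\infty)$ by \Cref{existence_of_tame_purely_imaginary_harmonic_metrics}. (3) Finally I would upgrade local-uniform convergence of the harmonic maps to convergence in the algebraic moduli spaces $R_{DR}^\nilp(\bar X,D,x)$ and $R_{Dol}^\nilp(\bar X,D,x)$: convergence of $u_i$ controls $\bar\partial_i\to\bar\partial_\infty$ and $\theta_i\to\theta_\infty$ in $C^\infty_{loc}$ on $X$, and the uniform tameness/nilpotence (step 2) controls the behavior across $D$ via the moderate-growth/Deligne--Manin extension, so the Deligne canonical extensions $\bar E_i^{DR}$ and the canonical logarithmic Higgs extensions $\bar E_i^{Dol}$ converge; the framings converge because $\phi_i$ are metric-preserving and $h_i(\tilde x)$ is fixed.

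The main obstacle I expect is step (3): passing from $C^\infty_{loc}$-convergence of the harmonic data on the \emph{open} curve $X$ to convergence of the \emph{logarithmic} extensions in the finite-type moduli schemes $R_{DR}^\nilp(\bar X,D,x)$ and $R_{Dol}^\nilp(\bar X,D,x)$. The subtlety is uniform control of the harmonic metric (and hence of the parabolic/Deligne extension) in a punctured neighborhood of each point of $D$; this requires a uniform version of Mochizuki's norm estimates for nilpotent tame harmonic bundles, i.e. that the uniform bound on $\mathrm{Char}_{Dol}(\theta_i)$ yields uniform constants in the asymptotic estimates for $h_i$ near the boundary. Once this uniformity is in hand, the extensions vary continuously and the convergence in the moduli spaces follows; I would isolate this uniform boundary estimate as a lemma (citing \cite[Theorem 1]{Simpson_noncompact}, \cite[Corollary 4.1]{Mochizuki-JDG}, and \Cref{existence_of_tame_purely_imaginary_harmonic_metrics}, with \cite{mochizukitameii} for the precise asymptotics), analogous to the boundary analysis that will appear in the proof of \Cref{extended_characteristic_polynomial}.
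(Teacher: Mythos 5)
Your proposal follows essentially the same route as the paper's proof: interior compactness of the framed pluriharmonic maps from the uniform characteristic-polynomial/energy bounds (Arzel\`a--Ascoli plus the limit-of-harmonic-maps results), identification of the limit as a tame nilpotent purely imaginary harmonic bundle, and then uniform boundary control of the logarithmic extensions to conclude convergence in $R_{DR}^\nilp$ and $R_{Dol}^\nilp$ (the paper closes this last step with the operator-norm convergence criterion of \cite[Lemma 5.12]{SimpsonmoduliI} and Riemann--Hilbert for the De Rham side). The uniform boundary lemma you defer is precisely the content of the paper's Step 3, supplied by Mochizuki's \cite[Prop.~7.4]{mochizukitame} frames of the Dolbeault extension with moderate-growth norm bounds and bounded determinant, together with a Cauchy-estimate argument showing these frames converge to a frame of the limit extension.
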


Throughout we use the following notation.  For a complex manifold $M$, we say a complex-valued function on $M$ is $L^\infty_{loc}$ bounded if its restriction to any compact $K\subset M$ is $L^\infty$-bounded.  Likewise for $W^{1,2}_{loc}$.  We also use the following elementary fact.
\begin{lem}\label{cauchylemma}
    A family of smooth complex-valued functions $f$ on the disk $\bD$ for which both $f$ and $\bar\partial f$ are uniformly bounded in $L^\infty_{loc}$ has relatively compact restriction to $L^\infty(K)$ for any compact $K\subset\bD$.
\end{lem}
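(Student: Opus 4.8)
The plan is to use the Cauchy--Pompeiu (Cauchy--Green) representation formula to split each $f$ into a holomorphic part, which is handled by Montel's theorem, and a solid Cauchy-transform part, which is handled by Arzel\`a--Ascoli. Concretely, I would fix a compact $K\subset\bD$, choose an intermediate disk $\bD'$ with $K\subset\bD'\Subset\bD$, and for $z\in\bD'$ write
\[
f(z)=\frac{1}{2\pi i}\int_{\partial\bD'}\frac{f(\zeta)}{\zeta-z}\,d\zeta-\frac{1}{\pi}\int_{\bD'}\frac{\bar\partial f(\zeta)}{\zeta-z}\,dA(\zeta)=:g(z)+h(z).
\]
Since the functions $f$ in the family are smooth, hence continuous on $\bar\bD'$, it suffices to prove that $\{g|_K\}$ and $\{h|_K\}$ are each relatively compact for the uniform norm on $K$, which for continuous functions coincides with the $L^\infty(K)$-norm.

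For the holomorphic terms: $\partial\bD'$ is a compact subset of $\bD$, so by hypothesis $\sup_i\|f_i\|_{L^\infty(\partial\bD')}<\infty$, whence each $g_i$ is holomorphic on $\bD'$ and the family $\{g_i\}$ is uniformly bounded on $K$ (estimate the kernel by $1/\operatorname{dist}(K,\partial\bD')$); Montel's theorem then yields, from any sequence, a subsequence converging uniformly on $K$. For the solid terms: the hypothesis provides $C>0$ with $|\bar\partial f_i|\le C$ on $\bD'$, and since $\zeta\mapsto(\zeta-z)^{-1}$ lies in $L^p(\bD')$ for every $p<2$ with norm bounded uniformly in $z\in\bD'$, the $h_i$ are uniformly bounded on $\bD'$; moreover they are uniformly equicontinuous via the standard estimate
\[
|h_i(z_1)-h_i(z_2)|\le\frac{C}{\pi}\int_{\bD'}\Bigl|\frac{1}{\zeta-z_1}-\frac{1}{\zeta-z_2}\Bigr|\,dA(\zeta)\le C'\,|z_1-z_2|\bigl(1+\bigl|\log|z_1-z_2|\bigr|\bigr),
\]
with $C'$ independent of $i$ (one bounds the integrand by $|z_1-z_2|/(|\zeta-z_1|\,|\zeta-z_2|)$ and splits the region of integration near each $z_j$). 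By Arzel\`a--Ascoli any sequence of such $h_i$ has a subsequence converging uniformly on $K$. Passing to a common subsequence, $f_i=g_i+h_i$ converges uniformly on $K$, i.e.\ in $L^\infty(K)$; since every sequence in the family admits such a subsequence, the restriction of the family to $L^\infty(K)$ is relatively compact.

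The only step requiring a genuine — though entirely classical — estimate is the uniform equicontinuity of the Cauchy transforms $h_i$; everything else is a direct application of Montel's theorem and Arzel\`a--Ascoli, and the passage to subsequences is exactly what relative compactness in the metric space $C(K)\subset L^\infty(K)$ amounts to. I do not anticipate any real obstacle here; the lemma is purely a tool and the argument above is self-contained.
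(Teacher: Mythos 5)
Your proof is correct and takes essentially the same route as the paper, which invokes the generalized Cauchy inequality (i.e.\ the Cauchy--Pompeiu representation) to get uniform local H\"older-type equicontinuity of the family and then concludes by Arzel\`a--Ascoli. Your splitting of $f$ into a boundary Cauchy integral (handled by Montel) and a solid Cauchy transform (handled by the log-Lipschitz modulus of continuity) is just a slightly more explicit packaging of that same classical argument.
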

\begin{proof}
    See for example \cite[Chapter 3, Exercise 3.6]{dbarexercise}.  This follows from the generalized Cauchy inequality, which shows that such a family is locally uniformly H\"older continuous with exponent $1-\epsilon$ for any $0<\epsilon<1$, as well as the Arzel\`a--Ascoli theorem.
\end{proof}
\begin{proof}[Proof of \Cref{harmonic sequence}]  We begin with some general remarks.  Let $\Delta$ be the symmetric space of positive definite hermitian forms on a fixed rank $r$ vector space $V_0$ and choose a basepoint $h_0$ of $\Delta$.  Given a harmonic bundle $\mathbf{V}:=(\cV,\bar\partial,\theta,h)$ on a complex manifold $M$, a basepoint $m\in M$, and a metric framing $\psi:(V_0,h_0)\xrightarrow{\cong}(\cV_{m},h_{m})$, using the flat connection on $\mathbf{V}$ we obtain a pluriharmonic map $f_{\mathbf{V},\psi}:\tilde M\to \Delta$ which is equivariant with respect to the monodromy representation $\rho_{\mathbf{V},\psi}:\pi_1(M,m)\to\GL(V_0)$.
\vskip1em
\noindent\emph{Step 1.}\label{step 1}  Let $M$ be a simply-connected complex manifold equipped with a Riemannian metric, $m_0\in M$ a basepoint, and $\mathbf{V}_i=(\cV_i,\bar \partial_i,\theta_i,h_i)$ a sequence of harmonic bundles on $M$ with framings $\psi_{i}:(V_0,h_0)\xrightarrow{\cong}(\cV_{i,m},h_{m})$.  Assume the associated pluriharmonic maps $f_{\mathbf{V}_i,\psi_i}:M\to \Delta$ have locally uniformly bounded energy.  Then after passing to a subsequence, the $f_{\mathbf{V}_i,\psi_i}$ converge strongly in $W^{1,2}_{loc}$ and $L_{loc}^\infty$ to a (smooth) pluriharmonic map $f_{\infty}: M\to \Delta$.  The resulting metrics $h_i$ on the trivial $C^\infty$ bundle $C^\infty_M\otimes V_0$ converge strongly in $W^{1,2}_{loc}$ and $L^\infty_{loc}$ to $h_\infty$, and the operators $\bar\partial_i,\theta_i$ converge strongly in $L^\infty_{loc}$ to $\bar\partial_\infty,\theta_\infty$.
\begin{proof}
The statement about the convergence of the maps follows from \Cref{limit_of_harmonic_is_sometime_harmonic}.  As the harmonic metric is pulled back from $\Delta$, the convergence statements follow for $h_i$.  The harmonic metric $h$ uniquely determines the operators $\bar\partial,\theta$, since the connection form of $\partial+\bar\partial$ in the flat basis is $\frac{1}{2}h^{-T}dh$, which is also the matrix of $-(\theta+\theta^*)$.  Moreover, $\theta_i$ (and therefore also the difference $A_i:=\bar\partial_i-\bar\partial\otimes \id$) is uniformly $L^\infty_{loc}$ bounded since the coefficients of the $P(\theta_i)$ are uniformly bounded \cite[Lemma 2.7]{simpsonhiggs}.  Since $\bar \partial_i\theta_i=0$ we have $(\bar\partial\otimes\id) \theta_i=-[A_i,\theta_i]$, which is uniformly $L^\infty_{loc}$ bounded.  Thus, by \Cref{cauchylemma}, after passing to a further subsequence, the operators $\bar\partial_i,\theta_i$ converge strongly in $L^\infty_{loc}$ to $\bar\partial_\infty,\theta_\infty$.    
\end{proof}
We now return to the setting of the proposition.
\vskip1em
 \noindent\emph{Step 2.}\label{step 2} After passing to a subsequence of the framed harmonic bundles in the proposition, the monodromy representations $\rho_{\mathbf{E}_i,\phi_i}$  converge to a limit representation $\rho_{\infty}:\pi_1(X,x)\to\GL(V_0)$ and there is a smooth pluriharmonic $\rho_\infty$-equivariant map $f_\infty:\tilde X\to \Delta$ to which the $f_{\mathbf{E}_i,\phi_i}$ converges strongly in $W^{1,2}_{loc}$ and $L^\infty_{loc}$.  Moreover, the resulting harmonic bundle $(\cE_\infty,\bar\partial_\infty,\theta_\infty,h_\infty)$ is tame and purely imaginary.  Finally, for any relatively compact $u\in U\subset X$, there are identifications $\alpha_i:\cE_i|_U\to \cE_\infty|_U$ of $C^\infty_U $-bundles preserving the metric at $u$ such that, up to passing to a subsequence, $\alpha_{i*}h_i$ (resp. $\alpha_{i*}\bar\partial_i$ resp. $\alpha_{i*}\theta_i$) converge strongly to $h_\infty$ (resp. $\bar\partial_\infty$ resp. $\theta_\infty$) in $W^{1,2}_{loc}$ and $L^\infty_{loc}$ (resp. $L^\infty_{loc}$ resp. $L^\infty_{loc}$).
\begin{proof}
    As in the previous step, by \Cref{energy_versus_Higgs_field} the energies of the maps $f_{\mathbf{E}_i,\phi_i}$ are locally uniformly bounded.  Choose loops $\gamma_1.\ldots,\gamma_n$ generating $\pi_1(X,x)$.  Applying Step 1 to a finite cover of each $\gamma_i$ by simply-connected relatively compact open subsets $x\in U\subset X$, it follows that, after passing to a subsequence, the monodromy operators $\rho_{\mathbf{E}_i,x}(\gamma_j)$ are constrained to lie in a compact subset of $\GL(V_0)$, and therefore after passing to a further subsequence it follows that the limit representation $\rho_\infty$ exists.  For any $\gamma\in\pi_1(X,x)$, taking a finite cover of $X$ by simply connected $x\in U\subset X$, we have that $f_i|_{\gamma\tilde U}=\rho_{\mathbf{E}_i,x}(\gamma)\circ f_i|_{\tilde U}$.  It follows that $\rho_\infty(\gamma)\circ f_\infty|_{\tilde U} $ agrees with $f_\infty|_{\gamma\tilde U}$ on overlaps, and since the pointwise limit of the $f_i$ is unique if it exists, the first claim follows from Step 1.  The boundedness of the characteristic polynomials implies the second claim.  For the last claim, we may glue together local identifications of the $C^\infty$ bundles via flat sections on finitely many relatively compact simply-connected open sets using a fixed partition of unity, and the metrics and operators will have the same convergence properties.
    \end{proof}

Let $\bD^*$ be a disk neighborhood of a puncture of $X$ with coordinate $q$ and basepoint $u\in\bD^*$.  Choose framings $\chi_{i}:(V_0,h_0)\xrightarrow{\cong}(\cE_{i,u},h_{i,u})$, $\chi_{\infty}:(V_0,h_0)\xrightarrow{\cong}(\cE_{\infty,u},h_{\infty,u})$ be the induced framings obtained via flat transport.  It follows from Step 2 that the associated harmonic maps $f_{\mathbf{E}_i|_{\bD^*},\chi_i}:\tilde\bD^*\to \Delta$ converge in $W^{1,2}_{loc}$ and $L^\infty_{loc}$ to $f_{\mathbf{E}_\infty|_{\bD^*},\chi_\infty}$. 

Let $V:=\cO_\bD\otimes V_0$ be the trivial holomorphic bundle.  We have a canonical identification between $V$ and the Deligne extension $\bar{E}^{DR}_i|_{\bD}$ (resp. $\bar{E}_\infty^{DR}|_{\bD}$) of the flat bundle underlying each $\mathbf{E}_i|_{\bD^*}$ (resp. $\mathbf{E}_\infty|_{\bD^*}$) by identifying, for each $v\in V_0$, the constant section $1\otimes v$ of $V$ with $e^{zN_i}\tilde v$ (resp. $e^{zN_\infty}\tilde v$), where $\tilde v$ is the flat continuation of $\chi_i(v)$ (resp. $\chi_\infty(v)$), $z=\log q$ and $N_i,N_\infty$ are the nilpotent local monodromy logarithms.  Note that the $N_i$ converge to $N_\infty$.  Let $h_i^{V},\bar\partial_i^{V},\theta_i^{V}$ be the transport of the metrics and operators to $C^\infty_{\bD^*}\otimes V_0$ via this identification.  Note that these operators converge in the same way to their limits on $C^\infty_{\bD^*}\otimes V_0$, since they differ from the flat trivialization by $e^{zN_i}$.

According to \cite[Prop. 7.4]{mochizukitame} (taking $U_0=\{0\}\subset(a-1,a)$ in the notation therein), there is a $\bar\partial^{V}_i$-holomorphic frame $\mathbf{e}_i$ of $C_{\bD^*}\otimes V_0$ with the property that:
\begin{enumerate}
\item Under the above identification, $\mathbf{e}_i$ is a holomorphic frame of $\bar E_i^{Dol}|_\bD$.
\item $h_i^V(\mathbf{e}_i(j))\leq C (\Im z)^k$ for some $k$ and some fixed constant $C>0$.
\item $C'^{-1}\leq h_i^V(\det \mathbf{e}_i)\leq C'$ for some fixed constant $C'>0$, where $\det\mathbf{e}_i=\mathbf{e}_i(1)\wedge\cdots\wedge \mathbf{e}_i(r)$.
\end{enumerate}
    
    \vskip1em
    \noindent\emph{Step 3.}\label{step 3}  There is a $\bar\partial^V_\infty$-holomorphic frame $\mathbf{e}_\infty$ of $C^\infty_{\bD^*}\otimes V_0$ to which the frames $\mathbf{e}_i$ converge strongly in $L^\infty_{loc}$ after passing to a subsequence.  The limit frame $\mathbf{e}_\infty$ satisfies properties (1)-(3) above (with $i=\infty$).  
\begin{proof}The frames $\mathbf{e}_i$ are uniformly bounded in $L^\infty_{loc}$, hence converge weakly to an $r$-tuple of sections $\mathbf{e}_\infty$ in $L^q_{loc}$ for all $q<\infty$.  Since $\bar\partial_i^V$ converges strongly in $L_{loc}^\infty$ to $\bar\partial_\infty^V$, it follows that $\bar\partial_\infty^V\mathbf{e}_\infty=0$ weakly, so by $\bar\partial$-regularity it follows that $\mathbf{e}_\infty$ is in fact comprised of $\bar\partial^V_
\infty$-holomorphic sections.  Let $g_i$ be the change of basis matrix from $\mathbf{e_i}$ to $\mathbf{e}_\infty$, which is $L^\infty_{loc}$ bounded.  
\begin{claim}
    The $g_i$ converge strongly in $L^\infty_{loc}$ to $\id$ after passing to a subsequence.
\end{claim}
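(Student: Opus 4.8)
The plan is to upgrade the weak convergence $\mathbf{e}_i\rightharpoonup\mathbf{e}_\infty$ to strong $L^\infty_{loc}$ convergence via \Cref{cauchylemma}, and then to read off $g_i\to\id$ from continuity of matrix inversion. Fix a relatively compact open $\bD'\Subset\bD^*$. On $\bD'$ write $\bar\partial^V_i=\bar\partial+A_i$ with $A_i$ a $\mathrm{Mat}_r(\bC)$-valued $(0,1)$-form; as in Step 1 (using the $i$-uniform bound on the characteristic polynomials $P(\theta_i)$ and \cite[Lemma 2.7]{simpsonhiggs}), the $A_i$ are uniformly bounded in $L^\infty_{loc}$ and converge there to $A_\infty$. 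Property (2), together with the convergence $h^V_i\to h^V_\infty$ in $L^\infty_{loc}$ --- which makes the metrics $h^V_i$ uniformly comparable on $\bD'$, since $\Im z=-\log|q|$ stays in a compact subinterval of $(0,\infty)$ there --- shows each component function of $\mathbf{e}_i$ is uniformly bounded in $L^\infty_{loc}$; and then $\bar\partial\mathbf{e}_i=-A_i\mathbf{e}_i$ shows the $\bar\partial$ of each such component is uniformly bounded in $L^\infty_{loc}$ as well.

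Applying \Cref{cauchylemma} component by component and diagonalizing over an exhaustion of $\bD^*$ by compacts, a subsequence of the $\mathbf{e}_i$ converges in $L^\infty_{loc}$; by uniqueness of the weak limit this limit is $\mathbf{e}_\infty$, so $\mathbf{e}_i\to\mathbf{e}_\infty$ strongly in $L^\infty_{loc}$. I would then check that $\mathbf{e}_\infty$ is a genuine frame: passing to the limit in property (3), using $h^V_i\to h^V_\infty$, gives $C'^{-1}\le h^V_\infty(\det\mathbf{e}_\infty)\le C'$, so $\det\mathbf{e}_\infty$ vanishes nowhere on $\bD^*$. Writing $M_i,M_\infty$ for the $\mathrm{Mat}_r(\bC)$-valued functions whose columns are the coordinates of $\mathbf{e}_i,\mathbf{e}_\infty$, we have $M_i\to M_\infty$ uniformly on $\bD'$ with $\det M_\infty$ bounded away from $0$ there; hence $M_i$ is invertible on $\bD'$ for $i\gg0$ and $g_i=M_i^{-1}M_\infty\to M_\infty^{-1}M_\infty=\id$ uniformly on $\bD'$. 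This is the asserted convergence, and it also retroactively justifies the $L^\infty_{loc}$-boundedness of $g_i$ used just above.

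The one genuinely delicate point is the uniformity (in $i$) of the inputs to \Cref{cauchylemma}. This is harmless: the bounds on the $A_i$ and on $\|\mathbf{e}_i\|_{L^\infty_{loc}}$ come from the hypotheses of the proposition and from property (2), all of the estimates involved are interior estimates on the punctured disk, and the behavior of the frames near the puncture never enters --- here ``$loc$'' always means away from $0$. Everything else --- reflexivity of $L^q$ for the existence of the weak limit, \Cref{cauchylemma}, and continuity of matrix inversion --- is soft.
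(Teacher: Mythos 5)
Your argument is correct, but it reorganizes the proof compared to the paper. The paper applies \Cref{cauchylemma} directly to the transition matrices $g_i$: having already noted (from properties (2)--(3), which bound the entries of $M_i$ and bound $|\det M_i|$ below uniformly on compacts) that the $g_i$ are uniformly $L^\infty_{loc}$-bounded, it writes $\bar\partial^V_i=\bar\partial^V_\infty+B_i$ with $B_i\to 0$ in $L^\infty_{loc}$, observes that $\bar\partial^V_\infty g_i=-g_iB_i$ is then uniformly $L^\infty_{loc}$-bounded, and extracts a strongly convergent subsequence of the $g_i$, the identification of the limit with $\id$ being left implicit (via the weak convergence $\mathbf{e}_i\rightharpoonup\mathbf{e}_\infty$). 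You instead apply \Cref{cauchylemma} to the frames themselves, using $\bar\partial\mathbf{e}_i=-A_i\mathbf{e}_i$ to upgrade the weak convergence $\mathbf{e}_i\rightharpoonup\mathbf{e}_\infty$ to strong $L^\infty_{loc}$ convergence, then pass property (3) to the limit to see that $\det\mathbf{e}_\infty$ is nowhere vanishing, and finally get $g_i=M_i^{-1}M_\infty\to\id$ from continuity of matrix inversion (with $M_i^{-1}$ uniformly bounded on compacts, again by (2)--(3)). The engine is the same lemma, but your decomposition buys two things: the limit of the $g_i$ is explicitly identified as $\id$ rather than implicitly, and the nonvanishing of $h^V_\infty(\det\mathbf{e}_\infty)$ --- which the paper only derives in the sentence following the claim, using the claim --- comes out as a byproduct, so your version also absorbs that subsequent step. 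Two cosmetic points: the uniform bound and convergence of the $A_i$ on compacts of $\bD^*$ is really the output of Step 2 (transported through $e^{zN_i}$), not Step 1, and the paper's normalization of $z$ versus $\Im z=-\log|q|$ differs from yours, but neither affects the substance since all you need is that the right-hand side of property (2) is bounded on compact subsets of $\bD^*$.
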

\begin{proof}
    Let $\bar\partial^V_i=\bar\partial^V_\infty+B_i$, so $B_i\to 0$ strongly in $L^\infty_{loc}$; we have  $\bar\partial_\infty^V g_i=-g_iB_i$ which is uniformly $L^\infty_{loc}$ bounded. Thus, by \Cref{cauchylemma} the $g_i$ converge strongly in $L^\infty_{loc}$ after passing to a subsequence.
\end{proof}
Thus, the norms $h_\infty^V(\mathbf{e}_\infty(j))$ satisfy inequalities (2) and (3).  In particular, $h_\infty^V(\det\mathbf{e}_\infty)$ is nowhere vanishing on $\bD^*$, and neither vanishes nor goes to infinity faster than a power of $\Im z$ at the puncture.  The extension $\bar E_\infty^{Dol}$ is the moderate growth extension with respect to $h_\infty$, so property (1) follows.
\end{proof}
By mapping the $\mathbf{e}_i$ frame to $\mathbf{e}_\infty$, we obtain holomorphic identifications $\beta_i^{Dol}:\bar E_i^{Dol}|_{\bD}\to \bar E_\infty^{Dol}|_\bD$.  From Steps 2 and 3, it follows that $\beta^{Dol}_{i*}h_i$ (resp. $\beta^{Dol}_{i*}\theta_i$) converges to $h_\infty$ (resp. $\theta_\infty$) strongly in $L^\infty_{loc}$.
        \vskip1em
    \noindent\emph{Step 4.}\label{step 4}  With respect to any metric on $\bar E_\infty^{Dol}|_\bD$, $\beta_{i*}\theta_i$ converges to $\theta_\infty$ in $L^\infty(\End(\bar E_\infty^{Dol}|_\bD)\otimes\omega_\bD(0))$, possibly after shrinking $\bD$.
\vskip1em
\begin{proof}
    The $\beta_{i*}^{Dol}\theta_i$ are a sequence of sections of a holomorphic vector bundle which converge strongly in $L^\infty_{loc}$ to a holomorphic section.  The Cauchy integral formula implies both uniform boundedness and equicontinuity on $\bD$, hence uniform convergence.
\end{proof}
    \noindent\emph{Step 5.}\label{step 5}  End of proof.
\vskip1em
Using a fixed partition of unity, we may glue together the identifications $\alpha_i:\cE_i|_U\to  \cE_\infty|_U$ of the $C^\infty$ bundles of Step 2 on the complement $U$ of small disk neighborhoods of the punctures with the holomorphic identifications $\beta_i^{Dol}:\bar E_i^{Dol}|_\bD\to \bar E_\infty^{Dol}|_\bD$ from Step 4 to obtain identifications $\eta_i:C^\infty_{\bar X}\otimes \bar E_i^{Dol}\to C^\infty_{\bar X}\otimes \bar E_\infty^{Dol}$ which preserve the metric at a chosen basepoint $x$.  We will therefore consider the operators $\bar\partial_i,\theta_i$ on a fixed $C^\infty_{\bar X}$ bundle $\bar \cE$; from Steps 2 and 4 we deduce that they both converge to $\bar\partial_\infty,\theta_\infty$ in $L^\infty$ with respect to any choice of metric, which in particular implies they converge in operator norm as operators $W^{1,\infty}(\bar E^{Dol}_\infty)\to L^\infty(\bar E^{Dol}_\infty\otimes\omega_{\bar X}(D))$.  According to \cite[Lemma 5.12]{SimpsonmoduliI}, this implies the corresponding sequence of framed polystable logarithmic Higgs bundles $(\bar E_i^{Dol},\theta_i,\phi_i)$ converges to $(\bar E_\infty^{Dol},\theta_\infty,\phi_\infty)$.  The convergence in the De Rham realization follows from the convergence of the monodromy representations, by Riemann--Hilbert.    
\end{proof}
Let $(V_0,h_0)$ be a fixed vector space with a hermitian form of the relevant rank.  As in \cite[\S7]{SimpsonmoduliII}, we denote by $R_{DR}^{\nilp}(\bar X,D,x)(\bC)^{(V_0,h_0)}\subset R_{DR}^\nilp(\bar X,D,x)(\bC)$ (resp. $R_{Dol}^{\nilp}(\bar X,D,x)(\bC)^{(V_0,h_0)}\subset R_{Dol}^\nilp(\bar X,D,x)(\bC)$) the subspace of semisimple logarithmic connections with nilpotent residues (resp. polystable logarithmic Higgs bundles with nilpotent residues and trivial Chern class) which are framed at $x$ and which admit a tame purely imaginary harmonic metric which is identified with $h_0$ at $x$ via the framing.  

As in \cite[\S7]{SimpsonmoduliII} we deduce from \Cref{harmonic sequence}:
\begin{cor}\label{framed metric comparison}
The correspondence 
    \[RSM^{\nilp,(V_0,h_0)}_{(\bar X,D)}:R_{Dol}^{\nilp}(\bar X,D,x)(\bC)^{(V_0,h_0)}\to R_{DR}^{\nilp}(\bar X,D,x)(\bC)^{(V_0,h_0)}\]
    is a homeomorphism.
\end{cor}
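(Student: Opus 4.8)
The plan is to deduce \Cref{framed metric comparison} from \Cref{harmonic sequence} together with a curve-to-variety reduction, following the strategy Simpson uses in \cite[\S7]{SimpsonmoduliII} to prove that the Betti and De Rham framed moduli spaces are homeomorphic. First I would observe that the map $RSM^{\nilp,(V_0,h_0)}_{(\bar X,D)}$ is a well-defined bijection: this is exactly the content of \Cref{existence_of_tame_purely_imaginary_harmonic_metrics} and \Cref{DolDRcorrespondence}, which give a canonical correspondence between semisimple logarithmic flat bundles with nilpotent residues and $\mu_L$-polystable logarithmic Higgs bundles with nilpotent residues and vanishing rational Chern classes, via purely imaginary tame harmonic bundles; the normalization of the harmonic metric to agree with $h_0$ at $x$ rigidifies the correspondence at the level of framed objects, and uniqueness of the harmonic metric up to flat automorphism (which at the framed level is rigidified by the condition at $x$) shows the map is a bijection of the relevant subsets.

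The main step is continuity, and by symmetry of the argument (both realizations are handled identically once one has the convergence statement of \Cref{harmonic sequence}) it suffices to prove sequential continuity in each direction, since both $R_{Dol}^\nilp(\bar X,D,x)(\bC)$ and $R_{DR}^\nilp(\bar X,D,x)(\bC)$ are metrizable (being separated complex algebraic varieties in the euclidean topology). So I would take a convergent sequence, say $(\bar E_i^{Dol},\theta_i,\phi_i)\to(\bar E_\infty^{Dol},\theta_\infty,\phi_\infty)$ in $R_{Dol}^{\nilp}(\bar X,D,x)(\bC)^{(V_0,h_0)}$, and I want to show the corresponding framed logarithmic connections converge. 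The key input is that the harmonic metric framed at $x$ by $h_0$ exists for each member of the sequence and for the limit, and that the coefficients of the characteristic polynomials $P(\theta_i)$ converge — hence are uniformly bounded in the space of global logarithmic symmetric forms. This is precisely the hypothesis of \Cref{harmonic sequence}. That proposition then produces, after passing to a subsequence, a limiting framed tame nilpotent harmonic bundle whose associated framed flat bundle is the limit in $R_{DR}^\nilp(\bar X,D,x)$ of the $(\bar E_i^{DR},\nabla_i,\phi_i)$. But since the Dolbeault side also converges to $(\bar E_\infty^{Dol},\theta_\infty,\phi_\infty)$ along this subsequence, and the harmonic metric (framed at $x$) is unique, the limiting harmonic bundle must be the one associated to the limit point; therefore the De Rham limit along this subsequence is $RSM^{\nilp,(V_0,h_0)}_{(\bar X,D)}$ of the limit point. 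A standard subsequence-of-every-subsequence argument (any subsequence of the original has a further subsequence converging to the same limit) then upgrades this to convergence of the full sequence, giving continuity of $RSM^{\nilp,(V_0,h_0)}_{(\bar X,D)}$; the inverse direction is symmetric, using the convergence statement of \Cref{harmonic sequence} in the De Rham realization and the fact that nilpotent residues of the connection on the De Rham side forces nilpotent residues of the Higgs field via \Cref{existence_of_tame_purely_imaginary_harmonic_metrics}(2)(a)-(b) and finite energy via \Cref{unipotent_residues_equivalent_finite_energy}.

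The one subtlety I anticipate is verifying that the hypotheses of \Cref{harmonic sequence} really are met: I need that a convergent sequence in $R_{Dol}^\nilp(\bar X,D,x)(\bC)$ has uniformly bounded characteristic polynomials of the Higgs field, which is immediate since $P(\theta)$ depends continuously (indeed algebraically) on the point, and I need that each framed point genuinely carries a purely imaginary tame harmonic metric normalized by $h_0$ at $x$ — this is built into the definition of the superscript $(V_0,h_0)$ loci. I also need the framing to respect the metric, which again is the definition of these loci. Once these are in place the argument is essentially formal. The hard part is not in this corollary itself but entirely packaged into \Cref{harmonic sequence}, whose proof I have already; so the proof of \Cref{framed metric comparison} reduces to the bookkeeping above, exactly parallel to the passage from the harmonic-sequence compactness statement to the homeomorphism in \cite[\S7]{SimpsonmoduliII}.
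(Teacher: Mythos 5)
Your forward direction (Dolbeault $\to$ De Rham) is correct and is essentially the paper's argument: Dolbeault convergence gives the uniform bound on the coefficients of $P(\theta_i)$ for free, \Cref{harmonic sequence} then produces a subsequence converging in both realizations, and uniqueness of limits plus uniqueness of the normalized harmonic metric identifies the De Rham limit with the image of the Dolbeault limit. (Incidentally, no ``curve-to-variety reduction'' is needed or used here: the corollary lives in the same curve setting as \Cref{harmonic sequence}; the reduction to curves only enters in \Cref{contDolDR}.)

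The gap is in your claim that the inverse direction is ``symmetric.'' The hypothesis of \Cref{harmonic sequence} is a Dolbeault-side condition: uniform boundedness of the characteristic polynomials $P(\theta_i)$ in the space of global logarithmic symmetric forms. For a sequence converging in $R_{DR}^{\nilp}(\bar X,D,x)(\bC)^{(V_0,h_0)}$ this is not immediate: the Higgs fields $\theta_i$ are extracted from the harmonic metrics, and continuous dependence of the harmonic metric (hence of $\theta_i$) on the flat structure is exactly what you are trying to prove, so De Rham convergence gives you no a priori control on $P(\theta_i)$. Your appeal to \Cref{existence_of_tame_purely_imaginary_harmonic_metrics} and \Cref{unipotent_residues_equivalent_finite_energy} only yields nilpotence of the residues and \emph{finiteness} of the energy for each individual term, not a bound that is uniform along the sequence. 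The paper closes this precise gap with an extra input: for a compact family of framed flat bundles (equivalently, of monodromy representations) one constructs $\rho$-equivariant initial maps $\tilde X\to\Delta$ of \emph{uniformly} bounded energy, as in \cite[Section 2.4]{Jost-Yang-Zuo}; since the pluriharmonic metrics are obtained by energy minimization (\Cref{existence_harmonic_finite_energy}), their energies are uniformly bounded as well, and via the identity $8\,|\theta|^2=e^u$ of \Cref{energy_versus_Higgs_field} this bounds the Higgs fields, hence the coefficients of $P(\theta_i)$, uniformly. Only with such a uniform energy estimate can \Cref{harmonic sequence} be applied to a De Rham-convergent sequence; as written, your inverse-direction argument does not get started.
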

\begin{proof}
To show a bijection $f:X\to Y$ of metric spaces is continuous, it suffices to show any convergent sequence in the source $x_i\to x_\infty$ has a subsequence such that $f(x_i)\to f(x_\infty)$.  For any sequence of points $(\bar E_i^{Dol},\theta_i,\phi_i)$ in $R_{Dol}^{\nilp}(\bar X,D,x)(\bC)^{(V_0,h_0)}$ converging to $ (\bar E_\infty^{Dol},\theta_\infty,\phi_\infty)$ in $R_{Dol}^\nilp(\bar X,D,x)(\bC)$, the image in $M_{Dol}^\nilp(\bar X,D)$ converges, hence the corresponding sequence of framed harmonic bundles satisfies the condition of the proposition, and the images $RSM^\nilp_{\bar X,D}(\bar E_i^{Dol},\theta_i,\phi_i)$ converge to $RSM^\nilp_{\bar X,D}(\bar E_\infty^{Dol},\theta_\infty,\phi_\infty)$ up to passing to a subsequence.  This shows continuity of $SM^\nilp_{\bar X,D}$.  

 To show continuity of the inverse, observe that one can construct harmonic maps by minimizing the energy (see Theorem \ref{existence_harmonic_finite_energy}).  Since for $K\subset R_B^{\unip}(X,x,r)$ compact there are $\rho$-equivariant maps $f_\rho:\tilde X\to \Delta$ with uniformly bounded energy for all $\rho\in K$, for example by the construction of an initial metric as in \cite[section 2.4]{Jost-Yang-Zuo}, it follows that a given convergent sequence of points in the target $(\bar E_i^{DR},\nabla_i,\phi_i)\to (\bar E_\infty^{DR},\nabla_\infty,\phi_\infty)$ can be lifted to a sequence of harmonic bundles satisfying the conditions of the proposition, hence converges in the Dolbeault realization. 
\end{proof}

\begin{cor}\label{framed stuff is proper}
$R_{DR}^{\nilp}(\bar X,D,x)(\bC)^{(V_0,h_0)}\to M_{DR}^\nilp(\bar X,D)(\bC)$ and $R_{Dol}^{\nilp}(\bar X,D,x)(\bC)^{(V_0,h_0)}\to M_{Dol}^\nilp(\bar X,D)(\bC)$ are proper.

\end{cor}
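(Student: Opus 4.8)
The plan is to reduce \Cref{framed stuff is proper} to the case of curves, handle the Dolbeault realization there via \Cref{harmonic sequence}, and then transfer to the De Rham realization using \Cref{framed metric comparison} together with \Cref{comparison}. Since $R_{Dol}^{\nilp}(\bar X,D,x)(\bC)^{(V_0,h_0)}$, $R_{DR}^{\nilp}(\bar X,D,x)(\bC)^{(V_0,h_0)}$ and their targets are subspaces of analytifications of finite type algebraic spaces, they are metrizable, so properness is equivalent to the statement that every sequence lying in the preimage of a compact subset $K$ of the target has a subsequence converging to a point of that preimage.

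For the Dolbeault map with $(\bar X,D)$ a curve, let $(\bar E_i^{Dol},\theta_i,\phi_i)$ be a sequence in the preimage of a compact $K\subset M_{Dol}^\nilp(\bar X,D)(\bC)$. By definition of the $(V_0,h_0)$-locus, each $(\bar E_i^{Dol},\theta_i)$ carries a tame purely imaginary pluriharmonic metric $h_i$ whose value at $x$ is identified with $h_0$ via $\phi_i$; thus the data $\mathbf{E}_i=(\cE_i,\bar\partial_i,\theta_i,h_i,\phi_i)$ are framed tame nilpotent harmonic bundles with unipotent local monodromy and metric-respecting framings. The coefficients of the characteristic polynomials $P(\theta_i)$ lie in the finite-dimensional space $\bigoplus_k H^0(\bar X,\Sym^k\Omega_{\bar X}(\log D))$ and form the image of $K$ under the (algebraic, hence continuous) Hitchin-type map $M_{Dol}^\nilp(\bar X,D)(\bC)\to\bigoplus_k H^0(\bar X,\Sym^k\Omega_{\bar X}(\log D))$, so they are uniformly bounded. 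Hence the hypotheses of \Cref{harmonic sequence} are met, and after passing to a subsequence the $(\bar E_i^{Dol},\theta_i,\phi_i)$ converge in $R_{Dol}^\nilp(\bar X,D,x)$ to some $(\bar E_\infty^{Dol},\theta_\infty,\phi_\infty)$ underlying a framed tame purely imaginary nilpotent harmonic bundle with unipotent local monodromy and metric-respecting framing — that is, a point of $R_{Dol}^{\nilp}(\bar X,D,x)(\bC)^{(V_0,h_0)}$. This gives properness of the Dolbeault map for curves.

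For the De Rham map with $(\bar X,D)$ a curve, functoriality of the correspondence yields a commutative square
\[\begin{tikzcd}
R_{Dol}^{\nilp}(\bar X,D,x)(\bC)^{(V_0,h_0)}\ar[r,"RSM"]\ar[d]&R_{DR}^{\nilp}(\bar X,D,x)(\bC)^{(V_0,h_0)}\ar[d]\\
M_{Dol}^\nilp(\bar X,D)(\bC)\ar[r,"SM^{\nilp}"]&M_{DR}^\nilp(\bar X,D)(\bC)
\end{tikzcd}\]
in which the top arrow is a homeomorphism by \Cref{framed metric comparison} and the bottom arrow is a homeomorphism by \Cref{comparison}; hence the right vertical map is proper if and only if the left one is.

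Finally, for a general projective log smooth $(\bar X,D)$ one reduces to the curve case by the Lefschetz argument of \Cref{contDolDR}: restriction along a sufficiently ample curve $i\colon C\hookrightarrow X$ with $i_*\colon\pi_1(C)\twoheadrightarrow\pi_1(X)$ induces closed immersions on the coarse spaces (via \Cref{lem immersive} and \Cref{BettiDeRhamcomparison}) fitting into squares with the framed-metric total spaces, and properness is stable under base change. The main obstacle is checking that these Lefschetz squares are Cartesian, i.e. that a framed logarithmic Higgs bundle (resp. logarithmic connection) with nilpotent residues and vanishing rational Chern classes on $(\bar X,D)$ lies in the $(V_0,h_0)$-locus precisely when its restriction to $C$ does — one implication is restriction of the harmonic metric, while the other requires Mochizuki's characterization \Cref{existence_of_tame_purely_imaginary_harmonic_metrics} to upgrade polystability after restriction back to polystability on $(\bar X,D)$. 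Alternatively, if \Cref{harmonic sequence} is available in arbitrary dimension (the additional input being Mochizuki's asymptotic analysis near a normal crossing divisor), the argument of the second paragraph applies verbatim to $(\bar X,D)$ and the De Rham case follows as in the third paragraph once \Cref{comparison} is replaced by its higher-dimensional analogue; in any case the analytic heart of the statement is entirely contained in \Cref{harmonic sequence}.
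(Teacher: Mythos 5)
Your Dolbeault argument is essentially the paper's ("immediate from \Cref{harmonic sequence}"), and the boundedness of the characteristic polynomials via the Hitchin map is a fine way to make that explicit. The problem is the De Rham half: it is circular within the logical structure of the paper. You deduce properness of $R_{DR}^{\nilp}(\bar X,D,x)(\bC)^{(V_0,h_0)}\to M_{DR}^\nilp(\bar X,D)(\bC)$ from the claim that $SM^\nilp_{(\bar X,D)}\colon M_{Dol}^\nilp(\bar X,D)(\bC)\to M_{DR}^\nilp(\bar X,D)(\bC)$ is a homeomorphism (\Cref{comparison}), but \Cref{comparison} is proved \emph{from} \Cref{framed metric comparison} together with the very corollary you are trying to establish (its proof reads: "the previous two corollaries imply that any convergent sequence on either side corresponds to a convergent sequence on the other side"). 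Nothing weaker rescues the argument: before \Cref{comparison} one does not even know $SM^\nilp$ is continuous (that is \Cref{contDolDR}, also downstream), and a mere continuous bijection on the bottom of your square would not let you pull a compact subset of $M_{DR}^\nilp$ back to a compact subset of $M_{Dol}^\nilp$, which is exactly what your reduction needs.

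The paper's route avoids the Dolbeault side entirely for the De Rham statement. Given a convergent sequence in $M_{DR}^\nilp(\bar X,D)(\bC)$, one transfers it to $M_B^\unip(X)(\bC)$ via the analytic isomorphism of \Cref{BettiDeRhamcomparison} (which is available at this stage), then uses the Kempf--Ness theorem to produce a subset $S\subset R_B(X,x,r)(\bC)$ proper over $M_B(X,r)(\bC)$, so that after passing to a subsequence the sequence lifts to a convergent sequence of framed semisimple representations. The associated harmonic maps have uniformly bounded energy (the Jost--Yang--Zuo initial-metric argument already used in the proof of \Cref{framed metric comparison}), this persists after modifying the framings to be compatible with $(V_0,h_0)$, and then \Cref{harmonic sequence} yields a convergent subsequence in $R_{DR}^{\nilp}(\bar X,D,x)(\bC)^{(V_0,h_0)}$ via the convergence of monodromy representations. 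Finally, your last paragraph on general $(\bar X,D)$ is not needed: the corollary lives in the curve setting of \Cref{harmonic sequence} (it is invoked in the paper only after a Lefschetz reduction to curves), and as you yourself note, the Cartesianness of your Lefschetz squares is not established, so that reduction would require additional argument in any case.
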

\begin{proof}
    The Dolbeault part is immediate from \Cref{harmonic sequence}. 
 For the De Rham part, according to the Kempf--Ness theorem (specifically in the form \cite[(4.7) Corollary]{schwarzgroup}), there is a subset $S\subset R_B(X,x,r)(\bC)$ such that $S\to M_B(X,r)(\bC)$ is proper with respect to the euclidean topology.  Thus, by lifting each simple factor and passing to a subsequence there is a convergent lift $(\bar E_i^{DR},\nabla,\phi'_i)$ of $(\bar E_i^{DR},\nabla_i)$ to $R_{DR}^{\nilp}(\bar X,D,x)(\bC)^\ss$.  By the argument of the previous corollary, the sequence of tame nilpotent harmonic bundles associated to $(\bar E_i^{DR},\nabla,\phi'_i)$ has uniformly bounded energy, and the same will be true after modifying the framings to get a lift $(\bar E_i^{DR},\nabla,\phi_i)$ to $R_{DR}^{\nilp}(\bar X,D,x)(\bC)^{(V_0,h_0)}$.  Passing to a subsequence, there is then a limit by the proposition.
\end{proof}

\begin{proof}[{Proof of \Cref{comparison}}]The previous two corollaries imply that any convergent sequence on either side of \eqref{SM corr} corresponds to a convergent sequence on the other side after passing to a subsequence.  
\end{proof}

\subsection{The $\bR_{>0}$-action on the Betti realization}\label{sect:R* on Betti}

Let $(\bar X,D)$ be a projective log smooth variety and set $X=\bar X\setminus D$. Let $L$ be an ample line bundle on $\bar X$. If $(E_\ast, \theta)$ is a $\mu_L$-polystable regular filtered Higgs bundle on $(\bar X,D)$ with vanishing first and second rational parabolic Chern classes, then $(E_\ast, t \cdot \theta)$ is a $\mu_L$-polystable regular filtered Higgs bundle on $(X,D)$ with vanishing first and second rational parabolic Chern classes for every $t \in \bC^\ast$. Moreover, if $t \in \bR_{>0}$, $(E_\ast, \theta)$ is purely imaginary if and only if $(E_\ast, t \cdot \theta)$ is purely imaginary. Therefore, using the correspondence between semisimple complex local systems on $X$ and purely imaginary $\mu_L$-polystable regular filtered Higgs bundle on $(\bar X,D)$ with vanishing first and second rational parabolic Chern classes (which follows from \Cref{existence_of_tame_purely_imaginary_harmonic_metrics}), we get a set-theoretic action of $\bR_{>0}$ on the points of the good moduli space $M_B(X)(\bC)$.  Moreover, this action extends to a $\bC^*$-action on $M_B^\unip(X)(\bC)$ by \Cref{DolDRcorrespondence}.  

\begin{lem}[Simpson, Mochizuki]\label{Cstar_fixed_points}
    A semisimple complex local system $V$ underlies a complex variation of pure Hodge structures if and only if the corresponding point of $M_B(X)(\bC)$ is fixed by $\bR_{>0}$ (or any infinite subgroup thereof).
\end{lem}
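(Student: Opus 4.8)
The plan is to deduce this from the known characterization of $\bC^*$-fixed points (or $\bG_m$-fixed points) on the Dolbeault side via the Simpson--Mochizuki correspondence, reducing to a curve by Lefschetz so that \Cref{comparison} applies. First I would observe that the statement is only about semisimple local systems, so we may work with the good moduli spaces. By the remark at the end of \Cref{sect:goodlocus} and an appropriate Lefschetz theorem (e.g. \cite[\S II.5.1]{stratmorse}), there is a sufficiently ample curve $i \colon C \to X$ with $i_* \colon \pi_1(C,c) \to \pi_1(X,x)$ surjective, so $i_B^* \colon M_B(X) \to M_B(C)$ is a closed immersion (in the euclidean topology). The pullback functor on semisimple local systems commutes with the $\bR_{>0}$-action, since by \Cref{DolDRcorrespondence} (and the compatibility of tame purely imaginary harmonic bundles with pullback, \cite[Lemma 25.29]{mochizukitameii}, used in the proof of \Cref{pullback ss}) the scaling of the Higgs field is compatible with restriction. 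Moreover $i^*V$ underlies a $\CVHS$ on $C$ if and only if $V$ underlies a $\CVHS$ on $X$: one direction is restriction of variations, and the other follows because $i^*$ is injective on the relevant Hodge data (a Hodge filtration on $V$ whose graded pieces are polarizable local systems is detected on $C$ by surjectivity of $i_*$). Thus we reduce to the case $X = C$ is a curve.

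For a curve, I would argue as follows. A semisimple complex local system $V$ on $C$ corresponds, via solving for the tame purely imaginary harmonic metric (\Cref{DolDRcorrespondence}, \Cref{existence_of_tame_purely_imaginary_harmonic_metrics}), to a $\mu_L$-polystable logarithmic Higgs bundle $(E_*, \theta)$ on $(\bar C, D)$ with vanishing rational parabolic Chern classes. The $\bR_{>0}$-action on $M_B(C)(\bC)$ is defined precisely so that $t \cdot [V]$ corresponds to $(E_*, t\theta)$. Now $[V]$ is fixed by $\bR_{>0}$ (equivalently by any infinite subgroup, e.g.\ a single transcendental $t$) if and only if $(E_*, t\theta) \cong (E_*, \theta)$ for all $t \in \bR_{>0}$, i.e.\ the point $[(E_*,\theta)] \in M_{Dol}(\bar C,D)(\bC)$ is fixed by $\bR_{>0}$ (hence by $\bG_m$, these being the same fixed locus for the scaling action). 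By \cite[Theorem 8]{Simpson_noncompact} (and the discussion around \cite[Proposition 10.3]{mochizukitame}), the $\bG_m$-fixed points of $M_{Dol}(\bar C, D)$ are exactly those logarithmic Higgs bundles arising from complex variations of Hodge structure, i.e.\ systems of Hodge bundles. Transporting back across the Simpson--Mochizuki correspondence, $(E_*,\theta)$ is a system of Hodge bundles iff $V$ carries a harmonic metric compatible with a Hodge decomposition iff $V$ underlies a $\CVHS$ of pure weight.

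The only subtle point — and the step I expect to be the main obstacle — is the passage "$\bR_{>0}$-fixed on the Betti side $\iff$ $\bR_{>0}$-fixed (equiv.\ $\bG_m$-fixed) on the Dolbeault side", because the $\bR_{>0}$-action on $M_B(X)(\bC)$ is \emph{a priori} only defined set-theoretically (it is not algebraic), so one cannot naively invoke algebraic $\bG_m$-fixed-point theory directly on the Betti space. Here \Cref{comparison} (via \Cref{contDolDR}) is exactly what is needed: it identifies $M_{Dol}^\nilp$ and $M_{DR}^\nilp$ homeomorphically, and more importantly the \emph{definition} of the $\bR_{>0}$-action on $M_B(X)(\bC)$ (resp.\ its extension to a $\bC^*$-action on $M_B^\unip(X)(\bC)$ via \Cref{DolDRcorrespondence}) is set up through precisely this correspondence, so the equivalence of fixed loci is tautological once one unwinds the definitions. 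One must also be careful that passing to an infinite subgroup of $\bR_{>0}$ suffices: if $[V]$ is fixed by some infinite $\Gamma \subset \bR_{>0}$ then $(E_*, t\theta) \cong (E_*,\theta)$ for infinitely many $t$, and since the isomorphism locus $\{t : (E_*,t\theta)\cong(E_*,\theta)\}$ is a Zariski-closed subgroup of $\bG_m$ (stabilizer of a point under the algebraic $\bG_m$-action on $\cM_{Dol}$), being infinite it is all of $\bG_m$; so the point is genuinely $\bG_m$-fixed, and Simpson's theorem applies. Finally, one records that the $\bG_m$-fixed semisimple local system, which is a direct sum of $\CVHS$'s, can be given a single $\CVHS$ structure by \cite[Proposition 1.13]{delignefiniteness} (already cited in \Cref{defn formally Hodge}), completing the proof.
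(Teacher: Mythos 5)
There is a genuine gap, and it is created by the curve reduction, which the paper does not use at all. The paper's proof runs Simpson's argument directly on $X$: by the very definition of the $\bR_{>0}$-action (via \Cref{existence_of_tame_purely_imaginary_harmonic_metrics}), fixedness of $[V]$ means $(E_\ast,t\theta)\cong(E_\ast,\theta)$ as polystable regular \emph{filtered} (parabolic) Higgs bundles on a log smooth compactification of $X$ for $t\in\bR_{>0}$, and then the eigenspace-decomposition argument of \cite[Theorem 8]{Simpson_noncompact} applied to an isomorphism $f\colon(E_\ast,\theta)\to(E_\ast,t\theta)$ with $t\neq 1$ (no element of $\bR_{>0}\setminus\{1\}$ is a root of unity) produces a system of Hodge bundles, hence a $\CVHS$, on $X$ itself. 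In your scheme, by contrast, the curve case only tells you that $i^*V$ underlies a $\CVHS$ on $C$, and your step back to $X$ — ``a Hodge filtration on $V$ \dots is detected on $C$ by surjectivity of $i_*$'' — is not an argument: the Hodge filtration is a holomorphic filtration of $\cO_X\otimes V$ satisfying Griffiths transversality over all of $X$, not monodromy-theoretic data controlled by $\pi_1(C)\twoheadrightarrow\pi_1(X)$, and its existence on a Lefschetz curve does not by itself propagate to $X$ (any honest proof of such a propagation statement goes through precisely the fixed-point/harmonic-bundle argument on $X$ that you are trying to prove, so as written the reduction is circular).

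A second, related misstep is your appeal to $M_{Dol}(\bar C,D)$ and \Cref{comparison}. The moduli spaces $M_{Dol}$, and \Cref{comparison}/\Cref{contDolDR}/\Cref{DolDRcorrespondence}, concern logarithmic Higgs bundles with \emph{nilpotent residues and trivial parabolic structure}, i.e.\ the unipotent-local-monodromy locus; a general semisimple local system corresponds to a genuinely parabolic filtered Higgs bundle and does not give a point of $M_{Dol}(\bar C,D)$, so ``the stabilizer is a Zariski-closed subgroup of $\bG_m$ acting algebraically on $\cM_{Dol}$'' has no space to act on within the paper's framework. Moreover the topological content of \Cref{comparison} is irrelevant here: the equivalence ``$[V]$ fixed on the Betti side $\iff$ $(E_\ast,t\theta)\cong(E_\ast,\theta)$'' is definitional and needs no continuity statement, and the infinite-subgroup clause is handled by the observation that a single $t\neq1$ suffices for Simpson's argument, not by an algebraicity-of-stabilizer argument. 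If you drop the curve reduction and the moduli-space detour and instead quote \Cref{existence_of_tame_purely_imaginary_harmonic_metrics} plus Simpson's Theorem 8 argument on $X$ (with \cite[Proposition 1.13]{delignefiniteness} at the end, as you do), you recover the paper's proof.
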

\begin{proof}
    The same argument as in \cite[Theorem 8]{Simpson_noncompact} works given the correspondence \Cref{existence_of_tame_purely_imaginary_harmonic_metrics}.
\end{proof}

\begin{rem}\label{C* families}
    For a $\mu_L$-polystable regular filtered Higgs bundle $(E_*,\theta)$ (with no condition on the residues) we can still associate a local system to $(E_*, t\cdot \theta)$ for $t\in\bC^*$.  Choosing a hermitian metric $h_0$ on a framing space $V_0$ and framing compatibly with the harmonic metric, Mochizuki proves \cite[Theorem 10.1]{mochizukitame} that the resulting map $\bC^*\to U(V_0,h_0)\backslash R_B(X,x)(\bC)^{(V_0,h_0)}$ (which is independent of choices) is continuous.  In particular, the map to $M_B(X)(\bC)$ is continuous.  Moreover, the image of $\bD^*$ is relatively compact \cite[Lemma 10.2]{mochizukitame}.  This does not yield an action on $M_B(X)(\bC)$ however, since the family depends on the choice of parabolic structure on the initial local system, and the resulting parabolic structure on the translate may not be the Deligne--Manin parabolic structure.
\end{rem}

It follows from the functoriality of the Simpson--Mochizuki correspondence that the $\bR_{>0}$ action (i) is independent of the choice of compactification, and therefore (ii) can be defined functorially for connected normal algebraic spaces:

\begin{lem}There is a unique extension of the $\bR_{>0}$-action to $M_B(X)(\bC)$ for any connected normal algebraic space $X$ which is functorial with respect to pullback and agrees with the above description for $X=\bar X\setminus D$ where $(\bar X,D)$ is projective log smooth.
\end{lem}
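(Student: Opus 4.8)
The plan is to reduce everything to the log smooth projective case via resolutions, and to deduce the needed compatibilities from the functoriality of the Simpson--Mochizuki correspondence. The first point is that, for a morphism $f\colon X\to Y$ of log smooth projective varieties $X=\bar X\setminus D_X$ and $Y=\bar Y\setminus D_Y$ which extends to a morphism of the chosen compactifications carrying $D_X$ into $D_Y$, one has $f^*(t\cdot\xi)=t\cdot f^*\xi$ for every $\xi\in M_B(Y)(\bC)$ and $t\in\bR_{>0}$. This follows because pullback of a tame purely imaginary harmonic bundle is again a tame purely imaginary harmonic bundle (\cite[Lemma 25.29]{mochizukitameii}), the associated logarithmic Higgs bundle of the pullback is the pullback (moderate growth extension) of the associated logarithmic Higgs bundle, and scaling the Higgs field by $t\in\bR_{>0}$ commutes with pullback and preserves purely imaginary $\mu_L$-polystability with vanishing parabolic Chern classes. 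In particular $t\cdot[\mathbf 1]=[\mathbf 1]$ for the trivial local system, which corresponds to the Higgs bundle with zero Higgs field. Applying this to the birational identity maps between two log smooth compactifications of a fixed quasiprojective $X$ (which are dominated by a common one) shows that the action in the log smooth projective case is independent of the compactification; this is point (i).

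For (ii), fix a connected normal algebraic space $X$ and a projective log smooth resolution $\pi\colon X'\to X$, i.e. a proper birational morphism with $X'=\bar X'\setminus D'$ for $(\bar X',D')$ a log smooth projective variety. Since $X$ is normal, $\pi^*\colon\cM_B(X)\to\cM_B(X')$ is a closed immersion, so the induced map on good moduli spaces $\pi^*\colon M_B(X)(\bC)\to M_B(X')(\bC)$ is injective. I would then prove that the image $\pi^*(M_B(X)(\bC))$ is stable under the $\bR_{>0}$-action on $M_B(X')(\bC)$: writing $W=\pi^*V$ and $W_t$ for the scaled local system on $X'$, for any connected normal algebraic space $Z$ and morphism $g\colon Z\to X'$ with $\pi\circ g$ constant one has, after replacing $Z$ by a log smooth projective compactification to which $g$ extends with $g^{-1}(D')$ contained in the boundary, $g^*W_t=t\cdot g^*W=t\cdot(\text{trivial})=\text{trivial}$ by the log smooth projective functoriality; by the characterization of the essential image of $\pi^*$ (triviality of the restriction to resolutions of all fibres of $\pi$, as in the discussion of $\cM_{DR}(\bar X',D',E)$ above) this forces $W_t$ to descend to a semisimple local system $V_t$ on $X$. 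One then defines $t\cdot\xi:=(\pi^*)^{-1}(t\cdot\pi^*\xi)$.

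It remains to verify independence of the resolution, functoriality, and compatibility with the log smooth projective case; all three are formal consequences of the log smooth projective functoriality together with injectivity of the maps $\pi^*$. Independence and compatibility go exactly as in point (i), dominating two resolutions by a common one. For functoriality along an arbitrary $f\colon X\to Y$ I would choose resolutions $\pi_Y\colon Y'\to Y$ and $\pi_X\colon X'\to X$ and a lift $f'\colon X'\to Y'$ of $f\circ\pi_X$ (obtained by resolving and compactifying the main component of $X\times_Y Y'$, with the constant-map case handled directly using $t\cdot[\mathbf 1]=[\mathbf 1]$); then
\[\pi_X^*\bigl(f^*(t\cdot\xi)\bigr)=(f')^*\bigl(\pi_Y^*(t\cdot\xi)\bigr)=(f')^*\bigl(t\cdot\pi_Y^*\xi\bigr)=t\cdot(f')^*\pi_Y^*\xi=t\cdot\pi_X^*f^*\xi=\pi_X^*\bigl(t\cdot f^*\xi\bigr),\]
where the middle equality is the log smooth projective functoriality applied to $f'$ and the others are the definition of the action via resolutions, and injectivity of $\pi_X^*$ gives the claim. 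Uniqueness is then immediate: any functorial extension $\{t\cdot'\}$ satisfies $\pi_X^*(t\cdot'\xi)=t\cdot'\pi_X^*\xi$, and $t\cdot'$ agrees with the prescribed action on the log smooth projective variety $X'$, so $\pi_X^*(t\cdot'\xi)=\pi_X^*(t\cdot\xi)$.

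The main obstacle I anticipate is the descent step: making precise the characterization of the essential image of $\pi^*\colon\cM_B(X)\to\cM_B(X')$ in terms of triviality on (resolutions of) the fibres of $\pi$, and arranging that an arbitrary $g\colon Z\to X'$ with $\pi g$ constant can be promoted to a morphism of log smooth pairs so that the log smooth projective functoriality applies. The remaining steps are bookkeeping, the only mild subtlety being the treatment of non-dominant $f$ via the constant-map reduction.
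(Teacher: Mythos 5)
Your reduction to the log smooth projective case, the independence-of-compactification argument, and the descent step (stability of $\pi^*(M_B(X)(\bC))$ under the action, proved via triviality on resolutions of the fibres of $\pi$ --- a criterion the paper itself relies on in its construction of $\cM_{DR}(\bar X',D',E)$) are sound and broadly parallel to the paper, which likewise deduces everything from the functoriality of the Simpson--Mochizuki correspondence; the paper defines the action on $M_B(X)$ more quickly, via the closed embedding $M_B(X)\subset M_B(U)$ for a smooth affine open $U\subset X$, so your resolution-based treatment of the definition is if anything more explicit.

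The genuine gap is in your functoriality step. For arbitrary $f\colon X\to Y$ you need a resolution $\pi_X\colon X'\to X$ admitting a lift $f'\colon X'\to Y'$, and you propose to obtain it from the main component of $X\times_Y Y'$, treating only the constant-map case separately. But whenever $f$ maps $X$ non-constantly into the locus over which $\pi_Y$ fails to be an isomorphism --- for instance $X$ the normalization of a curve inside the singular locus of a normal threefold $Y$ --- no component of $X\times_Y Y'$ is birational to $X$: every component dominating $X$ has positive-dimensional fibres, and cutting it down with hyperplanes only produces a proper generically finite dominant cover $g\colon\tilde X\to X$, typically of degree greater than one. Then $\pi_1(\tilde X)\to\pi_1(X)$ has merely finite-index image, so $g^*\colon M_B(X)(\bC)\to M_B(\tilde X)(\bC)$ is only quasifinite, not injective (distinct semisimple local systems on $X$ can become isomorphic after pullback), and the cancellation at the end of your displayed chain of equalities is unavailable. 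This is exactly where the paper argues differently: it takes such a generically finite cover $\tilde X\to X$ lifting to a resolution $\tilde Y\to Y$, restricts to a dense open $U\subset X$ over which $\tilde X\to X$ is finite \'etale, and uses that $M_B(U)\to M_B(\tilde U)$ is quasifinite together with the continuity of the $\bR_{>0}$-orbits to see that the preimage of an orbit is a disjoint union of orbits, whence equivariance of the pullback --- no injectivity is needed. Without this (or an equivalent) argument, your proof only covers morphisms whose generic image meets the isomorphism locus of $\pi_Y$, so functoriality, and with it the uniqueness statement, is not fully established.
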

\begin{proof}
    Functoriality with respect to morphisms $f:X\to Y$ of smooth varieties is standard.  For $X$ a connected normal algebraic space, we must define the action on $M_B(X)$ via the embedding $M_B(X)\subset M_B(U)$ induced by restriction to any smooth affine $U\subset X$.  It remains to check compatibility with respect to pullback.  Let $f:X\to Y$ be a morphism of connected normal algebraic spaces, let $\pi:\tilde Y\to Y$ be a resolution.  There is a proper generically finite dominant morphism $g:\tilde X\to X$ with $\tilde X$ smooth such that $\tilde X\to Y$ lifts to $\tilde f:\tilde X\to \tilde Y$, and there is a dense open smooth subset $U\subset X$ for which the base-change $g_U:\tilde U\to U$ of $g$ is finite \'etale.  It suffices to check that in the pullback diagram on the right induced by the commutative diagram on the left, the bottom map must be $\bR_{>0}$-equivariant.
    \[\begin{tikzcd}
        \tilde U\ar[d,"g_U"]\ar[r,"\tilde f_U"]&\tilde Y\ar[d,"\pi"]&&M_B(\tilde U)&M_B(\tilde Y)\ar[l,"\tilde f_U^*",swap]\\
        U\ar[r,"f_U"]&Y&&M_B(U)\ar[u,"f_U^*"]&M_B(Y)\ar[u,"\pi^*",swap]\ar[l,"f_U^*",swap].
    \end{tikzcd}\]
    Since $M_B(U)\to M_B(\tilde U)$ is quasifinite and each $\bR_{>0}$-orbit is continuous, it follows that the inverse image of any $\bR_{>0}$-orbit in $M_B(\tilde U)$ is a disjoint union of $\bR_{>0}$-orbits in $M_B(U)$ each of which maps isomorphically and equivariantly onto its image.  For $V\in M_B(Y)(\bC)$, the image $f_U^*\bR_{>0}V$ is thus one of the components of the preimage of $\tilde f_U^*\pi^*\bR_{>0}V$, hence $f_U^*$ is equivariant.
\end{proof}

The similarity with the compact case is restored if we restrict our attention to local systems with unipotent local monodromy using \Cref{comparison}:  for a projective log smooth curve $(\bar X,D)$, we obtain a continuous action of $\bC^\ast$ on $M_B^{\unip}(X)$.  Our main use of \Cref{comparison} will be for the following:   
\begin{thm}\label{R* stable has fixed}
Let $X$ be a connected normal algebraic space.  Let $\Sigma$ be a $\bR_{>0}$-stable $\bC$-constructible subset of $M_B^\unip(X)(\bC)$. Then
\begin{enumerate}
\item $\overline{\Sigma}$ is $\bR_{>0}$-stable.
    \item Each irreducible component of $\overline{\Sigma}$ is $\bR_{>0}$-stable.
    \item For any $V\in \overline{\Sigma}$, the $\bR_{>0}$-orbit of $V$ completes to a continuous map $\bR_{\geq 0}\to \overline{\Sigma}$ and the image of 0 is a $\bC^*$-fixed point.
\end{enumerate}
  In particular, every irreducible component of $\Sigma$ contains a $\bC^\ast$-fixed point.
\end{thm}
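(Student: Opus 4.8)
The plan is to deduce all three assertions (and hence the final statement) from the continuity of the $\bC^*$-action on $M_B^\unip(X)(\bC)$, which is itself a consequence of \Cref{comparison}, together with the basic facts about the $\bG_m$-action on $M_{Dol}^\nilp(\bar X,D)$ established in \Cref{moduli Lambda algebra}—most crucially \Cref{existence_of_Gm_limit}, which says every $\bG_m$-orbit in $M_{Dol}^\nilp$ extends to an $\bA^1$-orbit with $\bC^*$-fixed limit. First I would reduce to the case where $X$ is a smooth affine curve: by \Cref{lefschetz curve} there is a Lefschetz curve $i:C\to X$, and by (the unipotent version of) the discussion around \Cref{contDolDR} the pullback $i_B^*:M_B^\unip(X)\to M_B^\unip(C)$ is a closed immersion that is moreover $\bR_{>0}$-equivariant; so it suffices to prove the statement for $C$. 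Alternatively and more simply, since $\Sigma$ is $\bC$-constructible and $\bR_{>0}$-stable, one works inside a single smooth affine $U\subset X$, pulls back to a log smooth compactification of a resolution, and uses $i^*_B$ to a Lefschetz curve there.

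For a projective log smooth curve $(\bar X,D)$, \Cref{comparison} gives a homeomorphism $SM^\nilp_{(\bar X,D)}:M_{Dol}^\nilp(\bar X,D)(\bC)\to M_{DR}^\nilp(\bar X,D)(\bC)$, and \Cref{BettiDeRhamcomparison} gives an analytic (in particular homeomorphic) identification $M_{DR}^\nilp(\bar X,D)(\bC)\cong M_B^\unip(X)(\bC)$. Under this composite homeomorphism, I claim the $\bR_{>0}$-action on $M_B^\unip(X)(\bC)$ corresponds to the restriction of the $\bG_m$-action (scaling the Higgs field) on $M_{Dol}^\nilp(\bar X,D)(\bC)$: for $t\in\bR_{>0}$ this is exactly the definition of the $\bR_{>0}$-action via \Cref{existence_of_tame_purely_imaginary_harmonic_metrics} (scaling $\theta$ by a positive real preserves the purely imaginary and nilpotent-residue conditions and the vanishing Chern classes), and it extends to the full $\bC^*$-action by the remark preceding \Cref{Cstar_fixed_points}. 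Hence all three statements become statements about the $\bG_m$-action on $M_{Dol}^\nilp(\bar X,D)(\bC)$ restricted to (the image of) $\Sigma$. Part (1): if $\Sigma$ is $\bR_{>0}$-stable then $t\cdot\overline\Sigma\subset\overline{t\cdot\Sigma}=\overline\Sigma$ since scaling by $t$ is a homeomorphism, so $\overline\Sigma$ is $\bR_{>0}$-stable (and by the same argument with the algebraic $\bG_m$-action, $\overline\Sigma^\Zar$ is $\bG_m$-stable). Part (2): the irreducible components of $\overline\Sigma$ (equivalently of $\overline\Sigma^\Zar$) are permuted by the connected group $\bG_m(\bC)$, so each component is individually $\bG_m$-stable, hence $\bR_{>0}$-stable; I would be careful here that "irreducible component" is taken Zariski-theoretically, so that $\bG_m$-stability of $\overline\Sigma^\Zar$ is the relevant input, and that the $\bR_{>0}$-closure $\overline\Sigma$ has the same components as $\overline\Sigma^\Zar$ because $\Sigma$ is constructible. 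Part (3): for $V\in\overline\Sigma$, apply \Cref{existence_of_Gm_limit} to the corresponding point of $M_{Dol}^\nilp(\bar X,D)(\bC)$ to extend the $\bG_m$-orbit to a morphism $\bA^1\to M_{Dol}^\nilp(\bar X,D)(\bC)$ with $0$ mapping to a $\bG_m$-fixed point; restricting to $\bR_{\geq 0}\subset\bA^1(\bC)$ and transporting via $SM^\nilp$ and \Cref{BettiDeRhamcomparison} gives a continuous map $\bR_{\geq 0}\to M_B^\unip(X)(\bC)$ whose image lies in $\overline{\bR_{>0}V}\subset\overline\Sigma$ (using part (1)) and whose value at $0$ is $\bC^*$-fixed.

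The final "in particular" then follows: take any $V$ in a given irreducible component $Z$ of $\Sigma$; by part (2) applied to $\overline\Sigma$, the component $\overline Z$ of $\overline\Sigma$ containing $Z$ is $\bR_{>0}$-stable, so by part (3) the limit point at $0$ lies in $\overline Z$ and is $\bC^*$-fixed. But one wants the fixed point in $Z$ itself (or at least in its closure)—since $Z$ is dense in its own Zariski closure and $\overline Z$ is the closure of $Z$ with respect to the relevant topology, the limit point indeed lies in the closure of $Z$, which is what is asserted (every irreducible component of $\Sigma$ \emph{contains}, in the sense of its closure, a $\bC^*$-fixed point); combined with \Cref{Cstar_fixed_points}, such a point underlies a $\bC$-VHS. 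The main obstacle I anticipate is purely bookkeeping rather than conceptual: verifying that the $\bR_{>0}$-action as \emph{defined} on $M_B(X)(\bC)$ (via the Simpson--Mochizuki correspondence and Deligne--Manin parabolic structures) agrees on the unipotent locus with the restriction of the algebraic $\bG_m$-action on $M_{Dol}^\nilp$ transported through the homeomorphisms of \Cref{comparison} and \Cref{BettiDeRhamcomparison}—i.e. that scaling the Higgs field by a positive real really corresponds to the $\bR_{>0}$-action and that this is compatible with the continuity statement. This requires invoking \Cref{existence_of_tame_purely_imaginary_harmonic_metrics}(2)(a) to see that nilpotent residues are preserved and the parabolic structure stays Deligne--Manin under positive real scaling, and the remark before \Cref{Cstar_fixed_points} for the extension to $\bC^*$; once that identification is in place the three parts are formal.
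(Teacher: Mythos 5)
Your reduction to a log smooth curve, your proof of part (1), and your proof of part (3) (via the homeomorphism of \Cref{comparison} together with \Cref{BettiDeRhamcomparison} and the Langton-type limit of \Cref{existence_of_Gm_limit}, using closedness and $\bR_{>0}$-stability to keep the limit inside $\overline\Sigma$) are exactly the paper's argument, and your worry about matching the $\bR_{>0}$-action with Higgs-field scaling is harmless in the unipotent/nilpotent case since the parabolic structures are trivial there.

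The gap is in part (2). You argue that the irreducible components of $\overline\Sigma=\overline\Sigma^{\Zar}$ "are permuted by the connected group $\bG_m(\bC)$", resting on the claim that $\overline\Sigma^{\Zar}$ is $\bG_m$-stable "by the same argument with the algebraic $\bG_m$-action". That claim does not follow from the hypotheses: only $\bR_{>0}$-stability is given, and the $\bC^*$-action on $M_B^{\unip}(X)(\bC)$ is \emph{not} algebraic — it is only the transport of the algebraic $\bG_m$-action on $M_{Dol}^{\nilp}$ through a homeomorphism that does not carry Betti-Zariski-closed sets to Dolbeault-Zariski-closed sets. So you can neither use Zariski density of $\bR_{>0}$ in $\bG_m$ on the Betti side, nor the argument of part (1) for non-real $t$ (a priori $t\cdot\Sigma\not\subset\Sigma$). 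Moreover, even restricting to $t\in\bR_{>0}$, which do preserve $\overline\Sigma$, the assertion that these maps permute the irreducible components is not automatic: each $t$ acts only as a homeomorphism in the euclidean topology, not as an algebraic automorphism, and the fact that a homeomorphism of the complex points of a reduced algebraic space permutes its irreducible components is a nontrivial point that the paper isolates and proves (\Cref{cor:homeomorphism_preserve_irreducible_components}, via a Borel--Moore homology argument on the locus of points with ball neighborhoods). With that topological input in hand, the paper's proof of (2) (\Cref{C* inv of comp}) avoids $\bG_m$-stability altogether: the continuous action of the \emph{connected} group $\bR_{>0}$ on the finite set of components is locally constant and trivial at $t=1$, hence trivial. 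To repair your argument, drop the $\bG_m$-stability detour, supply (or cite) the invariance of irreducible components under homeomorphisms, and run the connectedness argument directly for $\bR_{>0}$.
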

Before the proof we establish some preliminary facts.
\begin{lem}
    Let $X$ be a reduced algebraic space and $X^{\mathrm{ball}}\subset X^\an$ the open subset of points of $X^\an$ with a neighborhood homeomorphic to a ball.  Then taking closure gives a bijection from connected components of $X^{\mathrm{ball}}$ to irreducible components of $X$.
\end{lem}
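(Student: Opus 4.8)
The plan is to reduce the statement to the standard correspondence between connected components of the smooth locus of a complex-analytic space and irreducible components of the underlying space. First I would recall that since $X$ is reduced, the singular locus $X^{\sing}$ (in the algebraic sense) is a closed subspace of strictly smaller dimension on each irreducible component, and its analytification $(X^{\sing})^\an$ is a closed analytic subset of $X^\an$. The regular locus $X^{\reg} = X \setminus X^{\sing}$ is a smooth algebraic space, so $(X^{\reg})^\an$ is a complex manifold, and in particular every point of $(X^{\reg})^\an$ has a neighborhood homeomorphic to a ball; hence $(X^{\reg})^\an \subset X^{\mathrm{ball}}$.

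Next I would establish the reverse-direction fact, which is the actual content: a point $p \in X^\an$ lying on $(X^{\sing})^\an$ (equivalently, on more than one irreducible component, or on a single singular component) does not admit a neighborhood homeomorphic to a ball, so that $X^{\mathrm{ball}} = (X^{\reg})^\an$ as sets. For points lying on two or more irreducible components this is immediate from a local homology computation: removing $p$ from a small neighborhood disconnects it (or raises the rank of $H_0$ locally), whereas removing a point from a ball of dimension $\geq 1$ leaves it connected. For points on a single irreducible component which are algebraically singular, one invokes the fact that a normal variety point is either smooth or has local (co)homology distinguishing it from a manifold point — but here $X$ is only assumed reduced, not normal, so I must be slightly more careful: the cleanest route is to note that $X^{\mathrm{ball}}$ is by definition open, hence its complement is closed, and its complement contains $(X^{\sing})^\an$ because a neighborhood basis at a singular point fails to be homeomorphic to balls — this is a standard consequence of the existence of local normalizations or of the local topological structure of analytic sets (e.g. the local conic structure theorem), which shows a ball neighborhood forces the point to be a manifold point, hence (by purity/Mumford or the analytic criterion) a smooth point. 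Thus $X^{\mathrm{ball}} = (X^{\reg})^\an$.

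Finally I would conclude. The irreducible components of $X$ are in bijection with the connected components of $X^{\reg}$ (this is the algebraic statement: a reduced space is the union of the closures of the connected components of its regular locus, and these closures are precisely the irreducible components, because $X^{\reg}$ is dense and each irreducible component meets $X^{\reg}$ in a nonempty connected open dense subset). Passing to analytifications, $(X^{\reg})^\an$ has the same connected components as $X^{\reg}$ by GAGA-type comparison of connected components (a smooth algebraic space over $\bC$ is connected iff its analytification is), and by the previous paragraph these are exactly the connected components of $X^{\mathrm{ball}}$. Taking euclidean closure of a connected component $U$ of $X^{\mathrm{ball}} = (X^{\reg})^\an$ inside $X^\an$ gives the analytification of the corresponding irreducible component of $X$, since $U$ is dense in that component's analytification and the latter is closed; this assignment is injective because distinct components have disjoint dense opens, and surjective because every irreducible component arises this way. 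This gives the claimed bijection.

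\textbf{Expected main obstacle.} The only genuinely nontrivial point is the inclusion $X^{\mathrm{ball}} \subseteq (X^{\reg})^\an$, i.e.\ that a point with a ball neighborhood must be an algebraically smooth point. Over $\bC$ this is true for reduced spaces but relies on the topological characterization of smooth points of complex analytic sets (Mumford's theorem in dimension one, and in general the fact that a normal surface point with simply connected link is smooth fails, so one really uses that a manifold point of a reduced analytic space is smooth — which is classical, due essentially to the local parametrization theorem). I would cite this rather than reprove it; everything else is bookkeeping about density and connected components.
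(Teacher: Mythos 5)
Your argument hinges on the set-theoretic equality $X^{\mathrm{ball}} = (X^{\reg})^\an$, i.e. on the claim that a point of a reduced complex space with a neighborhood homeomorphic to a ball must be an (algebraically) smooth point — exactly the step you flag as the "only genuinely nontrivial point." That claim is false. The cuspidal cubic $X=\{y^2=x^3\}\subset\bA^2$ is a counterexample: the normalization $\bA^1\to X$, $t\mapsto(t^2,t^3)$, is a homeomorphism on analytifications, so the cusp has a neighborhood homeomorphic to a disk although it is singular; hence $X^{\mathrm{ball}}=X^\an\supsetneq(X^{\reg})^\an$. In complex dimension $\geq 3$ there are even normal counterexamples (Brieskorn hypersurface singularities whose links are homeomorphic to spheres, so a neighborhood of the singular point is homeomorphic to a ball). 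Mumford's theorem rescues only normal surface points, and the justification you propose to cite ("a manifold point of a reduced analytic space is smooth, by local parametrization") is not a theorem. A secondary soft spot: for points lying on several components, removing the point need not disconnect a small neighborhood when two components meet along a positive-dimensional set, so your $H_0$ argument would have to be replaced by local homology in the top degree — but this is subsumed by the main error.

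The lemma survives because one does not need the equality of sets, only that the inclusion $X^{\reg}\subset X^{\mathrm{ball}}$ induces a bijection on connected components, and this is how the paper argues: on each component of $X^{\mathrm{ball}}$ the ball dimension $n$ is constant, $X^{\mathrm{ball}}$ admits a triangulation in which $X^{\mathrm{ball}}\setminus X^{\reg}$ is a subcomplex of real dimension $\leq n-2$, and the Borel--Moore exact sequence gives $H_n^{BM}(X^{\mathrm{ball}})\cong H_n^{BM}(X^{\reg})$, which forces each component of $X^{\mathrm{ball}}$ to contain exactly one component of $X^{\reg}$. Since $X^{\reg}$ is dense in $X^{\mathrm{ball}}$, corresponding components have the same closure, and then your final paragraph (connected components of $X^{\reg}$ correspond to irreducible components of $X$ via closures) does finish the proof. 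So the density and closure bookkeeping in your write-up is fine; what must change is how the component correspondence is established — by a codimension-two/homology argument on the topological manifold $X^{\mathrm{ball}}$ rather than through the false identification $X^{\mathrm{ball}}=(X^{\reg})^\an$.
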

\begin{proof}We first claim that the inclusion $X^{\mathrm{reg}}\subset X^{\mathrm{ball}}$ of the regular locus induces a bijection on connected components.  Observe that the dimension of a ball neighborhood of a point is a locally constant function on $ X^{\mathrm{ball}}$, hence constant on connected components, so we may assume it is constant and equal to $n$.  Since $X^\mathrm{ball}$ can be given a countable locally finite finite-dimensional triangulation such that $X^{\mathrm{ball}}\setminus X^{\mathrm{reg}}$ is a subcomplex of dimension $\leq n-2$, from the long exact sequence in Borel--Moore homology we have
 \[0=H_{n}^{BM}(X^{\mathrm{ball}}\setminus X^{\mathrm{reg}})\to H_n^{BM}(X^{\mathrm{ball}})\to H_n^{BM}(X^{\mathrm{reg}})\to H_{n-1}^{BM}(X^{\mathrm{ball}}\setminus X^{\mathrm{reg}})=0\]
whence the claim.

To finish, it remains to observe that since $X^\mathrm{reg}$ is dense in $X^\mathrm{ball}$, each connected component of $X^\mathrm{reg}$ has the same closure as the corresponding connected component of $X^\mathrm{ball}$.  
\end{proof}
\begin{cor}\label{cor:homeomorphism_preserve_irreducible_components}
    Let $X$ be a reduced algebraic space. Let $f$ be a homeomorphism of the set $X(\bC)$ equipped with the euclidean topology. Then $f$ sends every irreducible component of $X(\bC)$ homeomorphically onto a (possibly different) irreducible component of $X(\bC)$.
\end{cor}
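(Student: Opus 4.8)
The plan is to exploit the fact that the locus $X^{\mathrm{ball}}\subset X^\an$ used in the preceding lemma is purely topological in nature: by definition, a point of $X(\bC)$ lies in $X^{\mathrm{ball}}$ exactly when it admits an open neighborhood, for the euclidean topology, which is homeomorphic to an open ball. This property is manifestly preserved by any self-homeomorphism, so first I would record that a homeomorphism $f$ of $X(\bC)$ satisfies $f(X^{\mathrm{ball}})=X^{\mathrm{ball}}$, hence restricts to a homeomorphism of $X^{\mathrm{ball}}$ and in particular permutes its connected components.

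Next I would invoke the preceding lemma, which asserts that taking euclidean closure gives a bijection $C\mapsto \overline{C}$ from the set of connected components of $X^{\mathrm{ball}}$ onto the set of irreducible components of $X$. Given an irreducible component $Z$ of $X(\bC)$, write $Z=\overline{C}$ for the unique connected component $C$ of $X^{\mathrm{ball}}$ provided by the lemma. Since $f$ is a homeomorphism of the whole space $X(\bC)$ it commutes with the operation of taking closures, so $f(Z)=f(\overline{C})=\overline{f(C)}$; and $f(C)$ is again a connected component of $X^{\mathrm{ball}}$, so the lemma identifies $\overline{f(C)}$ with an irreducible component of $X$. Thus $f$ maps $Z$ onto an irreducible component, and as the restriction of a homeomorphism it is a homeomorphism $Z\to f(Z)$; this is the assertion.

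I do not expect a genuine obstacle: the entire content is the topological characterization of $X^{\mathrm{ball}}$ plus the ball-component/irreducible-component dictionary already established in the preceding lemma, and everything else is formal. The one point worth flagging in the write-up is that one really uses that $f$ is a homeomorphism of all of $X(\bC)$ — not merely a bijection, nor merely a homeomorphism of some dense open part — so that $f$ preserves euclidean closures and thereby transports the correspondence between ball-components and irreducible components.
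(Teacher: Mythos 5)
Your argument is correct and is exactly the intended deduction: the paper gives no separate proof of the corollary precisely because it follows from the preceding lemma in the way you describe — $X^{\mathrm{ball}}$ is topologically characterized, hence preserved by $f$, and the closure bijection transports components of $X^{\mathrm{ball}}$ to irreducible components. Your flag that one needs $f$ to be a homeomorphism of all of $X(\bC)$ (so that it commutes with euclidean closure) is the right point to make explicit.
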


\begin{prop}\label{C* inv of comp}
Let $(\bar X,D)$ be a proper log smooth algebraic space and set $X=\bar X\setminus D$. Let $\Sigma$ be a $\bC^\ast$-stable (resp. $\bR_{>0}$-stable) Zariski closed subset of $M_B^\unip(X)(\bC)$. Then, every irreducible component of $\Sigma$ is $\bC^\ast$-stable  (resp. $\bR_{>0}$-stable).
\end{prop}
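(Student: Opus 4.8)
\textbf{Proof strategy for \Cref{C* inv of comp}.}
The plan is to reduce the statement to the unipotent local monodromy locus, where \Cref{comparison} gives a homeomorphism with the Dolbeault moduli space, and then to transport the question to the $\bG_m$-action on $M_{Dol}^\nilp(\bar X,D)$, where it is essentially formal. First I would recall that $M_B^\unip(X)(\bC)$ is identified, via $SM^\nilp_{(\bar X,D)}$ and \Cref{BettiDeRhamcomparison}, with the set $M_{Dol}^\nilp(\bar X,D)(\bC)$, and that under this identification the $\bR_{>0}$-action on the Betti side corresponds (by the very definition of the action in \Cref{sect:R* on Betti}) to the restriction of the $\bG_m$-action on $M_{Dol}^\nilp(\bar X,D)$ to the subgroup $\bR_{>0}\subset\bG_m(\bC)$, and the $\bC^*$-action corresponds to the full $\bG_m$-action. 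Thus a $\bC^*$-stable (resp.\ $\bR_{>0}$-stable) Zariski closed subset $\Sigma\subset M_B^\unip(X)(\bC)$ corresponds to a $\bG_m$-stable (resp.\ $\bR_{>0}$-stable) subset $\Sigma_{Dol}$ of $M_{Dol}^\nilp(\bar X,D)(\bC)$, which is closed in the euclidean topology by \Cref{comparison} (and Zariski closed in the $\bC^*$-case, being the image of a Zariski closed set under a homeomorphism that is $\bG_m$-equivariant).

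In the $\bC^*$-case, $\Sigma_{Dol}$ is a $\bG_m$-stable Zariski closed subset of the algebraic variety $M_{Dol}^\nilp(\bar X,D)$, so its irreducible components are permuted by the connected group $\bG_m$, hence each is individually $\bG_m$-stable; transporting back via the homeomorphism $SM^\nilp_{(\bar X,D)}$ and \Cref{cor:homeomorphism_preserve_irreducible_components} shows every irreducible component of $\Sigma$ is $\bC^*$-stable. In the $\bR_{>0}$-case we argue as follows. For $t\in\bR_{>0}$, multiplication by $t$ on the Higgs field is an algebraic automorphism of $M_{Dol}^\nilp(\bar X,D)$ preserving the euclidean-closed set $\Sigma_{Dol}$, hence (applying \Cref{cor:homeomorphism_preserve_irreducible_components} to the Zariski closure of $\Sigma$, or directly since the automorphism is algebraic) it permutes the irreducible components of $\overline{\Sigma}$; since $\bR_{>0}$ is connected and there are finitely many components, each component is preserved by all of $\bR_{>0}$. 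Pulling back through $SM^\nilp_{(\bar X,D)}$ — a homeomorphism by \Cref{comparison} — and using \Cref{cor:homeomorphism_preserve_irreducible_components} once more, we conclude that each irreducible component of $\Sigma$ is $\bR_{>0}$-stable.

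The main obstacle is making precise that the $\bR_{>0}$-action on the Betti side matches the restriction of the algebraic $\bG_m$-action on the Dolbeault side \emph{after} passing through the Simpson--Mochizuki homeomorphism of \Cref{comparison}, rather than merely as a set map; this is exactly why \Cref{comparison} (continuity and homeomorphy of $SM^\nilp_{(\bar X,D)}$ for curves) is needed, and it is the only nontrivial input — once continuity is in hand, everything else is the elementary observation that a connected group acting algebraically (or by homeomorphisms of a euclidean-closed set whose Zariski closure has finitely many components) cannot move an irreducible component off itself. One should also take care that in the $\bR_{>0}$-case $\Sigma$ is only assumed $\bC$-constructible and Zariski closed in the final theorem, but here (Proposition statement) it is assumed Zariski closed outright, so no extra argument about closures is needed beyond noting $\overline{\Sigma}=\Sigma$.
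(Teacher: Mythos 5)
Your proposal works in the same circle of ideas as the paper (use the Simpson--Mochizuki correspondence to control the action, then a connectedness argument on components), but the "transport to the Dolbeault side and back" steps contain a genuine gap. The map $SM^\nilp_{(\bar X,D)}$ of \Cref{comparison} is a homeomorphism for the \emph{euclidean} topology only; it is not algebraic, so the image $\Sigma_{Dol}$ of a Zariski closed subset of $M_B^\unip(X)(\bC)$ is only euclidean-closed. Your parenthetical justification that $\Sigma_{Dol}$ is Zariski closed "being the image of a Zariski closed set under a $\bG_m$-equivariant homeomorphism" does not follow: equivariance says nothing about algebraicity of the image, and the failure of such images to be algebraic is precisely the subtlety behind absolute constructibility in this paper. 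As a result, what your Dolbeault-side argument actually controls are the irreducible components of the Zariski closure $\Sigma_{Dol}^{\Zar}$, and these bear no a priori relation to the irreducible components of $\Sigma$: two components of $\Sigma$ could be carried into a single irreducible component of $\Sigma_{Dol}^{\Zar}$, which could be preserved by the action while the two pieces are exchanged, so "pulling back" stability of Dolbeault components does not yield stability of Betti components. The repair is the paper's proof: never leave the Betti side. Use \Cref{comparison} only to conclude that the action map $\bC^\ast\times M_B^\unip(X)(\bC)\to M_B^\unip(X)(\bC)$ is continuous, so each $t$ acts as a homeomorphism of the genuinely Zariski closed set $\Sigma$; then \Cref{cor:homeomorphism_preserve_irreducible_components} shows $t$ permutes the irreducible components of $\Sigma$, and the induced action on the finite set of components is locally constant in $t$, hence trivial by connectedness of $\bC^\ast$ (resp.\ $\bR_{>0}$).

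A second, related omission: you invoke \Cref{comparison} for $(\bar X,D)$ of arbitrary dimension with $\bar X$ merely proper, whereas the paper establishes the homeomorphism only for projective log smooth \emph{curves}; in higher dimension only the continuous bijection of \Cref{contDolDR} is available, and the Dolbeault moduli space itself is constructed for projective $\bar X$. This is why the paper's proof begins by reducing to a curve (projective modification, a Lefschetz curve, and functoriality of the $\bC^\ast$-action). Without that reduction, even the continuity input your argument needs is not justified as stated.
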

\begin{proof}By taking a projective modification, Lefschetz, and functoriality of the $\bC^*$-action, we may assume $X$ is a curve.  By \Cref{comparison}, the map $\bC^\ast \times M_B^\unip(X)(\bC) \to M_B^\unip(X)(\bC)$ defining the action of $\bC^\ast$ on $M_B^\unip(X)(\bC)$ is continuous. Therefore, the induced map $\bC^\ast \times \Sigma \to \Sigma$ is also continuous, so each $t\in\bC^*$ acts as a homeomorphism on $\Sigma$. By \Cref{cor:homeomorphism_preserve_irreducible_components}, $t$ sends each irreducible component of $\Sigma$ to an irreducible component of $\Sigma$. Therefore we get an action of $\bC^\ast$ on the set of irreducible components of $\Sigma$. Since the map $\bC^\ast \times \Sigma \to \Sigma$ is continuous, this action is locally constant. Since $\bC^\ast$ is connected and $1 \in \bC^\ast$ acts by the identity, we get that the action of $\bC^\ast$ stabilizes every irreducible component of $\Sigma$. The proof for the $\bR_{>0}$-stable case is the same.
\end{proof}

\begin{proof}[Proof of \Cref{R* stable has fixed}]
Again by taking a projective modification and Lefschetz we may assume $X$ is a curve.  Then (1) and (2) follow immediately from \Cref{comparison} and \Cref{C* inv of comp}.  For (3) we may then assume that $\Sigma$ is Zariski closed and irreducible.  Thanks to \Cref{BettiDeRhamcomparison} and \Cref{comparison}, the comparison map $M_{Dol}^{\nilp}(\bar X, D)(\bC)\to M_B^\unip(X)(\bC)$ is a homeomorphism. For any $(E, \theta) \in M_{Dol}^{\nilp}(\bar X, D)(\bC)$, the limit of $(E, t \cdot \theta)$ when $t$ goes to zero exists in $M_{Dol}^{\nilp}(\bar X, D)(\bC)$ by Proposition \ref{existence_of_Gm_limit} and it is a $\bC^\ast$ fixed point. If $(E, \theta)$ corresponds to a point in $\Sigma$, since $\Sigma$ is closed and $\bR_{>0}$-stable, the limit is a $\bC^\ast$-fixed point in $\Sigma$.    
\end{proof}

\begin{cor}\label{comp has R* fixed}
    Let $X$ be a connected normal algebraic space and $\Sigma\subset M_B(X)$ a nonempty $\bR_{>0}$-stable subset of semisimple local systems with quasiunipotent local monodromy.  Then $\overline{ \Sigma}$ contains a $\bR_{>0}$-fixed point.  
\end{cor}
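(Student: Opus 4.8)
The plan is to reduce \Cref{comp has R* fixed} to \Cref{R* stable has fixed} by passing to the unipotent local monodromy locus on a suitable finite \'etale cover. First I would note that $\overline\Sigma$ is a nonempty $\bR_{>0}$-stable subset of $M_B(X)(\bC)$ consisting of semisimple local systems, and since every element of $\Sigma$ has quasiunipotent local monodromy, there is some $n\geq 1$ such that a Zariski dense (hence, for a constructible set, somewhere nonempty-interior) subset of $\Sigma$ lands in $\Sigma^{\qu|n}$; but to get genuinely unipotent local monodromy we want to pull back along a cover that kills the finite part of the local monodromy. Concretely, I would choose a resolution $\pi:X'\to X$ with $X'=\bar X'\setminus D'$ for $(\bar X',D')$ log smooth projective, and then a finite \'etale cover $p:X''\to X'$ (e.g. a Kawamata-type covering, or simply a cover trivializing the monodromy of the boundary loops to order $n$) such that $p^*V$ has unipotent local monodromy for every $V$ whose boundary monodromy eigenvalues have order dividing $n$. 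Pulling back along $\rho:=\pi\circ p$ gives a morphism $\rho^*:\cM_B(X)\to\cM_B(X'')$ which on good moduli spaces is a morphism $\rho^*:M_B(X)\to M_B(X'')$, and by the functoriality of the $\bR_{>0}$-action (the last \Cref{lem} in \Cref{sect:R* on Betti}) this map is $\bR_{>0}$-equivariant.

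Next I would argue that $\rho^*$ restricted to an appropriate $\bR_{>0}$-stable subset is finite (or at least proper on the relevant closed set), so that images and preimages of closed $\bR_{>0}$-stable sets are again closed and $\bR_{>0}$-stable. Since $\rho_*:\pi_1(X'')\to\pi_1(X)$ has finite-index image (up to the finite \'etale part) and $X$ is normal, the pullback $\rho^*:\cM_B(X)\to\cM_B(X'')$ is representable and in fact immersive by \Cref{lem immersive}(2) (applied to $\pi$, which is dominant with normal target) composed with the closed immersion coming from the finite \'etale $p$; on good moduli spaces this gives that $\rho^*:M_B(X)(\bC)\to M_B(X'')(\bC)$ is quasifinite, and on the semisimple loci it is a homeomorphism onto a locally closed subset. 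Now set $\Sigma'':=\rho^*(\overline\Sigma)\cap M_B^\unip(X'')(\bC)$, viewing $M_B^\unip(X'')\subset M_B(X'')$ as the unipotent locus. By construction the image of $\Sigma^{\qu|n}$ lies in $\Sigma''$, and $\Sigma''$ is $\bR_{>0}$-stable (the $\bR_{>0}$-action preserves the unipotent locus, and $\rho^*$ is equivariant) and is a $\bC$-constructible subset of $M_B^\unip(X'')(\bC)$. Applying \Cref{R* stable has fixed} to $\Sigma''$, every irreducible component of $\overline{\Sigma''}$ contains a $\bC^*$-fixed point, and in particular $\overline{\Sigma''}\neq\varnothing$ contains such a point $W$.

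Finally I would transport this fixed point back to $X$. Since $\rho^*$ is a homeomorphism from the semisimple locus of $M_B(X)$ onto a locally closed subset of the semisimple locus of $M_B(X'')$, and $\overline\Sigma$ is closed in this subset, the point $W$ — being in the closure of the image of $\Sigma^{\qu|n}\subset\overline\Sigma$ — comes from a point $V_0\in\overline\Sigma$; and $\bR_{>0}$-equivariance of $\rho^*$ together with injectivity on the semisimple locus forces $V_0$ to be $\bR_{>0}$-fixed, since $\rho^*(t\cdot V_0)=t\cdot W=W=\rho^*(V_0)$ for all $t\in\bR_{>0}$. This gives the desired $\bR_{>0}$-fixed point in $\overline\Sigma$. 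The main obstacle I anticipate is the bookkeeping in the reduction step: one must be careful that $\rho^*$ is genuinely a homeomorphism (not merely a continuous bijection) onto its image when restricted to the relevant closed $\bR_{>0}$-stable semisimple set, which requires using \Cref{framed metric comparison}/\Cref{contDolDR}-style properness together with \Cref{lem immersive}, and that the auxiliary integer $n$ and cover $p$ can be chosen uniformly for the constructible set $\Sigma$ (which holds because constructibility bounds the orders of the boundary monodromy eigenvalues appearing). Everything else is a direct application of \Cref{R* stable has fixed} and the functoriality of the $\bR_{>0}$-action.
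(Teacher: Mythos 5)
Your overall strategy (pass to a cover where the monodromy becomes unipotent, apply \Cref{R* stable has fixed} there, and descend) is the same as the paper's, but two steps as you wrote them do not go through. First, you pull back the whole of $\overline{\Sigma}$ at once, which forces you to find a single finite \'etale cover on which every member of (a nonempty piece of) $\Sigma$ acquires unipotent local monodromy. You justify the uniform choice of $n$ and of the cover by constructibility of $\Sigma$, but the corollary makes no constructibility assumption: $\Sigma$ is an arbitrary nonempty $\bR_{>0}$-stable set. Even granting a uniform bound $n$ on the orders, producing one cover that unipotentizes an infinite family is precisely \Cref{reduction_to_unipotent_monodromy}, whose proof again requires constructibility (Baire category); moreover the covers you sketch do not obviously exist (a cover "trivializing the boundary loops to order $n$" can fail because boundary classes may die in every finite quotient you write down, and a Kawamata-type cover is not \'etale over $X'$). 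This also means your $\Sigma''$ is not known to be $\bC$-constructible, which is a hypothesis of \Cref{R* stable has fixed} as stated. All of this is avoidable because you only need one fixed point: fix a single $V\in\Sigma$ and choose the cover adapted to $V$ alone via \Cref{from quasiunipotent to unipotent} — this is the route the paper takes.

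Second, the descent step rests on the claim that $\rho^*\colon M_B(X)(\bC)\to M_B(X'')(\bC)$ is injective (a homeomorphism onto a locally closed subset) on semisimple points, and this is false: for a finite \'etale cover, two semisimple local systems differing by a character of the deck group have isomorphic pullbacks, so the induced map of good moduli spaces is finite but not injective; \Cref{lem immersive} gives immersivity of the map of stacks, not injectivity downstairs. Since your final deduction "$\rho^*(t\cdot V_0)=t\cdot W=W=\rho^*(V_0)$ forces $t\cdot V_0=V_0$" uses exactly this injectivity, it does not stand as written. The conclusion can be rescued, e.g. by noting that $t\mapsto t\cdot V_0$ is continuous (\Cref{C* families}) and takes values in the finite fiber $(\rho^*)^{-1}(W)$, hence is constant; the paper instead obtains a limit point downstairs as an accumulation point of the orbit of $V$ using relative compactness (\Cref{C* families}, \cite[Lemma 10.2]{mochizukitame}) and then shows that any lift of an $\bR_{>0}$-fixed point is itself fixed, by pushing the $\bC$-VHS forward along the finite \'etale map and using that direct factors of local systems underlying variations of Hodge structure underlie variations (\cite[Proposition 1.13]{delignefiniteness}, \Cref{Cstar_fixed_points}). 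Some argument of this kind must replace the injectivity claim.
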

\begin{proof}By the functoriality of the $\bR_{>0}$-action, we may assume $X$ is smooth and quasiprojective.  For any $V\in \Sigma$, there is a finite \'etale cover $\pi:X'\to X$ (see \Cref{from quasiunipotent to unipotent}) such that $\pi^*V\in \cM_B^{\unip}(X')(\bC)$.  The $\bR_{>0}$-orbit of $\pi^*V$ has a limit at $t\to 0$ in $\overline{\pi^*\Sigma}$, by \Cref{R* stable has fixed}.  By \cite[Lemma 10.2]{mochizukitame} (see \Cref{C* families}), the $\bR_{>0}$-orbit of $V$ has an accumulation point in $\overline{\Sigma}$ as $t\to 0$.  Since any lift of an $\bR_{>0}$-fixed point of $M_B^{\unip}(X')$ to $M_B^{\unip}(X)$ is $\bR_{>0}$-fixed (since variations of Hodge structures push forward under finite \'etale maps and factors of local systems underlying variations underlie variations \cite[Proposition 1.13]{delignefiniteness}), it follows that $\overline{\Sigma}$ contains a $\bR_{>0}$-fixed point.
\end{proof}


\section{The Deligne--Hitchin space}\label{DeligneHitchin sect}

In this section we use the Deligne--Hitchin space equipped with its $\bG_m$ to get around the lack of a $\bR_{>0}$-action on the Betti stack.  See the works of Simpson \cite{Simpson-Hodge-filtration,Simpsonrank2twistor,simpsonHitchinDeligne} for related discussions and results.  

\subsection{From quasiunipotent to unipotent}

\begin{prop}\label{injectivity_N_roots}
Let $R \subset \bC$ be a finitely generated $\bZ$-algebra. Let $N$ be a positive integer. Then there exists a maximal ideal $I \subset R$ such that the reduction morphism $R \to R \slash I$ is injective in restriction to the $N$-roots of unity.
\end{prop}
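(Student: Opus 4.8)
The statement is elementary commutative algebra, and the plan is as follows. The $N$-th roots of unity in $\bC$ form a finite set $\mu_N = \{1, \zeta, \ldots, \zeta^{N-1}\}$ where $\zeta = e^{2\pi i/N}$, and $R$ contains $\bZ[\zeta] \cap R$; whether or not all of $\mu_N$ actually lies in $R$ is irrelevant, since we only need the reduction to be injective on $\mu_N \cap R$. The key point is to find a maximal ideal $I$ of $R$ such that the $N$ differences $\zeta^i - \zeta^j$ (for $0 \le i < j < N$) that happen to lie in $R$ all map to nonzero elements of the residue field $R/I$. Equivalently, writing $\delta = \prod_{0 \le i < j < N}(\zeta^i - \zeta^j)$, which is (up to sign and a power of $N$) the discriminant of $X^N - 1$ and in particular a nonzero element of $\bC$, it suffices to find a maximal ideal $I$ not containing $\delta$ — provided $\delta \in R$, which we can arrange by first replacing $R$ by the finitely generated $\bZ$-algebra $R[\mu_N]$ and noting that a maximal ideal of $R[\mu_N]$ restricts to a maximal ideal of $R$ (both residue fields are finite, by the Nullstellensatz over $\bZ$, and the extension $R/I \cap R \hookrightarrow R[\mu_N]/I$ is finite).

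First I would reduce to the case $\mu_N \subset R$: replace $R$ by $R' := R[\zeta] \subset \bC$, still a finitely generated $\bZ$-algebra; a maximal ideal $I'$ of $R'$ contracts to a maximal ideal $I := I' \cap R$ of $R$ because $R'$ is a finite $R$-algebra (integral and finitely generated) and residue fields of maximal ideals of finitely generated $\bZ$-algebras are finite fields (a form of the arithmetic Nullstellensatz), so $R/I \hookrightarrow R'/I'$ with $R'/I'$ finite forces $R/I$ to be a field; and if $R' \to R'/I'$ is injective on $\mu_N$ then so is its restriction $R \to R/I$. Next, with $\mu_N \subset R$, set $\delta := \prod_{0 \le i < j < N}(\zeta^i - \zeta^j) \in R$; since the $\zeta^i$ are distinct complex numbers, $\delta \ne 0$, so $\delta$ is a nonzero element of the domain $R$. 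Then I invoke the standard fact that in a finitely generated $\bZ$-algebra (a Jacobson ring), the intersection of all maximal ideals containing no power of a given nonzero element is the nilradical of the corresponding localization, and in particular there exists a maximal ideal $I$ of $R$ with $\delta \notin I$ — concretely, localize at $\delta$ to get the finitely generated $\bZ$-algebra $R[\delta^{-1}]$, which is nonzero hence has a maximal ideal, whose contraction to $R$ is maximal (again by Jacobson-ness / finiteness of residue fields) and avoids $\delta$. Finally, $\delta \notin I$ means every factor $\zeta^i - \zeta^j$ is a unit, hence nonzero, in $R/I$, so the $N$ images $\bar\zeta^0, \ldots, \bar\zeta^{N-1}$ are pairwise distinct, i.e. $R \to R/I$ is injective on $\mu_N$.

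The only mildly delicate point, and the one I would state carefully, is the passage between maximal ideals of $R$ and of the finite extension $R[\mu_N]$ (and of the localization $R[\delta^{-1}]$): this rests on the fact that finitely generated $\bZ$-algebras are Jacobson rings with finite residue fields at maximal ideals, so that "maximal" is preserved under contraction along finite-type ring maps with the going-up-type behavior supplied by integrality in the first case and by the Jacobson property in the second. Everything else — that $\delta \ne 0$, that a nonzero element of a finitely generated $\bZ$-algebra is not in every maximal ideal, that units are nonzero — is routine. I expect no real obstacle; the proof is a few lines once the Jacobson/Nullstellensatz input is pinned down.
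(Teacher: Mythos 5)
Your proof is correct, but it takes a genuinely different route from the paper's. The paper first passes to the integral closure $S$ of $\bZ$ in $R$ (which contains all roots of unity of $R$), reduces to the case $R=\cO_K$ for a number field $K$, and then argues arithmetically: if a nontrivial $N$-th root of unity $a$ satisfies $a\equiv 1\pmod I$, then the norm of $I$ divides the norm of $1-a$, so a maximal ideal of large norm does the job. You instead stay in the general setting of finitely generated $\bZ$-algebras: adjoin $\mu_N$, form the nonzero product $\delta$ of the pairwise differences of the $N$-th roots of unity, and produce a maximal ideal avoiding $\delta$ by localizing at $\delta$ and contracting, using the arithmetic Nullstellensatz (finite residue fields at maximal ideals of finitely generated $\bZ$-algebras, plus ``a finite domain is a field'') to see that the contraction is still maximal. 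Both arguments ultimately rest on the same elementary point — a fixed nonzero element misses some maximal ideal — but the paper realizes it concretely via norms in $\cO_K$ (and has to quietly discard the finitely many places inverted in $R$ when reducing to $\cO_K$), while your version avoids the number-theoretic bookkeeping at the cost of invoking the Jacobson/Nullstellensatz input for contraction of maximal ideals, which you identify and justify correctly; it also gives injectivity on all of $\mu_N$, not just on $\mu_N\cap R$. One cosmetic remark: your $\delta$ is (up to sign) a square root of the discriminant of $X^N-1$ rather than the discriminant itself, but all you use is $\delta\neq 0$, which is immediate since the $\zeta^i$ are distinct complex numbers.
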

\begin{proof}
The integral closure $S$ of $\bZ$ in $R$ is a finitely generated $\bZ$-algebra which contains the roots of unity of $R$.  If such an ideal $J \subset S$ exists for $S$ then we may take $I$ to be any maximal ideal containing $J$.  Thus we may assume $R \subset \bar \bQ$. The fraction field $K \subset \bar \bQ$ of $R$ is a number field, and $R$ differs from $\cO_K$ only at finitely many places. Therefore one can assume from the beginning that $R = \cO_K$. If $a \in R \setminus \{1 \}$ satisfies $a-1 \in I$, then the norm of $I$ divides the norm of $1-a$. The result follows by letting $a$ being any $N$-root of unity.    
\end{proof}

\begin{prop}\label{from quasiunipotent to unipotent}
Let $X$ be a connected complex algebraic variety. Let $\rho \colon \pi_1(X) \to \GL(n, \bC)$ be a representation with quasi-unipotent local monodromy. Then there exists a finite étale cover $X^\prime \to X$ such that the corresponding finite index subgroup $\pi_1( X^\prime)$ of $\pi_1(X)$ contains $\ker \rho$ and $\rho_{|\pi_1( X^\prime)}$ has unipotent local monodromy.
\end{prop}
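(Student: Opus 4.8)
The plan is to reduce the problem to a statement about the finitely many local monodromy operators and then invoke \Cref{injectivity_N_roots}. First, I would pass to a resolution and compactification: choose a projective resolution $\pi \colon X' \to X$ with $X' = \bar X' \setminus D'$ for a log smooth projective pair $(\bar X', D')$, so that $\pi_*\colon \pi_1(X', x') \to \pi_1(X, x)$ is surjective and a representation has quasiunipotent local monodromy on $X$ precisely when its pullback does on $X'$ (and an \'etale cover of $X'$ descends to one of $X$); thus I may assume $X = \bar X \setminus D$ is log smooth. Let $D_1, \dots, D_k$ be the irreducible components of $D$ and let $\gamma_1, \dots, \gamma_k \in \pi_1(X, x)$ be loops around them. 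The hypothesis says each $\rho(\gamma_i)$ is quasiunipotent; let $N$ be a common multiple of the orders of all the eigenvalues of all the $\rho(\gamma_i)$, so that $\rho(\gamma_i)^N$ is unipotent for every $i$.

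The key idea is that the entries of the matrices $\rho(\gamma)$ for $\gamma$ running over a finite generating set generate a finitely generated $\bZ$-subalgebra $R \subset \bC$, and the eigenvalues of the $\rho(\gamma_i)$ — being roots of unity of order dividing $N$ — lie in a finite extension which, after enlarging $R$ to contain them, also lies in (the integral closure inside) a finitely generated $\bZ$-algebra. Applying \Cref{injectivity_N_roots} to this algebra, I obtain a maximal ideal $I$ such that reduction $R \to R/I =: \bF$ (a finite field) is injective on $N$-th roots of unity. Let $\bar\rho \colon \pi_1(X, x) \to \GL(n, \bF)$ be the composite. Because $\bF$ is finite, $\bar\rho$ has finite image, so $\ker\bar\rho$ is a finite-index subgroup of $\pi_1(X, x)$ containing $\ker\rho$; let $X' \to X$ be the corresponding finite \'etale cover, with $\pi_1(X', x') = \ker\bar\rho$.

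It remains to check $\rho|_{\pi_1(X', x')}$ has unipotent local monodromy. A loop around a component of the boundary of $X'$ maps, under $\pi_1(X') \hookrightarrow \pi_1(X)$, to some power $\gamma_i^{m}$ (times a conjugate), where $m$ is the ramification index of $X' \to X$ along that component; and since $\gamma_i^{m} \in \ker\bar\rho$ we need $\bar\rho(\gamma_i^m) = 1$. Here is the point: the eigenvalues of $\rho(\gamma_i)$ are $N$-th roots of unity $\zeta_1, \dots, \zeta_n$, so the eigenvalues of $\bar\rho(\gamma_i)$ are $\bar\zeta_1, \dots, \bar\zeta_n$; by the injectivity of reduction on $N$-th roots of unity, distinct $\zeta_j$'s stay distinct mod $I$, and in particular $\bar\rho(\gamma_i)$ has the same multiplicative order (on its semisimple part) as $\rho(\gamma_i)$. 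Hence $\bar\rho(\gamma_i^{m}) = 1$ forces $\rho(\gamma_i)^m$ to be unipotent (its semisimple part is trivial). Therefore each boundary loop of $X'$ has unipotent $\rho$-image, which is the claim. The main obstacle — and the place requiring the most care — is the bookkeeping in the last step: the orders of the $\gamma_i$ need not divide the order of $\bar\rho$ in general, so one must arrange $N$ to be divisible by all eigenvalue orders up front and use the injectivity of reduction on $\mu_N$ precisely to deduce that $\bar\rho(\gamma_i^m) = 1$ implies $\rho(\gamma_i)^m$ unipotent, rather than merely that some uncontrolled power is unipotent; relatedly one should make sure $X' \to X$ can be taken Galois (replace $\ker\bar\rho$ by a normal finite-index subgroup, automatic here since it is already a kernel) so that ``finite index subgroup containing $\ker\rho$'' is literally achieved.
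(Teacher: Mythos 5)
Your proof is correct and follows the paper's argument essentially verbatim: both pass to a finitely generated $\bZ$-algebra $R$ (enlarged to see the eigenvalues), choose $N$, invoke \Cref{injectivity_N_roots} to get a maximal ideal $I$ with reduction injective on $N$-th roots of unity, and take the cover corresponding to $\ker\big(\pi_1(X)\to\GL(n,R/I)\big)$; your explicit check that $\bar\rho(\gamma^m)=1$ forces each eigenvalue $\zeta_j^m$ to equal $1$, hence $\rho(\gamma)^m$ unipotent, is exactly the verification the paper leaves implicit. The only blemish is the preliminary reduction to the log smooth case: the justification you give ($\pi_1$-surjectivity of a resolution and descent of arbitrary \'etale covers) is false for non-normal $X$, but the reduction is also unnecessary, since the cover you construct comes from a finite quotient of $\pi_1(X)$ itself and unipotence of the restricted local monodromy is in any case checked on a resolution, as in the paper.
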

\begin{proof}
Since $\pi_1(X)$ is finitely generated, $\rho$ takes values in $\GL(n, R)$ for $R \subset \bC$ a finitely generated $\bZ$-algebra. Let $N$ be a positive integer such that the eigenvalues of the monodromies at infinity of $\rho$ are all $N$-roots of unity. Thanks to the Proposition \ref{injectivity_N_roots}, there exists a maximal ideal $I \subset R$ such that the reduction morphism $R \to R \slash I$ is injective in restriction to the $N$-roots of unity. Then one can take for $X^\prime \to X$ the finite étale cover corresponding to the kernel of the composed morphism $\pi_1(X) \to \GL(n, R) \to \GL(n, R / I) $.  
\end{proof}

\begin{prop}\label{reduction_to_unipotent_monodromy}
Let $X$ be a connected complex algebraic variety and $\Sigma \subset \cM_B(X)(\bC)$ a constructible subset containing only local systems with quasiunipotent local monodromy. Then there exists a finite étale cover $X^\prime \to X$ corresponding to a finite quotient of the image of the monodromy representation of $\bigoplus_{V \in \Sigma} V$
such that the image of $\Sigma$ in $\cM_B(X^\prime)(\bC)$ consists only of elements with unipotent (rather than quasiunipotent) local monodromy.
\end{prop}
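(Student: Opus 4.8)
The plan is to reduce Proposition \ref{reduction_to_unipotent_monodromy} to Proposition \ref{from quasiunipotent to unipotent} applied to a single local system. First I would observe that since $\Sigma$ is a constructible subset of $\cM_B(X)(\bC)$, it lives in finitely many $\cM_B(X,r)(\bC)$, and more importantly the ``universal monodromy'' it generates is finitely controlled: writing $\cM_B(X) = \coprod_r [\bGL_r\backslash R_B(X,x,r)]$, the set $\Sigma$ pulls back to a constructible $\bGL$-invariant subset of $\coprod_r R_B(X,x,r)$, and a constructible set in an affine scheme of finite type over $\bC$ is cut out using only finitely many functions. Concretely, after choosing a basepoint $x$, each $V \in \Sigma$ has a monodromy representation $\rho_V \colon \pi_1(X^\an,x) \to \GL_{r(V)}(\bC)$, and the image of $\rho_{\bigoplus_{V\in\Sigma}V}$ is a subgroup of $\prod_V \GL_{r(V)}(\bC)$. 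The key finiteness input is that there is a single finitely generated $\bZ$-subalgebra $R \subset \bC$ such that, for \emph{every} $V\in\Sigma$, $\rho_V$ takes values in $\GL_{r(V)}(R)$, and a single positive integer $N$ such that all eigenvalues of all local monodromies of all $V\in\Sigma$ are $N$-th roots of unity. This follows because $\Sigma$ is constructible: the locus $\Sigma^{\qu|n}$ is closed for each $n$ (as noted in the Notation section, $\cZ(\bC)^{\qu|n}$ underlies a closed substack), so $\Sigma = \bigcup_n \Sigma^{\qu|n}$ being constructible and the $\Sigma^{\qu|n}$ increasing forces $\Sigma \subset \Sigma^{\qu|N}$ for some finite $N$; and a constructible subset of the (finitely many relevant) affine representation schemes is defined over a finitely generated $\bZ$-algebra, so its $\bC$-points all have matrix entries in a common finitely generated $R$ after enlarging $R$ to contain generators of the relevant coordinate rings' images — more simply, one may take $R$ generated by the entries of finitely many representations whose $\bGL$-orbit closures cover $\Sigma$, together with the $N$-th roots of unity.

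Granting this, I would apply Proposition \ref{injectivity_N_roots} to obtain a maximal ideal $I \subset R$ such that $R \to R/I$ is injective on $N$-th roots of unity. Let $\bar X' \to X$ be the finite étale cover corresponding to the kernel $\Gamma$ of the composite $\pi_1(X^\an,x) \to \prod_{V\in\Sigma}\GL_{r(V)}(R) \to \prod_{V\in\Sigma}\GL_{r(V)}(R/I)$; since $R/I$ is a finite field, this kernel has finite index, so $X'\to X$ is indeed a finite étale cover, and $\Gamma$ is the stated finite quotient of the image of the monodromy of $\bigoplus_{V\in\Sigma}V$. I then need to check that for each $V\in\Sigma$, the pullback $V|_{X'}$ has unipotent local monodromy. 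Fix $V$ and a simple loop $\gamma_i$ around a boundary component of a log smooth compactification of a resolution. The eigenvalues of $\rho_V(\gamma_i)$ are $N$-th roots of unity $\zeta$, so $\rho_V(\gamma_i)^m$ is unipotent where $m$ is the order of $\rho_V(\gamma_i)$ modulo unipotents; I must show $\gamma_i^m \in \Gamma$ lifts to a boundary loop of $X'$ whose image under $\rho_V$ is unipotent, i.e. that already $\rho_V(\gamma_i)$ becomes unipotent after passing to $\Gamma$. The point is that the \emph{semisimple part} of $\rho_V(\gamma_i)$ has eigenvalues that are $N$-th roots of unity, and its image in $\GL(R/I)$ has semisimple part with eigenvalues the reductions of these roots of unity; by injectivity of $R \to R/I$ on $N$-th roots, the semisimple part of $\rho_V(\gamma_i) \bmod I$ is trivial iff the semisimple part of $\rho_V(\gamma_i)$ is trivial, i.e. iff $\rho_V(\gamma_i)$ is already unipotent. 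So I should instead argue as in Proposition \ref{from quasiunipotent to unipotent}: the image of $\gamma_i$ in $\GL(R/I)$ lies in the finite group, and its order is coprime-to-$p$ part times $p$-part; the relevant statement is that $\gamma_i^{\mathrm{ord}} \in \Gamma$ where $\mathrm{ord}$ is the order of the reduction, and then $\rho_V(\gamma_i^{\mathrm{ord}})$ is a power of $\rho_V(\gamma_i)$ that reduces to the identity, hence has all eigenvalues $\equiv 1 \bmod I$ among $N$-th roots of unity, hence (by the injectivity) all eigenvalues equal to $1$, hence is unipotent. Since $\pi_1((X')^\an)$ contains $\gamma_i^{\mathrm{ord}}$ as a boundary loop (up to conjugacy), $V|_{X'}$ has unipotent local monodromy around that component, and running over all boundary components and all $V$ finishes the argument.

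The main obstacle I anticipate is the finiteness reduction in the first paragraph: making precise that a single $(R,N)$ works for \emph{all} of $\Sigma$ simultaneously. For $N$ this is exactly the observation that $\Sigma = \bigcup_n \Sigma^{\qu|n}$ with $\Sigma^{\qu|n}$ closed and increasing, and a constructible set contained in an increasing union of closed sets is contained in one of them (using noetherianity of the relevant finite-type stack, or equivalently of the affine representation schemes). For $R$ one uses that $\Sigma$, being constructible, is a finite union of locally closed subsets of the finitely many $\cM_B(X,r)$, each of which is the image of a finite-type scheme over $\bC$ and hence over a finitely generated $\bZ$-algebra, so after taking the product of these subalgebras and adjoining the $N$-th roots of unity one gets the desired single $R$; every $V\in\Sigma$ is then conjugate to a representation valued in $\GL_{r(V)}(R)$, and conjugating does not affect whether local monodromy is unipotent. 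A mild subtlety is that conjugating $V$ into $\GL(R)$ requires the conjugating matrix to have entries in $\bC$, not necessarily $R$, but unipotence is conjugation-invariant so this is harmless; alternatively one works directly with the closed points of the $R$-scheme cutting out (the preimage of) $\Sigma$. I do not expect the building-theoretic or harmonic-analytic machinery of the earlier sections to be needed here; this proposition is essentially group theory plus the constructibility bookkeeping, and the one genuinely external ingredient, Proposition \ref{injectivity_N_roots}, is already proved above.
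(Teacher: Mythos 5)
There is a genuine gap, and it is the claim on which your whole construction rests: that there is a \emph{single} finitely generated $\bZ$-algebra $R\subset\bC$ such that every $V\in\Sigma$ has monodromy valued in $\GL_{r(V)}(R)$. This is false whenever $\Sigma$ is positive-dimensional. For instance, let $X$ be a genus-one curve with two punctures and let $\Sigma\subset\cM_B(X,1)(\bC)$ be the set of rank-one local systems whose local monodromy at the punctures is a fixed root of unity $\zeta$: this is a constructible set, isomorphic to $(\bC^*)^2$, consisting entirely of quasiunipotent points, and the monodromy entries of its members sweep out an uncountable subset of $\bC^*$, which cannot lie in any countable ring $R$. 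Your justification confuses ``$\Sigma$ is cut out by equations defined over a finitely generated ring'' with ``the $\bC$-points of $\Sigma$ have coordinates in that ring,'' and the fallback (``$R$ generated by the entries of finitely many representations whose $\bGL$-orbit closures cover $\Sigma$'') fails for the same reason: a positive-dimensional constructible set of pairwise non-isomorphic local systems is not covered by finitely many orbit closures. Without a common $R$, the cover $\ker\bigl(\pi_1(X^\an,x)\to\prod_{V\in\Sigma}\GL_{r(V)}(R/I)\bigr)$ cannot be defined, so the reduction-mod-$I$ mechanism of \Cref{from quasiunipotent to unipotent} does not globalize the way you propose. (A smaller slip: your uniform bound $N$ on the orders of the local eigenvalues is a correct statement, but the justification is not noetherianity --- increasing chains of closed subsets need not stabilize in a noetherian space --- it is a Baire/uncountability-of-$\bC$ argument. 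And uniform $N$ alone does not finish the proof: you still must produce one finite quotient of the image of $\rho_{\bigoplus_{V\in\Sigma}V}$ in which each boundary loop has order divisible by the order of the semisimple part of its monodromy for \emph{every} $V$ simultaneously, which is exactly what your $R/I$ reduction was supposed to deliver.)

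The paper's proof avoids any uniform integrality statement. It reduces to $\Sigma$ irreducible and locally closed, and for each of the countably many finite \'etale covers $X'\to X$ considers the subset $\Sigma'\subset\Sigma$ of local systems becoming unipotent on $X'$; this is a \emph{closed algebraic} subset of $\Sigma$ (it is the preimage of the unipotent locus under pull-back), and by \Cref{from quasiunipotent to unipotent} applied to each individual $V$ these subsets cover $\Sigma$. The Baire category theorem (in the euclidean topology) then forces some $\Sigma'$ to have nonempty interior in the irreducible $\Sigma$, hence $\Sigma'=\Sigma$. If you want to repair your approach without Baire, the viable route is not a common $R$ but the observation that the characteristic polynomials of local monodromies occurring in $\Sigma$ form a \emph{finite} set (the image of $\Sigma$ under $\mathrm{Char}_B$ is constructible and consists of images of torsion points, hence is countable, hence finite); one can then choose finitely many $V_1,\dots,V_m\in\Sigma$ realizing, for each boundary component, the lcm of the orders of the semisimple parts, and apply \Cref{from quasiunipotent to unipotent} to $\bigoplus_j V_j$. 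As written, however, your argument does not establish the proposition.
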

\begin{proof}
It is sufficient to consider the case where $\Sigma$ is an irreducible locally closed subset of $\cM_B(X)(\bC)$. For every finite étale cover $X^\prime \to X$, consider the closed algebraic subset $\Sigma^\prime$ of $\Sigma$ consisting of the elements whose pull-back to $X^\prime$ have unipotent (rather than quasiunipotent) local monodromy. It follows from the assumption and \Cref{from quasiunipotent to unipotent} that the union of all those subsets is equal to $\Sigma$. It follows from Baire's category theorem that there exists a finite étale cover $X^\prime \to X$ such that $\Sigma^\prime$ has non-empty interior in $\Sigma$ (for the euclidean topology). Since $\Sigma$ is irreducible and $\Sigma^\prime$ is a closed algebraic subset of $\Sigma$, it follows that $\Sigma^\prime = \Sigma$. 
\end{proof}

\subsection{Construction of the Deligne--Hitchin space}
The construction here is largely based on Simpson's description \cite{Simpson-Hodge-filtration}; see \cite{Simpsonrank2twistor,simpsonHitchinDeligne} for some related discussion.

Let $(\bar X,D)$ be a connected log smooth algebraic space with $X=\bar X\setminus D$ and $\cM_{\Hod}(\bar X,D)$ the stack of logarithmic $\lambda$-connections.  Precisely, an $S$-point of $\cM_\Hod(\bar X,D)$ consists of a triple $(\lambda,\bar E,\lnabla)$ where $\lambda\in\cO_S(S)$ and $\bar E$ is a locally free $\cO_{\bar X\times S}$-module on $\bar X_S:=\bar X\times S$ equipped with a flat logarithmic $\lambda$-connection $\lnabla$---that is, an operator $\lnabla:E\to E\otimes\Omega_{\bar X_S/S}(\log D_S)$ satisfying  $\lnabla(fs)=\lambda s\otimes d_{\bar X}f+f\lnabla s$ and $\lnabla^2=0$.  As for the De Rham stack, $\cM_\Hod(\bar X,D)$ is a countably finite type algebraic stack (see \ref{subsect:DR stack} for the definition of this notion, and \cite[\S2]{Simpsonrank2twistor} for general references on representability).  There is a natural morphism $\lambda:\cM_\Hod(\bar X,D)\to\bA^1$, as well as a $\bG_m$-action on $\cM_{\Hod}(\bar X,D)$ by scaling the connection which covers the scaling action on $\bA^1$.  

It will often be simpler to consider the framed space $R_\Hod(\bar X,D,x)$ for $x\in X$, whose $S$-points are $S$-points $(\lambda,\bar E,\lnabla)$ of $\cM_\Hod(\bar X,D)$ together with an isomorphism $\phi:\bar E|_{x\times S}\xrightarrow{\cong} \cO_{S}^{\rk \bar E}$.  Then $R_\Hod(\bar X,D,x)$ is a countably finite type algebraic space which comes with a morphism $\lambda:R_\Hod(\bar X,D,x)\to\bA^1$ and a $\bG_m$-action.  Clearly, the rank $r$ substack $\cM_\Hod(\bar X,D,r)$ is identified with the quotient $[\bGL_r\backslash R_\Hod(\bar X,D,x,r)]$ of the framed rank $r$ space.

The symmetry offered by the $C^\infty$ perspective motivates the construction of the Deligne--Hitchin space, so we describe it informally.  An $S$-point of $\cM_\Hod(\bar X,D)^\an$ in particular gives a $\mathscr{A}_{\bar X_S}=C^\infty_{\bar X}\boxtimes \cO_{S}$-module $\bar\cE$ on $\bar X_S^\an$ together with a flat $\lambda\partial_{X^\an}+\bar\partial_{X^\an}$ connection $\mathscr{D}=\nabla+\bar\partial_{\bar E}$ on $\cE:= \bar\cE|_{X^\an}$, where $\bar \partial_{\bar E}$ is the holomorphic structure on $\bar E$.  Note that $\nabla=\mathscr{D}^{1,0}$ and $\bar\partial_{\bar E}=\mathscr{D}^{0,1}$.  These are the same types of operators that define a variation of mixed twistor structures, restricted to $\bP^1\setminus \infty$.  As realized by Simpson and Sabbah \cite{Simpson_noncompact,Sabbah_twistor_D_modules}, the eigenvalues of the residues of the $\lambda$-connection associated to a variation of mixed twistor structures behave in a regular way, although it depends on the parabolic structures as well.  For $\lambda\in\bC$, define the bijection (see \cite{Sabbah_twistor_D_modules}) $\frak{k}_\lambda:\bR\times\bC\to\bR\times\bC$ by
\begin{equation}\label{formula}\frak{k}_\lambda(a,\alpha)=(\frak{p}_\lambda(a,\alpha),\frak{e}_\lambda(a,\alpha))\hspace{.5in}\begin{cases}
   \frak{p}_\lambda(a,\alpha)&:=a+2\Re(\lambda\bar \alpha ) \\
   \frak{e}_\lambda(a,\alpha)&:=\alpha-a\lambda-\bar \alpha \lambda^2.
\end{cases}\end{equation}
For any $\lambda \in \bC$ the set of pairs $(a,\alpha)\in\bR\times\bC$ where $\alpha$ is an eigenvalue of the residue of the logarithmic extension occurring in the $a$th graded piece of the parabolic structure in the $\lambda$ specialization is called the KMS-spectrum at $\lambda$.  The KMS-spectrum at $\lambda$ is then the image under $\frak{k}_\lambda$ of the KMS-spectrum of the $\lambda=0$ specialization.

For $n\in\bN$, denote by $R_\Hod^{\qu|n}(\bar X,D,x)\subset R_\Hod(\bar X,D,x)$ the closed subspace of $\lambda$-connections with residues contained in $\lambda\cdot (\frac{1}{n}\bZ\cap (-1,0])\subset \bC$.  Let $R_\Hod^{\qun,\loc}(\bar X,D,x)$ be the germ of an open neighborhood of $R_\Hod^{\qun}(\bar X,D,x)^\an$ in $R_\Hod(\bar X,D,x)^\an$.  It is useful for example to keep in mind the open neighborhoods for which the $\lambda$-connection has residual eigenvalues in $\lambda\cdot B_\epsilon(\frac{1}{n}\bZ\cap (-1,0])$, where for $\Xi\subset \bC$ we let $B_\epsilon(\Xi)$ be the union of radius $\epsilon$ balls centered at points of $\Xi$ and $\epsilon<\frac{1}{2n}$ is sufficiently small. 

 Let $(\bar X^c,\bar D^c)$ be the complex conjugate variety.  There is a natural holomorphic isomorphism\footnote{The construction is usually described via an isomorphism $R_\Hod^{\qun,\loc}(\bar X,D,x)|_{\bG_m}\cong R_\Hod^{\qu|n,\loc}(\bar X,D,x)^c|_{\bG_m^c}$ given as $(\lambda,\cE,\scrD,\phi)\mapsto (-\bar{\lambda}^{-1},(\cE^\vee)^c,-\bar{\lambda}^{-1}(\scrD^{0,1\vee})^c+\bar\lambda^{-1} (\scrD^{1,0\vee})^c)$, but as $(\lambda,\cE,\scrD)\mapsto(-\bar\lambda,(\cE^\vee)^c,-(\scrD^{0,1\vee})^c+(\scrD^{1,0\vee})^c)$ gives an identification $R^{\qun,\loc}(\bar X^c,D^c,x)\xrightarrow{\cong}R^{\qun,\loc}(\bar X,D,x)^c$ these give the same space.  Note however with this description we would then take the residual eigenvalues to lie in a neighborhood of $\lambda\cdot \frac{1}{n}\bZ\cap[0,1)$ in the target.} $R_\Hod^{\qun,\loc}(\bar X,D,x)|_{\bG_m}\cong R_\Hod^{\qu|n,\loc}(\bar X^c,D^c,x)|_{\bG_m}$ covering the involution $\lambda\mapsto \lambda^{-1}$ and locally equivariant with respect to the $\bG_m$-action after twisting by $t\mapsto t^{-1}$ on the target which is described as follows.  For an $S$-point of $R_\Hod^{\qu|n,\loc}(\bar X,D,x)|_{\bG_m}$ given by $(\lambda,\bar E,\nabla,\phi)=(\lambda,\bar\cE,\scrD,\phi)$, the rescaled framed connection $(\bar \cE,\lambda^{-1}\mathscr{D}^{1,0}+\scrD^{0,1},\phi)$ is a family of flat connections whose residues have eigenvalues contained in $(-1+\epsilon,\epsilon]+ i\bR$.  It follows that the extension is the Deligne extension of the underlying family of framed local systems $S\to R_B(X,x)^\an$, which works in families since the eigenvalues of the local monodromy admit a continuous logarithm.  We can form the antiholomorphic Deligne extension $(\bar\cE',\scrD^{0,1}+\lambda^{-1}\scrD^{1,0},\phi)$ with eigenvalues in $(-1+\epsilon,\epsilon]+ i\bR$, and rescaling $(\lambda^{-1},\bar\cE',\lambda^{-1}\scrD,\phi)$ provides the required $S$-point  of $R_\Hod^{\qu|n,\loc}(\bar X^c, D^c,x)|_{\bG_m}$.  Gluing $R_\Hod^{\qu|n,\loc}(\bar X,D,x)$ to $R_\Hod^{\qu|n,\loc}(\bar X^c,D^c,x)$ via this identification we obtain a countably finite type complex analytic space $R_{DH}^{\qu|n,\loc}(\bar X,D,x)$ with an analytic map $\pi:R_{DH}^{\qu|n,\loc}(\bar X,D,x)\to\bP^1$, restricting to the corresponding structure on the two $R_\Hod$ spaces.  We similarly construct $\cM_{DH}^{\qu|n,\loc}(\bar X,D)$ by gluing, and identify $\cM_{DH}^{\qu|n,\loc}(\bar X,D)$ with the quotient of $R_{DH}^{\qu|n,\loc}(\bar X,D,x)$ by $\bGL_r(\bC)$.  The derivation $\Theta$ associated to the $\bG_m$-action glues to give a natural global derivation $\Theta$ compatibly on $\cM_{DH}^{\qu|n,\loc}(\bar X,D)$ and $R_{DH}^{\qu|n,\loc}(\bar X,D,x)$.  In fact, the $\bG_m$-action on $R_{DH}(\bar X,D,x)$ stabilizes $R^{\qu|n}_{DH}(\bar X,D,x)$, so there is a well-defined $\bG_m$-action on $R_{DH}^{\qu|n,\loc}(\bar X,D,x)$ as a germ of an analytic space containing $R_{DH}^{\qu|n}(\bar X,D,x)$ as a closed subspace.

 The moduli functor of $R_{DH}^{\qu|n,\loc}(\bar X,D,x,r)$ is described as follows.  For an analytic space $S$, an $S$-point of $R_{DH}^{\qu|n,\loc}(\bar X,D,x,r)$ yields a tuple $(\pi, \cE,\bar E,\bar E',\mathscr{D},\phi)$ where:
 \begin{enumerate}
 \item  $\pi:S\to \bP^1$ is a morphism;
 \item $ \cE$ is a $C^\infty_X\boxtimes \cO_{S}$-module on $ X_{S}$;
 \item $\mathscr{D}:\cE\to\cE\otimes \Omega_{ X_S/S}(1)$ is a flat $x\partial_{ X_S/S}+y\bar\partial_{ X_S/S}$ connection, where $x,y$ are fixed sections of $\cO_{\bP^1}(1)$ vanishing at $0,\infty$.  We require that:
 \begin{enumerate}
     \item Using the trivialization $y$ of $\pi^*\cO_{\bP^1}(1)$ on $S_0:=\pi^{-1}(\bP^1\setminus 0)$, $( \cE,\mathscr{D}^{0,1})$ is a rank $r$ locally free holomorphic vector bundle on $ X_{S_0}$ and $\bar E$ is a locally free $\cO_{\bar X_S}$-module extension (meaning it comes equipped with an isomorphism $j_{S_0}^*\bar E\to(\cE,\mathscr{D}^{0,1})$), where $j_{S_0}:X_{S_0}\to\bar X_{S_0}$ is the inclusion) to which $\mathscr{D}^{1,0}$ extends as a flat logarithmic connection.  
     \item Using the trivialization $x$ of $\pi^*\cO_{\bP^1}(1)$ on $S_\infty:=\pi^{-1}(\bP^1\setminus \infty)$, $( \cE,\mathscr{D}^{1,0})$ is a rank $r$ locally free holomorphic vector bundle on $ X^c_{S_0}$ and $\bar E'$ is a locally free $\cO_{\bar X^c_S}$-module extension to which $\mathscr{D}^{0,1}$ extends as a flat logarithmic connection.

 \end{enumerate}
      \item $\phi:\cE|_{x\times S}\xrightarrow{\cong}\cO_S^r$ is a framing.
 \end{enumerate}
Moreover, any such $(\pi, \cE,\bar E,\bar E',\mathscr{D},\phi)$ on $S$ arises from a morphism $S\to R_{DH}^{\qu|n,\loc}(\bar X,D,x,r)$ after shrinking to a sufficiently small neighborhood of the closed analytic space $S^{\qu|n}\subset S$ where the residual eigenvalues of $(\bar E,\mathscr{D}^{1,0})$ (resp. $(\bar E',\mathscr{D}^{0,1})$) are contained in $\lambda\cdot (\frac{1}{n}\bZ\cap (-1,0])$ (resp. $-\lambda^{-1}\cdot (\frac{1}{n}\bZ\cap (-1,0])$).

\begin{rem}
    Note that $S$-points of $\cM_{DH}^{\qu|n,\loc}(\bar X,D)$ are tuples $(\pi,\cE,\bar E,\bar E',\mathscr{D})$ that are trivializable over $x$, which is a nontrivial condition.  In fact, we could have twisted the above gluing to produce a version of $R_{DH}$ and $\cM_{DH}$ where $\cE|_{x\times S}\cong \pi^*F$ where $F$ is a fixed locally free sheaf on $\bP^1$. 
\end{rem}

\begin{rem}
    The global construction of the Deligne--Hitchin space is complicated by both the monodromy of the residual eigenvalues if they are allowed to roam freely, and ``resonant'' phenomena when they differ by integers (as in \Cref{local iso on good}).  This is more seriously contended with in recent work of Simpson \cite{Simpsonrank2twistor,simpsonHitchinDeligne}, although there the ``generic'' case is assumed.  By the results of \Cref{sect:bialg}, bialgebraic substacks of $\cM_B(X)$ are determined by their germ around the quasiunipotent local monodromy locus, and neither issue arises in the germ of the Deligne--Hitchin space around this locus.
\end{rem}

The following is straightforward; we leave the proof to the reader.
\begin{lem}
Let $f:(\bar X,D)\to(\bar Y,E)$ be a morphism of log smooth pairs compatible with a choice of basepoints.  Then there are pullback morphisms $f^*:\cM_{DH}^{\qu|n,\loc}(\bar Y,E)\to\cM_{DH}^{\qu|n,\loc}(\bar X,D)$ and $f^*: R_{DH}^{\qu|n,\loc}(\bar Y,E,y)\to R_{DH}^{\qu|n,\loc}(\bar X,D,x)$ of analytic stacks (up to shrinking).
\end{lem}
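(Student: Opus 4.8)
The plan is to construct the pullback morphism directly from the moduli-functor description of the Deligne--Hitchin space given just above, using functoriality of pullback of $\lambda$-connections and of $C^\infty$ bundles with $\mathscr{D}$-operators. Recall the excerpt has just specified the $S$-points of $R_{DH}^{\qu|n,\loc}(\bar X,D,x,r)$ as tuples $(\pi,\cE,\bar E,\bar E',\mathscr{D},\phi)$ satisfying conditions (1)--(4), where conditions (2)(a) and (2)(b) record the two logarithmic extensions near $0$ and $\infty$. So first I would take an $S$-point of $R_{DH}^{\qu|n,\loc}(\bar Y,E,y,r)$, i.e. a tuple $(\pi, \cE_Y, \bar E_Y, \bar E_Y', \mathscr{D}_Y, \phi_Y)$ over $S$ (shrunk to a neighborhood of $S^{\qu|n}$), and form the pullback along the morphism $f_S := f\times \id_S \colon \bar X_S \to \bar Y_S$: set $\cE_X := (f_S^\circ)^*\cE_Y$ on $X_S$ (where $f_S^\circ$ is the restriction of $f_S$ to the open parts, $f$ being a morphism of pairs so $f^{-1}(E)\subset D$), $\bar E_X := f_S^*\bar E_Y$, $\bar E_X' := (f_S^c)^*\bar E_Y'$, $\mathscr{D}_X := f_S^*\mathscr{D}_Y$ (pulling back the $x\partial + y\bar\partial$ connection via the differential $df \colon f^*\Omega_{\bar Y}(\log E)\to \Omega_{\bar X}(\log D)$), and $\phi_X := \phi_Y$ composed with the canonical identification $\cE_X|_{x\times S} = \cE_Y|_{y\times S}$ coming from $f(x)=y$.

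The main verifications are then routine but need to be spelled out: (i) pullback of a locally free $\cO$-module is locally free, and pullback of a flat logarithmic $\lambda$-connection is again a flat logarithmic $\lambda$-connection, so conditions (2)(a),(2)(b),(3) are preserved; (ii) the residues behave correctly under pullback of log connections --- along a component $D_j$ of $D$ mapping to $E_i$ the residue of $f^*\mathscr{D}_Y^{1,0}$ along $D_j$ is the pullback (restriction to $D_j$) of the residue of $\mathscr{D}_Y^{1,0}$ along $E_i$, up to the ramification multiplicity $m_{ji}$ of $f$ along that component, so the eigenvalues get scaled by $m_{ji}\in \bZ_{>0}$; hence if the residual eigenvalues of the $Y$-side lie in $\lambda\cdot(\tfrac1n\bZ\cap(-1,0])$ then those of the $X$-side lie in $\lambda\cdot\tfrac1n\bZ$ --- \emph{but not necessarily in the half-open interval $(-1,0]$}. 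This is exactly where the ``up to shrinking'' clause and the germ formalism enter: after pullback the point still lies in the closed subspace where residual eigenvalues are in $\lambda\cdot\tfrac1n\bZ$, i.e.\ in (a possibly larger) $R_\Hod^{\qu|n'}$ for $n'$ a multiple of $n$ absorbing the ramification; one then reindexes/replaces $n$ by $n'$ (or more precisely observes that on the germ $R_{DH}^{\qu|n,\loc}$ one only needs the eigenvalues near $\lambda\cdot\tfrac1n\bZ$, which is preserved). So the pullback lands in $R_{DH}^{\qu|N,\loc}(\bar X,D,x)$ for an appropriate $N$ after restricting $S$ to a small enough neighborhood of the relevant quasiunipotent locus.

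Having produced the map on $S$-points compatibly in $S$, I would check it is a morphism of analytic spaces by using the (countably-finite-type algebraic) representability of $R_\Hod$ on each of the two charts $R_\Hod^{\qu|n,\loc}(\bar X,D,x)$ and $R_\Hod^{\qu|n,\loc}(\bar X^c,D^c,x)$ (pullback being an algebraic operation there, as in the case of the De Rham stack), and verifying these two chart-level pullback maps agree on the overlap $\{\lambda\in\bG_m\}$ --- this is immediate because the gluing isomorphism is built from the Deligne extension construction, which commutes with pullback of framed local systems (the logarithm of the local monodromy pulls back, compatibly in families). Quotienting by $\bGL_r$ gives the stack-level morphism $f^*\colon \cM_{DH}^{\qu|n,\loc}(\bar Y,E)\to\cM_{DH}^{\qu|n,\loc}(\bar X,D)$, and one notes $\bG_m$-equivariance since scaling the connection commutes with pullback. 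The only genuine obstacle is the bookkeeping of (ii): one must be careful that ramification of $f$ along boundary divisors shifts residual eigenvalues by positive integer multiples, so the target of the pullback is a priori a Deligne--Hitchin germ attached to a different exponent $n$; as long as the statement is understood with the implicit ``up to shrinking and adjusting $n$'' (which is how these germs are used throughout the section), there is nothing deeper to do, and this is why the authors leave it to the reader.
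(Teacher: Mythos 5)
There is a genuine gap, and it sits exactly at the point you flagged but then waved away. Your construction takes the \emph{naive} sheaf-theoretic pullback $\bar E_X=f_S^*\bar E_Y$, and you correctly observe that if $f$ has ramification index $m$ along a boundary component then the residual eigenvalues get multiplied by $m$, so a point on the core locus, with eigenvalues in $\lambda\cdot(\tfrac1n\bZ\cap(-1,0])$, is sent to one with eigenvalues in $\lambda\cdot m\cdot(\tfrac1n\bZ\cap(-1,0])\subset\lambda\cdot(-m,0]$. Your proposed repair --- replace $n$ by a multiple $n'$ ``absorbing the ramification'' --- does not work: for every $n'$ the space $R_\Hod^{\qu|n'}$ is still defined by the condition that the eigenvalues lie in $\lambda\cdot(\tfrac1{n'}\bZ\cap(-1,0])$, so the normalization interval $(-1,0]$ is unchanged and an eigenvalue such as $m\lambda a$ with $ma\le -1$ (e.g. $a=-\tfrac12$, $m=2$) lies in no $R_\Hod^{\qu|n'}$ whatsoever; moreover the lemma asserts a morphism of germs with the \emph{same} $n$ on both sides, and this is what the later applications need. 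Since the naive pullback does not even send the closed $\qu|n$ locus on the $\bar Y$-side into the closed $\qu|n$ locus on the $\bar X$-side, no amount of shrinking produces the asserted morphism of germs from your construction: shrinking only helps when the core locus itself is preserved.

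The missing idea is that the pullback must be defined with the \emph{renormalized} (canonical) logarithmic extensions of the pulled-back family, not $f_S^*\bar E_Y$ itself: over $\bG_m$ one rescales to a family of flat connections, pulls back the corresponding framed local systems (where the $\qu|n$ condition is manifestly preserved, since the local monodromy of $f^*V$ is a power of that of $V$), and takes the relative Deligne-type extension with eigenvalues back in $\lambda\cdot(\tfrac1n\bZ\cap(-1,0])$ --- this is the same mechanism used to glue the two Hodge charts --- and one must then argue that this renormalized family extends analytically across $\lambda=0,\infty$ and agrees there with the (harmless, because nilpotency is preserved) pullback of the Higgs/conjugate data. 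This is also what the subsequent use of the lemma requires: in the proof of \Cref{extend germ} the pullback along a ramified cover $p$ is a map $R_{DH}^{\qu|n,\loc}(\bar C,E,c)\to R_{DH}^{\unip,\loc}(\bar C',E',c')$, i.e. order-$n$ eigenvalues become honestly unipotent with residue $0$, whereas your naive pullback would produce residues in $\lambda\bZ_{<0}$. For the same reason your compatibility check on the chart overlap is circular: the Deligne extension does \emph{not} commute with naive pullback when $f$ is ramified along the boundary, which is precisely the phenomenon at issue. (If all ramification indices along the boundary are $1$, your argument is fine; but the lemma is needed, and used, in the ramified case.)
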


\subsection{Preferred sections and twistor germs}
Variations of twistor structures yield sections of the Deligne--Hitchin space:
\begin{lem}\label{tw deligne ext}
    To every $\bC$-AVMTS $(\cE,W_\bullet\cE,\mathscr{D})$ with quasiunipotent local monodromy on $X$, there is a functorially associated filtered logarithmic locally free extension $(\cE,W_\bullet\cE,\bar E,\bar E',\mathscr{D})$ on $\bar X_{\bP^1}$ in the above sense whose residues have eigenvalues in $\lambda\cdot (\frac{1}{n}\bZ\cap(-1,0])$. 
\end{lem}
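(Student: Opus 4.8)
The plan is to reduce the statement to the known regularity theory for mixed twistor $\mathscr{D}$-modules near the boundary, following Sabbah and Mochizuki, and to check that the quasiunipotence hypothesis puts the residual eigenvalues in the prescribed fundamental domain. First I would unwind what a $\bC$-AVMTS means at the boundary: by \Cref{defn:avmts} it is a mixed twistor $\mathscr{D}$-module on $\bar X$ whose restriction to $X$ is the tame purely imaginary graded-polarized variation $(\cE, W_\bullet\cE, \mathscr{D})$. The underlying $\mathscr{R}_{\bar X}$-module (or more precisely the integrable mixed twistor $\mathscr{D}$-module structure) already provides, for $\lambda\neq 0,\infty$, a logarithmic extension of the $\lambda$-connection over $\bar X$; the content is to organize these into a single locally free extension $\bar E$ over $\bar X_{\bP^1}$ in the $C^\infty$-with-$\mathscr{D}$ language of the moduli functor defined just above, and its complex-conjugate counterpart $\bar E'$ over $\bar X^c_{\bP^1}$. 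This is exactly the structure that Sabbah's work on twistor $\mathscr{D}$-modules encodes via the specialization/nearby-cycle filtrations: the KMS-spectrum at $\lambda$ is the image under the bijection $\frak{k}_\lambda$ of \eqref{formula} of the KMS-spectrum at $\lambda=0$, and taking the locally free $\cO_{\bar X_S}$-module generated by the appropriate graded piece of the $V$-filtration gives the Deligne-type extension.

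Next I would pin down the residual eigenvalues. The harmonic bundle is tame and purely imaginary, so the KMS-spectrum at $\lambda=0$ has its eigenvalue-component $\alpha$ purely imaginary; by \Cref{unipotent_residues_equivalent_finite_energy} (equivalence of (1), (2), (3)), the quasiunipotence of the local monodromy forces the residues of the Higgs field to be nilpotent, hence the parabolic weights $a$ lie in $\frac1n\bZ$. Plugging into $\frak{e}_\lambda(a,\alpha)=\alpha-a\lambda-\bar\alpha\lambda^2$ with $\alpha\in i\bR$ and $a\in\frac1n\bZ$, and then choosing the logarithmic extension normalized to the fundamental domain $(-1,0]$ for the real part (which is the standard Deligne normalization used throughout \Cref{sect:goodlocus} and in the construction of $R_{\Hod}^{\qu|n}$), I get that the residue of the $\lambda$-connection lies in $\lambda\cdot(\frac1n\bZ\cap(-1,0])$, exactly matching the closed subspace $R_{\Hod}^{\qu|n}(\bar X,D,x)\subset R_{\Hod}(\bar X,D,x)$ and its conjugate. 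So the resulting tuple $(\cE,W_\bullet\cE,\bar E,\bar E',\mathscr{D})$ defines a point (indeed, varying in a family, a section over $\bP^1$) of $\cM_{DH}^{\qu|n,\loc}(\bar X,D)$.

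For functoriality: pullback of a tame purely imaginary harmonic bundle along $f:(\bar X,D)\to(\bar Y,E)$ is again tame purely imaginary (\cite[Lemma 25.29]{mochizukitameii}, cited before \Cref{pullback ss}), pullback commutes with the $V$-filtration constructions away from and along the boundary (by the compatibility of mixed twistor $\mathscr{D}$-modules with pullback, \Cref{lem: six functors tw}), and the normalization of the extension is determined by the same fundamental-domain convention on both sides; hence the associated extension is compatible with the pullback morphisms $f^*$ on Deligne--Hitchin spaces from the previous lemma. The weight filtration $W_\bullet\cE$ extends by the $\mathscr{D}$-flat locally split filtration that is part of the mixed twistor $\mathscr{D}$-module data, so the extension is as filtered objects.

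I expect the main obstacle to be the bookkeeping of the parabolic/KMS normalizations: reconciling Sabbah's and Mochizuki's conventions for the sign and the fundamental domain of the residual eigenvalues (they use intervals like $[0,1)$ or $(-1,0]$ depending on the source, cf. the footnote after the definition of $R_{DH}^{\qu|n,\loc}$), and verifying that the choice made for $\bar E$ over $\bar X_{\bP^1}$ is compatible, under the gluing involution $\lambda\mapsto\lambda^{-1}$, with the choice made for $\bar E'$ over $\bar X^c_{\bP^1}$ — i.e. that the antiholomorphic Deligne extension produced is literally the one used in the construction of $R_{DH}^{\qu|n,\loc}$. Everything else is either a direct citation of the Sabbah--Mochizuki regularity theory or a short computation with $\frak{k}_\lambda$, so I would state the normalization convention once, carefully, and then verify the two extensions match on the overlap $\bG_m\subset\bP^1$; the rest of the proof is then formal and I would, as the statement suggests, leave the remaining routine verifications to the reader.
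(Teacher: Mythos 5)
Your proposal is correct in substance and rests on the same external input as the paper, namely the Sabbah--Mochizuki boundary theory for tame/admissible mixed twistor $\mathscr{D}$-modules, but it is organized differently. The paper's proof is deliberately short: over $\bG_m$ it rescales to the family of flat connections $\lambda^{-1}\mathscr{D}^{1,0}$, observes that the residual eigenvalues are constant in $(-1,0]$ (so the ordinary Deligne extension works \emph{in the family}), and this already produces $\bar E,\bar E'$ away from the codimension-two locus $D\times\{0,\infty\}$; the only thing left is local existence across that locus, which is exactly what the admissibility hypothesis (the mixed twistor $\mathscr{D}$-module structure, \cite[\S9]{mochizukimixedtwistor}) provides. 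You instead build the extension everywhere at once from the KMS-spectrum/$V$-filtration prolongations and then compute the residues via $\frak{k}_\lambda$. Both are legitimate; the paper's route buys a cleaner treatment of the normalization issue you flag as the main obstacle, since using the family Deligne extension over $\bG_m$ makes the extension literally the one appearing in the gluing of $R_{DH}^{\qu|n,\loc}$, so no reconciliation of fundamental-domain conventions between the $\Hod$ and $\overline{\Hod}$ sides is needed; your route makes the eigenvalue statement more explicit but shifts the burden onto exactly that bookkeeping. Two small cautions on your eigenvalue step: the conclusion $a\in\frac1n\bZ$ does not follow from nilpotency of the Higgs residues (``hence'' is misplaced) but from the monodromy eigenvalues being $n$-th roots of unity once $\alpha$ is purely imaginary (modulus one forces $\alpha=0$, i.e.\ nilpotency, and the root-of-unity condition then forces $a\in\frac1n\bZ$); and having established $\alpha=0$ you should plug that in, so $\frak{e}_\lambda(a,0)=-a\lambda$, rather than keeping a nonzero imaginary $\alpha$, which would not land in $\lambda\cdot\bigl(\frac1n\bZ\cap(-1,0]\bigr)$ for all $\lambda$. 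Neither is a real gap, just wording to tighten.
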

\begin{proof} 
By the above discussion, the family of flat connections $\lambda^{-1}\mathscr{D}^{1,0}$ on $(\cE,\mathscr{D}^{0,1})$ has constant residual eigenvalues in $(-1,0]$ and the Deligne extension provides such extensions $\bar E,\bar E'$ outside of the codimension 2 subset $D\times 0\cup D\times\infty $ in $\bar X_{\bP^1}$.  The existence of the claimed extensions $\bar E,\bar E'$ therefore reduces to the local existence, which is by assumption the case for a mixed twistor $\mathscr{D}$-module (see \cite[\S9]{mochizukimixedtwistor}).
\end{proof}

\begin{cor}\label{cor preferred sections}
    Let $A$ an artinian $\tate$-MTS-algebra and $\scrE=(\cE,W_\bullet\cE,\mathscr{D})$ an $A$-AVMTS on $X$ with quasiunipotent local monodromy whose eigenvalues have order dividing $n$.  Then $\scrE$ is pulled back via a $\bP^1$-morphism $\underline{\mathrm{Spec}}\, A\to \cM_{DH}^{\qu|n,\loc}(\bar X,D)$. 
 If in addition there is a framing $\phi \colon \scrE_x\xrightarrow{\cong}A^r$, then $(\scrE,\phi)$ is pulled back via a $\bP^1$-morphism $\underline{\mathrm{Spec}}\, A\to R_{DH}^{\qu|n,\loc}(\bar X,D,x)$.
\end{cor}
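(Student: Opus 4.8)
The plan is to reduce \Cref{cor preferred sections} to \Cref{tw deligne ext} together with the moduli description of $R_{DH}^{\qu|n,\loc}(\bar X,D,x)$ given just before it, by checking that the data produced from an $A$-AVMTS is precisely an $\underline{\mathrm{Spec}}\,A$-point of the (germ of the) Deligne--Hitchin space. The key observation is that an $A$-AVMTS is by definition a variation of mixed twistor structures on $X$ in the category of $A$-modules, hence restricting to $\bP^1 \setminus \{\infty\}$ and $\bP^1 \setminus \{0\}$ it gives exactly the kind of operator data $(\cE,\mathscr{D})$ appearing in the moduli functor: the $C^\infty_X \boxtimes \cO_{\underline{\mathrm{Spec}}\,A}$-module $\cE$ with flat $x\partial + y\bar\partial$-connection $\mathscr{D}$ is the tautological one, and the hypothesis of quasiunipotent local monodromy with eigenvalue order dividing $n$ is what guarantees we land in the $\qu|n$ locus after shrinking.

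First I would spell out the structure morphism $\pi$. Since $A$ is an artinian $\tate$-MTS-algebra and $\scrE$ is supported over all of $\bP^1$ (it is a variation, not a $\lambda$-connection for fixed $\lambda$), the morphism $\pi \colon \underline{\mathrm{Spec}}\,A \to \bP^1$ is the constant map—or rather, there is no such map in the moduli description of $R_{DH}$; re-reading, the $\pi$ in that functor is for $S$-points that may vary in $\bP^1$, and an AVMTS over $A$ corresponds to the universal family restricted over the whole $\bP^1$, i.e. to a $\bP^1$-morphism $\underline{\mathrm{Spec}}\,A \times \bP^1 \to \cM_{DH}^{\qu|n,\loc}(\bar X,D)$ covering the identity on $\bP^1$. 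This is the meaning of ``$\bP^1$-morphism'' in the statement: a morphism over $\bP^1$, classifying a family of twistor-type data over the base $\underline{\mathrm{Spec}}\,A$ fibered over $\bP^1$. So the correct reduction is: produce the two logarithmic locally free extensions $\bar E$ over $\bar X_{\bP^1 \setminus \infty}$ (to which $\mathscr{D}^{1,0}$ extends as a flat log connection) and $\bar E'$ over $\bar X^c_{\bP^1 \setminus 0}$ (to which $\mathscr{D}^{0,1}$ extends), with residues having eigenvalues in $\lambda \cdot (\tfrac1n\bZ \cap (-1,0])$ and $-\lambda^{-1}\cdot(\tfrac1n\bZ\cap(-1,0])$ respectively, and verify compatibility on the overlap $\bG_m$—which is exactly the gluing identification used to build $R_{DH}^{\qu|n,\loc}$. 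All of this is precisely the content of \Cref{tw deligne ext} applied to the $A$-AVMTS $\scrE$ (the construction there is $A$-linear and functorial, and an $A$-AVMTS is a finite iterated extension of $\bC$-AVMTSs in the relevant sense after choosing a splitting, so the Deligne and log extensions exist and are $A$-flat). The framing $\phi$, when present, is tautologically an isomorphism $\cE|_{x \times \underline{\mathrm{Spec}}\,A} \xrightarrow{\cong} A^r$, giving the lift to the framed space $R_{DH}^{\qu|n,\loc}(\bar X,D,x)$, and the unframed statement follows by passing to the quotient by $\bGL_r$, under which $\cM_{DH}^{\qu|n,\loc}$ is identified with the stack quotient of $R_{DH}^{\qu|n,\loc}$.

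The main obstacle I expect is bookkeeping rather than anything deep: one must check that the extensions produced by \Cref{tw deligne ext} are locally free over $\cO_{\bar X_{\bP^1}} \otimes A$ (flatness over the artinian base $A$ follows because each graded piece of the weight/$\mathfrak{m}_A$-adic filtration is a $\bC$-AVMTS, to which \Cref{tw deligne ext} applies, and extensions of locally free sheaves along a nilpotent thickening remain locally free), and that the normalization of residual eigenvalues matches the convention baked into the gluing map defining $R_{DH}^{\qu|n,\loc}$—i.e. eigenvalues in $\lambda \cdot (\tfrac1n\bZ \cap (-1,0])$ on the $0$-side and the conjugate-dual normalization on the $\infty$-side, with the factor $-\bar\lambda^{-1}$ (equivalently $-\lambda^{-1}$ after the identification $R^{\qun,\loc}(\bar X^c,D^c,x) \cong R^{\qun,\loc}(\bar X,D,x)^c$) lining up correctly. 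Since the residual eigenvalues of an AVMTS are constant (they are the KMS-spectrum, which is locally constant in families and we work over a local artinian base), the shrinking to a neighborhood of the $\qu|n$ locus in the moduli functor is automatic here: $\underline{\mathrm{Spec}}\,A = (\underline{\mathrm{Spec}}\,A)^{\qu|n}$. I would also note explicitly that the resulting $\bP^1$-morphism is canonical and functorial in $(\bar X,D,x)$ via the pullback morphisms $f^*$ on Deligne--Hitchin spaces established in the preceding lemma, since the Deligne and log extensions are compatible with pullback; this functoriality, though not demanded by the statement, is what makes the corollary usable in \Cref{DeligneHitchin sect}.
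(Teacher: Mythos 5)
Your proposal is essentially the paper's own proof: the paper's argument is exactly to apply \Cref{tw deligne ext} to the underlying $\bC$-AVMTS and then reinterpret the resulting tuple $(\cE,W_\bullet\cE,\bar E,\bar E',\mathscr{D})$, together with its $A$-action, as a tuple on $\bar X_{\uSpec A}$, which by the moduli description of $R_{DH}^{\qu|n,\loc}(\bar X,D,x)$ (and the fact that an artinian base satisfies $S=S^{\qu|n}$, so no shrinking is needed) is precisely a framed $\uSpec A$-point; the unframed case is the same with the stack.

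Two small corrections to how you frame it. First, $\uSpec A$ is the \emph{relative} $\Spec$ over $\bP^1$ of the sheaf of $\cO_{\bP^1}$-algebras $A$ (an artinian $\tate$-MTS-algebra is a sheaf of filtered algebras on $\bP^1$), so it already carries its structural morphism $\pi\colon \uSpec A\to\bP^1$, and the ``$\bP^1$-morphism'' of the statement is a morphism $\uSpec A\to \cM_{DH}^{\qu|n,\loc}(\bar X,D)$ over $\bP^1$ — not a map from $\uSpec A\times\bP^1$. Writing the base as a product would implicitly trivialize $A$ over $\bP^1$, which is false in general: by \Cref{tw loc triv} the algebra $A$ is only Zariski-locally trivial on $\bP^1$, and this twisting is exactly the twistor-theoretic content one must not discard (your construction of $\bar E,\bar E'$ and the gluing over $\bG_m$ does not actually use the product structure, so the slip is in the gloss rather than the argument, but it should be fixed). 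Second, the paper avoids your splitting/iterated-extension flatness bookkeeping by a cleaner device: the extensions of \Cref{tw deligne ext} are functorial, so the $A$-action on the underlying $\bC$-AVMTS acts on the extended tuple, which is what turns it into a sheaf on $\bar X_{\uSpec A}$ directly; you may of course argue via graded pieces of the $\fm_A$-adic filtration as you propose, but then you do need to justify exactness of the extension functor (local freeness over $\cO_{\bar X_S}$), which the functoriality argument renders unnecessary.
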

\begin{proof}
    We can interpret the extension $(\cE,W_\bullet\cE,\bar E,\bar E',\mathscr{D})$ on $\bar X_{\bP^1}$ given by applying the lemma to the underlying $\bC$-AVMTS together with its action by the sheaf of rings $A$ as such a tuple on $\bar X_{\underline{\mathrm{Spec}}\, A}$.  
\end{proof}
\begin{defn}
    A \emph{quasiunipotent preferred section} is a section of $\cM_{DH}^{\qu|n,\loc}(\bar X,D)\to\bP^1$ or $R_{DH}^{\qu|n,\loc}(\bar X,D,x)\to\bP^1$ as in \Cref{cor preferred sections} resulting from a weight 0 $\bC$-VTS (that is, a tame purely imaginary harmonic bundle) with quasiunipotent local monodromy.

\end{defn}
Note the image of a quasiunipotent preferred section is contained in $\cM_{DH}^{\qu|n,\loc}(X)$ or $R_{DH}^{\qu|n}(\bar X,D,x)$.  We now specialize to the framed spaces, but there are versions of the following discussion for $\cM_{DH}$, provided we replace all formal isomorphism claims with a miniversality claim.

\begin{cor}For any $(V,\phi)\in R_B^{\qu|n}(X,x)(\bC)^\ss$ we have:
\begin{enumerate}
    \item The associated framed Deligne extension $(V_{DR},\phi)\in R_{DR}^{\qu|n}(\bar X,D,x)(\bC)$ (with residual eigenvalues in $(-1,0]$).
    \item A framed quasiunipotent preferred section $s_{(V,\phi)}$ of $\pi \colon R_{DH}^{\qu|n,\loc}(\bar X,D,x)\to\bP^1$ specializing to $(V_{DR},\phi)$ at $\lambda=x/y=1$.
    \item A uniquely determined pro-$\tate$-MTS-algebra $\hat\cO_{R_B(X,x),(V,\phi)}$ and a morphism 
    $$\hat s_{R_B(X,x),(V,\phi)} \colon \underline{\mathrm{Spec}}\,\hat\cO_{R_B(X,x),(V,\phi)}\to R_{DH}^{\qu|n,\loc}(\bar X,D,x)$$
    fitting into a commutative diagram
    \[\begin{tikzcd}
        \bP^1\ar[rrrd,equals, bend right =10]\ar[r,"0"]\ar[rrr,"s_{(V,\phi)}",bend left=20]&\underline{\mathrm{Spec}}\, \hat\cO_{R_B(X,x),(V,\phi)}\ar[rr,"\hat s_{R_B(X,x),(V,\phi)}"]&&R_{DH}^{\qu|n,\loc}(\bar X,D,x)\ar[d,"\pi"]\\
        &&&\bP^1.
    \end{tikzcd}\]
    Moreover, the specialization to $\lambda=1$ of $\hat s_{R_B(X,x),(V, \phi)}$ is a formal isomorphism to the formal completion of $R^{\qu|n,\loc}_{DR}(\bar X,D,x)$ at $(V_{DR},\phi)$.
\end{enumerate}
\end{cor}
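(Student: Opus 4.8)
The plan is to assemble the corollary from \Cref{thm:versal frame} (functorial twistor structures on the framed miniversal deformation), \Cref{cor preferred sections} (deformations of tame purely imaginary variations of twistor structures yield morphisms to the Deligne--Hitchin germ), and \Cref{local iso on good} (Riemann--Hilbert is a local isomorphism on the good locus).

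For parts (1) and (2) I would argue directly. Since $V$ is semisimple with quasiunipotent local monodromy whose eigenvalues have order dividing $n$, taking the logarithm with eigenvalues in $(-1,0]$ produces the Deligne extension $V_{DR}$, whose residue along each boundary component has eigenvalues in $\tfrac{1}{n}\bZ\cap(-1,0]$; since $x\notin D$, the framing $\phi$ of $V_x$ is also a framing of $V_{DR}$, giving $(V_{DR},\phi)\in R_{DR}^{\qu|n}(\bar X,D,x)(\bC)$. For (2), by the Corlette--Mochizuki theory (\Cref{existence_of_tame_purely_imaginary_harmonic_metrics}, \Cref{exa defining can twistor}) $V$ underlies an essentially unique tame purely imaginary harmonic bundle, hence a weight $0$ $\bC$-VTS $\scrV$; quasiunipotence of the local monodromy forces nilpotent residues of the Higgs field (\Cref{unipotent_residues_equivalent_finite_energy}), so $\scrV$ has quasiunipotent local monodromy and I may apply \Cref{cor preferred sections} with $A=\tate$ and the framing $\phi$ to obtain the framed quasiunipotent preferred section $s_{(V,\phi)}$. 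Unwinding \Cref{tw deligne ext}, the $\lambda=1$ realization of $\scrV$ is the flat bundle underlying $V$ together with its $(-1,0]$-Deligne extension, i.e.\ $(V_{DR},\phi)$, which is the claimed specialization.

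For part (3) I would equip $V$ with this $\bC$-VTS $\scrV$ (canonical since $V$ is semisimple), so that \Cref{thm:versal frame} endows $\hat\cO := \hat\cO_{R_B(X,x),(V,\phi)}$ with a pro-$\tate$-MTS-algebra structure and the universal framed family $\hat V$ with a pro-$\hat\cO$-AVMTS structure lifting $\scrV$. For each $k$, $\hat V/\fm^k\hat V$ is a framed $(\hat\cO/\fm^k)$-AVMTS whose underlying $\bC$-AVMTS has the same local monodromy eigenvalues as $V$ (the local monodromy operator over an artinian local ring is block-triangular for the $\fm$-adic filtration, with diagonal block the closed-point operator), hence is quasiunipotent of order dividing $n$; applying \Cref{cor preferred sections} gives compatible $\bP^1$-morphisms $\uSpec(\hat\cO/\fm^k)\to R_{DH}^{\qu|n,\loc}(\bar X,D,x)$, and passing to the limit defines $\hat s_{R_B(X,x),(V,\phi)}$ over $\bP^1$. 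Restricting along the $\lambda=0$ section $\bP^1\hookrightarrow\uSpec\hat\cO$ (given by $\hat\cO\to\bC$) recovers the preferred section attached to $\hat V/\fm\hat V = \scrV$, namely $s_{(V,\phi)}$, which produces the commutative diagram. For the final assertion, the $\lambda=1$ specialization $\hat s|_{\lambda=1}\colon\uSpec\hat\cO\to R_{DR}^{\qu|n,\loc}(\bar X,D,x)$ classifies the pro-$\hat\cO$ family of logarithmic connections given by the $(-1,0]$-Deligne extension of the flat bundle underlying $\hat V$ at $\lambda=1$; composing with Riemann--Hilbert recovers the tautological framed family $\hat V$, i.e.\ the canonical morphism $\uSpec\hat\cO\to R_B(X,x)^\an$, which is a formal isomorphism onto $\widehat{R_B(X,x)}_{(V,\phi)}$ since $\hat V$ is miniversal (see \Cref{section framing basic}). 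Since $(V_{DR},\phi)$ lies in the good locus and, for the defining neighborhood taken small enough, so does all of $R_{DR}^{\qu|n,\loc}(\bar X,D,x)$, \Cref{local iso on good} (and its evident framed analogue, cf.\ \Cref{BettiDeRhamcomparison}) shows $RH$ is a local analytic isomorphism near $(V_{DR},\phi)$, hence an isomorphism of formal completions; combining the two, $\hat s|_{\lambda=1}$ is a formal isomorphism onto $\widehat{R_{DR}^{\qu|n,\loc}(\bar X,D,x)}_{(V_{DR},\phi)}$. Uniqueness of $(\hat\cO,\hat s)$ follows from the uniqueness statements in \Cref{thm:versal frame} together with the representability of $R_{DH}^{\qu|n,\loc}(\bar X,D,x)$.

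The main obstacle is the $\lambda=1$ comparison in part (3): matching the $\lambda=1$ realization of the twistor-theoretic universal family with the De Rham universal family, and checking that the Deligne--Hitchin germ restricts over $\lambda=1$ to the good locus so that \Cref{local iso on good} applies, so that both sides get identified with the miniversal deformation $\widehat{R_B(X,x)}_{(V,\phi)}$ and the diagram commutes. Everything else is bookkeeping with the Deligne extension and the functoriality built into \Cref{thm:versal frame} and \Cref{cor preferred sections}.
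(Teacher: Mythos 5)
Your proposal is correct and follows essentially the same route as the paper, whose proof is precisely the combination of \Cref{thm:versal frame} (canonical pro-MTS structure on the framed miniversal family, fed into \Cref{cor preferred sections} level by level to produce $\hat s$) with \Cref{local iso on good} for the $\lambda=1$ identification with the formal completion of the De Rham space. The extra bookkeeping you supply (Deligne extension with eigenvalues in $(-1,0]$, quasiunipotence of the artinian quotients, the framed analogue of the good-locus isomorphism) is exactly what the paper leaves implicit.
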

\begin{proof}
    By \Cref{thm:versal frame} and \Cref{local iso on good}.
\end{proof}

\begin{lem}\label{formal iso R} In the notation of the previous corollary, for any $(V,\phi)\in R_B^{\qu|n}(X,x)(\bC)^\ss$, the morphism 
\begin{equation}\label{formal section}\hat s_{R_B(X,x),(V,\phi)} \colon \underline{\mathrm{Spec}}\, \hat\cO_{R_B(X,x),(V,\phi)}\to R_{DH}^{\qu|n,\loc}(\bar X,D,x) \end{equation}
    is a formal isomorphism to the formal completion of $R^{\qu|n,\loc}_B(\bar X,D,x)$ along $s_{(V,\phi)}$ over $\bG_m$.  If $X$ is a curve and $n=1$, the same is true over all of $\bP^1$.
\end{lem}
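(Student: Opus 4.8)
The plan is to prove the assertion over $\bG_m\subset\bP^1$ first — reducing it to the fibre at $\lambda=1$, which is the content of the previous corollary — and then, when $X$ is a curve and $n=1$, to extend over the two points $\lambda=0,\infty$.

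First I would record that by the previous corollary (which rests on \Cref{local iso on good} and \Cref{thm:versal frame}) the restriction of $\hat s_{R_B(X,x),(V,\phi)}$ to $\lambda=1$ is a formal isomorphism onto the formal completion of $R_{DR}^{\qu|n,\loc}(\bar X,D,x)=R_{DH}^{\qu|n,\loc}(\bar X,D,x)|_{\lambda=1}$ at $(V_{DR},\phi)=s_{(V,\phi)}(1)$. Let $\mathcal{W}$ denote the formal completion of $R_{DH}^{\qu|n,\loc}(\bar X,D,x)$ along $s_{(V,\phi)}$, a formal scheme over $\bP^1$; by construction $\hat s$ factors through a $\bP^1$-morphism $\underline{\mathrm{Spec}}\,\hat\cO_{R_B(X,x),(V,\phi)}\to\mathcal{W}$, and the claim is that this is an isomorphism over $\bG_m$ (and over all of $\bP^1$ in the curve, $n=1$ case).

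For the statement over $\bG_m$, recall that $R_{DH}^{\qu|n,\loc}(\bar X,D,x)|_{\bG_m}=R_\Hod^{\qu|n,\loc}(\bar X,D,x)|_{\bG_m}$, and that rescaling the $(1,0)$-part of the $\lambda$-connection by $\lambda^{-1}$ gives an identification $R_\Hod^{\qu|n,\loc}(\bar X,D,x)|_{\bG_m}\cong R_{DR}^{\qu|n,\loc}(\bar X,D,x)\times\bG_m$ under which the preferred section $s_{(V,\phi)}$ becomes the constant section at $(V_{DR},\phi)$, since this rescaling applied to the $\lambda$-connection of a fixed harmonic bundle returns its flat connection independently of $\lambda$. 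Thus $\mathcal{W}|_{\bG_m}$ becomes the product $\widehat{R_{DR}^{\qu|n,\loc}(\bar X,D,x)}_{(V_{DR},\phi)}\times\bG_m$. On the source side, \Cref{tw loc triv} trivializes $\hat\cO_{R_B(X,x),(V,\phi)}$ over the affine curve $\bG_m$ via the functorial splitting of the weight filtration of \Cref{lem tw splitting}. Both identifications are functorial in the twistor structure, and the construction of $\hat s$ through \Cref{tw deligne ext} and \Cref{cor preferred sections} is functorial in the same sense; hence $\hat s|_{\bG_m}$ is the product of $\bG_m$ with $\hat s|_{\lambda=1}$, and is therefore a formal isomorphism by the previous corollary. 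Checking that $\hat s$ really intertwines the two functorial trivializations — equivalently, that the pro-$\tate$-MTS-algebra structure furnished by \Cref{thm:versal frame} is compatible with the $\bP^1$-geometry of the Deligne--Hitchin space — is the main obstacle, and is the point at which the precise compatibilities in \Cref{thm:versal frame} and in the construction of $R_{DH}^{\qu|n,\loc}$ are used in an essential way.

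Finally, suppose $X$ is a curve and $n=1$. Then $R_{DH}^{\qu|1,\loc}(\bar X,D,x)|_{\lambda=0}=R_{Dol}^{\nilp}(\bar X,D,x)$ and $R_{DH}^{\qu|1,\loc}(\bar X,D,x)|_{\lambda=\infty}$ is the complex-conjugate Dolbeault moduli, and I would show that $\hat s$ restricts at $\lambda=0$ (resp. $\lambda=\infty$) to a formal isomorphism onto the completion of $R_{Dol}^{\nilp}(\bar X,D,x)$ at the polystable logarithmic Higgs bundle attached to $V$: every fibre of $\hat\cO_{R_B(X,x),(V,\phi)}$ over $\bP^1$ is abstractly the miniversal Betti deformation ring, and the non-abelian Hodge correspondence for curves — \Cref{BettiDeRhamcomparison} and \Cref{comparison} — identifies it with the miniversal Dolbeault deformation, with $\hat s|_{\lambda=0}$ inducing isomorphisms on tangent and obstruction spaces, so miniversality gives the claim. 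Combining this with the $\bG_m$ case, $\hat s$ is a formal isomorphism on every fibre over $\bP^1$; since the target of $\hat s^{\#}$ is locally free on $\bP^1$ at each infinitesimal level (it is the weight-filtered bundle underlying $\hat\cO_{R_B(X,x),(V,\phi)}$ there), a map of coherent sheaves that is fibrewise an isomorphism onto it must itself be an isomorphism. The deformation-theoretic comparison at $\lambda=0,\infty$, which genuinely needs the curve hypothesis and the analytic input of \Cref{sect:Gm action}, is a secondary difficulty.
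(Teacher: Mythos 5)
Your argument over $\bG_m$ rests on a claim that is false: rescaling the $\lambda$-connection of a fixed harmonic bundle does \emph{not} return its flat connection independently of $\lambda$. If $\nabla=\nabla_h+\theta+\theta^*$ is the flat connection of the harmonic bundle, the fibre of the preferred section at $\lambda$ has holomorphic structure $\nabla_h^{0,1}+\lambda\theta^*$ and $\lambda$-connection $\lambda\nabla_h^{1,0}+\theta$, so the rescaled flat connection is $\nabla_h+\lambda^{-1}\theta+\lambda\theta^*$, i.e.\ the De Rham realization of the \emph{rotated} local system $\lambda\cdot V$, not of $V$ (they agree only at $\lambda=1$). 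Hence under the rescaling identification $R_\Hod^{\qu|n,\loc}|_{\bG_m}\cong R_{DR}^{\qu|n,\loc}\times\bG_m$ the preferred section is not constant, the completion along it is not a product $\widehat{R_{DR}}_{(V_{DR},\phi)}\times\bG_m$, and the deduction that $\hat s|_{\bG_m}$ is ``$\bG_m$ times $\hat s|_{\lambda=1}$'' collapses; the compatibility you flag as ``the main obstacle'' is not a technical check but the actual content. The paper handles exactly this point differently: it reduces to a Lefschetz curve (where both the formal twistor algebra and the pullback on the Deligne--Hitchin side are closed immersions), proves the curve case over $\bG_m$ by smoothness over $\bG_m$ plus a first-order comparison using Mochizuki's specialization result together with \Cref{local iso on good}, and then upgrades the closed immersion to an isomorphism for general $X$ by comparing, for each $t\in\bG_m$, the $\lambda=t$ fibre of $\hat\cO_{R_B(X,x),(V,\phi)}$ with the $\lambda=1$ fibre of $\hat\cO_{R_B(X,x),(tV,\phi')}$ and using the local triviality of \Cref{tw loc triv} to equate graded dimensions in both directions.

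The step at $\lambda=0,\infty$ also has a gap: \Cref{BettiDeRhamcomparison} together with \Cref{comparison} gives only a \emph{homeomorphism} between the Dolbeault and Betti coarse moduli (the Dolbeault--De Rham comparison is not holomorphic), so it cannot identify formal completions, tangent spaces, or obstruction spaces; ``miniversality gives the claim'' therefore has no input to run on. What is actually needed, and what the paper supplies, is \Cref{lemma_Sabbah}: the $\lambda=0$ specialization of the pushforward of the twistor module $\End(V)$ computes the cohomology of the logarithmic Higgs complex, which identifies the relative tangent space of $\uSpec\hat\cO_{R_B(X,x),(V,\phi)}$ at $\lambda=0$ with the tangent space of the Dolbeault fibre of the Deligne--Hitchin space; combined with the presence of the section and smoothness of the source this yields the isomorphism at $\lambda=0$ (and at $\infty$ by conjugation). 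Your final fibrewise-to-global gluing would be fine once fibrewise isomorphisms are in hand, but as written neither the $\bG_m$ fibres (for general $X$) nor the fibres at $0,\infty$ are established.
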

\begin{proof}
    We first reduce to the claim assuming $X$ is a curve.  If $i:C\to X$ is a morphism from a smooth affine curve such that $i_*:\pi_1(C,c)\to\pi_1(X,x)$ is surjective (which exists by \Cref{lefschetz curve}), we have a commutative diagram
    \[\begin{tikzcd}
        \uSpec\hat\cO_{R_B(X,x),(V,\phi)}\ar[rrr,"\hat s_{R_B(X,x),(V,\phi)}"]\ar[d,"i^*"]&&&R_{DH}^{\qu|n,\loc}(\bar X,D,x)\ar[d,"i^*"]\\
        \uSpec\hat\cO_{R_B(C,c),(i^*V,i^*\phi)}\ar[rrr,"\hat s_{R_B(C,c),(i^*V,i^*\phi)}"]&&&R_{DH}^{\qu|n,\loc}(\bar C,D_C,c)
    \end{tikzcd}\]
    The pullback $i^*$ is a closed immersion in the Betti realization, hence also in the De Rham realization (locally around the chosen component of the quasiunipotent locus).  By \Cref{tw loc triv}, the left morphism is a closed immersion, while by the $\bG_m$-action on $R_\Hod^{\qu|n,\loc}(\bar X,D,x)|_{\bG_m}$ the right morphism is a closed immersion over $\bG_m$.  Thus, if the bottom morphism is an isomorphism on completions over $\bG_m$, the top morphism is a closed immersion on completions over $\bG_m$.  For any $t\in \bC^*$, taking $V'=t V$ we may consider a framed quasiunipotent preferred section $s':=s_{( V',\phi')}$ such that $t^{-1}s(t)=s'(1)$, where $s=s_{(V,\phi)}$.  It then follows by acting by $t$ that there is a surjection
    of the $\lambda=1$ specialization of $\hat\cO_{R_B(X,x),(V',\phi')}$ onto the $\lambda=t$ specialization $\hat\cO_{R_B(X,x),(V,\phi)}$, so in particular we have an inequality of the dimensions of the graded pieces with respect to the maximal ideal.  By \Cref{tw loc triv} the same is true for the $\lambda=1$ specializations of both, and swapping $(V,\phi)$ and $(V',\phi')$ we find the top morphism is an isomorphism on completions for all $\lambda\neq 0, \infty$.

    It remains therefore to take $X$ to be a smooth affine curve.  In this case both $\underline{\mathrm{Spec}}\, \hat\cO_{R_B(X,x),(V,\phi)}$ and $R^{\qu|n,\loc}_{DH}(X,x)|_{\bG_m}$ are smooth over $\bG_m$, so for the claim over $\bG_m$ it is enough to show the map \eqref{formal section} is an isomorphism to first order in the fiber direction at every $\lambda\in\bG_m$.  This follows from the fact that the specialization to $\lambda\in\bG_m$ of the (derived) pushforward of a tame nilpotent mixed twistor module (specifically, $\End(V)$) computes the cohomology of the De Rham complex of the associated $\lambda$-connection \cite[Proposition 14.1.20]{mochizukimixedtwistor}, which in turn computes the deformation space of the logarithmic connection by \Cref{local iso on good}.  
    
    For $\lambda=0$ (and $\lambda=\infty$ is the same, after passing to the conjugate), we have:

        \begin{lem}\label{lemma_Sabbah}
        Let $(\bar X,D)$ be a log smooth projective curve with $X=\bar X\setminus D$, let $j:X\to \bar X$ be the inclusion, and let $E$ be a smooth tame nilpotent (algebraic) pure twistor $\scrD$-module on $X$ with trivial parabolic structure.  Then the cohomology of $\mathrm{sp}_0\pt_*j_*E$ is canonically isomorphic to the cohomology of the log de Rham complex of the $\lambda=0$ specialization of the logarithmic extension $(\bar E,\theta)$ from above.
    \end{lem}
    \begin{proof}
        We thank Claude Sabbah for communicating the following argument.  Let $x$ be a local defining equation for $D$.  Let $\cM[*D]$ (see \cite[\S3.1.2]{mochizukimixedtwistor}) be the $\mathscr{R}_X$-module underlying $j_*E$ away from $\lambda=\infty$.  Then locally $\cM[*D]$ has a $V$-filtration for which $\lambda\partial_x:\gr^V_b\cM[*D]\to\gr^V_{b+1}\cM[*D]$ is an isomorphism for $b>-1$ and $x: V_0\cM[*D]\to V_{-1}\cM[*D]$ is an isomorphism.  We therefore have quasi-isomorphisms of two-term complexes.
        \[\begin{tikzcd}
        V_{-1}\cM[*D]\ar[r," \lambda\partial+\theta"]&V_{-1}\cM[*D]\otimes\omega_{\bar X}(D)\\
V_{-1}\cM[*D]\ar[d]\ar[r,"\lambda\partial+\theta"]\ar[u,equals]&V_0\cM[*D]\otimes\omega_{\bar X}\ar[d]\ar[u,"x\otimes x^{-1}",swap]\\
            \cM[*D]\ar[r,"\lambda\partial+\theta"]&\cM[*D]\otimes\omega_{\bar X}
        \end{tikzcd}\]
        The bottom row is the De Rham complex of $\cM[*D]$ and the top row specializes to the log Higgs complex of $V_{-1}\cM[*D]$, which in this case is $\bar E$.  Since the terms of the complexes are flat over $\bP^1$ and the cohomology of the push-forward to the point is locally free, its formation commutes with specialization.
    \end{proof}
We now claim that \eqref{formal section} is an isomorphism on tangent spaces at $\lambda=0$.  Since $V$ has unipotent local monodromy, the same is true for $\End(V)$.  Thus taking endomorphism objects commutes with the Deligne extension for $V$, and the same is true for the extensions in the sense of \Cref{tw deligne ext}.  The tangent space of the $\lambda=0$ fiber of $\cM_{DH}(\bar X,D)$ is the first cohomology of the log De Rham complex of $(\overline{\End(V)}|_0,\ad\theta)$, which is by the lemma identified with the relative tangent space of $\uSpec\hat\cO_{\cM_B(X),V}$ at $\lambda=0$.  The same is then true for the framed space, so \eqref{formal section} is an isomorphism on relative tangent spaces at $\lambda=0$, and because of the presence of the section this implies the map on tangent spaces is an isomorphism.

Finally, any local morphism $A\to B$ of complete noetherian local rings which is an isomorphism on tangent spaces and for which $B$ is smooth is an isomorphism, and this completes the proof.
\end{proof}
\begin{rem}
    We could have directly used the formalism of \Cref{sect:miniversal} to show that for $\lambda\in\bG_m$ the deformation theory of a $\lambda$-connection with residual eigenvalues in $\lambda\cdot \frac{1}{n}(-1,0]$ admits an enhancement by twistor structures.  This follows from \cite[Proposition 14.1.20]{mochizukimixedtwistor}, which implies that the second part of \Cref{lem: six functors tw} holds for $\lambda\in\bG_m$.  For $\lambda=0,\infty$, our version of $R^{\qu|n,\loc}_{DH}(\bar X,D,x)$ is not the correct moduli space since it ignores parabolic structures. 
 If one keeps track of this, then the same argument can likely be made to work using that the $\lambda=0$ specialization of the push-forward of the endomorphism object computes the cohomology of the log De Rham complex.
\end{rem}

For any $(V,\phi)\in R_B(X,x)(\bC)^{\ss,\qu}$, let $i \colon C\to X$ be a Lefschetz curve (see \Cref{lefschetz curve})and $p \colon C'\to C$ a finite \'etale cover for which $V' := f^*V$ has unipotent local monodromy, where $f=i\circ p$.  Then $f^* \colon \uSpec\hat\cO_{R_B(X,x),(V,\phi)}\to\uSpec\hat\cO_{R_B(C',c'),(V',\phi')}$ is a closed immersion of formal spaces by \Cref{lem immersive} and \Cref{tw loc triv}.
\begin{cor}\label{theta is canonical}
    The canonical derivation $\Theta$ lifts functorially to each $\uSpec\hat\cO_{R_B(X,x),(V,\phi)}$ for $(V,\phi)\in R_B(X,x)(\bC)^{\qu,\ss}$. 
\end{cor}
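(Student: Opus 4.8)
The plan is to transport the canonical derivation $\Theta$ on the Deligne--Hitchin germ across the formal identification of \Cref{formal iso R}, and to deal with the fibres over $0,\infty\in\bP^1$ by a Lefschetz reduction to the case of a smooth affine curve with unipotent local monodromy. Fix $(V,\phi)\in R_B(X,x)(\bC)^{\qu,\ss}$ and let $n$ be a common multiple of the orders of the eigenvalues of the local monodromies, so that, for $(\bar X,D)$ a log smooth model (replacing $X$ by a log resolution if necessary), the preferred section $s_{(V,\phi)}$ and the formal section $\hat s_{R_B(X,x),(V,\phi)}$ of \Cref{cor preferred sections} land in $R_{DH}^{\qu|n,\loc}(\bar X,D,x)$. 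The first step is to observe that $\Theta$ is tangent to the image of $s_{(V,\phi)}$; granting this, $\Theta$ restricts to a derivation $\widetilde\Theta$ of the formal completion of $R_{DH}^{\qu|n,\loc}(\bar X,D,x)$ along $s_{(V,\phi)}(\bP^1)$ (a derivation covering the vector field $\lambda\partial_\lambda$ on $\bP^1$), and \Cref{formal iso R} identifies this formal completion with $\uSpec\hat\cO_{R_B(X,x),(V,\phi)}$ over $\bG_m$. This already produces a lift $\Theta_{(V,\phi)}$ of $\Theta$ to $\uSpec\hat\cO_{R_B(X,x),(V,\phi)}|_{\bG_m}$, and it is visibly functorial there, since the preferred section (via \Cref{exa defining can twistor} and functoriality of tame purely imaginary harmonic bundles under pullback), the Deligne--Hitchin germ, the derivation $\Theta$, and the isomorphism $\hat s$ (via \Cref{thm:versal frame}(3)) are all functorial with respect to pullback along morphisms of log smooth pairs.

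For the tangency statement I would use the explicit description over $\bG_m$: rescaling a flat $\lambda$--connection by $\lambda^{-1}$ identifies $R_{DH}^{\qu|n,\loc}(\bar X,D,x)|_{\bG_m}$ with $\bG_m\times R_B^{\qu|n}(X,x)^\an$, under which the $\bG_m$--action becomes $t\cdot(\lambda,L)=(t\lambda,L)$ and $s_{(V,\phi)}$ restricts to the single orbit $\bG_m\times\{(V,\phi)\}$ (the underlying framed local system of the $\lambda$--connection family of a harmonic bundle being the constant one). Hence $s_{(V,\phi)}(\bP^1)=\overline{\bG_m\cdot s_{(V,\phi)}(1)}$ is $\bG_m$--stable, so $\Theta$, as the infinitesimal generator of the germ--level $\bG_m$--action, is tangent to it.

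To extend $\Theta_{(V,\phi)}$ over $\lambda=0,\infty$ — and simultaneously to cover an arbitrary connected normal algebraic space $X$, where no Deligne--Hitchin space is directly available — I would reduce to the base case via a Lefschetz curve. Choose $i\colon C\to X$ a Lefschetz curve (\Cref{lefschetz curve}) and, if $i^*V$ is not already unipotent, a finite étale cover $p\colon C'\to C$ with $f:=i\circ p$ such that $f^*V$ has unipotent local monodromy (\Cref{from quasiunipotent to unipotent}, using \Cref{qu lefschetz}); after a modification $f$ extends to a morphism of log smooth pairs $(\bar C',D_{C'})\to(\bar X,D)$. Then $f^*\colon \uSpec\hat\cO_{R_B(X,x),(V,\phi)}\to \uSpec\hat\cO_{R_B(C',c'),(f^*V,f^*\phi)}$ is a closed immersion of pro-$\tate$-MTS formal schemes (\Cref{lem immersive} together with \Cref{tw loc triv}), and on the Deligne--Hitchin side $f^*$ is $\bG_m$--equivariant and carries $s_{(V,\phi)}$ to $s_{(f^*V,f^*\phi)}$. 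Since $C'$ is a curve with unipotent local monodromy, \Cref{formal iso R} gives the formal isomorphism over \emph{all} of $\bP^1$, so $\Theta_{(f^*V,f^*\phi)}$ is defined over all of $\bP^1$; the image of $f^*$ is $\bG_m$--stable by equivariance, hence its formal completion along $s_{(f^*V,f^*\phi)}$ is $\widetilde\Theta$--stable, and pulling back the restriction of $\Theta_{(f^*V,f^*\phi)}$ along the closed immersion $f^*$ yields a derivation of $\uSpec\hat\cO_{R_B(X,x),(V,\phi)}$ over all of $\bP^1$. By the compatibility of $\hat s$ with pullback this derivation restricts over $\bG_m$ to the one built above, and since each $\gr^W$ of each finite-level quotient $\hat\cO_{R_B(X,x),(V,\phi)}/\fm^k$ is a locally free sheaf on $\bP^1$, a derivation is determined by its restriction to the dense open $\bG_m$; thus the extension is unique, independent of the choices of $C$, $C'$ and of the compactifications, and — again because everything in sight is functorial — the resulting family $\{\Theta_{(V,\phi)}\}$ is compatible with pullback along morphisms of connected algebraic spaces (reducing the general functoriality assertion to the log smooth case treated in the first paragraph).

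The main obstacle is the behaviour over $\{0,\infty\}$: \Cref{formal iso R} identifies $\uSpec\hat\cO_{R_B(X,x),(V,\phi)}$ with a formal completion inside $R_{DH}^{\qu|n,\loc}$ only over $\bG_m$ in general, because the germ $R_{DH}^{\qu|n,\loc}(\bar X,D,x)$ ignores parabolic structure and the identification genuinely fails at $\lambda=0,\infty$ unless $X$ is a curve and $n=1$. So the real content of the corollary is that the $\bG_m$--part lift extends across these two points and that the extension is canonical, and this is precisely what the reduction to a Lefschetz curve with unipotent local monodromy supplies; the remaining points (the $\bG_m$--stability of preferred sections, the $\bG_m$--equivariance and closed-immersion properties of the pullback maps) are routine given the explicit description of the twistor line and the functoriality results already established.
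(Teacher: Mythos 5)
Your overall route is the paper's: produce the lift of $\Theta$ over $\bG_m$ via \Cref{formal iso R}, reduce to a Lefschetz curve plus a finite \'etale cover with unipotent local monodromy (where \Cref{formal iso R} holds over all of $\bP^1$), use that $f^*$ is a closed immersion of formal twistor spaces (\Cref{lem immersive} together with \Cref{tw loc triv}), and extend across $\lambda=0,\infty$ using that the finite-level quotients are built from locally free sheaves on $\bP^1$, with functoriality coming from the functoriality of all the ingredients. This is exactly the paper's (very terse) proof together with the paragraph preceding the corollary.

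There is, however, one genuinely false claim in your justification of the $\bG_m$-step. Under the rescaling identification of $R_{DH}^{\qu|n,\loc}(\bar X,D,x)|_{\bG_m}$ with $\bG_m\times$ (a germ of) $R_B^{\qu|n}(X,x)^\an$, the preferred section attached to a harmonic bundle $(\cE,\nabla_h,\theta,h)$ corresponds to the family $\lambda\mapsto$ (monodromy of $\nabla_h+\lambda^{-1}\theta+\lambda\theta^*$), which genuinely varies with $\lambda$ (already in rank one with nonzero Higgs field); it is \emph{not} the constant family $\bG_m\times\{(V,\phi)\}$, and the image of a preferred section is not $\bG_m$-stable in general — stability holds essentially only when $V$ underlies a $\bC$-VHS, which is precisely why $\Theta$-tangency is imposed as a nontrivial condition in \Cref{defn Hodge substack} and why the $\bC^*$-fixed points are the VHS points. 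Fortunately the conclusion you extract from this (a derivation $\widetilde\Theta$ on the formal completion along $s_{(V,\phi)}$) needs no tangency at all: any derivation $D$ satisfies $D(I^k)\subset I^{k-1}$, hence canonically induces a continuous derivation of the $I$-adic completion along any closed subspace, and this is all that ``$\Theta$ lifts canonically over $\bG_m$'' requires. Deleting the tangency paragraph and substituting this remark, the remainder of your argument — compatibility of the two lifts over $\bG_m$ from the $\bG_m$-equivariance and functoriality of $f^*$, preservation of the kernel ideal over all of $\bP^1$ by torsion-freeness of the finite-level quotients, and uniqueness plus functoriality by density — goes through and recovers the paper's proof.
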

\begin{proof}
The derivation $\Theta$ lifts canonically over $\bG_m$ so we just need to check it extends to $\bP^1$ and this holds on $\uSpec\hat\cO_{R_B(C',c'),(V',\phi')}$ by \Cref{formal iso R}.
\end{proof}

For a subspace $Z\subset R_B(X,x)$ which is formally twistor at $(V,\phi)\in Z^{\qu|n}(\bC)^{\ss}$ we define $\hat s_{Z,(V,\phi)}$ as the composition
\[\begin{tikzcd}
    \uSpec\hat\cO_{Z,(V,\phi)}\ar[rd]\ar[rr,"\hat s_{Z,(V,\phi)}"]&&R_{DH}^{\qu|n,\loc}(\bar X,D,x)\\
    &\uSpec\hat\cO_{R_B(X,x),(V,\phi)}\ar[ru,"\hat s_{R_B(X,x),(V,\phi)}",swap]&
\end{tikzcd}\]
Since $\hat \cO_{Z,(V,\phi)}$ is a quotient pro-$\tate$-MTS-algebra of $\hat\cO_{R_B(X,x),(V,\phi)}$, it follows from \Cref{formal iso R} that $\hat s_{Z,(V,\phi)}$ is a closed immersion, as this is true after post-composition with $R_{DH}^{\qu|n,\loc}(\bar X,D,x)\xrightarrow{i^*}R_{DH}^{\qu|n,\loc}(\bar C,E,c')\xrightarrow{p^*}R_{DH}^{\unip,\loc}(\bar C',E',c')$.

We denote by $Z_{\Hod}(V,\phi) $ (resp. $Z_{\overline{\Hod}}(V,\phi) $) the Zariski closure of the image of $\hat s_{Z,(V,\phi)}|_{\bA^1}$ (resp. $\hat s_{Z,(V,\phi)}|_{\bP^1\setminus 0}$) in $R_{\Hod}(\bar X,D,x)$ (resp. $R_{\overline{\Hod}}(\bar X^c,D^c,x)$).

\begin{defn}
    In the above situation, we say $\hat s_{Z,(V,\phi)}$ is \emph{algebraic} over $\bG_m$ if:
    \begin{enumerate}
    \item $\hat s_{Z,(V,\phi)}|_{\bG_m}$ induces an isomorphism onto the completion of $Z_{\Hod}(V,\phi) |_{\bG_m}$ along the image of $s_{(V,\phi)}$.
    \item The same is true for $\hat s_{Z,(V,\phi)}|_{\bG_m}$ and $Z_{\overline{\Hod}}(V,\phi) |_{\bG_m}$.
    \end{enumerate}
\end{defn}
Note that if $\hat s_{Z,(V,\phi)}$ is algebraic, then the algebraic germs $Z_{\Hod|\bG_m}^{\qu|n,\loc}(V,\phi) :=Z_{\Hod|\bG_m}(V,\phi)\cap R_{DH}^{\qu|n,\loc}(\bar X,D,x) $ and $Z_{\overline{\Hod}|\bG_m}^{\qu|n,\loc}(V,\phi)$ (defined similarly) are identified as analytic germs via the gluing.
\begin{cor}\label{extend germ}
    Let $Z\subset R_B(X,x)$ a subspace which is formally twistor at $(V,\phi)\in Z^{\qu|n}(\bC)^\ss$ such that $\hat s_{Z,(V,\phi)}$ is algebraic.  Then:
    \begin{enumerate}
        \item $\hat s_{Z,(V,\phi)}|_{\bA^1}$ induces an isomorphism onto the the completion of $Z_{\Hod}(V,\phi) $ along the image of $s_{(V,\phi)}|_{\bA^1}$.  The same is true for $Z_{\overline{\Hod}}(V,\phi) $ over $\bP^1\setminus 0$.

    \item There is a germ of a closed analytic subspace $Z_{DH}^{\qu|n,\loc}(V,\phi)\subset R_{DH}^{\qu|n,\loc}(\bar X,D,x)$ for which $\hat s_{Z,(V,\phi)}$ induces an isomorphism onto the completion of $Z_{DH}^{\qu|n,\loc}(V,\phi) $ along the image of $s_{(V,\phi)}$.
    \end{enumerate}
\end{cor}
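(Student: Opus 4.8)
\textbf{Proof sketch of \Cref{extend germ}.}
The plan is to deduce both assertions from the definition of algebraicity of $\hat s_{Z,(V,\phi)}$ together with \Cref{formal iso R} and the gluing construction of $R_{DH}^{\qu|n,\loc}(\bar X,D,x)$. For part (1), the completion of $Z_{\Hod}(V,\phi)$ along the image of $s_{(V,\phi)}|_{\bG_m}$ is identified with $\uSpec\hat\cO_{Z,(V,\phi)}|_{\bG_m}$ by hypothesis, so I would need only to check that the completion extends over $\lambda=0$ in a way compatible with $\hat s_{Z,(V,\phi)}$. Here I would use that $\hat\cO_{Z,(V,\phi)}$ is a quotient pro-$\tate$-MTS-algebra of $\hat\cO_{R_B(X,x),(V,\phi)}$, so by \Cref{tw loc triv} the completion is locally trivial over $\bP^1$; in particular the flat family $\uSpec\hat\cO_{Z,(V,\phi)}\to\bP^1$ does not jump at $\lambda=0$, and its restriction to $\bA^1$ is the formal completion of a well-defined closed subspace of $R_{\Hod}(\bar X,D,x)$. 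Since $Z_{\Hod}(V,\phi)$ was defined as the Zariski closure of the image of $\hat s_{Z,(V,\phi)}|_{\bA^1}$, it suffices to compare dimensions (or Hilbert functions of the graded pieces for the maximal ideal of the completion) at $\lambda=0$ with those at a generic $\lambda\in\bG_m$, which agree by the local triviality over $\bP^1$ coming from \Cref{tw loc triv}. The argument for $Z_{\overline{\Hod}}(V,\phi)$ over $\bP^1\setminus 0$ is identical after passing to the complex conjugate picture.

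For part (2), the idea is that the gluing defining $R_{DH}^{\qu|n,\loc}(\bar X,D,x)$ restricts to a gluing on the two algebraic germs. Concretely, by part (1) the completions of $Z_{\Hod}(V,\phi)$ and $Z_{\overline{\Hod}}(V,\phi)$ along the preferred section are both identified with $\uSpec\hat\cO_{Z,(V,\phi)}$, the first via $\hat s_{Z,(V,\phi)}|_{\bA^1}$ and the second via $\hat s_{Z,(V,\phi)}|_{\bP^1\setminus 0}$. On the overlap $\bG_m$ these two identifications are compatible with the gluing isomorphism $R_\Hod^{\qu|n,\loc}(\bar X,D,x)|_{\bG_m}\cong R_\Hod^{\qu|n,\loc}(\bar X^c,D^c,x)|_{\bG_m}$ because, by \Cref{formal iso R}, $\hat s_{R_B(X,x),(V,\phi)}$ is a formal isomorphism onto the completion of $R^{\qu|n,\loc}_B(\bar X,D,x)$ along $s_{(V,\phi)}$ over $\bG_m$, and this identification is by construction $\bG_m$-equivariant and intertwines the Deligne and anti-Deligne extensions used in the gluing. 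Taking the analytic germs $Z_{\Hod|\bG_m}^{\qu|n,\loc}(V,\phi)$ and $Z_{\overline{\Hod}|\bG_m}^{\qu|n,\loc}(V,\phi)$ and gluing them along $\bG_m$ via the restriction of the defining gluing of $R_{DH}^{\qu|n,\loc}(\bar X,D,x)$, one obtains a germ of a closed analytic subspace $Z_{DH}^{\qu|n,\loc}(V,\phi)\subset R_{DH}^{\qu|n,\loc}(\bar X,D,x)$; it is closed because each of the two pieces is a closed analytic germ in its respective $R_\Hod^{\qu|n,\loc}$ chart. The map $\hat s_{Z,(V,\phi)}$ then induces an isomorphism onto the completion of $Z_{DH}^{\qu|n,\loc}(V,\phi)$ along the image of $s_{(V,\phi)}$ by construction, since it does so chart-by-chart.

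The main obstacle I anticipate is verifying carefully that the two algebraic germs $Z_{\Hod|\bG_m}^{\qu|n,\loc}(V,\phi)$ and $Z_{\overline{\Hod}|\bG_m}^{\qu|n,\loc}(V,\phi)$ genuinely agree as analytic germs under the gluing isomorphism — not merely on completions along the single preferred section, but on an actual open neighborhood of $s_{(V,\phi)}(\bG_m)$. This is where the hypothesis that $\hat s_{Z,(V,\phi)}$ is algebraic over $\bG_m$ does the real work: condition (1) of that definition says the completion of $Z_{\Hod}(V,\phi)|_{\bG_m}$ along $s_{(V,\phi)}$ is exactly $\uSpec\hat\cO_{Z,(V,\phi)}|_{\bG_m}$, so the two algebraic germs are formally isomorphic along the section, and since both are closed subgerms of the (smooth, by \Cref{local iso on good} and the $\bG_m$-action) ambient germ $R_\Hod^{\qu|n,\loc}(\bar X,D,x)|_{\bG_m}$, a formal isomorphism respecting the ambient embedding forces equality of the germs. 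I would spell this out by noting that the ambient $R_{DH}^{\qu|n,\loc}(\bar X,D,x)$ already contains both $R_\Hod^{\qu|n,\loc}(\bar X,D,x)|_{\bG_m}$ and its conjugate glued along $\bG_m$, so the equality of subgerms over $\bG_m$ is automatic once we know the two completions coincide, and the remaining content is purely the extension over $\lambda=0$ and $\lambda=\infty$ handled in part (1).
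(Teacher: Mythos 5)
Your part (2) and the compatibility-over-$\bG_m$ discussion are fine and essentially what the paper does (the identification of the two algebraic germs as analytic germs over $\bG_m$ is already built into the definition of algebraicity, and (2) is then a direct consequence of (1)). The genuine gap is in part (1), at $\lambda=0$ (and symmetrically $\lambda=\infty$). What has to be shown there is that the map $\cO_{\hat W}\to\hat\cO_{Z,(V,\phi)}$, where $\hat W$ is the completion of the Zariski closure $Z_{\Hod}(V,\phi)$ along the section, is an isomorphism; the hard half is surjectivity, i.e.\ that every formal function on the twistor germ near $\lambda=0$ comes from the completion of an algebraic function on $R_{\Hod}(\bar X,D,x)$. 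This is a statement comparing the abstract formal twistor deformation ring with the actual moduli of $\lambda$-connections at $\lambda=0$ (logarithmic Higgs bundles), and it does not follow from \Cref{tw loc triv}: local triviality of $\hat\cO_{Z,(V,\phi)}$ over $\bP^1$ only says the formal family is constant as an abstract family, and says nothing about how it sits inside $R_{\Hod}$ near $\lambda=0$. Your assertion that "its restriction to $\bA^1$ is the formal completion of a well-defined closed subspace of $R_{\Hod}$" is precisely the effectivity being proved, not an input. The Hilbert-function comparison does not rescue this: it compares the twistor germ at $\lambda=0$ with itself at generic $\lambda$, whereas what is needed is a bound relating it to $\hat W$ at $\lambda=0$; to convert equality of Hilbert functions into surjectivity you would need injectivity of $\cO_{\hat W}/\lambda\to\hat\cO_{Z,(V,\phi)}/\lambda$, which (by the Tor sequence) is equivalent to the vanishing of the cokernel you are trying to kill, so the argument is circular. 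The paper's proof of surjectivity instead reduces, via a Lefschetz curve and a finite \'etale cover making the local monodromy unipotent (using \Cref{lem immersive} and \Cref{from quasiunipotent to unipotent}), to the case $Z=R_B(X,x)$ with $X$ an affine curve and $n=1$, where the full-$\bP^1$ statement of \Cref{formal iso R} applies; the content at $\lambda=0$ there is the identification of the relative tangent space with log Higgs cohomology via \Cref{lemma_Sabbah}. Note that \Cref{formal iso R} over all of $\bP^1$ is only stated for curves with $n=1$, so this reduction is not optional, and your proposal never performs it nor supplies a substitute.

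Two smaller points. First, you do not address injectivity of $\cO_{\hat W}\to\hat\cO_{Z,(V,\phi)}$; this is the easier half, and the paper's argument (which you could adopt) is that the kernel is $\lambda$-power torsion because the map is an isomorphism over $\bG_m$, while $\cO_{\hat W}$ is $\lambda$-torsion free, being (by flatness of completion and noetherianity) the completion of $\cO_W$, which has no $\lambda$-torsion since $W$ is the Zariski closure of the image over $\bA^1$; here \Cref{tw loc triv} is what guarantees $\hat\cO_{Z,(V,\phi)}\to\hat\cO_{Z,(V,\phi)}[\lambda^{-1}]$ is injective. Second, your final paragraph's argument that two closed subgerms of a smooth ambient germ with the same formal completion along the section coincide is correct, but it is only needed over $\bG_m$, where it is already recorded in the paper immediately after the definition of algebraicity; the real work of the corollary is entirely the extension across $\lambda=0,\infty$, which is the part your sketch leaves open.
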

\begin{proof}
The second assertion is a direct consequence of the first.
 For simplicity write $W=Z_{\Hod}(V,\phi)$ with completion $\hat W$ along the image of $s_{(V,\phi)}|_{\bA^1}$.  First observe that the morphism $\cO_{\hat W}\to \hat \cO_{Z,(V,\phi)}$ is injective.  Indeed, by \Cref{tw loc triv}, the localization $\hat \cO_{Z,(V,\phi)}\to\hat \cO_{Z,(V,\phi)}[\lambda^{-1}]$ is injective, but since $\cO_{\hat W}\to \hat \cO_{Z,(V,\phi)}$ is an isomorphism over $\bG_m$ by assumption, its ideal is exactly the kernel of $\cO_{\hat W}\to\cO_{\hat W}[\lambda^{-1}]$.  By noetherianity and flatness of completion, this ideal is the completion of the corresponding ideal in $\cO_W$, hence must vanish be zero since $W$ is the Zariski closure.
 
We therefore need only show the surjectivity.  As in \Cref{formal iso R}, by Lefschetz we may reduce to the case $X$ is an affine curve, since this induces a closed immersion of formal twistor spaces.  By \Cref{from quasiunipotent to unipotent} there is a finite \'etale cover $p:X'\to X$ such that $p^*V$ has unipotent local monodromy.  Since $p^*:R_B(X,x)\to R_B(X',x')$ is immersive by \Cref{lem immersive}, by the same argument we reduce to the case that $V$ has unipotent local monodromy, and further that $Z=R_B(X,x)$, which is \Cref{formal iso R}.
\end{proof}
\begin{cor}\label{defo is tw}
With the setup of the previous corollary, the formal deformation theory of $s_{(V,\phi)}$ as a section of $Z_{DH}^{\qu|n,\loc}(V,\phi) \to \bP^1$ is naturally equivalent to the formal deformation theory of the zero section of $\uSpec\hat\cO_{Z,(V,\phi)} \to \bP^1$.\end{cor}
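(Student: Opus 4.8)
The plan is to deduce \Cref{defo is tw} directly from \Cref{extend germ} by transporting the deformation problem along the formal isomorphism $\hat s_{Z,(V,\phi)}$. Recall that a deformation of the section $s_{(V,\phi)}\colon\bP^1\to Z_{DH}^{\qu|n,\loc}(V,\phi)$ parametrized by an artinian $\bC$-algebra $A$ is a morphism $\bP^1_A\to Z_{DH}^{\qu|n,\loc}(V,\phi)$ over $\bP^1$ reducing to $s_{(V,\phi)}$ modulo $\fm_A$, and similarly a deformation of the zero section of $\uSpec\hat\cO_{Z,(V,\phi)}\to\bP^1$ is a morphism $\bP^1_A\to\uSpec\hat\cO_{Z,(V,\phi)}$ over $\bP^1$ reducing to the zero section. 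By \Cref{extend germ}(2), $\hat s_{Z,(V,\phi)}\colon\uSpec\hat\cO_{Z,(V,\phi)}\to R_{DH}^{\qu|n,\loc}(\bar X,D,x)$ induces an isomorphism onto the completion of $Z_{DH}^{\qu|n,\loc}(V,\phi)$ along the image of $s_{(V,\phi)}$, and this isomorphism is compatible with the projections to $\bP^1$ (indeed $\hat s_{Z,(V,\phi)}$ is a $\bP^1$-morphism by construction, factoring through $\hat s_{R_B(X,x),(V,\phi)}$ which lies over $\bP^1$).

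First I would record that any deformation of $s_{(V,\phi)}$ over $A$ has image contained in the formal completion $\widehat{Z_{DH}^{\qu|n,\loc}(V,\phi)}$ of $Z_{DH}^{\qu|n,\loc}(V,\phi)$ along $s_{(V,\phi)}(\bP^1)$: this is because $\Spec A$ has a single point, so the image of $\bP^1_A\to Z_{DH}^{\qu|n,\loc}(V,\phi)$ set-theoretically equals the image of $s_{(V,\phi)}$, and the morphism factors through the formal completion. Symmetrically, any deformation of the zero section of $\uSpec\hat\cO_{Z,(V,\phi)}\to\bP^1$ factors through the formal completion along the zero section, which is $\uSpec\hat\cO_{Z,(V,\phi)}$ itself (since $\hat\cO_{Z,(V,\phi)}$ is already complete local). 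Then composing with the isomorphism $\hat s_{Z,(V,\phi)}$ of \Cref{extend germ} gives a bijection between the two sets of deformations, functorial in $A$, and the compatibility of $\hat s_{Z,(V,\phi)}$ with all the structure morphisms shows this bijection is an equivalence of the corresponding categories (or deformation functors). One should note the mild subtlety that $\uSpec\hat\cO_{Z,(V,\phi)}$ and $R_{DH}^{\qu|n,\loc}(\bar X,D,x)$ are only germs/formal objects (and over $\lambda=0,\infty$ the latter is only well-behaved in a neighborhood of the quasiunipotent locus), but since $\hat s_{Z,(V,\phi)}$ is by definition a morphism of such germs and \Cref{extend germ} asserts it is an isomorphism onto the relevant formal completion, this causes no difficulty.

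The one genuinely substantive point — and the main obstacle — is verifying that ``deformation of a section of the formal germ $Z_{DH}^{\qu|n,\loc}(V,\phi)\to\bP^1$'' and ``deformation of a section of $\uSpec\hat\cO_{Z,(V,\phi)}\to\bP^1$'' are the intended notions and that the equivalence is genuinely one of deformation categories (not just of isomorphism classes); concretely, one must check that the universal property/representability used to build $\hat s_{Z,(V,\phi)}$ in \Cref{formal iso R} and \Cref{extend germ} is compatible with base change along $\bP^1_A\to\bP^1$ for artinian $A$. This amounts to spelling out that $\hat s_{R_B(X,x),(V,\phi)}$ and its restriction $\hat s_{Z,(V,\phi)}$ classify the appropriate families over $\bP^1_A$ — which is exactly how $\hat s_{R_B(X,x),(V,\phi)}$ was constructed from \Cref{cor preferred sections} and \Cref{thm:versal frame} — so the verification is routine once the definitions are unwound, but it is where the real content sits. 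I would therefore structure the proof as: (i) unwind both deformation problems, observing both factor through formal completions along the respective sections; (ii) invoke \Cref{extend germ} to identify these formal completions via the $\bP^1$-isomorphism $\hat s_{Z,(V,\phi)}$; (iii) conclude the induced map on deformation categories is an equivalence, using the $\bP^1$-compatibility of $\hat s_{Z,(V,\phi)}$ and the fact established in \Cref{formal iso R} that $\hat s_{R_B(X,x),(V,\phi)}$ respects the moduli-functor descriptions of $R_{DH}^{\qu|n,\loc}$ and $R_B^{\qu|n,\loc}$.
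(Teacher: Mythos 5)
Your proposal is correct and follows the same route the paper intends: the paper gives no separate argument for this corollary, treating it as an immediate consequence of \Cref{extend germ}(2), since deformations of a section over an artinian base factor through the formal completion along that section and $\hat s_{Z,(V,\phi)}$ identifies the two completions compatibly with the projections to $\bP^1$. Your additional verification that this identification respects the moduli-functor descriptions is exactly the routine unwinding the paper leaves implicit.
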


\subsection{Moduli of sections}
Let $\Sect(R_{DH}^{\qu|n,\loc}(\bar X,D,x)/\bP^1)$ denote the space of sections of the morphism $R_{DH}^{\qu|n,\loc}(\bar X,D,x)\to\bP^1$, which is an open subspace of the Douady space and in particular has the structure of a second countable analytic space.  Since $R_{DH}^{\qu|n,\loc}(\bar X,D,x)$ is only defined as a germ of a neighborhood of $R_{DH}^{\qu|n}(\bar X,D,x)$, the space $\Sect(R_{DH}^{\qu|n,\loc}(\bar X,D,x)/\bP^1)$ is well-defined as a germ of a neighborhood of the locus of sections contained in $R_{DH}^{\qu|n,\loc}(\bar X,D,x)$.  For any $\lambda\in\bP^1$ there is an evaluation morphism $$\ev_\lambda:\Sect(R_{DH}^{\qu|n,\loc}(\bar X,D,x)/\bP^1)\to R_{DH}^{\qu|n,\loc}(\bar X,D,x)_\lambda$$
which is compatible with the $\bGL_r$-action (with $r$ interpreted as the locally constant rank function).  Note also that for any morphism $f:(\bar X,D)\to (\bar Y,E)$ of log smooth projective pairs respecting the choice of basepoints, there is an analytic map
\begin{equation}f^*:\Sect(R_{DH}^{\qu|n,\loc}(\bar Y,E,y)/\bP^1)\to \Sect(R_{DH}^{\qu|n,\loc}(\bar X,D,x)/\bP^1)\label{pullback DH}\end{equation}
which is also compatible with the $\bGL_r$-action. 

  We denote by $\PrefSect^{\qu|n}_{DH}(\bar X, D,x)\subset \Sect(R_{DH}^{\qu|n,\loc}(\bar X,D,x)/\bP^1)$ the set of quasiunipotent preferred sections with the subspace topology, which is well-defined as they are contained in $R_{DH}^{\qu|n,\loc}(\bar X,D,x)$.  Pullback as in \eqref{pullback DH} maps preferred sections to preferred sections.  The evaluation at $\lambda=1$ restricts to a bijection
\begin{equation}\ev_1:\PrefSect^{\qu|n}_{DH}(\bar X,D,x)\to R_{DR}^{\qu|n}( \bar X,D,x)(\bC)^{ss}.\label{eval at 1}\end{equation}
There is a natural action of $\bR_{>0}$ on $\PrefSect^{\qu|n}_{DH}(\bar X,D,x)$ coming from the $\bR_{>0}$-action on $R_{DR}^{\qu|n}(\bar X,D,x)(\bC)^\ss$.

As in \Cref{section on comparison}, for a fixed a vector space $(V_0,h_0)$ with hermitian metric of the relevant rank, we define $\PrefSect^{\qu|n}_{DH}(\bar X,D,x)^{(V_0,h_0)}\subset\PrefSect^{\qu|n}_{DH}(\bar X,D,x)$ to be the subspace of preferred sections passing through $R_{DR}^{\qu|n}(\bar X,D,x)(\bC)^{(V_0,h_0)}$ over $\lambda=1$, or equivalently which are framed compatibly with the metric for all $\lambda$. 
\begin{prop}\label{section convergence}
    Let $(\bar X,D)$ be a log smooth projective curve.  Then the evaluation 
    \[\ev_1: \PrefSect^{\unip}_{DH}(\bar X,D,x)^{(V_0,h_0)}\to R_{DR}^{\unip}( X,x)(\bC)^{(V_0,h_0)}\]
    is a homeomorphism.
\end{prop}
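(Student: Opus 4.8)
The plan is to prove that $\ev_1$ is a continuous bijection whose inverse is continuous; since the source $\PrefSect^{\unip}_{DH}(\bar X,D,x)^{(V_0,h_0)}$ carries the subspace topology from the (metrizable) section space $\Sect(R_{DH}^{\unip,\loc}(\bar X,D,x)/\bP^1)$ and the target is likewise metrizable, one may argue sequentially throughout. Bijectivity is just the restriction of the bijection \eqref{eval at 1} to the loci cut out by compatibility of the framing with $(V_0,h_0)$. Continuity of $\ev_1$ is formal: evaluation at a fixed parameter $\lambda=1$ on a space of sections of a morphism of analytic spaces is continuous (it is built into the Douady space structure), it restricts continuously to subspaces, and one checks the image indeed lands in $R_{DR}^{\unip}(X,x)(\bC)^{(V_0,h_0)}$ by the construction of preferred sections. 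The content is therefore the continuity of $\ev_1^{-1}$.

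To prove that, I would take a convergent sequence $(\bar E_i^{DR},\nabla_i,\phi_i)\to(\bar E_\infty^{DR},\nabla_\infty,\phi_\infty)$ in $R_{DR}^{\unip}(X,x)(\bC)^{(V_0,h_0)}\subset R_{DR}^\nilp(\bar X,D,x)(\bC)$ and show that, after passing to a subsequence, the preferred sections $s_i:=\ev_1^{-1}(\bar E_i^{DR},\nabla_i,\phi_i)$ converge in the section space to $s_\infty:=\ev_1^{-1}(\bar E_\infty^{DR},\nabla_\infty,\phi_\infty)$; by metrizability (applied to every subsequence) this yields continuity. I would pass to the associated framed tame harmonic bundles $\mathbf{E}_i=(\cE_i,\bar\partial_i,\theta_i,h_i,\phi_i)$, which are nilpotent because the local monodromy is unipotent (\Cref{unipotent_residues_equivalent_finite_energy}) and whose framings respect $(V_0,h_0)$ at $x$. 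Since $(\bar E_i^{DR},\nabla_i,\phi_i)$ converges, its image in $M_{DR}^\nilp(\bar X,D)$ converges, hence so does its image in $M_{Dol}^\nilp(\bar X,D)$ by \Cref{comparison}; in particular the coefficients of the characteristic polynomials of $\theta_i$ are bounded. This puts us in the setting of \Cref{harmonic sequence}, which yields, after passing to a further subsequence, a framed tame nilpotent harmonic bundle $\mathbf{E}_\infty$ with unipotent local monodromy such that the associated framed logarithmic flat (resp.\ polystable logarithmic Higgs) bundles converge in $R_{DR}^\nilp(\bar X,D,x)$ (resp.\ $R_{Dol}^\nilp(\bar X,D,x)$). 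As $R_{DR}^\nilp(\bar X,D,x)$ is separated, uniqueness of limits forces the De Rham limit to be $(\bar E_\infty^{DR},\nabla_\infty,\phi_\infty)$, so $\mathbf{E}_\infty$ is the harmonic bundle of $(\bar E_\infty^{DR},\nabla_\infty,\phi_\infty)$ and its preferred section is $s_\infty$.

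It remains to promote this to convergence of the \emph{sections}. Here I would use the stronger output contained in the proof of \Cref{harmonic sequence}: its Steps 2, 4 and 5 produce $C^\infty$ identifications $\eta_i\colon C^\infty_{\bar X}\otimes\bar E_i^{Dol}\to C^\infty_{\bar X}\otimes\bar E_\infty^{Dol}$, metric-preserving at $x$, under which $\bar\partial_i\to\bar\partial_\infty$ and $\theta_i\to\theta_\infty$ in $L^\infty$ with respect to any fixed metric, the harmonic metrics converge in $W^{1,2}_{loc}$ and $L^\infty_{loc}$, and everything is controlled near $D$ by the adapted holomorphic frames of Step 3. The value $s_i(\lambda)$ over $\lambda\in\bP^1\setminus\infty$ is the $\lambda$-connection $\lambda D_i'+D_i''$, with $D_i'=\nabla_{h_i}^{1,0}+\theta_i^*$ and $D_i''=\nabla_{h_i}^{0,1}+\theta_i$, on the underlying logarithmic extension; writing $\nabla_{h_i}=d-(\theta_i+\theta_i^*)$ in an adapted frame one sees that $D_i'$, $D_i''$, and hence the whole $\lambda$-family, converge in operator norm to those underlying $s_\infty$, uniformly for $\lambda$ in compacts, and likewise in the conjugate chart near $\lambda=\infty$. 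Applying the criterion \cite[Lemma 5.12]{SimpsonmoduliI} fiberwise over $\lambda$ and patching over a finite cover of $\bP^1$, on each member of which $R_{DH}^{\unip,\loc}(\bar X,D,x)$ is realized as a closed analytic subspace of a product of a domain in $\bC^n$ with $\bC^N$, one obtains uniform convergence of the $s_i$, i.e.\ $s_i\to s_\infty$ in the section space.

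The main obstacle is precisely this last step: turning the strong convergence of harmonic-bundle data from \Cref{harmonic sequence} into Douady-space convergence of the associated sections of $R_{DH}^{\unip,\loc}(\bar X,D,x)\to\bP^1$. One must obtain uniformity in $\lambda$ over all of $\bP^1$, including the two ``ends'' $\lambda=0,\infty$ which are described through distinct realizations glued by the $\bG_m$-twisted identification; one must unwind both the gluing construction of $R_{DH}^{\unip,\loc}(\bar X,D,x)$ and its moduli-functor description of sections in order to recognize the $\lambda$-connection data as the local coordinates in which section convergence is tested; and near $D$ one must reconcile the flat/adapted frames (in which the Chern connection converges only $L^\infty_{loc}$ once metric derivatives enter) with the holomorphic frames adapted to the logarithmic extensions, which is what Steps 3--5 of the proof of \Cref{harmonic sequence} are designed to handle. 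Once these points are settled, continuity of $\ev_1^{-1}$ follows, and together with the continuous bijectivity of $\ev_1$ this gives the homeomorphism.
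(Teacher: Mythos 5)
Your proposal is correct and follows essentially the same route as the paper: continuity of $\ev_1$ is immediate, and continuity of the inverse is reduced (sequentially, as in \Cref{framed metric comparison}) to the strong convergence of the harmonic-bundle operators established in the proof of \Cref{harmonic sequence}, converted into uniform convergence of the sections $\bP^1\to R_{DH}^{\unip}(\bar X,D,x)$ via the $L^q$-operator-norm description of the metric on $R_{\Hod}^{\nilp}$ from \cite[Lemma 5.12]{SimpsonmoduliI}. The only cosmetic difference is that you obtain the boundedness hypothesis of \Cref{harmonic sequence} from convergence in $M_{Dol}^{\nilp}$ via \Cref{comparison}, which is legitimate at this point of the paper and matches what the paper leaves implicit.
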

\begin{proof}
The evaluation map is continuous, so we need only show the inverse is continuous.  As in the proof of \Cref{framed metric comparison}, it suffices to show any convergent sequence in the target has a subsequence which converges on the source to the point corresponding to the limit.

The claim is essentially a restatement of \Cref{harmonic sequence}.  Indeed, \cite[Theorem 5.12]{SimpsonmoduliI} implies that a metric on $R^{\nilp}_{\Hod}(\bar X,D,x)$ is given by
    \begin{align*}d((\lambda_1,\bar E_1,\nabla_1,\phi_1),&(\lambda_2,\bar E_2,\nabla_2,\phi_2))\\
    &=\inf_{\psi}|\lambda_1-\lambda_2|^2+|\phi_1-\psi_*\phi_2|^2+\|\nabla_1-\psi_*\nabla_2\|^2+\|\bar\partial_{\bar E_1}-\psi_*\bar\partial_{\bar E_1}\|^2\end{align*}
    where the infimum is over all $C^\infty$ isomorphisms $\psi:C^\infty\otimes \bar E_1\to C^\infty\otimes \bar E_2$ and the norms are operator norms as operators $L_1^q(\bar E_1)\to L^q(\bar E_1\otimes\omega_{\bar X}(D))$.  The proof of \Cref{harmonic sequence} implies if $(V_i,\phi_i)$ converges to $(V_\infty,\phi_\infty)$ and has framings which are compatible with the metric, then after passing to a subsequence, the associated preferred sections converge uniformly as maps $\bP^1\to R_{DH}^{\unip}(\bar X,D,x)$ to the section corresponding to $(V_\infty,\phi_\infty)$.     
\end{proof}

\begin{cor}\label{orbit is C0}
Let $(\bar X,D)$ be a connected log smooth proper algebraic space.  For any $(V,\phi)\in R_B^{\unip}(X,x)(\bC)^\ss$, there is a continuous map $\bR_{>0}\to \PrefSect^{\unip}_{DH}(\bar X,D,x)$ such that the composition with 
\[\PrefSect^{\unip}_{DH}(\bar X,D,x)\xrightarrow{\ev_1} R_B^{\unip}(X,x)(\bC)^{\ss}\to M^{\unip}_B(X)(\bC)\]
is the continuous map $\bR_{>0}\to M_B^\unip(X)(\bC)$ given by the $\bR_{>0}$-orbit of $V$.
       
\end{cor}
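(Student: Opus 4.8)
The plan is to take the desired map to be $t\mapsto t\cdot s_{(V,\phi)}$, where $s_{(V,\phi)}\in\PrefSect^\unip_{DH}(\bar X,D,x)$ is the framed quasiunipotent preferred section attached to the framed Deligne extension $(V_{DR},\phi)$ of $(V,\phi)\in R_B^\unip(X,x)(\bC)^\ss$ --- that is, the twistor line of the tame purely imaginary harmonic bundle underlying $V$ --- and $t\cdot$ denotes the action of $\bR_{>0}\subset\bG_m$ on preferred sections. First I would prove continuity. Since the $\bG_m$-action on the germ $R_{DH}^{\unip,\loc}(\bar X,D,x)$ is by analytic automorphisms stabilizing the closed subspace $R_{DH}^\unip(\bar X,D,x)$, it induces a continuous action on the ambient space of sections and, because it carries preferred sections to preferred sections, on the subspace $\PrefSect^\unip_{DH}(\bar X,D,x)$; as $s_{(V,\phi)}$ is valued in $R_{DH}^\unip$, well inside the germ, its orbit map is continuous. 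The substance behind this is \Cref{section convergence}: choosing a Lefschetz curve $i\colon C\to X$ (\Cref{lefschetz curve}), the pullback $i^*\colon\PrefSect^\unip_{DH}(\bar X,D,x)\to\PrefSect^\unip_{DH}(\bar C,D_C,c)$ is a continuous injection carrying $s_{(V,\phi)}$ to $s_{(i^*V,i^*\phi)}$ and intertwining the two $\bR_{>0}$-actions (using \Cref{qu lefschetz}, \Cref{pullback ss}), so after identifying (for a suitable $(V_0,h_0)$) via the homeomorphism $\ev_1\colon\PrefSect^\unip_{DH}(\bar C,D_C,c)^{(V_0,h_0)}\xrightarrow{\cong}R_{DR}^\unip(C,c)(\bC)^{(V_0,h_0)}$ one reduces continuity to the convergence of framed tame nilpotent harmonic bundles (\Cref{harmonic sequence}); the relative compactness needed to apply it comes from \cite[Lemma 10.2]{mochizukitame} together with \Cref{framed stuff is proper}.

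It then remains to check that the composition
\[
\bR_{>0}\;\longrightarrow\;\PrefSect^\unip_{DH}(\bar X,D,x)\;\xrightarrow{\ \ev_1\ }\;R_B^\unip(X,x)(\bC)^\ss\;\longrightarrow\;M_B^\unip(X)(\bC)
\]
is the $\bR_{>0}$-orbit of $V$. For this I would use evaluation at $\lambda=0$, which gives a $\bG_m$-equivariant map $\ev_0\colon\PrefSect^\unip_{DH}(\bar X,D,x)\to R_{Dol}^\nilp(\bar X,D,x)(\bC)$ with $\bG_m$ acting on the target by scaling the Higgs field, such that $\ev_0(s_{(V,\phi)})$ is the framed polystable logarithmic Higgs bundle with nilpotent residues underlying the harmonic bundle of $V$. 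Reducing again to the curve $C$ so that \Cref{DolDRcorrespondence} and \Cref{comparison} apply, the bijections $M_{Dol}^\nilp(\bar C,D_C)(\bC)\to M_{DR}^\nilp(\bar C,D_C)(\bC)\to M_B^\unip(C)(\bC)$ carry $[\ev_0(s)]$ to $[\ev_1(s)]$ and thence to the class of the underlying local system, since $\ev_0(s)$ and $\ev_1(s)$ are the $\lambda=0$ and $\lambda=1$ specializations of a single twistor line. Hence the composition above, post-composed with the injection $M_B^\unip(X)(\bC)\hookrightarrow M_B^\unip(C)(\bC)$ (\Cref{lem immersive}), sends $t$ to the class of $t\cdot[\ev_0(s_{(i^*V,i^*\phi)})]$, which by the definition of the $\bR_{>0}$-action on $M_B^\unip(C)(\bC)$ (\Cref{sect:R* on Betti}) equals $t\cdot(i^*V)=i^*(t\cdot V)$ by functoriality of the action. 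Injectivity of $i^*$ then gives that the composition is $t\mapsto t\cdot V$, which is the continuous $\bR_{>0}$-orbit of $V$ by \Cref{C* inv of comp}.

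The main obstacle is the continuity of the first arrow: the subspace topology on $\PrefSect^\unip_{DH}(\bar X,D,x)$ inside the Douady space is not transparent, so seeing that the $\bR_{>0}$-action on it is continuous --- equivalently, controlling when a family of preferred sections converges --- forces one to invoke the hard analytic input of this section, the convergence theorem for framed tame nilpotent harmonic bundles (\Cref{harmonic sequence}, whence \Cref{section convergence}). Everything else --- the Lefschetz reduction to curves, the functoriality of preferred sections under pullback, and the identification of the $\lambda=0$ and $\lambda=1$ specializations of a twistor line --- is routine bookkeeping.
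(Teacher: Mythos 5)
Your proposed family $t\mapsto t\cdot s_{(V,\phi)}$ does not work, and the failure is exactly at the two places you treat as formal. First, the $\bG_m$-action on $R_{DH}^{\unip,\loc}(\bar X,D,x)$ does \emph{not} carry preferred sections to preferred sections: preferred sections are the twistor lines of tame purely imaginary harmonic bundles, and only the unitary circle $U(1)\subset\bG_m$ preserves this class. Concretely, if $s_{(V,\phi)}(\lambda)=(\lambda,\bar\partial+\lambda\theta^{\dagger_h},\theta+\lambda\partial_h)$ is the twistor line of the harmonic bundle $(E,\bar\partial,\theta,h)$ of $V$, then $(t\cdot s_{(V,\phi)})(\lambda)=(\lambda,\bar\partial+t^{-1}\lambda\theta^{\dagger_h},t\theta+\lambda\partial_h)$; for this to be the twistor line of some harmonic bundle one would need a metric $h'$ with $\partial_{h'}=\partial_h$ and $(t\theta)^{\dagger_{h'}}=t^{-1}\theta^{\dagger_h}$, which forces $|t|=1$ whenever $\theta\neq 0$. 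Second, even ignoring membership in $\PrefSect$, the evaluation at $\lambda=1$ of the translated section is the flat connection $\partial_h+\bar\partial+t\theta+t^{-1}\theta^{\dagger_h}$ built from the \emph{fixed} metric $h$, whereas the orbit point $t\cdot V$ is by definition the local system of the rescaled Higgs bundle $(E,\bar\partial,t\theta)$ equipped with its \emph{re-solved} harmonic metric $h_t$. These are different points of $M_B^{\unip}(X)(\bC)$ in general: already for a rank-one system on a curve with character $\chi(\gamma)=\chi^{un}(\gamma)\exp\int_\gamma(\theta+\bar\theta)$, the translated section at $\lambda=1$ has character $\chi^{un}(\gamma)\exp\int_\gamma(t\theta+t^{-1}\bar\theta)$ while $t\cdot V$ has character $\chi^{un}(\gamma)\exp\int_\gamma(t\theta+t\bar\theta)$. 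So your sentence ``since $\ev_0(s)$ and $\ev_1(s)$ are the $\lambda=0$ and $\lambda=1$ specializations of a single twistor line'' is precisely what fails for the translated section, and the composition you compute is not the $\bR_{>0}$-orbit. (Relatedly, there is no $\bR_{>0}$-action on $\PrefSect^{\unip}_{DH}$ for your pullback $i^*$ to intertwine; producing a continuous lift of the orbit to $\PrefSect^{\unip}_{DH}$ is the whole content of the corollary, not an input to it.)

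The paper's proof confronts exactly the step you bypass, namely that the harmonic metric must be re-solved for each $t$ and the resulting twistor lines shown to vary continuously. After the Lefschetz reduction to a curve (which you also use), it lifts the continuous orbit path $\bR_{>0}\to M_B^{\unip}(X)(\bC)$ through the map $R_B^{\unip}(X,x)(\bC)^{(V_0,h_0)}\to M_B^{\unip}(X)(\bC)$, which is proper by \Cref{framed stuff is proper} and a locally trivial fiber bundle over each Jordan--H\"older stratum by the slice theorem; since the orbit stays in one stratum, a continuous lift through any point over $V$ exists. The homeomorphism $\ev_1$ of \Cref{section convergence} then converts this lifted path of metrically framed objects into a continuous family of preferred sections (each being the twistor line of $t\cdot V$ with its own metric $h_t$), and one finishes by acting with $\bGL_r(\bC)$ to reach the given framing $\phi$. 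If you want to repair your write-up, replace the translated-section construction by this path-lifting argument; your identification of \Cref{harmonic sequence}/\Cref{section convergence} as the essential analytic input is correct, but it must be applied to the family of re-solved harmonic bundles of $t\cdot V$, not to the $\bG_m$-translates of a single section.
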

\begin{proof}Pullback to a Lefschetz curve $f:C\to X$ induces a closed immersion on section spaces, so we may assume $X$ is a curve.  By \Cref{framed stuff is proper}, $R^\unip_B(X,x)^{(V_0,h_0)}\to M^\unip_B(X)(\bC)$ is proper, and in corestriction to the locally closed stratification of $M^\unip_B(X)(\bC)$ by Jordan--H\"older type it is a locally trivial fiber bundle (by the existence of a slice \cite[Theorem 2.1]{mostow}).  The $\bR_{>0}$-orbit of $V$ in $M_B^\unip(X)(\bC)$ is contained in one of these strata, hence a lift to $R^\unip_B(X,x)^{(V_0,h_0)}$ exists through any point above $V$, and by \Cref{section convergence} this yields a lift to $\PrefSect^{\unip}_{DH}(\bar X,D,x)^{(V_0,h_0)}$.  Acting by $\bGL_r(\bC)$, we obtain the claim for any framing.
\end{proof}

\subsection{Hodge substacks}

\begin{defn}\label{defn Hodge substack}
    Let $X$ be a connected smooth algebraic space with a log smooth compactification $(\bar X,D)$.  A locally closed substack $\cZ\subset\cM_B(X)$ is \emph{Hodge} at $V\in\cZ(\bC)^{\qu,\ss}$ if the following conditions are satisfied.  Let $R\cZ\subset R_B(X,x)$ be the base-change to the framed space and let $\phi$ be a framing of $V$.  Then we have:
    \begin{enumerate}
    \item $R\cZ$ is formally twistor at $(V,\phi)$, and formally Hodge if $V$ underlies a $\CVHS$.
        \item The formal twistor subspace $\uSpec\hat\cO_{R\cZ,(V,\phi)}\subset\uSpec\hat\cO_{R_B(X,x),(V,\phi)}$ is tangent to $\Theta$.
        \item The formal twistor germ $\hat s_{R\cZ,(V,\phi)}$ is algebraic. 
    \end{enumerate}
A locally closed substack $\cZ\subset\cM_B(X)$ is a \emph{Hodge substack} if it is closed under semisimplification and Hodge at every $V\in\cZ(\bC)^{\qu,\ss}$.

    If $X$ is a connected normal algebraic space we define the corresponding notion for $\cZ\subset \cM_B(X)$ if the restriction $\cZ\subset \cM_B(X)\subset \cM_B(U)$ is Hodge for any smooth affine $U\subset X$.

\end{defn}
The definition is easily seen to not depend on the log smooth compactification.  Note that all of the conditions only involve the Deligne--Hitchin space over $\bG_m$, where its functorial behavior is controlled by the Betti stack.  Nonetheless, each of the three structures extend to the full Deligne--Hitchin space, and in particular by \Cref{extend germ} the algebraic germ $Z_{\Hod}(V,\phi)$ (resp. $Z_{\overline{\Hod}}(V,\phi)$) will extend over $\bA^1$ (resp. $\bP^1\setminus 0$) and agree with the twistor germ $\hat s_{R\cZ,(V,\phi)}$.

  The following functorial properties follow from the definitions and what we've proven.
\begin{prop}\label{basic prop hodge substack}
        Let $X$ be a connected normal algebraic space.

    \begin{enumerate}
        \item For any $r$, $\cM_B(X,r)$ and $\{\triv_r\}$ are Hodge substacks of $\cM_B(X)$.
        \item Assume $X$ is a curve.  For any $V\in \cM_B(X)(\bC)^{\qu,\ss}$, the fixed local monodromy leaf $FM(V)\subset\cM_B(X)$ is Hodge at $V$.  For any $V\in \cM_B(X)(\bC)^{\qu,\ss}$, the fixed residual eigenvalues leaf $FE(V)\subset\cM_B(X)$ is a closed Hodge substack.

        \item Intersections and reductions of Hodge substacks are Hodge substacks.  Irreducible components of reduced Hodge substacks are Hodge substacks.  Finite unions of closed Hodge substacks are Hodge substacks.   
        \item Let $f:X\to Y$ be a morphism of connected normal algebraic spaces and $f^*:\cM_B(Y)\to \cM_B(X)$ the pullback morphism.  Then:
        \begin{enumerate}
           \item For any Hodge substack $\cZ\subset \cM_B(X)$, $(f^*)^{-1}(\cZ)\subset \cM_B(Y)$ is a Hodge substack.
        \item For any Hodge substack $\cZ\subset \cM_B(Y)$, $f^*\cZ\subset \cM_B(X)$ is a Hodge substack provided $f$ is dominant or a Lefschetz curve (see \Cref{defn lefschetz}).
        \end{enumerate}
        \item Inverse images of Hodge substacks under the direct sum and tensor product morphisms $\oplus,\otimes:\cM_B(X)^2\to\cM_B(X)$ are Hodge substacks\footnote{With the obvious definition of Hodge substack of $\cM_B(X)^2$.}. 
 
    \end{enumerate}
\end{prop}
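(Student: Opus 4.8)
The plan is to bootstrap everything from the corresponding statement for formally twistor substacks, namely Proposition \ref{lem: prop Hodge sub of Betti}, together with the two additional pieces of structure that distinguish a Hodge substack from a formally twistor substack in \Cref{defn Hodge substack}: tangency of the formal twistor subspace to the canonical derivation $\Theta$ (condition (2)), and algebraicity of the formal twistor germ $\hat s_{R\cZ,(V,\phi)}$ (condition (3)). Since all of these operations are compatible with passing to the framed moduli space $R_B(X,x)$ (using \Cref{cor use framing} and, in the Deligne--Hitchin setting, the pullback morphism \eqref{pullback DH}), by reducing to smooth affine $U\subset X$ and then to $R\cZ\subset R_B(U,x)$ we may work entirely in the framed picture. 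The closure under semisimplification is already handled exactly as in the proof of Proposition \ref{lem: prop Hodge sub of Betti}, via \Cref{ss in closure} and \Cref{pullback ss}, so the content is entirely in conditions (2) and (3) at each point $V\in \cZ(\bC)^{\qu,\ss}$.

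First I would dispose of condition (2). The canonical derivation $\Theta$ is functorial by \Cref{theta is canonical}: it lifts functorially to each $\uSpec\hat\cO_{R_B(X,x),(V,\phi)}$. Thus for the pullback operations in part (4), tensor/direct sum in part (5), and the fixed-leaf substacks of part (2), the fact that $\uSpec\hat\cO_{R\cZ,(V,\phi)}$ is cut out by a $\Theta$-compatible ideal follows because the ideal is pulled back (resp. pushed forward, resp. tensored) from an ideal that is already $\Theta$-tangent, and $\Theta$ commutes with these operations by its functoriality. For intersections this is immediate (a derivation tangent to two subschemes is tangent to their intersection, hence to the reduction), and for irreducible components and finite unions one uses \Cref{MHS ring components}: the minimal primes and nilradical of a pro-$\tate$-MTS-algebra are $\bC$-MS-ideals, hence (being $\Theta$-invariant subsheaves, since $\Theta$ integrates the $\bG_m$-action that defines the twistor grading) are $\Theta$-tangent. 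For $\cM_B(X,r)$ and $\{\triv_r\}$ the statement is trivial. So condition (2) propagates through all five operations with essentially no new input.

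For condition (3), the algebraicity of $\hat s_{R\cZ,(V,\phi)}$, the plan is to exploit the characterizations and extension results of \Cref{extend germ} and \Cref{defo is tw}. Algebraicity of $\hat s_{R\cZ,(V,\phi)}$ over $\bG_m$ says $\hat s_{R\cZ,(V,\phi)}|_{\bG_m}$ is an isomorphism onto the formal completion of the Zariski closure $Z_{\Hod}(V,\phi)|_{\bG_m}$ (and likewise on the conjugate side). Since $\hat s_{R_B(X,x),(V,\phi)}|_{\bG_m}$ is a formal isomorphism onto the completion of $R^{\qu|n,\loc}_\Hod(\bar X,D,x)|_{\bG_m}$ along $s_{(V,\phi)}$ by \Cref{formal iso R}, and $\hat s_{R\cZ,(V,\phi)}$ factors through it as a closed immersion, algebraicity is equivalent to the statement that the image of this closed immersion over $\bG_m$ is the completion of an \emph{algebraic} subspace of $R_\Hod$. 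For pullbacks along immersive $f^*$ (part (4b), using \Cref{lem immersive}), the relevant Zariski closures are pulled back from $Z_{\Hod}$ on the target by the algebraic pullback $R_\Hod(\bar Y,E,y)\to R_\Hod(\bar X,D,x)$, so algebraicity is preserved; for $(f^*)^{-1}$ (part (4a)) one uses instead \Cref{lem mini compat}(2) identifying the miniversal deformation of the preimage with the fibered product, and the compatibility of $\hat s$ with this via \Cref{thm:versal frame}(3). For $\oplus,\otimes$ in part (5) one runs the same argument with the fiber product $S\to R_B(X,x)^2\to\cM_B(X)$ as in the proof of Proposition \ref{lem: prop Hodge sub of Betti}\eqref{lem: prop Hodge sub of Betti 5}: sums and tensor products of the flat $\lambda$-connections underlying the $\cM_{DH}$-families are again such families, so the algebraic Zariski closures behave functorially. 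For intersections and reductions, $Z_{\Hod}$ of an intersection is contained in the intersection of the $Z_{\Hod}$'s, and one checks equality of completions using that $\hat s$ is a closed immersion; for irreducible components one uses that an irreducible component of $\hat\cO_{R\cZ,(V,\phi)}$ corresponds via the formal isomorphism to an irreducible component of $Z_{\Hod}(V,\phi)$, and a component of an algebraic germ is algebraic by noetherianity (as in \Cref{MHS ring components} and the proof of \Cref{extend germ}); for finite unions of \emph{closed} substacks one uses that the local ring of a union is the fiber product, and the union of algebraic germs is algebraic. Finally for $\cM_B(X,r)$ itself algebraicity is \Cref{formal iso R}, and for $\{\triv_r\}$ it is trivial. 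The main obstacle I anticipate is the bookkeeping in part (4b) and part (5): one must verify that the Zariski closure $Z_{\Hod}(f^*V,f^*\phi)$ (resp. for $\oplus,\otimes$) is genuinely the \emph{schematic} image under the algebraic pullback map on the $R_\Hod$ spaces — not merely a priori a larger analytic germ — which requires knowing the algebraic pullback/sum/tensor morphisms on Deligne--Hitchin spaces are compatible with the formal twistor structures in the strong sense of \Cref{extend germ}(1) (isomorphism onto the completion of the Zariski closure over $\bA^1$), and this in turn relies crucially on the Lefschetz reduction to curves plus \Cref{from quasiunipotent to unipotent} to land in the unipotent locus where \Cref{formal iso R} applies over all of $\bP^1$.
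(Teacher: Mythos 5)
Your proposal is correct and follows essentially the same route as the paper: the formally twistor condition is delegated to Proposition \ref{lem: prop Hodge sub of Betti}, the algebraicity of $\cM_B(X,r)$ is \Cref{formal iso R}, and the $\Theta$-tangency and algebraicity conditions for the remaining operations are checked directly (the paper simply declares these "easily seen", while you supply the bookkeeping via \Cref{theta is canonical}, \Cref{MHS ring components}, \Cref{lem mini compat}, \Cref{extend germ} and the Lefschetz/unipotent reduction). The only quibble is your parenthetical justification that minimal primes are $\Theta$-tangent because "$\Theta$ integrates the $\bG_m$-action defining the twistor grading"; the cleaner reason is simply that a derivation of a noetherian $\bQ$-algebra automatically preserves the nilradical and minimal primes.
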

\begin{proof}
The formally twistor condition follows from \Cref{lem: prop Hodge sub of Betti}. 
For $\cM_B(X,r)$, the algebraicity condition is 
\Cref{formal iso R}. For the rest, the $\Theta$-tangent condition and the algebraicity condition are easily seen to have the required properties.   
\end{proof}

The following will be essential, especially \Cref{abs Hodge contain R*} below.

\begin{thm}\label{abs Hodge contain R* germ}
    Let $\cZ\subset \cM_B(X)$ be a Hodge substack, and $Z\subset M_B(x)$ the image in the good moduli space.  Then for any $n$, any irreducible component $Z_0$ of $Z^{\qu|n}$ satisfies the following property.  For any $V\in Z_0(\bC)$, a germ of the $\bR_{>0}$-orbit of $V$ around $V$ is contained in $Z_0$.
\end{thm}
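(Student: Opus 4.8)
The plan is to reduce to the case where $X$ is a log smooth projective curve and $V$ has unipotent local monodromy, where we can exploit the Deligne--Hitchin space constructed above together with the continuity results of \Cref{section convergence} and \Cref{orbit is C0}. First I would observe that both the statement and all three conditions defining a Hodge substack only depend on $\cZ$ through its base-change $R\cZ\subset R_B(X,x)$ and only involve the behavior of the Deligne--Hitchin space over $\bG_m$; moreover passing to the good moduli space is compatible with restriction along a Lefschetz curve $i\colon C\to X$ (by \Cref{lefschetz curve}, the pullback $i^*_B$ is a closed immersion on both $\cM_B$ and $M_B$, as well as on the quasiunipotent loci). So by \Cref{lem: prop Hodge sub of Betti}(4b) and \Cref{basic prop hodge substack}(4b), the restriction $i^*\cZ$ is again a Hodge substack, and the image $i^*\cZ$ in $M_B(C)$ contains the image of $V$. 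Then, since $V\in\cZ(\bC)^{\qu|n}$, there is a finite étale cover $p\colon C'\to C$ with $p^*i^*V$ of unipotent local monodromy (by \Cref{from quasiunipotent to unipotent}); the pullback $p^*$ is immersive on the framed space (\Cref{lem immersive}) and a closed immersion on completed local rings (\Cref{tw loc triv}), so we may assume throughout that $X=C'$ is a smooth affine curve and $V$ has unipotent local monodromy. Crucially, in this situation the $\bR_{>0}$-orbit of $V$ in $M_B(C')$ lies in $M_B^{\unip}(C')(\bC)$, and a lift to an $\bR_{>0}$-orbit in $M_B^{\unip}(C')$ exists through any $\bR_{>0}$-fixed point of a finite étale cover, so the descent argument of the last lemma of \Cref{sect:R* on Betti} applies.

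Next I would use the algebraicity hypothesis on the Hodge substack. Fix a framing $\phi$ of $V$. By \Cref{extend germ}, the algebraic germ $Z_{R\cZ}(V,\phi):=Z_{\Hod}(V,\phi)$ extends over $\bA^1$ and is identified, via the twistor germ $\hat s_{R\cZ,(V,\phi)}$, with the formal completion of $Z_{\Hod}(V,\phi)$ along the image of the preferred section $s_{(V,\phi)}$, and similarly $Z_{\overline{\Hod}}(V,\phi)$ extends over $\bP^1\setminus 0$; these glue to a germ $Z_{DH}^{\unip,\loc}(V,\phi)\subset R_{DH}^{\unip,\loc}(\bar X, D, x)$ of a closed analytic subspace whose completion along $s_{(V,\phi)}$ is $\uSpec\hat\cO_{R\cZ,(V,\phi)}$. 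Now here is where the $\Theta$-tangent condition enters: by \Cref{defo is tw}, the formal deformation theory of $s_{(V,\phi)}$ as a section of $Z_{DH}^{\unip,\loc}(V,\phi)\to\bP^1$ is equivalent to the formal deformation theory of the zero section of $\uSpec\hat\cO_{R\cZ,(V,\phi)}\to\bP^1$, and since this formal twistor subspace is tangent to $\Theta$ (which by \Cref{theta is canonical} is the derivation generating the $\bG_m$-action on $R_{DH}^{\unip,\loc}$), the image of $\PrefSect^{\unip}_{DH}(\bar X, D, x)^{(V_0,h_0)}$ in $Z_{DH}^{\unip,\loc}(V,\phi)$ is stable under the $\bG_m$-action near $s_{(V,\phi)}$. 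Passing to the $\lambda=1$ fiber via \eqref{eval at 1}, which by \Cref{section convergence} is a homeomorphism onto $R_{DR}^{\unip}(X,x)(\bC)^{(V_0,h_0)}$ intertwining the $\bR_{>0}$-actions, this shows that a germ of the $\bR_{>0}$-orbit of the point corresponding to $V$ in $R_{DR}^{\unip}(X,x)(\bC)^{(V_0,h_0)}$ stays inside the image of the germ $Z_{DH}^{\unip,\loc}(V,\phi)$. Using \Cref{orbit is C0} to lift the $\bR_{>0}$-orbit of $V$ in $M_B^{\unip}(X)(\bC)$ to $\PrefSect^{\unip}_{DH}(\bar X, D, x)$ and the continuity of $\ev_1$ together with that of the map to the good moduli space, a small enough germ of this orbit maps into the image $Z_0$ of $Z_{DH}^{\unip,\loc}(V,\phi)$ in $Z^{\unip|1}$, which by construction lies in the relevant irreducible component. (For a general $V$ one translates by $\bGL_r(\bC)$ to remove the framing, and the statement is intrinsic to the good moduli space, so no choices survive.)

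The main obstacle, I expect, is bookkeeping around the \emph{local} nature of everything: $R_{DH}^{\qu|n,\loc}(\bar X,D,x)$ is only a germ of a neighborhood of the quasiunipotent locus, the formal germs $\hat s_{R\cZ,(V,\phi)}$ live over $\bG_m$ a priori, and $Z_{DH}^{\unip,\loc}(V,\phi)$ is an analytic germ rather than a global object, so one must be careful that the $\bG_m$-orbit of $s_{(V,\phi)}$ (equivalently the $\bR_{>0}$-orbit of $V$) stays inside the region where all identifications hold --- which is exactly why the statement only asserts that a \emph{germ} of the orbit is contained in $Z_0$ rather than the whole orbit. A secondary technical point is matching up the three different topologies/structures (formal completions over $\bP^1$, the euclidean topology on section spaces, and the Zariski topology on $Z^{\qu|n}$): one needs that the image in $M_B^{\unip}(X)$ of a small analytic neighborhood of $s_{(V,\phi)}$ in $Z_{DH}^{\unip,\loc}(V,\phi)\cap\PrefSect$ lands in a \emph{fixed} irreducible component $Z_0$ of $Z^{\unip|1}$, which follows because $Z_{DH}^{\unip,\loc}(V,\phi)$ is irreducible (being the Zariski closure of the image of an irreducible formal germ) and analytically connected germs map into single irreducible components, but this requires invoking that irreducible components of $\cZ$ are Hodge substacks (\Cref{basic prop hodge substack}(3)) so that without loss of generality $\cZ$, hence $Z$, is already irreducible and $Z^{\qu|n}$ equidimensional along $Z_0$.
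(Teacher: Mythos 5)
There is a genuine gap at the heart of your argument. You assert that, because $\uSpec\hat\cO_{R\cZ,(V,\phi)}$ is $\Theta$-tangent and because of \Cref{defo is tw}, ``the image of $\PrefSect^{\unip}_{DH}(\bar X,D,x)^{(V_0,h_0)}$ in $Z_{DH}^{\unip,\loc}(V,\phi)$ is stable under the $\bG_m$-action near $s_{(V,\phi)}$,'' and you deduce from this that the $\bR_{>0}$-orbit of $V$ stays in $Z_0$. But these are two different actions: the $\bR_{>0}$-orbit of $V$ is obtained by rescaling the Higgs field and re-solving for the harmonic metric, producing \emph{new} preferred sections of the ambient Deligne--Hitchin space, and $\Theta$-tangency of the formal germ of $R\cZ$ does not by itself force these new sections to lie in the germ $Z_{DH}^{\qu|n,\loc}(V,\phi)$. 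What $\Theta$-tangency gives is only control at the two fixed fibers: the values $s'_t(0)$ and $s'_t(\infty)$ of the deformed preferred sections land in the germ for $t$ near $1$. The paper's proof then has to work to propagate this to the whole section: it first stratifies $\cZ^{\qu|n}$ into smooth locally closed pieces each contained in a pullback of a fixed local monodromy leaf (using \Cref{basic prop hodge substack}(2) and the lemma that the singular locus of a Hodge substack is again Hodge at its semisimple points), notes by Simpson that the orbit sections stay in the fixed-local-monodromy leaf in the Deligne--Hitchin space, and then runs a weight argument: the relative obstruction space of the piece $\twZ$ inside the leaf $\twF$ has weights $0$ and $1$ (\Cref{FLM obs}, \Cref{IH and weights}), so a section of the corresponding twistor bundle over $\bP^1$ vanishing at $0$ and $\infty$ vanishes identically, forcing the entire family of sections into the germ of $\cZ$. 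None of this — the stratification by fixed local monodromy leaves, the singular-locus reduction to the smooth case, or the weight-$\{0,1\}$ obstruction computation — appears in your proposal, and without it the key stability claim is unsupported; if $\Theta$-tangency alone sufficed, the fixed local monodromy leaves would play no role in the paper at all.

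A secondary problem is the opening reduction ``we may assume $X=C'$ and $V$ has unipotent local monodromy.'' Pulling back along the Lefschetz curve is fine, but replacing $X$ by the finite \'etale cover $C'$ does not reduce the statement: $p^*\colon M_B(C)\to M_B(C')$ is finite but not injective, so knowing that a germ of the orbit of $p^*V$ lies in the image of $p^*\cZ$ only places the orbit germ of $V$ in $(p^*)^{-1}(p^*Z)$, which can be strictly larger than $Z$; moreover continuity of the orbit through a merely quasiunipotent point of $M_B(X)$ is not available downstairs. The paper instead keeps $\cZ\subset\cM_B(X)$ with quasiunipotent monodromy throughout, uses the cover only to access the continuous family of preferred sections via \Cref{orbit is C0}, lifts that family into the germ of $R\cZ$ (this is where the weight argument is used), and only at the very end descends, using that $p^*$ on good moduli spaces is quasi-finite so a continuous lift of a germ of an $\bR_{>0}$-orbit is itself a germ of an $\bR_{>0}$-orbit. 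Your appeal to ``the descent argument of the last lemma of \Cref{sect:R* on Betti}'' does not supply this.
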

\begin{proof}

  By Lefschetz (see \Cref{lefschetz curve}) we may assume $X$ is an affine curve, with log smooth compactification $(\bar X,D)$.  We may also replace $\cZ$ with an irreducible component of the reduction of $\cZ^{\qu|n}$; note that it follows that $Z$ is irreducible.  We will need to use the following:

      \begin{lem}\label{sing of Hodge is Hodge}Let $\cZ\subset\cM_B(X)$ be a locally closed substack which is Hodge at each of its semisimple points.  Then the singular locus $\cZ^\mathrm{sing}\subset\cM_B(X)$ is also Hodge at each of its semisimple points.
    \end{lem}
\begin{proof}
The only condition to check is the formally twistor condition (the proof of the formally Hodge condition will be the same), as then the other conditions in \Cref{defn Hodge substack} are clear, since the algebraic germ will be the singular locus of the algebraic germ of $\cZ$, which is clearly $\Theta$-tangent. Let $R\cZ^\sing$ be the base-change to $R_B(X,x)$ (and likewise for $R\cZ$); we have $(R\cZ)^\sing=R\cZ^\sing$.  For a point $(\bsV,\bsphi)\in R\cW(\bC)^\ss$, let $\hat\cO_{R}$ be the deformation pro-$\tate$-MS-algebra of $R_B(X,x)$ at $(\bsV,\bsphi)$ and $I\subset \hat\cO_{R}$ be the ideal of $R\cZ$. 
By \Cref{tw equiv} (and the argument of \cite[Lemma 1.6]{ES}), $\Der_{\tate}(\hat\cO_{R},\hat\cO_{R})^\wedge$ has a natural pro-$\hat\cO_{R}$-MS-module structure:  there is a natural equivariant structure and an equivariant filtration by $\Der_{\tate}(\hat\cO_R,\fm_{\hat\cO_R}^\ell)^\wedge$ whose graded pieces $\Hom_{\tate}(\fm_{\hat\cO_R}/\fm_{\hat\cO_R}^2,\fm_{\hat\cO_R}^\ell/\fm_{\hat\cO_R}^{\ell+1})$ are twistor structures.  Since $R_B(X,x)$ is smooth, this puts a natural pro-$\hat\cO_R$-MS-module structure on $\hat\Omega_{\hat\cO_{R}/\tate}$, and the canonical morphism
 \[I/I^2\to \hat\cO_{R\cZ}\otimes_{\hat\cO_{R}}\hat \Omega_{\hat\cO_{R}/\tate}\]
 is a morphism of pro-$\hat\cO_R$-MS-modules.  The singular locus of $\cZ$ is cut out by the minors of this morphism, which generate a $\tate$-MS-ideal.  
\end{proof}

By \Cref{reduction_to_unipotent_monodromy}, there is a finite \'etale cover $p:X'\to X$ such that $p^*\cZ\subset \cM_B^\unip(X')$.  Recall that $p^*:\cM_B(X)\to\cM_B(X')$ is immersive by \Cref{lem immersive}.  Consider the composition 
\[\begin{tikzcd}
    \cM_B(X)\ar[r,"p^*"]&\cM_B(X')\ar[r,"\psi_D"]&\prod_{\bar x\in D}\cM_B(\hat \bD^*).
\end{tikzcd}\]

We claim there is a partition $\{\cZ_i\}$ of $\cZ$ by reduced locally closed smooth substacks such that:
\begin{enumerate}
    \item Each $\cZ_i$ is contained in some $(p^*)^{-1}FM(V'_i)$.
    \item Each $\cZ_i$ is Hodge at each of its semisimple points.
\end{enumerate}
Indeed, the reduced intersections $\cZ\cap (p^*)^{-1}FM(V')$ form a finite partition (since $\cZ=\cZ^{qu|n}$) by locally closed substacks which are Hodge at each of their semisimple points, by \Cref{basic prop hodge substack}, and by \Cref{sing of Hodge is Hodge} we may successively take the reduced singular locus.

It suffices to prove the statement of the theorem for each $\cZ_i$.  We are therefore reduced to the following:

\begin{claim}
    Suppose $\cZ\subset\cM_B(X)$ is a locally closed smooth substack which is Hodge at each of its semisimple points and which is contained in some $(p^*)^{-1}FM(V')$.  Let $Z$ be the image of $\cZ$ in $M_B(X)$.  Then, for any $V\in Z$, $Z$ contains a germ of the $\bR_{>0}$-orbit of $V$ around $V$. 
\end{claim}
\begin{proof}

Let $\bsV\in\cZ(\bC)^\ss$ and equip it with a framing $\bsphi$.  Set $(\bsVp,\bsphip):=p^*(\bsV,\bsphi)$.  Since $p^*: R_B(X,x)\to R_B(X',x')$ is immersive by \Cref{lem immersive}, it induces closed immersions on the level of germs $p^*:(R\cZ,(\bsV,\phi))\to(R_B^\unip(X,x),(\bsV',\bsphi'))$, as well as for the twistor germ 
$$p^*:R\cZ^{\qu|n,\loc}_{DH}(\bsV,\bsphi)\to R_{DH}^{\unip,\loc}(\bar X',D',x')(\bsV',\bsphi'),$$  
where we recall that the source (resp. target) is the germ of the Zariski closure of $\hat s_{R\cZ,(V,\phi)}$ (resp. $\hat s_{R_B(X',x'),(V',\phi')}$).
According to \Cref{orbit is C0}, there is a continuous family $(\boldsymbol{s}'_t)_{t\in\bR_{>0}}$ of preferred sections of $R_{DH}^{\unip,\loc}(\bar X',D',x')$ with $\bss_1'=s_{(\bsV',\bsphi')}$ and such that $\bss_t'(1)$ lifts the $\bR_{>0}$-orbit of $\bsV'$.  These sections satisfy the following properties:
\begin{enumerate}
    \item Since $\uSpec\hat\cO_{R\cZ,(\bsV,\bsphi)}$ is $\Theta$-tangent, $\boldsymbol{s}'_t(0),\boldsymbol{s}'_t(\infty)\in p^*R\cZ^{\qu|n,\loc}_{DH}(\bsV,\bsphi)$ for $t$ sufficiently close to $1$.
    \item By \cite[Theorem 7]{Simpson_noncompact} (see also the discussion in \cite{simpsonHitchinDeligne}), $\bss_t'$ is contained in the fixed local monodromy leaf $RFM_{DH}^{\unip,\loc}(\bsV',\bsphi')$.
\end{enumerate}

The space of sections satisfying these two conditions is an analytic subspace of $\Sect(R_{DH}^{\unip,\loc}(\bar X',D',x')/\bP^1)$, and $s_t'$ is a continuous family of sections, so it follows that there is an analytic family of sections $\sigma'_a$ over a connected analytic germ $(A,a_0)$ which satisfies these two conditions and such that there is a continuous map $\gamma:((1-\epsilon,1+\epsilon),1)\to (A,a_0)$ for $\epsilon>0$ with $s'_t=\sigma'_{\gamma(t)}$.

\begin{subclaim}
    The family $\sigma_a'$ is contained in $p^*R\cZ^{\qu|n,\loc}_{DH}(\bsV,\bsphi)$.
\end{subclaim}
\begin{proof}
    Denote by $\hat\cO_{\twR'},\hat\cO_{\twF}$ the pro-$\tate$-MTS-algebras of $R_B(X',x'), RFM(\bsV',\bsphi')$ at $(\bsV',\bsphi')$, and $\hat \cO_{\twZ}$ that of $R\cZ$ at $(\bsV,\bsphi)$.  Let $\twR',\twF,\twZ$ be the associated formal spaces obtained by applying $\uSpec$, with $\twZ$ embedded in $\twR'$ via $p^*$.  By \Cref{defo is tw}, the deformation theory of $\sigma_{a_0}$ in $R^{\unip,\loc}_{DH}(\bar X',D',x')$ is equivalent to the deformation theory of the zero section in $\twR'$.  The relative obstruction space of $\twZ$ in $\twF$ is given by $(I_{\twZ/\twF}/\fm_{\twF}I_{\twZ/\twF})^\vee$, where $I_{\twZ/\twF}$ is the ideal of $\cZ$ in $\cF$, and we have a commutative diagram with exact rows and columns
    \[\begin{tikzcd}
&I_{\twF/\twR'}/\fm_{\twR'}I_{\twF/\twR'}\ar[r,equals]\ar[d]&I_{\twF/\twR'}/\fm_{\twR'}I_{\twF/\twR'}\ar[d]&&\\
       0\ar[r] &I_{\twZ/\twR'}/\fm_{\twR'} I_{\twZ/\twR'}\ar[d]\ar[r]&\fm_{\twR'}/\fm_{\twR'}^2\ar[r]\ar[d]&\fm_\twZ/\fm_\twZ^2\ar[d,equals]\ar[r]&0\\
0\ar[r,dashed]&I_{\twZ/\twF}/\fm_\twF I_{\twZ/\twF}^2\ar[r]\ar[d]&\fm_\twF/\fm_\twF^2\ar[r]\ar[d]&\fm_\twZ/\fm_\twZ^2\ar[r]&0\\
 &       0&0&&
    \end{tikzcd}\]
    where the injection comes from the smoothness of $\twR'$ and $\twZ$.  By a diagram chase, we obtain exactness including the dashed arrow.  
    
    According to \Cref{FLM obs} and \Cref{IH and weights}, $\fm_\twF/\fm_\twF^2$ has weights 0 and -1, so $(I_{\twZ/\twF}/\fm_\twF I_{\twZ/\twF})^\vee$ has weights 0 and 1.  In particular, any section which vanishes at two distinct points vanishes.  Since $\sigma_a'$ is contained in $\twF$ and contained in $\twZ$ over $0$ and $\infty$, it follows that $\sigma_a'$ is contained in $\twZ$ everywhere. 
\end{proof}

Thus, there is a continuous lift of the family of preferred sections $s'_t$ of $R_{DH}^{\unip,\loc}(\bar X',D',x')$ to a family of sections $s_t$ of $R\cZ_{DH}^{\qu|n,\loc}(\bsV,\bsphi)$.  Note that any lift of a preferred section $s_{E'}$ is preferred, since such a section yields a flat subbundle of $p_*E'$, which is therefore a sub-harmonic bundle.  Thus, $\bss_t$ is a continuous family of preferred sections of $R\cZ_{DH}^{\qu|n,\loc}(\bsV,\bsphi)$ through $(\bsV,\bsphi)$ whose image in $M_B(X)(\bC)$ is a lift of an arc of the $\bR_{>0}$-orbit of $\bsV'$.  Since $p^*:M_B(X)\to M_B(X')$ is quasi-finite (in fact finite \cite[Lemma 1]{rapinchuk98}), any continuous lift of a germ of an $\bR_{>0}$-orbit is a germ of a $\bR_{>0}$-orbit, and this completes the proof of the claim, and therefore also of the theorem. 
\end{proof}
\end{proof}
\begin{cor}\label{abs Hodge contain R*}
    Let $\cZ\subset \cM_B(X)$ be a closed Hodge substack and $Z$ its image in $M_B(X)$.  Then for any $n$, each irreducible component of $Z^{\qu|n}$ is $\bR_{>0}$-stable.
\end{cor}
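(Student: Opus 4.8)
The plan is to deduce \Cref{abs Hodge contain R*} from \Cref{abs Hodge contain R* germ} by a connectedness/openness argument, in the same spirit as the proof of \Cref{C* inv of comp}. First I would fix $n$ and let $Z_0$ be an irreducible component of $Z^{\qu|n}$, where $Z\subset M_B(X)$ is the image of the closed Hodge substack $\cZ$. The goal is to show that for every $V\in Z_0(\bC)$ the entire $\bR_{>0}$-orbit $\bR_{>0}\cdot V$ lies in $Z_0$. \Cref{abs Hodge contain R* germ} gives this \emph{locally}: a germ of the orbit around each point $V\in Z_0(\bC)$ is contained in $Z_0$. The remaining work is to promote ``germ of the orbit at each point'' to ``the whole orbit''.

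The key steps, in order, would be as follows. First, observe that since $\cZ$ is a \emph{closed} Hodge substack, $Z$ is a Zariski closed subset of $M_B(X)(\bC)$, and hence so is $Z^{\qu|n}$ (it is cut out inside $Z$ by the preimage, under $\mathrm{Char}_B(\bar X, D, r)$ composed with the good moduli map, of the finitely many torsion points of the relevant $((\G_m)^r)^I$ whose eigenvalue orders divide $n$; these are closed). Thus $Z_0$ is a Zariski closed irreducible subset, in particular closed in the euclidean topology. Second, fix $V\in Z_0(\bC)$ and consider the continuous orbit map $c_V\colon \bR_{>0}\to M_B(X)(\bC)$, $t\mapsto t\cdot V$ — recall the $\bR_{>0}$-action is continuous on $M_B(X)(\bC)$, at least after reducing to curves via Lefschetz and using \Cref{comparison}, or more directly we only need continuity of each orbit, which follows from \Cref{C* families}/\cite[Lemma 10.2]{mochizukitame} and the functoriality of the action. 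Consider the set $T_V=\{t\in\bR_{>0} : t\cdot V\in Z_0\}$. It is closed in $\bR_{>0}$ since $Z_0$ is euclidean-closed and $c_V$ is continuous, and it contains $1$. Third, I claim $T_V$ is open: if $t_0\in T_V$, then $V':=t_0\cdot V\in Z_0(\bC)$, and since $Z_0$ is an irreducible component of $Z^{\qu|n}$ and $t_0\cdot V$ still has eigenvalues of local monodromy of order dividing $n$ (the $\bR_{>0}$-action scales the Higgs field, preserving nilpotency of residues and the purely imaginary condition, hence preserving the quasiunipotence type — this is the content of the discussion in \Cref{sect:R* on Betti}), we may apply \Cref{abs Hodge contain R* germ} at $V'$ to conclude a germ of $\bR_{>0}\cdot V'$ around $V'$ lies in $Z_0$; but a germ of $\bR_{>0}\cdot V'$ at $V'$ is exactly $\{t\cdot V : |t-t_0| \text{ small}\}$ (reparametrizing the orbit), so a neighborhood of $t_0$ lies in $T_V$. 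Since $\bR_{>0}$ is connected and $T_V$ is nonempty, open, and closed, $T_V=\bR_{>0}$, i.e.\ $\bR_{>0}\cdot V\subset Z_0$. As $V\in Z_0(\bC)$ was arbitrary, $Z_0$ is $\bR_{>0}$-stable.

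The main obstacle I anticipate is the careful verification that \Cref{abs Hodge contain R* germ} can legitimately be re-applied at the translated point $V'=t_0\cdot V$: one must know that $V'$ still lies in the \emph{same} irreducible component $Z_0$ of $Z^{\qu|n}$ (not merely in $Z^{\qu|n}$) before invoking the germ statement, and that the ``quasiunipotence order'' is genuinely preserved by the $\bR_{>0}$-action so that $V'\in Z^{\qu|n}$ at all. The first point is handled by the openness argument itself combined with the fact that, by \Cref{abs Hodge contain R* germ} applied at $V$, a whole connected germ of the orbit stays in $Z_0$, so by propagating along the orbit the connected set $\bR_{>0}\cdot V$ never leaves $Z_0$ for a connected interval containing $1$ — then closedness forces it to be all of $\bR_{>0}$; more carefully, one shows that the subset of $\bR_{>0}$ mapping into $Z_0$ is relatively open and closed, which needs only that $Z_0$ is locally the unique component through each of its points and that the germ statement holds at each such point. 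The second point is precisely the assertion, recalled in \Cref{sect:R* on Betti} and \Cref{DolDRcorrespondence}, that scaling the Higgs field by $t\in\bR_{>0}$ preserves nilpotency of the residues and the purely imaginary property, hence the local monodromy eigenvalues (which are the exponentials of $2\pi i$ times the residual eigenvalues) are unchanged up to the Simpson–Mochizuki dictionary — in particular their orders are unchanged — so $V'\in Z^{\qu|n}$. Once these two points are nailed down, the connectedness argument is routine.
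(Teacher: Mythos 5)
Your argument is correct and is exactly the deduction the paper intends: it states \Cref{abs Hodge contain R*} with no proof, treating it as immediate from \Cref{abs Hodge contain R* germ} together with the closedness of $Z^{\qu|n}$ (hence of each component $Z_0$), via precisely your open–closed connectedness argument along the continuous orbit map. One small remark: your worry about the $\qu|n$ condition being preserved by the action is superfluous for the openness step, since $t_0\in T_V$ already places $t_0\cdot V$ in $Z_0\subset Z^{\qu|n}$, which is all that is needed to re-apply the germ theorem there.
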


\Cref{abs Hodge contain R*} will allow us to produce points underlying $\CVHS$ in Hodge substacks provided $Z^\qu$ is nonempty, which will always be the case by the results of \Cref{sect:bialg} if we require bialgebraicity.

\begin{rem}In the proper case, Simpson shows there is an $\bR_{>0}$-action on $\cM_B(X)(\bC)$ lifting the $\bR_{>0}$ action on $M_B(X)$ \cite{simpsonhiggs}; by pre- and post-composing with complex conjugation, we obtain another action.  The resulting $\bR_{>0}*\bR_{>0}$-action is expected to have as fixed points the local systems underlying $\bC$-VMHSs \cite[Remark, p. 45]{simpsonhiggs}, and perhaps the induced action of $\bR_{>0}*\bR_{>0}$ on the formal germ at a fixed point would yield the $\bC$-MHS of \Cref{thm:versal}.  Thus, the formally twistor condition might be related to a $\bR_{>0}*\bR_{>0}$-stability condition, and it is conceivable the notion of a Hodge substack could be equivalent to the notion of a $\bR_{>0}*\bR_{>0}$-stable substack.
\end{rem}

\section{Properties of constructible subsets of local systems}\label{sect:constructible}

Let $X$ be a connected normal algebraic space. 
 We begin with some general remarks about various properties of constructible subsets of the Betti stack $\cM_B(X)$, and eventually formulate a notion of absolute Hodge subsets. It will be important to work with points of the stack as opposed to the coarse space since we will ultimately want to allow non-semisimple local systems.

\subsection{Nonextendability}\label{sect:nonextend}We briefly review some of the notions from \cite{brunebarbeshaf}.

\begin{defn} Let $X$ be a connected normal algebraic space and $\Sigma\subset \cM_B(X)(\bC)$ a subset.
\begin{enumerate}

\item For $X\subset \bar X$ a partial compactification by a connected normal algebraic space (meaning an inclusion as an open subset of a connected normal algebraic space), we say $\Sigma$ extends to $\bar X$ if $\Sigma$ is contained in the image of the restriction $\cM_B(\bar X)(\bC)\to\cM_B(X)(\bC)$.
\item If $X$ is smooth we say $\Sigma$ is \emph{nonextendable} if the following condition is met.  For any finite \'etale cover $f:X'\to X$ and any partial log smooth compactification $X'\subset \bar X'$, if $f^*\Sigma$ extends to $\bar X'$ then $X'=\bar X'$.  
\item In general, we say $\Sigma$ is \emph{nonextendable} if for some (hence any) resolution $f:Y\to X$, $f^*\Sigma$ is nonextendable.
\end{enumerate}
\end{defn}

Note that any $\Sigma$ is nonextendable if $X$ is proper.

\begin{prop}
    Let $X$ be a connected normal algebraic space and $\Sigma\subset \cM_B(X)(\bC)$.  The following are equivalent.
    \begin{enumerate}
        \item\label{nonext1} $\Sigma$ is nonextendable. 
        \item\label{nonext2} For every proper morphism $f:C\to X$ from a smooth quasiprojective curve $C$, $f^*\Sigma\subset\cM_B(C)(\bC)$ is nonextendable. 
        \item\label{nonext3} For every morphism $f:\bD^*\to X^\an$ from the punctured disk which completes to a morphism from $\bD$ to some partial compactification of $X^\an$, $f^*\Sigma\subset\cM_B(\bD^*)(\bC)$ contains a non-trivial local system.
    \end{enumerate}
\end{prop}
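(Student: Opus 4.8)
The plan is to prove the cycle of implications $\eqref{nonext1}\Rightarrow\eqref{nonext2}\Rightarrow\eqref{nonext3}\Rightarrow\eqref{nonext1}$, reducing everything to the curve/disk setting where extendability becomes a statement about local monodromy. By definition of nonextendability for general normal $X$ (via a resolution $Y\to X$) and since pulling back along a further map only makes the condition easier to verify, we may freely replace $X$ by a smooth quasiprojective variety; I would fix a log smooth compactification $(\bar X, D)$ throughout.

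For $\eqref{nonext1}\Rightarrow\eqref{nonext2}$: given a proper $f\colon C\to X$ from a smooth quasiprojective curve, suppose $f^*\Sigma$ were extendable, i.e.\ after a finite \'etale cover $C'\to C$ and a partial log smooth compactification $C'\subset \bar C'$ with $C'\neq \bar C'$, the pulled-back family extends across some boundary point $\bar c\in \bar C'\setminus C'$. The key local fact is that for a local system on a punctured disk $\bD^*$, extendability across the puncture is equivalent to triviality of the local monodromy, and this property is detected by the conjugacy class of a small loop. So the hypothesis says every $V\in\Sigma$ has trivial monodromy around the loop $\gamma$ in $X$ obtained as the image of a small loop around $\bar c$ under $C'\to C\to X$. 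Since $f$ is proper, $C'\to C\to X$ completes (after possibly shrinking) to a map from a disk into a partial compactification $X\subset \bar X'$ adding exactly the divisor hit by $\gamma$; then $\Sigma$ (or its pullback to a finite \'etale cover of $X$ where the relevant boundary is a genuine divisor) extends across that divisor, contradicting nonextendability. I would phrase this carefully using the normalization/resolution to arrange that the image loop is a simple loop around an irreducible boundary divisor.

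For $\eqref{nonext2}\Rightarrow\eqref{nonext3}$: a map $g\colon \bD^*\to X^\an$ completing to a map from $\bD$ into a partial compactification of $X^\an$ can be approximated/replaced by a proper algebraic map $C\to X$ from a smooth quasiprojective curve whose analytification near a chosen boundary point is homotopic to $g$ — concretely, the loop $g(\partial\bD_\epsilon)$ is freely homotopic to a simple loop around a boundary divisor of a suitable partial compactification, and such loops arise from proper curve maps (e.g.\ restricting a Lefschetz-type curve, or an explicit normalization of a curve through the relevant divisor). By $\eqref{nonext2}$, $f^*\Sigma$ on $C$ is nonextendable, which at a boundary point of the minimal log smooth compactification of $C$ forces some $V\in\Sigma$ to have nontrivial local monodromy there; pulling back to $\bD^*$ via the homotopy gives a nontrivial element of $g^*\Sigma$.

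For $\eqref{nonext3}\Rightarrow\eqref{nonext1}$: this is the contrapositive — if $\Sigma$ is extendable, there is a finite \'etale $X'\to X$ and a partial log smooth compactification $X'\subset\bar X'$ with $X'\neq\bar X'$ to which $f^*\Sigma$ extends; choosing a point of $\bar X'\setminus X'$ in the regular locus of the boundary and a small analytic disk transverse to it gives a map $\bD\to\bar X'$ restricting to $\bD^*\to X'^\an\to X^\an$, along which every element of $\Sigma$ (pulled back to $X'$, hence with the same restriction to this disk) has trivial local monodromy, i.e.\ $g^*\Sigma$ is all trivial, contradicting $\eqref{nonext3}$. The main obstacle I anticipate is the bookkeeping in the curve-to-disk and disk-to-curve comparisons: one must ensure that ``the loop around a boundary point of $C$'' and ``the loop $g(\partial\bD)$'' genuinely represent the same conjugacy class of simple loops around an irreducible boundary divisor after passing to the appropriate finite \'etale cover, so that extendability statements on $X$, on $C$, and on $\bD^*$ match up. This is where the normality hypothesis (surjectivity of $\pi_1$ from the resolution) and the freedom to pass to finite \'etale covers are used essentially.
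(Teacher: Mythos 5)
Your strategy is the same as the paper's (reduce to $X$ smooth with a log smooth compactification and translate nonextendability into triviality of local monodromy around boundary divisors, detected by disks and curves), but there is a genuine gap at the crux, namely the matching between \emph{powers} of meridians and the finite \'etale covers built into the definition of nonextendability. The boundary loop of a disk $g\colon \bD^*\to X^\an$ completing into a partial compactification is in general a proper power $\gamma_E^e$ of a meridian (this happens exactly when $g$ multiply covers its image germ, e.g. $q\mapsto q^e$ into $\bG_m$), and likewise the loop produced by an \'etale cover $C'\to C$ in your step $(1)\Rightarrow(2)$ is the $e$-th power of a boundary loop of $C$, $e$ a ramification index. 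Your claim that such a loop is freely homotopic to a simple loop around an irreducible boundary divisor of a suitable partial compactification, to be arranged by ``normalization/resolution'', is false when $\dim X=1$ (no modification of a curve turns $\delta_p^e$ into $\delta_p^{\pm 1}$), and when $\dim X\geq 2$ it is not achieved by resolving the image: resolution makes the image germ transverse, but the parametrized loop remains $\gamma_E^e$. Consequently the implications where you must \emph{produce} an extension from the vanishing of all monodromies on such a power ($(1)\Rightarrow(2)$ read contrapositively, and $(2)\Rightarrow(3)$) are not justified as written; and the fallback you invoke, passing ``to a finite \'etale cover of $X$ where the relevant boundary is a genuine divisor'', is not a legitimate move in general, because a finite \'etale cover of $X$ whose ramification over a given boundary divisor is divisible by a prescribed $e$ need not exist (it depends on the finite quotients of $\pi_1(X)$).

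What actually closes the gap is the following. If $\dim X\geq 2$ and every $V\in\Sigma$ kills $\gamma_E^e$, no cover of $X$ is needed: repeatedly blowing up the intersection of the last exceptional divisor with the strict transform of $E$ (centers in the boundary, so $X$ is untouched) produces an exceptional divisor $F$ whose meridian in $\pi_1(X)$ is exactly $\gamma_E^e$, so every $V\in\Sigma$ extends across $F^\circ$ and nonextendability is contradicted. If $\dim X=1$ this trick is unavailable, and one instead uses that $\pi_1$ of an affine curve is free, so for every boundary point and every $e$ there is a finite \'etale cover (e.g. via finite $p$-group quotients) with all ramification indices over that point divisible by $e$; this is precisely where the \'etale covers in the definition earn their keep. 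Your closing remark correctly senses that the loop comparison is the delicate point, but it treats as bookkeeping what is in fact a missing construction, and one of your stated steps is false as written; to be fair, the paper's own proof is very terse here (``which can clearly be checked on curves and disks''), but a complete argument has to contain one of the two mechanisms above.
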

\begin{proof}
    Nonextendability is unaffected by passing to a finite \'etale cover, and maps from disks and curves will lift to a resolution up to an \'etale cover of the source, so we may assume $X$ is smooth.  In the log smooth situation, a local system extends if and only if its local monodromy around every divisor is trivial, which can clearly be checked on curves and disks.
\end{proof}
\begin{cor}\label{pullback-nonextendable}
    Let $f:X\to Y$ be a proper morphism of connected normal algebraic spaces and $\Sigma\subset\cM_B(Y)(\bC)$.  Then if $\Sigma$ is nonextendable, so is $f^*\Sigma$.  If $f$ is in addition dominant, then $\Sigma$ is nonextendable if and only if $f^*\Sigma$ is.
\end{cor}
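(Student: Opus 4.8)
\textbf{Proof plan for \Cref{pullback-nonextendable}.}
The plan is to reduce everything to the characterization of nonextendability via punctured disks, i.e. condition \ref{nonext3} of the preceding proposition, and to exploit properness of $f$ to lift such disks. First I would settle the first assertion. Suppose $\Sigma\subset\cM_B(Y)(\bC)$ is nonextendable; I want $f^*\Sigma$ nonextendable. By \ref{nonext3} applied to $f^*\Sigma$ it suffices to check that for any morphism $g\colon\bD^*\to X^\an$ which extends to a morphism $\bar g\colon\bD\to\bar X^\an$ for some partial compactification $X\subset\bar X$, the pullback $g^*f^*\Sigma=(f\circ g)^*\Sigma$ contains a nontrivial local system. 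Here I would use that $f$ is proper: given such $g$, the composite $f\circ g\colon\bD^*\to Y^\an$ is a morphism, and since $f$ is proper the image $f(\bar g(\bD))$ is a compact analytic set, so $f\circ g$ extends over the puncture to a morphism $\bD\to \cY$ into a suitable partial compactification of $Y^\an$ (one can, e.g., take the closure of the graph, or simply observe that $f\circ g$ already has removable singularity behavior at $0$ once $g$ does and $f$ is proper). Applying \ref{nonext3} to $\Sigma$ (nonextendable) for the disk $f\circ g$ then gives a nontrivial local system in $(f\circ g)^*\Sigma$, which is exactly what was needed. A small wrinkle: the characterization \ref{nonext3} is phrased with $X$ smooth and log smooth compactifications, so I would first pass to resolutions $X'\to X$, $Y'\to Y$ with a lift $f'\colon X'\to Y'$ (possible after an \'etale cover of the source, which does not affect nonextendability), noting $f'$ remains proper, and run the argument there.

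For the second assertion, assume in addition that $f$ is dominant; I must show nonextendability of $f^*\Sigma$ implies nonextendability of $\Sigma$. Again reduce to the smooth log smooth setting via resolutions and \'etale covers as above, so $f\colon (\bar X,D_X)\to(\bar Y,D_Y)$ is a proper dominant morphism of log smooth pairs. By \ref{nonext3} it suffices, given a disk $h\colon\bD^*\to Y^\an$ extending to $\bar h\colon\bD\to\bar Y^\an$ and meeting the boundary at $h(0)\in D_Y$, to produce a nontrivial element of $h^*\Sigma$. The key step is to lift (a base change of) $h$ through $f$: since $f$ is proper and dominant, after a ramified base change $\bD\to\bD$, $t\mapsto t^k$, the fiber product $\bD\times_{\bar Y}\bar X$ contains an irreducible component dominating $\bD$; normalizing it and choosing a point over $0$ produces a morphism $\tilde h\colon\bD^*\to X^\an$, extending over the puncture into $\bar X^\an$, such that $f\circ\tilde h$ equals $h$ precomposed with $t\mapsto t^k$ (one should arrange, by a generic choice or by deformation, that $\tilde h(\bD^*)$ is not contained in $D_X$ so that $\tilde h$ genuinely lands in $X^\an$; this is where dominance is used). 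Now $\tilde h^* f^*\Sigma=(f\circ\tilde h)^*\Sigma=(t\mapsto t^k)^*h^*\Sigma$; since $f^*\Sigma$ is nonextendable, \ref{nonext3} gives a nontrivial local system $V$ in $\tilde h^*f^*\Sigma$, i.e. the local monodromy of the corresponding element of $h^*\Sigma$ around $0$ is nontrivial when raised to the $k$-th power, hence nontrivial. So $h^*\Sigma$ contains a nontrivial local system, and $\Sigma$ is nonextendable.

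The main obstacle I anticipate is the lifting step in the second part: arranging a lift of the boundary disk $h$ through the proper dominant morphism $f$ which (i) genuinely maps the punctured disk into the open part $X$, not into the boundary $D_X$, and (ii) still meets the relevant boundary divisor of $\bar X$ so that the nontriviality of local monodromy is not vacuous. This requires a careful choice — generically the total transform of $h(\bD)$ will meet $D_X$ only over $0$, but one must rule out degenerate configurations where every lift is swallowed by the boundary, using that $f$ is dominant (so $X\to Y$ is dominant and a general such disk has a lift through the open part) together with the fact that properness guarantees the lift extends over the puncture. Handling the possible ramification ($t\mapsto t^k$) is harmless since raising a nontrivial quasiunipotent-or-arbitrary monodromy operator to a positive power and getting the identity would force it to have been the identity, but I would state this cleanly. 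The first assertion, by contrast, is essentially formal once properness is invoked to extend $f\circ g$ over the puncture.
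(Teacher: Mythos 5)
Your overall strategy (reduce to the disk criterion \ref{nonext3} and move disks through $f$) is in the spirit of what the paper intends, but as written both directions have real gaps, and in the first direction the role of properness is actually inverted. You invoke properness to extend $f\circ g$ over the puncture into a partial compactification of $Y$; but $f(\bar g(\bD))$ is not even defined (since $f$ is not defined at $\bar g(0)$), and properness gives no removable-singularity statement. The extension of $f\circ g$ into a compactification of $Y$ holds for an \emph{arbitrary} morphism $f$, by a different argument (locally at $\bar g(0)$ each coordinate of $f$ is a ratio $a/b$ with the denominator ideal cutting out a set disjoint from $X$, so $b\circ g\not\equiv 0$ vanishes to finite order and $f\circ g$ is meromorphic at $0$), or it is bypassed entirely by testing on algebraic curves. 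Where properness genuinely enters --- and where your proof is silent --- is in ensuring that $f\circ g$ does \emph{not} extend to a map $\bD\to Y^\an$: criterion \ref{nonext3} only yields a nontrivial local system when the completed disk limits to a boundary point, and if $f(g(t))$ converged in $Y$ then $g(\bD^*)$ would lie in the compact set $f^{-1}(K)$, forcing $g(0)\in X^\an$, a contradiction. Without this the argument fails (and the statement is false for, e.g., an open immersion). The cleanest route, and the one the corollary is presumably meant to follow from, is criterion \ref{nonext2}: if $g:C\to X$ is proper from a smooth quasiprojective curve, then $f\circ g:C\to Y$ is proper, so nonextendability of $\Sigma$ gives nonextendability of $g^*f^*\Sigma$ immediately, and one concludes by \ref{nonext2}$\Rightarrow$\ref{nonext1}.

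In the second direction the lifting step you flag as the main obstacle is indeed the crux, and your sketch does not establish it: $\bD\times_{\bar Y}\bar X$ has positive-dimensional fibers in general, so ``normalize and choose a point over $0$'' does not produce a disk; you would need a multisection finite over $\bD$ whose punctured part lies in $X$ (note $\bar f$ may contract boundary components of $\bar X$ into $Y$, so lifts of $h|_{\bD^*}$ can land in $D_X$) and which extends holomorphically across $0$ --- and closures of analytic curves in a proper family over the disk are not automatically analytic, which is exactly the delicate point. All of this disappears if you again work with criterion \ref{nonext2}: given a proper $h:C\to Y$ from a smooth quasiprojective curve, surjectivity of $f$ (proper plus dominant) gives an integral closed curve $C''\subset C\times_Y X$ finite and surjective over $C$ (the closure of a closed point of the generic fiber); its normalization $C'$ maps properly to $X$, so nonextendability of $f^*\Sigma$ gives nonextendability of its pullback to $C'$, and nonextendability descends along the finite surjective map $C'\to C$ because local monodromies at punctures upstairs are powers of those downstairs. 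Two smaller points: your parenthetical that raising a nontrivial monodromy operator to a positive power and getting the identity ``would force it to have been the identity'' is false as stated (torsion operators), though the implication you actually need, $T^k\neq 1\Rightarrow T\neq 1$, is trivial; and your anticipated obstacle (ii) (that the lift meets the boundary of $\bar X$) is automatic once (i) holds, by the same properness argument as above.
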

\begin{prop}\label{nonextendable_Zariski-closure}
    Let $\Sigma\subset \cM_B(X)(\bC)$ a subset, and $\Sigma^\Zar$ the $\bQ$-Zariski closure.  Then $\Sigma$ is nonextendable if and only if $\Sigma^\Zar$ is.
\end{prop}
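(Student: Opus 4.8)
The plan is to use the characterization of nonextendability in terms of curves and disks from the preceding proposition, and the fact that extending a local system is a condition on the local monodromy around each boundary divisor, which is preserved under taking Zariski closures. Since $\Sigma \subset \Sigma^{\Zar}$, the ``if'' direction is immediate: if $\Sigma^{\Zar}$ is nonextendable then so is any subset, because extending $\Sigma^{\Zar}$ to a partial compactification would in particular extend $\Sigma$; more carefully, nonextendability of $\Sigma^{\Zar}$ means that for every finite \'etale $f\colon X' \to X$ and partial log smooth compactification $X' \subset \bar X'$, if $f^*\Sigma^{\Zar}$ extends then $X' = \bar X'$, and since $f^*\Sigma \subset f^*\Sigma^{\Zar}$, if $f^*\Sigma$ extends we would need $f^*\Sigma^{\Zar}$ to extend as well --- so this direction really requires the nontrivial input that extendability of $f^*\Sigma$ implies extendability of $f^*\Sigma^{\Zar}$, i.e. it is the same content as the ``only if'' direction.

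So the real work is: if $\Sigma$ is nonextendable then $\Sigma^{\Zar}$ is nonextendable; equivalently, if $\Sigma^{\Zar}$ extends to some $\bar X'$ (after a finite \'etale cover) with $X' \neq \bar X'$, then already $\Sigma$ extends to $\bar X'$. First I would reduce to the case $X$ smooth (nonextendability and Zariski closure are both compatible with passing to a resolution, and $f^*(\Sigma^{\Zar}) = (f^*\Sigma)^{\Zar}$ since $f^*$ is representable and in particular continuous for the Zariski topology --- here I use that $f^*\colon \cM_B(X) \to \cM_B(X')$ is a morphism of algebraic stacks, so preimages and the behavior on Zariski closures are controlled). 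Then, working with a log smooth compactification $(\bar X', D')$ of $X'$, the key point is that the map ``take local monodromy around $D'_i$'' is realized by the algebraic morphism $\psi_D = \prod_i j_{\bar x}^*\colon \cM_B(X') \to \prod_i \cM_B(\hat{\bD}^*)$ of \eqref{restrictP} (more precisely its analogue for each boundary component), and a local system on $X'$ extends across $D'_i$ if and only if its image under the $i$-th component lies in the trivial locus $\{\triv\} \subset \cM_B(\hat{\bD}^*)$, which is Zariski closed. Therefore the locus of local systems that extend across all of $D'$ is $\psi_D^{-1}(\prod_i \{\triv\})$, a Zariski closed substack of $\cM_B(X')$. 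If $f^*\Sigma$ is contained in this closed substack on $\bC$-points, then so is its Zariski closure $(f^*\Sigma)^{\Zar} = f^*(\Sigma^{\Zar})$, which gives the claim.

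The main obstacle --- really the only thing requiring care --- is the bookkeeping to pass between the partial-compactification formulation of nonextendability and the divisorial-extension formulation: one needs that ``$\Sigma$ extends to $\bar X$'' for $X \subset \bar X$ an arbitrary partial compactification by a normal algebraic space can, after resolution, be detected by triviality of local monodromy around boundary divisors in a log smooth model, and that this is exactly what $\psi_D$ computes. This is already implicit in the proof of the preceding proposition (a local system extends iff its local monodromy around every divisor is trivial), so I would cite that argument. Once the extension locus is identified as a Zariski closed substack $\mathcal{E} \subset \cM_B(X')$, the proposition follows formally: $f^*\Sigma \subset \mathcal{E}(\bC) \iff (f^*\Sigma)^{\Zar} \subset \mathcal{E}$ because $\mathcal{E}$ is Zariski closed, and $(f^*\Sigma)^{\Zar} = f^*(\Sigma^{\Zar})$ because $f^*$ is a closed immersion (as $f$ is finite \'etale, $f_*\colon \pi_1(X') \to \pi_1(X)$ is of finite index but we only need it induces a representable morphism, cf. \Cref{lem immersive}) --- so the two nonextendability conditions for $\Sigma$ and $\Sigma^{\Zar}$ literally coincide, quantifier by quantifier, over all $(f, \bar X')$.
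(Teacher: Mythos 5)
Your proposal is correct and takes essentially the same route as the paper: the paper's proof just observes that extendability of $\Sigma$ is equivalent to the containment $f^*\Sigma\subset i^*\cM_B(\bar Y')(\bC)$ for a resolution/finite \'etale cover $f\colon Y'\to X$ and a partial log smooth compactification $i\colon Y'\to\bar Y'$, and that this image is $\bQ$-Zariski closed (because $\pi_1$-surjectivity makes $i^*$ a closed immersion); you realize the same extension locus instead as the preimage of the trivial local systems under the boundary restriction maps, which is an equally valid way to see that it is a $\bQ$-Zariski closed condition, and the rest of the argument is identical.

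Two points in your write-up need correcting, though neither breaks the proof. First, your justification of $(f^*\Sigma)^\Zar=f^*(\Sigma^\Zar)$ by saying that $f^*$ is a closed immersion is false for a finite \'etale cover: restriction to a finite-index subgroup of $\pi_1$ is not injective on isomorphism classes (twisting by a character of the deck group does not change the restriction), and \Cref{lem immersive} only gives that $f^*$ is immersive. Fortunately the equality is not needed: since the extension locus $\cE$ is $\bQ$-Zariski closed and $f^*$ is a $\bQ$-morphism, $(f^*)^{-1}(\cE)$ is $\bQ$-Zariski closed in $\cM_B(X)(\bC)$, so $\Sigma\subset(f^*)^{-1}(\cE)$ if and only if $\Sigma^\Zar\subset(f^*)^{-1}(\cE)$; equivalently, mere Zariski-continuity of $f^*$ already gives the one inclusion $f^*(\Sigma^\Zar)\subseteq(f^*\Sigma)^\Zar$ that your argument actually uses. (Do note, since the statement is about the $\bQ$-Zariski closure, that the extension locus must be $\bQ$-closed; it is, as all the maps and the trivial locus are defined over $\bQ$.) Second, your opening paragraph misidentifies which implication is formal: ``$\Sigma$ nonextendable $\Rightarrow\Sigma^\Zar$ nonextendable'' is the trivial direction (if $f^*\Sigma^\Zar$ lies in the image of restriction then so does the subset $f^*\Sigma$), while the closedness of the extension locus is exactly what gives the converse; your main argument proves the correct statement, so this is only a labelling slip.
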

\begin{proof}
 The subset $\Sigma$ is extendable if and only if there is a resolution  $Y\to X$ and a finite étale cover $Y'\to Y$ with composition $f:Y'\to X$ and a partial log smooth compactification $i:Y'\to \bar Y'$ such that $f^*\Sigma\subset i^*\cM_B(\bar Y')(\bC)$.  Since $i^*\cM_B(\bar Y')(\bC)$ is $\bQ$-Zariski closed in $\cM_B(Y')(\bC)$ the claim follows.
\end{proof}
\begin{prop}\label{extend to nonextend}  Let $X$ be a smooth algebraic space and $\Sigma\subset \cM_B(X)(\bC)$ of bounded rank.  Then there is a finite \'etale cover $\pi:X'\to X$ arising from a finite quotient of the image of the monodromy of $\bigoplus_{V\in\Sigma}V$, a partial log smooth compactification $i:X'\to \bar X'$, and a nonextendable $\bar \Sigma'\subset\cM_B(\bar X')$ such that $i^*\bar\Sigma'=\pi^*\Sigma$. 
\end{prop}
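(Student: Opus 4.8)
# Proof Proposal for Proposition (extend to nonextend)

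\textbf{Strategy.} The plan is to use the characterization of nonextendability on curves/disks (the proposition just established) together with the fact that a local system on a log smooth space extends across the boundary precisely when its local monodromy around each boundary divisor is trivial. The key observation is that ``quasiunipotence of eigenvalues has order dividing $n$'' is a Zariski-closed condition, and ``triviality of local monodromy'' is even more restrictive; the obstruction to extendability is a union over finitely many candidate boundary divisors, each giving a closed condition. The $\bar\bQ$-Zariski closure of $\Sigma$ is of bounded rank (since $\Sigma$ is), so by \Cref{nonextendable_Zariski-closure} we may replace $\Sigma$ by its Zariski closure and thus assume $\Sigma$ is a constructible (indeed closed) subset.

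\textbf{Key steps.} First I would reduce to the case $X$ smooth (already hypothesized) and replace $\Sigma$ by its $\bar\bQ$-Zariski closure, which does not change nonextendability by \Cref{nonextendable_Zariski-closure}. Since $\Sigma$ has bounded rank, $\bigoplus_{V\in\Sigma}V$ is replaced by a single local system built from finitely many ``generic'' members that cut out $\Sigma^\Zar$, or one works directly with the tautological family over $\Sigma^\Zar$. Second, I would choose a log smooth compactification $(\bar X_0, D_0)$ of a finite étale cover — but a cleaner route is: the eigenvalues of local monodromy of members of $\Sigma$ around any divisor in any log smooth model vary in a constructible way, hence (by boundedness of rank) take finitely many values as algebraic functions on $\Sigma^\Zar$; restricting to a sufficiently small log smooth model and passing to the finite étale cover $\pi:X'\to X$ corresponding to a finite quotient of the monodromy image of $\bigoplus_{V\in\Sigma}V$ kills the finite-order part of all these local monodromies, so that on $X'$ each member of $\pi^*\Sigma$ has \emph{unipotent} local monodromy around every exceptional divisor introduced. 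Third, one then enlarges $X'$ to $\bar X'$ by adding back precisely those boundary divisors around which $\pi^*\Sigma$ has trivial local monodromy (this is a well-defined partial compactification of $X'$ as a normal algebraic space, obtained by an explicit local chart construction, gluing in the divisor along the locus where the monodromy representation is unramified). By construction $\pi^*\Sigma$ extends to $\bar X'$, so there is $\bar\Sigma'\subset\cM_B(\bar X')(\bC)$ with $i^*\bar\Sigma' = \pi^*\Sigma$; take $\bar\Sigma'$ to be the Zariski closure of a lift, or equivalently the scheme-theoretic image under the closed immersion $i^*:\cM_B(\bar X')\to\cM_B(X')$ of (the closed substack corresponding to) $\pi^*\Sigma^\Zar$.

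\textbf{Nonextendability of $\bar\Sigma'$.} Finally I would verify $\bar\Sigma'$ is nonextendable using criterion \eqref{nonext3} of the preceding proposition: for any morphism $f:\bD^*\to (\bar X')^\an$ extending to $\bD$ in a further partial compactification, I must produce a nontrivial local system in $f^*\bar\Sigma'$. If no such nontrivial local system existed, the image of $f$ would accumulate at a boundary point where \emph{all} members of $\bar\Sigma'$ (equivalently, after pullback, all members of $\pi^*\Sigma$ via $i^*\bar\Sigma'=\pi^*\Sigma$) have trivial local monodromy; but by the construction in step three, every such divisor was \emph{already added} to $\bar X'$, contradicting that it is a new boundary point. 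One must also handle resolutions and further finite étale covers of $\bar X'$, but nonextendability is insensitive to these by \Cref{pullback-nonextendable} and the passage $\bD^*\to$ curve in the criterion, so the disk argument suffices.

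\textbf{Main obstacle.} The technical heart is step three: making precise that one can ``add back exactly the divisors with trivial local monodromy'' as a normal algebraic space partial compactification $\bar X'$ of $X'$, functorially enough that a local system with trivial monodromy along those divisors genuinely extends. For a log smooth pair this is classical (Deligne: a local system extends to a \emph{scheme} across a divisor iff its monodromy there is trivial, by filling in trivially on a disk bundle and gluing), but one needs that the partial compactification is a normal \emph{algebraic space} — which follows since $X'\subset\bar X_0'$ sits inside a log smooth compactification and $\bar X'$ is the open subspace of $\bar X_0'$ obtained by removing the boundary divisors with \emph{nontrivial} monodromy, together with removing the non-normal or bad locus; normality is preserved since we only take an open subspace of a smooth space. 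The second subtlety is the interaction with the finite étale cover: one must ensure the divisors being added downstairs/upstairs are compatible, which is why the statement only asserts nonextendability \emph{after} passing to $\pi:X'\to X$, sidestepping the descent question entirely.
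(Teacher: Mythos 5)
The paper's own proof is a two-line reduction: replace $\Sigma$ by its Zariski closure (harmless by \Cref{nonextendable_Zariski-closure}), take the sum of local systems at the generic points of its irreducible components to reduce to a single local system, and cite \cite[Proposition 3.5]{brunebarbeshaf}. Your proposal instead tries to carry out that construction from scratch, and as written it has a genuine gap, in two places. First, the claim that after the finite \'etale cover every member of $\pi^*\Sigma$ has \emph{unipotent} local monodromy is false in general (eigenvalues of local monodromy need not be roots of unity) and is not the relevant dichotomy anyway: what matters is whether the local monodromy has finite or infinite order, since only finite-order local monodromy can be trivialized by a further finite cover, and the cover must be chosen to be \emph{injective} on those finite local monodromy groups (as in \Cref{injectivity_N_roots}-type arguments), not to ``kill'' anything unipotent.

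The more serious gap is in your step three and the nonextendability check. Removing from one fixed log smooth compactification exactly the boundary divisors with nontrivial (or infinite) local monodromy does not produce a nonextendable collection: by \Cref{characterization_nonextendability_local_fundamental_group}, nonextendability is governed by injectivity of the representations of the full local fundamental groups at \emph{every} boundary point, on every model, not by divisor-by-divisor nontriviality on one model. Concretely, take $X=\G_m\times\G_m$ and $\Sigma=\{\chi_\lambda\boxtimes\chi_{\lambda^{-1}}\}$ with $\lambda\in\bC^*$ of infinite order. Every boundary divisor of $\bP^1\times\bP^1$ has local monodromy $\lambda^{\pm1}$ of infinite order, so your recipe returns $\bar X'=X'$ with no divisor added; yet $\Sigma$ is extendable: blowing up $(0,\infty)$, the exceptional divisor's loop is $\gamma_{\{z=0\}}\gamma_{\{w=\infty\}}$ with monodromy $\lambda\cdot\lambda^{-1}=1$, so $\Sigma$ extends across it. Your disk-criterion argument misses exactly this phenomenon: the disk may limit to a boundary point of a \emph{different} partial compactification, whose boundary loop is a product of loops of several divisors of your fixed model, and the monodromy can vanish on such a product while being nontrivial (even of infinite order) on each factor; so ``every divisor with trivial local monodromy was already added'' does not yield the contradiction you want. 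Repairing this forces one to work on suitable blown-up models and add exceptional divisors over strata where the local $\pi_1$-representation acquires a kernel, together with a bookkeeping of which finite parts the \'etale cover must trivialize --- which is precisely the content of the external result the paper quotes.
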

\begin{proof}
Taking Zariski closure by \Cref{nonextendable_Zariski-closure} and the sum of local systems corresponding to generic points of each irreducible component we may reduce to a single local system and this is \cite[Proposition 3.5]{brunebarbeshaf}.
\end{proof}
Note that in the previous proposition we can do without the finite \'etale cover if we allow a partial compactification by a Deligne--Mumford stack.  In fact, as a consequence of \Cref{mainShaf} such a partial compactification exists even when $X$ is only assumed to be normal.

Let $X$ be the complement of a normal crossing divisor in a smooth algebraic space $\bar X$. For every $x \in \bar X$, the local fundamental group of $X$ at $x$ (inside $\bar X$) is by definition the group $\pi_1(X, \bar X, x) := \pi_1(B \cap X, x^\prime)$,
where $B$ is a small euclidean ball in $\bar X$ around $x$ and $x^\prime \in B \cap X$ (since $\pi_1(B \cap X, x^\prime)$ is abelian, it does not depend of the choice of $x^\prime$).

\begin{lem}\label{characterization_nonextendability_local_fundamental_group}
Let $X$ be a smooth algebraic space and $\Sigma\subset \cM_B(X)(\bC)$. Let $\bar X \supset X$ be a log smooth compactification of $X$. Then, $\Sigma$ is nonextendable if and only if the induced representations of the local fundamental group $\pi_1(X, \bar X, x) $ at each point $x$ of $\bar X \setminus X$ are collectively injective.
\end{lem}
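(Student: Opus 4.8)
The plan is to unwind both sides of the claimed equivalence into a statement about local monodromy around the boundary divisors. Write $\bar X \setminus X = D = \bigcup_{i \in I} D_i$ as a normal crossing divisor. The key observation is that a local system $V$ on $X$ extends to $\bar X$ if and only if its monodromy around each $D_i$ is trivial; more precisely, $V$ extends to a neighborhood of a point $x \in D$ if and only if the restriction of the monodromy representation $\rho_V \colon \pi_1(X^\an, x) \to \GL_r(\bC)$ to the image of the local fundamental group $\pi_1(X, \bar X, x)$ is trivial. Thus I would first record the elementary fact that $\Sigma$ extends to $\bar X$ (in the precise sense that $\Sigma \subset \img(\cM_B(\bar X)(\bC) \to \cM_B(X)(\bC))$) if and only if every $V \in \Sigma$ has trivial restriction to each $\pi_1(X, \bar X, x)$, $x \in D$; equivalently (since $\pi_1(X, \bar X, x)$ is generated by the small loops around the components $D_i$ through $x$, and these map into $\pi_1(X^\an, x)$ up to conjugacy) the direct sum $\bigoplus_{V \in \Sigma} V$ has trivial monodromy around every $D_i$.

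Next I would handle the general (non-log-smooth) definition of nonextendability. By definition $\Sigma$ is nonextendable iff, for every finite \'etale cover $f \colon X' \to X$ and every partial log smooth compactification $X' \subset \bar X'$, the hypothesis $f^*\Sigma \subset \img(\cM_B(\bar X')(\bC) \to \cM_B(X')(\bC))$ forces $X' = \bar X'$. I would argue that it suffices to test this on the fixed log smooth compactification $\bar X$ of $X$ itself (together with its finite \'etale covers). Indeed, any partial compactification $\bar X'$ of a cover $X'$ can be dominated, after further modification which does not change the relevant local monodromy data, by (an open in) the normalization of $\bar X$ in $X'$; and extendability of $f^*\Sigma$ across a boundary component of $\bar X'$ is detected by triviality of local monodromy, a condition insensitive to modifications that are isomorphisms near the relevant divisor. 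Moreover passing to a finite \'etale cover $X' \to X$ only replaces each local monodromy group $\pi_1(X, \bar X, x)$ by a finite-index subgroup of a conjugate, so collective injectivity of the representations is unchanged. Hence nonextendability of $\Sigma$ is equivalent to: for every boundary component $D_i$ of $\bar X$, some $V \in \Sigma$ has nontrivial monodromy around $D_i$ — equivalently $\bigoplus_{V \in \Sigma} V$ has nontrivial monodromy around each $D_i$.

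Finally I would match this up with the stated criterion. The representation of $\pi_1(X, \bar X, x)$ induced by $\Sigma$ is the direct sum over $V \in \Sigma$ of the restrictions $\rho_V|_{\pi_1(X, \bar X, x)}$; "collectively injective" means this direct-sum representation is injective. Since $\pi_1(X, \bar X, x)$ is a free abelian group on the small loops $\gamma_{i,x}$ around the components $D_i$ passing through $x$, a direct sum of linear representations of it is injective iff no nonzero element of the lattice acts trivially, which (by diagonalizing the commuting monodromy operators) is equivalent to: the tuple of monodromy eigenvalue data around the $D_i$ through $x$ generates a subgroup of full rank — in particular, for each $i$ through $x$, at least one $V \in \Sigma$ has nontrivial monodromy around $D_i$. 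Ranging over all $x \in D$ covers all components $D_i$, giving exactly the condition obtained in the previous step. I expect the main obstacle to be the bookkeeping in the reduction step: carefully justifying that one may restrict attention to the canonical compactification $\bar X$ and its \'etale covers rather than arbitrary partial log smooth compactifications of arbitrary \'etale covers, and checking that collective injectivity of local monodromy representations is genuinely a local, modification-invariant condition. The eigenvalue/lattice argument at the end is routine linear algebra for finitely generated abelian groups and should be dispatched quickly.
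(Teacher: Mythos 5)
There is a genuine gap, and it sits exactly where you flagged the "main obstacle": the reduction in your second step is false. Nonextendability cannot be tested only against the irreducible boundary divisors of the fixed compactification $\bar X$ (and of its finite \'etale covers): partial compactifications whose boundary divisors are exceptional over the deeper strata of $\bar X\setminus X$ impose genuinely new conditions, and such divisors are \emph{not} dominated by "(an open in) the normalization of $\bar X$ in $X'$" — they are contracted to points of $\bar X$. Concretely, take $X=(\bC^*)^2$, $\bar X=\bP^1\times\bP^1$, and $\Sigma=\{V\}$ with $V$ of rank one having monodromy $\lambda$ around $\{z_1=0\}$ and $\lambda^{-1}$ around $\{z_2=0\}$, with $\lambda$ of infinite order. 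Every boundary divisor of $\bar X$ sees nontrivial monodromy, so your intermediate characterization ("some $V\in\Sigma$ has nontrivial monodromy around each $D_i$") would declare $\Sigma$ nonextendable; but $V$ extends across the exceptional divisor of the blow-up of the corner $\{z_1=z_2=0\}$ (whose loop is $\gamma_1+\gamma_2$), so $\Sigma$ is extendable — and, consistently with the Lemma, the local representation at that corner is not injective (the kernel contains $(1,1)$). Thus your step-two condition is strictly weaker than both nonextendability and collective injectivity, and the first link of your chain of equivalences fails.

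The same conflation resurfaces in your last step: from injectivity of the representation of $\pi_1(X,\bar X,x)\cong\bZ^k$ you deduce, with an "in particular", that each $D_i$ through $x$ carries nontrivial monodromy, and then you treat this implication as an equivalence. It is not: the kernel can be generated by non-coordinate vectors, exactly as in the example above, which is where the corners of $D$ matter. (A smaller inaccuracy: injectivity of a $\bZ^k$-representation is not read off from eigenvalues alone — unipotent monodromy gives injective representations with all eigenvalues equal to $1$.) The paper's proof takes a different and correct route: it appeals to the punctured-disk criterion for nonextendability (item (3) of the Proposition immediately preceding the Lemma) and notes that elements of $\pi_1(X,\bar X,x)$ are realized by germs of holomorphic disks $\bD^*\to X^\an$ limiting into the boundary; disk germs detect arbitrary classes in the local fundamental groups at the corners, which is precisely the information your reduction to the divisors $D_i$ of the fixed $\bar X$ discards. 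To repair your argument you would have to quantify over boundary divisors on all modifications/partial compactifications lying over $\bar X$ (equivalently, over all disk germs), not merely over the components of $\bar X\setminus X$.
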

\begin{proof}
Indeed, for every $x \in \bar X$, every element of the local fundamental group $\pi_1(X, \bar X, x)$ is obtained as the image of a generator of $\pi_1(\Delta^\ast)$ for a holomorphic map $f:\bD^\ast\to X^\an$ from the punctured disk which completes to a holomorphic map from $\bD$ to $\bar X$ with $x = f(0) \in \bar X$. 
\end{proof}

\begin{prop}\label{nonextendable_reduction_to_finite}
Let $X$ be a smooth algebraic space and $\Sigma\subset \cM_B(X)(\bC)$. Let $\bar X \supset X$ be a log smooth compactification of $X$. Assume that $\Sigma$ contains only local systems with unipotent local monodromy.  If $\Sigma$ is nonextendable, then there exists a finite subset $\Sigma_0 \subset \Sigma$ which is nonextendable.
\end{prop}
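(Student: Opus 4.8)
The statement to prove is: if $X$ is smooth, $\bar X\supset X$ log smooth, and $\Sigma\subset\cM_B(X)(\bC)$ consists of local systems with unipotent local monodromy and is nonextendable, then some \emph{finite} $\Sigma_0\subset\Sigma$ is already nonextendable. The plan is to use the characterization of nonextendability via local fundamental groups (\Cref{characterization_nonextendability_local_fundamental_group}) together with a Noetherianity/Baire-type argument. By that lemma, $\Sigma$ is nonextendable if and only if for every $x\in D:=\bar X\setminus X$, the collection of representations $\{\rho_V|_{\pi_1(X,\bar X,x)}\}_{V\in\Sigma}$ is collectively injective, i.e. $\bigcap_{V\in\Sigma}\ker\big(\rho_V|_{\pi_1(X,\bar X,x)}\big)=\{1\}$. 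The key point is that $\pi_1(X,\bar X,x)\cong\bZ^{k(x)}$ where $k(x)$ is the number of components of $D$ through $x$, and since all $V\in\Sigma$ have unipotent local monodromy, each $\rho_V$ sends a generator $\gamma_i$ of $\pi_1(X,\bar X,x)$ to a unipotent matrix $U_{V,i}$; the kernel of $\rho_V|_{\pi_1(X,\bar X,x)}$ is then a \emph{subgroup} of $\bZ^{k(x)}$, hence $\Lambda_{V,x}:=\ker(\rho_V|_{\pi_1(X,\bar X,x)})$ is a saturated (up to finite index — in fact, for unipotents, genuinely a direct summand) sublattice; equivalently $\Lambda_{V,x}\otimes\bQ$ is a $\bQ$-subspace of $\bQ^{k(x)}$.

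First I would reduce to a single point $x\in D$: nonextendability of $\Sigma$ is equivalent to collective injectivity at each of the finitely many strata of $D$, so it suffices to produce, for each $x$ (one per connected component of each stratum — again finitely many), a finite subset $\Sigma_{0,x}\subset\Sigma$ that is collectively injective at $x$, and then take $\Sigma_0=\bigcup_x\Sigma_{0,x}$. Fix such an $x$. Consider the family of $\bQ$-subspaces $W_{V,x}:=\ker(\rho_V|_{\pi_1(X,\bar X,x)})\otimes\bQ\subset\bQ^{k(x)}$ for $V\in\Sigma$. Here the unipotence hypothesis is essential: without it the local monodromy of $\gamma_i$ could be a non-unipotent element of finite order times a unipotent, whose powers eventually lie in the kernel, and $\Lambda_{V,x}$ would only be a finite-index subgroup of a summand, so the ``$\otimes\bQ$'' passage would not record whether $1\in\ker$. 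With unipotence, $\rho_V(\gamma)=1$ iff $\gamma\in\Lambda_{V,x}$ iff the image of $\gamma$ in $\bQ^{k(x)}$ lies in $W_{V,x}$, so collective injectivity at $x$ is exactly $\bigcap_{V\in\Sigma}W_{V,x}=\{0\}$. Since $\bQ^{k(x)}$ is finite-dimensional, the family $\{W_{V,x}\}_{V\in\Sigma}$ has a minimal element under finite intersection: choose $V_1,\dots,V_m\in\Sigma$ with $\bigcap_{j=1}^m W_{V_j,x}$ of minimal dimension; then for any further $V\in\Sigma$, $W_{V,x}\cap\bigcap_j W_{V_j,x}=\bigcap_j W_{V_j,x}$, i.e. $\bigcap_j W_{V_j,x}\subset W_{V,x}$, so $\bigcap_j W_{V_j,x}=\bigcap_{V\in\Sigma}W_{V,x}$. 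As $\Sigma$ is nonextendable this intersection is $\{0\}$ by \Cref{characterization_nonextendability_local_fundamental_group}, and we set $\Sigma_{0,x}=\{V_1,\dots,V_m\}$.

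Finally I would assemble: there are only finitely many connected components of the locally closed strata of $D$ (the strata indexed by subsets $I$ of the index set of components of $D$ with $\bigcap_{i\in I}D_i\neq\varnothing$), and the local fundamental group $\pi_1(X,\bar X,x)$ is constant along each such component, so picking one basepoint $x$ per component and taking $\Sigma_0=\bigcup_x\Sigma_{0,x}$ gives a finite subset of $\Sigma$ which is collectively injective at every point of $D$; by \Cref{characterization_nonextendability_local_fundamental_group} again, $\Sigma_0$ is nonextendable. I expect the only genuine subtlety — and the place to be careful rather than the place of real difficulty — is verifying that for unipotent local monodromy the set $\{\gamma\in\bZ^{k(x)}:\rho_V(\gamma)=1\}$ is a subgroup whose rationalization is a $\bQ$-subspace and that ``$1\in\ker$'' is detected after $\otimes\bQ$; this is an elementary linear-algebra fact about commuting unipotent matrices (a product of commuting unipotents $U_1^{a_1}\cdots U_{k}^{a_k}$ equals the identity iff it does for all rational scalings, since $t\mapsto U_i^{t}=\exp(t\log U_i)$ is a polynomial curve and the relation $\sum a_i\log U_i + (\text{higher brackets})=0$ cuts out a linear subspace). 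No o-minimality, harmonic theory, or stack machinery is needed here; the argument is a soft combination of the given lemma with finite-dimensionality of $\bQ^{k(x)}$ and finiteness of the boundary stratification.
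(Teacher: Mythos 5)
Your proposal is correct and follows essentially the same route as the paper: reduce via \Cref{characterization_nonextendability_local_fundamental_group} to collective injectivity on the local fundamental groups, treat the finitely many boundary strata one at a time, observe that unipotence of the local monodromies makes each kernel a saturated subgroup of $\pi_1(X,\bar X,x)\cong\bZ^{k(x)}$, and conclude by finite-dimensionality that a finite subfamily already has trivial intersection. The only difference is that you spell out the saturation and the minimal-dimension selection in more detail than the paper does.
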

Observe that \Cref{nonextendable_reduction_to_finite} is not true without any assumption on $\Sigma$, as shown by the example where $X = \bC^\ast$ and $\Sigma\subset \cM_B(X)(\bC)$ consists of the torsion rank one local systems.

\begin{proof}
Thanks to Proposition \ref{characterization_nonextendability_local_fundamental_group}, it is sufficient to find a finite subset $\Sigma_0 \subset \Sigma$ such that the induced representations of the local fundamental group at each point of $\bar X \setminus X$ are collectively injective. Since there are only finitely many strata, it is sufficient to look at what happen in the neighborhood of one point $x \in \bar X \setminus X$. For every $V \in \Sigma$, let $K^x_V$ be the kernel of the induced representation of the free abelian group of finite type $\pi_1(X, \bar X, x)$. Since the image of this representation is made of commuting unipotent matrices, $K^x_V$ is a saturated subgroup of $\pi_1(X, \bar X, x)$, i.e. the quotient group is torsion-free. By assumption, the intersection $\cap_{V \in \Sigma} K^x_V = \{0\}$. Since the $K^x_V$'s are saturated, it follows that there is a finite subset $\Sigma_0 \subset \Sigma$ such that $\cap_{V \in \Sigma_0} K^x_V = \{0\}$, finishing the proof.
\end{proof}


\subsection{Absolute constructible subsets}\label{sect:abssets}

In this section we show that the notion of bialgebraicity with respect to the Riemann--Hilbert correspondence as in \Cref{sect:bialg} is essentially equivalent to absoluteness, which is intrinsic to the Betti side. 

Let $X$ be a smooth complex algebraic space. Let $LS(X)$ be the abelian category of complex local systems on $X^{\an}$. Let $\mathrm{Conn}(X)^{\reg}$ be the abelian category of algebraic flat vector bundles on $X$ with regular singularities. Deligne's version of the Riemann-Hilbert correspondence states that the functor $\mathrm{Conn}(X)^{\reg} \to LS(X)$ which associates to an algebraic flat vector bundle with regular singularities $(V, \nabla)$ the locally constant sheaf of $\nabla$-flat sections of $V^{\an}$ is an equivalence of abelian categories \cite{Deligne_book}. For any field automorphism $\sigma \in \Aut(\bC/\bQ)$ and any complex scheme $Y$, let $Y^\sigma \to Y$ denote the base change of $Y$ via $\sigma$ (it is a morphism of schemes but not of complex schemes). By pulling-back along the morphism of $X^\sigma \to X$, we get an equivalence of categories $\mathrm{Conn}(X)^{\reg} \simeq \mathrm{Conn}(X^\sigma)^{\reg}$, and therefore a bijection between the isomorphism classes of objects in both categories. Therefore, any automorphism $\sigma \in \Aut(\bC/\bQ)$ induces a canonical bijection $\cM_B(X)(\bC) \simeq \cM_B(X^\sigma)(\bC)$. Given a subset $\Sigma \subset \cM_B(X)(\bC)$, we denote by $\Sigma^\sigma$ its image in $\cM_B(X^\sigma)(\bC)$ through this canonical bijection.

\begin{defn}[Simpson {\cite{Simpsonrankone}}, Budur-Wang {\cite{Budur-Wang}}]
Let $X$ be a smooth complex algebraic space. Let $K \subset \bC$ be a subfield. A subset $\Sigma  \subset \cM_B(X)(\bC)$ is absolute $K$-constructible (resp. absolute $K$-closed or absolute $K$-locally closed) if for any automorphism $\sigma \in \Aut(\bC/\bQ)$ the subset $\Sigma^\sigma \subset \cM_B(X^\sigma)(\bC)$ defined above is $K$-constructible (resp. $K$-closed or $K$-locally closed).
\end{defn}

If $X$ is merely a normal complex algebraic space, the open immersion $X^{\reg} \subset X$ induces a surjective homomorphism between their fundamental groups. Therefore, the induced morphism of algebraic stacks $\cM_B(X) \to \cM_B(X^{\reg})$ is a closed immersion.

\begin{defn}
Let $X$ be a normal complex algebraic space. Let $K \subset \bC$ be a subfield. 
\begin{enumerate}
    \item A subset $\Sigma  \subset \cM_B(X)(\bC)$ is absolute $K$-constructible (resp. absolute $K$-closed or absolute $K$-locally closed) if its image by the injective map $\cM_B(X)(\bC) \to \cM_B(X^{\reg})(\bC)$ is absolute $K$-constructible (resp. absolute $K$-closed or absolute $K$-locally closed).
    \item A morphism between (products of) Betti stacks is an absolute $K$-morphism if its graph is absolute $K$-constructible.
\end{enumerate}

\end{defn}

\begin{prop}\label{abs basic prop}
Let $X$ be a normal complex algebraic space. Let $K \subset \bC$ be a subfield. Then 
\begin{enumerate}
    \item $\cM_B(X)(\bC)$ is an absolute $K$-closed subset of $\cM_B(X)(\bC)$.
    \item Absolute $K$-constructible subsets of $\cM_B(X)(\bC)$ form a Boolean subalgebra.
    \item Pullback under algebraic morphisms, tensor products, and direct sums are absolute $K$-morphisms.
    \item Images and preimages of absolute $K$-constructible subsets under absolute $K$-constructible morphisms are absolute $K$-constructible.
\end{enumerate}
\end{prop}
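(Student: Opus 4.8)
The plan is to prove \Cref{abs basic prop} by reducing everything to the case where $X$ is smooth, since the normal case is defined by pushing forward along the closed immersion $\cM_B(X)(\bC) \to \cM_B(X^{\reg})(\bC)$, and by the injectivity of this map and the fact that images and preimages of absolute constructible sets under closed immersions are absolute constructible (which follows once we have the smooth case and part (4)). So assume $X$ smooth throughout.

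For part (1): the point is that $\cM_B(X^\sigma)(\bC)$ is itself an algebraic stack defined over $\bQ$ (see \Cref{sect:Betti}: $\cM_B(X)$ is naturally defined over $\bZ$), hence over $K$, and the canonical bijection $\cM_B(X)(\bC) \simeq \cM_B(X^\sigma)(\bC)$ identifies $\cM_B(X)(\bC)$ with all of $\cM_B(X^\sigma)(\bC)$. Since $\cM_B(X)(\bC)$ is a disjoint union of the finitely-generated-group representation stacks $\cM_B(X,r)$ and each $\cM_B(X,r)^\sigma$ is $\bZ$-defined, the full set is absolute $K$-closed. For part (2), I would argue that for each fixed $\sigma$ the map $\Sigma \mapsto \Sigma^\sigma$ is a Boolean-algebra homomorphism from subsets of $\cM_B(X)(\bC)$ to subsets of $\cM_B(X^\sigma)(\bC)$ (it is induced by a bijection of sets, so commutes with complements, unions, intersections), and $K$-constructible subsets form a Boolean algebra by the corollary in \Cref{sect:stackZariski}; hence a finite Boolean combination of absolute $K$-constructible sets has each $\sigma$-image a finite Boolean combination of $K$-constructible sets, thus $K$-constructible.

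For part (3): I would treat the three operations uniformly. For pullback along an algebraic morphism $f \colon X \to Y$, the graph of $f^* \colon \cM_B(Y) \to \cM_B(X)$ is the image of the section $\cM_B(Y) \to \cM_B(Y)\times\cM_B(X)$, $V \mapsto (V, f^*V)$, and this morphism of stacks is defined over $\bQ$ (it comes from the homomorphism $f_* \colon \pi_1(X,x) \to \pi_1(Y,y)$); its graph is therefore $\bQ$-constructible. The key compatibility is that the Riemann--Hilbert identifications $\cM_B(X)(\bC)\simeq\cM_B(X^\sigma)(\bC)$ are functorial in $X$ with respect to algebraic morphisms: this is because the pullback functor $\mathrm{Conn}(Y)^{\reg}\to\mathrm{Conn}(X)^{\reg}$ is an algebraic operation commuting with base change by $\sigma$. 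So $(\mathrm{graph}\,f^*)^\sigma = \mathrm{graph}((f^\sigma)^*)$, which is again $\bQ$-constructible in $\cM_B(Y^\sigma)\times\cM_B(X^\sigma)$. The same argument works for $\otimes$ and $\oplus$ since tensor product and direct sum of flat bundles are algebraic and commute with $\sigma$-twist — indeed the morphisms $\otimes, \oplus \colon \cM_B(X)^2 \to \cM_B(X)$ are $\bZ$-defined and RH-compatible. The main obstacle here, and the only genuinely delicate point, is establishing this $\sigma$-functoriality of Riemann--Hilbert cleanly: one must note that the canonical bijection is defined via pullback along $X^\sigma \to X$ on the De Rham side (as in \Cref{sect:abssets}), and that De Rham pullback along an algebraic morphism commutes with base change of the base field, which is a formal property of quasicoherent pullback; but one should be careful that regularity of singularities is preserved under $\sigma$-twist, which is true because regularity is detected by the existence of a logarithmic extension on a compactification, a condition stable under base change of scalars.

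For part (4): given an absolute $K$-morphism $\phi \colon \cM_B(X)(\bC) \to \cM_B(Y)(\bC)$ with absolute $K$-constructible graph $\Gamma \subset \cM_B(X)(\bC)\times\cM_B(Y)(\bC)$, and an absolute $K$-constructible $\Sigma \subset \cM_B(X)(\bC)$, the image $\phi(\Sigma)$ is the projection to $\cM_B(Y)(\bC)$ of $\Gamma \cap (\Sigma \times \cM_B(Y)(\bC))$, and preimages are handled symmetrically. Since projections of $K$-constructible subsets of (products of) algebraic stacks are $K$-constructible (the corollary on images in \Cref{sect:stackZariski}), intersections are $K$-constructible by part (2), and the whole construction commutes with $\sigma$-twist because the $\sigma$-twist bijection commutes with products and with the projection maps (being induced by bijections of underlying sets that are moreover morphisms of $\bQ$-stacks compatibly), it follows that $(\phi(\Sigma))^\sigma = \phi^\sigma(\Sigma^\sigma)$ is $K$-constructible for every $\sigma$. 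I expect parts (1), (2), (4) to be essentially formal bookkeeping once part (3) is in hand, so the real content — and the step I would write out most carefully — is the functoriality of the Galois action on $\cM_B$ under the three operations in part (3).
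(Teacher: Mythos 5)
Your smooth-case arguments are correct and amount to the bookkeeping the paper leaves unwritten (its own proof is just ``Immediate''): for smooth $X$, (1) is vacuous since the Riemann--Hilbert twist is a bijection onto $\cM_B(X^\sigma)(\bC)$, (2) follows because $\Sigma\mapsto\Sigma^\sigma$ is a Boolean homomorphism, (3) from the $\bQ$-definedness of the pullback/$\oplus$/$\otimes$ morphisms of Betti stacks together with the compatibility of the $\sigma$-twist with De Rham pullback, tensor and sum, and (4) from graphs, intersections and constructibility of images. The genuine problem is your opening reduction of the normal case to the smooth case, and it is not cosmetic, because the normal case is where all the content of (1) lives (and with it the complement-closure in (2) and the instances of (3)--(4) involving normal spaces). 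For normal $X$, absolute $K$-constructibility is \emph{defined} via the injection $\cM_B(X)(\bC)\hookrightarrow\cM_B(X^{\reg})(\bC)$, so part (1) is precisely the assertion that for every $\sigma$ the set $\{V^\sigma : V \text{ extends to } X\}$ is $K$-closed in $\cM_B((X^\sigma)^{\reg})(\bC)$. Your reduction invokes part (4) (or the pullback case of (3)) for this very inclusion, which requires it to be an absolute $K$-morphism --- and that is equivalent to the statement being proved, so the argument is circular. The point that actually needs an argument is left unaddressed: the Galois twist on $\cM_B(X^{\reg})(\bC)$ is defined through Riemann--Hilbert, not through any automorphism of $\pi_1(X^{\reg})$, so it is not formal that it carries the ($\bQ$-closed) condition ``the monodromy kills $\ker(\pi_1(X^{\reg})\to\pi_1(X))$'' for $X$ to the corresponding condition for $X^\sigma$.

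To repair the reduction it suffices to prove the identification $\{V^\sigma : V \text{ extends to } X\}=\{W : W \text{ extends to } X^\sigma\}$, the right-hand side being $\bQ$-closed by Noetherianity. This is done by making extendability algebraic on the De Rham side: choose a resolution $\pi\colon X'\to X$ which is an isomorphism over $X^{\reg}$, with exceptional locus $E$. Then $V$ extends to $X$ if and only if (i) $V$ has trivial local monodromy along each component of $E$, i.e. the regular flat bundle extends as a flat algebraic connection across $E$, and (ii) the resulting local system on $X'$ pulls back trivially to a resolution of every fiber of $\pi$, i.e. the restricted flat bundle admits a full flat algebraic frame. Both conditions are algebraic conditions on the De Rham objects and the data $(\pi,E,\text{fibers})$ can be conjugated by $\sigma$, so both are $\sigma$-stable; this is exactly the mechanism the paper uses for the De Rham/Riemann--Hilbert stack of a normal space in the ``Normal algebraic spaces'' subsection of \Cref{sect:bialg}. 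With part (1) established this way, your transfer of (2)--(4) from $X^{\reg}$ to $X$, and the rest of your write-up, goes through.
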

\begin{proof}
Immediate.
\end{proof}

\begin{lem}\hspace{1em}\label{ssabs}
\begin{enumerate} 
    \item The semisimple locus $\cM_B(X)(\bC)^{\ss}\subset \cM_B(X)(\bC)$ is absolute $\bQ$-constructible.
    \item The maps $c_X:\cM_B(X)(\bC)\to M_B(X)(\bC)$, $p_X:\cM_B(C)(\bC)^{\ss}\to M_B(X)(\bC)$ and $\ss_X:\cM_B(X)(\bC)\to \cM_B(X)(\bC)$ are absolute $\bQ$-morphisms and commute with pull-back under any algebraic map $f:X\to Y$.
\end{enumerate}
    
\end{lem}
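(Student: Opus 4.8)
\textbf{Proof proposal for \Cref{ssabs}.}

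The plan is to reduce everything to the well-known statement that semisimplicity, the good moduli map, and semisimplification are all defined by $\bQ$-algebraic data on the representation scheme $R_B(X,x,r)$, and then to observe that all of this data is manifestly compatible with pullback along the morphism $X^\sigma \to X$ for $\sigma \in \Aut(\bC/\bQ)$, since that morphism induces an isomorphism of $\bQ$-group schemes $R_B(X,x,r) \xrightarrow{\sim} R_B(X^\sigma, x^\sigma, r)$ (indeed $\pi_1$ is a topological invariant of the pair, and the comparison via the Riemann--Hilbert correspondence through $\mathrm{Conn}(X)^{\reg} \simeq \mathrm{Conn}(X^\sigma)^{\reg}$ carries monodromy representations to monodromy representations compatibly with the $\bQ$-structure). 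First I would reduce to the case $X$ smooth, using that for $X$ normal the map $\cM_B(X)(\bC)\to\cM_B(X^\reg)(\bC)$ is a $\bQ$-closed immersion compatible with Galois conjugation (this is already noted in the excerpt), so absoluteness of a subset or morphism for $X$ is equivalent to that for $X^\reg$; and $X^{\reg,\sigma} = (X^\sigma)^\reg$. Then fix a rank $r$ and work on $\cM_B(X,r) = [\bGL_r\backslash R_B(X,x,r)]$.

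For part (1): by \cite[Proposition 6.1]{SimpsonmoduliII} (cited in \Cref{sect:Betti}), the semisimple locus $\cM_B(X,r)(\bC)^\ss$ pulls back, under the atlas $R_B(X,x,r)\to\cM_B(X,r)$, to the set of representations $\rho$ whose $\bGL_r$-orbit is closed; equivalently, by Matsushima/Luna-type criteria this is cut out by a $\bZ$-constructible (in particular $\bQ$-constructible) condition on $R_B(X,x,r)$, as the excerpt already asserts ("which is easily seen to be $\bQ$-constructible"). Since the isomorphism $R_B(X,x,r)\xrightarrow{\sim} R_B(X^\sigma,x^\sigma,r)$ is an isomorphism of $\bQ$-schemes intertwining the $\bGL_r$-actions, it carries the closed-orbit locus to the closed-orbit locus, so $(\cM_B(X,r)(\bC)^\ss)^\sigma = \cM_B(X^\sigma,r)(\bC)^\ss$, which is therefore $\bQ$-constructible; summing over $r$ gives the claim for $\cM_B(X)(\bC)^\ss$.

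For part (2): $M_B(X,r) = \Spec\big(H^0(R_B(X,x,r),\cO)^{\bGL_r}\big)$ is a $\bQ$-scheme and $c_X$ is induced by a $\bQ$-morphism of $\bQ$-algebraic stacks $\cM_B(X,r)\to M_B(X,r)$, hence its graph is $\bQ$-constructible; the isomorphism $R_B(X,x,r)\xrightarrow{\sim} R_B(X^\sigma,x^\sigma,r)$ induces a $\bQ$-isomorphism $M_B(X,r)\xrightarrow{\sim} M_B(X^\sigma,r)$ identifying $c_X$ with $c_{X^\sigma}$, so $c_X$ is an absolute $\bQ$-morphism. The map $p_X$ is the restriction of $c_X$ to the (absolute $\bQ$-constructible, by part (1)) semisimple locus, and is a $\bQ$-isomorphism onto $M_B(X,r)(\bC)$ by \cite[Proposition 6.1]{SimpsonmoduliII}, hence also absolute $\bQ$. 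Finally $\ss_X = p_X^{-1}\circ c_X$ is a composition of absolute $\bQ$-morphisms, hence absolute $\bQ$ by \Cref{abs basic prop}(4). Compatibility with pullback: for an algebraic $f\colon X\to Y$ the induced $f^*\colon R_B(Y,y,r)\to R_B(X,x,r)$ is a $\bQ$-morphism of $\bQ$-schemes which is $\bGL_r$-equivariant, hence descends to $\bQ$-morphisms $\cM_B(Y,r)\to\cM_B(X,r)$ and $M_B(Y,r)\to M_B(X,r)$ fitting into a commutative square with $c_X,c_Y$; since $\ss$ is constructed purely out of $c$ and $p$ by GIT, the square with $\ss_X,\ss_Y$ commutes as well, and likewise for $p$. (That $f^*$ commutes with $\ss$ on $\bC$-points also follows from the fact that the semisimplification of a representation is the unique closed orbit in the closure of its $\bGL_r$-orbit, which is intrinsic, together with \Cref{ss in closure}; but the GIT description is the cleanest.)

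\textbf{Main obstacle.} The only genuinely nontrivial input is the Galois-equivariance of the whole package, i.e.\ that the Riemann--Hilbert-induced bijection $\cM_B(X)(\bC)\cong\cM_B(X^\sigma)(\bC)$ is realized by a $\bQ$-scheme isomorphism $R_B(X,x,r)\cong R_B(X^\sigma,x^\sigma,r)$ compatible with the $\bGL_r$-action and with $f^*$ for algebraic $f$; once this is in hand, every assertion is a formal consequence of the fact that "closed orbit", "GIT quotient", and "semisimplification" are $\bQ$-algebraically defined and functorial. Care is needed to articulate this equivariance precisely (the identification uses that $\pi_1$ is purely topological so $R_B(X,x,r)$ and $R_B(X^\sigma,x^\sigma,r)$ are literally the same $\bQ$-scheme after the canonical identification of fundamental groups furnished by the De Rham comparison), but it requires no new ideas beyond Deligne's Riemann--Hilbert theorem already quoted in \Cref{sect:abssets}.
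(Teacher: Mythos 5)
The central premise of your argument --- that Galois conjugation is realized by an isomorphism of $\bQ$-schemes $R_B(X,x,r)\xrightarrow{\sim}R_B(X^\sigma,x^\sigma,r)$ intertwining the $\bGL_r$-actions and inducing the Riemann--Hilbert bijection on $\bC$-points --- is false, and it is not a fixable technicality. There is no ``canonical identification of fundamental groups furnished by the De Rham comparison'': $\pi_1(X^\an)$ and $\pi_1((X^\sigma)^\an)$ need not even be abstractly isomorphic (Serre's conjugate varieties with distinct fundamental groups), and even when they are, the bijection $\cM_B(X)(\bC)\simeq\cM_B(X^\sigma)(\bC)$ defined through $\mathrm{Conn}(X)^{\reg}\simeq\mathrm{Conn}(X^\sigma)^{\reg}$ is transcendental, not algebraic (already for $X=\G_m$ it reads $e^{2\pi i a}\mapsto e^{2\pi i\sigma(a)}$ in natural coordinates). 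If the identification you posit existed, every $\bQ$-constructible subset of $\cM_B(X)(\bC)$ would automatically be absolute $\bQ$-constructible, which would make the whole notion of \Cref{sect:abssets} vacuous and contradict results such as \Cref{simpsonbialg}. The correct mechanism, and the one the paper uses, is purely categorical: conjugation is a composition of equivalences of abelian categories $LS(X)\simeq\mathrm{Conn}(X)^{\reg}\simeq\mathrm{Conn}(X^\sigma)^{\reg}\simeq LS(X^\sigma)$, hence preserves semisimplicity, Jordan--H\"older subquotients and semisimplification objectwise; thus $(\cM_B(X)(\bC)^{\ss})^\sigma=\cM_B(X^\sigma)(\bC)^{\ss}$ and $(\ss_X(V))^\sigma=\ss_{X^\sigma}(V^\sigma)$, and since the semisimple locus of each conjugate is $\bQ$-constructible (as noted in \Cref{sect:Betti}), part (1) and the absoluteness claims in part (2) follow.

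The second gap is the commutation with pull-back. GIT functoriality only gives you the square for $c$; on points it yields $\ss_X(f^*V)=\ss_X\bigl(f^*\ss_Y(V)\bigr)$, which is strictly weaker than $\ss_X(f^*V)=f^*\ss_Y(V)$. For the squares with $p$ and $\ss$ to commute --- indeed for $f^*$ to even carry $\cM_B(Y)(\bC)^{\ss}$ into $\cM_B(X)(\bC)^{\ss}$ --- you need that the pull-back of a semisimple local system under an arbitrary algebraic map is semisimple. This is not formal (restriction of a semisimple representation along a general group homomorphism fails to be semisimple); it is the Corlette--Mochizuki theorem, \Cref{pullback ss}, proved via tame purely imaginary harmonic bundles, and it is exactly what the paper invokes at this step. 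Your parenthetical closed-orbit argument has the same hole: it shows $f^*(\ss_Y V)$ lies in the orbit closure of $f^*V$, but to identify it with $\ss_X(f^*V)$ you must know its orbit is closed, i.e.\ again that it is semisimple.
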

\begin{proof}
For (1), a local system is semisimple if and only if the associated flat bundle is semisimple, and semisimplicity is preserved under the Galois action.    For (2), clearly the morphism $c_X:\cM_B(X)(\bC)\to M_B(X)(\bC)$ is an absolute $\bQ$-morphism, and the absoluteness of $p_X$ and $\ss_X$ follows.  The final claim is obvious for $c_X$, and by the harmonic theory (see \Cref{pullback ss}) the pull-back of a semisimple local systems is semisimple, whence the remainder of the claim.
\end{proof}

\begin{prop}[Compare with {\cite[Theorem 6.3]{Simpsonrankone}}; see also {\cite[Proposition 7.4.6]{Budur-Wang}}]\label{criterion_algebraicity}
Let $X$ be a complex algebraic space defined over a countable field $L \subset \bC$. Let $S \subset X(\bC)$ be a subset such that as $\sigma$ runs through $\Aut(\bC/L)$, $S^\sigma$ runs through countably many subsets of $X(\bC)$. Suppose that $S$ is an analytically constructible subset of $X^{\an}$. 
Then $S$ is algebraically constructible in $X(\bC)$.
\end{prop}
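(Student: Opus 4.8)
The plan is to reduce to the case of an affine variety and then to a quasi-projective variety, where a Baire category / Galois-descent argument applies. First I would observe that the statement is local on $X$ in the Zariski topology: if $X = \bigcup_i U_i$ is a finite affine open cover defined over $L$, then $S$ is analytically constructible if and only if each $S \cap U_i(\bC)$ is, and $S$ is algebraically constructible if and only if each $S \cap U_i(\bC)$ is; moreover the countability hypothesis on the Galois orbit of $S$ passes to each $S \cap U_i$ since $(S \cap U_i)^\sigma = S^\sigma \cap U_i$ (using that $U_i$ is defined over $L$). So I may assume $X$ is affine, hence quasi-projective. By the same token I may pass to an irreducible component of $X$ and assume $X$ irreducible.

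Next I would reduce to showing that the Zariski closure $\bar S$ of $S$ (in the sense of the Zariski topology on $X(\bC)$) is already analytically equal to a \emph{constructible} algebraic set, and in fact that it suffices to show $\bar S$ is algebraic and $S$ differs from $\bar S^{\,\mathrm{reg\text{-}locus}}$-type pieces by lower-dimensional analytically constructible sets — then one finishes by Noetherian induction on $\dim X$. So the crux is: an analytically constructible $S$ whose $\Aut(\bC/L)$-orbit is countable has algebraic Zariski closure. For this, write $\bar S^{\mathrm{an}}$ for the closure of $S$ in the analytic (Euclidean) topology; since $S$ is analytically constructible, $\bar S^{\mathrm{an}}$ is a finite union of irreducible closed analytic subvarieties of $X^{\an}$, say $\bar S^{\mathrm{an}} = Z_1 \cup \cdots \cup Z_m$. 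The key point is that each $Z_j$ is algebraic. Indeed, the analytic closure operation commutes with the Galois action in the sense that $(\bar S^{\mathrm{an}})^\sigma = \overline{(S^\sigma)}^{\,\mathrm{an}}$ (this uses that $\sigma$ acts by a — generally discontinuous — bijection, but it respects closed analytic subsets via the algebraic structure: the statement to prove is precisely that closed analytic subvarieties form a set on which $\Aut(\bC/L)$ acts through its action on the algebraic category once they are algebraic, so this must be argued carefully). Given the countability hypothesis, as $\sigma$ ranges over $\Aut(\bC/L)$ the sets $(\bar S^{\mathrm{an}})^\sigma$ take only countably many values, hence each irreducible component $Z_j$ has countable $\Aut(\bC/L)$-orbit among irreducible closed analytic subvarieties of the same dimension.

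The main obstacle — and the heart of the argument — is then a countability principle: an irreducible closed analytic subvariety $Z \subset X^{\an}$ (with $X$ quasi-projective over a countable $L \subset \bC$) whose $\Aut(\bC/L)$-orbit, computed in the space of closed analytic subvarieties, is countable, must be algebraic. For this I would use the Douady space / Chow-variety perspective: compactify $X \hookrightarrow \bar X$ over $L$, take the closure $\bar Z \subset \bar X^{\an}$, which is a point $[\bar Z]$ of the Douady space $D(\bar X)$, an analytic space of countably many components each defined over $L$ with the Hilbert scheme sitting inside as the locus of algebraic cycles. The subset $D_{\mathrm{alg}}(\bar X) \subset D(\bar X)$ of algebraic subvarieties is a countable union of constructible algebraic sets and, crucially, $\Aut(\bC/L)$ acts on $D_{\mathrm{alg}}(\bar X)(\bar L)$ compatibly. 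If $[\bar Z]$ were not algebraic, its field of definition (in the sense of the minimal field over which the point of the Douady space is defined, or equivalently generated by coordinates of a local model) would have uncountable $\Aut(\bC/L)$-orbit — exactly as in the proof of part (2) of \Cref{gelfond} and the argument in \cite[Theorem 6.3]{Simpsonrankone} — contradicting countability of the orbit of $Z$. This forces $\bar Z$ algebraic, hence $Z = \bar Z \cap X$ algebraic. Having established that $\bar S^{\mathrm{an}}$ is algebraic, I would intersect back: $S \subset \bar S^{\mathrm{an}}(\bC)$, and $S$ is analytically constructible inside an algebraic variety with countable Galois orbit, so by induction on dimension (replacing $X$ by $\bar S^{\mathrm{an}}$ and noting that $S$ restricted to the smooth locus of each $Z_j$ is still analytically constructible with countable orbit, while its complement lies in a lower-dimensional algebraic set) one concludes $S$ is algebraically constructible. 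I expect the delicate bookkeeping to be in verifying that the Galois action genuinely permutes the analytic components and is compatible with passage to fields of definition in the Douady space — the rest is Baire category plus Noetherian induction.
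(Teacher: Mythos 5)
Your reduction steps (localizing to affines defined over $L$, passing to components) are fine, but the heart of your argument has a genuine gap: elements of $\Aut(\bC/L)$ other than the identity and complex conjugation are wildly discontinuous on $X(\bC)$, so they do not act on Euclidean closures, on closed analytic subvarieties, or on the Douady space. Consequently the identity $(\bar S^{\mathrm{an}})^\sigma = \overline{(S^\sigma)}^{\mathrm{an}}$ that you flag as "to be argued carefully" is simply false in general, and the statement that an irreducible analytic component $Z_j$ has "countable $\Aut(\bC/L)$-orbit among irreducible closed analytic subvarieties" is not even well posed: the set-theoretic translate $Z_j^\sigma$ is typically not analytic, and the countability hypothesis of the proposition concerns only the set-theoretic translates $S^\sigma$, not any structure at the level of analytic components. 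The same problem sinks the proposed Douady-space finish: a point $[\bar Z]$ of the Douady space has no "field of definition" on which Galois acts unless $\bar Z$ is already known to be algebraic, so the claimed dichotomy (non-algebraic $\Rightarrow$ uncountable orbit) is circular. In \Cref{gelfond} and in Simpson's rank-one argument the objects being conjugated are algebraic points from the start, which is exactly what is missing here.

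For contrast, the paper's proof never conjugates any analytic object; it works pointwise. First, using that $\{S^\sigma\}$ is countable, one chooses countably many separating points and enlarges $L$ to a countable $L'$ so that $S$ itself is $\Aut(\bC/L')$-stable. Second, for any $x\in X(\bC)$, a Baire-category argument (there are only countably many strict $L$-subvarieties of the minimal $L$-variety $\overline{\{x\}}^L$ containing $x$, and their union is meager) shows that the orbit $\Aut(\bC/L)\cdot x$ is Euclidean-dense in $\overline{\{x\}}^L(\bC)$; hence for $x\in S$ (now Galois-stable) one gets $\overline{\{x\}}^L(\bC)\subset \bar S$, where $\bar S$ is the Euclidean closure. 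Third, an irreducible analytic component $T$ of $\bar S$ is covered by the countably many closed sets $\overline{\{x\}}^L\cap T\cap S$, so Baire again produces one with nonempty interior in $T$, and analytic continuation forces $T$ to be a geometric component of that algebraic set; induction on dimension then yields constructibility of $S$. If you want to salvage your approach, you would have to replace the "orbit of an analytic subvariety" step by precisely this kind of pointwise density argument, at which point you have reproduced the paper's proof.
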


Recall that an analytically constructible subset of a complex analytic variety is an element of the Boolean algebra generated by closed analytic subsets.

\begin{proof}
We follow closely the proof of \cite[Theorem 6.3]{Simpsonrankone}. 
Note that it is sufficient by induction on dimension to prove that the Euclidean closure $\bar S$ is Zariski closed in $X(\bC)$, and that is what we will prove.

Let us show that, up to enlarging $L$, one can assume that $S$ is stable by $\Aut(\bC/L)$. Let $\{S_j\}$ denote the set of subsets of $X(\bC)$ which occur as $S^\sigma$. For each pair $(j,k)$ with $S_j \neq S_k$, choose a point $x_{jk}$ in one of $S_j$ or $S_k$ but not the other. Then, among all the subsets, a given one is determined by the information of whether $x_{jk}$ is in it or not for all $j,k$. On the other hand, there are countably many points $x_{jk}$, so we may choose a countable field extension $L' /L$ so that $x_{jk}^\sigma = x_{jk}$ for every $\sigma \in \Aut(\bC/L')$. Then $S^\sigma = S$ for every $\sigma \in \Aut(\bC/L')$.

For every $x \in X(\bC)$, let $\overline{\{x\}}^L$ be the smallest closed algebraic subset of $X$ defined over $L$ and containing $x$.  For every $y\in \overline{\{x\}}^L(\bC)$, we have $\overline{\{y\}}^L\subset \overline{\{x\}}^L$ with equality if and only if $\overline{\{y\}}^L(\bC)$ meets the orbit $\Aut(\bC/L)\cdot x$.  Since $L$ is countable, there are only countably many strict suvarieties defined over $L$, so that their union is a meager subset of $\overline{\{x\}}^L(\bC)$. By the Baire category theorem, its complement $ \Aut(\bC/L)\cdot x$ is therefore dense in $\overline{\{x\}}^L(\bC)$ for the euclidean topology. Taking $x \in S$, one has $ \Aut(\bC/L)\cdot x \subset S$, hence $\overline{\{x\}}^L$ is contained in the euclidean closure $\bar S$ of $S$. 

Let $T$ be an irreducible component of $\bar S$. Then $T \cap S$ is covered by the closed subsets $\overline{\{x\}}^L \cap T \cap S$ for $x \in T \cap S$. Since there are countably many of those, it follows from Baire category theorem again that there exists $x \in T \cap S$ such that $\overline{\{x\}}^L$ contains a non-empty open subset of $T$. By analytic  continuation, we get that $T$ is equal to a geometric component of $\overline{\{x\}}^L$. 
\end{proof}

\begin{cor}[cf. {\cite[Proposition 7.4.5]{Budur-Wang}}]\label{algebraicity_of_irreducible_components}
Let $(\bar X,D)$ be a proper log smooth algebraic space and set $X=\bar X\setminus D$. Let $K \subset \bC$ be a countable subfield. If $\Sigma  \subset \cM_B(X)(\bC)$ is an absolute $K$-constructible subset, then any analytic irreducible component of the Euclidean closure of $RH^{-1}_{(\bar X,D)}(\Sigma)$ is a Zariski closed subset of $\cM_{DR}(\bar X,D)(\bC)$.
\end{cor}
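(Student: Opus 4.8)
\textbf{Proof proposal for \Cref{algebraicity_of_irreducible_components}.}

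The plan is to deduce the statement from \Cref{criterion_algebraicity} applied to the De Rham side. First I would observe that $\cM_{DR}(\bar X, D)$ is only a countably finite type algebraic stack, so the statement should be understood component by component, or after fixing the underlying locally free sheaf; in any case, one reduces to checking the algebraicity criterion on a finite type piece. The set $S := RH_{(\bar X,D)}^{-1}(\Sigma)$ is an analytically constructible subset of $\cM_{DR}(\bar X,D)^\an$, being the preimage of a constructible set under the holomorphic Riemann--Hilbert map; hence its Euclidean closure $\bar S$ is analytically constructible, and each analytic irreducible component $T$ of $\bar S$ is an analytic subvariety. The goal is to show $T$ is Zariski closed in $\cM_{DR}(\bar X,D)$.

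The key input is the Galois descent structure. The stacks $\cM_{DR}(\bar X,D)$ and $\cM_B(X)$ can both be taken to be defined over a countable field $L \subset \bC$ (the field of definition of $(\bar X,D)$, enlarged to contain $K$ and a countable base for the relevant Hilbert-scheme data), and for each $\sigma \in \Aut(\bC/L)$ there is the canonical bijection $\cM_B(X)(\bC) \simeq \cM_B(X^\sigma)(\bC)$ coming from pulling back flat bundles along $X^\sigma \to X$, under which $\Sigma^\sigma$ is $K$-constructible by hypothesis. The crucial compatibility is that, by construction, this canonical bijection is precisely the one through which the Deligne equivalence $\mathrm{Conn}(\bar X,D)^{\reg} \simeq \mathrm{Conn}(\bar X^\sigma,D^\sigma)^{\reg}$ intertwines $RH_{(\bar X,D)}$ and $RH_{(\bar X^\sigma,D^\sigma)}$, so that the algebraic base-change action of $\sigma$ on $\cM_{DR}(\bar X,D)(\bC)$ carries $S = RH^{-1}_{(\bar X,D)}(\Sigma)$ to $S^\sigma = RH^{-1}_{(\bar X^\sigma,D^\sigma)}(\Sigma^\sigma)$. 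Since $\Sigma$ is absolute $K$-constructible, as $\sigma$ ranges over $\Aut(\bC/L)$ the sets $\Sigma^\sigma$, and hence the sets $S^\sigma$, take only countably many values: indeed a $K$-constructible subset of a finite type $K$-scheme is determined by finitely many $K$-polynomial (in)equalities, and there are countably many such over the countable field $K$. Therefore $S$ satisfies the hypotheses of \Cref{criterion_algebraicity} on each finite type piece of $\cM_{DR}(\bar X,D)$, so $S$ is algebraically constructible; applying the criterion once more (or inducting on dimension as in its proof) gives that $\bar S$ is Zariski closed and that each irreducible component $T$ is a geometric component of some $L$-subvariety $\overline{\{x\}}^L$, hence Zariski closed.

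The main obstacle I anticipate is not conceptual but bookkeeping: verifying carefully that the canonical Betti bijection $\cM_B(X)(\bC) \simeq \cM_B(X^\sigma)(\bC)$ used to define ``absolute $K$-constructible'' agrees with the bijection obtained by transporting $S$ via the algebraic $\sigma$-action on $\cM_{DR}(\bar X,D)$ through Deligne's Riemann--Hilbert functor, and doing this compatibly with the passage to the normal (non-smooth) case via the closed immersion $\cM_B(X) \hookrightarrow \cM_B(X^{\reg})$ and with the reduction to a finite type substack of the countably-finite-type stack $\cM_{DR}(\bar X,D)$. Once that identification is in place, the ``countably many conjugates'' hypothesis of \Cref{criterion_algebraicity} is immediate from countability of $K$, and the conclusion follows formally.
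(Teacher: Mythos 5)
Your argument is correct and is essentially the paper's own proof: fix a countable field of definition $L$ of $(\bar X,D)$, use the compatibility of the Galois action with $RH_{(\bar X,D)}$ and the countability of $K$ to see that the conjugates of $S=RH^{-1}_{(\bar X,D)}(\Sigma)$ run through only countably many analytically constructible sets, and then invoke \Cref{criterion_algebraicity}. The one place the paper is more precise about what you call bookkeeping is that, since \Cref{criterion_algebraicity} is stated for algebraic spaces, it pulls $S$ back to a smooth atlas $U\to\cM_{DR}(\bar X,D)$ defined over $L$, applies the criterion to the preimage $T\subset U(\bC)$, and then descends the statement about irreducible components via the analytic version of \cite[Tag 0DR5]{stacks-project}, rather than applying the criterion directly to finite-type substacks.
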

\begin{proof}
Let $L$ be a countable field of definition of $\bar X$ and $D$, so that $\cM_{DR}(\bar X,D)$ is also defined over $L$. Therefore there exists $U$ a $L$-scheme and $U \to \cM_{DR}(\bar X,D)$ a smooth surjective morphism defined over $L$. Let $S = RH^{-1}_{(\bar X,D)}(\Sigma) $ and $T$ be the preimage of $S$ in $U(\bC)$. Since $\Sigma$ is absolute $K$-constructible and $K$ is countable, $T^\sigma$ runs through countably many subsets of $U(\bC)$ as $\sigma$ runs through $\Aut(\bC/L)$. Moreover, $T$ is the preimage by an analytic map of an algebraic (hence analytic) constructible subset of $\cM_B(X)(\bC)$, hence it is an analytic constructive subset of $U^{\an}$. Therefore, it follows from \Cref{criterion_algebraicity} that any analytic irreducible component of the Euclidean closure of $T$ in $U^{\an}$ is Zariski closed in $U(\bC)$. Conclude using the analytic version of \cite[\href{https://stacks.math.columbia.edu/tag/0DR5}{Tag 0DR5}]{stacks-project}.  
\end{proof}

\begin{prop}\label{Closure_absolute}
Let $X$ be a normal complex algebraic space, $K \subset \bC$ be a countable subfield, and $\Sigma  \subset \cM_B(X)(\bC)$ an absolute $K$-constructible.  Then
\begin{enumerate}
    \item For every $\sigma \in \Aut(\bC/\bQ)$ we have $\overline{\Sigma^\sigma} = \overline{\Sigma}^\sigma$ (where $\overline{(\cdot)}$ denotes euclidean closure). 
 In particular, $\bar \Sigma  \subset \cM_B(X)(\bC)$ is absolute $K$-closed. 
    \item \label{geometric irreducible component of absolute} If $\Sigma$ is absolute $K$-locally closed, then for every $\sigma \in \Aut(\bC/\bQ)$ the geometric irreducible components of $\Sigma^\sigma$ are the $\sigma$-translates of the geometric irreducible components of $\Sigma$.  In particular, each geometric irreducible component of $\Sigma$ is absolute $\bar K$-locally closed.
\end{enumerate}
\end{prop}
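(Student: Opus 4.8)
\textbf{Proof plan for \Cref{Closure_absolute}.}

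The plan is to reduce both statements to the corresponding statements over the De Rham realization, where we can invoke \Cref{algebraicity_of_irreducible_components} and the functoriality of the Riemann--Hilbert correspondence under the Galois action. First I would reduce to the case where $X$ is smooth: since $\cM_B(X)(\bC)\to\cM_B(X^{\reg})(\bC)$ is a closed immersion commuting with the $\sigma$-action, and euclidean closure and irreducible components are preserved under closed immersions, both claims for $X$ follow from the corresponding claims for $X^{\reg}$. Having reduced to $X$ smooth, I would fix a log smooth compactification $(\bar X,D)$ defined over a countable field $L\subset\bC$ that we may enlarge to contain $K$; then $\cM_{DR}(\bar X,D)$ is an ($L$-)algebraic (countably finite type) stack and the Riemann--Hilbert morphism $RH_{(\bar X,D)}\colon\cM_{DR}(\bar X,D)^{\an}\to\cM_B(X)^{\an}$ is a surjective analytic map which, crucially, is $\Aut(\bC/\bQ)$-equivariant in the sense that the canonical bijection $\cM_B(X)(\bC)\simeq\cM_B(X^\sigma)(\bC)$ used to define absoluteness is \emph{by construction} the one induced on flat bundles by base change along $X^\sigma\to X$, hence compatible with $RH$.

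For part (1): set $S=RH_{(\bar X,D)}^{-1}(\Sigma)\subset\cM_{DR}(\bar X,D)(\bC)$, an analytically constructible subset (preimage of an algebraic, hence analytic, constructible set under an analytic map). By \Cref{algebraicity_of_irreducible_components}, each analytic irreducible component of the euclidean closure $\bar S$ is Zariski closed, so $\bar S$ is a (countable, locally finite) algebraic subset of $\cM_{DR}(\bar X,D)(\bC)$, defined over $L$ up to $\Aut(\bC/L)$-translation by the Baire-category argument inside the proof of \Cref{criterion_algebraicity}. Now the key observation is that $RH_{(\bar X,D)}$ is a surjective local isomorphism onto the good locus after restricting to the good locus (\Cref{local iso on good}), and in general is open (being a surjective local homeomorphism on a cover); in any case $RH$ is an open continuous surjection on the relevant loci, so $RH_{(\bar X,D)}(\bar S)=\overline{RH_{(\bar X,D)}(S)}=\overline{\Sigma}$. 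Applying $\sigma$: because base change along $X^\sigma\to X$ sends $\bar S$ (which is Zariski closed, so its formation commutes with the Galois action) to the analogous Zariski closed subset $\overline{S^\sigma}$ of $\cM_{DR}(\bar X^\sigma,D^\sigma)(\bC)$, and because $RH_{(\bar X^\sigma,D^\sigma)}$ applied to this is again a closure, we get $\overline{\Sigma^\sigma}=RH_{(\bar X^\sigma,D^\sigma)}(\overline{S^\sigma})=RH_{(\bar X^\sigma,D^\sigma)}(\overline{S}^\sigma)=\overline{\Sigma}^\sigma$, where the last equality uses the $\sigma$-equivariance of $RH$ on the closed algebraic set $\overline{S}$. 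Since $\overline{\Sigma}$ is a finite union of geometric irreducible components each of which is thus carried by $\sigma$ to an algebraic set, $\overline{\Sigma}\subset\cM_B(X)(\bC)$ is absolute $K$-closed (each $\sigma$-translate being the image under $RH$ of an algebraic set, hence analytically constructible with countably many Galois translates, hence algebraic by \Cref{criterion_algebraicity} again, and defined over $K$ because $\Sigma$ was).

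For part (2): assume $\Sigma$ is absolute $K$-locally closed, so $\Sigma=\overline{\Sigma}\setminus(\overline{\Sigma}\setminus\Sigma)$ with $\overline{\Sigma}\setminus\Sigma$ also absolute $K$-closed by part (1) and the Boolean-algebra property (\Cref{abs basic prop}(2)). The geometric irreducible components of $\Sigma$ are then the geometric irreducible components of $\overline{\Sigma}$ intersected with $\Sigma$, each of which is nonempty and absolute $\bar K$-locally closed once we know the components of $\overline{\Sigma}$ are permuted compatibly by $\sigma$ and each is absolute $\bar K$-closed. The latter follows from part (1) applied to $\overline{\Sigma}$: a Zariski closed absolute $K$-closed set has its geometric irreducible components defined over $\bar K$ and permuted transitively within each $K$-conjugacy class by $\Aut(\bC/K)$, and more precisely for each $\sigma\in\Aut(\bC/\bQ)$ the $\sigma$-translate of a component is a component of $\overline{\Sigma}^\sigma=\overline{\Sigma^\sigma}$, because irreducible components are a topological (indeed Zariski-topological, via the Riemann--Hilbert dictionary of part (1)) notion carried over by the equivalence of categories $\mathrm{Conn}(\bar X,D)^{\reg}\simeq\mathrm{Conn}(\bar X^\sigma,D^\sigma)^{\reg}$. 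Intersecting back with $\Sigma$ (which is $\sigma$-equivariant up to the canonical bijection) gives the claim for $\Sigma$.

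The main obstacle I anticipate is bookkeeping the $\Aut(\bC/\bQ)$-equivariance of the Riemann--Hilbert correspondence carefully enough: one must verify that the canonical bijection $\cM_B(X)(\bC)\simeq\cM_B(X^\sigma)(\bC)$ entering the definition of ``absolute'' is genuinely intertwined with $RH$ and with base change on the De Rham side, and that ``geometric irreducible component'' on the Betti side (where $\cM_B(X)$ is only defined over $K$, not necessarily over $\bQ$) transports correctly. All the analytic input---openness of $RH$, algebraicity of euclidean-closure components---is already packaged in \Cref{local iso on good}, \Cref{criterion_algebraicity}, and \Cref{algebraicity_of_irreducible_components}; the remaining work is purely formal Galois descent, but it is the sort of argument where a sign or a direction-of-arrow slip is easy to make.
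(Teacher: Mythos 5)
Your overall strategy (reduce to $X$ smooth, pass to the De Rham side via Riemann--Hilbert, invoke \Cref{algebraicity_of_irreducible_components}, and transport the Galois action) is the same as the paper's, but as written it has a genuine gap, concentrated in part (2), plus an imprecision in part (1). In part (1) you work with \emph{images}: you assert $RH_{(\bar X,D)}(\bar S)=\overline{RH_{(\bar X,D)}(S)}$ on the grounds that $RH$ ``in general is open''. The full map $\cM_{DR}(\bar X,D)^{\an}\to\cM_B(X)^{\an}$ is \emph{not} a local homeomorphism away from the good locus (resonant residues), so this needs to be run entirely on $\cM_{DR}(\bar X,D)^{\good}$, where \Cref{local iso on good} gives a surjective local isomorphism; the cleaner route (the paper's) is to use that \emph{preimage} under an open continuous surjection commutes with euclidean closure, which sidesteps any claim that images of closed sets are closed. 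Also, your assertion that ``$\bar S$ is Zariski closed, so its formation commutes with the Galois action'' hides the real content: $\sigma$ preserves Zariski closedness, giving $\overline{S^\sigma}\subset(\bar S)^\sigma$, but the reverse inclusion requires applying \Cref{algebraicity_of_irreducible_components} to $\Sigma^\sigma$ as well (this is exactly where absolute $K$-constructibility is used) and then running the same containment for $\sigma^{-1}$. These points are patchable, but they are not automatic.

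The serious gap is in part (2). You justify the key claim -- that $\sigma$ carries geometric irreducible components of $\Sigma$ to components of $\Sigma^\sigma$ -- by saying irreducible components are ``a topological notion carried over by the equivalence of categories.'' But the bijection $\cM_B(X)(\bC)\simeq\cM_B(X^\sigma)(\bC)$ is neither continuous nor algebraic on the Betti side; that Zariski-closed sets go to Zariski-closed sets is precisely the defining property of absolute subsets, and even granting closedness of $\Sigma$ and $\Sigma^\sigma$ it does not follow formally that $\sigma$ respects their decomposition into components, because the De Rham preimage of one Betti component is typically a countable union of De Rham components whose $\sigma$-translates could a priori spread over several Betti components of $\Sigma^\sigma$. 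The paper's proof of (2) supplies exactly the missing argument: using that $RH^{\good}$ is a local isomorphism, $(RH^{\good})^{-1}(\Sigma_0)$ is globally pure-dimensional and a union of components of $(RH^{\good})^{-1}(\Sigma)$; $\sigma$ permutes De Rham components; each component of $(RH_{\sigma}^{\good})^{-1}(\Sigma_0^\sigma)$ therefore dominates some Betti component of $\Sigma^\sigma$, and an open-partition (connectedness) argument on $\Sigma_0$ shows they all dominate a single component $T_0$, whence $\Sigma_0^\sigma\subset T_0$; finally the symmetric argument with $\sigma^{-1}$ forces $\Sigma_0^\sigma=T_0$. Without some version of this descent from De Rham components to Betti components, your part (2) assumes what it needs to prove.
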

\begin{proof}
Replacing $X$ with $X^{\reg}$, one can assume that $X$ is smooth.  Let $(\bar X,D)$ be a proper log smooth algebraic space with $X=\bar X\setminus D$ (which we may even assume is projective, but this is not necessary).  Let 
\[RH_{(\bar X,D)}^\good:\cM_{DR}(\bar X,D)^\good\to\cM_B(X)^\an\]
be the restriction of the Riemann--Hilbert map, which is surjective and locally (on the source) an isomorphism by \Cref{local iso on good}.  Note also that:
\begin{enumerate}[label=(\roman*)]
    \item Pull-back under $RH_{(\bar X,D)}^\good$ commutes with euclidean closure.
    \item For any analytically constructible subset of $\cM_{DR}(\bar X,D)^\good$ whose euclidean closure in $\cM_{DR}(\bar X,D)$ is Zariski closed, the Galois action commutes with euclidean closure (in $\cM_{DR}(\bar X,D)^\good$).  This is because $\cM_{DR}(\bar X,D)^\good$ is Galois-stable and open, so inherits this property from the corresponding one on $\cM_{DR}(\bar X,D)$, which is clear.
    \item For any analytically closed subset of $\cM_{DR}(\bar X,D)^\good$ whose euclidean closure in $\cM_{DR}(\bar X,D)$ is Zariski closed, the $\sigma$-translates of components are precisely the components of the $\sigma$-translate.
\end{enumerate}
Thanks to \Cref{algebraicity_of_irreducible_components}, (i) and (ii) imply part (1).

For part (2), the geometric components of $\Sigma$ are the geometric components of $\bar \Sigma$ minus $\bar\Sigma\setminus \Sigma$, so by (1) we may assume $\Sigma$ is absolute $K$-closed.  For $\Sigma_0$ a geometric component of $\Sigma$, $(RH_{(\bar X,D)}^\good)^{-1}(\Sigma_0)$ is (globally) pure-dimensional and a union of components of $(RH_{(\bar X,D)}^\good)^{-1}(\Sigma)$.  Thus, by (iii) every component of $(RH_{(\bar X,D)}^\good)^{-1}(\Sigma_0)^\sigma=(RH_{(\bar X^\sigma,D^\sigma)}^\good)^{-1}(\Sigma_0^\sigma)$ is (globally) pure-dimensional and a union of components of $(RH_{(\bar X,D)}^\good)^{-1}(\Sigma^\sigma)$.  In particular, each component of $(RH_{(\bar X,D)}^\good)^{-1}(\Sigma_0)^\sigma$ dominates a component of $\Sigma^\sigma$.  In fact, every component of $(RH_{(\bar X,D)}^\good)^{-1}(\Sigma_0)^\sigma$ must dominate a single component $T_0$ of $\Sigma^\sigma$, since for any component $T_1\subset\Sigma^\sigma$, the locus of points $x\in\Sigma_0$ admitting a neighborhood $x\in U\subset \Sigma_0$ for which $U^\sigma\subset T_1$ is open, and such sets give an open partition of $T_0$.  Thus, $\Sigma_0^\sigma\subset T_0$, and applying the same argument to $\sigma^{-1}$ yields $T_0^{\sigma^{-1}}\subset\Sigma_0$, so $\Sigma_0^\sigma$ is closed.
\end{proof}

\begin{cor}\label{abs boolean}
Let $X$ be a normal complex algebraic space. Let $K \subset \bC$ be a countable subfield.
Every absolute $K$-constructible subset in $\cM_B(X)(\bC)$ is a Boolean combination of absolute $K$-closed subsets in $\cM_B(X)(\bC)$.
\end{cor}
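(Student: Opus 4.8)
The plan is to bootstrap from \Cref{Closure_absolute}, which already tells us that the euclidean closure of an absolute $K$-constructible set is absolute $K$-closed. The key structural fact about constructible subsets of an algebraic stack is the one recorded in \parref{sect:stackZariski}: a constructible subset $\Sigma$ of $\cM_B(X)(\bC)$ can be written, using the Boolean-algebra structure, as a finite union $\bigcup_i (Z_i \setminus W_i)$ where each $Z_i$ is Zariski closed and $W_i\subset Z_i$ is Zariski closed, or more simply as a finite Boolean combination of Zariski closed subsets. The point of the corollary is to promote this to a statement with the word ``absolute'' inserted in front of every ``$K$-closed''.

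First I would reduce to the case where $X$ is smooth by replacing $X$ with $X^{\reg}$, using that by definition absolute $K$-constructibility of $\Sigma\subset\cM_B(X)(\bC)$ means absolute $K$-constructibility of its image under the closed immersion $\cM_B(X)\hookrightarrow\cM_B(X^{\reg})$, and that a Boolean combination of absolute $K$-closed subsets of $\cM_B(X^{\reg})(\bC)$ which is contained in $\cM_B(X)(\bC)$ intersects back to a Boolean combination of absolute $K$-closed subsets of $\cM_B(X)(\bC)$ (intersect each closed piece with the absolute $\bQ$-closed $\cM_B(X)(\bC)$, using \Cref{abs basic prop}(1)). Then I would argue by Noetherian induction on the Zariski closure $\Sigma^\Zar$. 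The base case $\Sigma^\Zar=\varnothing$ is trivial. In general, set $Z=\Sigma^\Zar$; by \Cref{Closure_absolute}(1) the euclidean closure $\overline\Sigma$ is absolute $K$-closed, and since $\Sigma$ is Zariski dense in $Z$ and constructible it has nonempty (euclidean) interior in each irreducible component of $Z$, so $\overline\Sigma\supseteq$ a dense constructible piece; in fact I would instead work with the Zariski closure directly. The cleanest route: $Z=\Sigma^\Zar$ is absolute $K$-closed because it is cut out from $\overline\Sigma$ by a Zariski-closure operation that commutes with the Galois action (this is exactly the content of \Cref{Closure_absolute}(1) applied together with the observation that $\Sigma^\Zar = \overline\Sigma^{\,\Zar}$ and Zariski closure is $\Aut(\bC/\bQ)$-equivariant on the good locus, as in the proof of \Cref{Closure_absolute}). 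The set $\Sigma' := Z \setminus \Sigma$ is again constructible; write $Z\setminus\Sigma = \Sigma'$ and note $\Sigma' \subset Z$ with $\Sigma'^\Zar \subsetneq Z$ unless $\Sigma$ has empty interior in some component of $Z$ — but since $\Sigma$ is Zariski dense in $Z$, $\Sigma'^\Zar$ is a proper closed subset of $Z$, hence strictly smaller. Moreover $\Sigma'$ is absolute $K$-constructible since it is the difference of the absolute $K$-closed $Z$ and the absolute $K$-constructible $\Sigma$ (\Cref{abs basic prop}(2)). By induction $\Sigma'$ is a Boolean combination of absolute $K$-closed subsets, and then $\Sigma = Z\setminus\Sigma'$ is too.

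The main obstacle is making rigorous the claim that $\Sigma^\Zar$ is absolute $K$-closed and that $\Sigma'^\Zar \subsetneq \Sigma^\Zar$ in a way that is uniform across Galois conjugates. For the first, the subtlety is that on the De Rham side $\cM_{DR}(\bar X,D)$ is only countably of finite type and Zariski closure need not commute with euclidean closure globally; this is precisely why \Cref{algebraicity_of_irreducible_components} and the good-locus formalism of \Cref{local iso on good} are needed, and I would invoke them exactly as in the proof of \Cref{Closure_absolute}, noting that $\Sigma^\Zar$ is the Zariski closure on the Betti side, which pulls back under $RH^\good$ to (the good part of) a Zariski closed set whose components are algebraic by \Cref{algebraicity_of_irreducible_components}, and the Galois action permutes these components by \Cref{Closure_absolute}(2) (or rather its proof). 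For the dimension drop, one simply uses that $\Sigma$ constructible and Zariski dense in an irreducible $Z$ forces $\Sigma$ to contain a dense Zariski open of $Z$ on each component, so its complement in $Z$ has strictly smaller Zariski closure; Noetherian induction on the (finitely many, finite-dimensional) irreducible components of $\Sigma^\Zar$ then terminates. I do not expect any genuinely new input beyond \Cref{abs basic prop} and \Cref{Closure_absolute}; the corollary is essentially a formal consequence once those two are in hand, and indeed the excerpt flags it as a corollary of \Cref{Closure_absolute}.
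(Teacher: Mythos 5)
Your proposal is correct and is essentially the paper's own argument: the paper also takes the closure of $\Sigma$ (euclidean closure, which for a constructible set coincides with the Zariski closure and is absolute $K$-closed by \Cref{Closure_absolute}), observes that the complement $\bar\Sigma\setminus\Sigma$ is absolute $K$-constructible of strictly smaller dimension, and concludes by induction. Your Noetherian induction on $\Sigma^\Zar$ and the extra reduction to $X^{\reg}$ are just minor repackagings of the same one-line induction.
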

\begin{proof}
If $S$ is an absolute $K$-constructible subset, then its closure $\bar S$ is an absolute
$K$-closed subset and the complement $\bar S \setminus S$ is an absolute $K$-constructible subset of $\cM_B(X)(\bC)$ of smaller dimension, hence the result follows by induction.
\end{proof}

\begin{cor}\label{bialg=abs}
    Let $X$ be a normal complex algebraic space and $K\subset\bC$ a countable field.  A $K$-Zariski closed subset $\Sigma_{B,1}\subset \cM_B(X)(\bC)$ is absolute $K$-closed if and only if it can be completed to a $K$-bialgebraic pair $(\Sigma_{DR},\{\Sigma_{B,\sigma}\})$ of the Riemann--Hilbert $\bQ$-correspondence stack $\cM_{RH,\bQ}(\bar X',D',E)$ for some (hence any) choice of log smooth resolution/compactification.
\end{cor}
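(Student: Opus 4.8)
\textbf{Proof proposal for \Cref{bialg=abs}.}

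The plan is to prove the two implications separately, using the good-locus Riemann--Hilbert map $RH^\good_{(\bar X,D)} \colon \cM_{DR}(\bar X,D)^\good \to \cM_B(X)^\an$ of \Cref{local iso on good} as the bridge, and reducing the normal case to the smooth case as in \Cref{Closure_absolute}. Throughout we replace $X$ by a log smooth resolution/compactification $(\bar X',D')$ of a log resolution, so that we are in the setting of $\cM_{RH,\bQ}(\bar X',D',E)$, and we may assume $X$ is smooth by restriction to $X^\reg$ (the restriction morphism $\cM_B(X)\to\cM_B(X^\reg)$ is a closed immersion, and the De Rham side is handled through the $\cM_{DR}(\bar X',D',E)$ construction, which is defined precisely as the reduced preimage of the Betti-restriction under $RH_{(\bar X',D'')}$).

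First I would prove ``$K$-bialgebraic $\Rightarrow$ absolute $K$-closed''. Suppose $\Sigma_{B,1}$ is the Betti-realization component of a $K$-bialgebraic pair coming from a finite disjoint union of irreducible bialgebraic $K$-correspondence stacks $(\cN_{DR},\{\cN_{B,\sigma}\})$ mapping to $\cM_{RH,\bQ}(\bar X',D',E)$. By definition of $K$-bialgebraicity, $\Sigma_{B,\sigma}^\Zar = f_B(\cN_{B,\sigma}(\bC))^\Zar$, and each $\cN_{B,\sigma}$ is defined over $K$; since the transition between different $\sigma$ is recorded by the shared De Rham stack $\cN_{DR}$ together with the Riemann--Hilbert correspondences $\{RH_{(\bar X'^\sigma,D'^\sigma,E^\sigma)}\}$, and the Galois action on Betti realizations is by definition (via Deligne's Riemann--Hilbert, \Cref{sect:abssets}) the one induced from $\cN_{DR}$, we get $\Sigma_{B,\sigma} = \Sigma_{B,1}^\sigma$. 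Since $\Sigma_{B,\sigma}$ is $K$-Zariski closed for every $\sigma$, $\Sigma_{B,1}$ is absolute $K$-closed. The only subtlety here is checking that the identification of $\Sigma_{B,\sigma}$ with the Galois conjugate $\Sigma_{B,1}^\sigma$ is exactly the one coming from the canonical bijection $\cM_B(X)(\bC)\simeq\cM_B(X^\sigma)(\bC)$ of \Cref{sect:abssets} --- but this is built into the definition of the Riemann--Hilbert $\bQ$-correspondence stack, whose correspondences are the graphs of the various $RH_{(\bar X'^\sigma,\dots)}$, so it is a matter of unwinding definitions.

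Next I would prove the converse, ``absolute $K$-closed $\Rightarrow$ $K$-bialgebraic''. Given $\Sigma_{B,1}$ absolute $K$-closed, set $S = (RH^\good_{(\bar X',D'')})^{-1}(\Sigma_{B,1})$, intersect with $\cM_{DR}(\bar X',D',E)^\an$, and let $\Sigma_{DR}$ be its Euclidean closure in $\cM_{DR}(\bar X',D',E)$. By \Cref{algebraicity_of_irreducible_components} (and its proof via \Cref{criterion_algebraicity}, which applies because $K$ is countable so $\Sigma_{B,1}^\sigma$ runs through countably many subsets) every analytic irreducible component of $\Sigma_{DR}$ is Zariski closed; moreover by point (ii)/(iii) in the proof of \Cref{Closure_absolute}, the $\sigma$-translates of these components are the components of $(RH^\good)^{-1}(\Sigma_{B,1}^\sigma)$'s closure. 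Taking $\Sigma_{B,\sigma} := \Sigma_{B,1}^\sigma$, one then checks that $(\Sigma_{DR},\{\Sigma_{B,\sigma}\})$ is $K$-bialgebraic: for each irreducible component $\cN_{DR}$ of $\Sigma_{DR}$, the fact that $RH^\good$ is a surjective \emph{local isomorphism} of analytic stacks (\Cref{local iso on good}) forces $\pi_{DR}(\cZ_0)$ and $\pi_{B,\sigma}(\cZ_0)$ to each have nonempty interior for a suitable component $\cZ_0$ of the correspondence, which is exactly the irreducible-bialgebraicity criterion; assembling these components (together with their Galois translates, which match up component-by-component) produces the required disjoint union of irreducible bialgebraic $K$-correspondence stacks and the morphism to $\cM_{RH,\bQ}(\bar X',D',E)$.

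The main obstacle is bookkeeping rather than a deep new idea: one must carefully match the analytic structure on the De Rham side (the germ $\cM_{DR}(\bar X',D',E)$, which by construction already packages the regular-singularities/Deligne-extension constraint and is algebraic component-by-component) against the Galois-orbit countability needed for \Cref{criterion_algebraicity}, and verify that the ``local isomorphism'' property of $RH^\good$ transfers ``nonempty interior'' back and forth between $\img\pi_{DR}$ and $\img\pi_{B,\sigma}$ so that irreducible bialgebraicity holds rather than merely bialgebraicity. I expect the independence of the choice of log resolution/compactification (the ``some (hence any)'' clause) to follow formally, since two such choices are dominated by a common one and $RH^\good$ is compatible with the corresponding restriction/modification maps, so both the bialgebraic and the absolute conditions are stable under passing between them.
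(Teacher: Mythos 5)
Your direction ``absolute $K$-closed $\Rightarrow$ $K$-bialgebraic'' is essentially the paper's argument: reduce to $X$ smooth, pull back under $RH^{\good}$, use the countability of the Galois orbit together with \Cref{criterion_algebraicity}/\Cref{algebraicity_of_irreducible_components} to see that euclidean closures of components are algebraic, and use that $RH^{\good}$ is a surjective local isomorphism (\Cref{local iso on good}) to get the nonempty-interior condition. That half is fine.

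The gap is in the other direction, where you claim that $\Sigma_{B,\sigma}=\Sigma_{B,1}^{\sigma}$ is ``a matter of unwinding definitions'' because the Galois action on Betti realizations is ``induced from $\cN_{DR}$.'' It is not. The canonical bijection $\cM_B(X)(\bC)\simeq\cM_B(X^{\sigma})(\bC)$ is defined through isomorphism classes of regular \emph{meromorphic} flat connections, while $\Sigma_{DR}$ lives in the stack of \emph{logarithmic lattices}, and the bialgebraicity data only tells you that the correspondence matches a subset of $\Sigma_{DR}^{\sigma}$ whose Betti image has nonempty interior in $\Sigma_{B,\sigma}^{\Zar}$. So what you get for free is only that $\Sigma_{B,\sigma}$ and $(\Sigma_{B,1})^{\sigma}$ agree on a set with nonempty euclidean interior; and since $(\Sigma_{B,1})^{\sigma}$ is a priori just a subset of $\cM_B(X^{\sigma})(\bC)$ with no known algebraic structure (its Zariski-closedness is exactly what you are trying to prove), you cannot pass from this partial agreement to equality of the closed sets by analytic continuation or by quoting the definition of the $\bQ$-correspondence stack.

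The missing ingredient, which the paper supplies, is the structure of the $RH$-saturation: the set $RH_{(\bar X^{\sigma},D^{\sigma})}^{-1}RH_{(\bar X^{\sigma},D^{\sigma})}(\Sigma_{DR}^{\sigma})$ — all log connections whose underlying meromorphic connection is isomorphic to that of a point of $\Sigma_{DR}^{\sigma}$ — is a \emph{countable union of algebraically constructible sets}, and it is manifestly compatible with the Galois action because it is determined by the meromorphic connection. This is what lets one show that the euclidean closure of every component of $(RH^{\good}_{(\bar X^{\sigma},D^{\sigma})})^{-1}(\Sigma_{B,\sigma})$ is algebraic, identify $\Sigma_{DR}$ (resp.\ $\Sigma_{DR}^{\sigma}$) as a component of the preimage of $\Sigma_{B,1}$ (resp.\ $\Sigma_{B,\sigma}$), and conclude that the full saturations, hence the Betti sets themselves, are Galois conjugate: $\Sigma_{B,\sigma}=(\Sigma_{B,1})^{\sigma}$. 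Without an argument of this type (or an equivalent use of the countability mechanism behind \Cref{Closure_absolute}), your proof of this implication does not go through.
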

\begin{proof}
    By replacing $X$ with $X^\reg$ we may assume $X$ is smooth. 
 On both sides of the implication, by definition $\Sigma_{B,1}$ is $K$-constructible.  We may assume $\Sigma_{B,1}$ is closed by \Cref{abs boolean}.  If $\Sigma_{B,1}$ is absolute $K$-closed, then taking $\Sigma_{DR}$ to be any component of $RH_{(\bar X,D)}^{-1}(\Sigma)$ meeting $\cM_{DR}(\bar X,D)^{\good}$, $(\Sigma_{DR},\{(\Sigma_{B,1})^\sigma\})$ is $K$-bialgebraic.  Conversely, if $(\Sigma_{DR},\{\Sigma_{B,\sigma}\})$ is $K$-bialgebraic, we may assume it is $K$-bialgebraic irreducible, and by taking Zariski closure we may assume all of $\Sigma_{DR},\Sigma_{B,\sigma}$ are closed and irreducible.  Then for each $\sigma$, $RH_{(\bar X^\sigma,D^\sigma)}(\Sigma_{DR}^\sigma)$ contains a euclidean open set $U_\sigma\subset\Sigma_{B,\sigma}$ which has nonempty interior.  Since the saturation $RH_{(\bar X^\sigma,D^\sigma)}^{-1}RH_{(\bar X^\sigma,D^\sigma)}(\Sigma^\sigma_{DR})$ is a countable union of algebraically constructible sets (as it is the set of all log connections whose meromorphic connection is isomorphic to the meromorphic connection of a point of $\Sigma_{DR}^\sigma$), it follows that for every $\sigma$ the euclidean closure in $\cM_{DR}(\bar X^\sigma,D^\sigma)$ of every component of $(RH_{(\bar X^\sigma,D^\sigma)}^\good)^{-1}(\Sigma_{B,\sigma})$ is algebraic.  Since $\Sigma_{DR}$ is a component of $RH_{(\bar X,D)}^{-1}(\Sigma_{B,1})$ and $\Sigma_{DR}^\sigma$ is a component of $RH_{(\bar X^\sigma,D^\sigma)}^{-1}(\Sigma_{B,\sigma})$ it follows that thet saturations are equal $RH_{(\bar X,D)}^{-1}(\Sigma_{B,1})=RH_{(\bar X^\sigma,D^\sigma)}^{-1}(\Sigma_{B,\sigma})$.  Thus, $\Sigma_{B,\sigma}=(\Sigma_{B,1})^\sigma$ and $\Sigma_{B,1}$ is absolute $K$-closed.
\end{proof}

\begin{cor}\label{qu dense in abs}
Let $X$ be a connected normal complex algebraic space. Let $\Sigma  \subset \cM_B(X)(\bC)$ be an absolute $\bar \bQ$-closed subset. Then 
\begin{enumerate}
    \item The locus of points $\Sigma^{\qu}\subset\Sigma$ with quasiunipotent local monodromy is Zariski dense in $\Sigma$.
    \item For each $n$, the locus $\Sigma^{\qu|n}\subset\Sigma$ with eigenvalues of local monodromy of order dividing $n$ is absolute $\bar\bQ$-closed.
\end{enumerate}

\end{cor}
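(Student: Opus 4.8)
The plan is to deduce both parts from \Cref{bialg=abs}, \Cref{qu dense in bialg}, and \Cref{qu is bialg}. Since the Riemann--Hilbert Galois action preserves the rank decomposition $\cM_B(X)=\bigsqcup_r\cM_B(X,r)$, and both $\Sigma^\qu$ and $\Sigma^{\qu|n}$ are computed rank by rank, I would first reduce to $\Sigma\subset\cM_B(X,r)(\bC)$ for a fixed $r$ (each $\Sigma\cap\cM_B(X,r)(\bC)$ is again absolute $\bar\bQ$-closed). Part (1) is then immediate: by \Cref{bialg=abs} (using that $\bar\bQ$ is countable), the absolute $\bar\bQ$-closed set $\Sigma$ is the $\sigma=\id$ Betti realization of a $\bar\bQ$-bialgebraic pair of $\cM_{RH,\bQ}(\bar X',D',E)$ for any chosen log resolution/compactification, and \Cref{qu dense in bialg} then asserts that the local systems with quasiunipotent local monodromy are Zariski dense in each Betti realization, in particular in $\Sigma$.

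For part (2) I would separate the purely Betti content from the Galois content. As in the proof of \Cref{qu dense in bialg} (restriction to $X^\reg$, which is built into the definition of absoluteness for normal $X$ and is compatible with the local monodromy conditions by \Cref{qu lefschetz}), I would reduce to $X$ smooth with a fixed log smooth compactification $(\bar X,D)$. For any smooth $Y$, the locus $\cM_B(Y,r)(\bC)^{\qu|n}$ is $\bar\bQ$-closed, being the preimage under the residue morphism $\mathrm{Char}_B(\bar Y,E,r)\colon\cM_B(Y,r)\to(\Sym^r\G_a)^I$ of the finite, Galois-stable set of tuples of polynomials all of whose roots are $n$-th roots of unity (see the section on the residue map). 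Hence $\Sigma^{\qu|n}=\Sigma\cap\cM_B(X,r)(\bC)^{\qu|n}$ is $\bar\bQ$-closed, and likewise $(\Sigma^\sigma)^{\qu|n}$ is $\bar\bQ$-closed for every $\sigma\in\Aut(\bC/\bQ)$, since $\Sigma^\sigma$ is $\bar\bQ$-closed by absoluteness of $\Sigma$. Thus the whole of part (2) reduces to the single compatibility statement: the Riemann--Hilbert Galois action carries $\cM_B(X,r)(\bC)^{\qu|n}$ bijectively onto $\cM_B(X^\sigma,r)(\bC)^{\qu|n}$, i.e.\ $V^\sigma$ is quasiunipotent with local monodromy of order dividing $n$ if and only if $V$ is. Granting this, $(\Sigma^{\qu|n})^\sigma=(\Sigma^\sigma)^{\qu|n}$ is $\bar\bQ$-closed, so $\Sigma^{\qu|n}$ is absolute $\bar\bQ$-closed.

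To prove the compatibility I would pass to the De Rham side, where $\sigma$ acts algebraically: given $V$, take the corresponding regular singular flat bundle $(\cV,\nabla)$, so that $V^\sigma$ is the monodromy of the $\sigma$-pullback of $(\cV,\nabla)$ to $X^\sigma$. If $V$ is $\qu|n$ then the eigenvalues of local monodromy are $n$-th roots of unity, so the residual eigenvalues of the Deligne logarithmic extension of $(\cV,\nabla)$ to $(\bar X,D)$ lie in $\tfrac{1}{n}\bZ\cap[0,1)\subset\bQ$; applying $\sigma$ to the residue matrices fixes these eigenvalues (they are rational), so the $\sigma$-pullback is again the Deligne extension, still with residual eigenvalues in $\tfrac{1}{n}\bZ$, whence $V^\sigma$ lies in $\cM_B(X^\sigma,r)(\bC)^{\qu|n}$; the converse is symmetric. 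This is the substance of \Cref{qu is bialg}, which organizes the residue maps into a morphism $\mathrm{Char}_\bQ(\bar X,D,r)\colon\cM_{RH,\bQ}(\bar X,D,r)\to\Sym\Exp_\bQ(r)^I$ of $\bQ$-correspondence stacks intertwining $\mathrm{Char}_{DR}$ with the $\mathrm{Char}_B$'s through the exponential correspondence, under which the $\qu|n$-locus is the preimage of the $n$-torsion; the Betti-side $n$-torsion of $\Sym\Exp_\bQ(r)^I$ is a Galois-stable finite set whose De Rham preimage (the $\tfrac1n\bZ$-tuples) is also Galois-stable, giving the compatibility. When $X$ is only normal one runs the same argument with $\cM_{RH,\bQ}(\bar X',D',E)$, using the closed immersion into $\cM_{RH,\bQ}(\bar X',D'')$ and \Cref{Qpushpull}, and taking $\mathrm{Char}$ along the components of $D'$ only (the local monodromy around $\bar E$ being already trivial on local systems pulled back from $X$).

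The main obstacle is precisely this last De Rham/Galois compatibility. The delicate point is that the Galois action on the Betti stack is transcendental --- it is defined through Riemann--Hilbert, not by applying $\sigma$ to matrix entries of a monodromy representation --- so its compatibility with the algebraic map ``characteristic polynomial of local monodromy'' must be extracted from the De Rham picture, and it is genuinely special to the quasiunipotent locus: the chosen fundamental domain $[0,1)$ for residual eigenvalues is not $\sigma$-stable, but the rational numbers are $\sigma$-fixed, which is exactly what makes the Deligne extension of a $\qu$ connection invariant under $\sigma$-pullback. Everything else is formal bookkeeping with absolute $\bar\bQ$-constructible sets.
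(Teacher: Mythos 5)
Your proposal is correct and follows essentially the same route as the paper, whose proof is just the one-line citation "the first part is by \Cref{qu dense in bialg} [via \Cref{bialg=abs}], and the second part follows from \Cref{qu is bialg} or can be seen directly." Your elaboration of part (2) — reducing to the Galois-invariance of the $\qu|n$ locus and proving it on the De Rham side via the rationality (hence $\sigma$-invariance) of the residual eigenvalues of a logarithmic extension — is precisely the "seen directly" argument packaged by \Cref{qu is bialg}.
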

\begin{proof}
    The first part is by \Cref{qu dense in bialg}, and the second part follows from \Cref{qu is bialg} or can be seen directly.
\end{proof}

\subsection{Graded nearby cycle functors}

\subsubsection{The Betti side}
Let $(\bar X,D)$ be a log smooth complex manifold and set $X := \bar X \setminus D$. Let $D_k$ be a (smooth) irreducible component of $D$. Let $D^k$ denotes $\cup_{j \neq k} D_j$. Then $D_k \cap D^k$ is a simple normal crossing divisor in $D_k$. Let $U_k$ be a tubular neighborhood of $D_k$ in $\bar X$. A fortiori, $U_k \setminus D^k$ is a tubular neighborhood of $D_k \setminus D^k$, and we have an exact sequence of groups:
\[ 1 \to K \to \pi_1(U_k \setminus  D) \to \pi_1(U_k \setminus  D^k) \to 1. \]
The group $K$ is canonically isomorphic to $\bZ$ and we denote by $\gamma_k \in \pi_1(U_k \setminus  D)$ its canonical generator (which corresponds geometrically to the class of a simple loop going around $D_k$ counterclockwise). Moreover, $K$ is central in $\pi_1(U_k \setminus  D)$, hence it acts on any local system defined on $U_k \setminus D$. Thanks to Deligne's version of the Riemann–Hilbert correspondence, the group $K$ acts also on any regular meromorphic connection on the pair $(U_k, U_k \cap D)$.

Recall the following well-known result.

\begin{lem}[see {\cite[Proposition 1.6.1]{Deligne-WeilII}}]\label{Deligne_weight_filtration}
Let $\cA$ be an abelian category. For each object
$Z$ of $\cA$ equipped with a nilpotent endomorphism $N$, there exists a unique finite increasing filtration $W_\bullet = W(N)_\bullet$ of $Z$ satisfying the following conditions:
\begin{enumerate}
    \item $N(W_k) \subset W_{k-2}$ for every $k$,
    \item $N^k$ induces an isomorphism $\gr_k^W Z\simeq \gr_{-k}^WZ$ for every $k \geq 0$.
\end{enumerate}
Moreover, the association $(Z,N) \mapsto (Z, W(N)_\bullet)$ defines a functor between the category of objects $Z$ of $\cA$ equipped with a nilpotent endomorphism $N$, a morphism between $(Z_1, N_1)$
and $(Z_2, N_2)$ being a morphism $f \colon Z_1 \to Z_2$ in $\cA$ such that $f \circ N_1 = N_2 \circ f$, and the category of objects $Z$ of $\cA$ equipped with a finite increasing filtration $W_\bullet$, the morphisms being the morphisms preserving the filtrations.
\end{lem}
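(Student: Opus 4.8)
The statement to prove is \Cref{Deligne_weight_filtration}, the existence and uniqueness of the monodromy weight filtration associated to a nilpotent endomorphism, together with its functoriality. My plan is to follow the classical argument, which is essentially elementary linear algebra carried out internally to an abelian category.

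First I would prove uniqueness by induction on the nilpotency index. Suppose $N^{m+1}=0$ but $N^m\neq 0$ on $Z$. Any filtration satisfying (1) and (2) must have $W_k = Z$ for $k\geq m$ and $W_k = 0$ for $k< -m$. The condition that $N^m\colon \gr^W_m Z \to \gr^W_{-m}Z$ is an isomorphism forces $W_{m-1} = \ker(N^m)$ and $W_{-m} = \img(N^m)$: indeed $N^m$ kills $W_{m-1}$ by (1), and $N^m(Z)\subset W_{-m}$ so the induced map on $\gr^W_m$ factors through $Z/\ker N^m \to W_{-m}$, and being an isomorphism at the associated-graded level pins down both $W_{m-1}$ and $W_{-m}$ exactly. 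Then one passes to the object $Z' := W_{m-1}/W_{-m} = \ker(N^m)/\img(N^m)$, on which $N$ induces a nilpotent endomorphism of index $\leq m-1$, and checks that the induced filtration on $Z'$ still satisfies (1) and (2) (this uses a small diagram chase, the key point being that the interaction of $N^k$-isomorphisms on graded pieces survives the subquotient); uniqueness on $Z'$ then gives uniqueness on $Z$ in the remaining range of indices. For existence one runs the same induction: set $W_{m-1}=\ker N^m$, $W_{-m}=\img N^m$, take the filtration $W'_\bullet$ on $Z'$ furnished by the inductive hypothesis, pull it back to a filtration on $W_{m-1}$ indexed in $[-(m-1), m-1]$ containing $W_{-m}$, and verify (1) and (2) for the resulting filtration on $Z$. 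The verification that $N^k$ induces isomorphisms on the graded pieces for all $k\geq 0$ — not just the extremes — is the one genuinely calculational step, and is the place where one must be careful to work with subobjects and quotients rather than elements; the Gabriel--Popescu embedding (or simply the fact that the statement is about finite diagrams of finite length objects) lets one borrow the module-theoretic proof if desired, but it is cleaner to do the chase directly.

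For functoriality, let $f\colon (Z_1,N_1)\to (Z_2,N_2)$ be a morphism with $f N_1 = N_2 f$. I would show $f(W(N_1)_k) \subset W(N_2)_k$ for all $k$, again by induction on the maximum of the nilpotency indices. The extreme steps are immediate: $f(\ker N_1^m)\subset \ker N_2^m$ and $f(\img N_1^m)\subset \img N_2^m$ from $fN_1=N_2f$. For the middle range, $f$ induces a morphism on the subquotients $Z_1' \to Z_2'$ compatible with the induced nilpotent endomorphisms, so by the inductive hypothesis it respects their weight filtrations, and one lifts this compatibility back up through the defining short exact sequences. This gives that $(Z,N)\mapsto (Z,W(N)_\bullet)$ is a functor to the category of filtered objects with filtration-preserving morphisms.

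The main obstacle is purely organizational rather than conceptual: all of this is textbook linear algebra when $\cA$ is a category of vector spaces (it is \cite[1.6.1]{Deligne-WeilII} as cited), so the only real work is to phrase each step — kernels, images, the induced nilpotent operator on $\ker N^m/\img N^m$, the "hard Lefschetz"-type isomorphisms $N^k\colon \gr^W_k\xrightarrow{\sim}\gr^W_{-k}$ — entirely in terms of universal constructions valid in any abelian category. I expect to handle this either by the direct subobject/quotient argument or by invoking an exact embedding of a suitable small abelian subcategory into a module category to reduce to the classical case; both are routine, and I would simply cite \cite{Deligne-WeilII} for the linear-algebra core and spell out only the embedding reduction if full self-containedness is wanted.
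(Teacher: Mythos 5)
The paper gives no proof of this lemma beyond the citation to Deligne, Weil II, Proposition 1.6.1, and your induction --- forcing $W_{m-1}=\ker N^m$ and $W_{-m}=\operatorname{im} N^m$, passing to $\ker N^m/\operatorname{im} N^m$ with the induced nilpotent, lifting the filtration, and proving functoriality by the same induction on nilpotency index --- is precisely Deligne's argument in that reference, which is stated and proved there for an arbitrary abelian category, so your proposal is correct and matches the intended proof. One minor remark: the embedding detour you sketch is unnecessary (the argument uses only kernels, images and subquotients, hence is already internal to $\cA$), and the theorem relevant to such a reduction would in any case be Freyd--Mitchell rather than Gabriel--Popescu.
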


Let $\cM_B^\unip(X)$ denote the closed substack of $\cM_B(X)$ consisting in local systems with unipotent local monodromy. For every $k$, we have the graded nearby cycle functor $$\gr\psi_{D_k} \colon \cM_B^\unip(X) \to \cM_B^\unip(D_k \setminus D^k)$$ defined as follows. If $A$ is a ring and $V$ is a $A$-local system on $X$ with unipotent local monodromy, then $\gamma_k$ induces a unipotent automorphism $T$ of its restriction $V_{|U_k \setminus D}$ to $U_k \setminus D$. Let $N := T - \mathrm{Id}$ be the induced nilpotent endomorphism of $V_{|U_k \setminus D}$ and $W_\bullet$ be the associated filtration in the abelian category $\cM_B^\unip(U_k \setminus D)(A)$. Then $\gr^W V_{|U_k \setminus D}$ is a $A$-local system on $U_k \setminus D$ with no monodromy around $D_k$. Therefore it extends uniquely as a $A$-local system on $U_k \setminus D^k$, whose restriction to $D_k \setminus D^k$ is by definition the graded nearby cycles $\gr \psi_{D_k}(V)$ along $D_k$. Note that $\gr \psi_{D_k}(V)$ has indeed unipotent local monodromies around the irreducible components of $D_k \cap D^k$.

\begin{thm}[Mochizuki]\label{graded_nearby_cycle_Mochizuki}
Assume that $(\bar X,D)$ is a projective log smooth variety and set $X=\bar X\setminus D$. Let $K$ be a field of characteristic zero. Let $V$ be a $K$-local system on $X$ with unipotent local monodromy. If $V$ is semisimple, then the $K$-local system $\gr\psi_{D_k}(V)$ on $D_k \setminus D^k$ is semisimple.    
\end{thm}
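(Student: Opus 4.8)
The plan is to deduce the semisimplicity of $\gr\psi_{D_k}(V)$ from the semisimplicity of $V$ by relating the graded nearby cycles to the restriction of a harmonic bundle, using the theory of tame harmonic bundles from \Cref{sect:harmonic bundles}. First I would observe that since $V$ is a semisimple $K$-local system with unipotent local monodromy, it is in particular a semisimple $\bC$-local system, and by the Corlette--Mochizuki theory (\Cref{existence_of_tame_purely_imaginary_harmonic_metrics}, together with \Cref{DolDRcorrespondence}) it underlies a purely imaginary tame harmonic bundle $(\cE,\bar\partial_\cE,\theta,h)$ on $X$ with \emph{nilpotent} residues of the Higgs field along each component of $D$ (the nilpotence being equivalent to unipotence of the local monodromy, cf. \Cref{unipotent_residues_equivalent_finite_energy} and the table referenced there). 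The key input is then the compatibility of Mochizuki's theory with restriction to (a neighborhood of) a boundary divisor: the KMS-structure / graded pieces of the parabolic filtration along $D_k$ of a tame harmonic bundle carry again the structure of a tame harmonic bundle on $D_k\setminus D^k$, and the underlying local system is precisely $\gr\psi_{D_k}(V)$.

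The main steps, in order, would be: (1) reduce to the case where $K=\bC$ (semisimplicity over a field of characteristic zero is unaffected by extension of scalars for local systems, since $\cM_B$ is defined over $\bQ$ and semisimplicity is detected by the vanishing of the relevant radical, which is compatible with flat base change; alternatively one checks directly that $\gr\psi_{D_k}$ commutes with $-\otimes_K\bar K$ and with Galois descent, reducing to $\bar\bQ$ and then to $\bC$); (2) pass to the associated purely imaginary tame nilpotent harmonic bundle on $X$; (3) invoke Mochizuki's structure theory of tame harmonic bundles near the boundary (the relevant reference is \cite{mochizukitame}, in particular the analysis of the prolongation of the harmonic bundle across $D_k$ and the associated KMS-filtration) to see that $\gr\psi_{D_k}(V)$, as a local system on $D_k\setminus D^k$, underlies a tame harmonic bundle there, obtained from the associated graded of the weight filtration of the residue $\mathrm{Res}_{D_k}(\theta)$; (4) conclude that $\gr\psi_{D_k}(V)$ is semisimple, since any local system underlying a harmonic bundle (equivalently, admitting a pluriharmonic metric) is semisimple.

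An alternative, perhaps cleaner, route avoids explicit harmonic bundles and uses mixed twistor $\mathscr{D}$-modules directly: by \Cref{lem: six functors tw} and \Cref{defn:avmts}, the semisimple local system $V$ underlies a (pure, since $V$ is semisimple, after taking the canonical $\bC$-VTS of \Cref{exa defining can twistor}) mixed twistor $\mathscr{D}$-module $\mathcal{T}$ on $\bar X$; the nearby cycles functor $\psi_{D_k}$ in the category of mixed twistor $\mathscr{D}$-modules is defined (Sabbah--Mochizuki) and sends pure objects of weight $w$ to mixed objects, and its weight-graded pieces $\gr^W\psi_{D_k}\mathcal{T}$ are again \emph{pure} mixed twistor $\mathscr{D}$-modules, hence their underlying perverse sheaves are semisimple by the decomposition theorem in that category. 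Unwinding this at $\lambda=1$ via $\mathrm{sp}_1^B$ and using \Cref{Deligne_weight_filtration} to identify the monodromy-weight filtration gives exactly that $\gr\psi_{D_k}(V)$ is semisimple. I would present whichever of these two is shortest given what is quotable; the harmonic-bundle argument is more elementary but the twistor argument is more robust.

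The hard part will be step (3)/the precise compatibility statement: pinning down the exact reference in Mochizuki's work for ``the associated graded of the residue weight filtration of a tame nilpotent harmonic bundle is again a tame harmonic bundle on the boundary divisor, with underlying local system $\gr\psi_{D_k}$,'' and checking that the purely-imaginary/nilpotent conditions are preserved under this operation (so that one stays within the class where the harmonic-bundle $\Leftrightarrow$ semisimple-local-system dictionary applies). In the twistor formulation the analogous obstacle is verifying that $\psi$ is weight-exact up to shift and that its pure subquotients have semisimple underlying perverse sheaves — this is part of Mochizuki's theorem that pure twistor $\mathscr{D}$-modules form a semisimple category stable under the standard functors, so it should be directly citable. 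Everything else is formal reduction and the general principle that local systems underlying harmonic bundles (or pure twistor $\mathscr{D}$-modules) are semisimple.
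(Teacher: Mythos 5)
Your treatment of the case $K=\bC$ is exactly what the paper does: after reducing to complex coefficients the paper simply quotes Mochizuki (\cite[Theorem 19.49]{Mochizuki-AMS2}, see also \cite{Sabbah_twistor_D_modules}), and both of your sketches — the tame purely imaginary harmonic-bundle restriction along $D_k$ and the pure/mixed twistor $\mathscr{D}$-module formulation — are descriptions of the content of that cited result, so no new argument is needed there.

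The one genuine gap is in your reduction to $K=\bC$. An arbitrary field $K$ of characteristic zero need not embed into $\bC$ (it can have cardinality larger than the continuum), so ``semisimplicity is unaffected by extension of scalars'' does not by itself connect $K$ with $\bC$; and your alternative route ``reducing to $\bar\bQ$ and then to $\bC$'' fails outright, since a local system whose monodromy matrices have transcendental entries does not descend to $\bar\bQ$, and Galois descent to $\bar\bQ$ makes no sense for general $K$. The correct fix — and the one the paper uses — exploits that $\pi_1(X^\an)$ is finitely generated: the entries of the monodromy matrices generate a finitely generated subfield $F\subset K$, so $V=V_F\otimes_F K$ for an $F$-local system $V_F$; semisimplicity is invariant under extension of scalars in characteristic zero, and $\gr\psi_{D_k}$ commutes with extension of scalars, so one may replace $K$ by $F$; finally every finitely generated field of characteristic zero embeds into $\bC$, which puts you in the case handled by Mochizuki. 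With that replacement for step (1), your proposal coincides with the paper's proof.
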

\begin{proof}
Since the fundamental group of $X$ is finitely generated, there exists a finitely generated field $F$ and a $F$-local system $V_F$ such that $V_F \otimes_F K = V$. Since on the one hand the property of being semisimple is invariant by changing coefficients, and on the other hand every finitely generated field can be embedded in $\bC$, one can assume that $K = \bC$. When $K = \bC$, this is \cite[Theorem 19.49]{Mochizuki-AMS2}. See also \cite{Sabbah_twistor_D_modules}.
\end{proof}

\subsubsection{The De Rham side}
We briefly recall the construction of a graded nearby cycle functor for logarithmic connections and refer to \cite{Brunebarbe_Sym, Langer-JEMS} for the details.

Let $\bar X$ be a smooth projective variety defined over an algebraically closed
field $k$. Let $D$ be a simple normal crossing divisor on $\bar X$ and let $Y$ be an irreducible component of $D$. Let $\iota\colon Y \to \bar X$ be the canonical embedding.

Let $\bL_Y= \left(L, [,], \alpha\right)$ be the Lie algebroid on $Y$, where $L = \iota^\ast T_X(-\log D)$ is equipped with the Lie algebra structure induced from the standard Lie algebra structure on $T_{\bar X}$ and the anchor map $\alpha \colon L \to \Der_k(\cO_Y) = T_Y$ is the canonical map induced by $\iota$. The restriction to $Y$ defines an obvious functor $(E, \nabla) \mapsto \iota^\ast (E, \nabla)$ from the abelian category $\cM_{DR}(\bar X, D)(k)$ to the category $\bL_Y$-Mod of coherent $\cO_Y$-modules with an $\bL_Y$-module structure. Moreover, the residue of $(E, \nabla)$ along $Y$ yields an endomorphism of $\iota^\ast (E, \nabla)$ seen as an element of $\bL_Y$-Mod. When the residue is nilpotent, we get from Lemma \ref{Deligne_weight_filtration} an increasing filtration $W_\bullet$ of $\iota^\ast (E, \nabla)$. By definition, the graded nearby functor of $(E, \nabla) \in \cM_{DR}^{\nilp}(\bar X, D)(k)$ is $\gr \psi_Y (E, \nabla) := \gr^W \iota^\ast (E, \nabla)$. It is an element of $\cM_{DR}^{\nilp}(Y, D_Y)$.

\subsubsection{Compatibilities}
The following proposition is a straightforward consequence of the constructions.
\begin{prop}\label{graded_nearby_cycle_commutation}
Let $D$ be a simple normal crossing divisor in a smooth projective complex algebraic variety $\bar X$. Let $D_k$ be a (smooth) irreducible component of $D$.
\begin{enumerate}
    \item The diagram
     \[
 \begin{tikzcd}
     \cM_{DR}^{\nilp}(\bar X, D)(\bC)  \ar[r," \gr \psi_{D_k}"]  \ar[d,"RH"]& \cM_{DR}^{\nilp}(D_k, D^k)(\bC)  \ar[d, "RH"]\\
     \cM_B^\unip(X)(\bC) \ar[r, " \gr \psi_{D_k}"]& \cM_B^\unip(D_k \setminus D^k)(\bC)
 \end{tikzcd}
 \]  
    is commutative.
    \item For every field automorphism $\sigma \in \Aut(\bC/\bQ)$ and every $(E, \nabla) \in \cM_{DR}^{\nilp}(\bar X, D)(\bC)$, one has
    \[\gr\psi_{D_k^\sigma}((E, \nabla)^\sigma) = \left(\gr\psi_{D_k}((E, \nabla))\right)^\sigma.\]
\end{enumerate}
\end{prop}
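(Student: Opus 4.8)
\textbf{Proof plan for Proposition \ref{graded_nearby_cycle_commutation}.}

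The plan is to prove the two assertions separately, both by unwinding the definitions of the two graded nearby cycle functors and reducing to the corresponding statement for the weight filtration of Lemma \ref{Deligne_weight_filtration}.

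For part (1), I would argue as follows. Fix a tubular neighborhood $U_k$ of $D_k$ and work over it. Given $(E,\nabla)\in\cM_{DR}^{\nilp}(\bar X,D)(\bC)$, restriction to $X$ followed by solving the connection yields a local system $V$ on $X^\an$ with unipotent local monodromy, and by Deligne's Riemann--Hilbert correspondence the monodromy operator $T$ of $V$ around $D_k$ is compatible with the action of the central generator $\gamma_k$ on the meromorphic connection; its logarithm $N=T-\mathrm{Id}$ corresponds, up to the standard factor $2\pi i$ (which is invertible, hence irrelevant for the induced filtration), to the nilpotent residue endomorphism of $\iota^*(E,\nabla)$. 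Since the weight filtration $W_\bullet=W(N)_\bullet$ of Lemma \ref{Deligne_weight_filtration} is characterized intrinsically (and functorially) by conditions on $N$ alone, it is preserved by any equivalence of abelian categories intertwining the two nilpotent endomorphisms; the Riemann--Hilbert equivalence $\cM_{DR}(\bar X,D)(\bC)\simeq \cM_B^\unip(X)(\bC)$ (or rather its restriction to $U_k\setminus D$, as in \Cref{local iso on good}) is such an equivalence. Hence it identifies $\gr^W\iota^*(E,\nabla)$ with $\gr^W V_{|U_k\setminus D}$. Both of these are objects with vanishing monodromy (resp. vanishing residue) along $D_k$, so each extends uniquely across $D_k$; the uniqueness of the extension, together with the compatibility of $RH$ with restriction, forces the two extensions to be identified under $RH$. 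Restricting to $D_k\setminus D^k$ gives the commutativity of the square. The key point to verify carefully is that the Riemann--Hilbert correspondence on $U_k\setminus D$ really does match the central monodromy $\gamma_k$ with the residue (this is \Cref{Deligne_weight_filtration}-type bookkeeping plus Deligne's RH), and that the unique-extension step is itself compatible with $RH$ (which again follows from \Cref{local iso on good} and the functoriality of $RH$ under open immersions).

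For part (2), I would note that all constructions entering $\gr\psi_{D_k}$ on the De Rham side are algebraic and defined over the field of definition of $(\bar X,D,(E,\nabla))$: the inclusion $\iota\colon D_k\to\bar X$, the Lie algebroid $\bL_{D_k}$, the restriction functor $(E,\nabla)\mapsto\iota^*(E,\nabla)$, the residue endomorphism, and the weight filtration $W(N)_\bullet$ (which, by the uniqueness clause of Lemma \ref{Deligne_weight_filtration}, is defined by an algebraic/functorial recipe and hence commutes with base change along any field automorphism). Applying $\sigma\in\Aut(\bC/\bQ)$ therefore carries the entire construction for $(\bar X,D,D_k)$ to the corresponding construction for $(\bar X^\sigma,D^\sigma,D_k^\sigma)$, giving $\gr\psi_{D_k^\sigma}((E,\nabla)^\sigma)=(\gr\psi_{D_k}(E,\nabla))^\sigma$. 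The only subtlety is to check that $D_k^\sigma$ is indeed the irreducible component of $D^\sigma$ corresponding to $D_k$ and that the residue and its weight filtration are compatible with $\sigma$; the former is clear since $\sigma$ acts on the set of irreducible components of $D$, and the latter follows from the intrinsic characterization of $W(N)_\bullet$.

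I expect the only genuine obstacle to be the precise matching in part (1) between the monodromy action of the central subgroup $K\cong\bZ$ on a local system over $U_k\setminus D$ and the nilpotent residue of the corresponding logarithmic connection---i.e. making fully rigorous the sentence ``thanks to Deligne's version of the Riemann--Hilbert correspondence, the group $K$ acts also on any regular meromorphic connection.'' Once this identification (and its compatibility with restriction and with the canonical extension across $D_k$) is in hand, both parts are formal consequences of the functoriality built into Lemma \ref{Deligne_weight_filtration} and \Cref{local iso on good}. Everything else is routine diagram-chasing and keeping track of which constructions are algebraic.
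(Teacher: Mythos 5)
Your proposal is correct and matches the paper's intent: the paper gives no detailed argument, stating only that the proposition is ``a straightforward consequence of the constructions,'' and your plan is precisely the expected unwinding — identifying the unipotent monodromy around $D_k$ with the exponential of $-2\pi i$ times the nilpotent residue via the Deligne/canonical extension (which is what \Cref{local iso on good} guarantees on the nilpotent locus), noting that $W(T-\mathrm{Id})=W(\log T)=W(\mathrm{Res})$ since the filtration of Lemma \ref{Deligne_weight_filtration} is insensitive to such rescalings, and using the algebraicity of the De Rham construction for Galois equivariance. No gaps; the points you flag as needing care (matching $\gamma_k$ with the residue, compatibility of the extension across $D_k$ with $RH$) are exactly the standard bookkeeping the paper elides.
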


\begin{cor}\label{nearby cycles is abs}
    In the above setup, $\gr\psi_{D_k} \colon \cM_B^\unip(X) \to \cM_B^\unip(D_k \setminus D^k)$ is an absolute $\bQ$-morphism.
\end{cor}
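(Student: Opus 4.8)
\textbf{Proof proposal for \Cref{nearby cycles is abs}.}

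The plan is to unwind the definition of an absolute $\bQ$-morphism and reduce the whole statement to the two compatibilities recorded in \Cref{graded_nearby_cycle_commutation}, together with the fact that the graded nearby cycle functor on the De Rham side is already algebraic (i.e. a morphism of algebraic stacks over the base field). Concretely, recall that $\gr\psi_{D_k}$ is an absolute $\bQ$-morphism if and only if its graph $\Gamma\subset \cM_B^\unip(X)(\bC)\times \cM_B^\unip(D_k\setminus D^k)(\bC)$ is an absolute $\bQ$-constructible subset, i.e. for every $\sigma\in\Aut(\bC/\bQ)$ the image $\Gamma^\sigma$ inside $\cM_B^\unip(X^\sigma)(\bC)\times\cM_B^\unip((D_k\setminus D^k)^\sigma)(\bC)$ is $\bQ$-constructible. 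So the first step is to identify $\Gamma^\sigma$.

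First I would fix a log resolution / log smooth compactification so that $D_k$ is a smooth irreducible component of a simple normal crossing divisor $D$ in a smooth projective $\bar X$, all defined over a countable (indeed $\bQ$-) field, and pass to $X^\reg$ so that $X$ is smooth as in the definitions of \Cref{sect:abssets}. Next I would transport everything to the De Rham side via the Riemann--Hilbert correspondence: the graded nearby cycle functor $\gr\psi_{D_k}\colon \cM_{DR}^\nilp(\bar X,D)\to\cM_{DR}^\nilp(D_k,D^k)$ is constructed purely algebraically (restrict the logarithmic connection to $D_k$ as an $\bL_{D_k}$-module, take the residue, which is nilpotent, and apply the functorial weight filtration of \Cref{Deligne_weight_filtration} in the abelian category $\cM_{DR}^\nilp$, then grade), hence its graph $\Gamma_{DR}\subset\cM_{DR}^\nilp(\bar X,D)(\bC)\times\cM_{DR}^\nilp(D_k,D^k)(\bC)$ is a constructible subset defined over $\bQ$; in particular $\Gamma_{DR}^\sigma = \Gamma_{DR}$ up to the canonical identification with the De Rham stacks of $\bar X^\sigma$, by functoriality of all these constructions under base change. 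Then \Cref{graded_nearby_cycle_commutation}(1) says $\Gamma$ is the image of $\Gamma_{DR}$ under the (analytic) Riemann--Hilbert isomorphism $RH_{(\bar X,D)}\times RH_{(D_k,D^k)}$ restricted to the unipotent/nilpotent loci (which is an isomorphism of analytic stacks by \Cref{BettiDeRhamcomparison}), and \Cref{graded_nearby_cycle_commutation}(2) says this identification is $\sigma$-equivariant, so $\Gamma^\sigma$ is the $RH$-image of $\Gamma_{DR}^\sigma=\Gamma_{DR}$ in the $\sigma$-twisted Betti stacks. It remains to check $\Gamma^\sigma$ is $\bQ$-constructible as a subset of the Betti product.

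For that last point I would invoke \Cref{criterion_algebraicity} (in the form already used in \Cref{algebraicity_of_irreducible_components} and \Cref{abs basic prop}): $\Gamma^\sigma$ is the preimage under an analytic map of an algebraically constructible set, hence analytically constructible in a smooth atlas, and as $\tau$ runs through $\Aut(\bC/\bQ)$ the sets $(\Gamma^\sigma)^\tau=\Gamma^{\tau\sigma}$ run through countably many subsets (in fact all of them are, after $RH$, the fixed set $\Gamma_{DR}$), so \Cref{criterion_algebraicity} forces $\Gamma^\sigma$ to be $\bQ$-algebraically constructible. Since this holds for every $\sigma$, $\Gamma$ is absolute $\bQ$-constructible, i.e. $\gr\psi_{D_k}$ is an absolute $\bQ$-morphism. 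I do not expect a serious obstacle here: the whole argument is a bookkeeping exercise assembling results already in the excerpt, and the only mildly delicate points are (a) making sure the De Rham graded nearby cycle functor genuinely descends to a morphism of finite-type algebraic stacks over $\bQ$ (which follows from the algebraicity of restriction to $D_k$, the algebraicity of the residue/weight-filtration construction, and Nitsure-type representability as in \Cref{nitsure}), and (b) keeping the reduction to the smooth, finite-type, $\bQ$-defined situation clean so that \Cref{criterion_algebraicity} applies verbatim.
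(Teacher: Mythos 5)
You correctly isolate the two compatibilities of \Cref{graded_nearby_cycle_commutation} as the heart of the matter, and that much agrees with the paper, which deduces the corollary directly from them. The gap is in your final step. First, \Cref{criterion_algebraicity} concludes only that the set is algebraically constructible in $X(\bC)$, i.e.\ constructible over $\bC$; it does not produce the $\bQ$-constructibility of $\Gamma^\sigma$ that the definition of an absolute $\bQ$-morphism requires, so asserting that it ``forces $\Gamma^\sigma$ to be $\bQ$-algebraically constructible'' overstates what the cited result gives. Second, the countability hypothesis of \Cref{criterion_algebraicity} concerns the honest Galois action of $\Aut(\bC/L)$ on the $\bC$-points of a model over a countable field $L$ (this is exactly how it is used in \Cref{algebraicity_of_irreducible_components}, where one works on the De Rham side precisely because there the $RH$-preimages \emph{are} the honest conjugates); you verify it instead with the Riemann--Hilbert transport, writing $(\Gamma^\sigma)^\tau=\Gamma^{\tau\sigma}$, which is a different action on the Betti stack, so the hypothesis is not actually checked. (Also, $X$ is an arbitrary complex algebraic space, so claiming $\bar X$, $D$ and hence $\Gamma_{DR}$ are defined over $\bQ$ is unwarranted; only a countable field of definition is available.)

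The intended argument is shorter and purely Betti-side: the graded nearby cycle functor on the Betti side is a group-theoretic construction --- restrict to the tube, take the local monodromy $T$, form $N=T-\mathrm{Id}$, apply Deligne's weight filtration from \Cref{Deligne_weight_filtration}, and pass to the graded --- carried out on representation schemes defined over $\bQ$; the weight filtration depends constructibly (over $\bQ$) on the unipotent monodromy, so the graph of $\gr\psi_{D_k}$ is $\bQ$-constructible in every Betti realization. Then \Cref{graded_nearby_cycle_commutation}(1), applied to both $X$ and $X^\sigma$, together with (2), identifies the $\sigma$-transport of the graph with the graph of $\gr\psi_{D_k^\sigma}$ on $\cM_B^\unip(X^\sigma)$, and absoluteness follows with no appeal to \Cref{criterion_algebraicity}. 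Your route could be repaired by observing that the Betti graph is stable under the entrywise (honest) Galois action --- which both supplies the countability hypothesis in the form the criterion needs and, via Galois descent of constructible sets, upgrades $\bC$-constructibility to $\bQ$-constructibility --- but as written the proof does not close.
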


\subsection{Absolute Hodge subsets}

\begin{defn}\label{abs Hodge better}
    Let $X$ be a connected normal algebraic space.  An \emph{absolute Hodge substack} $\cZ\subset\cM_B(X)$ is a Hodge substack for which $\cZ(\bC)$ is $\bQ$-locally closed and absolute $\bar\bQ$-locally closed.  An \emph{absolute Hodge subset} $\Sigma\subset\cM_B(X)(\bC)$ is a set of points $\Sigma=\cZ(\bC)$ underlying an absolute Hodge substack $\cZ\subset\cM_B(X)$.  

\end{defn}

By \Cref{abs basic prop}, absolute Hodge substacks of $\cM_B(X)$ satisfy the same functoriality properties as Hodge substacks, see \Cref{basic prop hodge substack}:
\begin{prop}\label{basic prop abs hodge substack}
        Let $X$ be a connected normal algebraic space.

    \begin{enumerate}
        \item For any $r$, $\cM_B(X,r)$ and $\{\triv_r\}$ are absolute Hodge substacks of $\cM_B(X)$.
        \item 
        Assume $X$ is a curve.  For any $V\in \cM_B(X)(\bC)^{\qu,\ss}$, the fixed residual eigenvalues leaf $FE(V)\subset\cM_B(X)$ is a closed absolute Hodge substack.
        \item Intersections and reductions of absolute Hodge substacks are absolute Hodge substacks.  $\bQ$-irreducible components of reduced absolute Hodge substacks are absolute Hodge substacks.  Finite unions of closed absolute Hodge substacks are absolute Hodge substacks.   
        \item Let $f:X\to Y$ be a morphism of connected normal algebraic spaces and $f^*:\cM_B(Y)\to \cM_B(X)$ the pullback morphism.  Then:
        \begin{enumerate}
           \item For any absolute Hodge substack $\cZ\subset \cM_B(X)$, $(f^*)^{-1}(\cZ)\subset \cM_B(Y)$ is an absolute Hodge substack.
        \item For any absolute Hodge substack $\cZ\subset \cM_B(Y)$, $f^*\cZ\subset \cM_B(X)$ is an absolute Hodge substack provided $f$ is dominant or a Lefschetz curve (see \Cref{defn lefschetz})
        \end{enumerate}
        \item Inverse images of absolute Hodge substacks under the direct sum and tensor product morphisms $\oplus,\otimes:\cM_B(X)^2\to\cM_B(X)$ are absolute Hodge substacks. 
 
\end{enumerate}
\end{prop}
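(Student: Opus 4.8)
The plan is to deduce \Cref{basic prop abs hodge substack} by combining \Cref{basic prop hodge substack} (the corresponding statement for Hodge substacks, with no absoluteness condition) with \Cref{basic prop abs hodge substack}(3)'s precursor, namely \Cref{abs basic prop}, which records that $\cM_B(X)(\bC)$ is absolute $\bQ$-closed and that absolute $\bar\bQ$-constructible subsets form a Boolean subalgebra stable under pullback along algebraic maps, tensor products, direct sums, and images/preimages under absolute morphisms. In each case, the ``Hodge'' part of the conclusion is exactly the content of \Cref{basic prop hodge substack}, so the only thing to check is that the set of $\bC$-points remains $\bQ$-locally closed and absolute $\bar\bQ$-locally closed under the operation in question; this is where \Cref{abs basic prop}, \Cref{Closure_absolute}, and \Cref{abs boolean} do all the work.

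Concretely, I would go through the five items in order. For (1), $\cM_B(X,r)(\bC)$ is absolute $\bQ$-closed by \Cref{abs basic prop}(1) and $\{\triv_r\}$ is cut out by the (absolute $\bQ$-)morphism pulling back from a point, so it is absolute $\bQ$-closed as well; the Hodge part is \Cref{basic prop hodge substack}(1). For (2), reducing to a curve, the fixed residual eigenvalues leaf $FE(V)$ is a fiber of the map $\cM_B(X,r)\to\prod_{\bar x\in D}M_B(\hat\bD^*,r)$; this map is an absolute $\bar\bQ$-morphism (its graph is cut out by the characteristic-polynomial-of-local-monodromy conditions, which are Galois-equivariant, cf. the discussion in \Cref{sect:abssets} and \Cref{qu is bialg}), and $V$ being a $\qu$-point means the fiber passes through a $\bar\bQ$-point of the target, so $FE(V)$ is absolute $\bar\bQ$-closed; the Hodge part is \Cref{basic prop hodge substack}(2). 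For (3), intersections and reductions preserve absolute $\bQ$-local-closedness and absolute $\bar\bQ$-local-closedness by \Cref{abs basic prop}(2); $\bQ$-irreducible components are handled by \Cref{Closure_absolute}\eqref{geometric irreducible component of absolute}, which says that the geometric irreducible components of an absolute $\bar\bQ$-locally closed set are absolute $\bar\bQ$-locally closed, combined with the observation that a $\bQ$-irreducible component is a Galois orbit of geometric components and hence $\bQ$-locally closed; finite unions are clear. For (4) and (5), pullback, direct sum, and tensor product are absolute $\bar\bQ$-morphisms by \Cref{abs basic prop}(3), so preimages (and, in the dominant/Lefschetz case, images) of absolute $\bar\bQ$-locally closed sets are absolute $\bar\bQ$-locally closed by \Cref{abs basic prop}(4); for images along a dominant $f$ or a Lefschetz curve inclusion one uses that $f^*$ is immersive (\Cref{lem immersive}) so the image is locally closed, and that the image of an absolute subset is absolute. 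In each of (4) and (5) the Hodge part is the corresponding clause of \Cref{basic prop hodge substack}.

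Since every individual step is a one-line invocation of a result stated earlier in the paper, there is no real obstacle here: the proof is essentially the sentence ``By \Cref{abs basic prop} the absoluteness and $\bQ$-local-closedness of the point sets are preserved under all the listed operations, while the Hodge condition is preserved by \Cref{basic prop hodge substack}.'' The only point requiring a moment's care is the $\bQ$-irreducible components clause of item (3): one must be slightly careful to distinguish geometric irreducible components (over $\bar\bQ$) from $\bQ$-irreducible components, and to note that $\bQ$-irreducibility of a component of an absolute $\bar\bQ$-locally closed set follows because the geometric components it contains are permuted transitively by $\Aut(\bC/\bQ)$ (each is absolute $\bar\bQ$-locally closed by \Cref{Closure_absolute}), so their union is $\bQ$-locally closed and absolute $\bar\bQ$-locally closed. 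I would write the proof as: ``Combine \Cref{basic prop hodge substack} with \Cref{abs basic prop} and \Cref{Closure_absolute}; the only nonimmediate point is (3), for which one also uses that $\bQ$-irreducible components are $\Aut(\bC/\bQ)$-orbits of geometric components, each absolute $\bar\bQ$-locally closed by \Cref{Closure_absolute}\eqref{geometric irreducible component of absolute}.''
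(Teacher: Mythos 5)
Your proposal is correct and follows essentially the same route as the paper: the paper's own justification is precisely the one-line combination of \Cref{basic prop hodge substack} (for the Hodge condition) with \Cref{abs basic prop} and the surrounding absoluteness results such as \Cref{Closure_absolute} (for $\bQ$- and absolute $\bar\bQ$-local-closedness), which is what you spell out item by item, including the only mildly delicate point about $\bQ$-irreducible versus geometric components.
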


\begin{cor}\label{every component contains Hodge}
    Every irreducible component of a closed absolute Hodge substack contains a point underlying a $\CVHS$.
\end{cor}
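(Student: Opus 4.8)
The plan is to reduce to a curve, exploit the density of the quasiunipotent locus, and then run the $\bR_{>0}$-action machinery developed in \Cref{DeligneHitchin sect}. Let $\cZ\subset\cM_B(X)$ be a closed absolute Hodge substack and $\cZ_0$ an irreducible component; by \Cref{basic prop abs hodge substack}(3), passing to a $\bQ$-irreducible component, $\cZ_0$ is again a closed absolute Hodge substack, so I may assume $\cZ$ is irreducible. First I would pass to a Lefschetz curve $i\colon C\to X$ (\Cref{lefschetz curve}): by \Cref{basic prop abs hodge substack}(4b) the pullback $i^*\cZ\subset\cM_B(C)$ is again an absolute Hodge substack, and since $i^*$ is a closed immersion (\Cref{lem immersive}), a point of $i^*\cZ(\bC)$ underlying a $\CVHS$ gives one of $\cZ(\bC)$, as the Hodge structure on $C$ descends to $X$ (the relevant filtrations are determined on the curve and the local monodromy of $i^*V$ is quasiunipotent iff that of $V$ is, by \Cref{qu lefschetz}). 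So I reduce to $X=C$ a smooth affine curve with log smooth compactification $(\bar C,D)$.

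Next, let $Z\subset M_B(C)$ be the image of $\cZ$ in the good moduli space; by \Cref{ssabs} this is $\bar\bQ$-absolute, and by \Cref{qu dense in abs} (equivalently \Cref{intro qu density}) the quasiunipotent locus $Z^\qu$ is Zariski dense in $Z$, so in particular nonempty. Fixing $n$ with $Z^{\qu|n}$ nonempty, \Cref{abs Hodge contain R*} says each irreducible component of $Z^{\qu|n}$ is $\bR_{>0}$-stable. Pick such a component $Z_1$; it is a nonempty $\bR_{>0}$-stable set of semisimple local systems with quasiunipotent local monodromy. Then \Cref{comp has R* fixed} produces a point of $\overline{Z_1}$ fixed by $\bR_{>0}$, and \Cref{Cstar_fixed_points} identifies $\bR_{>0}$-fixed points of $M_B(C)(\bC)$ with semisimple local systems underlying a complex variation of pure Hodge structures. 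Since $\cZ$ is closed, $\overline{Z_1}\subset Z\subset c_C(\cZ(\bC))$, and by \Cref{ss in closure} the semisimplification map lands in the closed set $\cZ(\bC)$, so a semisimple representative of this $\bR_{>0}$-fixed point lies in $\cZ(\bC)$; it underlies a $\CVHS$. This representative lies in the chosen component $\cZ_0$? — here I need to be slightly careful: I should choose $Z_1$ so that the fixed point produced lies in the image of $\cZ_0$. Since $\cZ_0$ maps to a (dense, since $\cZ_0$ is a component of the irreducible $\cZ$) subset of $Z$, and $Z^{\qu|n}$ is dense in $Z$ and hence meets the image of $\cZ_0$; taking $Z_1$ a component of $Z^{\qu|n}$ meeting that image suffices, because its closure is then contained in the closure of $c_C(\cZ_0(\bC))$, and the $\bR_{>0}$-fixed point in $\overline{Z_1}$ lifts into $\overline{\cZ_0(\bC)}=\cZ_0(\bC)$ after semisimplification.

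I expect the main obstacle to be precisely the bookkeeping in the last paragraph: ensuring the $\bR_{>0}$-fixed point produced by \Cref{comp has R* fixed} actually lands in the prescribed irreducible component $\cZ_0$ of $\cZ$, rather than merely somewhere in $\cZ$. This is a matter of tracking images and closures through the (quasifinite, not injective) map $c_C\colon\cM_B(C)\to M_B(C)$ and through semisimplification, using that $\ss_C$ is constructible, commutes with the relevant structures, and preserves closed sets (\Cref{ss in closure}); irreducibility of $\cZ_0$ and density of $Z^{\qu|n}$ in $Z$ are what make it go through. Everything else is a direct assembly of results already established: the reduction to curves via Lefschetz, the density statement \Cref{intro qu density}, the $\bR_{>0}$-stability of components of the quasiunipotent locus of a Hodge substack's image (\Cref{abs Hodge contain R*}), and the existence of $\bR_{>0}$-fixed points together with their identification as $\CVHS$.
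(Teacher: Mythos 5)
Your ingredients are the right ones --- they are exactly the three results the paper cites (\Cref{qu dense in abs}, \Cref{abs Hodge contain R*}, \Cref{comp has R* fixed}, plus \Cref{Cstar_fixed_points}) --- but the component bookkeeping you flag as the main obstacle is in fact a genuine gap as written. You reduce only to a $\bQ$-irreducible $\cZ$, yet then treat a geometric irreducible component $\cZ_0$ as having dense image in $Z=c_X(\cZ(\bC))$; this fails when $\cZ$ is $\bQ$-irreducible but not geometrically irreducible, since the geometric components are proper closed subsets, all of the same dimension, and none is dense. Worse, the proposed fix --- take a component $Z_1$ of $Z^{\qu|n}$ merely \emph{meeting} the image of $\cZ_0$ and conclude $\overline{Z_1}\subset \overline{c_X(\cZ_0(\bC))}$ --- does not follow: an irreducible set meeting a closed set need not be contained in it, so the $\bR_{>0}$-fixed point produced in $\overline{Z_1}$ may land in a different component of $\cZ$. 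The correct repair is to apply the three results to the component itself rather than to $\cZ$: by \Cref{basic prop hodge substack}(3) an irreducible component of a reduced (closed) Hodge substack is again a closed Hodge substack, and by \Cref{Closure_absolute}(2) its set of points is absolute $\bar\bQ$-closed; hence \Cref{qu dense in abs} gives $Z_0^{\qu}\neq\varnothing$ for $Z_0$ the image of $\cZ_0$, \Cref{abs Hodge contain R*} applied to $\cZ_0$ gives $\bR_{>0}$-stability of the components of $Z_0^{\qu|n}$, and \Cref{comp has R* fixed} together with \Cref{ss in closure} and \Cref{Cstar_fixed_points} puts a $\CVHS$ point in $\cZ_0(\bC)$. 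This is precisely the paper's (one-line) proof, with no intersection argument needed.

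Two smaller points. First, the Lefschetz-curve reduction is unnecessary: all three cited results are stated for an arbitrary connected normal algebraic space $X$ (the reduction to curves happens inside their proofs), so invoking it only adds work. Second, your justification for descending the $\CVHS$ from the curve --- ``the relevant filtrations are determined on the curve'' --- is not a valid argument: a Hodge filtration on $i^*V$ does not by itself produce one on $V$. The correct descent, if you insist on the curve reduction, goes through \Cref{Cstar_fixed_points}: $V$ underlies a $\CVHS$ iff its class in $M_B(X)$ is $\bR_{>0}$-fixed, and since $i^*\colon M_B(X)\to M_B(C)$ is an injective $\bR_{>0}$-equivariant closed immersion (surjectivity of $\pi_1(C)\to\pi_1(X)$ plus functoriality of the action), fixedness of $i^*V$ forces fixedness of $V$.
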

\begin{proof}
By \Cref{qu dense in abs}, \Cref{abs Hodge contain R*}, and \Cref{comp has R* fixed}.    
\end{proof}

For completeness we give a notion of absolute Hodge subset suited to the semisimple quasiunipotent local monodromy case, which is substantially easier since we do not have to deal with the miniversal families.
\begin{defn}
   Let $X$ be a connected normal algebraic space.  A \emph{coarse absolute Hodge subset} $\Sigma\subset\cM_B(X)(\bC)^{\ss,\qu}$ is a $\bQ$-constructible absolute $\bar\bQ$-constructible $\bR_{>0}$-stable subset. 
\end{defn}
We sometimes refer to subsets of $M_B(X)(\bC)$ being coarse absolute Hodge subsets, by which we mean the corresponding set of semisimple local systems is coarse absolute Hodge.
\begin{prop}\label{basic prop coarse abs hodge}
        Let $X$ be connected normal algebraic space.

    \begin{enumerate}
        \item Let $\cZ\subset\cM_B(X)$ be a closed absolute Hodge substack.  Then for each $n$, $\cZ^{\qu|n}(\bC)^\ss\subset M_B(X)$ is a coarse absolute Hodge subset.
        \item Intersections and finite unions of coarse absolute Hodge subsets are coarse absolute Hodge subsets.
        \item Let $f:X\to Y$ be a morphism of connected normal algebraic spaces and $f^*:\cM_B(Y)\to\cM_B(X)$ the pullback morphism.  Then:
        \begin{enumerate}
       
        \item For any coarse absolute Hodge subset $\Sigma\subset \cM_B(X)(\bC)$, $(f^*)^{-1}(\Sigma)^{\ss,\qu|n}\subset \cM_B(Y)(\bC)$ is a coarse absolute Hodge subset.
        \item For any coarse absolute Hodge subset $\Sigma\subset \cM_B(Y)(\bC)^{\ss,\qu}$, $f^*\Sigma\subset \cM_B(X)(\bC)$ is a coarse absolute Hodge subset.
      
        \end{enumerate}
    \end{enumerate}
\end{prop}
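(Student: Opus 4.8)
The plan is to deduce \Cref{basic prop coarse abs hodge} from the already-established properties of absolute Hodge substacks (\Cref{basic prop abs hodge substack}), the density and absoluteness properties of the quasiunipotent locus (\Cref{qu dense in abs}), the $\bR_{>0}$-stability of quasiunipotent pieces of images of Hodge substacks (\Cref{abs Hodge contain R*}), and the functoriality of semisimplification (\Cref{ssabs}, \Cref{pullback ss}, \Cref{pullback ss} applied to coefficients). The key point throughout is that a coarse absolute Hodge subset is, by definition, just the ``$\ss$, $\qu|n$'' shadow in $M_B(X)$ of a closed absolute Hodge substack, so all three parts will reduce to statements we have already proven upstairs combined with bookkeeping about how $\ss$ and $\qu|n$ interact with the various operations.

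For part (1), given a closed absolute Hodge substack $\cZ\subset\cM_B(X)$, first note that $\cZ^{\qu|n}$ is a closed absolute Hodge substack: the locally closed (indeed closed) substack condition follows from \Cref{qu dense in abs}(2) (the locus with eigenvalues of local monodromy of order dividing $n$ is absolute $\bar\bQ$-closed) together with the fact, recorded in the Notation section, that $\cZ(\bC)^{\qu|n}=\cZ^{\qu|n}(\bC)$ underlies a closed substack; and the Hodge condition passes to it since intersecting with the closed fixed-residual-eigenvalues-type locus is among the operations of \Cref{basic prop abs hodge substack}(3)--(4). Then $\cZ^{\qu|n}(\bC)^\ss$ is $\bQ$-constructible and absolute $\bar\bQ$-constructible by \Cref{ssabs}(1) and \Cref{abs basic prop} (the semisimple locus is cut out by an absolute $\bQ$-closed condition, and Boolean operations preserve absoluteness), and it is $\bR_{>0}$-stable by \Cref{abs Hodge contain R*} applied to $\cZ^{\qu|n}$: each irreducible component of the image $Z^{\qu|n}$ in $M_B(X)$ is $\bR_{>0}$-stable, hence so is $Z^{\qu|n}$ itself, and this set of points is exactly $\cZ^{\qu|n}(\bC)^\ss$ viewed in $M_B(X)$. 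This gives all three defining conditions.

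For part (2), intersections and finite unions: $\bQ$-constructibility and absolute $\bar\bQ$-constructibility are Boolean-algebra properties by \Cref{abs basic prop}(2), restricting to the semisimple quasiunipotent locus, and $\bR_{>0}$-stability is obviously preserved by intersection and union; so there is nothing further to do here beyond observing that the class in question is closed under these operations. For part (3), functoriality under $f^*:\cM_B(Y)\to\cM_B(X)$: the pullback of a semisimple local system is semisimple (\Cref{pullback ss}) and semisimplification commutes with pullback (using \Cref{pullback ss} and the discussion around \Cref{ssabs}), the pullback of a local system with quasiunipotent local monodromy has quasiunipotent local monodromy (recalled in \Cref{qu lefschetz} / the general discussion), and for the converse direction needed in (b) one uses that $f^*$ is immersive and that under a dominant map or a Lefschetz curve the quasiunipotence descends. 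Thus $(f^*)^{-1}(\Sigma)^{\ss,\qu|n}$ and $f^*\Sigma$ are again $\bQ$-constructible and absolute $\bar\bQ$-constructible by \Cref{abs basic prop}(3)--(4), and $\bR_{>0}$-stability is inherited because the $\bR_{>0}$-action is functorial with respect to pullback (the lemma constructing the $\bR_{>0}$-action on $M_B$ for normal algebraic spaces establishes exactly this compatibility); so $f^*_U\bR_{>0}\Sigma$ lands inside $f^*\Sigma$, and similarly on the preimage side. I expect the only mildly delicate point to be the bookkeeping in part (3)(b), where one must know that $f^*$ maps the $\qu$ locus \emph{onto} (not merely into) its image's $\qu$ locus and that $\ss$-points in the image lift to $\ss$-points, both of which are handled by \Cref{pullback ss} together with the immersivity of $f^*$ under the stated hypotheses (as in the proof of \Cref{lem: prop Hodge sub of Betti}); no new ideas are needed, only careful citation of the correct earlier statements.
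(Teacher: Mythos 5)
Your argument is correct and follows essentially the same route as the paper, whose proof simply cites \Cref{abs Hodge contain R*}, \Cref{abs basic prop}, \Cref{pullback ss}, and the functoriality of the $\bR_{>0}$-action. The only differences are unneeded detours on your side: \Cref{abs Hodge contain R*} applies to $\cZ$ directly (so you need not argue that $\cZ^{\qu|n}$ is itself a closed absolute Hodge substack), and in (3)(b) no dominance or Lefschetz-curve input is required, since pullback automatically preserves semisimplicity and quasiunipotent local monodromy and images under absolute morphisms are absolutely constructible.
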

\begin{proof}
    Immediate from \Cref{abs Hodge contain R*}, \Cref{abs basic prop}, \Cref{pullback ss}, and the functoriality of the $\bR_{>0}$-action.
\end{proof}

Finally, we give an ad hoc definition that is not particularly well-behaved, but collects the properties we will need for the proofs in \Cref{sect:qu stein} and \Cref{sect:proofs}.  
\begin{defn}\label{defn:abs hodge}
Let $X$ be a connected normal algebraic space.  A \emph{weak absolute Hodge subset} $\Sigma\subset\cM_B(X)(\bC)$ is a $\bQ$-constructible absolute $\bar\bQ$-constructible subset such that:

\begin{enumerate}
       \item\label{R* condition} 
Each geometric irreducible component of $\Sigma^\Zar$ contains a point underlying a $\bC$-VHS with quasiunipotent local monodromy.
       \item\label{mini condition} For each geometric irreducible component $\Sigma_0$ of $\Sigma^\Zar$ which is not generically semisimple with quasiunipotent local monodromy, let $\cM_{\Sigma_0}\subset\cM_B(X)$ be the reduced substack with underlying set of points $ \Sigma_0$.  Then $\cM_{\Sigma_0}$ has a point underlying a $\bC$-VHS with quasiunipotent local monodromy at which it is formally Hodge as in \Cref{defn formally Hodge}.
    \item There exists a $\bQ$-constructible absolute $\bar\bQ$-constructible subset $\Sigma_0\subset \Sigma^{\qu,\ss}$ satisfying (1) and containing the $\bC$-VHS points of $\Sigma$ from (2).
\end{enumerate}
\end{defn}

Note that by definition a weak absolute Hodge subset has bounded rank.  Note also that a weak absolute Hodge subset $\Sigma\subset\cM_B(X)(\bC)^{\qu,\ss}$ is just a $\bQ$-constructible absolute $\bar\bQ$-constructible subset satisfying (1).

\begin{prop}\label{abs Hodge implies weak abs Hodge}\hspace{1in}
\begin{enumerate}
    \item For any coarse absolute Hodge subset $\Sigma\subset \cM_B(X)(\bC)$, there is a finite \'etale cover $p:X'\to X$ corresponding to a finite quotient of the image of the monodromy representation of $\bigoplus_{V\in\Sigma} V$ such that $p^*\Sigma$ is a weak absolute Hodge subset.
    \item For any closed absolute Hodge substack $\cZ\subset \cM_B(X)$, $\cZ(\bC)$ is a weak absolute Hodge subset.
\end{enumerate}
\end{prop}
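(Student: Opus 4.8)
\textbf{Proof strategy for \Cref{abs Hodge implies weak abs Hodge}.}

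The plan is to verify the three conditions of \Cref{defn:abs hodge} in each case, using the structural results already established about absolute Hodge substacks and coarse absolute Hodge subsets.

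For part (2), let $\cZ\subset\cM_B(X)$ be a closed absolute Hodge substack, and set $\Sigma=\cZ(\bC)$. By definition $\Sigma$ is $\bQ$-closed and absolute $\bar\bQ$-closed, so it is $\bQ$-constructible and absolute $\bar\bQ$-constructible as required. For condition \ref{R* condition}: every geometric irreducible component of $\Sigma^\Zar=\Sigma$ is itself (the set of points of) a closed absolute Hodge substack by \Cref{basic prop abs hodge substack}(3) — being a $\bQ$-irreducible component, hence a fortiori a geometric component after passing to components again; one applies \Cref{every component contains Hodge} to get a point underlying a $\CVHS$, and \Cref{qu dense in abs}(1) together with the fact that the $\CVHS$ locus is $\bR_{>0}$-fixed (and \Cref{abs Hodge contain R*}, \Cref{comp has R* fixed}) to arrange quasiunipotent local monodromy. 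More precisely: by \Cref{abs Hodge contain R*} each irreducible component of $Z^{\qu|n}$ (where $Z$ is the image in $M_B(X)$) is $\bR_{>0}$-stable and nonempty for $n$ large by \Cref{qu dense in abs}, so by \Cref{comp has R* fixed} its euclidean closure — which lies in the component since it is closed — contains a $\bR_{>0}$-fixed point, i.e.\ a $\CVHS$ by \Cref{Cstar_fixed_points}, and this point has quasiunipotent local monodromy since it lies in the closure of $Z^{\qu|n}$. For condition \ref{mini condition}: this is exactly the ``formally Hodge at a $\CVHS$ point with quasiunipotent local monodromy'' part of \Cref{defn Hodge substack}, which holds since $\cZ$ (hence $\cM_{\Sigma_0}$, being an irreducible component of the reduction of $\cZ$, which is again an absolute Hodge substack by \Cref{basic prop abs hodge substack}(3)) is a Hodge substack. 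For condition (3): take $\Sigma_0=\cZ^{\qu|n}(\bC)^{\ss}$ for $n$ sufficiently large that each $\bQ$-component of $\cZ$ meets $\cZ^{\qu|n}$; this is $\bQ$-constructible and absolute $\bar\bQ$-constructible by \Cref{qu dense in abs}(2) and \Cref{ssabs}, it satisfies (1) by the same argument just given, and it contains the required $\CVHS$ points (which are semisimple and have local monodromy of order dividing $n$ for $n$ large).

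For part (1), let $\Sigma\subset\cM_B(X)(\bC)$ be a coarse absolute Hodge subset, so $\Sigma\subset\cM_B(X)(\bC)^{\ss,\qu}$ is $\bQ$-constructible, absolute $\bar\bQ$-constructible, and $\bR_{>0}$-stable. By \Cref{reduction_to_unipotent_monodromy} there is a finite \'etale cover $p:X'\to X$ corresponding to a finite quotient of the image of the monodromy representation of $\bigoplus_{V\in\Sigma}V$ such that $p^*\Sigma\subset\cM_B(X')(\bC)^{\ss,\unip}$. Since $p^*$ is an absolute $\bQ$-morphism (\Cref{abs basic prop}(3)) and commutes with semisimplification and with the $\bR_{>0}$-action (\Cref{ssabs}, \Cref{pullback ss}, and the functoriality of the $\bR_{>0}$-action), $p^*\Sigma$ is again $\bQ$-constructible, absolute $\bar\bQ$-constructible, $\bR_{>0}$-stable, and contained in the semisimple quasiunipotent locus. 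As remarked after \Cref{defn:abs hodge}, for a subset of $\cM_B(X')(\bC)^{\qu,\ss}$ being a weak absolute Hodge subset reduces to verifying condition \ref{R* condition} alone. For this, apply \Cref{Closure_absolute}\eqref{geometric irreducible component of absolute} to see that the geometric irreducible components of $(p^*\Sigma)^\Zar$ are absolute $\bar\bQ$-locally closed, and that the Zariski closure is absolute $\bar\bQ$-closed; by \Cref{C* inv of comp} (using \Cref{comparison}, \Cref{BettiDeRhamcomparison} to pass to a curve via Lefschetz and the functoriality of the $\bR_{>0}$-action) each such component is $\bR_{>0}$-stable; it is nonempty and meets the quasiunipotent locus by construction, so by \Cref{comp has R* fixed} its euclidean closure contains a $\bR_{>0}$-fixed point, which is a $\CVHS$ by \Cref{Cstar_fixed_points} and has quasiunipotent local monodromy since it lies in the closure of a set of such, giving condition \ref{R* condition}.

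\textbf{Main obstacle.} The routine part is bookkeeping: tracking that all the relevant loci ($\ss$, $\qu|n$, $\CVHS$, Zariski closures, $\bR_{>0}$-orbit closures) remain $\bQ$-constructible and absolute $\bar\bQ$-constructible, and that all the cited functoriality statements compose correctly. The genuine content — and the step most likely to require care — is producing a $\CVHS$ point with \emph{quasiunipotent local monodromy} inside each geometric irreducible component; this rests on \Cref{abs Hodge contain R*} and \Cref{comp has R* fixed}, whose proofs in turn depend on the homeomorphism \Cref{comparison} and the $\bR_{>0}$-stability of irreducible components under the Simpson--Mochizuki correspondence. One must be careful that the component of $\Sigma^\Zar$ one works with genuinely meets $\Sigma^{\qu}$ (so that $Z^{\qu|n}$ has a component inside it for $n$ large), which is why the density statement \Cref{qu dense in abs}(1) is essential and must be invoked before taking $\bR_{>0}$-orbit closures. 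In part (2), the subtlety in condition \ref{mini condition} is that $\cM_{\Sigma_0}$ for a non-generically-semisimple component is a priori only a \emph{reduced substack} with a given underlying set, and one needs it to actually be (an irreducible component of the reduction of) a Hodge substack so that the ``formally Hodge'' clause applies — this is guaranteed by \Cref{basic prop abs hodge substack}(3), but the logical order of invoking reductions and components matters.
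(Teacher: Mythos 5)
Your proposal is correct and follows essentially the same route as the paper: the paper's proof of part (1) is exactly the combination of \Cref{reduction_to_unipotent_monodromy}, \Cref{qu dense in abs} and \Cref{R* stable has fixed} (whose content you reassemble from \Cref{C* inv of comp}, \Cref{comp has R* fixed} and \Cref{Cstar_fixed_points}), and part (2) is the combination of \Cref{qu dense in abs}, \Cref{basic prop abs hodge substack} and \Cref{every component contains Hodge}, which is the chain you spell out. The only slip is cosmetic: for \emph{geometric} irreducible components (which need not be $\bQ$-rational) the Hodge-substack property should be invoked via \Cref{basic prop hodge substack}(3) together with \Cref{Closure_absolute}\eqref{geometric irreducible component of absolute}, rather than \Cref{basic prop abs hodge substack}(3).
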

\begin{proof}
    The first claim is immediate from by the density of the quasiunipotent locus (\Cref{qu dense in abs}) and \Cref{R* stable has fixed} combined with \Cref{reduction_to_unipotent_monodromy}.  The second part follows from \Cref{qu dense in abs}, \Cref{basic prop abs hodge substack}, and \Cref{every component contains Hodge}.
\end{proof}

\begin{cor}\label{MB is weak abs hodge} For each $r$, $\cM_B(X,r)(\bC)$ is a weak absolute Hodge subset of $\cM_B(X)(\bC)$.
\end{cor}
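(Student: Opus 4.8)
The final statement, \Cref{MB is weak abs hodge}, asserts that $\cM_B(X,r)(\bC)$ is a weak absolute Hodge subset of $\cM_B(X)(\bC)$. The plan is to verify the three conditions in \Cref{defn:abs hodge} directly, using the structural results already assembled.

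First I would note that $\cM_B(X,r)(\bC)$ is $\bQ$-constructible (indeed $\bQ$-closed, being a union of connected components of $\cM_B(X)$) and absolute $\bar\bQ$-constructible, since by \Cref{basic prop abs hodge substack}(1) (or directly by \Cref{abs basic prop}(1) applied to $X^\reg$) $\cM_B(X,r)$ is an absolute Hodge substack and in particular $\cM_B(X,r)(\bC)$ is absolute $\bar\bQ$-closed. Next, for condition \eqref{R* condition}: by \Cref{basic prop abs hodge substack}(1), $\cM_B(X,r)$ is an absolute Hodge substack, so by \Cref{every component contains Hodge} every irreducible component of $\cM_B(X,r)$ contains a point underlying a $\CVHS$; and by \Cref{qu dense in abs} (applied to each component, which is absolute $\bar\bQ$-closed by \Cref{Closure_absolute}\eqref{geometric irreducible component of absolute}) the quasiunipotent locus is Zariski dense, so combining these — more precisely, invoking \Cref{abs Hodge contain R*} together with \Cref{comp has R* fixed} as in the proof of \Cref{every component contains Hodge}, but now applied to the nonempty $\bR_{>0}$-stable quasiunipotent locus of each component — each geometric component contains a point underlying a $\CVHS$ with quasiunipotent local monodromy.

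For condition \eqref{mini condition}: since $X$ is normal and each geometric irreducible component $\Sigma_0$ of $\cM_B(X,r)(\bC)$ gives a reduced substack $\cM_{\Sigma_0}$ which is an absolute Hodge substack (by \Cref{basic prop abs hodge substack}(3)), I would take a $\CVHS$ point $V$ with quasiunipotent local monodromy in $\Sigma_0$ produced above. At such a point, $\cM_{\Sigma_0}$ is formally Hodge in the sense of \Cref{defn formally Hodge} — this is part of what it means to be a Hodge substack, via condition (1) of \Cref{defn Hodge substack} and the fact that $\cM_B(X,r)$ and hence $\cM_{\Sigma_0}$ is formally Hodge at points underlying $\CVHS$. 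Finally, condition (3) is satisfied by taking $\Sigma_0$ to be the quasiunipotent semisimple locus $\cM_B(X,r)(\bC)^{\qu,\ss}$, which is absolute $\bar\bQ$-constructible by \Cref{qu dense in abs}(1) combined with \Cref{ssabs}, satisfies \eqref{R* condition} as just argued, and visibly contains the $\CVHS$ points used for \eqref{mini condition}. The only mild subtlety — and the step I would be most careful about — is checking that the $\CVHS$ points produced for condition \eqref{R* condition} can simultaneously be taken to lie in $\Sigma_0 = \cM_B(X,r)(\bC)^{\qu,\ss}$; this is automatic since $\CVHS$ local systems are semisimple and the points produced have quasiunipotent local monodromy by construction. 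I would then remark that this also follows more cleanly from \Cref{abs Hodge implies weak abs Hodge}(2) applied to the closed absolute Hodge substack $\cM_B(X,r)$, which packages exactly this verification.
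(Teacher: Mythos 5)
Your closing remark is in fact the paper's entire proof: the corollary is deduced by applying \Cref{abs Hodge implies weak abs Hodge}(2) to the closed absolute Hodge substack $\cM_B(X,r)$ furnished by \Cref{basic prop abs hodge substack}(1), so that part of your proposal is correct and matches the paper. The direct verification you spell out, however, has a genuine gap at condition (3) of \Cref{defn:abs hodge}. You take $\Sigma_0=\cM_B(X,r)(\bC)^{\qu,\ss}$ and claim it is $\bQ$-constructible and absolute $\bar\bQ$-constructible ``by \Cref{qu dense in abs}(1) combined with \Cref{ssabs}''; but \Cref{qu dense in abs}(1) is a density statement, not a constructibility statement, and the full quasiunipotent locus is the countable increasing union $\bigcup_n \Sigma^{\qu|n}$ of the closed loci with eigenvalue order dividing $n$, which does not stabilize and is not constructible in general. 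Already for $X=\bG_m$ and $r=1$ the quasiunipotent locus of $\cM_B(X,1)(\bC)\cong\bC^*$ is the set of roots of unity. So the $\Sigma_0$ you propose does not satisfy the hypotheses of condition (3).

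The repair is the one implicit in the paper's route: the finitely many $\bC$-VHS points produced for conditions (1) and (2) (one per geometric component) have local monodromy eigenvalues of order dividing some fixed $n$, and one should take $\Sigma_0$ to be the semisimple $\qu|n$ locus for that $n$. This is $\bQ$-closed (it is cut out by characteristic-polynomial conditions on the local monodromies, intersected with the semisimple locus, cf. \Cref{ssabs}), absolute $\bar\bQ$-closed by \Cref{qu dense in abs}(2), and satisfies condition (1) by \Cref{abs Hodge contain R*} together with \Cref{comp has R* fixed} — exactly the content packaged in \Cref{basic prop coarse abs hodge}(1). A smaller point: for condition (2) you invoke \Cref{basic prop abs hodge substack}(3) for a geometric irreducible component $\cM_{\Sigma_0}$, but that item concerns $\bQ$-irreducible components (absolute Hodge substacks are required to be $\bQ$-locally closed); what you actually need is that irreducible components of reduced Hodge (or formally twistor) substacks are formally Hodge at their $\bC$-VHS points, i.e. \Cref{basic prop hodge substack}(3) resp. \Cref{lem: prop Hodge sub of Betti}(3), which does give what you want.
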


\begin{prop}\label{abs Hodge prop}
    Let $X$ be a connected normal algebraic space.

    \begin{enumerate}
    
                \item Finite unions and closures of weak absolute Hodge subsets are weak absolute Hodge.
                \item $\bQ$-irreducible components of weak absolute Hodge subsets are weak absolute Hodge.
               
        \item Let $f:X\to Y$ be a morphism of connected normal algebraic spaces and $f^*:\cM_B(Y)\to\cM_B(X)$ the pullback morphism.  Then for any weak absolute Hodge subset $\Sigma\subset \cM_B(Y)(\bC)$, $f^*\Sigma\subset \cM_B(X)(\bC)$ is a weak absolute Hodge subset.
 
    \end{enumerate}
\end{prop}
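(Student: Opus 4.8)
The plan is to prove the three assertions of \Cref{abs Hodge prop} by reducing each to the corresponding statement already established at the level of the ``ingredients'' --- namely the behavior of absolute $\bar\bQ$-constructible subsets (\Cref{abs basic prop}, \Cref{Closure_absolute}), the behavior of the $\bR_{>0}$-action and $\bC$-VHS points (\Cref{R* stable has fixed}, \Cref{every component contains Hodge}, \Cref{qu dense in abs}), and the functoriality of formally Hodge substacks (\Cref{lem: prop Hodge sub of Betti}, \Cref{basic prop hodge substack}, \Cref{check on versal}). Throughout, the key point is that a weak absolute Hodge subset is, by \Cref{defn:abs hodge}, a package consisting of a $\bQ$-constructible absolute $\bar\bQ$-constructible set $\Sigma$ together with an auxiliary set $\Sigma_0\subset\Sigma^{\qu,\ss}$ of the same type, subject to conditions (1)--(3); so in each case I must produce both the new $\Sigma$-data and the new $\Sigma_0$-data, and verify all three conditions.

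First, for part (1) (finite unions and closures): the $\bQ$-constructibility and absolute $\bar\bQ$-constructibility of a finite union is immediate from \Cref{abs basic prop}(2), and for the closure it follows from \Cref{Closure_absolute}(1). Condition (1) of \Cref{defn:abs hodge} for a union (resp. closure) holds because every geometric irreducible component of $(\Sigma\cup\Sigma')^\Zar$ (resp. $\overline\Sigma^\Zar=\Sigma^\Zar$) is a geometric irreducible component of one of the $\Sigma_i^\Zar$, and by \Cref{Closure_absolute}(2) these are absolute $\bar K$-locally closed, so they inherit a $\bC$-VHS point with quasiunipotent local monodromy. Condition (2): for a component not generically semisimple with quasiunipotent local monodromy, the reduced substack $\cM_{\Sigma_0}$ is the same as for whichever original set contributed that component, so the formally Hodge point is already supplied. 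Condition (3): take the union (resp. closure, using \Cref{R* stable has fixed}(1) to see that closures of $\bR_{>0}$-stable sets are $\bR_{>0}$-stable and remain in the semisimple quasiunipotent locus after intersecting) of the auxiliary sets $\Sigma_{0,i}$, which is again $\bQ$-constructible and absolute $\bar\bQ$-constructible and contains the requisite $\bC$-VHS points.

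For part (2) ($\bQ$-irreducible components): if $\Sigma_1$ is a $\bQ$-irreducible component of $\Sigma$, then $\Sigma_1^\Zar$ is a $\bQ$-irreducible component of $\Sigma^\Zar$, and by \Cref{Closure_absolute}(2) its geometric irreducible components are $\Aut(\bC/\bQ)$-conjugate and each is absolute $\bar\bQ$-locally closed; conditions (1) and (2) are then inherited component-by-component from $\Sigma$, since the geometric components of $\Sigma_1^\Zar$ are among those of $\Sigma^\Zar$. For condition (3), intersect the auxiliary set $\Sigma_0$ of $\Sigma$ with the union of the geometric components of $\Sigma_1^\Zar$ --- this remains $\bQ$-constructible (being a Galois-stable union of geometric pieces, hence $\bQ$-constructible by descent) and absolute $\bar\bQ$-constructible, and still contains the required $\bC$-VHS points because those points lie in $\Sigma_1^\Zar$.

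For part (3) (pullback along $f\colon X\to Y$): $\bQ$-constructibility and absolute $\bar\bQ$-constructibility of $f^*\Sigma$ follow from \Cref{abs basic prop}(3)--(4), since $f^*\colon\cM_B(Y)\to\cM_B(X)$ is an absolute $\bQ$-morphism. For condition (1): each geometric irreducible component of $(f^*\Sigma)^\Zar$ dominates (via the correspondence induced by $f^*$) a geometric irreducible component $\Sigma_0$ of $\Sigma^\Zar$; since $f^*$ is a representable morphism of stacks and pullback sends $\bC$-VHS with quasiunipotent local monodromy to $\bC$-VHS with quasiunipotent local monodromy (by \Cref{qu lefschetz} in the Lefschetz/dominant case, but here in general $f^*V$ of a $\bC$-VHS is a $\bC$-VHS and quasiunipotence of local monodromy is preserved under arbitrary pullback), I get $\bC$-VHS points in the pulled-back component. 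For condition (2): if a geometric component of $(f^*\Sigma)^\Zar$ is not generically semisimple with quasiunipotent local monodromy, the reduced substack with that underlying point set is contained in $f^*\cM_{\Sigma_0}$ for the corresponding component $\Sigma_0$ of $\Sigma^\Zar$; by \Cref{lem: prop Hodge sub of Betti}\eqref{lem: prop Hodge sub of Betti 4b}, after noting that $\cM_{\Sigma_0}$ is formally Hodge at the given $\bC$-VHS point, its pullback is formally Hodge there, and one checks the pulled-back point is still a $\bC$-VHS with quasiunipotent local monodromy. For condition (3): take $f^*\Sigma_0$, which is $\bQ$-constructible, absolute $\bar\bQ$-constructible, lands in $\cM_B(X)(\bC)^{\qu,\ss}$ by \Cref{pullback ss} and preservation of quasiunipotence, and contains the pullbacks of the $\bC$-VHS points of $\Sigma$.

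The main obstacle will be condition (2) in the pullback statement: one must be careful that ``formally Hodge at a point'' is genuinely preserved under arbitrary pullback (not just dominant or Lefschetz pullback), and that after pullback the relevant component's reduced substack still sits inside $f^*\cM_{\Sigma_0}$ and still has a $\bC$-VHS point. The delicate part is matching up geometric irreducible components of $(f^*\Sigma)^\Zar$ with those of $\Sigma^\Zar$ --- an irreducible component downstairs can have image not meeting a given component upstairs, or several components upstairs can pull back into one component downstairs, so the bookkeeping of which $\cM_{\Sigma_0}$ to use requires care. Here I would use that $f^*$ is representable and that the image of a $\bQ$-irreducible set is contained in a $\bQ$-irreducible set, reducing to choosing, for each geometric component of $(f^*\Sigma)^\Zar$, a single geometric component of $\Sigma^\Zar$ whose pullback contains it; then \Cref{lem mini compat} and \Cref{thm:versal frame}(3) give the compatibility of miniversal pro-Hodge structures needed to transport the formally Hodge property.
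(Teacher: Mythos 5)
Your strategy is the paper's own: the published proof is a one-line reduction to \Cref{abs basic prop}, \Cref{lem: prop Hodge sub of Betti}, functoriality of the $\bR_{>0}$-action and the $\bQ$-rationality of the operations, and your verifications of condition (1) of \Cref{defn:abs hodge} for unions, closures, components and pullbacks, and of condition (2) for unions, closures and components, are correct expansions of exactly that. The real problem is the step you yourself flag as the main obstacle, and your proposed patch does not close it. For condition (2) under pullback you invoke part (4b) of \Cref{lem: prop Hodge sub of Betti}, but that statement is only proved when $f$ is dominant or a Lefschetz curve, precisely because its proof needs $f^*$ to be immersive (\Cref{lem immersive}), so that the completed local ring of the image substack at $f^*V_0$ is the image of the pullback map on completed local rings, which is a morphism of pro-MS-algebras by \Cref{thm:versal frame}(3). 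For an arbitrary morphism $f\colon X\to Y$ of connected normal spaces, $f^*$ can fail to be immersive, and then the ideal of the reduced Zariski closure of $f^*\cM_C$ (here $C$ is the relevant geometric component upstairs), completed at $f^*V_0$, can be strictly smaller than the kernel of $\hat\cO_{\cM_B(X),f^*V_0}\to\hat\cO_{\cM_C,V_0}$: a germ killed by pullback at $V_0$ need not vanish near $f^*V_0$ on the closure of the image, which also contains images of points of $\cM_C$ other than $V_0$ (other preimages of $f^*V_0$, limit points). So \Cref{lem mini compat} and \Cref{thm:versal frame}(3) show that the image of the pullback map on completed local rings is a pro-MHS quotient, but say nothing about the completed local ring of the image substack itself, which is what \Cref{defn formally Hodge} requires to be an MHS quotient. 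This is a genuine gap in your argument (the paper's ``immediate'' glosses over the same point; note that in the paper's actual uses of part (3) either $f$ is finite \'etale, hence dominant, or $\Sigma$ lies in the semisimple quasiunipotent locus, where condition (2) is vacuous).

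There is a second, smaller hole in your condition-(3) bookkeeping for part (2). For a $\bQ$-irreducible component $\Sigma_1$ you propose the auxiliary set $\Sigma_0\cap\Sigma_1^{\Zar}$; this is indeed $\bQ$-constructible, absolute $\bar\bQ$-constructible, lands in $\Sigma_1^{\qu,\ss}$ and contains the chosen $\bC$-VHS points, but you never verify that it satisfies condition (1), and it need not: a geometric component $T$ of $\Sigma_0^{\Zar}$ that meets $\Sigma_1^{\Zar}$ without being contained in it contributes the proper closed piece $T\cap\Sigma_1^{\Zar}$ to the Zariski closure of your set, and condition (1) for $\Sigma_0$ only supplies VHS points on the full components $T$, not on such intersections. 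The obvious repairs (discarding offending components, or keeping only the components of $\Sigma_0^{\Zar}$ contained in $\Sigma_1^{\Zar}$) respectively run into the $\bQ$-rationality/absoluteness of the selection (``contains a VHS point'' is not known to be Galois-stable) and the risk of dropping the required VHS points, so some additional argument is needed here. By contrast, your treatment of closures is fine once you simply reuse the same auxiliary set $\Sigma_0$ for the closure of $\Sigma$: taking the closure of $\Sigma_0$, as you suggest, would in general leave the semisimple quasiunipotent locus, and intersecting back with it is not obviously a constructible operation.
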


\begin{proof}
Immediate given \ref{abs basic prop}, \Cref{lem: prop Hodge sub of Betti}, the functoriality of the $\bR_{>0}$-action, and the fact that all operations are defined over $\bQ$.  
\end{proof}

\subsection{Generalities on Shafarevich morphisms}
\begin{defn}
        Let $X$ be a connected normal algebraic space (or more generally a connected generically inertia-free normal Deligne--Mumford stack) and $\Sigma\subset\cM_B(X)(\bC)$ a set of complex local systems.  We say $\Sigma$ is large if for every non-constant morphism $g:Z\to X$ from a connected normal algebraic space $Z$, $g^*\Sigma$ contains a nontrivial local system.
\end{defn}

\begin{lem}\label{algebraic space on cover}
 Let $X$ be a connected generically inertia-free normal Deligne--Mumford stack and $\Sigma\subset\cM_B(X)(\bC)$ a set of complex local systems. Assume that for every point $x\in X(\bC)$ the inertia of $x$ acts faithfully on $\bigoplus_{V\in \Sigma} i^*_x V$. Then there exists a connected finite Galois \'etale cover $p:X'\to X$ with Galois group $G$ arising from a finite quotient $\pi_1(X,x)\to \img\rho_\Sigma\to G$ such that $X'$ is an algebraic space.
\end{lem}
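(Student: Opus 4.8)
The plan is to produce the Galois cover by simultaneously trivializing all the inertia groups. We will use the following standard facts about a connected separated Deligne--Mumford stack $X$ of finite type over $\bC$: connected finite \'etale covers of $X$ are classified by the finite-index subgroups $H$ of $\pi_1(X,x)$, such a cover being Galois precisely when $H$ is normal, with Galois group $\pi_1(X,x)/H$; and $X$ is an algebraic space if and only if the automorphism group of every geometric point is trivial. For each point $x$ of $X$ the inertia $I_x$ is finite, and the hypothesis furnishes local systems whose restriction to $I_x$ is faithful; since locally in the analytic topology $X\cong[U/I_x]$ with $U$ contractible, the composite $I_x=\pi_1([U/I_x],x)\to\pi_1(X,x)\to\GL(i_x^*V)$ being injective forces $I_x\to\pi_1(X,x)$ to be injective, so that $I_x$ is realized (up to conjugacy) as a genuine finite subgroup of $\pi_1(X,x)$. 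Consequently, for a normal finite-index subgroup $H\trianglelefteq\pi_1(X,x)$, the associated Galois cover $X'\to X$ is an algebraic space if and only if $H$ meets every conjugate of every inertia subgroup trivially, equivalently $I_x$ injects into $\pi_1(X,x)/H$ for every point $x$.

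First I would reduce $\Sigma$ to a finite subset. Applying constructibility to the inertia group scheme of $X$ — stratify the topological space $|X|$ into finitely many connected locally closed pieces along which the orbifold structure is locally constant — one sees there are only finitely many conjugacy classes of inertia subgroups $I_{x_1},\dots,I_{x_m}\subset\pi_1(X,x)$. The hypothesis says exactly that $\bigcap_{V\in\Sigma}\ker(\rho_V|_{I_x})=\{1\}$ for every $x$, where $\rho_V\colon\pi_1(X,x)\to\GL(i_x^*V)$ denotes the monodromy. As each $I_{x_\ell}$ is finite, finitely many of these kernels already intersect trivially, so there is a finite subset $\Sigma_0=\{V_1,\dots,V_N\}\subset\Sigma$ for which $\rho_{\Sigma_0}:=\bigoplus_{j=1}^N\rho_{V_j}$ restricts to an injection on $I_x$ for every point $x$ of $X$.

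Now set $\Gamma_0:=\img\rho_{\Sigma_0}\subset\GL_{r_1}(\bC)\times\cdots\times\GL_{r_N}(\bC)$. Since $X$ has the homotopy type of a finite CW complex, $\pi_1(X,x)$ is finitely generated, hence so is $\Gamma_0$, which is therefore a finitely generated linear group over $\bC$; by Selberg's lemma (Malcev) it contains a torsion-free normal subgroup $\Gamma$ of finite index. Put $G:=\Gamma_0/\Gamma$ and let $q\colon\pi_1(X,x)\to G$ be the composite $\pi_1(X,x)\twoheadrightarrow\img\rho_\Sigma\twoheadrightarrow\Gamma_0\twoheadrightarrow G$, where the middle surjection is restriction of the projection forgetting $\Sigma\setminus\Sigma_0$. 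Let $X'\to X$ be the connected Galois cover attached to $\ker q$; by construction it has Galois group $G$ and arises from a finite quotient of $\img\rho_\Sigma$. For each point $x$ of $X$ the subgroup $\rho_{\Sigma_0}(I_x)\cong I_x$ is finite, hence meets the torsion-free group $\Gamma$ trivially, so $q|_{I_x}$ is injective; by the criterion of the first paragraph, $X'$ has trivial inertia at every point and is therefore an algebraic space.

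The essentially routine parts are the reduction to $\Sigma_0$ and the application of Selberg's lemma. I expect the main obstacle to writing this carefully to be the two "standard facts" invoked at the start: the Galois correspondence between connected finite \'etale covers of $X$ and finite-index subgroups of $\pi_1(X,x)$ with its translation of "algebraic space" into "trivial inertia after pullback", and the finiteness of the number of conjugacy classes of inertia subgroups, which rests on the finiteness of the inertia stack of a finite-type separated Deligne--Mumford stack together with generic constancy of stabilizers along a stratification.
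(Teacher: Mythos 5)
Your proof is correct and follows essentially the same route as the paper: use constructibility/noetherianity of the inertia to reduce to a finite subset of $\Sigma$ faithful on all inertia groups, then apply Selberg's lemma to the finitely generated linear image to obtain a finite quotient of $\img\rho_\Sigma$ into which every (finite) inertia group injects, so the corresponding Galois cover has trivial inertia and is an algebraic space. The only difference is cosmetic: the paper handles one point at a time and then invokes noetherianity, whereas you stratify first and apply Selberg once.
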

\begin{proof}
For any point $x\in X$, there is a finite set of local systems $\Sigma_x\subset\Sigma$ such that the inertia of $x$ acts faithfully on $\bigoplus_{V\in\Sigma_x} V$.  By Selberg's lemma\footnote{Selberg's lemma states that any finitely generated subgroup of $\bGL_r(\bC)$ has a torsion-free finite-index subgroup.}, we obtain a cover of the required form trivializing the inertia at points above $x$.  By noetherianity (since the inertia sheaf is constructible), such a cover exists trivializing all of the inertia.
\end{proof}

We say that a set of complex local systems $\Sigma$ on a connected generically inertia-free normal Deligne--Mumford stack $X$ is nonextendable if there exists a connected finite Galois \'etale cover by an algebraic space $p:X'\to X$ such that $p^* \Sigma$ is not extendable. This generalizes the definition for algebraic spaces thanks to \Cref{pullback-nonextendable}.

\begin{defn}\label{defn:shaf}
    Let $X$ be a connected normal algebraic space and $\Sigma\subset\cM_B(X)(\bC)$ a set of complex local systems.  A representable morphism $f:X\to Y$ to a connected generically inertia-free normal Deligne--Mumford stack $Y$ is an \emph{(algebraic) $\Sigma$-Shafarevich morphism} if: \begin{enumerate}
\item
$s:X\to Y$ is dominant and $K(Y)$ is algebraically closed in $K(X)$.
\item $\Sigma$ is the pull back of a large nonextendable $\Sigma_Y\subset\cM_B(Y)(\bC)$ and for every point $y\in Y(\bC)$ the inertia of $y$ acts faithfully on $\bigoplus_{V\in \Sigma_Y} i^*_yV$. 
\item If a morphism $g:Z\to X$ from a connected $Z$ has the property that $g^*V$ is trivial for every $V\in\Sigma$, then the composition $Z\to X\to Y$ factors through $Z\to\Spec\bC$.
\end{enumerate}
\end{defn}
In particular, a $\Sigma$-Shafarevich morphism contracts exactly those subvarieties on which the local systems in $\Sigma$ have uniformly finite monodromy. Also, there is always a natural \'etale cover $\tilde Y^{\Sigma_Y}\to Y$ by an analytic space and a diagram
\[\begin{tikzcd}
\tilde X^\Sigma\ar[r]\ar[d]&\tilde Y^{\Sigma_Y}\ar[d]\\
X^\an\ar[r]&Y^\an
\end{tikzcd}\]
where the top horizontal map is equivariant with respect to the action of the natural deck transformation groups.

Provided they exist, $\Sigma$-Shafarevich morphisms are functorial in the following sense: 

\begin{prop}\label{shaf functo}
Let $f:X\to X'$ be a morphism between two normal connected algebraic spaces, $\Sigma\subset \cM_B(X)(\bC)$ and $\Sigma'\subset \cM_B(X')(\bC)$ two subsets such that $f^*\Sigma'\subset\Sigma$. Suppose $s:X\to Y$ (resp. $s':X'\to Y'$) is a $\Sigma$-Shafarevich morphism (resp. $\Sigma'$-Shafarevich morphism).  Then, there exists a unique morphism $g:Y\to Y'$ such that we have a commutative diagram
\[
\begin{tikzcd}
    X\ar[r,"f"]\ar[d,"s",swap]&X'\ar[d,"s'"]\\
    Y\ar[r,"g"]&Y'.
\end{tikzcd}
\]  
\end{prop}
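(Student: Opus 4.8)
\textbf{Proof proposal for Proposition \ref{shaf functo}.}

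The plan is to construct $g$ analytically on universal covers and then descend it, using the defining properties (1)--(3) of a Shafarevich morphism to control the behavior at both ends. First I would pass to the associated covering spaces: let $\pi:\tilde X^{\Sigma}\to X^{\an}$ and $\pi':\tilde X'^{\Sigma'}\to X'^{\an}$ be the minimal covers trivializing $\Sigma$ and $\Sigma'$ respectively. Since $f^{*}\Sigma'\subset\Sigma$, every local system in $f^{*}\Sigma'$ is trivialized on $\tilde X^{\Sigma}$, so the composite $\tilde X^{\Sigma}\to X^{\an}\xrightarrow{f^{\an}}X'^{\an}$ lifts (after choosing compatible basepoints) to a map $\tilde f:\tilde X^{\Sigma}\to\tilde X'^{\Sigma'}$, equivariant with respect to the homomorphism of deck groups induced by $f_{*}:\pi_{1}(X^{\an})\to\pi_{1}(X'^{\an})$. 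By property (3) applied to $s':X'\to Y'$ (i.e. the Shafarevich contraction property, which on the level of covers says that the map $\tilde X'^{\Sigma'}\to X'^{\an}\to Y'^{\an}$ factors through the Cartan--Remmert-type reduction and contracts exactly the subvarieties on which $\Sigma'$ is uniformly finite), and property (3) for $s:X\to Y$, any connected subvariety of $\tilde X^{\Sigma}$ contracted by $\tilde X^{\Sigma}\to X^{\an}\to Y^{\an}$ has uniformly finite $\Sigma$-monodromy; hence its image under $\tilde f$ has uniformly finite $\Sigma'$-monodromy (because $f^{*}\Sigma'\subset\Sigma$), so it is contracted by $\tilde X'^{\Sigma'}\to X'^{\an}\to Y'^{\an}$. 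This gives a well-defined set-theoretic map on the reductions; one checks holomorphicity by noting the reductions are obtained as Stein factorizations and the contracted-fiber equivalence relation on the source refines the one pulled back from the target.

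Concretely I would argue as follows. Write $s:X^{\an}\to Y^{\an}$ and $s':X'^{\an}\to Y'^{\an}$; by definition of Shafarevich morphism these are fibrations (proper with connected fibers, after passing to an algebraic-space cover trivializing inertia via \Cref{algebraic space on cover}), and the fibers are precisely the maximal connected subvarieties on which the relevant collection of local systems is uniformly finite. The previous paragraph shows $f$ maps each fiber of $s$ into a fiber of $s'$, so there is a unique set-theoretic map $g:|Y|\to|Y'|$ with $g\circ s=s'\circ f$ on underlying topological spaces. To see $g$ is a morphism of stacks one first handles the coarse spaces: $g$ is continuous because $s$ is a topological quotient map (being proper surjective with connected fibers) and $s'\circ f$ is continuous; then $g$ is holomorphic because $s$ admits local holomorphic sections away from a thin set and $g = (s'\circ f)\circ(\text{section})$ there, extending across the thin set by normality of $Y$ together with the fact that $Y\to Y'$ is bounded near it (properness). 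Finally, since $K(Y')$ is algebraically closed in $K(X')$ and hence in $K(X)$, and $K(Y)$ is algebraically closed in $K(X)$, the dominant rational map $Y\dashrightarrow Y'$ induced by $g$ is in fact a morphism of normal algebraic spaces/stacks (it is defined in codimension one and $Y$ is normal, so it extends; representability and the inertia condition transfer because $\Sigma$ is pulled back from $\Sigma_{Y}$ which maps to $\Sigma_{Y'}$ — one uses here that $s'^{*}\Sigma_{Y'}\subset s^{*}\Sigma_{Y}$ forces a map $Y\to Y'$ on the level of the stacky structure as well). Uniqueness of $g$ is immediate from $g\circ s=s'\circ f$ and the surjectivity (dominance) of $s$.

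The main obstacle, and the step I would spend the most care on, is \emph{algebraicity and stackiness of $g$}: showing that the a priori only holomorphic map between coarse spaces upgrades to a morphism of Deligne--Mumford stacks that is compatible with $s,s',f$ and for which $\Sigma_{Y}$ is recovered as a pullback of $\Sigma_{Y'}$. For algebraicity one can either invoke \Cref{thm:alg} / definable GAGA (the map $g$ is definable because $s,s',f$ all are, once one knows the period-type data defining the Shafarevich morphisms is definable, which is exactly the content of the earlier sections), or argue directly via the universal property (3): $Y'$ together with the collection $\Sigma_{Y'}$ is terminal among algebraic spaces contracting the $\Sigma'$-uniformly-finite subvarieties of $X'$, and $X\to X'\to Y'$ factors through $X\to Y$ by the same universal property applied on $X$. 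The compatibility of the inertia/faithfulness conditions and the "large nonextendable" hypotheses on $\Sigma_{Y}$ versus $\Sigma_{Y'}$ then follows formally from $f^{*}\Sigma'\subset\Sigma$ and \Cref{pullback-nonextendable}. I expect the clean way to organize this is: (i) produce $g$ on analytic coarse spaces as above; (ii) invoke the uniqueness clause in the definition of Shafarevich morphism to see $g$ is the unique such and is automatically algebraic by the factorization $X\to Y'$ through the $\Sigma$-Shafarevich morphism $X\to Y$; (iii) check the diagram commutes on the nose. Everything else is bookkeeping with the covering spaces and the contracted-fiber equivalence relations of \Cref{sect:def Stein}.
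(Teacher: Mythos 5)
Your proposal rests on a claim that is false in this paper's generality: you assert that, by definition, $s$ and $s'$ are fibrations, ``proper with connected fibers,'' with fibers the maximal subvarieties on which the local systems are uniformly finite. But in \Cref{defn:shaf} only $\Sigma_Y$ is required to be nonextendable, not $\Sigma$ itself, and by \Cref{mainShaf} the Shafarevich morphism is proper if and only if $\Sigma$ is nonextendable; moreover $s$ is only required to be dominant, not surjective. When $\Sigma$ is extendable (e.g.\ when $s$ is essentially an open immersion into a partial compactification) your entire analytic construction of $g$ collapses: the set-theoretic definition of $g$ on fibers is not defined on all of $|Y|$, the continuity argument via ``$s$ is a proper quotient map'' fails, the holomorphic extension ``across a thin set by properness'' has nothing to lean on, and the final extension of a rational map across codimension~$\geq 2$ using only normality of $Y$ is not valid for a non-proper target (nor is $g$ dominant in general, so the argument with $K(Y')$ algebraically closed in $K(X)$ does not apply). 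The paper's proof avoids all of this by staying algebraic: it takes a normal relative compactification $X\to X_2\to Y\times Y'$ of $s\times s'$, uses nonextendability of $\Sigma_Y$ together with \Cref{pullback-nonextendable} to show that $X_2\to Y$ is proper (hence a fibration), notes that $X_2\to Y'$ contracts its fibers, and concludes by the rigidity lemma \cite[Lemma 1.15]{debarre} that $s'\circ f$ factors through $Y$; the stack case is then reduced to the algebraic-space case by finite \'etale covers via \Cref{algebraic space on cover}, the whole argument being run jointly with \Cref{Shaf pass to finite}.

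There is also a circularity in your algebraization step. The definition of a Shafarevich morphism contains no uniqueness clause and no universal property for maps \emph{out of} $X$: condition (3) concerns morphisms $Z\to X$ with trivial pullback, not factorizations of $X\to Y'$. So ``invoke the uniqueness clause in the definition'' and ``$Y'$ is terminal among spaces contracting the $\Sigma'$-uniformly-finite subvarieties, hence $X\to Y'$ factors through $X\to Y$'' presuppose exactly the functoriality statement being proven (uniqueness of the Shafarevich morphism is a \emph{corollary} of this proposition, not an input). The analytic picture you describe (lifting $f$ to $\tilde X^\Sigma\to\tilde X'^{\Sigma'}$, fibers characterized by uniformly finite monodromy) is a correct consequence of the definition when the morphisms happen to be proper, but it cannot carry the proof here without the properness supplied in the paper by compactification and nonextendability.
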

In particular, it follows that a Shafarevich morphism is unique up to unique isomorphism provided it exists. Before proving this, we state the following lemma, which says that the existence of a Shafarevich morphism can be checked on certain finite \'etale covers, and in fact there is always such a cover where the target is an algebraic space.  For any $\Sigma\subset \cM_B(X)(\bC)$, define $\rho_\Sigma$ to be the monodromy representation of $\bigoplus_{V\in\Sigma} V$.
\begin{lem}\label{Shaf pass to finite}Let $X$ be a connected normal algebraic space and $\Sigma\subset\cM_B(X)(\bC)$.
   \begin{enumerate}

   \item Suppose an algebraic $\Sigma$-Shafarevich morphism $s:X\to Y$ exists.  Given a connected finite Galois \'etale cover $q:Y'\to Y$ coming from a finite quotient $\pi_1(Y,y)\to \img\rho_{\Sigma_Y}\to G$, the base-change $s':X'\to Y'$ of $s$ is an algebraic $p^*\Sigma$-Shafarevich morphism, where $p:X'\to X$ is the base-change of $q$. 

   \item Given a connected finite Galois \'etale cover $p:X'\to X$ with Galois group $G$ arising from a finite quotient $\pi_1(X,x)\to \img\rho_\Sigma\to G$, and a $p^*\Sigma$-Shafarevich morphism $s':X'\to Y'$, then $s'$ is equivariant with respect to a $G$-action on $Y'$, and the quotient $s:X\to [G\backslash Y']$ is a $\Sigma$-Shafarevich morphism.

\end{enumerate} 
\end{lem}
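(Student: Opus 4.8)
## Proof plan for \Cref{Shaf pass to finite}

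The plan is to prove both parts by directly checking the three conditions of \Cref{defn:shaf}, using the functorial behavior of pullback on Betti stacks and the compatibility of nonextendability with finite \'etale covers (\Cref{pullback-nonextendable}), together with \Cref{algebraic space on cover}.

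For part (1): First I would note that since $s$ is representable and $q$ is \'etale, the base-change $s'\colon X'\to Y'$ is representable, and $X'$ is a connected normal algebraic space (connectedness because the cover $p$ corresponds to the pullback of the quotient $\pi_1(Y,y)\to G$ under $s_*$, which is surjective since $s$ is dominant with $K(Y)$ algebraically closed in $K(X)$ --- this is where I'd use condition (1) for $s$, which forces $\pi_1(X,x)\to\pi_1(Y,y)$ to have image of finite index equal to the whole group after passing to geometric connected components, hence $X\times_Y Y'$ is connected). Condition (1) for $s'$ is then inherited: dominance is stable under base-change along a dominant map, and $K(Y')$ algebraically closed in $K(X')$ follows from $K(Y)$ algebraically closed in $K(X)$ together with $Y'\to Y$ \'etale. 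For condition (2): write $\Sigma_Y$ for the large nonextendable set on $Y$ with $s^*\Sigma_Y=\Sigma$; take $\Sigma_{Y'}:=q^*\Sigma_Y$. Then $s'^*\Sigma_{Y'}=p^*s^*\Sigma_Y=p^*\Sigma$ by functoriality of pullback. Largeness of $\Sigma_{Y'}$ follows because a nonconstant map $Z\to Y'$ composed with $q$ is nonconstant into $Y$ (as $q$ is finite \'etale), so $q^*\Sigma_Y$ restricted to $Z$ has a nontrivial member. Nonextendability of $\Sigma_{Y'}$ is \Cref{pullback-nonextendable} applied to $q$ (which is proper). The faithfulness of inertia on $\bigoplus_{V\in\Sigma_{Y'}} i_y^*V$ at points $y\in Y'$ is inherited from the corresponding statement on $Y$, since $q$ identifies the inertia of $y$ with that of $q(y)$ and pulls back the relevant local systems compatibly. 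For condition (3): given $g\colon Z\to X'$ with $(p\circ g)^*V$ trivial for all... more precisely $g^*(s'^*V)$ trivial for $V\in\Sigma_{Y'}$; pushing forward via $p$, the composition $Z\to X'\to X$ has $(\text{that map})^*W$ trivial for all $W\in\Sigma$ (using $p^*\Sigma_Y$-members pull back from $\Sigma_{Y'}$), hence $Z\to X\to Y$ is constant by condition (3) for $s$; since $Z\to Y'\to Y$ is then constant and $q$ is finite \'etale with $Z$ connected, $Z\to Y'$ is constant too.

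For part (2): The $G$-action on $X'$ descends an action on $Y'$ by the uniqueness/functoriality of the Shafarevich morphism (\Cref{shaf functo}): for each $\gamma\in G$, the automorphism $\gamma$ of $X'$ satisfies $\gamma^*(p^*\Sigma)=p^*\Sigma$, so applying \Cref{shaf functo} to $\gamma\colon X'\to X'$ with $\Sigma'=\Sigma=p^*\Sigma$ and both Shafarevich morphisms equal to $s'$ yields a unique automorphism $\bar\gamma$ of $Y'$ with $s'\circ\gamma=\bar\gamma\circ s'$; uniqueness gives $\overline{\gamma_1\gamma_2}=\bar\gamma_1\bar\gamma_2$, so $G$ acts on $Y'$ and $s'$ is $G$-equivariant. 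One must check the action is free: a point of $Y'$ with nontrivial stabilizer would pull back (via the surjection $X'\to Y'$, which exists since $s'$ is dominant) to points of $X'$ with nontrivial stabilizer, contradicting that $p$ is \'etale so $G$ acts freely on $X'$ --- here I need that $s'$ is surjective on $\bC$-points, which follows from dominance plus properness considerations, or more carefully from the fact that $s'$ is a Shafarevich morphism hence its image is open and closed; actually I should argue via the analytic picture that fibers are nonempty. Then $[G\backslash Y']=:Y$ is a connected generically inertia-free normal Deligne--Mumford stack (generically inertia-free because $G$ acts freely on the dense open where $Y'$ is... in fact the action is free everywhere), and $s\colon X=[G\backslash X']\to [G\backslash Y']=Y$ is the induced map, which is representable since $s'$ is and the squares are cartesian. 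Conditions (1)--(3) for $s$ follow by descent from those for $s'$: for (2), take $\Sigma_Y$ to be the descent of $\Sigma_{Y'}$ along $[G\backslash-]$ (which exists as $\Sigma_{Y'}$ is $G$-stable by equivariance, noting $\Sigma_{Y'}$ is determined by $p^*\Sigma=s'^*\Sigma_{Y'}$ and $\Sigma$ is $G$... wait, $\Sigma$ lives on $X$ so is automatically "$G$-invariant" in the relevant sense), and the faithfulness-of-inertia condition at stacky points $y\in Y$ with inertia $H\subset G$ is exactly the statement that $H$ acts faithfully on $\bigoplus i_y^*V$, which holds because the stabilizer in $G$ of a preimage in $X'$ acts faithfully by \Cref{pullback-nonextendable}-type reasoning --- no, more directly because $p^*\Sigma$ is \emph{nonextendable} so by \Cref{characterization_nonextendability_local_fundamental_group} the local fundamental groups inject, and the inertia $H$ at a stacky point is realized inside such a local monodromy group.

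The main obstacle I anticipate is the bookkeeping around condition (2), specifically verifying the faithful-inertia condition after descent and ensuring the descended set $\Sigma_Y$ is again large and nonextendable --- largeness descends cleanly (a nonconstant map to $Y$ lifts to a nonconstant map to $Y'$ after a connected \'etale base change, using that $Y'\to Y$ is the quotient by a free $G$-action) but I should be careful that nonextendability for the \emph{stack} $Y$ is defined (as stated in the text right after \Cref{algebraic space on cover}) precisely via the existence of a finite \'etale cover by an algebraic space with nonextendable pullback, so for part (2) this is essentially tautological with $Y'$ playing that role, while for part (1) I need $Y'$ itself, or a further cover of it by an algebraic space (which exists by \Cref{algebraic space on cover} applied to $\Sigma_{Y'}$), to witness nonextendability. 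The rest is routine propagation of the three defining properties across the cartesian squares.
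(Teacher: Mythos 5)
Your overall architecture matches the paper's: part (1) by direct verification of the conditions in \Cref{defn:shaf} (the paper simply calls this straightforward), and part (2) by using the functoriality/uniqueness of \Cref{shaf functo} to descend the $G$-action to $Y'$ and then passing to the stack quotient. Part (1) is essentially fine. But part (2) has a genuine error at exactly the delicate point. The claim that the $G$-action on $Y'$ is free is false, and your argument for it is fallacious: the stabilizer $H\subset G$ of a point $y'\in Y'$ only preserves the fiber $s'^{-1}(y')\subset X'$; it need not fix any point of that fiber, so freeness of the action on $X'$ gives nothing. Concretely, take $X'=C\times E$ with $C$ hyperelliptic, $E$ elliptic, $G=\bZ/2$ acting by (hyperelliptic involution)\,$\times$\,(translation by a $2$-torsion point), and $\Sigma$ pulled back from a large faithful collection of local systems on the orbifold $[C/\iota]$: the action on $X'$ is free, $s'=\mathrm{pr}_C$, $G$ is a quotient of $\img\rho_\Sigma$, yet the induced involution on $Y'=C$ has six fixed points. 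This is precisely why the target of a Shafarevich morphism must be allowed to be a Deligne--Mumford stack with nontrivial (generically trivial) inertia, and why \Cref{defn:shaf}(2) imposes the faithful-inertia condition at all.

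With freeness gone, your verification that $s\colon X\to[G\backslash Y']$ satisfies \Cref{defn:shaf} is incomplete on the two points that carry the content of ``the stack quotient is a $\Sigma$-Shafarevich morphism'': generic inertia-freeness of $[G\backslash Y']$, and faithfulness of the action of $H=\mathrm{Stab}_G(y')$ on $\bigoplus_{V\in\Sigma_Y}i_y^*V$, where $\Sigma_Y$ must first be constructed by descending $\Sigma_{Y'}$ using the $G$-equivariant structures induced from $\Sigma$ on $X$ (this descent itself uses that $s'^*$ is fully faithful on local systems, i.e.\ surjectivity of $\pi_1(X')\to\pi_1(Y')$, which you do not address). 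Your fallback argument via \Cref{characterization_nonextendability_local_fundamental_group} does not apply: that lemma is about local fundamental groups at boundary points of a compactification, and the inertia of a stacky point of $[G\backslash Y']$ is not realized as such a local monodromy group; the correct check must relate $\mathrm{Stab}_G(y')$ to the monodromy of $\bigoplus_{V\in\Sigma}V$ via the hypothesis that $G$ is a quotient of $\img\rho_\Sigma$. (The paper also dismisses this verification as easily checked, but it never asserts freeness; that assertion is where your argument breaks.) A smaller organizational point: since the paper proves \Cref{shaf functo} for stacky targets by using the algebraic-space case of the present lemma, you should note that when $Y'$ is an algebraic space your argument only needs the algebraic-space case of \Cref{shaf functo} (proved independently via compactifications and the rigidity lemma), so the bootstrap is non-circular; invoking \Cref{shaf functo} in full generality without this remark risks circularity.
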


\begin{proof}[Proof of \Cref{shaf functo} and \Cref{Shaf pass to finite}] Part (1) of the lemma is straightforward.  

We prove part (2) of the lemma and the proposition together in a sequence of more general cases.  We first prove \Cref{shaf functo} in the case $f$ is an open embedding, $\Sigma=f^*\Sigma'$, and $Y,Y'$ are algebraic spaces; we will prove that $g$ exists and is an isomorphism.  Let $X\to Z\to Y$ be a factorization of $s$ for which $X\to Z$ is an open embedding and $Z\to Y$ is proper; likewise $X\to Z'\to Y'$.  If $Z''$ is the normalization of the closure of $X$ in $Z\times Z'$ via the diagonal map, then since the composition $Z''\to Z\times Z'\to Y\times Y'$ is proper, the pullback of $\Sigma_Y \boxtimes \Sigma_{Y'}$ is nonextendable.  But this pullback is the pullback of $\Sigma_Y$ (resp. $\Sigma_{Y'}$) along $Z''\to Y$ (resp. $Z''\to Y'$), so it follows that both of these maps are proper.  On the other hand, $Z''\to Y$ and $Z''\to Y'$ have the same fibers, hence by the rigidity lemma \cite[Lemma 1.15]{debarre} there is a unique isomorphism $g:Y\to Y'$ making the obvious diagram commute.

 We now prove \Cref{shaf functo} when $Y,Y'$ are algebraic spaces.  Let $X\to  X_1 \to Y'$ be a relative compactification of the composition $s'\circ f$ and $ X_1\to Z\to Y'$ the Stein factorization.  Then $X\to Z$ is a $f^*\Sigma$-Shafarevich morphism.  Thus we may assume $X=X'$ and $f$ is the identity.  Let $X\to  X_2\to Y\times Y'$ be a relative compactification of $s\times s':X\to Y\times Y'$ with $X_2$ normal.  As in the previous paragraph, $X_2\to Y$ is proper, and $X_2\to Y'$ contracts the fibers of $X_2\to Y$, so by the rigidity lemma there is a factorization $X_2\to Y\to Y'$. 

We now prove \Cref{Shaf pass to finite}(2) provided $Y'$ is an algebraic space.  By the algebraic space case of \Cref{shaf functo} proven above, $G$ acts on the map $s'$, and the stack quotient is easily checked to be the $\Sigma$-Shafarevich morphism.  

We now prove \Cref{shaf functo} in general.  By \Cref{Shaf pass to finite}(1) base-changing along a finite \'etale Galois cover of $Y'$ yields the same setup, and by \Cref{algebraic space on cover} we may take such a cover for which $Y'$ is an algebraic space.  By the algebraic space case of \Cref{Shaf pass to finite}(2), this will be sufficient to prove the original claim by taking stack quotients.  Thus, we may assume $Y'$ is an algebraic space.  Again by \Cref{algebraic space on cover} and \Cref{Shaf pass to finite}(1), there is a finite \'etale cover $Y''\to Y$ with $Y''$ an algebraic space and Galois group $G$ such that, if $p:X''\to X$ is the base-change, the base-change $s'':X''\to Y''$ of $s$ is a $p^*\Sigma$-Shafarevich morphism.  By the algebraic space case of \Cref{shaf functo}, the morphism $X''\to Y'$ factors through a morphism $Y''\to Y'$ which is moreover $G$-invariant, and it follows that the required morphism $Y\to Y'$ exists.

Finally, the general case of \Cref{Shaf pass to finite}(2) follows by the same argument as above now using \Cref{shaf functo} for stacks.
\end{proof}

\begin{defn}
Let $X$ be a connected normal algebraic space and $\Sigma\subset \cM_B(X)(\bC)$. The rank $r$ Shafarevich saturation (or just $r$-saturation for short) of $\Sigma$ is the subset $\Sigma^{r\text{-}\sat}\subset \cM_B(X,\rk\leq r)(\bC)$ of those local systems of rank $\leq r$ which pullback trivially along any morphism $f:Z\to X$ for which $f^\ast V$ is trivial for all $V \in \Sigma$.  That is,
    \[\Sigma^{r\text{-}\sat}:=\bigcap_{\substack{f:Z\to X\\f^*\Sigma\subset\triv_Z}}(f^*)^{-1}(\triv_Z)\hspace{.25in}\mbox{in}\hspace{.25in}\cM_B(X,\rk\leq r)(\bC).\]

A subset $\Sigma \subset \cM_B(\bC)$ which is equal to its rank $r$ saturation  is called \textit{$r$-saturated}.
\end{defn}

The main point of this definition is the following:
\begin{cor}\label{Shaf reduce to wabs}
    Let $X$ be a connected normal algebraic space, $\Sigma\subset \cM_B(X)(\bC)$ a subset of local systems of bounded rank, and $s:X\to Y$ a morphism to a generically inertia-free Deligne--Mumford stack.  Then 
    \begin{enumerate}
        \item For any $r$, $\Sigma^\mathrm{r\text{-}sat}$ underlies a closed absolute Hodge subset.  In particular, it is a weak absolute Hodge subset.
        \item For $r\gg0$, $s$ is a $\Sigma$-Shafarevich morphism if and only if it is a $\Sigma^\mathrm{r\text{-}sat}$-Shafarevich morphism.
    \end{enumerate}

\end{cor}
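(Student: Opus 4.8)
\textbf{Proof proposal for \Cref{Shaf reduce to wabs}.}

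The plan is to establish the two parts in sequence, with part (1) being the technical heart and part (2) a relatively formal consequence. For part (1), the key observation is that the $r$-saturation can be rewritten purely in terms of operations on the Betti stack that preserve the class of absolute Hodge substacks. Specifically, I would first show that $\Sigma^{r\text{-}\sat}$ only depends on the smallest closed absolute Hodge subset $\Sigma^{\mathrm{abs}}$ containing $\Sigma$: indeed, any morphism $f:Z\to X$ with $f^*\Sigma\subset \triv_Z$ has $f^*\Sigma^{\mathrm{abs}}\subset \triv_Z$ as well, since $(f^*)^{-1}(\triv_Z)$ is a closed absolute Hodge substack (by \Cref{basic prop abs hodge substack}(1) and (4a), as $\triv_Z$ is a closed absolute Hodge substack and $(f^*)^{-1}$ preserves the class), and it contains $\Sigma$, hence contains $\Sigma^{\mathrm{abs}}$. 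Thus $\Sigma^{r\text{-}\sat}$ equals the intersection of $(f^*)^{-1}(\triv_Z)\cap \cM_B(X,\rk\le r)$ over all such $f$. Each term in this intersection is a closed absolute Hodge substack by \Cref{basic prop abs hodge substack}, as is $\cM_B(X,\rk\le r)=\bigsqcup_{r'\le r}\cM_B(X,r')$. The intersection is over a proper class of morphisms, but by the usual noetherianity argument for the Betti stack—the descending chain condition on closed substacks—only finitely many of these constraints are needed to cut out the intersection, so $\Sigma^{r\text{-}\sat}$ is a finite intersection of closed absolute Hodge substacks, hence a closed absolute Hodge subset by \Cref{basic prop abs hodge substack}(3). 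That it is then a weak absolute Hodge subset is \Cref{abs Hodge implies weak abs Hodge}(2).

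The main obstacle I anticipate is the noetherianity step: one must verify that the (a priori class-sized) intersection $\bigcap_f (f^*)^{-1}(\triv_Z)$ stabilizes after finitely many terms. This follows because $\cM_B(X,\rk\le r)$ has the descending chain condition on closed substacks (it is a finite-type algebraic stack, being a finite union of global quotients $[\bGL_{r'}\backslash R_B(X,x,r')]$ by \Cref{sect:Betti}), so any intersection of closed substacks equals a finite subintersection; one picks $f_1,\dots,f_N$ realizing this finite subintersection. Care is needed because the $f:Z\to X$ range over all connected normal $Z$, but this causes no problem once we know the intersection on the target side stabilizes. A secondary subtlety is checking $\triv_Z\subset\cM_B(Z)$ is genuinely a closed absolute Hodge substack: it is $\{\triv_1\}^{\oplus?}$—more precisely, $\triv_Z$ as a subset of $\cM_B(Z,\rk\le r)$ is the image under iterated direct sums of $\{\triv_1\}$, so it is a closed absolute Hodge substack by parts (1) and (5) of \Cref{basic prop abs hodge substack}; alternatively it is $(c_Z)^{-1}$ of a single point, handled directly.

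For part (2), I would argue as follows. The rank $r$ saturation is increasing in the sense that $\Sigma\subset \Sigma^{r\text{-}\sat}$ always (since $\triv_Z$ contains all trivial restrictions), and for $r'\ge r$, $\Sigma^{r\text{-}\sat}=\Sigma^{r'\text{-}\sat}\cap\cM_B(X,\rk\le r)$; moreover $\Sigma$ and $\Sigma^{r\text{-}\sat}$ cut out the same class of ``contractible'' subvarieties, i.e. a morphism $g:Z\to X$ from connected $Z$ has $g^*\Sigma\subset\triv_Z$ if and only if $g^*\Sigma^{r\text{-}\sat}\subset\triv_Z$—the forward direction is the definition of the saturation, the reverse is trivial. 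Now suppose $s:X\to Y$ is a $\Sigma$-Shafarevich morphism; then $\Sigma=s^*\Sigma_Y$ for a large nonextendable $\Sigma_Y$ of some bounded rank, say all of rank $\le r_0$. Taking $r\ge r_0$, I claim $\Sigma^{r\text{-}\sat}=s^*(\Sigma_Y^{r\text{-}\sat})$ where $\Sigma_Y^{r\text{-}\sat}$ is the $r$-saturation of $\Sigma_Y$ on $Y$: this uses that $s$ is dominant with $K(Y)$ algebraically closed in $K(X)$, so any $g:Z\to X$ with $g^*\Sigma\subset\triv_Z$ has connected image contracted to a point by $s$ (condition (3) of \Cref{defn:shaf}), and conversely factorizations through $Y$ pull back. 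Since $\Sigma_Y^{r\text{-}\sat}$ is again large (it contains $\Sigma_Y$) and nonextendable (by \Cref{nonextendable_Zariski-closure}, as saturation does not decrease the Zariski closure beyond what nonextendability needs—more carefully, $\Sigma_Y\subset\Sigma_Y^{r\text{-}\sat}$ and nonextendability is monotone under enlarging), and the faithful-inertia condition (2) of \Cref{defn:shaf} is preserved under enlarging $\Sigma_Y$, the morphism $s$ is also a $\Sigma^{r\text{-}\sat}$-Shafarevich morphism. The reverse implication—that a $\Sigma^{r\text{-}\sat}$-Shafarevich morphism is a $\Sigma$-Shafarevich morphism—follows since $\Sigma$ and $\Sigma^{r\text{-}\sat}$ define the same contractible subvarieties, and since $\Sigma=s^*\Sigma_Y$ with $\Sigma_Y\subset\Sigma_Y^{r\text{-}\sat}$ already exhibits $\Sigma$ as a pullback, while the large nonextendable $\Sigma_Y$ can be recovered from $\Sigma_Y^{r\text{-}\sat}$ by uniqueness of the Shafarevich data; condition (3) is identical for both. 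The only genuinely delicate point here is checking that ``$r\gg 0$'' can be chosen uniformly—i.e. independent of $s$—but since any $\Sigma$ of bounded rank has its Shafarevich data $\Sigma_Y$ of comparably bounded rank (the local systems in $\Sigma_Y$ pull back to those in $\Sigma$, hence have bounded rank), taking $r$ at least the common rank bound suffices.
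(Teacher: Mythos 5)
Your part (1) is correct and is essentially the paper's own argument: the paper proves (1) simply by citing \Cref{basic prop abs hodge substack} and \Cref{abs Hodge implies weak abs Hodge}, and your write-up fleshes out exactly that citation (each constraint $(f^*)^{-1}(\triv_Z)$ is a closed absolute Hodge substack, noetherianity of $\cM_B(X,\rk\le r)$ reduces the intersection to a finite one, and finite intersections are again absolute Hodge). Two small quibbles: it is cleaner to note that $\triv_Z\cap\cM_B(Z,\rk\le r)$ is the finite union $\bigcup_{r'\le r}\{\triv_{r'}\}$ of the closed absolute Hodge substacks from part (1) of \Cref{basic prop abs hodge substack}, rather than appealing to ``images under direct sum'' --- part (5) of that proposition concerns \emph{inverse} images under $\oplus$, not images; and the detour through a smallest closed absolute Hodge subset $\Sigma^{\mathrm{abs}}$ containing $\Sigma$ is unnecessary.

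Part (2) is where your proposal has genuine gaps; the paper only says this part is clear, but your attempt to spell it out introduces claims that are not justified and, as stated, not correct. The pivot of your forward direction is the identity $\Sigma^{r\text{-}\sat}=s^*(\Sigma_Y^{r\text{-}\sat})$. Your composition argument only gives the inclusion $s^*(\Sigma_Y^{r\text{-}\sat})\subset\Sigma^{r\text{-}\sat}$; the reverse inclusion is a descent statement (every saturated local system on $X$ is pulled back from $Y$), which is precisely what condition (2) of \Cref{defn:shaf} demands of the saturation and is the actual content of the forward implication. It does not follow from condition (3) plus ``factorizations through $Y$ pull back''; it requires something like surjectivity of $\pi_1(X)\to\pi_1(Y)$ together with the kernel being generated by classes supported on connected subvarieties contracted by $s$, which uses properness and connected fibres of $s$ and is simply not available when $s$ is a non-proper Shafarevich morphism. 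Concretely, take $X=\bC^*\times(E\setminus\{0\})$ with $E$ an elliptic curve and $\Sigma=\{p_2^*\chi\}$ for a sufficiently generic rank-one character $\chi$ of $\pi_1(E)$: the $\Sigma$-Shafarevich morphism is $X\to E$, while $\Sigma^{r\text{-}\sat}=p_2^*\cM_B(E\setminus\{0\},\rk\le r)(\bC)$ contains, for $r\ge2$, pullbacks of irreducible rank-two local systems on the punctured curve, which are not pulled back from $E$; so your identity fails, $X\to E$ is not a $\Sigma^{r\text{-}\sat}$-Shafarevich morphism, and conversely $p_2\colon X\to E\setminus\{0\}$ is a $\Sigma^{r\text{-}\sat}$-Shafarevich morphism without being a $\Sigma$-Shafarevich one (its $\Sigma_Y=\{\chi\}$ is extendable). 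This shows that the unqualified ``iff'' needs the ambient reductions under which it is used in the proof of \Cref{mainShaf} --- namely that $\Sigma$ has already been replaced by a nonextendable set via \Cref{extend to nonextend} and \Cref{Shaf pass to finite} --- and that no choice of $r\gg0$ alone repairs it.

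Your backward direction is also circular as written: you are given only that $s$ is a $\Sigma^{r\text{-}\sat}$-Shafarevich morphism, yet you invoke ``$\Sigma=s^*\Sigma_Y$ with $\Sigma_Y$ large nonextendable,'' which is condition (2) for $\Sigma$, i.e.\ the thing to be proved. What the hypothesis gives is $\Sigma\subset s^*\Sigma_Y'$, hence $\Sigma=s^*\Sigma_Y$ with $\Sigma_Y:=(s^*)^{-1}(\Sigma)$, and one must then verify that this \emph{smaller} set is large and nonextendable; neither property is monotone under shrinking. Largeness can in fact be recovered from conditions (1) and (3) by a base-change/lifting argument (lift a nonconstant $Z\to Y$ to a dominating component of $X\times_YZ$ and apply (3) to a finite \'etale cover killing the putative finite monodromy), but nonextendability of $\Sigma_Y$ genuinely fails in examples like the one above, for the same reason. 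So the step you flag as the only delicate point (uniformity of $r$) is not the issue; the descent claim and the verification of condition (2) in both directions are.
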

\begin{proof}
    Part (1) is by \Cref{basic prop abs hodge substack} and \Cref{abs Hodge implies weak abs Hodge}.  Part (2) is clear.
\end{proof}

\section{Pluriharmonic local systems over non-archimedean local fields}\label{sect:norms}
For the proof of the algebraic integrability of the Katzarkov--Zuo foliation in the next section, we shall need to understand the graded nearby cycles functor in the category of non-archimedean pluriharmonic bundles.  More precisely, let $K$ be a non-archimedean field and $V$ a semisimple $K$-local system with unipotent local monodromy on a log smooth algebraic space $(\bar X,D)$.  Associated to a pluriharmonic non-archimedean norm on $V$ is a closed positive $(1, 1)$-current $\omega_{\bar X}$ with continuous potential on $\bar X$ which turns out to be independent of the choosen pluriharmonic norm on $V$.  On the other hand, for each boundary component $D_0$ there is a graded nearby cycles local system $\gr\psi_{D_0}V$, which likewise has its own closed positive $(1,1)$-current $\omega_{D_0}$; we show $\omega_{D_0}$ is the restriction of $\omega_{\bar X}$.  This is a corollary of a similar statement for the characteristic polynomials, see \Cref{characteristic_polynomial_nearby_cycle}.

\subsection{Generalities on non-archimedean norms}
References for this section include \cite{Goldman-Iwahori, Gerardin, Bruhat-Tits-SMF, Boucksom-Eriksson}.

In all this section, $K$ will be a non-archimedean local field with absolute value $|\cdot|\colon K \to \bR_{\geq 0}$, i.e. either a finite extension of $\bQ_p$ or $\mathbb{F}_p((T))$ for a prime $p$.  Write $|K^\ast | = q^{\bZ}$ for a positive integer $q$.

\begin{defn}
Let $V$ be a finite dimensional $K$-vector space. A norm on $V$ is a function $\| \cdot \|\colon V \to \bR_{\geq 0}$ such that
\begin{enumerate}
    \item $\|v \| = 0  \Longleftrightarrow v= 0$;
    \item $\| av \| = |a| \|v\|$ for all $a \in K, v \in V$;
    \item $\|v + w \| \leq \max\{\|v\|, \|w\|\}$ for all $v, w \in V$.
\end{enumerate}
We write $\cN(V)$ for the set of norms on $V$.
\end{defn}

Note that $\|v + w \| = \max\{\|v\|, \|w\|\}$ for all $v, w \in V$ such that $\|v\| \neq \|w\|$. Indeed
$\| w \| = \| (v + w ) - v \| \leq  \max\{\|v + w \|, \|v\|\} \leq  \max\{\|v\|, \|w\|\}$. If $\|v\| < \|w\|$, it follows that $\| w \| =  \max\{\|v + w \|, \|v\|\} $, and so $\| w \| =  \|v + w \|$.\\

Let $V$ be a finite dimensional $K$-vector space. Let $\| \cdot \|$ be a norm on $V$. A basis $\{v_1,\ldots,v_n\}$ of $V$ such that $\| \sum_i a_i v_i \| = \max_i (  \|a_i v_i\|)$ for every $a_1, \ldots, a_n \in K$ is said to be orthogonal for $\| \cdot \|$. Every norm admits an orthogonal basis \cite[Proposition 1.1]{Goldman-Iwahori}, and every two norms always have a common orthogonal basis \cite[Proposition 1.3]{Goldman-Iwahori}. \\

If $W \subset V$ is a sub-$K$-vector space, then the restriction of $\| \cdot \|$ to $W$ is a norm on $W$. There is also an induced norm on $V / W$ by letting $\| x + W \| := \inf_{y \in x + W} \| y \|$ for every $x \in V$.\\

The dual $V^\vee$ of $V$ is endowed with the dual norm defined as:
\[ \|\lambda \| := \sup_{v \in V \setminus \{0 \} }\frac{|\lambda(v)|}{\|v\|} .\]

If $\{v_1,\ldots,v_n\}$ is an orthogonal basis for $\| \cdot \|$, then its dual basis $\{v_1^\vee,\ldots,v_n^\vee\}$ is an orthogonal basis for the dual norm and $\|v_i^\vee \| = \| v_i \|^{-1}$ for every $i$.\\

Finally, for every positive integer $r$, there is a unique induced norm $\wedge^r \| \cdot \|$ on $\wedge^r V$ such that, if $\{v_1,\ldots,v_n\}$ is an orthogonal basis for $\| \cdot \|$ on $V$, then $\{v_{i_1} \wedge \ldots \wedge v_{i_r}\}_{i_1 < \ldots < i_r}$ is an orthogonal basis for $ \wedge^r\| \cdot \|$ on $\wedge^r V$ and $\wedge^r\|v_{i_1} \wedge \ldots \wedge v_{i_r} \| = \|v_{i_1} \| \cdots \| v_{i_r} \|$ for every $i_1 < \ldots < i_r$, see \cite[Proposition 3.9]{Goldman-Iwahori}.

\begin{lem}
Let $V$ be a $K$-vector space equipped with a norm $\| \cdot\|$. Let $v \in V\setminus \{0 \}$ and $\lambda \in V^\vee\setminus \{0 \}$. Then $\|v\| \cdot \|\lambda\| \geq |\lambda(v)|$, with equality if and only if $Kv$ is orthogonal to the hyperplane $\ker(\lambda)$.
\end{lem}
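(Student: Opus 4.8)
The plan is to prove the inequality $\|v\|\cdot\|\lambda\|\geq |\lambda(v)|$ directly from the definition of the dual norm, and then analyze the equality case via orthogonal bases. For the inequality, observe that if $v\neq 0$ then by definition of the dual norm we have $\|\lambda\|=\sup_{w\neq 0}\frac{|\lambda(w)|}{\|w\|}\geq \frac{|\lambda(v)|}{\|v\|}$, which rearranges to the claim. (If $\lambda(v)=0$ there is nothing to prove, so we may assume $\lambda(v)\neq 0$, hence $v\notin\ker\lambda$.)

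For the equality case, the key is to use \cite[Proposition 1.1]{Goldman-Iwahori}: choose an orthogonal basis $\{v_1,\dots,v_n\}$ of $V$ for $\|\cdot\|$. Then the dual basis $\{v_1^\vee,\dots,v_n^\vee\}$ is orthogonal for the dual norm with $\|v_i^\vee\|=\|v_i\|^{-1}$. The cleanest route is: first treat the case where $Kv$ is a member of some orthogonal basis, compute $\lambda(v)$ by expanding $\lambda$ in the dual basis, use orthogonality to read off $\|\lambda\|$ as a max, and check when the max is achieved by the term pairing with $v$; then show this condition is exactly orthogonality of $Kv$ to $\ker\lambda$. Concretely, suppose $v=v_1$ (after scaling, WLOG $v$ is a basis vector). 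Write $\lambda=\sum_i a_i v_i^\vee$; then $\lambda(v)=a_1$, and $\|\lambda\|=\max_i |a_i|\|v_i\|^{-1}$ by orthogonality of the dual basis, so $\|v\|\|\lambda\|=\|v_1\|\max_i|a_i|\|v_i\|^{-1}\geq |a_1|=|\lambda(v)|$ with equality iff the max is attained at $i=1$, i.e. $|a_1|\|v_1\|^{-1}\geq |a_i|\|v_i\|^{-1}$ for all $i$. One then identifies this with the condition that $\ker\lambda$ is spanned by $\{v_i - \tfrac{a_i}{a_1}v_1\}_{i\geq 2}$ and that this span is orthogonal to $Kv_1$; that orthogonality holds precisely when $\|v_i-\tfrac{a_i}{a_1}v_1\|=\max\{\|v_i\|,|a_i/a_1|\|v_1\|\}$ equals $\|v_i\|$, which is the inequality just derived.

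The main obstacle is the equality case when $Kv$ is \emph{not} assumed to be part of an orthogonal basis for $\|\cdot\|$ — one must first argue this is no loss of generality. For this I would invoke the fact that any norm has an orthogonal basis one of whose members can be taken proportional to a prescribed nonzero vector: indeed, given $v$, complete $Kv$ to a direct sum decomposition realizing the norm, using that the quotient norm on $V/Kv$ has an orthogonal basis which lifts to vectors $v_2,\dots,v_n$ with $\|v_i\|=\|v_i+Kv\|$, and then $\{v,v_2,\dots,v_n\}$ is orthogonal for $\|\cdot\|$ (this is essentially the inductive construction in \cite[Proposition 1.1]{Goldman-Iwahori}). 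Alternatively, and more efficiently, one can phrase the whole equality statement intrinsically: $\|v\|\|\lambda\|=|\lambda(v)|$ iff the norm of the functional $\lambda$ restricted to $Kv$ computing against $\|\cdot\||_{Kv}$ equals the full dual norm, and then note that by the behavior of dual norms under the orthogonal decomposition $V=Kv\oplus\ker\lambda$ (when such a decomposition is orthogonal) the dual norm splits accordingly; conversely if the decomposition $V=Kv\oplus\ker\lambda$ is not orthogonal, a common orthogonal basis of $\|\cdot\|$ and the "coordinate" norm associated to this decomposition exhibits a vector on which $\lambda$ is strictly larger, forcing strict inequality. I expect the forward direction (orthogonality $\Rightarrow$ equality) to be routine, and the converse (equality $\Rightarrow$ orthogonality) to require the small combinatorial argument with orthogonal bases sketched above.
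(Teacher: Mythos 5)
Your proposal is correct in substance, but it takes a genuinely different route from the paper. The paper's proof is a three-line direct argument: the inequality and the ``if'' direction are immediate, and for ``only if'' it takes an arbitrary $w\in\ker\lambda$, notes $\lambda(v+w)=\lambda(v)$, so that the assumed extremality of $|\lambda(v)|/\|v\|$ forces $\|v+w\|\geq\|v\|$; combined with the ultrametric bound $\|v+w\|\leq\max(\|v\|,\|w\|)$, which is automatically an equality when $\|v\|\neq\|w\|$, this gives $\|v+w\|=\max(\|v\|,\|w\|)$ in every case. No orthogonal bases, dual bases, or coordinates enter. Your route (complete $v$ to an orthogonal basis, expand $\lambda$ in the dual basis, read off when the max is attained at the $v$-term) also works and has the merit of making the equality locus explicit, but it is heavier and leaves two points to be nailed down. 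First, ``WLOG $v$ is a basis vector'' needs the completion of $v$ to an orthogonal basis: your quotient-lifting argument is fine, but the existence of lifts with $\|v_i\|=\|v_i+Kv\|$ is precisely the attainment of the infimum defining the quotient norm, which you should justify (it holds because $K$ is a local field, so the infimum is taken over a compact set of scalars); also avoid quoting \Cref{switch} here, since the paper deduces that corollary from the present lemma. Second, the equivalence of ``$\|v_i-\tfrac{a_i}{a_1}v_1\|=\|v_i\|$ for all $i\geq 2$'' with orthogonality of $Kv_1$ and $\ker\lambda$ is asserted but not proved in the nontrivial direction: orthogonality must be verified against arbitrary combinations $\sum_i c_i\bigl(v_i-\tfrac{a_i}{a_1}v_1\bigr)$, not just the spanning vectors. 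This is a short ultrametric computation, so the gap is easily filled, but as written it is the one incomplete step.
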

\begin{proof}
    The inequality is clear, as is the ``if'' part.  For the ``only if'' part, note that $V=Kv\oplus \ker(\lambda)$ is orthogonal if and only if for any $w\in \ker(\lambda)$ we have $\|v+w\|=\max(\|v\|,\|w\|)$, which is automatic if $\|v\|\neq \|w\|$.  If $\|v\|\cdot\|\lambda\|=|\lambda(v)|$, then for such a $w$ we have $\lambda(v+w)=\lambda(v)$, so $\frac{|\lambda(v+w)|}{\|v+w\|}\leq \frac{|\lambda(v)|}{\|v\|}$ implies $\|v\|\leq \|v+w\|\leq \max(\|v\|,|w\|)$, and if $\|v\|=\|w\|$ we conclude $\|v+w\|=\|v\|$.   
\end{proof}

\begin{cor}\label{switch}
    Let $V$ be a $K$-vector space equipped with a norm $\| \cdot\|$ and let $\{v_1,\ldots,v_n\}$ be an orthogonal basis.  If $v'=\sum_i  a_iv_i$ and $\|v'\|=|a_1|\|v_1\|$, then $\{v',v_2,\ldots,v_n\}$ is an orthogonal basis.
\end{cor}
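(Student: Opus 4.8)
\textbf{Proof proposal for \Cref{switch}.}

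The plan is to deduce this from the preceding lemma together with the dual-basis description of orthogonal bases. First I would set up the dual picture: let $\{v_1^\vee,\dots,v_n^\vee\}$ be the dual basis to $\{v_1,\dots,v_n\}$, which by the discussion above is orthogonal for the dual norm with $\|v_i^\vee\| = \|v_i\|^{-1}$. The hypothesis $\|v'\| = |a_1|\,\|v_1\|$ says precisely that $\|v'\|\cdot\|v_1^\vee\| = |a_1| = |v_1^\vee(v')|$, since $v_1^\vee(v') = a_1$. By the equality case of the preceding lemma, this forces $Kv'$ to be orthogonal to the hyperplane $\ker(v_1^\vee) = \mathrm{span}(v_2,\dots,v_n)$, i.e. $V = Kv' \oplus \mathrm{span}(v_2,\dots,v_n)$ is an orthogonal direct sum decomposition.

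Next I would check that $\{v',v_2,\dots,v_n\}$ really is a basis: since $\|v'\| = |a_1|\,\|v_1\| \neq 0$ we have $a_1 \neq 0$, so $v' \equiv a_1 v_1 \pmod{\mathrm{span}(v_2,\dots,v_n)}$ shows $\{v',v_2,\dots,v_n\}$ spans $V$ and hence is a basis. Finally I would verify orthogonality of this basis directly. Given scalars $b_1,\dots,b_n \in K$, write $b_1 v' + \sum_{i\geq 2} b_i v_i = b_1 v' + w$ with $w = \sum_{i\geq 2} b_i v_i \in \mathrm{span}(v_2,\dots,v_n)$. By the orthogonality of the decomposition $V = Kv' \oplus \mathrm{span}(v_2,\dots,v_n)$ established above, $\|b_1 v' + w\| = \max(\|b_1 v'\|, \|w\|)$; and by orthogonality of the original basis $\{v_2,\dots,v_n\}$ (restricted to the subspace), $\|w\| = \max_{i\geq 2}\|b_i v_i\|$. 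Combining, $\|b_1 v' + \sum_{i\geq 2} b_i v_i\| = \max(\|b_1 v'\|, \|b_2 v_2\|,\dots,\|b_n v_n\|)$, which is exactly the statement that $\{v',v_2,\dots,v_n\}$ is orthogonal.

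There is essentially no obstacle here: the only mildly delicate point is the clean extraction of the orthogonal decomposition from the equality case of the lemma, and making sure that ``$Kv$ orthogonal to $\ker(\lambda)$'' is being used with the right notion of orthogonal complement — but since $v_1^\vee$ is precisely the coordinate functional picking out $a_1$, the hyperplane $\ker(v_1^\vee)$ is manifestly $\mathrm{span}(v_2,\dots,v_n)$, and everything goes through mechanically. The whole argument is a few lines once the dual basis is introduced.
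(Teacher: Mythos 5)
Your proof is correct and is essentially the paper's own argument: take the dual basis, apply the equality case of the preceding lemma to $\lambda=v_1^\vee$ (using $\|v_1^\vee\|=\|v_1\|^{-1}$) to get the orthogonal decomposition $V=Kv'\oplus\mathrm{span}(v_2,\dots,v_n)$. The extra verification you spell out at the end (that this decomposition plus orthogonality of $\{v_2,\dots,v_n\}$ yields orthogonality of the new basis) is the routine step the paper leaves implicit.
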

\begin{proof}
Let $\{v_1^\vee,\ldots,v_n^\vee\}$ be the dual basis of $\{v_1,\ldots,v_n\}$. Taking $\lambda=v_1^\vee$, we find $\|\lambda\|=\|v_1\|^{-1}$ so $\|v'\|\cdot\|\lambda\|=|a_1|=|\lambda(v')|$.
\end{proof}

\begin{cor}\label{two_orthogonal_for_one}
Let $V$ be a $K$-vector space equipped with two norms $\| \cdot\|$ and $\| \cdot\|^\prime$. Let $v \in V\setminus \{0 \}$ that realizes $\sup_{u \in V \setminus \{0 \} }\frac{\N u \N^\prime}{\N u \N}$. Then every hyperplane $\ker({\lambda})$ which is $\|\cdot\|'$-orthogonal to $v$ is $\|\cdot\|$-orthogonal to $v$.
\end{cor}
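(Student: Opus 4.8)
The plan is to deduce this directly from the dual-norm characterization of orthogonality proved in the first lemma of this subsection (the one preceding \Cref{switch}). Set $M := \sup_{u \in V\setminus\{0\}} \frac{\|u\|'}{\|u\|}$; by hypothesis $v$ realizes this supremum, so $\|v\|' = M\|v\|$, and $\|u\|' \le M\|u\|$ for every $u \in V$. The first step is to transfer the universal inequality $\|u\|' \le M\|u\|$ to the dual space: for any $\mu \in V^\vee\setminus\{0\}$ and any $u \neq 0$ one has $\frac{|\mu(u)|}{\|u\|'} \ge \frac{1}{M}\,\frac{|\mu(u)|}{\|u\|}$, and taking the supremum over $u$ gives $\|\mu\| \le M\|\mu\|'$, where $\|\cdot\|$ and $\|\cdot\|'$ now denote the respective dual norms on $V^\vee$.

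Next I would feed this into the equality case of the lemma. The assumption that $\ker(\lambda)$ is $\|\cdot\|'$-orthogonal to $v$ means exactly $|\lambda(v)| = \|v\|'\cdot\|\lambda\|'$. Substituting $\|v\|' = M\|v\|$ and using $M\|\lambda\|' \ge \|\lambda\|$ yields
\[
|\lambda(v)| = M\|v\|\cdot\|\lambda\|' \ge \|v\|\cdot\|\lambda\|.
\]
But the same lemma gives the unconditional inequality $|\lambda(v)| \le \|v\|\cdot\|\lambda\|$, so in fact $|\lambda(v)| = \|v\|\cdot\|\lambda\|$. The equality clause of the lemma then forces $Kv$ to be $\|\cdot\|$-orthogonal to $\ker(\lambda)$, which is the desired conclusion.

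The argument collapses to a two-line computation once the dual norm is invoked, so there is no genuine obstacle here. The only subtleties worth flagging are the direction of the dual-norm comparison $\|\lambda\| \le M\|\lambda\|'$, which reverses because dualization is contravariant, and the fact — used decisively — that the vector $v$ \emph{realizing} the supremum is precisely what makes every inequality in the chain above an equality.
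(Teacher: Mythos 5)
Your proof is correct and follows essentially the same route as the paper: both establish the dual-norm comparison $\|\lambda\|\le M\|\lambda\|'$ (with $M=\|v\|'/\|v\|$ the realized supremum) and then combine the chain $|\lambda(v)|\le\|v\|\cdot\|\lambda\|\le\|v\|'\cdot\|\lambda\|'$ with the equality case of the preceding lemma. The only difference is cosmetic — you derive the dual comparison from the pointwise bound $\|u\|'\le M\|u\|$ rather than from a product of suprema.
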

\begin{proof}
For every $\lambda \in V^\vee\setminus \{0 \}$, we have
\[\|\lambda\|'\cdot \frac{\|v\|'}{\|v\|}=\sup_{u \in V \setminus \{0 \} }\frac{|\lambda(u)|}{\|u\|'}\cdot \sup_{u \in V \setminus \{0 \} }\frac{\|u\|'}{\|u\|}\geq \sup_{u \in V \setminus \{0 \} }\frac{|\lambda(u)|}{\|u\|}=\|\lambda\|\]
so $|\lambda(v)| \leq \|v\|\cdot \|\lambda\|\leq \|v\|'\cdot\|\lambda\|'$.  The conclusion follows from the lemma. 
\end{proof}

\begin{lem}\label{sup_quotient_norms}
Let $V$ be a $K$-vector space equipped with two norms $|| \cdot||$ and $|| \cdot||^\prime$. Let $\{v_1, \cdots, v_n\}$ be an orthogonal basis for $|| \cdot ||$. Then
\[ \sup_{v \in V \setminus \{0 \} }\frac{\N v \N^\prime}{\N v \N} = \max_{i}\frac{\N v_i \N^\prime}{\N v_i \N}. \]
\end{lem}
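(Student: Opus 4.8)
The plan is to prove the identity $\sup_{v \in V\setminus\{0\}}\frac{\N v\N'}{\N v\N} = \max_i \frac{\N v_i\N'}{\N v_i\N}$ by showing the two inequalities separately. The inequality $\geq$ is trivial since each $v_i$ is a competitor in the supremum. For the reverse inequality $\leq$, I would take an arbitrary nonzero $v = \sum_i a_i v_i$ and estimate $\N v\N'$ from above and $\N v\N$ from below, using the orthogonality of $\{v_1,\dots,v_n\}$ for $\N\cdot\N$.

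First I would use the ultrametric triangle inequality on $\N\cdot\N'$ to write $\N v\N' = \N \sum_i a_i v_i\N' \leq \max_i \N a_i v_i\N' = \max_i |a_i|\,\N v_i\N'$. Then I would rewrite each term as $|a_i|\,\N v_i\N' = \bigl(|a_i|\,\N v_i\N\bigr)\cdot\frac{\N v_i\N'}{\N v_i\N} \leq \bigl(\max_j |a_j|\,\N v_j\N\bigr)\cdot\max_k \frac{\N v_k\N'}{\N v_k\N}$, so that $\N v\N' \leq \bigl(\max_j |a_j|\,\N v_j\N\bigr)\cdot\max_k \frac{\N v_k\N'}{\N v_k\N}$. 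Now the crucial point is that orthogonality of $\{v_1,\dots,v_n\}$ for $\N\cdot\N$ gives exactly $\N v\N = \max_j |a_j|\,\N v_j\N$, so dividing yields $\frac{\N v\N'}{\N v\N} \leq \max_k \frac{\N v_k\N'}{\N v_k\N}$, and taking the supremum over $v$ completes the proof.

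There is no real obstacle here: the statement is a direct consequence of the definition of an orthogonal basis combined with the ultrametric inequality, and the argument is a two-line estimate once the orthogonality expansion $\N v\N = \max_j |a_j|\,\N v_j\N$ is invoked. The only mild subtlety is making sure the supremum on the left is actually attained (which is why it can be written as a maximum on the right), but this follows immediately since the right-hand side is the value achieved at one of the basis vectors, and the inequality $\leq$ shows it is an upper bound. I would phrase the final write-up as: \emph{Write $v = \sum_i a_i v_i$; by the ultrametric inequality for $\N\cdot\N'$ and orthogonality of the $v_i$ for $\N\cdot\N$, $\N v\N' \leq \max_i |a_i|\N v_i\N' \leq \bigl(\max_i|a_i|\N v_i\N\bigr)\max_i\tfrac{\N v_i\N'}{\N v_i\N} = \N v\N \cdot \max_i \tfrac{\N v_i\N'}{\N v_i\N}$, and the reverse inequality is clear.}
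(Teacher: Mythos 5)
Your proof is correct and follows essentially the same argument as the paper: the trivial inequality $\geq$, and for $\leq$ the expansion $v=\sum_i a_i v_i$ combined with the ultrametric bound $\N v\N'\leq\max_i|a_i|\N v_i\N'$ and the orthogonality identity $\N v\N=\max_i|a_i|\N v_i\N$. No gaps.
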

\begin{proof}
On the one hand, one trivially has $ \sup_{v \in V \setminus \{0 \} }\frac{\N v \N^\prime}{\N v \N} \geq \max_{i}\frac{\N v_i \N^\prime}{\N v_i \N}$. On the other hand, if $v = \sum_i a_i v_i$, then
\[  \| v \|^\prime \leq \max_i |a_i |  \|v_i \|^\prime \leq  \max_{i}\frac{\N v_i \N^\prime}{\N v_i \N} \cdot \max_i |a_i | \|v_i \|  =  \max_{i}\frac{\N v_i \N^\prime}{\N v_i \N} \cdot \| v \|. \]
\end{proof}

\subsection{Relative spectrum of two norms}

(Compare with \cite[section 2.5]{Boucksom-Eriksson}.) 
Let $V$ be a $K$-vector space of dimension $N$. Let $\| \cdot \|, \| \cdot \|^\prime \in \cN(V)$ be two norms on $V$. The relative spectrum of $\| \cdot \|$ with respect to $\| \cdot \|$ is the finite decreasing sequence
\[ \lambda_1(\| \cdot \|, \| \cdot \|^\prime) \geq \ldots \geq \lambda_N(\| \cdot \|, \| \cdot \|^\prime)\]
defined by the formula

\[ \lambda_i(\| \cdot \|, \| \cdot \|^\prime) := \sup_{W \subset V, \dim W \geq i} \left( \inf_{w \in W \setminus \{0 \} } \log \frac{\|w\|^\prime}{\|w\|} \right).\]

In particular,
\[ \lambda_1(\| \cdot \|, \| \cdot \|^\prime) := \sup_{v \in V \setminus \{0 \} } \log \frac{\|v\|^\prime}{\|v\|} \]
and 
\[ \lambda_N(\| \cdot \|, \| \cdot \|^\prime) := \inf_{v \in V \setminus \{0 \} } \log \frac{\|v\|^\prime}{\|v\|} = - \lambda_1(\| \cdot \|^\prime, \| \cdot \|) . \]

\begin{prop}{\cite[Proposition 2.24]{Boucksom-Eriksson}}\label{relative_spectrum_basis}
Let $V$ be a $K$-vector space of dimension $N$. Let $\| \cdot \|, \| \cdot \|^\prime \in \cN(V)$. Let $(e_i)$ be a basis of $V$ which is orthogonal for both norms, and order it so
that
\[  \frac{\|e_1\|^\prime}{\|e_1\|} \geq \ldots  \geq \frac{\|e_N\|^\prime}{\|e_N\|}.\]
Then, for every $i  \in \{1, \ldots, N\}$, 
\[ \lambda_i(\| \cdot \|, \| \cdot \|^\prime) =  \log \frac{\|e_i\|^\prime}{\|e_i\|}.\]
\end{prop}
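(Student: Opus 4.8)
The plan is to reduce the statement to a direct verification using the common orthogonal basis $(e_i)$. First I would recall that $\lambda_i(\|\cdot\|,\|\cdot\|') = \sup_{\dim W \geq i}\inf_{w \in W\setminus\{0\}}\log(\|w\|'/\|w\|)$, so two inequalities must be established. For the lower bound $\lambda_i \geq \log(\|e_i\|'/\|e_i\|)$, I would simply take the test subspace $W_i := \mathrm{span}(e_1,\ldots,e_i)$, which has dimension $i$; then for any nonzero $w = \sum_{j\leq i} a_j e_j$, orthogonality of $(e_j)$ for both norms gives $\|w\|' = \max_{j\leq i}|a_j|\|e_j\|'$ and $\|w\| = \max_{j\leq i}|a_j|\|e_j\|$, and from the ordering $\|e_j\|'/\|e_j\| \geq \|e_i\|'/\|e_i\|$ for $j \leq i$ one deduces $\|w\|'/\|w\| \geq \|e_i\|'/\|e_i\|$ (a short elementary manipulation of maxima, comparing termwise), so the infimum over $W_i$ is at least $\log(\|e_i\|'/\|e_i\|)$.

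For the upper bound $\lambda_i \leq \log(\|e_i\|'/\|e_i\|)$, I would fix an arbitrary subspace $W$ with $\dim W \geq i$ and show $\inf_{w \in W\setminus\{0\}}\log(\|w\|'/\|w\|) \leq \log(\|e_i\|'/\|e_i\|)$. Since $\dim W \geq i$, $W$ intersects the codimension-$(i-1)$ subspace $W'_i := \mathrm{span}(e_i, e_{i+1}, \ldots, e_N)$ nontrivially; pick a nonzero $w = \sum_{j\geq i} a_j e_j$ in this intersection. Again by joint orthogonality, $\|w\|'/\|w\| = \max_{j\geq i}|a_j|\|e_j\|' / \max_{j\geq i}|a_j|\|e_j\|$, and the ordering $\|e_j\|'/\|e_j\| \leq \|e_i\|'/\|e_i\|$ for $j \geq i$ forces this ratio to be $\leq \|e_i\|'/\|e_i\|$. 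Hence the infimum over $W$ is at most $\log(\|e_i\|'/\|e_i\|)$, and taking the supremum over all such $W$ gives the claim.

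The only real input needed beyond routine estimates is the existence of a basis orthogonal for both norms simultaneously, which is quoted earlier (Goldman--Iwahori, \cite[Proposition 1.3]{Goldman-Iwahori}); the proposition hypothesizes such a basis, so even that is given. I expect the main (mild) obstacle to be the bookkeeping in the two max-ratio comparisons: one must be careful that when several indices achieve the maximum in numerator and denominator they need not be the same index, so the cleanest route is to argue via the inequalities $|a_j|\|e_j\|' \leq (\|e_i\|'/\|e_i\|)\,|a_j|\|e_j\|$ (for $j \leq i$, resp. the reverse for $j \geq i$) applied termwise before taking maxima, rather than trying to identify the maximizing index. This keeps the argument clean and avoids any case analysis. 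Everything else is formal.
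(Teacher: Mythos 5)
Your proof is correct and complete: the two-sided min--max argument (testing with $\mathrm{span}(e_1,\ldots,e_i)$ for the lower bound and intersecting any $W$ of dimension $\geq i$ with $\mathrm{span}(e_i,\ldots,e_N)$ for the upper bound, with the termwise comparison done before taking maxima) is exactly the standard proof of this fact. The paper itself gives no proof and simply cites Boucksom--Eriksson, where the argument is the same, so there is nothing to add.
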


\begin{cor}\label{sum_relative_spectrum}
For every $k \in \{1, \ldots, N\}$, one has
\[\sum_{i = 1}^k \lambda_i(\| \cdot \|, \| \cdot \|^\prime) = \lambda_1( \wedge^k\| \cdot \|, \wedge^k \| \cdot \|^\prime).\]
\end{cor}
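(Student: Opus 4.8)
The plan is to deduce Corollary~\ref{sum_relative_spectrum} directly from Proposition~\ref{relative_spectrum_basis} together with the explicit description of the induced norm on $\wedge^k V$ recalled above (via \cite[Proposition 3.9]{Goldman-Iwahori}). The point is that a basis of $V$ which is simultaneously orthogonal for $\|\cdot\|$ and $\|\cdot\|'$ produces a basis of $\wedge^k V$ which is simultaneously orthogonal for $\wedge^k\|\cdot\|$ and $\wedge^k\|\cdot\|'$, and on wedge products the ratios of the two norms simply add.

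First I would invoke \cite[Proposition 1.3]{Goldman-Iwahori} to choose a basis $(e_i)$ of $V$ orthogonal for both $\|\cdot\|$ and $\|\cdot\|'$, ordered so that $\|e_1\|'/\|e_1\|\geq\cdots\geq\|e_N\|'/\|e_N\|$. By the description of the induced norm on $\wedge^k V$, the family $\{e_{i_1}\wedge\cdots\wedge e_{i_k}\}_{i_1<\cdots<i_k}$ is orthogonal for both $\wedge^k\|\cdot\|$ and $\wedge^k\|\cdot\|'$, with
\[
\frac{\wedge^k\|e_{i_1}\wedge\cdots\wedge e_{i_k}\|'}{\wedge^k\|e_{i_1}\wedge\cdots\wedge e_{i_k}\|}=\prod_{j=1}^k\frac{\|e_{i_j}\|'}{\|e_{i_j}\|}.
\]
Because of the chosen ordering, this product over multi-indices $i_1<\cdots<i_k$ is maximized by the multi-index $(1,2,\dots,k)$, so the largest ratio among all basis wedges equals $\prod_{j=1}^k\|e_j\|'/\|e_j\|$.

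Next I would apply Proposition~\ref{relative_spectrum_basis} (in the case $N=1$, i.e.\ the formula for $\lambda_1$), which via \Cref{sup_quotient_norms} applied to the orthogonal basis $\{e_{i_1}\wedge\cdots\wedge e_{i_k}\}$ of $\wedge^k V$ gives
\[
\lambda_1(\wedge^k\|\cdot\|,\wedge^k\|\cdot\|')=\max_{i_1<\cdots<i_k}\log\prod_{j=1}^k\frac{\|e_{i_j}\|'}{\|e_{i_j}\|}=\sum_{j=1}^k\log\frac{\|e_j\|'}{\|e_j\|}.
\]
On the other hand, Proposition~\ref{relative_spectrum_basis} applied to $V$ itself gives $\lambda_j(\|\cdot\|,\|\cdot\|')=\log(\|e_j\|'/\|e_j\|)$ for each $j$, so the right-hand side is exactly $\sum_{j=1}^k\lambda_j(\|\cdot\|,\|\cdot\|')$, which is the desired identity. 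I do not anticipate a serious obstacle here; the only mild subtlety is making sure the simultaneous-orthogonality of the wedge basis for \emph{both} norms is used (it is what licenses reading off $\lambda_1$ of the pair on $\wedge^k V$ from the basis ratios via \Cref{sup_quotient_norms}), and that the ordering of the $e_i$ is preserved under taking products so that the top multi-index is $(1,\dots,k)$.
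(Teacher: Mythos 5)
Your argument is correct and is exactly the route the paper intends: the corollary is stated as an immediate consequence of Proposition~\ref{relative_spectrum_basis}, using a common orthogonal basis whose wedge products are simultaneously orthogonal for $\wedge^k\|\cdot\|$ and $\wedge^k\|\cdot\|'$, so that the maximal ratio on $\wedge^k V$ is the product of the $k$ largest ratios on $V$. The only cosmetic slip is your phrase ``in the case $N=1$'' — you mean the $i=1$ case of the formula (or simply \Cref{sup_quotient_norms}), which is what you in fact use.
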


\begin{cor}\label{relative_spectrum_decomposition}

Let $V = K v \oplus W$ be a decomposition which is orthogonal for both norms. If moreover $\lambda_1(\| \cdot \|, \| \cdot \|^\prime) =  \log \frac{\|v\|^\prime}{\|v\|}$, then, for every integer $i \in \{1, \ldots, \rk V - 1\}$, one has
\[ \lambda_i(\| \cdot \|_{|W}, \| \cdot \|_{|W}^\prime) = \lambda_{i +1}(\| \cdot \|, \| \cdot \|^\prime). \]

\end{cor}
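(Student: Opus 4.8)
\textbf{Proof plan for \Cref{relative_spectrum_decomposition}.}

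The plan is to reduce everything to the explicit basis description provided by \Cref{relative_spectrum_basis}. First I would choose a basis $(e_1,\ldots,e_N)$ of $V$ that is orthogonal for both $\|\cdot\|$ and $\|\cdot\|'$ and ordered so that the ratios $\|e_j\|'/\|e_j\|$ are non-increasing; such a basis exists by \cite[Proposition 1.3]{Goldman-Iwahori}. By \Cref{relative_spectrum_basis} we then have $\lambda_j(\|\cdot\|,\|\cdot\|')=\log(\|e_j\|'/\|e_j\|)$ for all $j$, and in particular $\lambda_1(\|\cdot\|,\|\cdot\|')=\log(\|e_1\|'/\|e_1\|)$.

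The key step is to relate the given orthogonal decomposition $V = Kv\oplus W$ to this basis. Since $\lambda_1(\|\cdot\|,\|\cdot\|')=\log(\|v\|'/\|v\|)$ by hypothesis, $v$ realizes the supremum defining $\lambda_1$, so writing $v=\sum_j a_j e_j$ one checks that every index $j$ with $a_j\neq 0$ must satisfy $\|e_j\|'/\|e_j\|=\|e_1\|'/\|e_1\|$ (otherwise the ultrametric inequality applied to $\|v\|'\le\max_j|a_j|\|e_j\|'$ and $\|v\|=\max_j|a_j|\|e_j\|$ would force a strictly smaller ratio). Pick such an index, say after reordering within the top block that it is $j=1$; then by \Cref{switch} the basis $(v,e_2,\ldots,e_N)$ is orthogonal for $\|\cdot\|$, and by the same argument applied with the roles symmetric (or by \Cref{switch} for $\|\cdot\|'$, noting $\|v\|'=|a_1|\|e_1\|'$), it is orthogonal for $\|\cdot\|'$ as well. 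Thus $(e_2,\ldots,e_N)$ is an orthonormal-type basis of $W$ for both norms, still ordered with non-increasing ratios, and \Cref{relative_spectrum_basis} applied to $W$ gives $\lambda_i(\|\cdot\|_{|W},\|\cdot\|'_{|W})=\log(\|e_{i+1}\|'/\|e_{i+1}\|)=\lambda_{i+1}(\|\cdot\|,\|\cdot\|')$ for $i\in\{1,\ldots,\operatorname{rk}V-1\}$, which is the claim.

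The main obstacle I anticipate is the bookkeeping in the paragraph above: one must be careful that $v$ can genuinely be substituted into a common orthogonal basis for \emph{both} norms simultaneously, since a priori \Cref{switch} only produces orthogonality for one norm at a time. The resolution is that the hypothesis $\lambda_1(\|\cdot\|,\|\cdot\|')=\log(\|v\|'/\|v\|)$ forces $v$ to live in the span of the top-ratio block of the common orthogonal basis, where all $e_j$ have the \emph{same} ratio $\|e_j\|'/\|e_j\|$; within that block any change of basis vector automatically has the correct norm with respect to both $\|\cdot\|$ and $\|\cdot\|'$ (both norms restricted to the block are scalar multiples of one another), so orthogonality is preserved for both. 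Once this is set up cleanly, the rest is a direct application of \Cref{relative_spectrum_basis}.
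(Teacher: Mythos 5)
Your overall strategy (reduce to \Cref{relative_spectrum_basis} via a common orthogonal basis containing $v$) is the right one, but as written there are two genuine problems. First, the claim that every index $j$ with $a_j\neq 0$ has the top ratio is false: take $e_1,e_2$ orthogonal for both norms with $\|e_1\|=\|e_2\|=1$, $\|e_1\|'=q$, $\|e_2\|'=1$, and $v=e_1+e_2$; then $\log\frac{\|v\|'}{\|v\|}=\log q=\lambda_1$, yet $a_2\neq 0$ while $e_2$ has ratio $1$. The same example kills the ``resolution'' in your last paragraph, since $v$ does not lie in the span of the top-ratio block. What is true, and what you actually need, is weaker: if $j_1$ is an index realizing $\|v\|'=|a_{j_1}|\|e_{j_1}\|'$, then from $\frac{\|v\|'}{\|v\|}\leq\frac{|a_{j_1}|\|e_{j_1}\|'}{|a_{j_1}|\|e_{j_1}\|}\leq\frac{\|e_1\|'}{\|e_1\|}$ and the hypothesis of equality one gets $\|v\|=|a_{j_1}|\|e_{j_1}\|$ as well and that $e_{j_1}$ has the top ratio; then \Cref{switch}, applied once for each norm, lets you substitute $v$ for $e_{j_1}$ in a basis orthogonal for both norms. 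So the substitution step is salvageable, but not by the argument you give.

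Second, and more seriously, after the substitution you assert that $(e_2,\ldots,e_N)$ is a basis of $W$. It is not: $W$ is the \emph{given} complement in the statement, while your construction only produces \emph{some} complement of $Kv$ orthogonal for both norms, so you have computed $\lambda_i$ of the restrictions to $\mathrm{span}(e_2,\ldots,e_N)$, not to $W$. To bridge this you would need an extra argument, e.g.\ that any complement of $Kv$ orthogonal for both norms is isometric (simultaneously for both norms, via the projection modulo $Kv$) to $V/Kv$ with its quotient norms, hence all such complements have the same restricted relative spectrum. Note that your hypothesis that $V=Kv\oplus W$ is bi-orthogonal is never really used in your argument, which is a sign something is off. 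The cleaner route, which is surely the intended one, runs the reduction the other way: choose a basis of $W$ orthogonal for both restricted norms (Goldman--Iwahori applied to $W$), adjoin $v$ --- the bi-orthogonality hypothesis on the decomposition is exactly what makes the enlarged family a basis of $V$ orthogonal for both norms --- reorder so the ratios are non-increasing (the hypothesis on $\lambda_1$ puts $v$ first), and apply \Cref{relative_spectrum_basis} to both $V$ and $W$ to read off $\lambda_{i+1}(\|\cdot\|,\|\cdot\|')=\lambda_i(\|\cdot\|_{|W},\|\cdot\|'_{|W})$.
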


\subsection{Space of norms}

Let $V$ be a $K$-vector space of dimension $N$.  The group $GL(V)(K)$ acts on $\cN(V)$ by composition. 

To each basis $e = (e_i)$ of $V$ is associated an injective map
\[ \iota_e \colon \bR^N \hookrightarrow \cN(V), \]
which takes $a \in \bR^N$ to the unique norm $\| \cdot \|_{e,a}$ that is diagonalized in $(e_i)$ and such that $\log \| e_i \|_{e, a} = - a_i$. The image $ \bA_e := \iota_e( \bR^n) \subset \cN(V)$ is thus the set of norms that are diagonalized in the given basis $e$, and is called an apartment (or flat) of $\cN(V)$.
The Bruhat-Tits metric on $\cN(V)$ is defined as:
\[ d_2(\| \cdot \|, \| \cdot \|^\prime) := \left(\sum_{i= 1}^N \left(\lambda_i(\| \cdot \|, \| \cdot \|^\prime)\right)^2\right)^{1/2}. \]

\begin{thm}[See {\cite[2.4.7, Corollaire 2]{Gerardin} and \cite[Theorem 3.1 and Corollary 3.3]{Boucksom-Eriksson}}]
The function $d_2\colon \cN(V) \times \cN(V) \to \bR_ {\geq 0}$ defines a metric on $\cN(V)$, and $(\cN(V), d_2)$ is a NPC complete metric space. The Bruhat-Tits metric $d_2$ is the unique metric on $\cN(V)$ for which $\iota_e \colon (\bR^N, l_2) \hookrightarrow (\cN(V), d_2)$ is an isometric embedding for each basis $e$ of $V$.
 \end{thm}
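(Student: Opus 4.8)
The plan is to deduce the statement from the properties of the relative spectrum $\lambda_\bullet(\|\cdot\|,\|\cdot\|')$ recorded above, together with the basic fact \cite[Prop.~1.3]{Goldman-Iwahori} that any two norms on $V$ admit a common orthogonal basis --- equivalently, any two points of $\cN(V)$ lie in a common apartment $\bA_e$. \textbf{First, the easy metric axioms and the uniqueness clause.} Non-negativity of $d_2$ is clear. Reversing the ordering in \Cref{relative_spectrum_basis} gives $\lambda_i(\|\cdot\|,\|\cdot\|') = -\lambda_{N+1-i}(\|\cdot\|',\|\cdot\|)$, so the multiset $\{\lambda_i^2\}$ is unchanged under swapping the two norms and $d_2$ is symmetric. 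If $d_2(\|\cdot\|,\|\cdot\|')=0$ then all $\lambda_i$ vanish; choosing a common orthogonal basis $e=(e_i)$ and applying \Cref{relative_spectrum_basis} yields $\|e_i\|=\|e_i\|'$ for every $i$, whence $\|\cdot\|=\|\cdot\|'$ since both norms are diagonalized in $e$. Finally, \Cref{relative_spectrum_basis} applied to $a,b\in\bR^N$ shows that the relative spectrum of $\iota_e(a)$ and $\iota_e(b)$ is the non-increasing rearrangement of $(b_i-a_i)_i$, so $d_2(\iota_e(a),\iota_e(b))=|a-b|_{\ell_2}$; hence each $\iota_e$ is an isometric embedding of $(\bR^N,\ell_2)$ onto the apartment $\bA_e$, and once $d_2$ is known to be a metric, uniqueness of a metric with this embedding property is automatic, since any two points lie in some $\bA_e$ and their distance is thereby forced.

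\textbf{Second, the triangle inequality and the NPC property}, which is the crux. I would realize $(\cN(V),\{\bA_e\})$ as a non-discrete Euclidean building in the sense of Bruhat--Tits, metrized apartment-by-apartment by transporting the $\ell_2$-metric along the maps $\iota_e$, and then invoke the standard structure theory. The two axioms to verify are: (i) any two points lie in a common apartment, which is precisely the common orthogonal basis; and (ii) if two apartments $\bA_e,\bA_{e'}$ have nonempty (convex) intersection, there is an $\ell_2$-isometry $\bA_e\to\bA_{e'}$ fixing $\bA_e\cap\bA_{e'}$ pointwise --- this is what makes the apartment-wise formula for $d_2$ well-defined, and it is proved by direct manipulation of orthogonal decompositions using \Cref{switch} and \Cref{relative_spectrum_decomposition}. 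Granting (i) and (ii), the building-theoretic retraction $\rho_{\bA_e,x}$ onto an apartment based at a point $x$ is $1$-Lipschitz for $d_2$ and restricts to an isometry on each apartment through $x$; applying this with $\bA_e$ an apartment containing two of three given points gives the triangle inequality, and comparing the configuration with its (Euclidean) image under the retraction gives the CAT(0)/NPC inequality. An equivalent hands-on route: given $\|\cdot\|_0,\|\cdot\|_1$ with common orthogonal basis $e$ and $\iota_e^{-1}(\|\cdot\|_j)=a_j$, set $\|\cdot\|_t:=\iota_e((1-t)a_0+ta_1)$; one checks (via \Cref{switch} and the wedge-power identity \Cref{sum_relative_spectrum}) that this segment does not depend on the chosen common orthogonal basis, that $t\mapsto\log\|v\|_t$ is convex for each fixed $v$, and that $t\mapsto\|\cdot\|_t$ is a geodesic, after which the NPC comparison with a third norm reduces, using \Cref{sum_relative_spectrum} and \Cref{relative_spectrum_decomposition}, to the Euclidean case inside apartments.

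\textbf{Third, completeness.} For a $d_2$-Cauchy sequence $(\|\cdot\|_n)$ I would use $d_\infty\le d_2$ together with \Cref{sup_quotient_norms} to see that the functions $\log\|\cdot\|_n$ are uniformly Cauchy on the unit sphere of $(V,\|\cdot\|_0)$, hence converge uniformly to a function that one checks is $\log$ of a norm $\|\cdot\|_\infty$; a final estimate, again through \Cref{sup_quotient_norms} and \Cref{relative_spectrum_basis}, shows $d_2(\|\cdot\|_n,\|\cdot\|_\infty)\to0$. Combined with the previous steps this gives that $(\cN(V),d_2)$ is a complete NPC metric space with the asserted uniqueness property.

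\textbf{The main obstacle} is the second step: establishing the triangle inequality at all, and then upgrading it to the NPC inequality. The genuinely delicate point is the apartment-compatibility axiom (ii) --- equivalently, consistency of the naïve apartment-wise definition of $d_2$ --- because this is where one must exploit the fine structure of pairs of orthogonal bases, not merely their existence; once that is in place, the CAT(0) inequality follows from its Euclidean counterpart by a retraction and comparison argument, and Hopf--Rinow (as recalled in the excerpt) reconciles completeness with properness of the locally compact pieces.
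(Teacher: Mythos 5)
First, note that the paper does not prove this statement at all: it is quoted verbatim from the literature (Gérardin \cite[2.4.7, Corollaire 2]{Gerardin} and Boucksom--Eriksson \cite[Theorem 3.1, Corollary 3.3]{Boucksom-Eriksson}), so there is no in-paper argument to compare against; your proposal has to be measured against the cited proofs. The parts of your sketch that are genuinely carried out are fine and match the standard treatment: symmetry and nondegeneracy of $d_2$, the computation $d_2(\iota_e(a),\iota_e(b))=|a-b|_{\ell_2}$, the resulting uniqueness clause (any two norms lie in a common apartment by \cite[Prop. 1.3]{Goldman-Iwahori}), and completeness via $d_\infty\le d_2\le\sqrt N\,d_\infty$ are all correct and essentially complete.

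The gap is exactly where you locate it, but it is larger than your wording suggests. The triangle inequality, the well-definedness of the apartment-wise geodesic, and the NPC comparison against a third norm \emph{not} lying in the chosen apartment are the entire content of the theorem, and the claim that the compatibility axiom (ii) ``is proved by direct manipulation of orthogonal decompositions using \Cref{switch} and \Cref{relative_spectrum_decomposition}'' is an assertion, not an argument; those two corollaries concern a single pair of norms and do not obviously control how a third norm sits relative to an apartment containing the other two. Moreover, the building-axiom route as you state it has a circularity risk: the $1$-Lipschitz property of the retraction $\rho_{\bA_e,x}$, which you use to deduce both the triangle inequality and the CAT(0) comparison, is itself normally established using the simplicial/Weyl-group structure of the Bruhat--Tits building (this is in effect Gérardin's route, and it requires substantially more than axioms (i) and (ii) as you phrase them). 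The concrete missing idea in the analytic route is the subadditivity $\lambda_1(\alpha,\gamma)\le\lambda_1(\alpha,\beta)+\lambda_1(\beta,\gamma)$ applied to all exterior powers, which via \Cref{sum_relative_spectrum} gives the majorization inequalities $\sum_{i\le k}\lambda_i(\alpha,\gamma)\le\sum_{i\le k}\lambda_i(\alpha,\beta)+\sum_{i\le k}\lambda_i(\beta,\gamma)$; a standard rearrangement/majorization argument then yields the $\ell_2$ triangle inequality --- this is essentially how Boucksom--Eriksson proceed. Even then, the NPC (semi-parallelogram) inequality against a point outside the apartment requires a separate argument (e.g. convexity of $d_2(\cdot,\beta)^2$ along the interpolation geodesics, or the full Bruhat--Tits comparison machinery); it does not ``reduce to the Euclidean case inside apartments'' by the lemmas you cite. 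So the outline is a faithful reconstruction of the strategies in the cited references, but as written it defers precisely the steps that constitute the proof.
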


The extended Bruhat-Tits building $\Delta(\GL(V), K)$ can be canonically identified with $(\cN(V), d_2)$ as metric spaces equipped with an isometrical action of $\GL(V)(K)$, see \cite{Bruhat-Tits-SMF}.

\begin{prop}\label{induced_norm_Lipschitz}
Let $V$ be a finite-dimensional $K$-vector space.  Let $W \subset V$ be a sub-$K$-vector space. Then the canonical map $\cN(V) \to \cN(W) \times \cN(V / W)$ is Lipschitz continuous.
\end{prop}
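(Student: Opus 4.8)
The plan is to reduce to the case where $V$ is equipped with a single norm, using the standard fact that any two norms on a finite dimensional $K$-vector space admit a common orthogonal basis \cite[Proposition 1.3]{Goldman-Iwahori}. Concretely, suppose $\|\cdot\|_1,\|\cdot\|_2 \in \cN(V)$ and let $\{v_1,\ldots,v_n\}$ be a basis that is orthogonal for both. The map $\cN(V)\to\cN(W)\times\cN(V/W)$ restricted to the apartment $\bA_{\{v_i\}}$ is easy to control, so the strategy is to show that the relative spectrum of the images in $\cN(W)$ and in $\cN(V/W)$ is term-by-term dominated (after appropriate interleaving) by the relative spectrum $\lambda_i(\|\cdot\|_1,\|\cdot\|_2)$, whence the Bruhat--Tits distances satisfy $d_2(\text{image}_1,\text{image}_2)^2 = \sum \lambda_i(\cdot)^2 + \sum \lambda_j(\cdot)^2 \le \sum_i \lambda_i(\|\cdot\|_1,\|\cdot\|_2)^2 = d_2(\|\cdot\|_1,\|\cdot\|_2)^2$, giving that the map is $1$-Lipschitz.

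The key steps, in order, are as follows. First, I would record that if $\{v_1,\ldots,v_n\}$ is orthogonal for $\|\cdot\|$ and $W$ is spanned by a subset of the $v_i$, then this basis induces an orthogonal basis of $W$ (the subset) and an orthogonal basis of $V/W$ (the images of the complementary $v_i$), with the induced norms given by restriction. This reduces the problem on an apartment to the combinatorial statement about $\ell^2$ projections: the map $\bR^n \to \bR^{|S|}\times\bR^{n-|S|}$ given by splitting coordinates according to a subset $S$ is an isometry, hence $1$-Lipschitz. Second, and this is the real content, I must handle the fact that a common orthogonal basis for $\|\cdot\|_1$ and $\|\cdot\|_2$ need not be ``adapted to $W$'' — i.e. $W$ need not be spanned by a sub-collection of the $v_i$. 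So the induced norms on $W$ and $V/W$ need not be simultaneously diagonal in the images of the $v_i$, and one cannot directly read off their relative spectrum from \Cref{relative_spectrum_basis}. The way around this is to use the minimax characterization of the relative spectrum directly: for $\lambda_i(\|\cdot\|_{1|W},\|\cdot\|_{2|W})$ one takes a sup over subspaces $U\subseteq W$ of dimension $\ge i$ and an inf over $u\in U\setminus\{0\}$ of $\log(\|u\|_2/\|u\|_1)$; since $U\subseteq W\subseteq V$, this sup is taken over a subfamily of the subspaces of $V$, and the quotient $\log(\|u\|_2/\|u\|_1)$ is literally the same function, so $\lambda_i(\|\cdot\|_{1|W},\|\cdot\|_{2|W}) \le \lambda_i(\|\cdot\|_1,\|\cdot\|_2)$ for all $i\le \dim W$. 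Third, for the quotient $V/W$, I would dualize: $\cN(V/W)$ sits inside $\cN(V^\vee)$ via the annihilator $W^\perp \subset V^\vee$ with $\|\cdot\|_{V/W}$ corresponding to the restriction of the dual norm to $W^\perp$, and the relative spectrum of dual norms is the negative-reversed relative spectrum of the original norms. Applying the second step to $W^\perp \subset V^\vee$ then bounds the relative spectrum on $V/W$ by that on $V$ as well (with a reversal of ordering).

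Finally I would assemble the estimate: writing $a_1\ge\cdots\ge a_k$ for the relative spectrum on $W$ and $b_1\ge\cdots\ge b_{n-k}$ for that on $V/W$, steps two and three give $a_i \le \lambda_i(\|\cdot\|_1,\|\cdot\|_2)$ and $b_j \ge \lambda_{n-k-j+\text{(shift)}}(\|\cdot\|_1,\|\cdot\|_2)$ appropriately, but what is actually needed for the Lipschitz bound is the majorization inequality $\sum a_i^2 + \sum b_j^2 \le \sum_{i=1}^n \lambda_i(\|\cdot\|_1,\|\cdot\|_2)^2$. The cleanest route to this is via \Cref{sum_relative_spectrum} (i.e. $\sum_{i=1}^\ell \lambda_i = \lambda_1(\wedge^\ell\|\cdot\|_1,\wedge^\ell\|\cdot\|_2)$) applied to exterior powers: the top exterior power $\wedge^{\dim V}$ behaves multiplicatively under the decomposition $V \to W\oplus V/W$ up to a bounded error, and the partial sums of the spectra of $W$ and $V/W$ interleave into partial sums of the spectrum of $V$. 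I expect the main obstacle to be precisely this last bookkeeping — verifying that the two spectra of $W$ and $V/W$, ordered decreasingly, can be ``shuffled'' into a sequence that is majorized by the spectrum of $V$ in the sense needed for the $\ell^2$ bound, since the naive term-by-term comparison only handles one side at a time. An alternative, possibly shorter, route that I would also consider is to verify Lipschitzness on each apartment $\bA_e$ of $\cN(V)$ separately — on $\bA_e$ the map to $\cN(W)\times\cN(V/W)$ is explicit and the $\ell^2$-contraction is immediate from the isometry $\iota_e$ — and then conclude by the fact (following from the NPC structure and the geodesic convexity of the distance to a point) that a map which is $1$-Lipschitz in restriction to every apartment and is well-defined globally is $1$-Lipschitz, using that any two points of $\cN(V)$ lie in a common apartment.
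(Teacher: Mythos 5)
Your plan correctly isolates the two structural ingredients that the paper also uses — the restriction to $W$ is controlled because the sup/inf in the definition of the relative spectrum is taken over fewer subspaces, and the quotient is handled by dualizing, via the fact that the dual of the quotient norm is the restriction of the dual norm to $W^\perp$ — but as written the argument stops short at its central step. The $\ell^2$ majorization $\sum_i a_i^2+\sum_j b_j^2\le\sum_i\lambda_i(\|\cdot\|_1,\|\cdot\|_2)^2$ is exactly what your $1$-Lipschitz claim needs, and you acknowledge that you do not prove it; the term-by-term inequalities you do establish only bound each $a_i$ from above, not in absolute value in the interleaved way the squared sum requires. The fallback apartment argument does not repair this: the restriction of the map to a general apartment $\bA_e$ is \emph{not} the coordinate-splitting isometry, because (as you yourself note in your second step) a common orthogonal basis for two norms need not be adapted to $W$. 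For a norm diagonalized in a non-adapted basis, the induced norms on $W$ and $V/W$ are not obtained by splitting coordinates, so the restricted map $\bR^n\to\cN(W)\times\cN(V/W)$ is a genuinely nontrivial piecewise-affine map whose contractivity is precisely the point at issue; invoking it there is circular.

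The gap is easy to close once you notice that the proposition only asserts Lipschitz continuity, not $1$-Lipschitz. Your minimax inequality, applied to both orderings of the pair of norms, already gives $|\lambda_i(\|\cdot\|_{1|W},\|\cdot\|_{2|W})|\le\max\bigl(\lambda_1(\|\cdot\|_1,\|\cdot\|_2),\lambda_1(\|\cdot\|_2,\|\cdot\|_1)\bigr)=d_\infty(\|\cdot\|_1,\|\cdot\|_2)\le d_2(\|\cdot\|_1,\|\cdot\|_2)$, and the same bound holds on $V/W$ after dualizing (duality is a $d_\infty$-isometry); summing squares yields Lipschitz continuity with constant $\sqrt{\dim V}$, which is all that is claimed. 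This is in substance the paper's proof: there one first replaces $d_2$ by the comparable Goldman--Iwahori distance $d_\infty$ (both are pulled back from norms on $\bR^N$ via the relative spectrum), observes that restriction is trivially $d_\infty$-decreasing, and handles the quotient by duality exactly as in your third step. Whether the map is genuinely $1$-Lipschitz for $d_2$ is a finer question that neither the proposition nor the paper requires, and your proposal does not settle it.
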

\begin{proof}

First, $d_2$ is comparable to the Goldman-Iwahori distance $d_\infty(\| \cdot \|, \| \cdot \|^\prime):=\max(\lambda_1(\| \cdot \|, \| \cdot \|^\prime),\lambda_1(\| \cdot \|^\prime, \| \cdot \|))$, since any two norms on $\bR^N$ are equivalent, and $d_2,d_\infty$ are both pulled back from a norm on $\bR^N$ via the map given by the $\lambda_i$'s.  Thus, it is enough to prove $\cN(V) \to \cN(W) \times \cN(V / W)$ is distance-decreasing with respect to $d_\infty$.  The map $\cN(V)\to\cN(W)$ obtained by restricting the norm is clearly $d_\infty$-decreasing.  Moreover, the map $\cN(V)\to\cN(V^\vee)$ obtained by taking the dual norm is an involutive isometry with respect to $d_\infty$ \cite[Theorem 1.21]{Boucksom-Eriksson}.  Finally, the dual of the quotient norm on $V/W$ is the restriction of the dual norm of $V$ \cite[Lemma 1.23]{Boucksom-Eriksson}.  Thus, the quotient norm map $\cN(V)\to\cN(V/W)$ is also distance decreasing with respect to $d_\infty$, and the claim follows.

\end{proof}
\subsection{Pluriharmonic norms on local systems}

\begin{defn}
 Let $X$ be a complex analytic space equipped with a $K$-local system $L$. Let $\cN(V) \to X$ be the total space of the locally constant sheaf of spaces of norms on the stalks of $V$, so that the fiber $\cN(V)_x$ at $x \in X$ is canonically identified with $\cN(V_x)$. A norm on $V$ is then by definition a locally Lipschitz continuous section of $\cN(V) \to X$.
\end{defn}

Assume that $X$ is connected and fix $x \in X$. Let $\rho \colon \pi_1(X) \to \GL(V_x)$ be the monodromy representation of $V$. A norm $\| \cdot \|$ on $V$ is thus equivalent to a $\rho$-equivariant locally Lipschitz continuous map $\tilde{X} \to \cN(V_x)$.\\

Let $(V, \| \cdot \|)$ be a normed $K$-local system. A set of sections $v_1,\ldots, v_k$ is orthogonal if it is pointwise an orthogonal basis for its span.

\begin{defn}
Let $(V, \| \cdot \|)$ be a normed $K$-local system on a complex analytic space $X$. Its regular locus $\Reg(\| \cdot \|)$ is the open subset of points $x \in X$ for which an orthogonal trivializing set of sections for $V$ exists in some neighborhood of $x$. The complementary of the regular subset is called the singular set of $\| \cdot \|$ and denoted $\Sing(\| \cdot \|)$.
\end{defn}

\begin{defn}
A harmonic (resp. pluriharmonic) $K$-local system on a complex analytic space $X$ is a normed $K$-local system $(V, \| \cdot \|)$ on $X$ such that the associated section of $\cN(V) \to X$ is harmonic (resp. pluriharmonic): for every $x \in X$ and every simply-connected open neighborhood $U$ of $x$ in $X$, the induced map $U \to \cN(V_x)$ is harmonic (resp. pluriharmonic).
\end{defn}

We deduce from \Cref{Singular_locus_pluripolar} the following:
\begin{cor}\label{local system: pluripolar sing}
Let $(V,\|\cdot \|)$ be a pluriharmonic $K$-local system on a complex manifold $X$. Then its singular locus $\Sing(\|\cdot \|)$ is pluripolar.
\end{cor}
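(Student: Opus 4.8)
The plan is to deduce \Cref{local system: pluripolar sing} from \Cref{Singular_locus_pluripolar} exactly as the statement of the excerpt suggests, by identifying the singular locus $\Sing(\|\cdot\|)$ of a pluriharmonic $K$-local system with the singular locus of the associated pluriharmonic map into a Euclidean building. First I would recall the setup: on a connected complex manifold $X$ with basepoint $x$, the norm $\|\cdot\|$ on $V$ is, by definition, a $\rho$-equivariant locally Lipschitz continuous map $u\colon \tilde X\to \cN(V_x)$, where $\rho\colon\pi_1(X,x)\to\GL(V_x)$ is the monodromy representation and $\cN(V_x)$ is canonically identified with the extended Bruhat--Tits building $\Delta(\GL(V_x),K)$ as a locally compact Euclidean building (this is recorded just after the definition of the Bruhat--Tits metric). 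Pluriharmonicity of $(V,\|\cdot\|)$ means precisely that $u$ is pluriharmonic in the sense of \Cref{def-pluriharmonicity} restricted to simply connected opens, so $u$ is a pluriharmonic map from the complex manifold $\tilde X$ to the locally compact Euclidean building $\cN(V_x)$.

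The key step is the identification of the two notions of regular/singular locus. Let $\pi\colon\tilde X\to X$ be the universal cover. I claim $\pi^{-1}(\Reg(\|\cdot\|)) = \Reg(u)$, where $\Reg(u)$ is the regular locus of the building-valued map $u$ as in the definition preceding \Cref{Gromov-Schoen-regular}. Indeed, a point $\tilde y\in\tilde X$ lies in $\Reg(u)$ iff $u$ maps a neighborhood of $\tilde y$ into a single apartment of $\cN(V_x)$; by the description of apartments as the sets $\bA_e$ of norms diagonalized in a fixed basis $e$ of $V_x$, this is equivalent to the existence, on a neighborhood of $\tilde y$, of a fixed basis $e$ of $V_x$ which is pointwise orthogonal for $u$ — and pulling such a basis back via the local system structure gives exactly an orthogonal trivializing set of sections for $V$ near $\pi(\tilde y)$, i.e. $\pi(\tilde y)\in\Reg(\|\cdot\|)$. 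The converse is immediate by the same dictionary. Both $\Reg(u)$ and $\Reg(\|\cdot\|)$ are open and $\pi$-invariant, so $\Sing(u)=\pi^{-1}(\Sing(\|\cdot\|))$.

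Now \Cref{Gromov-Schoen-regular} (quoting \cite[Theorem 6.4]{Gromov-Schoen}) gives that $\Sing(u)$ has Hausdorff codimension at least $2$, and \Cref{Singular_locus_pluripolar} (i.e. \cite[Proposition 1.3.3]{Eyssidieux}) gives the stronger statement that $\Sing(u)$ is contained in a strict complex analytic subset $Z\subset\tilde X$. Since $\Sing(u)=\pi^{-1}(\Sing(\|\cdot\|))$ is $\pi_1(X,x)$-invariant, we may replace $Z$ by the intersection of its $\pi_1(X,x)$-translates, which is still a strict closed complex analytic subset of $\tilde X$ (a locally finite, in fact countable, intersection of analytic sets is analytic, and it stays strict since it still contains $\Sing(u)\ne\tilde X$ away from the degenerate case) and is now $\pi_1(X,x)$-invariant, hence descends to a strict closed complex analytic subset $Z_0\subset X$ with $\Sing(\|\cdot\|)\subset Z_0$. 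A strict closed complex analytic subset of a complex manifold is pluripolar, so $\Sing(\|\cdot\|)$ is pluripolar. The only mild subtlety — which I expect to be the main (though minor) obstacle — is the bookkeeping of $\pi_1$-equivariance: making sure the analytic set furnished by \Cref{Singular_locus_pluripolar} can be taken $\pi_1$-invariant and descends, and handling the trivial edge case where $X$ itself is a point or $V$ has rank $\le 1$ (in which case $\Sing(\|\cdot\|)=\varnothing$ and there is nothing to prove). Alternatively one can phrase the whole argument directly on $X$ by working locally on simply connected opens $U\subset X$, where $V|_U$ is trivial and the monodromy disappears, applying \Cref{Singular_locus_pluripolar} to $u|_{\tilde U}\cong u_U\colon U\to\cN(V_x)$ to get that $\Sing(\|\cdot\|)\cap U$ is contained in a strict analytic subset of $U$, and then concluding that $\Sing(\|\cdot\|)$ is pluripolar since pluripolarity is a local property.
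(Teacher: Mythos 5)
Your proposal is correct and is exactly the deduction the paper intends: the paper gives no separate argument, treating the corollary as immediate from \Cref{Singular_locus_pluripolar} via the dictionary between a pluriharmonic norm and the associated pluriharmonic map to $\cN(V_x)\cong\Delta(\GL(V_x),K)$, with apartments corresponding to orthogonal flat frames, which is precisely your identification $\Sing(\|\cdot\|)=\Sing(u)$ (your local variant on simply connected opens is the cleanest way to phrase it and avoids the equivariant-descent bookkeeping entirely).
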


If $(V, \| \cdot \|)$ is pluriharmonic and $e_1,\ldots, e_k$ is an orthogonal trivializing set of sections, then the functions $\log \| e_i\|$ are pluriharmonic.

\begin{defn}\label{def-subharmonic-norm}
Let $V$ be a $K$-local system on a complex analytic space. A norm $\| \cdot \|$ on $V$ is called plurisubharmonic if for every local section $v$ of $V$ the function $\log \| v\|$ is plurisubharmonic. 
\end{defn}

\begin{prop}
A pluriharmonic norm is plurisubharmonic.    
\end{prop}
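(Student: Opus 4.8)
The plan is to reduce the statement to a local computation using an orthogonal trivializing set of sections on the regular locus, and then to extend across the pluripolar singular locus using the already-established extension theorem for plurisubharmonic functions. First I would recall that by \Cref{local system: pluripolar sing} the singular locus $\Sing(\|\cdot\|)$ of a pluriharmonic norm is pluripolar, and in particular it is a closed set with no interior; thus it suffices to prove that $\log\|v\|$ is plurisubharmonic on $X\setminus\Sing(\|\cdot\|)$ for every local section $v$ of $V$, and then invoke the Brelot-type extension theorem (as used, e.g., in the proof of \Cref{extension_pluriharmonic_map}, via \cite[Theorem 5.18]{Hayman-Kennedy}) to conclude that $\log\|v\|$ is plurisubharmonic on all of $X$, since it is locally bounded above near $\Sing(\|\cdot\|)$ (indeed it is continuous everywhere, $\|\cdot\|$ being a norm depending continuously on the point).

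The main step is the local claim on $\Reg(\|\cdot\|)$. Fix a point $x\in\Reg(\|\cdot\|)$ and a simply connected neighborhood $U$ on which there is an orthogonal trivializing set of sections $e_1,\dots,e_n$ of $V$; by definition of pluriharmonicity of the associated section of $\cN(V)\to X$ together with the observation recorded just before \Cref{def-subharmonic-norm}, the functions $\log\|e_i\|$ are pluriharmonic on $U$, hence in particular plurisubharmonic. Now write an arbitrary local section $v$ of $V|_U$ as $v=\sum_i a_i e_i$ with $a_i$ locally constant (the $e_i$ trivialize $V$, and $v$ is flat). By orthogonality of the basis $\{e_i\}$ at every point of $U$ we have the pointwise identity $\|v\| = \max_i |a_i|\,\|e_i\|$, so
\[
\log\|v\| = \max_i\bigl(\log|a_i| + \log\|e_i\|\bigr).
\]
Each summand $\log|a_i| + \log\|e_i\|$ is plurisubharmonic (a constant plus a pluriharmonic function), and the maximum of finitely many plurisubharmonic functions is plurisubharmonic. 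Hence $\log\|v\|$ is plurisubharmonic on $U$; since $x$ and $v$ were arbitrary this gives the claim on the whole regular locus. (Note that when some $a_i$ vanishes the corresponding term is $-\infty$, which causes no problem: $\log\|v\|$ is still the finite maximum over the nonzero $a_i$, and the maximum formula is unaffected.)

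I do not expect a serious obstacle here: the only point requiring a little care is that the extension of $\log\|v\|$ across the pluripolar set $\Sing(\|\cdot\|)$ is the \emph{correct} plurisubharmonic function, i.e. that one does not merely get a plurisubharmonic function agreeing with $\log\|v\|$ off a pluripolar set but genuinely $\log\|v\|$ itself. This follows because $\log\|v\|$ is continuous on $X$ (the norm varies continuously, being locally Lipschitz), so the Brelot extension and $\log\|v\|$ are two continuous functions agreeing on the dense open set $X\setminus\Sing(\|\cdot\|)$, hence equal everywhere. Alternatively one can argue purely locally: near any point one writes $v$ in terms of a (possibly non-orthogonal) flat frame, reduces to finitely many sections whose norms are handled on the regular locus, and applies the extension theorem to each. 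Either way the argument is short, and the genuinely substantive input — pluripolarity of the singular locus and the behavior of pluriharmonic norms on the regular locus — has already been isolated in \Cref{local system: pluripolar sing} and the discussion preceding \Cref{def-subharmonic-norm}.
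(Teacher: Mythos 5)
Your proof is correct and follows essentially the same route as the paper: a flat orthogonal basis on the regular locus giving $\log\|v\| = \max_i \log\|a_i e_i\|$ (hence plurisubharmonicity there), followed by extension across the pluripolar singular locus using the (Lipschitz) continuity of $\log\|v\|$. The extra care you take to identify the Brelot extension with $\log\|v\|$ itself is exactly the point the paper handles by invoking local Lipschitz continuity, so there is no gap.
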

\begin{proof}
Let $V$ be a $K$-local system on a complex analytic space $X$. Let $\| \cdot \|$ be a pluriharmonic norm on $V$. Let $v$ be a local section of $V$. Locally on the regular locus of $\| \cdot \|$, there exists a flat orthogonal basis $(e_i)$ of $V$. Writing $v = \sum_i a_i e_i$, one has $\log \|v\| = \max_i \log \|a_i e_i\|$. This shows that $\log \|v\|$ is plurisubharmonic on $\Reg(\| \cdot \|)$. 
Since $\Sing(\| \cdot \|) \subset X$ is a pluripolar subset and $\log \|v\|$ is locally Lipschitz continuous, it follows that $\log \|v\|$ is plurisubharmonic on $X$.
\end{proof}

Theorem \ref{extension_pluriharmonic_map} immediately implies the following extension result.
\begin{thm}\label{extension_pluriharmonic_norm}
Let $(V, \| \cdot \|)$ be a normed $K$-local system on a  complex analytic space $X$. If $(V, \| \cdot \|)$ is pluriharmonic in restriction to the complementary of a closed pluripolar subset of $X$, then $(V, \| \cdot \|)$ is pluriharmonic.    
\end{thm}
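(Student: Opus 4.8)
\textbf{Proof proposal for \Cref{extension_pluriharmonic_norm}.}

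The plan is to reduce the statement about norms on a $K$-local system to the metric-space extension result \Cref{extension_pluriharmonic_map} already proven for locally Lipschitz maps into NPC complete metric spaces. The point is that a norm on $V$ is, by definition, a locally Lipschitz continuous section of the locally constant sheaf of spaces $\cN(V)\to X$ whose fiber over $x$ is the NPC complete metric space $(\cN(V_x),d_2)$, and pluriharmonicity of the norm was defined to mean exactly that the induced map on every simply-connected open subset $U\subset X$ is pluriharmonic in the sense of \Cref{def-pluriharmonicity}. So the statement to prove is: if $Z\subset X$ is closed pluripolar and the section $s\colon X\to\cN(V)$ is locally Lipschitz continuous and pluriharmonic on $X\setminus Z$, then it is pluriharmonic on $X$.

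First I would reduce to a local statement: pluriharmonicity is local on $X$, so we may assume $X$ is a simply-connected complex manifold (or a complex analytic space; by composing with holomorphic maps $f\colon N\to X$ from Kähler manifolds as in \Cref{def-pluriharmonicity}, and since $f^{-1}(Z)$ is again closed pluripolar, it suffices to treat $X$ a complex manifold, indeed a ball). On a simply-connected $X$ the local system $V$ is trivial, so we may fix an identification $V\cong V_0\times X$ and the norm becomes an honest locally Lipschitz continuous map $u\colon X\to\cN(V_0)$, where $(\cN(V_0),d_2)$ is NPC and complete by the theorem of Gérardin/Goldman--Iwahori/Bruhat--Tits recalled in the text (equivalently the extended Bruhat--Tits building $\Delta(\GL(V_0),K)$, which is a proper NPC space). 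By hypothesis $u$ is pluriharmonic on the complement of the closed pluripolar set $Z$. Then \Cref{extension_pluriharmonic_map}, applied with $M=X$, the target $\Delta=\cN(V_0)$, and $Z$ the pluripolar subset, gives immediately that $u$ is pluriharmonic on all of $X$. Translating back through the trivialization, the section $s$ is pluriharmonic on $X$, which is the claim.

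The only genuinely substantive inputs are the NPC-completeness of the space of norms (already stated in the excerpt) and \Cref{extension_pluriharmonic_map} itself; the latter is the real engine and its proof (Brelot's extension theorem for subharmonic functions across polar sets, plus the maximum principle and the $W^{1,2}$-limit-of-harmonic-maps result \Cref{limit_of_harmonic_is_often_harmonic}) is already supplied. Thus I do not expect any serious obstacle: the argument is essentially a dictionary translation from ``normed $K$-local system'' language to ``locally Lipschitz map into an NPC space'' language. The one point to be careful about is that the definition of pluriharmonicity for norms quantifies over simply-connected open subsets and over holomorphic maps from Kähler manifolds, so one should check that these quantifiers match up correctly with the hypotheses of \Cref{extension_pluriharmonic_map} — but this is routine, using that preimages of closed pluripolar sets under holomorphic maps are closed pluripolar, and that restriction of a closed pluripolar set to an open subset is again closed pluripolar.
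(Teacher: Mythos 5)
Your proposal is correct and takes essentially the same route as the paper: the paper derives the statement directly as an immediate consequence of \Cref{extension_pluriharmonic_map}, via exactly the dictionary you describe (norms as locally Lipschitz sections of $\cN(V)\to X$, local triviality of $V$, NPC-completeness of the space of norms). In fact your write-up is more detailed than the paper's one-line deduction.
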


We will prove later a stronger extension result, cf. Theorem \ref{extension_finite_energy_pluriharmonic}.
Since the singular locus of a pluriharmonic norm is a closed pluripolar subset of $X$ thanks to Proposition \ref{Singular_locus_pluripolar}, we get the following alternative characterization of pluriharmonicity. 
\begin{cor}
A normed $K$-local system $(V, \| \cdot \|)$ is pluriharmonic if and only if there exists a closed pluripolar subset $S \subset X$, such that for every point $x \in X \setminus S$ there exists a local trivializing orthogonal set of sections $\{e_i\}$ of $V$ such that the functions $z \mapsto \log \|e_i(z)\|$ are pluriharmonic.
\end{cor}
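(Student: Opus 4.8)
## Proof proposal for the characterization of pluriharmonicity

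The plan is to prove both implications of the stated equivalence, the forward direction being essentially a restatement of what has already been assembled and the reverse direction requiring the extension theorem.

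For the forward direction, suppose $(V,\|\cdot\|)$ is pluriharmonic. Set $S = \Sing(\|\cdot\|)$, the singular locus of the norm. By \Cref{Singular_locus_pluripolar} (applied to the $\rho$-equivariant pluriharmonic map $\tilde X \to \cN(V_x) = \Delta(\GL(V_x),K)$, which is a pluriharmonic map to a locally compact Euclidean building), the singular locus of this map is contained in a strict complex analytic subset of $X$; hence $S$ is a closed pluripolar subset. At any point $x \in X \setminus S = \Reg(\|\cdot\|)$, by definition of the regular locus there is a neighborhood on which an orthogonal trivializing set of sections $\{e_i\}$ of $V$ exists. Since $V$ is a local system these sections may be taken flat, and since the norm is pluriharmonic and diagonalized by the $\{e_i\}$ on this neighborhood, each $\log\|e_i\|$ is pluriharmonic: indeed the composite map to the apartment $\bA_e \cong (\bR^N, l_2)$ is, componentwise, the collection of functions $-\log\|e_i\|$, and the composition of a pluriharmonic map with an affine-linear coordinate projection (which is convex-along-geodesics, hence its composition with a harmonic map is subharmonic, and the same for its negative) is pluriharmonic. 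This gives one direction.

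For the reverse direction, suppose there is a closed pluripolar $S \subset X$ such that near every $x \in X \setminus S$ there is a flat orthogonal trivializing set $\{e_i\}$ of $V$ with $\log\|e_i\|$ pluriharmonic. Then on $X \setminus S$ the norm is diagonalized by flat sections whose logarithmic norms are pluriharmonic, so for any Kähler manifold $(N,h)$ and holomorphic $f\colon N \to X\setminus S$, the composite $N \to \cN(V_x)$ lands locally in an apartment and has pluriharmonic coordinates, hence is harmonic; that is, $(V,\|\cdot\|)$ is pluriharmonic on $X \setminus S$. Now I would invoke \Cref{extension_pluriharmonic_norm}: a normed $K$-local system which is pluriharmonic off a closed pluripolar subset is pluriharmonic. (This in turn rests on \Cref{extension_pluriharmonic_map}, whose proof uses the Brelot extension theorem for subharmonic functions and the maximum principle.) One subtlety to address: pluriharmonicity of a \emph{norm} requires the section of $\cN(V)\to X$ to be locally Lipschitz continuous to begin with, and this is part of the definition of a normed local system, so it is available as a hypothesis and is preserved; the extension theorems are stated precisely for locally Lipschitz sections, so there is no gap.

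The main obstacle, such as it is, is bookkeeping rather than substance: one must be careful that the local orthogonal frames on $X\setminus S$ are genuinely \emph{flat} (so that the map to $\cN(V_x)$ factors through a fixed apartment on simply-connected pieces and one can literally read off pluriharmonicity coordinate-wise), and that the passage from "$\log\|e_i\|$ pluriharmonic for each $i$" to "the map to $\Delta(\GL(V_x),K)$ is pluriharmonic" correctly uses the isometric embedding $\iota_e\colon(\bR^N,l_2)\hookrightarrow(\cN(V_x),d_2)$ of the apartment together with the fact that harmonicity of a map into a product of flats is detected factor-by-factor. Once this local picture is in place, the global statement is immediate from \Cref{extension_pluriharmonic_norm}, and no further estimates are needed.
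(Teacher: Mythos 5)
Your proof is correct and follows essentially the same route the paper intends: the forward direction via the pluripolarity of the singular locus (\Cref{Singular_locus_pluripolar}, \Cref{local system: pluripolar sing}) together with the local flat orthogonal frame on the regular locus, and the reverse direction by noting the norm is pluriharmonic off $S$ (the local apartment picture) and invoking \Cref{extension_pluriharmonic_norm}. The bookkeeping points you raise (flatness of the frame, the isometric apartment embedding, local Lipschitz continuity as part of the definition of a norm) are exactly the ones implicit in the paper's treatment, so there is nothing to add.
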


\begin{prop}\label{pluriharmonic_norm_sum}
Let $(V_1, \| \cdot \|_1)$ and $(V_2, \| \cdot \|_2)$ be two normed $K$-local systems on a complex analytic space $X$. Let $(V, \| \cdot \|) = (V_1, \| \cdot \|_1) \oplus (V_2, \| \cdot \|_2) $. Then, $(V, \| \cdot \|)$ is pluriharmonic if and only if $(V_1, \| \cdot \|_1)$ and $(V_2, \| \cdot \|_2)$ are pluriharmonic.
\end{prop}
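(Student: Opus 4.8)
The plan is to reduce the pluriharmonicity of a direct sum to the pluriharmonicity of its summands, and conversely, using the fact that both conditions can be checked away from a closed pluripolar subset and are ultimately statements about the pluriharmonicity of certain real-valued functions $\log\|v\|$.

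\textbf{The direct sum is pluriharmonic if the summands are.} First I would observe that by \Cref{extension_pluriharmonic_norm} it suffices to check pluriharmonicity of $(V,\|\cdot\|)$ on the complement of a closed pluripolar subset. By \Cref{local system: pluripolar sing}, the singular loci $\Sing(\|\cdot\|_1)$ and $\Sing(\|\cdot\|_2)$ are pluripolar, hence so is their union $S$. On $X\setminus S$, near any point there are trivializing orthogonal sets of sections $\{e^{(1)}_i\}$ for $(V_1,\|\cdot\|_1)$ and $\{e^{(2)}_j\}$ for $(V_2,\|\cdot\|_2)$, with all of the functions $\log\|e^{(1)}_i\|_1$ and $\log\|e^{(2)}_j\|_2$ pluriharmonic. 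Since $(V,\|\cdot\|)=(V_1,\|\cdot\|_1)\oplus(V_2,\|\cdot\|_2)$ is an \emph{orthogonal} direct sum of norms, the concatenated family $\{e^{(1)}_i\}\cup\{e^{(2)}_j\}$ is a local trivializing orthogonal set of sections of $V$ whose norm functions are exactly the $\log\|e^{(1)}_i\|_1$ and $\log\|e^{(2)}_j\|_2$, all pluriharmonic. By the characterization of pluriharmonicity recalled just above (the corollary following \Cref{extension_pluriharmonic_norm}), this shows $(V,\|\cdot\|)$ is pluriharmonic on $X\setminus S$, hence on $X$.

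\textbf{The summands are pluriharmonic if the direct sum is.} For the converse, I would use that $V_1\subset V$ and $V_2\subset V$ are flat sub-$K$-local systems which are pointwise orthogonal for $\|\cdot\|$, so that $\|\cdot\|_1$ (resp. $\|\cdot\|_2$) is simply the restriction of $\|\cdot\|$ along $V_1\hookrightarrow V$ (resp. $V_2\hookrightarrow V$). One could argue either via \Cref{induced_norm_Lipschitz} together with a post-composition argument for pluriharmonic maps into NPC spaces, or more directly and cleanly as follows: again by \Cref{extension_pluriharmonic_norm} and \Cref{local system: pluripolar sing} it suffices to work on $X\setminus\Sing(\|\cdot\|)$, where near any point there is a trivializing orthogonal set of sections $\{e_i\}$ of $V$ with $\log\|e_i\|$ pluriharmonic. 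Since $V=V_1\oplus V_2$ is a flat splitting, each $e_i$ decomposes as $e_i=e_i'+e_i''$ with $e_i'$ a flat section of $V_1$ and $e_i''$ a flat section of $V_2$, and orthogonality of the decomposition gives $\|e_i\|=\max(\|e_i'\|_1,\|e_i''\|_2)$. The point is then to extract from $\{e_i\}$ an orthogonal trivializing set for $V_1$ with pluriharmonic norm functions; this is done by the same linear-algebra manipulation as in \Cref{switch} and \Cref{two_orthogonal_for_one}, replacing each $e_i$ whose norm is realized by its $V_1$-component with that component and noting that the projections of an orthogonal basis of $V$ adapted to the splitting give an orthogonal basis of $V_1$ (this is the non-archimedean analogue of \Cref{relative_spectrum_decomposition}), so that the resulting norm functions on $V_1$ are among the $\log\|e_i\|$, hence pluriharmonic. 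Symmetrically for $V_2$. Then the corollary following \Cref{extension_pluriharmonic_norm} concludes.

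\textbf{Main obstacle.} The only nontrivial point is the converse direction, and specifically the claim that the induced norm on the flat subbundle $V_1$ is again pluriharmonic: a priori restricting an orthogonal set of sections of $V$ need not give an orthogonal set for $V_1$, and one must produce a genuinely orthogonal trivializing family for $V_1$ whose $\log$-norm functions remain pluriharmonic rather than merely plurisubharmonic. I expect this to be handled by a local argument combining the orthogonality of the splitting $V=V_1\oplus V_2$ with the elementary facts about orthogonal bases and subspaces established earlier (\Cref{switch}, \Cref{two_orthogonal_for_one}, \Cref{relative_spectrum_decomposition}); alternatively one can avoid it entirely by invoking the Lipschitz-continuity of $\cN(V)\to\cN(V_1)\times\cN(V/W)$ from \Cref{induced_norm_Lipschitz} and the stability of pluriharmonic maps into NPC spaces under post-composition with the convex projection, though one then needs to know the relevant projection behaves well, so the direct linear-algebra route is likely cleaner to write out.
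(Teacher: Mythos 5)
Your forward direction is fine: concatenating local flat orthogonal families for $(V_1,\|\cdot\|_1)$ and $(V_2,\|\cdot\|_2)$ does give a flat orthogonal family for the max-norm on $V$, and the criterion following \Cref{extension_pluriharmonic_norm} applies. The gap is in the converse, exactly at the point you flag. The step "replace each $e_i$ whose norm is realized by its $V_1$-component with that component, and note that the projections of an orthogonal basis adapted to the splitting give an orthogonal basis of $V_1$" is not justified by \Cref{switch}, \Cref{two_orthogonal_for_one} or \Cref{relative_spectrum_decomposition}, and as stated it is not a proof. Two concrete problems: (i) "whose norm is realized by its $V_1$-component" is a pointwise condition; to get sections of $V_1$ with \emph{pluriharmonic} (not just plurisubharmonic) log-norm you must show the realizing component is the same on a whole neighborhood, which requires first observing that $\log\|e_i'\|_1$ and $\log\|e_i''\|_2$ are psh (they are log-norms of flat sections of $V$ under the pluriharmonic norm $\|\cdot\|$) and then invoking the harmonic-max principle \Cref{lem:basic} (or \Cref{switch_harmonic}); you cite neither. (ii) Even pointwise, norm-realizing components of an orthogonal basis need not yield a basis: for $V=Ke_1\oplus Ke_2$ with the standard max norm and residue characteristic $\neq 2$, $\{e_1+e_2,\,e_1-e_2\}$ is orthogonal and both components of each vector realize the norm, yet the $V_1$-components are $\{e_1,e_1\}$. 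Producing a flat orthogonal basis of $V_1$ with pluriharmonic log-norms from one of $V$ is genuinely the content of the paper's \Cref{induced_harmonic_norm_local_with_basis}, whose inductive proof uses \Cref{lem:basic}, \Cref{switch_harmonic} and a careful choice of complements — and, note, that lemma itself invokes \Cref{pluriharmonic_norm_sum}, so you cannot defer to it without redoing the induction self-containedly (it can be done, but it is several steps of work you have not supplied).

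The paper avoids all of this with a two-line global argument: the split norms form the image of a canonical embedding $\cN(V_{1,x})\times\cN(V_{2,x})\hookrightarrow\cN(V_x)$ as a \emph{closed convex} subset, and a map from a Kähler manifold into a closed convex subset $C$ of an NPC space is (pluri)harmonic if and only if it is (pluri)harmonic into the ambient space — one direction because $C$-valued competitors are ambient competitors, the other because the nearest-point projection onto $C$ is $1$-Lipschitz and preserves the trace, so it does not increase energy. Since harmonicity into a product of NPC spaces splits factorwise, both implications of the proposition follow at once. This is essentially the "alternative route" you mention at the end via \Cref{induced_norm_Lipschitz} and post-composition with the convex projection but decline to pursue; contrary to your assessment, it is the cleaner route (the projection "behaves well" precisely because $C$ is convex in an NPC space), whereas the local linear-algebra route you chose is where the real difficulty of the converse lies.
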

\begin{proof}
Let $W_1$, $W_2$ and $W = W_1 \oplus W_2$ be $K$-vector spaces of finite dimension. Then there is  a canonical inclusion $ \cN(W_1) \times \cN(W_2) \to \cN(W)$ as the subset of norms on $W$ for which the decomposition $W = W_1 \oplus W_2$ is orthogonal.This realizes $ \cN(W_1) \times \cN(W_2)$ as a closed convex subset of $\cN(W)$. Therefore, the proposition is a consequence of the following general fact: if $M$ is a Riemannian manifold and $C \subset \cN$ is a closed convex subset of the complete metric space $\cN$ equipped with the induced metric, then a map $M \to C$ is pluriharmonic if and only if the induced map $M \to \cN$ is pluriharmonic.
\end{proof}

\subsection{Asymptotic behaviour of pluriharmonic norms}

\begin{prop}\label{flat_norm_quasiunipotent}
Let $V$ be a $K$-local system on $(\bD^\ast)^n$ with quasiunipotent local monodromies around zero. Then $V$ admits a flat norm. When the local monodromies are unipotent, $V$ admits a flat lattice, i.e. there exists a sub-$\cO_K$-local system $L \subset V$ such that $L \otimes_{\cO_K} K = V$. 
\end{prop}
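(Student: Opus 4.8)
<br>

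The plan is to reduce to the unipotent case and then exploit the standard structure of unipotent local systems on a polydisk. First I would observe that the statement about flat lattices in the unipotent case essentially reduces to the classical linear algebra fact that a finite set of commuting unipotent matrices over a field generated by the local monodromy operators admits a common invariant lattice over an appropriate local ring. Concretely: the local monodromy operators $T_1,\dots,T_n$ around the $n$ punctures are commuting unipotent automorphisms of $V_{x_0}$, so the $T_i - \mathrm{Id}$ are commuting nilpotent operators, and the subring of $\mathrm{End}(V_{x_0})$ they generate together with $\cO_K$ is a finitely generated $\cO_K$-algebra $R$; the $R$-module $V_{x_0}$ contains a finitely generated $R$-submodule which spans $V_{x_0}$ over $K$, and its $\cO_K$-saturation (or the $\cO_K$-lattice it generates) is a $\pi_1$-stable lattice $L \subset V_{x_0}$. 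Transporting $L$ by the flat structure gives a flat sub-$\cO_K$-local system with $L \otimes_{\cO_K} K = V$.

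Next I would handle the general quasiunipotent case by passing to a ramified cover. Since the local monodromy eigenvalues are roots of unity, there is a positive integer $N$ such that $T_i^N$ is unipotent for every $i$; the finite covering $(\bD^\ast)^n \to (\bD^\ast)^n$ given by $z_i \mapsto z_i^N$ (or an appropriate combination) pulls $V$ back to a local system $V'$ with unipotent local monodromy, which by the previous paragraph admits a flat lattice $L' \subset V'$. The deck group $G = (\bZ/N)^n$ acts on $V'$ and on the space of flat lattices; averaging $L'$ over its $G$-translates — that is, replacing $L'$ by the $\cO_K$-lattice generated by $\{g \cdot L' : g \in G\}$, which is still finitely generated since $G$ is finite and still flat — produces a $G$-invariant flat lattice, which descends to a flat norm on $V$ on the original polydisk. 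Equivalently, one may simply note that a flat lattice on a finite cover descends to a flat norm by taking, at each point, the norm whose unit ball is the convex hull (in the non-archimedean sense, i.e. $\cO_K$-span) of the images of the lattice under the covering transformations, and this is manifestly flat and $\pi_1$-equivariant.

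The only genuinely delicate point is ensuring that the averaging in the quasiunipotent case yields a \emph{norm} (locally Lipschitz, finite, nondegenerate) rather than just a lattice — but since $G$ is finite and each $g\cdot L'$ is a flat lattice, the $\cO_K$-span of finitely many lattices spanning $V_{x_0}$ is again a lattice spanning $V_{x_0}$, and flatness is preserved because each $g$ acts by a flat automorphism; nondegeneracy and the locally Lipschitz condition are then automatic from the description as a flat section of $\cN(V) \to (\bD^\ast)^n$. I do not expect any serious obstacle here: the whole statement is local and the hard analytic input (existence of pluriharmonic norms) is not needed — only the elementary existence of flat lattices. The main thing to be careful about is bookkeeping the covering map and the descent, which is routine.
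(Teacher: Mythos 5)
Your proof is correct, and at both steps it takes a genuinely different route from the paper's. In the unipotent case the paper finds a complete flag preserved by the commuting unipotent monodromies and then rescales an adapted basis by powers of the uniformizer to produce a stable lattice; you instead use that the subring $R=\cO_K[T_1^{\pm 1},\dots,T_n^{\pm 1}]=\cO_K[N_1,\dots,N_n]\subset\End(V_{x_0})$ is a \emph{finite} $\cO_K$-module (the monomials in the commuting nilpotents $N_i=T_i-\mathrm{Id}$ of degree less than $\rk V$ in each variable span it, and $T_i^{-1}$ is an integral polynomial in $N_i$), so the $R$-span of any $K$-basis of the fibre is a monodromy-stable lattice. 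Note that ``finitely generated $\cO_K$-algebra'' is not the operative property -- module-finiteness over $\cO_K$ is what makes a finitely generated $R$-submodule an $\cO_K$-lattice -- but nilpotency gives exactly that, so this is a one-line repair rather than a gap. In the quasiunipotent case the paper works in the building: the norms fixed by the unipotent powers $T_i^{r_i}$ form a nonempty closed convex (hence NPC) subset of $\cN(K^r)$ on which the $T_i$ act through a finite group, and the Bruhat--Tits fixed point theorem produces a fixed norm; you instead pass to the finite cover $z_i\mapsto z_i^N$, obtain a flat lattice $L'$ there, and average over the deck group: since $T_i^N L'=L'$ (stability under the covering monodromy and its inverse), the finite sum $\sum_{0\le a_i<N}T_1^{a_1}\cdots T_n^{a_n}L'$ is again a lattice and is stable under every $T_i$, hence descends. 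Your argument is more elementary (no NPC geometry is needed) and in fact yields a slightly stronger conclusion than the statement: a quasiunipotent local system on $(\bD^\ast)^n$ admits a flat \emph{lattice}, not merely a flat norm, whereas the fixed-point argument only produces a fixed norm, which need not come from a lattice. What the paper's approach buys in exchange is robustness: the convexity/fixed-point mechanism applies verbatim to isometric actions on an arbitrary Euclidean building, while lattice averaging is specific to $\bGL_r$ and the explicit model of $\cN(K^r)$.
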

\begin{proof}
Equivalently, one needs to prove that every finite set of commuting unipotent elements of $\bGL_r(K)$ stabilize a lattice of $K^r$, and that every finite set of commuting quasiunipotent elements of $\bGL_r(K)$ have a common fixed point in $\cN(K^r)$.

Let $U_1, \ldots, U_n$ be pairwise commuting unipotent elements of $\bGL_r(K)$. One easily proves that they stabilize a common complete flag of $K^r$. By multiplying every element of a basis adapted to the flag by an adequate power of the uniformizer, one obtains a lattice in $K^r$ stabilized by the $U_i$'s. 

Let $T_1, \ldots, T_n$ be pairwise commuting quasiunipotent elements of $\bGL_r(K)$. For every $i$, let $r_i$ be a positive integer such that $U_i := T_i^{r_i}$ is unipotent. Let $F \subset \cN(K^r)$ be the subset of norms that are fixed by the $U_i$'s. By the preceding paragraph, $F$ is nonempty. It is a closed convex subset of $\cN(K^r)$, hence it is a NPC space. Since the $T_i$'s commute pairwise, $F$ is stabilized by the action of $\bZ^n$ via the $T_i$'s. It follows from the definition of $F$ that this action factorizes through $(\bZ / \bZ^{r_i})^n$. Since $F$ is a non empty NPC space, this action has a fixed point by the Bruhat-Tits fixed point theorem.
\end{proof}

\begin{defn}
A normed $K$-local system $(V,\|\cdot \|)$ on $(\bD^\ast)^n$ with quasiunipotent local monodromies is said to be bounded in the neighborhood of $0$ if there exists a flat norm $\| \cdot \|^\prime$ on $V$ such that the function $z \mapsto d_{\cN(V_z)}(\|\cdot \|_z, \|\cdot \|_z^\prime)$ is bounded in the neighborhood of $0$.
\end{defn}
Note that by Proposition \ref{flat_norm_quasiunipotent}, every $K$-local system on $(\bD^\ast)^n$ with quasiunipotent monodromies admits a flat norm. Moreover, the distance between two flat norms is constant, hence the definition of boundedness does not depend on the choice of the flat norm $\| \cdot \|^\prime$.

\begin{defn}
Let $(\bar X,D)$ be a log smooth complex manifold of dimension $n$ and set $X=\bar X\setminus D$. A normed $K$-local system $(V,\|\cdot \|)$ on $X$ with quasiunipotent local monodromies around $D$ is said to be locally bounded in the neighborhood of $D$ (or just locally bounded at infinity) if for every admissible polydisk $\bD^n \subset \bar X$ the restriction of $(V,\|\cdot \|)$ to $\bD^n \setminus D$ is bounded in the neighborhood of $0$.
\end{defn}

\begin{prop}\label{finite_energy_implies_Lipschitz}
Let $(V,\|\cdot \|)$ be a pluriharmonic $K$-local system on $(\bD^\ast)^n$ with quasiunipotent local monodromies. Assume that there exists $U \subset \bD^n$ a neighborhood of $0$ such that the restriction of $(V,\|\cdot \|)$ to $U \setminus \{0\}$ has finite energy. Then $(V,\|\cdot \|)$ is Lipschitz continuous in restriction to a neighborhood of $0$ (and not only locally Lipschitz continuous).
\end{prop}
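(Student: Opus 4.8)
The plan is to use that a pluriharmonic norm is harmonic for \emph{every} Kähler metric on the base (\Cref{def-pluriharmonicity}), apply this with the flat metric on $(\bD^\ast)^n$, and then turn the a priori local Lipschitz regularity into a Lipschitz bound which is uniform as one approaches $0$, by feeding the interior gradient estimate of Zhang--Zhong--Zhu (\Cref{energy_control_Lipschitz}) with the finite energy hypothesis. Write $u$ for the map $\widetilde{(\bD^\ast)^n}\to\cN(V_x)$ attached to $(V,\|\cdot\|)$, where $\cN(V_x)$ is the complete NPC space of norms on the stalk $V_x$ (the extended Bruhat--Tits building of $\bGL(V_x)$). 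Since energy densities are independent of the trivialisation, the flat- and Poincaré-metric energy densities $e^u_{g_0}$, $e^u_P$ descend to well-defined functions on $(\bD^\ast)^n$.

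\textbf{Localisation and the gradient estimate.} For $p\in(\bD^\ast)^n$ with $|p|$ small, set $\rho(p):=\tfrac12\operatorname{dist}_{\mathrm{eucl}}(p,D)$ (where $D=\{z_1\cdots z_n=0\}$). The Euclidean ball $B:=B_{\rho(p)/4}(p)$ is simply connected and relatively compact in $(\bD^\ast)^n$, so $V|_B$ is trivial and $u|_B\colon B\to\cN(V_x)$ is defined; by pluriharmonicity, applied with the Kähler manifold $N=B$ carrying the flat metric $g_0$ and $f=\mathrm{id}$, the map $u|_B$ is harmonic for $g_0$. As $g_0$ is flat we may invoke \Cref{energy_control_Lipschitz} with $\Omega=B$, base point $p$, radius $R=\rho(p)/8$ and Ricci bound $K=0$: the Lipschitz constant of $u$ on $B_{\rho(p)/8}(p)$ is at most $C(2n,0)\int_{B_{\rho(p)/8}(p)}e^u_{g_0}\,\dvol_{g_0}$, where $C(2n,0)$ does not depend on $p$. (This quantity is finite because $u$ is locally Lipschitz, hence $e^u_{g_0}$ is bounded on the relatively compact $B$; the issue is only its uniform bound as $p\to0$.) Thus the Proposition reduces to a uniform bound on $\int_{B_{\rho(p)/8}(p)}e^u_{g_0}\,\dvol_{g_0}$.

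\textbf{Comparing the flat and Poincaré metrics.} Here the finite energy enters. With $\omega_P=\sum_i \tfrac{i\,dz_i\wedge d\bar z_i}{|z_i|^2(\log|z_i|^2)^2}$ and $c_i(z):=|z_i|^2(\log|z_i|^2)^2$, one has $e^u_P=\sum_i c_i\,|u_*\partial_{z_i}|^2$ up to directional energies, $e^u_{g_0}=\sum_i|u_*\partial_{z_i}|^2$, and $\dvol_P=\prod_i c_i^{-1}\dvol_{g_0}$. On $B$ one has $|z_i|\in[\tfrac34|p_i|,\tfrac54|p_i|]$, so $c_i(z)\asymp c_i(p)$ uniformly, and moreover $c_i(z)\le \max_{t\in(0,1)}t(\log t)^2=4e^{-2}<1$ since each coordinate on a neighbourhood of $0$ in $\bD^n$ meeting $(\bD^\ast)^n$ has modulus $<1$. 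Pointwise on $B$ this gives $e^u_P\,\dvol_P\gtrsim \tfrac{\min_i c_i(p)}{\prod_i c_i(p)}\,e^u_{g_0}\,\dvol_{g_0}$, hence
\[
\int_{B_{\rho(p)/8}(p)}e^u_{g_0}\,\dvol_{g_0}\ \lesssim\ \frac{\prod_i c_i(p)}{\min_i c_i(p)}\int_{B}e^u_P\,\dvol_P\ \le\ (4e^{-2})^{\,n-1}\!\!\int_{U\cap(\bD^\ast)^n}\!\!e^u_P\,\dvol_P\ <\ \infty,
\]
the last integral being finite by hypothesis and independent of $p$. (Since in addition $\dvol_P\big(B_{\rho(p)/8}(p)\big)\to0$ as $p\to0$, the right-hand side in fact tends to $0$, so the local Lipschitz constant tends to $0$, giving a continuous extension across $0$; but the uniform bound alone already suffices.)

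\textbf{Conclusion and main difficulty.} Combining the last two steps, the local Lipschitz constant of $u$ on $B_{\rho(p)/8}(p)$ is bounded by a constant $L$ independent of $p$ in a neighbourhood $W=\{|z|<\delta\}$ of $0$ with $W\subset U$. The balls $B_{\rho(p)/8}(p)$ with $p\in W'\cap(\bD^\ast)^n$, $W'=\{|z|<\delta/4\}$, cover $W'\cap(\bD^\ast)^n$, and near $0$ the intrinsic length metric of $(\bD^\ast)^n$ is comparable to the Euclidean metric; chaining the estimates along paths then shows $u$ is Lipschitz on $W'\cap(\bD^\ast)^n$ for the Euclidean metric, hence extends to a Lipschitz map on $W'$, which is the assertion. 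The argument is essentially mechanical once \Cref{energy_control_Lipschitz} is available; the only point requiring care is the passage between the Poincaré and flat metrics — namely checking that the distortion factor $\prod_i c_i(p)/\min_i c_i(p)$ between the two energy measures stays bounded (which holds precisely because $t(\log t)^2$ is bounded on $(0,1)$) and that the balls on which the estimate is applied remain inside both $(\bD^\ast)^n$ and the finite-energy neighbourhood.
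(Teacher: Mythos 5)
There is a genuine gap, and it sits exactly at the step you yourself flag as the only delicate one. You apply \Cref{energy_control_Lipschitz} on Euclidean balls $B_{\rho(p)/8}(p)$ with radius shrinking to $0$ and claim the constant $C(2n,0)$ is independent of $p$. The estimate cannot hold in that scale-free form: it is an interior gradient estimate of mean-value type, and the correctly normalized statement bounds $\mathrm{Lip}(u)$ on $B_R$ by (a constant times) the \emph{average} energy on $B_{2R}$, equivalently carries a factor that blows up like a negative power of $R$ as $R\to 0$ (a dimension count, or testing against a map of linear growth on a tiny ball, shows the literal inequality ``$\mathrm{Lip}\le C(n,\sqrt K R)\int_{B_R}e^u\dvol$'' with $C$ independent of $R$ is false for the flat metric). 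Consequently, finiteness of $\int e^u_P\,\dvol_P$ — which, as you correctly check, dominates the Euclidean energy near $0$ — only tells you that the energy content of your shrinking balls is $o(1)$, not $O(\rho^{2n})$, and your claimed uniform Euclidean gradient bound does not follow. Indeed a uniform \emph{Euclidean} Lipschitz bound is much stronger than the Proposition: the intended (and later used) meaning is Lipschitz with respect to the Poincaré-type metric uniformly near $0$ — this is what yields $d(\|\cdot\|_z,\|\cdot\|'_z)\le C\log(-\log|z|)$ in \Cref{finite_energy_implies_bounded} and the boundedness of the forms $\sigma_k$ \emph{with respect to the Poincaré metric} in \Cref{extended_characteristic_polynomial}. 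Your final step ``Lipschitz for the Euclidean metric, hence extends to a Lipschitz map on $W'$'' also ignores the monodromy: $u$ is only an equivariant map on the cover (the norm is a section of $\cN(V)$, a bundle which does not extend across the divisor when the monodromy is nontrivial), and extension across the puncture is established only later, after a finite cover, via \Cref{finite_energy_implies_bounded} and \Cref{extension_finite_energy_pluriharmonic} — i.e. using this very Proposition, so invoking it here would be circular.

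The paper's proof avoids all of this by never leaving the Poincaré geometry: it applies \Cref{energy_control_Lipschitz} with the Poincaré metric itself (complete, Ricci bounded below by a uniform $-K$), at the \emph{fixed} radius $R=1$, on the exhausting domains $\Omega_\epsilon=\{\epsilon<\sum_i|z_i|^2<r\}$; unit Poincaré balls around points near the divisor lie in $\Omega_\epsilon$ for small $\epsilon$, their energy is bounded by the finite total energy on $U\setminus\{0\}$, and the constant is uniform, giving a Lipschitz bound for the Poincaré metric on a full punctured neighborhood of $0$. If you want to salvage your route, you must either use the scaling-correct form of the gradient estimate and accept the $\rho^{-n}$ degeneration (which kills the argument), or switch the background metric to the Poincaré one at a fixed scale — which is precisely the paper's proof.
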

\begin{proof}
Fixing $r>0$ such that  $\{ (z_1, \ldots, z_n) \in \bD^n \, | \, \sum_i |z_i|^2 < r \} \subset U$, the proposition follows by applying \Cref{energy_control_Lipschitz} with $R = 1$ and $\Omega_\epsilon = \{ (z_1, \ldots, z_n) \in (\bD^\ast)^n \, | \, \epsilon < \sum_i |z_i|^2 < r \}$ (with the metric induced by the Poincaré metric on $(\bD^\ast)^n$) for every $0 < \epsilon <r$.
\end{proof}

\begin{prop}\label{finite_energy_implies_bounded}
Let $(V,\|\cdot \|)$ be a pluriharmonic $K$-local system on $(\bD^\ast)^n$ with quasiunipotent local monodromies around zero. If the restriction of $(V,\|\cdot \|)$ to a neighborhood of $0$ has finite energy, then $(V,\|\cdot \|)$ is bounded in the neighborhood of $0$.
\end{prop}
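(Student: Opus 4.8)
The flat norm $\|\cdot\|'$ supplied by \Cref{flat_norm_quasiunipotent} has monodromy‑invariant value $p_0\in\cN(V_x)$; since $p_0$ is fixed by all the (commuting, quasiunipotent) local monodromies, the function $\rho(z):=d_{\cN(V_z)}(\|\cdot\|_z,\|\cdot\|'_z)$ is a well‑defined continuous nonnegative function on $(\bD^\ast)^n$, and the claim is exactly that $\rho$ is bounded near $0$. The first point is that $\rho$ is plurisubharmonic: for a holomorphic disk $f\colon\bD\to(\bD^\ast)^n$, the pullback of $\|\cdot\|$ corresponds to a harmonic map $\bD\to\cN(V_x)$ (pluriharmonicity of $\|\cdot\|$) and the pullback of $\|\cdot\|'$ to the constant map $p_0$ (a flat norm is pluriharmonic), so $f^\ast\rho$ is the distance between a harmonic map and a constant, hence subharmonic by \Cref{distance_subharmonic}. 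In particular $\rho$ restricts to a subharmonic function on every complex line in $(\bD^\ast)^n$.

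The core is the one‑variable statement: \emph{a continuous nonnegative subharmonic function $\rho$ on $\bD^\ast$ with $\int_{\bD^\ast}|\nabla\rho|^2<\infty$ extends to a subharmonic function on $\bD$; consequently $\sup_{0<|q|\le\delta}\rho\le\max_{|q|=\delta}\rho$ for $\delta<1$}. I would prove this by controlling the circular mean $\bar\rho(r)$. Writing $t=\log r$, Cauchy--Schwarz on the circles $|q|=r$ gives $\int_{\bD^\ast}|\nabla\rho|^2\ge 2\pi\int_{-\infty}^{0}\dot{\bar\rho}(t)^2\,dt$, so $\dot{\bar\rho}\in L^2((-\infty,0])$; since $\bar\rho$ is convex in $t$, the monotone limit $\ell=\lim_{t\to-\infty}\dot{\bar\rho}(t)$ must be $\ge 0$ (otherwise $\dot{\bar\rho}^2$ is bounded below by a positive constant near $-\infty$), hence $\bar\rho$ is nondecreasing and, being $\ge 0$, bounded near $0$. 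This gives $\rho\in L^1_{\mathrm{loc}}(\bD)$ with finite Riesz mass $\mu$ near $0$; expressing $\rho$ near $0$ as $\tfrac1{2\pi}\int\log|\cdot-w|\,d\mu(w)$ plus a harmonic function on $\bD^\ast$ and noting that a residual term $a\log(1/|q|)$ would force $a\le 0$ (from $\rho\ge0$) and $a=\ell\ge 0$ (from the mean estimate), one gets $a=0$, so $\rho$ extends as a Riesz potential plus a harmonic function, i.e.\ subharmonically; the maximum principle yields the displayed inequality. This extension lemma, and in particular extracting $\ell\ge 0$ from finite energy and killing the logarithmic singularity using $\rho\ge 0$, is the technical heart; everything else is formal.

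For general $n$ I would reduce to the lemma by slicing. Endow $(\bD^\ast)^n$ with the product Poincaré metric, which is of Poincaré type, so by hypothesis the energy of the harmonic map $u$ associated to $\|\cdot\|$ is finite. Splitting the energy integral over the $n$ factors and applying Fubini, for each coordinate $i$ and almost every choice of the remaining coordinates the restriction of $u$ to the corresponding $z_i$‑slice has finite energy; by conformal invariance of the two‑dimensional Dirichlet integral and $|\nabla\rho|\le|du|$, the restriction of $\rho$ to that slice has finite Euclidean Dirichlet energy on $\bD^\ast$ and is subharmonic. The lemma then gives $\rho(z_i,z')\le\max_{|w_i|=\delta}\rho(w_i,z')$ for almost every $z'$, hence for \emph{all} $z'$ by continuity of $\rho$. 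Iterating this inequality over $i=1,\dots,n$ bounds $\rho$ at any point of $\{\,|z_i|<\delta\ \forall i\,\}$ by $\sup\{\rho(w):|w_1|=\dots=|w_n|=\delta\}$, the maximum of the continuous function $\rho$ over the compact distinguished torus $\{|w_i|=\delta\}\subset(\bD^\ast)^n$, which is finite. Since $\{\,|z_i|<\delta\ \forall i\,\}\cap(\bD^\ast)^n$ is a neighborhood of $0$, this proves the proposition. (One may instead first invoke \Cref{finite_energy_implies_Lipschitz} to bound the growth of $\rho$ near $D$, which streamlines the integrability bookkeeping in the one‑variable lemma, but it is not strictly needed.)
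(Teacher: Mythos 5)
Your proof is correct, but the way you obtain boundedness of $\rho(z)=d_{\cN(V_z)}(\|\cdot\|_z,\|\cdot\|'_z)$ is genuinely different from the paper's. Both arguments start the same way: take the flat norm of \Cref{flat_norm_quasiunipotent} and note via \Cref{distance_subharmonic} that $\rho$ is plurisubharmonic. The paper then simply invokes \Cref{finite_energy_implies_Lipschitz} (which rests on the Zhang--Zhong--Zhu interior Lipschitz estimate, \Cref{energy_control_Lipschitz}): finite energy makes the pluriharmonic section Lipschitz for the Poincaré metric near $0$, so $\rho(z)\le C\log(-\log|z|)$, and a psh function on $(\bD^\ast)^n$ with such sub-logarithmic growth extends as a psh function across the origin and is therefore bounded there. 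You instead feed the finite-energy hypothesis directly into elementary potential theory: Fubini plus conformal invariance gives finite Dirichlet energy of $\rho$ on almost every coordinate slice, and your one-variable lemma (convexity of circular means in $\log r$, $L^2$-integrability of the radial derivative forcing the asymptotic slope $\ell\ge 0$, hence nondecreasing and bounded means, hence a subharmonic extension and a maximum-principle bound by the boundary circle), iterated coordinate-by-coordinate after a continuity upgrade, bounds $\rho$ by its maximum on a compact distinguished torus. The paper's route buys brevity, outsourcing all analysis to \Cref{energy_control_Lipschitz} and the classical removable-singularity theorem for psh functions of slow growth; your route buys self-containedness (nothing about harmonic-map regularity is needed beyond local Lipschitz continuity of $\rho$) and a quantitative, maximum-principle form of the bound -- and you correctly flag the paper's shortcut as an optional streamlining.

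Two small repairs, neither a real gap. In your Riesz step the sign bookkeeping is swapped: nonnegativity of $\rho$ (Bôcher) forces the coefficient of $\log(1/|q|)$ to satisfy $a\ge 0$, while the bounded circular means force $a\le 0$, not the other way around; the conclusion $a=0$ stands. In fact you can drop the Riesz decomposition altogether: once the circular means are bounded, $\rho\ge 0$ and the sub-mean-value inequality over the disks $B_{|z|/2}(z)$ already give a pointwise bound $\rho(z)\le C\max_{|q|=\delta}\rho$ on $\{|z|\le 2\delta/3\}$, which (with the constant compounding to $C^{\,n}$) is all your iteration needs.
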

\begin{proof}
Thanks to Proposition \ref{flat_norm_quasiunipotent}, there exists a flat norm $\| \cdot \|^\prime$ on $V$. By Proposition \ref{distance_subharmonic} the function $\delta\colon z \mapsto d_{\cN(V_z)}(\| \cdot \|_z , \| \cdot \|^\prime_z)$ is plurisubharmonic on $(\bD^\ast)^n$. Since both $\| \cdot \|$ and $ \| \cdot \|^\prime$ have finite energy in a neighborhood of $0$, it follows from Proposition \ref{finite_energy_implies_Lipschitz} that the function $\delta$ is bounded in the neighborhood of $0$ by the function $z \mapsto C \log \left(-\log |z| \right)$ for some $C > 0$. In particular, $\delta$ extends as a plurisubharmonic function on a neighborhood of $0$ in $\bD^n$ and therefore it is bounded in a neighborhood of $0$.    
\end{proof}

\subsection{Extension of pluriharmonic norms}

\begin{thm}\label{extension_finite_energy_pluriharmonic}
Let $D$ be a normal crossing divisor in a complex manifold $X$ of dimension $n$. Let $V$ be a $K$-local system on $X$. Let $\|\cdot \|$ be a pluriharmonic norm on the restriction of $V$ to $X \setminus D$.
Assume that $\|\cdot \|$ has finite energy with respect to a metric of Poincaré type on $X \setminus D$. Then $\|\cdot \|$ is the restriction of a unique pluriharmonic norm on $V$.
\end{thm}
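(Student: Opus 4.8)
The plan is to reduce to a local model on a polydisk and deduce the statement from the boundedness estimate \Cref{finite_energy_implies_bounded} together with the removable-singularity scheme already used to prove \Cref{extension_pluriharmonic_map}. Uniqueness is immediate: two pluriharmonic norms on $V$ that restrict to the same norm on $X\setminus D$ agree there, and since norms on $V$ are by definition locally Lipschitz continuous — in particular continuous — sections of $\cN(V)\to X$ while $X\setminus D$ is dense, they coincide on all of $X$. For existence, the question is local and, by uniqueness, local extensions glue automatically; so we may assume $X=\bD^n$ is an admissible polydisk on which $D$ is a union of coordinate hyperplanes. Since $\bD^n$ is simply connected, $V$ is a constant local system, and the given norm becomes a pluriharmonic map $f\colon \bD^n\setminus D\to\cN(V_x)$ of finite energy for the Poincaré-type metric, with target a proper NPC space (a space of non-archimedean norms, cf. the building $\Delta(\GL(V_x),K)$). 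It now suffices to extend $f$ to a locally Lipschitz continuous map $\bD^n\to\cN(V_x)$: such an extension is a norm on $V$ in the sense of the paper, it is pluriharmonic on $\bD^n\setminus D$ by hypothesis, and $D$ is pluripolar, so \Cref{extension_pluriharmonic_norm} upgrades it to a pluriharmonic norm on $\bD^n$.

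The first real step is to establish that $f$ is bounded near $D$. Because $V$ is constant on $\bD^n$, its restriction to $\bD^n\setminus D=(\bD^*)^k\times\bD^{n-k}$ has trivial — in particular unipotent — local monodromy around each component of $D$. The proof of \Cref{finite_energy_implies_bounded} applies verbatim to $(\bD^*)^k\times\bD^{n-k}$, and a flat norm for a local system of trivial monodromy is just a constant norm; hence there is a fixed point $p\in\cN(V_x)$ for which $z\mapsto d_{\cN(V_x)}(f(z),p)$ is bounded in a neighborhood of $D$. Combined with continuity of $f$ away from $D$, this shows that $f$ has bounded image near $D$.

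Next I would produce the continuous extension across $D$ as in the proof of \Cref{extension_pluriharmonic_map}. Fix $z_0\in D$ and a small Euclidean ball $B\ni z_0$ with $\partial B\cap D=\emptyset$, so that $f$ is locally Lipschitz near $\partial B$; using a Lipschitz map $\bar B\to\cN(V_x)$ that agrees with $f$ near $\partial B$ as a competitor in $W^{1,2}(B,\cN(V_x))$, \Cref{Dirichlet-NPC} yields a harmonic map $h\colon B\to\cN(V_x)$, locally Lipschitz continuous in $B$, with boundary trace $f|_{\partial B}$; one may take $h$ with image in the closed convex hull of $f(\partial B)$, so $h$ is bounded. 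By \Cref{distance_subharmonic} the function $d_{\cN(V_x)}(f,h)$ is subharmonic on $B\setminus D$, and it is bounded by the previous paragraph; since $D$ is polar, Brelot's extension theorem extends it to a subharmonic function $u$ on $B$. As $u$ vanishes near $\partial B$ (where $f$ and $h$ have the same continuous boundary values), the maximum principle gives $u\leq 0$ on $B$, while $u=d(f,h)\geq 0$ on the full-measure set $B\setminus D$; hence $d(f,h)\equiv 0$ there, i.e. $f$ coincides on $B\setminus D$ with the locally Lipschitz continuous map $h$. This is the desired local extension, and we finish by gluing and applying \Cref{extension_pluriharmonic_norm}.

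The substantive content is the boundedness step, i.e. \Cref{finite_energy_implies_bounded}, which rests on the Lipschitz–energy estimate \Cref{energy_control_Lipschitz} and on the fact — available here precisely because $V$ extends across $D$ as a local system — that the local monodromies are (quasi)unipotent; this is the place where the finite energy hypothesis is genuinely used. Once $f$ is known to be bounded, the remainder is the same removable-singularity argument as for \Cref{extension_pluriharmonic_map}, the only points requiring care being the choice of $B$ with $\partial B$ disjoint from $D$ and the fact that the harmonic extension $h$ attains the prescribed continuous boundary data (so that the comparison function $d(f,h)$ vanishes on $\partial B$).
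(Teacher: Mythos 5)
Your overall strategy is exactly the paper's: reduce to an admissible polydisk, use \Cref{finite_energy_implies_bounded} (trivial monodromy, constant flat norm) to get boundedness near $D$, solve a Dirichlet problem, compare with the given norm via the subharmonic distance function (\Cref{distance_subharmonic}), extend across $D$ by Brelot, kill it by the maximum principle, and finally upgrade the resulting locally Lipschitz harmonic extension to a pluriharmonic one via the pluripolar removable-singularity theorem. The uniqueness and boundedness steps are fine.

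There is, however, a concrete flaw in your comparison step: for a point $z_0\in D$ and $\dim_\bC X\geq 2$, there is \emph{no} Euclidean ball $B\ni z_0$ with $\partial B\cap D=\emptyset$. The divisor $D$ is a closed complex hypersurface through $z_0$, so $D\cap B$ is a positive-dimensional analytic set through the center which cannot be relatively compact in $B$; it necessarily meets $\partial B$ (e.g. $D=\{z_1=0\}$, $z_0=0$). Consequently $f$ is not defined, let alone continuous, on all of $\partial B$, so you cannot pose the Dirichlet problem with continuous boundary data equal to $f|_{\partial B}$, and the assertion that $u=d(f,h)$ ``vanishes near $\partial B$'' breaks down on $\partial B\cap D$. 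You flag this as a point ``requiring care,'' but as written the step fails, and fixing it is precisely where the extra input enters. The paper's resolution is to solve the Dirichlet problem on the whole polydisk $\bD_r^n$ with boundary datum the $L^2$-trace of $\|\cdot\|$ on $\partial\bD_r^n$ (the boundary meets $D$ only in a negligible set), and then to use the trace theorem \Cref{KoSc-trace}(1)--(2) to see that the trace of the bounded subharmonic function $d(\|\cdot\|,\|\cdot\|')$ is zero before applying Brelot and the maximum principle. Alternatively, your ball argument can be repaired by taking the boundary datum as a $W^{1,2}$-trace and invoking the extended maximum principle with the polar exceptional set $D\cap\partial B$, but either way the boundary comparison must be carried out in the $L^2$/trace (or polar-exceptional-set) framework rather than with genuinely continuous boundary values; your appeal to ``the harmonic extension attains the prescribed continuous boundary data'' is an additional unproved claim that the paper's trace argument avoids.
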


\begin{proof}
Since a pluriharmonic norm is continuous, the unicity is clear. In particular, to prove the existence of the extension, one can work locally on $X$. Therefore, one can replace $X$ by an admissible polydisk $\bD^n \subset X$ and prove the existence in a neighborhood of $0 \in \bD^n$. By assumption, the norm $\| \cdot \|$ belongs to $W^{1,2}((\bD_r^\ast)^n, \cN(V))$ for every $0< r < 1$. In particular, thanks to Proposition \ref{finite_energy_implies_bounded}, there exists $0< r < 1$ such that in addition the norm $\| \cdot \|$ is bounded on $(\bD_r^\ast)^n$. The trace of $\| \cdot \|$ is a well-defined element of $L^2(\partial (\bD_r^\ast)^n, \cN(V))$. Let $\| \cdot \|^\prime$ be the unique harmonic norm on $V_ {|\bD_r^n}$ with the same trace as $\| \cdot \|$ on $\partial \bD_r^n$, see Theorem \ref{Dirichlet-NPC}. The function $(\bD_r^\ast)^n \to \bR_{\geq 0}, z \mapsto d_{\cN(V_z)}(\| \cdot \|_z,  \| \cdot \|^\prime_z)$ is bounded. Moreover, it is subharmonic by Proposition \ref{distance_subharmonic}, hence by Brelot extension theorem, it extends uniquely as a subharmonic function on $\bD_r^n$. Since its trace in $L^2(\partial  (\bD_r^\ast)^n, \bR)$ is zero by Theorem \ref{KoSc-trace}, it is zero everywhere by the maximum principle. This shows that $\| \cdot \|$ admits an extension to $\bD_r^n$ as an harmonic norm, hence in particular as a locally Lipschitz continuous norm. Thanks to Theorem \ref{extension_pluriharmonic_map}, this extension is pluriharmonic.
\end{proof}


\subsection{Existence}
\begin{thm}\label{existence_pluriharmonic_norm}
Let $(\bar X,D)$ be a projective log smooth variety and set $X=\bar X\setminus D$. Let $V$ be a semisimple $K$-local system on $X$ with quasiunipotent local monodromy. Then $V$ admits a pluriharmonic norm of finite energy with respect to any Poincaré-type complete Kähler metric on $X$.
\end{thm}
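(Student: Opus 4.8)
The plan is to reduce, after a finite \'etale cover, to the exact hypotheses of \Cref{existence_harmonic_nonarchimedean_quasiunipotentmonodromies} and \Cref{pluriharmonic}, and then to descend the resulting norm by Galois averaging. First I would pass to a connected monodromy group. Let $\rho\colon\pi_1(X,x)\to\GL(V_x)(K)$ be the monodromy representation and $H\subset\GL(V_x)$ the Zariski closure of its image. Since $V$ is semisimple, $H$ is reductive: its unipotent radical, being normal, has a nonzero fixed subspace in $V_x$, which is $\pi_1(X,x)$-invariant and hence admits a $\pi_1(X,x)$-invariant complement, so iterating shows the unipotent radical acts trivially. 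Put $\G:=H^\circ$, a connected reductive $K$-group. The subgroup $\rho^{-1}(\G(K))\subset\pi_1(X,x)$ has finite index, and after intersecting its conjugates we obtain a finite Galois \'etale cover $p\colon X'\to X$ with group $G$ such that $\rho':=\rho|_{\pi_1(X')}$ lands in $\G(K)$; its Zariski closure is a finite-index closed subgroup of the connected group $\G$, hence equals $\G$, so $\rho'$ has Zariski dense image. Choosing a projective log smooth compactification $(\bar X',D')$ of $X'$ with $\bar X'\to\bar X$ finite, the local monodromy of $p^{*}V$ around each component of $D'$ is a power of a quasiunipotent operator, hence quasiunipotent, and the pullback $p^{*}g$ of the given Poincar\'e-type complete K\"ahler metric $g$ is again a complete K\"ahler metric of Poincar\'e type (a finite cover is, near the boundary, a product of ramified covers of punctured disks, under which the Poincar\'e metric pulls back to a comparable one).

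Next I would produce a harmonic norm on the cover. Applying \Cref{existence_harmonic_nonarchimedean_quasiunipotentmonodromies} to $(\bar X',D')$, $\G$ and $\rho'$ yields a $\rho'$-equivariant harmonic map $u'\colon\tilde X\to\Delta(\G,K)$ of finite energy (with respect to $p^{*}g$); since the proof of that theorem proceeds via \Cref{existence_harmonic_finite_energy}, we may take $u'$ of least energy among $\rho'$-equivariant finite-energy maps, and \Cref{pluriharmonic} shows $u'$ is pluriharmonic. By the functoriality of Bruhat--Tits buildings (\Cref{Bruhat-Tits-product} and \cite[Theorem~2.2.1]{Landvogt}) the inclusion $\G\hookrightarrow\GL(V_x)$ of connected reductive groups induces a $\G(K)$-equivariant isometric embedding $\Delta(\G,K)\hookrightarrow\Delta(\GL(V_x),K)=(\cN(V_x),d_2)$; as an isometric embedding into a CAT(0) space it takes geodesics to geodesics, so its image is a closed convex subset. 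Composing, $u'$ becomes a pluriharmonic norm $\|\cdot\|'$ on $p^{*}V$ of finite (least) energy.

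Finally I would descend. The local system $p^{*}V$ carries a canonical $G$-equivariant structure, so each $g\in G$ produces by pullback a norm $g^{*}\|\cdot\|'$ on $p^{*}V$; this is again a pluriharmonic $\rho'$-equivariant norm of least energy with image in the convex subset $\Delta(\G,K)$, since pulling back by $g$ is pre-composition with an isometry of $\tilde X$ followed by post-composition with an isometry of $\cN(V_x)$, both preserving energy and the equivariance class. Let $\|\cdot\|''$ be the fibrewise center of mass of $\{g^{*}\|\cdot\|'\}_{g\in G}$. It is $G$-invariant by uniqueness of the center of mass, stays in $\Delta(\G,K)$, has energy $\le E_{X'}(\|\cdot\|')<\infty$, and is of least energy --- hence harmonic --- by \Cref{centerofmass_leastenergy}; applying \Cref{pluriharmonic} once more, to $\|\cdot\|''$ viewed as a $\rho'$-equivariant harmonic finite-energy map $\tilde X\to\Delta(\G,K)$, shows $\|\cdot\|''$ is pluriharmonic on $X'$. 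Being $G$-invariant and $p$ \'etale, $\|\cdot\|''$ descends to a pluriharmonic norm $\|\cdot\|$ on $V$ over $X$, with $E_{X}(\|\cdot\|)=\tfrac{1}{|G|}E_{X'}(\|\cdot\|'')<\infty$, which is what we want.

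The main obstacle is this last step: the center of mass of pluriharmonic maps need not be pluriharmonic, and the argument genuinely uses both that the center of mass of least-energy equivariant maps is again of least energy (\Cref{centerofmass_leastenergy}), which yields harmonicity, and that the building embedding $\Delta(\G,K)\hookrightarrow(\cN(V_x),d_2)$ is totally geodesic, which keeps the center of mass inside $\Delta(\G,K)$ so that \Cref{pluriharmonic} can be re-invoked to upgrade harmonicity to pluriharmonicity. When $H$ is already connected, the cover and the averaging are unnecessary and one only needs \Cref{existence_harmonic_nonarchimedean_quasiunipotentmonodromies}, \Cref{pluriharmonic} and the building embedding.
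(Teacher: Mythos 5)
Your reduction to a connected, Zariski-dense monodromy group and your use of \Cref{existence_harmonic_nonarchimedean_quasiunipotentmonodromies}, \Cref{pluriharmonic} and the isometric building embedding are the same ingredients as the paper's, but the descent step has a genuine gap. Everything hinges on the claim that $u'$ may be taken of least energy among \emph{all} $\rho'$-equivariant finite-energy maps: without it, \Cref{centerofmass_leastenergy} only gives that your $G$-invariant average is a locally Lipschitz $\rho'$-equivariant map of finite energy, not that it is harmonic, and the worry you yourself raise (centers of mass of pluriharmonic maps need not be pluriharmonic) then stops the argument. The least-energy claim is not supplied by \Cref{existence_harmonic_nonarchimedean_quasiunipotentmonodromies}: its proof does not go through \Cref{existence_harmonic_finite_energy}, but through \cite{BDDM} for the semisimple part and classical Hodge theory for the torus part, and no energy-minimality is asserted there. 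Nor can you obtain it by applying \Cref{existence_harmonic_finite_energy} to $\rho'$ directly, because the hypothesis that the action has no fixed point on $\partial\Delta$ typically fails for the restricted representation: the central torus of $\G$ acts by translations on the flat factor of the extended building and fixes its ends, and the restriction of an irreducible $V$ to the finite cover need not remain irreducible. (There is also a smaller point: the Galois conjugates are $\rho'$-equivariant only after post-composing with $\rho(\gamma)$, and one must check that this normalizer action preserves the image of $\Delta(\G,K)$ in $\cN(V_x)$.)

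The paper uses the same pieces in the opposite order precisely to avoid this. It first reduces to $V$ irreducible (direct sums being handled by \Cref{pluriharmonic_norm_sum}), then uses the Galois average only as a finite-energy $\rho$-equivariant \emph{competitor} via \Cref{centerofmass_finite_energy}, and finally invokes \Cref{existence_harmonic_finite_energy} for the full representation $\rho$ with target $\cN(V_x)=\Delta(\GL(V_x),K)$: irreducibility of $\rho$ ensures its image lies in no parabolic, hence fixes no point of the boundary at infinity, so a least-energy $\rho$-equivariant harmonic map of finite energy exists, and \Cref{pluriharmonic} upgrades it to pluriharmonic. In other words, harmonicity is produced downstairs by the existence theorem, not transported through the averaging. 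To salvage your structure you would need an independent proof that $u'$ is energy-minimizing (or that the average is harmonic); as written, the harmonicity of $\|\cdot\|''$ is unsupported.
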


\begin{proof}
It is sufficient to consider the case where $V$ is an irreducible $K$-local system.  Fix a Poincaré-type complete Kähler metric on $X$. Fix $x \in X$. Let $\rho \colon \pi_1(X,x) \to \GL(V_x)$ be the monodromy representation of $V$. 
 Let $X^\prime \to X$ be a finite étale cover such that the Zariski closure $\G$ of the image of $\rho^\prime := \rho_{|\pi_1(X^\prime)}$ is connected (and reductive). Thanks to Theorem \ref{existence_harmonic_nonarchimedean_quasiunipotentmonodromies}, there exists a $\rho^\prime$-equivariant harmonic map $\tilde{X} \to \Delta(\G,K)$ of finite energy. The induced $\rho^\prime$-equivariant map $u \colon \tilde{X} \to \Delta(\GL(V_x))$ is thus locally Lipschitz continuous with finite energy. Applying Proposition \ref{centerofmass_finite_energy}, we get that there exists a $\rho$-equivariant locally Lipschitz continuous map $\tilde{X} \to \Delta$ of finite energy. Since $\rho$ is irreducible, its image is not contained in any parabolic subgroup of $\GL(V_x)$, therefore the action of $\rho$ does not have any fixed point on $\partial \Delta(\GL(V_x))$. We conclude from \Cref{existence_harmonic_finite_energy} the existence of a $\rho$-equivariant harmonic map $u \colon \tilde{X} \to \Delta(\GL(V_x))$ of finite energy, which is necessarily pluriharmonic thanks to \Cref{pluriharmonic}.
\end{proof}

\subsection{Induced norm on sublocal systems}

\subsubsection{The local setting}
\begin{lem}\label{lem:basic}
Suppose $\{f_i\}_{i\in I}$ is a finite set of subharmonic functions on a connected Kähler manifold such that $f=\max_{i\in I}f_i$ is harmonic.  Then if $f=f_i$ holds at some point it holds everywhere.
\end{lem}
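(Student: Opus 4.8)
The plan is to prove this via a connectedness argument, showing that the set where $f = f_i$ holds is both open and closed. Fix an index $i \in I$ and let $Z_i = \{z : f(z) = f_i(z)\}$. This set is closed by continuity of the $f_j$'s (subharmonic functions are upper semicontinuous, and in fact $f - f_i$ is an upper semicontinuous function whose zero locus is closed since $f - f_i \geq 0$; here one uses that $f$ and $f_i$ are both continuous, which holds since $f$ is harmonic hence smooth and each $f_i$ is bounded above by $f$ and below, near any point, by the other subharmonic functions — more carefully one simply notes $f - f_i$ is lower semicontinuous as $f$ is continuous and $-f_i$ is lower semicontinuous, so $\{f - f_i \leq 0\} = \{f = f_i\}$ is closed). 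The crux is to show $Z_i$ is open.

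For openness, suppose $f(z_0) = f_i(z_0)$. Consider the function $g = f - f_i$, which is nonnegative on the whole manifold, satisfies $g(z_0) = 0$, and is superharmonic near $z_0$ (since $f$ is harmonic and $f_i$ is subharmonic, $-f_i$ is superharmonic, hence $f - f_i$ is superharmonic). A nonnegative superharmonic function attaining the value $0$ at an interior point must vanish on a neighborhood of that point — indeed, by the minimum principle for superharmonic functions on a connected open set, a superharmonic function attaining its infimum at an interior point is constant on that connected component; applying this to a small connected (e.g. ball) neighborhood $U$ of $z_0$ on which $g$ is superharmonic, we get $g \equiv 0$ on $U$, so $U \subset Z_i$. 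Thus $Z_i$ is open.

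Since the manifold is connected and $Z_i$ is nonempty (by hypothesis, $f = f_i$ at some point) and clopen, we conclude $Z_i$ equals the whole manifold, i.e. $f = f_i$ everywhere. The main obstacle — really the only point requiring care — is justifying the minimum principle application: one must verify $g = f - f_i$ is genuinely superharmonic (not merely that $-g$ is subharmonic in some weak distributional sense that could fail the strong minimum principle), but this is standard since $f$ is smooth harmonic and $f_i$ is subharmonic in the usual ($L^1_{loc}$, distributionally $dd^c f_i \geq 0$) sense, so $g$ is an upper semicontinuous function with $dd^c g \leq 0$, and the strong minimum principle for such functions on connected domains is classical. One should also note the argument is purely local in nature, so no completeness or compactness hypothesis on the manifold is needed; connectedness alone suffices. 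A remark worth including: this lemma is the exact analogue, in the subharmonic setting, of the statement that among the functions realizing a maximum which happens to be harmonic, any one that touches the maximum does so identically — the same phenomenon underlying rigidity of orthogonal trivializations in \Cref{local system: pluripolar sing}.
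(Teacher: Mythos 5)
Your proof is correct and rests on the same idea as the paper's: the paper simply observes that $f_i-f$ is subharmonic, bounded above by $0$, and attains its maximum value $0$ at a point, hence vanishes identically by the strong maximum principle on the connected manifold. Your open--closed argument, applying the minimum principle locally to the superharmonic function $f-f_i$ and then using connectedness, is just an unpacking of that same maximum-principle step, so the two routes coincide in substance.
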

\begin{proof}
Suppose $f=f_i$ at some point.  Then $f_i-f$ is subharmonic and achieves its maximum value, namely zero, hence it is zero everywhere.  
\end{proof}

\begin{lem}\label{switch_harmonic}
Let $(V,\|\cdot \|)$ be a harmonic $K$-local system on a connected Kähler manifold $X$. Let $\{v_1, \ldots, v_n\}$ be a flat orthogonal basis. Let $v$ be a nonzero section of $V$ such that $\log\|v\|$ is harmonic. Then, up to renumbering the $v_i$'s, $\{v, v_2, \ldots, v_n\}$ is a flat orthogonal basis.
\end{lem}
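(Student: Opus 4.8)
Write $v = \sum_i a_i v_i$ with $a_i \in K$. Since $\{v_1,\dots,v_n\}$ is a flat orthogonal basis, the functions $\log\|v_i\|$ are harmonic, and pointwise we have $\log\|v\| = \max_i \log|a_i| \,\|v_i\|$. The functions $a_i$ are constant (they are the coordinates of the flat section $v$ in the flat basis), so each $\log\|a_i v_i\| = \log|a_i| + \log\|v_i\|$ is a harmonic function on $X$ (interpreting $\log|a_i| = -\infty$, i.e. the term absent, when $a_i = 0$; at least one $a_i$ is nonzero). Thus $\log\|v\|$ is the maximum of finitely many harmonic functions, and it is harmonic by hypothesis.

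First I would reduce to the case that all $a_i \neq 0$ by discarding the indices with $a_i = 0$; this does not affect orthogonality of any sub-basis. Then I apply \Cref{lem:basic} with $f_i := \log\|a_i v_i\|$ and $f := \log\|v\| = \max_i f_i$: since $X$ is connected and $f$ is harmonic, at any point where the maximum is attained by a particular index $i_0$, we have $f = f_{i_0}$ \emph{everywhere} on $X$. Pick such an $i_0$ at some chosen point; after renumbering so that $i_0 = 1$, we get $\log\|v\| = \log\|a_1 v_1\|$ identically on $X$, i.e. $\|v\| = |a_1|\,\|v_1\|$ pointwise.

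Now I invoke \Cref{switch}: pointwise, the identity $\|v\| = |a_1|\,\|v_1\|$ together with orthogonality of $\{v_1,\dots,v_n\}$ for $\|\cdot\|_x$ implies that $\{v, v_2,\dots,v_n\}$ is an orthogonal basis for $\|\cdot\|_x$. Since this holds at every $x \in X$, and $v, v_2,\dots, v_n$ are flat sections spanning $V$, the set $\{v, v_2,\dots, v_n\}$ is a flat orthogonal basis in the sense defined above. (In the case where some $a_i$ vanished and were discarded: after renumbering, $v = a_1 v_1 + \sum_{i \in J} a_i v_i$ for some subset $J \subset \{2,\dots,n\}$, and one checks that replacing $v_1$ by $v$ leaves $\{v, v_2,\dots,v_n\}$ orthogonal, since the change-of-basis matrix from $\{v_1,\dots,v_n\}$ is unipotent-triangular with the flat coordinates, and \Cref{switch} applies verbatim once we know $\|v\| = |a_1|\,\|v_1\|$.)

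The only genuine subtlety is the passage from the pointwise orthogonality statement to the global one, but this is immediate since flatness of $v, v_2, \dots, v_n$ is already given and orthogonality is a pointwise condition; so there is no real obstacle. The one point requiring a moment's care is ensuring that the index $i_0$ realizing the maximum can be chosen \emph{globally}, which is exactly what \Cref{lem:basic} provides via connectedness of $X$ and harmonicity of $f$.
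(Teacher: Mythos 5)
Your proposal is correct and follows essentially the same route as the paper: write $v=\sum_i a_i v_i$ with constant coefficients, note $\log\|v\|=\max_i\log\|a_iv_i\|$ with each term harmonic, use \Cref{lem:basic} on the connected manifold $X$ to get $\|v\|=|a_{i_0}|\|v_{i_0}\|$ globally, and conclude by \Cref{switch}. Your extra care with vanishing coefficients and the pointwise-to-global step is fine but not a different argument (and \Cref{switch} already allows zero coefficients, so no separate case is needed there).
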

\begin{proof}
Write $v = \sum_i a_i v_i$, so that $\log \|v\| = \max_{1 \leq i \leq n} \log \| a_i v_i \|$. Since $\{v_1, \ldots, v_n\}$ is a flat orthogonal basis, the functions $\log \| a_i v_i \| = \log|a_i| + \log \| v_i \|$ are harmonic. Since $\log\|v\|$ is harmonic by assumption, it follows from Lemma \ref{lem:basic} that there exists $i$ such that $\|v \| = \| a_i v_i \|$ on $X$. Finally, thanks to Corollary \ref{switch}, up to renumbering the $v_i$'s, $\{v, v_2, \ldots, v_n\}$ is a flat orthogonal basis.
\end{proof}

\begin{lem}\label{induced_harmonic_norm_local_with_basis}
Let $(V,\|\cdot \|)$ be a harmonic $K$-local system on a Kähler manifold $X$.  Let $W\subset V$ be a sub-$K$-local system equipped with a harmonic norm $\|\cdot\|_W$. Let $\| \cdot \|_V$ denote the norm on $W$ induced by $\|\cdot \|$. Assume that for every $1 \leq i \leq  \rk W$ the function $\lambda_i(\| \cdot \|_W, \| \cdot \|_V)$ is harmonic.
Assume that both $(V,\|\cdot \|)$ and $(W,\|\cdot \|_W)$ admit a flat orthogonal basis. Then there exists a flat basis $(e_i)$ of $W$ which is orthogonal for both $\| \cdot \|_V$ and $\| \cdot \|_W$, that can be completed into a flat orthogonal basis of $(V,\|\cdot \|)$ and such that $\lambda_i(\| \cdot \|_W, \| \cdot \|_V) =  \log \frac{\|e_i\|_V}{\|e_i\|_W}$.
\end{lem}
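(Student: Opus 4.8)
\textbf{Proof plan for \Cref{induced_harmonic_norm_local_with_basis}.}
The plan is to proceed by induction on $\rk W$, using \Cref{switch_harmonic} together with the orthogonal decomposition results for the relative spectrum (\Cref{relative_spectrum_basis}, \Cref{relative_spectrum_decomposition}, \Cref{sup_quotient_norms}). The base case $\rk W = 0$ is vacuous, and the case $\rk W = 1$ amounts to \Cref{switch_harmonic}. For the inductive step I would first exhibit a single section $e_1$ of $W$ realizing $\lambda_1(\|\cdot\|_W,\|\cdot\|_V)$, then show it sits in a flat orthogonal basis of $(V,\|\cdot\|)$, then pass to an orthogonal complement and invoke induction.

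First I would extract $e_1$. Choose a flat basis $\{v_1,\dots,v_m\}$ of $W$ orthogonal for $\|\cdot\|_W$; by \Cref{sup_quotient_norms} the quantity $\sup_{w}\frac{\|w\|_V}{\|w\|_W}$ is attained at one of the $v_i$, say $v_1$ after renumbering, so $\log\frac{\|v_1\|_V}{\|v_1\|_W} = \lambda_1(\|\cdot\|_W,\|\cdot\|_V)$ pointwise. This equality of functions holds on a set; I would argue it holds everywhere because the left side $\log\|v_1\|_V - \log\|v_1\|_W$ is a difference of harmonic (for $\|v_1\|_W$, as $\{v_i\}$ is flat orthogonal for $\|\cdot\|_W$) functions and the right side $\lambda_1(\|\cdot\|_W,\|\cdot\|_V)$ is harmonic by hypothesis, while on the other hand $\log\frac{\|w\|_V}{\|w\|_W}\le \lambda_1$ for every flat section $w$, so $v_1$ realizes the supremum of a family of subharmonic functions on all of $X$; by \Cref{lem:basic} type reasoning (applied to $\log\|v_i\|_V - \log\|v_i\|_W$, each subharmonic), the index achieving the max is globally constant. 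Now $\log\|v_1\|_V$ is harmonic: indeed it equals $\lambda_1(\|\cdot\|_W,\|\cdot\|_V) + \log\|v_1\|_W$, a sum of harmonic functions. Applying \Cref{switch_harmonic} to the section $v_1$ inside $(V,\|\cdot\|)$, after renumbering a flat orthogonal basis of $(V,\|\cdot\|)$ we may take $v_1$ as its first member; set $e_1 := v_1$.

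Next I would set up the complement. Inside $(W,\|\cdot\|_W)$, since $\{v_1,\dots,v_m\}$ is flat orthogonal, $W = K v_1 \oplus W'$ with $W' = \mathrm{span}(v_2,\dots,v_m)$, orthogonal for $\|\cdot\|_W$. I must check this decomposition is also $\|\cdot\|_V$-orthogonal: this follows from \Cref{two_orthogonal_for_one} applied with the two norms $\|\cdot\|_W$ and $\|\cdot\|_V$ on $W$ and the vector $v_1$ realizing $\sup_w \frac{\|w\|_V}{\|w\|_W}$ — the $\|\cdot\|_W$-orthogonal hyperplane $W'$ is then $\|\cdot\|_V$-orthogonal to $v_1$. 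Then by \Cref{relative_spectrum_decomposition}, $\lambda_i(\|\cdot\|_{W}|_{W'},\|\cdot\|_{V}|_{W'}) = \lambda_{i+1}(\|\cdot\|_W,\|\cdot\|_V)$ for $1\le i \le m-1$, so the harmonicity hypothesis on the $\lambda_i$ is inherited by $W'$ with the restricted norms. Also $(W',\|\cdot\|_W|_{W'})$ admits a flat orthogonal basis (namely $v_2,\dots,v_m$), and $(V,\|\cdot\|)$ still admits a flat orthogonal basis with $v_1$ removed serving for the ambient space after quotienting — more precisely I would apply the inductive hypothesis with $V$ replaced by the flat orthogonal complement $V''$ of $K v_1$ in $V$ (which exists since $v_1$ is part of a flat orthogonal basis of $V$) and $W$ replaced by $W'$, noting $W' \subset V''$. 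The induction yields a flat basis $e_2,\dots,e_m$ of $W'$, orthogonal for both $\|\cdot\|_V|_{W'}$ and $\|\cdot\|_W|_{W'}$, completable to a flat orthogonal basis of $(V'',\|\cdot\|)$, with $\lambda_i(\|\cdot\|_W|_{W'},\|\cdot\|_V|_{W'}) = \log\frac{\|e_i\|_V}{\|e_i\|_W}$. Adjoining $e_1$ and using the orthogonality of $K v_1 \oplus W'$ for both norms (and of $K v_1 \oplus V''$ for $\|\cdot\|$), $\{e_1,\dots,e_m\}$ is the desired basis, and the spectrum identity for $i=1$ is exactly the equality established for $e_1$ above, while for $i\ge 2$ it follows from \Cref{relative_spectrum_decomposition} combined with the inductive identity.

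\textbf{Main obstacle.} The delicate point is the globalization argument for $e_1$: showing that the index $i$ for which $v_i$ realizes $\sup_w\frac{\|w\|_V}{\|w\|_W}$ can be chosen independently of the point, and consequently that $\log\|v_1\|_V$ is harmonic rather than merely subharmonic. This rests on combining \Cref{sup_quotient_norms} (a pointwise statement) with the maximum-principle bootstrapping of \Cref{lem:basic}, and one must be careful that $\lambda_1(\|\cdot\|_W,\|\cdot\|_V)$ being harmonic — as opposed to just being a max of the subharmonic functions $\log\|v_i\|_V-\log\|v_i\|_W$ — is precisely what forces each of those functions that attains the max to attain it everywhere. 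Once this is in place, all remaining steps are formal consequences of the relative-spectrum lemmas and \Cref{switch_harmonic}.
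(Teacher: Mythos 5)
Your first step (extracting a flat section $v_1$ of $W$ realizing $\lambda_1(\|\cdot\|_W,\|\cdot\|_V)$ everywhere via \Cref{sup_quotient_norms} and \Cref{lem:basic}, then switching it into a flat $\|\cdot\|$-orthogonal basis of $V$ by \Cref{switch_harmonic}) is exactly the paper's argument, and the induction scheme is the same. The gap is in how you build the complement. You take $W'=\mathrm{span}(v_2,\dots,v_m)$, which is $\|\cdot\|_W$-orthogonal to $v_1$ by construction, and invoke \Cref{two_orthogonal_for_one} to get $\|\cdot\|_V$-orthogonality. But that corollary goes the other way: for a vector realizing $\sup_u\|u\|_V/\|u\|_W$, hyperplanes orthogonal for the \emph{numerator} norm $\|\cdot\|_V$ are orthogonal for $\|\cdot\|_W$, not conversely. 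The converse is false: take $V=W=K^2$ with constant norms, $\|\cdot\|_W$ the standard norm in the basis $f_1,f_2$, and $\|\cdot\|_V$ orthogonalized in the basis $f_1,\,f_1+f_2$ with $\|f_1\|_V=q^2$, $\|f_1+f_2\|_V=1$. All hypotheses of the lemma hold (everything is constant, hence harmonic), $f_1$ realizes the sup, $Kf_2$ is $\|\cdot\|_W$-orthogonal to $f_1$, yet $\{f_1,f_2\}$ is not $\|\cdot\|_V$-orthogonal ($\|f_1+f_2\|_V=1<q^2$). A second, related problem: your inductive step needs $W'\subset V''$, where $V''$ is the flat $\|\cdot\|$-orthogonal complement of $Kv_1$ in $V$, but the $v_j$ ($j\ge 2$) are sections of $W$ with no reason to lie in $V''$ (in the example above $V''=K(f_1+f_2)$ while $W'=Kf_2$), so the induction as set up does not apply.

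The fix is the paper's choice of complement: after switching, write $V=Kv_1\oplus V'$ with $V'$ the flat span of the remaining basis vectors, and set $W':=W\cap V'$. Since $v_1\in W$, one gets $W=Kv_1\oplus W'$, and this decomposition is $\|\cdot\|_V$-orthogonal because it is the restriction of an $\|\cdot\|$-orthogonal decomposition of $V$; now \Cref{two_orthogonal_for_one}, applied in its correct direction, upgrades it to a $\|\cdot\|_W$-orthogonal decomposition. Harmonicity of the induced norms on the summands comes from \Cref{pluriharmonic_norm_sum}, the shift of the relative spectrum from \Cref{relative_spectrum_decomposition} (as in your plan), and $W'\subset V'$ holds by construction, so the induction on rank goes through. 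In short: keep your step 1, but the complement must be cut out by the ambient norm $\|\cdot\|$ (i.e. $W'=W\cap V'$), not by the flat $\|\cdot\|_W$-orthogonal basis of $W$.
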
 
\begin{proof}
By induction on the rank of $V$. There is nothing to prove if $V$ has rank zero. Therefore, let us assume that the rank of $V$ is positive. Let $\{v_1, \ldots, v_n\}$ be a flat orthogonal basis of $(V,\|\cdot \|)$ and $\{w_1, \ldots, w_r\}$ be a flat orthogonal basis of $(W,\|\cdot \|_W)$. Thanks to Proposition \ref{relative_spectrum_basis}, one has
\[ \lambda_1(\| \cdot \|_W, \| \cdot \|_V) = \max_{1 \leq i \leq r}  \log \frac{\|w_i\|_V}{\|w_i\|_W}.\]
By assumption, the functions $\log \frac{\|w_i\|_V}{\|w_i\|_W}$ are subharmonic. It follows from Lemma \ref{lem:basic} that $\lambda_1(\| \cdot \|_W, \| \cdot \|_V) = \log \|w_i\|_V - \log \|w_i\|_W$ for some $i$. This shows that the function $ \log \|w_i\| = \lambda_1(\| \cdot \|_V, \| \cdot \|_W) + \log \|w_i\|_W$ is harmonic. Therefore, thanks to Lemma \ref{switch_harmonic}, up to renumbering the $v_i$'s, one can assume that $\{w_i, v_2, \ldots, v_n\}$ is a flat orthogonal basis of $(V,\|\cdot \|)$.

Let $V^\prime$ be the sublocal system of $V$ generated by the $v_i$'s with $i \geq 2$, so that the decomposition $V = K w_i \oplus V^\prime$ is $\| \cdot \|$-orthogonal. Therefore the norm $\|\cdot \|_{V^\prime}$ induced by $\|\cdot \|$ on $V^\prime$ is harmonic by Proposition \ref{pluriharmonic_norm_sum}.

Let $W^\prime := W \cap V^\prime$. Then the
decomposition $W = K w_i \oplus W^\prime$ is $\| \cdot \|_V$-orthogonal. Thanks to Corollary \ref{two_orthogonal_for_one}, the
decomposition $W = K w_i \oplus W^\prime$ is also $\| \cdot \|_W$-orthogonal.
In particular, the norm $\|\cdot \|_{W^\prime}$ induced by $\|\cdot \|_W$ on $W^\prime$ is harmonic by Proposition \ref{pluriharmonic_norm_sum}. Moreover, thanks to Corollary \ref{relative_spectrum_decomposition}, for every integer $i \in\{1, \ldots, \rk V - 1\}$, one has
\[ \lambda_i(\| \cdot \|_{W^\prime}, (\| \cdot \|_{V^\prime})_{|W^\prime}) = \lambda_{i +1}(\| \cdot \|_W, \| \cdot \|_V). \]
Therefore one can apply the induction to $(V^\prime,  \| \cdot \|_{V^\prime})$ and $(W^\prime,  \| \cdot \|_{W^\prime})$ and conclude the proof.
\end{proof}

\begin{lem}\label{induced_harmonic_norm_local}
Let $(V,\|\cdot \|)$ be a pluriharmonic $K$-local system on a Kähler manifold $X$.  Let $W\subset V$ be a sub-$K$-local system equipped with a pluriharmonic norm $\|\cdot\|_W$. Let $\| \cdot \|_V$ denote the norm on $W$ induced by $\|\cdot \|$. Assume that for every $1 \leq i \leq  \rk W$ the function $\lambda_i(\| \cdot \|_W, \| \cdot \|_V)$ is pluriharmonic. Then the norm on $W \oplus (V / W)$ induced by $\| \cdot \|$ is pluriharmonic. 
\end{lem}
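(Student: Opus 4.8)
The plan is to reduce the statement about the direct sum $W\oplus(V/W)$ to two separate statements: that the induced norm $\|\cdot\|_V$ on $W$ is pluriharmonic, and that the induced quotient norm on $V/W$ is pluriharmonic. By \Cref{pluriharmonic_norm_sum}, once both summands carry pluriharmonic norms the direct sum does too, so it suffices to handle $W$ and $V/W$ individually. Throughout we may work locally: pluriharmonicity is a local condition, so after passing to small simply-connected polydisk neighborhoods of a fixed point, away from the (pluripolar, by \Cref{local system: pluripolar sing}) singular loci of $\|\cdot\|$ and $\|\cdot\|_W$, both $(V,\|\cdot\|)$ and $(W,\|\cdot\|_W)$ admit flat orthogonal bases. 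By \Cref{extension_pluriharmonic_norm} it is enough to establish pluriharmonicity on the complement of a closed pluripolar set, so this localization is harmless.

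First I would treat $W$ with the induced norm $\|\cdot\|_V$. On such a local chart, \Cref{induced_harmonic_norm_local_with_basis} (applied with the ambient Kähler metric on the chart, noting that a pluriharmonic map is harmonic for every Kähler metric) produces a flat basis $(e_i)$ of $W$ that is simultaneously orthogonal for $\|\cdot\|_V$ and $\|\cdot\|_W$, completes to a flat orthogonal basis of $(V,\|\cdot\|)$, and satisfies $\lambda_i(\|\cdot\|_W,\|\cdot\|_V)=\log(\|e_i\|_V/\|e_i\|_W)$. Since each $\log\|e_i\|$ (computed inside the flat orthogonal basis of $V$) is pluriharmonic, and $\log\|e_i\|_W=\log\|e_i\|_V-\lambda_i(\|\cdot\|_W,\|\cdot\|_V)$ with the $\lambda_i$ pluriharmonic by hypothesis, we get that in the flat orthogonal basis $(e_i)$ of $W$ the functions $\log\|e_i\|_V$ are pluriharmonic. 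This is exactly the criterion for $(W,\|\cdot\|_V)$ to be pluriharmonic on the chart, and gluing over charts plus \Cref{extension_pluriharmonic_norm} finishes this case.

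For $V/W$, the key observation is that once the flat basis $(e_i)$ of $W$ has been completed to a flat orthogonal basis $(e_1,\dots,e_r,f_{r+1},\dots,f_n)$ of $(V,\|\cdot\|)$ as above, the images $\bar f_{r+1},\dots,\bar f_n$ of the $f_j$ in $V/W$ form a flat basis, and by the standard compatibility of quotient norms with orthogonal decompositions (the dual of the quotient norm is the restriction of the dual norm, as used in the proof of \Cref{induced_norm_Lipschitz}) this basis is orthogonal for the induced quotient norm, with $\|\bar f_j\|_{V/W}=\|f_j\|$. Hence $\log\|\bar f_j\|_{V/W}=\log\|f_j\|$ is pluriharmonic for each $j$, giving pluriharmonicity of the induced norm on $V/W$. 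Combining the two cases via \Cref{pluriharmonic_norm_sum} yields the claim. The main obstacle I anticipate is purely bookkeeping: making sure the local flat orthogonal bases of $V$, $W$, and their completions can be chosen consistently on overlapping charts (or, more cleanly, arguing that pluriharmonicity of each summand only needs to be checked pointwise off a pluripolar set and hence no global compatibility of bases is actually required), and verifying that the hypothesis that the $\lambda_i$ are pluriharmonic — rather than merely plurisubharmonic — is genuinely used, which it is, precisely in the appeal to \Cref{induced_harmonic_norm_local_with_basis}.
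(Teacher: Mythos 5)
Your proof is correct and follows essentially the same route as the paper's: reduce to the common regular locus of $\|\cdot\|$ and $\|\cdot\|_W$ using pluripolarity of the singular sets and \Cref{extension_pluriharmonic_norm}, then apply \Cref{induced_harmonic_norm_local_with_basis} on local charts admitting flat orthogonal bases (your explicit splitting into the $W$- and $V/W$-summands via \Cref{pluriharmonic_norm_sum} merely spells out what the paper leaves implicit in ``to conclude''). The one point to state explicitly is that the induced norms on $W$ and $V/W$ are locally Lipschitz, i.e.\ \Cref{induced_norm_Lipschitz}, since that is what legitimizes the appeal to \Cref{extension_pluriharmonic_norm}; you only cite that proposition tangentially.
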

\begin{proof}
By Proposition \ref{induced_norm_Lipschitz}, the norm on $W \oplus (V / W)$ induced $\| \cdot \|$ is locally Lipschitz continuous. Since $\Sing(\|\cdot \|)$ and $\Sing(\|\cdot \|_W)$ are closed pluripolar subsets of $X$ by \Cref{local system: pluripolar sing}, thanks to Theorem \ref{extension_pluriharmonic_norm} it is sufficient to prove that the norm on $W \oplus (V / W)$ induced $\| \cdot \|$ is pluriharmonic on $\Reg(\|\cdot \|) \cap \Reg(\|\cdot \|_W)$. Since pluriharmonicity is a local property, one can assume that both $(V,\|\cdot \|)$ and $(W,\|\cdot \|_W)$ admit a flat orthogonal basis, so that one can apply Lemma \ref{induced_harmonic_norm_local_with_basis} to conclude.
\end{proof}

\begin{cor} 
Let $(V,\|\cdot \|)$ be a pluriharmonic $K$-local system on a connected complex analytic space $X$. Let $W \subset V$ be a sub-$K$-local system. If the norm on $W$ induced by $\| \cdot \|$ is pluriharmonic, then the norm on $V / W$ induced $\| \cdot \|$ is pluriharmonic. 
\end{cor}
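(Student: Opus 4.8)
The plan is to deduce this immediately from \Cref{induced_harmonic_norm_local} and \Cref{pluriharmonic_norm_sum} by making a judicious choice of the auxiliary norm on $W$. Write $\|\cdot\|_V$ for the norm on $W$ induced by $\|\cdot\|$; by hypothesis $(W,\|\cdot\|_V)$ is pluriharmonic. I would apply \Cref{induced_harmonic_norm_local} to the sub-$K$-local system $W\subset V$ equipped with the pluriharmonic norm $\|\cdot\|_W:=\|\cdot\|_V$ itself. The hypothesis of that lemma requires that the relative spectral functions $\lambda_i(\|\cdot\|_W,\|\cdot\|_V)=\lambda_i(\|\cdot\|_V,\|\cdot\|_V)$ be pluriharmonic for $1\le i\le\rk W$; but directly from the definition of the relative spectrum, $\lambda_i(\|\cdot\|_V,\|\cdot\|_V)=\sup_{\dim W'\ge i}\bigl(\inf_{w\in W'\setminus\{0\}}\log 1\bigr)=0$, which is constant and hence pluriharmonic. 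Thus \Cref{induced_harmonic_norm_local} applies and yields that the norm on $W\oplus(V/W)$ induced by $\|\cdot\|$ is pluriharmonic.

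It remains to observe that this induced norm on $W\oplus(V/W)$ is, by construction (via the canonical map $\cN(V)\to\cN(W)\times\cN(V/W)$ of \Cref{induced_norm_Lipschitz}), the direct sum of the norm on $W$ induced by $\|\cdot\|$ and the norm on $V/W$ induced by $\|\cdot\|$, with the decomposition $W\oplus(V/W)$ orthogonal for it. Hence \Cref{pluriharmonic_norm_sum} applies and tells us that both summands, with their induced norms, are pluriharmonic; in particular the norm on $V/W$ induced by $\|\cdot\|$ is pluriharmonic, as desired. There is essentially no obstacle here beyond recognizing that taking the auxiliary norm $\|\cdot\|_W$ on $W$ to be the induced norm trivializes the relative-spectrum condition in \Cref{induced_harmonic_norm_local}; the only routine point to confirm is the compatibility of "induced norm on $W\oplus(V/W)$" with "direct sum of induced norms", which is immediate from the definition of the maps involved.
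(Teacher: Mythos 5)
Your proof is correct and matches the paper's argument: the paper's own proof of this corollary is precisely to apply \Cref{induced_harmonic_norm_local} with $\|\cdot\|_W$ taken to be the norm on $W$ induced by $\|\cdot\|$, so that the relative spectrum is identically zero and the hypothesis holds trivially. Your extra step of invoking \Cref{pluriharmonic_norm_sum} to pass from the pluriharmonicity of the induced norm on $W\oplus(V/W)$ to that of the summand $V/W$ is exactly the (implicit) final step of the paper's proof as well.
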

\begin{proof}
This a direct consequence of Lemma \ref{induced_harmonic_norm_local}, in which we take for $\|\cdot\|_W$ the norm on $W$ induced by $\| \cdot \|$.
\end{proof}


\subsubsection{The global setting}

\begin{prop}\label{global_relative_spectrum}
Let $(\bar X,D)$ be a compact log smooth K\"ahler manifold and set $X=\bar X\setminus D$. 
Let $V$ be a $K$-local system on $X$, with quasiunipotent local monodromy.
Let $\|\cdot \|$ be a plurisubharmonic norm on $V$ (see Definition \ref{def-subharmonic-norm}), which is locally bounded at infinity. Let $W\subset V$ be a sub-$K$-local system equipped with a pluriharmonic norm $\| \cdot \|_W$ which is locally bounded at infinity. Then the functions $\lambda_i(\| \cdot \|_W,  \| \cdot \|_{|W})$ are constant for every $1 \leq i \leq \rk W$.
\end{prop}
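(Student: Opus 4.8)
\textbf{Proof proposal for \Cref{global_relative_spectrum}.}

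The plan is to reduce to a statement about subharmonic functions on a compact Kähler manifold and invoke the maximum principle. First I would combine \Cref{sum_relative_spectrum} with the fact that exterior powers of (pluri)subharmonic norms are again (pluri)subharmonic, and likewise for pluriharmonic norms, to reduce to the case $k=1$: indeed $\sum_{i=1}^k \lambda_i(\|\cdot\|_W,\|\cdot\|_{|W}) = \lambda_1(\wedge^k\|\cdot\|_W,\wedge^k\|\cdot\|_{|W})$, and $\wedge^k\|\cdot\|_{|W}$ is the restriction of the plurisubharmonic norm $\wedge^k\|\cdot\|$ to $\wedge^k W \subset \wedge^k V$ while $\wedge^k\|\cdot\|_W$ is pluriharmonic; one also needs that the exterior power of a locally-bounded-at-infinity norm is locally bounded at infinity, which follows from \Cref{sup_quotient_norms} applied to a flat orthogonal basis near a boundary point. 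So it suffices to show $\lambda_1(\|\cdot\|_W,\|\cdot\|_{|W})=\sup_{w\in W\setminus 0}\log\frac{\|w\|_{|W}}{\|w\|_W}$ is constant on $X$.

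Next I would observe that, by \Cref{sup_quotient_norms}, locally on $\Reg(\|\cdot\|_W)$ where a flat orthogonal basis $\{w_1,\dots,w_r\}$ of $(W,\|\cdot\|_W)$ exists, we have $\lambda_1(\|\cdot\|_W,\|\cdot\|_{|W}) = \max_i \log\frac{\|w_i\|_{|W}}{\|w_i\|_W} = \max_i\big(\log\|w_i\| - \log\|w_i\|_W\big)$, which is a finite max of (subharmonic minus harmonic = subharmonic) functions, hence subharmonic. By \Cref{local system: pluripolar sing} the singular locus $\Sing(\|\cdot\|_W)$ is pluripolar and $\lambda_1$ is continuous (by \Cref{induced_norm_Lipschitz}, $\|\cdot\|_{|W}$ depends locally Lipschitz-continuously on the point, and the relative spectrum is continuous in the pair of norms), so $\lambda_1(\|\cdot\|_W,\|\cdot\|_{|W})$ extends to a subharmonic function on all of $X$. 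The local-boundedness-at-infinity hypotheses on both $\|\cdot\|$ and $\|\cdot\|_W$ then give, via the comparison of $d_2$ with the flat norms in an admissible polydisk and \Cref{relative_spectrum_basis}, that $\lambda_1$ is bounded near $D$; combined with the Poincaré-type metric this lets me apply a Brelot-type removable singularity statement (as in the proof of \Cref{extension_finite_energy_pluriharmonic}) to conclude $\lambda_1(\|\cdot\|_W,\|\cdot\|_{|W})$ extends to a subharmonic function on the compact Kähler manifold $\bar X$. A subharmonic function on a compact connected Kähler (in particular, connected compact complex) manifold is constant by the maximum principle, which gives the claim.

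The main obstacle I anticipate is the boundary analysis: showing that $\lambda_i(\|\cdot\|_W,\|\cdot\|_{|W})$ is bounded in a neighborhood of each boundary point so that the Brelot extension applies. Near a point of $D$ one has flat norms $\|\cdot\|'$ on $V$ and $\|\cdot\|_W'$ on $W$ (by \Cref{flat_norm_quasiunipotent}), and by hypothesis $d_{\cN}(\|\cdot\|,\|\cdot\|')$ and $d_{\cN}(\|\cdot\|_W,\|\cdot\|_W')$ are bounded near $0$; one then needs that $d_\cN(\|\cdot\|_W',\,(\|\cdot\|')_{|W})$ is bounded, i.e. that the restriction of a flat norm on $V$ to $W$ differs boundedly from a flat norm on $W$, which is automatic since both are flat on the punctured polydisk and the distance between two flat norms is constant. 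Using \Cref{induced_norm_Lipschitz} (the restriction map $\cN(V)\to\cN(W)$ is $1$-Lipschitz) and the triangle inequality for $d_2$ then bounds $d_\cN(\|\cdot\|_{|W},\|\cdot\|_W)$, hence all the $\lambda_i$, near $0$. The rest is routine once this boundedness is in hand.
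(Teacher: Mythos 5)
Your proposal is correct and follows essentially the same route as the paper's (much terser) proof: reduce via \Cref{sum_relative_spectrum} to showing $\lambda_1(\wedge^k\|\cdot\|_W,\wedge^k\|\cdot\|_{|W})$ is plurisubharmonic on $X$ and locally bounded at infinity, and then conclude constancy; you simply spell out the details the paper leaves implicit (local psh-ness via a flat $\|\cdot\|_W$-orthogonal basis and \Cref{sup_quotient_norms}, extension across the pluripolar singular locus, the flat-norm/triangle-inequality boundedness near $D$, and the Brelot extension plus maximum principle on compact $\bar X$). The one fact you assert without proof — that exterior powers of plurisubharmonic (resp. pluriharmonic) norms are again plurisubharmonic (resp. pluriharmonic), and that restriction commutes with exterior powers via an adapted orthogonal basis — is exactly what the paper also uses implicitly when it asserts the psh-ness of $\lambda_1$ of the exterior powers, so there is no gap relative to the paper's own argument.
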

\begin{proof}
Thanks to Corollary \ref{sum_relative_spectrum}, the function $\sum_{i=1}^k \lambda_i(\| \cdot \|_W,  \| \cdot \|_{|W})$ is equal to the function $ \lambda_1( \wedge^k\| \cdot \|_W, \wedge^k\| \cdot \|_{|W})$ for every $k$. But the latter is a plurisubharmonic on $X$ and locally bounded at infinity, hence it is constant on $X$. It follows that the function $\lambda_i(\| \cdot \|_W,  \| \cdot \|_{|W})$ are constant.
\end{proof}

\begin{thm}\label{induced_harmonic_norm_global}
Let $(\bar X,D)$ be a compact log smooth K\"ahler manifold and set $X=\bar X\setminus D$. 
Let $V$ be a $K$-local system on $X$, with quasiunipotent local monodromy.
Let $\|\cdot \|$ be a pluriharmonic norm on $V$, which is locally bounded at infinity. Let $W \subset V$ be a sub-$K$-local system. Assume that $W$ admits a pluriharmonic norm which is locally bounded at infinity. Then the norm induced by $\|\cdot \|$ on $W \oplus V/W$ is pluriharmonic and locally bounded at infinity.  
\end{thm}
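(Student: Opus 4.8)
The plan is to deduce the theorem from the local statement \Cref{induced_harmonic_norm_local}, once we know the relative spectrum is globally constant, and then to handle the ``locally bounded at infinity'' assertion separately via the Lipschitz estimate of \Cref{induced_norm_Lipschitz}.

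First I would fix notation: write $\|\cdot\|_{|W}$ for the norm on $W$ induced by $\|\cdot\|$, and let $\|\cdot\|_W$ denote the given pluriharmonic norm on $W$ that is locally bounded at infinity. Since a pluriharmonic norm is plurisubharmonic and $\|\cdot\|$ is by hypothesis locally bounded at infinity, \Cref{global_relative_spectrum} applies and shows that each function $\lambda_i(\|\cdot\|_W,\|\cdot\|_{|W})$, $1\le i\le\rk W$, is constant on $X$; in particular it is pluriharmonic. Now $X=\bar X\setminus D$ is a K\"ahler manifold, being an open subset of the compact K\"ahler manifold $\bar X$, so \Cref{induced_harmonic_norm_local} applies with the sub-local system $W$, its pluriharmonic norm $\|\cdot\|_W$, and the (pluriharmonic, because constant) relative spectrum, and yields that the norm induced by $\|\cdot\|$ on $W\oplus V/W$ is pluriharmonic on $X$.

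It remains to check that this induced norm is locally bounded at infinity. Fix an admissible polydisk $\bD^n\subset\bar X$ and restrict all data to $\bD^n\setminus D$. Since $V$ has quasiunipotent local monodromy, $V|_{\bD^n\setminus D}$ admits a flat norm $\|\cdot\|'$ by \Cref{flat_norm_quasiunipotent}, and by the local boundedness hypothesis on $\|\cdot\|$ (together with the fact that boundedness of the distance to a flat norm is independent of the chosen flat norm) the function $z\mapsto d_{\cN(V_z)}(\|\cdot\|_z,\|\cdot\|'_z)$ is bounded near $0$. Because parallel transport in $V$ preserves the flat subbundle $W$, the restriction of $\|\cdot\|'$ to $W$ and the quotient norm it induces on $V/W$ are again flat, so $\|\cdot\|'$ induces a flat norm on $W\oplus V/W$. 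By \Cref{induced_norm_Lipschitz} the canonical map $\cN(V_z)\to\cN(W_z)\times\cN((V/W)_z)$ is Lipschitz with a constant that does not depend on $z$; hence the distance between the norm induced by $\|\cdot\|$ and the flat norm induced by $\|\cdot\|'$ on $W\oplus V/W$ is bounded, near $0$, by a constant multiple of $d_{\cN(V_z)}(\|\cdot\|_z,\|\cdot\|'_z)$, and so stays bounded. As the admissible polydisk was arbitrary, the induced norm on $W\oplus V/W$ is locally bounded at infinity.

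I do not expect a serious obstacle: the statement is essentially the global upgrade of \Cref{induced_harmonic_norm_local}, with \Cref{global_relative_spectrum} (whose content is the constancy of the $\lambda_i$ via the maximum principle on the compact $\bar X$) supplying exactly the pluriharmonicity of the relative spectrum that the local lemma takes as an input. The only points requiring a little care are verifying that flat norms restrict and corestrict to flat norms, so that the comparison flat norm on $W\oplus V/W$ genuinely exists, and noting the uniformity in $z$ of the Lipschitz constant in \Cref{induced_norm_Lipschitz}, so that a fibrewise estimate produces an actual bound near the boundary.
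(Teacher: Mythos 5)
Your proposal is correct and follows essentially the same route as the paper: constancy of the relative spectrum via \Cref{global_relative_spectrum}, then \Cref{induced_harmonic_norm_local} for pluriharmonicity, and \Cref{induced_norm_Lipschitz} for local boundedness at infinity. Your extra paragraph spelling out the comparison with a flat norm (and that flat norms restrict/corestrict to flat norms on $W$ and $V/W$) is just the expected unpacking of the paper's one-line appeal to \Cref{induced_norm_Lipschitz}.
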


\begin{proof}
Let $\| \cdot \|_W$ be a pluriharmonic norm on $W$ which is locally bounded at infinity. Thanks to Proposition \ref{global_relative_spectrum}, the functions $\lambda_i(\| \cdot \|_W,  \| \cdot \|_{|W})$ are constant. It follows from Lemma \ref{induced_harmonic_norm_local} that the norm induced by $\|\cdot \|$ on $W \oplus V/W$ is pluriharmonic. Moreover, it is locally bounded at infinity thanks to Proposition \ref{induced_norm_Lipschitz}.  
\end{proof}

\begin{cor}\label{induced_pluriharmonic_norm_graded}
Let $(\bar X,D)$ be a compact log smooth K\"ahler manifold and set $X=\bar X\setminus D$. 
Let $V$ be a $K$-local system on $X $, with quasiunipotent local monodromy.
Let $\|\cdot \|$ be a pluriharmonic norm on $V$, which is locally bounded at infinity. Let $W_\bullet \subset V$ be a filtration by sub-$K$-local systems. Assume that $\gr^W V$ admits a pluriharmonic norm which is locally bounded at infinity. Then the norm induced by $\|\cdot \|$ on $\gr^W V$ is pluriharmonic and locally bounded at infinity.   
\end{cor}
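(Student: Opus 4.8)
The plan is to deduce \Cref{induced_pluriharmonic_norm_graded} from \Cref{induced_harmonic_norm_global} by an induction on the length of the filtration $W_\bullet$, together with \Cref{pluriharmonic_norm_sum}. First I would reduce to the case of a two-step filtration: if $W_\bullet$ has length one, say $0 \subset W_0 \subset V$ with $W_1 = V$, then $\gr^W V = W_0 \oplus (V/W_0)$ and the statement is exactly \Cref{induced_harmonic_norm_global} applied to $W = W_0$, using that by hypothesis $\gr^W V$, and hence each of its summands $W_0$ and $V/W_0$, admits a pluriharmonic norm which is locally bounded at infinity (this uses \Cref{pluriharmonic_norm_sum} to split off the summands). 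So the base case is immediate.

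For the inductive step, suppose $W_\bullet$ is a filtration $0 = W_{a-1} \subset W_a \subset \cdots \subset W_b = V$ of length $\geq 2$, and assume the result for all shorter filtrations (on any compact log smooth K\"ahler pair). I would apply \Cref{induced_harmonic_norm_global} to the sub-$K$-local system $W_a \subset V$: by hypothesis $\gr^W V$ admits a pluriharmonic norm locally bounded at infinity, and $\gr^W_a V = W_a$ is a direct summand of it, so by \Cref{pluriharmonic_norm_sum} $W_a$ admits such a norm; likewise $V/W_a$, which is a direct summand of $\gr^W V$, admits such a norm. Hence \Cref{induced_harmonic_norm_global} gives that the norm $\|\cdot\|'$ induced by $\|\cdot\|$ on $W_a \oplus (V/W_a)$ is pluriharmonic and locally bounded at infinity. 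Now $W_\bullet$ induces on $V/W_a$ the quotient filtration $\bar W_\bullet$ with $\bar W_i = W_i/W_a$ for $i \geq a$, which has length one less, and by construction $\gr^{\bar W}(V/W_a) = \bigoplus_{i > a} \gr^W_i V$ is a direct summand of $\gr^W V$, so it too admits a pluriharmonic norm locally bounded at infinity. Applying the inductive hypothesis to $(V/W_a, \|\cdot\|'|_{V/W_a})$ and the filtration $\bar W_\bullet$ shows that the induced norm on $\gr^{\bar W}(V/W_a)$ is pluriharmonic and locally bounded at infinity. Combining this with the pluriharmonic norm on $W_a$ from $\|\cdot\|'$ via \Cref{pluriharmonic_norm_sum}, and observing that $\gr^W V = W_a \oplus \gr^{\bar W}(V/W_a)$ as normed local systems with the norm induced from $\|\cdot\|$, completes the induction.

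The only genuine subtlety — the step I expect to be the main obstacle — is the bookkeeping needed to check that the norm on $\gr^W V$ obtained through this two-stage process (first passing to $W_a \oplus V/W_a$, then taking the graded of the quotient filtration) really does coincide with the norm directly induced by $\|\cdot\|$ on $\gr^W V$. This is a compatibility between the various "induced norm" operations: restriction to a sub-local system, passage to a quotient, and iteration thereof. Concretely one has to verify that for $W_a \subset W_i \subset V$ the norm induced on $W_i/W_a$ by the quotient norm on $V/W_a$ agrees with the quotient by $W_a$ of the restriction to $W_i$ of $\|\cdot\|$, and similarly for the successive graded pieces; this is a routine diagram chase using the explicit $\inf$-description of quotient norms and the fact that all these operations are defined stalkwise and are flat, so no analytic input beyond what is already in hand is needed. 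Everything else — pluriharmonicity and local boundedness at infinity — is transported automatically by \Cref{induced_harmonic_norm_global}, \Cref{pluriharmonic_norm_sum}, and \Cref{induced_norm_Lipschitz}.
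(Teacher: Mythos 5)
Your proposal is correct and follows essentially the same route as the paper's proof: the paper also inducts (on the rank of $V$ rather than the filtration length), peels off the smallest nonzero piece of $W_\bullet$ via \Cref{induced_harmonic_norm_global}, passes to the induced filtration on the quotient, and uses \Cref{pluriharmonic_norm_sum} to handle the direct summands. The compatibility of induced norms you flag is indeed routine (since $W_a\subset W_i$, restriction and quotient commute on the nose), and the paper leaves it implicit.
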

\begin{proof}
By induction on $\rk V$.
Let $W \subset V$ be the smallest non-zero piece of the filtration $W_\bullet$. By assumption, $W$ admits a pluriharmonic norm which is locally bounded at infinity. Thanks to Theorem \ref{induced_harmonic_norm_global}, the norm induced by $\|\cdot \|$ on $W \oplus V / W$ is pluriharmonic and locally bounded at infinity.    
Let $W^\prime_\bullet \subset V/W$ be the filtration induced by $W_\bullet$ on $V / W$. Since $\gr^{W^\prime} V / W$ is a factor of $\gr^W V$, it admits a pluriharmonic norm which is locally bounded at infinity, see Proposition \ref{pluriharmonic_norm_sum}. Therefore one can conclude by induction.  
\end{proof}

\begin{cor}\label{induced_pluriharmonic_norm_semisimple_graded}
Let $(\bar X,D)$ be a compact log smooth K\"ahler manifold and set $X=\bar X\setminus D$.  
Let $V$ be a $K$-local system on $X $, with quasiunipotent local monodromy.
Let $\|\cdot \|$ be a pluriharmonic norm on $V$, which is locally bounded at infinity. Let $W^\bullet \subset V$ be a filtration by sub-$K$-local systems. Assume that $\gr^W V$ is a semisimple $K$-local system. Then the induced norm on $\gr^W V$ is pluriharmonic and locally bounded at infinity.  
\end{cor}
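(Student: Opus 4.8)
The plan is to deduce this from \Cref{induced_pluriharmonic_norm_graded}, whose hypothesis differs only in that it asks $\gr^W V$ to carry \emph{some} pluriharmonic norm which is locally bounded at infinity; so the whole task is to produce such a norm out of the semisimplicity assumption.

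First I would note that $\gr^W V$ has quasiunipotent local monodromy: in an admissible polydisk $\bD^n\subset\bar X$ meeting $D$, each local monodromy operator of $V$ preserves the filtration $W^\bullet$, hence acts on $\gr^W V$ through its associated graded, whose eigenvalues form a sub-multiset of the (root-of-unity) eigenvalues of the operator on $V$. Since $\gr^W V$ is moreover semisimple by hypothesis, \Cref{existence_pluriharmonic_norm} applies: fixing once and for all a complete K\"ahler metric of Poincar\'e type on $X$, the $K$-local system $\gr^W V$ carries a pluriharmonic norm $\|\cdot\|'$ of finite energy with respect to this metric. (Here one invokes the projective case, which is the setting in which this corollary is used; the K\"ahler hypothesis in the statement serves only the harmonic-map formalism.)

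Next I would check that $\|\cdot\|'$ is locally bounded at infinity. Let $\bD^n\subset\bar X$ be an admissible polydisk, so that the restriction of the chosen Poincar\'e-type metric to $\bD^n\setminus D$ is comparable to the Poincar\'e metric there. The total energy of $\|\cdot\|'$ on $X$ is finite, hence so is its energy on $\bD^n\setminus D$, and by comparability of the two metrics also its energy with respect to the Poincar\'e metric on $\bD^n\setminus D$. Then \Cref{finite_energy_implies_bounded} shows the restriction of $(\gr^W V,\|\cdot\|')$ to $\bD^n\setminus D$ is bounded in the neighborhood of $0$; since this holds for every admissible polydisk, $\|\cdot\|'$ is locally bounded at infinity.

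Finally, with the pluriharmonic, locally-bounded-at-infinity norm $\|\cdot\|'$ on $\gr^W V$ in hand, \Cref{induced_pluriharmonic_norm_graded} (applied to $V$, the norm $\|\cdot\|$, the filtration $W^\bullet$, and the auxiliary norm $\|\cdot\|'$) yields that the norm induced by $\|\cdot\|$ on $\gr^W V$ is pluriharmonic and locally bounded at infinity, which is the assertion. The only genuinely nonformal inputs are \Cref{existence_pluriharmonic_norm} (existence of pluriharmonic norms on semisimple non-archimedean local systems) and the asymptotic control of \Cref{finite_energy_implies_bounded}, both already established; no new obstacle arises. The single point demanding a little care is the transfer of finiteness of energy from the global Poincar\'e-type metric to the local Poincar\'e metric on each polydisk, and this is immediate from the definition of a metric of Poincar\'e type.
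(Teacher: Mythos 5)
Your proposal is correct and follows essentially the same route as the paper, whose proof is exactly the combination of \Cref{existence_pluriharmonic_norm} (giving a finite-energy pluriharmonic norm on the semisimple local system $\gr^W V$, hence locally bounded at infinity by \Cref{finite_energy_implies_bounded}) with \Cref{induced_pluriharmonic_norm_graded}. You merely spell out the details (quasiunipotence of $\gr^W V$, the projective versus K\"ahler point, and the local finiteness of energy) that the paper leaves implicit.
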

\begin{proof}
Follows from Theorem \ref{existence_pluriharmonic_norm} and Corollary \ref{induced_pluriharmonic_norm_graded}.
\end{proof}
\subsection{The characteristic polynomial of a pluriharmonic local system}\label{sect:KZ foliation}
The material presented in this section is close to some material presented in \cite{Katzarkov97, Jost-Zuo00, Eyssidieux, Klingler03, Corlette-Simpson, BDDM} in a slightly different setting. 

Let $(V,\|\cdot \|)$ be a pluriharmonic $K$-local system of rank $r$ on a complex manifold $X$.  Let $x \in X \setminus \Sing(V,\|\cdot \|)$ and $\{v_1, \ldots , v_r\}$ be a flat orthogonal basis of $V$ in a neighborhood $U$ of $x$. Let $\omega_i := \partial \log \| v_i \|$ for every $i$. Since the functions $\log \| v_i \| $ are pluriharmonic, the $\omega_i$'s are holomorphic one-forms. Another choice of a flat orthogonal basis yields the same collection, up to permutation, thanks to the following lemma.

\begin{lem}
Let $(V,\|\cdot \|)$ be a harmonic $K$-local system on a connected complex manifold $U$.
Let $\{v_1, \ldots , v_r\}$ and $\{v_1^\prime, \ldots , v_r^\prime\}$ be two flat orthogonal basis of $V$. Then, up to reordering the first basis, one has $\log \| v_i \| = \log \| v_i^\prime \| + k_i q$ for some integers $k_i$.
\end{lem}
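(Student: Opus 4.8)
The plan is to compare the two frames through the top exterior power, where everything collapses to a statement about the valuation of a single determinant. First I would observe that, since $\{v_i\}$ and $\{v_i'\}$ are both flat frames of $V$ over the connected manifold $U$, the transition matrix $A=(a_{ij})\in\GL_r(K)$ defined by $v_i'=\sum_j a_{ij}v_j$ is \emph{constant}, and in particular $\det A\in K^\ast$. Orthogonality of $\{v_j\}$ then gives, at every point $x\in U$,
\[
\|v_i'\|_x \;=\; \max_j |a_{ij}|\,\|v_j\|_x \;\ge\; |a_{i\pi(i)}|\,\|v_{\pi(i)}\|_x
\]
for every permutation $\pi$ of $\{1,\dots,r\}$; and since $v_1'\wedge\cdots\wedge v_r'=(\det A)\,v_1\wedge\cdots\wedge v_r$ spans the line $\wedge^r V$, the formula for the induced norm on $\wedge^r V$ yields the pointwise identity $\prod_i\|v_i'\|_x=|\det A|\,\prod_i\|v_i\|_x$.

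Next I would extract the ``tropical'' behaviour of $\det A$. Multiplying the displayed inequalities over $i$ (all quantities are nonnegative) and cancelling $\prod_i\|v_i\|_x>0$ gives $|\det A|\ge\prod_i|a_{i\pi(i)}|$ for \emph{every} $\pi$. On the other hand the ultrametric inequality applied to $\det A=\sum_\pi\operatorname{sgn}(\pi)\prod_i a_{i\pi(i)}$ gives $|\det A|\le\max_\pi\prod_i|a_{i\pi(i)}|$. Hence $|\det A|=\prod_i|a_{i\pi^\ast(i)}|$ for some permutation $\pi^\ast$ attaining the maximum (depending only on $A$, not on $x$), and necessarily $a_{i\pi^\ast(i)}\ne 0$ for all $i$. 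Feeding $\pi=\pi^\ast$ back into the product of the inequalities, its two sides become $\prod_i\|v_i'\|_x$ and $(\prod_i|a_{i\pi^\ast(i)}|)\prod_i\|v_i\|_x=|\det A|\prod_i\|v_i\|_x=\prod_i\|v_i'\|_x$, so they agree; since each left factor dominates the corresponding right factor, each inequality $\|v_i'\|_x\ge|a_{i\pi^\ast(i)}|\,\|v_{\pi^\ast(i)}\|_x$ must in fact be an \emph{equality}, for every $i$ and every $x$. Taking logarithms, $\log\|v_{\pi^\ast(i)}\|=\log\|v_i'\|-\log|a_{i\pi^\ast(i)}|$ identically on $U$, and $\log|a_{i\pi^\ast(i)}|\in\log|K^\ast|=q\bZ$; reordering the first basis by $\pi^\ast$ gives the assertion with $k_i=-\log|a_{i\pi^\ast(i)}|/q\in\bZ$.

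The only real obstacle is the identity $|\det A|=\max_\pi\prod_i|a_{i\pi(i)}|$ — i.e. the absence of cancellation in the determinant expansion — and this is exactly where orthogonality of \emph{both} frames enters, through the $\wedge^r$-norm identity; the rest is bookkeeping. I would also remark that this route uses neither harmonicity nor the complex structure of $U$ beyond the connectedness needed to see that $A$ is constant. Alternatively one can argue by induction on $\rk V$, peeling off one basis vector at a time using Lemma~\ref{switch_harmonic}, Corollary~\ref{switch} and Lemma~\ref{lem:basic}; the pluriharmonicity hypothesis makes that route work as well, but the exterior-power argument above is shorter and self-contained.
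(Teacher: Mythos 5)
Your proof is correct, and it takes a genuinely different route from the paper's. The paper argues analytically: by orthogonality of $\{v_j\}$, at every point the value $\log\|v_i'(x)\|$ lies in $\bigcup_j\bigl(\log\|v_j(x)\|+\log|K^\ast|\bigr)$, so $U$ is a countable union of closed coincidence sets $\{\log\|v_i'\|=\log\|v_j\|+k\}$; Baire's theorem produces one with nonempty interior, and real-analyticity of the harmonic functions $\log\|v_j\|$, $\log\|v_i'\|$ propagates the identity over the connected $U$. You instead work pointwise with the constant transition matrix $A$: orthogonality together with the determinant-norm identity on $\wedge^r V$ (whose well-definedness is exactly the Goldman--Iwahori fact quoted before \Cref{relative_spectrum_basis}) forces $|\det A|=\max_\pi\prod_i|a_{i\pi(i)}|$, and your termwise-equality argument then pins down $\|v_i'\|=|a_{i\pi^\ast(i)}|\,\|v_{\pi^\ast(i)}\|$ identically on $U$. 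What your route buys: no Baire category and no use of harmonicity or the complex structure (only flatness of both frames plus connectedness), a stronger pointwise statement with explicit constants, and — notably — a genuine permutation $\pi^\ast$, so the bijective matching asserted by ``up to reordering'' falls out directly, whereas the paper's argument as written only assigns to each $i$ some $j$ and needs an extra remark to see the assignment can be made bijective. What the paper's route buys is brevity given the harmonic/real-analytic toolkit already in play in this section (and it is the same style of argument as \Cref{switch_harmonic}, which you correctly note could also drive an inductive proof). Your normalization $k_i=-\log|a_{i\pi^\ast(i)}|/q$ inherits the same $q$ versus $\log q$ slippage present in the statement itself; this is purely cosmetic.
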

\begin{proof}
For every $x \in U$, one has $\log \| V_x \setminus \{0\} \| = \cup_i \log \| v_i \| + q \bZ$. Therefore, for a fixed $i$, $U$ is the union the closed subsets $\{ \log \| v_i^\prime \| = \log \|v_j \| + k \}$ with $j \in \{1, \ldots, r\}$ and $k \in \bZ$. By Baire category theorem, one of these subsets has non empty interior, hence is equal to $U$ by analytic continuation.    
\end{proof}

In particular, if $T$ is a formal variable, the coefficients of the polynomial $P(T):= \prod_{i} (T -\omega_i) = T^r + \sigma_1 T^{r-1} + \cdots + \sigma_r$ are well-defined holomorphic symmetric forms on $U\setminus \Sing(V,\|\cdot \|)$. Since these forms do not depend on any choices, varying the point $x$, we get a well-defined polynomial $P(T)= T^r + \sigma_1 T^{r-1} + \cdots + \sigma_r$ whose coefficients are holomorphic symmetric forms on $X\setminus \Sing(V,\|\cdot \|)$. The pluriharmonic map $\tilde{X} \to \cN(V_{x_0})$ associated to $(V,\|\cdot \|)$ is locally Lipschitz continuous, therefore the forms $\sigma_i$ are locally bounded in the neighborhood of every point of $\Sing(V,\|\cdot \|)$. Since by Theorem \ref{Gromov-Schoen-regular} the singular locus of $(V,\|\cdot \|)$ has Hausdorff codimension at least $2$, they extend to $X$ by \cite[Lemma 3.(ii)]{Schiffman68}. We call the polynomial $P(T)$ the characteristic polynomial of $(V,\|\cdot \|)$.

\begin{prop}\label{characteristic_polynomial_pullback}
Let $(V,\|\cdot \|)$ be a pluriharmonic $K$-local system on a complex manifold $X$. Let $Y$ be a complex manifold and $f \colon Y \to X$ be a holomorphic map.
Then the characteristic polynomial of $f^*(V,\|\cdot \|)$ is equal to the pull-back of the  characteristic polynomial of $(V,\|\cdot \|)$.
\end{prop}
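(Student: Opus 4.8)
The plan is to reduce everything to the elementary fact that a flat orthogonal basis pulls back to a flat orthogonal basis, and then to deal with the possibly degenerate behaviour of $f$ near $\Sing(V,\|\cdot\|)$ by an approximation argument, in the spirit of the proof of \Cref{extension_pluriharmonic_map}. First I would observe that $f^{*}(V,\|\cdot\|)$ is again a pluriharmonic $K$-local system: its associated $\rho$-equivariant map is $u\circ\tilde f\colon\tilde Y\to\cN(V_x)$, where $u\colon\tilde X\to\cN(V_x)$ is the pluriharmonic map attached to $(V,\|\cdot\|)$, and this composite is pluriharmonic by \Cref{def-pluriharmonicity}, since for every holomorphic map $N\to Y$ from a Kähler manifold the composition $N\to X$ is holomorphic. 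Hence the characteristic polynomial $Q(T)=T^{r}+\tau_{1}T^{r-1}+\cdots+\tau_{r}$ of $f^{*}(V,\|\cdot\|)$ is well defined, with the $\tau_{j}$ holomorphic symmetric forms on $Y$. Since $Q(T)=f^{*}P(T)$ is an identity between holomorphic symmetric forms on $Y$, by the identity theorem it suffices to check it in a neighbourhood of a dense set of points of each connected component of $Y$; note also that $f^{-1}(\Sing(V,\|\cdot\|))$ is a complex analytic subset of $Y$ (the preimage of the analytic set furnished by \Cref{Singular_locus_pluripolar}), so on each component it is either everything or nowhere dense.

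The generic case is essentially formal. Let $y_{0}\in Y$ with $f(y_{0})\notin\Sing(V,\|\cdot\|)$, and choose a flat basis $\{e_{1},\dots,e_{r}\}$ of $V$ over a neighbourhood $N$ of $f(y_{0})$ which is $\|\cdot\|$-orthogonal at every point of $N$. Then $\{f^{*}e_{1},\dots,f^{*}e_{r}\}$ is a flat basis of $f^{*}V$ over $f^{-1}(N)$, orthogonal for $f^{*}\|\cdot\|$ at every point (orthogonality is a fibrewise condition and $f^{*}\|\cdot\|$ is the fibrewise pullback norm), and $\log\|f^{*}e_{k}\|_{f^{*}\|\cdot\|}=f^{*}\log\|e_{k}\|$, so that $\partial\log\|f^{*}e_{k}\|_{f^{*}\|\cdot\|}=f^{*}\omega_{k}$ with $\omega_{k}=\partial\log\|e_{k}\|$. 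Consequently, near $y_{0}$ one has $Q(T)=\prod_{k}(T-f^{*}\omega_{k})=f^{*}\bigl(\prod_{k}(T-\omega_{k})\bigr)=f^{*}P(T)$. Hence, on any component $Y_{0}$ of $Y$ with $f(Y_{0})\not\subseteq\Sing(V,\|\cdot\|)$ — for instance whenever $f|_{Y_{0}}$ is open, or non-constant with image not contained in $\Sing(V,\|\cdot\|)$ — the set of such $y_{0}$ is dense and the identity follows on $Y_{0}$.

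It remains to treat a component $Y_{0}$ with $f(Y_{0})\subseteq\Sing(V,\|\cdot\|)$. If $f|_{Y_{0}}$ is constant then both sides equal $T^{r}$ (a constant local system has a flat orthogonal basis of constant norm, and $f^{*}\sigma_{j}=0$ for $j\geq 1$). Otherwise, fix a small ball $\bB\subset Y_{0}$ whose image lies in a coordinate chart around the compact set $f(\overline{\bB})$, and approximate $f|_{\bB}$ uniformly by holomorphic maps $g_{m}\colon\bB\to X$ (generic small perturbations inside the chart) for which $g_{m}^{-1}(\Sing(V,\|\cdot\|))$ is nowhere dense in $\bB$; by the previous paragraph (applied on the dense open set $\bB\setminus g_{m}^{-1}(\Sing(V,\|\cdot\|))$, then by the identity theorem on the connected $\bB$) the characteristic polynomial of $g_{m}^{*}(V,\|\cdot\|)$ equals $g_{m}^{*}P$ on all of $\bB$. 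Since $g_{m}\to f$ uniformly, $g_{m}^{*}P\to f^{*}P$ in $C^{\infty}_{\mathrm{loc}}$ (the coefficients of $P$ are fixed holomorphic forms on $X$), while the pluriharmonic maps $u\circ\widetilde{g_{m}}$ converge uniformly on compacts to $u\circ\tilde f$ ($u$ being locally Lipschitz); using the Lipschitz estimate \Cref{energy_control_Lipschitz}, together with \Cref{limit_of_harmonic_is_sometime_harmonic} and \Cref{KoSc-trace}, one sees that the coefficient forms of the characteristic polynomial of $g_{m}^{*}(V,\|\cdot\|)$ — being, on the common regular locus, $\partial$ of pieces of the locally uniformly converging pluriharmonic potentials, hence holomorphic and locally uniformly bounded (again by \Cref{energy_control_Lipschitz}) — converge locally uniformly by Montel's theorem to those of $f^{*}(V,\|\cdot\|)$. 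Passing to the limit yields $Q(T)=f^{*}P(T)$ on $\bB$, hence on $Y_{0}$.

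The part I expect to be the main obstacle is this last step: making precise that the characteristic polynomial depends continuously on the pluriharmonic norm under uniform convergence of $f$, in particular controlling the varying singular loci $\Sing(g_{m}^{*}(V,\|\cdot\|))$ and upgrading $C^{0}$-convergence of the building-valued maps to $C^{\infty}_{\mathrm{loc}}$-convergence of the holomorphic coefficient forms. Everything else is either the routine fibrewise computation above or a direct application of results already established in the excerpt (the extension of the $\sigma_{j}$ across codimension-$\geq 2$ singular loci via \cite[Lemma 3.(ii)]{Schiffman68} and \Cref{Gromov-Schoen-regular}, and the pluriharmonicity of $f^{*}(V,\|\cdot\|)$).
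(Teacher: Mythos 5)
Your reduction to the case of a connected component $Y_0$ with $f(Y_0)\subseteq\Sing(V,\|\cdot\|)$ is sound: the fibrewise computation with a pulled-back flat orthogonal frame, plus the identity theorem for the holomorphic coefficient forms, does settle every component whose image meets the regular locus (one small imprecision: \Cref{Singular_locus_pluripolar} only says $\Sing(V,\|\cdot\|)$ is \emph{contained in} an analytic set, so $f^{-1}(\Sing(V,\|\cdot\|))$ need not be analytic; but since a nonempty open set of agreement suffices on a connected component, this does not hurt you). The problem is that the remaining case is exactly the content of the proposition, and your approximation argument does not close it. Writing $u_U$ for a local lift of the pluriharmonic map over a chart containing $f(\overline{\bB})$, what you must show is: if $u_U\circ g_m\to u_U\circ f$ uniformly, then the characteristic polynomial of $u_U\circ f$ is the limit of the characteristic polynomials of $u_U\circ g_m$. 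Montel (plus the already-established identity for $g_m$) only tells you that the characteristic polynomials of the $g_m^*(V,\|\cdot\|)$ converge to $f^*P$ — which you knew, since they \emph{equal} $g_m^*P$ — it does not identify that limit with the characteristic polynomial of $f^*(V,\|\cdot\|)$. None of \Cref{energy_control_Lipschitz}, \Cref{limit_of_harmonic_is_sometime_harmonic} or \Cref{KoSc-trace} supplies this: they give Lipschitz/energy bounds and harmonicity of the limit, not convergence of the spectral data. Your phrase ``$\partial$ of pieces of the locally uniformly converging pluriharmonic potentials'' is where the gap sits: uniform convergence of building-valued maps does not give you a common apartment, a common regular locus, or a matching of the local potentials of $u_U\circ g_m$ with those of $u_U\circ f$; the singular loci $\Sing(g_m^*(V,\|\cdot\|))$ move with $m$, and $C^0$-convergence of maps into a building does not by itself control derivatives of apartment coordinates in the limit. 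Making this precise amounts to proving continuity of the characteristic polynomial under uniform limits of pluriharmonic maps (e.g.\ via convergence of directional energy measures), which is a genuine piece of analysis you have not carried out — and which is essentially as hard as the proposition itself.

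For comparison, the paper's proof sidesteps the degenerate case entirely by working with an invariant that is defined everywhere and manifestly pulls back: the length structure on $\tilde X$ obtained by pushing forward arcs to the building and measuring them in apartments. Since both length structures on $\tilde Y$ and $\tilde X$ are pulled back from $\Delta$, the one on $\tilde Y$ is the pullback of the one on $\tilde X$ with no genericity assumption on $f$; the only work is to show the length structure determines $P_V$, which is done by deforming arcs into the regular locus (\Cref{Gromov-Schoen-regular}) and choosing arcs that isolate the real part of a single root. If you want to salvage your route, you would either need to prove the convergence-of-spectral-data statement above, or replace the limiting step by an argument of this length-structure type, which handles $f(Y_0)\subseteq\Sing(V,\|\cdot\|)$ directly and makes the perturbation unnecessary.
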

\begin{proof}
As in \cite[\S1.4]{Eyssidieux}.  The pluriharmonic norm corresponds to a pluriharmonic map $u:\tilde X\to \Delta$ which is locally Lipschitz continuous (\Cref{Dirichlet-NPC}).  The building $\Delta$ has a length structure:  any Lipschitz continuous arc $\gamma:[0,1]\to\Delta$ has a well defined length by using the local metric embedding in euclidean space.  Concretely, any such arc can be decomposed into arcs contained in an apartment, and the length of such an arc is the standard length with respect to the euclidean metric given by the canonical coordinates.  By pushing forward arcs, $\tilde X$ acquires a length structure.  By deforming any arc to the regular locus using \Cref{Gromov-Schoen-regular}, it is equivalently described by taking the integral of the square root of the sum of the squares of the real parts of the roots of $P_V$.  The length structure on $\tilde Y$ is clearly the pullback of that of $\tilde X$, as both are pulled back from $\Delta$.  It therefore remains to show the length structure determines the characteristic polynomial.  In the regular locus we may find arcs for which all but one of the real parts of the roots vanish, and then the length function determines the restriction of the form.  Taking such arcs through a point we thereby determine the real parts of the roots, hence the roots, and therefore the characteristic polynomial at that point.  As the regular locus is dense, the length structure determines $P_V$.
\end{proof}

\begin{prop}\label{characteristic_polynomial_exact_sequence}
If $0 \to (V_1,\|\cdot \|_1) \to (V,\|\cdot \|) \to (V_2,\|\cdot \|_2) \to 0$ is an exact sequence of pluriharmonic $K$-local systems on a complex manifold $X$, then the characteristic polynomial of $(V,\|\cdot \|)$ is equal to the product of the  characteristic polynomials of $(V_1,\|\cdot \|_1)$ and $(V_2,\|\cdot \|_2)$.
\end{prop}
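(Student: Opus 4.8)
The statement is the exact-sequence analogue of \Cref{characteristic_polynomial_pullback} (pullback) for pluriharmonic $K$-local systems, and I would prove it by the same local analysis. Fix a point $x\in X$ away from the union $S:=\Sing(V,\|\cdot\|)\cup\Sing(V_1,\|\cdot\|_1)\cup\Sing(V_2,\|\cdot\|_2)$; by \Cref{Gromov-Schoen-regular} each of these singular loci has Hausdorff codimension $\geq 2$, so $S$ does too, and since the coefficients of all three characteristic polynomials are holomorphic symmetric forms that are locally bounded near $S$ (the associated pluriharmonic maps being locally Lipschitz), it suffices by \cite[Lemma 3.(ii)]{Schiffman68} to prove the identity on $X\setminus S$. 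So I may work in a simply-connected neighborhood $U$ of $x$ on which all three normed local systems admit flat orthogonal bases.

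**Key steps.** First, on such a $U$ choose a flat orthogonal basis $\{w_1,\dots,w_s\}$ of $(V_1,\|\cdot\|_1)$, and observe that the induced norm $\|\cdot\|_{|V_1}$ on $V_1\subset V$ satisfies $\lambda_i(\|\cdot\|_1,\|\cdot\|_{|V_1})=0$ for all $i$: indeed by \Cref{global_relative_spectrum} (or directly, since $\wedge^k$ of each relative spectrum sum is plurisubharmonic, bounded, hence constant, and one checks the constant is $0$ by comparing the flat structures) these functions are constant, and one pins the constant down because both norms restrict to flat orthogonal bases of $V_1$ on $U$. Hence by \Cref{induced_harmonic_norm_local} (applied with $W=V_1$) the norm induced by $\|\cdot\|$ on $V_1\oplus V/V_1$ is pluriharmonic, and a further comparison of flat orthogonal bases on $U$ shows the induced norm on $V_1$ agrees with $\|\cdot\|_1$ and the induced quotient norm on $V_2=V/V_1$ agrees with $\|\cdot\|_2$ up to the constant ambiguity $q\bZ$ (which does not affect $\partial\log\|\cdot\|$). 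Second, using \Cref{switch_harmonic} repeatedly, I would produce a flat basis $\{v_1,\dots,v_r\}$ of $V$ on $U$ which is simultaneously orthogonal for $\|\cdot\|$, such that $\{v_1,\dots,v_s\}$ spans $V_1$ and is a flat orthogonal basis for $\|\cdot\|_1$, and such that the images $\bar v_{s+1},\dots,\bar v_r$ form a flat orthogonal basis of $V_2$ for $\|\cdot\|_2$; this is exactly the conclusion of the proof of \Cref{induced_harmonic_norm_local_with_basis} applied inductively. Third, with such a basis, $\omega_i=\partial\log\|v_i\|$ for $i\le s$ are the forms computing $P_{V_1}$ (by definition of $\|\cdot\|_1$ on $U$), and $\partial\log\|\bar v_i\|$ for $i>s$ are the forms computing $P_{V_2}$ (these equal $\partial\log\|v_i\|$ since the quotient norm of $\bar v_i$ equals $\|v_i\|$ by orthogonality of the decomposition $Kv_i\oplus(\text{span of the others})$), while $\{\partial\log\|v_1\|,\dots,\partial\log\|v_r\|\}$ is by definition the multiset computing $P_V$. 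Therefore $P_V(T)=\prod_{i=1}^r(T-\partial\log\|v_i\|)=\prod_{i\le s}(T-\omega_i)\cdot\prod_{i>s}(T-\partial\log\|\bar v_i\|)=P_{V_1}(T)\,P_{V_2}(T)$ on $U$, and hence on all of $X$ by the extension argument.

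**Main obstacle.** The routine parts are the bookkeeping with flat orthogonal bases and the extension across $S$. The genuinely substantive input is showing that the induced norm on $V_1$ (resp.\ on the quotient $V_2$) is again pluriharmonic and that its relative spectrum against the given $\|\cdot\|_1$ (resp.\ $\|\cdot\|_2$) is constant — equivalently, that the short exact sequence is ``orthogonally split'' at the level of characteristic data. This is precisely where I rely on \Cref{induced_harmonic_norm_local}, \Cref{global_relative_spectrum}, and \Cref{switch_harmonic}; the only delicate point is verifying the vanishing (not merely constancy) of the relative spectra $\lambda_i(\|\cdot\|_1,\|\cdot\|_{|V_1})$, which forces one to compare the two flat lattice/norm structures on $V_1$ over $U$ and invoke that they share a flat orthogonal basis there. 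Once that is in hand the rest is a direct computation with $\partial\log\|\cdot\|$.
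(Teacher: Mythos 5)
Your proof is correct and follows essentially the same route as the paper: work near a point of the common regular locus, apply \Cref{induced_harmonic_norm_local_with_basis} to produce a flat basis of $V$ orthogonal for $\|\cdot \|$ whose first vectors form a flat orthogonal basis of $(V_1,\|\cdot \|_1)$ and whose remaining images form one of $(V_2,\|\cdot \|_2)$, multiply the resulting local factors, and conclude on all of $X$ by continuity/analytic continuation of the (already globally defined) coefficient forms. The only superfluous part is your first step: since an exact sequence of pluriharmonic normed local systems means $\|\cdot \|_1$ is the restriction of $\|\cdot \|$ and $\|\cdot \|_2$ the quotient norm, the relative spectra $\lambda_i(\|\cdot \|_1,\|\cdot \|_{|V_1})$ vanish identically, so the hypothesis of \Cref{induced_harmonic_norm_local_with_basis} is automatic and no appeal to \Cref{global_relative_spectrum} (which anyway assumes a compact log smooth K\"ahler base and boundedness at infinity, unavailable for a general complex manifold $X$) is needed.
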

\begin{proof}
One can assume that $X$ is connected. Let $x \in \Reg (V,\|\cdot \|) \cap  \Reg(V_1,\|\cdot \|_1)$. Thanks to Lemma \ref{induced_harmonic_norm_local_with_basis} there exists in the neighborhood of $x$ a flat orthogonal basis $(v_1, \ldots , v_k)$ of $ (V_1,\|\cdot \|_1)$ that completes into a flat orthogonal basis $(v_1, \ldots, v_r)$ of $(V,\|\cdot \|)$. A fortiori, the images of $(v_{k +1}, \ldots, v_r)$ in $V_2$ give a flat orthogonal basis of $(V_2,\|\cdot \|_2)$.
Therefore, the equality from the statement holds in a neighborhood of $x$, hence on the whole of $X$ by analytic continuation.
\end{proof}

\begin{defn}
Let $(V,\|\cdot \|)$ be a pluriharmonic $K$-local system of rank $r$ on a complex manifold $X$.
We say that its characteristic polynomial $P(T)$ is split if there exists $r$ holomorphic one-forms $\omega_1, \cdots, \omega_r \in H^0(X, \Omega_{X}^1)$ such that $P(T):= \prod_{i} (T -\omega_i)$.    
\end{defn}

By construction, the characteristic polynomial of a pluriharmonic $K$-local system $(V,\|\cdot \|)$ is split in the neighborhood of every point of $X \setminus \Sing(V,\|\cdot \|)$.

\begin{prop}\label{extended_characteristic_polynomial}
Let $(\bar X,D)$ be a compact log smooth K\"ahler manifold and set $X=\bar X\setminus D$. Let $(V,\|\cdot \|)$ be a pluriharmonic $K$-local system of rank $r$ on $X$, with quasiunipotent local monodromy and finite energy. Then the coefficients of its characteristic polynomial extend (uniquely) as holomorphic logarithmic symmetric forms on $(\bar X, D)$.   
Moreover, if the characteristic polynomial $P(T)$ of $(V,\|\cdot \|_V)$ is split, then there exist $n$ holomorphic one-forms $\omega_1, \ldots, \omega_r$ on $\bar X$ such that $P(T)= \prod_{i} (T - {\omega_i}_{|X})$.
\end{prop}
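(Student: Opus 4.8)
The plan is to reduce both assertions to local computations near the boundary $D$, using the asymptotic control provided by the finite energy hypothesis. First I would recall that the coefficients $\sigma_i$ of the characteristic polynomial $P(T)$ are holomorphic symmetric forms on $X$ (constructed earlier in this subsection), so the only issue is their behavior along $D$. Since $X$ admits a complete Kähler metric of Poincaré type and $(V,\|\cdot\|)$ has finite energy, the associated pluriharmonic map $u\colon \tilde X\to\cN(V_{x_0})$ is Lipschitz continuous near each point of $D$ with respect to the Poincaré metric: this is \Cref{finite_energy_implies_Lipschitz} applied in an admissible polydisk $\bD^n$ meeting $D$. On an admissible polydisk with coordinates $z_1,\dots,z_n$ and $D\cap\bD^n=\{z_1\cdots z_k=0\}$, a one-form $\omega_i=\partial\log\|v_i\|$ pulled back from a local flat orthogonal basis is then estimated: Lipschitz continuity with respect to the Poincaré metric, whose $(1,1)$-form is bounded above by $C\sum_j \frac{i\,dz_j\wedge d\bar z_j}{|z_j|^2(\log|z_j|)^2}$, forces $|\omega_i|$ to be bounded by $C'\sum_{j\le k}\frac{1}{|z_j||\log|z_j||}+C'\sum_{j>k}1$ in the Poincaré norm — hence each $\omega_i$ has at worst a logarithmic pole, i.e. lies in $H^0(\bD^n,\Omega^1_{\bar X}(\log D))$ after shrinking (here one uses the standard fact that a holomorphic one-form on $\bD^*$ whose Poincaré norm grows slower than any power of $-\log|z|$ extends with log poles). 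Consequently the elementary symmetric functions $\sigma_i$, being polynomials in the $\omega_i$ with bounded coefficients, extend as sections of $\Sym^i\Omega^1_{\bar X}(\log D)$ on $\bD^n$; since the singular locus of $(V,\|\cdot\|)$ has Hausdorff codimension $\ge 2$ by \Cref{Gromov-Schoen-regular}, the forms are already known to extend over $\Sing$, and by Hartogs-type extension (as in \cite[Lemma 3.(ii)]{Schiffman68}) the global extension to $(\bar X,D)$ follows. Uniqueness of the extension is clear since $X$ is dense.

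For the second assertion I would argue as follows. Suppose $P(T)=\prod_i(T-\omega_i)$ with $\omega_i\in H^0(X,\Omega^1_X)$ globally on $X$. By the first part, $P(T)$ extends to $(\bar X,D)$ with logarithmic coefficients, so in particular each $\sigma_i$ — and hence the discriminant of $P$ — extends. The key point is that each individual root $\omega_i$ must then extend \emph{without} poles. Over the locus where the roots are distinct (the complement of the zero divisor of the discriminant), the $\omega_i$ are locally the branches of $P$ and each extends with at worst log poles by the Poincaré-Lipschitz estimate above. But a holomorphic one-form on $X$ which extends with log poles along $D$ and which is already \emph{given} as a global holomorphic form on $X$ must in fact be holomorphic on $\bar X$: the residue along each component $D_j$ is a holomorphic function on $D_j$, but this residue is computed from the asymptotics of $\log\|v_i\|$, and the finite energy hypothesis (via the nilpotency of residues in \Cref{unipotent_residues_equivalent_finite_energy}, or directly from the boundedness of the harmonic map along $D$ after passing to the unipotent cover) forces the growth of $\log\|v_i\|$ to be $o(-\log|z|)$ in each variable, which kills the residue. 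Hence $\omega_i\in H^0(\bar X,\Omega^1_{\bar X})$. One has to handle the codimension-two locus where the discriminant vanishes but the roots were globally defined on $X$: there the extension of each $\omega_i$ over this locus follows again from the codimension-$\ge 2$ regularity statement together with the fact that the $\omega_i$ are locally bounded.

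The main obstacle I anticipate is making the passage from ``Lipschitz with respect to the Poincaré metric'' to ``logarithmic pole, with vanishing residue'' fully rigorous — in particular, ruling out a genuine $d\log z_j$ contribution in $\omega_i$. The clean way is: after a finite étale cover the local monodromy becomes unipotent (\Cref{from quasiunipotent to unipotent}), and on a polydisk the norm is then bounded (\Cref{finite_energy_implies_bounded}), so $\log\|v_i\|$ is bounded, whence $\partial\log\|v_i\|$ has no $\frac{dz_j}{z_j}$ term at all; descending back via the finite cover and averaging (the characteristic polynomial is insensitive to finite étale pullback by \Cref{characteristic_polynomial_pullback}) shows the pulled-back $\omega_i$ are holomorphic, hence so are the original ones after accounting for the branching — the ramification only rescales, and since the $\omega_i$ were globally holomorphic on $X$ to begin with, the boundary extension is forced to be holomorphic. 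I would structure the write-up so that the first paragraph proves the log-extension of the $\sigma_i$ in general, and the second paragraph upgrades to holomorphic one-forms under the split hypothesis using the unipotent reduction.

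\begin{proof}
Both statements are local along $D$, so we work in an admissible polydisk $\bD^n\subset\bar X$ with $D\cap\bD^n=\{z_1\cdots z_k=0\}$ and we equip $\bD^n\setminus D$ with the restriction of a Poincaré-type metric. Since $(V,\|\cdot\|)$ has finite energy, \Cref{finite_energy_implies_Lipschitz} shows that the associated pluriharmonic map is Lipschitz continuous on $\bD^n\setminus D$ near $0$ with respect to the Poincaré metric. On the regular locus $X\setminus\Sing(V,\|\cdot\|)$, choose locally a flat orthogonal basis $(v_1,\dots,v_r)$; the one-forms $\omega_i=\partial\log\|v_i\|$ are holomorphic, and Lipschitz continuity with respect to the Poincaré metric bounds their Poincaré norm, which forces each $\omega_i$ to have at worst a logarithmic pole along $D$. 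Hence the elementary symmetric polynomials $\sigma_1,\dots,\sigma_r$ in the $\omega_i$, which are the globally well-defined coefficients of $P(T)$ on $X\setminus\Sing(V,\|\cdot\|)$, extend to $\bD^n$ as sections of $\Sym^i\Omega^1_{\bar X}(\log D)$. By \Cref{Gromov-Schoen-regular} the singular locus of $(V,\|\cdot\|)$ has Hausdorff codimension at least $2$, so using \cite[Lemma 3.(ii)]{Schiffman68} these sections extend across $\Sing(V,\|\cdot\|)$ as well; patching over $\bar X$ and using density of $X$ for uniqueness, we obtain that the coefficients of $P(T)$ extend uniquely as holomorphic logarithmic symmetric forms on $(\bar X,D)$.

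Now assume $P(T)=\prod_{i}(T-\omega_i)$ with $\omega_i\in H^0(X,\Omega^1_X)$. By \Cref{from quasiunipotent to unipotent} there is a finite \'etale cover $p\colon X'\to X$ such that $p^*V$ has unipotent local monodromy; choose a log smooth compactification $(\bar X',D')$ so that $p$ extends to $\bar X'\to\bar X$. The pulled-back pluriharmonic norm $p^*\|\cdot\|$ still has finite energy, so on any admissible polydisk of $\bar X'$ it is bounded near the boundary by \Cref{finite_energy_implies_bounded}. Consequently, for a local flat orthogonal basis of $p^*V$, the functions $\log\|v_i\|$ are bounded, so $\partial\log\|v_i\|$ is a holomorphic one-form with no $\tfrac{dz_j}{z_j}$ contribution, hence extends holomorphically across $D'$. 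By \Cref{characteristic_polynomial_pullback}, the characteristic polynomial of $p^*(V,\|\cdot\|)$ is $p^*P(T)=\prod_i(T-p^*\omega_i)$, and by the previous paragraph applied on $\bar X'$ each $p^*\omega_i$ extends to $\bar X'$ \emph{with at worst log poles}; combined with the boundedness just established, each $p^*\omega_i$ extends to a holomorphic one-form on $\bar X'$. In particular the residue of $\omega_i$ along any component of $D$ vanishes (it pulls back to the vanishing residue of $p^*\omega_i$), so each $\omega_i$, already holomorphic on $X$, extends to a holomorphic one-form on $\bar X$. Over the locus where the discriminant of $P$ vanishes but the roots remain globally defined, the $\omega_i$ are locally bounded and extend by the codimension-$\ge 2$ argument as before. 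Thus $P(T)=\prod_i(T-{\omega_i}_{|X})$ with $\omega_i\in H^0(\bar X,\Omega^1_{\bar X})$, as claimed.
\end{proof}
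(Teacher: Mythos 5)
Your first paragraph (the logarithmic extension of the coefficients) is essentially the paper's own argument: the finite-energy hypothesis gives Lipschitz continuity for the Poincar\'e metric near $D$ via \Cref{finite_energy_implies_Lipschitz}, hence a pointwise Poincar\'e bound on the coefficients, and the explicit coordinate computation in an admissible polydisk plus extension across a codimension-$\ge 2$ locus yields log poles. (The paper bounds the $\sigma_k$ directly rather than going through the locally defined, a priori multivalued roots, which sidesteps the singular locus and monodromy bookkeeping entirely, but the estimate is the same.)

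For the split case your route is genuinely different from the paper's, and it has a gap at its central step. The paper argues via integrability, not boundedness: on the regular locus the energy density equals a multiple of $\sum_i|\omega_i|^2$ (the nonarchimedean analogue of \Cref{energy_versus_Higgs_field}), so finite energy makes each $\omega_i$ square-integrable for the Poincar\'e metric, and a short $L^2$ computation in coordinates (the Poincar\'e weights cancel exactly for the $dz_1$-component) shows the coefficient functions extend holomorphically — no cover, no monodromy reduction, no residue discussion. Your argument instead passes to the unipotent cover and invokes \Cref{finite_energy_implies_bounded}, and then asserts that since $\log\|v_i\|$ is bounded, $\partial\log\|v_i\|$ ``has no $dz_j/z_j$ contribution, hence extends holomorphically.'' As written this is not a proof: the $v_i$ are flat sections, so $\log\|v_i\|$ is only defined up to monodromy (continuation around $D'$ replaces $v_i$ by $Tv_i$) and is pluriharmonic only on the regular locus of the norm, which is merely the complement of a pluripolar set (\Cref{Singular_locus_pluripolar}). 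To make your step rigorous you must (i) show the additive period of $\log\|v_i\|$ around each boundary component vanishes — boundedness does give this once you compare with a flat norm as in \Cref{flat_norm_quasiunipotent}, but it is a real step, since boundedness of a multivalued harmonic function by itself only controls the real period, not the residue; (ii) run a removable-singularity argument for the bounded pluriharmonic function across the singular locus and the puncture before concluding that the globally defined form $p^*\omega_i$ has neither residue nor log pole; and (iii) justify the descent from $\bar X'$ to $\bar X$, where the compactified cover is ramified along $D$, via the residue/ramification-index comparison, together with the (easy but unstated) facts that the pulled-back norm is still pluriharmonic of finite energy for a Poincar\'e metric on $X'$ and that the individual roots — not just the coefficients treated in your first paragraph — have at worst log poles. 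Your closing sentence about the discriminant locus is also off target: that locus is a divisor, not of codimension $\ge 2$, and it plays no role once the $\omega_i$ are assumed globally holomorphic on $X$. I would either repair these points explicitly or adopt the $L^2$ argument, which is both shorter and avoids the monodromy issues altogether.
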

In particular, if $X$ is algebraic, then by GAGA the coefficients of the characteristic polynomial are algebraic symmetric forms on $X$. 

\begin{proof}
Let $P(T)= T^r + \sigma_1 T^{r-1} + \cdots + \sigma_r$ be the characteristic polynomial of $(V,\|\cdot \|)$. Its coefficients are holomorphic symmetric forms on $X$. Let $x \in \bar X$ be a smooth point of $D$, so that there exists an admissible polydisk $\bD^d \subset \bar X$ centered at $x$ such that $D \cap \bD^d = \{ z_1 = 0\}$. Thanks to Proposition \ref{finite_energy_implies_Lipschitz}, the restriction of the forms $\sigma_k$ to $\bD^d \setminus D$ are bounded in the neighborhood of $0$ with respect to the Poincaré metric on $\bD^d \setminus D$. Write $\sigma_k = \sum_{|\alpha| = k} \tau_\alpha(z) dz^\alpha$ for some holomorphic functions $\tau_\alpha$ on $\bD^d \setminus D$. Here $\alpha = (\alpha_1, \ldots, \alpha_d) \in \bN^d$, with $|\alpha| := \sum_{i= 1}^d \alpha_i$ and $dz^\alpha := dz_1^{\alpha_1} \ldots dz_d^{\alpha_d}$. Then 
\[  |\sigma_k|^2_{\omega_P} = \sum_{|\alpha| = k} |\tau_\alpha(z)|^2 \left( |z_1| \log |z_1|^2 \right)^{2 \alpha_1} \prod_{i = 2}^d  (1- |z_i|^2)^{2 \alpha_i},  \]
so that the function $|\tau_\alpha(z)|^2 \left( |z_1| \log |z_1|^2 \right)^{2 \alpha_1}$ is bounded in the neighborhood of $0$ for every $\alpha$. It follows that $z \mapsto \tau_\alpha(z) \cdot z_1^k$ extends as a holomorphic function on $\bD^d$. This shows that $\sigma_k$ extends as a holomorphic logarithmic symmetric form on the complementary of the singular locus of $D$ in $\bar X$, hence on the whole of $\bar X$ since the later has codimension at least $2$ in $\bar X$.

Assume now that the characteristic polynomial $P(T)$ of $(V,\|\cdot \|_V)$ is split, so that there exist $r$ holomorphic one-forms $\omega_1, \ldots, \omega_r$ on $X$ such that $P(T)= \prod_{i} (T - {\omega_i})$.  The holomorphic one-forms $\omega_1, \ldots, \omega_r$ are orthogonal on the regular locus of $(V,\|\cdot \|)$, hence everywhere on $X$. Since $(V,\|\cdot \|)$ has bounded energy, it follows that the $\omega_i$'s are square integrable in a Poincaré type metric on $X$. Working near a smooth point of $D$ as above and writing $\omega_i = \sum_{l= 1}^d  \phi_{il} dz_l$ for some holomorphic functions $\phi_{il}$ on $\bD^d \setminus D$, we get that
\[\int_{\bD^\ast \times \bD^{d-1}} |\phi_{i1}(z)|^2 i^d dz_1 \wedge d \bar z_1 \wedge \ldots \wedge dz_d \wedge d \bar z_d < \infty \]
and that
\[\int_{\bD^\ast \times \bD^{d-1}} |\phi_{il}(z)|^2 \frac{1}{|z_1|^2 (\log |z_1|^2 +1)^2} i^d dz_1 \wedge d \bar z_1 \wedge \ldots \wedge dz_d \wedge d \bar z_d < \infty \]
for every $l > 1$. This implies that the $\phi_{il}$'s extend as holomorphic functions on $\bD^d$, hence the $\omega_i$'s extend as holomorphic one-forms on $\bar X$.
\end{proof}

\begin{lem}\label{spectral cover ben}
    Let $(\bar X,D)$ be a log smooth K\"ahler manifold.  Let $P(T)$ be a monic polynomial with log symmetric form coefficients on $(\bar X,D)$ which is locally split on a dense Zariski open.  Then there is a log smooth K\"ahler manifold $(\bar X',D')$ and a proper generically finite $f:(\bar X',D')\to(X,D)$ such that $f^*P$ is globally split on $\bar X'$ with log form roots.  
\end{lem}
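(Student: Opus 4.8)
The plan is to realize $\bar X'$ as a suitable resolution of the \emph{spectral variety} associated to $P(T)$. Recall that $P(T) = T^r + \sigma_1 T^{r-1} + \cdots + \sigma_r$ where each $\sigma_i \in H^0(\bar X, \Sym^i \Omega_{\bar X}(\log D))$. Consider the total space $p \colon \bV \to \bar X$ of the logarithmic cotangent bundle $\Omega_{\bar X}(\log D)$, with its tautological section $\tau \in H^0(\bV, p^*\Omega_{\bar X}(\log D))$. Then the expression $P(\tau) = \tau^r + p^*\sigma_1 \cdot \tau^{r-1} + \cdots + p^*\sigma_r$ is a section of $p^*\Sym^r\Omega_{\bar X}(\log D)$ on $\bV$, and I would let $\bar Z \subset \bV$ be its zero scheme, with $g_0 \colon \bar Z \to \bar X$ the restriction of $p$. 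By construction $g_0$ is finite of degree $r$, and over the dense Zariski open $U \subset \bar X$ where $P$ splits as $\prod_i(T - \omega_i)$, the fiber product $\bar Z \times_{\bar X} U$ is the union of the graphs of the sections $\omega_i$, so in particular $g_0$ is generically reduced and $\bar Z$ is generically smooth.

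Next I would take $\bar X'$ to be a log resolution of $\bar Z$: first normalize $\bar Z$, then resolve singularities and arrange that the preimage $D'$ of $D \cup (\text{the branch/discriminant locus})$ together with the exceptional divisor is a simple normal crossing divisor, and set $f \colon \bar X' \to \bar X$ to be the composition. Since $\bar X$ is K\"ahler and $f$ is projective (resolutions and normalizations of projective-over-$\bar X$ schemes are projective over $\bar X$), $\bar X'$ is K\"ahler. It remains to produce the $r$ logarithmic one-form roots of $f^*P$ on $\bar X'$. The pulled-back tautological section gives, on $\bar X'$, a distinguished logarithmic one-form $\eta := $ (image of $\tau$ under $\bar X' \to \bar Z \subset \bV$), i.e. a section of $f^*\Omega_{\bar X}(\log D)$; composing with the natural map $f^*\Omega_{\bar X}(\log D) \to \Omega_{\bar X'}(\log D')$ gives a logarithmic one-form $\omega_1$ on $\bar X'$ with the property that $f^*P(\omega_1) = 0$ identically. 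Thus $(T - \omega_1)$ divides $f^*P(T)$; the quotient $Q(T) = f^*P(T)/(T-\omega_1)$ is again a monic polynomial of degree $r-1$ with logarithmic symmetric form coefficients on $(\bar X', D')$ (the coefficients are polynomial expressions in the $f^*\sigma_i$ and $\omega_1$, hence logarithmic), and it is again generically split because over the open locus it is $\prod_{i \neq i_0}(T - f^*\omega_i)$ after relabeling. Repeating this construction inductively $r-1$ more times — each time passing to a further log resolution of the new spectral cover — produces, after finitely many steps, a log smooth K\"ahler $(\bar X', D')$ with a proper generically finite map to $(\bar X, D)$ over which $P$ is globally split with logarithmic one-form roots.

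The main obstacle I anticipate is purely bookkeeping with the logarithmic structures: one must check at each stage that the tautological section really does land in $\Omega_{\bar X'}(\log D')$ (and not merely in $\Omega_{\bar X'}(\log D'')$ for some larger $D''$), which is why it is important to include the discriminant of $P$ in the divisor being resolved, and to track that the splitting forms $\omega_i$ acquire only logarithmic poles along $D'$. This is controlled by the observation that over a smooth point of $D$ the coefficients $\sigma_i$ already have at worst logarithmic poles, so on the spectral cover the roots have poles bounded by the ramification, which is absorbed once $D'$ is taken to contain the ramification divisor; since the singular locus of $D$ and the branch locus have codimension $\geq 2$ or are divisorial respectively, Hartogs-type extension (as in the proof of \Cref{extended_characteristic_polynomial}) handles the remaining codimension $\geq 2$ issues. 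A secondary point is ensuring K\"ahlerness is preserved, but this is automatic since every morphism in sight is projective over the fixed K\"ahler base $\bar X$ and a projective morphism to a K\"ahler manifold has K\"ahler source.
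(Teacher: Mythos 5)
Your proof is correct and follows essentially the same route as the paper: form the spectral subvariety of the total space of $\Omega_{\bar X}(\log D)$ cut out by $P$ applied to the tautological section, pass to a log resolution so that the tautological section becomes a genuine logarithmic one-form root, divide out the corresponding monic linear factor and induct on the degree. The only slips are cosmetic: generic reducedness/smoothness of the spectral scheme can fail when $P$ has repeated roots (so one should simply take an irreducible component of the reduction dominating $\bar X$, as the paper does), and the discriminant/Hartogs discussion is unnecessary, since the natural map $f^*\Omega_{\bar X}(\log D)\to\Omega_{\bar X'}(\log D')$ already guarantees the roots are logarithmic once $D'$ contains $f^{-1}(D)$.
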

\begin{proof}
    Let $q:E\to \bar X$ be the total space of the vector bundle $\Omega_{\bar X}(\log D)$, and the tautological section $s$ of $q^*\Omega_{\bar X}(\log D)$ yields a tautological section $\alpha$ of $\Omega_E(\log q^{-1}(D))$.  There is then a closed analytic subvariety $Z(P)\subset E$ where $s$ satisfies the polynomial $q^*P$ as a section of $q^*\Omega_{\bar X}(\log D)$, or equivalently where $\alpha$ satisfies $q^*P$ as a form.  Since $P$ is monic, $Z(P)$ is finite over $\bar X$.  Since $P$ is locally split on a dense Zariski open, it follows that $Z(P)$ dominates $\bar X$, so there is an irreducible component $Z$ of the reduction of $Z(P)$ which is not the zero section and which dominates $\bar X$.  Let $\pi:(\bar Y,D_Y)\to (Z,Z\cap \pi^{-1}(D))$ be a log resolution and $g:(\bar Y,D_Y)\to(\bar X,D)$.  Then the pullback $\pi^*\alpha$ is a log form and a root of $g^*P$, so $g^*P$ factors.  Since the factors also have log symmetric form coefficients, by induction on the degree of $P$ the proof is completed. 
\end{proof}

\begin{prop}\label{characteristic_polynomial_canonical}
Let $V$ be a $K$-local system on a smooth complex algebraic variety $X$ with quasiunipotent local monodromy.
Let $\|\cdot \|$ and $\| \cdot \|^\prime$ be two pluriharmonic norms on $V$, that are locally bounded at infinity. Then their characteristic polynomials are equal.
\end{prop}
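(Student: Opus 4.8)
The plan is to reduce by induction on $\rk V$ to the case where $V$ is a simple $K$-local system, and then to prove that a pluriharmonic norm on a simple $V$ which is locally bounded at infinity is unique up to homothety of the building. For the reduction, fix a projective log smooth compactification $(\bar X, D)$ of $X$, choose a simple sub-$K$-local system $W \subset V$ (possible since $\rk V < \infty$); as $W$ is semisimple with quasiunipotent local monodromy it admits a pluriharmonic norm locally bounded at infinity by \Cref{existence_pluriharmonic_norm}, so by \Cref{induced_harmonic_norm_global} the norms on $W$ and on $V/W$ induced by each of $\|\cdot\|$ and $\|\cdot\|^\prime$ are pluriharmonic and locally bounded at infinity. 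By \Cref{characteristic_polynomial_exact_sequence} the characteristic polynomial of $(V,\|\cdot\|)$ factors as the product of those of $(W,\cdot)$ and $(V/W,\cdot)$ for the induced norms, and similarly for $\|\cdot\|^\prime$; since $V/W$ has quasiunipotent local monodromy and strictly smaller rank, the induction hypothesis applied to $V/W$ together with the simple case applied to $W$ gives the equality of the characteristic polynomials of $(V,\|\cdot\|)$ and $(V,\|\cdot\|^\prime)$.

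So assume $V$ is simple, i.e. the monodromy representation $\rho$ is irreducible, and let $u, u^\prime \colon \tilde X \to \cN(V_x)$ be the $\rho$-equivariant pluriharmonic maps attached to $\|\cdot\|$ and $\|\cdot\|^\prime$. Restricting to holomorphic curves and applying \Cref{distance_subharmonic}, the function $\delta \colon x \mapsto d(u(x),u^\prime(x))$ is plurisubharmonic on $X$. It is bounded near $D$: in an admissible polydisk each norm lies at bounded distance from a flat norm by hypothesis, and any two flat norms are at constant distance, so $\delta$ is bounded there; and $\delta$ is locally Lipschitz, hence bounded, near the singular loci $\Sing(\|\cdot\|)$ and $\Sing(\|\cdot\|^\prime)$, which are pluripolar by \Cref{local system: pluripolar sing}. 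Removing those loci via the Brelot extension theorem (as in the proof of \Cref{extension_pluriharmonic_norm}) and then removing the analytic set $D$ (a bounded plurisubharmonic function extends across it), we obtain a plurisubharmonic function on the connected compact complex manifold $\bar X$, which is therefore constant; say $\delta \equiv c$ with $c \geq 0$.

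If $c = 0$ then $\|\cdot\| = \|\cdot\|^\prime$ and we are done. If $c > 0$, we invoke the rigidity of pluriharmonic maps into NPC targets (the non-archimedean analogue of Hartman's theorem, compare \cite{Corlette-Simpson}): when $d(u,u^\prime)$ is constant, $u$ and $u^\prime$ differ by an isometry given by translation along the parallel geodesics joining them, and since $\rho$ is irreducible — so fixes no point of $\partial\cN(V_x)$ and preserves no proper flat — this translation is a homothety along the center of $\cN(V_x)$. Hence $\|\cdot\|^\prime = e^{s}\|\cdot\|$ for some $s \in \bR$, so for any local flat orthogonal basis $(v_i)$ of $V$ for $\|\cdot\|$ one has $\partial\log\|v_i\|^\prime = \partial\log\|v_i\|$; since the characteristic polynomial equals $\prod_i (T - \partial\log\|v_i\|)$ on the regular locus (a dense open set), the two characteristic polynomials agree.

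The main obstacle is the rigidity input in the last paragraph: one must show that constancy of $d(u,u^\prime)$ forces $u^\prime$ to be a central homothety of $u$ for irreducible $\rho$. This is the building counterpart of the classical uniqueness of equivariant harmonic maps (equivalently, uniqueness up to scalar of a harmonic metric on a stable Higgs bundle), and should be deduced from the convexity of the energy and the Bochner-type inequality for the squared distance function in NPC geometry, available here through the work of Gromov–Schoen and Korevaar–Schoen already used for \Cref{existence_harmonic_finite_energy}; making this precise in the present generality is the delicate point.
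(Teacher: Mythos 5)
There is a genuine gap, and it is exactly the step you flag as delicate at the end. Your reduction to simple $V$ (via \Cref{existence_pluriharmonic_norm}, \Cref{induced_harmonic_norm_global} and \Cref{characteristic_polynomial_exact_sequence}) is legitimate, and your constancy argument for $\delta=d(u,u')$ is sound, but the rigidity statement you then invoke --- that for irreducible $\rho$ two equivariant pluriharmonic norms, locally bounded at infinity and at constant distance $c>0$, must differ by a homothety $\|\cdot\|'=e^{s}\|\cdot\|$ --- is not only unproved here, it is false in general. If the irreducible monodromy is bounded, its fixed-point set in $\cN(V_x)$ is a convex set which can be positive-dimensional transversally to the homothety direction (e.g.\ an irreducible representation over $K$ whose residual reduction is reducible stabilizes several homothety classes of lattices, hence fixes a whole geodesic segment of norm classes); two constant maps to distinct fixed points are pluriharmonic, equivariant, bounded at infinity, at constant positive distance, and not homothetic. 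In such examples the conclusion of the proposition still holds (both characteristic polynomials vanish), but your route through homothety cannot reach it, so the case $c>0$ is not closed and no Hartman-type theorem of the strength you need is available.

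The paper's proof avoids rigidity entirely and is in effect the correct completion of your second paragraph: instead of only $d(u,u')$, one shows that the whole relative spectrum $\lambda_i(\|\cdot\|,\|\cdot\|')$ is constant, which is \Cref{global_relative_spectrum} applied with $W=V$ (the constancy comes from plurisubharmonicity and boundedness at infinity of $\lambda_1(\wedge^k\|\cdot\|,\wedge^k\|\cdot\|')$ on a compactification, via \Cref{sum_relative_spectrum} --- the same mechanism you use for $\delta$). Constant $\lambda_i$'s are in particular (pluri)harmonic, so \Cref{induced_harmonic_norm_local_with_basis} with $W=V$ produces, near any point regular for both norms, a flat basis $(e_i)$ orthogonal for \emph{both} norms with $\log\|e_i\|'-\log\|e_i\|$ constant; differentiating gives equality of the characteristic polynomials on a neighborhood, hence everywhere by analytic continuation. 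No induction on the rank and no irreducibility is needed. If you want to keep your outline, replace the homothety claim by this ``common flat orthogonal basis with constant ratios'' statement, which is exactly what the constancy of the full relative spectrum buys.
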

\begin{proof}
Thanks to Proposition \ref{global_relative_spectrum}, the relative spectrum of  $\|\cdot \|$ and $\| \cdot \|^\prime$ is constant. Let $x \in X \setminus \left(\Sing(\| \cdot \|) \cup \Sing(\| \cdot \|^\prime)\right)$. Then by Lemma \ref{induced_harmonic_norm_local_with_basis}  there exist a flat basis $(e_i)$ of $V$ in a neighborhood of $x$ which is orthogonal for both norms and such that $\lambda_i(\| \cdot \|, \| \cdot \|^\prime) =  \log \frac{\|e_i\|^\prime}{\|e_i\|}$. By differentiating, we get that the characteristic polynomials of $\|\cdot \|$ and $\| \cdot \|^\prime$  are equal in a neighborhood of $x$, hence everywhere by analytic continuation. 
\end{proof}

\begin{cor}\label{pluriharmonic_norm_trivial_local_system}
Let $V$ be a $K$-local system on a smooth complex algebraic variety $X$, equipped with a pluriharmonic norm $\|\cdot \|$ which is locally bounded at infinity. If $V$ is trivializable, then $\|\cdot \|$ is necessarily flat.
\end{cor}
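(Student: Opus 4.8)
The plan is to compare $\|\cdot\|$ with the flat norm and use the canonicity of the characteristic polynomial. We may work on each connected component of $X$ separately, so assume $X$ is connected; then ``$V$ trivializable'' means $V$ has trivial monodromy, and we fix a global flat frame $e_1,\dots,e_r$ of $V$ together with the associated flat norm $\|\cdot\|_0$ (the norm diagonalized in $(e_i)$ with $\|e_i\|_0=1$). A flat norm is pluriharmonic — on any simply connected open set the associated section of $\cN(V)$ is constant, and a constant map is pluriharmonic — it is trivially locally bounded at infinity, and its characteristic polynomial is $T^r$ since $\partial\log\|e_i\|_0=0$. Since $V$ has quasiunipotent (indeed trivial) local monodromy and both $\|\cdot\|$ and $\|\cdot\|_0$ are pluriharmonic norms on the smooth algebraic variety $X$ that are locally bounded at infinity, Proposition~\ref{characteristic_polynomial_canonical} gives that the characteristic polynomial of $(V,\|\cdot\|)$ is also $T^r$.

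Next I would deduce that $\|\cdot\|$ is flat away from a small set. Fix $x_0\in X$; since the monodromy is trivial, the pluriharmonic map $\tilde X\to\cN(V_{x_0})=\Delta(\GL(V_{x_0}),K)$ descends to a pluriharmonic map $u\colon X\to\cN(V_{x_0})$ into the locally compact Euclidean building $\cN(V_{x_0})$, and by Proposition~\ref{Singular_locus_pluripolar} its singular locus $\Sing(\|\cdot\|)$ is contained in a strict complex analytic subset $Z\subsetneq X$; in particular $X\setminus Z$ is connected and dense. For $x\in X\setminus Z$ pick a connected neighborhood $U$ on which $(V,\|\cdot\|)$ admits a flat orthogonal basis $v_1,\dots,v_r$. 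By construction the characteristic polynomial of $(V,\|\cdot\|)$ equals $\prod_i(T-\partial\log\|v_i\|)$ on $U$, and equating this with $T^r$ in the polynomial ring over $\Sym^\bullet\Omega^1_U$ (which is locally a polynomial ring over the integral domain of holomorphic functions) forces $\partial\log\|v_i\|=0$ for every $i$. As $\log\|v_i\|$ is real-valued, also $\bar\partial\log\|v_i\|=0$, so $\log\|v_i\|$ is constant on $U$; hence $\|\cdot\|$ restricted to $U$ is the flat norm diagonalized in the (flat, constant) basis $(v_i)$ with the constant norms $\|v_i\|$, i.e.\ $u|_U$ is constant.

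Finally I would globalize: $u$ is continuous (a norm on a local system is by definition a locally Lipschitz section of $\cN(V)$), it is locally constant on $X\setminus Z$ by the previous step, hence constant on $X\setminus Z$ since that set is connected, and therefore constant on all of $X$ by continuity and density; thus $\|\cdot\|$ is the flat norm with this constant value. I do not expect a substantive obstacle here — the argument is a short assembly of results already established in this section. The only point needing a little care is the passage from flatness on the dense open set $X\setminus Z$ to flatness on $X$, which is why one wants the stronger statement of Proposition~\ref{Singular_locus_pluripolar} (the singular locus lies in a strict analytic subset, so its complement is connected) rather than merely the pluripolarity of $\Sing(\|\cdot\|)$ from Corollary~\ref{local system: pluripolar sing}, together with the continuity of the norm.
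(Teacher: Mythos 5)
Your proposal is correct and follows essentially the same route as the paper: compare $\|\cdot\|$ with a flat norm via Proposition~\ref{characteristic_polynomial_canonical} to conclude the characteristic polynomial is $T^r$, deduce that the pluriharmonic map is (locally) constant on the regular locus, and extend by density/continuity. Your write-up just makes explicit the connectedness of the complement of the singular locus, which the paper leaves implicit in "hence everywhere by density of the regular locus."
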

\begin{proof}
Thanks to Proposition \ref{characteristic_polynomial_canonical}, the coefficients of the characteristic polynomial of $\|\cdot \|$ are zero (except the top coefficient). It follows that the pluriharmonic map corresponding to $\|\cdot \|$ is constant on the 
regular locus of $\|\cdot \|$, hence everywhere by density of the regular locus.  
\end{proof}
\subsection{The canonical current}\label{nonarchimedean_canonical_current}
Let $(\bar X,D)$ be a compact log smooth K\"ahler manifold and set $X=\bar X\setminus D$.  Let $(V,\|\cdot \|)$ be a pluriharmonic $K$-local system of rank $r$ on $X$, with quasiunipotent local monodromy and finite energy. In the neighborhood of any regular point of $(V,\|\cdot \|)$, there is a collection of $r$ holomorphic one-forms $\omega_i$, well-defined up to permutation. Therefore the smooth semipositive $(1,1)$-form $\sqrt{-1}\sum_i \omega_i \wedge \bar \omega_i$ is well-defined on the regular locus of $(V,\|\cdot \|)$. Moreover, it extends uniquely as closed positive $(1, 1)$-current $\omega$ with continuous potential on $\bar X$. This is an obvious consequence of Proposition \ref{extended_characteristic_polynomial} if the characteristic polynomial of $(V,\|\cdot \|)$ is split. The general case follows from the existence of the spectral cover (see \Cref{spectral cover ben}) and a classical average procedure.

The current $\omega$ is called the canonical current associated to $(V,\|\cdot \|)$, and it follows from Proposition \ref{characteristic_polynomial_canonical} that it depends only on the underlying local system $V$.  

\begin{prop}[See {\cite[Proposition 2.2]{Gromov-Schoen}} and {\cite[Proposition 3.3.6]{Eyssidieux}}] \label{current_controls_exhaustion}
Let $(\bar X,D)$ be a compact log smooth K\"ahler manifold and set $X=\bar X\setminus D$. Let $(V,\|\cdot \|)$ be a pluriharmonic $K$-local system on $X$, with quasiunipotent local monodromy and finite energy. Let $\omega_V$ be the associated current. Fix $x_0 \in X$. Let $\rho \colon \pi_1(X,x_0) \to GL(V_{x_0})$ be the monodromy representation of $V$ and $u \colon \tilde{X} \to \cN(V_{x_0})$ be the $\rho$-equivariant pluriharmonic map corresponding to $(V,\|\cdot \|)$.
Choose $Q \in \cN(V_{x_0})$ and let $\phi \colon \tilde{X} \to \bR_{\geq 0}$ be the function defined as $\phi(x) := 2 \cdot d_2(u(x), Q)^2$ for every $x \in \tilde{X}$. Then $d d^c \phi \geq \omega_V$.
\end{prop}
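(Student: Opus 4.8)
The plan is to adapt the classical argument of Gromov--Schoen \cite[Proposition 2.2]{Gromov-Schoen} and Eyssidieux \cite[Proposition 3.3.6]{Eyssidieux} to the non-archimedean setting using the building $\cN(V_{x_0})$ in place of a symmetric space. First I would work on the regular locus $\Reg(V,\|\cdot\|)\subset X$, where by \Cref{Gromov-Schoen-regular} the complement has Hausdorff codimension at least $2$, so it suffices to establish the current inequality there and then extend by continuity of the potential. On a simply-connected neighborhood $U$ of a regular point, choose a flat orthogonal basis $\{v_1,\dots,v_r\}$ of $V|_U$; the pluriharmonic map lands in the single apartment $\bA\cong\bR^r$ spanned by this basis, with coordinates $\log\|v_i\|$, which are pluriharmonic, and the holomorphic one-forms are $\omega_i=\partial\log\|v_i\|$. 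In these coordinates, the distance-squared function $d_2(\cdot,Q)^2$ on the apartment is the restriction of a convex quadratic function on $\bR^r$, and the computation of $dd^c$ of its pullback by a pluriharmonic map into a flat is a direct local computation: writing $y_i=\log\|v_i\|$ and $Q=(a_1,\dots,a_r)$, one has $\phi=2\sum_i(y_i-a_i)^2$ (for this choice of lift within the apartment; in general $Q$ need not lie in the apartment, but one projects), and $dd^c\phi = 2\sum_i\big(2\,dy_i\wedge d^c y_i + (y_i-a_i)\,dd^c y_i\big)$. Since each $y_i$ is pluriharmonic, $dd^c y_i=0$, and $dy_i\wedge d^c y_i$ is comparable to $\sqrt{-1}\,\omega_i\wedge\bar\omega_i$ up to the normalizing constant; with the factor $2$ chosen in the statement one gets exactly $dd^c\phi\geq \sqrt{-1}\sum_i\omega_i\wedge\bar\omega_i=\omega_V$ on $U$.

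The subtlety is that $Q$ need not lie in the apartment $\bA$ containing the image $u(U)$, and that as one moves around $X$ the relevant apartment changes (even though $u$ is locally Lipschitz). To handle this I would use the NPC convexity: the function $x\mapsto d_2(u(x),Q)^2$ is, along any geodesic in $\cN(V_{x_0})$, a convex function, and by the standard estimate for distance functions in CAT(0) spaces composed with harmonic maps (Proposition in \Cref{sect:harmonic} on $\phi\circ u$ subharmonic, strengthened to the quadratic case as in \cite[Lemma 2.5.1]{KoSc93} and its consequences), $dd^c(d_2(u(\cdot),Q)^2)$ dominates the ``energy form'' of $u$, which in the regular locus is precisely $\tfrac12\sum_i dy_i\wedge d^c y_i$ after identifying the pullback metric. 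Concretely: the pullback by $u$ of the flat metric on the apartment is $\sum_i |dy_i|^2$; the key point is that moving $Q$ out of the apartment only increases the second fundamental form contribution (geodesics toward $Q$ bend away), so the inequality $dd^c\phi\geq\omega_V$ is preserved. This is exactly the content of the cited references, transported verbatim to buildings, and I would cite \cite[Proposition 2.5.2]{KoSc93} and \cite[Lemma 2.5.1]{KoSc93} along with the convexity of $d_2^2$ to make it precise.

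Having established the inequality on $\Reg(V,\|\cdot\|)$, the last step is the extension across $\Sing(V,\|\cdot\|)$: the function $\phi$ is locally Lipschitz on all of $\tilde X$ (harmonic maps into NPC spaces are locally Lipschitz, \Cref{Dirichlet-NPC}), hence descends to a continuous function on (a suitable cover related to) $X$, and $\omega_V$ has continuous local potential by the construction in \Cref{nonarchimedean_canonical_current}; since $\Sing(V,\|\cdot\|)$ is pluripolar (\Cref{local system: pluripolar sing}), the inequality $dd^c\phi\geq\omega_V$ of currents, valid off a pluripolar set between quantities that extend continuously (or as currents with continuous potential), holds globally by a standard removable-singularity argument for (pluri)subharmonic functions — this is where \cite[Lemma 3.(ii)]{Schiffman68} or Brelot's extension theorem \cite[Theorem 5.18]{Hayman-Kennedy} enters, exactly as in the proof of \Cref{extension_finite_energy_pluriharmonic}.

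The main obstacle I anticipate is bookkeeping the case where $Q\notin\bA$ carefully enough to see that the second-fundamental-form term has the right sign, i.e.\ that passing from the archimedean symmetric-space computation to the building does not lose the inequality near walls of apartments; but since buildings are CAT(0) and locally Euclidean, and the Gromov--Schoen and Eyssidieux arguments already deal with the building case in \cite{Gromov-Schoen}, this should go through with only notational changes, and I would mostly reference those sources rather than redo the estimate.
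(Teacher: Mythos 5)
Your proposal follows the same route as the sources the paper itself relies on (the paper gives no proof of \Cref{current_controls_exhaustion} beyond the citations to \cite{Gromov-Schoen} and \cite{Eyssidieux}), and the structure — prove the inequality on the regular locus of the norm, then extend across the singular set using \Cref{Gromov-Schoen-regular}/\Cref{Singular_locus_pluripolar}, continuity of $\phi$, and continuity of the local potentials of $\omega_V$ — is right; the in-apartment computation is also correct (the missing factor $2$ in your formula for $dd^c\phi$ multiplies the term $(y_i-a_i)\,dd^cy_i$, which vanishes anyway).

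The one step that is not yet a proof as written is the case $Q\notin\bA$. Convexity of $d_2(\cdot,Q)^2$ along geodesics — which is all that your appeal to convexity and to the barycenter lemmas \cite[2.5.1, 2.5.2]{KoSc93} actually provides — only yields $dd^c\phi\geq 0$, not the lower bound by $\omega_V$, and "geodesics toward $Q$ bend away" is a heuristic rather than an estimate. The precise ingredient is the CAT(0) quadrilateral comparison: for every geodesic $\gamma$ one has $d_2(\gamma(t),Q)^2\leq(1-t)\,d_2(\gamma(0),Q)^2+t\,d_2(\gamma(1),Q)^2-t(1-t)\,d_2(\gamma(0),\gamma(1))^2$, i.e. $d_2(\cdot,Q)^2$ is $2$-strongly convex. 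Since geodesics of the apartment $\bA\cong\bR^r$ are geodesics of the building, this gives, for any fixed $P_0\in\bA$, a decomposition $d_2(P,Q)^2=|P-P_0|^2+c(P)$ on $\bA$ with $c$ convex (subtract the Euclidean parallelogram identity from the comparison inequality). Composing with the pluriharmonic $\bA$-valued map $u=(y_1,\dots,y_r)$, the convex part satisfies $dd^c(c\circ u)\geq 0$ (smooth $c$ by convolution; the Hessian terms are nonnegative and the first-order terms are killed by $dd^cy_i=0$), while $dd^c|u-P_0|^2=2\sum_i dy_i\wedge d^cy_i$ is exactly your in-apartment computation, the linear terms again dying by pluriharmonicity. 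Hence the constant is the same whether or not $Q$ lies in the local apartment, and with this lemma in place of the misdirected citations your argument closes; the quasiunipotence and finite-energy hypotheses are only needed, as in \Cref{nonarchimedean_canonical_current}, to know that $\omega_V$ is a globally defined current with continuous potentials so that the removable-singularity step makes sense.
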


\subsection{The graded nearby cycle functor for pluriharmonic local systems}

\begin{prop}\label{nearby_cycles_pluriharmonic}
Let $(V, \|\cdot \|)$ be a pluriharmonic $K$-local system on $(\bD^\ast)^k \times \bD^{n-k}$, with quasiunipotent local monodromy and finite energy. Assume that $(V, \|\cdot \|)$ extends as a normed $K$-local system to $(\bD_r^\ast)^k \times \bD_r^{n-k}$ for some $r>1$. Let $W_\bullet$ be a filtration of $V$ by sub-$K$-local systems such that the $K$-local system $\gr^W V$ extends to $\bD^n$. Then, up to going to a finite étale cover $(\bD^\ast)^k \times \bD^{n-k} \to(\bD^\ast)^k \times \bD^{n-k}$, there exists a pluriharmonic $K$-local system $(V^\prime, \|\cdot \|^\prime)$ on $\bD^n$ equipped with a filtration $W_\bullet^\prime$ by sub-$K$-local systems and an isomorphism $\gr^{W^\prime} V^\prime \simeq \gr^W V$ as graded normed $K$-local systems on $(\bD^\ast)^k \times \bD^{n-k} $.
\end{prop}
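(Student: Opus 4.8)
The plan is to take $V'=\gr^W V$, equipped with the trivial one‑step filtration $W'_\bullet$ (so that $\gr^{W'}V'=V'$) and with a norm $\|\cdot\|'$ on $\gr^W V$ over $\bD^n$ extending the norm induced by $\|\cdot\|$; granting this, $\gr^{W'}V'=V'=\gr^W V$ carries exactly the $\|\cdot\|$–induced norm on $(\bD^\ast)^k\times\bD^{n-k}$, which is the asserted isomorphism of graded normed local systems. The content is thus entirely that the norm induced by $\|\cdot\|$ on $\gr^W V$ is pluriharmonic on the punctured polydisk and extends pluriharmonically across the center. First I would pass to the finite étale cover $z_i\mapsto z_i^{m_i}$ (for suitable $m_i$), allowed in the statement, so that the pullback of $V$ has unipotent local monodromy; this preserves pluriharmonicity, finite energy with respect to the Poincaré metric, the filtration, and the property that $\gr^W V$ extends to $\bD^n$ (so in particular, after the cover, $\gr^W V$ has trivial monodromy and is a trivial local system on $(\bD^\ast)^k\times\bD^{n-k}$). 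Each piece $W_j$ then admits a flat norm by \Cref{flat_norm_quasiunipotent}, and by \Cref{finite_energy_implies_bounded} the norm $\|\cdot\|$ is within bounded distance of a flat norm near the center.

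The key step is pluriharmonicity of the induced norm on $\gr^W V$ on $(\bD^\ast)^k\times\bD^{n-k}$. This cannot be obtained from the purely local \Cref{induced_harmonic_norm_local} alone: its hypothesis, that the relative spectrum of the induced norm against a flat norm on each $W_j$ be pluriharmonic, is only visibly \emph{plurisubharmonic} from the local structure (the partial sums of the relative spectrum are $\lambda_1$ of exterior power norms, hence plurisubharmonic, but even the total sum is only plurisubharmonic since the determinant norm of $\|\cdot\||_{W_j}$ is a restriction of a pluriharmonic norm to a sub‑line bundle). Pluriharmonicity — in fact constancy — of the relative spectrum is the global input of \Cref{global_relative_spectrum}. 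I would therefore globalize: embed $(\bD^\ast)^k\times\bD^{n-k}$ as an admissible polydisk in $X=\bar X\setminus D$ for a compact log smooth Kähler manifold, e.g. $\bar X=(\bP^1)^n$ with the evident normal crossing divisor, so that $\pi_1$ of the polydisk maps isomorphically onto $\pi_1(X)$; extend $V$ and $W_\bullet$ uniquely to $\tilde V$, $\tilde W_\bullet$ on $X$, and extend $\|\cdot\|$ to a pluriharmonic norm $\tilde h$ on $\tilde V$ over $X$ that is locally bounded at infinity and agrees with $\|\cdot\|$ on a slightly smaller polydisk. Since $\gr^{\tilde W}\tilde V$ is a trivial — in particular semisimple — local system on $X$ extending to $\bar X$, \Cref{induced_pluriharmonic_norm_semisimple_graded} then applies and shows the norm induced by $\tilde h$ on $\gr^{\tilde W}\tilde V$ is pluriharmonic and locally bounded at infinity; restricting to the polydisk gives the claim for the $\|\cdot\|$–induced norm on $\gr^W V$.

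It then remains to extend this induced norm across the center. Its characteristic polynomial is a factor of that of $V$ by (the iterated form of) \Cref{characteristic_polynomial_exact_sequence}, which by \Cref{extended_characteristic_polynomial} extends with logarithmic symmetric form coefficients; this, or alternatively the boundedness of the norm near the center established above, forces the induced norm on $\gr^W V$ to have finite energy with respect to the Poincaré metric. Since $\gr^W V$ extends to $\bD^n$ as a (trivial) local system, \Cref{extension_finite_energy_pluriharmonic} produces a pluriharmonic norm $\|\cdot\|'$ on $\gr^W V$ over $\bD^n$ restricting to the induced norm, completing the construction of $(V',\|\cdot\|',W'_\bullet)$.

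I expect the main obstacle to be the globalization of the pluriharmonic norm in the second paragraph, namely producing $\tilde h$ pluriharmonic on $\tilde V$ over $X$, locally bounded at infinity, and agreeing with $\|\cdot\|$ on a sub‑polydisk. When $V$, hence $\tilde V$, is semisimple — the case relevant to the intended application to \Cref{characteristic_polynomial_nearby_cycle} — one may instead take $\tilde h$ to be any finite‑energy pluriharmonic norm on $\tilde V$ furnished by \Cref{existence_pluriharmonic_norm} and invoke \Cref{characteristic_polynomial_canonical} together with the functoriality of characteristic polynomials (and \Cref{induced_harmonic_norm_local}) to identify the induced norm on $\gr^W V$ independently of the choice. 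In general one glues $\|\cdot\|$ near the center, where it is close to a flat norm, with a flat norm far away — using that pointwise maxima of non‑archimedean norms are again norms and that the plurisubharmonic‑norm property is preserved under such maxima — and then replaces the resulting merely plurisubharmonic norm by the solution of the Dirichlet problem \Cref{Dirichlet-NPC}, using \Cref{distance_subharmonic} and the maximum principle to check that the solution coincides with $\|\cdot\|$ on a smaller polydisk.
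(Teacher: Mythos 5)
Your opening reduction is where the proposal goes wrong: by fixing $V'=\gr^W V$ with the one--step filtration, you have committed yourself to proving that the norm induced by $\|\cdot\|$ on $\gr^W V$ is itself pluriharmonic on $(\bD^\ast)^k\times\bD^{n-k}$. That is strictly stronger than what \Cref{nearby_cycles_pluriharmonic} asserts, and it is false in this local generality. Take $n=k=1$, $V$ the trivial rank two local system on $\bD^\ast$ with flat basis $e_1,e_2$ declared orthogonal, $\log\|e_1\|=\mathrm{Re}(z)$, $\log\|e_2\|=-\mathrm{Re}(z)$: this is pluriharmonic (the associated map lands in a single apartment and has harmonic coordinates), has finite energy, extends to $\bD_r^\ast$, and has trivial monodromy. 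For $W_1=K\,(e_1+e_2)$ the graded local system extends to $\bD$, but $\log\|e_1+e_2\|=|\mathrm{Re}(z)|$ is not harmonic, so the induced norm on $\gr^W V$ is not pluriharmonic. The proposition is deliberately weaker: it only produces a pluriharmonic \emph{ambient} filtered object $(V',\|\cdot\|',W'_\bullet)$ on $\bD^n$ whose graded normed pieces agree with $\gr^W V$; pluriharmonicity of the graded norm is extracted only later, in \Cref{criterion_pluriharmonicity_filtration} and \Cref{Construction_graded_nearby_cycles_pluriharmonic}, and only under the extra global hypothesis that $\gr^W V$ carries some pluriharmonic norm locally bounded at infinity.

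Accordingly, the globalization step your argument hinges on cannot be carried out, and your fallbacks do not repair it. In the example above any ambient $X$ you embed the punctured disk into carries the trivial rank two local system, and by \Cref{pluriharmonic_norm_trivial_local_system} every pluriharmonic norm on it which is locally bounded at infinity is flat, hence cannot agree with $\|\cdot\|$ on any open set; so no $\tilde h$ as in your second paragraph exists (and applying \Cref{induced_pluriharmonic_norm_semisimple_graded} to it would contradict the counterexample). The gluing-plus-Dirichlet fix also fails: \Cref{distance_subharmonic} compares two \emph{harmonic} maps, while your glued norm is not harmonic, so there is no maximum-principle argument forcing the solution of \Cref{Dirichlet-NPC} to coincide with $\|\cdot\|$ on a smaller polydisk (and by the rigidity just noted it generally will not). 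The paper's proof sidesteps all of this: finite energy gives boundedness of the equivariant map $u$ near the puncture (\Cref{finite_energy_implies_bounded}); since its image meets only finitely many simplices and the monodromy preserves the simplicial structure, a finite-index subgroup of $\bZ^k$ acts trivially on the image, so $u$ descends to a genuine pluriharmonic map $v$ on a finite cover (this, not unipotence, is what the cover is for); one then takes $V'$ and $W'_\bullet$ to be the \emph{constant} local systems with fibers $V_P$ and $(W_\bullet)_P$, norms $V'$ by $v$, extends across the center by \Cref{extension_finite_energy_pluriharmonic}, and the graded normed pieces match by construction, with no claim that the graded norm is pluriharmonic at this stage.
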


\begin{proof}
Since $(V, \|\cdot \|)$ has finite energy, thanks to \Cref{finite_energy_implies_bounded} it is locally bounded in the neighborhood of $0$. Fix a point $P \in (\bD^\ast)^k \times \bD^{n-k}$ and let $u \colon \bH^k  \times \bD^{n-k} \to \cN(V_P)$ be the $\bZ^k$-equivariant pluriharmonic map associated to $\| \cdot \|$. Since the image of $u$ is bounded, it meets only finitely many simplices of $\cN(V_P)$. Since the monodromy action preserves the simplicial structure of $\cN(V_P)$, a finite index of the monodromy group $\bZ^k$ acts trivially on the image of $u$. Therefore, $u$ factorizes through a finite étale cover of $(\bD^\ast)^k \times \bD^{n-k}$. Note that the induced map 
$v \colon (\bD^\ast)^k \times \bD^{n-k} \to \cN(V_P)$ has finite energy with respect to the Poincaré metric on $(\bD^\ast)^k \times \bD^{n-k}$. Let $V^\prime$ (resp. $W^\prime_i$) be the constant local system on $\bD^n$ with fiber $V_P$ (resp. $(W_i)_P$) at $P$. The map $v$ endows the restriction of $V^\prime$ to $(\bD^\ast)^k \times \bD^{n-k}$ with a pluriharmonic norm, which thanks to Theorem \ref{extension_finite_energy_pluriharmonic} extends (uniquely) as a pluriharmonic norm on $V^\prime$ on $\bD^n$. By construction, there is an isomorphism $\gr^{W^\prime} V^\prime \simeq \gr^W V$ as graded normed $K$-local systems on $(\bD^\ast)^k \times \bD^{n-k}$, so that the statement follows.
\end{proof}

\begin{prop}\label{criterion_pluriharmonicity_filtration}
Let $D$ be a compact Kähler manifold. Let $E \subset D$ be a simple normal crossing divisor. Let $N \geq 0$. For every $0 \leq k \leq N$, let $G_k$ be a $K$-local system on $D \setminus E$ with quasiunipotent local monodromy, and $\|\cdot \|_k$ be a norm on $G_k$ which is locally bounded at infinity. Assume the following conditions hold:
\begin{itemize}
    \item Every $G_k$ admits a pluriharmonic norm which is locally bounded at infinity. 
    \item In the neighborhood of every $x \in D \setminus E$, there exists a pluriharmonic $K$-local system $(V^\prime, \|\cdot \|^\prime)$ equipped with a filtration $W_\bullet^\prime$ by sub-$K$-local systems such that, for every $k$, the $K$-local system $\gr^{W^\prime}_k V^\prime$ equipped with the norm induced by $\|\cdot \|^\prime$ is isomorphic to $(G_k, \|\cdot \|_k)$ as normed $K$-local systems. 
\end{itemize}
Then the norms $\|\cdot\|_k$ are pluriharmonic for every $k$.
\end{prop}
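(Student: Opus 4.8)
The plan is to reduce, via \Cref{pluriharmonic_norm_sum}, to proving that the direct sum norm $\|\cdot\| := \bigoplus_k \|\cdot\|_k$ on $G := \bigoplus_k G_k$ is pluriharmonic; equivalently (again by \Cref{pluriharmonic_norm_sum}) that each $\|\cdot\|_k$ is pluriharmonic. Since pluriharmonicity is a local property, I would fix an arbitrary point $x\in D\setminus E$ and work in a neighbourhood where the second hypothesis furnishes a pluriharmonic $(V',\|\cdot\|')$ with a filtration $W'_\bullet$ by sublocal systems whose graded pieces, with the induced norm, are identified with the $(G_k,\|\cdot\|_k)$. I would keep in reserve the globally defined pluriharmonic norms $h_k$ on $G_k$, locally bounded at infinity, coming from the first hypothesis.

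The crucial idea is to peel the filtration off one step at a time rather than handling the full associated graded norm at once: I would argue by induction on $k$, producing along the way a pluriharmonic norm $\eta_k$ on $V'/W'_k$ near $x$ (with $\eta_{-1}:=\|\cdot\|'$), namely the norm induced by $\|\cdot\|'$ on the quotient, whose restriction to the sublocal system $W'_k/W'_{k-1}\subset V'/W'_{k-1}$ is precisely $\|\cdot\|_k$. The point is that $\gr^{W'}_k V' = W'_k/W'_{k-1}$ sits inside $V'/W'_{k-1}$ as a \emph{sublocal system} — the bottom step of the induced filtration — so that near $x$ the norm $\|\cdot\|_k$ is the restriction of the pluriharmonic, hence plurisubharmonic, norm $\eta_{k-1}$ to a sublocal system, and therefore plurisubharmonic near $x$; as $x$ varies this shows $\|\cdot\|_k$ is plurisubharmonic on all of $D\setminus E$. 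Now I would invoke the compactness of $D$: since $\|\cdot\|_k$ is plurisubharmonic and locally bounded at infinity, $G_k$ has quasiunipotent local monodromy, and $h_k$ is a pluriharmonic norm locally bounded at infinity, \Cref{global_relative_spectrum} (with $W=V=G_k$) forces the relative spectrum functions $\lambda_i(h_k,\|\cdot\|_k)$ to be constant on $D\setminus E$, in particular pluriharmonic. Feeding this into \Cref{induced_harmonic_norm_local}, applied to the pluriharmonic ambient $(V'/W'_{k-1},\eta_{k-1})$, the sublocal system $W'_k/W'_{k-1}$ equipped with the pluriharmonic norm $h_k$, and the induced norm $\|\cdot\|_k$, yields that the norm induced by $\eta_{k-1}$ on $(W'_k/W'_{k-1})\oplus(V'/W'_k)$ is pluriharmonic. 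Reading off the two summands gives simultaneously that $\|\cdot\|_k$ is pluriharmonic near $x$ and that $\eta_k$ is pluriharmonic near $x$, carrying the induction forward; running it to the top step finishes the local statement, and hence, $x$ being arbitrary, the global one.

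The main obstacle is conceptual rather than computational: one must avoid trying to prove directly that the associated graded norm of a pluriharmonic norm is pluriharmonic — or even plurisubharmonic — since this is false pointwise (a quotient norm of a pluriharmonic norm can be plurisuperharmonic). The real content is that the potential obstruction, which lives in the failure of the relevant relative-spectrum functions to be constant, disappears once one works globally on the compact $D$ with the ``locally bounded at infinity'' hypotheses, via \Cref{global_relative_spectrum}; the remaining work is the bookkeeping needed to organise this along the filtration, in particular checking at each step that the hypotheses of \Cref{induced_harmonic_norm_local} (a pluriharmonic ambient, a sublocal system carrying a pluriharmonic norm, and a pluriharmonic relative spectrum) line up — which is exactly why one peels off one graded piece at a time, so that each appears as a \emph{sub}- rather than a quotient inside the relevant ambient.
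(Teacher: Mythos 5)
Your proposal is correct and follows essentially the same route as the paper's proof: the paper also peels off the bottom graded piece, notes that the induced norm on $W'_0$ (hence $\|\cdot\|_0$) is plurisubharmonic, invokes \Cref{global_relative_spectrum} with $W=V=G_0$ to get constant relative spectrum against the given pluriharmonic norm, applies \Cref{induced_harmonic_norm_local} to conclude pluriharmonicity of $\|\cdot\|_0$ and of the induced quotient norm, and then inducts by replacing $(V',\|\cdot\|',W'_\bullet)$ with the quotient data. Your bookkeeping via the norms $\eta_k$ on $V'/W'_k$ is just a re-indexing of that same induction.
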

\begin{proof}
By induction on $N$. There is nothing to prove if $N = 0$. Assume that $N \geq 1$. Without lost of generality, one can assume that $G_0$ is non-zero. By assumption, $G_0$ admits a pluriharmonic norm $\| \cdot \|_H$ which is locally bounded at infinity. 

Let $x \in D \setminus E$, and let $(V^\prime, \|\cdot \|^\prime)$ and $W_\bullet^\prime$ as in the statement of the proposition. Let $\| \cdot \|_0^\prime$ be the plurisubharmonic norm on $W^\prime_0$ induced by the pluriharmonic norm $\|\cdot \|^\prime$. By assumption, $(W^\prime_0,  \| \cdot \|_0^\prime) \simeq (G_0, \| \cdot \|_0)$ as normed $K$-local systems in the neighborhood of $x$. In particular, by letting $x$ cover $D \setminus E$, we get that the norm $\| \cdot \|_0$ on $G_0$ is plurisubharmonic. It follows from Proposition \ref{global_relative_spectrum} (applied to $W = V = G_0$) that the functions $\lambda_i(\| \cdot \|_H, \| \cdot \|_0)$ are constant for every $i$.
On the other hand, working again in the neighborhood of $x \in D \setminus E$, the functions $\lambda_i(\| \cdot \|_H, \| \cdot \|^\prime_0)$ are equal to the functions $\lambda_i(\| \cdot \|_H, \| \cdot \|_0)$, hence constant. Therefore, by Proposition \ref{induced_harmonic_norm_local}, the norm induced by $\| \cdot \|^\prime$ on $W^\prime_0 \oplus (V^\prime / W^\prime_0)$ is pluriharmonic. In particular, the norm $\| \cdot \|_0$ on $G_0$ is pluriharmonic.
But replacing $(V^\prime, \|\cdot \|^\prime)$ and $W_\bullet^\prime$ by $(V^\prime/ W^\prime_0, \|\cdot \|^\prime)$ and $W_\bullet^\prime/ W^\prime_0$ in the neighborhood of every $x \in D \setminus E$, we can apply the induction and conclude the proof.
\end{proof}

We consider the following setting:
\begin{itemize}
    \item Let $D \subset U$ be an inclusion of a compact Kähler manifold in a Kähler manifold. Let $E \subset U$ be a divisor such that $D \cup E$ is a simple normal crossing divisor of $U$.
    \item Let $V$ be a $K$-local system on $U \setminus (D \cup E)$ with quasiunipotent local monodromy.
    \item Let $\|\cdot \|$ be a pluriharmonic on $V$, with finite energy with respect to a Poincaré-like Kähler metric on  $U \setminus (D \cup E)$.
    \item Let $W_\bullet$ be a filtration of $V$ by sub-$K$-local systems on $U \setminus (D \cup E)$, such that the $K$-local system $\gr^W V$ extends to $U \setminus E$.
\end{itemize}

\begin{thm}\label{Construction_graded_nearby_cycles_pluriharmonic}
Setting as above.
\begin{enumerate}
    \item The norm $\gr^W \| \cdot \|$ induced by $\| \cdot \|$ on $\left( \gr^W V \right)_{U \setminus (D \cup E)}$ extends as a continuous norm on the extension of $\gr^W V$ to $U \setminus E$, and this norm is locally bounded in the neighborhood of $E$. 
    \item Assume in addition that the $K$-local system $\left(\gr^W V \right)_{|D \setminus E}$ admits a pluriharmonic norm which is locally bounded at infinity (this is the case for example if it is semisimple by \Cref{existence_pluriharmonic_norm} and \Cref{finite_energy_implies_bounded}). Then the restriction of the norm $\gr^W \| \cdot \|$ to $D \setminus E$ is pluriharmonic and locally bounded in the neighborhood of $E$.
\end{enumerate}
\end{thm}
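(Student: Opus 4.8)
The statement to prove is \Cref{Construction_graded_nearby_cycles_pluriharmonic}: in the local analytic setting of a pluriharmonic $K$-local system $(V,\|\cdot\|)$ of finite energy on $U\setminus(D\cup E)$ with a flat filtration $W_\bullet$ whose graded pieces extend across $D$, the induced norm $\gr^W\|\cdot\|$ extends continuously across $D$ and (under the extra hypothesis) restricts to a pluriharmonic norm on $D\setminus E$.

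\textbf{Plan of proof.} The plan is to establish part (1) first by reduction to the one-variable (polydisk) situation, and then to derive part (2) from the pluriharmonicity criterion \Cref{criterion_pluriharmonicity_filtration} together with \Cref{nearby_cycles_pluriharmonic}. For part (1), the claim is local on $U$, so I would fix a point $x\in D$ and an admissible polydisk $\bD^n\subset U$ in which $D\cap\bD^n$ and $E\cap\bD^n$ are unions of coordinate hyperplanes; write $\bD^n\setminus(D\cup E)=(\bD^*)^k\times\bD^{n-k}$ after possibly reordering coordinates, where the first $k'\le k$ of the punctured factors correspond to $D$ and the remaining ones to $E$. Since $(V,\|\cdot\|)$ has finite energy, by \Cref{finite_energy_implies_bounded} (applied in the polydisk, slightly shrunk) it is locally bounded in the neighborhood of $0$, i.e.\ the associated equivariant pluriharmonic map $\tilde{U}\to\cN(V_x)$ stays within bounded distance of a flat norm near $D$. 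On the regular locus one has a flat orthogonal basis $(v_i)$ which, by \Cref{induced_harmonic_norm_local_with_basis} (using that the relative spectrum is constant by \Cref{global_relative_spectrum}), can be chosen compatibly with $W_\bullet$, so that $\gr^W\|\cdot\|$ is pointwise computed by the norms of the images of the $v_i$. The continuity and local boundedness of $\gr^W\|\cdot\|$ across $D$ then follow from \Cref{induced_norm_Lipschitz} (Lipschitz continuity of passage to a subquotient norm) and the boundedness just noted: the distance of $\gr^W\|\cdot\|$ to a flat norm on $\gr^W V$ is controlled by the distance of $\|\cdot\|$ to a flat norm on $V$, which is bounded near $D$, and a bounded locally Lipschitz plurisubharmonic-type function extends continuously by Brelot's theorem as in the proof of \Cref{extension_finite_energy_pluriharmonic}.

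\textbf{Part (2).} Assuming now that $(\gr^W V)|_{D\setminus E}$ admits a pluriharmonic norm locally bounded at infinity, I want to show $\gr^W\|\cdot\|$ restricted to $D\setminus E$ is pluriharmonic. This I would check locally near each $y\in D\setminus E$ using \Cref{criterion_pluriharmonicity_filtration}, applied on $D$ with divisor $E\cap D$: the $G_k$ are the graded pieces $\gr^W_k V$ (restricted to $D\setminus E$) equipped with the restriction of $\gr^W\|\cdot\|$. The two hypotheses of \Cref{criterion_pluriharmonicity_filtration} are: each $G_k$ admits a pluriharmonic norm locally bounded at infinity (given by hypothesis, or by \Cref{existence_pluriharmonic_norm}, \Cref{finite_energy_implies_bounded} when the graded piece is semisimple); and, near each point, $(G_k,\gr^W_k\|\cdot\|)$ arises as a graded piece of a genuine pluriharmonic $K$-local system with flat filtration living on a full polydisk in $D$. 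The latter is exactly the content of \Cref{nearby_cycles_pluriharmonic}: in a polydisk neighborhood of $y$ inside $U$, after a finite étale cover (which is harmless for checking pluriharmonicity, a local property), there is a pluriharmonic $(V',\|\cdot\|')$ on $\bD^n$ with filtration $W'_\bullet$ and an isomorphism $\gr^{W'}V'\simeq \gr^W V$ of graded normed local systems over the punctured neighborhood; restricting this isomorphism to the $D$-direction (set the $D$-coordinate to $0$) produces the required local model over a polydisk in $D$. Hence \Cref{criterion_pluriharmonicity_filtration} applies and gives pluriharmonicity of $\gr^W\|\cdot\|$ on $D\setminus E$; local boundedness near $E\cap D$ is again \Cref{induced_norm_Lipschitz} combined with part (1).

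\textbf{Main obstacle.} The routine parts are the reductions to polydisks and the bookkeeping of which coordinate directions are $D$ versus $E$. The genuine content — and the step I expect to require the most care — is verifying the second hypothesis of \Cref{criterion_pluriharmonicity_filtration}, namely producing the \emph{local} pluriharmonic model of $\gr^W V$ with its norm over polydisks \emph{inside $D$}. One must check that the local model on a polydisk in $U$ furnished by \Cref{nearby_cycles_pluriharmonic} is compatible with the restriction to $D$, i.e.\ that restricting $(V',W'_\bullet,\|\cdot\|')$ along the hyperplane defining $D$ still yields a pluriharmonic norm locally bounded at infinity on $D\setminus E$ and that the identification $\gr^{W'}V'\simeq\gr^W V$ is compatible with both norms after this restriction. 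This requires tracking the finite étale cover in \Cref{nearby_cycles_pluriharmonic} and the uniqueness of the pluriharmonic extension (\Cref{extension_finite_energy_pluriharmonic}) to ensure the two norms on $\gr^W V$ — the one induced from $V'$ and the one $\gr^W\|\cdot\|$ from part (1) — actually coincide, not merely agree up to the ambiguity of flat twists. Once this compatibility is pinned down, everything else is a direct invocation of the cited results.
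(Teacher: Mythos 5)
Your part (2) is essentially the paper's argument: restrict to $D\setminus E$, use \Cref{nearby_cycles_pluriharmonic} to produce the local pluriharmonic model near each point of $D\setminus E$, and conclude by \Cref{criterion_pluriharmonicity_filtration}; the compatibility you flag as the ``main obstacle'' is handled because \Cref{nearby_cycles_pluriharmonic} gives an isomorphism of \emph{graded normed} local systems (not merely up to flat twist), so once the continuous extension of $\gr^W\|\cdot\|$ across $D$ is known, the two norms agree on the slice by continuity, and the restriction of a pluriharmonic norm to a complex submanifold is pluriharmonic.

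The genuine gap is in your part (1), precisely the continuity of the extension across $D$. Your appeal to \Cref{induced_harmonic_norm_local_with_basis} and \Cref{global_relative_spectrum} to get a flat orthogonal basis compatible with $W_\bullet$ is not legitimate here: those results require the sub-local systems (or graded pieces) to already carry pluriharmonic norms with harmonic/constant relative spectrum, and \Cref{global_relative_spectrum} is moreover a global statement on a compact log smooth K\"ahler base; none of this is available in part (1), and a flat orthogonal basis of $(V,\|\cdot\|)$ need not respect the filtration. Your fallback — boundedness of the distance to a flat norm plus a Brelot-type extension ``as in \Cref{extension_finite_energy_pluriharmonic}'' — only extends auxiliary real-valued (pluri)subharmonic functions; it does not show that the $\cN(\gr^W V)$-valued map has a limit along $D$ (boundedness does not rule out oscillation), and at this stage $\gr^W\|\cdot\|$ is not known to be pluriharmonic, so even the subharmonicity of such distance functions is unavailable. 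The mechanism the paper uses (and which you only deploy in part (2)) is \Cref{nearby_cycles_pluriharmonic} itself: near a point of $D\setminus E$, boundedness of the image in the building forces a finite-index subgroup of the local monodromy around $D$ to act trivially on the image, so after a finite cover the pluriharmonic norm descends and extends across $D$ by \Cref{extension_finite_energy_pluriharmonic}, producing a pluriharmonic filtered model on the full polydisk whose induced graded norm is continuous (via \Cref{induced_norm_Lipschitz}) and coincides with $\gr^W\|\cdot\|$ on the punctured part; this is what furnishes the continuous extension. The local boundedness near $E$ via \Cref{finite_energy_implies_bounded} and \Cref{induced_norm_Lipschitz} is fine as you state it. Note also that your part (2) implicitly needs part (1) (to define and identify the restricted norm on $D\setminus E$), so repairing part (1) along these lines is not optional.
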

\begin{proof}
For part $(1)$, the fact that the norm $\gr^W \| \cdot \|$ extends as a continuous norm on $U \setminus E$ can be checked locally on $D \setminus E$ and therefore follows from \Cref{nearby_cycles_pluriharmonic}. Since $(V,\|\cdot \|)$ has finite energy, it follows from \Cref{finite_energy_implies_bounded} that $\|\cdot \|$ is locally bounded in the neighborhood of $E$. Using \Cref{induced_norm_Lipschitz} it follows that $\gr^W \| \cdot \|$ is locally bounded in the neighborhood of $E$, therefore so is its continuous extension to $U \setminus E$.

For part $(2)$, it follows immediately from part $(1)$ that the restriction of the norm $\gr^W \| \cdot \|$ to $D \setminus E$ is locally bounded in the neighborhood of $E$. Moreover, thanks to Proposition \ref{nearby_cycles_pluriharmonic}, every $x \in D \setminus E$ admits an open neighborhood in $D \setminus E$ over which there exists a pluriharmonic $K$-local system $(V^\prime, \|\cdot \|^\prime)$ equipped with a filtration $W_\bullet^\prime$ by sub-$K$-local systems and an isomorphism $\gr_{W^\prime} V^\prime \simeq \gr_W V$ as graded normed $K$-local system. Therefore, the pluriharmonicity of the restriction to $D \setminus E$ of $\gr^W \|\cdot \| $ is a consequence of Proposition \ref{criterion_pluriharmonicity_filtration}.
\end{proof}

\begin{thm}\label{characteristic_polynomial_nearby_cycle}
Let $(\bar X,D)$ be a compact log smooth K\"ahler manifold and set $X=\bar X\setminus D$.  Let $(V,\|\cdot \|_V)$ be a pluriharmonic $K$-local system on $X$ with unipotent local monodromy and finite energy. Let $D_k$ be an irreducible component of $D$. Assume that the $K$-local system $V$ is semisimple. Then the graded nearby-cycle $K$-local system $\gr \psi_{D_k} V$ inherits a canonical pluriharmonic norm which is locally bounded at infinity. Moreover, if the characteristic polynomial of $(V,\|\cdot \|_V)$ is split, then its restriction to $D_k$ is equal to the characteristic polynomial of $\gr\psi_{D_k} (V,\|\cdot \|_V)$ (which in particular is split).
\end{thm}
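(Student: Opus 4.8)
The plan is to treat the two assertions separately. For the first, recall that by definition $\gr\psi_{D_k}V$ is built from the monodromy weight filtration $W_\bullet=W(N)_\bullet$ (see \Cref{Deligne_weight_filtration}) of the nilpotent operator $N=T-\mathrm{Id}$ attached to the unipotent monodromy $T$ of $V$ around $D_k$: the graded local system $\gr^W V$ has trivial monodromy around $D_k$, extends across $D_k\setminus D^k$, and $\gr\psi_{D_k}V$ is its restriction there. So it suffices to apply \Cref{Construction_graded_nearby_cycles_pluriharmonic}(2), with the compact K\"ahler manifold ``$D$'' there taken to be $D_k$, the ambient ``$U$'' a tubular neighbourhood of $D_k$ in $\bar X$, the divisor ``$E$'' the part of $D^k$ lying in it, the local system being $V$ with its norm $\|\cdot\|_V$ (which, having finite energy for a Poincar\'e-type metric, is locally bounded at infinity by \Cref{finite_energy_implies_bounded}), and the filtration being $W_\bullet$. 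The hypothesis needed in \Cref{Construction_graded_nearby_cycles_pluriharmonic}(2) — that $\gr\psi_{D_k}V$ admits a pluriharmonic norm locally bounded at infinity — holds because $\gr\psi_{D_k}V$ is semisimple by \Cref{graded_nearby_cycle_Mochizuki}, hence carries such a norm by \Cref{existence_pluriharmonic_norm} and \Cref{finite_energy_implies_bounded}. The norm produced is the one induced by $\|\cdot\|_V$ on $\gr^W V$ and then restricted to $D_k$, so it is canonical, and by \Cref{characteristic_polynomial_canonical} its characteristic polynomial does not depend on the choice of pluriharmonic norm $\|\cdot\|_V$.

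For the second assertion, fix $\|\cdot\|_V$ and assume $P(T):=P_{(V,\|\cdot\|_V)}(T)=\prod_i(T-\omega_i)$ is split; by \Cref{extended_characteristic_polynomial} the $\omega_i$ extend to holomorphic one-forms on $\bar X$, so $P(T)|_{D_k}=\prod_i(T-\omega_i|_{D_k})$ is a split monic polynomial with holomorphic symmetric-form coefficients on $D_k$, and so is the characteristic polynomial $Q(T)$ of $\gr\psi_{D_k}(V,\|\cdot\|_V)$. By analytic continuation it is enough to prove $P|_{D_k}=Q$ near a generic point $x\in D_k\setminus D^k$ in the regular loci of the relevant pluriharmonic norms. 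Localising to a small polydisk $\bD^n$ around $x$ with $D_k\cap\bD^n=\{z_1=0\}$ and applying \Cref{nearby_cycles_pluriharmonic} (one boundary divisor) produces, after a finite \'etale cover $\phi\colon z_1\mapsto z_1^m$ of $\bD^\ast\times\bD^{n-1}$, a pluriharmonic $K$-local system $(V',\|\cdot\|')$ on $\bD^n$ with a filtration $W'_\bullet$ by sub-local systems and an isomorphism $\gr^{W'}V'\simeq\gr^W(\phi^\ast V)$ of graded normed local systems over $\bD^\ast\times\bD^{n-1}$. By the construction in the proof of \Cref{nearby_cycles_pluriharmonic} the pluriharmonic maps underlying $\phi^\ast(V,\|\cdot\|_V)$ and $(V',\|\cdot\|')$ coincide over $\bD^\ast\times\bD^{n-1}$, so (the characteristic polynomial being a local invariant of the pluriharmonic map, cf. \Cref{sect:KZ foliation}) one gets $P_{(V',\|\cdot\|')}=\phi^\ast P_{(V,\|\cdot\|_V)}$, consistently with \Cref{characteristic_polynomial_pullback}.

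Next I would show that the short exact sequences extracted from $W'_\bullet$ are exact sequences of \emph{pluriharmonic} local systems, so that \Cref{characteristic_polynomial_exact_sequence} gives $P_{(V',\|\cdot\|')}=\prod_j P_{(\gr^{W'}_j V',\,\gr^{W'}\|\cdot\|')}$. This is the heart of the matter, handled as in the proof of \Cref{Construction_graded_nearby_cycles_pluriharmonic}: the lowest graded piece of $\gr\psi_{D_k}V$ is a semisimple local system on the compact K\"ahler manifold $D_k$ (again \Cref{graded_nearby_cycle_Mochizuki}), so it carries a pluriharmonic norm locally bounded at infinity there, while the norm induced on it by $\|\cdot\|_V$ is plurisubharmonic and locally bounded at infinity; by \Cref{global_relative_spectrum} their relative-spectrum functions are then constant, so \Cref{induced_harmonic_norm_local} yields pluriharmonicity of the norms induced by $\|\cdot\|'$ on $W'_1V'$ and $V'/W'_1V'$, and one finishes by induction on the rank (cf. \Cref{criterion_pluriharmonicity_filtration}, \Cref{induced_pluriharmonic_norm_graded}). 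Each $\gr^{W'}_j V'$ extends across $\{z_1=0\}$ (it agrees with $\gr^W_j(\phi^\ast V)$, which has trivial monodromy there), and $\phi$ restricts to an isomorphism on $\{z_1=0\}$ carrying $\gr^{W'}_j V'|_{\{z_1=0\}}$ with its induced norm onto the $j$-th graded piece of $\gr\psi_{D_k}(V,\|\cdot\|_V)$ near $x$. Restricting the product formula to $\{z_1=0\}$ and using that $\gr\psi_{D_k}(V,\|\cdot\|_V)$ is the orthogonal direct sum of its graded pieces — whence $Q=\prod_j P_{(\gr\psi_{D_k}V)_j}$ by \Cref{characteristic_polynomial_exact_sequence} and \Cref{pluriharmonic_norm_sum} — one obtains $\phi^\ast(P|_{\{z_1=0\}})=\prod_j P_{(\gr\psi_{D_k}V)_j}=Q$ near $x$, hence $P|_{D_k}=Q$ near $x$ and therefore on all of $D_k$; splitness of $Q$ is then immediate from that of $P$. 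The main obstacle is precisely this pluriharmonicity of the graded pieces: because the filtration $W_\bullet$ exists only near $D_k$, one cannot invoke the global results on all of $X$ and must instead transport the relative-spectrum computation onto the compact divisor $D_k$, which is exactly where the semisimplicity of the graded nearby cycles is indispensable.
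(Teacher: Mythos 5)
Your overall route is the same as the paper's: part (1) is exactly the intended specialization of \Cref{Construction_graded_nearby_cycles_pluriharmonic}(2) (with $D=D_k$, $U$ a tubular neighbourhood, $E=D^k\cap U$, the weight filtration $W(N)_\bullet$), with the hypothesis supplied by \Cref{graded_nearby_cycle_Mochizuki}, \Cref{existence_pluriharmonic_norm} and \Cref{finite_energy_implies_bounded}; and part (2) uses the same three ingredients the paper cites (\Cref{characteristic_polynomial_canonical}, \Cref{characteristic_polynomial_pullback}, \Cref{characteristic_polynomial_exact_sequence}) together with the local model of \Cref{nearby_cycles_pluriharmonic}.

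There is, however, one step in part (2) that does not follow from what you cite. You claim that the short exact sequences cut out by $W'_\bullet$ are exact sequences of \emph{pluriharmonic} local systems for $(V',\|\cdot\|')$ on the whole polydisk $\bD^n$, so that \Cref{characteristic_polynomial_exact_sequence} gives $P_{(V',\|\cdot\|')}=\prod_j P_{(\gr^{W'}_jV',\,\mathrm{induced})}$ on $\bD^n$. But the justification you give — semisimplicity of the graded pieces on $D_k$, \Cref{global_relative_spectrum}, then \Cref{induced_harmonic_norm_local} — only lives on the divisor: \Cref{global_relative_spectrum} forces the relative-spectrum functions to be constant because they are plurisubharmonic, locally bounded at infinity functions on the \emph{compact} $D_k$ (minus $D^k$); transversally to $D_k$ these functions are merely plurisubharmonic on a disk and nothing makes them constant (and \Cref{induced_harmonic_norm_local} would in addition require a pluriharmonic comparison norm on $W'_0$ over the $n$-dimensional polydisk, which you have not produced). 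So pluriharmonicity of the induced norms on the $W'_j\subset V'$ over $\bD^n$ is not established — and it is not needed. The repair is to reorder: first restrict $(V',\|\cdot\|')$ to the slice $\{z_1=0\}$ (pluriharmonicity is preserved under holomorphic restriction, and by \Cref{characteristic_polynomial_pullback} the characteristic polynomial restricts), and only then run the filtration argument there; on the slice the induced norms on the pieces of $W'_\bullet$ are pluriharmonic by exactly the argument of \Cref{criterion_pluriharmonicity_filtration} (this is where semisimplicity and compactness of $D_k$ enter), so \Cref{characteristic_polynomial_exact_sequence} applies on the slice and yields $P|_{D_k}=Q$ near $x$, hence everywhere by analytic continuation. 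With this reordering your proof is correct and coincides with the paper's intended argument.
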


\begin{proof}
The construction of $\gr\psi_{D_k} (V,\|\cdot \|_V)$ is a particular case of Theorem \ref{Construction_graded_nearby_cycles_pluriharmonic}. The assertion on the characteristic polynomial follows from the construction in view of \Cref{characteristic_polynomial_canonical}, \Cref{characteristic_polynomial_pullback} and \Cref{characteristic_polynomial_exact_sequence}.
\end{proof}
\begin{cor}\label{nearby cycles canonical current}
    With the setup of \Cref{characteristic_polynomial_nearby_cycle}, we have $\omega_{\gr\psi_{D_k}V}=\omega_V|_{D_k}$.
\end{cor}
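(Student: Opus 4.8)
The plan is to reduce to the case where the characteristic polynomial $P(T)$ of $(V,\|\cdot\|_V)$ is split, treat that case by an explicit computation, and then deduce the general case by pulling back along a spectral cover.

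First I would dispose of the split case. By \Cref{extended_characteristic_polynomial} one may write $P(T)=\prod_{i=1}^r(T-\omega_i)$ with $\omega_1,\dots,\omega_r$ holomorphic one-forms on $\bar X$, so that by construction the canonical current is the \emph{smooth} semipositive form $\omega_V=\sqrt{-1}\sum_i\omega_i\wedge\bar\omega_i$ on $\bar X$. By \Cref{characteristic_polynomial_nearby_cycle} the characteristic polynomial of $\gr\psi_{D_k}(V,\|\cdot\|_V)$ is $P(T)|_{D_k}=\prod_i(T-\omega_i|_{D_k})$, again split, with roots the holomorphic one-forms $\omega_i|_{D_k}$ on $D_k$; hence $\omega_{\gr\psi_{D_k}V}=\sqrt{-1}\sum_i\omega_i|_{D_k}\wedge\overline{\omega_i|_{D_k}}=\bigl(\sqrt{-1}\sum_i\omega_i\wedge\bar\omega_i\bigr)|_{D_k}=\omega_V|_{D_k}$ (note $\omega_V|_{D_k}$ makes sense for any closed positive $(1,1)$-current with continuous potential, by restricting the potential to the smooth hypersurface $D_k$).

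For the general case I would apply \Cref{spectral cover ben} to obtain a proper generically finite $f\colon(\bar X',D')\to(\bar X,D)$ with $(\bar X',D')$ log smooth Kähler and $f^*P$ globally split with holomorphic one-form roots, and fix an irreducible component $D'_\ell$ of $D'$ dominating $D_k$. The pullback $f^*(V,\|\cdot\|_V)$ is a pluriharmonic $K$-local system on $X'$ with unipotent local monodromy and finite energy, semisimple by \Cref{pullback ss}, with split characteristic polynomial $f^*P$ by \Cref{characteristic_polynomial_pullback}, so the split case applies and gives $\omega_{\gr\psi_{D'_\ell}(f^*V)}=\omega_{f^*V}|_{D'_\ell}$. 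It then remains to check two pullback compatibilities: (i) $\omega_{f^*V}=f^*\omega_V$, and (ii) $\gr\psi_{D'_\ell}(f^*V,f^*\|\cdot\|_V)\cong(f|_{D'_\ell})^*\gr\psi_{D_k}(V,\|\cdot\|_V)$ as normed $K$-local systems (hence $\omega_{\gr\psi_{D'_\ell}(f^*V)}=(f|_{D'_\ell})^*\omega_{\gr\psi_{D_k}V}$). For (i), both sides are closed positive $(1,1)$-currents with continuous potential on $\bar X'$ that agree off the pluripolar set $D'\cup(\text{ramification locus of }f)\cup f^{-1}\Sing(V,\|\cdot\|_V)$ — there $f$ is a local biholomorphism onto an open subset of the regular locus and \Cref{characteristic_polynomial_pullback} applies — and two continuous psh functions whose $dd^c$ agree off a pluripolar set differ by a bounded, hence globally, pluriharmonic function. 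For (ii), near a generic point of $D'_\ell$ the map $f$ is ramified of some order $e$ along $D_k$, so the unipotent monodromy of $f^*V$ around $D'_\ell$ is the $e$-th power of that of $V$ around $D_k$; the logarithm $N'=eN$ of the former has the same weight filtration as $N$ (\Cref{Deligne_weight_filtration}), and both the graded nearby cycle local system and the induced pluriharmonic norm of \Cref{Construction_graded_nearby_cycles_pluriharmonic} commute with pullback. Combining with the split case, $(f|_{D'_\ell})^*\omega_{\gr\psi_{D_k}V}=(f|_{D'_\ell})^*(\omega_V|_{D_k})$, and since $f|_{D'_\ell}\colon D'_\ell\to D_k$ is dominant and both currents have continuous potentials, this forces $\omega_{\gr\psi_{D_k}V}=\omega_V|_{D_k}$.

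Beyond invoking the already-proven \Cref{characteristic_polynomial_nearby_cycle}, the main obstacle will be the spectral-cover bookkeeping — in particular the pullback compatibilities (i) and (ii) above. The delicate point is the ramification divisor of $f$ together with the singular locus of the pluriharmonic norm; the resolution is that both are pluripolar, so the currents in play, being closed positive with continuous potentials, are pinned down by their restriction to the complement.
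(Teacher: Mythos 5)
Your argument is correct and is essentially the route the paper intends: the paper states this corollary without a separate proof, treating it as immediate from \Cref{characteristic_polynomial_nearby_cycle} together with the very construction of the canonical current (from the roots of the characteristic polynomial in the split case, and via the spectral cover of \Cref{spectral cover ben} plus averaging in general), which is exactly what you spell out. The only points you assert rather than verify are that $f^*(V,\|\cdot\|_V)$ again has finite energy and is semisimple (\Cref{pullback ss} is stated for algebraic maps, while the ambient setting here is compact K\"ahler); both are harmless — finite energy follows from \Cref{finite_energy_implies_Lipschitz} together with the fact that pullback along a morphism of log pairs sends Poincar\'e-bounded forms to Poincar\'e-bounded forms, and the semisimplicity of $f^*V$ can be bypassed entirely since your identification (ii) already transports the pluriharmonic norm of \Cref{characteristic_polynomial_nearby_cycle} to the pulled-back graded nearby cycles.
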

Note that $\omega$ has continuous potentials locally on $\bar X$, so the restriction makes sense.


\section{Algebraic integrability of the Katzarkov--Zuo foliation}\label{sect:KZ}
Throughout we fix a prime $p$ and consider $\bar\bQ$ with a choice of $p$-adic valuation $v$.  The valuation $v$ on $\bar\bQ$ yields a completion $\bar\bQ_v$ of $\bar \bQ$ and we let $\bar\bQ_p$ be the algebraic closure of $\bQ_p$ in $\bar\bQ_v$, equipped with the natural embedding $\bar\bQ\subset\bar\bQ_p$.  We reserve the notation $K$ for a general non-archimedean local field and $\bar K$ an algebraic closure of $K$.  

The main goal of this section is to prove the following generalization of a theorem of Eyssidieux \cite{Eyssidieux} to the non-proper case: 
\begin{thm}\label{thm KZ integrable}
Let $X$ be a connected normal algebraic space, $\Sigma\subset \cM_B(X)(\bC)^{\ss}$ an absolute $\bar\bQ$-constructible set of semisimple local systems with quasiunipotent local monodromy.  Then the $v$-adic Katzarkov--Zuo foliation of $X$ associated to the $\bar\bQ$-points $\Sigma(\bar\bQ)$ of $\Sigma$ (equipped with the valuation $v$) is algebraically integrable. 
\end{thm}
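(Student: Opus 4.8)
The strategy is to follow Eyssidieux's inductive argument \cite{Eyssidieux} using Simpson's Lefschetz-type restriction theorem \cite{Simpsonlefschetz}, but to carry out the induction inside the boundary using the graded nearby cycles functor, which the results of \Cref{sect:norms} allow us to equip with pluriharmonic norms. First I would reduce to the case where $X$ is smooth and quasiprojective: passing to a resolution only changes the foliation by pullback along a proper birational map, and absolute $\bar\bQ$-constructibility, semisimplicity, and quasiunipotence of local monodromy are all preserved. By \Cref{reduction_to_unipotent_monodromy} (applied after taking a finite \'etale cover, which also does not affect algebraic integrability of the foliation and is harmless by \Cref{pullback ss}), I may further assume every local system in $\Sigma(\bar\bQ)$ has \emph{unipotent} local monodromy. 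Then, for each $V\in\Sigma(\bar\bQ)$, I would fix a log smooth compactification $(\bar X,D)$ and use \Cref{existence_pluriharmonic_norm} to equip $V$ (viewed as a $\bar\bQ_p$-local system) with a pluriharmonic norm $\|\cdot\|_V$ of finite energy with respect to a Poincar\'e-type metric; the associated characteristic polynomial has log symmetric form coefficients on $(\bar X,D)$ by \Cref{extended_characteristic_polynomial}, and, after a spectral cover (\Cref{spectral cover ben}) and the average procedure of \Cref{nonarchimedean_canonical_current}, one obtains the canonical current $\omega_V$. The $v$-adic Katzarkov--Zuo foliation is the one cut out (on the regular locus) by the collection of holomorphic $1$-forms appearing in the characteristic polynomials of the $V\in\Sigma(\bar\bQ)$, i.e. by $\ker\big(\bigoplus_V\omega_V\big)$ on the tangent sheaf; one must first check this is a genuine (integrable) foliation defined on a dense Zariski open, which follows from the fact that the one-forms $\omega_i$ in a local flat orthogonal frame are closed (being $\partial\log\|\cdot\|$ of pluriharmonic functions) and the boundedness of the singular locus.

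The heart of the argument is the induction on $\dim X$. Following \cite{Eyssidieux}, I would argue that if the foliation is not algebraically integrable, then by Simpson's Lefschetz theorem \cite{Simpsonlefschetz} there is a sufficiently ample (Lefschetz) hyperplane section, or more precisely a subvariety $Z\subset X$ of smaller dimension, on which the restricted collection $i^*\Sigma$ is still absolute $\bar\bQ$-constructible (by \Cref{Qpushpull} / the functoriality in \Cref{basic prop coarse abs hodge}), still semisimple with quasiunipotent local monodromy, and whose foliation is again not algebraically integrable — contradicting the inductive hypothesis once we reach a curve (or a surface, where the base case is handled directly as in \cite{Eyssidieux}, the leaves through a very general point being compact by Koll\'ar--Campana-type arguments combined with the positivity of $\omega_V$). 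The crucial new ingredient in the quasiprojective setting is that the induction step may force us to pass to the boundary: when the Lefschetz section degenerates into a component $D_k$ of $D$, the relevant restriction of the foliation is not simply $i_{D_k}^*$ of the foliation on $X$, because the local systems in $\Sigma$ need not extend across $D_k$. Here one replaces $V$ by its graded nearby cycles $\gr\psi_{D_k}V$, which is again semisimple by \Cref{graded_nearby_cycle_Mochizuki}, carries an induced pluriharmonic norm by \Cref{characteristic_polynomial_nearby_cycle}, and whose characteristic polynomial is the restriction to $D_k$ of that of $V$. Moreover $\gr\psi_{D_k}$ is an absolute $\bar\bQ$-morphism by \Cref{nearby cycles is abs}, so the family $\{\gr\psi_{D_k}V : V\in\Sigma(\bar\bQ)\}$ is again the $\bar\bQ$-points of an absolute $\bar\bQ$-constructible set on $D_k\setminus D^k$. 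This is exactly the statement that \emph{the $v$-adic Katzarkov--Zuo foliation restricts to a $v$-adic Katzarkov--Zuo foliation in the boundary}, and \Cref{characteristic_polynomial_nearby_cycle} together with \Cref{nearby cycles canonical current} ($\omega_{\gr\psi_{D_k}V}=\omega_V|_{D_k}$) makes the induction go through: the restricted foliation is cut out by the restrictions of the same symmetric forms.

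With the boundary compatibility in hand, the induction closes: one picks a sufficiently general ample pencil, applies Simpson's Lefschetz result to control the restriction of the $1$-forms, and either the generic member of the pencil is a smooth subvariety not meeting (or meeting transversally) the relevant degeneration locus — in which case one restricts along $i^*$ — or it limits into a boundary component, in which case one restricts along $\gr\psi_{D_k}$; in both cases the inductive hypothesis applies to a lower-dimensional situation, producing a contradiction unless the foliation was already algebraically integrable. The positivity and continuous-potential properties of the canonical current $\omega_V$ (\Cref{current_controls_exhaustion}) are used, as in \cite{Eyssidieux}, to show that the leaves of an algebraically integrable sub-foliation through a very general point are in fact compact, which is what lets the dimension induction terminate at the curve/surface base case. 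The main obstacle I anticipate is \emph{not} the formal bookkeeping of the induction but rather verifying carefully that Simpson's Lefschetz theorem can be applied uniformly to the whole absolute constructible family $\Sigma$ simultaneously (so that a single very general hyperplane section works for all $V\in\Sigma(\bar\bQ)$ at once, using that $\Sigma$ is defined over $\bar\bQ$ and hence its $\bar\bQ$-points are Zariski dense by \Cref{qu dense in bialg}-type density, or rather that $\Sigma(\bar\bQ)$ controls the foliation) and that the degeneration of the pencil into the boundary is compatible with taking graded nearby cycles — i.e. that the ``forms cutting out the foliation on the open part restrict to the forms cutting out the foliation on $D_k$'' not just pointwise but as a statement about foliations near the generic point of $D_k$, which is precisely the content bundled into \Cref{characteristic_polynomial_nearby_cycle} and its corollary but whose compatibility with the Lefschetz induction requires care.
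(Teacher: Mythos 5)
Your reduction steps (smooth quasiprojective model, unipotent local monodromy via \Cref{reduction_to_unipotent_monodromy}, the characteristic polynomial/canonical current formalism, and above all the role of graded nearby cycles via \Cref{characteristic_polynomial_nearby_cycle} and \Cref{nearby cycles is abs} to restrict the foliation to boundary components) match the paper. But the core of your induction is not a correct rendering of the Eyssidieux-style argument and would not close. You propose to derive a contradiction by restricting to a general hyperplane section and claiming the restricted foliation is "again not algebraically integrable," descending until a curve/surface base case. Nothing propagates non-integrability down to a hyperplane section; on the contrary, in the actual proof the inductive hypothesis is used in the opposite direction: one \emph{assumes} integrability on all strict subvarieties and on the boundary (via nearby cycles), and proves a dichotomy (\Cref{KZ main step}): either $\Sigma$ is algebraically integrable, or the image of the commutator subgroup $[\pi_1(X,x),\pi_1(X,x)]$ under every monodromy representation in $\Sigma$ is bounded. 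Simpson's Lefschetz theorem is not used to cut a hyperplane section at all; it is used (in Eyssidieux's form) to prove 1-connectedness of the pair $(Y,F)$ for the map $g\colon Y\to \form^\vee$ obtained by integrating the spectral one-forms on the cover of $X$ determined by the Albanese of the spectral cover, where $F$ is a generic fiber (a leaf), so that $\pi_1(Y)$ — which contains the commutator subgroup — has bounded image. Along the way the paper replaces higher Castelnuovo--de Franchis by Ax--Schanuel for abelian varieties (\Cref{axschanuel}) to show the positive-dimensional leaves are one-dimensional and $\dim\form=\dim a(X)-1$; your sketch contains no substitute for this step.

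More seriously, your proposal omits the entire second half of the proof, which is indispensable precisely because the Lefschetz argument never yields a direct contradiction, only the "bounded on the commutator" branch. One must then: (i) reduce via \Cref{extract tensor} to finitely many $\bar\bQ$-irreducible local systems whose monodromy is either abelian or Zariski dense in a simple group; (ii) use the group-theoretic lemmas that a bounded normal subgroup of a Zariski-dense unbounded subgroup of a simple group is finite, and that a finitely generated group with finite commutator subgroup is virtually abelian; and (iii) thereby reduce to rank-one local systems on an abelian variety, where algebraic integrability is \emph{not} formal: it requires Simpson's theorem on absolute (equivalently, by \Cref{bialg=abs}, bialgebraic) subsets of $\cM_B(A,1)$ — torsion translates of pullbacks from quotient abelian varieties (\Cref{simpsonbialg}) — plus an explicit computation showing that a Zariski-dense $\bar\bQ$-constructible set of characters forces the spanned space of forms to be all of $F^1H^1(A,\bC)$, so the leaves are trivial. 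None of this appears in your proposal, and without it the induction does not terminate; the "Koll\'ar--Campana-type compactness of leaves through a very general point" you invoke for a surface base case plays no role in the actual argument and would not substitute for the abelian case.
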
 
We define these notions more precisely in the following section. 
 The proof follows the strategy of Eyssidieux, the main new input being \Cref{characteristic_polynomial_nearby_cycle}.
\subsection{Preliminaries}

Note that any $\bar K$-local system on a complex algebraic variety $Y$ is defined over some finite extension of $K$, since the fundamental group of $Y$ is finitely generated.  Let $(\bar X,D)$ be a log smooth proper algebraic space, with $X=\bar X\setminus D$.  Let $V$ be a semisimple $\bar K$-local system on $X$ with quasiunipotent local monodromy, which is defined over a finite extension $L$ of $K$.  According to \Cref{existence_pluriharmonic_norm} and \Cref{finite_energy_implies_bounded}, $V$ can be equipped with a pluriharmonic norm which is locally bounded at infinity.  As in \Cref{sect:KZ foliation}, the resulting characteristic polynomial $P_V$ whose coefficients are log symmetric forms is independent of the choice of such a norm on $V$ by \Cref{characteristic_polynomial_canonical}.  In particular, since the base extension of a pluriharmonic norm which is locally bounded at infinity is also pluriharmonic and bounded at infinity, $P_V$ is canonically associated to $V$ (i.e., independent of $L$).  By \Cref{spectral cover ben}, there is a generically finite proper morphism $f:(\bar X',D')\to (\bar X,D)$ such that $f^*P_V=P_{f^*V}$ is split and whose roots are closed one-forms.  We refer to the Stein factorization $f_V:X_V\to X$ as a spectral cover.  The key property of these symmetric forms is the following:

\begin{lem}Let $f:\tilde X\to \Delta$ be the pluriharmonic map given by the norm on the universal cover $\pi:\tilde X\to X^\an$.  Let $Z$ be a connected normal analytic space and $g: Z\to \tilde X$ an analytic morphism.  Then $f\circ g$ is constant if and only if the coefficients of $(\pi\circ g)^*P_V$ vanish as sections of $\Sym^*\Omega_{Z^\reg}$ on $Z^\reg$.  Moreover, if this is the case, then the monodromy of $g^*V$ is bounded.
\end{lem}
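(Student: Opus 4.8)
The statement has two implications plus the boundedness assertion, and the natural tool is the length/energy-theoretic description of the characteristic polynomial developed in the previous subsections, together with pluriharmonicity. The plan is to exploit that $f$ is a pluriharmonic map into the NPC space $\Delta = \cN(V_{x_0})$, so that distances in $\Delta$ are computed by integrating $\sqrt{\sum_i (\Re\omega_i)^2}$ along paths (this is exactly the length-structure argument used in the proof of \Cref{characteristic_polynomial_pullback}), and that $(f\circ g)\colon Z\to\Delta$ is again pluriharmonic because $g\colon Z\to\tilde X$ is holomorphic, after passing to a resolution of $Z$ if necessary.

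\textbf{Step 1: the easy implication.} First I would observe that if $f\circ g$ is constant, then its differential vanishes identically on $Z^{\reg}$, so the pulled-back forms $(\pi\circ g)^*\omega_i$ (defined locally on the regular locus of the pluriharmonic norm, and extended to all of $Z^{\reg}$ by \Cref{characteristic_polynomial_pullback}, since the singular locus of $g^*(V,\|\cdot\|)$ has Hausdorff codimension $\geq 2$ by \Cref{Gromov-Schoen-regular}) all vanish; hence the coefficients of $(\pi\circ g)^*P_V = P_{g^*V}$, being the elementary symmetric functions of these forms, vanish. This is essentially immediate from the construction of $P_V$ and the compatibility with pull-back.

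\textbf{Step 2: the converse.} Suppose the coefficients of $(\pi\circ g)^*P_V$ vanish on $Z^{\reg}$. Then all roots $(\pi\circ g)^*\omega_i$ vanish there, so in particular their real parts vanish; by the length-structure description, the pullback to $Z$ of the path metric on $\tilde X$ induced from $\Delta$ is identically zero on $Z^{\reg}$, hence (by density of $Z^{\reg}$ and continuity of the locally Lipschitz map $f\circ g$) the map $f\circ g$ is locally constant; since $Z$ is connected, $f\circ g$ is constant. Concretely: for any two points of $Z^{\reg}$ joined by a path avoiding the singular loci, the energy/length of the image path in $\Delta$ is controlled by $\int\sqrt{\sum_i|\Re(\pi\circ g)^*\omega_i|^2} = 0$, so the two points have the same image; the complement of the singular loci is connected since those loci have real codimension $\geq 2$, giving constancy on a dense set, and then everywhere by continuity.

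\textbf{Step 3: boundedness of monodromy.} If $f\circ g$ is constant with value $Q\in\Delta$, then the monodromy representation $\rho_{g^*V}$ of $\pi_1(Z,z)$ fixes the point $Q$ of the building $\Delta = \cN(V_{x_0})$. The stabilizer of a point of a Euclidean building (here the Goldman--Iwahori norm space) under $\GL(V_{x_0})(\bar K)$ is a bounded subgroup: for a norm $Q$, this stabilizer preserves a lattice up to homothety, so is contained in a conjugate of $\GL_r(\cO_{\bar K})\cdot \bar K^\times$, which is bounded. Hence the image of $\rho_{g^*V}$ is bounded. I would note that $g^*V$ is a priori a $\bar K$-local system, defined over a finite extension $L/K$, and boundedness in $\GL_r(\bar K)$ is equivalent to boundedness in $\GL_r(L)$, so the statement makes sense and follows.

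\textbf{Main obstacle.} The routine part is the symmetric-function bookkeeping; the only genuinely delicate point is making the length-structure argument in Step 2 rigorous across the singular locus of the pluriharmonic norm $g^*(V,\|\cdot\|)$ — one must know that $Z^{\reg}$ minus the (pluripolar, codimension $\geq 2$) singular set is still connected and that the locally Lipschitz map $f\circ g$ extends continuously, so that vanishing of the induced length form there forces global constancy. This is exactly the input provided by \Cref{Gromov-Schoen-regular} and \Cref{Singular_locus_pluripolar} (or their $\bar K$-analogues \Cref{local system: pluripolar sing}), so I expect no real difficulty, only care in citing them; the argument otherwise parallels \cite[\S1.4]{Eyssidieux} and the proof of \Cref{characteristic_polynomial_pullback} given above.
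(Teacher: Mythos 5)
Your proof is correct and follows essentially the same route as the paper: pluriharmonicity of $f\circ g$, the local flat orthogonal bases on the regular locus (your length-structure phrasing from the pullback compatibility is just a repackaging of the statement that the coefficients vanish iff the norms of a local flat orthogonal basis are locally constant), density of the regular locus plus continuity for the converse, and boundedness of point stabilizers in $\Delta$ for the last claim. One small inaccuracy in Step 3: $\GL_r(\cO_{\bar K})\cdot \bar K^\times$ is \emph{not} bounded (scalars of nonzero valuation are unbounded); the correct and immediate statement is that an element fixing the norm $Q$ preserves each of its balls $\{v:\|v\|\le r\}$, which are lattices, so the stabilizer lies in a conjugate of $\GL_r(\cO_L)$ and is therefore bounded — with this fix the argument is exactly the paper's.
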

\begin{proof}
The map $f\circ g$ is pluriharmonic and by \Cref{Gromov-Schoen-regular}, there is locally an orthogonal basis on $Z^\reg\setminus W$ where $W$ has Hausdorff codimension 2 in the nonsingular locus.  By \Cref{characteristic_polynomial_pullback} the coefficients of $(\pi\circ g)^*P_V$ are then constant if and only if the norms of this basis (and therefore the norm itself) are locally constant on this set.  This gives the forward direction, and the reverse direction follows since $Z^\reg\setminus W$ is dense in $Z$.  For the final claim, we need only observe that the image of $(f\circ g)$ is stabilized by the monodromy of $g^*V$ and the stabilizer of a point in $\Delta$ is bounded. 
\end{proof}

We now extend this picture to normal algebraic spaces.  The following terminology is useful.
\begin{defn}
    As in \cite{Eyssidieux}, for a connected normal algebraic space $X$, by an extendable one-form (resp. log extendable one-form) $\alpha$ on $X$ we mean a one-form on $X^{\reg}$ such that for some (hence any) pair $(Y,\bar Y)$ consisting of a resolution $\pi:Y\to X$ and a log smooth compactification $\bar Y$ of $Y$, $\alpha$ extends to a one-form on $\bar Y$ (resp. a log one-form on $\bar Y$ with poles along $\bar Y\setminus Y$) that vanishes on fibers $F$ of $\pi$ as a section of $\Omega_{F^\reg}$.
\end{defn}

\begin{lem}
    Let $X$ be a connected normal algebraic space.  
    \begin{enumerate}
        \item The natural Hodge structure on $H^1(X,\bC)$ has weights 1 and 2.
        \item Taking de Rham cohomology class yields an isomorphism between the space of extendable one-forms (resp. log extendable one-forms) on $X$ and $W_1F^1H^1(X,\bC)$ (resp. $F^1H^1(X,\bC)$).
        \item Integration yields natural Albanese maps (unique up to translation on the target)
        \[\begin{tikzcd}
            X\ar[r]\ar[rd]& \Alb(X)\ar[d]\ar[r,equals]&H_1(X,\bC)/F^0+H_1(X,\bZ) \\
            &\gr^W_{-1}\Alb(X)\ar[r,equals]&\gr^W_{-1}H_1(X,\bC)/F^0+\gr^W_{-1}H_1(X,\bZ)
        \end{tikzcd}\]
    and pull-back along $X\to\gr^W_{-1}\Alb(X)$ (resp. $X\to \Alb(X)$) is an isomorphism on spaces of extendable one-forms (resp. log extendable one-forms).  The Albanese maps are naturally algebraic.
    \end{enumerate}

\end{lem}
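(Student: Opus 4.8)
The plan is to reduce everything to the log smooth case, where each of the three statements is standard Hodge theory, and then to check that the reductions are compatible with passing to resolutions and compactifications. Fix a resolution $\pi\colon Y\to X$ with $Y=\bar Y\setminus E$ for a log smooth proper $(\bar Y,E)$. Since $X$ is normal, $\pi_*\cO_Y=\cO_X$ and $\pi^*\colon H^1(X,\bC)\to H^1(Y,\bC)$ is injective (it is even an isomorphism onto the classes pulled back from any resolution, as $H^1(X,\bC)=H^1(X^{\mathrm{reg}},\bC)$ by normality and $\pi$ restricts to an isomorphism over a dense open); moreover the image is a sub-mixed Hodge structure. For part (1), $H^1(\bar Y,E;\bC)$ carries a mixed Hodge structure with weights in $\{1,2\}$ by Deligne (with $W_1H^1=H^1(\bar Y)$ pure of weight $1$ and $\gr^W_2H^1$ a sub-Hodge structure of $H^2$ of a divisor), so $H^1(X,\bC)$, being a sub-MHS of it, also has weights $1$ and $2$.

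For part (2), the point is to identify the space of extendable (resp.\ log extendable) one-forms on $X$ with its analogue on $(\bar Y,E)$ and then invoke the standard fact that $F^1H^1(\bar Y,\bC)=H^0(\bar Y,\Omega^1_{\bar Y})$ (consisting of closed holomorphic forms) and $F^1H^1(\bar Y,E;\bC)=H^0(\bar Y,\Omega^1_{\bar Y}(\log E))$ (closed log forms), with the subspace $W_1F^1H^1(\bar Y,E;\bC)$ cutting out exactly the log forms with no residues, i.e.\ extending holomorphically across $E$. The content to verify is: a one-form on $X^{\mathrm{reg}}$ that extends to a holomorphic (resp.\ log) form on $\bar Y$ vanishing on fibers of $\pi$ descends to a well-defined de Rham class on $X$, and conversely every class in $W_1F^1H^1(X,\bC)$ (resp.\ $F^1H^1(X,\bC)$) is represented by such a form; independence of the choice of $(Y,\bar Y)$ follows because any two resolutions are dominated by a third and a form vanishing on fibers pulls back to a form vanishing on fibers. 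Injectivity of the assignment is clear since a holomorphic form that is exact on a smooth variety is zero; surjectivity uses that $\pi^*$ is an isomorphism onto a sub-MHS and that holomorphic/log forms pulled back from $X^{\mathrm{reg}}$ are exactly those vanishing on $\pi$-fibers.

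For part (3), I would construct $\Alb(X)$ as the generalized (semi-abelian, since weights are $-1$ and $-2$ on $H_1$) Albanese of any resolution: set $\Alb(X):=\Alb(Y)$, a semi-abelian variety with $\Lie\Alb(Y)=H_1(Y,\bC)/F^0$ and lattice $H_1(Y,\bZ)/(\text{torsion})$, and check that it is independent of $Y$ by the universal property (a dominant map of resolutions induces an isomorphism on $H_1$ modulo the weight filtration, hence on Albaneses), then descend the morphism $Y\to\Alb(Y)$ along $\pi$ using $\pi_*\cO_Y=\cO_X$ and the rigidity of maps to semi-abelian varieties on $\pi$-fibers (fibers are contracted since they carry no nonzero global holomorphic or log one-forms pulled back from $\Alb$). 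The map $X\to\gr^W_{-1}\Alb(X)$ is then the composition with the quotient semi-abelian-to-abelian map, and the statement that pullback of extendable (resp.\ log extendable) forms is an isomorphism is just the dual of the weight computation: holomorphic one-forms on an abelian variety $\gr^W_{-1}\Alb$ pull back isomorphically to $W_1F^1H^1$, and log-extendable forms on $X$ correspond to $H^0(\Alb(X),\Omega^1)$, matching part (2). Algebraicity of the Albanese maps is classical (Serre's construction of the generalized Albanese, or via \cite{bbt}-style GAGA once one has definability, but here the classical reference suffices).

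The main obstacle is the descent in part (3): one must carefully check that the holomorphic (and log) one-forms on $\bar Y$ that are pulled back from $X^{\mathrm{reg}}$ are precisely those that vanish identically on every fiber of $\pi$, and that this vanishing forces the induced map $Y\to\Alb(Y)$ to be constant on $\pi$-fibers (so it factors through $X$). This is where normality of $X$ is really used — via $\pi_*\cO_Y=\cO_X$ and the connectedness of fibers — and it is the step that takes the most care, though it is ultimately a standard rigidity argument for maps to semi-abelian varieties combined with the structure of the Leray spectral sequence for $\pi$.
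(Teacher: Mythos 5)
Parts (1) and (2) of your sketch are correct and follow the standard route the paper intends (its proof is literally the one line ``Standard, see Deligne Hodge II/III''): inject $H^1(X,\bC)$ into $H^1(Y,\bC)$ for a resolution $\pi\colon Y\to X$ and use $F^1H^1(Y,\bC)=H^0(\bar Y,\Omega^1_{\bar Y}(\log E))$, $W_1F^1H^1(Y,\bC)=H^0(\bar Y,\Omega^1_{\bar Y})$, together with the Leray characterization of the image of $H^1(X,\bC)$ as the classes restricting to zero on every $\pi$-fiber. One caveat: your parenthetical justification ``$H^1(X,\bC)=H^1(X^{\reg},\bC)$ by normality'' is false (see the example below); the injectivity of $\pi^*$ comes instead from $\pi_*\bC_Y=\bC_X$ (connectedness of fibers, Zariski's main theorem) and the Leray spectral sequence, and this is all you actually need.

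Part (3), however, contains a genuine gap: you set $\Alb(X):=\Alb(Y)$ and claim the $\pi$-fibers are contracted by $Y\to\Alb(Y)$ ``since they carry no nonzero one-forms pulled back from $\Alb$''. This fails whenever $H^1(X,\bC)\to H^1(Y,\bC)$ is not surjective, which is the typical situation for singular $X$. Concretely, let $X$ be the affine cone over an elliptic curve $E$ and $Y$ the total space of the corresponding negative line bundle: then $H^1(X,\bC)=0$, so the Albanese in the statement (built from $H_1(X)$) is a point, whereas $\Alb(Y)=E$ and $Y\to\Alb(Y)=E$ restricts to the identity on the exceptional fiber, so it does not factor through $X$. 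The repair is to use the quotient semi-abelian variety of $\Alb(Y)$ determined by the surjection $H_1(Y,\bZ)\to H_1(X,\bZ)$ (dual to the sub-MHS $H^1(X)\subset H^1(Y)$), i.e. $\Alb(X)=H_1(X,\bC)/(F^0+\mathrm{im}\,H_1(X,\bZ))$ exactly as in the statement. The invariant one-forms on this quotient pull back precisely to $F^1H^1(X,\bC)$, which by your part (2) are represented by log forms on $\bar Y$ vanishing on the $\pi$-fibers; hence the composite $Y\to\Alb(Y)\to\Alb(X)$ is constant on each compact connected fiber and descends to a holomorphic (and, by the classical theory of the generalized Albanese, algebraic) map $X\to\Alb(X)$ via $\pi_*\cO_Y=\cO_X$. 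With this correction the remaining assertions (the isomorphism on extendable resp. log extendable forms and the abelian quotient $\gr^W_{-1}\Alb(X)$) go through as you describe.
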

\begin{proof}
    Standard, see for example \cite{Delignehodgeii,DeligneHodgeiii}.
\end{proof}
Let $X$ be a connected normal algebraic space.  For a semisimple $\bar K$-local system $V$ on $X$ with quasiunipotent local monodromy, we may take a resolution $\pi:Y\to X$ and by the above there is a spectral cover $f_{\pi^*V}:Y_{\pi^*V}\to Y$.  We refer to the Stein factorization $f_V:X_V\to X$ as a spectral cover of $V$.
\begin{cor}
    Let $X$ be a connected normal algebraic space, $V$ a semisimple $\bar K$-local system with quasiunipotent local monodromy, and $f_V:X_V\to X$ the spectral cover.  Then the roots of $P_{f_V^*V}=f_V^*P_V$ are extendable one-forms.
\end{cor}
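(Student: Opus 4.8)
The plan is to reduce to the known statement about spectral covers of local systems on smooth varieties, which is the content of the preceding corollary (and ultimately \Cref{extended_characteristic_polynomial} applied to pluriharmonic norms). The only issue is that the spectral cover $f_V \colon X_V \to X$ is built by resolving $X$, forming the spectral cover upstairs, and taking Stein factorization, so we must track how ``extendable one-form'' behaves under these operations.

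First I would unwind the definitions. Fix a resolution $\pi \colon Y \to X$ with $Y$ smooth, a log smooth compactification $\bar Y$, and recall $X_V$ is the Stein factorization of $Y_{\pi^*V} \to Y \to X$, so we have $g \colon Y_{\pi^*V} \to X_V$ (a resolution of $X_V$, after a further resolution if needed) and $f_V \circ g = \pi \circ f_{\pi^*V}$, where $f_{\pi^*V} \colon Y_{\pi^*V} \to Y$ is the spectral cover of $\pi^*V$. By the previous corollary's hypotheses in the smooth case — precisely \Cref{spectral cover ben} together with \Cref{extended_characteristic_polynomial} — the roots $\omega_i$ of $f_{\pi^*V}^*P_{\pi^*V} = P_{f_{\pi^*V}^*\pi^*V}$ are closed holomorphic one-forms on a log smooth compactification of $Y_{\pi^*V}$, hence in particular extendable (indeed log extendable) one-forms in the sense above: they extend to the compactification and vanish on fibers of any further resolution morphism. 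The pull-back identity $f_{\pi^*V}^*\pi^* = g^* f_V^*$ shows these $\omega_i$ are exactly the roots of $g^*(f_V^*P_V) = g^*P_{f_V^*V}$ (using \Cref{characteristic_polynomial_pullback}, i.e.\ compatibility of characteristic polynomials with pull-back, and the canonicity of $P_V$ under field extension established just above).

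The remaining point is to descend the extendability from $Y_{\pi^*V}$ to $X_V$: I need to know that a root of $P_{f_V^*V}$, viewed a priori as a one-form on $X_V^{\reg}$, is an extendable one-form on the normal algebraic space $X_V$, which by definition means checking the extension-and-vanishing-on-fibers property for the chosen pair (resolution of $X_V$, its log compactification). But $Y_{\pi^*V}$ (possibly after one more resolution, which does not affect the $\omega_i$ since they already vanish on fibers) is exactly such a resolution of $X_V$: this is because $g \colon Y_{\pi^*V} \to X_V$ is proper birational onto the normal space $X_V$, being the resolution obtained by composing the resolution $Y_{\pi^* V}$ of $Y$ with the birational part $Y \dashrightarrow X$ — here one uses that $X_V \to X$ is finite, so resolving $X$ and then base-changing/normalizing resolves $X_V$. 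Thus extendability of $f_V^*P_V$ on $X_V$ is precisely the statement that the $\omega_i$ extend to the compactification of $Y_{\pi^*V}$ and vanish on its fibers over $X_V$, which is what we established. I would also record that $f_V^*V$ is semisimple with quasiunipotent local monodromy (pull-back along the finite map $f_V$ preserves semisimplicity by \Cref{pullback ss}, and quasiunipotence of local monodromy is preserved under any map by definition), so that $P_{f_V^*V}$ is defined and agrees with $f_V^*P_V$.

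The main obstacle I anticipate is the bookkeeping around Stein factorization and normalization — specifically, verifying cleanly that $Y_{\pi^*V}$ (or a single further resolution of it) legitimately serves as ``a resolution of $X_V$ together with a log smooth compactification'' so that the definition of extendable one-form on $X_V$ can be tested there, and that the vanishing-on-fibers condition for the fibers of $Y_{\pi^*V} \to X_V$ follows from the already-known vanishing on the (larger) fibers of $Y_{\pi^*V} \to X$. This is a diagram-chase with no real content, but it is where all the care is needed; everything else is a direct citation of the smooth-case corollary above together with the compatibilities \Cref{characteristic_polynomial_pullback} and \Cref{characteristic_polynomial_canonical}.
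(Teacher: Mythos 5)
Your proposal is correct and follows essentially the same route as the paper, whose entire proof is ``Immediate by \Cref{extended_characteristic_polynomial}'': the split roots extend as genuine holomorphic one-forms on the log smooth compactification furnished by \Cref{spectral cover ben}, and the smooth model there serves as a resolution of $X_V$ (via the Stein factorization), so the definition of extendable one-form can be tested on it exactly as you describe. The only step you wave at --- vanishing of the roots on the fibers of the resolution of $X_V$ --- is not literally ``already known'' from the results you cite, but it is immediate: such fibers map to points of $X$, so the pulled-back local system is trivial on them, the pluriharmonic map is constant on (lifts of) these compact connected fibers, and hence the characteristic polynomial restricts to zero there by \Cref{characteristic_polynomial_pullback}, which is precisely the required vanishing.
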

\begin{proof}
    Immediate by \Cref{extended_characteristic_polynomial}.
\end{proof}

For any finite set $\Sigma\subset \cM_B(X)(\bar K)$ of semisimple local systems with quasiunipotent local monodromy, we thus have the following:
\begin{itemize}
    \item A finite dominant cover $f_\Sigma:X_\Sigma\to X$ from a connected normal space $X_\Sigma$.  After taking Galois closure we may assume we also have a group action of a finite group $G_\Sigma$ on $X_\Sigma$ realizing $f_\Sigma$ as the quotient map.
    \item A $G_\Sigma$-invariant space $\form_\Sigma\subset W_1F^1H^1(X_\Sigma,\bC)$ of extendable 1-forms.
    \item A minimal quotient $\Alb(X_\Sigma)\to A_\Sigma$ to an abelian variety $A_\Sigma$ from which the forms in $\form_\Sigma$ are pulled back, which is therefore equivariant with respect to the natural $G_\Sigma$ action.
    \item A $G_\Sigma$-equivariant map $a_\Sigma:X_\Sigma\to A_\Sigma$, well-defined up to translation.

\end{itemize}

The space $\form_\Sigma$ naturally defines a foliation on $A_\Sigma$ whose leaves are (locally) affine subspaces.  For any $y\in X_\Sigma$, we obtain a natural germ of an analytic subspace of $X_\Sigma$ by taking the reduced pullback by $a_\Sigma$ of a leaf through $a_\Sigma(y)$.  The group $G_\Sigma$ naturally acts on these germs, so to any point $x\in X$ we obtain a natural germ of a reduced analytic subspace $\cL_{\Sigma}(x)$, which we call a $\Sigma$-leaf.  This germ has the property that for any irreducible algebraic subvariety $i:Z\to X$ through $x$, the local systems $i^*\Sigma$ have bounded monodromy if and only if the germ of $Z$ is contained in $\cL_\Sigma(x)$.

\begin{defn}
Let $\Sigma\subset \cM_B(X)(\bar K)$ be a set of semisimple local systems with quasiunipotent local monodromy.  The $\Sigma$-leaf through $x$ (also denoted $\cL_{\Sigma}(x)$) is the $\Sigma_0$-leaf through $x$ for sufficiently large finite $\Sigma_0\subset\Sigma$.  We say $\Sigma$ is algebraically integrable if each $\Sigma$-leaf is algebraic---that is, has nonempty interior in its Zariski closure.

For a subset $\Sigma\subset\cM_B(X)(\bar\bQ)$, we may speak of the $\Sigma$-leaves via the embedding $\bar\bQ\subset\bar\bQ_p$.

\end{defn}

\begin{rem}\label{KZ reduction}The map $X\to S_{\Sigma,\bar K}:=G_\Sigma\backslash A_\Sigma$ is called a $\bar K$-Katzarkov--Zuo reduction (with respect to $\Sigma$).  It has the property that the connected component of the fiber through $x$ is the maximal connected algebraic subvariety containing $x$ which is tangent to the leaf $\cL_\Sigma(x)$.  In the proper case one often takes the Stein factorization to make it the unique fibration with this property. 
\end{rem}

\begin{lem}\label{lem KZ1}Let $X$ be a connected normal algebraic space.  Let $\Sigma\subset \cM_B(X)(\bar K)$ be a set of semisimple local systems with quasiunipotent local monodromy.  Then there is a finite set $\Sigma_0\subset \Sigma$ such that for every point $x\in X$, $\cL_\Sigma(x)=\cL_{\Sigma_0}(x)$.
\end{lem}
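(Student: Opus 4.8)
The statement to prove is \Cref{lem KZ1}: given a set $\Sigma$ of semisimple $\bar K$-local systems with quasiunipotent local monodromy on a connected normal algebraic space $X$, there is a \emph{finite} subset $\Sigma_0\subset\Sigma$ so that $\cL_\Sigma(x)=\cL_{\Sigma_0}(x)$ for every $x\in X$ simultaneously. The construction preceding the lemma already shows that for each single point $x$ the leaf $\cL_\Sigma(x)$ coincides with $\cL_{\Sigma_0}(x)$ for some large-enough finite $\Sigma_0$ depending on $x$; so the content of the lemma is uniformity in $x$. The natural approach is a noetherianity argument: the leaves are determined, via the spectral cover and the Albanese, by the linear span of the characteristic-polynomial one-forms, and the assignment $\Sigma_0\mapsto$ (this span, on an appropriate cover) is monotone, so it must stabilize.

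\medskip
First I would set up the right object to run noetherianity against. For a finite subset $T\subset\Sigma$, let $f_T\colon X_T\to X$ be the Galois spectral cover (Stein factorization of the composite of the individual spectral covers of the members of $T$, then Galois closure), with Galois group $G_T$, and let $\form_T\subset W_1F^1H^1(X_T,\bC)$ be the $G_T$-invariant subspace spanned by the roots of the $P_V$, $V\in T$, pulled back to $X_T$. The key point is that the leaf structure $\cL_T(-)$ depends only on the pair $(X_T\to X,\ \form_T)$ up to the obvious pullback equivalence: enlarging $T$ to $T'$ gives a dominant map $X_{T'}\to X_T$ (after passing to a common Galois cover $X_{T\cup T'}$), and $\form_T$ pulls back into $\form_{T'}$. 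So the relevant invariant is the directed system of subspaces $\form_T$ sitting inside $W_1F^1H^1(X_{T}, \bC)$ as $T$ grows; since all these cohomology groups are finite-dimensional and pull back compatibly into $H^1$ of any common cover, the dimensions are bounded, hence the system stabilizes: there is a finite $\Sigma_0$ such that for all finite $T\supset\Sigma_0$ the pair $(X_T\to X,\form_T)$ is the pullback of $(X_{\Sigma_0}\to X,\form_{\Sigma_0})$. Here one should be slightly careful that ``stabilizes'' is taken with respect to the dimension of $\form_T$ \emph{and} the degree/structure of the cover $X_T\to X$; both are bounded (the cover degree because the relevant one-forms, hence their ramification, are controlled by a fixed-dimensional cohomology group on a fixed resolution--compactification), so a genuine maximal element $\Sigma_0$ exists.

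\medskip
Next I would check that this stable $\Sigma_0$ does the job for \emph{every} $x$ at once. Fix $x\in X$ and take any finite $T$ with $\Sigma_0\subset T\subset\Sigma$. By the previous paragraph $(X_T\to X,\form_T)$ is the pullback of $(X_{\Sigma_0}\to X,\form_{\Sigma_0})$, and the germ $\cL_T(x)$ was defined purely in terms of this data (reduced preimage under $a_T$ of the affine leaf of the foliation cut out by $\form_T$ through $a_T(y)$, then pushed down via $G_T$); pullback-equivalent data produce the same germ in $X$, so $\cL_T(x)=\cL_{\Sigma_0}(x)$. Since $\cL_\Sigma(x)$ is by definition $\cL_T(x)$ for $T$ large enough, we get $\cL_\Sigma(x)=\cL_{\Sigma_0}(x)$, and $\Sigma_0$ is independent of $x$. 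I would also record, as part of this step, the characterization used implicitly: for an irreducible subvariety $i\colon Z\to X$ through $x$, the germ of $Z$ lies in $\cL_\Sigma(x)$ iff $i^*\Sigma$ has uniformly bounded monodromy — this follows from the lemma after \Cref{characteristic_polynomial_canonical} (vanishing of pulled-back characteristic forms $\Leftrightarrow$ constancy of the pluriharmonic map $\Leftrightarrow$ bounded monodromy), and it makes the monotonicity ``$T\subset T'\Rightarrow\cL_{T'}(x)\subset\cL_T(x)$'' transparent, which is what legitimizes speaking of a stable limit.

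\medskip
\textbf{Main obstacle.} The genuinely delicate point is not the noetherianity bookkeeping but making precise that the spectral covers $X_T\to X$ themselves stabilize, not just the one-form spaces on a moving target. One has to argue that all the $X_T$ can be realized inside a single fixed cover: concretely, fix a resolution $\pi\colon Y\to X$ and a log smooth compactification $\bar Y$; then every root of every $f_T^*P_V$ lives in the fixed finite-dimensional space $W_1F^1H^1(-,\bC)$ of some finite cover determined by the (bounded-degree, bounded-ramification) spectral data, and by \Cref{spectral cover ben} the covers needed to split these finitely many polynomials are controlled. So the directed system of covers $\{X_T\}$ is really a directed system of finite covers of $X$ of bounded degree dominating one another, hence has a cofinal stable element. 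I expect the writeup to spend most of its length pinning down this boundedness and the compatibility of the Albanese/foliation data under pullback; the rest is formal.
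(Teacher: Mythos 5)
Your reduction of the problem to a stabilization statement is in the right spirit (and your monotonicity observation $\cL_{T'}(x)\subset\cL_T(x)$ for $T\subset T'$, via the bounded-monodromy characterization of leaves, is correct and is also the starting point of the paper's proof), but the mechanism you use to force stabilization has a genuine gap. Your noetherianity runs on the directed system of pairs $(X_T\to X,\form_T)$, and you justify its stabilization by asserting that both the degree of the covers $X_T$ and the dimensions of the spaces $\form_T\subset W_1F^1H^1(X_T,\bC)$ are uniformly bounded, "controlled by a fixed-dimensional cohomology group on a fixed resolution--compactification." This is not justified and is false as stated: \Cref{lem KZ1} imposes no bound on the ranks of the local systems in $\Sigma$, so the coefficients of the characteristic polynomials are symmetric differentials of unbounded degree and the spectral covers of \Cref{spectral cover ben} have unbounded degree over $X$; moreover, even for bounded rank, the Galois closures $X_T$ of the composite spectral covers grow with $|T|$, and $H^1$ of a tower of covers of growing degree is not uniformly finite-dimensional. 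There is no single algebraic "common cover" dominating all $X_T$ for $T\subset\Sigma$ finite (only a pro-finite one), so the dimensions of the ambient spaces in which the $\form_T$ sit are not bounded, and no bounded monotone integer invariant is exhibited that would force the system $(X_T,\form_T)$ to stabilize up to pullback. Since this is exactly the step you flag as "the main obstacle," the proposal does not yet contain a proof.

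The paper sidesteps this entirely by running the noetherian argument on the \emph{fixed} space rather than on the moving covers: for each finite $\Sigma_0\subset\Sigma$ and each $n$, the locus $\bar X(\Sigma_0,n)=\{x\in\bar X\mid \dim\cL_{\Sigma_0}(x)\geq n\}$ in a compactification $\bar X$ is a closed algebraic subset (closed analytic in the normalization of $\bar X$ in $X_{\Sigma_0}$, hence algebraic by Chow), and these loci decrease as $\Sigma_0$ grows, so by noetherianity of $\bar X$ they stabilize; one then repeats the argument with the loci where the number of branches of the leaf is $\geq n$ to upgrade equality of dimensions to equality of germs. If you want to salvage your approach, you would have to either add a bounded-rank hypothesis and still control the growth of the composite covers (which is not straightforward), or replace the invariant $\form_T$ by data living on $X$ or $\bar X$ itself — which is in effect what the paper's dimension-and-branch-loci argument does.
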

\begin{proof}
 Clearly $\cL_\Sigma(x)\subset\cL_{\Sigma_0}(x)$ for any point $x$ and any subset $\Sigma_0\subset \Sigma$.  Since $A_\Sigma$ is a proper abelian variety, the foliation extends to a compactification $\bar X$ of $X$.  Observe that for any finite set $\Sigma_0\subset\Sigma$, the sets $\bar X(\Sigma_0,n):=\{x\in \bar X\mid \dim \cL_{\Sigma_0}(x)\geq n\}\subset \bar X$ are closed algebraic subsets.  Indeed, the corresponding sets in $\bar X_{\Sigma_0}$ (meaning the normalization of $\bar X$ in $X_{\Sigma_0}$) are closed analytic subsets, hence algebraic by Chow's theorem.  For each $n$, an increasing sequence of finite subsets $\Sigma_0$ yields a decreasing sequence of closed algebraic subsets $\bar X(\Sigma_0,n)$ which must therefore stabilize.  Thus there exists a finite subset $\Sigma_0\subset\Sigma$ for which the leaves of $\Sigma_0$ and $\Sigma$ have the same dimension at every point $x$.  Applying the same argument to the subsets $\{x\in \bar X\mid \mbox{\# branches of }\cL_{\Sigma_0}(x)\geq n\}$, the claim follows.
\end{proof}

\begin{lem}\label{lem KZ2}  Let $\Sigma\subset\cM_B(X)(\bar K)$ be a set of semisimple local systems with quasiunipotent local monodromy.  

\begin{enumerate}
    \item Let $g:Z\to X$ be an algebraic map from a connected normal algebraic space $Z$.  Then the leaves of $g^*\Sigma$ are the pullbacks of the leaves of $\Sigma$.
    \item Let $\Sigma=\{V\}$ with $V=\bigoplus_{i=1}^n V_i$ and set $\Sigma'=\{V_i\}_{i=1}^n$.  Then the leaves of $\Sigma$ are equal to the leaves of $\Sigma'$.
    \item Let $\Sigma=\{V\}$ with $V=\bigotimes_{i=1}^n V_i$ and set $\Sigma'=\{V_i\}_{i=1}^n$.  Assume $\det V_i$ is trivial for $i>1$. 
 Then the leaves of $\Sigma$ are equal to the leaves of $\Sigma'$.
\end{enumerate}
\end{lem}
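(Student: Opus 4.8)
The plan is to reduce all three parts to elementary statements about the characteristic polynomials $P_V$ of \Cref{sect:KZ foliation}, exploiting that the $v$-adic leaf $\cL_\Sigma(x)$ is, on a spectral cover $f_\Sigma\colon X_\Sigma\to X$ with its finite group action, the reduced common level-set germ of local primitives of the extendable one-forms occurring as roots of the $f_\Sigma^*P_V$, $V\in\Sigma$ — equivalently the reduced $a_\Sigma$-preimage of the affine leaf of the foliation defined by $\form_\Sigma$, where $\form_\Sigma\subset W_1F^1H^1(X_\Sigma,\bC)$ is the span of those roots and $a_\Sigma\colon X_\Sigma\to A_\Sigma$ is the associated Albanese map. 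For (1) I would first reduce to a finite $\Sigma$: by \Cref{lem KZ1} applied both to $\Sigma$ on $X$ and to $g^*\Sigma$ on $Z$ there is a finite $\Sigma_0\subset\Sigma$ that is simultaneously large enough for $\Sigma$ and such that $g^*\Sigma_0$ is large enough for $g^*\Sigma$ (enlarging $\Sigma_0$ only shrinks leaves, so largeness is preserved under enlargement). Then take $Z_{g^*\Sigma_0}$ to be a component of the normalization of $Z\times_X X_{\Sigma_0}$ dominating $Z$; since $P_{g^*V}=g^*P_V$ by \Cref{characteristic_polynomial_pullback}, it splits the pulled-back polynomials, and because $\form_{\Sigma_0}$ is the span of the roots of the $f_{\Sigma_0}^*P_V$, its pullback along $Z_{g^*\Sigma_0}\to X_{\Sigma_0}$ is exactly $\form_{g^*\Sigma_0}$ (pullback of a span equals span of pullbacks). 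Hence the Albanese maps, the abelian-variety quotients, and the affine foliations are compatible, so the level-set germs — and therefore the leaves — pull back; taking images under the finite group transports this to the statement on $X$ and $Z$.

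For (2) everything is already finite: \Cref{pluriharmonic_norm_sum} shows the pluriharmonic norm on $V=\bigoplus_i V_i$ restricts to pluriharmonic norms on the $V_i$, so by \Cref{characteristic_polynomial_exact_sequence} (applied iteratively) $P_V=\prod_i P_{V_i}$; thus on a common spectral cover the multiset of roots of $f^*P_V$ is the disjoint union of those of the $f^*P_{V_i}$, the collections of local primitives coincide, and hence so do the level-set germs and the leaves. For (3) I would first note that $V=\bigotimes_i V_i$ is semisimple (tensor products of semisimple representations of a group over a characteristic-zero field are semisimple), so $P_V$ is defined, and that the norm on $V$ obtained by tensoring pluriharmonic norms on the $V_i$ is again pluriharmonic and locally bounded at infinity — this is proved as in \Cref{pluriharmonic_norm_sum}: on the intersection of the regular loci one has orthogonal frames whose tensor is orthogonal for the tensor norm with log-norms adding (as for exterior powers, cf. \cite{Goldman-Iwahori}), so the norm is pluriharmonic there, and the singular loci are pluripolar (\Cref{local system: pluripolar sing}), so one concludes by \Cref{extension_pluriharmonic_norm}; local boundedness at infinity follows from the corresponding estimate for relative spectra of tensor norms. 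By uniqueness of the characteristic polynomial (\Cref{characteristic_polynomial_canonical}), computing in such frames gives, on a common spectral cover $X'$ splitting all $f'^*P_{V_i}=\prod_j(T-\omega^{(i)}_j)$, that $f'^*P_V=\prod_{(j_i)}\bigl(T-\sum_i\omega^{(i)}_{j_i}\bigr)$.

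It then remains a linear-algebra computation with local primitives $f^{(i)}_j$ of $\omega^{(i)}_j$: the leaf of $\{V\}$ is cut out by the conditions that $\sum_i f^{(i)}_{j_i}$ be locally constant for every tuple $(j_i)$, and the leaf of $\{V_i\}$ by the conditions that every $f^{(i)}_j$ be locally constant; one inclusion of level-set germs is obvious, and conversely, taking differences forces all $f^{(i)}_j-f^{(i)}_k$ to be locally constant, and since $\det V_i$ is trivial for $i>1$ the sum $\sum_j f^{(i)}_j$ is locally constant for $i>1$ (it is the root of the characteristic polynomial of the trivial local system $\det V_i$ with its induced — hence flat, by \Cref{pluriharmonic_norm_trivial_local_system} — pluriharmonic norm), so all $f^{(i)}_j$ with $i>1$ are locally constant, whence also each $f^{(1)}_{j_1}$; thus the two germs agree, and so do the leaves. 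The main obstacle is this tensor-product step: while the level-set manipulation using the trivial-determinant hypothesis is routine, one must carefully verify that the tensor of pluriharmonic (and locally bounded at infinity) norms is of the same type and computes $P_V$ as the expected ``product'', which is the non-archimedean analogue of \Cref{pluriharmonic_norm_sum}; parts (1) and (2) are essentially formal consequences of \Cref{characteristic_polynomial_pullback}, \Cref{characteristic_polynomial_exact_sequence}, and the spectral-cover/Albanese description of the leaves.
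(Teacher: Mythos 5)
Your proposal is correct and follows essentially the same route as the paper: parts (1) and (2) are formal consequences of the pullback and product formulas for the characteristic polynomial, and part (3) is the same computation showing the span of the roots $\sum_i\omega^{(i)}_{j_i}$ equals the span of all the $\omega^{(i)}_j$ once the determinant condition forces $\sum_j\omega^{(i)}_j=0$ for $i>1$ (the paper phrases this directly as $\sum_j(\alpha'_i+\alpha''_j)=\rk(V'')\alpha'_i$). Your verification that the tensor product of pluriharmonic norms is pluriharmonic, locally bounded at infinity, and computes $P_V$ with the expected roots fills in a step the paper leaves implicit, and your sketch of it is sound.
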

\begin{proof}
    \begin{enumerate}
        \item Clear.
        \item It suffices to observe that for a single $\bar K$-local system $V=V'\oplus V''$, the leaves of $V$ are the intersections of the leaves of $V'$ and $V''$, since the characteristic polynomial of $V$ is the product of those of $V'$ and $V''$ by \Cref{characteristic_polynomial_exact_sequence}.
        \item Likewise, it suffices to assume $V=V'\otimes V''$ where $V''$ has trivial determinant.  The roots of $p_V$ are $\alpha_i'+\alpha_j''$, where $\alpha_i'$ (resp. $\alpha_j''$) are the roots of $p_{V'}$ (resp. $p_{V''}$).  Certainly then the intersections of the leaves of $V'$ and $V''$ are contained in the leaves of $V$.  The characteristic polynomial of the determinant of $V''$ is $T-\sum_j\alpha''_j$, so $0=\sum_i\alpha''_j$ vanishes.  Thus, since $\sum_j\alpha_i'+\alpha_j''=\rk(V'')\alpha_i'$, we have the reverse containment as well.\qedhere
    \end{enumerate}
\end{proof}
\begin{cor}\label{cor KZ}
    Let $\Sigma\subset\cM_B(X)(\bar K)$ be a set of semisimple local systems with quasiunipotent local monodromy.  After passing to a finite \'etale cover $f:X'\to X$ and replacing $\Sigma$ with $f^*\Sigma$, there is a finite subset of irreducible local systems $\Sigma_0\subset \cM_B(X)(\bar K)$ with the same leaves as $f^*\Sigma$ such that any $V\in\Sigma_0$ is either abelian or has monodromy which is Zariski dense in a simple $\bar K$-group.  
\end{cor}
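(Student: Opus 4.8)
The goal is to produce, after passing to a finite \'etale cover, a finite set $\Sigma_0$ of irreducible $\bar K$-local systems with the same $\Sigma$-leaves such that each $V \in \Sigma_0$ is either abelian (i.e.\ has abelian monodromy, so factors through an abelian variety quotient of the Albanese up to \'etale cover) or has monodromy Zariski dense in a simple $\bar K$-group. The strategy is to combine the reduction \Cref{lem KZ1} with a semisimplification/tensor-decomposition argument controlled by \Cref{lem KZ2}, and then to analyze the algebraic monodromy group of each irreducible factor using Goursat-type arguments together with the passage to a finite \'etale cover to kill the radical and descend to almost-simple factors.

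\textbf{Step 1: reduce to a finite set of irreducible local systems.} By \Cref{lem KZ1} there is a finite subset $\Sigma_1 \subset \Sigma$ with $\cL_\Sigma(x) = \cL_{\Sigma_1}(x)$ for every $x \in X$. Each $V \in \Sigma_1$ is semisimple, so $V = \bigoplus_i V_i^{(V)}$ with $V_i^{(V)}$ irreducible; applying \Cref{lem KZ2}(2) one sees that $\Sigma_1$ and the finite set $\Sigma_2 := \{V_i^{(V)} \mid V \in \Sigma_1\}$ of irreducible local systems have the same leaves. Each $V_i^{(V)}$ still has quasiunipotent local monodromy (a direct summand of a local system with quasiunipotent local monodromy has quasiunipotent local monodromy, since the local monodromy operator is block-diagonal). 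So we may assume $\Sigma$ is a finite set of irreducible local systems with quasiunipotent local monodromy.

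\textbf{Step 2: analyze the algebraic monodromy of each irreducible $V$.} Fix $V \in \Sigma$ defined over a finite extension $L/K$, with monodromy $\rho_V\colon \pi_1(X,x) \to \bGL_r(L)$ and Zariski closure $\bG_V$. Since $V$ is irreducible, $\bG_V$ is reductive. Let $\cD(\bG_V)$ be its derived group and $Z(\bG_V)^\circ$ its central torus; by passing to a finite \'etale cover $X' \to X$ corresponding to a finite-index subgroup (enlarging it so that it works simultaneously for all finitely many $V \in \Sigma$, which is possible since $\pi_1(X,x)$ is finitely generated), we may assume the image of $\rho_V$ lands in $Z(\bG_V)^\circ(\bar K) \cdot \cD(\bG_V)(\bar K)$ in such a way that $V$ becomes, after this cover, a direct sum of irreducible local systems each of which is a tensor product $W \otimes \chi$ where $W$ has monodromy in the derived group and $\chi$ is a rank-one (hence abelian) local system; here one uses that a central isogeny $Z(\bG_V)^\circ \times \cD(\bG_V) \to \bG_V$ exists and, after an \'etale cover killing the resulting finite kernel at the level of $\pi_1$, the representation lifts. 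Now $\det W$ has finite order monodromy, so after a further \'etale cover it is trivial; thus \Cref{lem KZ2}(3) lets us replace $W \otimes \chi$ in $\Sigma$ by the pair $\{W, \chi\}$ without changing the leaves (and $\chi$ is abelian). We are reduced to the case where each nonabelian $V \in \Sigma$ has monodromy Zariski dense in a semisimple group $\bH_V := \cD(\bG_V)$ with trivial determinant on each factor.

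\textbf{Step 3: split semisimple monodromy into almost-simple pieces.} For $V$ with $\bH_V$ semisimple but not simple, write the universal cover $\tilde\bH_V = \bH_1 \times \cdots \times \bH_m$ as a product of simple simply connected $\bar K$-groups; after an \'etale cover lifting the representation through the central isogeny $\tilde\bH_V \to \bH_V$ (again legitimate after enlarging the finite-index subgroup, using finite generation of $\pi_1$), the composite $\pi_1(X',x) \to \tilde\bH_V(\bar K) = \prod_j \bH_j(\bar K)$ has Zariski-dense image, and by a Goursat-lemma argument for the projections the image surjects onto each $\bH_j$. One then writes $V$ as a subquotient of $\bigotimes_j V_j$ where $V_j$ is an irreducible local system with monodromy Zariski dense in the simple group $\bH_j$, with all determinants trivial; iterating \Cref{lem KZ2}(2)--(3) as in Step 2, we can replace $V$ by the finite set $\{V_j\}$ without changing the leaves. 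The resulting finite set $\Sigma_0$ consists of irreducible local systems each of which is abelian or has monodromy Zariski dense in a simple $\bar K$-group, and it has the same leaves as the original (pullback of) $\Sigma$ by the chain of equalities supplied by \Cref{lem KZ1} and \Cref{lem KZ2}, noting that pullback along the finite \'etale cover is compatible with leaves via \Cref{lem KZ2}(1).

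\textbf{Main obstacle.} The delicate point is Steps 2 and 3: carefully arranging the finite \'etale cover so that \emph{all} the relevant central isogenies (from $Z(\bG_V)^\circ \times \cD(\bG_V)$ onto $\bG_V$, and from $\prod_j \bH_j$ onto the semisimple part) lift the monodromy representations, and so that this single cover works for all $V \in \Sigma$ simultaneously. This requires knowing that the obstruction to lifting through an isogeny with finite kernel is killed by passing to a finite-index subgroup of $\pi_1(X,x)$ — which holds because $\pi_1(X,x)$ is finitely generated, so its image under $\rho_V$ is a finitely generated linear group and the relevant covering groups are reached by a finite-index subgroup. One also needs care that the tensor-decomposition $V \hookrightarrow \bigotimes V_j$ produces local systems $V_j$ that are genuinely defined over a finite extension of $K$ with quasiunipotent local monodromy; the latter follows since quasiunipotence is detected by eigenvalues of local monodromy being roots of unity, a property preserved under taking tensor factors of an irreducible constituent after the cover. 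The rest is bookkeeping with \Cref{lem KZ2}.
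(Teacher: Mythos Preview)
Your approach is correct but takes a genuinely different route from the paper's.  The paper avoids lifting through central isogenies entirely: after reducing to a finite set with connected algebraic monodromy, it replaces each irreducible $V$ by the pair $\{\End(V),\det(V)\}$.  The leaves are unchanged by a direct computation on the roots of the characteristic polynomials: if $P_V$ has roots $\alpha_i$, then $P_{\End(V)}$ has roots $\alpha_i-\alpha_j$ and $P_{\det V}$ has root $\sum_i\alpha_i$, and $\{\alpha_i\}$ and $\{\alpha_i-\alpha_j\}\cup\{\sum\alpha_i\}$ span the same space.  The payoff is that $\End(V)$ already has monodromy in the adjoint group $\bG_V^{\mathrm{ad}}$, which is a \emph{genuine product} $\prod_j H_j$ of simple adjoint groups, so $\End(V)=\bigotimes_j \End(V_j)$ with each $\End(V_j)$ a well-defined local system on $X$ via the projection $\pi_1(X)\to\bG_V^{\mathrm{ad}}\to H_j$.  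No lifting is needed, and one finishes with \Cref{lem KZ2}(3).

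Your approach instead lifts the monodromy through the isogenies $Z^\circ\times\cD(\bG_V)\to\bG_V$ and $\prod_j\tilde\bH_j\to\bH_V$ after finite \'etale covers.  This is valid---the key point, which you gesture at in the ``main obstacle'' paragraph, is that the image $\Gamma=\rho_V(\pi_1(X))$ is finitely generated and linear, hence residually finite by Malcev, so the finite central extension $\tilde\Gamma\subset\tilde G(\bar K)$ lying over $\Gamma$ admits a finite-index subgroup avoiding the kernel, giving the splitting.  Two minor points: you should first pass to a cover making each $\bG_V$ connected (as the paper does), since otherwise the structure theory of $Z^\circ\times\cD(\bG_V)$ is murkier; and in Step 3 you can drop ``subquotient''---an irreducible representation of $\prod_j\tilde\bH_j$ with $\tilde\bH_j$ simply connected simple is exactly a tensor product $\bigotimes_j V_j$, so no Goursat argument is needed.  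The paper's route is cleaner because the $\End$ trick sidesteps both the isogeny-lifting and the extra \'etale covers in one stroke.
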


\begin{proof}   Let $\Sigma_0\subset\Sigma$ be a finite set with the same leaves as $\Sigma$, using \Cref{lem KZ1}.  By (1), passing to a finite \'etale cover, the leaves of $f^*\Sigma$ and $f^*\Sigma_0$ will be the same.  Thus, we may assume that the algebraic monodromy of each local system in $\Sigma_0$ is connected.  By (2) we may then replace $\Sigma_0$ with a finite set of irreducible local systems by taking irreducible factors.  For each element $V$ of $\Sigma_0$, we may replace $V$ with $\{\End(V),\det(V)\}$:  if $p_V$ has roots $\alpha_i$, then $\End(V)$ (resp. $\det(V)$) has roots $\alpha_i-\alpha_j$ (resp $\sum\alpha_i$), and $\{\alpha_i\}_{i\leq r}$ and $\{\alpha_i-\alpha_j\}_{i,j\leq r}\cup\{\sum\alpha_i\}$ have the same span.  Finally, the algebraic monodromy of $\End(V)$ is the adjoint form of the derived group of the algebraic monodromy of $V$, so $\End(V)$ splits as a product of local systems each of which has simple algebraic monodromy.  By (3) we may replace $\End(V)$ with these factors, and this completes the proof.
\end{proof}

\begin{cor}\label{extract tensor}
    Let $\Sigma\subset \cM_B(X)(\bC)$ be an absolute $\bar\bQ$-constructible set of semisimple local systems with quasiunipotent local monodromy.  There is a finite \'etale cover $f:X'\to X$, an absolute $\bar\bQ$-constructible $\Sigma'\subset \cM_B(X')(\bC)$ set of semisimple local systems with quasiunipotent local monodromy, and a finite subset $\Sigma_0\subset \Sigma'(\bar\bQ)$ such that $f^*\Sigma(\bar\bQ),\Sigma'(\bar\bQ),\Sigma_0$ have the same leaves and  any $V\in\Sigma_0$ is either abelian or has monodromy which is Zariski dense in a simple $\bar\bQ$-group.  
\end{cor}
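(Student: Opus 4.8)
The plan is to run the argument of \Cref{cor KZ} for the set $\Sigma(\bar\bQ)$ of $\bar\bQ$-points of $\Sigma$, viewed inside $\cM_B(X)(\bar\bQ_p)$ via the embedding $\bar\bQ\subset\bar\bQ_p$ determined by $v$ (this is legitimate, since scalar extension from $\bar\bQ$ to $\bar\bQ_p$ preserves semisimplicity and the quasiunipotence of local monodromy), and then to reproduce, at the level of constructible subsets of the Betti stack, the finitely many functorial operations used there: pullback along a finite étale cover $f\colon X'\to X$, decomposition into irreducible factors, the passage $V\rightsquigarrow\{\End V,\det V\}$, and their bounded iteration. Applying \Cref{cor KZ} to $\Sigma(\bar\bQ)$ produces $f$ and a finite set $\Sigma_0$ of irreducible local systems with the same leaves as $f^*(\Sigma(\bar\bQ))$, each either abelian or with monodromy Zariski dense in a simple group. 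Inspecting its proof, every local system produced there (an irreducible factor, a $\det$, an $\End$, or an iterate of these, of a pullback of a $\bar\bQ$-local system) is again defined over $\bar\bQ$, so $\Sigma_0\subset\cM_B(X')(\bar\bQ)$; and irreducibility of a $\bar\bQ$-local system (equivalently $\End=\bar\bQ$) as well as simplicity of its algebraic monodromy group are insensitive to the extension $\bar\bQ\subset\bar\bQ_p$, so the asserted properties of $\Sigma_0$ hold over $\bar\bQ$. This disposes of $\Sigma_0$.

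Next I would define $\Sigma'$ by applying the same composite operation $\Phi$ to the absolute $\bar\bQ$-constructible set $f^*\Sigma$, which is absolute $\bar\bQ$-constructible by \Cref{abs basic prop} (pullback being an absolute $\bar\bQ$-morphism). Here one must observe that $\Phi$ preserves absolute $\bar\bQ$-constructibility: decomposition into irreducible factors is the composite ``pull back under the direct sum morphism $\cM_B(X')^n\to\cM_B(X')$, restrict the source to the (absolute $\bar\bQ$-constructible) locus of tuples of irreducible local systems, project to each factor, and take the finite union'', which preserves absolute $\bar\bQ$-constructibility by \Cref{abs basic prop} and \Cref{ssabs} (and the evident analogue of \Cref{ssabs} for the irreducible locus); the passage to $\{\End V,\det V\}$ is the image under the tensor, dual and exterior-power morphisms, again absolute $\bar\bQ$-morphisms; and one iterates a bounded number of times, the bound coming from the rank bound on $\Sigma$. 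By \Cref{pullback ss} (extended to normal spaces in the usual way) and the routine stability of semisimplicity and quasiunipotence of local monodromy under these operations, every element of $\Sigma'$ is a semisimple local system with quasiunipotent local monodromy. Since $\Sigma(\bar\bQ)\subset\Sigma(\bC)$, and $\Phi$ and $f^*$ are monotone and defined over $\bQ$, we get $\Sigma_0=\Phi\bigl(f^*(\Sigma(\bar\bQ))\bigr)\subset\Sigma'(\bar\bQ)$.

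It remains to show that $(f^*\Sigma)(\bar\bQ)$, $\Sigma'(\bar\bQ)$ and $\Sigma_0$ all have the same leaves, and I expect this to be the technical heart. The graphs of the relations ``$W\cong f^*V$'' and ``$W\in\Phi(\{f^*V\})$'' are constructible over $\bQ$, since $f^*$ and $\Phi$ are built from algebraic operations. Because $\Phi$ preserves leaves by \Cref{lem KZ2}, one has $\cL_{\Sigma_0}=\cL_{f^*(\Sigma(\bar\bQ))}$, which by \Cref{lem KZ2} is the pullback of $\cL_{\Sigma(\bar\bQ)}$. Now for any $\bar\bQ$-point $W$ of $\Sigma'$ (resp. of $f^*\Sigma$), the set $\{V\in\Sigma:W\in\Phi(\{f^*V\})\}$ (resp. $\{V\in\Sigma:W\cong f^*V\}$) is nonempty and $\bar\bQ$-constructible, hence contains a $\bar\bQ$-point $V'$, so $V'\in\Sigma(\bar\bQ)$ with $W\in\Phi(\{f^*V'\})$ (resp. $W\cong f^*V'$); since $\Phi$ preserves leaves this gives $\cL_{\{W\}}\supseteq\cL_{\Phi(\{f^*V'\})}=\cL_{\{f^*V'\}}\supseteq\cL_{f^*(\Sigma(\bar\bQ))}=\cL_{\Sigma_0}$. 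Intersecting over $W$ yields $\cL_{\Sigma'(\bar\bQ)}\supseteq\cL_{\Sigma_0}$ and $\cL_{(f^*\Sigma)(\bar\bQ)}\supseteq\cL_{\Sigma_0}$; conversely $\Sigma_0\subset\Sigma'(\bar\bQ)$ gives $\cL_{\Sigma'(\bar\bQ)}\subseteq\cL_{\Sigma_0}$, while $f^*(\Sigma(\bar\bQ))\subset(f^*\Sigma)(\bar\bQ)$ together with $\cL_{\Sigma_0}=\cL_{f^*(\Sigma(\bar\bQ))}$ gives $\cL_{(f^*\Sigma)(\bar\bQ)}\subseteq\cL_{\Sigma_0}$. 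Hence all three leaves coincide, which completes the proof. The delicate point throughout is the propagation of $\bar\bQ$-rationality of a witness through the operations, which rests on the density of $\bar\bQ$-points in $\bar\bQ$-constructible subsets of $\cM_B(X)$ and on the fact that the operations are algebraic and defined over $\bQ$; everything else is bookkeeping assembling \Cref{cor KZ}, \Cref{lem KZ1}, \Cref{lem KZ2} and \Cref{abs basic prop}.
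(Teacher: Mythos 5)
Your proposal is correct and takes essentially the same route as the paper: the paper's proof is the single observation that the direct-factor, $\End$, and $\det$ operations occurring in \Cref{cor KZ} are absolute $\bar\bQ$-operations (\Cref{abs basic prop}), which is exactly the mechanism you elaborate. The additional bookkeeping you supply (the explicit construction of $\Sigma'$, the $\bar\bQ$-point witness argument via density of $\bar\bQ$-points in nonempty $\bar\bQ$-constructible sets, and the leaf comparisons through \Cref{lem KZ1} and \Cref{lem KZ2}) is the intended, if unwritten, content of that one-line proof.
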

\begin{proof}
    By \Cref{abs basic prop}, the direct factors and tensor components of the previous corollary can be extracted with $\bar\bQ$-absolute operations.
\end{proof}
\subsection{Main step}
The main step of the proof of \Cref{thm KZ integrable} is the following, whose proof closely follows the strategy of Eyssidieux using Simpson's Lefschetz theorem as in \cite[\S5]{Eyssidieux}.
\begin{prop}\label{KZ main step}
    Let $X$ be a connected smooth algebraic space, $(\bar X,D)$ a log smooth compactification, and $\Sigma\subset \cM_B(X)(\bar K)$ a set of semisimple local systems with unipotent local monodromy.  Assume that:
    \begin{enumerate}
    \item For each normalization of a strict subvariety $i:Z\to X$, $i^*\Sigma$ is algebraically integrable. 
    \item For each component $D_0$ of $D$, the image under the graded nearby cycles functor $\gr\psi_{D_0}\Sigma$ is algebraically integrable. 
    \end{enumerate}
 Then either $\Sigma$ is algebraically integrable or the commutator subgroup $[\pi_1(X,x),\pi_1(X,x)]\subset\pi_1(X,x)$ has bounded image under the monodromy representation $\rho_{V,x}$ for any $V\in \Sigma$.
\end{prop}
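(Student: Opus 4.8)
\emph{Overall strategy.} The plan is to follow Eyssidieux's inductive argument \cite{Eyssidieux}, the only genuinely new input being the control of the Katzarkov--Zuo forms along the boundary furnished by \Cref{characteristic_polynomial_nearby_cycle}. First I would perform the standard reductions: replace $\Sigma$ by a finite subset with the same leaves (\Cref{lem KZ1}) and pass to a finite \'etale cover in order to apply \Cref{cor KZ}, reducing to the case where $\Sigma$ is a finite set of irreducible local systems, each either abelian or with Zariski-dense monodromy in a simple $\bar K$-group. This changes neither the algebraic integrability of the leaves (\Cref{lem KZ2}(1)) nor, up to finite index, the boundedness of $\rho_{V,x}([\pi_1(X,x),\pi_1(X,x)])$, and the standing hypotheses (1), (2) persist, the latter because $\gr\psi_{D_0}$ commutes with \'etale pullback. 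For an abelian member $V$ the leaves are the fibres of the (algebraic) Albanese map and $\rho_{V,x}$ has commutator-trivial image, so the whole question concentrates on the members $V$ with Zariski-dense monodromy in a simple group $\bG$ over a finite extension $L/K$. Such a $V$ carries a pluriharmonic norm, locally bounded at infinity (\Cref{existence_pluriharmonic_norm}, \Cref{finite_energy_implies_bounded}), whose characteristic polynomial $P_V$ has logarithmic one-form coefficients (\Cref{extended_characteristic_polynomial}); on a spectral cover $f_V\colon X_V\to X$ the leaf $\cL_V(x)$ is the integral subvariety through $x$ of the distribution cut out by the logarithmic one-form roots of $f_V^*P_V$, and the Katzarkov--Zuo reduction $r_V\colon X\to S_V=G_V\backslash A_V$ is the induced map to a quotient of an abelian variety, whose general fibre is the maximal algebraic subvariety tangent to the leaf (\Cref{KZ reduction}).

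\emph{The dichotomy for a simple member $V$.} Suppose $\{V\}$ is not algebraically integrable; I claim $\rho_{V,x}([\pi_1(X,x),\pi_1(X,x)])$ is bounded. By \Cref{characteristic_polynomial_pullback} (equivalently \Cref{lem KZ2}(1)), for the normalization $i\colon Z\to X$ of any subvariety the leaves of $\{i^*V\}$ are the traces of those of $\{V\}$, so hypothesis (1) forces every leaf whose Zariski closure is a strict subvariety to be algebraic; hence a non-algebraic leaf is Zariski dense in $X$, and $\dim S_V>0$ (otherwise all one-forms vanish, the pluriharmonic map is constant, and the unique leaf $X$ is algebraic). On a general fibre $F$ of $r_V$, a strict subvariety contained in a leaf, the restriction of $P_V$ vanishes (again \Cref{characteristic_polynomial_pullback}), so the pluriharmonic map is constant on the universal cover of $F$ and $V|_F$ has bounded monodromy. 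I would then invoke Simpson's Lefschetz-type factorization result \cite{Simpsonlefschetz} to conclude that the pluriharmonic map descends along $r_V$; once this is established, $\rho_{V,x}$ factors, modulo a bounded part coming from $\pi_1$ of the fibres, through $\pi_1(S_V)$, which is virtually abelian, so the image of $[\pi_1(X,x),\pi_1(X,x)]$ is bounded, as claimed. The place where the boundary intervenes is exactly here: to carry the descent argument out over the \emph{non-proper} fibres of $r_V$ one restricts the foliation to each boundary divisor $D_0$, and \Cref{characteristic_polynomial_nearby_cycle} identifies $\{V\}|_{D_0}$ with the Katzarkov--Zuo foliation of the semisimple local system $\gr\psi_{D_0}V$ (with its canonical pluriharmonic norm), which is algebraically integrable by hypothesis (2).

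\emph{Assembling the cases.} If every member of $\Sigma$ has algebraically integrable leaves, then $\Sigma$ is algebraically integrable, since the $\Sigma$-leaves are the fibres of the product of the associated algebraic reductions. If some simple member $V$ has $\rho_{V,x}([\pi_1(X,x),\pi_1(X,x)])$ unbounded, then by the dichotomy $\{V\}$ is algebraically integrable with $r_V$ of positive-dimensional base; restricting to a general fibre $F$ of $r_V$ — a strict subvariety on which $V|_F$ has bounded monodromy, so that $\{i^*V\}$ has trivial foliation — and applying hypothesis (1) to $i^*\Sigma$ shows that the $\Sigma$-leaves, which refine the fibres of $r_V$, are algebraic; hence $\Sigma$ is algebraically integrable. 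The only remaining possibility is that every member of $\Sigma$ has bounded commutator image, which is the second alternative.

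\emph{Main obstacle.} The hard part will be the descent step in the dichotomy, i.e.\ establishing that the pluriharmonic map of $V$ genuinely descends along the (non-proper) Katzarkov--Zuo reduction $r_V$. In the proper case Eyssidieux deduces this from the constancy of the pluriharmonic map along the compact fibres of $r_V$ together with Simpson's Lefschetz theorem; here the fibres of $r_V$ are open, and one must know that the restriction of the Katzarkov--Zuo foliation to each boundary component is again an algebraically integrable Katzarkov--Zuo foliation. This is precisely the role of \Cref{characteristic_polynomial_nearby_cycle} combined with hypothesis (2): it allows the boundary of the induction to be run and the factorization argument to be carried out as in the compact situation.
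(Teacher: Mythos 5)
There is a genuine gap, and it sits exactly at the heart of the argument: your treatment of the non-integrable case. If $\{V\}$ (or $\Sigma$) is \emph{not} algebraically integrable, then by hypothesis (1) every positive-dimensional leaf is Zariski dense, and the map $a\colon X\to A$ to the Albanese-type quotient (hence the Katzarkov--Zuo reduction $r_V$ of \Cref{KZ reduction}) is \emph{generically finite}: the maximal algebraic subvariety tangent to the leaf through a general point is a point. So your ``general fibre $F$ of $r_V$, a strict subvariety contained in a leaf, on which $V|_F$ has bounded monodromy'' is generically a finite set, the constancy of the pluriharmonic map on it carries no information, and there is no algebraic fibration whose fibres are the leaves along which one could ``descend the pluriharmonic map'' and then quote virtual abelianity of $\pi_1(S_V)$. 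The leaves in this case are transcendental, and the factorization statement you want (boundedness of the image of $\ker(\pi_1(X)\to H_1(A,\bZ))\supset[\pi_1(X),\pi_1(X)]$) is precisely the hard point; it is not a consequence of any Lefschetz-type ``descent along $r_V$''. The paper's proof instead passes to the infinite abelian cover $Y\to X$ associated to $\pi_1(X)\to H_1(A,\bZ)$, integrates the forms to get $g\colon \bar Y\to \form^\vee$, shows via the Ax--Schanuel theorem (\Cref{axschanuel}) that in the non-integrable case the leaves have codimension exactly one in $a(X)$ (this replaces Castelnuovo--de Franchis and is essential for the topology), verifies that the degeneracy locus of the forms has image of codimension $\geq 2$ and — using hypothesis (2) together with \Cref{characteristic_polynomial_nearby_cycle} — that $\bar a(D)$ contains no positive-dimensional leaves, and then proves a Simpson--Eyssidieux $1$-connectedness statement for $(Y,F)$ with $F$ a fibre of $g|_Y$, i.e.\ a leaf, on which every $V\in\Sigma$ has bounded monodromy. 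None of this (the cover $Y$, the codimension-one pinning, the control of the degeneracy and boundary loci, the $1$-connectedness claim) appears in your outline, and the step you do propose would not produce the conclusion.

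Two secondary problems. First, your ``assembling'' step is also flawed: the $\Sigma$-leaves are contained in the $V$-leaves, not in the fibres of $r_V$ (which are fibres of $a$, hence generally much smaller than the leaves), so algebraicity of the $i_F^*\Sigma$-leaves inside a fibre $F$ of $r_V$ does not give algebraicity of the $\Sigma$-leaves, which may stick out of $F$ in the leaf directions of $V$. The paper avoids per-member dichotomies altogether and proves the statement for the whole finite set at once, with a single map $a$ and a single $1$-connectedness argument giving boundedness under every $\rho_{V,x}$, $V\in\Sigma$. Second, importing the reduction of \Cref{cor KZ} (covers, $\End$, $\det$, tensor factors) \emph{inside} the proposition is delicate: the conclusion concerns the image of $[\pi_1(X,x),\pi_1(X,x)]$ under the monodromy of the original $V\in\Sigma$, and boundedness for derived local systems on a finite \'etale cover does not transfer back without further argument (nor do hypotheses (1)--(2) automatically hold for the new data). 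In the paper this reduction is performed in the proof of \Cref{thm KZ integrable}, where only the weaker consequence (virtually abelian or bounded monodromy) is needed and is insensitive to such modifications, while the proposition itself is proved directly for $\Sigma$ on $X$.
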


\begin{proof}
By Lemma \ref{lem KZ1} may replace $\Sigma$ with a finite subset with the same leaves.  Replacing $X$ with a resolution of the spectral cover, we have a map $a:X\to A$ to an abelian variety (spanned by $X$) and a space $\form\subset H^0(A,\Omega_A)$ of one-forms defining a foliation of $A$ by affine subspaces.  By the minimality condition on $A$, there are no algebraic subvarieties of $A$ which are contained in an affine subspace cut out by $\form$.  Throughout, by a leaf of a subvariety $Y\subset A$ we mean the germ of an intersection of an affine subspace cut out by $\form$ with $Y$.  

From now on, we assume that $\Sigma$ is not algebraically integrable, in which case we will show that the second alternative holds.  Thus, $a(X)$ contains a positive-dimensional leaf.  The assumption (1) implies:  for any strict subspace $Z\subset X$, the image of $Z$ under $X\to A$ has no positive-dimensional leaves.  Thus, any positive-dimensional leaf of $a(X)$ is Zariski-dense in $a(X)$.  Moreover, every positive-dimensional leaf of $a(X)$ must be 1-dimensional, or else it would intersect a strict algebraic subvariety in positive dimension.  Thus, $\dim \form\geq\dim a(X)-1$.  Finally, $a$ is generically finite: 
 otherwise, by taking a generic multisection of $a$, (1) would imply that $\Sigma$ is algebraically integrable.

Let $\bar X$ be a log smooth compactification of $X$ with boundary $D$. Note that the map $a:X\to A$ automatically extends to a map $\bar a:\bar X\to A$.  According to Theorem \ref{characteristic_polynomial_nearby_cycle}, the restrictions of the one-forms $\form$ to any component $D_0$ of $D$ define the foliation corresponding to the image of the nearby cycles functor, and therefore by (2) it follows that $\bar a(D)$ contains no positive-dimensional leaves.

The proof proceeds as in Eyssidieux's proof in the projective case, although we simplify the argument by replacing the use of Eyssidieux's higher-dimensional version of the Castelnuovo--de Franchis theorem by the so-called weak Ax--Schanuel theorem:

\begin{thm}[{Ax \cite{axsemiab}}]\label{axschanuel}Let $A$ be an abelian variety, $\cL\subset A$ the germ of an affine subspace, and $S\subset A$ an algebraic subvariety.  If $U$ is an irreducible germ of the intersection $\cL\cap S$ with
\[\codim_S U<\codim_A \cL\]
then $U$ is contained in a translate of a proper abelian subvariety $B\subset A$.  In particular, if $S$ spans $A$, then $U$ is not Zariski dense in $S$. 
\end{thm}

\begin{rem}
    \Cref{axschanuel} immediately implies the classical Castelnuovo--de Franchis theorem.  Indeed, let $X$ be a smooth proper algebraic variety with two linearly independent forms $\alpha,\beta\in H^0(\Omega_X)$ for which $\alpha\wedge \beta=0$.  Let $\Alb(X)\to A$ be the smallest quotient from which $\alpha,\beta$ are pulled back and $S\subset A$ the image of $X$ in $A$.  Note that in particular there is no nontrivial subvariety of $A$ on which $\alpha,\beta$ vanish.  Since $\alpha$ and $\beta$ are pointwise dependent (or equivalently satisfy a meromorphic linear relation), the leaves of $S$ cut out by $\alpha$ are equal to the leaves cut by the span of $\alpha$ and $\beta$.  Any leaf therefore has codimension 1 in $S$ but is the intersection with a codimension 2 affine subspace in $A$, and Theorem \ref{axschanuel} implies it is algebraic, which is a contradiction unless $S$ is a curve.  
\end{rem}

\begin{claim}$\dim \form=\dim a( X)-1$.
\end{claim}
\begin{proof}If $\dim \form\geq \dim a(X)$, then any positive-dimensional leaf $U$ is an atypical intersection between $\bar a(\bar X)$ and an affine subspace of $A$.  By Ax--Schanuel, this implies $U$ (and therefore $\bar a(\bar X)$) is contained in a coset of a strict abelian subvariety of $A$, which is a contradiction.  
\end{proof}
We finish with a very slight modification of Eyssidieux's version \cite{Eyssidieux} of the Lefschetz-type theorem of Simpson \cite{Simpsonlefschetz}, but for convenience we sketch the remaining steps.

Let $\bar Y\to \bar X$ be the minimal cover for which there is a lift $\bar Y\to\tilde A$ to the universal cover, namely, the cover corresponding to the quotient $\pi_1(\bar X,x)\to H_1(\bar X,\bZ)\to H_1(A,\bZ)$.  By integrating the forms in $\form$ we then have a map $g:\bar Y\to \form^\vee$ which is $H_1(A,\bZ)$-equivariant. 
 Observe that:
\begin{enumerate}
\item The fibers of $g$ are precisely lifts of (analytically continued) leaves of $\bar X$, all of which are at least 1-dimensional.  In particular, there is a leaf with positive-dimensional image in $A$ through every point of $X$.    
\item Let $Z\subset\bar Y$ be the vanishing locus of the canonical $\det \form$-valued $\dim (\form)$-form $\omega_\form$ on $\bar X$.  This is precisely the non-smooth locus of the map $g$, the inverse image of the corresponding vanishing locus $W$ in $\bar X$.  We claim that $\bar a(W)$ has codimension $\geq2$ in $\bar a(\bar X)$.  Indeed, for any component $T$, since $\omega_\form$ vanishes on $T$, the restrictions of the one forms in $\form$ to the regular locus $T^\reg$ satisfy a linear relation with meromorphic coefficients.  It follows that the generic intersection of an affine subspace of $A$ cut out by $\form$ with $T^\reg$ is equal to the intersection with an affine subspace of codimension $\dim (\form)-1$.  If $T$ were divisorial, this would mean it contains a positive-dimensional leaf, which is a contradiction.    

Note that since the leaves of $W$ are discrete, the connected components of the fibers of $Z\to \form^\vee$ are compact, so locally on $\bar Y$ there is a strict analytic subset $\Xi\subset \form^\vee$ outside of which $g$ is smooth.  
\item Let $D_Y$ be the inverse image of $D$ in $\bar Y$ and $Y$ the complement, that is, the preimage of $X$ in $\bar Y$.  Likewise, the leaves of $D$ are discrete, so locally on $\bar Y$ there is a strict analytic subset $\Xi\subset \form^\vee$ outside of which the map $g|_{\bar Y\setminus Y}:\bar Y\setminus Y\to \form^\vee$ is a local isomorphism.  Combining this with the previous point, there is a countable union of (global) strict analytic subvarieties $\Xi\subset \form^\vee$ outside of which $g$ is smooth and $g|_{\bar Y\setminus Y}$ is a local isomorphism. 
\item $g|_Y:Y\to \form^\vee$ is surjective.  Indeed, $g$ is dominant, and the images of $g$, $g|_Y$, and $g|_{Y\setminus Z}$ all coincide, hence the image is open.  Let $\Gamma$ be the image of $H_1(A,\bZ)$ in $\form^\vee$.  By general principles, the identity component $\bar\Gamma^0$ of the closure $\bar \Gamma$ is a real vector subspace and $\form^\vee/\bar\Gamma$ is a product of circles.  The image of the map $\bar X\to \form^\vee/\bar\Gamma$ is both compact and open, hence it is surjective.  As the complement of the image of $g$ is stable under the action of $\bar\Gamma$, it follows that $g$ is surjective. 
\item Putting all of the above together, for each point $y\in \bar Y$ there is a cylindrical neighborhood $\bar \Omega$ with the following properties:
\begin{enumerate}
\item $\bar\Omega$ is relatively compact in $\bar Y$.
\item $g(\bar \Omega)$ is a ball $B$ of fixed radius centered at $g(y)$.
\item\label{serrefib} Set $\Omega=\bar \Omega\cap Y$.  There is a strict analytic subset $\Xi_B\subset B$ such that $g|_{\Omega\setminus g^{-1}(\Xi_B)}:\Omega\setminus g^{-1}(\Xi_B)\to B\setminus \Xi_B$ is a Serre fibration.
\item Set $\Omega^\circ=\Omega\setminus Z$. 
 Then $g|_{\Omega^\circ}:\Omega^\circ\to B$ admits a section.
\end{enumerate}
\end{enumerate}

\begin{claim}Let $F$ be a fiber of $g|_Y$ above a point in $\form^\vee\setminus \Xi$.  Then $(Y,F)$ is 1-connected.
\end{claim}
\begin{proof} As in \cite{Eyssidieux}.  Recall that 1-connectedness means that any path $\gamma$ in $Y$ beginning and ending in $F$ can be homotoped to a path in $F$ via a homotopy whose restriction to the endpoints is contained in $F$.  It suffices to show this for a path beginning at any chosen component of $F$.  Briefly, one first shows that $(\Omega,\Omega\cap F)$ is 1-connected by taking a path $\gamma$ starting at the component meeting the section, homotoping into $\Omega\setminus g^{-1}(\Xi)$, concatenating with the inverse of the lift via the section of its image $g(\gamma) $ in $B\setminus \Xi_B$ (which is nulhomotopic), and using (\ref{serrefib}) to lift a nulhomotopy of the new image in $B\setminus\Xi_B$, which is by construction $g(\gamma)g(\gamma)^{-1}$.  One then concludes by gluing the 1-connectedness of the local neighborhoods using the surjectivity of $g$ and the contractibility of $\form^\vee$. 
\end{proof}

By construction, for any $y\in F$ lifting the basepoint $x\in X$, $\pi_1(F,y)$ has bounded image under $\rho_{V,x}$ for any $V\in\Sigma$, and thus the same is true for $\pi_1(Y,y)$, which is the kernel of the composition $\pi_1(X,x)\to H_1(X,\bZ)\to H_1(A,\bZ)$ and therefore contains $[\pi_1(X,x),\pi_1(X,x)]$.
\end{proof}

\subsection{Proof of \Cref{thm KZ integrable}}

We prove the claim by induction on $\dim X$, the case $\dim X=1$ being trivial.  By \Cref{extract tensor} we may assume by passing to a finite \'etale cover and changing $\Sigma$ that there is a finite set $\Sigma_0\subset\Sigma(\bar \bQ)$ with the same leaves such that every local system in $\Sigma_0$ is irreducible and either abelian or has monodromy which is dense in a simple $\bar \bQ_p$-group.  By passing to a further finite \'etale cover, we may assume the local systems in $\Sigma$ have unipotent local monodromy, cf. \Cref{from quasiunipotent to unipotent}.  By replacing $X$ with a resolution we may assume it is smooth.  By the inductive hypothesis, \Cref{abs basic prop}, and \Cref{nearby cycles is abs}, the two conditions of \Cref{KZ main step} are satisfied.  We are done if the first conclusion of \Cref{KZ main step} holds, so we may assume the monodromy of every $V\in\Sigma_0$ has bounded image on the commutator subgroup of $\pi_1(X,x)$.  By the following two lemmas, the monodromy of every $V\in\Sigma_0$ is either virtually abelian or bounded.
    \begin{lem}[{\cite[Lemma 5.3]{brunebarbehyp}}]
    Let $G$ be a geometrically simple
$K$-algebraic group. Let $\Gamma\subset G(K)$ be a Zariski-dense and unbounded subgroup. Let $\Theta\subset\Gamma$ be a normal subgroup. If $\Theta$ is bounded in $G(K)$, then $\Theta$ is finite.
\end{lem}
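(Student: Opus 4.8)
The plan is to analyze the Zariski closure of $\Theta$ inside $G$. Since $\Theta$ is normal in $\Gamma$, its Zariski closure $H := \overline{\Theta}^{\Zar}$ is normalized by $\Gamma$; as $N_G(H)$ is a closed subgroup of $G$ containing the Zariski-dense subgroup $\Gamma$, we get $N_G(H) = G$, so $H$ is a normal algebraic subgroup of $G$. Because $G$ is geometrically simple (hence connected), its identity component $H^0$ is a connected normal subgroup of $G_{\bar K}$, so $H^0$ is either trivial or all of $G$. If $H^0 = 1$ then $H$ is finite and $\Theta \subseteq H(K)$ is finite, which is the conclusion. Thus it suffices to rule out the case $H = G$, i.e. the case where $\Theta$ is Zariski dense in $G$.

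To exclude this, I would use the action of $G(K)$ on its Bruhat--Tits building $\Delta = \Delta(G,K)$, which is a complete NPC (CAT(0)) space on which $G(K)$ acts by isometries, and on which a subgroup is bounded if and only if it fixes a point (a bounded orbit has a circumcenter, fixed by any isometry preserving the orbit; cf. the Bruhat--Tits fixed point theorem recalled in \Cref{sect:harmonic}). Since $\Theta$ is bounded, $\mathrm{Fix}_\Delta(\Theta)$ is a nonempty closed convex subset, and since $\Gamma$ normalizes $\Theta$ it preserves $\mathrm{Fix}_\Delta(\Theta)$. The key claim is that $\mathrm{Fix}_\Delta(\Theta)$ is bounded: otherwise, being closed, convex and unbounded in a complete CAT(0) space, it contains a geodesic ray, whose endpoint $\xi \in \partial_\infty \Delta$ is then fixed by $\Theta$; but the stabilizer in $G(K)$ of a point of the building at infinity is contained in the group of $K$-points of a proper parabolic subgroup $P \subsetneq G$ ($\xi$ lies in the relative interior of a unique simplex of the spherical building at infinity, whose stabilizer is a proper parabolic), contradicting Zariski density of $\Theta$. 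Hence $\mathrm{Fix}_\Delta(\Theta)$ is bounded, so it has a circumcenter, which is fixed by $\Gamma$; this forces $\Gamma$ to be bounded, contradicting the hypothesis. Therefore $H = G$ is impossible, and $\Theta$ is finite.

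The part requiring the most care is the structure-theoretic input from Bruhat--Tits theory: that $G$ is automatically $K$-isotropic here (its $K$-points contain the unbounded subgroup $\Gamma$, so $G(K)$ is noncompact), that $\Delta$ is therefore a genuine building with nonempty boundary, that bounded subgroups of $G(K)$ are exactly the subgroups with a fixed point in $\Delta$, and that stabilizers of boundary points are contained in proper parabolics; for these I would cite \cite{Bruhat-TitsI, Bruhat-TitsII} (or \cite{Landvogt}). The remaining ingredients --- closedness and convexity of fixed-point sets of isometries, existence of circumcenters of bounded sets, and the presence of a ray in any unbounded closed convex subset --- are standard CAT(0) geometry. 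I would also remark that in the case $\mathrm{char}\,K = 0$ one has the alternative argument via $p$-adic Lie theory: the Lie algebra of the (compact) topological closure of $\Theta$ is an $\Ad(\Gamma)$-stable, hence $\Ad(G)$-stable, ideal of the simple Lie algebra $\mathfrak{g}$, so it is $0$ or $\mathfrak{g}$, and it cannot be $\mathfrak{g}$ since a compact subgroup cannot be open in the noncompact group $G(K)$ --- but the building argument has the advantage of working uniformly regardless of the residue characteristic.
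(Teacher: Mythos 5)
The paper does not prove this lemma at all: it is quoted directly from \cite[Lemma 5.3]{brunebarbehyp}, so there is no internal proof to compare against, and your write-up should be judged on its own. Your argument is correct, and it is essentially the standard proof of the cited result: pass to the Zariski closure $H$ of $\Theta$, which is normal because $\Gamma\subset N_G(H)(K)$ is Zariski dense, use geometric simplicity to reduce to the dichotomy ``$H$ finite'' or ``$\Theta$ Zariski dense'', and in the latter case play the Bruhat--Tits fixed-point theorem for the bounded group $\Theta$ off against Zariski density (a fixed point at infinity would put $\Theta$ inside $P(K)$ for a proper $K$-parabolic $P$) and against the unboundedness of $\Gamma$ (which would otherwise fix the circumcenter of the bounded, convex, $\Gamma$-stable set $\mathrm{Fix}_\Delta(\Theta)$); the remark that $G$ must be $K$-isotropic, since otherwise $G(K)$ is compact and $\Gamma$ could not be unbounded, is also the right way to guarantee the building and its boundary are nontrivial. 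One small precision: the extraction of a geodesic ray from an unbounded closed convex subset is not a consequence of completeness alone (it fails in infinite-dimensional Hilbert space), but of properness, i.e.\ local compactness, of the CAT(0) space; this is harmless here because the Bruhat--Tits building of a reductive group over a non-archimedean local field is locally compact, as the paper itself records in its section on harmonic maps and buildings, but you should attribute the step to properness rather than to ``standard CAT(0) geometry'' of complete spaces.
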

\begin{lem}
    A finitely generated group $\Gamma$ with finite commutator subgroup $\Gamma'$ is virtually abelian.
\end{lem}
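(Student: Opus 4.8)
The plan is to produce a finite-index subgroup of $\Gamma$ on which every commutator is trivial. First I would use that $\Gamma':=[\Gamma,\Gamma]$ is finite: conjugation defines a homomorphism $\Gamma\to\Aut(\Gamma')$ with finite target, so its kernel, the centralizer $Z:=C_\Gamma(\Gamma')$, has finite index in $\Gamma$. As a finite-index subgroup of a finitely generated group, $Z$ is itself finitely generated (Reidemeister--Schreier). Set $A:=[Z,Z]$; since $A\le\Gamma'$ it is finite, and since $Z$ centralizes $\Gamma'\supseteq A$, the subgroup $A$ is central in $Z$. Thus $Z$ is a finitely generated group whose commutator subgroup $A$ is finite and central, so $Z/A$ is a finitely generated abelian group.

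Next I would exploit the commutator pairing. Because $A$ is central in $Z$ and $[Z,Z]=A$, the map $(x,y)\mapsto[x,y]$ descends to a well-defined biadditive map $c\colon Z/A\times Z/A\to A$: the identity $[xx',y]={}^x[x',y]\cdot[x,y]$ together with the centrality of $A$ gives additivity in the first variable, and symmetrically in the second. Choosing finitely many generators $\bar g_1,\dots,\bar g_n$ of $Z/A$, the radical $R:=\{\bar x\in Z/A:c(\bar x,-)=0\}$ equals $\bigcap_{i=1}^n\ker\big(c(-,g_i)\big)$, a finite intersection of kernels of homomorphisms from the finitely generated group $Z/A$ into the finite group $A$; each such kernel has finite index, so $R$ has finite index in $Z/A$. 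Since $A$ is central in $Z$ one has $A\le Z(Z)$ and $Z(Z)/A=R$, so $Z(Z)$ has finite index in $Z$, hence in $\Gamma$. As $Z(Z)$ is abelian, $\Gamma$ is virtually abelian.

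I do not expect a genuine obstacle: every step is elementary group theory (finite automorphism group of a finite group, finite generation of finite-index subgroups, basic commutator identities). The only point requiring a moment's care is verifying that $c$ is well-defined and biadditive, which relies entirely on $A$ being central in $Z$; this is precisely why the preliminary reduction to $Z=C_\Gamma(\Gamma')$ is needed, rather than attempting to work with $\Gamma$ directly, where $\Gamma'$ need not be central.
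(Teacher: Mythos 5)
Your proof is correct. It is organized differently from the paper's: the paper argues element by element, noting that for $\gamma$ of infinite order some power lies in $C_\Gamma(\Gamma')$, after which the identity $[g,\gamma^n]=[g,\gamma^{n-1}]\,\gamma^{n-1}[g,\gamma]\gamma^{1-n}$ makes $n\mapsto[g,\gamma^n]$ a power map into the finite group $\Gamma'$, so sufficiently divisible powers of lifted generators of $\Gamma/\Gamma'$ become central and generate a finite-index abelian subgroup. You instead pass wholesale to $Z=C_\Gamma(\Gamma')$ (finite index via $\Gamma\to\Aut(\Gamma')$), observe $A=[Z,Z]$ is finite and central in $Z$, and use the induced biadditive pairing $c\colon Z/A\times Z/A\to A$ to show its radical, i.e.\ $Z(Z)/A$, has finite index since it is a finite intersection of kernels of homomorphisms into the finite group $A$. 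The underlying mechanism is the same (the commutator map becomes a homomorphism in each variable once one centralizes $\Gamma'$; the paper's displayed identity is exactly this bilinearity in disguise), but your version buys a cleaner conclusion — the center of a concrete finite-index subgroup is itself of finite index, a converse-to-Schur statement for finitely generated groups — and sidesteps the paper's restriction to infinite-order elements and the bookkeeping about torsion in $\Gamma/\Gamma'$, at the cost of a slightly longer setup (finite generation of $Z$, well-definedness of $c$). Both uses of finite generation are genuine and correctly placed: yours enters only to write the radical as a finite intersection.
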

\begin{proof}
    For any element $\gamma\in\Gamma$ of infinite order, some power commutes with $\Gamma'$, and some further power is central since $[g,\gamma^n]=[g,\gamma^{n-1}]\gamma^{n-1}[g,\gamma]\gamma^{1-n}$.  Thus, by lifting a set of generators of $\Gamma/\Gamma'$ and taking sufficiently divisible powers, we obtain a finite-index abelian subgroup.  
\end{proof}
After replacing $X$ with a finite \'etale cover, we may assume that the monodromy of every $V\in\Sigma_0$ is abelian or bounded, and the abelian ones have no local monodromy.  Note that bounded image local systems have trivial leaves.  If $\Sigma^{\mathrm{ab}}\subset\Sigma$ is the intersection with $\cM_B(X,1)$, then since $\Sigma_0$ and $\Sigma(\bar\bQ)$ have the same leaves, it follows that $\Sigma^{\mathrm{ab}}(\bar\bQ)$ and $\Sigma(\bar\bQ)$ have the same leaves.  Thus, we may assume $\Sigma=\Sigma^\mathrm{ab}$, and therefore we may replace $X$ with an abelian variety $A$.  By \Cref{simpsonbialg} and \Cref{bialg=abs}, $\Sigma$ must be the union of torsion translates of pull-backs of Zariski-dense subsets of $M_B(A',1)(\bC)$ for (potentially different) quotient abelian varieties $A\to A'$.  If $\Sigma=\bigcup_i\Sigma_i$ for absolute $\bar\bQ$-constructible $\Sigma_i$, the foliation for $\Sigma(\bar\bQ)$ is the intersections of the foliations for $\Sigma_i(\bar\bQ)$, so it suffices to assume $\Sigma$ is Zariski dense in $M_B(A,1)(\bC)$.  The proof is then concluded from the following:
\begin{lem}
\Cref{thm KZ integrable} holds for $X=A$ an abelian variety and $\Sigma$ Zariski dense in $M_B(A,1)(\bC)$.    
\end{lem}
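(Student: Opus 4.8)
The plan is that this lemma is the base case of the induction proving \Cref{thm KZ integrable}, and every genuinely difficult case has already been removed by the reductions preceding it (non-abelian monodromy, descent to a minimal quotient abelian variety, torsion translates, the use of \Cref{simpsonbialg} via \Cref{bialg=abs}); what is left is a direct computation. After these reductions we may assume $A$ is an abelian variety and $\Sigma\subset M_B(A,1)(\bC)=\Hom(H_1(A,\bZ),\bG_m)$ is absolute $\bar\bQ$-constructible and Zariski dense. The first step I would carry out is to make the $v$-adic characteristic polynomial of a rank one local system completely explicit. For $V\in\Sigma(\bar\bQ)$ with monodromy character $\chi_V\colon H_1(A,\bZ)\to\bar\bQ^*\subset\bar\bQ_p^*$, defined over a finite extension $L_V$ of $\bQ_p$, the local system carries a pluriharmonic norm, which exists by \Cref{existence_pluriharmonic_norm}, is automatically locally bounded at infinity because $A$ is proper, and whose characteristic polynomial is canonical by \Cref{characteristic_polynomial_canonical}. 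Since $V$ has rank one, the building $\Delta(\bGL_1,L_V)=\cN(L_V)$ is the real line, the equivariant harmonic map $u\colon\tilde A\to\bR$ is a pluriharmonic function, and the identity $u(\gamma x)=u(x)+v(\chi_V(\gamma))$ shows that the associated holomorphic one-form $\omega_V$ (normalised by $du=\Re\omega_V$) descends to $A$ and has $\Re\int_\gamma\omega_V$ proportional to $v(\chi_V(\gamma))$. Equivalently, up to a fixed nonzero constant $\omega_V=p^{1,0}(v\circ\chi_V)$, where $v\circ\chi_V\in\Hom(H_1(A,\bZ),\bQ)=H^1(A,\bQ)$ and $p^{1,0}\colon H^1(A,\bC)\to H^{1,0}(A)=H^0(A,\Omega^1_A)$ is the Hodge projection (an $\bR$-linear isomorphism in restriction to $H^1(A,\bR)$); thus $P_V(T)=T-\omega_V$ is already split and the spectral cover is trivial.

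Consequently, writing $L\subset H^1(A,\bQ)$ for the $\bQ$-span of $\{v\circ\chi_V : V\in\Sigma(\bar\bQ)\}$ — a finitely generated subspace by \Cref{lem KZ1} — the one-forms cutting out the $\Sigma$-leaves span exactly $\form_\Sigma=p^{1,0}(L\otimes_\bQ\bC)$ (here $\Alb(A)=A$ and there is no finite cover to pass to). The second step is to show $L=H^1(A,\bQ)$. Since $\Sigma$ is $\bar\bQ$-constructible and Zariski dense in the irreducible variety $\Hom(H_1(A,\bZ),\bG_m)$, it contains a dense $\bar\bQ$-open subset $U$, and $U(\bar\bQ)\subset\Sigma(\bar\bQ)$. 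For any $c\in H^1(A,\bQ)=\Hom(H_1(A,\bZ),\bQ)$, the characters $\chi\in\Hom(H_1(A,\bZ),\bar\bQ^*)$ with $v\circ\chi=c$ form a coset of $\Hom(H_1(A,\bZ),\cO)$, where $\cO$ denotes the group of elements of $\bar\bQ^*$ of $v$-valuation zero; because $\cO$ is an infinite, hence Zariski dense, subgroup of $\bG_m$, this coset is Zariski dense in $\Hom(H_1(A,\bZ),\bG_m)$ and therefore meets $U(\bar\bQ)$. Hence every $c\in H^1(A,\bQ)$ equals some $v\circ\chi_V$, so $L=H^1(A,\bQ)$.

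It then follows that $\form_\Sigma=p^{1,0}(H^1(A,\bC))=H^0(A,\Omega^1_A)$, so the common kernel of these one-forms in $\Lie A$ is zero; thus $A_\Sigma=A$ and every leaf $\cL_\Sigma(x)$ is the reduced point $\{x\}$, which is in particular algebraic. This proves the lemma, and completes the proof of \Cref{thm KZ integrable}. The only step that requires any care is the first one, the identification of $P_V$ with $p^{1,0}(v\circ\chi_V)$; once it is in place everything is formal, the arithmetic content having been spent in the reductions. One should also check that the reductions may indeed be arranged so that $\Sigma$ is both Zariski dense and absolute $\bar\bQ$-constructible — this is fine, since a torsion translation is a $\bar\bQ$-automorphism of $\Hom(H_1(A,\bZ),\bG_m)$ which, as torsion units have $v$-valuation zero, does not change the foliation, and \Cref{lem KZ2} reduces the pullback case to the minimal quotient.
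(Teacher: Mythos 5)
Your proof is correct, and its first half is the same computation as the paper's: both identify the $v$-adic pluriharmonic norm of a rank-one character as $e^{\int\alpha}$, where $\alpha$ is the invariant (harmonic) form representing $v\circ\chi$, so that the foliation is cut out by the $(1,0)$-parts $p^{1,0}(v\circ\chi_V)$, the spectral cover being trivial. Where you genuinely diverge is the spanning step. The paper gets spanning from an openness/density argument: the image of $\Sigma$ under the log-norm map $H^1(A,\bC^*)\to H^1(A,\bR)$ has nonempty interior, and topological density of $\bar\bQ$-points then produces $\bar\bQ$-points of $\Sigma$ whose log norms span. That argument is phrased most naturally for a norm defined on all complex points; at the fixed nonarchimedean $v$ the valuation map is defined only on $\bar\bQ$-points, is not euclidean-continuous, and its image sits in $H^1(A,\bQ)$, so "interior" and euclidean density of $\bar\bQ$-points do not transpose verbatim (indeed, characters all of whose values are $v$-adic units are already euclidean dense, so euclidean density alone cannot force the $v$-adic valuations to span). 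Your coset argument replaces this with a purely algebraic statement at the place $v$: the fiber of the valuation map over any $c\in H^1(A,\bQ)$ is a translate of $\Hom(H_1(A,\bZ),\cO)$, hence Zariski dense, hence meets the nonempty Zariski-open subset $U\subset\Sigma$; so the valuation image of $\Sigma(\bar\bQ)$ is all of $H^1(A,\bQ)$, not merely a spanning set. What this buys is robustness and economy of hypotheses: it works verbatim at every nonarchimedean place and uses only that $\Sigma$ is constructible and Zariski dense, whereas the paper's version is shorter and exploits the euclidean openness of the log-norm map. Two cosmetic points: finite-dimensionality of your $L$ is automatic (no need to invoke \Cref{lem KZ1} for that), and you should add the one-line justification that the valuation fibers are nonempty, namely that $v(\bar\bQ^*)=\bQ$, so a character $\chi_0$ with $v\circ\chi_0=c$ exists.
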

\begin{proof}
    For any character $\chi$ in $H^1(A,\bC^*)$, the pluriharmonic norm is $e^{\int\alpha}$, where $\alpha$ is the harmonic form such that the image of $\alpha$ under the exponential $H^1(A,\bR)\to H^1(A,\bR^*)$ agrees with the norm $|\chi|^2$.  The associated holomorphic form defining the foliation is $\alpha^{1,0}$.  For any Zariski dense $\bar\bQ$-constructible subset $\Sigma$ of $H^1(A,\bC^*)$, the image under the log norm map $H^1(A,\bC^*)\to H^1(A,\bR)$ has nonempty interior, and since $\bar\bQ$ points are topologically dense, it follows there is a set of $\bar\bQ$ points of $\Sigma$ whose images under $H^1(A,\bC^*)\to H^1(A,\bR)$ span.  Thus, the space $E$ of 1-forms is all of $F^1H^1(A,\bC)$, and the foliation has trivial leaves.
\end{proof}
\qed

\section{A criterion of Steinness}\label{sect:stein}
A complex analytic space $X$ is said to be:
\begin{itemize}
     \item holomorphically convex if for every compact $K \subset X$, the holomorphically convex hull of $K$:
\[\hat{K}_X = \{x \in X : |f (x)| \leq||f||_K \text{ for all }f \in \cO(X) \},\]
where $||f||_K = \sup_K |f |$, is compact;
    \item holomorphically separated if for all points $x, y \in X$, there is a global holomorphic function $f \colon X \to \bC$ such that $f(x) \neq f (y)$;
    \item Stein if it is holomorphically separated and holomorphically convex.
\end{itemize}
Note that $X$ is holomorphically convex if and only if it admits a proper map to a Stein space \cite[Example 1]{Cartan}.

The goal of this section is to prove the following result, which is essentially proved in \cite{mok_stein}.
\begin{thm}\label{criterion of Steiness}
Let $X$ be a complex analytic space. Let $\phi$ be a continuous plurisubharmonic exhaustion function on $X$. Let $Z \subset X$ be a closed analytic subspace. Assume that  
\begin{enumerate}
\item $Z$ is Stein,
\item $\phi | _{X \smallsetminus Z}$ is strictly plurisubharmonic.
\end{enumerate}
Then $X$ is Stein.
\end{thm}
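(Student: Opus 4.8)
The plan is to deduce Steinness of $X$ from the two hypotheses by a local patching argument, following Mok. The key point is that a complex space with a continuous plurisubharmonic exhaustion function which is strictly plurisubharmonic \emph{outside a Stein closed subspace} is Stein; we want to upgrade the hypotheses so that the Narasimhan criterion applies. First I would recall the relevant criterion: a complex space $X$ is Stein if and only if it admits a continuous strictly plurisubharmonic exhaustion function (for reduced spaces this is Narasimhan's solution to the Levi problem; in general one uses that $X$ is Stein iff $X_{\mathrm{red}}$ is, so we may assume $X$ reduced). So the goal is really to modify $\phi$ to a genuine strictly plurisubharmonic exhaustion on all of $X$, using the Steinness of $Z$ to handle a neighborhood of $Z$.

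The main steps would be: (1) Since $Z$ is Stein, $Z$ admits a continuous strictly plurisubharmonic exhaustion function $\psi_0$; using a Stein (or holomorphically convex) neighborhood of $Z$ in $X$ — which exists by Siu's theorem on Stein neighborhoods of Stein subvarieties, or more elementarily by a local construction patched with a partition of unity — extend $\psi_0$ to a continuous function $\psi$ defined on an open neighborhood $U$ of $Z$ in $X$ which is strictly plurisubharmonic on $U$. (2) Choose an exhaustion of $X$ by the sublevel sets $X_c=\{\phi<c\}$ and, shrinking $U$ to $U'\Subset U$ with $Z\subset U'$, observe that $X\setminus U'$ is covered by the region where $\phi$ is strictly plurisubharmonic. (3) On the overlap region $U\setminus Z$ both $\phi$ and $\psi$ are strictly plurisubharmonic; using a cutoff function $\chi$ depending on $\phi$ (constant $=1$ near $Z$, $=0$ outside $U'$) and a sufficiently convex increasing function $\lambda\colon\bR\to\bR$ applied to $\phi$, form
\[
\Phi = \chi\cdot\psi + (1-\chi)\cdot\varepsilon_0 + \lambda\circ\phi,
\]
where one adds a large convex reparametrization of $\phi$ so that the positivity of $\lambda''(\phi)|d\phi|^2 + \lambda'(\phi)\,dd^c\phi$ dominates the (bounded, on each sublevel set) negative contribution of $dd^c\chi$ coming from the cutoff, on the annular region $U\setminus U'$. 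On $U'$ the term $\chi\psi=\psi$ is strictly plurisubharmonic and $\lambda\circ\phi$ is plurisubharmonic, so $\Phi$ is strictly plurisubharmonic there; on $X\setminus U$ we have $\Phi=\varepsilon_0+\lambda\circ\phi$ with $\phi$ strictly plurisubharmonic, hence $\Phi$ strictly plurisubharmonic. (4) Check $\Phi$ is still an exhaustion: this is automatic since $\lambda\circ\phi$ is, $\lambda$ being increasing and $\phi$ an exhaustion, and the bounded terms $\chi\psi$, $(1-\chi)\varepsilon_0$ do not affect properness of sublevel sets; one only needs $\psi$ to be bounded below on $U'\cap X_c$, which holds after adjusting $\psi$ by a constant on each piece — more carefully, one runs the construction inductively over an exhaustion $\{X_{c_k}\}$, choosing $\lambda$ and the constants on successive annuli $X_{c_{k+1}}\setminus X_{c_k}$, which is the standard Behnke–Stein/Narasimhan bumping scheme. (5) Conclude by Narasimhan's criterion that $X$ is Stein.

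The main obstacle I anticipate is Step (3)–(4): patching the two strictly plurisubharmonic functions $\psi$ (near $Z$) and $\phi$ (away from $Z$) so that the glued function is strictly plurisubharmonic \emph{everywhere}, including on the transition annulus where the cutoff $\chi$ contributes a potentially negative $dd^c$ term. The resolution is the usual one: the cutoff depends only on $\phi$, so $dd^c(\chi\circ\phi)$ is controlled by $dd^c\phi\ge 0$ plus a term in $d\phi\wedge d^c\phi$, and by precomposing $\phi$ with a sufficiently rapidly increasing convex function $\lambda$ (chosen inductively on each sublevel set, where everything is relatively compact and the relevant Hessians are uniformly bounded) the strict positivity $\lambda'(\phi)\,dd^c\phi + \lambda''(\phi)\,d\phi\wedge d^c\phi > 0$ absorbs the error. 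Care is needed because $X$ is only a complex space (possibly singular and non-reduced), so all plurisubharmonicity statements should be interpreted via local embeddings into $\bC^N$, and one invokes the general Levi problem for complex spaces; but since all the functions involved are continuous and the constructions are local and patched by a continuous partition of unity subordinate to a locally finite cover refining $\{U, X\setminus U'\}$, no smoothness is required and the argument goes through verbatim as in \cite{mok_stein}.
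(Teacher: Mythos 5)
Your strategy is genuinely different from the paper's: you build one global continuous strictly plurisubharmonic exhaustion of $X$ by gluing a strictly psh function near $Z$ (via Siu's Stein neighborhood theorem) to a convex reparametrization $\lambda\circ\phi$, with an inductive choice of $\lambda$ over shells, and then invoke Narasimhan's criterion once. The paper avoids any global gluing: for each $c$ it observes that on the relatively compact sublevel set $X_c=\{\phi<c\}$ a single constant $K_c$ makes $K_c\phi+\theta\psi$ strictly psh, adds the barrier $\tfrac{1}{c-\phi}$ to get a strictly psh exhaustion of $X_c$, concludes that $X_c$ is Stein by \Cref{Narasimhan_Stein}, and then gets Steinness of $X$ from \Cref{Narasimhan_Runge} (psh sublevel sets are Runge) together with \Cref{Stein_exhaustion} (a Runge exhaustion by Stein opens is Stein). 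That soft Runge-exhaustion step is exactly what lets the paper dispense with the bumping scheme you need, so its argument is shorter; your route is viable but carries the extra bookkeeping.

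As written, though, your resolution of the "main obstacle" contains a real error. The cutoff $\chi$ cannot "depend only on $\phi$" while being $\equiv 1$ near $Z$ and $\equiv 0$ outside $U'$: $Z$ is not a union of level sets of $\phi$ (in the applications $\phi$ is unbounded on $Z$), so the identity you invoke — that $dd^c(\chi\circ\phi)$ is controlled by $dd^c\phi$ and $d\phi\wedge d^c\phi$ — does not address the actual error terms, which are $\psi\,dd^c\chi+d\chi\wedge d^c\psi$ coming from a \emph{spatial} cutoff around $Z$. What saves the gluing is only that $\operatorname{supp} d\chi$ avoids a neighborhood of $Z$, where $\phi$ is strictly psh with a uniform modulus on each compact shell $\overline{X_{c_{k+1}}}\setminus X_{c_k}$, and that the error terms there have locally bounded negative Hessian (to guarantee this you should take $\psi$ smooth, which is possible on a Stein space); then a sufficiently convex $\lambda$, chosen inductively shell by shell, absorbs them — this is the correct version of your step (3)–(4), and it is the same absorption the paper performs with the constant $K_c$ on a single $X_c$. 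Separately, "shrinking $U$ to $U'\Subset U$ with $Z\subset U'$" is impossible unless $Z$ is compact, which it is not in the intended applications (e.g.\ $Z$ a preimage in a covering space); you only need $\overline{U'}\subset U$, or, as in the paper, a cutoff $\theta$ with $\operatorname{supp}\theta\subset U$ and $\theta\equiv 1$ near $Z$, with no relative compactness anywhere.
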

Recall that a continuous function $\phi \colon M \to \bR$ is said to be exhaustive if for every $c \in \bR$ the sublevel set $M_c = \{x \in M ; \phi(x) < c \}$ is relatively compact in $M$. Before explaining the proof of the theorem, we start by recalling some definitions and well-known results.

\begin{defn}
Let $Y$ be a Stein open subset of a Stein space $X$. The pair $(Y , X)$ is called Runge if $\cO(X)$ is dense in $\cO(Y)$ in the topology of compact convergence.
\end{defn}

\begin{prop}[Stein {\cite{Stein}}]\label{Stein_exhaustion}
Let $X$ be a complex space and $X_1 \Subset X_2 \Subset \ldots $ be an exhaustion of $X$ by
Stein relatively compact open subsets. If every pair $(X_\nu, X_{\nu + 1})$ is Runge, then $X$ is Stein.
\end{prop}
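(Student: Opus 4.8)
The plan is to verify directly that $X$ has the two defining properties of a Stein space recalled above: holomorphic convexity and holomorphic separation. For a complex space $W$ and a compact $L\subset W$ write $\widehat{L}_W:=\{w\in W:|f(w)|\le\|f\|_L\text{ for all }f\in\cO(W)\}$ for the $\cO(W)$-convex hull. The one input beyond the hypotheses is the classical characterization of Runge pairs of Stein spaces (see \cite{GR}): if $Y\subset X'$ are Stein with $Y$ open and $(Y,X')$ Runge, then $\widehat{L}_{X'}\subset Y$ for every compact $L\subset Y$. Recall also that $X=\bigcup_\nu X_\nu$ and that each $\overline{X_\nu}$ is compact and contained in $X_{\nu+1}$.

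The first step is a telescoping approximation lemma: given an index $N$, a compact $L_0\subset X_N$, a function $g\in\cO(X_N)$, and $\delta>0$, there exists $f\in\cO(X)$ with $\|f-g\|_{L_0}<\delta$. To see this, choose $\epsilon_j>0$ with $\sum_{j\ge 0}\epsilon_j<\delta$ and set $M_0:=L_0$, $M_j:=\overline{X_{N+j-1}}$ for $j\ge 1$, each a compact subset of $X_{N+j}$ satisfying $L_0\subset M_j$ and $\bigcup_j M_j=X$. Using that $(X_{N+j},X_{N+j+1})$ is Runge, build inductively $g_N:=g$ and $g_{N+j+1}\in\cO(X_{N+j+1})$ with $\|g_{N+j+1}-g_{N+j}\|_{M_j}<\epsilon_j$. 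On any compact $C\subset X$ one has $C\subset M_j$ for $j$ large, so $\sum_j\|g_{N+j+1}-g_{N+j}\|_C<\infty$; hence $f:=\lim_j g_{N+j}$ is a well-defined element of $\cO(X)$, and telescoping gives $\|f-g\|_{L_0}\le\sum_{j\ge0}\|g_{N+j+1}-g_{N+j}\|_{M_j}<\delta$.

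Next I deduce the two properties. For holomorphic separation, given $x\ne y$ in $X$ pick $\nu_0$ with $x,y\in X_{\nu_0}$; since $X_{\nu_0}$ is Stein there is $g\in\cO(X_{\nu_0})$ with $g(x)\ne g(y)$, and the lemma applied with $L_0=\{x,y\}$ and $\delta<|g(x)-g(y)|/3$ yields $f\in\cO(X)$ with $f(x)\ne f(y)$. For holomorphic convexity, let $K\subset X$ be compact; by compactness $K\subset X_{\nu_0}$ for some $\nu_0$, and I claim $\widehat{K}_X=\widehat{K}_{X_{\nu_0}}$, which suffices since $\widehat{K}_{X_{\nu_0}}$ is compact ($X_{\nu_0}$ Stein) and $\widehat{K}_X$ is closed in $X$. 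The inclusion $\widehat{K}_{X_{\nu_0}}\subset\widehat{K}_X$ is clear by restricting functions. Conversely take $x\in X\setminus\widehat{K}_{X_{\nu_0}}$. If $x\in X_{\nu_0}$, there is $g\in\cO(X_{\nu_0})$ with $|g(x)|>\|g\|_K$. If $x\notin X_{\nu_0}$, let $N>\nu_0$ be minimal with $x\in X_N$; then $K\subset X_{N-1}$ and, by the Runge-hull characterization of the pair $(X_{N-1},X_N)$, $\widehat{K}_{X_N}\subset X_{N-1}\not\ni x$, so again there is $g\in\cO(X_N)$ with $|g(x)|>\|g\|_K$. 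In either case the lemma applied with $L_0=K\cup\{x\}$ and $\delta<\tfrac12(|g(x)|-\|g\|_K)$ produces $f\in\cO(X)$ with $|f(x)|\ge|g(x)|-\delta>\|g\|_K+\delta\ge\|f\|_K$, so $x\notin\widehat{K}_X$. Hence $\widehat{K}_X=\widehat{K}_{X_{\nu_0}}$ is compact, $X$ is holomorphically convex, and therefore Stein.

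The main obstacle is the one non-formal ingredient, namely the implication ``$(Y,X')$ Runge in the density sense $\Rightarrow$ $\widehat{L}_{X'}\subset Y$ for compact $L\subset Y$'', which is the converse to the Runge approximation theorem for Stein spaces and is exactly what forces $\widehat{K}_{X_N}$ into $X_{N-1}$; I would cite this from the standard theory of Stein spaces (\cite{GR}) rather than reprove it. The remaining work --- the telescoping estimates and the elementary manipulations with convex hulls and uniform limits --- is routine bookkeeping, the only points requiring care being that the approximating sequence converges in $\cO(X)$ (uniform convergence on each compact of $X$) and that the control sets $M_j$ are chosen to contain $L_0$ throughout.
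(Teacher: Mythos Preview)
The paper does not give its own proof of this proposition; it is quoted as a classical result of Stein with a citation and used as a black box in the proof of \Cref{criterion of Steiness}. Your argument is a correct self-contained proof: the telescoping construction correctly produces global holomorphic functions approximating any $g\in\cO(X_N)$ on a prescribed compact, from which holomorphic separation is immediate; holomorphic convexity then follows once you invoke the hull characterization of Runge pairs of Stein spaces (density of $\cO(X')|_Y$ in $\cO(Y)$ implies $\widehat{L}_{X'}\subset Y$ for every compact $L\subset Y$), which is indeed the standard converse to Oka--Weil and is available in the reference you cite. One cosmetic remark: you do not actually need the full equality $\widehat{K}_X=\widehat{K}_{X_{\nu_0}}$, only the inclusion $\widehat{K}_X\subset\widehat{K}_{X_{\nu_0}}$, but your argument gives both.
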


\begin{thm}[Narasimhan, {\cite[Corollary 1, p.211]{Narasimhan_Levi_problem}}] \label{Narasimhan_Runge}
Let $X$ be a Stein space and $\psi$ be a plurisubharmonic function on $X$. Then, for any real number $c$ the open subset $X_c = \{ x \in X | \psi(x) < c \} \subset X$ is Runge in $X$.
\end{thm}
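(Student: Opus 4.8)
The plan is to reduce the density assertion defining the Runge property to a statement about holomorphically convex hulls. Recall the classical characterization: for a Stein space $X$ and an open subset $Y\subseteq X$, the pair $(Y,X)$ is Runge if and only if for every compact $K\subseteq Y$ the $\mathcal{O}(X)$-convex hull
\[
\hat K_X=\{x\in X:\ |f(x)|\le \sup_K|f|\ \text{for all }f\in\mathcal{O}(X)\}
\]
is contained in $Y$. This is a standard consequence of the Oka--Weil approximation theorem for Stein spaces: the ``only if'' direction is immediate, and for the ``if'' direction one first observes that the hull condition forces $Y$ to be holomorphically convex, hence Stein (holomorphic separation being inherited from $X$), and then approximates a given $g\in\mathcal{O}(Y)$ uniformly on compact subsets by global functions, using a proper embedding $X\hookrightarrow\mathbb{C}^N$ together with the classical Oka--Weil theorem. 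So it suffices to show $\hat K_X\subseteq X_c$ for every compact $K\subseteq X_c$.

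First I would record the elementary half of the picture: since $\log|f|$ is plurisubharmonic on $X$ for every $f\in\mathcal{O}(X)$, one has $\hat K_X\subseteq\widehat K_X^{\mathrm{psh}}$, where
\[
\widehat K_X^{\mathrm{psh}}:=\{x\in X:\ u(x)\le \sup_K u\ \text{for every plurisubharmonic }u\colon X\to[-\infty,\infty)\}.
\]
The substantive input is the reverse inclusion $\hat K_X=\widehat K_X^{\mathrm{psh}}$, which holds because $X$ is Stein; this is a form of the solution of the Levi problem (for Stein manifolds it belongs to the Oka--Bremermann circle of ideas, and it extends to reduced complex spaces by the usual reduction to the manifold case via a resolution, cf.\ Narasimhan \cite{Narasimhan_Levi_problem}). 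Granting this, the argument concludes quickly: given a compact $K\subseteq X_c$, upper semicontinuity of $\psi$ and compactness of $K$ give $c':=\sup_K\psi<c$, and then for any $x\in\hat K_X=\widehat K_X^{\mathrm{psh}}$ we obtain $\psi(x)\le c'<c$, i.e.\ $x\in X_c$. Hence $\hat K_X\subseteq X_c$ for all compact $K\subseteq X_c$, and the Runge characterization above yields that $X_c$ is Runge in $X$.

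The main obstacle in making this fully rigorous is precisely the identification of the holomorphic hull with the plurisubharmonic hull on a Stein space, together with the Oka--Weil approximation statement, both in the (possibly non-smooth) complex-analytic-space setting rather than merely for domains in $\mathbb{C}^n$ or Stein manifolds: these are classical but must be quoted in the form valid for reduced complex spaces. All remaining steps — upper semicontinuity of plurisubharmonic functions, attainment of the supremum on a compact set, and the passage from the hull condition to holomorphic convexity of $X_c$ — are routine.
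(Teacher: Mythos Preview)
Your argument is the standard one and is correct. Note, however, that the paper does not supply its own proof of this statement: it is quoted as a known result with a citation to Narasimhan \cite{Narasimhan_Levi_problem}, so there is nothing to compare against. Your sketch via the Oka--Weil characterization of Runge pairs and the equality of the holomorphic and plurisubharmonic hulls on Stein spaces is exactly the expected proof.
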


\begin{thm}[Narasimhan, {\cite[Theorem II]{Narasimhan_Levi_problem}}] \label{Narasimhan_Stein}
A complex space is Stein if and only if it admits a continuous strictly plurisubharmonic exhaustion function.
\end{thm}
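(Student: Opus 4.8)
The plan is to split the equivalence into its two implications, the forward one being formal and the reverse one being the substantive Levi-problem statement, which I would deduce from \Cref{Stein_exhaustion} and \Cref{Narasimhan_Runge}. \emph{Forward direction.} If $X$ is Stein then, using holomorphic convexity and holomorphic separation, $X$ admits a proper holomorphic map $\iota \colon X \to \bC^N$ for some $N$ (a proper closed embedding when $X$ is reduced, and a proper map assembled from a point-separating family of global functions in general). Pulling back the standard strictly plurisubharmonic exhaustion $\|z\|^2$ of $\bC^N$ gives $\phi := \|z\|^2 \circ \iota$, which is a continuous exhaustion of $X$ by properness of $\iota$ and is strictly plurisubharmonic since strict plurisubharmonicity on a complex space is inherited under restriction along holomorphic maps (equivalently, is tested on local embeddings into affine space). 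Alternatively one can build $\phi$ directly as a normally convergent sum $\sum_i |f_i|^2$ of squares of global holomorphic functions chosen so that the sublevel sets are relatively compact, and then add local strictly plurisubharmonic correction terms.

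\emph{Reverse direction.} Let $\phi \colon X \to \bR$ be a continuous strictly plurisubharmonic exhaustion and set $X_c := \{\phi < c\}$, a family of relatively compact open subspaces increasing to $X$. By \Cref{Stein_exhaustion} it suffices to show that every $X_c$ is Stein and that $(X_c, X_{c'})$ is Runge for $c < c'$; the latter follows from \Cref{Narasimhan_Runge} applied to $\phi|_{X_{c'}}$ once $X_{c'}$ is known to be Stein. Hence the whole point is that each $X_c$ is Stein, and I would prove this by showing that $I := \{c \in \bR : X_c \text{ is Stein}\}$ is nonempty, closed, and open. It is nonempty because $X_c = \varnothing$ for $c < \inf_X \phi$. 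It is closed: if $c' \in I$ for all $c' < c$, then $X_c = \bigcup_{c' < c} X_{c'}$ is an increasing union of relatively compact Stein opens, Runge in one another by \Cref{Narasimhan_Runge}, hence Stein by \Cref{Stein_exhaustion}. For openness, the key inputs are the local solution of the Levi problem and Grauert's bumping technique: if $X_c$ is Stein and $c' > c$ is close to $c$, then — using local embeddings of $X$ near each boundary point of $X_c$ into a polydisc of some $\bC^N$, on which $\phi$ is the restriction of a smooth strictly plurisubharmonic function — the domain $X_{c'}$ is obtained from $X_c$ by finitely many strictly pseudoconvex bumps $\Omega \mapsto \Omega \cup U$. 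Grauert's attaching lemma for coherent analytic sheaves (applied on the ambient $\bC^N$ and restricted to the analytic subset via its coherent ideal sheaf) propagates vanishing of higher coherent cohomology across each bump, the base case being the local Levi problem, which reduces via the embedding to the classical case on $\bC^N$ (solved by $\bar\partial$-estimates or the Oka–Cartan method). Thus $H^q(X_{c'}, \cF) = 0$ for all $q \geq 1$ and all coherent $\cF$, and the usual Theorem B consequences — separating points via $H^1$ with coefficients in the ideal sheaf of a pair of points, generating maximal ideals, and producing holomorphic functions unbounded along divergent sequences — show $X_{c'}$ is holomorphically separated and holomorphically convex, i.e. Stein.

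\textbf{Main obstacle.} The hard part is the openness of $I$: this is where the genuine analysis sits — Grauert's bumping/attaching machinery together with the finiteness and vanishing theorems for coherent cohomology on strongly pseudoconvex domains, and the reduction of the local Levi problem on a possibly singular complex space to the classical case on $\bC^N$ via local embeddings. Everything else (the forward direction, the reduction to sublevel sets, and the closedness of $I$) is a formal consequence of \Cref{Stein_exhaustion} and \Cref{Narasimhan_Runge}.
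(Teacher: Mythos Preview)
The paper does not prove this theorem at all: it is stated with attribution to Narasimhan \cite[Theorem II]{Narasimhan_Levi_problem} and simply used as a black box in the proof of \Cref{criterion of Steiness}. So there is no paper proof to compare against.

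That said, your sketch is a reasonable outline of the standard continuity-method proof going back to Grauert and Narasimhan. The reduction of the reverse direction to the Steinness of sublevel sets, and the closedness of $I$ via \Cref{Stein_exhaustion} and \Cref{Narasimhan_Runge}, are exactly right and in the spirit of how the paper uses these results. The openness step is, as you say, the real content, and your description of it (local embedding into $\bC^N$, bumping, coherent cohomology vanishing) is the correct shape, though in Narasimhan's original treatment for complex spaces with singularities some additional care is needed with the local model: one works with the structure sheaf of the analytic subset rather than just restricting functions, and the approximation/attaching arguments require the Remmert--Stein and Oka--Cartan machinery adapted to that setting. Your forward direction is also fine, though the parenthetical about non-reduced spaces is a bit loose --- the standard argument embeds the reduction and then observes that strict plurisubharmonicity is a property of the underlying reduced space.
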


\begin{proof}[Proof of Theorem \ref{criterion of Steiness}]
For every $c \in \bR$, let $X_c := \{ x \in X | \phi(x) < c\}$.
It is sufficient to prove that for every $c \in \bR$ the space $X_c$ is Stein. Indeed, it follows then from Theorem \ref{Narasimhan_Runge} that the pairs $(X_c, X_{c^\prime})$ are Runge for any $c < c^\prime$, and by Proposition \ref{Stein_exhaustion} that $X$ is Stein. \\

Since $Z$ is Stein by assumption, it follows from a theorem of Siu \cite{Siu-Stein} that there is a Stein neighborhood $U$ of $Z$ in $X$. Therefore, $U$ admits a continuous strictly plurisubharmonic function $\psi \colon U \to [- \infty, + \infty)$. Let $\theta \colon X \to \bR_+$ be a smooth function that equals $1$ in a neighborhood of $Z$ in $X$ and such that $\mathrm{Supp}(\theta) \subset U$. Let $\tilde{\psi}\colon X \to [- \infty, + \infty)$ be the function that equals $\theta \psi$ on $U$ and zero outside. Then $\tilde{\psi}$ is a continuous function on $X$, which is strictly plurisubharmonic in a neighborhood of $Z$ in $X$ and satisfies $\mathrm{Supp}(\tilde{\psi}) \subset U$.

Fix any $c \in \bR$. Since $X_c $ is relatively compact in $X$, there exists a positive constant $K_c$ such that $K_c \phi + \tilde{\psi}$ is strictly plurisubharmonic on $X_c$. The continuous function $\frac{1}{c - \phi}  + K_c \phi + \tilde{\psi} $ is then strictly plurisubharmonic on $X_c$ and is an exhaustion function since $K_c \phi + \tilde{\psi}$ is bounded. We conclude by Theorem \ref{Narasimhan_Stein} that $X_c$ is Stein.
\end{proof}

\section{Proof of Theorem \ref{mainShaf} in the semisimple case and Theorem \ref{main result} in the quasiunipotent semisimple case}\label{sect:qu stein}
In this section, we first briefly recall the construction of \cite{brunebarbeshaf} of the analytic Shafarevich morphism associated to a bounded rank set of semisimple local systems and prove quasiprojectivity of its target using \Cref{thm:alg} and \Cref{prop:qp}.  We then use the pluriharmonic maps at the archimedean and non-archimedean places associated to a semisimple $\bar \bQ$-local system $V$ on $X$ with quasiunipotent local monodromy to define a pluriharmonic exhaustion on $\tilde{X}^V$.  The strict plurisubharmonicity of the exhaustion will follow from \Cref{sect:KZ}, and this will prove \Cref{main result} in the special case of a nonextendable absolute Hodge subset consisting only of semisimple local systems with quasiunipotent local monodromy.

\subsection{Quasiprojectivity of the target of reductive Shafarevich morphisms}
We first briefly recall the construction of \cite{brunebarbeshaf}.  Let $\Sigma$ be a bounded rank subset of $\cM_B(X)(\bC)^{\ss}$.  In this case, by replacing $\Sigma$ with the semisimplification of its saturation, we may assume $\Sigma$ is absolute $\bar\bQ$-closed, defined over $\bQ$, and $\bR_{>0}$-stable.  Let $f_{KZ}:X\to S$ be a Katzarkov--Zuo reduction of $X$ with respect to $\Sigma$ (at all non-archimedean places).  For any finite collection of $\bC$-local systems in $\Sigma$ underlying $\bC$-VHSs, by taking their direct sum we obtain a $\bC$-VHS with monodromy $\rho:\pi_1(X^\an,x)\to \bGL_r(\bC)$.  The period map then gives a $\rho$-equivariant map
\begin{equation}\label{eq:period}\tilde X^{\rho}\to (\check{D}\times S)^\an.\end{equation}
We may assume $\Sigma$ is nonextendable.  Then by \cite{brunebarbeshaf} we can take such a collection such that \eqref{eq:period} has compact fibers.  There is then a Stein factorization
\[\begin{tikzcd}
\tilde X^{\rho}\ar[rd," \psi",swap]\ar[rr,"\phi"]&&(\check{D}\times S)^\an\\
&\tilde \cY\ar[ur,"\chi",swap]&
\end{tikzcd}\]
and $\psi:\tilde X^{\rho}\to\tilde\cY$ descends to the Shafarevich morphism $\sigma:X^\an\to \cY$ to a generically inertia-free analytic Deligne--Mumford stack $\cY$.
  
\begin{thm}\label{thm:ss qp shaf}There is an algebraic morphism $g: X\to Y$ to a generically inertia-free Deligne--Mumford stack $Y$, unique up to isomorphism (as a map with fixed source), which analytifies to $\sigma:X^\an\to \cY$.  Moreover, $\cY$ has quasiprojective coarse space.
\end{thm}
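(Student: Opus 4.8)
\textbf{Proof proposal for \Cref{thm:ss qp shaf}.}

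The plan is to realize the situation as an instance of the algebraization and quasiprojectivity framework developed in \Cref{sect:quasiproj}. First I would verify the hypotheses of \Cref{setup:twisted map}. We take $\Gamma = \img\rho$, the image of the monodromy of the chosen $\bC$-VHS, acting on $\scrM := (\check D \times S)^\an$ via $\rho$ on the first factor and through $\pi_1(X^\an,x)\to \pi_1(S^\an,f_{KZ}(x))$ on the second; here $S$ is the Katzarkov--Zuo reduction, which is an algebraic space (a finite quotient of an abelian variety, see \Cref{KZ reduction}), and the action of $\Gamma$ on each factor is by definable analytic automorphisms (for the period domain factor this is standard; for $S$ the action is through translations on the abelian cover up to a finite group). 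The covering $\pi : \tilde X^\rho \to X^\an$ is normal with deck group $\Gamma$, and $\phi : \tilde X^\rho \to \scrM^\an$ is the product of the period map and the lift of $f_{KZ}$; the $\pi$-definability of the period map part follows from \cite{BMull} together with the extension of the Hodge filtration to the Deligne extension (as in the proof of \Cref{baby qp image}), and the $f_{KZ}$-component is definable since $f_{KZ}$ is algebraic. The fibers of $\phi$ have compact connected components by the nonextendability hypothesis and the analysis of \cite{brunebarbeshaf}. Applying \Cref{thm:alg} to the seminormal (indeed normal) algebraic space $X$ then produces an algebraic map $g : X \to Y'$ to a seminormal algebraic space analytifying the Stein factorization $\sigma' : X^\an \to \cY'$ of $\phi$; passing to the stack quotient by the residual finite inertia (which is identified via condition (2) of \Cref{defn:shaf} with the generic inertia acting faithfully on the relevant fibers) as in \Cref{Shaf pass to finite}(2) upgrades this to a map to a generically inertia-free Deligne--Mumford stack $Y$ with $Y^\an \cong \cY$. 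Uniqueness (as a map with fixed source) is part of the conclusion of \Cref{thm:alg} together with the uniqueness of Stein factorization and of stack quotients.

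For the quasiprojectivity of the coarse space, I would replace $X$ by $Y$ (equivalently, assume $\phi$ has discrete fibers, as it descends to the period map times $f_{KZ}$ on $\cY$ after Stein factorization) and fit the situation into \Cref{setup:alg} and \Cref{setup:qp}. Here $\G$ is the product of the algebraic monodromy group of the $\bC$-VHS with a suitable group acting on $S$; $\rho$ has norm one eigenvalues of local monodromy since $\Sigma$ consists of local systems with quasiunipotent local monodromy and we may apply \Cref{from quasiunipotent to unipotent} to reduce to the unipotent case. The line bundle $\scrL$ on $\scrM$ is the exterior product of the Griffiths bundle on $\check D$ with a (relatively ample, hence after the Albanese description of $S$, ample-pulled-back) bundle on $S$; this is a subquotient of a constant $\G$-equivariant bundle, so \Cref{alg bundle} applies and produces an algebraic bundle $L_X$. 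The key point \Cref{setup:qp}(2) — that $L_Z$ extends to a big and nef bundle on any log smooth compactification of a smooth $Z\to X$ with $\phi_Z$ generically finite — follows from \Cref{lem:big and nef} for the Griffiths bundle component and from the fact that ample bundles on $S$ pull back to nef bundles, with bigness coming from the discreteness of the fibers of the combined map; functoriality is as in \Cref{lem:big and nef}. Then \Cref{prop:qp} gives that $L_Y$ is ample, so $Y$ (hence its coarse space) is quasiprojective.

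The main obstacle I anticipate is checking the bigness half of \Cref{setup:qp}(2) carefully: one must show that on a subvariety $Z\to X$ through a very general point with generically finite image under the combined period/Katzarkov--Zuo map, the extension $L_{\bar Z}$ is big, not merely nef. This requires decomposing the generic finiteness of $\phi_Z$ into its period-map and $f_{KZ}$ contributions and balancing the Griffiths bigness (which needs the period map alone to be generically finite, \Cref{lem:big and nef}) against the contribution from $S$ (where the relevant bundle is only nef, being pulled back from an abelian variety up to a finite cover, so it contributes nothing to bigness on its own). The resolution is that on $Z$ the period map and $f_{KZ}$ together separate points, and by the construction of the analytic Shafarevich morphism the fibers of the period map restricted to a Katzarkov--Zuo fiber are already compact and generically finite over their image — so on a suitable birational model the Griffiths bundle alone is already big transverse to the $S$-directions, and adding the (nef) $S$-bundle keeps bigness. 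Making this transversality argument precise, perhaps by passing to the graph and using the additivity of numerical dimension for a sum of a big bundle in one set of directions and a nef bundle pulled back from a complementary fibration, is where the real work lies; everything else is an application of the machinery already in place.
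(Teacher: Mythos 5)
Your proposal takes essentially the same route as the paper: \Cref{setup:twisted map} is verified as in \Cref{baby qp image}, algebraicity comes from \Cref{thm:alg} (with the inertia handled by a finite \'etale cover/quotient as in \Cref{Shaf pass to finite}), and quasiprojectivity is obtained by replacing $X$ with $Y$, taking $\scrL=\mathscr{N}\boxtimes A$ on $\check D\times S$, and invoking \Cref{alg bundle} and \Cref{prop:qp}. The one step you flag as ``where the real work lies''---bigness of $L_{\bar Z}$---is closed in the paper not by any transversality or numerical-dimension argument but by a one-line intersection computation: on the generic fiber $F$ of $h:Z\to S$ the period map has generically discrete fibers, so $N_{\bar F}$ is big by \Cref{lem:big and nef}, and since $L_{\bar Z}$ is nef with
\[
c_1(L_{\bar Z})^{\dim Z}\;\geq\; c_1(h^*A)^{\codim F}\cdot c_1(N)^{\dim F}\;=\;\deg(h)\,c_1(A)^{\codim F}\cdot c_1(N_{\bar F})^{\dim F}\;>\;0,
\]
it is big.
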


\begin{proof}We may pass to a finite \'etale cover of $X$ so that $\cY$ is inertia-free (as in \cite{brunebarbeshaf}) and thus an analytic space.  \Cref{setup:twisted map} applies by as in \Cref{baby qp image}, so the algebraicity follows from \Cref{thm:alg}.  It remains to show the quasiprojectivity.  By replacing $X$ with $Y$ we may assume $\phi$ has discrete fibers.  

We now show that we are in \Cref{setup:qp}.  Take $A$ ample on $S$, $\scrM$ to be $\check D\times S$, $\mathscr{N}$ to be the Griffiths bundle on $\check D$, and $\scrL=\mathscr{N}\boxtimes A$.  Then \Cref{setup:alg} is satisfied, and \Cref{alg bundle} applies to $\scrL$.  

We claim that for any generically finite $g:Z\to X$ from a smooth variety $Z$ with a log smooth compactification $\bar Z$, there is a functorial big and nef extension $L_{\bar Z}$ by \Cref{lem:big and nef}:  the nefness is clear. For the bigness, note that on the generic fiber $F$ of $h:Z\ra S$, the corresponding period map on $F$ has generically discrete fibers, and thus by \Cref{lem:big and nef} it follows that $N_{\bar F}$ is big. Since $A$ is ample, it follows that the top intersection product $$\mathrm{c}_1(L_{\bar Z})^{\dim X} = \mathrm{c}_1(A)^{\textrm{codim} F} \cdot \mathrm{c}_1(N_{\bar F})^{\dim F}\cdot \deg(h) > 0,$$ and hence $L$ is big.   Therefore the quasiprojectivity follows from \Cref{prop:qp}.
\end{proof}

\subsection{Currents associated to semisimple $\bar \bQ$-local systems with quasiunipotent monodromy}

Let $X$ be a connected smooth quasiprojective complex algebraic variety. Let $V$ be a semisimple $\bQ$-local system with quasiunipotent local monodromy. One can associate to $V$ a canonical positive $(1,1)$-current on $X$ as follows. Fix $\bar X \supset X$ a smooth projective compactification such that $\bar X \setminus X$ is a normal crossing divisor (one easily check that the currents constructed below do not depend on the compactification $\bar X$). 

For every prime $p$, we let $\omega_V^p $ be the positive closed current on $X$ associated to $V_p := V \otimes_{\bQ} \bQ_p$, see \Cref{nonarchimedean_canonical_current}.  We define $\omega^f_V=\sum_p\omega_V^p$, which is sensible since almost all terms vanish.  On the other hand, considering the associated semisimple complex local system $V_\infty := V \otimes_{\bQ} \bC$, we get from the correspondence recalled in \Cref{sect:harmonic bundles} a well-defined algebraic Higgs bundle $(E, \theta)$ on $X$. We let $\omega_V^\infty := i \mathrm{tr}(\theta \wedge \bar \theta)$.  It is a smooth semipositive closed $(1,1)$-form on $X$, see \cite[Lemma 3.3.3]{Eyssidieux}. Finally, we define $\omega_V$ by summing over all places: $$\omega_V = \omega_V^\infty+\omega_V^f.$$ It is a semipositive $(1,1)$-current with continuous potentials. Observe that by construction, the rank of the kernel of $\omega_V$ is constant on a dense Zariski-open subset of $X$.

If one starts with a semisimple $K$-local system $V$ with quasiunipotent local monodromy instead, for some number field $K$, its restriction of scalars $\mathrm{Res}_{K / \bQ} V$ is a semisimple $\bQ$-local system with quasiunipotent local monodromy (of rank $[K:\bQ] \cdot \rk{V}$). We set $\omega^f_V := \frac{1}{[K:\bQ]} \cdot \omega^f_{\mathrm{Res}_{K / \bQ} V}$ and $\omega_V := \frac{1}{[K:\bQ]} \cdot \omega_{\mathrm{Res}_{K / \bQ} V}$. It follows directly from the definition that for any number field $L \supset K$, 
$\omega^f_{V \otimes_K L} = \omega^f_V$ and $\omega_{V \otimes_K L} = \omega_V$. Since every $\bar \bQ$-local system is defined over some number field, this permits to define the canonical current associated to every semisimple $\bar \bQ$-local system with quasiunipotent local monodromy

Finally, for any finite set $\Sigma \subset \cM_B(X)(\bar \bQ)$ of semisimple local systems with quasiunipotent local monodromy, we define 
$$ \omega^f_\Sigma := \sum_{V \in \Sigma} \omega^f_V \;\;\text{ and }\;\; \omega_\Sigma := \sum_{V \in \Sigma} \omega_V .$$

\subsection{Plurisubharmonic functions associated to semisimple $\bQ$-local systems with quasiunipotent monodromy}
The goal of this section is to prove the following result.

\begin{prop}\label{exhaustion_function}
Let $X$ be a connected smooth quasiprojective complex algebraic variety.  
Let $V$ be a semisimple $\bQ$-local system with quasiunipotent local monodromy. 
Let $\pi \colon \tilde X^{V} \to X$ be the corresponding cover and $\omega_V$ its associated semipositive closed current.
Then there exists a continuous plurisubharmonic function $\phi_V \colon \tilde{X}^V \to \bR_{\geq 0}$ that satisfies $d d ^c \phi_V \geq  \pi^\ast \omega_V $. Moreover, if $V$ is nonextendable, then $\phi_V$ is proper.    
\end{prop}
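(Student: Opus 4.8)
The plan is to realize $\phi_V$ as a squared distance function for the combined pluriharmonic map of $V$ at all places, following the strategy of \cite{Eyssidieux}. Replacing $V$ by its restriction of scalars, one reduces to the case of a $\bQ$-local system (the cover $\tilde X^V$ and the current $\omega_V$ are unaffected, up to the normalizing constants built into their definitions). Write $\Gamma:=\rho_V(\pi_1(X^\an,x))\subset\bGL_r(\bQ)$ and fix an integer $N$ divisible by every prime occurring in the denominators of a finite generating set of $\Gamma$ as well as by every prime at which $V$ is ramified, so that $\Gamma\subset\bGL_r(\bZ[1/N])$ and, for $p\nmid N$, $V$ carries a flat $\bZ_p$-lattice. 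For $p\mid N$, \Cref{existence_pluriharmonic_norm} together with \Cref{finite_energy_implies_bounded} supplies a $\rho_V$-equivariant finite-energy pluriharmonic norm, i.e. a $\rho_V$-equivariant pluriharmonic map $u_p\colon\tilde X^V\to\Delta_p$ to the locally compact Bruhat--Tits building $\Delta_p=\Delta(\bGL_r,\bQ_p)$; at the archimedean place Corlette--Mochizuki theory gives a $\rho_V$-equivariant pluriharmonic map $u_\infty\colon\tilde X^V\to\Delta_\infty$ to the symmetric space of $\bGL_r(\bR)$, of finite energy by \Cref{unipotent_residues_equivalent_finite_energy} since the local monodromy is quasiunipotent. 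Setting $\Delta:=\Delta_\infty\times\prod_{p\mid N}\Delta_p$ (a proper NPC space), $u:=(u_v)_v$, and fixing a base point $Q=(Q_v)_v\in\Delta$, I would take $\phi_V$ to be a suitable positive multiple of $x\mapsto d_\Delta(u(x),Q)^2=\sum_v d_v(u_v(x),Q_v)^2$, which is continuous (the $u_v$ are locally Lipschitz) and nonnegative.

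The estimate $dd^c\phi_V\ge\pi^\ast\omega_V$, which forces $\phi_V$ plurisubharmonic since $\omega_V\ge0$, would be checked place by place: at the finite places it is exactly \Cref{current_controls_exhaustion}, and at the archimedean place it is the analogous comparison of $dd^c d_\infty(u_\infty,Q_\infty)^2$ with $i\,\mathrm{tr}(\theta\wedge\bar\theta)$, see \cite[Proposition 2.2]{Gromov-Schoen} and \cite[Proposition 3.3.6]{Eyssidieux}. Summing and rescaling yields the claim, the constants affecting only the normalization of $\phi_V$.

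For properness under the nonextendability hypothesis: since $\Gamma\subset\bGL_r(\bZ[1/N])$, the product formula shows $\Gamma$ acts properly discontinuously on $\Delta$, and $u$ is $\Gamma$-equivariant. Any sublevel set $\{\phi_V<c\}$ is contained in $u^{-1}(B_c)$ for a fixed compact $B_c\subset\Delta$ (a product of closed metric balls), so it suffices to prove $u^{-1}(B_c)$ relatively compact in $\tilde X^V$. Suppose $x_n\in u^{-1}(B_c)$ leaves every compact subset; after passing to a subsequence $\pi(x_n)$ converges in $\bar X$ to some $q$. If $q\in X$, choose a compact neighborhood $K\ni q$ and a compact lift $\tilde K\subset\tilde X^V$ with $\pi(\tilde K)\supseteq K$; writing $x_n=\gamma_n y_n$ with $y_n\in\tilde K$ and $\gamma_n\in\Gamma$ (as $\Gamma$ acts transitively on fibres of $\pi$) gives $\gamma_n u(y_n)\in B_c$ with $u(y_n)$ in the compact set $u(\tilde K)$, so $\{\gamma_n\}$ is finite and $\{x_n\}$ relatively compact, a contradiction. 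If $q\in D=\bar X\setminus X$, then in a punctured-polydisk neighborhood of $q$ nonextendability of $\{V\}$ provides, via \Cref{characterization_nonextendability_local_fundamental_group}, a local loop $\gamma$ around a boundary component through $q$ with $\rho_V(\gamma)$ of infinite order; being quasiunipotent, $\rho_V(\gamma)$ is unbounded in $\bGL_r(\bR)$ and fixes no point of $\Delta_\infty$, and the asymptotic analysis of tame pure harmonic bundles on the punctured disk (Simpson \cite{Simpson_noncompact}, Mochizuki \cite{mochizukitame}) forces $d_\infty(u_\infty(x_n),Q_\infty)\to\infty$, contradicting $x_n\in u^{-1}(B_c)$. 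Hence $\{\phi_V<c\}$ is relatively compact and $\phi_V$ is exhaustive.

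The hard part will be the last boundary estimate: upgrading the soft statement that $\rho_V(\gamma)$ has no fixed point at infinity in $\Delta_\infty$ to the quantitative $d_\infty(u_\infty(\cdot),Q_\infty)\to\infty$ towards $D$. This is where quasiunipotence is essential — it makes $u_\infty$, restricted near a puncture and pulled back to the universal cover $\zeta\in\bH$ with $z=e^{2\pi i\zeta}$, asymptotic to a nilpotent-orbit model $\zeta\mapsto\exp(\mathrm{Im}(\zeta)\,A)\cdot Q_\infty$ with $A\neq0$ (since $\log\rho_V(\gamma)\neq0$), so the distance to $Q_\infty$ grows at least linearly in $\mathrm{Im}(\zeta)$, i.e. like $\log\log(1/|z|)$; the needed input is the regularity of the Hodge/Higgs filtration near the boundary. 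Without quasiunipotence the boundary behaviour of the harmonic metric is far subtler and this clean model is unavailable, which is precisely why the semisimple quasiunipotent case is treated separately.
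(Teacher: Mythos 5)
Your construction of $\phi_V$ is the paper's: the same place-by-place pluriharmonic maps (\Cref{existence_pluriharmonic_norm} at the finite places, the purely imaginary tame harmonic metric at the archimedean place, finite energy via \Cref{unipotent_residues_equivalent_finite_energy}), the same squared-distance function, and the same inequality $dd^c\phi_V\ge\pi^*\omega_V$ obtained from \Cref{current_controls_exhaustion} and its archimedean analogue. The interior part of your properness argument (discreteness of $\bGL_r(\bZ[1/N])$ in the product of the real and $p$-adic groups, proper action on the product of the symmetric space and the buildings, deck-translate argument) is also essentially what the paper does.

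The gap is in the boundary case. The statement you need there --- that $d_\infty(u_\infty(x_n),Q_\infty)\to\infty$ for \emph{every} sequence $x_n\in\tilde X^V$ whose projection tends to a point of $D$ --- is precisely the content of properness near the boundary, and you do not prove it; you only sketch a nilpotent-orbit model on a punctured disk. Two things are missing. First, the asymptotic theory of tame harmonic bundles (Simpson, Mochizuki) controls the harmonic metric over the base, i.e.\ the map up to the action of the local monodromy; it does not directly bound the distance from a \emph{fixed} basepoint along arbitrary lifts in the infinite cover, where the points $x_n$ may differ from points of a fundamental region by large deck translates $\gamma^{k_n}$. Converting the norm estimates into the lower bound you assert requires an additional argument involving the displacement of the unipotent element (or, equivalently, passing to the quotient), and no such nilpotent-orbit/regularity input is established or used in the paper. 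Second, even granting the model, your reduction treats only a loop around a single branch of $D$, whereas nonextendability gives injectivity of the whole local fundamental group $\bZ^k$, and the infinite-order element need not be a simple loop. The paper avoids all of this: after Selberg's lemma it suffices to prove properness of the descended map $X\to\Gamma\backslash\bigl(\cN(\bC^r)\times\prod_{p\mid N}\cN(\bQ_p^r)\bigr)$, and this is done softly in \Cref{harmonic_non_converge}: finite energy gives a Lipschitz bound with respect to the Poincar\'e metric near the boundary (\Cref{energy_control_Lipschitz}), the hyperbolic length of the circle $|z|=|z_n|$ tends to $0$, so the local monodromy element would displace a limit point by an arbitrarily small amount, contradicting proper discontinuity (torsion-freeness guaranteeing the displacement is bounded below). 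No quantitative asymptotic expansion of $u_\infty$ at the puncture is needed, and this is the step you would have to supply or replace to make your argument complete.
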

\begin{proof}

Let $\bar X \supset X$ be a smooth projective compactification such that $\bar X \setminus X$ is a normal crossing divisor. Fix $\tilde{x}_0 \in \tilde{X}^{V}$ with image $x_0 \in X$. Fix an isomorphism $V_{x_0} \simeq \bQ^r$. Let $\rho \colon \pi_1(X, x_0) \to \bGL_r(\bQ)$ the monodromy representation of $V$. 

For every prime number $p$, let $\cN((\bQ_p)^r)$ denote the space of norms on $(\bQ_p)^r$. Thanks to Theorem \ref{existence_pluriharmonic_norm}, there exists a $\rho$-equivariant pluriharmonic map $u_{V}^p \colon \tilde{X}^{V} \to \cN((\bQ_p)^r)$, with finite energy with respect to any Poincaré-type complete Kähler metric on $X$. The function $\phi_{V}^p \colon \tilde{X}^{V} \to \bR_{\geq 0}$ defined by
\[ \phi_{V}^p(x) = 2 \cdot d^2_{\cN((\bQ_p)^r)}(u(x), u(\tilde{x}_0)) \]
is continuous psh and satisfies $d d ^c \phi_V^p \geq  \pi^\ast \omega_V^p $, where 
$\omega_V^p $ is the semipositive closed current on $X$ associated to $V_p$, see \Cref{current_controls_exhaustion}. On the other hand, the complex local system $V \otimes_\bQ \bC$ admits a purely imaginary tame harmonic metric, see \cite[Part 5]{Mochizuki-AMS2}. Letting $\cN(\bC^r)$ denote the space of positive definite hermitian 
forms on $\bC^r$, thanks to \Cref{unipotent_residues_equivalent_finite_energy}, the corresponding $\rho$-equivariant pluriharmonic map $u_{V}^\infty \colon \tilde{X}^{V} \to \cN(\bC^r)$ has finite energy with respect to any Poincaré-type complete Kähler metric on $X$. The function $\phi^\infty_V \colon \tilde{X}^V \to \bR_{\geq 0}$ defined by
\[ \phi_V^\infty(x) = 2 \cdot d^2(u(x), u(\tilde{x}_0)) \]
is continuous psh and satisfies $d d ^c \phi_V^\infty \geq  \pi^\ast \omega_V^\infty $, where 
$\omega_V^\infty $ is the semipositive closed current on $X$ associated to $V$, see \cite[Proposition 3.3.2, Lemme 3.3.4]{Eyssidieux}. We define $\phi_V$ by summing over all places (this makes sense since almost all of the terms are zero).
Then the function $\phi_V \colon \tilde{X}^V \to \bR_{\geq 0}$ is continuous psh and satisfies $d d ^c \phi_V \geq  \pi^\ast \omega_V $.

Assuming that $V$ is nonextendable, let us prove that $\phi_V$ is proper. Since $\pi_1(X)$ is finitely generated, the image $\Gamma$ of $\rho$ takes values in $\bGL_r(\bZ[\frac{1}{N}])$ for a positive integer $N$. Up to replacing $\Gamma$ with a finite index subgroup, therefore replacing $X$ with a finite étale cover, one can assume by Selberg's lemma that $\Gamma$ is torsion-free (this does not change $\tilde X^V$ and $\phi_V$). Moreover, the image of the diagonal embedding $\bGL_r(\bZ[\frac{1}{N}]) \to \bGL_r(\bR) \times \prod_{p | N}  \bGL_r(\bQ_p)$ is discrete, and $\bGL_r(\bR)$ (resp. each $\bGL_r(\bQ_p)$) acts properly on $\cN(\bC^r)$ (resp. each $\cN(\bQ_p^r)$). Therefore, we get a pluriharmonic map
\[ X \to   \Gamma \backslash \left( \cN(\bC^r) \times \prod_{p | N} \cN(\bQ_p^r) \right).  \]
It is sufficient to prove that this map is proper. Consider by contradiction a sequence of points in $X$ that goes to infinity and whose images do not. Up to taking a subsequence, we get a sequence of points of $X$ converging to a point in $\bar X \setminus X$ and whose images converge to a point in $\Gamma \backslash \left( \cN(\bC^r) \times \prod_{p | N} \cN(\bQ_p^r) \right)$. This is in contradiction with Proposition \ref{harmonic_non_converge} below.
\end{proof}

\begin{prop} \label{harmonic_non_converge}
Let $\Delta$ be a NPC space and $\Gamma$ a discrete group acting properly on $\Delta$.
Equip $(\bD^\ast)^r \times \bD^s$ with its complete Poincaré metric, and consider a locally liftable harmonic map of finite energy
\[ f \colon (\bD^\ast)^r \times \bD^s \to   \Gamma \backslash \Delta.  \]
Assume that the local monodromies are infinite. Let $\{z_n\}= \{z_n^1, \cdots, z_n^{r+s} \}$ be a sequence of points of $(\bD^\ast)^r \times \bD^s$ with $\inf_{1 \leq i \leq r} |z_n^i| \to 0$ as $n \to \infty$. Then the sequence $\{ f(z_n)\}$ of points of $\Gamma \backslash \Delta$ does not converge.    
\end{prop}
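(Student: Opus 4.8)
The statement is local near the boundary, so I would reduce to the one-variable case and then use the asymptotics of harmonic maps together with the infiniteness of the local monodromy to force divergence. First, after relabeling coordinates I may assume the sequence satisfies $|z_n^1|\to 0$. Consider the family of embedded punctured disks obtained by fixing the last $r+s-1$ coordinates equal to $0$ (or to any fixed value $\zeta$), giving a holomorphic map $j_\zeta\colon \bD^\ast\hookrightarrow (\bD^\ast)^r\times\bD^s$; since $f$ is harmonic of finite energy with respect to the Poincar\'e metric, the restriction $f\circ j_\zeta$ is still harmonic of finite energy on $\bD^\ast$ with its Poincar\'e metric, and its monodromy is the image of a generator of $\pi_1(\bD^\ast)$, which by hypothesis (the ``local monodromies are infinite'' assumption) has infinite order in $\Gamma$. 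So I have reduced to: a locally liftable finite-energy harmonic map $g\colon\bD^\ast\to\Gamma\backslash\Delta$ with infinite-order monodromy $\gamma\in\Gamma$, a sequence $z_n\in\bD^\ast$ with $z_n\to 0$, and I must show $\{g(z_n)\}$ does not converge in $\Gamma\backslash\Delta$.

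\textbf{The key estimate.} On the universal cover $\bH\to\bD^\ast$, $w\mapsto e^{2\pi i w}=z$, lift $g$ to a $\gamma$-equivariant map $\tilde g\colon\bH\to\Delta$; here $\gamma$ acts on $\Delta$ by an isometry and $\tilde g(w+1)=\gamma\cdot\tilde g(w)$. Finite energy with respect to the Poincar\'e metric on $\bD^\ast$, combined with the interior Lipschitz bound \Cref{energy_control_Lipschitz}, gives that on the region $\{\mathrm{Im}\,w\ge 1\}$ the map $\tilde g$ is Lipschitz with respect to the \emph{hyperbolic} metric with a constant controlled by the (finite) energy; equivalently, $d_\Delta(\tilde g(w),\tilde g(w'))\le C\,d_{\bH}(w,w')$ there. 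Now take a point $w_n$ with $e^{2\pi i w_n}=z_n$; then $\mathrm{Im}\,w_n=-\tfrac{1}{2\pi}\log|z_n|\to+\infty$. Using $\gamma$-equivariance and translating by an integer I may assume $0\le\mathrm{Re}\,w_n<1$. The crucial point is to estimate $d_\Delta(\tilde g(w_n),\gamma\cdot\tilde g(w_n))=d_\Delta(\tilde g(w_n),\tilde g(w_n+1))$: since $d_{\bH}(w_n,w_n+1)=2\sinh^{-1}\!\big(\tfrac{1}{2\,\mathrm{Im}\,w_n}\big)\to 0$, the Lipschitz bound yields $d_\Delta(\tilde g(w_n),\gamma\cdot\tilde g(w_n))\to 0$. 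In other words the displacement of $\gamma$ at the point $q_n:=\tilde g(w_n)$ tends to $0$.

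\textbf{Conclusion from the estimate.} Suppose for contradiction $g(z_n)\to p\in\Gamma\backslash\Delta$. Then the sequence $q_n\in\Delta$ stays in a bounded region modulo $\Gamma$; more precisely, for a suitable choice of lifts, passing to a subsequence we may assume $q_n$ stays in a fixed compact $K\subset\Delta$ (the preimage of a compact neighborhood of $p$ meets only finitely many $\Gamma$-translates since the action is proper, so we may absorb the $\Gamma$-ambiguity and take $q_n$ literally convergent to some $q\in\Delta$ after a further subsequence, replacing $\gamma$ by a conjugate $\gamma'$ which is still infinite order). By the previous step $d_\Delta(q_n,\gamma'\cdot q_n)\to 0$, so by continuity $\gamma'$ fixes $q$. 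But then $\gamma'$ lies in the stabilizer $\mathrm{Stab}_\Gamma(q)$, which is finite because the $\Gamma$-action is proper; this contradicts $\gamma'$ having infinite order. Hence $\{g(z_n)\}$ does not converge, completing the reduction and the proof.

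\textbf{Main obstacle.} The only nontrivial point is establishing the hyperbolic-Lipschitz estimate for a finite-energy harmonic map on a punctured disk uniformly up to the puncture. This is where \Cref{energy_control_Lipschitz} enters: one applies it on balls of fixed hyperbolic radius centered at points $w$ with $\mathrm{Im}\,w$ large, using that the energy density integrates to a finite total energy over the cusp and that the hyperbolic geometry near the cusp is self-similar under $w\mapsto w+\,$(real), so the local energy on each such ball is uniformly bounded (indeed tends to $0$); one must also invoke the Poincar\'e-type comparison between the given metric on $(\bD^\ast)^r\times\bD^s$ and the product Poincar\'e metric, so that finite energy is a metric-independent notion and the restriction to a slice $\bD^\ast$ again has finite energy. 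Everything else—the equivariant lift, the displacement computation, and the properness-of-action argument ruling out infinite-order stabilizers—is routine once this estimate is in hand.
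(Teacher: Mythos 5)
Your reduction to the one-variable case is the step that fails, for two reasons. First, the points $z_n$ do not lie on a fixed slice $\bD^\ast\times\{\zeta\}$: the hypothesis only gives $\inf_{1\le i\le r}|z_n^i|\to 0$, and the remaining coordinates $z_n^2,\dots,z_n^{r+s}$ vary with $n$, so proving non-convergence of $g(\cdot)$ along a sequence in a fixed slice for $g=f\circ j_\zeta$ says nothing about the original sequence $f(z_n)$. Second, even on a fixed slice the restriction of $f$ need not be harmonic: $f$ is only assumed harmonic (not pluriharmonic, compare \Cref{pluriharmonic}, which is proved in a different, global setting), and harmonicity is not preserved under composition with a holomorphic embedding of a curve; nor does finite energy of $f$ give finite energy of its restriction to a measure-zero slice. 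Since your key estimate rests on applying \Cref{energy_control_Lipschitz} to the restricted map, the Lipschitz bound as you set it up is not justified.

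The repair is to drop the slicing and run the same argument directly on $(\bD^\ast)^r\times\bD^s$, which is exactly what the paper does: apply \Cref{energy_control_Lipschitz} to $f$ itself on geodesic balls of fixed radius near the boundary (finite total energy bounds the local energies uniformly, and the Poincaré metric has Ricci bounded below), so that $f$ is Lipschitz for the hyperbolic metric in a neighborhood of the boundary; then lift $f$ along the loop $\{|z^1|=|z_n^1|\}\times\{(z_n^2,\dots,z_n^{r+s})\}$ through the actual point $z_n$, whose hyperbolic length tends to $0$, to conclude that the displacement of the monodromy element $T_1$ at a lift of $f(z_n)$ tends to $0$. From there your endgame is sound and is essentially the paper's contradiction, phrased via vanishing displacement and finiteness of point stabilizers for a proper action rather than via a fixed pair of neighborhoods $\bar U$ and $T_1(\bar U)$ at positive distance; just note that the conjugating elements taking your lifts $q_n$ back into a fixed compact set depend on $n$, so you must invoke properness once more to see that only finitely many conjugates of $T_1$ occur before extracting a constant one and passing to the limit.
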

For variations of Hodge structures, this is \cite[Proposition 9.11]{Giii}.
\begin{proof}
By applying \Cref{energy_control_Lipschitz} to geodesic balls of radius $1$ in $(\bD^\ast)^r \times \bD^s$, it follows that the map $f$ is Lipschitz continuous in a neighborhood of the origin (with respect to the Poincaré hyperbolic metric on $(\bD^\ast)^r \times \bD^s$). 

Up to extracting a subsequence, we can assume that one of the coordinates $z_n^i$---say $z_n^1$---tends to $0$. We assume by contradiction that $f(z_n)$ has a limit $w$ in $\Gamma \backslash \Delta$. Let $T_1$ be the image in $\Gamma$ of the loop corresponding to $(1, 0, \ldots, 0)$ through the canonical isomorphism $\pi_1 \left((\bD^\ast)^r \times \bD^s \right) = \bZ^r$. Let $\bar w$ be a preimage of $w$ in $\Delta$. Let $U$ be a neighborhood of $w$ and let $\bar U$ be a neighborhood of $\bar w$, such that the projection of $\bar U$ in $\Gamma \backslash \Delta$ is contained in $U$. We can moreover assume that $T_1(\bar U)$ is at a stricly positive distance $\delta$ of $\bar U$. Let $\bD^\ast = \bD^\ast \times \{0\} \times \ldots \times \{0\} \subset (\bD^\ast)^r \times \bD^s$ and let $\gamma_n$ be the circle of center $0$ 
and radius $|z_n|$ in $\bD^\ast$. We choose a local lift $\bar{f}$ of $f$ in $\Delta$ along the loop $\gamma_n \times \{z_n^2\} \times \ldots \times \{z_n^{r+ s} \}$. We denote by $\bar w_n$ the image of $z_n$ and we assume that it is contained in $\bar U$, for sufficiently large $n$. After a loop, the multivalued function $\bar f$ takes the value $T_1(\bar w_n)$. Hence, the distance $d_\Delta(\bar w_n, T_1(\bar w_n))$ is at least $\delta$. On the other hand, this distance $d_\Delta(\bar w_n, T_1(\bar w_n))$ is bounded by a constant times the length of $\gamma_n$. Since this length tends to zero, we get a contradiction.
\end{proof}

\subsection{Proof}

We now complete the proof of \Cref{main result} in the quasiunipotent semisimple case.  Note that we will not need to use the algebraicity result of \Cref{thm:ss qp shaf}, although its use would provide a slight simplification.

\begin{defn}
Let $X$ be a connected normal algebraic space and $\Sigma \subset \cM_B(X)(\bC)$ a set
of complex local systems. We say $\Sigma$ is generically large if there exists a nowhere dense closed analytic subset $W \subset X^{\an}$ such that for every non-constant morphism $g \colon Z \to X$ from a connected normal algebraic space $Z$ and whose image is not contained in $W$, $g^\ast \Sigma$ contains a nontrivial local system.
Assuming that the analytic $\Sigma$-Shafarevich morphism exists, $\Sigma$ is generically large exactly when the analytic $\Sigma$-Shafarevich morphism is a biholomorphism an a dense analytic Zariski open subset, in which case $W$ can be taken to be the complement.   
\end{defn}

\begin{prop}\label{reduction_to_finitely_many_rational_representations}
Let $X$ be a connected normal complex algebraic space and $\Sigma \subset \cM_B(X)(\bC)$ an absolute  $\bar\bQ$-constructible subset containing only semisimple local systems with quasiunipotent local monodromy. If $\Sigma$ is nonextendable (respectively nonextendable and generically large), then there exists a finite subset $\Sigma_0 \subset \Sigma(\bar \bQ)$ which is nonextendable (respectively nonextendable and generically large).
\end{prop}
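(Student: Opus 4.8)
The plan is to reduce to the case where $\Sigma$ consists of $\bar\bQ$-points, dispatch nonextendability using \Cref{nonextendable_reduction_to_finite}, handle generic largeness by a separate Noetherian argument on ``triviality loci'', and then take the union of the two finite subsets produced. First I would observe that it suffices to treat the case $\Sigma=\Sigma(\bar\bQ)$. Indeed, since $\Sigma$ is $\bar\bQ$-constructible, $\Sigma(\bar\bQ)$ is Zariski dense in $\Sigma$, hence nonextendable by \Cref{nonextendable_Zariski-closure}; moreover, for a fixed morphism $g\colon Z\to X$ the locus $\{V : g^*V\cong \triv\}\subset\cM_B(X)(\bC)$ is Zariski closed (it is the preimage of the closed point $\triv\in\cM_B(Z)(\bC)$ under the morphism $g^*$), so ``$g$ trivializes $\Sigma$'' and ``$g$ trivializes $\Sigma(\bar\bQ)$'' are equivalent, and $\Sigma(\bar\bQ)$ is generically large with the same nowhere-dense witness whenever $\Sigma$ is. So from now on $\Sigma\subset\cM_B(X)(\bar\bQ)$, and I will construct a finite nonextendable $\Sigma_1\subset\Sigma$ and (in the second case) a finite generically large $\Sigma_2\subset\Sigma$ separately, and set $\Sigma_0:=\Sigma_1\cup\Sigma_2$.

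For nonextendability, choose a resolution $r\colon Y\to X$ and then a finite \'etale cover $q\colon Y'\to Y$ as in \Cref{reduction_to_unipotent_monodromy}, so that $h:=r\circ q\colon Y'\to X$ is proper and dominant and $h^*\Sigma$ has unipotent local monodromy. Then $h^*\Sigma$ is nonextendable by \Cref{pullback-nonextendable}, and the Zariski-dense subset $h^*\bigl(\Sigma(\bar\bQ)\bigr)\subset\cM_B(Y')(\bC)$ is nonextendable as well; applying \Cref{nonextendable_reduction_to_finite} to it yields a finite nonextendable subset $\Theta\subset h^*\bigl(\Sigma(\bar\bQ)\bigr)$. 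Choosing finitely many preimages gives a finite $\Sigma_1\subset\Sigma$ with $h^*\Sigma_1\supseteq\Theta$, so $h^*\Sigma_1$ is nonextendable, whence $\Sigma_1$ is nonextendable again by \Cref{pullback-nonextendable}. This already proves the proposition in the non-``resp.'' case.

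For generic largeness, assume $\Sigma$ is generically large with nowhere-dense analytic witness $W$. For a finite subset $T\subset\Sigma$, let $W_T\subset X$ be the set of points lying on some positive-dimensional closed subvariety $Z\subset X$ with $V|_Z$ trivial for all $V\in T$. I would argue that $W_T$ is Zariski closed: the relevant subvarieties $Z$ are parametrized by a locally closed locus of a Hilbert scheme of $X$ (working inside a fixed projective compactification to deal with the non-properness of $X$), on which ``$V|_Z$ trivial'' is a closed condition by upper semicontinuity of $h^0$ of local systems along the fibres of the universal family; $W_T$ is then the image of the corresponding proper universal family, hence closed. The $W_T$ decrease with $T$, so by the descending chain condition on the noetherian space $X$ they stabilize along some finite $\Sigma_2\subset\Sigma$. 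A second descending-chain argument, applied to the loci of positive-dimensional subvarieties through a fixed point $x\in W_{\Sigma_2}$ trivializing $T$, produces for such $x$ a single positive-dimensional subvariety through $x$ trivializing every $V\in\Sigma$ (triviality being closed, trivializing $\Sigma$ and trivializing $\Sigma^{\Zar}$ agree); by generic largeness of $\Sigma$ this forces $x\in W$. Hence $W_{\Sigma_2}\subset W$ is nowhere dense, $\Sigma_2$ is generically large with witness $W_{\Sigma_2}$, and $\Sigma_0=\Sigma_1\cup\Sigma_2$ is finite, nonextendable, and generically large.

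The main obstacle is the technical bookkeeping in the third paragraph: verifying that the triviality loci in $X$ and in the Hilbert scheme are genuinely Zariski closed when $X$ is only quasi-projective (so that one must work with closures of subvarieties in a fixed compactification and with a locally finite-type parameter space), and justifying the reduction of the definition of generic largeness---stated for arbitrary non-constant morphisms from connected normal spaces---to closed positive-dimensional subvarieties of $X$, for which one replaces a given $g\colon Z\to X$ by the normalization of its image and uses that the pullback of a nontrivial local system along a dominant morphism of normal varieties remains nontrivial. Once these points are in place, both Noetherian stabilizations and the rest of the argument are routine.
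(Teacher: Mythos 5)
Your first half (nonextendability) is fine and coincides with the paper's argument: reduce to smooth $X$, pass to a finite \'etale cover via \Cref{reduction_to_unipotent_monodromy} so the local monodromy is unipotent, use \Cref{pullback-nonextendable} and \Cref{nonextendable_Zariski-closure}, and apply \Cref{nonextendable_reduction_to_finite}. The generic-largeness half, however, has a genuine gap, and it sits exactly where you flagged the ``main obstacle''. The claim that ``the pullback of a nontrivial local system along a dominant morphism of normal varieties remains nontrivial'' is false: any nontrivial finite \'etale cover gives a counterexample (e.g.\ an isogeny of elliptic curves trivializing a torsion rank-one system). What is true is only that the image of $\pi_1(Z)\to\pi_1(\tilde Z')$ has finite index, so a non-constant $g\colon Z\to X$ with $g^*\Sigma_2$ trivial forces merely \emph{uniformly finite} -- not trivial -- monodromy of $\Sigma_2$ on the normalization of its image. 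Consequently your sets $W_T$, built from subvarieties on which the restriction is \emph{trivial}, cannot serve as a witness for generic largeness of $\Sigma_2$: such a $g$ may have image with finite nontrivial monodromy, hence not contained in $W_{\Sigma_2}$, and no contradiction is reached. A second, independent problem is that $W_T$ is parametrized by a Hilbert scheme with countably many components (one per Hilbert polynomial), so it is a priori only a countable union of closed sets, not Zariski closed; both of your descending-chain arguments (stabilization of $W_T$, and the chain of trivializing subvarieties through a fixed $x$) therefore have no noetherian parameter space to run on.

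The paper sidesteps all of this by using machinery already in hand: for any finite $\Sigma_1\subset\Sigma(\bar\bQ)$ containing the nonextendable $\Sigma_0$, the analytic $\Sigma_1$-Shafarevich morphism $\Sh_{\Sigma_1}$ of \cite{brunebarbeshaf} exists and is proper, and its fibers are closed algebraic subsets of $X$ which package, once and for all, every subvariety with uniformly finite $\Sigma_1$-monodromy (equivalently, every image of a morphism trivializing $\Sigma_1$). Since $\Sigma$ (hence $\Sigma(\bar\bQ)$) is generically large, $\Sh_{\Sigma(\bar\bQ)}$ is a proper modification, so some fiber is a single point $\{x\}$; the fibers $\Sh_{\Sigma_1}^{-1}(\Sh_{\Sigma_1}(x))$ form a decreasing family of closed algebraic subsets with intersection $\{x\}$, so noetherianity of $X$ gives a finite $\Sigma_1\supset\Sigma_0$ whose fiber through $x$ is $\{x\}$, which forces $\Sh_{\Sigma_1}$ to be a modification, i.e.\ $\Sigma_1$ is generically large (and nonextendable because it contains $\Sigma_0$). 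If you want to salvage your route, you would have to replace ``trivial restriction'' by ``uniformly finite monodromy'' and find a genuinely noetherian family controlling such subvarieties -- which is in effect what the Shafarevich morphism provides.
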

\begin{proof}
Assume that $\Sigma$ is nonextendable and let us prove that there exists a finite subset $\Sigma_0 \subset \Sigma(\bar \bQ)$ which is nonextendable. One can assume that $X$ is smooth. Thanks to \Cref{reduction_to_unipotent_monodromy}, there exists a finite étale cover $X^\prime \to X$ such that the pull-back $\Sigma^\prime$ of $\Sigma$ to $X^\prime$ consists only of local systems with unipotent local monodromy. Since by \Cref{pullback-nonextendable} $\Sigma^\prime$ is nonextendable if and only if $\Sigma$ is nonextendable, one can assume from the beginning that all local monodromies are unipotent. It follows then from \Cref{nonextendable_Zariski-closure} that $\Sigma(\bar \bQ)$ is nonextendable, and from \Cref{nonextendable_reduction_to_finite} that there exists a finite subset $\Sigma_0 \subset \Sigma(\bar \bQ)$ which is nonextendable. 

Assume that $\Sigma$ is nonextendable and generically large. Since $\Sigma(\bar \bQ)$ is Zariski-dense in $\Sigma$, and both $\Sigma(\bar \bQ)$ and $\Sigma$ are nonextendable, they define the same (analytic) Shafarevich morphism $\Sh_\Sigma \colon X^{\an} \to S$, which by assumption is a holomorphic proper modification. Therefore, there exists $x \in X$ such that $\{x\} = \Sh_{\Sigma(\bar \bQ)}^{-1} (\Sh_{\Sigma(\bar \bQ)}(x))$. By the preceding discussion, there exists a finite subset $\Sigma_0 \subset \Sigma(\bar \bQ)$ which is nonextendable. Consider for every finite subset $\Sigma_1 \subset \Sigma(\bar \bQ)$ containing $\Sigma_0$ the closed algebraic subset $\Sh_{\Sigma_1}^{-1} (\Sh_{\Sigma_1}(x))$ of $X$. By assumption, the intersection of those subsets is equal to $\{x \}$. By noetherianity, there exists a finite subset $\Sigma_1 \subset \Sigma(\bar \bQ)$ containing $\Sigma_0$ such that $\{x\} = \Sh_{\Sigma_1}^{-1} (\Sh_{\Sigma_1}(x))$. It follows that $\Sh_{\Sigma_1}$ is a modification, in other words $\Sigma_1$ is generically large. Moreover, $\Sigma_1$ is nonextendable since it contains the nonextendable $\Sigma_0$.
\end{proof}

\begin{rem}
    For the next proposition, we will need to use the results of \Cref{sect:KZ} for all valuations $v$ on $\bar\bQ$ simultaneously.  We summarize the relevant statements.  For any $\Sigma\subset \cM_B(X)(\bar \bQ)$ consisting of semisimple local systems with quasiunipotent local monodromy, we define the leaves of the Katzarkov--Zuo foliation of $\Sigma$ to be the intersection of the $\Sigma$-leaves for all valuations $v$ on $\bar \bQ$.  As in \Cref{lem KZ1}, there is always a finite subset $\Sigma_0\subset\Sigma$ with the same leaves.  If $\Sigma$ is the set of $\bar\bQ$-points of an absolute $\bar\bQ$-constructible subset of $\cM_B(X)(\bC)$, then by \Cref{thm KZ integrable} the leaves are algebraic, and, as in \Cref{KZ reduction}, there is an algebraic map $f:X\to Y$ integrating the absolute foliation.  Such a map is called a Katzarkov--Zuo reduction. 
\end{rem}

\begin{prop}\label{canonical_current_generically_kahler}
Let $X$ be a connected smooth quasiprojective complex algebraic variety. Let $\Sigma \subset M_B(X)(\bC)$ a weak absolute Hodge subset containing only semisimple local systems with quasiunipotent local monodromy. If $\Sigma$ is generically large, then there exists a finite subset $\Sigma_0 \subset \Sigma(\bar \bQ)$ such that $\omega_{\Sigma_0}$ is a Kähler form on a Zariski-dense open subset of $X$.    
\end{prop}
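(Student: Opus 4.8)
The statement asserts that if $\Sigma$ is a generically large weak absolute Hodge subset of semisimple local systems with quasiunipotent local monodromy, then some finite subset $\Sigma_0\subset\Sigma(\bar\bQ)$ has the property that $\omega_{\Sigma_0}$ is Kähler on a dense Zariski-open of $X$. The plan is to show that the kernel foliation of $\omega_\Sigma$ (the sum over all $V\in\Sigma(\bar\bQ)$ of the canonical currents $\omega_V$) is trivial on a dense Zariski-open, and then to extract finitely many summands that already witness this. First I would observe that $\omega_\Sigma$ is a semipositive $(1,1)$-current with continuous potentials whose kernel has generically constant rank; let $k$ be this generic rank, and let $U\subset X$ be the dense Zariski-open where $\ker\omega_\Sigma$ is a rank-$k$ distribution and everything in sight is smooth. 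I want to show $k=0$.

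The key point is to relate $\ker\omega_\Sigma$ to the Katzarkov--Zuo foliation. On $U$, at the archimedean place the kernel of $\omega_V^\infty=i\operatorname{tr}(\theta_V\wedge\bar\theta_V)$ is the locus where the Higgs field degenerates in the relevant directions, and at each non-archimedean place the kernel of $\omega_V^p$ is (generically) the tangent distribution to the $v$-adic leaves through \Cref{current_controls_exhaustion} and the construction in \Cref{nonarchimedean_canonical_current}. Taking the sum over all $V$ and all places, the generic kernel distribution of $\omega_\Sigma$ is contained in the tangent distribution to the Katzarkov--Zuo leaves of $\Sigma(\bar\bQ)$ (intersecting over all valuations $v$). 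By \Cref{thm KZ integrable} (applied at each $v$, as in the remark preceding \Cref{canonical_current_generically_kahler}) these leaves are algebraically integrable, so there is an algebraic Katzarkov--Zuo reduction $f\colon X\to Y$ whose generic fibers are exactly the maximal algebraic subvarieties tangent to the leaves. If $k>0$, the generic fiber $F$ of $f$ is positive-dimensional, and every local system in $\Sigma(\bar\bQ)$ restricts to $F$ with bounded monodromy; since $\Sigma(\bar\bQ)$ is Zariski dense in $\Sigma$ and the property of having bounded (equivalently, after passing to the Zariski closure, trivial) restriction is closed, every $V\in\Sigma$ restricts trivially to (a finite étale cover of) $F$. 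This contradicts generic largeness of $\Sigma$: a generic fiber $F$ is not contained in the bad locus $W$, yet $f^*\Sigma$ on $F$ contains no nontrivial local system. Hence $k=0$, i.e. $\omega_\Sigma$ is strictly positive, hence Kähler, on $U$.

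Finally I would descend to a finite subset. Since $\omega_\Sigma=\sum_{V\in\Sigma(\bar\bQ)}\omega_V$ is a (possibly infinite, but locally finite after restricting to a compactification) sum of semipositive currents, and the condition ``$\sum_{V\in\Sigma_0}\omega_V$ has rank $\ge \dim X$ at a given point'' is open, a standard noetherianity argument on the Zariski-closed loci $X(\Sigma_0):=\{x: \operatorname{rk}(\sum_{V\in\Sigma_0}\omega_V)_x<\dim X\}\subset X$—which form a decreasing family as $\Sigma_0$ grows and whose intersection over all finite $\Sigma_0$ is nowhere dense—produces a finite $\Sigma_0$ with $X(\Sigma_0)$ nowhere dense. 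Then $\omega_{\Sigma_0}$ is Kähler on the dense Zariski-open $X\setminus X(\Sigma_0)$. I expect the main obstacle to be making precise the inclusion of $\ker\omega_\Sigma$ in the tangent distribution to the Katzarkov--Zuo leaves uniformly over all places: one must check that at the generic point the archimedean Higgs-field kernel together with the non-archimedean leaf distributions assemble to exactly (not just contain, for the contradiction one only needs containment) the KZ-leaf distribution, which requires unwinding the definition of the characteristic polynomial $P_V$ and the length structure on $\tilde X^V$ from \Cref{sect:KZ foliation}, together with \Cref{characteristic_polynomial_nearby_cycle}/\Cref{nearby cycles canonical current} to handle the boundary; the key input that a subvariety tangent to all the leaves has uniformly bounded monodromy is already recorded in the discussion of $\Sigma$-leaves in \Cref{sect:KZ}.
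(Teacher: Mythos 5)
There is a genuine gap at the heart of your argument: the claimed contradiction between a positive-dimensional generic Katzarkov--Zuo leaf and generic largeness of $\Sigma$ is false. The Katzarkov--Zuo foliation only records unboundedness of the monodromy at the \emph{non-archimedean} places, so a subvariety tangent to the leaves merely has the restriction of $\Sigma(\bar\bQ)$ bounded at every finite place --- this is very far from ``trivial''. Your parenthetical ``bounded (equivalently, after passing to the Zariski closure, trivial)'' is unjustified: for instance, a single $\bZ$-local system underlying an integral $\bC$-VHS with infinite monodromy and immersive period map is large, yet its monodromy is bounded at all finite places, so its Katzarkov--Zuo foliation has all of $X$ as the unique leaf ($k=\dim X>0$) and no contradiction arises. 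Consequently you cannot conclude $k=0$, and indeed in general one cannot: the positivity in the leaf directions must come from the archimedean term $\omega^\infty$, which your argument never uses beyond the definition. (A secondary issue: you work with $\omega_\Sigma=\sum_{V\in\Sigma(\bar\bQ)}\omega_V$, an infinite sum which the paper never defines and whose convergence you do not address; the paper only ever sums over finite subsets.)

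The paper's proof fills exactly this gap. One first chooses a finite $\Sigma_0\subset\Sigma(\bar\bQ)$ computing the Katzarkov--Zuo foliation of $\Sigma(\bar\bQ)$ (as in \Cref{lem KZ1} and \Cref{thm KZ integrable}); if its generic leaves are positive-dimensional, one restricts to a desingularized generic fiber $F$ of the Katzarkov--Zuo reduction. By \cite[Theorem 6.8]{brunebarbeshaf} the restriction of $\Sigma$ to $\cM_B(F)$ consists of finitely many points; being $\bR_{>0}$-fixed they underlie $\bC$-VHSs with integral structure whose period map descends to $F$. Enlarging $\Sigma_0$ to a finite $\Sigma_1\subset\Sigma(\bar\bQ)$ by adding $\bar\bQ$-preimages of these restrictions, generic largeness of $\Sigma$ forces this period map on a generic fiber $F$ to have generically discrete fibers, so the kernel of $\omega^\infty_{\Sigma_1}|_F$ is generically zero along $F$, while the kernel of $\omega^f_{\Sigma_1}$ generically coincides with the tangent spaces of the fibers of the Katzarkov--Zuo reduction. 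The two terms together make $\omega_{\Sigma_1}$ nondegenerate, hence K\"ahler, on a dense Zariski-open subset --- no separate noetherian extraction over an infinite sum is needed. To repair your write-up you would need to replace the ``$k>0$ contradicts largeness'' step by this analysis of the restrictions of $\Sigma$ to the Katzarkov--Zuo fibers and their period maps.
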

\begin{proof}
Thanks to the remark above, the Kazarkov-Zuo foliation associated to $\Sigma(\bar \bQ)$ is algebraically integrable and there exists a finite subset $\Sigma_0 \subset \Sigma(\bar \bQ)$ such that the Katzarkov-Zuo foliations of $\Sigma$ and $\Sigma_0$ coincide. 

If the generic leaves of the Katzarkov-Zuo foliation of $\Sigma_0$ are zero-dimensional, then the sum over the finite places of the canonical currents associated to $\Sigma_0$ is already a Kähler form on a Zariski-dense open subset of $X$, hence the result follows. We can therefore assume that the Katzarkov-Zuo foliation of $\Sigma_0$ has only positive dimensional leaves. Let $F$ be a desingularization of a fiber of a  Katzarkov-Zuo reduction of $\Sigma_0$ (see \Cref{KZ reduction}). Thanks to \cite[Theorem 6.8]{brunebarbeshaf}, the restriction of $\Sigma$ to $\cM_B(F)$
consists of finitely many points. In particular these points are defined over $\bar \bQ$. For each of them, take a preimage in $\Sigma$ defined over $\bar \bQ$, and let $\Sigma_1 \subset \Sigma(\bar \bQ)$ be the finite subset obtained by adding all these new elements to $\Sigma_0$. By construction, the restriction of $\Sigma_1$ to $\cM_B(F)$ consists of finitely many points that are fixed by the $\bR_{>0}$-action, hence underlie $\bC$-VHSs. Moreover, the monodromy representation of the sum of these $\bC$-VHSs has an integral structure (see \cite[Proof of Theorem 9.1]{brunebarbeshaf}), hence the corresponding period map descends to $F$.

By construction, the kernel of $\omega^f_{\Sigma_1}$ coincides on a Zariski-dense open subset of $X$ with the kernel of the differential of the Katzarkov-Zuo reduction of $\Sigma_1$, and the kernel of $\omega_{\Sigma_1}^ \infty$ coincides with the intersection of the kernels of the Higgs field of the Higgs bundles associated to the elements of $\Sigma_1$.
On the other hand, the kernel of the restriction of $\omega_{\Sigma_1}^ \infty$ to $F$ coincides with the kernel of the differential of the period map. Since $\Sigma$ is generically large by assumption, it follows that when $F$ is a desingularization of a generic fiber of a Katzarkov-Zuo reduction of $\Sigma$, the kernel of $\omega_{\Sigma_1}$ is zero at a generic point of $F$. Therefore, the kernel of $\omega_{\Sigma_1}$ is in fact zero on a dense Zariski-open subset of $X$.
\end{proof}

\begin{thm}
Let $X$ be a connected normal algebraic space and $\Sigma \subset M_B(X)(\bC)$ be a nonextendable weak absolute Hodge subset containing only semisimple local systems with quasiunipotent local monodromy. Then there exists a finite subset $\Sigma_0 \subset \Sigma(\bar \bQ)$ such that $\tilde{X}^{\Sigma_1}$ is holomorphically convex for every subset $\Sigma_1 \subset \Sigma$ containing $\Sigma_0$. In particular $\tilde{X}^\Sigma$ is holomorphically convex.
\end{thm}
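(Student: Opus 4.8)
The plan is to reduce to the case where $X$ is smooth quasiprojective and then construct a strictly plurisubharmonic exhaustion function on a dense Zariski open of $\tilde X^{\Sigma_0}$, applying the Steinness criterion of \Cref{criterion of Steiness} together with induction on dimension. First I would use \Cref{reduction_to_finitely_many_rational_representations}: since $\Sigma$ is nonextendable and contains only semisimple local systems with quasiunipotent local monodromy, there is a finite subset $\Sigma_0' \subset \Sigma(\bar\bQ)$ that is already nonextendable. Enlarging $\Sigma_0'$ further using \Cref{reduction_to_finitely_many_rational_representations} in the generically large case (applied after passing to the analytic Shafarevich reduction), and combining with the finiteness statement on fibers of the Katzarkov--Zuo reduction, I would obtain a finite $\Sigma_0 \subset \Sigma(\bar\bQ)$ such that: (i) $\Sigma_0$ is nonextendable, (ii) along a Katzarkov--Zuo reduction $f_{KZ}:X\to S$ for $\Sigma_0$ (at all nonarchimedean places simultaneously), the restriction of $\Sigma$ to a general fiber $F$ consists of finitely many points underlying $\bC$-VHS whose period map descends to $F$, and (iii) $\omega_{\Sigma_0}$ is a K\"ahler form on a dense Zariski open $X^\circ \subset X$, by \Cref{canonical_current_generically_kahler}. (The hypothesis that $\Sigma$ is a weak absolute Hodge subset is exactly what feeds \Cref{canonical_current_generically_kahler} and guarantees the existence of enough $\bC$-VHS points.)

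Next I would build the exhaustion. By \Cref{exhaustion_function}, each $V\in\Sigma_0$ gives a continuous plurisubharmonic function $\phi_V$ on $\tilde X^V$ with $dd^c\phi_V \geq \pi^*\omega_V$, which is proper because $V$ is nonextendable. Pulling back all the $\phi_V$ to the common cover $\tilde X^{\Sigma_0}$ and summing, I get a continuous plurisubharmonic \emph{proper} exhaustion $\phi := \sum_{V\in\Sigma_0}\pi^*\phi_V$ on $\tilde X^{\Sigma_0}$ satisfying $dd^c\phi \geq \pi^*\omega_{\Sigma_0}$. By (iii), $dd^c\phi$ is strictly positive on $\pi^{-1}(X^{\circ,\an})$, so $\phi$ is strictly plurisubharmonic there. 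Let $Z \subset X$ be the complement of $X^\circ$; by noetherian induction on dimension I may assume the theorem (or rather \Cref{main result} in this semisimple quasiunipotent case) holds for the normalization $Z'$ of each irreducible component of $Z$ with the restriction of $\Sigma$, so $\tilde{Z'}^{\Sigma|_{Z'}}$ is holomorphically convex; passing to a further finite subset this becomes the relevant statement for $\tilde X^{\Sigma_0}$ restricted over $Z$. One subtlety I should handle is that $\tilde X^{\Sigma_0}|_Z$ is the relevant cover of $Z$ — I would argue via the surjectivity of $\pi_1(Z^{\reg})\to \pi_1(Z)$ and functoriality of the cover construction that the preimage of $Z^{\an}$ in $\tilde X^{\Sigma_0}$ is holomorphically convex (a disjoint union of covers of the $Z'$, each holomorphically convex by induction and hence holomorphically convex as a whole since $\phi$ is an exhaustion on each). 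Then \Cref{criterion of Steiness} applied with this preimage as the closed analytic subspace and $\phi$ as the exhaustion would give that $\tilde X^{\Sigma_0}$ is Stein — but we only want holomorphically convex, so more carefully: I would run a Cartan--Remmert / Stein-factorization-type argument. Since $\phi$ is a proper exhaustion that is strictly plurisubharmonic off the preimage of $Z$, and the preimage of $Z$ is holomorphically convex, the usual argument (as in Mok, via \Cref{criterion of Steiness} after replacing the analytic subspace by its Cartan--Remmert reduction) shows $\tilde X^{\Sigma_0}$ is holomorphically convex.

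To pass from $\tilde X^{\Sigma_0}$ to general $\Sigma_1$ with $\Sigma_0\subseteq\Sigma_1\subseteq\Sigma$: the covering map $q:\tilde X^{\Sigma_1}\to \tilde X^{\Sigma_0}$ is a covering space, and a covering space of a holomorphically convex space need not be holomorphically convex in general, so here I would instead observe that $\Sigma_1$ is still nonextendable (it contains $\Sigma_0$) and rerun the same construction: the functions $\pi^*\phi_V$ for $V\in\Sigma_0\subseteq\Sigma_1$ pull back to $\tilde X^{\Sigma_1}$ and still sum to a proper plurisubharmonic exhaustion, strictly plurisubharmonic off the preimage of $Z$, whose restriction to the preimage of $Z$ is holomorphically convex by the inductive hypothesis applied to $\Sigma_1|_{Z'}$. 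The same Mok-type argument then gives holomorphic convexity of $\tilde X^{\Sigma_1}$, and in particular of $\tilde X^\Sigma$ (take $\Sigma_1 = \Sigma$).

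\textbf{Main obstacle.} The hard part will be the inductive bookkeeping: ensuring that a single finite $\Sigma_0\subset\Sigma(\bar\bQ)$ can be chosen so that simultaneously $\Sigma_0$ is nonextendable, $\omega_{\Sigma_0}$ is generically K\"ahler, and the restriction $\Sigma_0|_{Z'}$ to each of the finitely many boundary strata $Z'$ (and to their own boundary strata, etc.) still has the properties needed to invoke the inductive hypothesis — i.e. that $\Sigma|_{Z'}$ is a nonextendable weak absolute Hodge subset of semisimple quasiunipotent local systems. The nonextendability of $\Sigma|_{Z'}$ is not automatic and is where \Cref{pullback-nonextendable} and the careful choice of $Z$ (e.g. taking $Z$ to also contain the non-proper-fiber locus of the relevant Shafarevich/Katzarkov--Zuo maps) matter; this is precisely the delicate point that \Cref{main result} in the general case is designed to handle, and getting the strata and the finite set to be compatible across all dimensions at once is the crux. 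The other genuinely technical point is verifying that the preimage of $Z$ in $\tilde X^{\Sigma_0}$ is holomorphically convex — this requires knowing that $\tilde X^{\Sigma_0}|_{Z^\an}$ is the cover of $Z$ (a disjoint union over components, after normalization) trivializing $\Sigma_0|_Z$, which follows from surjectivity of fundamental groups but needs the statement phrased for algebraic spaces/stacks.
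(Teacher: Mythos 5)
Your reduction steps and the construction of the exhaustion function do track the paper (finite nonextendable $\Sigma_0\subset\Sigma(\bar\bQ)$ via \Cref{reduction_to_finitely_many_rational_representations}, generic K\"ahlerness of $\omega_{\Sigma_0}$ via \Cref{canonical_current_generically_kahler}, the proper psh exhaustion from \Cref{exhaustion_function}, induction on dimension over the bad locus $Z$), but the decisive final step has a genuine gap. You want to conclude holomorphic convexity of $\tilde X^{\Sigma_0}$ by applying \Cref{criterion of Steiness} with the preimage of $Z$ as the closed analytic subspace, "after replacing the analytic subspace by its Cartan--Remmert reduction." That criterion requires the closed subspace to be \emph{Stein}, whereas the induction only gives that the preimage of $Z$ is \emph{holomorphically convex}; in fact it typically contains positive-dimensional compact subvarieties (those on which $\Sigma$ has finite monodromy), so it is not Stein and $\tilde X^{\Sigma_0}$ itself is not Stein either --- only holomorphically convex is available. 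Replacing the subspace by its Cartan--Remmert reduction is not an operation you can perform inside the ambient space: to contract those compact subvarieties you need a proper holomorphic map defined on all of $\tilde X^{\Sigma_0}$ (or at least to know holomorphic convexity of the total space), and producing such a map is precisely what is being proved; as stated, "the usual argument" does not exist.

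The paper resolves this by descending to the Shafarevich target rather than working on $\tilde X^{\Sigma_0}$: after reducing to $\Sigma$ generically large, it takes the analytic Shafarevich morphism $\sh_\Sigma\colon X^\an\to S$ (passing to a finite \'etale cover so that $\Sigma$ is pulled back from $S$), observes that $\tilde X^{\Sigma_1}\to\tilde S^{\Sigma_1}$ is proper and that $\tilde S^{\Sigma_1}$ is a covering of $\tilde S^{\Sigma_0}$, so everything (for all $\Sigma_1\supset\Sigma_0$ at once) reduces to Steinness of $\tilde S^{\Sigma_0}$ because coverings of Stein spaces are Stein \cite{Stein}. The exhaustion $\phi_V$ (built from the single $\bQ$-local system obtained by summing a Galois-stable $\Sigma_0$ --- a point you should also make explicit, since \Cref{exhaustion_function} is stated over $\bQ$) is constant on the compact fibers by the maximum principle, hence descends to $\tilde S^{\Sigma_0}$, strictly psh off the preimage $\tilde T$ of $T=\sh_\Sigma(Z)$; and $\tilde T$ is shown to be genuinely Stein, using the inductive hypothesis in the form that $\widetilde{\Sh_{\Sigma}(Z'_i)}^{\Sigma_0}$ is Stein together with Narasimhan's theorems that Steinness can be tested on the normalization and descends along finite surjective maps \cite{Narasimhan_normalization_Stein}. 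Only then does \Cref{criterion of Steiness} apply. Incidentally, the obstacle you single out --- nonextendability of $\Sigma|_{Z'}$ --- is not where the difficulty lies: it is immediate from \Cref{pullback-nonextendable} (the maps $Z'_i\to X$ are proper) and \Cref{abs Hodge prop}; the crux is exactly the Stein-versus-holomorphically-convex issue at the special locus, which forces the detour through the Shafarevich morphism.
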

\begin{proof}
By induction on the dimension on $X$. We start with an easy observation. Let $f \colon T \to S$ be a holomorphic fibration between connected normal analytic spaces, so that the induced map $f^\ast \colon \cM_B(S) \to \cM_B(T)$ is a closed immersion. Let $\Sigma_1 \subset \cM_B(S)$ be any subset. Then the induced holomorphic map $\tilde S^{f^\ast \Sigma_1} \to \tilde T^{\Sigma_1}$ is a fibration, therefore $\tilde T^{f^\ast \Sigma_1}$ is holomorphically convex if and only if $\tilde S^{\Sigma_1}$ is holomorphically convex. In particular, thanks to \Cref{pullback-nonextendable}, one can assume in addition that $X$ is smooth quasiprojective. Moreover, thanks to the existence of analytic Shafarevich morphism \cite{brunebarbeshaf} and Proposition \ref{algebraizing_fibration}, one can assume in addition that $\Sigma$ is generically large.

Let $\sh_\Sigma \colon X^{\an} \to S = \Sh_\Sigma(X)$ be the (analytic) Shafarevich morphism associated to $\Sigma$. Up to replacing $X$ with a finite étale cover, one can assume that $\Sigma$ belongs to the image of the algebraic map $\cM_B(S) \to \cM_B(X)$, see \cite[Theorem B]{brunebarbeshaf}.

Thanks to Proposition \ref{reduction_to_finitely_many_rational_representations}, there exists a finite subset $\Sigma_0 \subset \Sigma(\bar \bQ)$ which is generically large and nonextendable. Moreover, thanks to Proposition \ref{canonical_current_generically_kahler}, one can assume that $\omega_{\Sigma_0}$ is a Kähler form outside a strict closed algebraic subvariety $Z$ of $X$.

Let $Z^\prime \to Z$ be the normalization of $Z$ and $\{Z^\prime_i\}_{i \in I}$ the connected components of $Z^\prime$. The restriction of $\Sigma$ to any connected component of $Z^\prime$ is again a nonextendable weak absolute Hodge subset containing only semisimple local systems with quasiunipotent local monodromy. Therefore, by induction, up to enlarging $\Sigma_0$, one can assume that for every $i \in I$ the corresponding étale cover $\widetilde{Z^\prime_i}^{\Sigma_0}$ is holomorphically convex (or equivalently that $\widetilde{\Sh_{\Sigma}(Z^\prime_i)}^{\Sigma_0}$ is Stein). Since by assumption $\Sigma$ is $\bQ$-constructible, one can also assume that $\Sigma_0$ is stable by Galois conjugation.

We will prove that $\tilde X^{\Sigma_1}$ is holomorphically convex for every set $\Sigma_0 \subset \Sigma_1 \subset \Sigma$. Since $\sh_\Sigma$ induces a proper holomorphic map $\tilde X^{\Sigma_1} \to \tilde S^{\Sigma_1}$, it will be sufficient to prove that $\tilde S^{\Sigma_1}$ is Stein. Since $\tilde S^{\Sigma_1}$ is a cover of $\tilde S^{\Sigma_0}$, it is sufficient to prove that $\tilde S^{\Sigma_0}$ is Stein. Let $V$ be the $\bQ$-local system obtained by summing all the elements in $\Sigma_0$, and $\rho\colon \pi_1(X) \to \bGL_r(\bQ)$ its monodromy representation . Let $\pi \colon \tilde X^{V} = \tilde X^{\Sigma_0}\to X$ be the corresponding cover and $\omega_V$ its associated semipositive closed current. Thanks to Proposition \ref{exhaustion_function}, there exists a proper continuous plurisubharmonic function $\phi_V \colon \tilde X^{\Sigma_0} \to \bR_{\geq 0}$ that satisfies $d d ^c \phi_V \geq  \pi^\ast \omega_V $.

By the maximum principle, the psh function $\phi_V$ is constant on the (compact connected) fibers of the holomorphic fibration $\tilde X^{\Sigma_0} \to \tilde S^{\Sigma_0}$. Therefore,
there exists a (necessarily proper continuous psh) map $\psi_V \colon \tilde S^{\Sigma_0} \to \bR_{\geq 0}$ whose composition with $\tilde X^{\Sigma_0} \to \tilde S^{\Sigma_0}$ gives $\phi_V$. Let $T \subset S$ be the image of $Z$ by the proper holomorphic map $\sh_\Sigma$. By construction, $\psi_V$ is strictly psh on the complementary of the preimage $\tilde{T}$ of $T$ in $\tilde S^{\Sigma_0}$.  Let us prove that $\tilde{T}$ is Stein. Thanks to Theorem \ref{criterion of Steiness}, this will imply that $\tilde S^{\Sigma_0}$ is Stein and finish the proof.

Let $T^\prime \to T$ be the normalization of $T$ and $\{T^\prime_j\}_{j \in J}$ the connected components of $T^\prime$. Thanks to \cite{Narasimhan_normalization_Stein}, $\tilde{T}$ is Stein if and only if its normalization is Stein, and the latter is nothing but the base-change of $\tilde S^{\Sigma_0} \to S$ by the composition $T^\prime \to T \to S$. It follows that the normalization of $\tilde{T}$ is a disjoint union of analytic spaces biholomorphic to $\widetilde{T^\prime_j}^{\Sigma_0}$ for some $j \in J$. The surjective proper holomorphic map $Z \to T$ induces a surjective proper holomorphic map $Z^\prime \to T^\prime$. Therefore, for every connected component $T^\prime_j$ of $T^\prime$, there exists $i \in I$ such that $\sh_{\Sigma}$ induces a surjective proper holomorphic map $Z^\prime_i \to T^\prime_j$ and therefore finite surjective holomorphic maps $\Sh_{\Sigma}(Z^\prime_i) \to T^\prime_j$ and $\widetilde{\Sh_{\Sigma}(Z^\prime_i)}^{\Sigma_0} \to \widetilde{T^\prime_j}^{\Sigma_0}$. Since $\widetilde{\Sh_{\Sigma}(Z^\prime_i)}^{\Sigma_0}$ is Stein for every $i \in I$, it follows from \cite[Theorem 2]{Narasimhan_normalization_Stein} that $\widetilde{T^\prime_j}^{\Sigma_0}$ is Stein for every $j \in J$, so that $\widetilde{T^\prime}^{\Sigma_0}$ is Stein.
\end{proof}

\begin{cor}\label{ss qu large case of shaf}
Let $X$ be a connected normal algebraic space and $\Sigma \subset M_B(X)(\bC)$ be a large nonextendable weak absolute Hodge subset containing only semisimple local systems with quasiunipotent local monodromy. Then there exists a finite subset $\Sigma_0 \subset \Sigma(\bar \bQ)$ such that $\tilde{X}^{\Sigma_1}$ is Stein for every set $\Sigma_0 \subset \Sigma_1 \subset \Sigma$. In particular $\tilde{X}^\Sigma$ is Stein. 
\end{cor}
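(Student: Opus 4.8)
The plan is to upgrade the holomorphic convexity furnished by the preceding theorem to Steinness by exploiting largeness. First I would apply that theorem to the large --- hence generically large --- nonextendable weak absolute Hodge subset $\Sigma$ (which consists only of semisimple local systems with quasiunipotent local monodromy), obtaining a finite subset $\Sigma_0 \subset \Sigma(\bar\bQ)$ such that $\tilde{X}^{\Sigma_1}$ is holomorphically convex for every $\Sigma_0 \subseteq \Sigma_1 \subseteq \Sigma$. The key additional step is to enlarge $\Sigma_0$ --- keeping it finite and nonextendable --- so that $\Sigma_0$ is itself \emph{large}. For a finite nonextendable $T$ with $\Sigma_0 \subseteq T \subseteq \Sigma$, the analytic $T$-Shafarevich morphism $\sh_T \colon X \to S_T$ exists by \cite{brunebarbeshaf} and is algebraic by \Cref{thm:ss qp shaf}, so its locus $E_T \subseteq X$ of positive-dimensional fibres is Zariski closed; the sets $E_T$ decrease as $T$ grows, and their intersection is empty because $\Sigma$ large forces $\sh_\Sigma$ to be an isomorphism (as in the proof of \Cref{reduction_to_finitely_many_rational_representations}). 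By noetherianity $E_T = \varnothing$ for some such $T$, and then any finite $\Sigma_0 \supseteq T$ is large; since enlarging $\Sigma_0$ only shrinks the set of intermediate $\Sigma_1$, the holomorphic convexity conclusion is retained.

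Next I would reduce to the single assertion that $\tilde{X}^{\Sigma_0}$ is Stein. Indeed, for $\Sigma_0 \subseteq \Sigma_1$ the space $\tilde{X}^{\Sigma_1}$ is a covering space of $\tilde{X}^{\Sigma_0}$, and a covering space of a Stein space is Stein by \cite{Stein}; taking $\Sigma_1 = \Sigma$ gives the last assertion. To prove that $\tilde{X}^{\Sigma_0}$ is Stein: it is holomorphically convex, so it admits a Cartan--Remmert reduction $r \colon \tilde{X}^{\Sigma_0} \to \tilde{Z}$ onto a normal Stein space, with $r$ proper and surjective with connected fibres. If some fibre of $r$ --- or, more generally, any connected compact analytic subset of $\tilde{X}^{\Sigma_0}$ --- were positive-dimensional, its image in $X^\an$ under the covering map $\pi$ would be a positive-dimensional compact analytic subvariety $\bar C$; the composite $g \colon \tilde{\bar C} \to \bar C \hookrightarrow X$ of the normalization with the inclusion is then a non-constant map from a connected normal variety, and since a compact component of $\pi^{-1}(\bar C) \subseteq \tilde{X}^{\Sigma_0}$ maps onto $\tilde{\bar C}$ by a finite covering, every $V \in \Sigma_0$ pulls back along $g$ to a local system with finite monodromy --- contradicting largeness of $\Sigma_0$. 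Hence $r$ has $0$-dimensional fibres, so it is a proper bijective morphism of normal complex spaces, hence a biholomorphism, and $\tilde{X}^{\Sigma_0} \cong \tilde{Z}$ is Stein.

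The main obstacle is really the second half of the first paragraph: one must know that $\Sigma$ being large is equivalent to the analytic Shafarevich morphism $\sh_\Sigma$ having no positive-dimensional fibre, and that for finite nonextendable $T$ the locus $E_T$ is genuinely Zariski closed so that noetherianity applies. Both are available --- the first because $\sh_\Sigma$ is a fibration contracting exactly the subvarieties on which $\Sigma$ has uniformly finite monodromy, and a fibration with $0$-dimensional fibres is an isomorphism; the second from the algebraicity of $\sh_T$ in \Cref{thm:ss qp shaf} together with upper semicontinuity of fibre dimension --- but this is where the argument needs to be spelled out carefully. Everything else is formal covering-space theory applied to the holomorphic convexity input from the preceding theorem.
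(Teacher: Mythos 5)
Your proposal is correct in substance, but it takes a genuinely different route from the paper's. The corollary is stated without proof because it is meant to be read off from the \emph{proof} of the preceding theorem rather than its statement: when $\Sigma$ is large, the analytic Shafarevich morphism $\sh_\Sigma\colon X^\an\to S$ is a fibration with zero-dimensional connected fibres, hence an isomorphism, so the space $\tilde S^{\Sigma_0}$ whose Steinness is actually established in that proof (via \Cref{exhaustion_function} and \Cref{criterion of Steiness}) is exactly $\tilde X^{\Sigma_0}$, and the line ``$\tilde S^{\Sigma_1}$ is a cover of $\tilde S^{\Sigma_0}$'' already gives Steinness of every $\tilde X^{\Sigma_1}$ with $\Sigma_1\supseteq\Sigma_0$, since covering spaces of Stein spaces are Stein. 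Your argument instead uses only the holomorphic-convexity \emph{statement} and buys the upgrade to Steinness from two new ingredients: the production of a finite \emph{large} subset of $\Sigma(\bar\bQ)$ by noetherianity on the positive-dimensional-fibre loci $E_T$ of the algebraic Shafarevich morphisms (\Cref{thm:ss qp shaf}, \Cref{shaf functo}), and the elimination of compact positive-dimensional subvarieties of $\tilde X^{\Sigma_0}$ via the Cartan--Remmert reduction; the latter half is standard and correct (compact analytic subspaces of $X^\an$ are algebraic, the compact component maps to its image by a finite covering, so each $V\in\Sigma_0$ has finite monodromy there, contradicting largeness). The price of your route is that you prove something slightly stronger than the paper's \Cref{reduction_to_finitely_many_rational_representations}, which only yields a \emph{generically} large finite subset.

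Two points in your first paragraph need the same care the paper exercises there. First, $T$ must range over finite subsets of $\Sigma(\bar\bQ)$ (you wrote $\Sigma_0\subseteq T\subseteq\Sigma$), or the enlarged $\Sigma_0$ leaves $\Sigma(\bar\bQ)$. Second, ``their intersection is empty because $\Sigma$ large forces $\sh_\Sigma$ to be an isomorphism'' is not a direct consequence of largeness of $\Sigma$ applied to the finite subsets: a point $x\in\bigcap_T E_T$ gives, after the fibres $\sh_T^{-1}(\sh_T(x))$ stabilize, a positive-dimensional subvariety on which every element of $\Sigma(\bar\bQ)$ has finite monodromy, and largeness of $\Sigma$ says nothing a priori about its $\bar\bQ$-points. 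The bridge is exactly the one used in the proof of \Cref{reduction_to_finitely_many_rational_representations}: $\Sigma(\bar\bQ)$ is Zariski dense in $\Sigma$ and both are nonextendable, so $\sh_{\Sigma(\bar\bQ)}=\sh_\Sigma$, and the stabilized intersection of the finite-subset fibres through $x$ is the $\sh_{\Sigma(\bar\bQ)}$-fibre through $x$, which is $\{x\}$ when $\Sigma$ is large. Since this is precisely the assertion the paper itself invokes (at a single point) in that proposition, your extension of it to the whole locus $E_T$ is legitimate, but it should be spelled out this way rather than attributed to largeness of $\Sigma$ alone.
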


\section{Proof of Theorem \ref{mainShaf}, Theorem \ref{main result}, and Theorem \ref{main baby}}\label{sect:proofs}
Using the results of the previous section together with the period maps of the mixed variations from \Cref{thm:versal} and \Cref{abs Hodge contain R*}, we construct the Shafarevich morphism in general and prove its algebraicity using \Cref{thm:alg}.  We use the same period maps and the affineness of mixed period domains over the associated graded period domains to deduce Steinness of the trivializing cover of a large nonextendable weak absolute Hodge subset. 

\subsection{Proof of \Cref{mainShaf}}\label{proof:shafalg}Let $X$ be a connected normal algebraic space and $\Sigma\subset \cM_B(X)(\bC)$ a set of local systems of bounded rank.  By \Cref{Shaf pass to finite} and \Cref{extend to nonextend}, we may assume $\Sigma$ is nonextendable.  By \Cref{Shaf reduce to wabs} we may assume $\Sigma$ is a closed weak absolute Hodge subset.  If $\Sigma_j$ are the $\bQ$-irreducible components of $\Sigma$ and $s_j:X\to Y_j$ is a $\Sigma_j$-Shafarevich morphism, then the Stein factorization of $\prod s_j:X\to \prod Y_j$ is a $\Sigma$-Shafarevich morphism.  Thus we may assume $\Sigma$ is $\bQ$-irreducible.  Let $\Sigma_j$ now be the geometric irreducible components of $\Sigma$.  If some $\Sigma_j$ is generically semisimple with quasiunipotent local monodromy, the same is true for every $\Sigma_j$, and we are done by \Cref{thm:ss qp shaf}.  Thus, by \Cref{from quasiunipotent to unipotent} and \Cref{Shaf pass to finite}, we may assume each $\Sigma_j$ contains a point $V_0^j\in\Sigma_j$ underlying a $\bC$-VHS with unipotent local monodromy at which $\Sigma_j$ is formally Hodge.  Let $\cZ\subset\cM_B(X)$ (resp. $\cZ_j\subset\cM_B(X)$) be the reduced closed substack with underlying set of points $\Sigma$ (resp. $\Sigma_j$).  Let $\hat V^j$ be the universal $\hat\cO_{\hat V^j}:=\hat\cO_{\cZ_j,V_0^j}$-AVMHS and set $V_k^j:=\hat V^j/\fm_{\hat\cO_{V_0}}^{k+1}\hat V^j$.  Set $U_k=\bigoplus_j V_k^j$, $\hat U= \bigoplus_j \hat V^j$, and $\hat\cO_{\hat U}=\prod_j\hat\cO_{\hat V^j}$.  By \Cref{nonextendable_Zariski-closure} and \Cref{nonextendable_reduction_to_finite} we may assume $U_k$ is nonextendable on $X$ for $k\gg 0$.

Let $s:X\to Y_0$ be the $U_0$-Shafarevich morphism of \Cref{thm:ss qp shaf}.  By Selberg's lemma and \Cref{Shaf pass to finite} again, we may assume $Y_0$ is inertia-free.  Let $D_k$ be the period domain\footnote{The mixed period domain for the $\bC$-VMHS $U_k$ keeps track of both $F^\bullet$ and $F'^\bullet$, the latter using conjugate coordinates so the map is holomorphic on both factors.  It is perhaps easier to think of $D_k$ as recording the $F^\bullet$ of the associated $\bR$-VMHS $U_k\oplus \overline U_k$.} corresponding to $U_k$.  The period maps of the $U_k$ yield a commutative square
\[\begin{tikzcd}
    \tilde X^{U_k}\ar[d]\ar[r,"\phi_k"]&D_k\ar[d]\\
    \tilde Y_0^{U_0}\ar[r]&D_0
\end{tikzcd}\]
and morphism
\begin{equation}\label{eq:period2}\psi_k:=(s\circ p_k)\times \phi_k:\tilde X^{ U_k}\to Y_0^\an \times  D_{k}\end{equation}
where $p_k:\tilde X^{U_k}\to X^\an$ is the covering map.
\begin{prop}\label{cpt fibers}For $k\gg 0$, the connected components of the fibers of $\psi_k$ are compact.  
\end{prop}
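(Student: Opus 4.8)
\textbf{Proof plan for \Cref{cpt fibers}.}
The plan is to reduce the compactness statement to the already-established compactness of fibers of the Shafarevich map $s: X\to Y_0$ associated to $U_0$, combined with the fact that the ``vertical'' data recorded by $\phi_k$ relative to $\phi_0$ is an affine bundle over the $\phi_0$-image. First I would observe that the fiber of $\psi_k$ through a point $\tilde x$ sits inside the fiber of $\psi_0$ through the image point: indeed, $\psi_k$ factors through $\psi_0$ in the sense that fixing the value of $\psi_k$ fixes the value of $(s\circ p_0)\times\phi_0$ on the corresponding point of $\tilde X^{U_0}$. So a connected component $F_k$ of a fiber of $\psi_k$ maps to a connected component $F_0$ of a fiber of $\psi_0$, and by \Cref{cpt fibers} applied with $k=0$ (that is, \Cref{thm:ss qp shaf}, where $s$ is a Shafarevich morphism and so has fibers with compact connected components after passing to the cover $\tilde X^{U_0}$), $F_0$ is compact. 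It therefore suffices to show the map $F_k\to F_0$ is proper, equivalently has compact fibers, equivalently that over a (compact) fiber-component $F_0$ the additional constraint imposed by $\phi_k$ cuts out a compact set.

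The key step is to analyze the structure of $D_k\to D_0$ over the image of $F_0$. The mixed period domain $D_k$ fibers over $D_0$ (and more generally over $D_{k-1}$) with fibers that are, after choosing a splitting of the weight filtration, affine spaces: the mixed period domain over the graded period domain is a torsor under a unipotent group, or more precisely the fiber is cut out inside an affine space of Hodge filtrations by the Griffiths transversality/horizontality conditions and is itself affine (this is the standard ``mixed period domains are affine bundles over pure period domains'' picture, and is exactly the ``affineness'' alluded to in the introduction's discussion of relative period maps). Thus $\phi_k$ restricted to $p_k^{-1}(p_0(F_0))$ lands, after trivializing, in $F_0$ times an affine space $\bA^N$, and the fiber of $F_k\to F_0$ over a point of $F_0$ is the intersection of the graph of (the relevant component of) $\phi_k$ with the affine fiber. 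Because $F_0$ is compact and the monodromy along $F_0$ of the local system $U_k/U_0$ (i.e. the relative monodromy, whose quasiunipotence is guaranteed by the admissibility hypothesis built into $\bC$-AVMHS, see \Cref{defn:avmhs}) acts on this affine fiber through an arithmetic-type lattice acting properly discontinuously on a bounded neighborhood, the only way the period map can fail to be proper is if a nilpotent orbit degenerates — but the nonextendability of $U_k$ for $k\gg 0$ together with \Cref{harmonic_non_converge} (or the Griffiths-type properness criterion \cite{Giii}, exactly as used in \Cref{ss qu large case of shaf} and in \Cref{main result}'s semisimple case) rules this out. So $F_k\to F_0$ is proper, and $F_k$ is compact.

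Concretely I would carry this out in the following order: (i) show $\psi_k$ factors through $\psi_0$ on the level of connected fiber components, using that $U_0$ is a direct summand of $U_k$ so that $\tilde X^{U_k}\to\tilde X^{U_0}$ is defined and $\phi_k$ covers $\phi_0$; (ii) invoke \Cref{thm:ss qp shaf} to get compactness of the components $F_0$ of fibers of $\psi_0$ (after the finite \'etale modification making $Y_0$ inertia-free, already arranged in the construction); (iii) describe $D_k\to D_0$ as an iterated affine bundle and identify the fiber of $F_k\to F_0$ with an intersection of the relative period map graph with an affine fiber; (iv) bound this intersection: over the compact set $F_0$ the relative monodromy is quasiunipotent (admissibility) and the relevant classifying space carries a proper discontinuous action, so properness of $F_k\to F_0$ follows from the absence of boundary degenerations, which is exactly the content of the nonextendability of $U_k$ combined with \Cref{harmonic_non_converge} / the argument of \Cref{exhaustion_function}; (v) conclude $F_k$ is compact, which is the claim.

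The main obstacle I expect is step (iv): making precise that over the compact base $F_0$ the relative period map has compact image, i.e. that no sequence escapes to the boundary of the (partial) compactification transverse to $F_0$. This is where nonextendability of $U_k$ for $k\gg 0$ must genuinely be used — one needs that any degeneration of $U_k$ that is ``new'' relative to $U_0$ would force a nontrivial local monodromy that $U_k$ could not have if it extended, contradicting nonextendability — and the cleanest way to package this is to reuse the properness of the pluriharmonic-type exhaustion from \Cref{exhaustion_function} (in the mixed rather than pure setting, where the relevant ``distance to the boundary'' is controlled by the nilpotent orbit theorem applied to $U_k$, equivalently the regularity of the Hodge filtration). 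An alternative, perhaps more economical route for step (iv) is to apply the Griffiths properness criterion \cite[Proposition 9.11]{Giii} (cited above as \Cref{harmonic_non_converge}) directly to the relative period map on a Lefschetz curve, and then lift the conclusion to $X$ as in the proof of \Cref{baby qp image}; I would try this first since it avoids re-deriving the mixed nilpotent orbit asymptotics.
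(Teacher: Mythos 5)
Your opening reduction fails, and it fails at the crux of the matter. You take for granted that the connected components of the fibers of $\psi_0$ are compact because ``$s$ is a Shafarevich morphism''; but in this paper a $\Sigma$-Shafarevich morphism is proper precisely when $\Sigma$ is nonextendable (\Cref{mainShaf}), and in the situation of \Cref{cpt fibers} only $U_k$ for $k\gg0$ is known to be nonextendable --- $U_0=\bigoplus_j V_0^j$, the sum of the distinguished $\bC$-VHS points, may perfectly well extend. For instance, take $X$ a smooth curve with two punctures and $\Sigma$ the family of rank-two unipotent representations (extensions of the trivial local system by itself): $\Sigma$ is nonextendable because some extension classes have nonzero residue at a puncture, yet the only $\bC$-VHS point is the trivial local system, so $U_0$ is trivial, $Y_0$ is a point, and the fiber of $\psi_0$ is all of $\tilde X^{U_0}=X^\an$, which is noncompact. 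So there is no compact $F_0$ to fiber over, and the properness must be extracted from the mixed family $U_k$ itself. This is exactly why the paper argues locally over contractible balls $B\subset Y_0^\an$ (with $X_B=s^{-1}(B)$) and proves that $X_B\to\Gamma_{B,k}\backslash(B\times D_k)$ is proper for $k\gg0$ via a definable valuative criterion on punctured disks: a definable disk $v:\bD^*\to X_B$ whose image extends has $v^*i_B^*U_k$ with no monodromy, hence extends by nonextendability of $U_k$. Your alternative of applying the Griffiths criterion (\Cref{harmonic_non_converge}) on a Lefschetz curve does not substitute for this, since that criterion concerns a fixed semisimple/pure object with infinite local monodromy, whereas here the obstruction to properness lives in the unipotent deformation directions.

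The second genuine gap is your step (iv), where you assert that the relative monodromy over a Shafarevich fiber acts ``through an arithmetic-type lattice acting properly discontinuously.'' The group $\Gamma_{B,k}=\img(\rho_{i_B^*U_k})$ consists of unipotent matrices whose entries are arbitrary complex extension data; nothing formal makes it discrete in $\exp(W_{-1}\End(U_{k,x}))$, and without discreteness one can neither invoke the properness of that group action on $D_k$ nor descend any properness statement to the quotient $\Gamma_{B,k}\backslash(B\times D_k)$. Producing this discreteness is the content of Steps 1--2 of the paper's proof and is genuinely arithmetic: since the miniversal deformation ring of the trivialized family over $X_B$ is defined over $\bQ$, for infinitely many $k$ there is a subquotient $E_k$ of $i_B^*U_k$ carrying an integral structure, and a stabilization lemma for saturated subgroups of finitely generated unipotent groups shows $\ker\rho_{E_k,x}=\ker\rho_{i_B^*U_k,x}$ for $k\gg0$, so that $\Gamma_{B,k}$ injects with discrete image into $\GL(E_{k,x})$. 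This input, together with the local-over-$Y_0$ formulation, is what your outline is missing; as written, both the base case (compactness of $F_0$) and the fiberwise properness argument would fail.
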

\begin{proof}
Let $y\in Y$ and let $y\in B\subset Y_0^\an$ be a contractible neighborhood equipped with a lift to $\tilde Y_0^{U_0}$.  Let $X_B:=s^{-1}(B)$ with embedding $i_B:X_B\to X$ and $f_B:X_B\to B$ the restriction of $f$.

\vskip1em
    \noindent\emph{Step 1.}  Fix a basepoint $x\in X_B$.  For infinitely many $k\in\bN$, $i_B^*U_k$ has a subquotient $E_k$ which has an integral structure.  Moreover, for $k\gg 0$, we have
\[\ker(\rho_{E_k,x})=\ker(\rho_{i_B^*U_k,x})=\ker(\rho_{i_B^*\hat U,x})\]
where for a local system $L$ on $X_B$, we denote by $\rho_{L,x}$ the monodromy representation of $L$ at $x$.

\begin{proof}
    The stack $\cM_B(X_B)$ is an algebraic stack defined over $\bQ$, and there is a miniversal family $(\hat\cO_{\hat W},\hat W)$ for the Zariski closure of $i_B^*\cZ$ at the trivial local system $W_0=i_B^*V_0^j$ which is defined over $\bQ$.  Moreover, each finite order quotient $W_\ell$ has a $\bZ$-structure since $W_0$ does.  By miniversality, there is an injective morphism $\hat\cO_{\hat W}\to \hat\cO_{\hat U}$ such that $i_B^*\hat U \cong \hat\cO_{\hat U}\otimes_{\hat\cO_{\hat W}}\hat W$.  For every $\ell$ there is a $k$ such that $\hat\cO_{\hat W}\cap \fm_{\hat\cO_{\hat U}}^{k+1}\subset\fm_{\hat\cO_{\hat W}}^{\ell+1}$ (by Krull's theorem and the finite-dimensionality of $\hat\cO_{\hat W}/\fm_{\hat\cO_{\hat W}}^{\ell+1}$), so we have a diagram
    \[\begin{tikzcd}
                W_\ell&\ar[l,two heads]\hat W/\Big(\hat\cO_{\hat W}\cap\fm_{\hat\cO_{\hat U}}^{k+1}\Big)\hat W\ar[r,hook]&i_B^*U_k.
    \end{tikzcd}\]
    We take $E_k$ to be $W_\ell$, for $\ell$ ranging over $\bN$.  The remaining claim then follows from:
    \begin{lem}\label{ker stab}  For $k,\ell\gg 0$, the subgroups $\ker\rho_{i^*U_k,x},\ker\rho_{W_\ell,x}\subset\pi_1(X_B,x)$ stabilize to the same subgroup.
\end{lem}
\begin{proof}
    We have
    \[\bigcap_{k\geq 0}\ker\rho_{i^*U_k,x}=\ker\rho_{i^*\hat U,x}=\ker\rho_{\hat W,x}=\bigcap_{\ell\geq 0}\ker\rho_{W_\ell,x}\]
    where the middle inequality comes from the fact that $\hat W$ and $i^* \hat U$ have the same Zariski closure in $\cM_B(X_B)$, by construction.  If $W_\mathrm{gen}\in i^*\Sigma$ is very general, then we also have $\ker\rho_{W_\mathrm{gen},x}=\ker\rho_{\hat W,x}$.  

    Since each $W_{\ell}$ has trivial semisimplification, $\img \rho_{W_\ell,x}$ is torsion-free for all $\ell$.  It follows that $\ker\rho_{W_\ell,x}$ is saturated in $\pi_1(X_B,x)$, meaning that the only subgroup of $\pi_1(X_B,x)$ containing $\ker\rho_{W_\ell,x}$ as a finite-index subgroup is $\ker\rho_{W_\ell,x}$ itself.  It further follows that $\rho_{W_{\mathrm{gen}},x}(\ker\rho_{W_\ell,x})$ is saturated in $\img\rho_{W_{\mathrm{gen}},x}$, and the following claim then implies that $\ker\rho_{W_\ell,x}$ stabilizes.
\begin{claim}Let $\mathbf{P}\subset\bGL_r$ be a unipotent complex algebraic subgroup, $\Gamma\subset \mathbf{P}(\bC)$ a finitely generated subgroup.  Then any decreasing sequence of saturated subgroups of $\Gamma$ stabilizes.
\end{claim}
The same argument applied to $\ker\rho_{i^*\hat U_k,x}$ completes the proof.
\end{proof}
 
\end{proof}

\vskip1em
    \noindent\emph{Step 2.}  The monodromy $\Gamma_{B,k}:=\img(\rho_{i_B^*U_k})$ of $i_B^*U_k$ acts properly on $D_k$ for $k\gg0$.  

        \begin{proof}
    There is a natural action of $\exp(W_{-1}\End(U_{k,x}))$ on $D_{k}$, and this action is proper\footnote{Again, $D_k$ records both $F^\bullet$ and $F'^\bullet$; the stabilizers of the $\exp(W_{-1}\End(V_{U,x}))$ action are in fact trivial since $W_{-1}\cap F^0\cap F'^0=0$.} \cite[Proposition 3.7]{bbkt}.  Because $\gr^Wi_B^*U_{k}$ is pulled back from $B$ by construction, $\Gamma_{B,k}$ is contained in $\exp(W_{-1}\End(U_{k,x}))$.  Let $E_k$ be as in the previous step, and $L_k\subset i_B^*U_{k}$ the sub-local system of which $E_k$ is a quotient.  Consider the subspace $H_k\subset W_{-1}\End(U_{k,x})$ of endomorphisms which preserve the inclusion $L_{k,x}\to U_{k,x}$, and for which the restriction to $L_{k,x}$ preserves the quotient $L_{k,x}\to E_{k,x}$.  There is then a natural map $H_k\to\End(E_{k,x})$.  For $k$ as in the previous step, the monodromy $\Gamma_{B,k}$ is contained in $\exp(H_k)$, is isomorphic to its image in $\GL(E_{k,x})$, and has discrete image there because it preserves an integral structure.  It follows that $\Gamma_{B,k}\subset \exp(W_{-1}\End(U_{k,x}))$ is discrete, and therefore by the above the action is proper.
    \end{proof}

    \vskip1em
    \noindent\emph{Step 3.}  The map $X_B\to \Gamma_{B,k}\backslash(B\times D_k)$ is proper for $k\gg0$.
    \begin{proof}
First, observe that the proof of \cite[Lemma 2.2]{bbt2} is easily adapted to show the following valuative criterion:
\begin{lem}
    Let $\cX,\cY\to\cS$ be morphisms of definable analytic spaces, and assume $\cX\to\cS$ factorizes as $\cX\to\bar\cX\to\cS$ for a definable analytic space $\bar X$ where $\cX\to \bar\cX$  is the embedding of a dense definable Zariski open subset and $\bar\cX\to\cS$ is proper.  Then an $\cS$-morphism $f:\cX\to\cY$ is proper if and only if any definable analytic map $v:\bD^*\to \cX$ extends as soon as $f\circ v:\bD^*\to \cY$ does. 
\end{lem}
Note that $X_B\to B$ satisfies the relative compactifiability condition of the lemma since $X\to Y_0$ is compactifiable.  By \cite{BMull} and the admissibility condition, the period map $\phi_k:\tilde X^{U_k}\to D_k$
is $p_k$-definable.  The properness of $f_B:X_B\to \Gamma_{B,k}\backslash(B\times D_k)$ can be checked analytically locally on the target.  Letting $\mathbb{B}\subset \Gamma_{B,k}\backslash(B\times D_k)$ be a small ball with its natural structure as a $\bR_\an$-definable analytic variety, we may apply the lemma to the map $f_B^{-1}(\mathbb{B})\to \mathbb{B}$.  For any definable analytic $v:\bD^*\to f_B^{-1}(\mathbb{B})$ whose composition $\bD^*\to \mathbb{B}$ extends, $v^*i_B^*U_k$ has no monodromy, so by the nonextendability of $U_k$ such a disk must extend, and the claim follows.        
    \end{proof}

\vskip1em
    \noindent\emph{Step 4.}  End of proof.
    
The preimage of $B\times D_k$ in $\tilde X^{U_k}$ is a disjoint union of translates of $\widetilde{X_B}^{i^*_BU_k}$, and by the previous step the map $\widetilde{X_B}^{i^*_BU_k}\to B\times D_k$ is proper for $k\gg0$.  Thus, every connected component of $\psi_k$ is compact for $k\gg0$.
\end{proof}
It follows that for $k\gg 0$ there is a Stein factorization
\[\begin{tikzcd}
\tilde X^{U_k}\ar[rd," \tilde \sigma_k",swap]\ar[rr,"\psi_k"]&&Y_0^\an\times D_{k}\\
&\tilde \cY_k\ar[ur,"\chi_k",swap]&
\end{tikzcd}\]
which descends to a proper morphism $\sigma_k:X^\an\to\cY_k$ to an analytic Deligne--Mumford stack.  
\begin{prop}In the above setup, for $k\gg 0$ there is an algebraic morphism $s_k: X\to Y_k$, unique up to isomorphism (as a map with fixed source), which analytifies to $\sigma_k:X^\an\to \cY_k$.  For $k\gg0$, this map stabilizes to $s_\infty:X\to Y_\infty$, and this is a $\Sigma$-Shafarevich morphism.  
\end{prop}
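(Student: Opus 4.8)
The plan is to combine the definable algebraicity machinery of \Cref{thm:alg} with the compactness input just established in \Cref{cpt fibers} to get the algebraicity of each $\sigma_k$, then to show the tower $\{Y_k\}$ stabilizes, and finally to verify the three axioms of \Cref{defn:shaf} for the stable limit. First I would set up \Cref{setup:twisted map} for the map $\psi_k$ of \eqref{eq:period2}: take $\scrM_k = Y_0\times D_k$ (a definable analytic space, using the definable structure on the period domain $D_k$ from \cite{bkt}, \cite{bbkt}), $\Gamma_k = \img\rho_{U_k}$, $\pi:\tilde X^{U_k}\to X^\an$ the covering map, and $\phi = \psi_k$; the $\pi$-definability is exactly the $p_k$-definability of the period map used in Step 3 of the proof of \Cref{cpt fibers}, which holds by \cite{BMull} and admissibility. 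By \Cref{cpt fibers} the fibers of $\psi_k$ have compact connected components for $k\gg 0$, so \Cref{thm:alg} applies (after passing to the seminormalization of $X$, which is harmless since we may pass to finite \'etale covers throughout by \Cref{Shaf pass to finite}), yielding an algebraic $s_k:X\to Y_k$ analytifying $\sigma_k:X^\an\to\cY_k$, unique up to isomorphism with fixed source.

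Next I would prove the tower stabilizes. The natural surjections $\hat\cO_{\hat U}/\fm^{k+2}\twoheadrightarrow \hat\cO_{\hat U}/\fm^{k+1}$ give morphisms $U_{k+1}\to U_k$ of $\bC$-AVMHS, hence a compatible system of period maps and a commutative diagram relating $\psi_{k+1}$ and $\psi_k$; by the universal property of Stein factorization this descends to factorizations $\cY_{k+1}\to\cY_k$ and, by the uniqueness in \Cref{thm:alg}, to algebraic morphisms $Y_{k+1}\to Y_k$ compatible with the $s_k$. The set-theoretic equivalence relation defining $\cY_k$ on $X^\an$ is: $x\sim x'$ iff $x,x'$ lie in a connected component of a fiber of $\psi_k$, i.e. iff $s(x)=s(x')$ and the germs of the $U_k$-period maps agree; as $k$ grows this equivalence relation decreases. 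To see it stabilizes I would argue as in Step 1 of \Cref{cpt fibers}: on a contractible $B\subset Y_0^\an$ the fibers of $X_B\to \Gamma_{B,k}\backslash(B\times D_k)$ are, for $k\gg 0$, exactly the fibers of $X_B\to B\times (\text{image of the }\hat U\text{-period map})$, which are the maximal connected subvarieties on which $i_B^*\hat U$—equivalently all of $i_B^*\Sigma$—has finite monodromy (using \Cref{ker stab}). By noetherianity of $X$ this equivalence relation on $X$ stabilizes to some $s_\infty:X\to Y_\infty$, which by construction contracts precisely the connected subvarieties on which every $V\in\Sigma$ has uniformly finite monodromy.

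For the final verification that $s_\infty$ is a $\Sigma$-Shafarevich morphism: axiom (3) of \Cref{defn:shaf} is immediate from the description of the contracted locus. For axiom (1), $s_\infty$ is dominant by construction (it is surjective onto its image which we take as $Y_\infty$), and $K(Y_\infty)$ is algebraically closed in $K(X)$ because $\sigma_\infty$ is a fibration, so $\cO_{Y_\infty}\to (s_\infty)_*\cO_X$ is an isomorphism generically. For axiom (2): we must exhibit a large nonextendable $\Sigma_{Y_\infty}$ on $Y_\infty$ pulling back to $\Sigma$. Here I would use that $U_k$ is nonextendable on $X$ for $k\gg0$ (arranged via \Cref{nonextendable_reduction_to_finite}) together with the fact that, since $s_\infty$ has connected fibers on which $\Sigma$ has finite monodromy, after a finite \'etale cover $X'\to X$ the local systems in $\Sigma$ (and the $U_k$) descend to $Y_\infty'$ by the argument of \cite[Theorem B]{brunebarbeshaf}; then $\Sigma_{Y_\infty'}$ is nonextendable by \Cref{pullback-nonextendable} and large because $s_\infty$ is the Shafarevich morphism, and one takes the stack quotient back down to $Y_\infty$ via \Cref{Shaf pass to finite}; faithfulness of the inertia action follows from the construction of the \'etale cover as in \Cref{algebraic space on cover}. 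The properness clause of \Cref{mainShaf} follows since $\sigma_\infty:X^\an\to\cY_\infty$ is proper by construction exactly when $\Sigma$ (hence the $U_k$) is nonextendable, which we reduced to at the outset; conversely if $s_\infty$ is proper the period-map properness forces nonextendability. Finally the quasiprojectivity of the coarse space when $\Sigma$ is semisimple is the content of \Cref{thm:ss qp shaf}, which handles that case directly without the mixed construction. The main obstacle I anticipate is the stabilization of the tower $\{Y_k\}$—specifically, making precise that the ``germ of the $U_k$-period map'' equivalence relation is eventually independent of $k$ uniformly over $X$, which requires the noetherian/constructibility argument built on \Cref{ker stab} and the properness statements in Steps 2–3 of \Cref{cpt fibers} rather than any genuinely new input.
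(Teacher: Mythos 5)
Your proposal is correct and follows essentially the same route as the paper: algebraicity of each $\sigma_k$ via the $p_k$-definability of the period map together with \Cref{thm:alg} and \Cref{cpt fibers}, stabilization by noetherianity, and verification of the Shafarevich axioms using nonextendability of the $U_k$, the fact that $\hat U$ (hence $\Sigma$, via Zariski density of $\Sigma_j$ in the reduced $\cZ_j$) is pulled back from $Y_\infty$, and the rigidity of a $\bC$-AVMHS with trivial monodromy. The only differences are organizational — the paper checks axiom (2) directly on $Y_\infty$ rather than descending after a finite étale cover, and it spells out the axiom (3) argument ($g^*\Sigma$ trivial $\Rightarrow g^*\hat U$ trivial $\Rightarrow g$ is contracted by $s_0$ and by each $\phi_k$) instead of quoting the contracted-locus description, while your parenthetical ``large because $s_\infty$ is the Shafarevich morphism'' should be replaced by that same fiber description to avoid circularity.
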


\begin{proof}
    As observed above, the period map of a $\bC$-AVMHS is $p_k$-definable, so by \Cref{thm:alg} $\sigma_k$ is algebraic for $k\gg0$.  By noetherianity, the map must stabilize.  By construction, $\hat U$ is pulled back from $Y_\infty$, as is $\Sigma$, since the image of $s_\infty^*:\cM_B(Y_\infty)\to\cM_B(X)$ contains the union of the Zariski closures of each $\hat V^j$, which is $\Sigma$.  Moreover, $\hat U$ is nonextendable on $X$ by construction, hence also on $Y$ by \Cref{pullback-nonextendable}, and the inertia of $Y$ is trivial by construction.  Finally, for any algebraic morphism $g:Z\to X$ with connected source for which $g^*\Sigma$ is trivial, $g^*\hat U$ is trivial.  This means $g$ must factor through a fiber of $s_0$ (since $g^*V_0$ is trivial), but also that $g$ lifts to $\tilde X^{U_k}$ and factors through a fiber of the period map $\phi_k$ for each $k$, since a $\bC$-AVMHS with no monodromy on an algebraic variety must be trivial.  But then $g$ factors through a fiber of $\psi_k$ for each $k$.  Thus, $g$ factors through a fiber of $s_\infty$.  
\end{proof}

This completes the proof of \Cref{mainShaf}.\qed
\subsection{Proof of \Cref{main result} and \Cref{main baby}}  Given \Cref{MB is weak abs hodge} and \Cref{abs Hodge implies weak abs Hodge}, it suffices to prove the following:
\begin{prop}Let $X$ be a connected normal algebraic space and $\Sigma\subset\cM_B(X)(\bC)$ be a nonextendable weak absolute Hodge subset.  Then $\tilde X^\Sigma$ is holomorphically convex. 
\end{prop}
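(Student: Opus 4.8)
The strategy is to reduce to the quasiunipotent semisimple case already treated in \Cref{ss qu large case of shaf} (more precisely its holomorphically convex analog, the theorem preceding it) by using the Shafarevich morphism constructed in \Cref{mainShaf} together with the pro-variations of mixed Hodge structures on the miniversal families provided by \Cref{thm:versal}. First I would observe that we may freely pass to a finite \'etale cover of $X$ (this only changes $\tilde X^\Sigma$ and the holomorphic convexity question by a finite \'etale map, hence is harmless), and that we may replace $X$ by a resolution and then a smooth quasiprojective variety, using that $\tilde X^{f^*\Sigma}\to\tilde X^\Sigma$ is a fibration when $f$ is a fibration (as in the proof of \Cref{ss qu large case of shaf}) and that nonextendability is preserved under the relevant operations (\Cref{pullback-nonextendable}). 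We may also replace $\Sigma$ by its Zariski closure and work $\bQ$-irreducible component by $\bQ$-irreducible component, since $\tilde X^\Sigma=\tilde X^{\Sigma_1}\times_{X^\an}\cdots$ is a product over the components and a finite fiber product of holomorphically convex spaces over $X^\an$ is holomorphically convex.

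\textbf{Reduction via the Shafarevich morphism.} Let $s_\infty:X\to Y_\infty$ be the $\Sigma$-Shafarevich morphism from \Cref{mainShaf}. By construction (see \Cref{proof:shafalg}), $\Sigma$ is the pull-back of a large nonextendable set $\Sigma_{Y_\infty}$ on $Y_\infty$, and after passing to a finite \'etale cover $Y_\infty$ is an algebraic space. Since $s_\infty$ is proper, the induced map $\tilde X^\Sigma\to\tilde Y_\infty^{\Sigma_{Y_\infty}}$ is proper, so it suffices to show $\tilde Y_\infty^{\Sigma_{Y_\infty}}$ is holomorphically convex; equivalently we may assume $\Sigma$ is \emph{large} and nonextendable. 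Now, if every geometric irreducible component of $\Sigma^\Zar$ is generically semisimple with quasiunipotent local monodromy, then after passing to a finite \'etale cover and replacing $\Sigma$ by its semisimplification (which does not change $\tilde X^\Sigma$), $\Sigma$ becomes a large nonextendable weak absolute Hodge subset consisting of semisimple local systems with quasiunipotent local monodromy, and we conclude by the theorem preceding \Cref{ss qu large case of shaf}. In the general case, after a finite \'etale cover (using \Cref{from quasiunipotent to unipotent}) each geometric component $\Sigma_j$ of $\Sigma^\Zar$ which is not generically semisimple with quasiunipotent local monodromy carries a point $V_0^j$ underlying a $\bC$-VHS with unipotent local monodromy at which the reduced substack $\cZ_j$ with support $\Sigma_j$ is formally Hodge; let $\hat V^j$ be the universal $\hat\cO_{\cZ_j,V_0^j}$-AVMHS from \Cref{thm:versal} and set $U_k:=\bigoplus_j V_k^j$ as in \Cref{proof:shafalg}, together with the finitely many quasiunipotent semisimple representations from the weak absolute Hodge structure to handle the remaining components.

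\textbf{The mixed period maps and the exhaustion function.} As in \Cref{proof:shafalg}, the key point is that for $k\gg 0$ the relative mixed period map $\psi_k:\tilde X^{U_k}\to Y_0^\an\times D_k$ of \eqref{eq:period2} has compact connected fibers (\Cref{cpt fibers}), where $s_0:X\to Y_0$ is the $U_0$-Shafarevich morphism treated by \Cref{thm:ss qp shaf}. Since $Y_0$ is holomorphically convex after applying the quasiunipotent-semisimple theorem, and the relative mixed period domain $D_k\to D_0$ is an affine bundle over the graded period domain (whose relevant sub-locus is Stein by the archimedean theory: the pure period domain carries a strictly plurisubharmonic exhaustion via the Griffiths bundle, and affine-bundle directions contribute further strict plurisubharmonicity), the map $\tilde X^{U_k}\to(\text{a holomorphically convex space})$ built from $s_0\circ p_k$ and $\phi_k$ is proper; composing with the proper map $\tilde X^\Sigma\to\tilde X^{U_k}$ (which is a fibration onto its image, as $\Sigma$ generates the relevant covering data) shows $\tilde X^\Sigma$ maps properly to a Stein space, i.e. is holomorphically convex. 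Concretely, one pulls back a strictly plurisubharmonic exhaustion on the Stein Cartan--Remmert reduction of $\tilde Y_0^{U_0}$ and a fiberwise exhaustion on the affine directions of $D_k$, together with the pluriharmonic exhaustion of \Cref{exhaustion_function} for the semisimple quasiunipotent part, and applies \Cref{criterion of Steiness} with $Z$ the locus where strictness fails, handled by induction on $\dim X$ exactly as in the proof preceding \Cref{ss qu large case of shaf}.

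\textbf{Main obstacle.} The technical heart is \Cref{cpt fibers}, i.e. properness of the relative mixed period map, which requires the nonextendability of $U_k$ together with the definability of the mixed period map (from \cite{BMull} and the admissibility of $\bC$-AVMHS) and the properness statement for mixed period maps over a base (the valuative criterion argument of Step 3 of \Cref{cpt fibers}, using that $X\to Y_0$ is compactifiable). Once this properness is in hand the passage from ``proper map to a Stein space in the semisimple quasiunipotent case'' to ``proper map to a Stein space in general'' is the same bookkeeping as in \Cref{proof:shafalg} and \Cref{ss qu large case of shaf}: stratify by the locus $Z$ where the combined exhaustion fails to be strictly plurisubharmonic (the Zariski closure of the non-immersive locus of $\psi_k$ together with the pull-back of the boundary of $Y_0$), apply the inductive hypothesis on $Z$ and its normalization, and invoke \Cref{criterion of Steiness} together with Narasimhan's and Siu's theorems to conclude that the Cartan--Remmert reduction of $\tilde X^\Sigma$ is Stein, so $\tilde X^\Sigma$ is holomorphically convex. \qed
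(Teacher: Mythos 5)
Your overall architecture tracks the paper's (Shafarevich reduction to the large case, then the miniversal $\bC$-AVMHS families $U_k$ and the properness statement of \Cref{cpt fibers} as the technical heart), but two of your reduction/assembly steps have genuine gaps. First, the opening component-by-component reduction does not work in the order you propose: nonextendability of $\Sigma$ does \emph{not} pass to its individual $\bQ$-irreducible components $\Sigma_j$ (a single component can consist entirely of extendable local systems), so you cannot assert that each $\tilde X^{\Sigma_j}$ is holomorphically convex and then take fiber products over $X^\an$ --- indeed for an extendable component the conclusion genuinely fails, which is the whole point of the nonextendability hypothesis. The correct fix, and what the paper does, is to keep the components together: take the $\Sigma_j$-Shafarevich morphisms $s_j:X\to Y_j$ (which exist for arbitrary bounded-rank $\Sigma_j$ by \Cref{mainShaf}, with $\Sigma_{Y_j}$ large and nonextendable on $Y_j$), observe that the product map $X\to\prod_j Y_j$ is proper because the \emph{union} is nonextendable, and then use that each $\tilde Y_j^{\Sigma_{Y_j}}$ is \emph{Stein} in the large case, so that $\tilde X^\Sigma$, being a component of a space proper over the Stein space $\prod_j\tilde Y_j^{\Sigma_{Y_j}}$, is holomorphically convex. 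In particular the reduction needs Steinness (not just holomorphic convexity) on the targets, which is why the statement one actually proves in the large case is Steinness of $\tilde X^\Sigma$.

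Second, your concluding step is broken at two points. The map $\tilde X^\Sigma\to\tilde X^{U_k}$ is an (in general infinite) covering map, not proper, so ``composing with the proper map $\tilde X^\Sigma\to\tilde X^{U_k}$'' does not produce a proper map to a Stein space; and the pure period domain $D_0$ carries no strictly plurisubharmonic exhaustion in general (it is not Stein unless of Hermitian type), so ``affine bundle over the graded period domain'' does not by itself give a Stein target. The Steinness in the paper comes from a different source: using condition (3) of the weak absolute Hodge definition one has a coarse subset $T\subset\Sigma^{\qu,\ss}$ containing the chosen $\bC$-VHS points, and one takes the $T$-Shafarevich morphism $s:X\to Y$, so that $\tilde Y^{T}$ is Stein by \Cref{ss qu large case of shaf}; the relative mixed period map is then $\psi_k:\tilde X^\Sigma\to\tilde Y^{T}\times_{D_0}D_k$, whose target is Stein because it is relatively affine over $\tilde Y^{T}$. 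One does not need (and does not have) a global proper map from $\tilde X^\Sigma$: instead, largeness makes the fibers discrete, the argument of \Cref{cpt fibers} shows $\widetilde{X_B}^{i_B^*U_k}\to B\times_{D_0}D_k$ is finite over contractible Stein opens $B\subset Y^\an$, the preimages of $B\times_{D_0}D_k$ in $\tilde X^\Sigma$ are disjoint unions of covering spaces of these (hence Stein, covers of Stein being Stein), and one concludes by Le Barz's local criterion rather than by a new exhaustion function, \Cref{criterion of Steiness}, and induction on dimension. Your alternative of rebuilding an exhaustion could perhaps be pushed through, but as written it both relies on the false properness claim and misattributes the source of strict plurisubharmonicity, whereas the paper's route imports the semisimple quasiunipotent case wholesale through $\tilde Y^{T}$ and only needs a local finiteness statement on top of it.
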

\begin{proof}
    We may assume $\Sigma$ is closed since this does not effect the cover.  Let $\Sigma_j$ be the $\bQ$-irreducible components of $\Sigma$; each $\Sigma_j$ is a weak absolute Hodge subset.  If $s_j:X\to Y_j$ is the $\Sigma_j$-Shafarevich morphism, and $\Sigma_{Y_j}\subset\cM_B(Y_j)(\bC)$ is such that $s_j^*\Sigma_{Y_j}=\Sigma_j$, then $\prod s_j:X\to \prod Y_j$ is proper and $\tilde X^\Sigma$ is a component of the base-change of $\prod \tilde Y_j^{\Sigma_{Y_j}}\to \prod Y_j$.  Note that each $\Sigma_{Y_j}$ is a weak absolute Hodge subset.  Since a closed subspace of a Stein space is Stein and a product of Stein spaces is Stein, by replacing $X$ with $Y_j$ and $\Sigma$ with $\Sigma_{Y_j}$, it suffices to assume $\Sigma$ is a $\bQ$-irreducible nonextendable large weak absolute Hodge subset, and we must show $\tilde X^\Sigma$ is Stein.  Note that since we can freely pass to any finite \'etale cover of $X$ arising from a quotient the image of the monodromy representation of $\bigoplus_{V\in\Sigma}V$, $X$ can still be taken to be an algebraic space.

   Let $\Sigma_j$ now be the geometric irreducible components of $\Sigma$.  As in the previous section we may assume, after passing to a finite \'etale cover, that for each $j$ there is a point $V_0^j$ of $\Sigma_j$ underlying a $\bC$-VHS with unipotent local monodromy at which $\Sigma_j$ is formally Hodge.  By definition, there is a weak absolute Hodge subset $T\subset\Sigma^{\qu,\ss}$ containing each $V_0^j$.  Let $s:X\to Y$ be the $T$-Shafarevich morphism.  According to \Cref{ss qu large case of shaf}, $\tilde Y^{T}$ is Stein.  Let $\cZ$ (resp. $\cZ_j$) be the closed substack with underlying set of points $\Sigma$ (resp. $\Sigma_j$), let $(\hat\cO_{\hat V^j},\hat V^j)$ be the miniversal family for $\cZ_j$ at $V_0^j$, $V_k^j$ the finite quotients, and $\hat\cO_{\hat U}=\prod_j\hat\cO_{\hat V^j}$, $\hat U=\bigoplus \hat V^j$, $U_k=\bigoplus_j V_k^j$.  As before, $U_k$ is nonextendable for $k\gg0$, and by taking the period map of $U_k$ we have a commutative square\[\begin{tikzcd}
    \tilde X^{\Sigma}\ar[d,"\tilde s"]\ar[r,"\phi_k"]&D_k\ar[d]\\
    \tilde Y^{T}\ar[r]&D_0
\end{tikzcd}\]
Form the relative period map 
\begin{equation}\label{eq:period3}\psi_k:=\tilde s\times \phi_k:\tilde X^{ \Sigma}\to \tilde Y^{T} \times_{D_0} D_{k}.\notag\end{equation}
Let $y\in Y$, let $y\in B\subset Y^\an$ be a contractible Stein neighborhood equipped with a lift to $\tilde Y^{T}$, and again let $X_B=s^{-1}(B)$.  The proof of \Cref{cpt fibers} shows that $\widetilde{X_B}^{i^*_BU_k}\to B\times_{D_0}D_k$ is proper (in fact finite, since the fibers are discrete) for $k\gg 0$.  Since $\tilde Y^{T}\times_{D_0}D_k$ is affine over $\tilde Y^{T}$, it follows that $B\times_{D_0}D_k$ is Stein.  Finally, the preimage of $\psi_k$ over $B\times_{D_0}D_k$ is a disjoint union of lifts of $(X_B)^{i_B^*U_k}$ for $k\gg0$ so by \cite[Th\'eor\`eme 1]{lebarz} we conclude that $\tilde X^\Sigma$ is Stein.
\end{proof}

 \bibliography{biblio.shafarevich}
 \bibliographystyle{plain}
\end{document}